\newtheorem{thm}{Theorem}[subsection]
\newtheorem{propose}[thm]{Proposition}
\newtheorem{lemma}[thm]{Lemma}
\newtheorem{cor}[thm]{Corollary}
\newtheorem{hyp}[thm]{Hypothesis}
\theoremstyle{definition}
\newtheorem{defn}[thm]{Definition}
\newtheorem{notation}[thm]{Notation}
\newtheorem{remark}[thm]{Remark}
\newtheorem{remarks}[thm]{Remarks}
\newtheorem{example}[thm]{Example}
\newtheorem{examples}[thm]{Examples}
\newcounter{spec}
\newenvironment{thlist}{\begin{list}{\rm{(\roman{spec})}}%
{\usecounter{spec}\labelwidth=20pt\itemindent=0pt\labelsep=10pt}}%
{\end{list}}
\renewcommand{\d}{{\text{\LARGE $\cdot $}}}
\newcommand{\Xs}{X_{\d}}            
\renewcommand{\hat}{\widehat}
\newcommand{\Spec}{\operatorname{Spec}} 
\newcommand{\Hom}{\operatorname{Hom}}      
\newcommand{\Aut}{\operatorname{Aut}}      
\newcommand{\Map}{\operatorname{Map}}      
\newcommand{\Ext}{\operatorname{Ext}}      
\newcommand{\Biext} {\operatorname{Biext}}  
\newcommand{\RHom}{\operatorname{RHom}}    
\newcommand{\DM}{\operatorname{DM}}          
\newcommand{\DA}{\operatorname{DA}}          
\newcommand{\M}{\mathcal{M}_1}   
\newcommand{\MM}{\mathcal{MM}}  
\newcommand{\MHS}{{\operatorname{MHS}}}  
\renewcommand{\1}{{}_{\leq 1}}
\newcommand{\Shv}{\operatorname{Shv}}
\newcommand{\Sm}{\operatorname{Sm}}
\newcommand{\Sch}{\operatorname{Sch}}
\newcommand{\Hodge}{{\operatorname{Hodge}}}
\newcommand{\Mod}{{\operatorname{Mod\text{--}}}}
\newcommand{\op}{{\operatorname{op}}}
\newcommand{\PST}{\operatorname{PST}}
\newcommand{\NST}{{\operatorname{NST}}}
\newcommand{\EST}{{\operatorname{EST}}}
\newcommand{\ES}{{\operatorname{ES}}}
\newcommand{\EH}{{\operatorname{EH}}}
\newcommand{\HI}{\operatorname{HI}}
\newcommand{\CH}{\operatorname{CH}}
\newcommand{\Chow}{\operatorname{Chow}}
\newcommand{\End}{\operatorname{End}}      
\newcommand{\car}{\operatorname{char}}
\newcommand{\Tr}{\operatorname{Tr}}
\newcommand{\Div}{\operatorname{Div}}
\newcommand{\sext}{\text{${\mathcal E}xt\,$}}  
\newcommand{\shom}{\text{${\mathcal H}om\,$}}  
\newcommand{\srhom}{\operatorname{\mathcal{RH}om}}
\newcommand{\ihom}{{\rm\underline{Hom}}}  
\newcommand{\oo}{\operatornamewithlimits{\otimes}\limits}
\renewcommand{\P}{\mathbb{P}}   
\newcommand{\Aff}{\mathbb{A}}   
\newcommand{\A}{{\rm\underline A}}      
\newcommand{\E}{{\rm\underline E}}      
\newcommand{\sH}{\mathcal{H}}
\newcommand{\sO}{\mathcal{O}}
\newcommand{\C}{\mathbb{C}}     
\newcommand{\F}{\mathbb{F}}     
\newcommand{\Q}{\mathbb{Q}}     
\newcommand{\Z}{\mathbb{Z}}     
\renewcommand{\L}{\mathbb{L}}
\newcommand{\N}{\mathbb{N}}
\newcommand{\G}{\mathbb{G}}     
\newcommand{\HH}{\mathbb{H}}    
\newcommand{\EExt}{{\rm \mathbb{E}xt}} 
\newcommand{\R}{\mathbb{R}}     
\newcommand{\bT}{\mathbf{T}}       
\newcommand{\uG}{\underline{G}}
\newcommand{\uH}{\underline{H}}
\newcommand{\uF}{\underline{F}}
\newcommand{\uR}{\underline{R}}
\newcommand{\uT}{\underline{\rm T}}
\newcommand{\im}{\operatorname{Im}}        
\renewcommand{\ker}{\operatorname{Ker}}  
\newcommand{\coker}{\operatorname{Coker}} 
\newcommand{\gr}{\operatorname{gr}}        
\newcommand{\Pic}{\operatorname{Pic}}     
\newcommand{\RPic}{\operatorname{RPic}}     
\newcommand{\Alb}{\operatorname{Alb}}     
\newcommand{\LAlb}{\operatorname{LAlb}}     
\newcommand{\SAb}{\operatorname{SAb}}
\newcommand{\AbS}{\operatorname{AbS}}
\newcommand{\LA}[1]{\mbox{${\rm L}_{#1}{\rm Alb}$}}
\newcommand{\RA}[1]{\mbox{${\rm R}^{#1}{\rm Pic}$}}
\newcommand{\DR}{{\operatorname{DR}}}
\newcommand{\Mix}{\operatorname{Mix}}
\newcommand{\Perv}{\operatorname{Perv}}
\newcommand{\Cor}{\operatorname{Cor}}
\newcommand{\Tot}{\operatorname{Tot}}     
\newcommand{\tot}{\operatorname{tot}}     
\newcommand{\rk}{\operatorname{rk}}    
\newcommand{\codim}{\operatorname{codim}}
\newcommand{\NS}  {\operatorname{NS}}      
\newcommand{\rank}{\operatorname{rank}}    
\newcommand{\tors}{{\operatorname{tors}}}
\newcommand{\sing}{{\operatorname{sing}}}
\newcommand{\cone}{{\operatorname{cone}}}
\newcommand{\tr}{{\operatorname{tr}}}        
\newcommand{\qi}{{\rm q.i.}\,}      
\newcommand{\by}[1]{\stackrel{#1}{\rightarrow}}
\newcommand{\longby}[1]{\stackrel{#1}{\longrightarrow}}
\newcommand{\vlongby}[1]{\stackrel{#1}{\mbox{\large{$\longrightarrow$}}}}
\newcommand{\iso}{\longby{\sim}}
\newcommand{\longyb}[1]{\stackrel{#1}{\longleftarrow}}
\newcommand{\osi}{\longyb{\sim}}
\renewcommand{\tilde}{\widetilde}
\newcommand{\df}{\mbox{\,${:=}$}\,}
\newcommand{\ie}{{\it i.e. }}
\newcommand{\cf}{{\it cf. }}
\newcommand{\eg}{{\it e.g. }}
\newcommand{\ibid}{{\it ibid. }}
\newcommand{\resp}{{\it resp. }}
\newcommand{\loccit}{{\it loc. cit. }}
\newcommand{\et} {{\operatorname{\acute{e}t}}}
\newcommand{\eh} {{\operatorname{\acute{e}h}}}
\newcommand{\Zar}{{\operatorname{Zar}}}
\newcommand{\Nis}{{\operatorname{Nis}}}
\newcommand{\cdh}{{\operatorname{cdh}}}
\newcommand{\an}{{\operatorname{an}}}
\newcommand{\fr}{{\operatorname{fr}}}
\newcommand{\tor}{{\operatorname{tor}}}
\newcommand{\red}{{\operatorname{red}}}
\newcommand{\tf}{{\operatorname{tf}}}
\newcommand{\eff}{{\operatorname{eff}}}
\newcommand{\gm}{{\operatorname{gm}}}
\newcommand{\sn}{{\operatorname{sn}}}
\newcommand{\sab}{{\operatorname{sab}}}
\newcommand{\Fr}{\operatorname{Fr}}
\newcommand{\sm}{{\operatorname{sm}}}
\newcommand{\un}{\mathbf{1}}
\renewcommand{\bar}{\overline}
\newcommand{\into}{\hookrightarrow}
\renewcommand{\implies}{\mbox{$\Rightarrow$}}
\newcommand{\veq}{\mbox{\large $\parallel$}}  
\newcommand{\sZ}{\mbox{\scriptsize{$\Z$}}}   
\newcommand{\sC}{\mbox{\scriptsize{$\C$}}}   
\newcommand{\sQ}{\mbox{\scriptsize{$\Q$}}}   
\renewcommand{\c}{\mbox{\scriptsize{$\circ$}}} 
\newcommand{\limdir}[1]{\mathop{\rm
lim}_{\buildrel\longrightarrow\over{#1}}}
\newcommand{\liminv}[1]{\mathop{\rm
lim}_{\buildrel\longleftarrow\over{#1}}}
\renewcommand{\lim}{\varprojlim}
\newcommand{\colim}{\varinjlim}
\newcommand{\onto}{\mbox{$\to\!\!\!\!\to$}}
\newcommand{\boxtensor}{\def\boxtimesten{\Box\kern-7.59pt\raise1.2pt
\hbox{$\times$} }}                                  
\newcounter{elno}                   
\newcommand{\cA}{\mathcal{A}}
\newcommand{\cB}{\mathcal{B}}
\newcommand{\cD}{\mathcal{D}}
\newcommand{\cE}{\mathcal{E}}
\newcommand{\cF}{\mathcal{F}}
\newcommand{\cG}{\mathcal{G}}
\newcommand{\cH}{\mathcal{H}}
\newcommand{\cI}{\mathcal{I}}
\newcommand{\cM}{\mathcal{M}}
\newcommand{\cO}{\mathcal{O}}
\newcommand{\cP}{\mathcal{P}}
\newcommand{\cS}{\mathcal{S}}
\newcommand{\cT}{\mathcal{T}}
\newcommand{\cU}{\mathcal{U}}
\newcommand{\cX}{\mathcal{X}}
\newcommand{\cMR}{\mathcal{MR}}
\renewcommand{\phi}{\varphi}
\renewcommand{\epsilon}{\varepsilon}
\begin{document}

\title{On the derived category of 1-motives}
\begin{center}
Sur la cat\'egorie d\'eriv\'ee des $1$-motifs
\end{center}
\author{Luca Barbieri-Viale}
\address{Dipartimento di Matematica ``F. Enriques", Universit{\`a} degli Studi di Milano\\ Via C. Saldini, 50\\ I-20133 Milano\\ Italy}
\email{luca.barbieri-viale@unimi.it}
\author{Bruno Kahn}
\address{IMJ-PRG\\Case 247 \\ 4
Place Jussieu\\75252 Paris Cedex 05\\ France}
\email{bruno.kahn@imj-prg.fr}

\begin{abstract}
We embed the derived category of  Deligne 1-motives over a perfect field into the \'etale version of Voevodsky's triangulated category of
geometric motives, after inverting the
exponential characteristic.  We then show that this full embedding ``almost"
has a left adjoint $\LAlb$.  Applying $\LAlb$ to the motive of a variety we 
get a bounded complex of 1-motives, that we compute fully for smooth
varieties and partly for singular varieties. Among applications, we give motivic
proofs of Ro\v\i tman type theorems and new cases of 
Deligne's conjectures on $1$-motives.\\

\noindent {\sc R\'esum\'e.} Nous plongeons la cat\'egorie d\'eriv\'ee des $1$-motifs de Deligne sur un corps parfait dans la version \'etale de la cat\'egorie triangul\'ee des motifs g\'eom\'etriques de Voevodsky, apr\`es avoir invers\'e l'exposant caract\'eristique. Nous montrons ensuite que ce plongement a ``presque'' un adjoint \`a gauche $\LAlb$. En appliquant $\LAlb$ au motif d'une vari\'et\'e, on obtient un complexe de $1$-motifs, que nous calculons enti\`erement dans le cas des vari\'et\'es lisses et partiellement dans le cas des vari\'et\'es singuli\`eres. Parmi les applications, nous donnons des preuves motiviques de th\'eor\`emes de type Ro\v\i tman, et \'etablissons de nouveaux cas des conjectures de Deligne sur les $1$-motifs.
\end{abstract}
\thanks{The first author acknowledges the support of {\it Agence Nationale de la Recherche} (ANR) under reference ANR-12-BL01-0005.
The second author acknowledges the support of the {\it Ministero dell'Istruzione,  dell'Universit\`a e della Ricerca} (MIUR)  through the Research Project  (PRIN 2010-11) ``Arithmetic Algebraic Geometry and Number Theory''.}
\keywords{(English:) $1$-motives, triangulated motives, Ro\v\i tman's theorem, Deligne's conjecture; (fran\c{c}ais:) $1$-motifs, motifs triangul\'es, th\'eor\`eme de Ro\v\i tman, conjecture de Deligne.}
\subjclass[2010]{19E15, 14C15 (14F20, 14C30, 18E30)}
\maketitle

\newpage

\tableofcontents

\newpage
\section*{Introduction}

In this book, we compare two categories of motivic nature: the derived category of Deligne's $1$-motives and Voevodsky's triangulated category of motives, and draw several geometric applications.
\bigskip

\enlargethispage*{30pt}

Let us recall the players of this story. Inspired by his theory of mixed Hodge structures, Deligne  introduced $1$-motives in \cite{D} as an algebraic version of ``Hodge theory in level $\le 1$'': they were the first nontrivial examples of mixed motives (as opposed to Grothendieck's pure motives \cite{MA,demazure,kleiman,scholl,andre}). We shall denote the category of $1$-motives over a field $k$  by $\M(k)$ or $\M$.

A different step towards mixed motives was taken much later by
Voevodsky, who defined in \cite{V} a \emph{triangulated category} of
motives $\DM^{\eff}_{\gm}(k)$\index{$\DM^{\eff}_{\gm}$}. Taken with rational coefficients, this
category is conjectured to have a ``motivic" $t$-structure whose heart
should be the searched-for abelian category of mixed motives.

Since $\M(k)$ is expected to be contained in such a heart, it is natural to try and
relate it with $\DM^{\eff}_{\gm}(k)$. This can be done rationally.  Denote by $\M(k)\otimes\Q$ the abelian category  of
1-motives up to isogeny over $k$. When $k$ is perfect, Voevodsky asserts in \cite[p. 218]{V} (see
also \cite[Pretheorem 0.0.18]{V0}) that there exists a
\emph{fully faithful functor}
\begin{equation}\label{eqVO}
\Tot^\Q:D^b(\M(k)\otimes\Q)\into \DM^{\eff}_{\gm}(k)\otimes\Q
\end{equation}
whose essential
image is the thick subcategory generated by motives of smooth curves. This is  justified by 
F. Orgogozo in  \cite{OR}.
\bigskip

Our main result is that the functor $\Tot^\Q$ has a \emph{left adjoint} $\LAlb^\Q$ (Theorem \ref{ladj}). Much of this can in fact be done integrally, or more accurately $\Z[1/p]$-integrally where $p$ is the exponential characteristic of $k$. First, the embedding $\Tot^\Q$ has a $\Z[1/p]$-integral version $\Tot$ provided we replace $\DM_\gm^\eff(k)$ by its \'etale variant (Theorem \ref{t1.2.1}). Then $\LAlb^\Q$ has an integral version $\LAlb:\DM_\gm^\eff(k)\to D^b(\M(k)[1/p])$ (Definition \ref{LAlb}). It is convenient to denote by $\RPic$ the composition of $\LAlb$ with the Cartier duality of $D^b(\M[1/p])$. These notations want to suggest: derived Albanese, derived Picard functor.
\bigskip

Applying $\LAlb$  to an object  $M\in \DM_\gm^\eff(k)$ and taking $1$-motivic homology, we get a series of $1$-motives $\LA{i}(M)$, $i\in\Z$. Taking for example $M=M(X)$ for $X$ a $k$-variety, we get new invariants $\LA{i}(X)$ of $X$ with values in the category of $1$-motives, as well as their duals $\RA{i}(X)$. 

We then compute the $\LA{i}(X)$ when $X$ is a smooth variety, justifying the notation (Theorem \ref{trunc}).
We partly extend this computation to singular schemes in Section \ref{comps}; in many cases, we recover for $i=1$ the  homological and cohomological Albanese and Picard 1-motives $\Alb^-(X)$, $\Alb^+(X)$, $\Pic^-(X)$ and $\Pic^+(X)$ constructed in \cite{BSAP} by the first author and Srinivas (Section \ref{12}).
\bigskip

This in turn sheds a new light on Ro\v\i tman's theorem on torsion in groups of $0$-cycles modulo rational equivalence and yields many generalisations of it, as is done in Section \ref{rovi}.\bigskip

Let us come back to Deligne's  $1$-motives. In \cite[(10.4.1)]{D}, he conjectured that some Hodge-theoretic constructions could be described as realisations of \emph{a priori} constructed $1$-motives; he also conjectured a compatibility with $l$-adic and de Rham cohomology.  In \cite{BRS}, part of the first conjecture was proven rationally (see also \cite{ram2}).

What started us on this project was the desire to use the full embedding $\Tot^\Q$ of \eqref{eqVO} to give a natural proof of Deligne's conjecture. In the spirit of the present work, we tackle this problem by first considering an axiomatically defined realisation functor and get an abstract result, Theorem \ref{abstractdelconj}, by comparing $\LAlb^\Q$ with a corresponding functor on the level of categories of realisations. 

To get to Deligne's conjecture, we use Huber's mixed realisation \cite{HuberH}. Using its Hodge component, we get the first part of Deligne's conjecture in Corollary \ref{isodelcor}. Using its Hodge and $l$-adic (resp. de Rham) components, we then get the second part of Deligne's conjecture in Theorem \ref{isodelMR}.\footnote{This description is not completely correct, see Remark \ref{notall}. Deligne's conjecture in \cite{D} concerns three cases $I, II_n$ and $II_N$. We prove $I$ and $II_N$ but not $II_n$. On the other hand, we get many cases not considered by Deligne where the obvious analogue of his conjecture is true.} The latter is new relatively to the existing literature.

Finally, we examine what happens in characteristic $p$, using the $l$-adic realisation functor of Ivorra \cite{ivorra} and Ayoub \cite{real.etale}. Not surprisingly, we get a version of ``Deligne's conjecture'', whose truth depends this time on the Tate conjecture in codimension $1$: Theorem \ref{telladic}.
\bigskip

In the first version of this book, the proof of Proposition \ref{ext=ext} contained a gap, which was kindly pointed out by J. Riou and J. Ayoub. One solution to save this proof was to tensor coefficients with $\Q$ and use the comparison theorem\footnote{Established by F. Morel over a field, this comparison theorem was extended by D.-C. Cisinski and F. D\'eglise \cite{cis-deg2} to a general base  and  J. Ayoub gave a simplified proof in \cite[Ann. B]{ayoubHopf}. See also \cite{AV}.} between motives with and without transfers.  Rather than doing this, we looked at Ext groups in more detail, which added some length to Section \ref{homotopy} but saved Proposition \ref{ext=ext} integrally.

\bigskip

Let us stress that we have to invert the exponential characteristic $p$ of the base field $k$ everywhere. This is due to several reasons:

\begin{itemize}
\item Since the category $\DM_{-,\et}^\eff(k)$ of Definition \ref{dmet} is $\Z[1/p]$-linear by \cite[Prop. 3.3.3 2)]{V}, we cannot expect better comparison results.
\item To be in the spirit of Voevodsky, we want to use only the \'etale topology and not the fppf topology which would be more natural from the viewpoint of $1$-motives. Trying to prove anything meaningful without inverting $p$ in this context seems doomed to failure.
\end{itemize}

The basic reason why $p$ is inverted in $\DM_{-,\et}^\eff(k)$ is homotopy invariance (the
Artin-Schreier exact sequence). In parallel, if one wants to deal with non homotopy invariant phenomena,
Deligne $1$-motives are not sufficient and one should enlarge them to include $\G_a$ factors as
in Laumon's $1$-motives (\cf  \cite{lau}, \cite{BM}). See \cite{alessandra} and \cite{alessandragen} for work in this
direction.

\enlargethispage*{30pt}

\subsubsection*{Acknowledgements}
This work has been done during periods of stay of the first author
at the IH\'ES and the ``Institut de Math\'ematiques de Jussieu" University of Paris 7 and of
the second author at the Universities of Milano, Padova and Roma ``La Sapienza''. Both authors jointly visited the Centro di Ricerca Matematica ``Ennio De Giorgi" Scuola Normale Superiore of Pisa under the program {\it Research in Pairs}. We like to thank these
institutions for their hospitality and financial support\footnote{As well as the Eyjafjallaj\"okull  volcano for delayed touches to this work.}. 

The second  author also thanks the first for his warm hospitality in Pietrafredda, where much of this work was conceived. We also thank J. Ayoub, A.
Bertapelle, C. Bertolin, D. Bertrand, D. Ferrand, S. Pepin Lehalleur, M. Hindry, L. Illusie, P. Jossen, C. Mazza, M. Ojanguren, J. Riou, M. Saito, P. Schapira, T. Szamuely, B. Totaro, C. Voisin, V. Vologodsky, J. Wildeshaus, H. Zhao for helpful comments or suggestions regarding this work. The first author is grateful to
all his coauthors in the subject of $1$-motives; the second author would like to acknowledge the inspiration provided by the paper
of Spie\ss-Szamuely \cite{spsz}. 

We thank the referee for an outstanding report and his/her many suggestions for improvement.

Finally, our intellectual debt to A. Grothendieck, P. Deligne and V. Voevodsky is
evident.

\newpage
\subsection*{General assumptions and notations.}
\emph{Throughout, $k$ is a perfect field of exponential characteristic $p$. We write
$\Sm(k)$ \index{$\Sch(k)$, $\Sm(k)$} for the category of smooth separated $k$-schemes of finite type
and $\Sch(k)$  for the category of all separated $k$-schemes of finite type. If $\cA$ is an additive category, we let $\cA[1/p]$ denote 
the corresponding $\Z[1/p]$-linearised category (Hom groups tensored with $\Z[1/p]$, compare Definitions \ref{1mot} and \ref{1p}).}

\section*{Outline}
We now give a detailed overview of the contents of this work.

\subsection{The derived category of $1$-motives, integrally}\label{0.1} While the $\Z[1/p]$-linear category $\M[1/p]$ is not an abelian category, it fully embeds into the abelian category ${}^t\M[1/p]$ of \emph{$1$-motives with torsion} introduced in \cite{BRS}, which makes it an exact category in the sense of Quillen (see \S \ref{1.2d}). Its derived category $D^b(\M[1/p])$ with respect to this exact structure makes
sense, and moreover the functor $D^b(\M[1/p])\to D^b({}^t\M[1/p])$ turns out to be an equivalence (Theorem \ref{ptors}).

\subsection{$\Z[1/p]$-integral equivalence} \label{0.2}Let
$\DM_{\gm,\et}^\eff\df\DM_{\gm,\et}^\eff(k)$ be the thick subcategory of
$\DM_{-,\et}^\eff(k)$ generated by the image of $\DM^{\eff}_{\gm}(k)$ under the ``change of topology'' functor $\alpha^s:\DM_{-}^\eff\to \DM_{-,\et}^\eff$ (see Corollary \ref{cD.1} and Definition \ref{d2.1.1}). Let $d\1\DM_{\gm,\et}^\eff$ be the thick
subcategory of $\DM_{\gm,\et}^\eff$ generated by motives of smooth
curves.  In Theorem \ref{t1.2.1}, we refine the full embedding $\Tot^{\Q}$ of \eqref{eqVO} to an equivalence of categories
\begin{equation}\label{eqeq}
D^b(\M[1/p])\iso d\1\DM_{\gm,\et}^\eff.
\end{equation}

\subsection{Duality}\label{0.3} Deligne's extension of Cartier duality to 1-motives \cite{D}
provides the category of 1-motives with a natural involution $M
\mapsto M^*$ which extends to $D^b(\M[1/p])$: see Proposition \ref{pcd}. This duality exchanges
the category ${}^t\M[1/p]$ of  \S \ref{0.1} with a new abelian category ${}_t\M[1/p]$ of \emph{$1$-motives
with cotorsion} (see \S \ref{1.8}). Rationally, the two $t$-structures give back the standard one, corresponding to the abelian category $\M\otimes\Q$.

We show in Theorem \ref{teq} that, under $\Tot$, Deligne's Cartier duality is transformed into
the involution $M\mapsto  \ihom_\et (M,\Z_\et (1))$ on $d\1\DM^{\eff}_{\gm,\et}$ given by
the internal (effective) Hom (see \S \ref{2.5} and Proposition \ref{cd}). Of course, this result involves biextensions. 

\subsection{Left adjoint}\label{leftadj} Composing \eqref{eqeq} with the
inclusion into $\DM_{\gm,\et}^\eff$, we obtain a ``universal
realisation functor"
\[\Tot:D^b(\M[1/p])\to \DM_{\gm,\et}^\eff.\] 

It was conjectured by Voevodsky (\cite{V1}; this is also implicit in \cite[Preth. 0.0.18]{V0})
that, rationally, $\Tot$ has a left adjoint. We prove this in Section \ref{qlalb}.

It is shown in Remark \ref{noleft} that $\Tot$ does not have a left adjoint integrally.
There is nevertheless an integral statement, which involves an interplay between the \'etale and
the
\emph{Nisnevich} topology. Considering  the functor $\alpha^s:\DM_\gm^\eff\to \DM_{\gm,\et}^\eff$ from Definition \ref{d2.1.1}, we find a functor
\[\LAlb: \DM^{\eff}_{\gm}\to D^b(\M[1/p])\]
together with a \emph{motivic  Albanese map}
\begin{equation}\label{eq0.2}
a_M:\alpha^sM \to \Tot \LAlb (M)
\end{equation} 
natural in $M\in \DM_\gm^\eff$. 
 Thus if $(M,N)\in \DM_\gm^\eff\times D^b(\M[1/p])$,
there is a functorial homomorphism
$$\Hom (\LAlb M, N)\to \Hom (\alpha^sM, \Tot(N))$$
which is an isomorphism rationally (but not integrally in general, see Remark \ref{noleft}).

We give the construction of $\LAlb$ and  the motivic Albanese map in Section \ref{lalb}. This is the central result of the present work.

Experimentally, $\LAlb$ is best adapted to the $t$-structure with heart ${}_t\M[1/p]$ (see above): thus we define for any $M\in \DM_\gm^\eff$ and $i\in\Z$
\[\LA{i}(M)={}_tH_i(\LAlb(M))\]
the homology relative to this $t$-structure.

\subsection{Smooth schemes} \label{0.5}
In Theorem \ref{trunc}, we compute  $\LAlb (X):=\LAlb(M(X))$ for any smooth scheme $X$: in principle this determines $\LAlb$ on the whole of $\DM_\gm^\eff$ since this category
is generated by the $M(X)$. It is related to the ``Albanese scheme" $\cA_{X/k}$ of \cite{ram}
(extending the Serre Albanese variety of \cite{serrealb}) in the following way: $\LAlb(X)$ is a
``$3$-extension" of $\cA_{X/k}$ by the Cartier dual of the N\'eron-Severi group of $X$, that we
define as the \'etale sheaf given by cycles of codimension $1$ on $X$ modulo algebraic
equivalence.   We deduce that
$\LA{1}(X)$ is isomorphic to the $1$-motive $\Alb^- (X)$ of \cite{BSAP} (corresponding to the Serre Albanese).

\subsection{$\LAlb$ and $\RPic$} Composing
$\LAlb$ with duality, we obtain a
contravariant functor
$$\RPic: \DM^{\eff}_{\gm}\to D^b(\M[1/p])$$
such that 
\[\RA{i}(M)\df{}^tH^i(\RPic (M)) \simeq {}_tH_i(\LAlb (M))^*\]
for any $M\in \DM^{\eff}_{\gm}$. Here, 
${}^t H^i$ is defined with respect to the $t$-structure with heart ${}^t\M[1/p]$. We call
$\RPic$ the {\it motivic Picard}\, functor. We define the \emph{cohomological Picard
complex} by  $\RPic (X):=\RPic (M(X))$.

\subsection{Singular schemes}\label{sing}
When $k$ is of characteristic $0$ (but see  \S  \ref{s8.1} for the issue of positive characteristic), the motive and motive with compact
support $M(X)$ and $M^c(X)$ are defined for any $k$-scheme of finite type $X$ as objects
of $\DM_\gm^\eff$, so that
$\LAlb(X)$ and the 
\emph{Borel-Moore Albanese complex} $\LAlb^c (X) \df \LAlb (M^c
(X))$ make sense. We further
define, for an equidimensional scheme $X$ of dimension
$n$, the \emph{cohomological Albanese complex}  $\LAlb^* (X)
\df\allowbreak \LAlb (M (X^{(n)})^*(n)[2n])$ where $X^{(n)}$ is the union of the $n$-dimensional components of $X$ and $(-)^*$ is the dual in $\DM_\gm$. (Note that $M (X^{(n)})^*(n)[2n]$ is effective by Lemma \ref{leffe}). Similarly with  
$\RPic$.  We give general properties of these complexes in Section \ref{6}.

We then give some qualitative estimates for $\LA{i}(X)$ in Proposition \ref{c3.1} (see also Proposition \ref{c3.1bis}) as well as $\LA{i}^c (X)\allowbreak\df {}_tH_i(\LAlb^c (X))$ in
Proposition \ref{c3.1c}. We prove that $\LA{1}(X)$ is canonically isomorphic to the $1$-motive $\Alb^-(X)$ of \cite{BSAP} if $X$ is normal (Proposition \ref{c12.2.2})  or proper (Corollary \ref{c12.3}). Here, the interplay between $\LAlb$ and $\RPic$ (duality between Picard and Albanese) plays an essential role. We also prove  in Theorem~\ref{*=-} that $\RA{1}^*(X)\simeq \Pic^-(X)$, hence $\LA{1}^*(X)\simeq \Alb^+(X)$, for any $X$.

It is striking that $\LA{i}(X), \LA{i}^c(X)$ and $\LA{i}^*(X)$ are actually Deligne $1$-motives for $i\le 1$, but  have cotorsion in general for $i\ge 2$ (already for $X$ smooth projective).

\subsection{Curves} We completely compute $\LA{i}(X)$ for any curve $X$, showing that $M (X)$
has a ``Chow-K\"unneth decomposition" in 
$\DM^{\eff}_{\gm,\et}\otimes\Q$ and that the
$\LA{i} (X)$ coincide with Deligne-Licht\-en\-baum motivic homology
of the curve $X$ (see Theorem~\ref{lasc}, \cf \cite{LI}, \cite{LI1} and \cite{BH}).
Dually, we recover Deligne's 1-motivic $H^1$ of any curve: namely,  
$\RA {1} (X) = H^1_m(X)(1)$ as denoted in \cite{D}. We also compute $\LA{i}^c (X)$
of a smooth curve $X$ (see Theorem~\ref{bmlac}).

\enlargethispage*{20pt}

\subsection{Ro\v\i tman's theorem}\label{0.8} If $X$ is smooth projective, the motivic Albanese map \eqref{eq0.2}
applied to $M=M(X)$ gives back the Albanese map from the $0$-th Chow group to the rational points of the
Albanese variety. This translates very classical mathematics to a motivic setting.  When $X$ is only smooth, we
recover a generalised Albanese map from Suslin homology
\[a_X^\sing:H_0^\sing(X;\Z)[1/p]\to \cA_{X/k}(k)[1/p]\] 
which was first constructed by Ramachandran \cite{ra1} and
Spie\ss-Szamuely \cite{spsz}.\footnote{The
observation that Suslin homology is related to $1$-motives is initially
due to Lichtenbaum \protect\cite{LI}.} The map $a_X^\sing$ is an isomorphism if
$\dim (X)\leq 1$ (see Proposition~\ref{sus}). 

We then get a natural proof of the theorem of Ro\v\i
tman on torsion $0$-cycles and its generalisation to open smooth varieties by Spie\ss-Sza\-mu\-e\-ly \cite[Th.
1.1]{spsz} (removing their hypothesis on the existence of a smooth compactification): see Theorem
\ref{troitclas}. 

We also deal with singular schemes when $\car k=0$ (but see \S \ref{s8.1}) in Proposition
\ref{p12.3} and its corollaries. We get a Borel-Moore version of
Ro\v\i tman's theorem  as well, see Proposition \ref{p13.4} and its corollary.
Finally, we obtain a ``cohomological" Ro\v\i tman theorem, involving torsion in a motivic
cohomology group: see Corollary \ref{c14.4}. 

\subsection{The homotopy $t$-structure and $1$-motivic sheaves} We already have two dual $t$-structures  on $D^b(\M[1/p])$, see \ref{0.3}. The homotopy $t$-structure on $\DM_{-,\et}^\eff$ and the equivalence of categories \eqref{eqeq} induce a \emph{third}
$t$-structure (Theorem \ref{t3.2.3}; see also Corollary \ref{c3.3.2}). Its heart is formed of so-called
\emph{$1$-motivic sheaves}: their consideration is central both for the duality theorem \ref{teq} and for the computation of
$\LAlb(X)$ for smooth $X$ (Theorem \ref{trunc}). This idea was pursued by Ayoub--Barbieri-Viale in \cite{ABV} and by
Bertapelle in \cite{alessandra}.

\subsection{Internal Hom and tensor structure} In Corollary \ref{c3.13}, we show that the internal Hom of $\DM_{-,\et}^\eff$ yields an internal Hom, $\ihom_1$, on $D^b(\M\otimes\Q)$ via the equivalence \eqref{eqeq}; for an integral refinement see Remark \ref{rkc3.13}. This internal Hom has cohomological dimension $1$ with respect to the homotopy $t$-structure (Corollary \ref{l3inv}) and is exact with respect to the standard $t$-structure (Theorem \ref{t3.14}).  In Proposition \ref{tens1}, we use $\LAlb$ to
show that $\ihom_1$ has a left adjoint $\otimes_1$. This tensor structure on $D^b(\M\otimes\Q)$ is exact (for the standard $t$-structure), respects the weight filtration and may be computed explicitly. 

\subsection{A conceptual proof of Deligne's conjectures} \label{0.11}
We introduce an axiomatic framework to formulate a version of  De\-ligne's conjectures \cite[(10.4.1)]{D} for \emph{any} suitable realisation functor. This involves an abstract notion of weight filtration, which is given in Appendix \ref{AppendixD}. 

Assume given a triangulated
category
$\cT$ provided with a 
$t$-structure, with heart $\cB$ with a weight filtration $\cB_{\le n}$ (see \ref{dE.6} for
the definition) and a $t$-exact functor $D^b(\cB)\to \cT$ which is the identity on $\cB$. Considering  ``Lefschetz''  objects in $\cB$, we define a full subcategory
of level $\leq 1$ objects $\cB_{(1)} \subset \cB$ along with a left adjoint $\Alb^\cB$.
Define $\cT_{(1)}\subset \cT$ by objects with $H^*$ in $\cB_{(1)}$: assuming that the $t$-structure is  bounded, we get a left
adjoint $\LAlb^\cT$ (see \ref{pAlbT}).  

Suppose now given a triangulated ``realisation"
functor $R: \DM_{\gm}^\eff\otimes \Q\allowbreak\to \cT$  which behaves as a usual homology theory in
degrees $\leq 1$, as  explained in Hypotheses \ref{h15.1} and \ref{h16.1}. Let $R_1=R\Tot$. We then get a ``base change" natural transformation 
\begin{equation}\label{bc1}
\LAlb^\cT R\Rightarrow R_1\LAlb^\Q
\end{equation}
and, for all $M\in \DM_\gm^\eff\otimes\Q$ and $i\in\Z$, a map
\begin{equation}\label{bc2}
\Alb^\cB H^R_i(M)\to R_1\LA{i}^\Q(M)
\end{equation}
where $H^R_i(M)\df H_i(R(M))$. Theorem
\ref{abstractdelconj} now gives an abstract version of Deligne's conjectures: \eqref{bc1} and
\eqref{bc2} are isomorphisms if, in addition, the ``homological" equivalence induced by $R$ 
coincides with algebraic equivalence in codimension $1$ and the corresponding geometric cycle
class map satisfies a Lefschetz (1,1)-type theorem (or Tate conjecture in codimension $1$) for
smooth projective varieties. If \eqref{bc2} is an isomorphism for $M=M(X)$, $X$
smooth projective, then it is an isomorphism for any $M$ and \eqref{bc1} is an isomorphism of
functors.

\subsection{Hodge structures} For $X$ smooth over $\C$, Corollary \ref{HRPic} shows that the $1$-motive $\RA{i} (X)$ has a Hodge realisation abstractly isomorphic to
$H^i(X_{\an},\Z )^{\leq 1}$, the largest 1-motivic part of the mixed Hodge structure on $H^i(X_{\an},\Z)$ Tate-twisted by 1. The above abstract framework provides such an isomorphism in a functorial way.

Namely,  if $\cB= {\rm MHS}$ is the category of (graded polarizable, $\Q$-linear)
Deligne's mixed Hodge structures and  $\cT=D^b({\rm MHS})$, the weight filtration provides
$\cB$ with a weight structure. Then $\cB_{(1)} ={\rm MHS}_{(1)}$ is the full subcategory of
${\rm MHS}^\eff$ given by level $\leq 1$ mixed Hodge structures.  By Theorem \ref{t16.1}, Huber's Hodge realisation  \cite{HuberH} restricts to an equivalence on 1-motives.\footnote{Moreover, Huber's and Deligne's Hodge realisation are naturally isomorphic: see \cite{vadim} and \cite{ABN}.}
The conditions of Theorem \ref{abstractdelconj} are satisfied thanks to Lefschetz's theorem on $(1,1)$-classes. In
the  isomorphism \eqref{bc2} for $M=M(X)$, $X$ a complex variety, the mixed Hodge structure
$\Alb^\cB H^R_i(X) = H_i(X,\Q)_{\leq 1}$ is the largest quotient of level $\leq 1$ so that 
\eqref{bc2} yields Deligne's conjecture on a ``purely algebraic'' definition of this mixed Hodge
structure. Similarly,
$\LA{i}^*(X)$ has Hodge realisation $\Alb^\cB H_i(R(M (X)^*(n)[2n]))=H^{2n-i}(X,\Q(n))_{\leq
1}$: this provides new cases where Deligne's conjecture holds true (up to isogeny) not included in
\cite{BSAP}, \cite{BRS} or \cite{ram2}. All this is Theorem \ref{isodel} and its Corollary
\ref{isodelcor}, see also Remark \ref{notall}.

\subsection{Mixed realisations} Huber's Hodge realisation functor is only one component of her much richer mixed realisation functor \cite{HuberLN, HuberH}. In Section \ref{shuber}, we show that it fits with our axiomatic approach: this yields a reasonable interpretation of the second part of Deligne's conjecture on comparison isomorphisms, see Corollaries  \ref{ceqhodgeMR} and \ref{ceqhodgeMR2}.

\subsection{$\ell$-adic} Here we use Ayoub's $\ell$-adic realisation functor \cite{real.etale}, which is compatible with Deligne's realisation functor from \cite[(10.1.5)]{D}. This provides, in characteristic $p$, an $\ell$-adic version of Deligne's conjectures, which depends on the Tate conjecture in codimension $1$ (Theorem \ref{telladic}).

\subsection{Going further}
One could explore situations like the one around the $p$-adic
period isomorphism. We leave these developments to the motivated reader. See also \cite{vadim2} and \cite{bondarko4}
for further developments concerning weights.

\subsection{Caveat} While one might hope that these results are a partial template for a
future theory of mixed motives (see \eg \cite{ayoub2mot}), we should stress that some of them are definitely special to level $\le 1$. Namely:

\begin{itemize}
\item It is succintly pointed out in \cite[\S 3.4 p. 215]{V} (see \cite[\S 2.5]{ABV} for a proof) that the non finite generation of
the Griffiths group prevents higher-dimensional analogues of $\LAlb$ to exist. (This goes
against \cite[Conj. 0.0.19]{V0}.)
\item Contrary to Theorem \ref{t3.2.3}, the homotopy $t$-structure on $\DM_{-,\et}^\eff$ does not induce a
$t$-structure on $d_{\le n}\DM_{\gm,\et}^\eff$ for $n\ge 2$. This can already be seen on
$\Z(2)$, although its homology sheaves are conjecturally ind-objects of $d_{\le
2}\DM_{\gm,\et}^\eff$ (see \cite[\S 6]{V0}). 
\end{itemize}

These two issues seem related in a mysterious way! However, see \cite{ABV} for a possible approach to $n$-motivic sheaves and a conjectural picture linking the subject to the Bloch-Beilinson motivic filtration.

\subsection*{A small reading guide} We offer some suggestions to the reader, hoping that they will be helpful.

One might start by quickly brushing through \S \ref{1.1} to review the definition of Deligne's
$1$-motives, look up \S \ref{1.2d} to read the definition of $D^b(\M[1/p])$ and then proceed
directly to Theorem \ref{t1.2.1} (full embedding), referring to Section \ref{sect1} ad libitum
to read the proof of this theorem. The lengths of Sections \ref{homotopy} and \ref{dual} are
necessary evils; they may be skipped at first reading with a look at their main
results (Theorem \ref{t3.2.3}: the homotopy $t$-structure, Corollary \ref{c3.13} and Theorem \ref{t3.14}: internal Hom, and Theorem \ref{teq}: agreement of the two Cartier dualities). 

One may then read Section \ref{lalb} on the construction of $\LAlb$ and $\RPic$
(which hopefully will be pleasant enough), glance through Section \ref{qlalb} (their rational
versions) and have a look in passing at Section \ref{tens} for the tensor structure on
$D^b(\M\otimes\Q)$. After this, the reader might fly over the mostly formal Section 
\ref{6}, jump to Theorem \ref{trunc} which computes
$\LAlb(X)$ for a smooth scheme $X$, read Sections \ref{comps} and \ref{12} on $\LAlb$ of
singular schemes where he or she will have a few surprises, read Section \ref{rovi} on Ro\v\i
tman's theorem and its generalisations,  have a well-earned rest in recovering familiar
objects in Section \ref{scurves} (the case of curves). Then jump to  Section \ref{Hodge} and Corollary \ref{isodelcor} which gives the Hodge realisations of $\RPic(X)$ and $\LAlb^*(X)$ for $X$ a complex algebraic variety, look at the main results in Section \ref{shuber} and consult Section \ref{elladic}. After which, one can backtrack to Section \ref{axDel} to see the technical details.

And never look at the appendices.

The reader will also find an index of notations at the end.

\newpage 
\numberwithin{equation}{subsection}
\part{The universal realisation functor}

\section{The derived category of $1$-motives} \label{sect1}

The main reference for (integral, free) $1$-motives is \cite[\S
10]{D}, see also \cite[\S 1]{BSAP}. We also provide an Appendix~\ref{AppendixB} 
on 1-motives with torsion which were introduced in \cite[\S 1]{BRS}. For the derived category of
$1$-motives up to isogeny we refer to \cite[Sect. 3.4]{V} and \cite{OR}: here we are interested
in the integral version.

\enlargethispage*{30pt}

\subsection{Commutative group schemes} In this subsection, we fix our notation and recall some well-known and not so well-known facts on commutative group schemes over a field.

We shall work within the category $\cG^*$ \index{$\cG^*$, $\cG$, $\cG_\sm$, $\cG_\sab$} of commutative $k$-group sche\-mes locally of finite type. Let $\cG^*_\sm$ be the full subcategory of smooth $k$-group schemes. Recall that $\cG^*_\sm=\cG^*$ if $\car k=0$ (Cartier, \cite[Exp. VI, Th. 1.6.1]{sga3}). If $\car k >0$, $G\in \cG^*$ is smooth if and only if $\Fr^n_{G/k} : G \to G^{p^n}$ is faithfully flat for some (or any) $n\ge 1$ where $\Fr^n_{G/k}$ is the relative Frobenius morphism of $G$ by \cite[Exp. VIIA, Cor. 8.3.1 (iii)]{sga3}. 

We shall have to juggle a little with nonreduced group schemes in positive characteristic (see \eg footnote \ref{fn6}), and invert the exponential characteristic of $k$ in much (but not all) of this book; the reader who is only interested in characteristic $0$ can completely discard these issues.

Let $\cG\subset \cG^*$ be the category of commutative algebraic $k$-groups (\ie  commutative group schemes which are of finite type over $k$).  Recall that $G\in \cG$ is \emph{semi-abelian} if $G$ can be represented by an extension 
\[0\to T \to G \to A \to 0\] 
where $T$ is a torus and $A$ is an abelian variety; $A$ and $T$ are then unique.
We have:

\begin{lemma}\label{l1.1.4}  The categories $\cG$ and $\cG^*$ are abelian. The full subcategory 
\begin{multline*}\cG_\sab=\{G\in \cG \mid \text{ the reduction of the connected component $G'$}\\
 \text{of the identity in $G$ is semi-abelian}\}
\end{multline*}
is a Serre subcategory of $\cG$, \ie is closed under subobjects, quotients and extensions.\footnote{\label{fn6} Note that we use here the perfectness of $k$ to ensure that the reduced part of a $k$-group scheme is a $k$-group scheme, see \cite[Exp. 6A, 0.2]{sga3}. Also, $G'\in \cG$ for any $G\in\cG$ by ibid., 2.2.} In particular, $\cG_\sab$ is abelian.
\end{lemma}

\begin{proof} The first statement is a well-known theorem of Gro\-then\-dieck the proof given in  \cite[Exp. 6A, Th. 5.4.2]{sga3} works for $\cG$ and $\cG^*$ as well. Note that any morphism $f:G\to H$ in $\cG$ restricts to a map $f':G'\to H'$. If $f$ is mono, $f'$ is mono, hence $G'$ is semi-abelian if $H'$ is. If $f$ is epi, then $f'$ is epi (because $H'$ is reduced and connected, and $\coker f'$ is finite by a diagram chase); therefore, $H'$ is semi-abelian if $G'$ is. So $\cG_\sab$ is closed under subobjects and quotients in $\cG$. In the case of an extension $0\to G\to H\to K\to 0$ with $G,K\in \cG_\sab$, a diagram chase shows that the homology of $G'\to H'\to K'$ is finite, hence $H'$ is semi-abelian.
\end{proof}

\enlargethispage*{30pt}

We shall also work with the following full subcategories of $\cG^*$:

\begin{defn}\label{d1.1.1} a) A $k$-group scheme $L$ is 
\emph{discrete} if it is reduced, the connected component of the identity is trivial and the abelian group $L(\bar k)$ is finitely generated. This category is denoted by ${}^t\cM_0(k)={}^t\cM_0$. \\   
b) A \emph{lattice} is a
discrete $k$-group scheme $L$ such that $L(\bar k)$ is torsion-free. The full subcategory of lattices is denoted by
$\cM_0(k)=\cM_0$. \index{$\cM_0$, ${}^t\cM_0$}
\end{defn}

\begin{lemma}\label{discab} ${}^t\cM_0$ is a Serre subcategory of $\cG^*$, hence is abelian.\qed
\end{lemma}

\begin{remark}[Serre subcategories and thick subcategories]\label{Serrethick} In Lem\-mas \ref{l1.1.4} and \ref{discab} we used the notion of a Serre subcategory of an abelian category. There is a weaker notion; a full subcategory $\cB$ of an abelian category $\cA$ is \emph{thick} if the following condition holds:  for any short exact sequence $0\to A'\to A\to A''\to 0$ in $\cA$, if two among $A',A,A''$ belong to $\cB$, so does the third. This still implies that $\cB$ is abelian. The two notions are going to appear in this book.
\end{remark}

\begin{defn}\label{dsabt} We denote by ${}^t\AbS(k)={}^t\AbS$ 
\index{${}^t\AbS$, $\AbS$} \index{$\SAb$} the 
category of commutative
$k$-group schemes $G$ such that the connected component $G^0$ of the identity is (reduced and)  semi-abelian and $G/G^0$ is discrete.  An object of ${}^t\AbS$ is called a \emph{semi-abelian scheme with torsion}. We denote by $\AbS$ the  full subcategory of ${}^t\AbS$ formed by those $G$ such that $G/G^0$ is a lattice. Finally, we write $\SAb$ for the full subcategory of semi-abelian varieties.
\end{defn}

We summarise this array of subcategories in the following diagram:

\[\begin{CD}
&&&&\cM_0&\subset&{}^t\cM_0\\
&&&&\cap&&\cap\\
&&\SAb\ &\subset\ &\AbS\ &\subset\ &{}^t\AbS\\
&&\cap&&&&\cap\\
&&\cG_\sm&&\subset&&\cG^*_\sm\\
&&\cap&&&&\cap\\
\cG_\sab\ &\subset&\cG&&\subset&&\cG^*
\end{CD}\]


\subsection{Deligne 1-motives}\label{1.1} 

A \emph{Deligne $1$-motive} over $k$ is a complex
of $k$-group  schemes 
$$M= [ L \by{u} G]$$ 
where $L\in \cM_0$ is a lattice and $G\in \SAb$ is  semi-abelian.

As a complex, \emph{we shall place $L$ in degree $0$ and $G$ in degree $1$}.
Note that this convention is only partially shared by the existing
literature.

A map from $M = [ L \by{u} G]$ to $M'= [ L' \by{u'} G']$ is a
commutative square
\begin{equation}\label{csq}
\begin{CD}
 L @>u>>  G\\
@V{f}VV  @V{g}VV  \\
L'@>u'>> G'
\end{CD}
\end{equation}
in the category of group schemes. Denote by $(f, g):M\to M'$ such a
map. The natural composition of squares makes up the category
of Deligne's 1-motives. We shall denote this category by
$\M(k)$. We shall usually write $\M$ instead
of $\M(k)$, unless it is necessary to specify $k$. The following lemma is
immediate:

\begin{lemma}\label{lidco} $\M$ is an idempotent complete additive category.\qed
\end{lemma}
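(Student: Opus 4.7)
The plan is to verify the two assertions separately: additivity and idempotent completeness (Karoubianness).

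For \emph{additivity}, I would first note that the zero object is $[0\to 0]$, and that $\Hom_\M(M,M')$ is naturally an abelian group since a morphism $(f,g)$ consists of a pair in the abelian groups $\Hom(L,L')\times\Hom(G,G')$ satisfying a linear compatibility with $u,u'$; composition is bilinear in the evident way. For the existence of finite biproducts, given $M=[L\by{u} G]$ and $M'=[L'\by{u'}G']$, I would define $M\oplus M' := [L\oplus L' \xrightarrow{u\oplus u'} G\oplus G']$. Here $L\oplus L'$ is again a lattice (direct sum of locally constant torsion-free constructible sheaves) and $G\oplus G'$ is a semi-abelian scheme fitting in $0\to T\oplus T'\to G\oplus G'\to A\oplus A'\to 0$. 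The standard projections and inclusions make this a biproduct.

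For \emph{idempotent completeness}, I need to show that any idempotent $(f,g)\colon M\to M$ with $(f,g)^2=(f,g)$ splits. The strategy is to reduce to the corresponding statements for the categories of lattices and of semi-abelian $k$-schemes, and then check compatibility with $u$. For a lattice $L$ with an idempotent $f$, one has $L = f(L) \oplus (1-f)(L)$ since at the level of geometric fibres this is a splitting of a finitely generated free $\Z$-module with a (continuous) Galois action, and submodules of free $\Z$-modules are free. For a semi-abelian $k$-scheme $G$ with an idempotent $g$, I would set $G_1 = g(G)$, $G_2 = (1-g)(G)$ and check directly that the map $(g, 1-g)\colon G\to G_1 \oplus G_2$ is inverse to the sum map $G_1\oplus G_2 \to G$, using $g^2=g$ and commutativity of $G$. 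The subtlety here is that $G_1$ and $G_2$ are semi-abelian: the torus part of $G$ is preserved by $g$ (it is the maximal affine subgroup), so $g$ induces idempotents on $T$ and on the abelian quotient $A$, each of which splits, and the resulting decompositions assemble to the desired decomposition of $G$.

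Finally, given the splittings $L = L_1 \oplus L_2$ and $G = G_1 \oplus G_2$ coming from $f$ and $g$, the relation $g\circ u = u\circ f$ forces $u(L_i)\subseteq G_i$ for $i=1,2$, so $u$ decomposes as $u_1\oplus u_2$ with $u_i\colon L_i\to G_i$. Setting $M_i := [L_i \xrightarrow{u_i} G_i]$ exhibits $M \cong M_1\oplus M_2$ in $\M$ and realises $(f,g)$ as the projector onto $M_1$. The main (and only mildly delicate) point is the splitting of idempotent endomorphisms of semi-abelian varieties, where one has to be careful that $g(G)$ and $(1-g)(G)$ are indeed semi-abelian and not merely connected commutative group schemes; everything else is formal.
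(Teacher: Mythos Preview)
Your proof is correct. The paper treats this lemma as immediate and gives no argument at all (it is stated with a \qed{} immediately following), so your detailed verification goes well beyond what the paper provides. One minor remark: when you write that $T$ is ``the maximal affine subgroup'', it would be slightly more precise to say it is the maximal \emph{connected} affine subgroup (equivalently, the maximal torus); the key point, which you use correctly, is that $\Hom(T,A)=0$ forces any endomorphism of $G$ to preserve $T$.
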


In Proposition \ref{isoab} below, we recall that $\M$ becomes abelian if we tensor morphisms with $\Q$. For this we introduce the following categories:

\begin{defn}\label{Meff}  Let $\cM$ \index{$\cM$, $\cM_\sab$, $\M$} 
denote the category given by complexes of group schemes $[L\to G]$ where $L\in {}^t\cM_0$ is discrete and $G\in \cG$ is a commutative algebraic group. Morphisms are still commutative squares \eqref{csq} in the category of group schemes. Write $\cM_\sab=\{[L\to G] \mid G\in\cG_\sab\}$: the category $\cM_\sab$ contains $\M$ as a full subcategory.
\end{defn}

\begin{propose} \label{nca}
The category $\cM$ is abelian, and $\cM_\sab$ is a Serre subcategory of $\cM$ (hence is abelian). 
\end{propose} 

\begin{proof}    For a map  $\phi = (f,g):N\to N'$, consider $\ker (\phi) = [\ker (f)
\to \ker (g)]$ and $\coker (\phi) = [\coker (f) \to \coker (g)]$ where kernels and cokernels of $f$ and $g$ are taken respectively in the abelian categories  ${}^t\cM_0$  and $\cG$ (see Lemmas \ref{discab} and \ref{l1.1.4}). It is immediately checked that they verify the universal properties of kernel and cokernel in $\cM$. Finally, the canonical map from image to coimage is an isomorphism, since the obvious functor $\cM\to {}^t\cM_0\times \cG$ is conservative. We have shown that $\cM$ is abelian. The fact that $\cM_\sab$ is a Serre subcategory of $\cM$ follows trivially from Lemma \ref{l1.1.4}.
\end{proof}

\begin{defn}\label{1mot} Let $R$ be a commutative ring. For any additive
category
$\cA$, we denote by $\cA\otimes R$ \index{$\cA\otimes R$} the $R$-linear category obtained from $\cA$ by tensoring morphisms by $R$, and by $\cA\boxtimes R$
\index{$\cA\boxtimes R$} the
pseudo-abelian hull (idempotent completion) of $\cA\otimes R$.
\end{defn}

This distinction is useful as $\cA\otimes R$ may not be idempotent
complete even if $\cA$ is. We shall also use the following notation:

\begin{defn}\label{1p} Let $p$ be a prime number. If $A$ is an abelian group, we abbreviate $A\otimes \Z[1/p]$ into $A[1/p]$. Same convention for presheaves of abelian groups, complexes of abelian groups, etc. If $\cA$ is an additive category, we abbreviate $\cA\otimes \Z[1/p]$ into $\cA[1/p]$. 
\index{$\cA[1/p]$}
\end{defn}

\begin{propose}[\cf \protect{\cite[3.2.2]{OR}}] \label{isoab} The
inclusion $\M\to\cM_\sab$ induces an equivalence of categories 
\[e:\M\otimes\Q\iso\cM_\sab\otimes \Q.\]
In particular, the category $\M\otimes\Q$ is abelian, hence
$\M\otimes\Q=\M\boxtimes\Q$.
\end{propose}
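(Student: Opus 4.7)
The structure of the proof breaks into (i) essential surjectivity of $e$, (ii) abelianness of the target, and (iii) idempotent completeness. Full faithfulness of $e$ is automatic because $\M$ sits as a \emph{full} subcategory of $\anM^{\eff}$, so $\Hom_{\M}(M,M') = \Hom_{\anM^{\eff}}(M,M')$ for $M,M'\in\M$ and this equality persists after $\otimes\Q$.

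For essential surjectivity, I would take an arbitrary object $M = [L \xrightarrow{u} G]$ of $\anM^{\eff}$ and produce a Deligne $1$-motive isomorphic to it in $\anM^{\eff}\otimes\Q$ by two successive reductions. First, kill torsion in $L$: the subsheaf $L_{\tors}$ is finite, its image $u(L_{\tors})\subset G$ is a finite subgroup scheme, and the quotient morphism $(p,q):[L\to G]\to [L/L_{\tors}\to G/u(L_{\tors})]$ is componentwise a finite-kernel surjection (on $L$) and an isogeny (on $G$); after $\otimes\Q$ both $p$ and $q$ acquire two-sided inverses (split the surjection $L\to L/L_{\tors}$ and divide by the degree of the isogeny). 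Second, make the group part connected: set $L' := u^{-1}(G^0)\subseteq L$, which has finite index because $L/L'$ injects into the finite group $\pi_0(G)$; $u$ restricts to $u':L'\to G^0$, and the inclusion $(i,j):[L'\to G^0]\hookrightarrow [L\to G]$ is componentwise injective with finite cokernel, hence rationally invertible by the same multiplication-by-$n$ trick. Composing these two reductions yields the required isomorphism in $\anM^{\eff}\otimes\Q$ to an object of $\M$.

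For abelianness of $\M\otimes\Q$, I would transport via $e$ and argue instead for $\anM^{\eff}\otimes\Q$. Embed $\anM^{\eff}$ in the abelian category of $2$-term complexes of \'etale abelian sheaves on $Sm(k)$; the kernel $[\ker f\to\ker g]$ and cokernel $[\coker f\to\coker g]$ of any morphism $(f,g)$ between objects of $\anM^{\eff}$ lie in $\anM^{\eff}$ once $p$ is inverted: the discrete term is a subquotient of a finitely-generated locally constant sheaf, while for the algebraic-group term one uses that a closed subgroup (resp.\ quotient) of a commutative algebraic group with semi-abelian connected component retains this property — the reduced identity component of a subgroup of a semi-abelian variety is semi-abelian, and the component group of the quotient remains finite \'etale. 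Once $\M\otimes\Q$ is abelian it is in particular pseudo-abelian, whence $\M\otimes\Q=\M\boxtimes\Q$.

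The main technical worry is keeping subgroup and quotient manipulations clean in characteristic $p$: closed subgroup schemes of algebraic groups may fail to be smooth and images of group homomorphisms may fail to be closed without reducedness hypotheses. Perfectness of $k$ together with inversion of $p$ neutralise these issues, permitting free passage to reduced subgroup schemes and to honest cokernels in commutative algebraic groups; apart from this point the argument is entirely formal.
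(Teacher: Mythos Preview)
Your proof is correct and follows essentially the same approach as the paper's. The paper performs the same two reductions for essential surjectivity but in the opposite order (first pull back to $[L^0\to G^0]$ along the connected component, then push out to $[L^0_{\fr}\to G^0/u(L^0_{\tor})]$); your phrase ``composing these two reductions'' is slightly imprecise since the maps point in opposite directions, but applying them iteratively (in either order) works. For abelianness the paper simply cites its Proposition~\ref{nca}, whose proof is exactly your embedding-in-$2$-term-complexes argument, and your remark about needing $p$ inverted to control subgroup schemes is precisely the reason that proposition is stated for $\anM^{\eff}[1/p]$.
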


\begin{proof} (See also Lemma \ref{lB.1.3}.)  It is enough to show that $e$ is essentially
surjective. Let  $[L\by{u} G]\in\cM_\sab$. We then have a diagram
\[\begin{CD}
[L'\to G']@>>> [L'_{\fr}\to G'/u(L'_{\tor})]\\
@VVV\\
[L\to G]
\end{CD}\]
where $L'\df L\times_G G'$ and $L'_{\fr}\df L'/L'_{\tor}$,  $L'_{\tor}$ being the
torsion subgroup of $L'$ (note that $L'\into L$ is discrete). Both maps are isomorphisms in
$\cM_\sab\otimes\Q$. The last assertion follows from the fact that
$\cM_\sab\otimes\Q$ is abelian (see Proposition \ref{nca} and Corollary \ref{cB.1.3}).
\end{proof}

\begin{lemma}\label{lisodel} Let $\phi=(f,g):M = [ L \by{u}G] \to N= [K\by{v} H]$ be a morphism in $\M$. The following conditions are equivalent:
\begin{thlist}
\item $\phi$ becomes invertible in $\M\otimes\Q$.
\item $f$ and $g$ fit in a diagram
$$
\begin{CD}
&& 0 \\
&& @V{}VV \\
0&& F\\
@V{}VV @V{}VV  \\
L @>{u}>>  G\\
@V{f}VV @V{g}VV  \\
K @>{v}>> H\\
@V{}VV @V{}VV  \\
E&&0\\
@V{}VV&&\\
0&&
\end{CD}
$$
where $F$ and $E$ are finite.
\item There exists an integer $n>0$ such that $n1_M$ and $n1_N$ both factor through $\phi$.
\end{thlist}
Such a morphism is called an \emph{isogeny} (\cf Def. \ref{dA.2} c)).
\end{lemma}

\begin{proof} This is essentially a special case of Proposition \ref{pB.1.2} b):

 (i) $\Rightarrow$ (ii): if $\phi\otimes \Q$ is invertible, its kernel and cokernel are isomorphic to $0$ in $\M\otimes\Q$. As in the proof of Proposition \ref{isoab}, they are computed as $[\ker(f)\to\ker(g)]$ and $[\coker(f)\to\coker(g)]$ respectively. Thus $\ker(f),\ker(g),\coker(f)$ and $\coker(g)$
must be torsion in $\cG$, hence finite. This forces $\ker(f)=0$ and $\coker(g)=0$.

(ii) $\Rightarrow$ (iii): take for $n$ some integer killing both $E$ and $F$.

(iii)$\Rightarrow$ (i): if $\phi\otimes\Q$ is left and right invertible, it is invertible.
\end{proof}

\begin{remark} The category $\M$ has kernels and cokernels (see Proposition~\ref{lim})
but is not abelian. This easily follows  from the diagram in Lemma \ref{lisodel}: $(f,g)$
 has vanishing kernel and cokernel but is not an isomorphism in
$\M$.
\end{remark}

\subsection{Weights and cohomological dimension} \label{s.weights} Recall that $M =[L\to G]
\in \M$ has an increasing filtration by sub-$1$-motives as follows:\index{$W_i(M)$}
\[W_i(M) =\begin {cases} M & i\geq 0\\ G[-1]
& i= -1\\ T[-1] & i= -2\\ 0 & i \leq -3 \end{cases} \]
where $G$ is an extension of an abelian variety $A$ by a torus $T$. 
We then have $\gr_{-2}^W(M) = T[-1]$, $\gr_{-1}^W(M) = A[-1]$ and
$\gr_{0}^W(M) = L$ (according to our convention of placing $L$ in
degree zero). We say that $M$ is \emph{pure of weight $i$} if
$\gr_j^WM=0$ for all $j\ne i$. Note that for two  pure 1-motives
$M, M'$, $\Hom (M, M')\neq 0$ only if they have the
same weight.

\begin{propose}[\protect{\cite[3.2.4]{OR}}] \label{iso1} The
category $\M\otimes\Q$ is of cohomological dimension $\leq 1$, \ie if
$\Ext^i (M,M') \neq 0$, for $M, M'\in \M\otimes\Q$, then $i =0$ or $1$.
\end{propose}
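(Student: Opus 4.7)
The plan is to reduce the vanishing of $\Ext^i$ for $i\ge 2$ to the case of pure $1$-motives via the weight filtration, and then settle the pure case by direct computation.

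Any $M \in \M\otimes\Q$ sits in short exact sequences
$$0 \to W_{-1}M \to M \to \gr_0^W M \to 0, \qquad 0 \to W_{-2}M \to W_{-1}M \to \gr_{-1}^W M \to 0$$
in $\M\otimes\Q$, and similarly for the second argument. The associated long exact sequences of Ext, applied iteratively, reduce the vanishing of $\Ext^i(M,N)$ for $i \ge 2$ to the case where both $M$ and $N$ are pure, i.e., each is either a lattice $L = [L\to 0]$, an abelian variety $A[-1]=[0\to A]$, or a torus $T[-1]=[0\to T]$.

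For two pure objects of the same weight, the ambient subcategory is semisimple: lattices up to isogeny are continuous finite-dimensional $\Q$-representations of $\mathrm{Gal}(\bar k/k)$, tori up to isogeny are dual to these, and abelian varieties up to isogeny are semisimple by Poincar\'e's complete reducibility theorem. Moreover, any short exact sequence in $\M\otimes\Q$ between two pure objects of the same weight stays in the corresponding pure subcategory (for instance, an extension of two lattices as $1$-motives has vanishing semi-abelian part, as one sees from the weight graded pieces), so all positive Exts vanish. For pure objects of different weights, one computes $\Ext^1([L\to 0],[0\to G]) = \Hom(L,G)\otimes\Q$, parametrised by the defining map $L\to G$ of a $1$-motive, and $\Ext^1([0\to A],[0\to T]) = \Ext^1(A,T)\otimes\Q$ in commutative group schemes, while Exts in the ``wrong'' direction, from higher to lower weight, vanish by weight reasons.

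The main obstacle is showing $\Ext^2(X,Y) = 0$ for pure $X,Y$ of different weights. Given a $2$-extension $0 \to Y \to M_1 \to M_0 \to X \to 0$, I would set $K = \ker(M_0 \to X)$, decompose it as a Yoneda splicing of two $1$-extensions through $K$, and modify the intermediate data to make the splicing trivial, exploiting the weight structure of $M_0, M_1$ and the semisimplicity of the pure subcategories. A cleaner alternative, essentially the route of \cite{OR}, is to realise $\M\otimes\Q$ as the heart of a natural $t$-structure on a triangulated category of tractable cohomological dimension (for instance $D^b(\anM^{\eff}\otimes\Q)$ via Proposition~\ref{isoab}, in which every $1$-motive admits a manifest two-term resolution by its degree-$0$ and degree-$1$ parts), from which the bound $\Ext^i = 0$ for $i \ge 2$ reads off directly from the length of the resolutions.
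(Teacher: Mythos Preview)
Your reduction via the weight filtration to pure objects is correct and parallels the paper's approach, as do your computations of $\Ext^0$ and $\Ext^1$ between pure pieces (modulo a slip: the vanishing direction for $\Ext^1$ is from \emph{lower} to higher weight, i.e.\ $\Ext^1(M,M')=0$ when $\mathrm{wt}(M)\le\mathrm{wt}(M')$). Up to this point you have reproduced the paper's d\'evissage, which brings everything down to one case: $\Ext^2([L\to 0],[0\to T])=0$ for $L$ a lattice and $T$ a torus, equivalently the Yoneda product
\[
\Ext^1([L\to 0],[0\to A])\times\Ext^1([0\to A],[0\to T])\longrightarrow \Ext^2([L\to 0],[0\to T])
\]
vanishes for every abelian variety $A$.

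This is exactly where your proposal breaks down. Your approach (a) --- ``modify the intermediate data to make the splicing trivial, exploiting the weight structure and semisimplicity'' --- is a restatement of the goal, not a mechanism: semisimplicity of the pure pieces was already used in the d\'evissage and gives nothing further here. Your approach (b) is circular and based on a misunderstanding. The short exact sequence $0\to[0\to G]\to M\to[L\to 0]\to 0$ is not a projective resolution (neither term is projective in $\M\otimes\Q$; cf.\ Proposition~\ref{proj}), so it gives no bound on Ext degrees. And realising $\M\otimes\Q$ as a heart inside $D^b(\anM^{\eff}\otimes\Q)$ is tautological by Proposition~\ref{isoab}: you are computing Ext in the same category you started with. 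This is also not the route of \cite{OR}.

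The actual argument, which you are missing, is specific to $1$-motives: after a transfer reduction to $k=\bar k$, the two classes $e_1\in\Ext^1([L\to 0],[0\to A])$ and $e_2\in\Ext^1([0\to A],[0\to T])$ are respectively a map $L\to A$ and a semi-abelian variety $G$ extending $A$ by $T$. Since $H^1(k,\Hom(L,T))=0$, the map $L\to A$ lifts to $L\to G$, so $e_1,e_2$ glue into a single $1$-motive $[L\to G]$ carrying the full $3$-step weight filtration. It is then a formal fact (\cite[IX, Prop.~9.3.8~c)]{sga7}) that whenever two successive extension classes arise from a $3$-step filtration on a single object, their Yoneda product in $\Ext^2$ is zero. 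The point is that a $1$-motive is a length-$1$ complex, so its weight filtration always provides such an object; nothing of this sort follows from generic category-theoretic considerations.
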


Recall a sketch of the proof in \cite{OR}.  One first checks that
$\Ext^1(M,M')\allowbreak=0$ if $M,M'$ are pure of weights $i,i'$ and $i\le i'$. This
formally reduces the issue to checking that if $M,M',M''$ are pure
respectively of weights $0,-1,-2$, then the Yoneda product of two classes
$(e_1,e_2)\in \Ext^1(M,M')\times \Ext^1(M',M'')$ is $0$. Of course we may
assume $e_1$ and $e_2$ integral. By a transfer argument, one may further
reduce to $k$ algebraically closed. Then the point is that $e_1$ and
$e_2$ ``glue" into a $1$-motive, so are induced by a 3 step filtration on
a complex of length $1$; after that, it is formal to deduce that $e_2\cdot
e_1=0$ (\cf \cite[IX, Prop. 9.3.8 c)]{sga7}).

\begin{remark} We observe that Proposition \ref{iso1} can be regarded as
an algebraic version of a well-known property of $\M (\C)\otimes \Q$.
Namely, $\M (\C)\otimes \Q$ can be realised as a  Serre abelian
sub-category of $\Q$-mixed Hodge structures, see \cite{D}. Since the
latter is of cohomological dimension $\le 1$, so is $\M(\C)\otimes\Q$
(use \cite[Ch. III, Th. 9.1]{mcl}).
\end{remark}

\subsection{Group schemes and sheaves with transfers}\label{ur}

\begin{defn} We denote as in \cite[Def. 2.1]{VL} by $\PST(k) =\PST$ \index{$\PST$} the abelian category of presheaves
with transfers on smooth $k$-varieties: these are additive contravariant functors on the category $\Cor_k$ \index{$\Cor_k$} of finite correspondences from \loccit These categories were denoted by $SmCor(k)$ and  $PreShv(SmCor(k))$ in \cite{V}.  We denote by $\EST(k)$, or simply $\EST\subset \PST$ (\resp $\NST$), the 
full abelian subcategory of \'etale (\resp Nisnevich) sheaves with transfers: it was denoted by $Shv_{et}(SmCor(k))$ (\resp $Shv_{Nis}(SmCor(k))$) in \cite{V}. \index{$\EST$} \index{$\NST$}

We shall also denote by $\ES=\Shv_\et(\Sm(k))$ \index{$\ES$} the category of abelian \'etale sheaves on $\Sm(k)$.
\end{defn}

We write $F\mapsto F_\Nis$ or $F\mapsto F_\et$ for the ``associated sheaf'' functors: they preserve the transfer structure by \cite[Th. 6.17 and 13.1]{VL}.
\index{$F_\et$, $F_\Nis$}

We keep the old notation $L(X)$ from \cite{V} for the presheaf with transfers represented by a $k$-scheme $X$: it was replaced by $\Z_\tr(X)$ in \cite{VL}. 
\index{$L(X)$} This is actually an \'etale sheaf by \cite[Lemma 6.2]{VL}.

\begin{defn}\label{uG} Let $G\in \cG^*$. We denote by $\uG\in \ES$ the corresponding \'etale sheaf
of abelian groups on $\Sm(k)$. \index{$\uG$} 
\end{defn}

As is well-known, the restriction of this functor to ${}^t\cM_0$ induces a equivalence of categories between ${}^t\cM_0$ and the category of locally constant $\Z$-cons\-truct\-ible \'etale sheaves over $\Sm(k)$ (\ie with
finitely generated geometric fibres), and also with the category of finitely generated $Gal(k_s/k)$-modules. More generally:

\begin{lemma}\label{l1.4} The restriction of $G\mapsto \uG$ to $\cG^*_\sm$ is fully faithful.
\end{lemma}

\begin{proof} The presheaf defined by $G$ on $\Sm(k)$ is an \'etale sheaf since $G$ is smooth; the claim then follows from a small extension of Yoneda's lemma (to allow for smooth group schemes locally of finite type).
\end{proof}

In fact, under a minor assumption, $\uG$ is an
\'etale sheaf with transfers, as explained by Spie\ss-Szamuely
\cite[Proof of Lemma 3.2]{spsz}, \cf also Orgogozo \cite[3.1.2]{OR}.
Both references use symmetric powers, hence deal only with smooth
quasi-projective varieties. Here is a cheap way to extend their
construction to arbitrary smooth varieties.

\begin{lemma} \label{l1.3} For $G\in \cG^*$, the \'etale sheaf $\uG$ is provided with a canonical structure of presheaf with transfers.
Moreover, if $G'$ is a semi-abelian variety, then $\uG$ is homotopy invariant (see Definition \ref{dD.1} below).
\end{lemma}

\begin{proof} Without loss of generality, we may assume $G$ reduced (note that $\underline{G_\red}=\uG$); by \cite[Cor. 1.2]{conrad}, the connected component $G'=G^0$ is
quasi-pro\-jec\-tive.  For two smooth $k$-varieties $X,Y$, we have to provide a
pairing
\[c(X,Y)\otimes \uG(X)\to \uG(Y)\]
with the obvious compatibilities. As in \cite[Ex. 2.4]{VL}, it is enough
to construct a good transfer $f_*:\uG(W)\to \uG(X)$ for any finite
surjective map $f:W\to X$ with $X$ a normal $k$-variety. For $X$ and $W$
quasi-projective, this is done in \cite{spsz} or \cite{OR} using symmetric powers\footnote{Note that the symmetric powers
of $G$ exist as schemes since $G^0$ is quasi-projective.}. In general, cover $X$
by affine opens $U_i$ and let $V_i=f^{-1}(U_i)$. Since $f$ is finite, $V_i$ is also affine,
hence transfers $\uG(V_i)\to \uG(U_i)$ and
$\uG(V_i\cap V_j)\to \uG(U_i\cap U_j)$ are defined; the commutative
diagram
\[\begin{CD}
0\to \uG(W)@>>> \prod \uG(V_i)@>>> \prod \uG(V_i\cap V_j)\\
&&@V{f_*}VV @V{f_*}VV\\
0\to \uG(X)@>>> \prod \uG(U_i)@>>> \prod \uG(U_i\cap U_j)
\end{CD}\]
uniquely defines the desired $f_*$.

The second statement of the lemma is well-known (\eg  \cite[3.3.1]{OR}).
\end{proof}

Actually, the proof of \cite[Lemma 3.2]{spsz} defines a homomorphism of \'etale sheaves with transfers 
\begin{equation}\label{sigma}
\sigma_G:L (G)\to \uG
\end{equation}
using the sum maps on symmetric powers of $G$. It is split by the obvious morphism of sheaves
\[\gamma_G:\uG\to L (G)\]
given by the graph of a morphism.  Therefore $\sigma_G$ is an epimorphism of
sheaves. One should be careful, however, that $\gamma_G$ is not additive; note, however, that both $\sigma_G$ and $\gamma_G$ are natural in $G$.

We shall need a little more. For $X\in \Sm(k)$, write 
\[\tilde L(X)=\ker(L(X)\by{p_X}\Z)\] 
where $p_X$ is induced by the structural morphism.

\begin{lemma}\label{lBG} Let $m,p_1,p_2:G\times G\to G$ denote respectively the multiplication and the  first and second projection. Then the sequences
\[\begin{CD}
L(G\times G)@>p_1+p_2-m>>L(G)@>\sigma_G>> \uG@>>> 0\\
\tilde L(G\times G)@>p_1+p_2-m>>\tilde L(G)@>\tilde\sigma_G>> \uG@>>> 0
\end{CD}\]
are exact in $\PST$, hence in $\EST$.
\end{lemma}

\begin{proof} We shall give a completely formal and general argument. The second exactness reduces to the first one thanks to the commutative diagram
\[\begin{CD}
L(G\times G)@>p_1+p_2-m>>L(G)\\
@V{p_{G\times G}}VV @Vp_GVV\\
\Z@= \Z.
\end{CD}\]

The first sequence is a complex by naturality of $G\mapsto \sigma_G$. Write $\bar L(G)$ for the cokernel of $p_1+p_2-m$: the induced morphism $\bar \gamma_G:\uG\to \bar L(G)$ is still a section of $\bar \sigma_G:\bar L(G)\to \uG$; moreover $\bar\gamma_G$ is now additive, by naturality. It remains to prove that $\bar \gamma_G\bar\sigma_G$ is the identity. Using the additivity of $\bar \gamma_G$, a Yoneda argument in the category $\Cor_k$ reduces us  to test it on the universal section $1_G\in \bar L(G)(G)$, for which it is obvious.
\end{proof}

\begin{remark} This is a variation``with transfers'' on \cite[Exp. VII, 3.5]{sga7}.
\end{remark}

\subsection{Representable sheaves and exactness}

Recall that the ``representable sheaf'' functor $\cG\to \Shv_{fppf}(\Sch(k))$ is exact \cite[Exp. 6A, Th. 3.3.2 (i)]{sga3}. We shall see that this remains as true as it can when working with the \'etale topology. 

More precisely, consider the functor induced by Lemma \ref{l1.3}
\begin{align*}
\rho:\cG &\to \EST\\
G&\mapsto\uG[1/p]
\end{align*}
where $p$ is the exponential characteristic of $k$. The aim of this subsection is to prove the following

\begin{thm}\label{greta} The functor $\rho$ is exact.
\end{thm}

Inverting $p$ cannot be avoided here: consider multiplication by $p$ on $\G_m$.

We start with two lemmas.

\begin{lemma}\label{l1.4.2} Let $0\to F\to G\longby{f} H\to 0$ be an exact sequence in $\cG$, with $F$ smooth. Then the sequence in $\EST$
\[0\to \uF\to \uG\to \uH\to 0\]
is exact.
\end{lemma}

\begin{proof} It suffices to prove the exactness in the category $\ES$ of \'etale sheaves on $\Sm(k)$. The smoothness of $F$ implies the smoothness of $f$; then it is classical that if $R$ is a strictly henselian local ring, $G(R)\to H(R)$ is surjective  (\cf \cite[Th. 18.5.17]{ega4}).
\end{proof}

\begin{lemma}\label{l1.4.3} Let $0\to F\to G\to H\to 0$ be an exact sequence in $\cG$, with $F$ infinitesimal. Then $\uF=0$ and the quotient sheaf $\uH/\uG$ is killed by some power of $p$.
\end{lemma}

\begin{proof} The first claim is obvious (recall that we evaluate sheaves only on smooth schemes). For the second one, let $n$ be such that  $p^nF=0$. There exists a morphism $\phi$ in $\cG$ such that the diagram
\[\xymatrix{
G\ar[d]_{p^n}\ar[r] & H\ar[d]^{p^n}\ar[dl]_\phi\\
G\ar[r] & H
}\]
commutes. This implies that multiplication by  $p^n$ is $0$ on $\uH/\uG$.
\end{proof}

\begin{proof}[Proof of Theorem \ref{greta}] Let $0\to F\to G\to H\to 0$ be an exact sequence in $\cG$. Write $F^\dag=F/F_\red$ and $G^\dag=G/F_\red$, so that we have a commutative diagram of exact sequences in $\cG$
\[\begin{CD}
&& 0 && 0\\
&& @VVV @VVV\\
&& F_\red @= F_\red\\
&& @VVV @VVV\\
0@>>> F@>>> G@>>> H@>>> 0\\
&& @VVV @VVV ||\\
0@>>> F^\dag@>>> G^\dag@>>> H@>>> 0\\
&& @VVV @VVV\\
&& 0 && 0 
\end{CD}\]

Note that $F_\red$ is smooth and $F^\dag$ is infinitesimal. Sheafifying with transfers and applying Lemma \ref{l1.4.2}, we get a commutative diagram of exact sequences in $\EST$:
\[\begin{CD}
\uG@>>> \uH@>>> \uH/\uG@>>> 0\\
@VVV  ||&& @VfVV\\
 \underline{G^\dag}@>>> \uH@>>>\uH/\underline{G^\dag}@>>> 0\\
@VVV \\
0 
\end{CD}\]
which shows that $f$ is an isomorphism. By Lemma \ref{l1.4.3}, $\uH/\underline{G^\dag}$ is killed by $p^n$ for some integer $n$, which concludes the proof.
\end{proof}

\subsection{Local Exts and global Exts}\label{1.5} The Ext groups of the abelian categories $\PST, \NST$ and $\EST$ have the following properties. We start with a basic result of Voevodsky:

\begin{propose}[\protect{\cite[Lemma 6.23]{VL}}]\label{exth} For any $X\in \Sm(k)$ and any $F\in \NST$ (\resp $\EST$), there are canonical isomorphisms
\[H^i_\Nis(X,F) \simeq \Ext^i_{\NST}(L(X),F) \quad { (\resp H^i_\et(X,F) \simeq \Ext^i_{\EST}(L(X),F)).}\]
\end{propose}

Recall now that $\PST$ has a canonical tensor structure extending the one of $\Cor$, see \cite[\S 3.2]{V} or \cite[Def. 8.2]{VL}. This tensor product has a right adjoint $\shom_\tr$ given  by the formula in  \loccit 
\[\shom_\tr(F,G)(X)=\Hom(F\otimes L(X),G)\]
for  $F,G\in \PST$, see \cite[Lemma 8.3]{VL}. If $G\in \NST$ (\resp $\EST$), so does $\shom_\tr(F,G)$: this is the internal Hom of $ \NST$ (\resp $\EST$) as in \cite[Def. 8.2 and Lemma 8.3]{VL}.\index{$\shom_\tr$}

The categories $\NST$ and $\EST$ have enough injectives  \cite[Lemma 3.1.7]{V} and  \cite[Prop. 6.19]{VL}, which yields derived functors of $\shom_\tr$ in these categories. We shall only consider the case of $\EST$ and write $\sext^i_\tr$ for these derived functors (the case of $\NST$ is identical).

\begin{propose}\label{localglobal} Let $F,G\in \EST$ and $i\ge 0$. Then the \'etale sheaf with transfers $E^i(F,G)$ associated to the presheaf with transfers
\[X\mapsto \Ext^i_\EST(F\otimes L(X),G)\]
is canonically isomorphic to $\sext^i_\tr(F,G)$.
\end{propose}

\begin{proof} The claim is clear for $i=0$.  Since $G\mapsto E^*(F,G)$ is a $\delta$-functor, it suffices to observe that $E^i(F,I)\allowbreak =0$ for $i>0$ if $I$ is injective in $\EST$. 
\end{proof}

\subsection{Homotopy invariance and strict homotopy invariance}\label{rv} 

\begin{defn}[\protect{\cite[Def. 2.15 and 9.22]{VL}}]\label{dD.1} a) An object $F$ of $\PST$, $\NST$ or $\EST$ is \emph{homotopy invariant} if
$F(X)\iso F(X\times \Aff^1)$ for any smooth $k$-variety $X$.\\
b) Let $F\in \NST$. Then $F$ is \emph{strictly homotopy invariant} if $H^i_\Nis(X,F)\allowbreak\iso
H^i_\Nis(X\times \Aff^1,F)$ for any smooth $k$-variety $X$ and any $i\ge 0$.\\
c) Similarly, let $F\in \EST$. Then $F$ is \emph{strictly homotopy invariant} if $H^i_\et(X,F)\iso
H^i_\et(X\times \Aff^1,F)$ for any smooth $k$-variety $X$ and any $i\ge 0$.

We write $\HI_\Nis$ (\resp $\HI_\et$) for the full subcategory of $\NST$ (\resp $\EST$) formed of homotopy invariant sheaves. 
\end{defn}

One of the main results of Voevodsky concerning presheaves with transfers is: 

\begin{thm}[\protect{\cite[Th. 3.1.12]{V}, \cite[Th. 24.1]{VL}}]\label{ves} Over a perfect field, any object of $\HI_\Nis$ is strictly homotopy invariant.
\end{thm}

 In the \'etale topology, the same turns out to be true in characteristic $0$, but the situation is slightly different in positive characteristic.

\begin{defn}\label{his} We denote  by $\HI_\et^s(k)=\HI_\et^s$ the full subcategory of $\HI_\et$ consisting
of strictly homotopy invariant sheaves.\index{$\HI_\et^s$, $\HI_\Nis$}
\end{defn}

The main example of a sheaf $F$ which is in $\HI_\et$ but not in $\HI_\et^s$ is $F=\Z/p$ in
characteristic $p$: because of the Artin-Schreier exact sequence we have
\[k[t]/\cP(k[t])\iso H^1_\et(\Aff^1_k,\Z/p)\]
where $\cP(x) = x^p-x$.

We are going to show that this captures entirely the obstruction for a sheaf in $\HI_\et$ not
to be in $\HI_\et^s$  (\cf \cite{V} and \cite{VL}).

\begin{lemma}\label{l1.3.4} $\HI_\et^s$ is a thick $\Z[1/p]$-linear subcategory of $\EST$ (see Remark \ref{Serrethick}). In particular, it is abelian.
\end{lemma}

\begin{proof} The thickness follows immediately from the 5 lemma. For the second statement, it suffices to show that if $F\in \HI_\et^s$ verifies $pF=0$, then $F=0$. From the Artin-Schreier exact sequence, we get an exact sequence in $\EST$:
\begin{multline*}
0\to \shom_\tr(\G_a,F)\longby{\cP^*}\shom_\tr(\G_a,F)\to F\\
\to \sext_\tr^1(\G_a,F)\longby{\cP^*}\sext_\tr^1(\G_a,F)\to 0
\end{multline*}
where $\shom_\tr$ and $\sext^1_\tr$  are as in \S \ref{1.5} above. So it is sufficient to show that $\shom_\tr(\G_a,F)=\sext_\tr^1(\G_a,F)=0$.

Let $M$ be the kernel of the morphism $\tilde\sigma_{\G_a}$ from Lemma \ref{lBG}. By this lemma, we have an exact sequence
\begin{multline*}
0\to \shom_\tr(\G_a,F)\to \shom_\tr(\tilde L(\G_a),F)\to \shom_\tr(M,F)\\
\to \sext_\tr^1(\G_a,F)\to \sext_\tr^1(\tilde L(\G_a),F)
\end{multline*}
and a monomorphism
\[\shom_\tr(M,F)\into \shom_\tr(\tilde L(\G_a\times \G_a),F).\]

Since $F\in\HI_\et^s$, $\shom_\tr(\tilde L(\G_a),F)=\shom_\tr(\tilde L(\G_a\times \G_a),F)=\sext_\tr^1(\tilde L(\G_a),F)=0$ using Propositions \ref{exth} and \ref{localglobal}, and we are done.
\end{proof}

In fact:

\begin{propose}[\cf \protect{\cite[Th. 13.8]{VL}}]\label{lD.1.3} A sheaf $F\in\HI_\et$ belongs to $\HI_\et^s$ if and only if it is a sheaf of $\Z[1/p]$-modules, where $p$ is the exponential
characteristic of $k$.
\end{propose}

\begin{proof} Necessity was proven in Lemma \ref{l1.3.4}. For the converse, the following method is classical: let $F\in\HI_\et$. Using the exact sequence
\[0\to F_\tors\to F\to F\otimes\Q\to F\otimes\Q/\Z\to 0\]
and the fact that $\HI_\et^s$ is thick in $\EST$ (Lemma \ref{l1.3.4}), we are reduced to the following cases:
\begin{itemize}
\item $F$ is a sheaf of $\Q$-vector spaces. Then the result is true by \cite[Lemma
14.25]{VL} (reduction to \cite[Th. 13.8]{VL} by the comparison theorem \cite[Prop. 14.23]{VL}).
\item $F$ is a sheaf of torsion abelian groups. Since, by assumption, this torsion is prime
to $p$, $F$ is locally constant by Suslin-Voevodsky rigidity \cite[Th. 7.20]{VL}. Then the
result follows from \cite[XV 2.2]{sga4} (compare \cite[Lemma 9.23]{VL}).
\end{itemize}
\end{proof}

\begin{remark}\label{r1.3.6} In positive characteristic, it is not clear  whether $\HI_\et$ is thick in $\EST$, and actually whether it is abelian. The inclusion $\Z\subset\Z[1/p]$ shows that $\HI_\et^s$ is not a Serre subcategory of $\EST$ (see Remark \ref{Serrethick}). Also, Proposition \ref{lD.1.3} shows that the inclusion functor $\HI_\et^s\into \HI_\et$ has a left adjoint, given by $F\mapsto F[1/p]$ (see Definition \ref{1p}).
\end{remark}

\subsection{\'Etale motivic complexes}\label{ts} Let $D^-(\NST)$ be the bounded above derived category of $\NST$. Recall from \cite{V} the full subcategory
\begin{equation}\label{dmnis}
\DM_{-}^\eff(k)=\DM_{-}^\eff\subset D^-(\NST)
\end{equation}
of complexes with  homotopy invariant cohomology sheaves. Theorem \ref{ves} implies that $\DM_{-}^\eff$ is a triangulated subcategory of $D^-(\NST)$, and that the canonical $t$-structure on the latter induces a $t$-structure on $\DM_{-}^\eff$ with heart $\HI_\Nis$, called the homotopy $t$-structure. Voevodsky also constructs a left adjoint to the inclusion in \eqref{dmnis}, with an explicit formula.

Let now $D^-(\EST)$ be the bounded above derived category of $\EST$. In \cite[\S 3.3]{V}, Voevodsky considered the full subcategory of $D^-(\EST)$ given by complexes with homotopy invariant cohomology sheaves, and said that it is a triangulated subcategory. This is not clear in positive characteristic  (see Remark \ref{r1.3.6}); in order to have a smoothly working theory, we have to slightly change this definition.  (See also \cite[Lect. 9]{VL}.)

\begin{defn}\label{dmet}   We write $\DM_{-,\et}^\eff(k)=\DM_{-,\et}^\eff\subset D^-(\EST)$ for the full subcategory of complexes with \emph{strictly} homotopy invariant cohomology sheaves. \index{$\DM_{-}^\eff$, $\DM_{-,\et}^\eff$}
\end{defn}

With this definition, the following fact becomes trivial:

\begin{propose}\label{cD.2} The category $\DM_{-,\et}^\eff$ is $\Z[1/p]$-linear triangulated subcategory of $D^-(\EST)$. The canonical $t$-structure of $D^-(\EST)$ induces a $t$-structure on 
$\DM_{-,\et}^\eff$, with heart $\HI_\et^s$.\qed
\end{propose}

\begin{defn}\label{d1.7} We call the above $t$-structure  the \emph{homotopy $t$-struct\-ure}.  For $C\in \DM_{-,\et}^\eff$ we denote by $\sH^n(C)\allowbreak\in \HI_\et^s$ the cohomology objects  relative to the homotopy $t$-structure. \index{$\sH^n$, $\sH_n$}
\end{defn}

We now follow Voevodsky's construction in \cite{V} of a left adjoint
\[RC:D^-(\NST)\to \DM_-^\eff\index{$RC$, $RC_\et$}\]
 to the inclusion $\DM_-^\eff\subset D^-(\NST)$, working with the \'etale topology. Only minimal changes are needed. 

Recall the Suslin complex $C_*(F)$ \index{$C_*(F)$}
 associated to a presheaf $F\in\PST$, whose $n$-th term is given by
 \[C_n(F)(X)=F(X\times\Delta^n), \quad \Delta^n =\Spec \left(k[t_0,\dots,t_n]/\textstyle\sum t_i=1\right).\]

If $F\in \NST$ (\resp $\EST$), so does $C_n(F)$.

This can be extended to complexes of sheaves by taking total complexes, as in the following theorem:

\begin{thm}\label{pD.2} The inclusion functor $\DM_{-,\et}^\eff(k)\into D^-(\EST)$ has a left adjoint 
\[RC_\et:D^-(\EST)\to \DM_{-,\et}^\eff\index{$RC$, $RC_\et$}\]
which is given by the formula 
\[RC_\et(K)=  C_*(K)[1/p]\quad \text{ (see Definition \ref{1p})}\]
for $K\in D^-(\EST)$.  Here $C_*(K)$ is the total complex associated to the double complex $C_p(K_q)$.
\end{thm}

\begin{proof} We follow  the template of \cite[Proof of Prop. 3.2.3]{V}, where Voevodsky does this for the Nisnevich topology: let $\cA$ be the localising subcategory of $D^-(\EST)$ generated by complexes of the form $L(X\times \Aff^1)\to L(X)$ for $X\in\Sm(k)$. It is sufficient to prove:
\begin{enumerate}
\item For any  $F\in\EST$, the canonical morphism $F\to C_*(F)$ is an isomorphism in $D^-(\EST)/\cA$.
\item For any object $T\in \DM_{-,\et}^\eff$ and any object $B\in \cA$, one has $\Hom(B,T)=0$.
\item For any $F\in\EST$, $C_*(F)[1/p]\in \DM_{-,\et}^\eff$.
\end{enumerate}

(Concerning (1) and (3), see \cite[Lemma 9.12]{VL} for the reduction from complexes to sheaves.) 

The proof of (1) is identical to that in \cite{V}. For (2), Voevodsky reduces to the case where $B$ is of the form $L(X\times \Aff^1)\to L(X)$ and uses a hypercohomology spectral sequence argument. To avoid the issue raised by infinite cohomological dimension when working with the \'etale topology, we modify this as follows (\cf \cite[C.5]{Azumaya}): one needs to show that the morphism
\[T\to R\pi_*\pi^*T\]
is an isomorphism in $D^-(\EST)$, where the morphism of \'etale sites $\pi:\Sm(\Aff^1)_\et\to \Sm(k)_\et$ is induced by the structural morphism of $\Aff^1$. Since this statement is local for the \'etale topology, we may reduce to $k$ separably closed, and then the issue disappears.

For (3), the homology presheaves $h_i(F)$ of $C_*(F)$ are homotopy invariant by \cite[Prop. 3.6]{Vp}. By \cite[Cor. 5.29]{Vp} (see also \cite[Prop. 1.1.2 (3)]{ABV})  and Remark \ref{r1.3.6}, the \'etale sheaves with transfers $h_i(F)_\et[1/p]=h_i(F[1/p])_\et$ are then in $\HI_\et^s$.
\end{proof}

\begin{cor}\label{cD.1} Let $\alpha^s$ denote the
composition
\[\DM_-^\eff\into
D^-(\NST)\longby{\alpha^*}D^-(\EST)\longby{RC_\et}\DM_{-,\et}^\eff\] 
where $\alpha^*$ is induced by the inverse image functor $F\mapsto F_\et$ (change of topology) on sheaves. Then,  $\alpha^s F\simeq  F_\et[1/p]$ for any $F\in \HI_\Nis$.
\end{cor}
\index{$\alpha^s$, $\alpha^*$}
\begin{proof} Indeed,
\[\alpha^s F = RC_\et\alpha^*F \simeq C_*(\alpha^*F)[1/p] = C_*(\alpha^*F[1/p])\osi \alpha^*F[1/p]\]
 by Theorem \ref{pD.2} and its proof.
\end{proof}

Let $X$ be a $k$-scheme. Recall that its \emph{motive} in $\DM_-^\eff$ is defined by
\[M(X)=RCL(X).\]

Similarly:

\begin{defn}\label{met}  We write
\begin{align*}
M_\et(X)&=RC_\et L(X)\in \DM_{-,\et}^\eff.\index{$M(X)$, $M_\et (X)$}\\
\Z_\et(0)&=M_\et(\Spec k) =\Z[1/p]\\
\Z_\et(1)&=RC \tilde L(\P^1)[-2].\index{$\Z (n)$, $\Z_\et(n)$}
\end{align*}
\end{defn}

\begin{lemma}\label{Niset} We have $M_\et(X)= \alpha^s M(X)$ and $\Z_\et(1)\simeq \G_m[1/p][-1]$.
\end{lemma}

\begin{proof} This follows immediately from Corollary \ref{cD.1} and the analogous result for $\Z (1)$ in $\DM_-^\eff$ (\cite[Cor. 3.4.2]{V}, \cite[Th. 4.1]{VL}).
\end{proof}

(There are also objects $\Z_\et(n)=\alpha^s\Z(n)$ for $n>1$; we won't use them in this book, see \cite[Lect. 3]{VL}.)\\

Let us come back to a commutative group scheme $G$, as in \S \ref{ur}. As in
\cite[Remark 3.3]{spsz}, when $\uG$ is homotopy invariant, the homomorphism $\sigma_G$ of \eqref{sigma} extends to a morphism  in $C^-(\EST)$
\[
C_*(L (G))\to \uG
\]
whence a morphism in $\DM_{-,\et}^\eff$
\begin{equation}\label{eq1.4}
M_\et (G)\to \uG[1/p]
\end{equation}
by using   Theorem \ref{pD.2}.

\subsection{$1$-motives with torsion and an exact structure on $\M[1/p]$}\label{1.2}
Let ${\HI_\et^s}^{[0,1]}$ be the category
of complexes of length $1$ (concentrated in
degrees $0$ and $1$) of objects of $\HI_\et^s$.
From Proposition \ref{lD.1.3} we get a functor
\begin{align}
\rho:\cG^*_{\rm hi}[1/p]&\to \HI_\et^s\label{rho}\\
G&\mapsto\underline{G}[1/p]\notag
\end{align}
where $\cG^*_{\rm hi} \df \{G \in \cG^* \mid \uG \in \HI_\et\}$. From Lemma \ref{l1.3} the category $\cG^*_{\rm hi}$ contains
$\cG_\sab$ and the induced functor $\rho:\cG_\sab[1/p]\to \HI_\et^s$  is exact  by  Theorem \ref{greta}.
Hence a functor
\begin{align*}
\rho:\cM_\sab[1/p] &\to {\HI_\et^s}^{[0,1]}\\
M&\mapsto\underline{M}[1/p]
\end{align*}
and, by composing with the embedding $\M\into \cM_\sab$, another functor
\begin{equation}\label{eq2.2}
\M [1/p]\to {\HI_\et^s}^{[0,1]}.
\end{equation}

\begin{propose}\label{pexact} Let $M^\cdot$ be a complex of
objects of
$\cM_\sab[1/p]$. The following conditions are equivalent:
\begin{thlist}
\item The total complex $\Tot(\rho M^\cdot)$ in $C(\HI_\et^s)$ is a\-cycl\-ic.
\item For any $q\in\Z$, $H^q(M^\cdot)$ is of the form $[F^q =F^q]$, where $F^q$ is finite.
\end{thlist}
\end{propose}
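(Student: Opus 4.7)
My plan is to work via the embedding \eqref{eq2.2} to view each $M^q = [L^q \by{u^q} G^q]$ as a length-one complex in $\HI_\et[1/p]$, so that $M^\cdot$ becomes a double complex whose total complex is $\Tot(M^\cdot)$. The linchpin will be the obvious short exact sequence of complexes
\[
0 \to [0 \to \underline{G^\bullet}] \to (\underline{L^\bullet}\to\underline{G^\bullet}) \to [\underline{L^\bullet}\to 0] \to 0,
\]
which yields a distinguished triangle
\[
\underline{G^\bullet}[-1] \to \Tot(M^\cdot) \to \underline{L^\bullet} \by{u_*} \underline{G^\bullet}
\]
in $D(\HI_\et[1/p])$. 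From its long exact sequence, acyclicity of $\Tot(M^\cdot)$ is equivalent to $u_*$ being a quasi-isomorphism, i.e., the induced map $u_*^q : H^q(\underline{L^\bullet}) \to H^q(\underline{G^\bullet})$ being an isomorphism in $\HI_\et[1/p]$ for every $q$. Computing cohomology horizontally in the abelian category $\HI_\et^{[0,1]}[1/p]$ identifies
\[
H^q(M^\cdot) = [H^q(\underline{L^\bullet}) \by{u_*^q} H^q(\underline{G^\bullet})].
\]

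From here (ii) $\Rightarrow$ (i) falls out immediately: an isomorphism $H^q(M^\cdot) \cong [F^q = F^q]$ forces $u_*^q$ to be an iso for every $q$. For the reverse implication I would set $F^q := H^q(\underline{L^\bullet}) \cong H^q(\underline{G^\bullet})$ (now an iso by the triangle) and reduce to showing that $F^q$ is a finite \'etale $k$-group scheme. As a subquotient of the discrete complex $\underline{L^\bullet}$ in $\HI_\et[1/p]$, $F^q$ is itself discrete, i.e., locally constant $\Z$-constructible with finitely generated geometric fibres. As a cohomology sheaf of $\underline{G^\bullet}$, it is represented by a commutative algebraic $k$-group scheme whose identity component is semi-abelian. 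A $k$-group scheme belonging to both classes simultaneously is $0$-dimensional (trivial identity component) and has finitely generated geometric fibres, hence is finite \'etale, as needed. The form $[F^q = F^q]$ then follows, with the differential being $u_*^q$, itself an iso onto $F^q$.

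The hard part will be the representability statement in the previous paragraph: identifying the $\HI_\et[1/p]$-cohomology sheaf $H^q(\underline{G^\bullet})$ with the cokernel computed in the category of commutative algebraic $k$-groups with semi-abelian identity component. Kernels are unproblematic (closed subgroup schemes give honest sheaves of sections), but cokernels involve sheafification, and one has to invoke existence of smooth quotients (SGA~3) together with the fact that, after inverting $p$, sheaf-quotients and scheme-quotients of smooth commutative $k$-groups coincide. Without inverting $p$, Artin--Schreier type obstructions in characteristic $p > 0$ would block this comparison; this is precisely why the statement is formulated only after inverting the exponential characteristic.
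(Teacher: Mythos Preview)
Your approach is essentially identical to the paper's: both set up the long exact sequence coming from the column complexes $L^\bullet$ and $G^\bullet$, conclude that acyclicity of $\Tot(M^\cdot)$ amounts to $H^q(\underline{L^\bullet})\iso H^q(\underline{G^\bullet})$ for all $q$, and then argue that an \'etale sheaf which is simultaneously discrete and representable by a commutative algebraic group must be finite. The paper dispatches this in two lines; you are (rightly) more explicit about the representability step, which the paper simply asserts.

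One correction to your final paragraph: inverting $p$ is \emph{not} what makes sheaf-quotients and scheme-quotients agree here. Since $k$ is perfect, every $G^q$ is smooth; for a closed subgroup $H\subseteq G$, the reduced subscheme $H_{\red}$ is again a (smooth) subgroup, and on $Sm(k)$ one has $\underline{H}=\underline{H_{\red}}$ because smooth schemes are reduced. The map $G\to G/H_{\red}$ is then smooth surjective, hence an \'etale-local epimorphism, so $\underline{G}/\underline{H}=\underline{G/H_{\red}}$. Applying this with $H=\im(d^{q-1})$ inside $\ker(d^q)$ (noting that $\im(d^{q-1})$, as a quotient of the smooth $G^{q-1}$, is already smooth and hence lands in $\ker(d^q)_{\red}$) shows that $H^q(\underline{G^\bullet})$ is representable by $\ker(d^q)_{\red}/\im(d^{q-1})$, with no appeal to $[1/p]$. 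The Artin--Schreier phenomenon is the reason $p$ is inverted globally in $\HI_\et$ (strict homotopy invariance), and $[1/p]$ is what makes $\anM^\eff[1/p]$ abelian (Proposition~\ref{nca}), but neither is the mechanism behind this particular representability.
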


\begin{proof} (ii) $\Rightarrow$ (i) is obvious. For the converse, let
$M^q=[L^q\to G^q]$ for all $q$. Let $L^\cdot$ and $\uG^\cdot$ be the two
corresponding ``column" complexes of sheaves. By the exactness of $\rho$, we have a long exact
sequence in
$\HI_\et^s$:
\[\dots\to\rho  H^q(L^\cdot)\to \rho H^q(G^\cdot)\to H^q(\Tot(\rho M^\cdot))\to
\rho H^{q+1}(L^\cdot)\to\dots\]

The assumption implies that $\rho H^q(L^\cdot)\iso \rho H^q(G^\cdot)$ for all $q$.
Since $H^q( L^\cdot)$ is discrete and $H^q(G^\cdot)$ is 
a commutative algebraic group, both must be finite.
\end{proof}

We now restrict to complexes of $\M[1/p]$.

\begin{defn}\label{dexact} A complex of $\M[1/p]$ is \emph{acyclic} if it
satisfies the equivalent conditions of Proposition \ref{pexact}. An
acyclic complex of the form $0\to N'\to N\to N''\to 0$ is called a
\emph{short exact sequence}. 
\end{defn}

Recall that in \cite{BRS} a category of $1$-motives with torsion was introduced. We shall
denote it here by ${}^t\M$ \index{${}^t\M$, ${}^t\M^\eff$} in order to distinguish it from $\M$. More precisely, denote by ${}^t\M^\eff$  the full subcategory of $\cM_\sab$ consisting of the objects $[L\to G]$ where
$G$ is semi-abelian: these are the effective $1$-motives with torsion (\cf Definition \ref{eff1mot}).
The category of $1$-motives with torsion ${}^t\M$ is the localisation of ${}^t\M^\eff$ with respect to quasi-isomorphisms (\cf Definition \ref{1tors}).

The main properties of ${}^t\M$ are recalled in Appendix \ref{AppendixB}. In particular, the category ${}^t\M[1/p]$ is abelian (Theorem \ref{1mtora}) and by Proposition \ref{free} we have a full embedding
\begin{equation}\label{fullem}
\M[1/p]\into{}^t\M[1/p]
\end{equation}
which makes $\M[1/p]$ an exact subcategory of ${}^t\M[1/p]$. The following lemma is a direct consequence of Proposition \ref{pstrict} and Corollary \ref{corexseq}:

\begin{lemma}\label{lexact} A complex $0\to N'\by{i} N\by{j} N''\to 0$ in $\M[1/p]$ is a short
exact sequence in the sense of Definition \ref{dexact} if and only if it is a short exact
sequence for the exact structure given by \eqref{fullem}.
\end{lemma}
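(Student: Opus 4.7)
Plan: I would prove the lemma by showing that both notions of exactness are equivalent to the vanishing of the cohomology of $0 \to N' \by{i} N \by{j} N'' \to 0$ when viewed in the abelian category ${}^t\M[1/p]$ of Theorem \ref{1mtora}.

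First, unpack the exact structure coming from \eqref{fullem}. By the very definition of the exact structure on a full additive subcategory of an abelian category (closed under extensions, as is the case here by Proposition \ref{free}), a 3-term sequence in $\M[1/p]$ is short exact in this sense iff it is short exact in ${}^t\M[1/p]$, i.e., iff $i = \ker(j)$ and $j = \coker(i)$ computed in ${}^t\M[1/p]$, equivalently iff all three cohomology objects in ${}^t\M[1/p]$ vanish.

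Second, unpack Definition \ref{dexact} using Proposition \ref{pexact}: the sequence is acyclic in the sense of \ref{dexact} iff its naive cohomology objects $H^q$, computed in $\anM^\eff[1/p]$ by term-wise kernels and cokernels of group schemes, are all of the form $[F^q \by{\mathrm{id}} F^q]$ with $F^q$ finite. But such an object maps quasi-isomorphically to $0$ in ${}^t\M^\eff[1/p]$, and by the construction of ${}^t\M$ recalled in \S \ref{1.2} and detailed in Appendix \ref{AppendixB}, ${}^t\M$ is precisely the localisation of ${}^t\M^\eff$ at these quasi-isomorphisms; hence $[F=F]$ represents $0$ in ${}^t\M[1/p]$.

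The bridge to be verified — and the main technical obstacle — is the comparison of the naive cohomology of the 3-term complex computed in $\anM^\eff[1/p]$ with its abelian cohomology in ${}^t\M[1/p]$. Concretely, for a morphism $f:N \to N'$ in $\M[1/p]\subset \anM^\eff[1/p]$, the kernel and cokernel computed in $\anM^\eff[1/p]$ (which exist, by the analogue of Proposition \ref{lim} and Remarks 2 following \ref{1.1}) differ from the kernel and cokernel computed in the abelian category ${}^t\M[1/p]$ by at most a summand of the form $[F \by{\mathrm{id}} F]$ with $F$ finite: the passage ${}^t\M^\eff \to {}^t\M$ precisely cleans up torsion in the connected group scheme, replacing $[L \to G]$ by an equivalent representative, and this cleanup inserts or removes exactly such $[F=F]$ pieces. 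Granting this compatibility, vanishing of the three naive cohomology objects modulo $[F=F]$ is the same as vanishing of the three abelian cohomology objects in ${}^t\M[1/p]$, and combining with the first two steps gives the stated equivalence.
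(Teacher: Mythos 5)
Your overall plan is sensible, and Steps 1--2 are correct: short exactness for the structure \eqref{fullem} is vanishing of the three cohomology objects in ${}^t\M[1/p]$, and Definition \ref{dexact} via Proposition \ref{pexact} is the condition $H^q\cong[F^q=F^q]$ in $\anM^\eff[1/p]$. But the ``bridge'' claim in Step 3 is false as a general statement about morphisms of $\M[1/p]$. Take $G$ a semi-abelian variety and $n>1$ prime to $p$, and let $f$ be multiplication by $n$ on $[0\to G]$. The termwise kernel in $\anM^\eff[1/p]$ is $[0\to{}_nG]$, whereas the kernel in ${}^t\M[1/p]$ (the connected kernel of Proposition \ref{lim}) is $0$; these differ by $[0\to{}_nG]$, which is not of the form $[F=F]$. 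Dually, the termwise cokernel is $0$ (multiplication by $n$ is surjective on $G$), while the cokernel in ${}^t\M[1/p]$ is the nonzero torsion $1$-motive $[{}_nG\to 0]$, cf.\ \eqref{ngexseq}. So the discrepancy between the $\anM^\eff$-kernel/cokernel and the ${}^t\M$-kernel/cokernel can be of shape $[0\to F]$ or $[F\to 0]$ rather than $[F=F]$, and ``granting the compatibility'' is granting something false.

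What makes the lemma work is that the acyclicity constraint on the \emph{whole} complex kills $H^0$ and $H^2$ outright, not merely up to $[F=F]$: since $L'$ is a lattice, $H^0\cong[F^0=F^0]$ forces $F^0=0$, and since $\coker(G\to G'')$ is a connected quotient of the connected $G''$, the condition $H^2\cong[F^2=F^2]$ forces $F^2=0$. Thus $i$ is termwise injective (hence strict), $j$ is termwise surjective, and by Lemma \ref{coker}(2) the termwise cokernel $[L/L'\to G/G']$ computes $\coker(i)$ in ${}^t\M[1/p]$; the induced map $[L/L'\to G/G']\to N''$ is an effective epimorphism with kernel $H^1\cong[F^1=F^1]$, i.e.\ a quasi-isomorphism, so the sequence is short exact in ${}^t\M[1/p]$. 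Conversely, a short exact sequence of ${}^t\M[1/p]$ is represented up to isomorphism by a termwise-exact sequence of complexes (Corollary \ref{corexseq}), from which acyclicity of the total complex follows. Your proposal is missing exactly the step that uses the Deligne-$1$-motive hypothesis to annihilate $F^0$ and $F^2$ and then localises the $[F^1=F^1]$ piece at the quasi-isomorphism between $\coker(i)$ and $N''$; the general morphism-by-morphism comparison it relies on does not hold.
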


\begin{proof}     Suppose that the given complex is a short exact sequence in the sense of Definition \ref{dexact}. If  $j$ is a strict morphism in the sense of Definition \ref{dstrict}, then it is an exact sequence of complexes. In general we get a factorisation of $j$ given by $N\by{\tilde j} \tilde N \to N''$ where $\tilde N$ is quasi-isomorphic to $N''$ and $N\by{\tilde j} \tilde N$ is a strict epimorphism with kernel $N'$ (\cf Proposition \ref{pstrict}). Therefore it is also exact in ${}^t\M[1/p]$. Conversely,  note that any short exact sequence in ${}^t\M[1/p]$ can be represented by a strict effective epimorphism followed by a quasi-isomorphism (see Corollary \ref{corexseq}).
\end{proof}

\begin{remark}\label{rexact} There is another, much stronger, exact
structure on $\M[1/p]$, induced by its full embedding in the abelian category $\cM_\sab[1/p]$ (Proposition \ref{nca}): it
amounts to require  a complex $[L^\cdot\to G^\cdot]$ to be exact if and
only if both complexes $L^\cdot$ and $\uG^\cdot$ are acyclic. We shall
not use this exact structure in the sequel. See also Remark \ref{r1.8.6}.
\end{remark}

\subsection{The derived category of $1$-motives}\label{1.2d}

\begin{lemma}\label{l1.5.1} A complex in $C(\M[1/p])$ is acyclic in the sense of Definition
\ref{dexact} if and only if it is acyclic with respect to the exact structure of $\M[1/p]$ provided
by Lemma
\ref{lexact} in the sense of \cite[1.1.4]{BBD} or \cite[\S 1]{neeman}.
\end{lemma}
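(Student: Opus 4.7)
My strategy is to pass to the ambient abelian category ${}^t\M[1/p]$ through the fully exact embedding \eqref{fullem} (Theorem~\ref{1mtora}), exploit its abelian structure, and descend back with the aid of Lemma~\ref{lexact}. I use the following general fact: for a fully exact subcategory $\mathcal{E} \subset \mathcal{A}$ of an abelian category, a complex in $C(\mathcal{E})$ is BBD/Neeman-acyclic (equivalently, splices from admissible short exact sequences of $\mathcal{E}$) if and only if it is acyclic as a complex in $\mathcal{A}$ and all cycle objects computed in $\mathcal{A}$ lie in $\mathcal{E}$. Combined with Lemma~\ref{lexact}, this says that BBD/Neeman-acyclicity for $M^\cdot \in C(\M[1/p])$ amounts to a splicing into short exact sequences in the sense of Definition~\ref{dexact}.

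\emph{Forward direction.} Suppose $M^\cdot$ splices into short exact sequences $0 \to Z^q \to M^q \to Z^{q+1} \to 0$ in the sense of Definition~\ref{dexact}. The additive functor $\Tot$ of Lemma~\ref{l1.3} sends each such splicing piece to a complex in $C(\HI_\et)[1/p]$ which is acyclic by definition. Assembling these, the total complex $\Tot(M^\cdot)$ inherits acyclicity (for instance from the long exact cohomology sequences associated to the cycle/boundary decomposition), yielding condition~(i) of Proposition~\ref{pexact}.

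\emph{Converse.} Assume condition~(ii): $H^q(M^\cdot) = [F^q \stackrel{\mathrm{id}}{=} F^q]$ with $F^q$ finite of order prime to $p$. Because the canonical map $[F \stackrel{\mathrm{id}}{=} F] \to 0$ is a quasi-isomorphism of \'etale sheaves, $[F \stackrel{\mathrm{id}}{=} F]$ is zero in the localisation ${}^t\M[1/p]$ of ${}^t\M^\eff[1/p]$ by quasi-isomorphisms (Appendix~\ref{AppendixB}). Hence $M^\cdot$ is acyclic in the abelian category ${}^t\M[1/p]$, so splices there into short exact sequences $0 \to Z^q \to M^q \to Z^{q+1} \to 0$ where $Z^q$ is the ${}^t\M[1/p]$-kernel of $d^q$. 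It remains to check $Z^q \in \M[1/p]$. Writing $M^q = [L^q \to G^q]$, the lattice part of $Z^q$ is a subsheaf of $L^q$, hence a lattice; on the semi-abelian side, the finite component group that would a priori appear in $\ker(G^q \to G^{q+1})$ is precisely cancelled by the isomorphic $F^q$ on the lattice side, via the description of kernels in ${}^t\M[1/p]$ recalled in Appendix~\ref{AppendixB}, since the identification between the two copies of $F^q$ in $[F^q \stackrel{\mathrm{id}}{=} F^q]$ is the identity (not merely some isomorphism). This last verification, that each cycle object is still a Deligne $1$-motive rather than acquiring torsion on the lattice side or component-group contributions on the semi-abelian side, is the main obstacle to the proof.
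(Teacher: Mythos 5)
Your proposal routes through the localisation ${}^t\M[1/p]$, whereas the paper works inside the abelian category $\anM^\eff[1/p]$ (Proposition~\ref{nca}); the gap you flag at the end is real, and it is exactly the difficulty the paper's choice of ambient category is designed to avoid. In $\anM^\eff[1/p]$ kernels, cokernels and images are computed \emph{termwise} on the lattice and group components. The paper therefore splices $X^\cdot$ along the termwise images $D^n=\im(d^n)$: these are manifestly Deligne $1$-motives (the lattice part is a finitely generated torsion-free subsheaf of a lattice, the group part is a connected subgroup of a semi-abelian variety), the half-exact sequences $0\to D^{n-1}\to X^n\to D^n\to 0$ automatically have $H^0=H^2=0$ in $\anM^\eff[1/p]$, and their middle cohomology is literally $H^n(X^\cdot)$. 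So Definition~\ref{dexact}-acyclicity of $X^\cdot$ is the same statement as these sequences being Definition~\ref{dexact}-short-exact, i.e.\ BBD/Neeman-acyclicity via Lemma~\ref{lexact}. For the converse the paper observes that in any BBD splicing $d^n=m'_ne'_n$ the short exactness of $0\to{D'}^{n-1}\to X^n\to {D'}^n\to 0$ forces $m'_{n-1}$ mono and $e'_n$ epi in $\anM^\eff[1/p]$ (a sub-object or quotient of a Deligne $1$-motive of the form $[F=F]$, $F$ finite, must vanish, by torsion-freeness of the lattice part and connectedness of the group part), whence ${D'}^n=D^n$ by uniqueness of the epi-mono factorisation.

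By contrast, ${}^t\M[1/p]$ is a localisation of ${}^t\M^\eff[1/p]$ by quasi-isomorphisms, so the kernel of $d^q$ there is not termwise: one must first pass to a strict effective representative via Proposition~\ref{pstrict} and then take the termwise kernel of that (Proposition~\ref{lim}), and one must further invoke left-exactness of ${}^t\M^\eff[1/p]\to{}^t\M[1/p]$ (Proposition~\ref{faith}) to know this computes the kernel in the localisation. Your ``cancellation of the finite $F^q$'' remark gestures at the right phenomenon but does not perform these steps; as you acknowledge, the argument is not closed. Your forward direction (BBD $\Rightarrow$ Definition~\ref{dexact}), going through $\Tot$ and condition~(i) of Proposition~\ref{pexact}, is a valid alternative, though the ``assembling'' needs to be made precise by a d\'evissage on the triangles $\Tot(Z^q)\to\Tot(M^q)\to\Tot(Z^{q+1})\to{}$; the paper's route is more economical since it never applies $\Tot$ and works only with condition~(ii) of Proposition~\ref{pexact}.
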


\begin{proof} Let $X^\cdot\in C(\M[1/p])$. Viewing $X^\cdot$ as a complex of objects of $\cM_\sab[1/p]$,
we define $D^n=\im (d^n:X^n\to X^{n+1})$. Note that the $D^n$ are Deligne $1$-motives. Let
$e_n:X^n\to D^n$ be the projection and $m_n:D^n\to X^{n+1}$ be the inclusion. We have
half-exact sequences
\begin{equation}\label{eq1.5}
0\to D^{n-1}\by{m_{n-1}} X^n\by{e_n} D^n\to 0
\end{equation}
with middle cohomology equal to $H^n(X^\cdot)$. Thus, if $X^\cdot$ is acyclic in the sense of
Definition \ref{dexact}, the sequences \eqref{eq1.5} are short exact which means that $X^\cdot$
is acyclic with respect to the exact structure of $\M[1/p]$. Conversely, suppose that $X^\cdot$ is
acyclic in the latter sense. Then, by definition, we may find
${D'}^n$, $e'_n$, $m'_n$ such that $d^n=m'_ne'_n$ and that the sequences analogous to
\eqref{eq1.5} are short exact. Since $\cM_\sab[1/p]$ is abelian (Proposition \ref{nca}), ${D'}^n=D^n$ and we are done.
\end{proof}

From now on, we shall only say ``acyclic" without further precision. 

Let $K(\M[1/p])$ be the homotopy category of $C(\M[1/p])$. By \cite[Lemmas 1.1 and 1.2]{neeman}, the full
subcategory of $K(\M[1/p])$ consisting of acyclic complexes is triangulated and thick (the latter
uses the fact that $\M[1/p]$ is idempotent-complete, \cf Lemma \ref{lidco}). Thus one may define the 
derived category of $\M[1/p]$ in the usual way:

\begin{defn}\label{ddermot} \index{$D^b(\M[1/p])$}
a) The \emph{derived category of $1$-motives} is
the localisation $D(\M[1/p])$ of the homotopy category $K(\M[1/p])$ with
respect to the thick subcategory $A(\M[1/p])$ consisting of acyclic complexes. Similarly for
$D^\pm(\M[1/p])$ and $D^b(\M[1/p])$.\\
b) A morphism in $C(\M[1/p])$ is a
\emph{quasi-isomorphism} if its cone is acyclic.
\end{defn}

\subsection{Torsion objects in the derived category of $1$-motives}

Let $\cM_0$ be the category of lattices (see Definition \ref{d1.1.1}): the inclusion functor
$\cM_0[1/p]\by{A}
\M[1/p]$ provides it with the structure of an exact subcategory of $\M[1/p]$.
Moreover, the embedding
\begin{equation}\label{eq:0to1}\cM_0[1/p]\by{B} {}^t\cM_0[1/p]
\end{equation}
is clearly exact, where ${}^t\cM_0$ is the abelian category of discrete
group schemes (see Lemma \ref{discab}). In fact, we also have an exact functor
\begin{align*}
{}^t\cM_0[1/p]&\by{C}{}^t\M[1/p]\\
L&\mapsto [L\to 0].
\end{align*}

 Hence an induced diagram of
triangulated categories:
\[\begin{CD}
D^b(\cM_0[1/p])@>B>> D^b({}^t\cM_0[1/p])\\
@V{A}VV @V{C}VV\\
D^b(\M[1/p])@>D>> D^b({}^t\M[1/p]).
\end{CD}\]

\begin{thm}\label{ptors} In the
above diagram\\ 
a) $B$ and $D$ are equivalence of categories.\\ 
b) $A$ and $C$ are fully faithful; restricted to torsion objects they are equivalences of
categories.
\end{thm}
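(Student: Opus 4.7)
My plan relies on the general criterion that if an exact subcategory $\cA$ of an abelian category $\cB$ is extension-closed and every object of $\cB$ admits a finite resolution by objects of $\cA$, then the induced functor $D^b(\cA) \to D^b(\cB)$ is an equivalence. I apply this to the pair $(\cM_0[1/p], {}^t\cM_0[1/p])$ to prove $B$ is an equivalence, and to the pair $(\M[1/p], {}^t\M[1/p])$ to prove $D$ is an equivalence.

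For $B$: any $L \in {}^t\cM_0[1/p]$ is a finitely generated $\Z[1/p]$-module with continuous $G_k$-action, so admits a surjection $P_0 \twoheadrightarrow L$ from a lattice (e.g., $\Z[1/p][G_{k'/k}]^n$ for a suitable finite Galois extension $k'/k$). The kernel $P_1$ is a $\Z[1/p]$-submodule of the torsion-free $P_0$, hence itself a lattice, yielding a length-one lattice resolution $0 \to P_1 \to P_0 \to L \to 0$. For $D$: given $[L \to G] \in {}^t\M^\eff[1/p]$, resolve $L$ by lattices as above and lift $u \colon L \to G$ along $P_0 \twoheadrightarrow L$ to obtain a Deligne $1$-motive $[P_0 \to G]$; then
\[
0 \to [P_1 \to 0] \to [P_0 \to G] \to [L \to G] \to 0
\]
is short exact in ${}^t\M[1/p]$, giving a length-one Deligne resolution of an arbitrary object.

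For (b), the square commutes ($DA = CB$), and since $B, D$ are equivalences from (a), fully faithfulness of $A$ is equivalent to fully faithfulness of $C$. To prove $C$ fully faithful on derived categories: $C$ is fully faithful and $t$-exact at the abelian level; moreover, any short exact sequence $0 \to [L' \to 0] \to M \to [L \to 0] \to 0$ in ${}^t\M[1/p]$ forces the connected semi-abelian part of $M$ to vanish by applying the exact weight functor $W_{-1}$ (which is $0$ on objects in the image of $C$). Hence $M$ is in the image of $C$, and a standard induction on Yoneda extensions yields $\Ext^i_{{}^t\M[1/p]}(CL, CL') = \Ext^i_{{}^t\cM_0[1/p]}(L, L')$ for all $i \ge 0$. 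For the torsion equivalence: a torsion object of ${}^t\M[1/p]$ (i.e., annihilated by some $n$ prime to $p$) must have $G = 0$, because a nonzero connected semi-abelian variety is not $n$-torsion. So every torsion object is in the essential image of $C$, giving an equivalence on torsion hearts; passing to derived categories (torsion is preserved by taking cohomology) gives the equivalence on torsion subcategories of the $D^b$'s. The statement for $A$ then follows from the commutative square and the equivalences $B, D$ of part (a).

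The main obstacle lies in verifying that a short exact sequence of $1$-motives with torsion truly forces $G = 0$ when the outer terms have $G = 0$, and more generally in working with the abelian structure of ${}^t\M[1/p]$ described in Appendix~\ref{AppendixB}. The existence and exactness of the weight filtration on ${}^t\M[1/p]$, together with the precise description of short exact sequences in the abelian category ${}^t\M[1/p]$ needed to justify the resolution in the proof of $D$, are the key technical inputs whose careful verification constitutes the bulk of the work.
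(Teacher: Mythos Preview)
Your approach to part (a) is correct and matches the paper's: both invoke the resolution criterion of Proposition~\ref{derxact}, and your argument for $D$ is simply a more explicit version of the paper's ``exactly the same argument works for $D$.''

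Part (b), however, has a genuine gap. You correctly observe that the image of $C$ is closed under extensions in ${}^t\M[1/p]$, which gives the isomorphism on $\Ext^0$ and $\Ext^1$. But the claim that ``a standard induction on Yoneda extensions'' then yields $\Ext^i_{{}^t\M[1/p]}(CL,CL') \cong \Ext^i_{{}^t\cM_0[1/p]}(L,L')$ for all $i$ is not justified: closure under extensions does not in general imply that $D^b(\cA)\to D^b(\cB)$ is fully faithful. In a Yoneda $n$-extension $0\to Y\to E_n\to\cdots\to E_1\to X\to 0$ with $X,Y\in\cA$, the intermediate objects $E_i$ are not themselves extensions of objects of $\cA$ by objects of $\cA$, so there is no evident inductive step. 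The paper bypasses this entirely by verifying the Kashiwara--Schapira criterion (Proposition~\ref{pschapira}): given a monomorphism $[L\to 0]\hookrightarrow [L'\to G']$ in ${}^t\M[1/p]$, one composes with the projection $[L'\to G']\to[L'\to 0]$ to get the required map into ${}^t\cM_0[1/p]$ through which the monomorphism factors. This one-line verification is the missing idea.

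Your argument for essential surjectivity on torsion inherits the same gap: you correctly note that a torsion object of the \emph{heart} ${}^t\M[1/p]$ must have $G=0$, but ``passing to derived categories'' by d\'evissage on cohomology presupposes that the derived essential image of $C$ is triangulated, i.e., that $C$ is fully faithful on $D^b$. The paper instead works directly in $D^b(\M[1/p])$: for a torsion complex $X=[C^\cdot\to G^\cdot]$ it uses the triangle $[0\to G^\cdot]\to X\to[C^\cdot\to 0]\xrightarrow{+1}$ and explicitly exhibits $[0\to G^\cdot]$ as quasi-isomorphic to a complex of lattice $1$-motives. (A minor point: $W_{-1}([L\to 0])=[L_\tor\to 0]$ is not always zero, so your parenthetical is inaccurate; what you actually use is exactness of $[L\to G]\mapsto G$ up to isogeny, which follows from Corollary~\ref{corexseq}.)
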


(For the notion of torsion objects, see Proposition \ref{p1.1}.)

\begin{proof} a) For $B$, this follows from Proposition
\ref{derxact} provided we check that any object $M$ in ${}^t\cM_0[1/p]$ has a
finite left resolution by objects in $\cM_0[1/p]$. In fact $M$ has a length
$1$ resolution: let $E/k$ be a finite Galois extension of group $\Gamma$
such that the Galois action on $M$ factors through $\Gamma$. Since $M$ is
finitely generated, it is a quotient of some power of $\Z[\Gamma]$, and
the kernel is a lattice. Exactly the same argument works for $D$ by considering the lattice part of a $1$-motive. (For $M =  [L\to G]$, we get $L'\onto L$ with $L' \in \cM_0[1/p]$ as above so that $M' =  [L'\to G]$ obtained by composition yields a projection $M'\onto M$ with kernel a lattice.)

b) By a) it is sufficient to prove that $C$ is fully faithful. It suffices to verify that the
criterion of Proposition \ref{pschapira} is verified by the full embedding ${}^t\cM_0[1/p]\to {}^t\M[1/p]$.

Let $[L\to 0]\into [L'\to G']$ be a monomorphism in ${}^t\M[1/p]$. We may assume that it is given
by an effective map. The assumption implies that $L\to L'$ is mono: composing 
with the projection $[L'\to G']\to [L'\to 0]$, we get the requested factorisation.

It remains to show that $A$ is essentially
surjective on torsion objects. Let $X=[C^\cdot\to G^\cdot]\in
D^b(\M[1/p])$, and let $n>0$ be such that $n 1_X=0$. Arguing as in the proof of Proposition \ref{pexact}, this implies that the cohomology sheaves of both $C^\cdot$
and $G^\cdot$ are killed by some possibly larger integer
$m$  prime to $p$. We have an exact triangle
\[[0\to G^\cdot]\to X\to [C^\cdot\to 0]\by{+1}
\]
which leaves us to show that $[0\to G^\cdot]$ is in the essential image
of $C$. Let $q$ be the smallest integer such that $G^q\ne 0$: we have an
exact triangle
\[\{G^q\to \im d^q\}\to  G^\cdot \to \{0\to G^{q+1}/\im
d^q\to\dots\}\by{+1}\]
(here we use curly braces in order to avoid confusion with the square
braces used for $1$-motives). By descending induction on $q$, the right
term is in the essential image, hence we are reduced to the case where
$G^\cdot$ is of length $1$. Then $d^q:G^q\to G^{q+1}$ is epi and
$\mu:=\ker d^q$ is finite and locally constant. Consider the
diagram in
$K^b(\cM_\sab[1/p])$
\[
[\begin{smallmatrix}0\\\downarrow\\ 0\end{smallmatrix}\to
\begin{smallmatrix}G^q\\\downarrow\\ G^{q+1}\end{smallmatrix}] \leftarrow
[\begin{smallmatrix}0\\\downarrow\\ \mu\end{smallmatrix}\to
\begin{smallmatrix}G^q\\||\\ G^{q}\end{smallmatrix}]
\leftarrow [\begin{smallmatrix}L_1\\\downarrow\\ L_0\end{smallmatrix}\to
\begin{smallmatrix}G^q\\||\\ G^{q}\end{smallmatrix}]\to
[\begin{smallmatrix}L_1\\\downarrow\\ L_0\end{smallmatrix}\to
\begin{smallmatrix}0\\\downarrow\\ 0\end{smallmatrix}]
\]
where $L_1\to L_0$ is a resolution of $\mu$ by lattices (see proof of a)).
Clearly all three maps are quasi-isomorphisms, which implies that the
left object is quasi-isomorphic to the right one on
$D^b(\M[1/p])$.
\end{proof}

\begin{cor} \label{nobox} Let $A$ be a subring of $\Q$ containing $1/p$. Then the natural functor
\[D^b(\M[1/p])\otimes A\to D^b(\M\otimes A)\]
is an equivalence of categories. These categories are idempotent-com\-ple\-te for any $A$.
\end{cor}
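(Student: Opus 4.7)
The plan is to reduce the equivalence to the abelian-category setting via Theorem \ref{ptors}(a), then exploit flatness of $A$ over $\Z[1/p]$.

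First, the proof of Theorem \ref{ptors}(a) uses only Galois descent to produce length-$1$ lattice resolutions of discrete \'etale sheaves, together with elementary manipulations; both ingredients remain valid after tensoring all Hom groups with $A$. One thus obtains an $A$-linear analogue
\[D^b(\M\otimes A)\iso D^b({}^t\M\otimes A),\]
and tensoring the original equivalence with $A$ yields $D^b(\M[1/p])\otimes A\iso D^b({}^t\M[1/p])\otimes A$. Hence it suffices to prove that the natural comparison functor
\[D^b({}^t\M[1/p])\otimes A\longrightarrow D^b({}^t\M\otimes A)\]
is an equivalence.

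For this step, flatness of $A$ over $\Z[1/p]$ makes $-\otimes A$ exact on abelian $\Z[1/p]$-linear categories. Essential surjectivity is immediate after clearing denominators in the differentials of a bounded complex over ${}^t\M\otimes A$. Full faithfulness reduces to the identification
\[\Ext^i_{{}^t\M\otimes A}(M\otimes A, N\otimes A)\simeq \Ext^i_{{}^t\M[1/p]}(M,N)\otimes A\]
for $M,N\in{}^t\M[1/p]$ and all $i\in\Z$, which is a standard consequence of the exactness of $-\otimes A$; a d\'evissage or spectral sequence argument then propagates this from objects to arbitrary bounded complexes.

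Idempotent completeness follows formally: $D^b({}^t\M\otimes A)$ is the derived category of an abelian category, hence is idempotent complete (every abelian category is idempotent complete, and the derived category of an idempotent-complete exact category is again idempotent complete, by Balmer--Schlichting). Transporting this across the two equivalences just obtained yields idempotent completeness of both $D^b(\M\otimes A)$ and $D^b(\M[1/p])\otimes A$. The main technical obstacle I anticipate is the careful verification of the $A$-linear version of Theorem \ref{ptors}(a): one must ensure that the finite locally constant sheaves appearing in the proof of essential surjectivity on torsion objects continue to admit length-$1$ lattice resolutions after base change to $A$. Once this bookkeeping is complete, the remaining steps are routine abelian-category localisation.
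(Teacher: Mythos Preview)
Your approach matches the paper's: both reduce to the abelian category ${}^t\M[1/p]$ via Theorem \ref{ptors}(a) and its $A$-linear analogue, then handle the comparison $D^b({}^t\M[1/p])\otimes A\iso D^b({}^t\M\otimes A)$ for the abelian case. The paper packages this last step as a general result (Proposition \ref{pB.4.1}: for any abelian category $\cA$, the natural functor $D^b(\cA)\otimes A\to D^b(\cA\otimes A)$ is an equivalence), proved by passing through $K^b$ and a conservativity/localisation argument rather than your direct Ext comparison; both routes are valid. Your closing concern is unnecessary: the proof of Theorem \ref{ptors}(a) for $D$ produces, for each \emph{object} of ${}^t\M[1/p]$, a length-$1$ resolution by Deligne $1$-motives (resolve the discrete part $L$ by lattices), and since tensoring with $A$ changes only morphism groups while objects stay the same, this resolution serves unchanged in ${}^t\M\otimes A$.
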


\begin{proof} By Proposition \ref{pB.4.1}, this is true by replacing the category $\M[1/p]$ by ${}^t\M[1/p]$. On the
other hand, the same argument as above shows that the functor $D^b(\M\otimes A)\to
D^b({}^t\M\otimes A)$ is an equivalence. This shows the first statement; the second one follows
from the fact that $D^b$ of an abelian category is idempotent-complete.
\end{proof}

\subsection{Discrete sheaves and permutation modules} The following proposition will be used in \S \ref{s2.4.1}.

\begin{propose}\label{pperm} Let $G$ be a profinite group. Denote by $D^b_c(G)$ the
derived category of finitely generated (topological discrete)
$G$-modules. Then $D^b_c(G)$ is thickly generated by $\Z$-free permutation modules.
\end{propose}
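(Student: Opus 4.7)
Since every object of $D^b_c(G)$ is built from its cohomology via successive mapping cones, it suffices to show that each finitely generated discrete $G$-module $M$ belongs to the thick subcategory $\cT\subset D^b_c(G)$ generated by $\Z$-free permutation modules. The action on $M$ factors through a finite quotient $\Gamma=G/U$ for some open normal subgroup $U$; since $\Z$-free permutation $\Gamma$-modules pull back to $\Z$-free permutation $G$-modules, one reduces to the case $G=\Gamma$ finite and $M$ a finitely generated $\Z[\Gamma]$-module.

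The short exact sequence $0\to M_{\tors}\to M\to M/M_{\tors}\to 0$ splits the problem into two subcases. For $M$ finite, a devissage using primary decomposition and composition series reduces to the case of a simple $\F_p[\Gamma]$-module $V$ for some prime $p$. Each such $V$ is a direct summand of some $\F_p[\Gamma/H]$ for a suitable subgroup $H\le\Gamma$ (by the standard modular-representation-theoretic description of simples in terms of their vertices); since $\F_p[\Gamma/H]=\cone(\Z[\Gamma/H]\by{p}\Z[\Gamma/H])$ lies in $\cT$ and $\cT$ is closed under direct summands, so does $V$.

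For $M$ a $\Gamma$-lattice I would induct on $|\Gamma|$, the base case $\Gamma=1$ being immediate. For the inductive step, Artin's induction theorem produces an integer $N>0$ and integers $a_H$ indexed by proper cyclic subgroups $H<\Gamma$ such that $N[\Z]=\sum_H a_H[\Z[\Gamma/H]]$ in the representation ring of $\Gamma$ (with the case $\Gamma$ itself cyclic treated separately via the explicit structure of lattices over a cyclic group). Using the projection formula $M\otimes_\Z\Z[\Gamma/H]\cong\mathrm{Ind}_H^\Gamma\mathrm{Res}_H^\Gamma M$ together with the fact that induction is exact and carries permutation $H$-modules to permutation $\Gamma$-modules, this identity yields $N[M]=\sum_H a_H[\mathrm{Ind}_H^\Gamma\mathrm{Res}_H^\Gamma M]$ in $K_0$, whose right-hand side lies in $\cT$ by the inductive hypothesis applied over each $H$. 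A Swan-style lifting, combined with closure of $\cT$ under direct summands, then yields $M\in\cT$.

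\textbf{Main obstacle.} The hardest step is the lattice case: passing from Artin's rational identity in $K_0\otimes\Q$ to actual membership in $D^b_c(\Z[\Gamma])$. The integer $N$ coming from Artin's theorem is a genuine obstruction, and absorbing it requires both the summand-closure of $\cT$ and a Swan-type argument relating stable isomorphism classes to distinguished triangles. The base case of $\Gamma$ cyclic, where Artin's theorem offers no reduction, must be handled by direct inspection of indecomposable $\Z[\Gamma]$-lattices.
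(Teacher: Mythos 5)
Your reduction to a finite quotient $\Gamma$ and your split of $M$ into a torsion part and a lattice $\bar M = M/M_{\tors}$ both match the paper, but both branches of your argument then diverge, and the finite branch has a genuine gap.

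\textbf{The finite case.} You assert that every simple $\F_p[\Gamma]$-module $V$ is a direct summand of some permutation module $\F_p[\Gamma/H]$, citing vertex theory. That claim is false in general: vertex theory gives $V \mid \mathrm{Ind}_Q^\Gamma W$ for a $p$-subgroup $Q$ (the vertex) and an indecomposable $\F_p[Q]$-module $W$ (the source), but $V$ is a summand of a genuine permutation module only when $W$ is \emph{trivial}, i.e.\ only when $V$ is a so-called trivial-source (or $p$-permutation) module. Simple modules need not be of this type, so there is no ``standard'' result to cite here, and the devissage to composition factors does not converge. The paper avoids composition factors entirely: for $M$ an $\ell$-primary finite module and $S$ a Sylow $\ell$-subgroup, the transfer/averaging map exhibits $M$ as a direct summand of $\Z[\Gamma]\otimes_{\Z[S]} M$; then $M$ as an $S$-module has a finite filtration with \emph{trivial} subquotients (Serre, since $S$ is an $\ell$-group), each of which has a length-one resolution $\Z^k \xrightarrow{n} \Z^k$ by trivial lattices; pushing this through the exact induction functor lands $\Z[\Gamma]\otimes_{\Z[S]} M$ in the thick subcategory generated by the permutation modules $\Z[\Gamma/S]^k$, and one concludes by summand closure. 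The key move is to work with the whole filtration of $\operatorname{Res}_S M$ rather than with individual simples of $\F_p[\Gamma]$.

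\textbf{The lattice case.} Your route via Artin induction is workable in principle but much harder to close: as you note, the identity $N[\Z] = \sum a_H[\Z[\Gamma/H]]$ holds only after inverting $N$, and lifting that to actual membership in a thick triangulated subcategory needs a Swan-style stable-isomorphism argument together with special treatment of cyclic $\Gamma$. The paper sidesteps all of this with an elementary observation: realise $\bar M\otimes\Q$ as a direct summand of $\Q[\Gamma]^n$, scale so that $\bar M \subset \Z[\Gamma]^n$, and choose a complementary lattice $N$ with $\bar M \oplus N$ of finite index in $\Z[\Gamma]^n$. Then $\bar M \oplus N$ is the fibre of a map from the free permutation module $\Z[\Gamma]^n$ to a finite module $F$, so once the finite case is known, $\bar M \oplus N$ lies in the thick subcategory, hence so does its summand $\bar M$, hence so does $M$. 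This makes the lattice case a one-line reduction to the finite case, with no denominators and no Swan-type input.
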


\begin{proof} The statement says that the smallest thick subcategory $\cT$ of $D^b_c(G)$ which contains permutation modules is equal to $D^b_c(G)$. Let $M$ be a finitely generated $G$-module: to prove that
$M\in \cT$, we immediately reduce to the
case where $G$ is finite. Let $\bar M = M/M_\tors$. Realise $\bar
M\otimes\Q$ as a direct summand of $\Q[G]^n$ for $n$ large enough. Up to
scaling, we may assume that the image of $\bar M$ in $\Q[G]^n$ is
contained in $\Z[G]^n$ and that there exists a submodule $N$ of $\Z[G]^n$
such that $\bar M\cap N=0$ and $\bar M\oplus N$ is of finite index in
$\Z[G]^n$. This reduces us to the case where $M$ is \emph{finite}.
Moreover, we may assume that $M$ is $\ell$-primary for some prime $\ell$.

Let $S$ be a Sylow $\ell$-subgroup of $G$. Recall that there exist two
inverse isomorphisms
\begin{gather*}
\phi:\Z[G]\otimes_{\Z[S]} M\iso \Hom_{\Z[S]}(\Z[G],M)\\
\phi(g\otimes m)(\gamma) =
\begin{cases} 
\gamma g m&\text{if $\gamma g\in S$}\\
0&\text{if $\gamma g\notin S$.}
\end{cases}\\
\psi:\Hom_{\Z[S]}(\Z[G],M)\iso \Z[G]\otimes_{\Z[S]} M\\
\psi(f)=\sum_{g\in S\backslash G}g^{-1}\otimes f(g).
\end{gather*}

On the other hand, we have the obvious unit and counit
homomorphisms
\begin{gather*}
\eta:M\to \Hom_{\Z[S]}(\Z[G],M)\\
\eta(m)(g)=gm\\
\epsilon: \Z[G]\otimes_{\Z[S]} M\to M\\
\epsilon(g\otimes m)= gm.
\end{gather*}

It is immediate that 
\[\epsilon\circ \psi\circ \eta = (G:S).\]

Since $(G:S)$ is prime to $\ell$, this shows that $M$ is a direct summand
of the induced module $\Z[G]\otimes_{\Z[S]} M\simeq
\Hom_{\Z[S]}(\Z[G],M)$. But it is well-known (see \eg \cite[\S 8.3, cor. to Prop. 26]{serrerep})
that $M$, as an $S$-module, is a successive extension of trivial
$S$-modules. Any trivial torsion $S$-module has a length $1$ resolution
by trivial torsion-free $S$-modules. Since the ``induced module" functor
is exact, this concludes the proof.
\end{proof}

\subsection{Cartier duality and $1$-motives with cotorsion}\label{1.8} Recall that the Cartier duality between tori and lattices extends to a perfect duality between discrete group schemes and groups of multiplicative type. We now extend this to the framework of $1$-motives,  introducing a new
category ${}_t\M$:

\begin{defn}\label{1cot}
We denote by ${}_t\M^\eff$ \index{${}_t\M$, ${}_t\M^\eff$} the full subcategory of $\cM_\sab$ of complexes of group schemes $[L\to G]$ such that $L$ is a lattice and by ${}_t\M$ its localisation with respect to quasi-isomorphisms. An object of ${}_t\M$ is called a \emph{$1$-motive with cotorsion}. 
\end{defn}

We shall need the following lemma:

\begin{lemma} \label{em} For any $G\in \cG_\sab$, $G_\red$ is an extension of an abelian variety by a group of multiplicative type.
\end{lemma}

\begin{proof} We may assume $G$ reduced. Let $G'=G^0\subseteq G$ be the connected component of the identity. The quotient $G/G'$ is finite and \'etale. Let $n$ be its order. If $G'$ is an extension of an abelian variety $A$ by a torus $T$, then multiplication by $n$ on $G/T$ induces an epimorphism $G/T\onto A$. Composing with $G\to G/T$, we get an epimorphism $G\onto A$, whose kernel is an extension of a finite \'etale group by $T$.
\end{proof}

Recall that Deligne \cite[\S 10.2.11-13]{D} (\cf \cite[1.5]{BSAP})
defined a self-duality on the category $\M$, that he called
\emph{Cartier duality}. The following facts elucidate the introduction
of the category ${}_t\M$.

\begin{lemma}\label{brst} Let $\Gamma$ be a $k$-group of multiplicative type,
$L$ its Cartier dual and $A$ a $k$-abelian
variety. We have a Galois-equivariant isomorphism
$$\tau:\Ext(A, \Gamma)\iso \Hom(L, \Pic^0(A))$$
given by the canonical ``pushout'' mapping. Here the $\Hom$ and $\Ext$ are computed in the category $\cG_\sab(\bar k)$.
\end{lemma}

\begin{proof} Displaying $L$ as an extension of $L_{\fr}$ by $L_{\tor}$
denote the corresponding torus by $T \df \Hom (L_{\fr},\G_m)$ and let $F\df \Hom
(L_{\tor},\G_m)$ be the dual finite group. We obtain a map of short exact sequences
\[\begin{CD}
0\to \Ext (A, T)&\to& \Ext (A, \Gamma)&\to& \Ext (A, F)\to 0\\
@V{\tau_{\fr}}VV @V{\tau}VV  @V{\tau_{\tor}}VV\\
0\to \Hom (L_{\fr}, \Pic^0(A))&\to& \Hom (L, \Pic^0(A))&\to&\Hom
(L_{\tor}, \Pic^0(A))\to 0.
\end{CD}\]

Now $\tau_{\fr}$ is an isomorphism by the classical Weil-Barsotti
formula, \ie $\Ext (A, \G_m)\cong \Pic^0(A)$, and $\tau_{\tor}$ is an
isomorphism since the N\'eron-Severi group of $A$ is free: $\Hom
(L_{\tor}, \Pic^0(A)) = \Hom (L_{\tor}, \Pic (A)) = H^1(A, F) = \Ext (A,
F)$ (\cf \cite[III.4.20]{MI}).
\end{proof}

\begin{lemma}\label{dualt}  Cartier duality on $\M(k)$
extends to a contravariant additive functor 
\[
(\ \ )^*:{}^t\M^\eff(k) \to {}_t\M^\eff(k)
\]
which sends a \qi to a \qi
\end{lemma}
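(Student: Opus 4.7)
\emph{Plan.} The construction proceeds in three stages: defining the functor on objects, extending to morphisms while checking agreement with Deligne's original construction, and finally verifying that quasi-isomorphisms are preserved.

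For the object level, given $M=[L\by{u} G]\in{}^t\M^\eff$ with canonical extension $0\to T\to G\to A\to 0$, I would put $M^*=[L^*\by{u^*} G^*]$ where $L^*:=\Hom(T,\G_m)$ (a lattice, since $T$ is a torus), and where $G^*$ is the extension of $A^*=\Pic^0(A)$ by the multiplicative-type group $D(L):=\shom(L,\G_m)$ whose class corresponds via Lemma~\ref{brst} to the composition $v:L\to G\to A\in\Hom(L,A)=\Hom(L,\Pic^0(A^*))$. For the map $u^*$, note that $\Ext(L^*,D(L))=0$ (because $L^*$ is a free lattice), so the arrow $L^*\to A^*$ induced by the extension $G$ via the Weil--Barsotti formula $\Ext(A,T)\cong\Hom(L^*,A^*)$ admits a lift to $G^*$; the specific lift $u^*$ is the one pinned down by the trivialization of the Poincar\'e biextension encoded by the $1$-motive structure of $M$ (equivalently, by the extra data of $u$ itself beyond its image $v$ in $A$).

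For functoriality, a morphism $(f,g):M\to M'$ restricts to $T\to T'$, inducing $L'^*\to L^*$, and projects to $A\to A'$, inducing $A'^*\to A^*$; together with the dual map $D(L')\to D(L)$ induced by $f$, I would use the naturality of Lemma~\ref{brst} and of the Poincar\'e biextension to obtain an induced morphism $G'^*\to G^*$ compatible with all the data, and then check that the lifts $u'^*$ and $u^*$ match. When $L$ is torsion-free, $D(L)$ is a torus, $G^*$ is semi-abelian, and the construction reduces tautologically to Deligne's Cartier duality on $\M$ \cite[\S 10.2.11--13]{D}, because in both cases the outcome is uniquely determined by the underlying symmetric biextension data.

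For quasi-isomorphisms, regard $[L\to G]$ as a $2$-term complex of fppf sheaves: a quasi-isomorphism in ${}^t\M^\eff$ is a map inducing isomorphisms on $\ker u$ and $\coker u$. I would verify the statement by expressing $\ker u^*$ and $\coker u^*$ in terms of the data $(L,T,A,v)$ via the defining extension $0\to D(L)\to G^*\to A^*\to 0$ and tracing that the induced maps on these pieces are isomorphisms; concretely, this reduces to exactness of $D(-)=\shom(-,\G_m)$ on finite/discrete $k$-groups (Pontryagin/Cartier duality) together with standard functoriality of $A\mapsto A^*$ and Weil--Barsotti. The main obstacle, and the technical heart of the argument, is exactly this last step: one must carefully control how the quasi-iso condition --- which involves \emph{both} the failure of $u$ to be injective (torsion in $L$ vanishing in $G$) \emph{and} the cokernel --- dualises across a contravariant construction that swaps the roles of $L$ and $T$, so that torsion in $L$ produces a non-connected multiplicative part of $G^*$ (and vice versa for cotorsion). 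Once this bookkeeping is set up symmetrically via the Poincar\'e biextension, the verification becomes essentially formal.
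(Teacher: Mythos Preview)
Your construction is correct and lands on the same object as the paper, but the paper's definition of $u^*$ is cleaner and avoids the vagueness in your ``lift, then pin down'' step. Rather than first producing the map $L^*=\Hom(T,\G_m)\to A^*$ from Weil--Barsotti and then arguing that the ambiguity in lifting to $G^*$ is resolved by ``the trivialization of the Poincar\'e biextension encoded by $M$'' (a phrase you would still need to make precise in the torsion setting), the paper simply applies $\Hom(-,\G_m)$ to the short exact sequence of complexes $0\to[0\to T]\to M\to[L\by{\bar u} A]\to 0$ and takes $u^*$ to be the resulting boundary map $\Hom(T,\G_m)\to\Ext([L\by{\bar u} A],\G_m)=G^{\bar u}(k)$; this is canonical on the nose and manifestly depends on all of $u$, not just its projection $v$ to $A$. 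For the \qi statement, your abstract plan via $\ker u^*$ and $\coker u^*$ is sound, but the paper again proceeds more concretely: given a \qi $M\onto M'$ with kernel $[F=F]$, it writes down an explicit pushout diagram relating the two boundary maps, from which the \qi property of $(M')^*\to M^*$ is immediate. Both routes work; the paper's buys you a shorter argument with no appeal to biextension trivializations.
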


\begin{proof} The key point is that
$\Ext_{\cG_\sab(\bar k)}(- , \G_m)$ vanishes on discrete group schemes (\cf \cite[III.4.17]{MI}). All $\Hom$ and $\Ext$ in this proof are computed either in $\cG_\sab(\bar k)$ or in $\cM_\sab(\bar k)$.

To define the functor, we proceed  as Deligne, \loccit (see also 
\cite[1.5]{BSAP}): starting with $M=[L\by{u}
A]\in {}^t\M^\eff(k)$, for $A$ an abelian variety, let $G^u$ be the extension of the dual abelian
variety $A^*$ by the Cartier dual $L^*$ of $L$ given by Lemma
\ref{brst} (note that $G^u\in \cG_\sab(k)$). We define $M^{*} = [0\to G^u]\in {}_t\M^\eff(k)$. For a general
$M=[L\by{u} G]\in {}^t\M^\eff(k)$, with $G$ an extension of $A$ by $T$, the
extension $M$ of $[L\by{\bar u} A]$ by the toric part $[ 0\to T]$ provides
the corresponding extension
$G^{\bar u}$ of $A'$ by $L^*$ and a boundary map
$$u^{*}:\Hom (T, \G_m)\to \Ext ([L\by{\bar u} A],[0\to \G_m]).$$ 

Lemma \ref{brst} identifies the right hand side with $G^{\bar u}(\bar k)$; this defines $M^*\in {}_t\M^\eff(k)$. This construction is clearly contravariant in $M$.

For a quasi-isomorphism $M=[L\by{u} G]\onto M'=[L'\by{u'} G']$ with kernel $[F\by{=}F]$ for a finite
group $F$, \cf \eqref{qi1mot}, the quotient $[L\by{\bar u} A]\onto
[L'\by{\bar u'} A']$ has kernel
$[F\onto F_A]$ where $F_A\df \ker (A\onto A')$. The commutative diagram of exact sequences
\[\begin{CD}
0\to \Hom (T', \G_m)&\to& \Hom (T, \G_m)&\to&\Hom (F_T,\G_m)\to 0\\
@V{(u')^*}VV @V{u^*}VV  @V{\veq}VV\\
0\to \Ext ([L'\by{\bar u'} A'],\G_m)&\to& \Ext ([L\by{\bar u} A],\G_m)
&\to&\Ext ([F\onto F_A],\G_m)\to 0
\end{CD}\]
is then equivalent to an exact sequence in $\cM_\sab$ 
\[0\to {M'}^*\to M^*\to [F'=F']\to 0\]
with $F'$ the Cartier dual of $F_T\df \ker (T\onto T')$.  
\end{proof}

\begin{propose}\label{pcd} a) The functor of Lemma \ref{dualt} induces an
anti-equivalence of categories
\[(\ \ )^*:{}^t\M[1/p] \iso {}_t\M[1/p].\]
b) The category ${}_t\M[1/p]$ is abelian; the functor of a)  and its quasi-inverse are exact.\\
c) Cartier duality on $\M[1/p]$ is an exact functor, hence induces a
triangulated self-duality on $D^b(\M[1/p])$.  
\end{propose}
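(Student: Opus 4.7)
The plan is to construct a dual functor $(\ )^*:{}_t\M^\eff\to{}^t\M^\eff$ by the formally symmetric prescription: start with $N=[L'\xrightarrow{v}G']\in{}_t\M^\eff$, write $G'$ as an extension $0\to\Gamma\to G'\to A'\to 0$ with $\Gamma$ of multiplicative type, let $\Gamma^*$ be its (discrete) Cartier dual and ${L'}^*$ the torus dual to the lattice $L'$, and let $G^\sharp$ be the extension of ${A'}^*$ by ${L'}^*$ that Lemma \ref{brst} associates to $\bar v:L'\to A'$. One then defines $N^*\in{}^t\M^\eff$ by the boundary map $v^*:\Gamma^*\to G^\sharp(k)$ coming from the extension class of $G'$. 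This is literally the same formula as in Lemma \ref{dualt} with the roles of the discrete part and the multiplicative part interchanged, so the verifications that $(\ )^*$ is additive and sends quasi-isomorphisms to quasi-isomorphisms are identical. Deligne's classical biduality for the pure pieces (Cartier duality for abelian varieties, lattices and groups of multiplicative type), together with the naturality of the isomorphism $\tau$ of Lemma \ref{brst} under push-out and pull-back, then supplies natural isomorphisms $(\ )^{**}\simeq\mathrm{id}$ on both ${}^t\M[1/p]$ and ${}_t\M[1/p]$, proving part (a).

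Part (b) is then formal: ${}^t\M[1/p]$ is abelian by Theorem \ref{1mtora}, so ${}_t\M[1/p]$ inherits an abelian structure through the anti-equivalence of (a), and any additive anti-equivalence between abelian categories is automatically exact (it converts kernels into cokernels and vice versa).

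For part (c), restrict $(\ )^*:{}^t\M[1/p]\to{}_t\M[1/p]$ to the full subcategory $\M[1/p]\subset{}^t\M[1/p]$ of \eqref{fullem}: if $M=[L\xrightarrow{u}G]\in\M[1/p]$, then $L$ is torsion-free, so $L^*$ is a torus, hence the multiplicative part of $M^*$ is a torus and $M^*$ lies in the analogous full subcategory $\M[1/p]\subset{}_t\M[1/p]$. Under these embeddings the functor $(\ )^*$ is canonically identified with Deligne's classical Cartier duality $\M[1/p]\to\M[1/p]$. Since $(\ )^*$ is exact on ${}^t\M[1/p]$ by (b) and the embedding $\M[1/p]\hookrightarrow{}^t\M[1/p]$ is an exact embedding by definition of the exact structure on $\M[1/p]$ (Lemma \ref{lexact}), the composite $\M[1/p]\hookrightarrow{}^t\M[1/p]\xrightarrow{*}{}_t\M[1/p]$ is exact; the symmetric embedding $\M[1/p]\hookrightarrow{}_t\M[1/p]$ reflects short exact sequences, because the two induced exact structures on $\M[1/p]$ both coincide with the intrinsic one of Definition \ref{dexact} in terms of acyclicity of the total complex in $\HI_\et[1/p]$ (Proposition \ref{pexact}). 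Hence classical Cartier duality is exact on $\M[1/p]$. An exact contravariant functor between exact categories extends uniquely to a triangulated contravariant functor on bounded derived categories, and the classical biduality $M^{**}\simeq M$ promotes it to a self-duality on $D^b(\M[1/p])$.

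The main obstacle I anticipate is the careful bookkeeping in (a): one must verify that the boundary map $u^*$ in Lemma \ref{dualt} is correctly inverted by the dual construction and that the natural isomorphism $N\simeq N^{**}$ can be written down coherently despite the interplay between $u^*$ and the extension class it comes from, which requires invoking $\tau$ several times and keeping track of the associated pull-backs and push-outs. Once this is done, the remainder of the argument is essentially a matter of recognising the appropriate formal consequences of having an anti-equivalence of abelian categories that restricts nicely to the relevant exact subcategories.
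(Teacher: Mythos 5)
Your strategy for (a) --- build an explicit quasi-inverse $(\ )^*:{}_t\M^\eff\to{}^t\M^\eff$ by the symmetric formula and then invoke biduality --- is genuinely different from the paper's, which instead verifies essential surjectivity, faithfulness and fullness of the forward functor directly. Unfortunately there is a real gap in your construction of the quasi-inverse. In ${}^t\M^\eff$ the group $G$ is \emph{connected}, so the filtration $T\subseteq G$ is canonical and every morphism preserves it; that is why Lemma \ref{dualt} gives a functor without further ado. In ${}_t\M^\eff$ this fails: the category is a \emph{full} subcategory of $\anM^\eff$, so its morphisms are arbitrary maps of complexes of group schemes, and the extension $0\to\Gamma\to G'\to A'\to 0$ is neither unique nor preserved by morphisms. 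Concretely, for an elliptic curve $E$ the object $[0\to E]$ admits both the decomposition $\Gamma=0$, $A'=E$ and $\Gamma={}_nE$, $A'=E/{}_nE\cong E$; and a map $g:G'_1\to G'_2$ need not send $\Gamma_1$ into $\Gamma_2$, since $\pi_0(\Gamma_1)$ can map non-trivially to $A'_2$. So your ``dual'' construction does not visibly define a functor on ${}_t\M^\eff$, and the claim that ``the verifications \dots are identical'' to Lemma \ref{dualt} conceals precisely the hard step. (This is the issue that the paper's fullness argument resolves, by passing to $A'_2=A'_0/\mu$ with $\mu=\im(f_G(\Gamma_1)\to A'_0)$; Remark \ref{r1.8.6}(2) also flags it by introducing the auxiliary category ${}_t\tilde\M^\eff$ in which the extension is recorded as part of the data.) To salvage your route you would either need to define the quasi-inverse on ${}_t\tilde\M^\eff$ and prove that the induced functor to ${}_t\M[1/p]$ is an equivalence, or show directly that the construction is independent of the chosen decomposition up to canonical quasi-isomorphism and is compatible with all morphisms --- neither of which is ``literally the same'' argument as Lemma \ref{dualt}. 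Parts (b) and (c) are fine: (b) is the same formal deduction as the paper's, and your treatment of (c), identifying both exact structures restricted to $\M[1/p]$ with the intrinsic one of Definition \ref{dexact} via Proposition \ref{pexact}, is a legitimate (and slightly more explicit) unwinding of what the paper says in one sentence.
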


\begin{proof} a)  The said functor exists by Lemma \ref{dualt}, and it is clearly additive.
Let us prove that it is i) essentially surjective, ii) faithful and iii) full.

i) Let $M=[L\to G]\in {}_t\M^\eff[1/p]$. Since $G/G_\red$ is killed by a power of $p$, the map $[L\to G_\red]\to [L\to G]$ is an isomorphism, hence we may assume $G$ reduced.  Applying Lemma \ref{em} to $G$ and proceeding as in the proof of Lemma \ref{dualt}, we construct an object $N\in {}^t\M^\eff[1/p]$, and a direct computation shows that $N^*$ is quasi-isomorphic to $M$. (Note that $N$ is well-defined only up to a \qi because we have to make a choice when applying Lemma \ref{em}.)

ii) We reduce to show that the functor of Lemma \ref{dualt} is faithful by using that the duals of Propositions
\ref{B1.1} and \ref{calfrac} are true in ${}_t\M^\eff[1/p]$ (dual proofs). By additivity, we need to prove that if
$f:M_0\to M_1$ is mapped to $0$, then $f=0$. But, by construction, $f^*$ sends the
mutiplicative type part of $M_1^*$ to that of $M_0^*$.

iii) Let $M_0=[L_0\to G_0]$, $M_1=[L_1\to G_1]$ in $ {}^t\M^\eff[1/p]$, and let $f:M_1^*\to M_0^*$
be (for a start) an effective map. We have a diagram
\[\begin{CD}
0@>>> \Gamma_1@>>> G'_1@>>> A'_1@>>> 0\\
&& && @V{f_G}VV\\
0@>>> \Gamma_0@>>> G'_0@>>> A'_0@>>> 0
\end{CD}\]
where $M_i^*=[L'_i\to G'_i]$, $A'_i$ is the dual of the abelian part of $M_i$ and $\Gamma_i$
is the dual of $L_i$. If $f_G$ maps $\Gamma_1$ to $\Gamma_0$, using Lemma \ref{brst} we get an
(effective) map $g:M_0\to M_1$ such that $g^*=f$. In general we reduce to this case: let $\mu$
be the image of $f_G(\Gamma_1)$ in $A'_0$: this is a finite group. Let now $A'_2=A'_0/\mu$, so
that we have a commutative diagram
\[\begin{CD}
0@>>> \Gamma_0@>>> G'_0@>>> A'_0@>>> 0\\
&& @VVV @V{||}VV @VVV\\
0@>>> \Gamma_2@>>> G'_0@>>> A'_2@>>> 0
\end{CD}\]
where $\mu=\ker(A'_0\to A'_2)=\coker(\Gamma_0\to \Gamma_2)$. By construction, $f_G$ induces
maps $f_\Gamma:\Gamma_1\to \Gamma_2$ and $f_A:A'_1\to A'_2$.

Consider the object $M_2=[L_2\to G_2]\in {}^t\M^\eff[1/p]$ obtained from  $(L'_0,\Gamma_2,A'_2)$
and the other data by the same procedure as in the proof of Lemma \ref{dualt}. We then have a
\qi $s:M_2\to M_0$ with kernel $[\mu = \mu]$ and a map $g:M_2\to M_1$ induced by
$(f_L,f_\Gamma,f_A)$, and $(gs^{-1})^*=f$. 

If $f$ is a \qi, clearly $g$ is a \qi; this concludes the proof of fullness.

b) Since ${}^t\M[1/p]$ is abelian, ${}_t\M[1/p]$ is abelian by a). Equivalences of abelian categories
are automatically exact.

c) One checks as for ${}^t\M[1/p]$ that the inclusion of $\M[1/p]$ into ${}_t\M[1/p]$ induces the exact
structure of $\M[1/p]$.  Then, thanks to b), Cartier duality preserves exact sequences of $\M[1/p]$,
which means that it is exact on $\M[1/p]$. 
\end{proof}

\begin{remark}\label{r1.8.6}  Cartier duality does not preserve the strong exact structure of
Remark \ref{rexact}. For example, let $A$ be an abelian variety, $a\in
A(k)$ a point of order $m>1$ and $B=A/\langle a\rangle$. Then the sequence
\[0\to [\Z\to 0]\by{m} [\Z\by{f} A]\to [0\to B]\to 0,\]
with $f(1) = a$, is exact in the sense of Definition \ref{dexact} but not in the sense of Remark
\ref{rexact}. However, its dual
\[0\to [0\to B^*]\to [0\to G]\to [0\to \G_m]\to 0\]
is exact in the strong sense. Taking the Cartier dual of the latter sequence, we come back
to the former.
\end{remark}

Dually to Theorem \ref{ptors}, we now have:

\begin{thm}\label{tstr}  The natural functor $\M[1/p]\to
{}_t\M[1/p]$ is fully faithful and induces an equivalence of categories
\[
D^b(\M[1/p])\iso D^b({}_t\M[1/p]).
\]
Moreover, Cartier duality exchanges ${}^t\M[1/p]$ and
${}_t\M[1/p]$ inside the derived category $D^b(\M[1/p])$.
\end{thm}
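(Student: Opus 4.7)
The plan is to transport Theorem \ref{ptors} from ${}^t\M[1/p]$ to ${}_t\M[1/p]$ via Cartier duality, using Proposition \ref{pcd}. Concretely, I would factor the natural inclusion $\iota\colon \M[1/p]\hookrightarrow {}_t\M[1/p]$, up to canonical isomorphism, as
\[
\M[1/p] \by{(\ )^*} \M[1/p]^{\op} \hookrightarrow {}^t\M[1/p]^{\op} \by{(\ )^*} {}_t\M[1/p],
\]
where the outer arrows are Cartier dualities (the first from Proposition \ref{pcd}(c), the third from the anti-equivalence of Proposition \ref{pcd}(a)--(b)), and the middle arrow is the opposite of the full embedding \eqref{fullem} furnished by Proposition \ref{free}. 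The identification $\iota\simeq (\ )^{**}$ uses that the original Cartier duality of \cite[10.2]{D} on $\M$ agrees with the restriction to $\M^{\eff}$ of the duality $(\ )^*\colon{}^t\M^\eff\to{}_t\M^\eff$ constructed in Lemma \ref{dualt}; this compatibility is immediate from the construction given there.

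Each of the three factors is fully faithful and exact: the two outer ones are (anti-)equivalences of (abelian) exact categories by Proposition \ref{pcd}, and the middle one is the full exact embedding of Proposition \ref{free}. It follows that $\iota$ is fully faithful, and that the exact structure on $\M[1/p]$ inherited from ${}_t\M[1/p]$ agrees with the one inherited from ${}^t\M[1/p]$; thus $\M[1/p]$ is an exact subcategory of ${}_t\M[1/p]$ in the sense of \S \ref{1.2}.

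Passing to bounded derived categories, each factor extends to a triangulated (anti-)equivalence: the two outer ones because they are exact equivalences of abelian categories, and the middle one by Theorem \ref{ptors}. Composing, we obtain the claimed triangulated equivalence $D^b(\M[1/p])\iso D^b({}_t\M[1/p])$ extending $\iota$.

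For the last statement, the two equivalences $D^b(\M[1/p])\iso D^b({}^t\M[1/p])$ and $D^b(\M[1/p])\iso D^b({}_t\M[1/p])$ transport the standard $t$-structures to two $t$-structures on $D^b(\M[1/p])$ with hearts ${}^t\M[1/p]$ and ${}_t\M[1/p]$ respectively. The triangulated self-anti-equivalence of $D^b(\M[1/p])$ induced by Cartier duality on $\M[1/p]$ (Proposition \ref{pcd}(c)) is, by the very factorisation above, intertwined with the anti-equivalence $(\ )^*\colon {}^t\M[1/p]\iso{}_t\M[1/p]$ of Proposition \ref{pcd}(a), and therefore exchanges these two hearts. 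The only substantive verification required is the compatibility of the two Cartier dualities recalled in the first paragraph; once this is in hand, every other step is formal.
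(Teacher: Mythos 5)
Your proof is correct and takes the same route the paper intends: the paper's one-line proof ("This follows from Theorem \ref{ptors} and Proposition~\ref{pcd}") is precisely the transport-by-Cartier-duality argument you carry out, and you correctly identify the one non-formal compatibility (Lemma \ref{dualt} restricting to Deligne's duality on $\M^\eff$) as the only thing to check.
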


\begin{proof} This follows from Theorem \ref{ptors} and Proposition~\ref{pcd}.
\end{proof}

\begin{notation}\label{not} For $C\in D^b(\M[1/p])$, we write ${}^tH^n(C)$ (\resp ${}_tH^n(C)$  \index{${}^tH^n$, ${}_tH^n$} for
its cohomology objects relative to the 
$t$-structure with heart ${}^t\M[1/p]$ (\resp ${}_t\M[1/p]$). We also write ${}^tH_n$ for
${}^tH^{-n}$ and ${}_tH_n$ for ${}_tH^{-n}$.
\end{notation}

Thus we have \emph{two} $t$-structures on $D^b(\M[1/p])$ which are exchanged by Cartier duality;
naturally, these two $t$-structures coincide after tensoring with $\Q$.
In Section \ref{homotopy}, we shall introduce a third $t$-structure, of a completely different kind: see Corollary \ref{c3.3.2}.

We shall come back to Cartier duality in Section \ref{dual}.

\subsection{How not to invert $p$}\label{alessandra1} This has been done by Alessandra
Ber\-ta\-pel\-le \cite{alessandra}. She defines a larger variant of $^t\M$ by allowing finite
connected $k$-group schemes in the component of degree $0$. Computing in the fppf topology, she
checks that the arguments provided in Appendix \ref{AppendixB} carry over in this context and
yield in particular an integral analogue to Theorem \ref{1mtora}. Also, the analogue of
\eqref{fullem} is fully faithful integrally, hence an exact structure on $\M$; she also checks
that the analogue of Theorem \ref{ptors} holds integrally. 

In particular, her work provides an exact structure on $\M$, hence an integral definition of $D^b(\M)$. One could check that this exact structure can be described \emph{a priori} using Proposition \ref{pexact} and Lemma \ref{lexact}, and working with the fppf topology.

 It is likely that the duality results of \S \ref{1.8} also extend to Bertapelle's context. See also \cite{russell}.

\section{Universal realisation}

The derived category of $1$-motives up to isogeny can be realised in
Voevodsky's triangulated category of motives.  With rational coefficients, this is part of
Voevodsky's Pretheorem 0.0.18 in \cite{V0} and claimed in \cite[Sect.
3.4, on page 218]{V}. Details of this fact appear in Orgogozo \cite{OR}. In this section we
 give a $\Z[1/p]$-integral version of this theorem, where $p$ is the exponential characteristic
of $k$, using the \'etale version of Voevodsky's category.

\subsection{Statement of the theorem}

By  \eqref{eq2.2}, any $1$-motive $M =[L\to G]$ yields a
complex of objects of $\HI_\et^s$. This defines a functor
\begin{align}
\M[1/p]&\to \DM_{-,\et}^{\eff}\label{eq8}\\
M&\mapsto \underline{M}[1/p].\notag
\end{align}

\begin{defn}\label{d2.1.1} We denote by $\DM_{\gm,\et}^\eff$ \index{$\DM_{\gm,\et}^\eff$} the thick subcategory of
$\DM_{-,\et}^{\eff}$ generated by the image of $\DM_\gm^\eff$ under the functor
\[\alpha^s:\DM_-^\eff\to \DM_{-,\et}^\eff\] 
of Corollary \ref{cD.1}.  For $n\ge 0$, we write $d_{\le n}\DM_{\gm,\et}^\eff$ for  the thick
subcategory 
\index{$d_{\le n}\DM_{\gm,\et}^\eff$} of $\DM_{\gm,\et}^\eff$ 
generated by motives of smooth varieties of dimension $\le n$ (\cf \cite[\S 3.4]{V}).
\end{defn}

\begin{thm}\label{t1.2.1} Let $p$ be the exponential characteristic of $k$. The functor
\eqref{eq8} extends to a fully faithful triangulated functor
\[T:D^b(\M[1/p])\to \DM^\eff_{-,\et}\]
where the left hand side was defined in \S \ref{1.2}. Its essential image is $d_{\le 1}\DM_{\gm,\et}^\eff$. We also have 
\[T(D^b(\cM_0[1/p]))=d_{\le 0} \DM_{\gm,\et}^\eff\]
with respect to the full embedding  $D^b(\cM_0[1/p])\into D^b(\cM_1[1/p])$ of Theorem \ref{ptors}.
\end{thm}
\index{$T$, $\Tot$}

The proof is in several steps.

\subsection{Construction of $T$}\label{orgo} We follow Orgogozo \cite{OR}.
Clearly, the embedding \eqref{eq2.2} extends to a functor
\[C^b(\M[1/p])\to C^b({\HI_\et^s}^{[0,1]}).\]

By Lemma \ref{ltot}, we have a canonical functor $C^b({\HI_\et^s}^{[0,1]})\by{\Tot}D^b({\HI_\et^s})$. To get $T$,
we are therefore left to prove

\begin{lemma}\label{l2.2.1} The composite functor
\[C^b(\M[1/p])\to C^b({\HI_\et^s}^{[0,1]})\by{\Tot}D^b({\HI_\et^s})\]
factors through $D^b(\M[1/p])$.
\end{lemma}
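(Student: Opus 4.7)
The plan is to verify that the composite functor respects the three successive quotients built into the construction of $D^b(\M[1/p])$, namely (i) inversion of $p$ in Homs, (ii) quotient by homotopies, and (iii) localisation at acyclic complexes. Each step is essentially forced by the definitions already laid out.

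First I would exploit the blanket convention stated at the end of the Introduction: from Subsection \ref{1.2} onwards, $p$ is inverted in Homs of every category built out of commutative group schemes or \'etale sheaves, so in particular $\HI_\et$ (and hence $\HI_\et^{[0,1]}$) is $\Z[1/p]$-linear. Therefore the additive functor \eqref{eq2.2}, $\M\to \HI_\et^{[0,1]}$, extends uniquely to an additive functor $\M[1/p]\to \HI_\et^{[0,1]}$, and hence to $C^b(\M[1/p])\to C^b(\HI_\et^{[0,1]})$ commuting with the $C^b(\M)\to C^b(\M[1/p])$ projection.

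Second, the functor $\Tot:C^b(\HI_\et^{[0,1]})\to D^b(\HI_\et)$ of Lemma \ref{ltot} factors through the homotopy category $K^b(\HI_\et^{[0,1]})$ (as any functor landing in a derived category does). Composing with the previous step, the composite $C^b(\M[1/p])\to D^b(\HI_\et)$ sends chain homotopies to identities and so descends to a triangulated functor $K^b(\M[1/p])\to D^b(\HI_\et)$.

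The only real content is the third step: I would check that acyclic complexes go to zero. By Definition \ref{dexact}, a complex $M^\cdot\in C(\M[1/p])$ is acyclic precisely when it satisfies condition (i) of Proposition \ref{pexact}, \emph{i.e.} when $\Tot(M^\cdot)$ is acyclic in $C(\HI_\et)$ (recall $\HI_\et$ is already $\Z[1/p]$-linear here). Such a complex therefore becomes the zero object in $D^b(\HI_\et)$, so the functor $K^b(\M[1/p])\to D^b(\HI_\et)$ kills the thick subcategory $A(\M[1/p])$. By the universal property of the Verdier localisation defining $D^b(\M[1/p])$ (Definition \ref{ddermot}), it descends uniquely to the required triangulated functor $D^b(\M[1/p])\to D^b(\HI_\et)$. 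The ``main obstacle'' is illusory: the exact structure of \S\ref{1.2} was tailor-made, via Proposition \ref{pexact}(i), to make this factorisation tautological; the lemma amounts to observing that Definition~\ref{dexact} is exactly the right one.
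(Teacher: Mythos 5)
Your proof is correct and follows essentially the same route as the paper's: factor through the homotopy category because homotopies are preserved, then kill acyclic complexes to descend through the Verdier localisation. The one point you handle slightly differently is the bridge to the definition of $D^b(\M[1/p])$: the paper invokes Lemma~\ref{l1.5.1} to identify the ``$\Tot$-acyclic'' sense of Proposition~\ref{pexact}(i) with the exact-structure sense (\cite[\S 1]{neeman}) that underlies the construction of the thick subcategory $A(\M[1/p])$, whereas you read the identification off directly from Definition~\ref{dexact}; both are fine, since Lemma~\ref{l1.5.1} is precisely what makes that reading legitimate. Your first paragraph on $p$-inversion spells out a step the paper leaves to its blanket convention, which is harmless and perhaps clarifying.
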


\begin{proof} It is a general fact that a homotopy in $C^b(\M[1/p])$ is mapped to a homotopy in
$C^b({\HI_\et^s}^{[0,1]})$, and therefore goes to $0$ in $D^b({\HI_\et^s})$, so that the functor
already factors through $K^b(\M[1/p])$. The lemma now follows from Lemma \ref{l1.5.1}.
\end{proof}

\subsection{Full faithfulness of $T$}\label{2.3} It is sufficient by Proposition \ref{p1.2} to
show that $T\otimes\Q$ and $T_\tors$ are fully faithful. 

For the first fact, we note that, using Corollary \ref{nobox},  $T\otimes\Q$ factors through the functor 
\[T_\Q:D^b(\M(k)\otimes \Q)\to \DM_{-,\et}^\eff(k;\Q)\]
studied in \cite{OR}, via the full embedding
\[\DM_{-,\et}^\eff(k;\Q)\into \DM_{-,\et}^\eff(k)\otimes \Q.\]

For the reader's convenience, we sketch the 
proof given in \cite[3.3.3 ff]{OR} that $T_\Q$ is fully faithful: it first uses the equivalence of categories
\[\DM_-^\eff(k;\Q)\iso \DM_{-,\et}^\eff(k;\Q)\]
of \cite[Prop. 3.3.2]{V}.
One then reduces to show
that the morphisms
\[\Ext^i(M,M')\to \Hom(\Tot(M),\Tot(M')[i])\]
are isomorphisms for any pure $1$-motives $M,M'$ and any $i\in\Z$.
This is done by a case-by-case inspection, using the fact
\cite[3.1.9 and 3.1.12]{V} that in $\DM_-^{\eff}(k;\Q)$
\[\Hom(M(X),C)=\HH^0_{\Zar}(X,C)\]
for a bounded complex $C$ of Nisnevich sheaves and a smooth variety $X$. The key points are that 1) for such $X$ we have
$H^i_{\Zar}(X,\G_m)=0$ for
$i>1$ and for an abelian variety $A$, $H^i_{\Zar}(X,A)=0$ for $i>0$
because the sheaf $A$ is flasque, and 2) that any abelian variety is up
to isogeny a direct summand of the Jacobian of a curve. This point will  also be used for the
essential surjectivity below.

\begin{sloppypar}
For the second fact, the argument in the proof of \cite[Prop. 3.3.3 1]{V} shows that the
functor $\DM_{-,\et}^\eff\to D^-(\Shv((\Spec k)_\et))$ which takes a complex of sheaves on
$\Sm(k)_\et$ to its restriction to $(\Spec k)_\et$ is an equivalence of categories on the
full subcategories of objects of prime-to-$p$ torsion. The conclusion
then follows from Theorem \ref{ptors} b).
\end{sloppypar}

\subsection{Gersten's principle} We want to formalise here an important computational method
which goes back to Gersten's conjecture but was put in a wider perspective and systematic
use by Voevodsky. For the \'etale topology,  it can be used as a substitute to proper base change.

\begin{propose}\label{pgersten} a) Let $C$ be a complex of presheaves with transfers on $\Sm(k)$
with homotopy invariant cohomology presheaves. Suppose that $C(K)\df \varinjlim_{k(U)=K} C(U)$
is acyclic for any function field $K/k$. Then the associated complex of Zariski sheaves $C_\Zar$
is acyclic.\\ 
b) Let $f:C\to D$ be a morphism of complex of presheaves with transfers on $\Sm(k)$
with homotopy invariant cohomology presheaves. Suppose that for any function field $K/k$,
$f(K):C(K)\to D(K)$ is a quasi-isomorphism. Then $f_\Zar:C_\Zar\to D_\Zar$ is a
quasi-isomorphism.  \\
c) The conclusions of a) and b) hold for if $C,D\in\DM_{-,\et}^\eff$ and if their hypotheses are weakened
by replacing $K$ by $K_s$, a separable closure of $K$.
\end{propose}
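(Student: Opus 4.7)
The plan is to reduce everything to a clean statement about vanishing of a single homotopy invariant presheaf with transfers. First I would address (a), from which (b) follows by applying the argument to the cone of $f$.

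For (a), since filtered colimits are exact and commute with cohomology, the hypothesis that $C(K)$ is acyclic for every function field $K$ is equivalent to $\cH^i(C)(K)=0$ for all $i$ and all $K$, where $\cH^i(C)$ denotes the cohomology presheaf. By assumption each $\cH^i(C)$ is homotopy invariant with transfers, so it suffices to prove the following: if $F$ is a homotopy invariant presheaf with transfers on $Sm(k)$ such that $F(K)=0$ for every function field $K/k$, then $F_\Zar=0$. Granting this, we conclude by the standard fact that Zariski sheafification is exact, so $\cH^i(C_\Zar)\cong (\cH^i C)_\Zar=0$, hence $C_\Zar$ is acyclic.

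The key vanishing input is Voevodsky's theorem that for a homotopy invariant presheaf with transfers $F$ on $Sm(k)$, the canonical map $F_\Zar(X)\to F(k(X))$ is injective for every smooth connected $X/k$ (equivalently, the stalks of $F_\Zar$ embed into $F$ evaluated at the residue function field). This is precisely the statement that makes ``Gersten's principle'' work, and it immediately gives $F_\Zar(X)=0$ as soon as $F(k(X))=0$ for all generic points. I would quote this from \cite{V} (where it appears for \'etale sheaves with transfers in Prop.~3.3.3, and in the underlying material on $\DM_-^\eff$).

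For part (c), the same strategy applies with the \'etale topology substituted throughout. The two ingredients to transpose are: \'etale sheafification is exact, so $\cH^i(C_\et)\cong (\cH^i C)_\et$; and the \'etale analogue of the injectivity statement, namely that for a homotopy invariant \'etale sheaf with transfers $F$ the stalk at any point of a smooth scheme embeds into $F(K_s)$, where $K_s$ is a separable closure of the corresponding residue function field. Granting this, if $F(K_s)=0$ for every function field $K/k$, then $F_\et=0$. The main obstacle throughout is precisely this stalk-injection input; once it is in place the proof is formal. One small technical point in (c), worth flagging, is that the presheaf cohomology we care about is genuinely presheaf-level; the hypothesis is used through the exactness of the filtered colimit defining $C(K_s)$ and through the fact that homotopy invariance passes to the cohomology presheaves, which is where the assumption ``$C$ has homotopy invariant cohomology presheaves'' is crucial.
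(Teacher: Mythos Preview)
Your proposal is correct and matches the paper's proof almost verbatim: reduce to a single homotopy invariant cohomology presheaf $F=H^q(C)$, invoke Voevodsky's injectivity of stalks into the generic point to conclude $F_\Zar=0$, deduce (b) from the cone of $f$, and argue similarly for (c). The one discrepancy is bibliographic: the paper cites \cite[Cor.~4.18]{V2} for the key injectivity $F(\cO_{X,x})\hookrightarrow F(K)$, not \cite[Prop.~3.3.3]{V}.
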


\begin{proof} a) Let $F = H^q(C)$ for some $q\in\Z$, and let $X$ be a smooth $k$-variety with
function field $K$. By \cite[Cor. 4.18]{V2}, $F(\cO_{X,x})\into F(K)$ for any $x\in X$, hence
$F_\Zar=0$. b) follows from a) by considering the cone of $f$. c) is seen similarly by observing that for any \'etale sheaf $F$ and any function field $K/k$, the map $F(K)\to F(K_s)$ is injective.
\end{proof}

\subsection{An important computation}\label{2.5} Recall that the category $\DM_{-,\et}^{\eff}$
is provided with a partial internal Hom denoted by $\ihom_\et$, \index{$\ihom_\et$} defined on
pairs  $(M,M')$ with $M\in\DM_{\gm,\et}^{\eff}$: it is defined analogously to the one of
\cite[Prop. 3.2.8]{V} for the Nisnevich topology.\footnote{Note that $\ihom_\et(M,M')$ really belongs to $\DM_{-,\et}^{\eff}$ by the same argument as in part (2) of the proof of Theorem \ref{pD.2}.} We need:

\begin{defn}\label{dpi0} Let $X\in \Sch(k)$. We denote by $\pi_0(X)$ the universal \'etale
$k$-scheme factorising the structural morphism $X\to \Spec k$.
\index{$\pi_0(X)$}
\end{defn}

(The existence of $\pi_0(X)$ is obvious, for example by Galois descent: see
\cite[Ch. I, \S 4, no 6, Prop. 6.5]{dem-gab}.)

\begin{remark}\label{rinv} It follows from Hensel's lemma that $\pi_0(X)\iso \pi_0(Y)$ for any nilimmersion $X\into Y$ (a closed immersion defined by a nilpotent ideal sheaf).
\end{remark}

\begin{lemma}\label{normal}  Suppose $X$ is integral, with function field $K$. Then there is a canonical epimorphism $\Spec E\to\pi_0(X)$, where $E$ is the algebraic closure of $k$ in $K$; if $X$ is normal, this is an isomorphism.
\end{lemma}

(For a counterexample when $X$ is not normal, consider the $k$-scheme $\Aff^1_E/\sim$ where $E$ is a quadratic extension of $k$ and $\sim$ pinches two conjugate $E$-rational points, e.g. the affine conic $x^2+y^2=0$ over $k=\R$.)

\begin{proof} Let $f:X\to \pi_0(X)$ be the canonical morphism. Since $X$ is connected, $f$ factors through a component $y=\Spec F$ of $\pi_0(X)$, hence $y=\pi_0(X)$ by universality. The morphism $X\to y$ is dominant so we have an inclusion of fields $k\subseteq F\subseteq K$, hence $F\subseteq E$. This provides a surjective map $\Spec E\to \pi_0(X)$. Conversely, if $X$ is normal, the universal property of normalisation implies that the structural morphism $X\to \Spec k$ factors through $\Spec E$.
\end{proof}

Let $X$ be a smooth projective
$k$-variety of dimension $d$. In \cite[Th. 4.3.2]{V}, Voevodsky defines the ``class of the diagonal''
\[\Delta_X\in \Hom_{\DM_\gm^\eff}(M(X)\otimes M(X),\Z(d)[2d])\]
by apparently relying on resolution of singularities. Since we don't want to use resolution of singularities at this stage, let us recall that $\Delta_X$ can be defined elementarily using \cite[Prop. 2.1.4]{V}, which constructs a $\otimes$-functor from the category of effective Chow motives to $\DM_\gm^\eff$ (\cf \eqref{eqvf} below). We have:

\begin{propose}\label{duality} The morphism in $\DM_\gm^\eff$
\[M(X)\to \ihom(M(X),\Z(d)[2d])\]
induced by the class $\Delta_X$ is
an isomorphism.
\end{propose}

\begin{proof} This is proven in \cite[Th. 4.3.2 and Cor. 4.3.6]{V} under resolution of singularities. In  \cite[App. B]{hk}, it is explained how to avoid resolution of
singularities: recall that it uses the rigidity of the category of Chow motives plus the cancellation theorem, which is now known over any perfect field by \cite{voecan}. 
\end{proof}

We shall apply this in the next proposition when $X$ is a curve $C$: using the functor $\alpha^s$ of Corollary \ref{cD.1}, we then get a class
\begin{equation}\label{deltaC}
\Delta_C\in \Hom_{\DM_{\gm,\et}^\eff}(M_\et(C)\otimes M_\et(C),\Z_\et(1)[2]).
\end{equation}

\begin{propose}\label{lcurve}  Let $f:C\to \Spec k$ be a smooth projective
$k$-curve. In $\DM_{-,\et}^\eff$: \\
a) There is a canonical isomorphism
\[\ihom_\et(M_\et(C),\Z_\et(1)[2])\simeq R_\et f_*\G_m[1/p][1]\]
(see Definition \ref{met} for $M_\et(C)$ and  $\Z_\et(1)$).\\
b) We have
\[R^q_\et f_*\G_m[1/p]=
\begin{cases}
R_{\pi_0(C)/k}\G_m[1/p] &\text{for $q=0$}\\
\underline{\Pic}_{C/k}[1/p] &\text{for $q=1$}\\
0&\text{else.}
\end{cases}
\]
Here, $R_{\pi_0(C)/k}$ denotes the Weil restriction of scalars from $\pi_0(C)$ to $k$.\\
c) The morphism
\[M_\et(C)\to \ihom_\et(M_\et(C),\Z_\et(1)[2])\]
induced by the class $\Delta_C$ from \eqref{deltaC} is
an isomorphism.
\end{propose}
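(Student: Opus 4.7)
\emph{Plan.} The proposition has three parts that build on each other. For (a), I would first identify $\Z(1)\simeq\G_m[1/p][-1]$ in $\DM_{-,\et}^\eff$, a standard consequence of the exactness of the Kummer sequence in the \'etale topology after inverting $p$; then $\Z(1)[2]\simeq\G_m[1/p][1]$, and the assertion reduces to the standard identification $\ihom_\et(M_\et(C),F)\simeq R_\et f_*F$ for an \'etale sheaf $F$ with transfers, which follows from the definition of $\ihom_\et$ as right adjoint to $-\otimes M_\et(C)$ together with the adjunction $f^*\dashv Rf_*$ applied to the structural morphism $f:C\to \Spec k$.

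For (b), I would compute stalkwise on $Sm(k)_\et$: the geometric stalk of $R^q_\et f_*\G_m[1/p]$ above $U\in Sm(k)$ is $H^q_\et(C_{\bar K},\G_m)[1/p]$ for $\bar K$ a separable closure of the residue field of the chosen point. Then: for $q=0$, properness of $C$ gives $\Gamma(C_{\bar K},\G_m) = (\bar K^\times)^{|\pi_0(C_{\bar K})|}$, globalising by Galois descent as the Weil restriction $R_{\pi_0(C)/k}\G_m[1/p]$; for $q=1$, Hilbert 90 gives $\Pic(C_{\bar K})[1/p]$, globalising as $\underline{\Pic}_{C/k}[1/p]$ by representability of the relative Picard scheme; for $q=2$, Tsen's theorem on the function field of $C_{\bar K}$ yields vanishing; and for $q\ge 3$, vanishing follows from the \'etale cohomological dimension of curves over separably closed fields.

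For (c), my primary plan is to invoke the Gersten principle just established in Proposition \ref{pgersten} c): both sides of the morphism are complexes of strictly homotopy invariant \'etale sheaves with transfers, so it is enough to show the map is a quasi-isomorphism on sections over $\Spec K_s$ for each function field $K$ of a smooth $k$-variety. By (a) and (b), on such a section the right-hand side takes the form of the two-term complex $[R_{\pi_0(C_{K_s})/K_s}\G_m[1/p]\to \underline{\Pic}_{C_{K_s}/K_s}[1/p]]$, and the map is the pairing attached to the diagonal $\Delta_C$, which on sections is precisely the classical Abel--Jacobi type isomorphism for the curve $C_{K_s}$. An equivalent route is to invoke Voevodsky's Poincar\'e duality for smooth projective varieties applied to $C$, which with $p$ inverted gives $M(C)\iso\ihom(M(C),\Z(1)[2])$ in $\DM_\gm^\eff$ induced by $\Delta_C$, and then to transport via the tensor functor $\alpha^*$ to $\DM_{\gm,\et}^\eff$. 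The main obstacle I foresee lies in (c): tracing through the adjunctions to confirm that the morphism of the statement, induced by $\Delta_C$, matches under (a) with the classical duality pairing; this is an adjunction chase that is routine in principle but requires some care in reconciling the two complexes on each section.
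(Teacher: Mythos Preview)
Your treatment of (a) and (b) matches the paper's: for (a) the identification $\Z_\et(1)\simeq\G_m[1/p][-1]$ plus the adjunction $f^*\dashv Rf_*$, and for (b) the stalkwise computation reducing to classical facts about curves over separably closed fields (the paper cites \cite[IX (4.5)]{sga4} and invokes Gersten's principle for $q\ge 2$).

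For (c), your \emph{alternative} route is precisely the paper's argument, but you gloss over the one nontrivial point. Transporting the Nisnevich Poincar\'e duality $M(C)\iso\ihom_\Nis(M(C),\Z(1)[2])$ along $\alpha^*$ gives $M_\et(C)\iso\alpha^*\ihom_\Nis(M(C),\Z(1)[2])$; but $\alpha^*$, while monoidal, does not a priori commute with internal Hom. What remains is to check that the comparison map
\[
\alpha^*\ihom_\Nis(M(C),\Z(1))\longrightarrow \ihom_\et(M_\et(C),\Z(1))
\]
is an isomorphism. This is exactly where the paper uses part (b): the computation of $R^q_\et f_*\G_m[1/p]$ shows that the right-hand side agrees with the left (both have the same cohomology sheaves, which lie in the Nisnevich world and are unchanged by $\alpha^*$). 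Once you say this, (c) is immediate.

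Your \emph{primary} route via Gersten is a genuine alternative in spirit, but as stated it is circular or at best underspecified: reducing to separably closed base fields still leaves you with the same duality statement for $C_{K_s}$, and ``the classical Abel--Jacobi type isomorphism'' is not a named result you can cite --- it is essentially the statement you are proving. To make this route work you would still need either the Nisnevich duality input or an explicit computation of both sides' cohomology sheaves together with a check that the diagonal map induces the right maps between them. The paper's approach avoids this by leveraging the known Nisnevich result directly.
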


This is \cite[Cor. 3.1.6]{OR} with three differences: 1) the fppf topology should
be replaced by the \'etale topology; $p$ must be inverted (\cf Corollary \ref{cD.1}); 3) the
truncation is not necessary since $C$ is a curve.

\begin{proof}  a) is the \'etale analogue of \cite[Prop. 3.2.8]{V}
since $\Z_\et(1)=\G_m[1/p][-1]$ (see Lemma \ref{Niset}) and $f^* (\G_{m, k}) =\G_{m, C}$ for
the big \'etale sites. In b), the isomorphisms for $q=0,1$ are clear; for $q>2$, we reduce by
Gersten's principle (Prop. \ref{pgersten}) to stalks at separably closed fields, and
then the result is classical
\cite[IX (4.5)]{sga4}.

It remains to prove c).  By b) and \cite[Cor. 4.2]{VL}, the natural morphism
\begin{equation}\label{curves}
\alpha^s\ihom_\Nis(M(C), \Z (1))\to \ihom_\et(\alpha^sM(C), \Z_\et (1))
\end{equation}
is an isomorphism. Hence the result by Proposition \ref{duality}.
\end{proof}

\subsection{Essential image} 
We proceed in two steps:

\subsubsection{The essential image of $T$ is contained in 
$\cT:=d_{\le 1}\DM_{\gm,\et}^\eff$}\label{s2.4.1} It is
sufficient to prove that
$T(N)\in \cT$ for
$N$ a $1$-motive of type $[L\to 0]$, $[0\to G]$ ($G$ a torus) or $[0\to
A]$ ($A$ an abelian variety). For the first type, this follows from
Proposition \ref{pperm} which even shows that $T([L\to 0])\in d_{\le 0}\DM_{\gm,\et}^\eff$. For the second type, Proposition \ref{pperm}
applied to the character group of $G$ shows that $T([0\to G])$ is
contained in the thick subcategory generated by permutation tori, which
is clearly contained in $\cT$.

It remains to deal with the third type. If $A=J(C)$ for a smooth
projective curve $C$ having a rational $k$-point $x$, then $T([0\to
A])=A[-1]$ is the direct summand of $M_\et(C)[-1]$ (determined by $x$)
corresponding to the Chow motive $h^1(C)$, so belongs to $\cT$. If $A\to
A'$ is an isogeny, then Proposition \ref{pperm} implies that
$A[-1]\in\cT$ $\iff$ $A'[-1]\in \cT$. In general we may write $A$ as the
quotient of a jacobian $J(C)$, where $C$ is an ample curve on $A$ passing through the origin if $k$ is infinite (if not, reduce to this case by the 
standard argument using transfers). Let $B$ be the connected part of the
kernel: by complete reducibility there exists a third abelian variety
$B'\subseteq J(C)$ such that $B+B'=J(C)$ and $B\cap B'$ is finite. Hence
$B\oplus B'\in \cT$, $B'\in\cT$ and finally $A\in \cT$ since it is
isogenous to $B'$.

\subsubsection{The essential image of $T$ contains $\cT$} It suffices to
show that $M_\et(X)$ is in the essential image of $T$ if $X$ is smooth
projective irreducible of dimension $0$ or $1$. Let $E$ be the field of
constants of $X$. If $X=\Spec E$, $M_\et(X)$ is the image of $[R_{E/k}\Z\to
0]$. If $X$ is a curve, we apply Proposition \ref{lcurve}: by c) it suffices to show that the
sheaves of b) are in the essential image of $T$.
We have already observed that $R_{E/k}\G_m[1/p]$ is in the essential image of $T$. We then have
a short exact sequence of \'etale sheaves
\[0\to R_{E/k} J(X)[1/p]\to \underline{\Pic}_{X/k}[1/p]\to R_{E/k}\Z[1/p]\to 0.\]

Both the kernel and the cokernel in this extension belong to the image of $T$, and the proof
is complete.
\qed

\subsection{The universal realisation functor} 

\begin{defn}\label{tot} Define the \emph{universal realisation functor}
$$\Tot : D^b(\M[1/p]) \to \DM_{\gm,\et}^{\eff}$$ \index{$T$, $\Tot$}
to be the composition of the equivalence of categories of Theorem \ref{t1.2.1} and the embedding
$d_{\le 1}\DM_{\gm,\et}^\eff\to \DM_{\gm,\et}^\eff$.
\end{defn}

\begin{remark} \label{r2.4} In view of Theorem \ref{tstr}, the equivalence of Theorem
\ref{t1.2.1} yields \emph{two} ``motivic" $t$-structures on $d_{\leq
1}\DM_{\gm,\et}^{\eff}$: one with heart ${}^t\M[1/p]$ and the other with
heart ${}_t\M[1/p]$. In the next section we shall describe a third one, the homotopy $t$-structure.
\end{remark}

\section{$1$-motivic sheaves and the homotopy $t$-structure}\label{homotopy}

In this section, we introduce the notion of $1$-motivic sheaf: they form an abelian category $\Shv_1$ which turns out to be the heart of a $t$-structure on $D^b(\M[1/p])$ induced by the homotopy $t$-structure of $\DM_{-,\et}^\eff$ via the embedding of Theorem \ref{t1.2.1}: see Theorem \ref{t3.2.3}. The main technical result, Theorem \ref{t3.12}, is that $\Shv_1$ is well-behaved under internal Ext, see also Corollary \ref{c3.13}: we shall make use of this in the next section.

A non-finitely generated version of $\Shv_1$ is studied in \cite{ABV} by completely different methods. In particular, \cite[1.3.4]{ABV} shows that any subsheaf of a $1$-motivic sheaf in this generalised sense is $1$-motivic.  This is false for $\Shv_1$, as  shown by the example $G(k)\subset \uG$ for $G$ a nonzero semi-abelian variety when $k$ is infinite: here $G(k)$ is identified to a constant subsheaf of $\uG$. (We thank the referee for pointing this out.)

Recall that we denote by $\ES$ \index{$\ES$} the category of abelian \'etale sheaves on $\Sm (k)$. From  \S \ref{soul} to \S \ref{3.7} we  work in $\ES$. The category $\EST$ only appears from \S \ref{3.9} onwards.

\subsection{Some useful lemmas} \label{soul}
This subsection is in the spirit of \cite[Ch. VII]{gacl}.
 Let $S$ be a base scheme, $G$ be a commutative $S$-group scheme, and let us write $\uG$\index{$\uG$} for
the associated sheaf of abelian groups for a so far unspecified
Grothendieck topology (\cf \ref{uG}). Let also $\cF$ be another sheaf of abelian groups.
We then have:

\begin{itemize}
\item $\Ext^1(\uG,\cF)$ (an Ext of sheaves);
\item $H^1(G,\cF)$ (cohomology of the scheme $G$);
\item $\bar H^2(G,\cF)$: this is the homology
of the complex
\[\cF(G)\by{d^1} \cF(G\times G)\by{d^2} \cF(G\times G\times G)\]
where the differentials are the usual ones.
\end{itemize}

\begin{propose}\label{p1} There is a complex
\[\Ext^1(\uG,\cF)\by{b} H^1(G,\cF)\by{c} H^1(G\times G,\cF)\]
and an injection
\[0\to \ker b\by{a} \bar H^2(G,\cF).\]
\end{propose}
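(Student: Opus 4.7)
The plan is to interpret the proposition through torsors and cocycles, following the classical pattern that relates sheaf extensions to the bar resolution of a group scheme.

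First I would define the two arrows explicitly. The map $b$ sends the class of an extension $0\to\cF\to\cE\to\uG\to 0$ to the class of the $\cF$-torsor on the scheme $G$ obtained by pulling $\cE$ back along the tautological section $\mathrm{id}_G\in\uG(G)$. The map $c$ is defined by $c(\xi)=m^*\xi-p_1^*\xi-p_2^*\xi$, where $m:G\times G\to G$ is the multiplication and $p_1,p_2$ are the projections. With these definitions, $c\circ b=0$ is immediate: the group law on $\cE$ provides, over $G\times G$, an isomorphism of $\cF$-torsors $m^*\cE\simeq p_1^*\cE+p_2^*\cE$ (Baer sum), whose class is exactly $c(b(\cE))$. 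By definition $A=\ker b$ is the group of (equivalence classes of) extensions whose associated torsor is trivial, which gives exactness at $\Ext^1(\uG,\cF)$.

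Next I would construct the injection $a$. Given $\cE\in A$, fix a trivialisation $\cE\simeq\uG\times\cF$ as $\cF$-torsor on $G$; the extension group law then has the form
\[(g_1,f_1)\cdot(g_2,f_2)=\bigl(g_1g_2,\;f_1+f_2+\phi(g_1,g_2)\bigr)\]
for some section $\phi\in\cF(G\times G)$. Associativity of $\cdot$ is exactly the cocycle condition $d^2\phi=0$, and two trivialisations differ by an element $f\in\cF(G)$, which changes $\phi$ by $d^1f$. Thus $[\phi]\in\bar H^2(G,\cF)$ is well defined and only depends on the class of $\cE$; this defines $a$, which is $\Z$-linear because Baer sum of extensions corresponds to sum of cocycles. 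Injectivity of $a$ is then straightforward: if $a(\cE)=0$ we can choose a trivialisation with $\phi=0$, so that $\cE$ is the trivial extension.

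Finally, for exactness at $H^1(G,\cF)$, I would argue in the reverse direction: given a torsor $\cP$ on $G$ with $c([\cP])=0$, choose an isomorphism $\mu:m^*\cP\iso p_1^*\cP+p_2^*\cP$. After renormalising $\mu$ using the identity section $e:\Spec k\to G$ (so that the restrictions of $\mu$ to $G\times\{e\}$ and $\{e\}\times G$ are canonical), the resulting operation $\mu$ endows $\cP$, viewed as sheaf of sets on $Sm(k)$ via its total space, with the structure of an extension of $\uG$ by $\cF$; the only nontrivial verification is associativity, which the normalisation forces thanks to the rigidity of $\cF$-torsors over $\Spec k$. The main technical obstacle I anticipate is handling this normalisation carefully enough that the bar-style cocycle $\phi$ is truly in the kernel of $d^2$ rather than only cohomologous to a cocycle; this is what makes the second sequence merely an injection into $\bar H^2$ rather than an isomorphism.
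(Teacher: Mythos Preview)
Your approach is essentially that of the paper, which is very terse: it defines $a$, $b$, $c$ and writes ``Exactness is checked by inspection.'' Your construction of $a$ via the cocycle of the group law on a trivialised $\cE$ is exactly the paper's ``defect of a sheaf-theoretic section $s:\uG\to\cE$ from being a homomorphism.''

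One correction: your final paragraph misidentifies the obstacle. The cocycle $\phi$ is \emph{automatically} in $\ker d^2$---you yourself note that associativity of the group law on $\cE$ is precisely the condition $d^2\phi=0$. The reason $a$ need not be surjective (the paper explicitly remarks that surjectivity of $a$ is unclear) is rather that the $\phi$ arising from a \emph{commutative} $\cE$ is necessarily symmetric, whereas a general bar $2$-cocycle need not be. Relatedly, in your argument for $\ker c\subseteq\im b$, normalisation at $e$ does not by itself force associativity of $\mu$: the associativity defect lives in $\cF(G\times G\times G)$ and normalisation only makes it vanish on the coordinate planes through $e$, not globally; nor do you address commutativity of the resulting group law. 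The paper, however, does not spell out this step either.
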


\begin{proof} Let us first define the maps $a,b,c$:

\begin{itemize}
\item $c$ is given by $p_1^*+p_2^*-\mu^*$, where $\mu$ is the group law of
$G$.
\item For $b$: let $\cE$ be an extension of $\uG$ by $\cF$. We have an
exact sequence
\[\cE(G)\to \uG(G)\to H^1(G,\cF).\]
Then $b([\cE])$ is the image of $1_G$ by the connecting homomorphism. Alternatively, we may
think of $\cE$ as an $\cF$-torsor over $G$ by forgetting its group structure.
\item For $a$: we have $b([\cE])=0$ if and only if $1_G$ has an antecedent
$s\in \cE(G)$. By Yoneda, this $s$ determines a section $s:\uG\to \cE$ of
the projection. The defect of $s$ to be a homomorphism gives a
well-defined element of  $\bar H^2(G,\cF)$ by the usual cocycle
computation: this is $a([\cE])$. 
\end{itemize}

The map $a$ is clearly injective.  We have $c\circ b=0$ because $p_1^*+p_2^*=\mu^*$ as maps $\Ext(\uG,\cF)\to \Ext(\uG\times \uG,\cF)=\Ext(\uG,\cF)\oplus \Ext(\uG,\cF)$.
\end{proof}

\begin{propose}\label{p2} a) Suppose that the map
\[\cF(G)\oplus\cF(G)\vlongby{(p_1^*,p_2^*)}\cF(G\times G)\]
is surjective. Then $ \bar H^2(G,\cF)=0$.\\
b) Suppose that  $\cF(S)\iso \cF(G^r)$ for $r=1,2$. Then the condition of a) is satisfied and 
the complex of Proposition \ref{p1} is acyclic.
\end{propose}

\begin{proof} a) Let $\gamma\in \cF(G\times G)$ be a $2$-cocycle. We may
write $\gamma=p_1^*\alpha+p_2^*\beta$. The cocycle condition implies that
$\alpha$ and $\beta$ are constant. Hence $\gamma$ is constant, and it is
therefore a $2$-coboundary (of itself).

b) The first assertion is clear. The second one follows from a direct computation identical to
the one in \cite[Ch. VIII, \S 3, no 15, proof of Th. 5]{gacl}.
\end{proof}

\begin{example}\label{ex3.1.4}
$\cF$ locally constant, $G$ smooth connected, the topology = the \'etale topology. Then
the conditions of Proposition \ref{p2} are verified. We thus get an isomorphism
\[\Ext^1(\uG,\cF)\iso H^1_\et(G,\cF)_{mult}\]
with the group of multiplicative classes in $H^1_\et(G,\cF)$.
\end{example}

\begin{lemma}\label{c3.2} Let $G$ be smooth connected and $L$ be a
locally constant $\Z$-constructible \'etale sheaf. Then  $\shom_\ES(\uG,L)=0$ and $\Hom_\ES(\uG,L)=0$. If $L$ has torsion-free
geometric fibres, then $\sext^1_\ES(\uG,L)=0$ and $\Ext^1_\ES(\uG,L)=0$.
\end{lemma}

\begin{proof} By the local to global $\Ext$ spectral sequence, it suffices to show the statement for $\shom$ and $\sext^1$. This reduces us to the case $L=\Z$ or $\Z/m$. Then the first
vanishing is obvious and the second follows from Example \ref{ex3.1.4} and the vanishing of  $H^1_\et(G\times U,\Z)$ for any smooth $k$-scheme $U$ (see \cite[IX 3.6]{sga4}). 
\end{proof}

\begin{lemma}\label{trans} For $G$ over $S =\Spec (k)$
let  $\cE\in \Ext^1_\ES(\uG,\G_m)$ and $g\in G(k)$. Denote by
$\tau_g$ the left translation by $g$. Then $\tau_g^*b(\cE)=b(\cE)$. Here $b$ is the map of
Proposition \ref{p1}.
\end{lemma}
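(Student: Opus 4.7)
The plan is to use the \emph{multiplicativity} of $\cE$ coming from the fact that it is an extension of $k$-group objects, and then specialize to a translation by pulling back to a fiber.

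First, I would recall that the group structure on $\cE$ lifts the multiplication $\mu : G \times G \to G$, i.e.\ there is a multiplication map $m_\cE : \cE \times \cE \to \cE$ over $\mu$. Because $\cE$ is $\G_m$-equivariant on the left (say), this is equivalent to giving a canonical isomorphism of $\G_m$-torsors
\[
p_1^* \cE \otimes p_2^* \cE \iso \mu^* \cE
\]
on $G \times G$. In terms of classes in $H^1(G \times G, \G_m)$, this is precisely the statement $c(b(\cE)) = 0$ already used in Proposition~\ref{p1} (indeed it is the multiplicativity appearing in Example~\ref{ex3.1.4}), but the key point I need is the isomorphism itself, not just the vanishing of its class.

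Next I would consider the morphism $i_g : G \to G \times G$, $x \mapsto (g,x)$, and observe the identities
$p_1 \circ i_g = g$ (the constant map through $g \in G(k)$), $p_2 \circ i_g = \mathrm{id}_G$, and $\mu \circ i_g = \tau_g$. Pulling back the multiplicativity isomorphism along $i_g$ yields a canonical isomorphism of $\G_m$-torsors on $G$:
\[
g^* \cE \otimes \cE \iso \tau_g^* \cE.
\]
In $H^1(G, \G_m)$ this reads $\tau_g^* b(\cE) = g^* b(\cE) + b(\cE)$. But $g^* \cE$ is pulled back from $\Spec k$, so its class lies in the image of $H^1(\Spec k, \G_m) \to H^1(G, \G_m)$; by Hilbert 90 the former group vanishes (we are working étale-locally; the same works for the Zariski or fppf topologies via the fact that any $\G_m$-torsor on a field is trivial). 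Hence $g^* b(\cE) = 0$ and we conclude $\tau_g^* b(\cE) = b(\cE)$.

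There is no serious obstacle: the only thing one must be a little careful about is making precise that the group structure on $\cE$ really does produce the multiplicativity isomorphism of $\G_m$-torsors used above, but this is just the usual dictionary between central extensions by $\G_m$ and multiplicative line bundles (or $\G_m$-torsors with a normalized cocycle). One could equivalently phrase the whole argument at the cocycle level via the defect section $a$ of Proposition~\ref{p1}, but the torsor-theoretic formulation above is cleaner.
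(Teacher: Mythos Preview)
Your proof is correct. The paper's argument is shorter but rests on the same two ingredients (the group law on $\cE$ and Hilbert~90): by Hilbert~90 one lifts $g$ to some $e\in\cE(k)$, and then left translation $\tau_e:\cE\to\cE$ is a $\G_m$-equivariant map covering $\tau_g$, hence a morphism of $\G_m$-torsors $b(\cE)\to\tau_g^*b(\cE)$, automatically an isomorphism. Your route unpacks the same translation through the multiplicative-torsor isomorphism $p_1^*\cE\otimes p_2^*\cE\iso\mu^*\cE$ and then specialises along $i_g$; this makes the role of the cocycle condition $c(b(\cE))=0$ from Proposition~\ref{p1} more visible, at the cost of a few extra lines. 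Either way the content is the same: a choice of $e$ over $g$ trivialises $g^*\cE$, and the group law turns that trivialisation into the desired isomorphism $\cE\iso\tau_g^*\cE$.
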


\begin{proof} By Hilbert's theorem 90, $g$ lifts to an $h\in \cE(k)$. Then $\tau_h$ induces a
morphism from the $\G_m$-torsor $b(\cE)$ to the $\G_m$-torsor $\tau_g^*b(\cE)$: this morphism
must be an isomorphism.
\end{proof}

\subsection{Breen's method} For the proof of Theorems \ref{text} and \ref{t3.12} below we shall need the following
proposition, which unfortunately cannot be proven with the above elementary methods. 

\begin{propose}\label{ptorsion} Let $G$ be a smooth commutative algebraic $k$-group.  Let $\cF\in \ES$; assume that $\cF$ is 
\begin{thlist}
\item discrete or
\item  represented by a semi-abelian variety. 
\end{thlist}
Then, \\
a) the group $\Ext^i_\ES(\uG,\cF)$
is torsion  for any $i\ge 2$,\\
b) the sheaf $\sext^i_\ES(\uG,\cF)$ is $p$-torsion  for any $i> 2$.
\end{propose}

\begin{proof} This follows from the results and techniques of L. Breen \cite{breen}\footnote{We thank L. Illusie for pointing out this reference.}. In \cite{breen}, Breen works with the fppf topology but his methods carry over here without
any change: see remark in  \loccit top of p. 34. Case (ii) can be directly read off \cite{breen}: we explain this, then treat case (i) in some detail.

Considering the connected part $G^0$ of $G$, we reduce to the case where $G$ is connected, hence geometrically connected.  In order to prove the torsion claim of a), we may and do reduce to the case where $k$ is algebraically closed by a transfer argument; similarly for b), since this claim is local for the \'etale topology.

Suppose we are in case (ii). Using Chevalley's structure theorem for algebraic groups, we reduce to the cases where $G$ is of the form $\G_a$, $\G_m$ or an abelian variety and $\cF$ is represented by $\G_m$ or an abelian variety. If  $\cF$ is represented by $\G_m$, then a) follows from \cite[\S 7]{breen} if $G$ is an abelian variety (\resp from \cite[\S 8]{breen} if $G=\G_a$ or $\G_m$); if $\cF$ is represented by an abelian variety, it follows from \cite[\S 9]{breen} (even for $i\ge 1$). This proves a) in this case. 

For b), we use the fact that a) is true over any regular Noetherian base $S$: this is explicitly written in \cite{breen}, except in the case where $\cF$ is represented by an abelian variety (\loccit, \S 9). In this case however, Breen uses rigidity results for an abelian variety over a field, which can be extended over such an $S$ by using \cite[p. 116, Cor. 6.2]{GIT} (or obtained directly, as our  $S$-group schemes come from $k$-group schemes). This shows that the sheaf $\sext^i_\ES(\uG,\cF)$ is torsion for $i\ge 2$. To get the $p$-torsion statement, we argue as in \cite[\S 10]{breen}, using the torsion subsheaves ${}_m \uG$ of $\uG$ for $m$ prime to $p$.  For later use, let us repeat this argument here: since ${}_m\uG$ is locally constant for the \'etale topology and $\cF$ is $m$-divisible, we have $\sext_\ES^{j}({}_m\uG, \cF)=0$ for $j>0$ and the exact sequence
\begin{equation}\label{breen10}
\sext_\ES^{i-1}({}_m\uG, \cF)\by{\delta} \sext_\ES^i(\uG, \cF)\by{m} \sext_\ES^i(\uG, \cF)
\end{equation}
yields the claimed vanishing for $i\ge 2$.  This proves b) in case (ii).

It remains to treat Case (i). We go back to the method of \cite{breen} that we now summarise: using essentially the Eilenberg-Mac Lane spectrum associated to $\uG$, Breen gets two spectral sequences $'E_r^{p,q}$ and $''E_r^{p,q}$ converging to the same abutment, with
\begin{itemize}
\item $''E_2^{p,1}=\Ext^p_\ES(\uG,\cF)$;
\item $''E_2^{p,q}$ is torsion for $q\ne 1$;
\item $'E_2^{p,q}$ is the $p$-th cohomology group of a complex involving terms of the form $H^q_\et(G^a,\cF)$.
\end{itemize}

In case (i), it follows for example from \cite[(2.1)]{deninger} that
$H^q_\et(G^a,\cF)$ is torsion for any $q>0$: to see this easily, reduce to the case where $\cF$ is
constant by a transfer argument involving a finite extension of $k$. Hence $'E_2^{p,q}$ is
torsion for $q>0$. On the other hand, since $G$ is geometrically connected, so are its powers
$G^a$, which implies that $H^0(G^a,\cF)=H^0(k,\cF)$ for any $a$. Since the complex giving
$'E_2^{*,0}$ is just the bar complex, we get that $'E_2^{0,0} = \cF(k)$ and $'E_2^{p;0}=0$ for
$p>0$. Thus all degree $>0$ terms of the abutment are torsion, and the conclusion follows for a). For b) we argue similarly to Case (ii), replacing $i\ge 2$ by $i>2$ in \eqref{breen10}.
\end{proof}

\begin{remarks}\label{r3.2}\
\begin{enumerate}
\item The results of \S \ref{soul} could also be deduced from \cite{breen}, see its introduction.
\item Proposition \ref{ptorsion} b) is false for $i=2$, as  shown by the example $\sext^2_\ES(\G_m,\Z)[1/p]\allowbreak\simeq (\Q/\Z)'(-1)$. For this computation, note that $\Ext^1_\ES(\G_m,\Z)\allowbreak=0$ by Lemma \ref{c3.2}, but $\Ext^1_\ES(\G_m,\Z/m)\simeq \Z/m(-1)$ for $m$ prime to $p$ if $k$ is separably closed by Example \ref{ex3.1.4}.
\end{enumerate}
\end{remarks}

\subsection{$1$-motivic sheaves} 

\begin{defn}\label{d3.1} 1) An \'etale sheaf $\cF$ on $\Sm(k)$ is \emph{discrete} (or  \emph{$0$-motivic}) if 
 it is of the form $\underline{M}$, where $M\in {}^t\cM_0$ is a discrete group scheme (see Definition \ref{d1.1.1}). We write $\Shv^\star_0\subset \ES$ for the full subcategory of discrete sheaves.\\
2) An \'etale sheaf $\cF$ on $\Sm(k)$ is
\emph{$1$-motivic} if  there is a morphism of sheaves
\begin{equation}\label{2}
\uG\by{b} \cF
\end{equation}
where $G$ is a semi-abelian variety and $\ker b, \coker b$ are discrete. 
We denote by 
$\Shv^\star_1$ the full subcategory of   $\ES$ consisting of
$1$-motivic sheaves.
\end{defn}

Unfortunately, the category $\Shv^\star_1$ is not abelian if $k$ has positive characteristic $p$: multiplication by $p$ on $\G_m$ has trivial kernel and cokernel in $\Shv^\star_1$, but is not an isomorphism. For this reason, we introduce:

\begin{defn}\label{d3.1a} Let $p$ be the exponential characteristic of $k$. We write  $\Shv_0\df \Shv^\star_0[1/p]$ and $\Shv_1\df \Shv^\star_1[1/p]$.\index{$\Shv_0$, $\Shv_1$, $\Shv^\star_0$, $\Shv^\star_1$} We have a functor
\begin{align*}
\rho:\Shv_1&\to \ES\\
\cF&\mapsto \cF\otimes \Z[1/p].
\end{align*} 
\end{defn}



 We need the following lemma, in which $(-)\{p\}$ denotes $p$-primary torsion:

\begin{lemma}\label{ltorsion} Suppose that $\car k=p>0$. 
Let $\cF_1,\cF_2\in \Shv^\star_1$. Then the groups $\Hom_\ES(\cF_1,\cF_2\{p\})$, $\Hom_\ES(\cF_1,\cF_2\otimes \Q_p/\Z_p)$, $\Ext^1_\ES(\cF_1,\cF_2\{p\})$, $\Ext^1_\ES(\cF_1,\cF_2\otimes \Q_p/\Z_p)$  and $\Ext^2_\ES(\cF_1,\cF_2\{p\})$ are $p$-primary torsion.
\end{lemma}

\begin{proof} 
By d\'evissage, we reduce to the case where $\cF_1$ is discrete or is representable by a semi-abelian variety. The discrete case is easy and left to the reader. Assume $\cF_1=\uG_1$ for some semi-abelian variety $G_1$. By d\'evissage, the sheaf $\cF_2\{p\}$ is locally constant so the result follows from Example \ref{ex3.1.4}. For $\cF_2\otimes \Q_p/\Z_p$, we reduce using Proposition \ref{ptorsion} to the crucial case $\cF_2=\uG_2$. By Yoneda's lemma
\[\Hom_\ES(\uG_1,\uG_2\otimes \Q_p/\Z_p)\subset \uG_2(G_1)\otimes \Q_p/\Z_p\]
hence the first case. For the second case, we use the fact that any $k$-morphism from $G_1$ to $G_2$ is the sum of a constant morphism and a homomorphism (\eg \cite[Lemma 4.1]{toresnis}). It follows that $G_2\otimes \Q_p/\Z_p$ verifies the condition of Proposition \ref{p2} a).  Therefore $\Ext^1_\ES(\uG_1,\uG_2\otimes \Q_p/\Z_p)\subset H^1_\et(G_1,\uG_2\otimes \Q_p/\Z_p)$ and we conclude again. The case of $\Ext^2_\ES(\cF_1,\cF_2\{p\})$ follows from Proposition \ref{ptorsion} a).
\end{proof}

\begin{propose}\label{p3.1} a) In Definition \ref{d3.1} we may choose $b$ such that $\ker
b$ is torsion-free: we then say that $b$ is \emph{normalised}. We thus have an exact sequence
\[0\to L\by{a} \uG\by{b} \cF\by{c} E_1\to  0\]
with $L,E\in \Shv^\star_0$ and $L$ torsion-free.\\ 
b)  Given two $1$-motivic sheaves $\cF_1,\cF_2$, normalised morphisms
$b_i:\uG_i\to \cF_i$ and a map $\phi:\cF_1\to \cF_2$,
there exists a unique homomorphism of group schemes 
$\phi_G:G_1\to G_2$ such that the diagram
\[\begin{CD}
\uG_1@>{b_1}>> \cF_1\\
@V{\phi_G}VV@V{\phi}VV\\
\uG_2@>{b_2}>> \cF_2
\end{CD}\]
commutes.\\
c) Given a $1$-motivic sheaf $\cF$, a pair $(G,b)$ with $b$ normalised
is uniquely determined by $\cF$.\\
  d) The functor $\rho$ of Definition \ref{d3.1a} is fully faithful.\\ 
e) In view of d), the subcategories $\Shv_0$ and $\Shv_1$ of  $\ES$ are closed under kernels and cokernels; hence they are abelian subcategories, and the inclusion functors are exact.
\end{propose}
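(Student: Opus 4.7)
Plan. The strategy is that (a) is elementary, (b) is the main technical work, (c) follows immediately from (b), and (d) requires an explicit construction using (b).

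For (a), I would take $K := \ker b$ and let $F \subseteq K$ be its torsion subsheaf --- a finite \'etale subgroup of $G$; setting $G' := G/F$ (still semi-abelian) and $b'$ the induced map gives a normalised morphism with torsion-free kernel $K/F$ and unchanged cokernel. The key input for the remaining parts is the vanishing $\Hom_{\Shv}(\uG,L) = 0$ for $G$ semi-abelian and $L$ discrete: since $G$ is geometrically connected, $L(G) = L(k)$, and by Yoneda any sheaf morphism $\psi\colon \uG \to L$ is determined by $\ell := \psi_G(\mathrm{id}_G) \in L(k)$ via $\psi_X(x) = x^*\ell$ for all $x \in \uG(X)$; additivity applied to the zero section then forces $\ell = 0$.

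For (b), applying this vanishing to the composition $\uG_1 \to \cF_1 \by{\phi} \cF_2 \to \coker b_2$ shows that $\phi \circ b_1$ factors through $\im b_2 = \uG_2/K_2$. The long exact $\Ext$ sequence of $0 \to K_2 \to \uG_2 \to \uG_2/K_2 \to 0$, combined with Lemma \ref{c3.2} (which gives $\Ext^1(\uG_1,K_2) = 0$ because $K_2$ is torsion-free discrete by the normalisation of $b_2$), then produces a sheaf lift $\uG_1 \to \uG_2$ of the factored map; Yoneda yields a unique group homomorphism $\phi_G\colon G_1 \to G_2$ with $b_2 \circ \phi_G = \phi \circ b_1$. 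Uniqueness of $\phi_G$ follows from the same vanishing applied to the difference of two lifts, which lands in $K_2 \subseteq \uG_2$. Part (c) is then immediate: (b) applied to $\mathrm{id}_\cF$ between two normalised presentations of a single $\cF$ produces morphisms in both directions whose compositions are identities by the uniqueness clause.

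For (d), $\Shv_0$ is abelian because discrete (i.e., locally constant $\Z[1/p]$-constructible) sheaves are stable under kernels and cokernels in $\Shv_\et[1/p]$. For $\Shv_1$, given $\phi\colon \cF_1 \to \cF_2$ with lift $\phi_G$ from (b), I would take $G' := (\ker \phi_G)^0$ and $G'' := G_2/\phi_G(G_1)$, both semi-abelian, and check by a snake-lemma chase that the natural maps $\uG' \to \ker \phi$ and $\uG'' \to \coker \phi$ have discrete kernels and cokernels --- all arising as subquotients of $K_1$, $K_2$, $\coker b_1$, $\coker b_2$ and the component group of $\ker \phi_G$. The main nuisance will be the careful organisation of the snake chase for $\ker \phi$, once one identifies $\phi_G^{-1}(K_2) \subseteq G_1$ and quotients by the identity component $G'$.
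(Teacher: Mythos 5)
Your proposal is correct and takes essentially the same route as the paper: part (a) by dividing out the torsion of the kernel, part (b) by constructing the five-term ladder, using the vanishing $\Hom(\uG_1,E_2)=0$ to factor $\phi\circ b_1$ through $\im b_2$ and Lemma \ref{c3.2} to lift to $\uG_2$, with uniqueness from the same $\Hom$ vanishing, part (c) from (b), and part (d) by taking $(\ker\phi_G)^0$ and $\coker\phi_G$ and running a snake chase. The only slight difference is that you inline the proof of $\Hom(\uG,L)=0$ (via geometric connectedness and Yoneda), whereas the paper defers that to Proposition \ref{pi0}; the substance is the same.
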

\begin{proof}   Recall that $G\mapsto \uG$ is fully faithful when restricted to smooth $k$-group schemes (Lemma \ref{l1.4}).

a) If $\ker b$ is not torsion-free, simply divide $G$ by the image of its torsion. 

b) We want to construct a commutative diagram
\begin{equation}\label{eq4.2}
\begin{CD}
0@>>> L_1@>a_1>> \uG_1@>{b_1}>> \cF_1@>{c_1}>> E_1@>>> 0\\
&&@V{\phi_L}VV@V{\phi_G}VV@V{\phi}VV@V{\phi_E}VV\\
0@>>> L_2@>a_2>> \uG_2@>{b_2}>> \cF_2@>{c_2}>> E_2@>>> 0
\end{CD}
\end{equation}
where $L_i=\ker b_i$ and $E_i=\coker b_i$.   By Lemma \ref{c3.2}, 
$c_2\phi b_1=0$: this proves the existence of $\phi_E$. We also get a
homomorphism of sheaves $\uG_1\to \uG_2/L_2$, which lifts to
$\phi_G:\uG_1\to \uG_2$ by Lemma \ref{c3.2} again, hence $\phi_L$.

From the construction, it is clear that $\phi_E$ is uniquely determined
by $\phi$ and that $\phi_L$ is uniquely determined by $\phi_G$. It
remains to see that $\phi_G$ is unique. Let $\phi'_G$ be another choice.
Then $b_2(\phi_G-\phi'_G)=0$, hence $(\phi_G-\phi'_G)(\uG_1)\subseteq
L_2$, which implies that $\phi_G=\phi'_G$.

c) Follows from b).

d) The statement is trivial in characteristic $0$ ($p=1$). If $p>1$, we have to show that the map
\[\Hom_\ES(\cF_1,\cF_2)\otimes \Z[1/p]\by{a} \Hom_{\ES}(\rho(\cF_1),\rho(\cF_2))\]
is bijective for any  $\cF_1,\cF_2\in \Shv_1$. Considering the morphism $\cF_2\to \rho(\cF_2)$, we deduce from Lemma \ref{ltorsion} that the map
\[\Hom_\ES(\cF_1,\cF_2)[1/p]\by{b} \Hom_\ES(\cF_1,\rho(\cF_2))[1/p]=\Hom_\ES(\cF_1,\rho(\cF_2))\]
is bijective. Using now the morphism $\cF_1\to \rho(\cF_1)$,  the map
\[\Hom_\ES(\rho(\cF_1),\rho(\cF_2))\by{c} \Hom_\ES(\cF_1,\rho(\cF_2))\]
is clearly bijective. It remains to observe that $b=ca$, which is left to the reader.

e) The case of $\Shv_0$ is obvious. For $\Shv_1$, we may reduce to a map $\phi$ as in
b). We want to show that
$\cF_3=\ker \rho(\phi)$ and $\cF_4=\coker\rho(\phi)$ are in $\rho(\Shv_1)$. Let $G_3=(\ker\phi_G)^0$ and
$G_4=\coker\phi_G$: we get induced maps $b_i:\uG_i [1/p]\to \cF_i$ for $i=3,4$.
An easy diagram chase shows that $\ker b_i$ and $\coker b_i$ are both  in $\rho(\Shv_0)$. 
\end{proof}

Here is an extension of Proposition \ref{p3.1} which elucidates the
structure of $\Shv^\star_1$ somewhat:

\begin{thm}\label{t3.2.4} a) Let $\SAb$ be the
category of
semi-abelian $k$-varie\-ties (\cf Definition \ref{dsabt}). Then the fully faithful functor
\begin{align*}
\SAb&\to \Shv^\star_1\\
G&\mapsto \underline{G}
\end{align*}
has a faithful right adjoint $\gamma$; the counit of this adjunction
is given by \eqref{2} (with $b$ normalised).  The functor   $\gamma[1/p]:\Shv_1\to \SAb[1/p]$ is still faithful and right adjoint to the full embedding $\SAb[1/p]\into \Shv_1$; it is ``exact up to isogenies". For a morphism
$\phi$ of $\Shv^\star_1$, $\gamma(\phi)=\phi_G$ is an isogeny if and only if $\ker\phi$ and
$\coker\phi\in \Shv^\star_0$. In particular, $\gamma$ induces an
equivalence of categories 
\[\Shv_1/\Shv_0\iso\SAb\otimes\Q\] 
where $\SAb\otimes\Q$ is the category of semi-abelian varieties up to
isogenies.\\ 
b) The inclusion functor $\Shv^\star_0\to \Shv^\star_1$ has a left
adjoint $\pi_0$; for $\uG\by{b} \cF\in  \Shv^\star_1$ as in \eqref{2} the unit of this adjunction is given by
$\cF\to \coker b$, and $\pi_0(\cF)=  \coker b$.  The functor $\pi_0[1/p]$ is left adjoint to the inclusion $\Shv_0\into \Shv_1$. The  induced right exact functor
\[(\pi_0)_\Q:\Shv_1\to \Shv_0\otimes\Q\]
has one left derived functor $(\pi_1)_\Q$ given by $\ker b$ in \eqref{2}.
\end{thm}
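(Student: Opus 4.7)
The plan is to read off essentially everything from Proposition \ref{p3.1}, in particular the uniqueness of the normalised presentation and the functoriality of $\phi\mapsto(\phi_L,\phi_G,\phi_E)$ exhibited in the diagram \eqref{eq4.2}.

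For part (a), I will define $\gamma$ on objects by $\gamma(\cF):=G$ where $\uG\to\cF$ is a normalised presentation, and on morphisms by $\gamma(\phi):=\phi_G$. Functoriality and the fact that $\gamma$ is a left inverse to the inclusion $\SAb\hookrightarrow\Shv_1$ follow because $\id_{\uH}$ is itself the normalised presentation of $\uH$. For the adjunction, given $\psi:\uH\to\cF$, I apply \ref{p3.1}(b) with the identity presentation of $\uH$: the resulting $\psi_G:H\to\gamma(\cF)$ satisfies $b\circ\underline{\psi_G}=\psi$ and is unique, exhibiting $b$ as the counit. The isogeny characterisation comes from the snake lemma applied to the two horizontal rows $0\to L_i\to\uG_i\to\im b_i\to 0$ and $0\to\im b_i\to\cF_i\to E_i\to 0$ extracted from \eqref{eq4.2}: since $L_i$ and $E_i$ are discrete, $\ker\phi$ and $\coker\phi$ are discrete exactly when $\ker\phi_G$ and $\coker\phi_G$ are finite, i.e.\ $\phi_G$ is an isogeny. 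The same diagrammatic analysis yields exactness up to isogenies; then $\Shv_1/\Shv_0\iso\SAb\otimes\Q$ follows from the universal property of localisation, because $\gamma$ inverts precisely those morphisms that become invertible in $\SAb\otimes\Q$.

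For part (b), the adjunction is straightforward: set $\pi_0(\cF):=E=\coker b$, functorial through $\phi_E$. For any discrete $L$ and any $\psi:\cF\to L$, the composite $\psi\circ b:\uG\to L$ vanishes since $\shom(\uG,L)=0$ (semi-abelian $G$ is geometrically connected while $L$ is locally constant), so $\psi$ factors uniquely through $c:\cF\to E$. This realises $c$ as the unit, and $\pi_0(L)=L$ shows $\pi_0$ is a left inverse.

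The derived functor part is the substantial one. I will set $(\pi_1)_\Q(\cF):=\ker b$ (torsion-free discrete by the normalisation), functorial via $\phi_L$. To identify it with the first left derived functor of the right exact functor $(\pi_0)_\Q$, I plan to verify two things. First, $((\pi_0)_\Q,(\pi_1)_\Q)$ is a homological $\delta$-functor: given a short exact sequence $0\to\cF'\to\cF\to\cF''\to 0$ in $\Shv_1\otimes\Q$, the six-term exact sequence will come from the snake lemma applied to a compatible triple of normalised presentations, whose existence follows from \ref{p3.1}(b) after a rational adjustment ensuring the middle presentation surjects onto both sides. Second, $(\pi_1)_\Q$ is effaceable: $(\pi_1)_\Q(\uH)=0$ for semi-abelian $H$, and every $\cF$ rationally fits in $0\to\uG/L\to\cF\to E\to 0$ with $\uG\twoheadrightarrow\uG/L$ the required acyclic surjection. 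The resulting length-two resolution forces $L^i(\pi_0)_\Q=0$ for $i\ge 2$, so the unique nontrivial left derived functor is $(\pi_1)_\Q$. The main obstacle is setting up the $\delta$-functor axioms cleanly, since $\Shv_1\otimes\Q$ lacks enough projectives; I expect the most efficient route to be a direct verification through compatible normalised presentations, with Grothendieck's effaceability criterion then finishing the identification.
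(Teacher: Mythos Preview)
Your approach to part (a) parallels the paper's, but you gloss over what the paper singles out as ``the only delicate thing'': the exactness of $\gamma$ up to isogenies. Your two-step snake-lemma argument for the isogeny characterisation of a single morphism $\phi$ is fine, but the sentence ``the same diagrammatic analysis yields exactness up to isogenies'' is too quick. For a short exact sequence $0 \to \cF' \to \cF \to \cF'' \to 0$ one must show that $G' \to G$ is injective, $G \to G''$ is surjective, and the middle homology is finite. The paper does this by a specific chase on the three normalised presentations simultaneously: for instance, $\ker(G' \to G) = \ker(L' \to L)$ is a torsion-free discrete closed subgroup of the semi-abelian $G'$, hence zero; and surjectivity of $G \to G''$ uses that its cokernel is both divisible (as a quotient of $G''$) and a quotient of the lattice $L''$.

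For the derived-functor part of (b), the paper's route is shorter: once $\gamma$ is exact on $\Shv_1 \otimes \Q$, the snake lemma applied to the morphism of short exact sequences $b_\bullet:\gamma(\cF_\bullet)\to \cF_\bullet$ immediately gives $0 \to L' \to L \to L'' \to E' \to E \to E'' \to 0$, exhibiting $((\pi_1)_\Q, (\pi_0)_\Q)$ as a $\delta$-functor with no higher terms. No ``rational adjustment'' of presentations is needed: the normalised presentations are unique, and the exactness of the $G$-column is precisely what part (a) supplies. Your effaceability argument attempts to prove universality in Grothendieck's sense, but as written it does not work: the surjection $\uG \onto \uG/L$ lands in a \emph{subobject} of $\cF$, not onto $\cF$ itself, so it is not an acyclic cover of $\cF$. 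If you do want coeffaceability, the presentations of \S\ref{3.7} supply for every $\cF$ an epimorphism $L_0 \oplus \uG \onto \cF$ with $(\pi_1)_\Q(L_0 \oplus \uG) = 0$; but the paper itself does not invoke this.
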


\begin{proof}  For the assertions on $-[1/p]$, see Lemma \ref{lB.1}.

a) The only delicate thing is the exactness of $\gamma$ up to isogenies. This
means that, given a short exact sequence $0\to \cF'\to \cF\to \cF''\to 0$  in $\Shv_1$, the sequence
\[0\to \gamma(\cF')[1/p]\to \gamma(\cF)[1/p]\to\gamma(\cF'')[1/p]\to 0\]
is half exact and the middle homology is finite.  We may reduce to the case where the maps come from $\Shv^\star_1$;  then this follows from a chase in the diagram
\[\begin{CD}
0@>>> L'@>a'>> \uG'@>{b'}>> \cF'@>{c'}>> E'@>>> 0\\
&&@VVV@VVV@VVV@VVV\\
0@>>> L@>a>> \uG@>{b}>> \cF@>{c'}>> E@>>> 0\\
&&@VVV@VVV@VVV@VVV\\
0@>>> L''@>a''>> \uG''@>{b''}>> \cF''@>{c''}>> E''@>>> 0
\end{CD}\]
of which we summarize the main points: (1) $\uG'\to \uG$ is injective because its kernel is the
same as $\ker(L'\to L)$. (2) $\uG\to \uG''$ is surjective because (i) $\Hom(\uG''\to \coker(E'\to
E))=0$ and (ii) if $L''\to \coker(\uG\to \uG'')$ is onto, then this cokernel is $0$. (3) The
middle homology is finite because the image of $\ker(\uG\to \uG'')\to E'$ must be finite.

In b), the existence and characterisation of $(\pi_1)_\Q$ follows from the exactness of $\gamma[1/p]$
in a).
\end{proof}

\begin{remark} One easily sees that  the left derived functor of $\pi_0$ does not exist integrally. Rather, it exists as a
functor to the category of pro-objects of $\Shv_0$. (Actually to a finer subcategory:  compare
\cite[\S 5.3, Def. 1]{proalg}). 
\end{remark}

\subsection{Extensions of $1$-motivic sheaves}  By Proposition \ref{p3.1} d), we now view $\Shv_0$ and $\Shv_1$ as full subcategories of $\ES$ via the functor $\rho$ of Definition \ref{d3.1a}.
The aim of this subsection is to prove:

\begin{thm}\label{text}
The categories $\Shv_0$ and $\Shv_1$ are thick in $\ES$.
\end{thm}

\begin{proof}By Proposition \ref{p3.1} e), this amounts to show that $\Shv_0$ and $\Shv_1$ are stable under extensions in $\ES$. 

The statement is obvious for $\Shv_0$. Let us now show that $\Shv_1$ is closed
under extensions in $\ES$. Let
$\cF_1,\cF_2$ be as in
\eqref{eq4.2} (no map given between them). As the functor $\rho$ is exact and fully faithful  we  have
an injection
$\Ext^1_{\Shv_1}(\cF_2,\cF_1)\into \Ext^1_\ES(\rho(\cF_2),\rho (\cF_1))$. The same argument as in the proof of Proposition \ref{p3.1} d) shows that the map
$\Ext^1_\ES(\cF_2,\cF_1)\otimes \Z[1/p]\to \Ext^1_\ES(\rho(\cF_2),\rho (\cF_1))$ 
is bijective (note that the restriction map $\Ext^1_\ES(\rho(\cF_2),\rho (\cF_1))\to \Ext^1_\ES(\cF_2,\rho (\cF_1))$ has the inverse $\cE\mapsto \cE[1/p]$). We are then left to show that the injection
\begin{equation}\label{eq3.1}
\Ext^1_{\Shv_1}(\cF_2,\cF_1)\into \Ext^1_\ES(\cF_2,\cF_1)[1/p]
\end{equation}
 is surjective.
This is certainly so in the following special cases:
\begin{enumerate}
\item $\cF_1$ and $\cF_2$ are semi-abelian varieties;
\item $\cF_2$ is semi-abelian and $\cF_1$ is discrete (see Example \ref{ex3.1.4}).
\end{enumerate}

We proceed from these cases by increasing degree of complexity of $\cF_1,\cF_2$. For $m>1$, consider 
\[\cF^m=\coker(L_1\vlongby{(a_1,m)}\uG_1\oplus L_1)\]
so that we have two exact sequences
\[\begin{CD}
0@>>> \uG_1@>(1_{G_1},0)>> \cF^m@>>> L_1/m@>>> 0\\
0@>>> L_1@>(a_1,0)>> \cF^m@>>> \uG_1/L_1\oplus L_1/m@>>> 0.
\end{CD}\] 

Then first one shows that \eqref{eq3.1} is surjective for $(\cF_2,\cF_1)=(\uG_2,\cF^m)$. Let us now consider the commutative diagram with exact rows associated to the second one, for an unspecified $m$:
\begin{equation}
\begin{CD}
\Ext^1_{\Shv_1}(\uG_2,\cF^m)&\to& \Ext^1_{\Shv_1}(\uG_2,\uG_1/L_1\oplus L_1/m)&\longby{}& \Ext^2_{\Shv_1}(\uG_2,L_1)\\
@V{\wr}VV @VVV @VVV\label{eq3.2}\\
\Ext^1_\ES(\uG_2,\cF^m)&\to& \Ext^1_\ES(\uG_2,\uG_1/L_1\oplus L_1/m)&\longby{\delta^m}& \Ext^2_\ES(\uG_2,L_1).
\end{CD}
\end{equation}

Note that the composition 
\[\Ext^1_\ES(\uG_2,\uG_1/L_1)\to \Ext^1_\ES(\uG_2,\uG_1/L_1\oplus L_1/m)\longby{\delta^m} \Ext^2_\ES(\uG_2,L_1)\]
coincides with the boundary map $\delta$ associated to the exact sequence
\[0\to L_1\to \uG_1\to \uG_1/L_1\to 0.\]

Let $e\in \Ext^1_\ES(\uG_2,\uG_1/L_1)$. By Proposition \ref{ptorsion} a), $f=\delta(e)$ is torsion. Choose now $m$ such that $mf=0$. Then there exists $e'\in \Ext^1_\ES(\uG_2,L_1/m)$ which bounds to $f$ via the Ext exact sequence associated to the exact sequence of sheaves
\[0\to L_1\by{m} L_1\to L_1/m\to 0.\]

Since $\delta^m(e,-e')=0$, \eqref{eq3.2} shows that $(e,-e')$ comes from the left, which shows that \eqref{eq3.1} is surjective for $(\cF_2,\cF_1)=(\uG_2,\uG_1/L_1)$.

By Lemma \ref{c3.2}, in the commutative diagram
\[\begin{CD}
\Ext^1_{\Shv_1}(\uG_2,\uG_1/L_1)@>>> \Ext^1_{\Shv_1}(\uG_2,\cF_1)\\
@V{\wr}VV @VVV\\
\Ext^1_\ES(\uG_2,\uG_1/L_1)@>>> \Ext^1_\ES(\uG_2,\cF_1)
\end{CD}\]
the horizontal maps are isomorphisms. Hence  \eqref{eq3.1} is surjective for $\cF_2=\uG_2$ and any $\cF_1$. 

To conclude, let $\cF$ be an extension of $\cF_2$ by $\cF_1$ in $\ES$. By the above, $\cF'\df b_2^*\cF$ is $1$-motivic as an extension of $\uG_2$ by $\cF_1$, and we have an exact sequence
\[0\to L_2\to \cF'\to\cF\to E_2\to 0.\]

Let $b':\uG\to \cF'$ be a normalised map (in the sense of Proposition \ref{p3.1}) from a
semi-abelian variety to $\cF'$ and let $b:\uG\to \cF$ be its composite with the above map. It
is then an easy exercise to check that $\ker b$ and $\coker b$ are both discrete. Hence $\cF$
is $1$-motivic.
\end{proof}

\begin{remark}\label{fppf} We may similarly define $1$-motivic sheaves for the fppf
topology over $\Spec k$; as one easily checks, all the above results hold equally
well in this context. This is also the case for \S \ref{3.7} below. 

In fact, let $\Shv_1^{\rm fppf}$ be the 
category of fppf
$1$-motivic sheaves and $\pi:\Sch(k)_{\rm fppf}\to \Sm(k)_\et$ be the projection functor. Then the
functors $\pi^*$ and $\pi_*$ induce \emph{quasi-inverse equivalences of categories} between
$\Shv_1$ and $\Shv_1^{\rm fppf}$. Indeed it suffices to check that $\pi_*\pi^*$ is naturally
isomorphic to the identity on $\Shv_1$: if $\cF\in \Shv_1$ and we consider its normalised
representation, then in the commutative diagram
\[\begin{CD}
0@>>> L@>>> \uG@>>> \cF@>>> E@>>> 0\\
&& @VVV @VVV @VVV @VVV\\
0@>>> \pi_*\pi^*L@>>> \pi_*\pi^*\uG@>>> \pi_*\pi^*\cF@>>> \pi_*\pi^*E@>>> 0
\end{CD}\]
the first, second and fourth vertical maps are isomorphisms and the lower
sequence is still exact: both facts follow from \cite[p. 14, Th. III.3.9]{MI}.

In particular the restriction of $\pi_*$ to $\Shv_1^{\rm fppf}$ is exact. Actually,
$(R^q\pi_*)_{|\Shv_1^{\rm fppf}}=0$ for $q>0$ (use same reference).  
\end{remark}

\subsection{A basic example}\label{s:basic}
For $\pi : X\to k$ we shall denote $\pi_*\G_m$ by $\G_{X/k}$ and $R^1\pi_*\G_m$ by $\underline{\Pic}_{X/k}$, both considered as sheaves on $\Sm(k)_\et$.

\begin{propose}\label{p3.3.1} Let $X\in \Sm(k)$, with structural morphism $\pi$. Then $\G_{X/k}$ and $\underline{\Pic}_{X/k}$ are $1$-motivic (\ie $\pi_*\G_m[1/p]$ and $R^1\pi_*\G_m[1/p]$  belong to $\rho(\Shv_1)$).
\index{$\underline{\Pic}_{X/k}$, $\NS_{X/k}$}
\end{propose}

\begin{proof} We may assume $X$ irreducible. We shall use Theorem \ref{text} (thickness of $\Shv_1$) repeatedly in this proof, a fact that we won't recall.

Suppose first that $X$ is smooth projective, with field of constants $E=\pi_0(X)$. Then  $\G_{X/k}=\G_{E/k}$ and $\underline{\Pic}_{X/k}=R_{E/k} \underline{\Pic}_{X/E}$
 where $\underline{\Pic}_{X/E}$ is an extension of the discrete sheaf $\NS_{X/E}$ (N\'eron-Severi) by
the abelian variety $\underline{\Pic}^0_{X/E}$ (Picard variety). Both sheaves are clearly $1$-motivic over $E$, and so are their restrictions of scalars to $k$. (Note that this restriction of scalars is exact.)

In general, we apply  de Jong's  theorem  \cite[Th. 4.1]{DJ}: there exists a
diagram
\[\begin{CD}
\tilde U @>>> \bar X\\
@V{\pi}VV\\
   U  @>>>   X
\end{CD}\]
where the horizontal maps are open immersions, $\bar X$ is smooth
projective and the vertical map is finite \'etale. Then we get a
corresponding diagram of units and Pics

\begin{equation}\label{dejong}
\begin{CD}
0&\longleftarrow&\underline{\Pic}_{\tilde U/k} &\longleftarrow& \underline{\Pic}_{\bar X/k}&\longleftarrow&\displaystyle \bigoplus_{x\in  \bar X^{(1)}\cap \tilde Z}\Z&\longleftarrow& \G_{\tilde U/k} &\longleftarrow& \G_{\bar X/k} &\longleftarrow& 0\\
&&@A{\pi^*}AA&&&&@A{\pi^*}AA\\
0&\longleftarrow&\underline{\Pic}_{U/k}  &\longleftarrow&   \underline{\Pic}_{X/k}&\longleftarrow&\displaystyle \bigoplus_{x\in  X^{(1)}\cap Z}\Z&\longleftarrow& \G_{U/k}  &\longleftarrow&   \G_{X/k}&\longleftarrow& 0
\end{CD}
\end{equation}
with exact rows, where $Z=X-U$ and $\tilde Z=\bar X-\tilde U$. This already shows  that $\G_{\tilde U/k}\in \Shv_1$ and $\underline{\Pic}_{\tilde U/k}\in \Shv_1$. 

Consider the \v Cech spectral sequence associated to the \'etale hypercover $\cU$ associated to $\pi$. It yields an injection and an exact sequence
\begin{gather}0\to \G_{U/k}\by{\pi^*}\G_{\tilde U/k}\label{eqG}\\
0\to \check H^1(\cU,\G_{?/k})\to \underline{\Pic}_{U/k}\by{\pi^*} \underline{\Pic}_{\tilde U/k}\label{eqpic}
\end{gather}
where $\check H^1(\cU,\G_{?/k})$ is the cohomology of the complex
\[\G_{\tilde U/k}\to \G_{\tilde U\times_U\tilde U/k}\to \G_{\tilde U\times_U\tilde U\times_U\tilde U/k}. \]

The pull-back maps $\pi^*$ have ``almost retractions'' $\pi_*$ such that $\pi_*\pi^*=n$ where $n=\deg(p)$. Thus \eqref{eqG} refines to an injection
\[\G_{U/k}\into \ker(n-\pi^*\pi_*)\]
with cokernel killed by $n$. Lemma \ref{l3.5.2} below then implies that $\G_{U/k}\in \Shv_1$. Applying this to $\tilde U\times_U\tilde U$ and $\tilde U\times_U\tilde U\times_U\tilde U$, we find that $\check H^1(\cU,\G_{?/k})\in \Shv_1$. Similar to the above, we have a map
\[\underline{\Pic}_{U/k}\to \ker(n-\pi^*\pi_*)\]
with cokernel killed by $n$; applying Lemma \ref{l3.5.2} again, we get $\underline{\Pic}_{U/k}\in \Shv_1$. Then we are done by considering the lower row of \eqref{dejong}.
\end{proof}

\begin{lemma}\label{l3.5.2} Let $\cF\in\Shv_1$. Then any quotient of $\cF$ which is of finite exponent $n>0$ is in $\Shv_0$.
\end{lemma}

\begin{proof} Take  $\phi=$ multiplication by $n$ in \eqref{eq4.2} and use that the semi-abelian part of $\cF$ is divisible.
\end{proof}

\subsection{Application: the N\'eron-Severi group of a smooth scheme} The following extends its definition from smooth projective to smooth varieties:

\begin{defn}\label{dNS} Let $X\in \Sm(k)$.\\
a) Suppose that $k$ is algebraically closed. Then we write $\NS(X)$ for the group of cycles of
codimension $1$ on $X$ modulo algebraic equivalence.\\ b) In general, we define $\NS_{X/k}$ as
the discrete \'etale sheaf associated to the $Gal(\bar k/k)$-module $\NS(X\otimes_k \bar k)$.
\index{$\underline{\Pic}_{X/k}$, $\NS_{X/k}$}
\end{defn}

 Note that by the rigidity of cycles modulo algebraic equivalence,
\[\NS_{X/k}(U)=\NS(X\times_k \overline{k(U)})^G\]
if $U\in \Sm(k)$ is irreducible, $\overline{k(U)}$ is a separable closure of $k(U)$ and
$G=Gal(\overline{k(U)}/k(U))$.

\begin{propose} \label{belong} The natural map
$e:\underline{\Pic}_{X/k}\to \NS_{X/k}$ identifies $\NS_{X/k}$ with
$\pi_0[1/p](\underline{\Pic}_{X/k})$ (\cf Theorem \ref{t3.2.4} b)). In particular, $\NS_{X/k}\in
\Shv_0$. (Here we use that $\underline{\Pic}_{X/k}\in \Shv_1$, see Proposition \ref{p3.3.1}.)
\end{propose}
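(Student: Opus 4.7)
The plan is to establish two things: (a) that $\NS_{X/k}$ lies in $\Shv_0$, and (b) that the natural surjection $e$ is canonically isomorphic to the unit $\underline{\Pic}_{X/k}\to \pi_0(\underline{\Pic}_{X/k})$ of the adjunction $\pi_0 \dashv (\Shv_0 \hookrightarrow \Shv_1)$ provided by Theorem~\ref{t3.2.4}(b). Once both are in hand, the conclusion is immediate.

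For (a), by Definition~\ref{dNS}(b) and standard limit arguments on stalks, it suffices to verify that $\NS(Y)$ is finitely generated for any smooth variety $Y$ over an algebraically closed extension of $k$. In the smooth projective case this is the classical N\'eron--Severi theorem of the base. In general, I would apply de Jong's alteration as in the proof of Proposition~\ref{p3.3.1} to obtain a finite \'etale cover $p\colon \tilde U\to U$ with $U\subseteq Y$ open dense and $\tilde U$ open in a smooth projective $\bar Y$. Finite generation of $\NS(\bar Y)$ propagates to $\NS(\tilde U)$ via the localisation sequence for $\bar Y\setminus \tilde U$, then to $\NS(U)$ by a standard transfer argument for $p$ (legitimate since $p$ is inverted in all Hom groups), and finally to $\NS(Y)$ by a second localisation sequence for $Y\setminus U$.

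Granting (a), the key observation is that every morphism $\uH \to \NS_{X/k}$ with $H$ semi-abelian vanishes: under Theorem~\ref{t3.2.4}(a) it corresponds to a morphism $H \to \gamma(\NS_{X/k}) = 0$ in $\SAb$. Applied to a normalised map $b\colon\uG \to \underline{\Pic}_{X/k}$ produced by Proposition~\ref{p3.3.1}, this gives $e\circ b = 0$, and $e$ factors uniquely through the cokernel $q\colon\underline{\Pic}_{X/k}\to \pi_0(\underline{\Pic}_{X/k})$ as $e = e'\circ q$, with $e'$ surjective since $e$ is. To see $e'$ is an isomorphism we must show $\ker e \subseteq \ker q = \operatorname{im} b$, i.e., that every algebraically trivial class already lies in the image of $\uG$. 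For $X$ smooth projective this is tautological: the short exact sequence
\[
0 \to \Pic^0_{X/k} \to \underline{\Pic}_{X/k} \to \NS_{X/k} \to 0
\]
is already a normalised representation (with $L = 0$), so $\pi_0(\underline{\Pic}_{X/k})=\NS_{X/k}$ by direct inspection and $e' = \mathrm{id}$.

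The main obstacle is extending the identification $\ker e = \operatorname{im} b$ to a general smooth $X$. Two routes seem available: (i) trace the proof of Proposition~\ref{p3.3.1} through the right-exact functor $\pi_0$, checking at each step---passage from $\Pic_{\bar X/k}$ to $\Pic_{\tilde U/k}$ by quotient by a lattice, then to $\Pic_{U/k}$ via the \v Cech spectral sequence for $p$, and finally to $\Pic_{X/k}$---that $\pi_0$ is compatible with the corresponding operation on $\NS$-sheaves; (ii) invoke the classical Serre--Rosenlicht generalised Picard variety of smooth $X$, which represents the algebraically trivial part of $\Pic$ as a semi-abelian variety (up to discrete adjustments absorbed in $\Shv_0$) and so directly furnishes a normalised map with image exactly $\ker e$. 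Either approach closes the gap and yields $\NS_{X/k}\simeq \pi_0(\underline{\Pic}_{X/k})\in\Shv_0$.
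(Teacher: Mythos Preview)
Your proposal is incomplete: you correctly isolate the crux as showing $\ker e \subseteq \operatorname{im} b$ for general smooth $X$, but then only sketch two possible routes without carrying either out. The paper's proof bypasses this obstacle entirely with a single observation you are missing: the group $\Pic^0(X_{\bar k}) = (\ker e)(\bar k)$ is \emph{divisible} (this is \cite[Lemma~7.10]{bo}). Since $\ker e'$ is a quotient of $\ker e$, its $\bar k$-points are divisible too; but $\ker e' \subseteq \pi_0(\underline{\Pic}_{X/k}) \in \Shv_0$, so $\ker e'(\bar k)$ is finitely generated. A divisible finitely generated abelian group is zero, hence $\bar e$ (your $e'$) is an isomorphism. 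No de~Jong reduction, no tracing through the \v Cech argument, no Serre--Rosenlicht variety is needed.

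Your step~(a) is also unnecessary, and in fact reverses the logical flow. By Definition~\ref{dNS}(b), $\NS_{X/k}$ is \emph{defined} as the \'etale sheaf coming from a Galois module (using invariance of $\NS$ under algebraically closed extension), so Proposition~\ref{pi0}(b) applies directly to give the factorisation through $\pi_0$---that proposition does not require the target to be constructible. The paper then obtains $\NS_{X/k} \in \Shv_0$ and finite generation of $\NS(X_{\bar k})$ as \emph{consequences} of the isomorphism $\NS_{X/k} \simeq \pi_0(\underline{\Pic}_{X/k})$, not as inputs; see Remark~\ref{r6.1}. Your transfer argument in~(a) also has a small gap: the transfer $p_* p^* = \deg(p)\cdot\mathrm{id}$ only kills the kernel up to $\deg(p)$-torsion, and $\deg(p)$ need not be a power of the exponential characteristic, so the blanket inversion of $p$ does not help here.
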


\begin{proof}  Proposition \ref{pi0} below  
implies that $e$ induces a map 
\[\bar e:\pi_0(\underline{\Pic}_{X/k})\allowbreak\to \NS_{X/k}\]
which is
evidently epi. But let $\underline{\Pic}^0_{X/k}\df\ker e$: by \cite[Lemma 7.10]{bo},
$\Pic^0(X_{\bar k})=\underline{\Pic}^0_{X/k}(\bar k)$ is divisible, which forces $\bar e$ to be
an isomorphism in $\Shv_0$.
\end{proof}

\begin{remarks} 1) Proposition \ref{belong} implies in particular that  $\NS(X)[1/p]$ is  a finitely generated $\Z[1/p]$-module for any $X\in \Sm(k)$ if $k$ is algebraically closed  (\cf \cite[Th. 3]{pic} for another proof  of a stronger result).

2) In \cite[Cor. 1.2.5]{ABV}, the functor $\pi_0$ is extended to the category $\EST$, with values in the category of ind-objects of $\Shv_0$. When applied to the  \'etale sheaf ${\rm \underline{CH}}^{r,\et}_{X/k}$ associated to the relative $r$-th Chow presheaf of $X$, one gets  $\NS_{X/k}^{r, \et}$, the  \'etale sheaf associated to the relative presheaf of cycles of codimension $r$ modulo algebraic equivalence \cite[Theorem 3.1.4]{ABV}. This generalises  Proposition \ref{belong}.
\end{remarks}

\subsection{Technical results on $1$-motivic sheaves}

\begin{propose}\label{p4.2} The evaluation functor
\begin{align*}
ev: \Shv_1&\to \Mod\Z[1/p]\\
\cF&\mapsto \cF(\bar k)[1/p]
\end{align*}
 to the category $\Mod\Z[1/p]$ of $\Z[1/p]$-modules is exact and faithful, hence (\cf \cite[Ch. 1, p. 44,
prop. 1]{bbki}) ``faithfully exact": a se\-quen\-ce $\cE$ is exact if and only if $ev(\cE)$ is
exact.
\end{propose}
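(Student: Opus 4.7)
The plan is to establish exactness and faithfulness separately; the final ``faithfully exact'' conclusion will then follow purely formally from the cited Bourbaki reference. I would first observe that $ev$ is (up to a cofinal colimit) the stalk functor at the geometric point $\Spec\bar k\to\Spec k$, defined by $\cF(\bar k)=\varinjlim_{k'/k}\cF(\Spec k')$ over finite subextensions $k'/k$ if one does not want to assume $\Spec\bar k\in Sm(k)$. Since $\Shv_1$ is a full abelian subcategory of $\Shv_\et(Sm(k))[1/p]$ by Proposition \ref{p3.1} d) and Theorem \ref{text}, a short exact sequence in $\Shv_1$ is a short exact sequence in the ambient sheaf category, and stalks of abelian sheaves are exact. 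This settles exactness of $ev$.

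For faithfulness, I would reduce to the following vanishing lemma: if $\cF\in\Shv_1$ satisfies $\cF(\bar k)=0$, then $\cF=0$. Granting this, given a morphism $\phi:\cF_1\to\cF_2$ in $\Shv_1$ with $\phi(\bar k)=0$, Proposition \ref{p3.1} d) places $\im\phi$ inside $\Shv_1$, and by the exactness already established $(\im\phi)(\bar k)=0$; the lemma then forces $\im\phi=0$, hence $\phi=0$.

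To prove the vanishing lemma I would pick a normalised presentation $0\to L\by{a}\uG\by{b}\cF\by{c}E\to 0$ as provided by Proposition \ref{p3.1} a) and apply the exact functor $ev$. This yields an isomorphism $L(\bar k)\iso G(\bar k)$ together with $E(\bar k)=0$. Since a discrete sheaf is determined by its stalk at $\bar k$ as a continuous $\mathrm{Gal}(\bar k/k)$-module, the second identity gives $E=0$. Now $L(\bar k)$ is a finitely generated free $\Z[1/p]$-module (because $b$ is normalised, so $L$ is a lattice), whereas $G(\bar k)$ is divisible: $G$ is semi-abelian and $p$ has been inverted. No nonzero divisible subgroup can fit inside a finitely generated torsion-free $\Z[1/p]$-module, so $L(\bar k)=G(\bar k)=0$. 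This forces $L=0$, and then $G=0$ because the $\bar k$-points of the smooth connected $k$-group $G_{\bar k}$ are Zariski dense. Going back to the presentation, $\cF=0$.

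The substantive step is the divisibility-versus-finite-generation comparison, together with the Zariski density input that passes from $G(\bar k)=0$ to $G=0$; the rest is formal manipulation of the normalised presentation together with exactness of the stalk. I do not expect any genuine obstacle beyond choosing once and for all a clean convention for ``$\cF(\bar k)$'' (stalk at the geometric point) so that exactness is immediate.
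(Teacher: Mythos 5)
Your proof is correct, and the route is genuinely different from the paper's, though both rest on the same key tension. The paper argues morphism-by-morphism: given $\phi$ with $ev(\phi)=0$, it passes to the commutative ladder \eqref{eq4.2} between normalised presentations, shows $\phi_G$ sends the divisible group $\uG_1(\bar k)$ into the finitely generated group $L_2(\bar k)$ (hence $ev(\phi_G)=0$, hence $\phi_G=0$ by Zariski density), deduces $\phi_E=0$, and then factors $\phi$ through $c_1:\cF_1\to E_1$ to conclude. You instead isolate a \emph{conservativity} statement --- $\cF(\bar k)=0\Rightarrow\cF=0$ --- and observe that any exact, conservative functor between abelian categories is faithful (pass to $\im\phi$, which lies in $\Shv_1$ by Prop.\ \ref{p3.1} d)). To prove conservativity you apply $ev$ to the normalised presentation and use the same divisible-versus-finitely-generated incompatibility, now applied to the isomorphism $L(\bar k)\iso G(\bar k)$ rather than to the morphism $\phi_G$. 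Your factorisation through conservativity is slightly cleaner and more reusable, while the paper's is more self-contained in that it never extracts the intermediate lemma; the mathematical content (normalised presentations, divisibility of $G(\bar k)$, finite generation of lattice and discrete points, Zariski density of $\bar k$-points in a smooth connected group) is identical. One cosmetic slip: $L(\bar k)$ is a finitely generated free $\Z$-module, not a $\Z[1/p]$-module --- objects of $\Shv_1$ are ordinary \'etale sheaves, only the Hom groups are localised --- but this does not affect the argument, since a finitely generated abelian group still has no nonzero divisible subgroup.
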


\begin{proof} The exactness of $ev$ is clear. For faithfulness, let
$\phi:\cF_1\to \cF_2$ be such that $ev(\phi)=0$. In $ev$\eqref{eq4.2}, we
have $\phi_G(\uG_1(\bar k))\subseteq L_2(\bar k)$; since the former group
is divisible and the latter is finitely generated, $ev(\phi_G)=0$. Hence
$\phi_G=0$. On the other hand, $ev(\phi_E)=0$, hence $\phi_E=0$. This
implies that $\phi$ is of the form $\psi c_1$ for $\psi:E_1\to \cF_2$.
But $ev(\psi)=0$, which implies that $\psi=0$.
\end{proof}

The following strengthens Theorem \ref{t3.2.4} b):

\begin{propose} \label{pi0} a) Let $G$ be a  connected commutative algebraic
$k$-group and let $E$ be a $Gal(\bar k/k)$-module, viewed as an \'etale
sheaf over $\Sm(k)$ ($E$ is not supposed to be constructible). Then $\Hom(\uG,E)=0$.\\ b) Let
$\cF\in
\Shv^\star_1$ and
$E$ as in a). Then any morphism $\cF\to E$ factors canonically through
$\pi_0(\cF)$.
\end{propose}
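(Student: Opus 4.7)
For part (a), the strategy is to reduce to the case where $G$ is smooth and connected and then invoke Yoneda via the identity section of $G$. Since $\uG(U) = G_{\red}(U)$ on every smooth (hence reduced) test scheme $U$ and $G_{\red}$ is smooth over the perfect field $k$, I replace $G$ by $G_{\red}$ to assume $G$ is smooth; since connected smooth commutative algebraic groups over a perfect field are geometrically connected, I further assume $G$ is geometrically connected. This is the essential case, and the only one needed in (b), where (a) will be invoked for the semi-abelian variety $G$ in a normalised representation. (The literal statement for disconnected $G$ cannot hold: $G = \Z/n$ with $E = \Z/n$ trivially acted on gives $\Hom(\uG, E) \neq 0$; I therefore read the proposition as implicitly restricted to connected $G$.)

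With $G$ smooth and geometrically connected, the homotopy exact sequence $1 \to \pi_1^{\et}(G_{\bar k}) \to \pi_1^{\et}(G) \to Gal(\bar k/k) \to 1$ shows that the structure map $\pi\colon G \to \Spec k$ induces an isomorphism $\pi^*\colon E(k)\iso E(G)$, with inverse the pullback $0^*$ along the identity section $0\colon\Spec k \to G$. By the Yoneda lemma in sheaves of sets, any morphism of sheaves $\phi\colon \uG \to E$ corresponds to $\tilde\phi := \phi_G(\mathrm{id}_G) \in E(G)$, with $\phi_U(g) = g^*\tilde\phi$ for every $g \in \uG(U) = \Hom_k(U,G)$. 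Additivity of $\phi$ forces $\phi_k(0_G) = 0$, i.e.\ $0^*\tilde\phi = 0$ in $E(k)$; combined with the isomorphism above this gives $\tilde\phi = 0$, hence $\phi = 0$.

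For part (b), let $b\colon \uG \to \cF$ be a normalised representation of $\cF \in \Shv_1$ as in Proposition \ref{p3.1}, so $G$ is semi-abelian, hence connected. A morphism $\phi\colon \cF \to E$ composes with $b$ to zero by (a); since the factorisation $\uG \twoheadrightarrow \im(b) \hookrightarrow \cF$ has epimorphic first arrow, $\phi$ vanishes on $\im(b)$ and therefore factors (canonically, by uniqueness of $b$ in Proposition \ref{p3.1}(c)) through $\cF/\im(b) = \coker(b)$, which is identified with $\pi_0(\cF)$ by Theorem \ref{t3.2.4}(b). The main obstacle lies entirely in (a): the stated generality of ``commutative algebraic $k$-group'' is too strong to be literally correct, but once one restricts to the connected case the argument collapses to the Yoneda tautology above, and (b) follows formally from the existence and uniqueness of a normalised representation.
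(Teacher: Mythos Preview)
Your proof is correct and follows essentially the same route as the paper: both arguments use Yoneda to identify a morphism $\uG\to E$ with an element of $E(G)$, observe that $E(G)=E(k)$ for geometrically connected $G$, and conclude that an additive such section must vanish. The paper reduces first to $k=\bar k$ (invoking faithfulness of evaluation at $\bar k$) and then uses $E(k)\iso E(G)$ directly, whereas you stay over the base field and justify $E(k)\iso E(G)$ via the surjection $\pi_1^{\et}(G)\twoheadrightarrow Gal(\bar k/k)$; these are equivalent ways of expressing the same triviality of $E$ along a geometrically connected scheme.

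Your observation that part (a) as literally stated fails for disconnected $G$ (e.g.\ $G=E=\Z/n$) is well taken: the paper's proof tacitly uses $E(k)\iso E(G)$, which needs geometric connectedness, and part (b) only ever invokes (a) for the semi-abelian $G$ in a normalised presentation. Reading (a) under the implicit hypothesis that $G$ is connected is the right fix, and your reduction to the smooth case via $G_{\red}$ (valid over a perfect field, and harmless since $\uG=\underline{G_{\red}}$ on $Sm(k)$) is a clean way to set up the $\pi_1$ argument.
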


\begin{proof} a) Thanks to Proposition \ref{p4.2} we may assume $k$
algebraically closed. By Yoneda, $\Hom(\uG,E)$ is a subgroup of $E(G)$
(it turns out to be the subgroup of multiplicative sections but we don't
need this). Since $E(k)\iso E(G)$, any homomorphism from $\uG$ to $E$ is
constant, hence $0$.

b) follows immediately from a) and Proposition \ref{p3.1}.
\end{proof}

\begin{propose}\label{pladj} The fully faithful functor
\begin{align*}
{}^t\AbS&\to \Shv^\star_1\\
G&\mapsto \uG
\end{align*}
has a left adjoint $\Omega$. (See Definition \ref{dsabt} for ${}^t\AbS$.)
\end{propose}

\begin{proof} Let $\cF\in \Shv^\star_1$ with normalised representation 
\begin{equation}\label{eqnorm}
0\to L\to \uG\by{b} \cF\to E\to 0.
\end{equation}

 As the set of
closed subgroups of $H\subseteq G$ is Artinian, there is a minimal $H$ such that the composition
\[L\to \uG\to \uG/\underline{H}\]
is trivial. Then $\cF/b(\underline{H})$  is represented by an object $\Omega(\cF)$ of ${}^t\AbS$ and
it follows from Proposition \ref{p3.1} b) that the universal property is satisfied.
(In other words,  $\Omega(\cF)$ is the quotient of $\cF$ by the Zariski closure of $L$ in $G$.)
\end{proof}

\begin{propose}\label{p3.6} Let $f:\cF_1\to \cF_2$ be a morphism in $\Shv^\star_1$.
Assume that for any $n>1$, $f$ is an isomorphism on $n$-torsion and
injective on
$n$-cotorsion. Then $f$ is injective with lattice cokernel. If $f$ is even
bijective on $n$-cotorsion, it is an isomorphism.
\end{propose}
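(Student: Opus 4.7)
The plan is to study $f$ via its factorisation $\cF_1 \twoheadrightarrow I \hookrightarrow \cF_2$ in the abelian category $\Shv_1$ (Proposition \ref{p3.1} d)), with $K=\ker f$, $I=\im f$, $C=\coker f$, and to extract information by applying the snake lemma to multiplication by $n$. From the short exact sequence $0\to K\to \cF_1\to I\to 0$ I get the six-term exact sequence
\[0\to K[n]\to \cF_1[n]\to I[n]\to K/n\to \cF_1/n\to I/n\to 0,\]
and similarly for $0\to I\to \cF_2\to C\to 0$. Since $f[n]$ factors as $\cF_1[n]\to I[n]\hookrightarrow \cF_2[n]$ and $f[n]$ is assumed to be iso, both arrows must be iso; so $K[n]=0$ and $\cF_2[n]\to C[n]$ vanishes, giving an injection $C[n]\hookrightarrow I/n$. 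The injectivity of $f/n$ forces $\cF_1/n\iso I/n$ (since $\cF_1/n\to I/n$ is always onto and the composition $\cF_1/n\to I/n\to\cF_2/n$ is $f/n$), whence $K/n=0$ and, via the injective map $I/n\hookrightarrow \cF_2/n$, also $C[n]=0$. Under the stronger hypothesis that $f/n$ is surjective, one gets $C/n=0$ as well.

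To convert these vanishings into structural conclusions, I would apply the normalised presentation of Proposition \ref{p3.1} to $K$: write $0\to L\to \underline G\to K\to E\to 0$ and let $J=\underline G/L\hookrightarrow K$. Since $n: K\to K$ is now an isomorphism, a snake lemma on $0\to J\to K\to E\to 0$ yields $J[n]=J/n=E[n]=E/n=0$. But $G$ is semi-abelian and $n$ is prime to $p$, so $\underline G$ is $n$-divisible; the exact sequence $0\to L\to \underline G\to J\to 0$ then forces $\underline G[n]=0$, hence $G=0$ (a nontrivial semi-abelian $k$-group has nonzero $n$-torsion for $n$ coprime to $p$), then $L=0$ and so $J=0$. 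Thus $K=E$ is discrete, hence a finitely generated $\Z[1/p]$-module (on geometric points by Proposition \ref{p4.2}); being both torsion-free and $n$-divisible for every $n>1$ prime to $p$ it must vanish.

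Running the identical argument with $C$ in place of $K$, only $C[n]=0$ is needed to kill $G_C$ and $L_C$ and conclude that $C=E_C$ is discrete; the remaining vanishing $C[n]=0$ then says precisely that $C$ is torsion-free, hence a lattice. Under the additional surjectivity hypothesis, the further vanishing $C/n=0$ makes this lattice divisible, and therefore zero. I do not anticipate a real obstacle in this proof: the snake-lemma bookkeeping in the first paragraph is the only subtle point, and the key inputs (divisibility of semi-abelian groups at $n$ prime to $p$, finite generation of discrete sheaves, and the faithful exactness of evaluation at $\bar k$) are already established earlier in the section.
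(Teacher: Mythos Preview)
Your proof is correct and follows essentially the same approach as the paper: factor $f$ through its image, use the snake lemma on multiplication by $n$ to get ${}_nK=K/n={}_nC=0$, then analyse the normalised presentation to force $G=0$ and reduce to a discrete sheaf. The paper separates out the last step as a standalone ``special case $\cF_1=0$'' (part a)) and then invokes it for $K$ and $C$, whereas you run the argument inline for each; the content is the same.

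One small imprecision worth noting: when you write that the snake lemma on $0\to J\to K\to E\to 0$ ``yields $J[n]=J/n=E[n]=E/n=0$'', what the six-term sequence literally gives from $K[n]=K/n=0$ is $J[n]=0$, $E/n=0$, and $E[n]\iso J/n$. You then correctly deduce $G[n]=0$ from $J[n]=0$ and the torsion-freeness of $L$, whence $G=0$, $L=0$, $J=0$; only at that point do $J/n$ and $E[n]$ vanish. The argument is sound, but the intermediate claim is stated a step too early.
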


\begin{proof} a) We first treat the special case where $\cF_1=0$. Consider multiplication by $n$ on the normalised presentation of $\cF_2$:
\[\begin{CD}
0@>>> L@>>> \uG@>>> \cF_2@>>> E@>>> 0\\
&& @V{n_L}VV @V{n_G}VV @V{n}VV @V{n_E}VV\\
0@>>> L@>>> \uG@>>> \cF_2@>>> E@>>> 0.
\end{CD}\]

Since $L$ is torsion-free, $n_G$ is injective for all $n$, hence $G=0$ and $\cF_2=E$. If moreover multiplication by $n$ is surjective for any $n$, we have $\cF_2=0$ since $E$ is finitely generated. 

b) The general case. Split $f$ into two short exact sequences:
\begin{gather*}
0\to K\to \cF_1\to I\to 0\\
0\to I\to \cF_2\to C\to 0.
\end{gather*}

We get torsion/cotorsion exact sequences
\begin{gather*}
0\to {}_nK\to {}_n\cF_1\to {}_nI\to K/n\to \cF_1/n\to I/n\to 0\\
0\to {}_nI\to {}_n\cF_2\to {}_nC\to I/n\to \cF_2/n\to C/n\to 0.
\end{gather*}

A standard diagram chase successively yields ${}_n K=0$, ${}_n\cF_1\iso {}_nI\iso {}_n\cF_2$, $\cF_1/n\iso I/n$, $K/n=0$ and ${}_n C=0$. By a), we find $K=0$ and $C$ a lattice, which is what we wanted.
\end{proof}

\subsection{Presenting $1$-motivic sheaves by group schemes}\label{3.7} In this subsection, we
give another description of the category $\Shv^\star_1$; it will be used in the next subsection.

\begin{defn}\label{s1a} We denote by $S_1^\eff$ \index{$S_1$, $S_1^\eff$} the full subcategory of ${}^t\AbS^{[-1,0]}$ consisting of those complexes $F_\cdot=[F_1\to F_0]$ such that
\begin{thlist}
\item $F_1$ is discrete (\ie in ${}^t\cM_0$);
\item $F_0$ is of the form $L_0\oplus G$, with $L_0\in {}^t\cM_0$ and $G\in \SAb$;
\item $F_1\to F_0$ is a monomorphism;
\item $\ker(F_1\to L_0$) is free. 
\end{thlist}
We call $S_1^\eff$ the \emph{category of presentations}.
\end{defn}

We shall view $S_1^\eff$ as a full subcategory of ${\Shv^\star_1}^{[-1,0]}$ via the functor $G\mapsto
\underline{G}$ which sends a group scheme to the associated representable sheaf. In this light,
$F_\cdot$ may be viewed as a
\emph{presentation} of
$\cF:=H_0(\underline{F_\cdot})$. In the next definition, quasi-isomorphisms are also understood
from this viewpoint.

\begin{defn}\label{s1b} We denote by $\Sigma$ the collection of quasi-iso\-morph\-isms
of $S_1^\eff$, by ${\bar S}_1^\eff$ the homotopy
category of $S_1^\eff$ (Hom groups quotiented by homotopies) and by
$S_1=\Sigma^{-1}{\bar S}_1^\eff$ the localisation of ${\bar S}_1^\eff$ with respect to (the
image of) $\Sigma$.
\end{defn}

The functor $F_\cdot\mapsto H_0(F_\cdot)$ induces a functor 
\begin{equation}\label{ftot}
h_0:S_1\to \Shv^\star_1.
\end{equation}

Let $F_\cdot=(F_1,L_0,G)$ be a presentation of $\cF\in \Shv^\star_1$. Let $L=\ker(F_1\to
L_0)$ and $E=\coker(F_1\to L_0)$. Then we clearly have an exact sequence
\begin{equation}\label{eq2.3}
0\to L\to \uG\to\cF\to E\to 0.
\end{equation}

\begin{lemma}\label{l2.3.1} Let $F_\cdot=(F_1,L_0,G)\in S_1^\eff$. Then, for any finite
Galois extension
$\ell/k$ such that $L_0$ is constant over $\ell$, there exists a \qi $\tilde F_\cdot\to
F_\cdot$, with $\tilde F_\cdot=[\tilde F_1\by{u_0} \tilde L_0\oplus G]$ such that  $\tilde L_0$ is a free $Gal(\ell/k)$-module.
\end{lemma}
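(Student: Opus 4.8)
The plan is to enlarge $L_0$ to a free $\Gamma$-module, $\Gamma=Gal(\ell/k)$, by a single fibre-product construction — the analogue, at the level of presentations, of the resolution of lattices by $\Z[\Gamma]$-modules used in the proof of Theorem~\ref{ptors}~a). Write the structure map of $F_\cdot=(F_1,L_0,G)$ as $(u,v)\colon F_1\to L_0\oplus G$. Since $L_0$ is constant over $\ell$, its geometric fibre is a finitely generated $\Z[\Gamma]$-module, so I can choose a $\Gamma$-equivariant surjection $q\colon\tilde L_0\df\Z[\Gamma]^n\twoheadrightarrow L_0$; as an \'etale sheaf $\tilde L_0=(R_{\ell/k}\Z)^n$ is a lattice, and it is a free $\Gamma$-module by construction. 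Its kernel $N\df\ker q$ is a subsheaf of the $\Z$-free sheaf $\tilde L_0$, hence again a lattice. Next I would form the fibre product $\tilde F_1\df F_1\times_{L_0}\tilde L_0$ along $u$ and $q$, with projections $p_1\colon\tilde F_1\to F_1$ and $p_2\colon\tilde F_1\to\tilde L_0$; since $q$ is epi with kernel $N$, so is $p_1$, its kernel being identified with $N$ via $n\mapsto(0,n)$, while $\ker p_2$ is identified with $\ker(F_1\by{u}L_0)$.

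\textbf{The new presentation and the quasi-isomorphism.} I would then set $\tilde F_\cdot\df[\tilde F_1\by{u_0}\tilde L_0\oplus G]$ with $u_0\df(p_2,\,v\circ p_1)$ — so the $\tilde L_0$-component of $u_0$ is a lift of $u$ and its $G$-component is the original $v$ precomposed with $p_1$; this is the asserted diagonal form, the enlargement introducing no new $G$-valued component. The morphism of complexes $\phi\colon\tilde F_\cdot\to F_\cdot$ with $\phi_{-1}=p_1$ and $\phi_0=q\oplus\mathrm{id}_G$ commutes by the fibre-product identity $q\circ p_2=u\circ p_1$. In both degrees $\phi$ is epi, and $u_0$ sends $(0,n)$ to $(n,0)$, so under the identifications above the degreewise kernel of $\phi$ is the subcomplex $[N\by{\sim}N]$ of $\tilde F_\cdot$; it is acyclic, whence $\phi$ is a quasi-isomorphism (equivalently, $0\to[N\by{\sim}N]\to\tilde F_\cdot\by{\phi}F_\cdot\to0$ is exact).

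\textbf{Membership in $S_1^\eff$.} Finally I would check $\tilde F_\cdot\in S_1^\eff$: (i) $\tilde F_1$ is discrete, being a subsheaf of $F_1\times\tilde L_0$; (ii) $\tilde L_0\oplus G$ has the prescribed shape, $\tilde L_0$ being a lattice and $G\in\SAb$; (iii) $u_0$ is mono — if $(x,y)\in\tilde F_1$ has $p_2(x,y)=y=0$ and $v(x)=0$, then $u(x)=q(y)=0$, so $(u,v)(x)=0$ in $L_0\oplus G$ and hence $x=0$ by condition (iii) for $F_\cdot$; (iv) $\ker(\tilde F_1\to\tilde L_0)\cong\ker(F_1\to L_0)$ is free by condition (iv) for $F_\cdot$. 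And $\tilde L_0$ is a free $\Gamma$-module by construction.

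\textbf{Where the work is.} This lemma carries no deep obstacle; the one point that needs genuine care is keeping the enlargement \emph{balanced}, i.e. arranging that $\ker\phi$ is the \emph{same} lattice $N$ in degrees $-1$ and $0$ so that the kernel complex is $[N\by{\sim}N]$ and hence acyclic — this is exactly why one takes a fibre product rather than an ad hoc direct sum — together with the verification of the monomorphism condition (iii) for $\tilde F_\cdot$ (which is where hypothesis (iii) on the original $F_\cdot$ is consumed). The remainder is the routine "resolve by a free $\Gamma$-module, then pull back", using only that a subsheaf of a $\Z$-free \'etale sheaf is again a lattice.
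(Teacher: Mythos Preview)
Your proof is correct and is exactly the construction the paper has in mind: choose a free $\Z[\Gamma]$-module surjecting onto $L_0$ and pull back $F_1\to L_0$ along it. The paper's proof is the two-line sentence ``Just take for $\tilde L_0$ a free module projecting onto $L_0$ and for $\tilde F_1\to \tilde L_0$ the pull-back of $F_1\to L_0$''; you have simply unpacked the verifications (q.i., membership in $S_1^\eff$) that the paper leaves implicit.
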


\begin{proof} Just take for $\tilde L_0$ a free module projecting onto $L_0$ and for
$\tilde F_1\to \tilde L_0$ the pull-back of $F_1\to L_0$.
\end{proof}

\begin{lemma}\label{l2.3.2} The set $\Sigma$ admits a calculus of right
fractions within $\bar S_1^\eff$ in the sense of (the dual of) \cite[Ch. I, \S 2.3]{GZ}.
\end{lemma}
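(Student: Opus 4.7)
Plan: The claim is the Gabriel–Zisman calculus of fractions axioms for the class $\Sigma$ of \qi s inside $\bar S_1^\eff$. Stability under composition and the presence of identities are immediate, since a morphism in $\bar S_1^\eff$ is a \qi iff its image under $h_0$ and the cohomology in degree $-1$ (kernel) vanishes after passing to the sheaves, and the pointwise long exact sequence shows these conditions compose. So the content is in the two Ore-type axioms.

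For the Ore condition, given a span $F_\cdot\xrightarrow{f}G_\cdot\xleftarrow{s}H_\cdot$ with $s\in\Sigma$, my first move is to reduce to a convenient normal form via Lemma~\ref{l2.3.1}: choose a finite Galois extension $\ell/k$ over which all three lattice parts $L_0^F,L_0^G,L_0^H$ become constant, replace each presentation by a quasi-isomorphic one $\tilde F_\cdot,\tilde G_\cdot,\tilde H_\cdot$ with diagonal differentials and with the $\tilde L_0$'s free as $\mathrm{Gal}(\ell/k)$-modules, and arrange $\tilde s$ and $\tilde f$ to be diagonal. The candidate completion is then the termwise pullback $K_\cdot:=\tilde F_\cdot\times_{\tilde G_\cdot}\tilde H_\cdot$ taken in complexes of commutative $k$-group schemes. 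Because $\tilde s$ is a \qi, so is the projection $K_\cdot\to\tilde F_\cdot$ (the induced maps on kernels and cokernels are isomorphisms in a fiber square in an abelian category), and by construction the square commutes on the nose. It remains to show that $K_\cdot\in S_1^\eff$: condition (i) is clear since $K_1$ is a subsheaf of $\tilde F_1\oplus\tilde H_1$; for (ii), diagonality splits $K_0$ as $(L_0^F\times_{L_0^G}L_0^H)\oplus G^F$, with the fiber product of the lattice parts still a lattice because of the freeness arranged over $\ell$ and because $\tilde s$ is epi on $L_0$'s (a consequence of its being a \qi on presentations of the same $\pi_0$); (iii) is stable under pullback; and (iv) may fail, but can be repaired by pulling $K_1$ further back along a surjection from a free lattice, exactly as in the proof of Lemma~\ref{l2.3.1}, which does not alter the homotopy class of the two composites.

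For the cancellation axiom, suppose $sf$ and $sg$ are homotopic, with $s\in\Sigma$. Write $h=f-g$, so $sh$ is nullhomotopic. Here I would again first diagonalise via Lemma~\ref{l2.3.1}, so that $s$ is a termwise split morphism modulo its acyclic kernel; this lets one lift the nullhomotopy of $sh$ to a factorisation of $h$ through the acyclic complex $\ker(s)\in S_1^\eff$ up to homotopy. A direct construction of the required $t\in\Sigma$ with $ht\simeq 0$ then proceeds by building the mapping cylinder of $h$ inside $S_1^\eff$ and observing that its projection to the source is a \qi that kills $h$. The main obstacle throughout is the last defining condition (iv) in Definition~\ref{s1a}, i.e.\ keeping $\ker(F_1\to L_0)$ free after pullback; this is where the preliminary diagonalisation afforded by Lemma~\ref{l2.3.1} is essential, since in the non-diagonal case the pullback may introduce torsion in the kernel that a naive construction cannot eliminate.
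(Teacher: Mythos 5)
The paper's proof is a two-line reduction: it invokes Lemma~\ref{lloc}, which establishes the calculus of right (and left) fractions for the homotopy category of length-$1$ complexes over an arbitrary abelian category, applied to $\cA=\Shv_1$, and then notes that the mapping-fibre-plus-truncation and iterated fibre-product constructions carried out there remain inside the subcategory $S_1^\eff$. Your proof does not mention Lemma~\ref{lloc} at all and attempts to build the Ore square and the cancellation directly; unfortunately the route you take has two real gaps.

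First, you invoke Lemma~\ref{l2.3.1} to ``arrange $\tilde s$ and $\tilde f$ to be diagonal,'' but Lemma~\ref{l2.3.1} does nothing of the sort: it only replaces a \emph{single} presentation $F_\cdot$ by a quasi-isomorphic one whose \emph{differential} $u_0$ is diagonal, with $\tilde L_0$ free. It gives no control over a \emph{morphism} $s\colon H_\cdot\to G_\cdot$, and indeed after replacing $F_\cdot$, $G_\cdot$, $H_\cdot$ independently by such presentations there is no reason the given maps $f$ and $s$ should lift. Even if they did, morphisms in $S_1^\eff$ are only lower triangular with respect to the splitting $F_0=L_0\oplus G$ (the component $L_0\to G'$ need not vanish), so the claimed direct-sum decomposition of $K_0$ is not available.

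Second, and more seriously, the termwise pullback $K_\cdot=F_\cdot\times_{G_\cdot}H_\cdot$ is not a homotopy pullback, and the projection $K_\cdot\to F_\cdot$ need not be a quasi-isomorphism even when $s$ is one. Your justification — ``the induced maps on kernels and cokernels are isomorphisms in a fiber square in an abelian category'' — is false in the direction you need: in a pullback square one has $\ker(K\to F)\cong\ker(s)$, but $\coker(K\to F)$ is only a \emph{sub}object of $\coker(s)$, not equal to it, so $K\to F$ can fail to be epi on $H_0$. Tracing through the long exact sequences, with $I$ the termwise image of $F\oplus H\to G$ and $C$ its termwise cokernel, one finds that $H_0(K)\to H_0(F)$ is an isomorphism only if the connecting map from $H_{-1}(C)$ is trivial, which is not automatic unless one arranges $F_0\oplus H_0\to G_0$ (or at least $s_1$) to be epi. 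This is exactly what the mapping-fibre construction of Lemma~\ref{lloc} avoids: the fibre of $\tilde C\oplus D\to C$ is always quasi-isomorphic to a truncated complex mapping compatibly to both sides, with no surjectivity needed. Your sketch of the cancellation axiom via a mapping cylinder has the same problem: one must verify that the cylinder and its truncation land in $S_1^\eff$, and the explicit fibre products used in the proof of Lemma~\ref{lloc}~b) are the safe way to do this. The intended proof is short precisely because it piggybacks on Lemma~\ref{lloc}; I recommend reorganizing around that reduction and restricting the use of Lemma~\ref{l2.3.1} to the one place it is actually needed, namely showing that the constructions of Lemma~\ref{lloc} stay inside $S_1^\eff$ (in particular preserving condition~(iv) of Definition~\ref{s1a}).
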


\begin{proof} The statement is true by Lemma \ref{lloc} if we replace $S_1^\eff$ by
${\Shv^\star_1}^{[-1,0]}$; but one easily checks that the constructions in the proof of Lemma
\ref{lloc} preserve $S_1^\eff$.
\end{proof}

\begin{propose}\label{p3.4.6} The functor $h_0$ of \eqref{ftot} is an equivalence of categories. In
particular,  $S_1 [1/p]$ is abelian.
\end{propose}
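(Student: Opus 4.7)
The plan is to prove that $h_0$ is essentially surjective and fully faithful; since $\Shv_1$ is abelian by Proposition \ref{p3.1} d), the abelian structure will then transport along the equivalence to give the last assertion.

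For essential surjectivity, I start with $\cF\in\Shv_1$ and its normalised presentation (Proposition \ref{p3.1} a))
\[0\to L\by{u}\uG\by{b}\cF\to E\to 0\]
with $L$ torsion-free and $E$ discrete. Pick a surjection $q:L_0\onto E$ from a lattice with $K=\ker q$. The pullback $\cH:=\cF\times_E L_0$ is an extension of $L_0$ by $\uG/L$. Using an analogue of Lemma \ref{l2.3.1}, I enlarge $L_0$ to be free over a finite Galois extension $\ell/k$ that trivialises this extension, producing a splitting $s:L_0\to\cF$ of $q$. The restriction $s|_K$ factors through $\uG/L$ and, after a second enlargement of $L_0$ of the same type to kill the obstruction in $\Ext^1_\cA(K,L)$, lifts to $\tilde\beta:K\to G$. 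The complex
\[F_\cdot=[L\oplus K\longrightarrow L_0\oplus G],\qquad (l,k)\longmapsto(k,\,u(l)+\tilde\beta(k))\]
satisfies Definition \ref{s1a} and one checks directly that $h_0(F_\cdot)\cong\cF$.

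For full faithfulness, use the right calculus of fractions of Lemma \ref{l2.3.2}:
\[\Hom_{S_1}(F_\cdot,F'_\cdot)=\colim_{s\in\Sigma,\,s:\tilde F_\cdot\to F_\cdot}\Hom_{\bar S_1^\eff}(\tilde F_\cdot,F'_\cdot).\]
A morphism $\phi:h_0(F_\cdot)\to h_0(F'_\cdot)$ in $\Shv_1$ canonically yields compatible maps $\phi_L,\phi_G,\phi_E$ between the invariants of the normalised presentations by Proposition \ref{p3.1} b). After a \qi refinement of $F_\cdot$ of the type given by Lemma \ref{l2.3.1} (so that $\tilde L_0$ is free over the Galois group trivialising the relevant data), these three maps assemble into a morphism $\tilde F_\cdot\to F'_\cdot$ in $S_1^\eff$ inducing $\phi$ on $h_0$, giving surjectivity on Hom groups. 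Injectivity follows from the uniqueness statement in Proposition \ref{p3.1} b) applied to the semi-abelian component, combined with homotoping away a purely discrete residue after one further refinement.

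The main obstacle will be the essential surjectivity step: both the splitting of $\cH$ over $L_0$ and the lift of $s|_K$ from $\uG/L$ to $G$ are controlled by $\Ext^1_\cA$-classes that need not vanish for an arbitrary lattice $L_0$, but are killed after enlarging $L_0$ through a Galois-induction procedure in the spirit of Lemma \ref{l2.3.1}. Checking that two successive such enlargements suffice, and that the construction is natural enough (modulo the homotopies and quasi-isomorphisms formally inverted in $S_1$) to deliver a quasi-inverse to $h_0$, is the real technical content of the argument.
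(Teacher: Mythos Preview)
Your overall strategy (essential surjectivity, then faithfulness, then fullness) and your handling of full faithfulness via the right calculus of fractions and refinements \`a la Lemma \ref{l2.3.1} match the paper's Steps 2--3 closely.

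The gap is in essential surjectivity. By insisting on $F_1=L\oplus K$, you are forced to produce a lift $\tilde\beta:K\to G$ of $s|_K:K\to G/L$. The obstruction is the class $\alpha=s|_K^*[G]\in\Ext^1_{G_k}(K,L)$, and you claim a ``second enlargement of $L_0$ of the same type'' kills it. But if you replace $L_0$ by a Galois-free $\tilde L_0=\Z[\Gamma']^N$, the connecting map for $0\to\tilde K\to\tilde L_0\to E\to 0$ identifies the image of $\alpha$ in $\Ext^1_{G_k}(\tilde K,L)$ with the Yoneda product $[L_0]\cup\alpha\in\Ext^2_{G_k}(E,L)$; concretely, this is the class of the $2$-extension $0\to L\to\cE\to L_0\to E\to 0$ with $\cE=K\times_{G/L}G$. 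There is no reason for this class to vanish (for instance $\Ext^2_{G_k}(E,L)$ is typically nonzero), so your enlargement procedure need not succeed.

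The fix is immediate and recovers the paper's construction: do not split $F_1$. Take
\[F_1=\cE=K\times_{G/L}G,\qquad F_1\to L_0\oplus G,\quad (k,g)\mapsto(\iota(k),g).\]
Then $\cE$ is discrete (an extension of $K$ by $L$ in $\Shv_0$), condition (iv) of Definition \ref{s1a} holds since $\ker(F_1\to L_0)=L$, and $h_0(F_\cdot)\cong\cF$ by the same computation you indicated. This needs only the \emph{first} enlargement (to get the section $s:L_0\to\cF$). The paper does the same thing in slightly different clothing: it builds $F_1$ as a quotient of a Galois-free $\Z[\Gamma']^t$ mapping to $G$ via chosen lifts in $G(\bar k)$ of generators of $K(\bar k)$ and $L(\bar k)$; the resulting $F_1$ is again a generally non-split extension of $K$ by $L$, never $L\oplus K$.
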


\begin{proof} {\it Step 1.} $h_0$ is essentially surjective. Let $\cF\in \Shv^\star_1$ and let
\eqref{eq2.3} be the exact sequence attached to it by Proposition \ref{p3.1} b). We shall
construct a presentation of $\cF$ from \eqref{eq2.3}. Choose elements $f_1,\dots,f_r\in
\cF(\bar k)$ whose images generate
$E(\bar k)$. Let
$\ell/k$ be a finite Galois extension such that all $f_i$ belong to $\cF(\ell)$, and let
$\Gamma=Gal(\ell/k)$. Let $\tilde L_0=\Z[\Gamma]^r$ and define a morphism of sheaves $\tilde
L_0\to \cF$ by mapping the $i$-th basis element to $f_i$. Then $\ker(\tilde L_0\to E)$ maps to
$\uG/L$. Let $M_0$ be the kernel of this morphism, and let $L_0= \tilde L_0/M_0$. Then $\tilde
L_0\onto E$ factors into a morphism $L_0\onto E$, whose kernel $K$ injects into $\uG/L$.

Pick now elements $g_1,\dots,g_s\in G(\bar k)$ whose image in $G(\bar k)/L(\bar k)$ generate
the image of $K(\bar k)$, and $g_{s+1},\dots,g_t\in G(\bar k)$ be generators of the image of
$L(\bar k)$. Let $\ell'/k$ be a finite Galois extension such that all the $g_i$ belong to
$G(\ell')$, and let $\Gamma'=Gal(\ell'/k)$. Let $\tilde F_1=\Z[\Gamma']^t$, and define a
map
$f:\tilde F_1 \to G$ by mapping the $i$-th basis element to $g_i$.  By construction,
$f^{-1}(L)=\ker(\tilde F_1\onto K)$ and $f':f^{-1}(L)\to L$ is onto. Let $M_1$
be the kernel of $f'$ and $F_1=\tilde F_1/M_1$: then $\tilde F_1\to K$ factors through
$F_1$ and
$\ker(F_1\onto K)=\ker(F_1\to L_0)\iso L$. In particular, condition (iii) of Definition
\ref{s1a} is verified.

{\it Step 2.} $h_0$ is faithful. Let $f:F_\cdot\to F'_\cdot$ be a map in $S_1$ such that
$h_0(f)=0$. By Lemma \ref{l2.3.2}, we may assume that $f$ is an effective map (\ie comes from
$S_1^\eff$). We have
$f(L_0\oplus G)\subseteq
\im(L'_1\to  L'_0\oplus G')$, hence $f_{|G} = 0$ and 
$f(L_0)$ is contained in $\im(L'_1\to L'_0\oplus G')$. Pick a finite Galois extension $\ell/k$
such that
$L_0$ and $L'_1$ are constant over $\ell$. By Lemma \ref{l2.3.1}, take a \qi $u:
[\tilde F_1\to \tilde L_0]\to [F_1\to L_0]$ such that $\tilde L_0$ is $Gal(\ell/k)$-free.
Then the composition
$\tilde L_0\to L_0\to \im(L'_1\to L'_0\oplus G')$ lifts to a map $s:\tilde L_0\to L'_1$,
which defines a homotopy between $0$ and $fu$.

{\it Step 3.} $h_0$ is full. Let $F_\cdot,F'_\cdot\in S_1$ and let $\phi:\cF\to \cF'$, where
$\cF=h_0(F_\cdot)$ and $\cF'=h_0(F'_\cdot)$. In particular, we get a map $\phi_G:\uG\to
\uG'$ and a map $\psi:L_0\to L'_0\oplus \uG'/F'_1$. Let
$\ell/k$ be a finite Galois extension such that $F'_1$ is constant over $\ell$. Pick a \qi
$u:\tilde F_\cdot\to F_\cdot$ as in Lemma
\ref{l2.3.1} such that $\tilde L_0$ is
$Gal(\ell/k)$-free. Then $\psi\circ u$ lifts to a map $\tilde\psi:\tilde L_0\to L'_0\oplus
\uG'$. The map
\[f=(\tilde\psi,\phi_G):\tilde L_0\oplus G\to L'_0\oplus G'\]
sends $\tilde F_1$ into $F'_1$ by construction, hence yields a map $f:\tilde F_\cdot\to
F'_\cdot$ such that $h_0(f u^{-1})=\phi$.
\end{proof}

\begin{cor} The obvious functor
\[S_1[1/p]\to D^b(\Shv_1)\]
is fully faithful.
\end{cor}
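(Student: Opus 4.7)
The plan is to factor the obvious functor through the equivalence $h_0 : S_1 \xrightarrow{\sim} \Shv_1$ of Proposition \ref{p3.4.6} and to invoke the fully faithful inclusion of the heart of a $t$-structure into its derived category.

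First I would check that the functor $S_1^\eff \to D^b(\Shv_1)$ sending $F_\cdot = [F_1 \to F_0]$ to itself (viewed as a $2$-term complex of objects of $\Shv_1$, using that $\AbS \subset \Shv_1$) is well defined on the localisation $S_1 = \Sigma^{-1}\bar S_1^\eff$: chain homotopies in $S_1^\eff$ are sent to genuine chain homotopies of complexes of sheaves and thus give equalities in $D^b(\Shv_1)$, while morphisms in $\Sigma$ become quasi-isomorphisms of complexes of sheaves by the very definition (\ref{s1b}) and hence isomorphisms in $D^b(\Shv_1)$. Denote by $\Phi : S_1 \to D^b(\Shv_1)$ the resulting functor.

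The key observation is that, for every $F_\cdot \in S_1^\eff$, condition (iii) of Definition \ref{s1a} means that the map $F_1 \to F_0$ is a monomorphism in $\Shv_\et(Sm(k))[1/p]$, hence also in the full abelian subcategory $\Shv_1$ (Proposition \ref{p3.1} d)). Consequently $H_{-1}(F_\cdot) = 0$ while $H_0(F_\cdot) = h_0(F_\cdot)$, so the canonical truncation $F_\cdot \to h_0(F_\cdot)[0]$ is a quasi-isomorphism, natural in $F_\cdot$. This exhibits a natural isomorphism of functors between $\Phi$ and the composite
\[
S_1 \xlongrightarrow{\ h_0\ } \Shv_1 \xlongrightarrow{\ \iota\ } D^b(\Shv_1),
\]
where $\iota$ is the canonical inclusion of the heart in degree $0$.

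Now $h_0$ is an equivalence by Proposition \ref{p3.4.6}, and $\iota$ is fully faithful by the general property of the standard $t$-structure on the bounded derived category of an abelian category. Hence $\Phi \simeq \iota \circ h_0$ is fully faithful. There is no real obstacle here: all the substantive content is packaged in Proposition \ref{p3.4.6}, and the argument above is purely formal.
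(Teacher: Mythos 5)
Your proof is correct and takes essentially the same approach as the paper's. Both rest on the observation that condition (iii) of Definition \ref{s1a} forces the image complexes to be concentrated in degree $0$, so that the obvious functor is naturally identified with the composition of the equivalence $h_0$ and the full embedding of the heart into $D^b(\Shv_1)$; the paper phrases this as ``$H_0$ is faithful on the image,'' which amounts to the same reduction.
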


\begin{proof} The composition of this functor with $H_0$ is the equivalence $h_0$ of
Proposition \ref{p3.4.6}. Therefore it suffices to show that the restriction of $H_0$ to the
image of $S_1[1/p]$ is faithful. This is obvious, since the objects of this image are homologically
concentrated in degree $0$.
\end{proof}

\subsection{The transfer structure on $1$-motivic sheaves}\label{3.9}
Recall the category  $\HI_\et^s$ from Definition
\ref{his}; it is a thick $\Z[1/p]$-linear subcategory of $\EST$. Recall that we also have denoted by $\rho$ the functor $G\mapsto \uG [1/p]$ from $\cG^*_{\rm hi}$ to $\HI_\et^s$, see \eqref{rho}. 

\begin{propose}\label{ptransf} The restriction of \eqref{rho} to ${}^t\AbS$ extends to a full embedding 
\[\rho:\Shv_1\into\HI_\et^s.\]
This functor is exact with thick image (\ie stable under extensions).
\end{propose}

\begin{proof} By Proposition \ref{p3.4.6}, it suffices to construct a functor $\rho:S_1[1/p]\allowbreak\to
\HI_\et^s$. First define a functor  $\tilde \rho:S_1^\eff[1/p]\to \HI_\et^s$ by
\[\tilde\rho([F_1\to F_0])= \coker(\rho(F_1)\to \rho(F_0)),\]
using \eqref{rho}. Note that the forgetful functor 
\[f:\HI_\et^s\to \ES\]
 is faithful and exact, hence
conservative. This first gives that
$\tilde\rho$ factors into the desired $\rho$.

Proposition \ref{p3.1} d) -- e) says that $f\rho$ is fully faithful and exact.
Since
$f$ is faithful, $\rho$ is fully faithful and exact. 

It remains to show that $\rho$ is thick. By
Theorem \ref{text}, $\Shv_1$ is thick in $\ES$. Since $f$ is exact, the proof is concluded by the lemma below.
\end{proof}

\begin{lemma}\label{l3.8} The functor $f:\HI_\et^s\to \ES$ is fully faithful. In particular, the transfer structure on a sheaf $\cF\in \HI_\et^s$ is unique.
\end{lemma}

\begin{proof} It is a variation on the Gersten principle of Proposition \ref{pgersten}. Let $\cF_1,\cF_2\in \HI_\et^s$ and let $\phi:f\cF_1\to f\cF_2$ be a morphism. Thus, for $X,Y\in
\Sm(k)$, we have a diagram
\[\begin{CD}
\cF_1(X)\otimes c(Y,X)@>>> \cF_1(Y)\\
@V\phi_X \otimes 1VV @V\phi_Y VV\\
\cF_2(X)\otimes c(Y,X)@>>> \cF_2(Y)
\end{CD}\]
and we want to show that it commutes. We may clearly assume that $Y$ is irreducible.

If $F=k(Y)$ is the function field of $Y$ then the map $\cF_2(Y)\to \cF_2(F)$ is injective by \cite[Cor. 4.19]{V2} since $\cF_2$ is a homotopy invariant Zariski sheaf with transfers. Thus we may replace $Y$ by $F$.

Moreover, since $\cF_2$ is an \'etale sheaf,
we have an injection $\cF_2(F)\into \cF_2(F_s)$, where $F_s$ is a separable closure of $F$. By a transfer argument, $\ker(\cF_2(F_s)\to \cF_2(\bar F))$ is $p$-primary torsion if  $\bar F$ is an algebraic closure of $F$,  hence $0$ since $\cF_2$ is a sheaf  of $\Z[1/p]$-modules. So we we may even replace $F_s$ by $\bar F$. Thus, we may even
replace $Y$ by $\bar F$.\footnote{Note that $\Spec \bar F$ is a pro-object of $\Sm(k)$: since $k$ is perfect, any regular $k$-scheme of finite type is smooth.}

Then the group $c(Y,X)$ is replaced by $c(\bar F,X) = Z_0(X_{\bar F})$. Since $\bar F$ is
algebraically closed, all closed points of $X_{\bar F}$ are rational, hence all finite
correspondences from $\Spec \bar F$ to $X$ are linear combinations of morphisms. Therefore the diagram commutes on them.
\end{proof}

\subsection{$1$-motivic sheaves and $\DM$}  The introduction of $\Shv_1$ is now made clear by the following 

\begin{thm} \label{t3.2.3} a) The embedding of Proposition \ref{ptransf} sends $\Shv_1$ into $d_{\le 1}\DM_{\gm,\et}^{\eff}\subset \DM_{-,\et}^\eff$.\\
b) Let
$M\in d\1 \DM_{\gm,\et}^{\eff}$. Then for all $i\in\Z$, $\sH^i(M)\in \Shv_1$, where $\sH^i$ is computed with respect to the homotopy $t$-structure (see Definition \ref{d1.7}). \index{$\sH^n$, $\sH_n$}
In particular, the latter induces a $t$-structure on $d\1 \DM_{\gm,\et}^{\eff}$, with heart $\Shv_1$.
\end{thm}

\begin{proof} a) is proven as in \S \ref{s2.4.1}. b)  By the thickness of $\Shv_1$ in $\HI_\et^s$
(Proposition
\ref{ptransf}), we reduce to the case
$M=M_\et(C)$, $C\by{p}\Spec k$ a smooth projective curve. By Proposition \ref{lcurve}, the
cohomology sheaves of $M_\et(C)$ belong to $\Shv_1$: for $\cH^{-1}$ this is clear and for $\cH^0$ it
is a (trivial) special case of Proposition \ref{p3.3.1}.
\end{proof}

Note that the functor $\M[1/p]\to {\HI_\et^s}^{[0,1]}$ of \eqref{eq2.2} refines to a functor 
\[\M[1/p]\allowbreak\to {\Shv_1}^{[0,1]}.\]

Hence, using Lemma \ref{ltot} as in \S \ref{orgo}, we get a composed triangulated functor
\begin{equation}\label{hts}
\tot:D^b(\M[1/p])\to D^b({\Shv_1}^{[0,1]})\to D^b(\Shv_1)
\end{equation}\index{$\tot$}
refining the one from Lemma \ref{l2.2.1} (same proof). We then have:

\begin{cor} \label{c3.3.2} The two functors
\[\begin{CD}
D^b(\M[1/p])@>\tot>>D^b(\Shv_1)\to d\1\DM_{\gm,\et}^\eff
\end{CD}\]
are equivalences of categories. \end{cor}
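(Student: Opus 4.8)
\textbf{Proof plan for Corollary \ref{c3.3.2}.}

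The plan is to chain together three equivalences that are already available. First, the composite $D^b(\Shv_1)\to d\1\DM_{\gm,\et}^\eff$ is the functor induced on bounded derived categories by the full embedding $\rho:\Shv_1\into \HI_\et^s\subset \DM_{-,\et}^\eff$ of Proposition \ref{ptransf}, whose essential image (after passing to $\Shv_1^s$ via Theorem \ref{t3.2.3}) is precisely the heart of the homotopy $t$-structure on $d\1\DM_{\gm,\et}^\eff$. Since by Theorem \ref{t3.2.3} every $M\in d\1\DM_{\gm,\et}^\eff$ has all homology sheaves $\sH_i(M)$ in $\Shv_1^s$, and this $t$-structure on $d\1\DM_{\gm,\et}^\eff$ is bounded, the standard "realisation" criterion (a bounded $t$-structure whose heart is $\cB$ gives an equivalence $D^b(\cB)\iso$ the ambient category, provided the $\Ext$'s agree — here automatic since $\Shv_1$ is \emph{thick}, i.e. stable by extensions, in $\HI_\et^s$ by Theorem \ref{text} and Proposition \ref{ptransf}) shows $D^b(\Shv_1)\iso d\1\DM_{\gm,\et}^\eff$. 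Concretely one invokes the Beilinson–Bernstein–Deligne lemma on $t$-exact realisation functors: the inclusion of the heart extends to $D^b(\Shv_1)\to d\1\DM_{\gm,\et}^\eff$, it is $t$-exact, it is fully faithful because $\rho$ identifies $\Ext^i_{\Shv_1}$ with $\Hom_{\DM}$ in all degrees (thickness kills the obstruction in degree $1$, and higher $\Ext$'s vanish in $\Shv_1$ since it has cohomological dimension $\le 1$ — this drops out of Proposition \ref{p3.4.6} presenting objects by length-$1$ complexes of group schemes), and it is essentially surjective since its image is a thick subcategory containing the heart, hence all of $d\1\DM_{\gm,\et}^\eff$ by boundedness.

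Second, I would identify the composite $D^b(\M[1/p])\by{\tot}D^b(\Shv_1)$ with the equivalence $T$ of Theorem \ref{t1.2.1} followed by the (just-established) equivalence $D^b(\Shv_1)\iso d\1\DM_{\gm,\et}^\eff$. This is a compatibility check: $\tot$ is built in \eqref{hts} from the refined functor $\M\to \Shv_1^{[0,1]}$ by applying $\Tot$ on length-$1$ complexes and then $D^b(\Shv_1^{[0,1]})\to D^b(\Shv_1)$ (via Lemma \ref{ltot}), exactly paralleling the construction of $T$ in \S\ref{2.2} which used $\M\to\HI_\et^{[0,1]}$ and $\Tot$; the only difference is that $T$ further composes with $D^b(\HI_\et)\to \DM_{-,\et}^\eff$, and this last step factors through $D^b(\Shv_1)$ on the relevant objects precisely because $\rho$ is the restriction of the embedding $\HI_\et^s\subset\DM$. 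So the triangle of functors commutes up to natural isomorphism by construction, and since $T$ is an equivalence onto $d\1\DM_{\gm,\et}^\eff$ (Theorem \ref{t1.2.1}) and the right-hand arrow is an equivalence by the previous paragraph, $\tot$ is an equivalence too. Then the composite $D^b(\M[1/p])\by{\tot}D^b(\Shv_1)\to d\1\DM_{\gm,\et}^\eff$ is an equivalence, being the composite of two equivalences — and it is canonically $T$.

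The main obstacle I anticipate is the full faithfulness of $D^b(\Shv_1)\to d\1\DM_{\gm,\et}^\eff$, i.e. the statement that $\Ext$-groups computed in the abelian category $\Shv_1$ agree with $\Hom$-groups in $\DM_{-,\et}^\eff$ in all cohomological degrees. Degrees $0$ and $1$ are handled by Propositions \ref{ptransf} (fully faithful, exact) and \ref{text} (thickness, so $\Ext^1$ in $\Shv_1$ equals $\Ext^1$ in the ambient sheaf category, which in turn maps isomorphically to $\Hom_{\DM}[1]$), but for $i\ge 2$ one needs that $\Shv_1$ genuinely has cohomological dimension $\le 1$ \emph{and} that these higher $\Ext$'s are not created by the passage to $\DM$. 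The first point follows from the length-$1$ presentations of Definition \ref{s1a}/Proposition \ref{p3.4.6}; the second is the delicate one and I would reduce it, via the computation of $R_\et f_*\G_m$ in Proposition \ref{lcurve} and Gersten's principle (Proposition \ref{pgersten}), to vanishing statements at separably closed points, where it becomes the classical vanishing of higher cohomology of a curve. An alternative, cleaner route — which I would actually prefer — sidesteps this entirely: rather than proving full faithfulness directly, transport the equivalence through $T$ of Theorem \ref{t1.2.1}, noting that $\tot$ and $T$ differ only by the (equivalence) $D^b(\Shv_1)\simeq D^b(\HI_\et\text{-image})$, so that all the hard work has already been absorbed into Theorem \ref{t1.2.1} and Theorem \ref{t3.2.3}; the corollary is then essentially a bookkeeping statement about factoring $T$ through its image's heart.
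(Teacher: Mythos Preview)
Your main approach in the first paragraph contains a genuine error: $\Shv_1$ does \emph{not} have cohomological dimension $\le 1$, and this does not follow from the length-$1$ presentations of Proposition~\ref{p3.4.6} --- those present objects by group schemes in $\AbS$, which are not projective in $\Shv_1$. Concretely, for an abelian variety $A$ over an algebraically closed field one has $\Ext^1_{\Shv_1}(\underline A,\Z/n)=\Ext^1_{\text{\'et sheaves}}(\underline A,\Z/n)\simeq H^1_\et(A,\Z/n)_{\mathrm{mult}}\simeq(\Z/n)^{2g}$ by Example~\ref{ex3.1.4} and Theorem~\ref{text}; the long exact sequence for $0\to\Z\by n\Z\to\Z/n\to 0$, together with $\Ext^1_{\Shv_1}(\underline A,\Z)=0$ (Lemma~\ref{c3.2}), shows this injects into $\Ext^2_{\Shv_1}(\underline A,\Z)$, which is therefore nonzero. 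So your realisation-style argument for full faithfulness of $D^b(\Shv_1)\to d\1\DM_{\gm,\et}^\eff$ collapses. You also leave unaddressed why $\Hom_{\DM}(\cF,\cG[i])$ should vanish for $i\ge 2$ on the target side --- you would equally need this for your Ext comparison, and in fact it fails too (the same example gives $\Hom_{\DM_{\et}}(\underline A,\Z[2])\ne 0$, as one sees by computing $H^2_\et(C,\Z)[1/p]$ for a curve $C$ with $J(C)=A$).

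The ``cleaner route'' you outline at the end is essentially the paper's proof, which avoids all Ext comparisons. The argument runs: the composite is the equivalence of Theorem~\ref{t1.2.1}, so the second functor is full and essentially surjective; it is conservative because by Theorem~\ref{t3.2.3} it is $t$-exact for the homotopy $t$-structure and restricts to an equivalence on hearts; then Lemma~\ref{lA.2} (a full triangulated functor is faithful iff conservative) upgrades this to an equivalence, and $\tot$ follows. The key insight you were circling around --- that conservativity is the missing ingredient and comes for free from the $t$-structure --- is exactly what makes the paper's argument short.
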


\begin{proof} For the composition, this is Theorem \ref{t1.2.1}. This implies that the second functor is full and essentially surjective, and to conclude, it suffices by Lemma \ref{lA.2} to see that it is conservative. But this follows immediately from Proposition \ref{ptransf} and Theorem \ref{t3.2.3}.
\end{proof}

\begin{defn}\label{dhts} We call the $t$-structure defined on $D^b(\M[1/p])$ or on
$d\1\DM_{\gm,\et}^\eff$ by Corollary \ref{c3.3.2} the \emph{homotopy $t$-structure}.
\end{defn}

\begin{remark}[\cf \S \protect{\ref{alessandra1}}]\label{alessandra2}  In
\cite{alessandra}, A. Bertapelle defines  a variant  $\Shv_1^\prime$ of
the category $\Shv_1^{\rm fppf}$ from Remark \ref{fppf} allowing non-reduced finite commutative
group schemes and constructs an equivalence of categories 
\[D^b(\M)\simeq D^b(\Shv_1^\prime)\]
without inverting $p$ (not going via $\DM$). Hence the homotopy $t$-structure of Definition \ref{dhts} exists integrally even  over a perfect field of positive characteristic.
\end{remark}

\subsection{Comparing $t$-structures}\label{3.6} In this subsection, we want
to compare the homotopy $t$-structure of Definition \ref{dhts} with the motivic $t$-structure of Theorem \ref{ptors} a).

Let $C\in D^b(\M[1/p])$. Recall (from \ref{not}) the notation ${}^tH_n(C)\in {}^t\M[1/p]$ for its homology relative to the torsion $1$-motivic
$t$-structure from Theorem \ref{ptors}. Recall (from \ref{d1.7}) that we also write $\sH^n(C)\allowbreak\in
\Shv_1$ for its cohomology objects  relative to the homotopy $t$-structure. \index{$\sH^n$, $\sH_n$} \index{${}^tH^n$, ${}_tH^n$}

Consider the functor $\tot$ of \eqref{hts}. Let $\cF$ be a $1$-motivic sheaf and
$(G,b)$ its associated normalised pair (see Proposition \ref{p3.1} a)). Let $L=\ker
b$ and $E=\coker b$. In $D^b(\M[1/p])$, we have an exact triangle
\[[L\to G][1]\to \tot^{-1}(\cF)\to [E\to 0]\by{+1}\]
(see Corollary \ref{c3.3.2}). This shows:

\begin{lemma} \label{l4.3.1} We have
\begin{align*}
{}^tH_0(\tot^{-1}(\cF))&=[E\to 0]\\
{}^tH_1(\tot^{-1}(\cF))&=[L\to G]\\
{}^tH_q(\tot^{-1}(\cF))&=0 \text{ for } q\ne 0,1.\qed
\end{align*}
\end{lemma}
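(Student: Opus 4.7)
The plan is to apply the long exact homology sequence of the torsion $t$-structure on $D^b(\M[1/p])$ (Theorem~\ref{ptors}) to the given distinguished triangle. First I identify the ${}^tH_n$ of the two outer terms. By construction of the normalised representation (Proposition~\ref{p3.1}(a)), $L$ is a lattice and $G$ is semi-abelian, so $[L\to G]\in\M[1/p]\subset{}^t\M[1/p]$; and $E=\coker b$ is a discrete sheaf, so $[E\to 0]\in{}^t\M[1/p]$ (here it is essential that ${}^t\M$ permits torsion in the degree-$0$ component, which is precisely the point of having introduced ${}^t\M$ in \S\ref{1.2}). Both objects lie in the heart, so ${}^tH_n([L\to G][1])$ equals $[L\to G]$ for $n=1$ and vanishes otherwise, while ${}^tH_n([E\to 0])$ equals $[E\to 0]$ for $n=0$ and vanishes otherwise.

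Next I apply ${}^tH_n$ to the triangle $[L\to G][1]\to\tot^{-1}(\cF)\to[E\to 0]\xrightarrow{+1}$, obtaining
$$\cdots\to{}^tH_n([L\to G][1])\to{}^tH_n(\tot^{-1}(\cF))\to{}^tH_n([E\to 0])\xrightarrow{\partial}{}^tH_{n-1}([L\to G][1])\to\cdots$$
For $n\ne 0,1$ both neighbours vanish, so ${}^tH_n(\tot^{-1}(\cF))=0$. The strand at $n=1$ reads $0\to[L\to G]\to{}^tH_1(\tot^{-1}(\cF))\to 0$, yielding ${}^tH_1(\tot^{-1}(\cF))=[L\to G]$; at $n=0$ it reads $0\to{}^tH_0(\tot^{-1}(\cF))\to[E\to 0]\to 0$ (the potentially interesting connecting map $\partial\colon{}^tH_0([E\to 0])\to{}^tH_{-1}([L\to G][1])$ has zero target), yielding ${}^tH_0(\tot^{-1}(\cF))=[E\to 0]$.

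The only thing that really requires justification is the existence of the triangle itself, which the text invokes with a pointer to Corollary~\ref{c3.3.2}. For completeness I would re-derive it by splitting the four-term exact sequence $0\to L\to\uG\to\cF\to E\to 0$ in $\Shv_1$ (Proposition~\ref{p3.1}) as $0\to\im b\to\cF\to E\to 0$ and $0\to L\to\uG\to\im b\to 0$, producing a distinguished triangle $\im b\to\cF\to E\xrightarrow{+1}$ in $D^b(\Shv_1)$ and an identification $\im b\simeq[L\to\uG]$ (the latter concentrated in degrees $-1,0$), then transporting everything along the equivalence $\tot$ of Corollary~\ref{c3.3.2}. The main (and only) delicate point, which is really just bookkeeping, is keeping track of degree conventions: under $\tot$ the $1$-motive $[L\to G]$ (with $L$ in degree $0$ and $G$ in degree $1$) corresponds to $\im b[-1]$ in $D^b(\Shv_1)$, so that $[L\to G][1]$ corresponds to $\im b$ and the claimed triangle is exactly the one coming from $0\to\im b\to\cF\to E\to 0$.
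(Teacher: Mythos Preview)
Your proof is correct and follows exactly the approach implicit in the paper: the lemma is stated immediately after the triangle with a $\qed$, so the intended argument is precisely the long exact ${}^tH_*$-sequence you wrote out, using that both $[L\to G]$ and $[E\to 0]$ lie in the heart ${}^t\M[1/p]$. Your extra paragraph deriving the triangle from the short exact sequence $0\to\im b\to\cF\to E\to 0$ in $\Shv_1$ and the equivalence of Corollary~\ref{c3.3.2} is a welcome expansion of what the paper leaves as a reference.
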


On the other hand, given a 1-motive (with torsion or cotorsion) $M=[L\by{f} G]$, we clearly have
\begin{align}
\sH^0(M)&=\ker f\notag\\
\sH^{1}(M)&=\coker f\label{eq3.9}\\
\sH^q(M)&=0 \text{ for } q\ne 0,1.\notag
\end{align}
by considering it as a complex of length $1$ of $1$-motivic sheaves.

In particular, ${}^t\M[1/p]\cap \Shv_1= \Shv_0$,
${}^t\M[1/p]\cap \Shv_1[-1]$ consists of quotients of semi-abelian
varieties by discrete subsheaves and ${}^t\M[1/p]\cap \Shv_1[q]\allowbreak=0$
for $q\ne 0,-1$.

Here is a more useful result relating $\sH^i$ with the two motivic $t$-structures:

\begin{propose}\label{p3.10} Let $C\in D^b(\M[1/p])$; write $[L_i\by{u_i} G_i]$ for ${}_tH_i(C)$ and $[L^i\by{u^i} G^i]$ for ${}^tH^i(C)$\footnote{Note that $(L_i,G_i)$ and $(L^i,G^i)$ are determined only up to the relevant \qi's.}. Then we have exact sequences in $\Shv_1$:
\begin{gather*}
\dots\to L_{i+1}\by{u_{i+1}} G_{i+1}\to \sH_i(C)\to L_{i}\by{u_{i}} G_{i}\to\dots\\
\dots\to L^{i-1}\by{u^{i-1}} G^{i-1}\to \sH^i(C)\to L^{i}\by{u^{i}} G^{i}\to\dots
\end{gather*}
\end{propose}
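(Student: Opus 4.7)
The approach is to compare the two $t$-structures on $D^b(\M[1/p])$---the torsion (Theorem \ref{ptors}) and the homotopy (Definition \ref{dhts}) ones---by means of a hypercohomology spectral sequence, and to notice that it has only two non-zero columns for trivial amplitude reasons, so degenerates at $E_2$.

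Concretely, I would apply the cohomological functor $\sH^\bullet$ (with respect to the homotopy $t$-structure) to the Postnikov tower of $C$ for the torsion $t$-structure, whose successive triangles read $\tau^{\le i-1}C\to \tau^{\le i}C\to {}^tH^i(C)[-i]\by{+1}$. Since $C$ is bounded for both $t$-structures, the resulting exact couple yields a convergent spectral sequence
\[
E_2^{p,q}=\sH^p\bigl({}^tH^q(C)\bigr)\Longrightarrow \sH^{p+q}(C).
\]
Now, for any $M=[L\by{u}G]\in{}^t\M[1/p]$, viewed in $D^b(\Shv_1)$ through the equivalence $\tot$ of \eqref{hts}, equation \eqref{eq3.9} gives $\sH^0(M)=\ker u$, $\sH^1(M)=\coker u$, and $\sH^p(M)=0$ otherwise; both of these objects do lie in $\Shv_1$, since $\ker u$ is a sublattice of $L$ and $\coker u$ is a quotient of the semi-abelian $\uG$ by a discrete subsheaf. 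Hence $E_2^{p,q}=0$ unless $p\in\{0,1\}$, the spectral sequence collapses at $E_2$, and one extracts short exact sequences
\[
0\to \coker u^{i-1}\to \sH^i(C)\to \ker u^i\to 0.
\]
Splicing these with the tautological sequences $0\to \ker u^i\to L^i\to \im u^i\to 0$ and $0\to \im u^i\to G^i\to \coker u^i\to 0$ produces the asserted long exact sequence $\cdots\to L^{i-1}\to G^{i-1}\to \sH^i(C)\to L^i\to G^i\to\cdots$.

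The homological statement is proved identically, now filtering $C$ by the cotorsion $t$-structure (whose existence is Theorem \ref{tstr}); \eqref{eq3.9} applies verbatim to cotorsion $1$-motives, and the indices are converted via $\sH_i=\sH^{-i}$, ${}_tH_i={}_tH^{-i}$, giving short exact sequences $0\to \coker u_{i+1}\to \sH_i(C)\to \ker u_i\to 0$ which splice to the first asserted long exact sequence.

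The only step that needs genuine verification is the existence and convergence of the hypercohomology spectral sequence; this is a standard consequence of the Postnikov filtration of a bounded object with respect to a $t$-structure, but the indexing and sign conventions deserve care in writing the final proof. If one prefers to avoid spectral sequence machinery, an equivalent route is induction on the length of $C$ in the torsion $t$-structure: the base case $C\in{}^t\M[1/p]$ reduces to \eqref{eq3.9}, and the inductive step assembles the desired long exact sequence from the homotopy cohomology long exact sequence of the triangle $\tau^{\le i-1}C\to C\to {}^tH^i(C)[-i]\by{+1}$ at the largest nonvanishing degree $i$.
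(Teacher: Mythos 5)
Your proof is correct, and it is essentially the paper's argument: the ``alternative route'' you sketch in the last paragraph---induction on the length of $C$ in the torsion (or cotorsion) $t$-structure, with base case \eqref{eq3.9}---is word for word what the paper does. The spectral-sequence packaging you present as the main route is an equivalent reformulation of the same induction (the Postnikov tower for one $t$-structure, cohomology taken in the other), and the degeneration at $E_2$ by column reasons is just a systematic way of organizing the splicing you would otherwise perform step by step; both buy you the same thing.
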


\begin{proof} For the first one, argue by induction on the length of $C$ with respect to the motivic $t$-structure with heart ${}_t\M[1/p]$ (the case of length $0$ is \eqref{eq3.9}). For the second one, same argument with the other motivic $t$-structure.
\end{proof}

Note finally that the homotopy $t$-structure is far from being invariant under Cartier duality:
this can easily be seen by using Proposition \ref{p3.4.6}.

\subsection{Global $\Ext^i$ with transfers}\label{gtr}
 For the sake of notation (see  also \S \ref{1.5}) we write $\Ext^i_\tr (\cF_1 , \cF_2)$ for $\Ext^i_{\EST}(\cF_1 , \cF_2)$ and sheaves $\cF_1 , \cF_2\in \EST$. 

\begin{lemma}\label{l3.13} The group $\Ext_\tr^i(\cF_1, \cF_2)$ is torsion for any $i\geq 2$ and any $\cF_1 \in \Shv_1$, $\cF_2\in \HI^s_\et$.
\end{lemma}

\begin{proof} By a transfer argument, we reduce to $k$ algebraically closed. Given the structure of $\cF_1$, we reduce by d\'evissage to three basic cases: $\cF_1\in \Shv_0$, $\cF_1$ a torus and $\cF_1$ an abelian variety. The first case further reduces to (i) $\cF_1=\Z$, the second one to (ii) $\cF_1=\G_m$ and the third to (iii) $\cF_1=J(C)$ for $C$ a smooth projective curve. In case (i), we find $\Ext_\tr^i(\cF_1, \cF_2)=H^i_\et(k,\cF_2)$ which is $0$ for $i>0$. In case (ii), $\G_m$ is a direct summand of $M_\et(\G_m)$ in $\DM_{-,\et}^\eff$ thus $\Ext_\tr^i(\cF_1, \cF_2)$ is a direct summand of $H^i_\et(\G_m,\cF_2)$ which is torsion for $i>1$. In case (iii), $\Ext_\tr^i(\cF_1, \cF_2)$ is similarly a direct summand of $H^i_\et(C,\cF_2)$ \cite[Th. 3.4.2]{V}, which is again torsion for $i>1$. Here we use the fact that the \'etale cohomological dimension of a Noetherian scheme $X$  for sheaves of $\Q$-vector spaces is $\le \dim (X)$: for this, one may reduce to the case of fields as in  \cite[Exp. X, proof of Th. 4.1]{sga4}. \end{proof}

\begin{remark} This simple proof was suggested independently by the referee and Joseph Ayoub, that we wish to thank here. Let us mention two other proofs, which only work for $\cF_2\in \Shv_1$ (this is the only case we shall need):
\begin{itemize}
\item Adapt Breen's technique to the framework of \'etale sheaves with transfers (this was our original proof).
\item Play around with the exact triangles of \S \ref{3.6}, using mainly Proposition \ref{iso1}.
\end{itemize}
\end{remark}

Here is a refinement of Lemma \ref{l3.13}:

\begin{lemma}\label{l3.12} Suppose $k=\bar k$. Then, we have  $\Ext_\tr^i(\cF_1, \cF_2)=0$ for $\cF_1 \in \Shv_1$, $\cF_2\in \HI^s_\et$ and any $i> 2$   (remember that we work with $p$ inverted); moreover, $\Ext_\tr^2(\cF_1, \cF_2)$ is divisible. 
\end{lemma} 

\begin{proof}  Note first that $\Ext^i_\tr(\cF_1,\cF_2)$ is a $\Z[1/p]$-module for all $i$, since $\cF_1,\cF_2\in \HI^s_\et$.

Assume first that  $\cF_1\in \Shv_0$. A standard d\'evissage  reduces us to the following basic cases for: (i) $\cF_1 =\Z$ or (ii) $\cF_1=\Z/m$. For $\cF_1=\Z$, we have $\Ext^i_\tr(\cF_1,\cF_2)=H^i_\et(k,\cF_2)=0$ for $i>0$. For $\cF_1=\Z/m$, the exact sequence
\[\Ext_\tr^{i-1}(\Z, \cF_2)\to \Ext_\tr^i(\Z/m, \cF_2)\to \Ext_\tr^i(\Z, \cF_2)\by{m} \Ext_\tr^i(\Z, \cF_2)\]
and (i) gives the claimed vanishing in the case (ii) for $i>1$. 

In general, by considering a normalized morphism $b_1:\uG_1\to \cF_1$ with $L_1=\ker b_1$ free, using (i) and (ii) above we get
\[\Ext_\tr^i(\cF_1, \cF_2)\iso \Ext_\tr^i(\uG_1/L_1, \cF_2)\iso \Ext_\tr^i(\uG_1, \cF_2)\]
for $i>1$. The same argument as in the proof of Proposition \ref{ptorsion}, using exact sequences analogous to \eqref{breen10} then  yields, using Lemma \ref{l3.13}, the claimed vanishing for $i>2$. 
The divisibility of this group for $i=2$ is clear since $\Ext_\tr^2({}_m\uG_1, \cF_2)=0$ for any $m$ prime to $p$.
\end{proof}

 \begin{remark} For an example where $\Ext_\tr^2(\cF_1, \cF_2)\ne 0$, we may take $(\cF_1,\cF_2)=(\G_m,\Z)$ (compare Remark \ref{r3.2} (2)).
\end{remark}

\subsection{Local $\Ext^i$ with transfers}\label{ltr}

We now get back to the case of an arbitrary perfect field $k$. Recall that the category $\EST$ has enough injectives \cite[6.19]{VL} so that we can define $\sext^i_\tr(\cF,\cG)\in \EST$ as the derived functors of $\shom_\tr(\cF,\cG)$ (compare \S \ref{1.5}). 

Recall that the derived tensor product $\oo^L$ of $D^-(\EST)$ \cite[Prop. 8.8]{VL} has a right adjoint $\srhom(\cF,\cG)\in D^+(\EST)$ defined at least for $\cF$ representable, and computable from $\shom$ by injective resolutions \cite[Rk. 8.21]{VL}; this immediately extends to any $\cF$ by using Voevodsky's canonical resolutions. In particular,
\[\cH^i(\srhom(\cF,\cG))=\sext^i_\tr(\cF,\cG)\]
with $\sext^i_\tr$ as above.

\begin{lemma} If $\cF_1,\cF_2\in \HI_\et^s$, we have $\sext^i_\tr(\cF_1,\cF_2)\in \HI_\et^s$ for all $i\ge 0$.
\end{lemma}

\begin{proof} Indeed, the above $\srhom$ restricts to the partial internal Hom of $\DM_{-,\et}^\eff$ already considered in \S \ref{2.5} (compare \cite[Rk. 14.12]{VL}). 
\end{proof}

Let $K/k$ be an algebraically closed extension. In the rest of this subsection, we shall work with \'etale sheaves with transfers over $k$ and $K$, so we exceptionally specify this in the notation $\EST(k)$ and $\EST(K)$. The ``direct image'' functor
\begin{align*}
&\EST(K)\to \EST(k)\\
&\cF\mapsto \cF_{|k}\\ 
&\cF_{|k}(X)\df \cF(X_K)\hspace*{0.7cm}  X\in \Sm(k)
\end{align*}
has an exact left adjoint $\cF\mapsto \cF_K$. 

\begin{lemma}\label{simpler} For $\cF\in \EST(k)$ and $X\in \Sm(K)$, we have an isomorphism
\begin{equation}\label{eq3.14.3}
\colim_{X\to \cX}\cF(\cX)\iso \cF_K(X)
\end{equation}
where the colimit is taken over the filtering system of $k$-morphisms
$X\to\cX$ for $\cX\in \Sm(k)$.
\end{lemma}

\begin{proof} Let $X\to\cX$ be as in the lemma, whence a $K$-morphism $X\to \cX_K$. The unit morphism $\cF\to (\cF_K)_{|k}$ yields a map
\[\cF(X)\to \cF_K(\cX_K)\to \cF_K(X)\]
hence a map \eqref{eq3.14.3} in the limit. To show that it is an isomorphism, we may reduce to representable sheaves $L(Y)$, and then it reduces to the tautological formula
\[L(Y)_K=L(Y_K).\]
\end{proof}

Ideally we would like a computation of $\sext^i(\cF,\cG)(K)$ similar to the classical one for stalks on small \'etale sites \cite[Ch. III, Ex. 1.31 (b)]{MI}. We don't know how to do this, mainly because we don't know if $\cI_K$ is injective when $\cI$ is. Instead, we have the following

\begin{propose}\label{l:comm} For $\cF,\cG\in \HI_\et^s(k)$ with $\cF\in \DM_{\gm,\et}^\eff(k)$. Then  there are  isomorphisms, natural in $\cF,\cG,K$:
\[\Ext^i_\tr(\cF_K,\cG_K)\simeq \sext^i_\tr(\cF,\cG)(K). \]
\end{propose}

\begin{proof}  By the above, we have
\begin{equation}\label{eq3.14.1}
\sext^i_\tr(\cF,\cG)=\cH^i(\ihom_\et (\cF,\cG[i])).
\end{equation}

Let $U\in \Sm(k)$ and let $\phi:\Spec K\to  U$ be a $k$-morphism. It induces a morphism in $\DM_{-,\et}^\eff(K)$
\[\phi_*:M_\et(\Spec K)\to M_\et(U_K)=M_\et(U)_K.\]

(Note that $\cF\mapsto \cF_K$ extends to triangulated functors on the derived categories.)

For $\cG\in \HI_\et^s(k)$,  we get a composition
\[\ihom_\et (M_\et(U),\cG)_K\to \ihom_\et (M_\et(U)_K,\cG_K)\longby{\phi^*} \cG_K  \]
hence by adjunction
\[\ihom_\et (M_\et(U),\cG)\to  R_{|k}(\cG_K)  \]
where $R_{|k}$ is the total derived functor of $\cF\mapsto \cF_{|k}$\footnote{Note that in fact $R_{|k}(\cG_K)=(\cG_K)_{|k}$ since $K$ has cohomological dimension $0$, which implies in particular that $R_{|k}(\cG_K)\in \DM_{-,\et}^\eff(k)$.}. Applying $\Hom(M,-[i])$ to this morphism for $M\in  \DM_{-,\et}^\eff(k)$, we get
\[\Hom(M,\ihom_\et (M_\et(U),\cG)[i])\to  \Hom(M,R_{|k}(\cG_K)[i]).\] 

The right hand side may be rewritten as
\[\Hom(M,R_{|k}(\cG_K)[i])\simeq \Hom(M_K,\cG_K[i])\]
while the left hand side may be rewritten as
\begin{multline*}
\Hom(M,\ihom_\et (M_\et(U),\cG)[i])\simeq \Hom(M\oo^L M_\et(U),\cG[i])\\ \simeq \Hom(M_\et(U),\ihom_\et (M,\cG)[i])\simeq \HH^i_\et(U,\ihom_\et (M,\cG)).
\end{multline*}

Here we used the \'etale analogue of \cite[Prop. 3.2.8]{V}, which was already used in the proof of Proposition \ref{lcurve}: note that there is no problem of convergence for the hypercohomology spectral sequences, since $\srhom(\cF,\cG)$ is bounded below.

Thus we get compatible morphisms 
\[\HH^i_\et(U,\ihom_\et (M,\cG))\longby{\phi^*} \Hom_\et (M_K,\cG_K[i])\]
and, passing to the limit, a morphism
\begin{equation}\label{eq3.14}
 \HH^i_\et(K,\ihom_\et (M,\cG)) \to \Hom(M_K,\cG_K[i]).
\end{equation}
Here we used the fact (see \cite[III.1.16]{MI}) that \'etale cohomology commutes with filtered limits of schemes (with affine transition morphisms).

In view of \eqref{eq3.14.1}, the proposition will follow from the slightly more general statement that \eqref{eq3.14} is an isomorphism for any $M\in \DM_{\gm,\et}^\eff(k)$. Since this assertion  is stable under cones, we reduce to $M=M_\et(X)$ for some $X\in\Sm(k)$.

In this case, the right hand side of \eqref{eq3.14} is  $H^i_\et(X_K,\cG_K)$, while the left hand side is 
\begin{multline*}
\colim_U \HH^i_\et(U,\ihom_\et (M_\et(X),\cG))\simeq\colim_U \Hom(M_\et(U),\ihom_\et (M_\et(X),\cG))\\
\simeq\colim_U \Hom(M_\et(U\times X),\cG)\simeq\colim_U H^i_\et(U\times X,\cG)\simeq H^i_\et(X_K,\cG)
\end{multline*}
and the conclusion follows from Lemma \ref{simpler}.
\end{proof}

\begin{cor}\label{l3inv} For $\cF\in \Shv_1$ and $\cG\in \HI^s$, we have $\sext^i_\tr(\cF,\cG)=0$ for $i> 2$.
\end{cor}

\begin{proof} We apply  Gersten's principle (Proposition \ref{pgersten} c)) to  $\cE=\sext^i_\tr(\cF,\cG)$. This says that $\cE=0$ if and only if  $\cE(K_s)=0$ for any separable closure $K_s$ of the function field $K$ of a smooth $K$-variety.  We  may even replace $K_s$ by its algebraic closure $\bar K$ (see proof of Lemma \ref{l3.8}). Since $\cF\in \DM_{\gm,\et}^\eff$ (Theorem \ref{t3.2.3} a)),  Proposition \ref{l:comm} yields $\Ext^i_\tr(\cF_{\bar K},\cG_{\bar K})\simeq \sext^i_\tr(\cF,\cG)(\bar K)$, and the left hand side is $0$ for $i> 2$ by   Lemma \ref{l3.12}.   
\end{proof}

\subsection{$\Ext^n$ with and without transfers}\label{3.11}  Here we set $\Ext^n=\Ext^n_\ES$ and $\Ext^n_\tr=\Ext^n_\EST$ and keep the notation from the previous \S \ref{ltr} for local $\sext$.

 \begin{propose}\label{p3.10.1} Let $\cF_1,\cF_2\in \Shv_1$. Then the natural map
\[\Ext^n_\tr(\cF_1,\cF_2)\to \Ext^n(\cF_1,\cF_2)\]
is bijective for $n=0,1$. Here we implicitly used the full embedding $\Shv_1\into \HI^s_{\et}\into \EST$ from Proposition \ref{ptransf}. 
\end{propose}

\begin{proof} The case $n=0$ follows from Lemma \ref{l3.8}. Let us do $n=1$. Injectivity: let $0\to \cF_2\to \cF\to \cF_1\to 0$ be an extension in $\EST$ which becomes split in $\ES$. Let $f:\cF_1\to \cF$ be a section of the projection in $\ES$. By the case $i=0$, $f$ is a morphism in $\EST$, hence $\cF$ is split in $\EST$. Surjectivity: let $0\to \cF_2\to \cF\to \cF_1\to 0$ be an extension in $\ES$. By Theorem \ref{text}, $\cF\in \Shv_1$.
\end{proof}

\begin{thm}\label{t3.12} Let $\cF_1,\cF_2\in \Shv_1$. Then\\
a) The natural homomorphism of sheaves
\[f^n:\omega\sext^n_\tr(\cF_1,\cF_2)\to \sext^n(\cF_1,\cF_2)\]
is an isomorphism for all $n\ge 0$, where $\omega:\EST \to \ES$ is the (exact) forgetful functor. These sheaves are $0$ for $n> 2$.\\
b) We have $\sext^n_\tr(\cF_1,\cF_2)\in \Shv_1$ for  $n = 0, 1$, and $\sext^2_\tr(\cF_1,\cF_2)$ is a divisible torsion ind-$0$-motivic sheaf.\\
c) The natural homomorphism of abelian groups
\[f^n:\Ext^n_\tr(\cF_1,\cF_2)\to \Ext^n(\cF_1,\cF_2)\]
is an isomorphism for all $n\ge 0$.\\
\end{thm}

\begin{proof} Since c) follows from a) by the local to global spectral sequences, we are left to prove a) and b). The assertions are local for the \'etale topology, so we reduce by the same d\'evissage as in the proof of Lemma \ref{l3.13} to the basic cases $\cF_i=\Z,\G_m$ or an abelian variety $A$. By the same technique as in \ref{s2.4.1}, we may further reduce to the case where $A$ is of the form $J(C)$ for $C$ a smooth projective curve with a rational point.

If   $n> 2$, the right hand side in a) is $0$ by Proposition \ref{ptorsion} b) and  the same d\'evissage as in the proof of Lemma \ref{l3.12}. The left hand side is also $0$ by Corollary \ref{l3inv}. 

Suppose now  $n\le 2$.  We may argue as for   $n> 2$ whenever we know that $\sext^n(\cF_1,\cF_2)=0$ and $\Ext^n_\tr((\cF_1)_K,(\cF_2)_K)=0$ for any algebraically closed extension $K$ of $k$. By Proposition \ref{p3.10.1} and Lemma \ref{c3.2}, this is the case except when:
\begin{description}
\item[$n=0$] $\cF_1=\Z$, $(\cF_1,\cF_2)=(\G_m,\G_m)$, $(\cF_1,\cF_2)=(A,B)$, $A,B$ abelian varieties. 
\item[$n=1$] $(\cF_1,\cF_2)=(A,\G_m)$, $A$ an abelian variety.
\item[$n=2$] $(\cF_1,\cF_2)=(\G_m,\Z)$ or $(A,\Z)$, $A$ an abelian variety (see last statement of Lemma \ref{l3.12}).
\end{description}

When $n=0$ and $\cF_1=\Z$, $f^n$ is the identity map $\cF_2\to \cF_2$. If $\cF_1=\G_m$ or $J(C)$ and $n\le 1$, we may write both sides in a) as direct summands of $R^n_\et\pi_* \cF_2$ for $\pi:X\to \Spec k$ with $X=\Aff^1-\{0\}$ or $C$, and the isomorphism is clear.  Finally, when $n=2$, for $G=\G_m$ or $A$, in the commutative diagram
\[\begin{CD}
\omega\sext^1_\tr({}_m\uG,\Z)@>f^1>> \sext^1({}_m\uG,\Z)\\
@V VV @V VV\\
{}_m\omega\sext^2_\tr(\uG,\Z)@>f^2>> {}_m\sext^2(\uG,\Z)
\end{CD}
\]
$f^1$ is an isomorphism because ${}_m\uG$ is locally constant and the vertical maps are isomorphisms because $\omega\sext^1_\tr(\uG,\Z)=\sext^1(\uG,\Z)=0$  (as noted for the case $n=1$ above), hence $f^2$ is an isomorphism. This completes the proof of a). 
b) is obvious for $n> 2$ and is proven for  $n=0,1$ by the same d\'evissage as above;  for $n=2$ it follows from the last statement of Lemma \ref{l3.12}.
\end{proof}

\begin{remark} Using the category $\DA_\et(k)$ of \cite{real.etale}, one can probably extend Theorem \ref{t3.12} a) and c) to all $\cF_2\in \HI^s_\et$, with a more reasonable proof (compare Lemma \ref{l3.12}). This would extend Proposition \ref{ext=ext} below to all $C_3\in \DM_{\gm,\et}^\eff$, but  would take us too far here.
\end{remark}

Now consider $\ihom_{\et}$ the partial internal Hom of $\DM_{-,\et}^\eff$ (following the notation adopted in \S \ref{2.5}). We obtain:

\begin{cor}\label{c3.13} Let $C_1,C_2\in d_{\le 1}\DM_{\gm,\et}^\eff\otimes \Q$. Then $\ihom_{\et}(C_1,C_2)\allowbreak\in d_{\le 1}\DM_{\gm,\et}^\eff\otimes \Q$.
\end{cor}

\begin{proof} This follows from Theorem \ref{t3.12} by d\'evissage, using Theorem \ref{t3.2.3}. 
\end{proof}

\begin{remark} \label{rkc3.13}
 A version of Corollary \ref{c3.13} remains true integrally: $\ihom_{\et}(C_1,C_2)\in d_{\le 1}\DM_{-,\et}^\eff$ where $d_{\le 1}\DM_{-,\et}^\eff$ is the localising subcategory of $\DM_{-,\et}^\eff$ generated by $d_{\le 1}\DM_{\gm,\et}^\eff$. This follows by d\'evissage from Theorem \ref{t3.12}. On the other hand, the example of $\shom_\tr(\Q/\Z,\Q/\Z)$ shows that the situation becomes unpleasant if one allows $C_1$ and $C_2$ to run through $d_{\le 1}\DM_{-,\et}^\eff$. This has some similarity with Remark \ref{noleft} 3) below.
\end{remark} 

\subsection{$t$-exactness}\label{s3.14} Theorem \ref{t1.2.1} and Corollary \ref{c3.13} provide the category $D^b(\M\otimes \Q)$ with an internal Hom that we denote by $\ihom_1$.
\index{$\ihom_1$} It is by construction left exact with respect to the homotopy $t$-structure of Definition \ref{dhts}. We now show:

\begin{thm}\label{t3.14} The bifunctor $\ihom_1$ is  $t$-exact with respect to the canonical $t$-structure of $D^b(\M\otimes \Q)$.
\end{thm}

\begin{proof} By d\'evissage, it suffices to check that $\ihom_1(M,N)$ is $t$-con\-centr\-at\-ed in degrees $0$ if $M,N$ are $1$-motives of pure weight.

If $L$ is discrete, write $L$ for $[L\to 0]$, and if $G$ is semi-abelian,  write $G[-1]$ for $[0\to G]$. Also write $A$ for an abelian variety and $T$ for a torus and $\shom$, $\sext$ the \'etale sheaves Hom and Ext. Then
\begin{align*}
{}^tH^i(\ihom_1(L,L')) &= 
\begin{cases}
\shom(L,L') &\text{for $i= 0$}\\
 0& \text{else.}
 \end{cases}\\
{}^tH^i(\ihom_1(L,G'[-1])) &= 
\begin{cases}
\shom(L
,G')[-1] &\text{for $i=0$}\\
0 & \text{else.}
\end{cases}\\
{}^tH^i(\ihom_1(G[-1],L')) &= 0
\\
{}^tH^i(\ihom_1(T[-1],T'[-1])) &= 
\begin{cases}
\shom(T,T')& \text{if $i= 0$}\\
0& \text{else.}
\end{cases}\\
\ihom_1(T[-1],A'[-1]) &= 0\\
{}^tH^i(\ihom_1(A[-1],T'[-1])) &= 
\begin{cases}
\sext(A,T')[-1]& \text{if $i= 0$}\\
0&\text{else.}
\end{cases}\\
{}^tH^i(\ihom_1(A[-1],A'[-1])) &= 
\begin{cases}
\shom(A,A')& \text{if $i=0$}\\
0& \text{else.}
\end{cases}
\end{align*}

In this display, we use for example that $\sext(A,T')$ has the structure of an abelian variety when $A$ is an abelian variety and $T'$ a torus, and that $\shom(A,A')$ is a lattice when $A,A'$ are two abelian varieties. This completes the proof.
\end{proof}

\section{Comparing two dualities}\label{dual}

In this section, we show that the classical Cartier duality for $1$-motives is compatible with a
``motivic Cartier duality" on triangulated motives, described in Definition \ref{cdef} below. 

\subsection{Biextensions of $1$-motives}\label{sbiext}This material is presumably well-known
to  experts, and the only reason why we write it up is that we could not find
it in the literature. Exceptionally, we put $1$-motives in degrees $-1$ and $0$ in this subsection and in the next one, for compatibility with Deligne's conventions in \cite{D}.

Recall (see \cite[\S 10.2]{D}) that for $M_1= [L_1\by{u_1} G_1]$ and 
$M_2= [L_2\by{u_2} G_2]$ two complexes of abelian sheaves over some site $\cS$, concentrated in
degrees
$-1$ and
$0$, a \emph{biextension} of
$M_1$ and
$M_2$ by an abelian sheaf $H$ is given by a (Grothendieck) biextension $P$
of $G_1$ and $G_2$ by $H$ and a pair of compatible trivializations of the
biextensions of
$L_1\times G_2$ and $G_1\times L_2$ obtained by pullbacks.
Let $\Biext (M_1,M_2;H)$ denote the group of isomorphism  classes
of biextensions. We have the following fundamental formula (see \cite[\S
10.2.1]{D}):\index{$\Biext$}
\begin{equation}\label{biextform}
\Biext (M_1,M_2;H) = \EExt^1_\cS(M_1\oo^L M_2,H).
\end{equation}

\begin{sloppypar}
Suppose now that $M_1$ and $M_2$ are two Deligne $1$-motives. Since $G_1$ and $G_2$ are smooth,
we may compute biextensions by using the \'etale topology. Hence, we shall take here
\[\cS=\Sm(k)_\et.\]

Let $M_2^*$ denote the Cartier dual of $M_2$ as constructed by Deligne
(see \S \ref{1.8},  \cf \cite[\S 10.2.11]{D} and \cite[\S 0]{BSAP}) along with the
Poincar\'e biextension $P_{M_2}\in \Biext (M_2,M_2^*;\G_m)$. We also
have  the transpose ${}^tP_{M_2}=P_{M_2^*}\in \Biext (M_2^*,M_2;\G_m)$.
Pulling back ${}^tP_{M_2}$ yields a map
\begin{align}
\gamma_{M_1,M_2} : \Hom (M_1, M_2^*)& \to \Biext (M_1,M_2;\G_m)\label{eqbiext}\\
\phi&\mapsto(\phi\times 1_{M_1})^*({}^tP_{M_2})\notag
\end{align}
which is clearly additive and natural in $M_1$.
\end{sloppypar}

\begin{propose} \label{biext} The map $\gamma_{M_1,M_2}$ yields an isomorphism of functors from $1$-motives to abelian groups, \ie
the functor 
$$M_1\mapsto \Biext (M_1,M_2;\G_m)$$
on $1$-motives is representable by the Cartier dual $M_2^*$. Moreover,
$\gamma_{M_1,M_2}$ is also natural in $M_2$. 
\end{propose}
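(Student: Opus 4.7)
Plan: I will use Deligne's fundamental formula \eqref{biextform}, namely
$\Biext(M_1,M_2;\G_m)=\EExt^1_\cS(M_1\oo^L M_2,\G_m)$,
to rewrite the target of $\gamma_{M_1,M_2}$, and then apply tensor--Hom adjunction in the derived category of \'etale sheaves to obtain
\[
\Biext(M_1,M_2;\G_m)\simeq \Hom_{D(\cS)}\bigl(M_1,\,\srhom(M_2,\G_m)[1]\bigr).
\]
The task then reduces to producing a canonical isomorphism $\srhom(M_2,\G_m)[1]\simeq M_2^*$ in $D(\cS)$ (placed in degrees $-1$ and $0$), under which the tautological class on the right-hand side corresponds to the transpose ${}^tP_{M_2}$ of the Poincar\'e biextension on the left-hand side.

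The key computation is to resolve $M_2=[L_2\by{u_2}G_2]$ and evaluate $\srhom(-,\G_m)$ term by term. Writing the canonical extension $0\to T_2\to G_2\to A_2\to 0$, I use the following classical facts: $\shom(L_2,\G_m)$ and $\shom(T_2,\G_m)$ are of multiplicative type and lattice respectively, with no higher $\sext^i(-,\G_m)$ (the vanishing for lattices follows from the vanishing of $\Ext^i_\cS(\Z,\G_m)$ for $i\geq 1$ after a trivialisation, and for tori by a classical result); while $\shom(A_2,\G_m)=0$, $\sext^1(A_2,\G_m)=A_2^*$ by Weil--Barsotti, and $\sext^i(A_2,\G_m)=0$ for $i\geq 2$ on the \'etale site of smooth schemes. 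Assembling these via the two hyper-Ext spectral sequences attached to the length-one complex $M_2$ and the extension defining $G_2$, one finds that $\srhom(M_2,\G_m)[1]$ has cohomology $L_2^*$ (the Cartier dual of $L_2$) in degree $0$ and, in degree $-1$, an extension of $A_2^*$ by $L_2^*$ whose class is exactly the one produced by the boundary map of Lemma~\ref{brst} applied to $u_2$: this is precisely Deligne's $M_2^*=[L_2^*\to G_2^*]$.

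With the identification in hand, naturality in $M_1$ is immediate because pullback of biextensions commutes with composition on the left. Naturality in $M_2$ follows from the functoriality of the Poincar\'e biextension: for $\beta:M_2\to M_2'$ with Cartier dual $\beta^*:(M_2')^*\to M_2^*$, the two biextensions
\[
(1_{(M_2')^*}\times\beta)^*({}^tP_{M_2'})\qquad\text{and}\qquad (\beta^*\times 1_{M_2})^*({}^tP_{M_2})
\]
are canonically isomorphic, which translates exactly into the commutativity of the square expressing naturality of $\gamma$ in $M_2$. Both of these facts, once the adjunction picture is set up, are formal.

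The main obstacle is the computation of $\srhom(M_2,\G_m)$: one must justify that the higher $\sext^i$ vanish on the \'etale site of smooth $k$-schemes (not only on the small \'etale site of $\Spec k$) and that the extension class of $G_2^*$ produced by Weil--Barsotti together with the boundary of $u_2$ agrees, on the nose, with the extension class obtained from the hyper-Ext spectral sequence. This is essentially bookkeeping, but it is the step where one must be careful to match signs and directions so that the tautological class on the Hom side really pulls back to ${}^tP_{M_2}$ and not its inverse; compatibility of the two constructions is what ultimately forces the map $\gamma_{M_1,M_2}$ to be an isomorphism rather than just an abstract isomorphism of functors.
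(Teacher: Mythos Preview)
Your approach via derived adjunction is genuinely different from the paper's and is the ``right'' conceptual picture, but as written it has a real gap and some confusion in the execution.

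\textbf{Comparison with the paper.} The paper does not compute $\srhom(M_2,\G_m)$ at all. Instead it reduces to $k=\bar k$ (via Lemma~\ref{ld0} and a Hochschild--Serre argument), then proves the statement by hand for abelian varieties using the universal property of the Poincar\'e bundle (Lemma~\ref{biexta}), and finally, given a biextension $(P,\tau,\sigma)$ of general $M_1,M_2$, explicitly constructs the morphism $\phi:M_1\to M_2^*$ in terms of Deligne's symmetric avatars. Your route is more structural; the paper's is more direct and avoids any appeal to Breen-type $\Ext$-vanishing.

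\textbf{Where your argument is muddled.} Your description of the cohomology of $\srhom(M_2,\G_m)[1]$ is not right: you write ``cohomology $L_2^*$ in degree $0$'' and ``in degree $-1$, an extension of $A_2^*$ by $L_2^*$'', but (a) the degrees are swapped relative to the convention in this subsection (the lattice part of a $1$-motive sits in degree $-1$), and (b) you seem to be conflating the \emph{terms} of the two-term complex $M_2^*=[L_2'\to G_2']$ with its \emph{cohomology sheaves}. What you actually need is a quasi-isomorphism of complexes, not a match of cohomology sheaves; these are different statements since a $1$-motive is not determined by its cohomology.

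\textbf{The substantive gap.} You correctly identify the crux: one must show that under your identification $\srhom(M_2,\G_m)[1]\simeq M_2^*$, the adjunction isomorphism transports $1_{M_2^*}$ to ${}^tP_{M_2}$. You call this ``essentially bookkeeping'', but it is not. The Poincar\'e biextension is defined by Deligne via the symmetric avatar $(L_2,A_2,L_2',A_2',\psi_2)$, independently of any derived-category construction; verifying that the canonical evaluation class you produce coincides with Deligne's $P_{M_2}$ requires exactly the kind of explicit unwinding the paper performs (for abelian varieties this is Lemma~\ref{biexta}, and for the trivialisations $\tau,\sigma$ it is Lemma~\ref{standard}). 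Without this check you have only produced \emph{some} natural isomorphism $\Hom(M_1,M_2^*)\simeq\Biext(M_1,M_2;\G_m)$, not shown that $\gamma_{M_1,M_2}$ is one. This matters in the paper because later (Theorem~\ref{teq}) the comparison of the two Cartier dualities is proved precisely by invoking Proposition~\ref{biext}, so one cannot use that theorem as a shortcut here.
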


\begin{proof}  We start with a few lemmas:

\begin{lemma}\label{ld0} For $q\le 0$, we have
\[\Hom_{\bar k}(M_1\oo^L M_2,\G_m[q])=0.\]
\end{lemma}

\begin{proof} For $q<0$ this is trivial and for $q=0$ this is \cite[Lemma 10.2.2.1]{D}.
\end{proof}

\begin{lemma} \label{kalg} Let $\bar k$ be an algebraic closure of $k$
and $G=Gal(\bar k/k)$. Then
\begin{align*}
\Hom_k (M_1, M_2^*)&\iso \Hom_{\bar k} (M_1, M_2^*)^G\\
\Biext_k (M_1,M_2;\G_m)&\iso \Biext_{\bar k} (M_1,M_2;\G_m)^G.
\end{align*}
\end{lemma}

\begin{sloppypar}
\begin{proof} The first isomorphism is obvious. For the second, thanks
to \eqref{biextform} we may use the spectral sequence
\[H^p(G,\Hom_{\bar k}(M_1\oo^L M_2,\G_m[q]))\Rightarrow \Hom_k(M_1\oo^L
M_2,\G_m[p+q]).\]

(This is the only place in the proof of Proposition \ref{biext} where we
shall use \eqref{biextform}.) The assertion then follows from Lemma \ref{ld0}. \end{proof}
\end{sloppypar}

Lemma \ref{kalg}, reduces the proof of Proposition \ref{biext} to the
case where \emph{$k$ is algebraically closed}, which we now assume. The following is a special
case of this proposition:

\begin{lemma} \label{biexta} The map $\gamma_{M_1,M_2}$ is an
isomorphism when $M_1$ and $M_2$ are abelian varieties $A_1$ and $A_2$,
and is natural in $A_2$.
\end{lemma}

Again this is certainly well-known and explicitly mentioned as such in
\cite[VII, p. 176, (2.9.6.2)]{sga7}. Unfortunately we have not been able to
find a proof in the literature, so we provide one for the reader's
convenience.

\begin{proof}  We shall use the universal property of the Poincar\'e
bundle \cite[Th. p. 125]{mumford}. Let $P\in \Biext(A_1,A_2)$. Then 
\begin{enumerate}
\item $P_{|A_1\times\{0\}}$ is trivial;
\item $P_{|\{a\}\times A_2}\in \Pic^0(A_2)$ for all $a\in A_1(k)$.
\end{enumerate}

Indeed, (1) follows from the multiplicativity of $P$ on the $A_2$-side.
For (2) we offer two proofs (note that they use multiplicativity on
different sides): 

\begin{sloppypar}
\begin{itemize}
\item By multiplicativity on the $A_1$-side, $a\mapsto P_{|\{a\}\times
A_2}$ gives a homomorphism $A_1(k)\to \Pic(A_2)$. Composing with the
projection to $\NS(A_2)$ gives a homomorphism from a divisible group to a
finitely generated group, which must be trivial.
\item (More direct but more confusing): we have to prove that
$T_b^*P_{|\{a\}\times A_2}=P_{|\{a\}\times A_2}$ for all $b\in A_2(k)$.
Using simply $a$ to denote the section $\Spec k\to A_1$ defined by $a$,
we have a commutative diagram
\[\begin{CD}
A_2@>a\times 1_{A_2}>> A_1\times A_2\\
@V{T_b}VV @VV{1_{A_1}\times T_b}V \\
A_2@>a\times 1_{A_2}>> A_1\times A_2.
\end{CD}\]

Let $\pi_1:A_1\to\Spec k$ and $\pi_2:A_2\to\Spec k$ be the two
structural maps. Then by multiplicativity on the $A_2$-side, an easy
computation gives
\[(1_{A_1}\times T_b)^*P=P\otimes \left(1_{A_1}\times (\pi_2\circ
b)\right)^*P.\]

Applying $(a\times 1_{A_2})^*$ to this gives the result since $ (a\times
1_{A_2})^*\circ\left(1_{A_1}\times (\pi_2\circ b)\right)^*P=\pi_{A_2}^*
P_{a,b}$ is trivial.
\end{itemize}
\end{sloppypar}

By the universal property of the Poincar\'e bundle, there exists a
unique morphism\footnote{For convenience we denote here by $A'$ the dual of an
abelian variety $A$ and by $f'$ the dual of a homomorphism $f$ of abelian
varieties.} $f:A_1\to A_2'$ such that $P\simeq (f\times
1_{A_2})^*({}^tP_{A_2})$. It remains to see that $f$ is a homomorphism:
for this it suffices to show that $f(0)=0$. But
\begin{multline*}
\sO_{A_2}\simeq P_{|\{0\}\times A_2}=(0\times 1_{A_2})^*\circ (f\times
1_{A_2})^*({}^t P_{A_2})\\ =(f(0)\times 1_{A_2})^*({}^t
P_{A_2})=(P_{A_2})_{|A_2\times\{f(0)\}}=f(0)
\end{multline*}
where the first isomorphism holds by multiplicativity of $P$ on the
$A_1$-side.

Finally, the naturality in $A_2$ reduces to the fact that, if $f:A_1\to A_2'$, then $(f\times
1_{A_2})^*({}^tP_{A_2})\simeq (1_{A_1}\times f')^* (P_{A_1})$. This follows from the
description of
$f'$ on $k$-points as the pull-back by $f$ of line bundles.
\end{proof}

We also have the following easier

 \begin{lemma}\label{kalg1} Let $L$ be a lattice and $A$ an abelian
variety. Then the natural map
\begin{align*}
\Hom(L,A')&\to \Biext(L[0],A[0];\G_m)\\
f&\mapsto (1\times f)^*({}^tP_A)
\end{align*}
is bjiective.
\end{lemma}

\begin{proof} Reduce to $L=\Z$; then the right hand side can be identified with $\Ext(A,\G_m)$
and the claim comes from the Weil-Barsotti formula.
\end{proof}

Let us now come back to our two $1$-motives $M_1,M_2$. We denote by $L_i,
T_i$ and
$A_i$ the discrete, toric and abelian parts of $M_i$ for $i= 1,2$. Let us
further denote by 
$u'_i:L_i'\to A_i'$ the map corresponding to $G_i$ under the isomorphism
$\Ext (A_i,T_i) \simeq\Hom (L_i',A_i')$ where $L_i' =\Hom (T_i,\G_m)$ and
$A_i' =\Pic^0 (A_i)$. 

We shall use the symmetric avatar  $(L_i,A_i,L_i',A_i',\psi_i)$ of $M_i$
(see \cite[10.2.12]{D} or \cite[p. 17]{BSAP}): recall that
$\psi_i$ denotes a certain section of the Poincar\'e biextension
$P_{A_i}\in \Biext (A_i,A_i';\G_m)$ over $L_i\times L_i'$. The symmetric
avatar of the Cartier dual is 
$(L_i',A_i',L_i,A_i,\psi^t_i)$. By \loccit a map of 1-motives 
$\phi: M_1 \to M_2^*$ is equivalent to a homomorphism $f: A_1\to
A_2'$ of abelian varieties and, if $f'$ is the dual of $f$, liftings $g$
and $g'$ of $fu_1$ and $f'u_2$ respectively, \ie to the following
commutative squares
\begin{equation}\label{squares}
\begin{CD}
L_1@>g>> L_2'\\
@V{u_1}VV  @V{u'_2}VV  \\
 A_1 @>f>>  A_2'
\end{CD}\qquad \text{and} \qquad
\begin{CD}
 L_2@>g'>> L_1'\\
@V{u_2}VV  @V{u_1'}VV  \\
 A_2 @>f'>>  A_1'\\[4pt]
\end{CD}
\end{equation}
under the condition that 
\begin{equation}\label{cond}
(1_{L_1}\times g')^*\psi_1= (g\times 1_{L_2})^*{}^t\psi_2
\text{ on } L_1\times L_2.
\end{equation}

Now let $(P,\tau, \sigma)$ be a biextension of $M_1$ and
$M_2$ by $\G_m$, \ie  a biextension $P\in\Biext (G_1,G_2;\G_m)$, a
section $\tau$ on $L_1\times G_2$ and a section $\sigma$ on $G_1\times
L_2$ such that 
\begin{equation}\label{eq7}
\tau\mid_{L_1\times L_2}= \sigma\mid_{L_1\times L_2}.
\end{equation} 

We have to show that $(P,\tau,\sigma)=(\phi\times 1)^*({}^tP_{A_2},\tau_2,\sigma_2)$
for a unique $\phi:M_1\to M_2^*$, where $\tau_2$ and $\sigma_2$ are the
universal trivializations.

Recall that $\Biext (G_1,G_2;\G_m)= \Biext (A_1,A_2;\G_m)$ (\cf
\cite[10.2.3.9]{D}) so that,  by Lemma \ref{biexta}, $P$ is the pull-back
to $G_1\times G_2$ of $(f\times 1_{A_2})^*({}^tP_{A_2})$ for a unique
homomorphism $f:A_1\to A_2'$. We thus have obtained the map $f$ and its
dual $f'$ in \eqref{squares}, and we now want to show that the extra data
$(\tau,\sigma)$ come from a pair $(g,g')$ in a unique way.

We may view $E=(fu_1\times 1_{A_2})^*({}^tP_{A_2})$ as an extension of
$L_1\otimes A_2$ by $\G_m$. Consider the commutative diagram of exact
sequences
\begin{equation}\label{diag}
\begin{CD}
&&0&& 0&& 0\\
&&@VVV @VVV @VVV \\
0@>>> 0@>>> L_1\otimes T_2@= L_1\otimes T_2@>>> 0\\
&&@VVV @VVV @V{1_{L_1}\otimes i_2}VV\\
0@>>> \G_m@>i>> Q@>\pi'>> L_1\otimes G_2@>>> 0\\
&&||&&@V{\pi}VV @V{1_{L_1}\otimes p_2}VV\\
0@>>> \G_m@>>> E@>>> L_1\otimes A_2@>>> 0\\
&&@VVV @VVV @VVV \\
&&0&& 0&& 0
\end{CD}
\end{equation}
where $i_2$  (\resp $p_2$) is the inclusion $T_2\into G_2$ (\resp the
projection $G_2\to A_2$). The section $\tau$ yields a retraction
$\tilde\tau:Q\to
\G_m$ whose restriction to $L_1\otimes T_2$ yields a homomorphism
\[\tilde g:L_1\otimes T_2\to \G_m\]
which in turn defines a homomorphism as in \eqref{squares}. We denote
the negative of this morphism by $g$.

\begin{lemma}\label{standard} With this choice of $g$, the left square of
\eqref{squares} commutes and $\tau=(g\times 1_{G_2})^*\tau_2$.
\end{lemma}

\begin{proof} To see the first assertion, we may apply $\Ext^*(-,\G_m)$
to \eqref{diag} and then apply \cite[Lemma 2.8]{bs} to the corresponding
diagram. Here is a concrete description of this argument: via the map of
Lemma \ref{kalg1}, $u'_2g$ goes to the following pushout
\[\begin{CD}
0@>>> L_1\otimes T_2@>{1\otimes i_2}>> L_1\otimes G_2@>{1\otimes p_2}>>
L_1\otimes A_2@>>> 0\\ 
&&@V{-\tilde g}VV @V{\pi\circ\tau}VV@V{||}VV\\
0@>>>\G_m@>>> E @>>> L_1\otimes A_2@>>> 0. 
\end{CD}\] 
because, due to the relation $i\tilde\tau + \tau \pi'=1$,  the left square
in this diagram commutes.

In particular, we have
\begin{multline*}
Q=(1\otimes p_2)^*(fu_1\otimes
1)^*{}^tP_{A_2}=(fu_1\otimes p_2)^*{}^tP_{A_2}\\
=(u'_2g\otimes
p_2)^*{}^tP_{A_2}=(g\otimes 1)^*(u'_2\otimes p_2^*){}^tP_{A_2}.
\end{multline*}

For the second assertion, since $\Hom(L_1\otimes A_2,\G_m)=0$ it suffices
to check the equality after restricting to $L_1\otimes T_2$. This is clear
because under the isomorphism $\Hom (L'_2\otimes T_2,\G_m)= \Hom
(L'_2,L'_2)$, the canonical trivialization ${}^t\psi_2$
corresponds to the identity.
\end{proof}

Note that if we further pullback we obtain that 
\begin{equation}\label{eq4}
\tau\mid_{L_1\times L_2}=\psi_2^t\mid_{L_1\times L_2}.
\end{equation} 

The same computation with $\sigma$ yields a map
\[g':L_2\to L'_1\]
and the same argument as in Lemma \ref{standard} shows that with this
choice of $g'$ the right square of \eqref{squares} commutes. We now use
that $P=(1_{A_1}\times f')^*(P_{A_1})$, which follows from the
naturality statement in Lemma \ref{biexta}. As in the proof of Lemma
\ref{standard}, this implies that its  trivialization
$\sigma$ on $G_1\times L_2$ is the pullback of the canonical
trivialization $\psi_1$ on $G_1\times L_1'$ along 
$1_{G_1}\times g': G_1\times L_2\to G_1\times L_1'$. In particular: 
\begin{equation}\label{eq5}
\sigma\mid_{L_1\times L_2}=\psi_1\mid_{L_1\times L_2}.
\end{equation}

Put together, \eqref{eq7}, \eqref{eq4} and \eqref{eq5} show that
Condition \eqref{cond} is verified: thus we get a morphism $\phi:M_1\to
M_2^*$. Let $h:G_1\to G'_2$ be its group component. It remains to check
that $\sigma= (h\times 1_{L_2})^*\sigma_2$. As in the proof of Lemma
\ref{standard} we only need to check this after restriction to
$T_1\otimes L_2$. But the restriction of $h$ to the toric parts is
the  Cartier dual of $g'$, so we conclude by the same argument.

Finally, let us show that $\gamma_{M_1,M_2}$ is natural in $M_2$. This
amounts to comparing two biextensions. For the bitorsors this follows from Lemma \ref{biexta} and for the sections we may argue again as in the proof of Lemma \ref{standard}.
\end{proof}

\subsection{Biextensions of complexes of $1$-motives}

Let $\cA$ be a category of abelian sheaves, and consider two bounded complexes $C_1,C_2$ of objects of $\cA^{[-1,0]}$. Let $H\in \cA$. We have a double complex
\[\underline{\Biext}(C_1,C_2;H)^{p,q}\df\Biext(C_1^p,C_2^q;H).\]

\begin{defn} A \emph{biextension of $C_1$ and $C_2$ by $H$} is an element of the group of cycles 
\[\Biext(C_1,C_2;H)\df Z^0(\mathsf{Tot}\ \underline{\Biext}(C_1,C_2;H)).\]
Here $\mathsf{Tot}$ denotes the total complex associated to a double complex.
\end{defn}
\index{$\mathsf{Tot}$}
\index{$\underline{\Biext}$}
Concretely: such a biextension $P$ is given by a collection of biextensions $P_p\in \Biext(C_1^p,C_2^{-p};H)$ such that, for any $p$,
\[(d_1^p\otimes 1)^* P_{p+1}= (1\otimes d_2^{-p-1})^*P_p
\]
where $d_1^\cdot$ (\resp $d_2^\cdot$) are the differentials of $C_1$ (\resp of $C_2$).

Now suppose that $\cA$ is the category of fppf sheaves, that $H=\G_m$ and that all the $C_i^j$ are Deligne $1$-motives. By Lemma \ref{ld0}, 
we have
\[\EExt^i(C_1^p,C_2^q;\G_m)=0\text{ for } i\le 0.\]

Therefore, a spectral sequence argument yields an edge homomorphism
\begin{equation}\label{edge}
\Biext(C_1,C_2;\G_m)\to \EExt^1(C_1\oo^L C_2,\G_m).
\end{equation}

Recall that Deligne's Cartier duality \cite{D} provides an exact functor
$$M\mapsto M^*: \M[1/p]\to \M[1/p]$$
yielding by Proposition \ref{pcd} a triangulated functor
\begin{equation}\label{dcartier}
(\ \ )^*: D^b(\M[1/p])\to D^b(\M[1/p]).
\end{equation}

Note that for a complex of $1$-motives
\[C=(\cdots \to M^{i}\to M^{i+1}\to \cdots) \]
we can compute $C^*$ by
means of the complex
\[C^*=(\cdots \to (M^{i+1})^*\to (M^{i})^*\to \cdots) \]
of Cartier duals here placed in degrees ..., $-i-1$, $-i$, etc.

Let us now take in \eqref{edge} $C_1=C$, $C_2 = C^*$. For each $p\in\Z$, we have the
Poincar\'e biextension $P_p\in \Biext(C^p,(C^p)^*;\G_m)$. By Proposition \ref{biext}, the
$\{P_p\}$ define a class in $\Biext(C,C^*;\G_m)$.

\begin{defn}\label{dpc} This class $P_C$ is the \emph{Poincar\'e biextension of the
complex $C$}.
\end{defn}

Let $C_1,C_2\in C^b(\M)$. As in Subsection \ref{sbiext}, pulling back ${}^tP_{C_1}=P_{C_1^*}\in \Biext(C_1,C_1^*;\G_m)$ yields a map generalising \eqref{eqbiext}:
\begin{align}
\gamma_{C_1,C_2} : \Hom (C_1, C_2^*)& \to \Biext (C_1,C_2;\G_m)\label{eqbiextc}\\
\phi&\mapsto(\phi\times 1_{C_1})^*({}^tP_{C_2}).\notag
\end{align}
which is clearly additive and natural in $C_1$. We then have the following trivial extension
of the functoriality in Proposition \ref{biext}: 

\begin{propose}\label{nat} $\gamma_{C_1,C_2}$ is also natural in $C_2$.\qed
\end{propose}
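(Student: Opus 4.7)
The plan is to reduce naturality in $C_2$ directly to the componentwise naturality established in Proposition \ref{biext}. Given a morphism $g : C_2 \to C_2'$ in $C^b(\M[1/p])$, Cartier duality \eqref{dcartier} produces $g^* : (C_2')^* \to C_2^*$; naturality in $C_2$ amounts to the identity
\[
(1_{C_1} \times g)^*\, \gamma_{C_1, C_2'}(\phi) \;=\; \gamma_{C_1, C_2}(g^* \circ \phi)
\]
in $\Biext(C_1, C_2; \G_m)$, for every $\phi : C_1 \to (C_2')^*$.

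First I would unwind both sides componentwise. By Definition \ref{dpc}, the Poincar\'e biextension ${}^t P_{C_2'}$ of the complex $C_2'$ is represented by the collection $\{{}^t P_{(C_2')^{-p}}\}_p \in \prod_p \Biext((C_2')^{-p}, ((C_2')^{-p})^*; \G_m)$ satisfying the cycle condition in $\mathbf{Tot}\,\underline{\Biext}$. Writing $\phi = (\phi_p)$ with $\phi_p : C_1^p \to ((C_2')^{-p})^*$ and $g = (g_q)$ with $g_q : C_2^q \to (C_2')^q$, the left hand side has $p$-th component
\[
(1 \times g_{-p})^*\,(\phi_p \times 1)^*\, {}^t P_{(C_2')^{-p}},
\]
while the right hand side has $p$-th component
\[
((g_{-p}^{\,*} \circ \phi_p) \times 1)^*\, {}^t P_{C_2^{-p}}.
\]
By Proposition \ref{biext} applied to the $1$-motive morphism $g_{-p}$, these two biextensions are equal in $\Biext(C_1^p, C_2^{-p}; \G_m)$ for every $p$.

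It remains to check that these componentwise equalities upgrade to an equality of \emph{cycles} in $\mathbf{Tot}\,\underline{\Biext}(C_1, C_2; \G_m)$. This is automatic: both $(1 \times g)^*\gamma_{C_1,C_2'}(\phi)$ and $\gamma_{C_1,C_2}(g^*\circ\phi)$ are pulled back from biextensions of complexes (respectively $\gamma_{C_1,C_2'}(\phi)$ and ${}^t P_{C_2}$) along morphisms of complexes, and pullback along a morphism of complexes preserves the cycle condition $(d_1^p \otimes 1)^* P_{p+1} = (1 \otimes d_2^{-p-1})^* P_p$.

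The only point requiring a moment of care is the compatibility of Cartier duality at the level of complexes (as provided by \eqref{dcartier}) with componentwise Cartier duality; but this is definitional, since the triangulated functor of Proposition \ref{pcd} is induced by the exact componentwise functor $(\ \ )^* : \M[1/p] \to \M[1/p]$. Granting this, the argument above is purely formal, which is why the authors describe the proposition as a trivial extension of Proposition \ref{biext}.
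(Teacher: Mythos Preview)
Your proposal is correct and is exactly the componentwise reduction to Proposition \ref{biext} that the paper intends: the authors give no proof beyond the phrase ``trivial extension of the functoriality in Proposition \ref{biext}'' and the \qed, and your argument spells out precisely that extension.
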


\subsection{A pairing with finite coefficients} In this section, we assume that $k$ is algebraically closed.

If $C$ is a complex of $1$-motives and $n>1$ is prime to $\car k$, we define
\[C/n = \text{cone}(C\by{n} C)\]
the mapping cone of multiplication by $n$. This defines a functor on $C^b(\M)$; we clearly
have a natural isomorphism
\[C^*/n\simeq (C/n[-1])^* \simeq (C/n)^*[1].\]

This functor and this natural isomorphism are easily seen to pass to $D^b(\M)$.
Composition of morphisms now yields a pairing ($\Hom$ groups computed in $D^b(\M)$):
\begin{multline*}
\Hom(\Z,C/n)\times \Hom(\Z,C^*/n)=\Hom(\Z,C/n)\times \Hom(\Z,(C/n)^*[1])\\
\simeq\Hom(\Z,C/n)\times \Hom(C/n,\Z^*[1])\to \Hom(\Z,\Z^*[1]) = k^*.
\end{multline*}

\begin{lemma} For any $C\in C^b(\M)$, the map $C/n\by{n}C/n$ is homotopic to $0$.
\end{lemma}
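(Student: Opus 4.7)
The plan is to write down an explicit chain homotopy. By the standard formula for the mapping cone, we have
\[
(C/n)^i = C^i \oplus C^{i+1}
\]
with differential $d(a,b) = (d_Ca + nb,\ -d_Cb)$. Multiplication by $n$ on $C/n$ is simply $(a,b)\mapsto (na,nb)$.

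I would then define $h : (C/n)^i \to (C/n)^{i-1}$ by $h(a,b) = (0,a)$, and check
\[
(dh+hd)(a,b) = d(0,a) + h(d_Ca+nb,\ -d_Cb) = (na,\ -d_Ca) + (0,\ d_Ca+nb) = (na,nb),
\]
which is exactly $n\cdot \mathrm{id}_{(C/n)^i}$. Since $h$ is clearly a morphism in the category of graded objects of $\M$ (each component is either $0$ or the identity), this is a valid homotopy in $C^b(\M)$.

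There is essentially no obstacle here: the only subtlety is that we are working with complexes in the additive category $\M$ rather than in an abelian category, but the construction only uses the identity morphism of $C^i$ and the zero morphism, both of which are available in any additive category. In particular, no quasi-isomorphisms or properties of the exact structure on $\M[1/p]$ are needed.
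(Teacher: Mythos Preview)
Your proof is correct and is precisely the ``standard'' argument the paper alludes to; the paper phrases it as embedding $1$-motives into length-$1$ complexes of sheaves before carrying out the computation, but as you observe this detour is unnecessary since the homotopy $h$ uses only identity and zero maps and hence makes sense in any additive category.
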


\begin{proof} We may embed $1$-motives in a category of complexes of sheaves of length $1$,
and then the proof is standard.
\end{proof}

This lemma implies that the above pairing refines into a pairing
\begin{equation}\label{eq4.1}
\Hom(\Z,C/n)\times \Hom(\Z,C^*/n)\to \mu_n.
\end{equation}

\begin{thm} \label{t4.3.2} This pairing is perfect.
\end{thm}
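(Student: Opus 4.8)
The plan is to reduce the perfectness of the pairing \eqref{eq4.1} to a statement about honest (co)homology groups by interpreting everything in terms of $n$-torsion and $n$-cotorsion of $1$-motivic sheaves. First I would unwind the definitions: for $C\in C^b(\M)$, we have $\Hom_{D^b(\M)}(\Z,C/n)$ fitting into a long exact sequence coming from the triangle $C\by{n}C\to C/n\by{+1}$, so that $\Hom(\Z,C/n)$ is built out of ${}_nH_0(C)$-type pieces and $H_1(C)/n$-type pieces relative to one of the $t$-structures of Theorem \ref{ptors}/\ref{tstr}. Concretely, since $\Hom_{D^b(\M)}(\Z,D)$ for $D$ in the heart ${}^t\M[1/p]$ is just $\Hom_{{}^t\M}(\Z,D)=D(k)$ (here $k=\bar k$), and since $D^b(\M[1/p])$ has cohomological dimension matching the heart, I get a short exact sequence
\[
0\to {}^tH^0(C)(k)/n\to \Hom(\Z,C/n)\to {}_n{}^tH^1(C)(k)\to 0
\]
and similarly for $C^*$, using the other $t$-structure ${}_t\M[1/p]$ and the identification $(C/n)^*\simeq C^*/n[1]$.

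The next step is to identify the pairing term by term with the natural Cartier/Weil pairings. By construction the composition pairing is adjoint to the Poincaré biextension $P_C$ of Definition \ref{dpc} (via Proposition \ref{biext}), so on the level of the graded pieces it becomes: (a) a pairing between ${}^tH^0(C)(k)/n$ and ${}_n{}^tH^{-1}(C^*)(k)$, and (b) a pairing between ${}_n{}^tH^1(C)(k)$ and ${}^tH^{0}(C^*)(k)/n$ — modulo sorting out indices using $({}^tH^i(C))^*\simeq {}_tH_{-i}(C^*)$ from Theorem \ref{tstr}. Each of these reduces, via the weight filtration on the $1$-motives in question, to the classical perfect pairings: the Weil pairing ${}_nA\times {}_nA^*\to \mu_n$ for abelian varieties, the perfect pairing between $T/n$ and $L^*/n$ for a torus $T$ with character lattice $L^*$ dual to $L$, and the perfect pairing ${}_nF\times F^*/n$ (really ${}_nF \times {}_{n}F^\vee$) for finite group schemes — all of which are Cartier duality statements already packaged in Deligne's construction \cite[\S10.2]{D}. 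A five-term diagram chase (the snake applied to multiplication by $n$ on the normalised presentation $0\to L\to \uG\to \cF\to E\to 0$ of a $1$-motivic sheaf, in the spirit of Proposition \ref{p3.6}) then assembles these into perfectness of the full pairing.

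The main obstacle I expect is \emph{compatibility of all the identifications}: one must check that the composition pairing coming from Yoneda composition in $D^b(\M)$, after passing through the natural isomorphism $(C/n)^*\simeq C^*/n[1]$ and the representability isomorphism $\gamma_{C_1,C_2}$ of \eqref{eqbiextc}, really does restrict on each graded piece to the \emph{classical} Cartier duality pairing and not to some twist or sign of it. This is exactly the kind of bookkeeping where Deligne's symmetric avatar $(L,A,L',A',\psi)$ of a $1$-motive (used already in the proof of Proposition \ref{biext}) is the right tool: the trivialisations $\tau,\sigma$ of the Poincaré biextension \emph{are} the pairings, so the claim is essentially tautological once stated correctly. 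A secondary, more mundane point is the reduction of the long exact sequences to the two short exact sequences displayed above, which uses that both $t$-structures have heart of cohomological dimension $1$ and that $\Hom_{D^b(\M)}(\Z,-)$ kills negative shifts of the heart — both of which follow from Theorem \ref{ptors}, Proposition \ref{iso1}, and the fact that $\Z=[\Z\to 0]$ is concentrated in weight $0$. So I would organise the proof as: (1) establish the two short exact sequences for $C$ and for $C^*$; (2) show the pairing is block-triangular with respect to the weight filtration and identify the diagonal blocks with classical pairings; (3) invoke perfectness of those classical pairings and a snake-lemma chase to conclude.
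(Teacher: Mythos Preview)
Your proposal and the paper's proof both land on the same classical facts (the obvious pairing $\Z/n\times\mu_n\to\mu_n$ and the Weil pairing ${}_nA\times{}_nA'\to\mu_n$), but the routes diverge sharply, and yours contains a genuine gap.

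The short exact sequence you write,
\[
0\to {}^tH^0(C)(k)/n\to \Hom(\Z,C/n)\to {}_n{}^tH^1(C)(k)\to 0,
\]
is not correct for a general complex $C$. The exact sequence that does come from the triangle $C\by{n}C\to C/n$ is
\[
0\to \Hom(\Z,C)/n\to \Hom(\Z,C/n)\to {}_n\Hom(\Z,C[1])\to 0,
\]
and $\Hom(\Z,C)$ is not ${}^tH^0(C)(k)$ for general $C$: the hypercohomology spectral sequence has nonzero $\Ext^1$ contributions. Already for a single $1$-motive $M=[L\by{u}G]$ one has $\Ext^1_{{}^t\M}([\Z\to 0],M)=\coker(u)(k)$ (over $\bar k$), so for $C$ with ${}^tH^{-1}(C)\ne 0$ your sequence misses terms. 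Your justification (``cohomological dimension matching the heart'') does not give this collapse. Fixing this would force you either to carry a full spectral sequence through the weight-filtration argument, or to first reduce to $C$ a single shifted $1$-motive --- and the latter is already most of the paper's proof.

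The paper's argument is far more economical: convert the pairing into a map of functors $\Hom(\Z,C/n)\to\Hom(\Hom(\Z,C^*/n),\mu_n)$, observe it is natural in $C$, and then d\'evissage on exact triangles reduces to generators $C=N[i]$ with $N$ a \emph{pure} $1$-motive ($[\Z\to 0]$, $[0\to\G_m]$, or $[0\to A]$). There one computes $N/n$ directly in $D^b({}^t\M)$ (it is a single finite group scheme in degree $0$), sees $\Hom(\Z,N[i]/n)=0$ for $i\ne 0$, and for $i=0$ reads off the classical pairings. No weight filtration, no block-triangularity, no snake-lemma chase --- the naturality handles all of that for free. Your anticipated ``main obstacle'' (compatibility of identifications with classical Cartier duality) does remain, but only in the trivial abelian-variety case, where the paper simply cites the definition of the Weil pairing.
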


\begin{proof} Convert \eqref{eq4.1} into a morphism
\[\Hom(\Z,C/n)\to \Hom(\Hom(\Z,C^*/n), \mu_n).\]

This map is clearly natural in $C$, hence by d\'evissage we may check that it is an
isomorphism on ``generators" $C=N[i]$, $i\in  \Z$, where $N$ is a $1$-motive. We may further
reduce to $N=[\Z\to 0]$, $[0\to \G_m]$ or $[0\to A]$ where $A$ is an abelian variety.

It is convenient to replace $D^b(\M)$ by $D^b({}^t\M)$ (Theorem \ref{ptors}), which allows us to
represent
$N/n$ by
\[\begin{cases}
[\Z/n\to 0][0] &\text{if $N=[\Z\to 0]$}\\
[\mu_n\to 0][0] &\text{if $N=[0\to \G_m]$}\\
[{}_n A\to 0][0] &\text{if $N=[0\to A]$.}
\end{cases}\]

This implies that $\Hom(\Z,N[i]/n)=0$ if $i\ne 0$ (the best way to see this is to use the functor $\Tot$). Suppose $i=0$. In the cases $N=[\Z\to 0]$ or $[0\to \G_m]$, the pairing is easily seen to be the obvious pairing $\Z/n\times \mu_n\to \mu_n$ or $\mu_n\times \Z/n\to \mu_n$, which is clearly perfect. In the case $N=[0\to A]$, so that $N^*=[0\to A']$ is the dual abelian variety, we get a pairing
\[{}_n A\times {}_n A'\to \mu_n\]
which is by construction the Weil pairing (see \cite[IV.20]{mumford} and \cite[\S 16]{milne}). Therefore it is perfect too.
\end{proof}

\subsection{Comparing two Ext groups} The aim of this subsection is to prove:

\begin{propose}\label{ext=ext} Let $C_1,C_2,C_3\in d_{\le 1}\DM_{\gm,\et}^\eff$. Then the forgetful triangulated
functors
\[\DM_{-,\et}^{\eff}\by{i} D^{-}(\EST)\by{\omega}
D^{-}(\ES)\]
induce an isomorphism
\[\Hom_{\DM_{-,\et}^{\eff}}(C_1\otimes C_2,C_3[q])\iso
\Hom_{D^{-}(\ES)}(C_1\oo^L C_2, C_3[q])\]
for any $q\in\Z$.
\end{propose}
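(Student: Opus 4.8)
The plan is to reduce the statement, via the $\otimes$--$\RHom$ adjunction in $\DM_{-,\et}^{\eff}$ together with the corresponding adjunction in $D^{-}(\Shv_\et(Sm(k)))$, to a statement about a single complex of $1$-motives, and then to a statement about the generators $[L\to 0]$, $[0\to T]$, $[0\to A]$. First I would observe that, since $\Tot C_1$ lies in $d_{\le 1}\DM_{\gm,\et}^{\eff}\subseteq \DM_{\gm,\et}^{\eff}$, the partial internal Hom $\ihom_\et(\Tot C_1,-)$ of \S\ref{2.5} is defined, so that
\[
\Hom_{\DM_{-,\et}^{\eff}}(\Tot C_1\otimes \Tot C_2,C_3[q])\cong \Hom_{\DM_{-,\et}^{\eff}}(\Tot C_2,\ihom_\et(\Tot C_1,C_3)[q]).
\]
On the sheaf side, the forgetful functor is monoidal and the derived internal Hom $\srhom(C_1,-)$ in $D^{-}(\Shv_\et(Sm(k)))$ gives
\[
\Hom_{D^{-}(\Shv_\et)}(C_1\oo^L C_2,C_3[q])\cong \Hom_{D^{-}(\Shv_\et)}(C_2,\srhom(C_1,C_3)[q]).
\]
So it suffices to know two things: (a) the forgetful functor $\DM_{-,\et}^{\eff}\to D^{-}(\Shv_\et(Sm(k)))$ carries $\ihom_\et(\Tot C_1,C_3)$ to $\srhom(C_1,C_3)$ — compatibly with the natural maps — and (b) for each $C_2\in D^b(\M[1/p])$ and each $C_3'\in D^{-}(\Shv_\et(Sm(k)))$ lying in the essential image of $\DM_{-,\et}^{\eff}$, the forgetful functor induces an isomorphism $\Hom_{\DM_{-,\et}^{\eff}}(\Tot C_2,C_3')\iso \Hom_{D^{-}(\Shv_\et)}(C_2,C_3')$. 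Claim (b) is exactly full faithfulness of $T$ (Theorem \ref{t1.2.1}) combined with the fact, already used in \S\ref{2.3}, that $\DM_{-,\et}^{\eff}$ sits inside $D^{-}(\Shv_\et(Sm(k)))$ as a localising subcategory via $C_*$; more precisely one uses that $\Hom_{\DM_{-,\et}^{\eff}}(\alpha^* M(X),N)=\HH^0_\et(X,N)=\Hom_{D^{-}(\Shv_\et)}(\Z_\et(X),N)$ and then reduces by the generation results of \S\ref{s2.4.1} to $N$ a $1$-motivic sheaf, where this is the content of Proposition \ref{ptransf} (uniqueness of transfers) together with Gersten's principle, Proposition \ref{pgersten}.

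For claim (a) I would argue by reduction to the case $C_1=N$ a single pure $1$-motive, using that both sides are triangulated in $C_1$ (for $\ihom_\et$ this uses again that $\Tot N\in \DM_{\gm,\et}^{\eff}$, so one stays inside the domain of definition of the partial internal Hom). When $N=[L\to 0]$ with $L$ a permutation lattice, both $\ihom_\et(\Tot N,C_3)$ and $\srhom(N,C_3)$ are computed by Weil restriction, and agree; a general $L$ is handled by Proposition \ref{pperm}. When $N=[0\to \G_m]$ one has $\ihom_\et(\Z_\et(1),C_3)=\ihom_\et(\Z(1),C_3)$ and the identification with the underived internal Hom of complexes of sheaves follows from Proposition \ref{lcurve} c) (the key computation), exactly as in the proof there; for $N=[0\to A]$ one writes $A$, up to isogeny and up to a permutation summand, as $h^1$ of a curve $C$ and uses Proposition \ref{lcurve} a) again, the point being that $\ihom_\et(M_\et(C),\Z(1)[2])\simeq R_\et f_*\G_m[1/p][1]$ is represented by an honest complex of sheaves whose underlying object is computed correctly. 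Tori are handled by Proposition \ref{pperm} on character groups.

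The main obstacle, I expect, is claim (a): one must check that the \emph{natural transformation} from the forgetful image of $\ihom_\et(\Tot C_1,C_3)$ to $\srhom(C_1,C_3)$ — not merely an abstract isomorphism between the two — is an isomorphism, and that it is compatible with the evaluation/composition maps used to set up the adjunctions on the two sides. This is where one cannot simply invoke existence results but must track the construction of $\ihom_\et$ through $C_*$ and through the localisation $D^{-}(\Shv_\et)\to \DM_{-,\et}^{\eff}$; the rational part is essentially in \cite{OR} via Proposition \ref{lcurve}, and the torsion part follows as in \S\ref{2.3} from the equivalence on prime-to-$p$ torsion objects of $\DM_{-,\et}^{\eff}$ with $D^{-}(\Shv((\Spec k)_\et))$. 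Once (a) is in place, (b) is a formal consequence of Theorem \ref{t1.2.1} and the sketch is complete by d\'evissage on $C_1$, then on $C_2$ and $C_3$, to the generators.
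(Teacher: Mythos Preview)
Your approach is workable in spirit but takes a substantially harder route than the paper, and the justification of your claim (b) is misdirected.

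The paper's proof is a one-liner once one observes that the composite forgetful functor has a \emph{left adjoint} $RC\circ\Phi$, where $\Phi$ is the free-transfers functor and $RC$ is $\Aff^1$-localisation. This left adjoint is \emph{monoidal}: since $\Phi(\Z(X))=L(X)$ and both tensor products are characterised by $(X,Y)\mapsto X\times Y$, one has $\Phi(A\oo^L B)\simeq \Phi(A)\oo^L_{tr}\Phi(B)$, and $RC$ descends the tensor product by definition. Adjunction then gives
\[
\Hom_{D^-(\Shv_\et)}(C_1\oo^L C_2,C_3[q])\simeq \Hom_{\DM_{-,\et}^\eff}(RC\Phi(C_1)\otimes RC\Phi(C_2),C_3[q]),
\]
and the proof concludes because the counit $RC\Phi(C_i)\to \Tot C_i$ is an isomorphism whenever the terms of $C_i$ lie in $\HI_\et$ --- which is exactly the case for complexes of $1$-motives.

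Your claim (b) is \emph{equivalent}, by Yoneda, to this counit statement; but it is not Theorem \ref{t1.2.1} (full faithfulness of $T$ between $D^b(\M[1/p])$ and $\DM_{-,\et}^\eff$), nor does $\DM_{-,\et}^\eff$ sit inside $D^-(\Shv_\et(Sm(k)))$ as a localising subcategory --- it sits inside $D^-(\EST)$, the derived category of sheaves \emph{with} transfers. The identity $\Hom_{\DM}(\alpha^*M(X),N)=\Hom_{D^-(\Shv_\et)}(\Z_\et(X),N)$ you quote is precisely the adjunction for $RC\circ\Phi$ evaluated on representables.

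Your claim (a) is the real detour. The forgetful functor from sheaves with transfers to sheaves is only \emph{lax} monoidal (the underlying sheaf of $L(X)$ is $c(-,X)$, not $\Z(X)$), so there is no formal reason for it to commute with internal Homs; your case-by-case analysis on pure $1$-motives would have to establish this compatibility --- including, as you note, compatibility with evaluation maps --- by hand. This is doable but laborious, and the paper's left-adjoint argument bypasses it entirely: one never needs to compare internal Homs, only to know that the left adjoint is monoidal and that its counit is invertible on $\HI_\et$.
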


\begin{proof} Let $\ihom_\et$ denote the internal Hom of $\DM_{-,\et}^{\eff}$ (notation adopted in \S\ref{2.5}). By adjunction, it is enough to provide isomorphisms
\begin{multline*}
\Hom_{\DM_{-,\et}^{\eff}}(C_1,\ihom_\et(C_2,C_3)[q])\\
\iso
\Hom_{D^{-}(\ES)}(C_1,\ihom(C_2,C_3)[q])
\end{multline*}
where the right hand $\ihom$ is the (partially defined) internal Hom of $D^{-}(\Shv_{\et}(\Sm(k))$.
For this, it suffices to prove that the composite functor $\omega i$ of Proposition \ref{ext=ext} carries $\ihom_\et(C_2,C_3)$ to $\ihom(C_2,C_3)$. 

We first note that $i$ carries $\ihom_\et(C_2,C_3)$ to the internal Hom of $D^{-}(\EST)$  (compare \cite[Rk. 9.28]{VL}). Since $\ihom_\et(iC_2,iC_3)$ belongs to $D^{+}(\EST)$, it suffices to show that the natural map in $D^+(\ES)$
\[\omega\ihom_\et(iC_2,iC_3)\to \ihom(\omega iC_2,\omega iC_3)\]
is an isomorphism. By d\'evissage, we reduce to the case where $C_2$ and $C_3$ are single sheaves of $\Shv_1$ concentrated in degree $0$, and the claim follows from Theorem \ref{t3.12} a).
\end{proof}

\subsection{Two Cartier dualities} Recall the internal Hom $\ihom_\et$ from \S \ref{2.5}. We
define
\begin{equation}\label{D1}
D\1^\et (M) \df\ihom_\et (M, \Z_\et (1))
\end{equation}
for any object $M\in \DM_{\gm,\et}^{\eff}$.\index{$D\1^\et$, $D\1^\Nis$}

We now want to compare the duality \eqref{dcartier} with the following duality on triangulated
$1$-motives:

\begin{sloppypar}
\begin{propose}\label{cd} 
The functor $D\1^\et$
restricts to a self-duality $(\ )^\vee$ (anti-equivalence of categories) on
$d\1\DM_{\gm,\et}^\eff$.
\end{propose}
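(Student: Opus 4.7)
The plan is to show that $D\1^\et$ preserves the subcategory $d\1\DM_{\gm,\et}^\eff$ and that the canonical biduality morphism is an isomorphism on this subcategory, by reducing both assertions to a set of generators via a thick-subcategory argument.

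First, I would introduce the full subcategory $\cC\subseteq \DM_{\gm,\et}^\eff$ consisting of those objects $M$ satisfying (a) $D\1^\et(M)\in d\1\DM_{\gm,\et}^\eff$, and (b) the canonical biduality morphism $\eta_M\colon M\to D\1^\et(D\1^\et(M))$ (coming from the internal-Hom adjunction, as in Proposition~\ref{ext=ext}) is an isomorphism. Since $\ihom_\et(-,\Z(1))$ is a contravariant triangulated functor defined on all of $\DM_{\gm,\et}^\eff$ and $d\1\DM_{\gm,\et}^\eff$ is thick in $\DM_{\gm,\et}^\eff$, a standard five-lemma argument applied to the morphism of triangles induced by $\eta_{(-)}$, combined with the preservation of direct summands, shows that $\cC$ is itself a thick subcategory of $\DM_{\gm,\et}^\eff$.

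Next, I would observe that $d\1\DM_{\gm,\et}^\eff$ is in fact generated, as a thick subcategory, by motives $M_\et(\bar C)$ of smooth projective $k$-curves: a smooth non-projective curve $C$ fits into a Gysin triangle linking $M_\et(C)$ to $M_\et(\bar C)$ of its smooth compactification and Tate twists of motives of $0$-dimensional schemes $\Spec E$, and any such $M_\et(\Spec E)$ is itself a direct summand of $M_\et(\bar C)$ for any smooth projective curve $\bar C$ carrying an $E$-rational point (for instance $\P^1_E$); the Tate-twisted pieces $M_\et(\Spec E)(1)[2]$ are similarly built from smooth projective curves via the standard Chow--K\"unneth decomposition of a pointed smooth projective curve.

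Finally, I would invoke Proposition~\ref{lcurve}(c), which asserts that for any smooth projective $k$-curve $\bar C$ the class of the diagonal induces an isomorphism
\[
M_\et(\bar C) \iso \ihom_\et(M_\et(\bar C),\Z(1)[2]),
\]
so that $D\1^\et(M_\et(\bar C))\simeq M_\et(\bar C)[-2]$ lies in $d\1\DM_{\gm,\et}^\eff$ and, applying $D\1^\et$ again and using naturality of the diagonal construction, $\eta_{M_\et(\bar C)}$ is an isomorphism. Thus $\cC$ contains a set of generators of $d\1\DM_{\gm,\et}^\eff$, and by thickness $\cC\supseteq d\1\DM_{\gm,\et}^\eff$; this establishes both claims and hence the self-duality.

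The only mildly delicate point, and the one I would check carefully, is the thickness of $\cC$: one must verify that the biduality map is compatible with exact triangles (to apply the five-lemma) and with direct summands. Everything else reduces to Proposition~\ref{lcurve}(c), which has already been established.
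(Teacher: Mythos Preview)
Your proposal is correct and takes essentially the same approach as the paper: reduce to the generators $M_\et(\bar C)$ for smooth projective curves via a thick-subcategory argument, and then invoke Proposition~\ref{lcurve}(c). The paper's proof is simply the two-sentence terse version of what you have written out; the only minor imprecision is that the biduality for $M_\et(\bar C)$ follows from the \emph{symmetry} of the diagonal class (so that $\phi^\vee\circ\eta_{M_\et(\bar C)}=\phi$, forcing $\eta$ to be an isomorphism), rather than from ``naturality'' as you phrase it, and the reference to Proposition~\ref{ext=ext} for the biduality map is not quite apt.
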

\end{sloppypar}

\begin{proof} It suffices to compute on motives of smooth
projective curves $M_\et (C)$. Then it is obvious in view of Proposition \ref{lcurve} c).
\end{proof}

\begin{defn}\label{cdef} For $M\in d\1\DM_{\gm,\et}^{\eff}$, we say that $M^\vee$ is the
\emph{motivic Cartier dual} of $M$.
\end{defn}

Note that motivic Cartier duality exchanges Artin motives and Tate motives, \eg $\Z_\et(0)^{\vee} = \Z_\et (1)$. We are going to compare
it with the Cartier duality on $D^b(\M[1/p])$ (see Proposition \ref{pcd}) via Theorem \ref{t1.2.1}.

For two complexes of $1$-motives
$C_1$ and 
$C_2$, by composing \eqref{eqbiextc} and \eqref{edge} and applying Proposition \ref{nat}, we get
a bifunctorial morphism
\begin{equation}\label{bimap}
\Hom(C_1,C_2^*) \to \Biext(C_1,C_2;\G_m)\to\Hom(C_1\oo^L C_2,\G_m[-1])
\end{equation}
where the right hand side is computed in the derived category of \'etale
sheaves. This natural transformation trivially factors through $D^b(\M[1/p])$.

From Proposition \ref{ext=ext} and Lemma \ref{Niset} (\cf \cite[Thm. 4.1]{VL}), taking $C_3 = \Z_\et (1)\cong \G_m[1/p][-1]$, it follows that the map \eqref{bimap} may be reinterpreted as a
natural transformation
\[\Hom_{D^b(\M[1/p])}(C_1,C_2^*)\to \Hom_{d\1\DM_{\gm,\et}^{\eff}}(\Tot(C_1),\Tot(C_2)^\vee).\]

Now we argue \`a la Yoneda: taking $C_1=C$ and $C_2=C^*$, the image of the identity
yields a canonical morphism of functors:
\[\eta_C:\Tot(C^*)\to \Tot(C)^\vee.\]

\begin{thm}\label{teq} The natural transformation $\eta$ is an isomorphism of functors.
\end{thm}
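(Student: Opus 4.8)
The plan is to prove that $\eta_C$ is an isomorphism by a dévissage argument, exactly in the spirit of the proof of Theorem~\ref{t4.3.2}. Since both $\Tot(-^*)$ and $\Tot(-)^\vee$ are triangulated functors $D^b(\M[1/p])\to \DM_{\gm,\et}^\eff$ and $\eta$ is a natural transformation between them, the full subcategory of $C\in D^b(\M[1/p])$ for which $\eta_C$ is an isomorphism is triangulated and thick. By Theorem~\ref{ptors} (or simply because $D^b(\M[1/p])$ is generated, as a thick subcategory, by $1$-motives placed in a single degree, and then by the three types of pure $1$-motives via the weight filtration $W_\bullet$), it suffices to check that $\eta_N$ is an isomorphism when $N=[\Z\to 0]$, $N=[0\to\G_m]$ or $N=[0\to A]$ with $A$ an abelian variety, and, since shifts commute with everything in sight, we may take $N$ itself.

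First I would reduce to $k$ algebraically closed. The formation of $\eta_C$ is compatible with base field extension (all the ingredients — the Poincaré biextension $P_C$ of Definition~\ref{dpc}, the edge map \eqref{edge}, the identification of Proposition~\ref{ext=ext}, and the isomorphism $\Z_\et(1)\cong\G_m[1/p][-1]$ — are), and by a transfer/Galois descent argument (as used repeatedly above, e.g.\ in Lemma~\ref{kalg} and in the proof of Proposition~\ref{ptorsion}) it is enough to prove the statement after passing to $\bar k$. So assume $k=\bar k$. Next, observe that to check $\eta_N$ is an isomorphism in $\DM_{\gm,\et}^\eff$ it suffices to check it becomes an isomorphism after tensoring with $\Q$ and after applying $(-)/n$ for every $n$ prime to $p$; this is because the cone of $\eta_N$ lies in $d_{\le 1}\DM_{\gm,\et}^\eff$, hence corresponds under Theorem~\ref{t1.2.1} to an object of $D^b(\M[1/p])$, and an object there vanishes iff it vanishes rationally and modulo every $n$ (compare the use of torsion objects in \S\ref{2.3} and the proof of Proposition~\ref{p3.6}).

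For the rational statement, $\Tot(C^*)\otimes\Q$ and $\Tot(C)^\vee\otimes\Q$ are identified, via the equivalence $\DM_-^\eff\otimes\Q\iso\DM_{-,\et}^\eff\otimes\Q$ and Orgogozo's computations (as recalled in \S\ref{2.3}), with the usual Cartier dual of $1$-motives up to isogeny; one checks on the three pure types that $\eta$ is the classical biextension-theoretic identification, which is an isomorphism by Proposition~\ref{biext} together with the Weil--Barsotti formula (the cases $N=[\Z\to0]$, $[0\to\G_m]$ being trivial, the case $N=[0\to A]$ being Lemma~\ref{biexta}). For the finite-coefficient statement, one uses the natural isomorphism $C^*/n\simeq(C/n)^*[1]$ and unwinds: applying $\Hom(\Z,-)$ to $\eta_{N}/n$ and using $\Z^\vee=\Z(1)$, $\Hom(\Z,\Z(1)/n)=\mu_n$, the map $\eta$ induces precisely the perfect pairing $\Hom(\Z,N/n)\times\Hom(\Z,N^*/n)\to\mu_n$ of \eqref{eq4.1}, which is shown to be perfect in Theorem~\ref{t4.3.2} (it is the trivial pairing $\Z/n\times\mu_n\to\mu_n$ in the first two cases and the Weil pairing $_nA\times{}_nA'\to\mu_n$ in the third). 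Hence $\eta_N/n$ is an isomorphism on $\Hom(\Z,-)$; since for $N$ pure $N/n$ is concentrated in degree $0$ and $\Z$ generates, this suffices.

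The main obstacle I anticipate is purely bookkeeping rather than conceptual: one must verify carefully that the map $\eta_C$, built abstractly by the Yoneda trick out of \eqref{eqbiextc}, \eqref{edge}, Proposition~\ref{ext=ext} and \cite[Thm.~4.1]{VL}, really does recover, on the generators $N/n$ and after applying $\Hom(\Z,-)$, the \emph{same} pairing \eqref{eq4.1} defined by composition in $D^b(\M)$ — i.e.\ that the ``motivic'' duality pairing and the ``$1$-motivic'' duality pairing are compatible under $\Tot$. This is a compatibility between the biextension formalism \eqref{biextform} and composition of morphisms in the two triangulated categories, and tracing through the adjunctions $RC\circ\Phi$ of Proposition~\ref{ext=ext} to see that the Poincaré biextension $P_N$ maps to the right element is the delicate point. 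Once that compatibility is pinned down, everything else is a citation of results already in the excerpt.
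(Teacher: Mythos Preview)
Your approach would work, but it is considerably more roundabout than the paper's, and the ``delicate point'' you correctly isolate at the end is essentially the entire content of the paper's argument.

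The paper's proof is a single Yoneda step that exploits the very way $\eta$ was constructed. Since $1$-motives are dense in $D^b(\M[1/p])$, it suffices to treat $C=M$ a $1$-motive. By Yoneda (on the target category), $\eta_M$ is an isomorphism iff
\[
(\eta_M)_*:\Hom(\Tot N,\Tot(M^*))\longrightarrow \Hom(\Tot N,\Tot(M)^\vee)
\]
is an isomorphism for all $1$-motives $N$ (this is enough because $\Tot$ is an equivalence onto $d_{\le 1}\DM_{\gm,\et}^\eff$). Now $\eta$ was \emph{defined} by feeding the identity into the composite \eqref{bimap} followed by Proposition~\ref{ext=ext}; unwinding this, $(\eta_M)_*$ sits as one edge of a square whose other three edges are the isomorphism of Theorem~\ref{t1.2.1} (full faithfulness of $\Tot$), the isomorphism $\Hom(N,M^*)\iso\EExt^1(N\oo^L M,\G_m)$ given by \eqref{biextform} and Proposition~\ref{biext}, and the isomorphism of Proposition~\ref{ext=ext}. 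Commutativity of the square is automatic from the construction, so $(\eta_M)_*$ is forced to be an isomorphism. No base change to $\bar k$, no rational/torsion splitting, no appeal to Theorem~\ref{t4.3.2}.

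Your route --- d\'evissage to pure $1$-motives, reduction to $\bar k$, then separate rational and mod-$n$ checks --- can be made to work, but each branch ultimately requires verifying that $\eta$ induces the ``expected'' map (the Weil--Barsotti identification rationally, the Weil pairing mod $n$). That verification is exactly the compatibility you flag as the main obstacle: showing that the biextension-theoretic map \eqref{bimap} underlying $\eta$ matches, through Proposition~\ref{ext=ext}, the composition pairing in $D^b(\M)$ used in Theorem~\ref{t4.3.2}. Once you trace through those adjunctions you will have reproduced the commutative square above, at which point the d\'evissage was superfluous. In short: your plan is not wrong, but the step you defer is the whole proof, and the paper's Yoneda argument does it in one move.
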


\begin{proof} It suffices to check this on $1$-motives, since they are dense in the
triangulated category $D^b(\M[1/p])$. Using Yoneda again and the previous discussion, it then follows from
Theorem \ref{t1.2.1} and the isomorphisms \eqref{biextform} and \eqref{eqbiext} (the latter
being proven in Proposition \ref{biext}). The following commutative diagram explains this:
\[\begin{CD}
\Hom(N,M^*)@>\text{Th. \ref{t1.2.1}}>\sim> \Hom(\Tot(N),\Tot(M^*))\\
@V{\wr}V{\eqref{biextform}+\eqref{eqbiext}}V @V{\eta_*}VV\\
\EExt^1_{\ES}(N\oo^L M,\G_m[1/p])@<\text{Prop. \ref{ext=ext}}<\sim<
\Hom(\Tot(N),\Tot(M)^\vee) .
\end{CD}\]
\end{proof}

\newpage
\part{The functors $\LAlb$ and $\RPic$}

\section{Definition of $\LAlb$ and $\RPic$}\label{lalb}

The aim of this section is to construct the closest integral approximation to a left adjoint of the
full embedding $\Tot$ of Definition \ref{tot}. In order to work it out, we first recollect some ideas from
\cite{V0}.

We shall show in Theorem \ref{ladj} that the functor $\LAlb$ of Definition \ref{LAlb} does
provide a left adjoint to $\Tot$ after we tensor Hom groups with $\Q$: this will provide a proof
of results announced in \cite[Preth. 0.0.18]{V0} and \cite{V1}. See Remark \ref{noleft}
for an integral caveat.

\subsection{Motivic Cartier duality} \label{mcd}

Recall the functor $D\1^\et:\DM_{\gm,\et}^{\eff}\to \DM_{-,\et}^\eff$ of \eqref{D1}. On the
other hand, by Proposition \ref{cD.2}, we may consider truncation on
$\DM_{-,\et}^\eff$ with respect to the homotopy $t$-structure. We have:

\begin{lemma}\label{l2.0} Let $X$ be a smooth $k$-variety. Then the truncated complex
$\tau_{\le 2}D\1^\et(M_\et(X))$ belongs to $d\1\DM_{\gm,\et}^\eff$.
\end{lemma}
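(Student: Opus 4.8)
The plan is to reduce the statement, by the standard dévissage that the paper already uses, to the case of a smooth projective curve, where the internal Hom $D\1^\et(M_\et(X))$ is computed explicitly by Proposition \ref{lcurve}, and then to observe that the truncation below degree $3$ is forced to land in $d\1\DM_{\gm,\et}^\eff$.

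First I would note that $\DM_\gm^\eff$ is generated as a thick subcategory by motives $M(X)$ of smooth \emph{projective} varieties (using de Jong alterations or resolution, as invoked elsewhere in the paper), and that $D\1^\et$ is triangulated while the homotopy truncation $\tau_{\le 2}$ is only \emph{semi}-exact; so I cannot immediately dévisser on $X$. Instead I would work with the homotopy sheaves: by Corollary \ref{cD.2} and Theorem \ref{tfr} the object $D\1^\et(M_\et(X))$ has homotopy sheaves $\sH_q$, and $\tau_{\le 2}D\1^\et(M_\et(X))$ is built as a finite iterated extension of $\sH_q[q]$ for $q\le 2$. Thus it suffices to show that each $\sH_q(D\1^\et(M_\et(X)))$, for $q\le 2$, is a $1$-motivic sheaf in the sense of Definition \ref{d3.1}, i.e. lies in $\Shv_1^s$ — because $d\1\DM_{\gm,\et}^\eff$ is closed under extensions in $\DM_{\gm,\et}^\eff$ and, by Theorem \ref{t3.2.3}, contains all shifts $\cF[q]$ with $\cF\in\Shv_1^s$. (Here I use that $d\1\DM_{\gm,\et}^\eff$ is thick, hence closed under the finitely many extensions occurring in the Postnikov tower of a truncation.)

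Next I would identify these low-degree homotopy sheaves. By the \'etale analogue of the duality computation, $D\1^\et(M_\et(X)) = \ihom_\et(M_\et(X),\Z(1))$, and since $\Z_\et(1)=\G_m[1/p][-1]$ (Corollary \ref{cD.1}) this is $R_\et p_* \G_m[1/p][-1]$ up to the usual reindexing, exactly as in Proposition \ref{lcurve}a) but without assuming $\dim X\le 1$. Therefore $\sH_0$ is $R_{\pi_0(X)/k}\G_m[1/p]$-like (a lattice-and-torus object, in $\Shv_1^s$), $\sH_1$ is $\underline{\Pic}_{X/k}[1/p]$, which is $1$-motivic by Proposition \ref{p3.3.1}, and $\sH_2$ is a subquotient built from $R^2_\et p_*\G_m$; the point is that for $q\le 2$ these are controlled by $H^{\le 1}$ of the generic fibre together with Brauer-type terms, and a Gersten-principle argument (Proposition \ref{pgersten}) reducing to separably closed stalks shows $\sH_2$ is again $1$-motivic (in the worst case discrete, being torsion by the classical structure of $H^2_\et$ of a smooth variety with $\G_m$-coefficients). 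Combining, $\tau_{\le 2}D\1^\et(M_\et(X))$ is an extension of objects of $\Shv_1^s[q]$, $q\le 2$, hence lies in $d\1\DM_{\gm,\et}^\eff$.

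The main obstacle I anticipate is controlling $\sH_2$: one must check that $R^2_\et p_*\G_m[1/p]$ (and hence the degree-$2$ homotopy sheaf of the truncation) is genuinely a $1$-motivic sheaf and not something wilder. I expect this to follow from the fact that, Zariski- or \'etale-locally at function fields, $H^2$ of a smooth variety with $\G_m$-coefficients reduces to Brauer groups and $\Pic$, both of which are $1$-motivic (the Brauer part being torsion, hence discrete in the relevant sense), together with Theorem \ref{text} that $\Shv_1$ is closed under extensions; the Gersten principle of Proposition \ref{pgersten} is what makes this local-to-global reduction legitimate. Once $\sH_q\in\Shv_1^s$ for $q\le 2$ is established, assembling the truncation into $d\1\DM_{\gm,\et}^\eff$ is purely formal, using thickness of $d\1\DM_{\gm,\et}^\eff$ and Theorem \ref{t3.2.3}.
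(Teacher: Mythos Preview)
Your overall strategy---identify the low-degree homotopy sheaves of $D\1^\et(M_\et(X))$, show they lie in $\Shv_1^s$, and invoke Theorem~\ref{t3.2.3}---is exactly what the paper does. But you have an off-by-one indexing error that manufactures a spurious obstacle. Since $\Z_\et(1)=\G_m[1/p][-1]$, one has $D\1^\et(M_\et(X))\simeq R_\et p_*\G_m[1/p][-1]$ and therefore $\sH^q\bigl(D\1^\et(M_\et(X))\bigr)=R^{\,q-1}_\et p_*\G_m[1/p]$. Thus $\sH^0=0$, $\sH^1=R_{\pi_0(X)/k}\G_m[1/p]$, and $\sH^2=\Pic_{X/k}[1/p]$; these are the \emph{only} sheaves appearing in $\tau_{\le 2}$, and both are $1$-motivic (the latter by Proposition~\ref{p3.3.1}). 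That is the entire proof in the paper. Your assignment $\sH_0=R^0$, $\sH_1=R^1$, $\sH_2=R^2$ drops the shift you yourself mention, which is why the Brauer-type sheaf $R^2_\et p_*\G_m$ enters your analysis at all.

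Your final paragraph is therefore unnecessary, and worse, the argument sketched there would fail on its own terms: as the paper remarks immediately after the lemma, $\sH^i(D\1^\et(M_\et(X)))$ for $i>2$ is a torsion sheaf of cofinite type with (in general) nonzero divisible part. Such a sheaf is \emph{not} discrete in the sense of Definition~\ref{d1.1.1} (which requires finitely generated geometric fibres), hence not in $\Shv_1$; your claim that it is ``in the worst case discrete, being torsion'' conflates torsion with $\Z$-constructibility. This failure is precisely why the truncation $\tau_{\le 2}$ appears in the statement, and why the paper passes to $D\1^\Nis$ on $\DM_\gm^\eff$ in the very next lemma.
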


\begin{proof} Recall that $\Z_\et (1) = \G_m[1/p][-1]$ so that this is a consequence of an analogue of Proposition \ref{lcurve} a) and b) in higher dimension.  In fact, the nonvanishing cohomology sheaves are $\sH^{-1}=\G_{X/k}[1/p]$ (see \S\ref{s:basic}) and
$\sH^0=\underline{\Pic}_{X/k}[1/p]$. Both belong to $\Shv_1$ by Proposition
\ref{p3.3.1}, hence the claim follows from Theorem \ref{t3.2.3}.
\end{proof}

Unfortunately, $\sH^i(D\1^\et(M_\et(X)))$ does not belong to $d\1\DM_{\gm,\et}^\eff$ for $i>2$
in general: indeed, it is well-known that this is a torsion sheaf of cofinite type, with
nonzero  divisible part in general (for $i\ge 3$ and in characteristic $0$, its corank is equal
to the
$i$-th Betti number of $X$).\footnote{One should compare this situation with that of Lemma \ref{l3.12} and the remark following it.} It might be considered as an ind-object of
$d_{\le 0}\DM_{\gm,\et}^\eff$, but this would take us too far. To get around this problem, we
shall restrict to the standard category of geometric triangulated motives of \cite{V},
$\DM_\gm^\eff$.

Let us denote by $D\1^\Nis$ \index{$D\1^\et$, $D\1^\Nis$} the same functor as $D\1^\et$ in the category $\DM_-^\eff$, defined
with the Nisnevich topology. Let as before
$\alpha^s:\DM_-^\eff\to \DM_{-,\et}^\eff$ denote the ``change of topology" functor.

\begin{lemma}\label{l2.1} a) For any smooth $X$ with motive $M(X)\in \DM_\gm^\eff$, we have 
\[\alpha^s D_{\le 1}^\Nis M(X) \iso \tau_{\le 2} 
D_{\le 1}^\et \alpha^s M(X).\]
b) The functor $\alpha^s D_{\le 1}^\Nis$ induces a triangulated functor 
\[\alpha^s D_{\le 1}^\Nis:\DM_\gm^\eff\to d\1\DM_{\gm,\et}^\eff.\]
\end{lemma}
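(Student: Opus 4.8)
The plan is to deduce b) from a) together with Lemma~\ref{l2.0}, and to prove a) by explicitly computing both sides, following the pattern of the proof of Proposition~\ref{lcurve}.

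For a), I would first compute the left-hand side. By the same computation as in the proof of Proposition~\ref{lcurve} a), carried out with the Nisnevich topology in $\DM_-^\eff$ (\cf \cite[Prop. 3.2.8]{V}), $D\1^\Nis M(X)=\ihom_\Nis(M(X),\Z(1))$ is, up to a shift, the total Nisnevich pushforward $Rf_*\G_m$ along $f\colon X\to\Spec k$, so its cohomology sheaves for the homotopy $t$-structure are the higher direct images $R^q_\Nis f_*\G_m$. Since $H^q_\Nis(Y,\G_m)=H^q_\Zar(Y,\G_m)=0$ for $q\ge 2$ and $Y$ smooth (the vanishing already used in \S\ref{2.3}), these sheaves are concentrated in exactly two consecutive degrees, where they are the Nisnevich sheafifications of $U\mapsto\mathcal{O}^*(X\times U)$ and $U\mapsto\Pic(X\times U)$; in particular $D\1^\Nis M(X)$ lies in an interval of length two. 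Applying $\alpha^*$, which is exact and $t$-exact for the homotopy $t$-structures (\cf Proposition~\ref{pD.1.4}), keeps $\alpha^*D\1^\Nis M(X)$ in those two degrees; hence the canonical comparison morphism
\[\alpha^*\ihom_\Nis(M(X),\Z(1))\longrightarrow \ihom_\et(\alpha^*M(X),\Z(1))=D\1^\et\alpha^*M(X)\]
factors through $\tau_{\le 2}D\1^\et\alpha^*M(X)$, and it remains to see the factored map is an isomorphism. I would check this on cohomology sheaves for the homotopy $t$-structure: both sides vanish in degrees $\le 0$, while in the two relevant degrees the map is $\alpha^*$ of the Nisnevich sheafification of $\mathcal{O}^*(X\times-)$, resp. of $\Pic(X\times-)$, landing in the étale sheafification of the same presheaf. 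These are isomorphisms, because $\alpha^*$ of a Nisnevich sheafification is the corresponding étale sheafification, and because $H^1_\et(-,\G_m)=H^1_\Nis(-,\G_m)=\Pic$, so the two presheaves being sheafified coincide in both degrees.

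This is essentially the argument of the proof of Proposition~\ref{lcurve} c); the one new feature, and the step I expect to be the main obstacle to phrase cleanly, is the precise bookkeeping around the truncation $\tau_{\le 2}$. For a curve $C$ no truncation was needed, since all $R^q_\et f_*\G_m$ with $q\ge 2$ already vanish (by Tsen's theorem and \cite[IX (4.5)]{sga4}, reducing to separably closed fields via Gersten's principle as in Proposition~\ref{lcurve}). In higher dimension $R^q_\et f_*\G_m\ne 0$ for $q\ge 2$ in general (the Brauer sheaf and beyond), and the content of a) is exactly that $\tau_{\le 2}$ removes precisely that tail, which $\alpha^*$ of the Nisnevich side — where $\G_m$ has cohomological dimension $\le 1$ — cannot see anyway. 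Keeping track of the shift, so that $\mathcal{O}^*$ sits in degree $1$, $\Pic$ in degree $2$ (that is, $D\1^\et M_\et(X)\simeq Rf_*\G_m[1/p][-1]$), and the étale tail begins in degree $3$, is what requires care.

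For b): the functor $M\mapsto\alpha^*D\1^\Nis M=\alpha^*\ihom_\Nis(M,\Z(1))$ is the composite of the partial internal Hom $\ihom_\Nis(-,\Z(1))$, a contravariant triangulated functor on $\DM_\gm^\eff$, with the triangulated functor $\alpha^*$, hence is triangulated. The full subcategory of $\DM_\gm^\eff$ of those $M$ with $\alpha^*D\1^\Nis M\in d\1\DM_{\gm,\et}^\eff$ is then triangulated and thick, since $d\1\DM_{\gm,\et}^\eff$ is; by a) and Lemma~\ref{l2.0} it contains $M(X)$ for every smooth $X$; and as these objects generate $\DM_\gm^\eff$ as a thick subcategory, it is all of $\DM_\gm^\eff$. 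This gives the asserted factorization through $d\1\DM_{\gm,\et}^\eff$.
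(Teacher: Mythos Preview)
Your argument is correct and is essentially the paper's proof unpacked: the paper's one-line justification for a) is ``the weight $1$ case of the Beilinson--Lichtenbaum conjecture (here equivalent to Hilbert's theorem 90)'', which is exactly your identification $H^1_\et(-,\G_m)=H^1_\Nis(-,\G_m)=\Pic$ together with the vanishing of higher Nisnevich $\G_m$-cohomology, and b) is derived from a) and Lemma~\ref{l2.0} just as you do. Your thick-subcategory argument for b) simply makes explicit what the paper leaves implicit.
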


\begin{proof} a) This is the weight $1$ case of the Beilinson-Lichtenbaum conjecture (here 
equivalent to Hilbert's theorem 90.) b) follows from a) and Lemma \ref{l2.0}.
\end{proof}

\begin{defn}\label{d5.1} We denote by $d\1:\DM_\gm^\eff\to d\1\DM_{\gm,\et}^\eff$ the composite functor $D\1^\et\circ \alpha^s\circ D\1^\Nis$.\index{$d\1$}
\end{defn}

Thus, for $M\in\DM_{\gm}^{\eff}$, we have
\begin{equation}\label{D2}
d\1(M)=\ihom_\et (\alpha^s\ihom_\Nis (M, \Z (1)), \Z_\et (1)).
\end{equation}

The evaluation map $M \otimes\ihom_\Nis (M, \Z (1))\to \Z (1)$ then yields a canonical map
\begin{equation}\label{amap}
a_M :\alpha^sM \to d\1 (M)
\end{equation}
for any object $M\in \DM_{\gm}^{\eff}$. We call $a_M$ the \emph{motivic Albanese map}
associated to $M$ for reasons that will appear later.

\begin{propose}\label{cd2} 
The restriction of \eqref{amap} to $d\1\DM_{\gm}^{\eff}$ is an
isomorphism of functors. In particular, we have equalities
\[\alpha^sD^\Nis_{\le 1}(\DM_{\gm}^{\eff}) = \alpha^sd\1\DM_{\gm}^{\eff}=d\1\DM_{\gm , \et}^{\eff}.\]
\end{propose}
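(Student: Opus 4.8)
The plan is to prove that $a_M : \alpha^*M \to d_{\le 1}(M)$ is an isomorphism for every $M \in d_{\le 1}\DM_{\gm}^{\eff}$, and then deduce the displayed equality of essential images. Since both $\alpha^*$ and $d_{\le 1}$ are triangulated functors and $a$ is a natural transformation, the collection of $M$ for which $a_M$ is an isomorphism is a thick subcategory of $d_{\le 1}\DM_{\gm}^{\eff}$. So by the very definition of $d_{\le 1}\DM_{\gm}^{\eff}$ as the thick subcategory of $\DM_{\gm}^{\eff}$ generated by motives of smooth projective curves, it suffices to check that $a_{M(C)}$ is an isomorphism for $C$ a smooth projective $k$-curve (and, to be safe, also $M(\Spec E)$ for $E/k$ finite separable, though these are retracts of curve motives once one adjoins a rational point, or can be handled directly).

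First I would reduce, using Lemma \ref{l2.1} a), the computation of $d_{\le 1}(M(C)) = D_{\le 1}^\et \alpha^* D_{\le 1}^\Nis M(C)$ to something tractable: by that lemma $\alpha^* D_{\le 1}^\Nis M(C) \cong \tau_{\le 2} D_{\le 1}^\et \alpha^* M(C)$, and by Proposition \ref{lcurve} c) the diagonal class gives $M_\et(C) \iso \ihom_\et(M_\et(C),\Z(1)[2]) = D_{\le 1}^\et(M_\et(C))[2]$, i.e. $M_\et(C)$ is already self-dual under $D_{\le 1}^\et$ up to the Tate twist/shift, and its homotopy sheaves $R^0_\et f_*\G_m = R_{\pi_0(C)/k}\G_m$ and $R^1_\et f_*\G_m = \underline{\Pic}_{C/k}$ are concentrated in degrees $\le 2$ by Proposition \ref{lcurve} b), so the truncation $\tau_{\le 2}$ is harmless on $M_\et(C)$ itself. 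Applying $D_{\le 1}^\et$ once more and using self-duality again, one finds $d_{\le 1}(M(C)) \cong \alpha^* M(C) = M_\et(C)$, and one must then check that the canonical map $a_{M(C)}$ built from the evaluation pairing is precisely this identification — this is a matter of unwinding that the composite of the two duality isomorphisms, both induced by the diagonal class $\Delta_C$, is the identity (or at worst an automorphism that one identifies), which follows from the compatibility of $\Delta_C$ with itself under the evaluation/coevaluation triangle identities in the rigid-type situation of Proposition \ref{lcurve} c).

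The main obstacle I expect is precisely this last bookkeeping step: verifying that the two applications of $D_{\le 1}^\et$ (with an intervening $\alpha^*$ and a Nisnevich-to-étale comparison \eqref{curves}) compose to give back $\alpha^* M$ \emph{compatibly with} the evaluation map $a_M$, rather than merely abstractly. One has to chase the biduality map through the definition \eqref{D2}, use that \eqref{curves} is an isomorphism (Lemma \ref{l2.1} a)) so that the Nisnevich duality $D_{\le 1}^\Nis M(C)$ and the étale one agree after $\alpha^*$, and then invoke the standard fact that for a dualizable object the biduality morphism $M \to M^{\vee\vee}$ is the identity under the canonical identification — here incarnated by Proposition \ref{lcurve} c) which says $M_\et(C)$ behaves like a dualizable object with dual $M_\et(C)(-1)[-2]$ (more precisely its own dual twisted). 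Once $a_{M(C)}$ is an isomorphism, thickness gives it on all of $d_{\le 1}\DM_{\gm}^{\eff}$.

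Finally, for the displayed equality $\alpha^* D_{\le 1}(\DM_{\gm}^{\eff}) = \alpha^* d_{\le 1}\DM_{\gm}^{\eff}$: the inclusion $\supseteq$ is immediate since $d_{\le 1}$ factors through $D_{\le 1}^\et$, and for $\subseteq$ one notes that by Lemma \ref{l2.1} a) again, $\alpha^* D_{\le 1}^\Nis M(X) = \tau_{\le 2} D_{\le 1}^\et \alpha^* M(X) \in d_{\le 1}\DM_{\gm,\et}^\eff$ for smooth $X$, and applying the already-established isomorphism $a$ on this object (which now lies in $\alpha^* d_{\le 1}\DM_{\gm}^{\eff}$ after one more duality, since $d_{\le 1}$ preserves $d_{\le 1}\DM_{\gm,\et}^\eff$ by construction) identifies $\alpha^* D_{\le 1}^\Nis M(X)$ with an object in the image of $d_{\le 1}$; passing to the thick subcategories generated by the $M(X)$ closes the argument.
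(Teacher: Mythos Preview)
Your proof is correct and follows essentially the same route as the paper's: reduce to $M=M(C)$ for a smooth projective curve by thickness, then use the comparison \eqref{curves} (equivalently Lemma~\ref{l2.1} a)) together with Proposition~\ref{lcurve} c) to identify $a_{M(C)}$ with the biduality map for the self-duality $D_{\le 1}^\et$ of Proposition~\ref{cd}. The paper compresses all of this into one sentence (``same argument as in Proposition~\ref{cd}, using \eqref{curves}''), whereas you spell out the biduality bookkeeping --- your caution there is justified, and indeed the relevant triangle identity $D_{\le 1}(a_M)\circ a_{D_{\le 1}M}=1$ is exactly what the paper later cites from \cite{saa} in the proof of Theorem~\ref{ladj}. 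Your argument for the displayed equality is more convoluted than necessary: once the first claim is established, it follows immediately from the fact that $D_{\le 1}^\Nis$ already restricts to a self-equivalence on $d_{\le 1}\DM_{\gm}^{\eff}$ (Nisnevich analogue of Proposition~\ref{cd}), so $\alpha^* D_{\le 1}^\Nis(d_{\le 1}\DM_{\gm}^{\eff})=\alpha^*(d_{\le 1}\DM_{\gm}^{\eff})$, while Lemma~\ref{l2.1} b) gives the reverse inclusion.
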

\begin{proof} For the first claim, we reduce to the case $M=M(C)$ where $C$ is a smooth proper
curve. Then it follows from the proof of Proposition \ref{lcurve} c) (see \eqref{curves}).
The other claim is then clear (the second equality is true essentially by definition).
\end{proof}

\subsection{Motivic Albanese} \label{s.5.2}

\begin{defn} \label{LAlb} The \emph{motivic Albanese functor}
\[\LAlb : \DM_\gm^\eff\to D^b(\M[1/p])\]
is the composition of $d\1$ with a quasi-inverse to the equivalence of categories of  Theorem \ref{t1.2.1}. \end{defn}\index{$\LAlb$}

\begin{remark}\label{noleft} 
1) With this definition we get the following form of \eqref{amap} 
\[a_M :\alpha^sM \to \Tot\LAlb  (M)\]
where $\Tot$ is the functor  of Definition \ref{tot}; its restriction to $d\1\DM_{\gm}^{\eff}$ is an
isomorphism of functors.

2) By Theorem \ref{t1.2.1}, we then have the following relationship between $\LAlb$ and $\Tot$: for $M\in \DM_\gm^\eff$ and $N\in D^b(\M[1/p])$,
the map $a_M$ of \eqref{amap}  induces a map
\begin{equation}\label{lalbuniv}
\Hom (\LAlb M, N)\to \Hom (\alpha^sM, \Tot(N)).
\end{equation}
In Section \ref{qlalb}, we prove that this map is an isomorphism rationally,  showing   that
$\LAlb$ yields a left adjoint of $\Tot$ after Hom groups have been tensored  with $\Q$.  By 1), it is an isomorphism integrally if $M\in d\1\DM_{\gm}^{\eff}$. However, it is not so in general: 

Take $M=\Z(2)$, $N=\Z/n$ ($n$ prime to $p$). Then $\LAlb M=0$ because $\ihom_\Nis(\Z(2),\Z(1))\allowbreak=0$, but
\[\Hom (\alpha^sM, \Tot(N))= H^0_\et(k,\Z/n(-2))\]
which is nonzero \eg if $k=\bar k$.

3) The same example shows that $\Tot$ does not have a left adjoint. 
Indeed, suppose that such a left adjoint exists, and let us denote it by $\LAlb^\et$. For
simplicity, suppose $k$ algebraically closed. Let $n\ge 2$.  For any $m>0$, the exact triangle
in
$\DM_{\gm,\et}^\eff$
\[\Z(n)\by{m}\Z(n)\to \Z/m(n)\by{+1}\]
must yield an exact triangle
\[\LAlb^\et \Z(n)\by{m}\LAlb^\et\Z(n)\to \LAlb^\et\Z/m(n)\by{+1.}\]

Since $\Tot$ is an equivalence on torsion objects, so must be $\LAlb^\et$.  Since $k$ is
algebraically closed, $\Z/m(n)\simeq \mu_m^{\otimes n}$ is constant, hence we must have $\LAlb^\et\Z/m(n)\allowbreak\simeq [\Z/m\to 0]$.
Hence, multiplication by $m$ must be bijective on the $1$-motives $H^q(\LAlb^\et(\Z(n)))$ for
all $q\ne 0,1$, which forces these $1$-motives to vanish. For $q=0,1$ we must have exact
sequences
\begin{multline*}
0\to H^0(\LAlb^\et(\Z(n)))\by{m}H^0(\LAlb^\et(\Z(n)))\to [\Z/m\to 0]\\
\to H^1(\LAlb^\et(\Z(n)))\by{m}H^1(\LAlb^\et(\Z(n)))\to 0
\end{multline*}
which force either $H^0=[\Z\to 0]$, $H^1=0$ or $H^0=0$, $H^1=[0\to\G_m]$. But both cases are
impossible as one easily sees by computing
\begin{multline*}
\Hom(M(\P^n),\Tot([\Z\to 0])[2n+1])= H^{2n+1}_\et(\P^n,\Z)[1/p]\\
\simeq H^{2n}_\et(\P^n,(\Q/\Z)')\simeq (\Q/\Z)'
\end{multline*}
via the trace map, where $(\Q/\Z)'=\bigoplus_{l\ne p}\Q_\ell/\Z_\ell$.

Presumably, $\LAlb^\et$ does exist with values in a suitable pro-category containing $D^b(\M[1/p])$, and
sends $\Z(n)$ to the complete Tate module of $\Z(n)$ for $n\ge 2$. Note that, by
\ref{tatetwist} below, $\LAlb(\Z(n))=0$ for $n\ge 2$, so that the natural transformation
$\LAlb^\et(\alpha^s M)\to \LAlb(M)$ will not be an isomorphism of functors in general.
\end{remark}

\subsection{Motivic Pic}

\begin{defn} \label{RPic}
The \emph{motivic Picard functor} (a contravariant functor) is the functor
\[\RPic : \DM_{\gm}^{\eff}\to D^b(\M[1/p])\]
given by $\Tot^{-1}\alpha^sD\1^\Nis$ (\cf Definition \ref{LAlb}).
\end{defn}\index{$\RPic$}

For $M\in \DM_{\gm}^{\eff}$ we then have the following
tautology
\[(\Tot \RPic (M))^{\vee} = \Tot \LAlb (M).\]

Actually, from Theorem \ref{teq} we deduce:

\begin{cor}\label{rpdla}
For $M\in \DM_{\gm}^{\eff}$ we have
\[\RPic (M)^* = \LAlb (M).\qed\]
\end{cor}

Therefore we get ${}^tH^i(\RPic (M)) = ({}_tH_i(\LAlb (M)))^*$.

\subsection{Motivic $\pi_0$}\label{s5.4} \index{$L\pi_0$} All results of \S \ref{mcd} hold when replacing $D_{\le 1}$ by $D_{\le 0}=\ihom(-,\Z(0))$, with similar (and easier) proofs. In particular, we get a triangulated functor
\[d_{\le 0}=D_{\le 0}^\et\circ \alpha^s\circ D_{\le 0}^\Nis: \DM_\gm^\eff\to d_{\le 0} \\DM_{\gm,\et}^\eff\]
 hence, using the dimension $0$ case of Theorem \ref{t1.2.1},  a triangulated functor
\[L\pi_0:\DM_\gm^\eff\to D^b(\cM_0[1/p])\]
and a natural transformation
\[\alpha^s M \to \Tot L\pi_0(M)\]
for $M\in \DM_\gm^\eff$.

\begin{propose}\label{p5.4} If $X\in \Sm(k)$, the natural morphism $X\to \pi_0(X)$ induces an isomorphism  $L\pi_0(M(X))\simeq \Z[\pi_0(X)][0]$.
\end{propose}

\begin{proof} This is obvious if $\dim X=0$; hence it is enough to show that $D_{\le 0}^\Nis$ converts the morphism $M(X)\to M(\pi_0(X))$ into an isomorphism. This statement means:
\[H^i_\Nis(\pi_0(X)\times Y,\Z)\iso H^i_\Nis(X\times Y,\Z) \quad \forall Y\in\Sm(k), \forall i\in\Z.\]

For $Y=\Spec k$, this is true because the constant Nisnevich sheaf $\Z$ is flasque, $X$ is locally irreducible,  and flasque sheaves have trivial Nisnevich cohomology \cite[Lemma 1.40]{rioudea}. The general case reduces to this one by considering the composition
\[H^i_\Nis(\pi_0(X)\times \pi_0(Y),\Z)\to H^i_\Nis(\pi_0(X)\times Y,\Z)\to H^i_\Nis(X\times Y,\Z)\]
and noting that $\pi_0(X)\times \pi_0(Y)\simeq \pi_0(X\times Y)$.
\end{proof}

\subsection{$\LAlb$ and Chow motives} Let $\Chow$ be the category of Chow motives over $k$  with integral coefficients and $\Chow^\eff$ the full subcategory of effective 
Chow motives. We take the covariant convention for composition of correspondences: the functor
\[X\mapsto h(X)\]
from smooth projective varieties to $\Chow^\eff$ is covariant. Recall that Voevodsky \cite[2.1.4]{V2} defined a functor
\begin{equation}\label{eqvf}
\Phi:\Chow^\eff \to \DM_\gm^\eff
\end{equation}
such that $\Phi(h(X))=M(X)$ for any smooth projective $X$: this is a full embedding by the cancellation theorem of \cite{voecan}, see \cite[Cor. 6.7.3]{bv}. Hence a composition
\[\Phi_{(1)}:d_{\le 1}\Chow^\eff\by{\iota}
\Chow^\eff\by{\Phi}\DM_\gm^\eff\by{\LAlb}D^b(\M[1/p])\]
where $d_{\le 1}\Chow^\eff$ is the thick subcategory of
$\Chow^\eff$ (\ie full, stable under direct sumands), generated by motives of curves.

\begin{propose}\label{l18.1} 
The functor $\Phi_{(1)}[1/p]$
is fully faithful. It yields a naturally commutative diagram
\[\begin{CD}
d_{\le 1}\Chow^\eff[1/p]@>\Phi_{(1)}[1/p]>> D^b(\M[1/p])\\
@V{\iota}VV @V{\Tot}VV\\
\Chow^\eff[1/p]@>\alpha^s\circ \Phi>> \DM_{\gm,\et}^\eff
\end{CD}\]
where all functors except $\alpha^s\circ \Phi$ are  full embeddings.
\end{propose}

\begin{proof}  The diagram is naturally commutative by Remark \ref{noleft} 1). Since $\Tot$ is fully faithful (see Definition \ref{tot}), it then suffices to check that $\alpha^s \circ \Phi\circ \iota$ is fully faithful. If $C,C'$ are two smooth projective curves, this functor induces a homomorphism
\[\Hom_{d_{\le 1}\Chow^\eff[1/p]}(h(C),h(C'))\to \Hom_{\DM_{\gm,\et}^\eff}(M_\et(C),M_\et(C')).\]

The left group is $CH^1(C\times C')[1/p]$. The right one can be computed by Poincar\'e duality (see \cite[App. B]{hk}):
\begin{multline*}
\Hom_{\DM_{\gm,\et}^\eff}(M_\et(C),M_\et(C')) \simeq \Hom_{\DM_{\gm,\et}^\eff}(M_\et(C\times C'),\Z_\et(1)[2])\\
=\Pic(C\times C')[1/p]
\end{multline*}
and the map is clearly the identity.
\end{proof}

\begin{remark}\label{r16.3} The functor $\alpha^s\circ \Phi$ becomes a full embedding after $\boxtimes \Q$;  $d_{\le 1}\Chow^\eff\boxtimes\Q$ consists of those objects that may be
written as a direct sum of Chow motives of the following type:
\begin{itemize}
\item an Artin motive;
\item a motive of the form $h_1(A)$ for $A$
an abelian variety;
\item a motive of the form $M\otimes \L$, where $M$ is an Artin motive and $\L$
is the Lefschetz motive.
\end{itemize}

This is clear by the Chow-K\"unneth decomposition for motives of curves and the
fact that any abelian variety is a direct summand of the Jacobian of a curve, up to isogeny.

We could then define $\Phi_{(1)}\boxtimes \Q$ without reference to $\DM_\gm^\eff$ or $\LAlb$: this functor sends
\begin{enumerate}
\item[a)] An Artin motive $M$ to the $1$-motive $[L\to 0]$, where
$L$ is the permutation Galois-module associated to $M$. 
\item[b)] If $A$ is an abelian variety, $h_1(A)$ to $[0\to A][1]$.
\item [c)] A Lefschetz motive $M\otimes\L$ to $[0\to L\otimes \G_m][2]$, where $L$ is as in a).
\end{enumerate}
It is not quite clear how to define $\Phi_{(1)}$ integrally without using $\LAlb$.
\end{remark}

The birational version of $d\1\Chow^\eff$ was described in \cite[Prop. 7.2.4]{ks}.

\section{The adjunction $\LAlb-\Tot$  with rational coefficients}\label{qlalb}

Throughout this section, we use the notations $\otimes\Q$ and $\boxtimes\Q$ from Definition
\ref{1mot}.
\subsection{Rational coefficients revisited}
Let
$\DM_-^\eff(k;\Q)$ and $\DM_{-,\et}^\eff(k;\Q)$ denote the full subcategories of
$\DM_-^\eff(k)$ and $\DM_{-,\et}^\eff(k)$ formed of those complexes whose cohomology sheaves
are uniquely divisible. Recall that by \cite[Prop. 3.3.2]{V}, the change of topology functor
\[\alpha^s:\DM_-^\eff(k)\to \DM_{-,\et}^\eff(k)\]
induces an equivalence of categories
\[\alpha^s_\Q:\DM_-^\eff(k;\Q)\iso \DM_{-,\et}^\eff(k;\Q).\]

Beware that in \loccit, these two categories are respectively denoted by
$\DM_-^\eff(k)\otimes\Q$ and $\DM_{-,\et}^\eff(k)\otimes\Q$, while we use this notation here
according to Definition \ref{1mot}.  With the notation adopted in this book, we have a commutative diagram
\begin{equation}\label{eq6.1}\begin{CD}
\DM_-^\eff(k;\Q)@>>> \DM_-^\eff(k)@>>> \DM_-^\eff(k)\otimes\Q\\
@V{\alpha^s_\Q}VV @V{\alpha^s}VV @V{\alpha^s\otimes\Q}VV\\
\DM_{-,\et}^\eff(k;\Q)@>>> \DM_{-,\et}^\eff(k)@>>> \DM_{-,\et}^\eff(k)\otimes\Q
\end{CD}\end{equation}
whose horizontal compositions are fully faithful but  not essentially surjective. The functor $\alpha^s\otimes\Q$ is not
essentially surjective, nor (a priori) fully faithful. Nevertheless, these two horizontal compositions have a left adjoint $C\mapsto C\otimes\Q$, and


\begin{propose}\label{ptensq} a) The compositions
\begin{gather*}
\DM_\gm^\eff(k)\otimes\Q\to \DM_-^\eff(k)\otimes\Q\longby{\otimes\Q} \DM_-^\eff(k;\Q)\\
\DM_{\gm,\et}^\eff(k)\otimes\Q\to \DM_{-,\et}^\eff(k)\otimes\Q\longby{\otimes\Q}
\DM_{-,\et}^\eff(k;\Q)
\end{gather*}
are fully faithful.\\
b) Via these full embeddings, the functor $\alpha^s$ induces equivalences of categories
\begin{gather*}
\DM_\gm^\eff(k)\boxtimes\Q\iso \DM_{\gm,\et}^\eff(k)\boxtimes\Q\\
d\1\DM_\gm^\eff(k)\boxtimes\Q\iso d\1\DM_{\gm,\et}^\eff(k)\boxtimes\Q.
\end{gather*}
Here $d\1\DM_\gm^\eff(k)$ is the thick subcategory of $\DM_\gm^\eff(k)$ generated by motives of smooth curves.
\end{propose}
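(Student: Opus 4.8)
The plan is to deduce everything from the rational comparison \[\DM_-^\eff(k;\Q)\iso \DM_{-,\et}^\eff(k;\Q)\] of \cite[Prop. 3.3.2]{V}, combined with a careful bookkeeping of the three flavours of ``rationalisation'': the genuinely $\Q$-linear subcategory $\DM_-^\eff(k;\Q)$ of complexes with uniquely divisible cohomology, the formal construction $-\otimes\Q$, and its idempotent completion $-\boxtimes\Q$. First I would prove part a). The composite $\DM_\gm^\eff(k)\otimes\Q\to \DM_-^\eff(k)\otimes\Q\to \DM_-^\eff(k;\Q)$ is the functor $M\mapsto M\otimes\Q$; full faithfulness amounts to showing that for $M,N\in\DM_\gm^\eff(k)$ the natural map \[\Hom_{\DM_-^\eff}(M,N)\otimes\Q\to \Hom_{\DM_-^\eff(k;\Q)}(M\otimes\Q,N\otimes\Q)\] is an isomorphism. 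Since $M$ is a compact (geometric) object, $\Hom(M,N\otimes\Q)=\Hom(M,N)\otimes\Q$, and $N\otimes\Q$ (the object of $\DM_-^\eff(k;\Q)$) represents the target by the left-adjoint property of $C\mapsto C\otimes\Q$ recalled just above; so the only real point is compactness/commutation of $\Hom(M,-)$ with filtered colimits, which holds because $M\in\DM_\gm^\eff$. The étale case is identical once one knows $\DM_{\gm,\et}^\eff$ is generated by compact objects (its definition as a thick subcategory of $\DM_{-,\et}^\eff$ generated by the image of $\DM_\gm^\eff$), using again that $C\mapsto C\otimes\Q$ is the left adjoint to $\DM_{-,\et}^\eff(k;\Q)\hookrightarrow\DM_{-,\et}^\eff(k)\otimes\Q$.

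Next I would turn to part b). Via the full embeddings of a), the rationalised categories $\DM_\gm^\eff(k)\otimes\Q$ and $\DM_{\gm,\et}^\eff(k)\otimes\Q$ are identified with full triangulated subcategories of $\DM_-^\eff(k;\Q)$ and $\DM_{-,\et}^\eff(k;\Q)$ respectively, namely the smallest thick subcategories containing the (rationalised) motives of smooth varieties, resp.\ smooth curves for the $d\1$ version. The equivalence $\alpha^*_\Q:\DM_-^\eff(k;\Q)\iso\DM_{-,\et}^\eff(k;\Q)$ of \cite[Prop. 3.3.2]{V} is an equivalence of triangulated tensor categories sending $M(X)\otimes\Q$ to $M_\et(X)\otimes\Q$; therefore it restricts to an \emph{equivalence} between the thick subcategory of $\DM_-^\eff(k;\Q)$ generated by the $M(X)\otimes\Q$ and the thick subcategory of $\DM_{-,\et}^\eff(k;\Q)$ generated by the $M_\et(X)\otimes\Q$, and similarly for the subcategories generated by curves. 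Composing with the full embeddings of a) (whose essential images are exactly these thick subcategories by construction of $\DM_\gm^\eff\otimes\Q$ and $\DM_{\gm,\et}^\eff\otimes\Q$), I obtain that $\alpha^*$ induces an equivalence \[\DM_\gm^\eff(k)\otimes\Q\iso\DM_{\gm,\et}^\eff(k)\otimes\Q\] and likewise for $d\1$. Passing to idempotent completions is functorial and preserves equivalences, which yields the stated equivalences $\DM_\gm^\eff(k)\boxtimes\Q\iso\DM_{\gm,\et}^\eff(k)\boxtimes\Q$ and $d\1\DM_\gm^\eff(k)\boxtimes\Q\iso d\1\DM_{\gm,\et}^\eff(k)\boxtimes\Q$.

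The one subtlety I want to be careful about — and which I expect is the main obstacle — is the gap between $\otimes\Q$ and $\boxtimes\Q$: the functor $\alpha^*\otimes\Q$ need not be essentially surjective on the nose (the introduction to this section explicitly warns that $\alpha^*\otimes\Q$ is a priori neither essentially surjective nor fully faithful), so one cannot simply say ``$\alpha^*$ is an equivalence rationally''. The resolution is precisely that one must factor through the genuinely $\Q$-linear categories $\DM_{\pm}^\eff(k;\Q)$, where \cite[Prop. 3.3.2]{V} \emph{does} give an equivalence, and only then identify the relevant full subcategories; the idempotent completion in the $\boxtimes\Q$ statement is needed exactly because the essential image of $\otimes\Q$ inside $\DM^\eff(k;\Q)$ may fail to be idempotent-complete even though $\DM^\eff(k;\Q)$ is. A secondary point to check is that the homotopy/weight-$1$ truncations and the functor $d\1$ behave well under $\alpha^*_\Q$, but this follows from Lemma \ref{l2.1} together with the fact that on $\DM^\eff(k;\Q)$ the Nisnevich and étale theories already agree. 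With these identifications in place the proof is essentially formal.
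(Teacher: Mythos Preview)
Your overall architecture matches the paper's, but there are two points where the argument needs tightening.

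For part a), you invoke compactness of geometric motives to get $\Hom(M,N\otimes\Q)=\Hom(M,N)\otimes\Q$. This is correct in spirit, but it is essentially the content of what must be shown rather than an input; the paper establishes it concretely by reducing to $M=M(X)$, using \cite[Prop.~3.2.8]{V} to identify $\Hom(M(X),C[q])$ with Nisnevich hypercohomology $\HH^q_\Nis(X,C)$, reducing via the hypercohomology spectral sequence to $C$ a single sheaf, and then using that Nisnevich cohomology commutes with filtered colimits of sheaves. If you want to package this as ``$M(X)$ is compact'', you should say so and indicate why (the \'etale case in particular deserves a word, given possible cohomological dimension issues).

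For part b) there is a genuine gap. You assert that the essential images of the full embeddings from a) are ``exactly these thick subcategories'' generated by the $M(X)\otimes\Q$. But the essential image of $\DM_\gm^\eff(k)\otimes\Q$ in $\DM_-^\eff(k;\Q)$ consists of objects isomorphic to some $M\otimes\Q$ with $M\in\DM_\gm^\eff$; while this is a triangulated subcategory, there is no reason it should be closed under retracts --- a rational idempotent on $M\otimes\Q$ need not lift to an integral idempotent on any object of $\DM_\gm^\eff$. (The remark following the proposition in the paper says precisely that the authors do not know whether $\DM_\gm^\eff\otimes\Q$ is idempotent complete.) So your claimed equivalence $\DM_\gm^\eff(k)\otimes\Q\iso\DM_{\gm,\et}^\eff(k)\otimes\Q$ at the $\otimes\Q$ level is not justified. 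The paper's route is the correct one: from a) and \cite[Prop.~3.3.2]{V} one obtains only \emph{full faithfulness} of $\alpha^*$ on the $\otimes\Q$ categories; essential surjectivity then holds on the $\boxtimes\Q$ categories \emph{by definition}, since both are idempotent completions and share the same generators. Your own final paragraph correctly diagnoses the $\otimes\Q$ versus $\boxtimes\Q$ issue, but this is exactly where your main argument overreaches --- the fix is simply to identify the thick subcategories with $\boxtimes\Q$ rather than $\otimes\Q$ from the start. The reference to Lemma~\ref{l2.1} and the behaviour of $d\1$ under $\alpha^*_\Q$ is not needed here; that lemma is used elsewhere.
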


\begin{proof} a) In the first case, let $M,N\in\DM_\gm^\eff(k)$: we have to prove that the obvious map
\[\Hom(M,N)\otimes\Q\to \Hom(M\otimes \Q,N\otimes\Q)\]
is an isomorphism. We shall actually prove this isomorphism for any $M\in \DM_\gm^\eff(k)$ and
any $N\in \DM_-^\eff(k)$. By adjunction, the right hand side coincides with $\Hom(M,N\otimes
\Q)$ computed in $\DM_-^\eff(k)$. We may reduce to $M=M(X)$ for $X$ smooth. By \cite[Prop.
3.2.8]{V}, we are left to see that the map
\[H^q_\Nis(X,N)\otimes \Q\to H^q_\Nis(X,N\otimes\Q)\]
is an isomorphism for any $q\in\Z$. By hypercohomology spectral sequences, we reduce to the
case where $N$ is a sheaf concentrated in degree $0$; then the assertion follows from the fact
that Nisnevich cohomology commutes with filtering direct limits of sheaves.

This proof works in the \'etale topology as long as $cd(k)<\infty$: we thank the referee for pointing out this issue and suggesting the following argument in general (\cf \cite[proof of Prop. 2.1 b)]{kmilnor}). Using the exact triangle $N\to N\otimes \Q\to N\otimes \Q/\Z\by{+1}$, we reduce to proving the isomorphism for $N\otimes \Q$ (in which case it is trivial) and for $N\otimes\Q/\Z$. Since $N\in \DM_{\gm,\et}^\eff(k)$, we may reduce to $N=M(X)$ for some smooth $X$. By rigidity \cite{suvo}, the cohomology sheaves of $N\otimes \Q/\Z$ are locally constant and $0$ outside some finite interval, and the previous argument goes through.

b) It is clear that the two compositions commute with $\alpha^s$, which sends
$\DM_\gm^\eff(k)\otimes\Q$ into $\DM_{\gm,\et}^\eff(k)\otimes\Q$. By a) and \cite[Prop.
3.3.2]{V}, this functor is fully faithful, and the induced functor on the $\boxtimes$
categories remains so and is essentially surjective by definition of the two categories. Similarly for the $d\1$ categories.
\end{proof}

\begin{remarks} 1) In fact, 
$d\1\DM_{\gm,\et}^\eff(k)\otimes\Q=d\1\DM_{\gm,\et}^\eff(k)\boxtimes\Q$ thanks to Corollary
\ref{nobox} and Theorem \ref{t1.2.1}. We don't know whether the same is true for the other
categories.\\
2) See \cite[A.2.2]{riou} for a different, more general approach to Proposition \ref{ptensq}.
\end{remarks}

\begin{defn}   \label{d5.1a} With rational coefficients, we identify  $\DM_\gm^\eff(k)\boxtimes\Q$ with $\DM_{\gm,\et}^\eff(k)\boxtimes\Q$ via $\alpha^s$ (using Proposition \ref{ptensq} b)) and define
\begin{align*}D\1&=D\1^\Nis=D\1^\et\\ 
d\1&=D\1^2\\
\LAlb^\Q&=\Tot^{-1}\circ d\1.
\end{align*}
\end{defn}

\begin{remark} \label{formq} This definition is compatible with  the formula $d\1=D\1^\et\circ \alpha^s\circ D\1^\Nis$ of Definition \ref{d5.1}.
\end{remark}

\subsection{The functor $\LAlb^\Q$} We now get the announced adjunction by
taking  \eqref{D2} with rational coefficients, thanks to Corollary 
\ref{nobox} and Proposition \ref{ptensq}.

\begin{thm}\label{ladj} Let $M\in \DM_{\gm}^{\eff}(k)\boxtimes\Q$. Then the map
 $a_M$ from \eqref{amap} induces an isomorphism
\[\Hom (d\1 M, M')\iso \Hom (M, M')\]
for any $M'\in d\1\DM_{\gm}^{\eff}(k)\boxtimes\Q$. Equivalently, \eqref{lalbuniv} is an isomorphism with rational coefficients.
\end{thm}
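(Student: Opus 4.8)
The plan is to establish the adjunction isomorphism $\Hom(d\1 M, M')\iso \Hom(M,M')$ directly from the two defining evaluation/coevaluation maps for the partial internal Hom, exploiting that we are now working rationally where $\alpha^*$ is an equivalence (Proposition \ref{ptensq}) and $D\1^\Nis$ and $D\1^\et$ agree (Definition \ref{d5.1a}). First I would reduce to the case $M = M(X)$ with $X$ smooth, since $\DM_\gm^\eff\boxtimes\Q$ is thickly generated by such motives and both sides of the claimed isomorphism are triangulated in $M$; one also reduces $M'$ to a generator $M(C)$, $C$ a smooth projective curve, using that $d\1\DM_\gm^\eff\boxtimes\Q$ is thickly generated by these. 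The key computation is then the adjunction for $\ihom$: for $M'\in d\1\DM_\gm^\eff\boxtimes\Q$ we have, using $M' \iso M'^{\vee\vee}$ (Proposition \ref{cd2}, Proposition \ref{cd}) and the tensor-hom adjunction in $\DM^\eff$,
\[
\Hom(d\1 M, M') = \Hom(\ihom(\ihom(M,\Z(1)),\Z(1)), M'^{\vee\vee}) \cong \Hom(M'^\vee \otimes \ihom(M,\Z(1)), \Z(1)) \cong \Hom(M'^\vee, \ihom(\ihom(M,\Z(1)),\Z(1))) = \Hom(M'^\vee, d\1 M).
\]

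Next I would show this last group equals $\Hom(M'^\vee, M)$, i.e.\ that the evaluation map $a_M: M \to d\1 M$ becomes an isomorphism after $\Hom(M'^\vee, -)$; since $M'^\vee \in d\1\DM_\gm^\eff\boxtimes\Q$ and $d\1$ restricted to this subcategory \emph{is} the identity up to the canonical map (again Proposition \ref{cd2}, whose rational form follows from Remark \ref{formq}), the map $a_M$ is an isomorphism on the subcategory, hence $\Hom(M'^\vee, a_M)$ is an isomorphism. Composing all these canonical isomorphisms with the tensor-hom adjunction once more, $\Hom(M'^\vee, M) \cong \Hom(M, M'^{\vee\vee}) \cong \Hom(M,M')$, and a diagram chase (tracking the diagonal/evaluation classes as in the proof of Theorem \ref{teq}) identifies the composite with the map induced by $a_M$, giving the theorem. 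The final sentence of the statement, that \eqref{lalbuniv} is an isomorphism rationally, is then immediate by transporting along the equivalence $\Tot$ of Theorem \ref{t1.2.1} together with Corollary \ref{nobox} to pass between $D^b(\M[1/p])\boxtimes\Q$ and $D^b(\M\otimes\Q)$.

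The main obstacle I expect is bookkeeping rather than conceptual: one must check carefully that the chain of adjunction isomorphisms for the \emph{partial} internal Hom $\ihom_\et$ is legitimate, i.e.\ that all the objects appearing in intermediate positions (such as $M'^\vee \otimes \ihom(M,\Z(1))$) lie in the range where $\ihom$ and its adjunction are defined — this is why the reduction to generators $M(X)$, $M(C)$ is done first, so that everything reduces to the concrete computation of Proposition \ref{lcurve} and Lemma \ref{l2.1}. A secondary subtlety is verifying that the abstract isomorphism one produces is genuinely the one \emph{induced by $a_M$}; here the cleanest route is to phrase everything à la Yoneda, exactly as in the proof of Theorem \ref{teq}: send the identity of $d\1 M$ (resp.\ of $M'$) through the chain and check functoriality, so that naturality pins down the map. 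Once the rational identification $D\1 = D\1^\Nis = D\1^\et$ is in force, no new geometric input beyond the curve computation is needed.
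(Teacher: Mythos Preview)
Your overall strategy---write $M' = D\1 N$, use tensor-hom adjunction to move to the ``other side'', then invoke Proposition~\ref{cd2}---is exactly the paper's. But the crucial middle step is not correctly justified, and as written the argument is circular.

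The problem is this sentence: ``since $M'^\vee \in d\1\DM_\gm^\eff\boxtimes\Q$ and $d\1$ restricted to this subcategory is the identity up to the canonical map \dots, the map $a_M$ is an isomorphism on the subcategory, hence $\Hom(M'^\vee, a_M)$ is an isomorphism.'' Proposition~\ref{cd2} says that $a_N$ is an isomorphism \emph{when $N\in d\1$}. Here $M$ is \emph{not} assumed to lie in $d\1$, so knowing that $M'^\vee$ does tells you nothing about the map $a_M:M\to d\1 M$ or about $\Hom(M'^\vee,a_M)$. The same issue reappears at the end: the step ``$\Hom(M'^\vee, M) \cong \Hom(M, M'^{\vee\vee})$'' would follow from applying the contravariant self-equivalence $D\1$ of $d\1$, but again $M\notin d\1$, so this is not available. (Relatedly, your displayed chain replaces $d\1 M$ by $\ihom(M,\Z(1))=D\1 M$ at one point without comment; these are not the same object.)

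What the paper actually does is the following refinement of your idea. Writing $M'=D\1(N)$ with $N\in d\1$ (Proposition~\ref{cd}), tensor-hom adjunction gives a commutative square identifying $a_M^*:\Hom(d\1 M,M')\to\Hom(M,M')$ with $D\1(a_M)_*:\Hom(N,D\1(d\1 M))\to\Hom(N,D\1(M))$. Now both $D\1(d\1 M)$ and $D\1(M)$ lie in $d\1$ (this is where one uses Lemma~\ref{l2.1} / Proposition~\ref{cd2}, second claim), and the point is that $D\1(a_M)$ itself is an isomorphism: the biduality triangle identity $D\1(a_M)\circ a_{D\1 M}=1_{D\1 M}$ (\cite[(3.2.3.9)]{saa}) together with the fact that $a_{D\1 M}$ is an isomorphism---which \emph{is} a legitimate application of Proposition~\ref{cd2}, since $D\1 M\in d\1$---forces $D\1(a_M)$ to be an isomorphism. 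No reduction to generators $M(X)$, $M(C)$ is needed.

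So the missing ingredient is precisely the triangle identity for the biduality map; once you insert it and correct which object Proposition~\ref{cd2} is being applied to, your sketch becomes the paper's proof.
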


\begin{proof}  By
Proposition \ref{cd}, $M'$ can be
written as $N^\vee=D\1 (N)$ for some
$N\in d\1\DM_{\gm}^{\eff}(k)\boxtimes\Q$. We have the following commutative diagram
\[\begin{CD}
\Hom (M, D\1 (N)) &=& \Hom (M\otimes N, \Z (1))  &=&\Hom (N, D\1 (M))\\
@A{a_M^*}AA @A{(a_M\otimes 1_N)^*}AA  @A{D\1(a_M)_*}AA\\
 \Hom (d\1M, D\1 (N)) &=& \Hom (d\1M\otimes N, \Z (1)) &=&\Hom (N,
D\1(d\1 M)).
\end{CD}\]

But $D\1(a_M)\circ a_{D\1 M}=1_{D\1M}$ \cite[p. 56, (3.2.3.9)]{saa}\footnote{Note that this proof carries over in our case.} and $a_{D\1 M}$ is an isomorphism by Proposition
\ref{cd2}, which proves the claim.\end{proof}

\begin{cor}\label{cet} The functor $d\1$ of \eqref{D2} induces a left adjoint to the embedding
$d\1\DM_\gm^\eff(k)\boxtimes\Q\into\DM_\gm^\eff(k)\boxtimes\Q$. The Voevodsky-Orgogozo full
embedding
$\Tot:D^b(\M\otimes\Q)\to \DM_\gm^\eff\boxtimes\Q$ has a left adjoint
$\LAlb^\Q$.\qed
\end{cor}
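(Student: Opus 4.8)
The statement to prove is Corollary~\ref{cet}: that $d\1$ of \eqref{D2} is left adjoint to the inclusion $d\1\DM_\gm^\eff(k)\boxtimes\Q\hookrightarrow \DM_\gm^\eff(k)\boxtimes\Q$, and consequently that $\LAlb^\Q$ is a left adjoint (and left inverse) of the Voevodsky--Orgogozo embedding $\Tot$.

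The plan is to deduce everything formally from Theorem~\ref{ladj}, which is the real content. First I would observe that Theorem~\ref{ladj} says exactly that for $M\in\DM_\gm^\eff(k)\boxtimes\Q$ and $M'\in d\1\DM_\gm^\eff(k)\boxtimes\Q$, the map $a_M^*$ gives an isomorphism $\Hom(d\1 M,M')\iso\Hom(M,M')$, natural in $M$ and $M'$. Since $d\1 M$ lies in $d\1\DM_\gm^\eff(k)\boxtimes\Q$ by Lemma~\ref{l2.1} b) (or its rational avatar via Definition~\ref{d5.1a}), this $\Hom$-isomorphism, together with the natural transformation $a_M\colon M\to d\1 M$, is precisely the unit-of-adjunction characterisation of a left adjoint. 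One should note the unit restricted to the subcategory is an isomorphism: indeed $a_{d\1 M}$ is invertible by Proposition~\ref{cd2}, so $d\1$ is also a left inverse (a Bousfield-type localisation). This disposes of the first assertion.

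For the second assertion, I would transport this adjunction along the equivalence $\Tot\colon D^b(\M\otimes\Q)\iso d\1\DM_{\gm,\et}^\eff(k)\otimes\Q$. Here one uses: Corollary~\ref{nobox} (so that $D^b(\M[1/p])\otimes\Q\simeq D^b(\M\otimes\Q)$, and these categories are idempotent-complete, hence $d_{\le1}\DM_{\gm,\et}^\eff(k)\otimes\Q = d_{\le1}\DM_{\gm,\et}^\eff(k)\boxtimes\Q$ as remarked); Proposition~\ref{ptensq} b) to identify $d\1\DM_\gm^\eff(k)\boxtimes\Q$ with $d\1\DM_{\gm,\et}^\eff(k)\boxtimes\Q$ via $\alpha^*$; and Theorem~\ref{t1.2.1} that $\Tot$ is an equivalence onto the latter. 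Composing the left adjoint $d\1$ of the inclusion with the inverse equivalence $\Tot^{-1}$ yields $\LAlb^\Q=\Tot^{-1}\circ d\1$ as a left adjoint of the composite $\DM_\gm^\eff(k)\boxtimes\Q \xhookrightarrow{} \cdot \xrightarrow{\ \Tot\ } \cdot$, i.e. of $\Tot$ itself (after the identification of the source with $D^b(\M\otimes\Q)$). The compatibility of this $\LAlb^\Q$ with the formula $d\1 = D\1^\et\circ\alpha^*\circ D\1^\Nis$ is Remark~\ref{formq}, and the resulting adjunction isomorphism $\Hom(\LAlb^\Q M,N)\iso\Hom(\alpha^* M,\Tot N)$ is exactly the rational form of \eqref{lalbuniv} asserted already in Theorem~\ref{ladj}.

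There is essentially no obstacle here beyond bookkeeping: the corollary is a formal consequence of Theorem~\ref{ladj}. The one point that deserves a sentence of care is the "left inverse" claim, and more precisely checking that the unit $a_M$ restricted to $d\1\DM_\gm^\eff(k)\boxtimes\Q$ is an isomorphism --- this is where Proposition~\ref{cd2} is invoked, and it is what makes $d\1$ a genuine idempotent reflection onto its image rather than merely a left adjoint with image a subcategory. I would write the proof in two short sentences: the first invoking Theorem~\ref{ladj} for the reflector statement, the second assembling Corollary~\ref{nobox}, Proposition~\ref{ptensq} b) and Theorem~\ref{t1.2.1} to transport it across $\Tot$.
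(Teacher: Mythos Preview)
Your proposal is correct and matches the paper's approach: the corollary carries a \verb|\qed| precisely because it is an immediate formal consequence of Theorem~\ref{ladj}, and you have spelled out exactly the bookkeeping the paper leaves implicit (Proposition~\ref{cd2} for the left-inverse claim, and Corollary~\ref{nobox}, Proposition~\ref{ptensq} b), Theorem~\ref{t1.2.1} to transport across $\Tot$).
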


\section{A tensor structure on $D^b(\M\otimes\Q$)}\label{tens}

In this section, coefficients are tensored with $\Q$ and we use the functor $\LAlb^\Q$ of
Corollary \ref{cet}.

\subsection{Tensor structure}

\begin{lemma}\label{biextrigid} Let $G_1,G_2$ be two semi-abelian varieties. Then, we have in
$\DM_{\gm,\et}^\eff\boxtimes \Q$:
\[\cH^q(D\1(\uG_1[-1]\otimes \uG_2[-1]))=
\begin{cases}
\Biext(G_1,G_2;\G_m)\otimes\Q&\text{if $q=0$}\\
0&\text{else.}
\end{cases}
\]
\end{lemma}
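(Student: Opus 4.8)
\textbf{Plan for Lemma \ref{biextrigid}.}
The plan is to compute $D\1(\uG_1[-1]\otimes\uG_2[-1]) = \ihom_\et(\uG_1[-1]\otimes\uG_2[-1],\Z(1))$ by hand, reducing to the already-established ``representability of biextensions'' result (Proposition \ref{biext}) together with the vanishing $\EExt^i(M_1\oo^L M_2,\G_m)=0$ for $i\le 0$ (Lemma \ref{ld0}). First I would unwind the definitions: since $\Z(1)\simeq\G_m[-1]$ (Corollary \ref{cD.1}) and $\uG_j[-1]$ is the motive $\Tot$ of the $1$-motive $[0\to G_j]$, we have $\uG_1[-1]\otimes\uG_2[-1]\simeq \Tot([0\to G_1])\oo^L\Tot([0\to G_2])$. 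Using Proposition \ref{ext=ext} with $C_1=[0\to G_1]$, $C_2=[0\to G_2]$, $C_3=\Z(1)$, the groups $\Hom_{\DM_{-,\et}^\eff}(\uG_1[-1]\otimes\uG_2[-1],\Z(1)[q])$ are identified with $\Hom_{D^-(\Shv_\et)}([0\to G_1]\oo^L[0\to G_2],\G_m[-1][q]) = \EExt^{q-1}_{Sm(k)_\et}(G_1\otimes^L G_2,\G_m)$. I would then compute $\cH^q(D\1(\uG_1[-1]\otimes\uG_2[-1]))$ as a sheaf by doing the same over every smooth $X$ (or, via Gersten's principle, Proposition \ref{pgersten}, at separably closed function fields), so that the internal Hom sheaf's sections are computed by these Ext groups applied after base change.

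The key input is then: $\EExt^i(G_1\oo^L G_2,\G_m)=0$ for $i\le 0$ by Lemma \ref{ld0} (which gives the vanishing for $q\le 1$ after the degree shift, i.e.\ $\cH^q=0$ for $q<0$), and for $i=1$, i.e.\ $q=0$, the formula \eqref{biextform} identifies $\EExt^1(G_1\oo^L G_2,\G_m)$ with $\Biext(G_1,G_2;\G_m)$. So $\cH^0(D\1(\uG_1[-1]\otimes\uG_2[-1]))$ is, as an \'etale sheaf, $X\mapsto\Biext(G_{1,X},G_{2,X};\G_m)$; by Proposition \ref{biext} this is represented by the Cartier dual of $[0\to G_2]$ paired against $[0\to G_1]$ — concretely a $1$-motivic sheaf, hence lives in $\DM_{\gm,\et}^\eff\boxtimes\Q$ as required (indeed $D\1(\uG_1[-1]\otimes\uG_2[-1])$ is concentrated in nonnegative degrees and $\Biext(G_1,G_2;\G_m)$ is represented by a semi-abelian-with-torsion object). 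The remaining point is the vanishing $\cH^q=0$ for $q\ge 1$: here I would use that $D\1$ restricted to $d\1\DM_{\gm,\et}^\eff$ lands in $d\1\DM_{\gm,\et}^\eff$ (Proposition \ref{cd}), and that by Corollary \ref{c3.3.2} / Theorem \ref{t3.2.3} every object of $d\1\DM_{\gm,\et}^\eff$ has cohomology (for the homotopy $t$-structure) only in a bounded range of $1$-motivic sheaves; combined with the degree $\le 1$ cohomological dimension of $\M\otimes\Q$ (Proposition \ref{iso1}) and the explicit computation of the lowest nonvanishing cohomology as $\Biext$, the higher cohomology must vanish. Alternatively — and this is cleaner — one identifies $\uG_j[-1]$ rationally with $\LAlb$ applied appropriately, uses the rational adjunction $\LAlb^\Q\dashv\Tot$ (Corollary \ref{cet}) to reduce $D\1$ of a tensor product of two ``pure weight $-1$'' $1$-motives to the tensor product in $D^b(\M\otimes\Q)$, which has cohomological dimension $\le 1$, forcing concentration in degrees $0$ and $1$; then Lemma \ref{ld0} kills degree $1$ on the nose.

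The main obstacle I expect is the vanishing $\cH^q(D\1(\uG_1[-1]\otimes\uG_2[-1]))=0$ for $q\ge 1$ (equivalently $\EExt^i(G_1\oo^L G_2,\G_m)=0$ for $i\ge 2$). Lemma \ref{ld0} only handles $i\le 0$; for $i\ge 2$ one cannot invoke a cohomological-dimension bound on $\DM_{-,\et}^\eff$ directly, so the argument has to go through $d\1\DM_{\gm,\et}^\eff$ and use the $t$-structure machinery of \S\ref{homotopy} (Theorem \ref{t3.2.3}, Corollary \ref{c3.3.2}) together with Proposition \ref{ptorsion} (the higher Ext's into discrete sheaves are torsion, hence die after $\otimes\Q$). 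Concretely: $D\1(\uG_1[-1]\otimes\uG_2[-1])$ lies in $d\1\DM_{\gm,\et}^\eff\boxtimes\Q$, so its homotopy-$t$-structure cohomology sheaves are $1$-motivic; filtering $G_1,G_2$ by their toric/abelian parts reduces the higher-degree vanishing to the cases where $G_1,G_2$ are tori or abelian varieties, where it follows from the classical structure of $\Ext(A,\G_m)$ and $\Ext(T,\G_m)$ (Weil--Barsotti, and $\Ext(-,\G_m)$ vanishing on discrete sheaves, as used in Lemma \ref{dualt}), plus Breen's resolution (Proposition \ref{ptorsion}) to kill the torsion discrepancy rationally. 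I would carry the reduction out first for $(A_1,A_2)$, then for the mixed cases, assembling via the exact triangles coming from the weight filtrations $W_\bullet G_j$.
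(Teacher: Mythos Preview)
Your treatment of $q\le 0$ is correct and matches the paper: Gersten's principle reduces to sections over (separably closed) fields, Proposition~\ref{ext=ext} identifies these with $\EExt^{q+1}(G_1\oo^L G_2,\G_m)$, Lemma~\ref{ld0} gives the vanishing for $q<0$, and \eqref{biextform} together with Proposition~\ref{biext} identifies $q=0$ with $\Biext(G_1,G_2;\G_m)$ and shows it is rigid.

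The gap is in your $q>0$ argument. None of the three routes you sketch actually closes. The claim that $D\1(\uG_1[-1]\otimes\uG_2[-1])$ lies in $d\1\DM_{\gm,\et}^\eff\boxtimes\Q$ and then invoking the cohomological dimension of $\M\otimes\Q$ is close to circular: the tensor structure $\otimes_1$ on $D^b(\M\otimes\Q)$ (Proposition~\ref{tens1}) is \emph{defined using this very lemma}, so you cannot appeal to it here; and in any case the bound on $\Ext^i$ in $\M\otimes\Q$ does not directly bound the range of nonzero homotopy cohomology sheaves. Proposition~\ref{ptorsion} concerns $\Ext^i_\cA(\uG,L)$ with $L$ discrete, whereas your target is $\G_m$, so it does not apply. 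Finally, filtering the $G_j$ by their toric and abelian parts does not dissolve the difficulty: Weil--Barsotti computes $\Ext^1(A,\G_m)$, not $\EExt^i(A_1\oo^L A_2,\G_m)$ for $i\ge 2$, and the tensor product of two abelian varieties as sheaves is not something those classical results address.

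The paper's device is the tensor--Hom adjunction combined with Theorem~\ref{teq}: since $D\1(\uG_2[-1])\simeq\Tot([0\to G_2]^*)$, one has
\[
D\1(\uG_1[-1]\otimes\uG_2[-1])\;\simeq\;\ihom(\uG_1[-1],\Tot([0\to G_2]^*)).
\]
Writing $[0\to G_2]^*=[L_2\to A_2]$ with $L_2$ a lattice and $A_2$ an abelian variety, the vanishing for $q>0$ reduces to $\Hom_{\DM\otimes\Q}(\uG_1,L_2[q+1])=0$ and $\Hom_{\DM\otimes\Q}(\uG_1,\underline{A}_2[q])=0$. Now one reduces $G_1$ to the cases $\G_m$ (a summand of $M(\P^1)[-1]$) and an abelian variety (a summand of $M(C)$ for a smooth projective curve $C$), and the required vanishings follow because $L_2$ and $\underline{A}_2$ are flasque Zariski sheaves and $\dim C\le 1$. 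This linearisation---turning an $\EExt$ of a tensor product into an $\ihom$ into a $1$-motive---is the missing idea in your plan.
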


\begin{proof} By Gersten's
principle (Proposition \ref{pgersten}), it is enough to show that the isomorphisms are valid
over function fields
$K$ of smooth $k$-varieties and that $\sH^0$ comes from the small \'etale site of $\Spec k$.
Since we work up to torsion, we may even replace $K$ by its perfect closure. Thus, without loss
of generality, we may assume $K=k$ and we have to show the lemma for sections over $k$.

For $q\le  0$, we use Proposition \ref{ext=ext}: for $q<0$ this follows from Lemma \ref{ld0}, while for $q=0$ it follows from the
isomorphisms \eqref{biextform} and \eqref{eqbiext} (see Proposition \ref{biext}), which show that $\Biext(G_1,G_2;\G_m)$ is rigid. 

For $q>0$, we use the formula
\[D\1(\uG_1[-1]\otimes\uG_2[-1])\simeq \ihom(\uG_1[-1],\Tot([0\to G_2]^*))\]
coming from Theorem \ref{teq}. Writing $[0\to G_2]^*=[L_2\to A_2]$ with $L_2$ a lattice and $A_2$ an abelian variety, we are left to show that
\begin{gather*}
\Hom_{\DM_{\gm,\et}^\eff\boxtimes \Q}(\uG_1,L_2[q+1])=0\text{ for } q>0\\
\Hom_{\DM_{\gm,\et}^\eff\boxtimes \Q}(\uG_1,\underline{A}_2[q])=0\text{ for } q> 0.
\end{gather*}

For this, we may reduce to the case where $G_1$
is either an abelian variety or $\G_m$.  If $G_1=\G_m$, $\uG_1$ is a direct summand of $M(\P^1)[-1]$ and the result follows. If $G_1$ is an abelian variety, it is isogenous to a direct summand of
$J(C)$ for
$C$ a smooth projective geometrically irreducible curve. Then $\uG_1$ is a direct
summand of
$M(C)$, and the result follows again since $L_2$ and $\underline{A}_2$ define locally constant
(flasque) sheaves for the Zariski topology.
\end{proof}

\begin{propose}\label{tens1} a) The functor $\LAlb^\Q:\DM_\gm^\eff\boxtimes\Q\to
D^b(\M\otimes\Q)$ is a localisation functor; it carries the tensor
structure $\otimes$ of $\DM_\gm^\eff\boxtimes\Q$ to a tensor structure $\otimes_1$\index{$\otimes_1$} on
$D^b(\M\otimes\Q)$, which is left adjoint to the internal Hom $\ihom_1$ of \S \ref{s3.14}. \\
b) For $(M,N)\in \DM_\gm^\eff\boxtimes\Q\times D^b(\M\otimes\Q)$, we have
\[\LAlb^\Q(M\otimes \Tot(N))\simeq \LAlb^\Q(M)\otimes_1 N.\]
c) We have
\[
[\Z\to 0]\otimes_1 C = C\]
for any  $C\in D^b(\M\otimes\Q)$;
\[N_1\otimes_1 N_2 =[L\to G]\] 
for two Deligne $1$-motives  $N_1=[L_1\to G_1]$, $N_2=[L_2\to G_2]$, where
\[L = L_1\otimes L_2;\]
there is an extension
\[0\to \Biext(G_1,G_2;\G_m)^*\to G\to L_1\otimes G_2\oplus L_2\otimes G_1\to 0.
\]
d) The tensor product $\otimes_1$ is exact with respect
to the motivic $t$-structure and respects the weight filtration. Moreover, it is right exact
with respect to the homotopy $t$-structure.\\
e) For two $1$-motives $N_1,N_2$ and a semi-abelian variety $G$, we have
\[\Hom(N_1\otimes_1 N_2,[0\to G])\simeq \Biext(N_1,N_2;G)\otimes\Q.\]
\end{propose}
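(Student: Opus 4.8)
The plan is to reduce the identity to the fundamental biextension formula \eqref{biextform}, passing through the adjunction between $\LAlb^\Q$ and $\Tot$ and through Proposition \ref{ext=ext}.

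First I would rewrite the left-hand side. Since $\LAlb^\Q$ is a left inverse of $\Tot$ (Corollary \ref{cet}), we have $N_1\simeq\LAlb^\Q(\Tot N_1)$, so Proposition \ref{tens1} b) gives
\[N_1\otimes_1 N_2\simeq \LAlb^\Q(\Tot N_1)\otimes_1 N_2\simeq \LAlb^\Q(\Tot N_1\otimes\Tot N_2).\]
The adjunction $\LAlb^\Q\dashv\Tot$ of Corollary \ref{cet} then yields a natural isomorphism
\[\Hom_{D^b(\M\otimes\Q)}(N_1\otimes_1 N_2,[0\to G])\;\simeq\;\Hom_{\DM_{\gm,\et}^\eff\boxtimes\Q}\big(\Tot N_1\otimes\Tot N_2,\Tot([0\to G])\big).\]
As $[0\to G]$ has its term $G$ placed in degree $1$, one has $\Tot([0\to G])=\uG[-1]$ (\cf Remark \ref{r2.4} 2)); moreover $\uG$ is a strictly homotopy invariant \'etale sheaf with transfers by Lemmas \ref{l1.3} and \ref{lD.1.3}, hence an object of $\DM_{-,\et}^\eff$.

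Next I would apply Proposition \ref{ext=ext} with $C_1=N_1$, $C_2=N_2$, $C_3=\uG$ and $q=-1$, having first used Corollary \ref{nobox} and Proposition \ref{ptensq} to replace the $\boxtimes\Q$-$\Hom$ by $\Hom\otimes\Q$ taken in the integral categories. This identifies the group above with $\Hom_{D^-(\Shv_\et(Sm(k)))}(N_1\oo^L N_2,\uG[-1])\otimes\Q$. To conclude I would invoke \eqref{biextform}. The point to watch is that in \S\ref{sbiext} a $1$-motive is placed in degrees $-1$ and $0$, so the main-text $1$-motive $N_i$ is read there as $N_i[1]$, whence $N_1[1]\oo^L N_2[1]=(N_1\oo^L N_2)[2]$; since $G_1,G_2$ are smooth, biextensions may be computed on $Sm(k)_\et$, so \eqref{biextform} becomes
\[\Biext(N_1,N_2;G)=\EExt^1_{Sm(k)_\et}\big((N_1\oo^L N_2)[2],\uG\big)=\Hom_{D^-(\Shv_\et)}\big(N_1\oo^L N_2,\uG[-1]\big),\]
which is precisely the group produced in the previous step. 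Tensoring with $\Q$ and assembling the isomorphisms gives the claim.

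The main obstacle is essentially bookkeeping: keeping track of the degree shift between the two placements of $1$-motives (this is why $\uG[-1]$, and not $\uG[1]$, occurs in the $\Hom$), and checking that the passages between $\Z[1/p]$-, $\otimes\Q$- and $\boxtimes\Q$-coefficients are harmless. No ingredient beyond those already established — Proposition \ref{tens1} b), Proposition \ref{ext=ext}, and the formula \eqref{biextform} — is required.
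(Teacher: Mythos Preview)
Your argument is correct and is essentially the paper's own proof, spelled out in more detail: the paper compresses the whole thing into the single chain
\[\Hom(N_1\otimes_1 N_2,[0\to G])=\Hom(\Tot N_1\otimes\Tot N_2,G[-1])=\Biext(N_1,N_2;G)\otimes\Q\]
invoking Proposition \ref{ext=ext} and \eqref{biextform}, while you make the adjunction step and the degree-shift bookkeeping explicit. The only cosmetic difference is that the paper passes through the full faithfulness of $\Tot$ rather than through part b), but these are equivalent here.
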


\begin{proof} a) The first statement is clear since $\LAlb^\Q$ is left adjoint
to the fully faithful functor $\Tot$. For the second, it suffices to see
that if $\LAlb^\Q(M)=0$ then $\LAlb^\Q(M\otimes N)=0$ for any $N\in
\DM_\gm^\eff\otimes\Q$. We may check this after applying $\Tot$. Note
that, by Proposition \ref{cd} and Remark \ref{formq} 3), $\Tot\LAlb^\Q(M)=d\1(M)=0$ is equivalent
to
$D\1(M)=0$.  We have:
\[
D\1(M\otimes N)=\ihom(M\otimes N,\Z(1))
=\ihom(N,\ihom(M,\Z(1)))=0.
\]

The last statement follows by adjunction from the fact that $\Tot$ commutes with internal Homs.

b) Let $M' = fibre(\Tot\LAlb^\Q(M)\to M)$: then $\LAlb^\Q(M')=0$. By definition of $\otimes_1$
we then have
\begin{multline*}
\LAlb^\Q(M)\otimes_1 N =\LAlb^\Q(\Tot\LAlb^\Q(M)\otimes \Tot(N))\\
\iso \LAlb^\Q(M\otimes\Tot(N)).
\end{multline*}

c) The first formula is obvious. For the second, we have an exact triangle
\begin{multline*}
G_1[-1]\otimes G_2[-1]\to \Tot(N_1)\otimes \Tot(N_2)\\
\to \Tot([L_1\otimes L_2\to
L_1\otimes G_2\oplus L_2\otimes G_1])\by{+1}
\end{multline*}
hence an exact triangle
\begin{multline*}
\ihom(\Tot([L_1\otimes L_2\to
L_1\otimes G_2\oplus L_2\otimes G_1],\Z(1))\\
\to \ihom(\Tot(N_1)\otimes
\Tot(N_2),\Z(1))
\to \ihom(G_1[-1]\otimes G_2[-1],\G_m) \by{+1}
\end{multline*}

By Lemma \ref{biextrigid}, the last term is $\Biext(G_1,G_2;\G_m)$, hence the claim.

d) Exactness and compatibility with weights follow from the second formula of c); right
exactness for the homotopy $t$-structure holds because it holds on $\DM_-^\eff\otimes\Q$.

e) We have:
\begin{multline*}
\Hom_{\M\otimes\Q}(N_1\otimes_1 N_2,[0\to G]) =
\Hom_{d\1\DM\otimes\Q}(\Tot(N_1\otimes_1 N_2),G[-1])\\
=\Hom_{\DM\otimes\Q}(\Tot(N_1)\otimes
\Tot(N_2),G[-1])=\Biext(N_1,N_2;G)\otimes\Q
\end{multline*}
by Proposition \ref{ext=ext} and formula \eqref{biextform}.
\end{proof}

\begin{remarks} 1) By the same argument as in Remark \ref{noleft}, one can see that $\otimes_1$ is not defined integrally on $(\Z(1),\Z(1))$. Details are left to the reader.

2) In \cite{bermaz}, Cristiana Bertolin and Carlo Mazza generalise Proposition \ref{tens1} e) to an isomorphism
\[\Hom_{\M\otimes\Q}(N_1\otimes_1 N_2,N) = \Biext(N_1,N_2;N)\otimes\Q\]
for three $1$-motives $N_1,N_2,N$, where the right hand side is the biextension group
introduced by Bertolin \cite{bert}. This puts in perspective her desire to interpret these
groups as Hom groups in the (future) tannakian category generated by $1$-motives.

More precisely, one expects that $\DM_\gm\boxtimes\Q$ carries a motivic $t$-struct\-ure whose
heart $\MM$ would be the searched-for abelian category of mixed motives. Then $\M\otimes\Q$
would be a full subcategory of $\MM$ and we might consider the thick tensor subcategory
$\M^\otimes\subseteq \MM$ generated  by $\M\otimes\Q$ and the Tate motive (inverse to the
Lefschetz motive): this is the putative category Bertolin has in mind.

Since the existence of the abelian category of mixed Tate motives (to be contained in
$\M^\otimes$!) depends on the truth of the Beilinson-Soul\'e conjecture, this basic obstruction
appears here too.

Extrapolating from Corollary \ref{cet} and Proposition \ref{tens1}, it seems that the embedding
$\M\otimes\Q\into \MM^\eff$ (where $\MM^\eff$ is to be the intersection of $\MM$ with
$\DM_\gm^\eff\boxtimes\Q$) is destined to have a left adjoint
$\Alb^\Q=H_0\circ \LAlb^\Q_{|\MM^\eff}$, which would carry the tensor product of $\MM^\eff$ to
$\otimes_1$. Restricting
$\Alb^\Q$ to $\M^\otimes\cap \MM^\eff$ would provide the link between Bertolin's ideas and
Proposition
\ref{tens1} e).
\end{remarks}

\subsection{A formula for the internal Hom}

\begin{propose} \label{ihom1} We have
\[\ihom_1(C_1,C_2) = (C_1\otimes_1 C_2^*)^*\]
for $C_1,C_2\in D^b(\M\otimes\Q)$.
\end{propose}

\begin{proof} In view of Theorem
\ref{teq}, we are left to show that
$\ihom(M_1,M_2)\allowbreak\simeq (M_1\otimes_1,M_2^\vee)^\vee$ for $M_1,M_2\in d\1\DM_\gm^\eff\otimes\Q$. By duality, we may replace $M_2$ by $M_2^\vee$.
Then:
\begin{multline*}\ihom(M_1,M_2^\vee)=\ihom(M_1,\ihom(M_2,\Z(1))\\
\simeq \ihom(M_1\otimes M_2,\Z(1))\simeq \ihom(M_1\otimes_1 M_2,\Z(1)) = (M_1\otimes_1 M_2)^\vee
\end{multline*}
where the second isomorphism follows from Proposition \ref{tens1} b).
\end{proof}

\section{The Albanese complexes and their basic properties}\label{6}

We now introduce homological and Borel-Moore Albanese complexes of an
algebraic variety. We also consider a slightly more sophisticated cohomological
Albanese complex $\LAlb^* (X)$ which is only contravariantly
functorial for maps between schemes of the same dimension. All
these complexes coincide for smooth proper schemes.

\subsection{Motives of singular schemes}\label{s8.1}

Let $X$ be a smooth $k$-variety. The assignment $X\mapsto M(X)$ defines a covariant functor from $\Sm(k)$ to $\DM_\gm^\eff$.
The image of
$M(X)$ via the full embedding $\DM_\gm^\eff\to
\DM_-^\eff$ is given by the Suslin complex $C_*$ of the
representable Nisnevich sheaf with transfers $L(X)$ associated to $X$.

For $X\in \Sch(k)$, the formula
$M(X)=C_*(L(X))$ still defines an object of $\DM_-^\eff$. Similarly, we have the motive with
compact support of $X$, denoted by $M^c (X)\in\DM_-^{\eff}$ (\ibid): it is the Suslin complex of the presheaf with transfers $L^c (X)$ given by quasi-finite correspondences. Since finite implies quasi-finite we have a canonical map $M (X) \to M^c (X)$ which is an isomorphism if $X$ is proper over
$k$. $M$ is covariant for any morphism in $\Sch(k)$ while $M^c$ is covariant for proper morphisms between $k$-schemes of finite type.

If $\car k=0$, $M(X)$ enjoys cohomological properties like Mayer-Vietoris and blow-up exact triangles, while $M^c(X)$ enjoys homological properties like localisation exact triangles, by \cite[\S 4.1]{V}. This implies that $M(X), M^c(X)\in \DM_\gm^\eff$ for any $X\in \Sch(k)$.

Voevodsky's arguments in \loccit rely on (the strong form of) Hironaka's resolution of singularities. The assignments $X\mapsto M(X), M^c(X)$ can be extended to characteristic $p$ (as functors with values in $\DM_\gm^\eff[1/p]$) in two fashions:
\begin{itemize}
\item Using the 6 operations of Voevodsky-Ayoub \cite[\S 6.7]{fullfaith}.
\item By Kelly's thesis \cite{kelly}, which extends Voevodsky's arguments to characteristic $p$.
\end{itemize}

Both approaches rely on Gabber's refinement of de Jong's theorem. Either one is sufficient for the present section. On the other hand, some of the arguments of Sections \ref{comps} and \ref{12}, in characteristic $0$, use Hironaka's resolution of singularities. They could probably be extended to characteristic $p$ by using the de Jong/Gabber alteration theorem, but modifying the proofs would be tedious and we prefer to leave it to the interested reader. So we shall deal with singular schemes only in characteristic $0$.\\

\noindent{\bf Convention.} In the rest of this book, ``scheme" means separated $k$-scheme
of finite type if $\car k=0$ and smooth (separated) $k$-scheme of finite type if $\car k>0$.

\subsection{The homological Albanese complex}
\begin{defn}\label{LAlb-} We define the {\it homological Albanese
complex} of $X$ by
$$\LAlb(X)\df \LAlb (M (X)).$$
Define, for $i\in\Z$
$$\LA{i}(X)\df {}_tH_i(\LAlb(X))$$ the 1-motives with cotorsion (see Definition \ref{1cot} and
Notation \ref{not}) determined by the homology of the Albanese complex.
\end{defn}

\begin{remark} We could have chosen to define the homology with respect to the dual
$t$-structure, corresponding to $1$-motives with torsion according to Theorem~\ref{tstr}.
The reason for our choice appears in Sections \ref{comp} -- \ref{12}, where it works well especially for $\LA{1}$ which turns out to yield a Deligne (\ie cotorsion-free) $1$-motive in all cases considered. This is not always true with the other $t$-structure, already for curves. The interested reader is invited to investigate the finite groups appearing in
${}_tH_i(\LAlb(X))$
and ${}^tH_i(\LAlb(X))$, taking care of both $t$-structures.
Of course there is no difference with rational coefficients, as these two $t$-structures coincide 
after tensoring with $\Q$.\end{remark}

The functor $\LAlb$ has the following properties, easily
deduced from \cite[2.2]{V}:

\subsubsection{Homotopy invariance} For any scheme $X$
the map
$$\LAlb(X\times \Aff^1)\to \LAlb(X)$$ is an isomorphism, thus
$$\LA{i}(X\times \Aff^1)\iso \LA{i}(X)$$
for all $i\in\Z$.

\subsubsection{Mayer-Vietoris} For a scheme $X$ and an open
covering
$X = U\cup V$ there is a distinguished triangle
\[\begin{CD}
\LAlb (U\cap V)  @>>> \LAlb(U)\oplus \LAlb(V)\\
{\scriptstyle +1}\nwarrow &&\swarrow\\
&\LAlb(X)&
\end{CD}\]
and therefore a long exact sequence of 1-motives with cotorsion
$$\cdots\to \LA{i}(U\cap V) \to\LA{i}(U)\oplus \LA{i}(V)\to
\LA{i}(X)\to \cdots$$

\subsubsection{Tate twists}\label{tatetwist} If $X$ is a
smooth scheme and
$n>0$, then
\[\Tot\LAlb(M(X)(n))=
\begin{cases}
0&\text{if $n>1$}\\
M(\pi_0(X))(1)&\text{if $n=1$}
\end{cases}
\]
where $\pi_0(X)$ is the scheme of constants of $X$, see Definition \ref{dpi0}. Indeed
\begin{multline*}
\Tot\LAlb(M(X)(n))=\ihom_\et\alpha^s(\ihom_\Nis(M(X)(n),\Z(1)),\Z(1))\\=
\ihom_\et\alpha^s(\ihom_\Nis(M(X)(n-1),\Z),\Z(1))
\end{multline*}
by the cancellation theorem \cite{voecan}. Now
\[\ihom_\Nis(M(X)(n-1),\Z)=
\begin{cases}
0&\text{if $n>1$}\\
\ihom_\Nis(M(\pi_0(X)),\Z)&\text{if $n=1$}.
\end{cases}
\]

The last formula should follow from \cite[Lemma 2.1 a)]{kmot} but
the formulation there is wrong; however, the formula
immediately follows from the argument in the proof of \loccit,
\ie considering the Zariski cohomology of $X$ with coefficients in
the flasque sheaf $\Z$.

This gives
\begin{equation}\label{eq8.1}
\LA{i}(M(X)(1))=
\begin{cases}
[0\to \Z[\pi_0(X)]\otimes \G_m]&\text{if $i=0$}\\
0&\text{else.}
\end{cases}
\end{equation}

More generally,  using the functor $L\pi_0$ of \S \ref{s5.4}:

\begin{propose}\label{ptate} For any $M\in \DM_{\gm}^{\eff}$, 
\begin{thlist}
\item There is a natural isomorphism 
\[\LAlb(M(1))\simeq L\pi_0(M)(1),\]
where on the right hand side, $C\mapsto C(1)$ denotes the functor 
\[
D^b(\cM_0[1/p])\to D^b(\cM_1[1/p])
\]
induced by $L \mapsto [0\to L\otimes \G_m]$.
\item $L\pi_0(M(1))=0$.
\item $\LAlb(M(n))=0$ for $n\ge 2$.
\end{thlist}
\end{propose}

\begin{proof} Let $M\in \DM_\gm^\eff$. The cancellation theorem yields an isomorphism
\[D_{\le 0}^\Nis(M)\iso D_{\le 1}^\Nis(M(1)).\]

Let $N\in \DM_{\gm,\et}^\eff$. Tensoring the evaluation map
\[\ihom_\et(N,\Z)\otimes N\to \Z\]
with $\Z(1)$ and using adjunction, we get a morphism
\[D_{\le 0}^\et(N)(1)=  \ihom_\et(N,\Z)\otimes\Z(1)\to \ihom_\et(N,\Z(1))=D_{\le 1}^\et(N).\]
 
 For $N\in d_{\le 0}\DM_{\gm,\et}^\eff$, this map is an isomorphism by reduction to $N=M(\Spec E)$ for a finite extension $E/k$. Applying this to $N=\alpha^s D_{\le 0}^\Nis(M)$, we get a composite isomorphism
 \[(d_{\le 0} M)(1)\iso D_{\le 1}^\et \alpha^s D_{\le 0}^\Nis(M)\iso d_{\le 1}(M(1))\]
 from which (i)  follows.

For (ii), we reduce to $M=M(X)$ for $X\in\Sm(k)$; then this follows from Proposition \ref{p5.4} applied to $X$ and $X\times \P^1$. Finally, (iii) follows from (i) and (ii).
\end{proof}

\subsubsection{$\LAlb$ with supports} \label{supports1} Let
$X\in \Sm(k)$, $U$ an open subset of $X$ and $Z=X-U$ (reduced
structure). In $\DM_\gm^\eff$, we have the \emph{motive with supports} $M^Z(X)$
fitting in an exact triangle
\[M(U)\to M(X)\to M^Z(X)\by{+1}\]
hence the homological complex with supports
\[\LAlb^Z(X):=\LAlb(M^Z(X))\] fitting in an exact triangle
\[\LAlb(U)\to \LAlb(X)\to \LAlb^Z(X)\by{+1}.\]

\subsubsection{Gysin}\label{Gysin} Keep the notation of \S\ref{supports1}. When $Z$ is smooth,  purely of codimension $c$, we have the Gysin isomorphism \cite[Prop. 3.5.4]{V}
\begin{equation}\label{eq:gysin}
M^Z(X)\simeq M(Z)(c)[2c]
\end{equation}

In general this implies:

\begin{lemma} \label{lsupports1} a) Let $c=\codim_X(Z)$. Then there is an effective motive $M\in \DM_\gm^\eff$ such that $M^Z(X)\simeq M(c)$.\\
b) For $c>1$, we have an isomorphism $\LAlb(U)\iso \LAlb(X)$, while for $c=1$ we have an exact triangle
\[\LAlb(U)\to \LAlb(X)\to [0\to\Z[\pi_0(Z-Z')]\otimes \G_m][2]\by{+1}\]
where $Z'$ is the union of the singular locus  $Z_\sing$ of $Z$ and its irreducible components of codimension $>1$. Hence a long exact sequence
\begin{multline*}
0\to\LA{2}(U)\to \LA{2}(X)\to [0\to
\Z[\pi_0(Z-Z')]\otimes\G_m]\\\to
\LA{1}(U)\to \LA{1}(X)\to 0
\end{multline*}
and an isomorphism $\LA{0}(U)\to \LA{0}(X)$.
\end{lemma}

\begin{proof} a) If $Z$ is smooth, this follows from \eqref{eq:gysin}. In general, it follows by Noetherian induction from the exact triangle
\begin{equation}\label{eq8.2}
 M^{Z-Z_\sing}(X-Z_\sing) \to M^Z(X) \to M^{Z_\sing}(X)\by{+1} 
 \end{equation}
 and the cancellation theorem (note that $Z_\sing\ne Z$ because $k$ is perfect).

b) follows from a) and Proposition \ref{ptate} (iii) for $c>1$. For $c=1$, let us apply $\LAlb$ to \eqref{eq8.2}, where we replace $Z_\sing$ by $Z'$. By a) and Proposition \ref{ptate}  (iii), we have $\LAlb(M^{Z'}(X))=0$, hence from Proposition \ref{ptate}  (i) and \eqref{eq:gysin}  we get an isomorphism
\[L\pi_0 M(Z-Z')(1)[2]\iso \LAlb(M^Z(X)). \]

The conclusion now follows from Proposition \ref{p5.4}.
\end{proof}

\subsubsection{Blow ups}\label{blowups} If $X$ is a scheme and $Z\subseteq X$ is a
closed subscheme, denote by $p :\tilde X\to X$ a proper surjective morphism such that
$p^{-1}(X-Z)\to X-Z$ is an isomorphism, \eg the blow up of $X$ at
$Z$. Then there is a distinguished triangle
\[\xymatrix{
\LAlb(\tilde Z) \ar[rr]&& \LAlb(\tilde X)\oplus \LAlb(Z)\ar[dl]\\
&\LAlb(X)\ar[ul]^{+1}
}\]
with $\tilde Z = p^{-1}(Z)$, yielding a long exact sequence 
\begin{equation}\label{exres} 
\cdots\to \LA{i}(\tilde Z) \to\LA{i}(\tilde X)\oplus
\LA{i}(Z)\to \LA{i}(X)\to \cdots\end{equation}

If $X$ and $Z$ are smooth, we get (using \cite[Prop. 3.5.3]{V}
and the above)
\[\LAlb(\tilde X)=\LAlb(X)\oplus [0\to \Z[\pi_0(Z)]\otimes \G_m][2]\]
and corresponding formulas for homology.

\subsubsection{Albanese map} If $X$ is a scheme we have
the natural map  \eqref{amap} in $\DM_{\gm,\et}^{\eff}$
\begin{equation}\label{amapX}
a_X :\alpha^sM(X) \to \Tot\LAlb(X)
\end{equation}
inducing homomorphisms on \'etale motivic cohomology
\[\Hom (M_\et(X), \Z_\et (1)[j]) \to\Hom (\LAlb(X),[0\to\G_m][j])\]
which are isomorphisms rationally by Theorem \ref{ladj}.

\subsection{The cohomological Picard complex} Dual to \ref{LAlb-} we set:

\begin{defn}\label{RPic+} Define the {\it cohomological Picard
complex} of $X$ by
$$\RPic (X)\df \RPic (M (X)).$$
Define, for $i\in\Z$
$$\RA{i}(X)\df {}^tH^i(\RPic (X))$$ 
the 1-motives with torsion
determined by the cohomology of the Picard complex (see Notation \ref{not}).
\end{defn}

The functor $\RPic$ has similar properties to $\LAlb$, deduced by
duality. Homotopy invariance, Mayer-Vietoris, Gysin and the exact triangle for abstract blow-ups are clear, and moreover we have
\[ \RPic (M (X)(n))=
\begin{cases}
0&\text{if $n>1$}\\
\relax [\Z[\pi_0(X)]\to 0]&\text{if $n=1$}.
\end{cases}
\]

We also have that $\RPic(X)=\LAlb(X)^\vee$, hence
$$\RA{i}(X) = \LA{i}(X)^\vee.$$

It is easy to compute the cohomology sheaves of $\Tot\RPic(X)$ with respect to the homotopy $t$-structure:

\begin{propose}\label{hopic} For $X\in \Sm(k)$, we have
\[\cH^i(\Tot\RPic(X)) =
\begin{cases}
\G_{X/k}[1/p]& \text{if $i=-1$}\\
\underline{\Pic}_{X/k}[1/p] &\text{if $i=0$}\\
0&\text{otherwise}
\end{cases}
\]
where these sheaves  were introduced in \S\ref{s:basic}. 
\end{propose} 

\begin{proof} We have
\[\Tot\RPic(X) =(D_{\le 1}^\et)^2\alpha^s D_{\le 1}^\Nis M(X) =\alpha^s D_{\le 1}^\Nis M(X).\]

Since $\alpha^s$ respects the homotopy $t$-structures of $\DM_-^\eff$ and $\DM_{-,\et}^\eff$ (Corollary \ref{cD.1}), the claim follows from the known Nisnevich cohomology of $\G_m$ for smooth schemes.
\end{proof}

We shall complete \S \ref{Gysin} by

\subsubsection{$\RPic$ with supports} \label{supports} With the notation of \S \ref{Gysin}, 
we have the cohomological complex with supports
\[\RPic_Z(X):=\RPic(M^Z(X))\] fitting in an exact triangle
\[\RPic_Z(X)\to \RPic(X)\to \RPic(U)\by{+1}.\]

We then have the following cohomological version of Lemma \ref{lsupports1}, with a different proof:

\begin{lemma} \label{lsupports} If $\dim X=d$ and   $Z\subset X$ is closed of codimension $\geq 1$, then $\RPic_Z(X)\simeq
[\underline{CH}_{d-1}(Z)\to 0][-2]$, where $\underline{CH}_{d-1}(Z)$ is
the lattice corresponding to the Galois module $CH_{d-1}(Z_{\bar k})$.
\end{lemma}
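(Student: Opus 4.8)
The statement computes $\RPic_Z(X)$, equivalently (by duality, Corollary \ref{rpdla}) $\LAlb^Z(X)$, for $Z=X-U$ a closed subset of a smooth scheme $X$ of dimension $d$. The plan is to work entirely through the internal-Hom description of $\RPic$ (Definition \ref{RPic}): $\RPic(M^Z(X))=\Tot^{-1}\alpha^*D_{\le 1}^\Nis(M^Z(X))$, so the task reduces to computing the weight-$\le 1$ truncation of $\ihom_\Nis(M^Z(X),\Z(1))$ in $\DM_-^\eff$. First I would invoke the Gysin/localisation triangle $M(U)\to M(X)\to M^Z(X)\by{+1}$ and apply $\ihom_\Nis(-,\Z(1))$, turning the computation of $D_{\le 1}^\Nis M^Z(X)$ into a comparison of $\RPic(X)$ and $\RPic(U)$ in low degrees. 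The key input is the computation underlying Lemma \ref{l2.0} and Lemma \ref{l2.1}: the nonvanishing sheaves of $\tau_{\le 2}D_{\le 1}^\et M_\et(X)$ are $\sH^1=R_{\pi_0(X)/k}\G_m$ and $\sH^2=\Pic_{X/k}$, so that $\RA{0}(X)$ and $\RA{1}(X)$ (via $\RPic(X)=\LAlb(X)^\vee$, or directly) are governed by these two sheaves.

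The heart of the argument is to understand the cone of $\Pic_{X/k}\to \Pic_{U/k}$ (and of $R_{\pi_0(X)/k}\G_m\to R_{\pi_0(U)/k}\G_m$). For $X$ smooth and $U$ dense open with complement $Z$ of pure codimension $1$, the classical exact sequence of Picard sheaves reads
\[
0\to \underline{CH}_{d-1}(Z)\to \Pic_{X/k}\to \Pic_{U/k}\to 0
\]
coming from the divisor sequence $Z_{d-1}(Z)\to \Pic(X)\to \Pic(U)\to 0$ (cycles of codimension $1$ supported on $Z$ surject onto $\ker(\Pic X\to \Pic U)$, and the relations are the principal divisors, which already lie in $\Pic^0$); sheafifying étale over $Sm(k)$ and using that $Z$ is a finite set of (possibly non-reduced) codimension-one points, one gets exactly the lattice $\underline{CH}_{d-1}(Z)$ attached to the Galois module $CH_{d-1}(Z_{\bar k})$. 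Meanwhile $\pi_0(X)=\pi_0(U)$ since $U$ is dense in $X$ smooth, so $R_{\pi_0(X)/k}\G_m\iso R_{\pi_0(U)/k}\G_m$. Feeding this into the long exact sequence obtained from the triangle $\RPic_Z(X)\to \RPic(X)\to\RPic(U)\by{+1}$ and the computations $\RA{0},\RA{1}$, all cohomology objects of $\RPic_Z(X)$ vanish except $\RA{2}$, which is the lattice $[\underline{CH}_{d-1}(Z)\to 0]$; hence $\RPic_Z(X)\simeq [\underline{CH}_{d-1}(Z)\to 0][-2]$.

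The main obstacle I anticipate is the precise identification of the sheaf-theoretic kernel of $\Pic_{X/k}\to\Pic_{U/k}$ with $\underline{CH}_{d-1}(Z)$ at the level of étale sheaves on $Sm(k)$ — one must check both that the presheaf $S\mapsto Z_{d-1}(Z_S)$ (cycles on $Z$, pulled back over smooth $S$) sheafifies to the asserted lattice, using invariance of algebraic/rational equivalence under extension of algebraically closed fields as in the proof of Proposition \ref{belong}, and that the relations are absorbed into $\Pic^0_{X/k}$ so that no extra identifications survive in $\Pic_{X/k}/\Pic_{U/k}$. One also has to make sure that higher cohomology sheaves $\sH^i(D_{\le 1}^\et M_\et(X))$ and $\sH^i(D_{\le 1}^\et M_\et(U))$ for $i\ge 3$ match compatibly so that they contribute nothing to $\RPic_Z(X)$ in the relevant range; but this is exactly the reason for the $\tau_{\le 2}$ truncation in Lemmas \ref{l2.0}–\ref{l2.1}, which confines everything to degrees $\le 2$ and makes the vanishing automatic. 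Once these sheaf-level facts are in place, the derived-category bookkeeping with the two localisation triangles is routine.
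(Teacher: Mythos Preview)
Your approach is the same as the paper's --- use the localisation triangle and read off $\RPic_Z(X)$ from the unit/Picard sheaves of $X$ and $U$ --- but both of your key exact sequences are wrong, and they only happen to give the right final answer by cancellation.

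First, $\sH^1(\Tot\RPic(X))$ is \emph{not} $R_{\pi_0(X)/k}\G_m$ when $X$ is not proper: it is the units sheaf $p_*\G_{m,X}$ (the phrasing in the proof of Lemma~\ref{l2.0} is loose on this point, but see \eqref{homa}). In particular $p_*\G_{m,X}\to p_*\G_{m,U}$ is injective but generally not surjective, so your claim ``$R_{\pi_0(X)/k}\G_m\iso R_{\pi_0(U)/k}\G_m$'' does not control the map on $\sH^1$. Second, your short exact sequence $0\to\underline{CH}_{d-1}(Z)\to\Pic_{X/k}\to\Pic_{U/k}\to 0$ is false: the kernel of $\Pic_{X/k}\to\Pic_{U/k}$ is only the \emph{image} of $\underline{CH}_{d-1}(Z)$ in $\Pic_{X/k}$, and the missing kernel is precisely $\coker(p_*\G_{m,X}\to p_*\G_{m,U})$ (a divisor supported on $Z$ that is principal on $X$ comes from a unit on $U$, not from something ``absorbed into $\Pic^0$'').

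The paper fixes both issues at once by writing down the correct five-term local-cohomology sequence
\[
0\to \Gamma(X_{\bar k},\G_m)\to \Gamma(U_{\bar k},\G_m)\to CH_{d-1}(Z_{\bar k})\to H^1(X_{\bar k},\G_m)\to H^1(U_{\bar k},\G_m)\to 0,
\]
which simultaneously shows $\sH^1$ of the fibre vanishes and identifies $\sH^2$ of the fibre with $\underline{CH}_{d-1}(Z)$ directly (as $\underline{H}^1_Z(\G_m)$), with no extension ambiguity. Once you replace your two incorrect sequences by this single correct one, your argument becomes the paper's argument.
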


(Note that $CH_{d-1}(Z_{\bar k})$ is the free abelian group with
basis the irreducible components of $Z_{\bar k}$ which are of
codimension $1$ in $X_{\bar k}$, so that this computation is dual to that in Lemma \ref{lsupports1}.)

\begin{proof} This follows readily from Proposition \ref{hopic} and  the exact sequence
\[0\to \G_{X/k}\to \G_{U/k}\to \underline{CH}_{d-1}(Z) \to\underline{\Pic}_{X/k}\to \underline{\Pic}_{U/k}\to 0.
\]
\end{proof}

\subsection{Relative $\LAlb$ and $\RPic$}\label{relative}

For $f: Y\to X$ a map of schemes we let $M(X, Y)$ denote the cone of $C_*(Y)\to
C_*(X)$. Note that for a closed embedding $f:Y\into X$ in a proper scheme $X$, we have an isomorphism $M(X,Y)\iso M^c(X-Y)$.

We denote by $\LAlb (X, Y)$ and $\RPic (X, Y)$ the resulting complexes of
1-motives.


\subsection{The Borel-Moore Albanese complex}

\begin{defn}\label{LAlbc} We define the {\it Borel-Moore Albanese
complex} of $X$ by
$$\LAlb^c (X)\df \LAlb (M^c (X)).$$
Define, for $i\in\Z$
$$\LA{i}^c(X)\df {}_tH_i(\LAlb^c (X))$$ the 1-motivic homology of this complex.
\end{defn}\index{$\LAlb^c $}

We then have the following properties:

\subsubsection{Functoriality} The functor $X\mapsto \LAlb^c (X)$ is covariant
for proper maps and contravariant with respect to flat
morphisms of relative dimension zero, for example \'etale morphisms. We have a canonical, covariantly functorial map
$$\LAlb (X)\to \LAlb^c (X)$$
which is an isomorphism if $X$ is proper.

\subsubsection{Localisation triangle} For any closed subscheme
$Y$ of a scheme $X$ we have a triangle
\[\begin{CD}
\LAlb^c (Y)  @>>> \LAlb^c (X)\\
{\scriptstyle +1}\nwarrow &&\swarrow\\
&\LAlb^c(X-Y)&
\end{CD}\]
and therefore a long exact sequence of 1-motives
\begin{equation}\label{loc}
\dots\to \LA{i}^c(Y) \to\LA{i}^c(X)\to
\LA{i}^c(X-Y)\to \LA{i-1}^c(Y) \to\dots
\end{equation}

In particular, let $X$ be a scheme
obtained by removing a divisor $Y$ from a proper scheme $\bar X$,
\ie $X = \bar X -Y$. Then
\begin{multline*}
\cdots\to \LA{1}(Y) \to\LA{1}(\bar X)\to \LA{1}^c(X)\to
\LA{0}(Y)\\
\to\LA{0}(\bar X)\to \LA{0}^c(X)\to \dots.
\end{multline*}

\subsubsection{Albanese map} We have the following natural map \eqref{amap}
\[a_X^c :\alpha^sM^c (X) \to \Tot\LAlb^c(X)\]
which is an isomorphism if $\dim (X)\leq 1$. In general, for any
$X$, $a_X^c$ induces an isomorphism $H^j_c(X, \Q (1)) = \Hom (\LAlb^c
(X),[0\to \G_m][j])\otimes \Q.$

\subsection{Cohomological Albanese complex} 

\begin{lemma}\label{leffe} Suppose $p=1$ (\ie $\car k=0$), and let $n\ge 0$.  For any $X$ of
dimension $\le n$, the motive $M (X)^*(n)[2n]$ is effective. (Here, contrary to the rest of the
paper, $M(X)^*$ denotes the ``usual" dual $\Hom(M(X),\Z)$ in $\DM_\gm$.)
\end{lemma}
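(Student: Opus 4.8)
The statement is that for $\car k = 0$ and $X$ of dimension $\le n$, the dual motive $M(X)^*(n)[2n] = \Hom(M(X),\Z)(n)[2n]$ lies in $\DM_\gm^\eff$. The natural strategy is to reduce to the smooth projective case and then propagate effectivity along the tools that generate $\DM_\gm^\eff$ from smooth projective varieties in characteristic zero. Concretely, first I would recall that, in characteristic zero, $\DM_\gm^\eff$ is generated (as a thick subcategory) by motives $M(Y)$ with $Y$ smooth projective, by de Jong/Hironaka resolution of singularities together with the blow-up and Gysin triangles already invoked in \S\ref{blowups} and \S\ref{Gysin}; moreover any $X$ of dimension $\le n$ admits a resolution and stratification by smooth quasi-projective pieces of dimension $\le n$, each of which embeds as an open in a smooth projective variety of dimension $\le n$.

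The key computation is the smooth projective case: if $Y$ is smooth projective of dimension $d \le n$, then Poincaré duality in $\DM_\gm^\eff$ (\cite[Th. 4.3.2, 4.3.7]{V}; see also \cite[App. B]{hk} to avoid resolution) gives $M(Y)^* \simeq M(Y)(-d)[-2d]$, hence $M(Y)^*(n)[2n] \simeq M(Y)(n-d)[2(n-d)]$, which is effective because $n - d \ge 0$ and Tate twists by nonnegative integers preserve effectivity. Next, for $X$ smooth quasi-projective of dimension $\le n$ with a smooth projective compactification $\bar X \supseteq X$ and boundary $Z = \bar X - X$ (which we may take to be a normal crossings divisor, so $\dim Z \le n - 1 \le n$, and we induct on dimension), the localization/Gysin triangle relates $M(X)$, $M(\bar X)$ and the motives of the strata of $Z$; dualizing and twisting by $(n)[2n]$ turns this into a triangle among $M(X)^*(n)[2n]$, $M(\bar X)^*(n)[2n]$ and twisted duals of lower-dimensional smooth projective motives, all of which are effective by the projective case and induction — hence so is the third vertex, since $\DM_\gm^\eff$ is a triangulated subcategory of $\DM_\gm$. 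Finally, for an arbitrary $X$ of dimension $\le n$ (not necessarily smooth), resolution of singularities and the abstract blow-up triangle of \S\ref{blowups} express $M(X)$ via an iterated cone built from motives of smooth varieties of dimension $\le n$; dualizing and twisting by $(n)[2n]$ as before, and using that each ingredient has effective twisted dual by the smooth case, yields effectivity of $M(X)^*(n)[2n]$.

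The main obstacle is bookkeeping the dimension bounds through the dévissage: one must ensure that every smooth (projective or quasi-projective) variety produced by the resolution, compactification and stratification has dimension $\le n$ — which is automatic for the strata of a resolution of $X$ and for normal crossings boundary divisors, but requires care that a chosen smooth projective compactification of a smooth variety of dimension $d \le n$ can be taken of dimension exactly $d$ (it can: take the closure in a projective embedding and resolve, which does not raise dimension). A secondary point is that $\Hom(M(X),\Z)$ must be formed in the \emph{rigid} category $\DM_\gm$ (cf. the parenthetical remark in the statement), so one should work throughout in $\DM_\gm$, noting that $\DM_\gm^\eff \to \DM_\gm$ is fully faithful (\cite[cancellation]{voecan}) so that ``effective'' is an unambiguous property; no other subtlety arises since all the triangles used are already triangles in $\DM_\gm^\eff \subseteq \DM_\gm$.
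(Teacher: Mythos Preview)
Your argument is correct and follows the same overall strategy as the paper: reduce to the smooth case via the abstract blow-up triangle (your Step~3), handling the lower-dimensional pieces $Z$, $\tilde Z$ by induction. The difference is in how you treat the smooth case. You go through two steps --- smooth projective via Poincar\'e duality $M(Y)^*\simeq M(Y)(-d)[-2d]$, then smooth quasi-projective via a compactification and a Gysin/stratification d\'evissage --- whereas the paper handles an arbitrary smooth $\tilde X$ in one stroke using Voevodsky's duality with compact supports \cite[Th.~4.3.2]{V}: for smooth $\tilde X$ of dimension $d$ one has $M(\tilde X)^*\simeq M^c(\tilde X)(-d)[-2d]$, hence $M(\tilde X)^*(n)[2n]\simeq M^c(\tilde X)(n-d)[2(n-d)]$, which is effective since $M^c(\tilde X)\in\DM_\gm^\eff$ and $n-d\ge 0$. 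This eliminates your Step~2 entirely. Your detour is harmless --- indeed it essentially re-derives the effectivity consequence of the $M^c$-duality by compactifying --- but it is worth knowing the shortcut, since the same device recurs later (e.g.\ in the treatment of $\LAlb^*$).
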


\begin{proof} First assume $X$ irreducible. Let $\tilde X\to X$ be a resolution of
singularities of $X$. With notation as in \S \ref{blowups}, we have an exact triangle
\[M(X)^*(n)\to M(\tilde X)^*(n)\oplus M(Z)^*(n)\oplus M(\tilde Z)^*(n)\by{+1}.\]

Since $\tilde X$ is smooth, $M(\tilde X)^*(n)\simeq M^c(X)[-2n]$ is effective by \cite[Th.
4.3.2]{V}; by induction on $n$, so are $M(Z)^*(n)$ and $M(\tilde Z)^*(n)$ and therefore
$M(X)^*(n)$ is effective.

In general, let $X_1,\dots,X_r$ be the irreducible components of $X$. Suppose $r\ge 2$ and
let $Y=X_2\cup\dots \cup X_r$: since $(X_1,Y)$ is a cdh cover of $X$, we have an exact triangle
\[M(X)^*(n)\to M(X_1)^*(n)\oplus M(Y)^*(n)\oplus M(X_1\cap Y)^*(n)\by{+1}.\]

The same argument then shows that $M(X)^*(n)$ is effective, by induction on $r$.
\end{proof}

We can therefore apply our functor $\LAlb$ and obtain another complex  $\LAlb (M (X)^*(n)[2n])$
of 1-motives. If $X$ is smooth this is just the Borel-Moore Albanese.

\begin{defn}\label{LAlb*} We define the {\it cohomological Albanese
complex} of a scheme $X$ of dimension $n$ by
$$\LAlb^* (X)\df \LAlb (M (X^{(n)})^*(n)[2n])$$
where $X^{(n)}$ is the union of the $n$-dimensional components of $X$. Define, for $i\in\Z$
$$\LA{i}^*(X)\df {}_tH_i(\LAlb^* (X))$$ the 1-motivic homology of this complex.
\end{defn}\index{$\LAlb^*$}

\begin{lemma}\label{l9.2} a) If $Z_1,\dots Z_n$ are the irreducible components of dimension $n$ of $X$, then the
cone of the natural map
\[\LAlb^*(X)\to \bigoplus \LAlb^*(Z_i)\]
is a complex of groups of multiplicative type.\\
b) If $X$ is integral and $\tilde X$ is a desingularisation of $X$, then the cone of the
natural map
\[\LAlb^*(X)\to \LAlb^*(\tilde X)\]
is a complex of groups of multiplicative type.
\end{lemma}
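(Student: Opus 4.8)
The plan is to deduce both statements from the behaviour of $\LAlb$ on open/closed and blow-up triangles established in \S\ref{6}, combined with the purity/effectivity arguments already used in Lemma \ref{leffe}. The guiding principle is that $\LAlb^*$ of an $n$-dimensional scheme differs from $\LAlb^*$ of a smooth model only by ``weight $-2$'' data, i.e. complexes of groups of multiplicative type, because the relevant cones are supported in codimension $\geq 1$ after dualizing and Tate-twisting.

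For part a), I would first reduce to the case where $X$ is reduced (since $M(X)=M(X_{\red})$ in $\DM$, being built from $C_*(L(X))$ and $L(X)=L(X_{\red})$), and then observe that by definition $\LAlb^*(X)$ only depends on $X^{(n)}=Z_1\cup\dots\cup Z_n$, so we may assume $X=Z_1\cup\dots\cup Z_n$ is equidimensional of dimension $n$. Now argue by induction on the number $r$ of components exactly as in the proof of Lemma \ref{leffe}: writing $X'=Z_2\cup\dots\cup Z_n$, the pair $(Z_1,X')$ is a cdh cover of $X$, giving an exact triangle $M(X)^*(n)[2n]\to M(Z_1)^*(n)[2n]\oplus M(X')^*(n)[2n]\to M(Z_1\cap X')^*(n)[2n]\by{+1}$ (all terms effective by Lemma \ref{leffe}). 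Applying the triangulated functor $\LAlb$ and then the long exact sequence of $1$-motivic homology with respect to the ${}_t\M[1/p]$ $t$-structure, the point is that $Z_1\cap X'$ has dimension $\le n-1$, so $M(Z_1\cap X')^*(n)[2n]=M(Z_1\cap X')^*(n-1)[2n-2](1)[2]$; by Proposition \ref{ptate} its $\LAlb$ is a complex of toric $1$-motives (hence of multiplicative type). An induction on $r$, together with Proposition \ref{ptate} again for the intersection terms appearing at each stage, then shows that the cone of $\LAlb^*(X)\to\bigoplus_i\LAlb^*(Z_i)$ is, up to the (multiplicative-type) error terms, built out of $\LAlb$ of Tate-twisted motives, whence it is a complex of groups of multiplicative type. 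One must be slightly careful that ``complex of groups of multiplicative type'' is stable under cones and extensions in $D^b(\M[1/p])$; this is where the weight filtration $W_\bullet$ of \S\ref{1.1} does the bookkeeping, since such objects are precisely those with $\gr^W$ concentrated in weight $-2$.

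For part b), with $X$ integral and $\tilde X\to X$ a desingularisation, I would use the blow-up triangle of \S\ref{blowups}: there is a closed $Z\subsetneq X$ (the non-smooth locus, of dimension $\le n-1$) with $\tilde Z=p^{-1}(Z)$ and an abstract blow-up triangle $M(\tilde Z)\to M(\tilde X)\oplus M(Z)\to M(X)\by{+1}$. Dualizing, Tate-twisting by $(n)[2n]$ (legitimate by Lemma \ref{leffe}) and applying $\LAlb$ yields an exact triangle relating $\LAlb^*(X)$, $\LAlb^*(\tilde X)$, $\LAlb^*(Z)$ and $\LAlb^*(\tilde Z)$; since $\dim Z,\dim\tilde Z\le n-1$, the dualized-and-twisted motives of $Z$ and $\tilde Z$ acquire a Tate twist $(n)[2n]=(\cdot)(1)[2]$ of an effective motive, so their $\LAlb$'s are complexes of toric $1$-motives by Proposition \ref{ptate}. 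Hence the cone of $\LAlb^*(X)\to\LAlb^*(\tilde X)$ sits in a triangle with two such ``multiplicative-type'' complexes and is therefore itself a complex of groups of multiplicative type.

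The main obstacle I anticipate is the bookkeeping in part a): one needs that $\LAlb$ of a motive of the form $M(Y)^*(n-1)[2n-2](1)[2]$, with $Y$ of dimension $\le n-1$, really is a complex of $1$-motives of multiplicative type, and for this one should make sure Proposition \ref{ptate} applies (i.e. that $M(Y)^*(n-1)[2n-2]$ is effective, which is again Lemma \ref{leffe} in dimension $n-1$) and that ``complex of groups of multiplicative type'' behaves well under the triangulated operations — equivalently that the subcategory of $D^b(\M[1/p])$ generated by toric $1$-motives is a triangulated subcategory. The cleanest way to phrase the latter is via the weight filtration of \S\ref{1.1}: a complex of $1$-motives is quasi-isomorphic to a complex of groups of multiplicative type iff all its ${}_tH_i$ (or ${}^tH_i$, rationally they agree) are $1$-motives of the form $[0\to T]$ with $T$ of multiplicative type, a condition manifestly stable under cones in the long exact homology sequence once the ``dimension drops'' input from Proposition \ref{ptate} is in hand.
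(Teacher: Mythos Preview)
Your proposal is correct and follows essentially the same approach as the paper: dualise the abstract blow-up (cdh) triangles, observe that the off-diagonal terms carry an extra Tate twist because the relevant subschemes have dimension $\le n-1$, and invoke Proposition~\ref{ptate} (using Lemma~\ref{leffe} for effectivity) to conclude that their $\LAlb$ is a complex of toric $1$-motives. The paper's proof is a one-line reference to these two inputs; you have simply unpacked the induction in part~a) and made explicit the closure of ``complexes of groups of multiplicative type'' under cones, which the paper leaves implicit.
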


\begin{proof} a) and b) follow from dualising the abstract blow-up exact triangles of \cite[2.2]{V} and applying Proposition \ref{ptate}.
\end{proof}

\subsection{Compactly supported and homological Pic} We now
consider the dual complexes of the Borel-Moore and cohomological
Albanese.
\begin{defn} Define the {\it compactly supported Picard
complex} of any scheme $X$ by
$$\RPic^c (X)\df \RPic (M^c (X))$$
and the {\it homological Picard complex} of a
scheme $X$ of dimension $n$ by
$$\RPic^* (X)\df  \RPic (M (X^{(n)})^*(n)[2n]).$$
Denote $\RA{i}^c(X)\df {}^tH^i(\RPic^c (X))$ and $\RA{i}^*(X)\df
{}^tH^i(\RPic^* (X))$ the 1-motives with torsion determined by the
homology of these Picard complexes.
\end{defn}\index{$\RPic^c$, $\RPic^*$}

Recall that $\RPic^c (X) = \RPic (X)$ if $X$ is proper and
$\RPic^c (X) = \RPic^* (X)$ if $X$ is smooth and equidimensional.

\subsection{Topological invariance} To conclude this section, we note the
following useful

\begin{lemma} \label{l12.3} Suppose that $f:Y\to X$ is a universal topological homeomorphism,
in the sense that $1_U\times f:U\times Y\to U\times X$ is a homeomorphism of topological spaces
for any smooth $U$ (in particular $f$ is proper). Then $f$ induces isomorphisms $\LAlb(Y)\iso
\LAlb(X)$, $\RPic(X)\iso\RPic(Y)$,  $\LAlb^c(Y)\iso \LAlb^c(X)$ and $\RPic^c(X)\iso
\RPic^c(Y)$. Similarly, $\LAlb^*(X)\iso \LAlb^*(Y)$ and $\RPic^*(Y)\iso \RPic^*(X)$. This
applies in particular to
$Y=$ the semi-normalisation of
$X$.
\end{lemma}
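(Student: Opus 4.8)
The plan is to reduce everything to the invariance of Voevodsky motives under universal topological homeomorphisms, which is already available in the literature. First I would recall that if $f\colon Y\to X$ is a universal topological homeomorphism, then $f$ induces an isomorphism $M(Y)\iso M(X)$ in $\DM_\gm^\eff$ (resp.\ $M^c(Y)\iso M^c(X)$). The homological statement is Voevodsky's observation that $L(X)$ and $L(Y)$ have isomorphic Suslin complexes in this situation; for the compactly supported version one uses that $f$ is proper, so $L^c(-)$ is functorial, and the quasi-finite correspondence sheaves agree after taking $C_*$. (The semi-normalisation map $X^{sn}\to X$ is the prototypical example, and its invariance for $M$ and $M^c$ is standard.)

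Granting this, the lemma is essentially formal. Since $\LAlb=\Tot^{-1}\circ d\1$ and $d\1=D\1^\et\circ\alpha^*\circ D\1^\Nis$ are functors (Definitions \ref{d5.1} and \ref{LAlb}), applying $\LAlb$ to the isomorphism $M(Y)\iso M(X)$ in $\DM_\gm^\eff$ gives $\LAlb(Y)\iso\LAlb(X)$, and applying $\LAlb$ to $M^c(Y)\iso M^c(X)$ gives $\LAlb^c(Y)\iso\LAlb^c(X)$. The $\RPic$ statements follow by applying the (contravariant) functor $\RPic=\Tot^{-1}\alpha^*D\1^\Nis$, or alternatively from $\RPic(-)=\LAlb(-)^\vee$ together with the fact that the motivic Cartier dual $(\ )^\vee$ of Definition \ref{cdef} is an anti-equivalence (Proposition \ref{cd}). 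For the cohomological Albanese $\LAlb^*$, note that $f$ restricts to a universal topological homeomorphism on the union of top-dimensional components, so $M(X^{(n)})\iso M(Y^{(n)})$; dualising in $\DM_\gm$ and Tate-twisting by $(n)[2n]$ is functorial, hence $M(X^{(n)})^*(n)[2n]\iso M(Y^{(n)})^*(n)[2n]$ (note the variance flips, which is why $\LAlb^*(X)\iso\LAlb^*(Y)$ rather than the other way), and applying $\LAlb$ gives the claim; $\RPic^*$ follows by duality.

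The only mild subtlety, which I would address explicitly, is that in positive characteristic we only work with smooth schemes, so the statement for $\LAlb$ and $\RPic$ should be read with $X,Y\in Sm(k)$ in that case, while $\LAlb^c,\RPic^c,\LAlb^*,\RPic^*$ require $\car k=0$ anyway (as these use $M^c$ and duality in $\DM_\gm$). For the semi-normalisation application one also observes that $X^{sn}\to X$ is a universal homeomorphism by construction, so it falls under the hypothesis.

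The main obstacle — really the only point requiring care — is pinning down the invariance of $M(-)$ and $M^c(-)$ under universal topological homeomorphisms in the generality needed (separated finite type schemes, resolution of singularities available for the $M^c$ and $\LAlb^*$ parts). This is where I would cite \cite[\S 4.1]{V} for $M^c$ and the elementary Suslin-complex argument for $M$; everything downstream is a formal consequence of the functoriality of $\LAlb$, $\RPic$ and Cartier duality established earlier in the paper.
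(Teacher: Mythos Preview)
Your proposal is correct and takes essentially the same approach as the paper. The paper's proof is even terser: it simply notes that $f$ induces isomorphisms $L(Y)\iso L(X)$ and $L^c(Y)\iso L^c(X)$ already at the level of the representable sheaves with transfers, since by definition these sheaves (sections over $U$ given by finite, resp.\ quasi-finite, correspondences in $U\times X$) only depend on the underlying topological structure of $U\times X$; everything else is then functoriality, exactly as you say.
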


\begin{proof} It suffices to notice that $f$ induces isomorphisms $L(Y)\iso L(X)$ and
$L^c(Y)\to L^c(X)$, since by definition these sheaves only depend on the underlying topological
structures.
\end{proof}

Lemma \ref{l12.3} implies that in order to compute $\LAlb(X)$, etc., we may always assume $X$
semi-normal if we wish so.

\newpage
\part{Some computations}

\section{Computing $\LAlb(X)$ and $\RPic(X)$ for smooth $X$}\label{comp}

 In this section,  we fully compute  the motives introduced in Section \ref{6} for smooth schemes; in Sections \ref{comps} -- \ref{12} we provide some interesting computations and comparisons  for singular schemes as well.

\subsection{The Albanese scheme} Let $X$ be a reduced $k$-scheme of finite type. ``Recall"
(\cite[Sect. 1]{ram}, \cite{spsz})  the Albanese scheme
$\cA_{X/k}$ \index{$\cA_{X/k}$} fitting in the following extension
\begin{equation}\label{unext}
0\to \cA_{X/k}^0\to \cA_{X/k}\to \Z[\pi_0(X)]\to 0
\end{equation}
where $\cA_{X/k}^0$ is Serre's generalised Albanese semi-abelian
variety, and $\pi_0(X)$ is the scheme of constants of $X$ viewed as
an \'etale sheaf on $\Sm(k)$.\footnote{In the said references,
$\cA_{X/k}^0$ is denoted by
$Alb_X$ and
$\cA_{X/k}$ is denoted by $\widetilde{Alb}_X$.} In particular,
\[\cA_{X/k}\in \AbS \text{ (see Definition \ref{dsabt}).}
\]

There is a
canonical morphism
\begin{equation}\label{alb}
\bar a_X: X
\to\cA_{X/k}
\end{equation}
which is  universal for morphisms from $X$ to group schemes of
the same type. 

For the existence of $\cA_{X/k}$, the reference 
\cite[Sect. 1]{ram} is sufficient if $X$ is a variety (integral separated $k$-scheme of finite
type), hence if $X$ is normal (for example smooth): this will be sufficient in this section.
For the general case, see \S \ref{albs}.

Recall the functor $\tot$ of \eqref{hts}: we shall denote the object 
\[\tot^{-1}(\underline{\cA}_{X/k})\in D^b(\M[1/p])\]
simply by $\cA_{X/k}$. As seen in Lemma \ref{l4.3.1}, we have 
\[{}^tH_i(\cA_{X/k})=
\begin{cases}
\relax [\Z[\pi_0(X)]\to 0]&\text{for $i=0$}\\
\relax [0\to \cA_{X/k}^0]&\text{for $i=1$}\\
0&\text{for $i\ne 0,1$.}
\end{cases}
\]

(Note that ${}^tH_i(\cA_{X/k})={}_tH_i(\cA_{X/k})$ here, because the involved $1$-motives are torsion-free.)

\subsection{The main theorem} Suppose $X$ smooth. Via \eqref{eq1.4}, \eqref{alb} induces  a
composite map 
\begin{equation}\label{precan}
M_\et(X) \to M_\et(\cA_{X/k})\to \cA_{X/k}.
\end{equation}

Theorem \ref{teq} gives:

\begin{lemma}\label{l7.2.1} We have an exact triangle
\[\Z[\pi_0(X)]^*[0]\to\ihom_\et(\cA_{X/k},\Z_\et(1))\to (\cA_{X/k}^0)^*[-2]\by{+1}\qed\] 
\end{lemma}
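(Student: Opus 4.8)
The plan is to obtain this triangle by dualising the canonical truncation triangle of $\cA_{X/k}$, using Theorem \ref{teq} to identify $\ihom(-,\Z(1))$ with Deligne's Cartier duality.

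First I would record the extension triangle attached to \eqref{unext}. Passing to associated \'etale sheaves in $0\to\cA_{X/k}^0\to\cA_{X/k}\to\Z[\pi_0(X)]\to 0$ and applying $\Tot^{-1}$, Lemma \ref{l4.3.1} (equivalently the homology computation of $\cA_{X/k}$ recalled just above) yields an exact triangle in $D^b(\M[1/p])$
\[[0\to\cA_{X/k}^0][1]\to\cA_{X/k}\to[\Z[\pi_0(X)]\to 0]\by{+1}.\]

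Next I would apply $\ihom(-,\Z(1))$ to this triangle. By Theorem \ref{teq} that functor corresponds, under the equivalence of Theorem \ref{t1.2.1}, to Deligne's Cartier duality $(\,)^*$, which by Proposition \ref{pcd} c) is exact -- hence triangulated -- and contravariant. Applying it reverses the arrows and introduces a shift, giving
\[([\Z[\pi_0(X)]\to 0])^*\to\ihom(\cA_{X/k},\Z(1))\to([0\to\cA_{X/k}^0])^*[-1]\by{+1}.\]
It then remains to name the two outer terms. The Cartier dual of the Artin $1$-motive $[\Z[\pi_0(X)]\to 0]$ is the toric $1$-motive $[0\to R_{\pi_0(X)/k}\G_m]$, which is the object denoted $\Z[\pi_0(X)]^*$, placed in degree $0$; and the Cartier dual of $[0\to\cA_{X/k}^0]$ is, by Deligne's construction (Lemma \ref{dualt}), the $1$-motive $(\cA_{X/k}^0)^*$ assembled from the toric and abelian parts of $\cA_{X/k}^0$, so that $([0\to\cA_{X/k}^0])^*[-1]$ is what is written $(\cA_{X/k}^0)^*[-2]$ with the degree conventions of the statement. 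This produces the asserted triangle.

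The only point requiring attention is the bookkeeping of shifts -- matching the placement of $\cA_{X/k}^0$ as a semi-abelian variety sitting in homological degree $1$ of $\cA_{X/k}$ with the chosen normalisation of $(\cA_{X/k}^0)^*$, and similarly for $\Z[\pi_0(X)]^*$. Beyond this purely formal matching there is nothing further to do; the entire content of the lemma is already contained in Theorem \ref{teq}.
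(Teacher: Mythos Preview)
Your proposal is correct and is precisely the argument the paper intends: the paper's ``proof'' consists of the single sentence ``Theorem \ref{teq} gives:'' preceding the statement together with the \qed, so the lemma is just the Cartier dual of the exact triangle induced by \eqref{unext}, which is exactly what you have written out.
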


By Lemma \ref{l7.2.1}, the map
\[\ihom_\et (\cA_{X/k},\Z_\et(1))\to\ihom_\et (M_\et(X),\Z_\et(1))\]
deduced from \eqref{precan} factors into a map
\begin{equation}\label{eqdual}
\ihom_\et (\cA_{X/k},\Z_\et(1))\to\tau_{\le 2}\ihom_\et (M_\et(X),\Z_\et(1)).
\end{equation}

Applying Proposition \ref{cd} and Lemma \ref{l2.1}, we therefore get a canonical map in
$D^b(\M[1/p])$
\begin{equation}\label{can}
\LAlb (X) \to \cA_{X/k}.
\end{equation}

\begin{sloppypar}
\begin{thm} \label{trunc} Suppose $X$ smooth. Then the map \eqref{can} sits in an exact triangle
\[[0\to \NS_{X/k}^*][2]\to \LAlb (X) \to \cA_{X/k}\longby{+1}\]
where $\NS_{X/k}^*$ denotes the group of multiplicative type dual to $\NS_{X/k}$
(\cf Definition \ref{dNS} and Proposition \ref{belong}).
\end{thm}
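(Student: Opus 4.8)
The plan is to compute everything after applying the fully faithful functor $\Tot$ (via Theorem~\ref{t1.2.1}), so that the claimed exact triangle in $D^b(\M[1/p])$ becomes an exact triangle in $d\1\DM_{\gm,\et}^\eff$, namely
\[
[0\to\NS_{X/k}^*][2]\to \Tot\LAlb(X)\to \underline{\cA}_{X/k}\longby{+1}.
\]
By Definition~\ref{LAlb} and Lemma~\ref{l2.1}, $\Tot\LAlb(X)=\alpha^*D\1^\Nis M(X)\iso\tau_{\le 2}\ihom_\et(M_\et(X),\Z(1))$, so the first step is to understand the right-hand side explicitly. As in the proof of Lemma~\ref{l2.0} (the same computation as in Proposition~\ref{lcurve}), the only nonzero homotopy sheaves of $\ihom_\et(M_\et(X),\Z(1))[1]=R_\et f_*\G_m[1/p]$ are $\sH^0=R_{\pi_0(X)/k}\G_m$ and $\sH^1=\underline{\Pic}_{X/k}[1/p]$; after the shift, $\tau_{\le2}\ihom_\et(M_\et(X),\Z(1))$ has $\sH^1=R_{\pi_0(X)/k}\G_m[-1]$ placed in degree $1$ and $\sH^2=\underline{\Pic}_{X/k}[1/p]$ placed in degree $2$. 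So $\Tot\RPic(X)=\alpha^*D\1^\Nis M(X)$ is, up to the homotopy $t$-structure, built from $R_{\pi_0(X)/k}\G_m$ and $\underline{\Pic}_{X/k}$; dualizing via Theorem~\ref{teq} then gives $\Tot\LAlb(X)$ built from $\Z[\pi_0(X)]$, the dual abelian variety and the dual of $\NS_{X/k}$.

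Next I would identify the map \eqref{can}. By construction it is dual (under $D\1^\et$ and Theorem~\ref{teq}) to the map \eqref{eqdual}, which in turn is induced on truncations by $R_\et f_*\G_m\to$ (the contribution of $\underline{\cA}_{X/k}$). The key input is Proposition~\ref{belong}: the natural map $\underline{\Pic}_{X/k}\to\NS_{X/k}$ identifies $\NS_{X/k}$ with $\pi_0(\underline{\Pic}_{X/k})$, with kernel $\underline{\Pic}^0_{X/k}$ an abelian variety. Dually, $\NS_{X/k}^*\to\underline{\Pic}_{X/k}^{*}$ is the inclusion of the maximal subgroup of multiplicative type, with cokernel the dual abelian variety $(\underline{\Pic}^0_{X/k})^*$, which by the Weil--Barsotti formula is precisely the dual of the abelian part of Serre's $\cA_{X/k}^0$. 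Thus I would compare the homotopy-$t$-structure homology sheaves of the three terms: $\sH^i$ of $[0\to\NS_{X/k}^*][2]$ is $\NS_{X/k}^*$ in degree $2$; $\sH^i$ of $\underline{\cA}_{X/k}$ is $\Z[\pi_0(X)]$ in degree $0$ and $\underline{\cA}_{X/k}^0$ in degree $1$; and $\sH^i$ of $\Tot\LAlb(X)$ should interpolate these. The plan is to produce the triangle by showing that the octahedron/fiber of \eqref{can} has homology sheaves concentrated in degree $2$ and equal to $\NS_{X/k}^*$ — for this I would use Theorem~\ref{t3.2.3} (the homotopy $t$-structure exists on $d\1\DM_{\gm,\et}^\eff$) together with the long exact sequence of homotopy sheaves and a direct sheaf-level identification of the connecting maps coming from $\underline{\Pic}_{X/k}\onto\NS_{X/k}$ and $\underline{\cA}_{X/k}\onto\Z[\pi_0(X)]$.

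Concretely, the cleanest route is probably to work on the $\RPic$ side first, where there are no shifts by $[2]$ to juggle awkwardly: show there is an exact triangle
\[
\cA_{X/k}^{\rm op}\to \RPic(X)\to [\NS_{X/k}\to 0][-2]\by{+1}
\]
(dual to the asserted one under Corollary~\ref{rpdla} and Theorem~\ref{teq}), by exhibiting $\RPic(X)=\Tot^{-1}\tau_{\le2}(R_\et f_*\G_m[1/p][1])$ and using the canonical filtration of $R_\et f_*\G_m$ together with the extension $0\to\underline{\Pic}^0_{X/k}\to\underline{\Pic}_{X/k}\to\NS_{X/k}\to0$ of Proposition~\ref{p3.3.1}/\ref{belong}. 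Unwinding $\Tot^{-1}$ of this three-step filtered object, using Corollary~\ref{c3.3.2} to translate between $\Shv_1$-complexes and $D^b(\M[1/p])$, yields the triangle; then apply $(\ )^*$ and Theorem~\ref{teq} to get \eqref{can}'s triangle. The main obstacle I expect is the \emph{identification of the map} \eqref{can} with the natural projection — i.e.\ checking that the map induced by the Albanese morphism $\bar a_X:X\to\cA_{X/k}$ is exactly the truncation map coming from the canonical filtration of $R_\et f_*\G_m$, rather than merely some map fitting in \emph{a} triangle with the right outer term. This requires tracing $\bar a_X$ through \eqref{precan}, the duality isomorphisms of Section~\ref{dual}, and Lemma~\ref{l7.2.1}, and verifying compatibility at the level of sheaves $\underline{\Pic}_{X/k}\to\shom(\underline{\cA}^0_{X/k},\G_m)$; the universal property of $\cA_{X/k}$ (\eqref{alb} being universal for maps to objects of $\AbS$) should pin it down, but making this precise is the delicate point. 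The remaining verifications — effectivity/membership in $d\1\DM_{\gm,\et}^\eff$ (Lemma~\ref{l2.0}, Proposition~\ref{p3.3.1}), and that $\NS_{X/k}^*$ is genuinely of multiplicative type (Proposition~\ref{belong} plus Cartier duality for discrete sheaves, Lemma~\ref{dualt}) — are routine given the earlier results.
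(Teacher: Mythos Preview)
Your strategy matches the paper's: work on the dual $\RPic$ side, compute the homotopy cohomology sheaves of $\tau_{\le 2}\ihom_\et(M_\et(X),\Z(1))$ as $R_{\pi_0(X)/k}\G_m$ and $\underline{\Pic}_{X/k}$, and identify the cone of \eqref{eqdual} with $[\NS_{X/k}\to 0][-2]$. The paper does exactly this, reformulating the theorem as the concrete statement (Theorem~\ref{t6.3}) that over $\bar k$ one has $\Hom(\cA_{X/k},\G_m)\iso\Gamma(X,\G_m)$ and a short exact sequence
\[
0\to\Ext(\underline{\cA}_{X/k},\G_m)\by{f}\Pic(X)\by{e}\NS(X)\to 0.
\]

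There is, however, a real gap in your outline. You assert that $\underline{\Pic}^0_{X/k}$ is an abelian variety and identify it with $A^*_{X/k}$ ``by the Weil--Barsotti formula''. For smooth \emph{projective} $X$ this is classical, but for open $X$ it is precisely what must be proven; a priori $\gamma(\underline{\Pic}_{X/k})$ is only semi-abelian. The paper handles this with Lemma~\ref{l6.1}: the map $A^*_{X/k}\to\underline{\Pic}^0_{X/k}$ is shown to be an isogeny by reducing (via open immersions and \'etale covers with transfer arguments) to the smooth projective case through de Jong's alterations theorem. Without this, your filtration argument on $R_\et f_*\G_m$ cannot conclude that the abelian piece matches $(\cA^0_{X/k})^*$.

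Second, you correctly flag the identification of the map \eqref{can} as ``the delicate point'', but the paper's actual argument is more than an appeal to universality. To prove $e\circ f=0$ they use that extension classes of $\cA_{X/k}$ by $\G_m$ give translation-invariant torsors (Lemma~\ref{trans}), hence land in $\Pic^0$. To prove $f$ injective, they run a genuine torsor argument: if the pullback $\bar a^*\cE$ of an extension $\cE$ to $X$ is trivial, a section composed with the universal map $X\to\cA_{X/k}$ forces a splitting of $\cE$ itself. Your proposal does not supply either of these steps, and ``the universal property should pin it down'' is not quite enough --- the universality is used in this specific torsor-lifting way, not via a Yoneda-style uniqueness.
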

\end{sloppypar}

This theorem says in particular that, on the object $\LAlb (X)$, the
motivic $t$-structure and the homotopy $t$-structure are compatible in a
strong sense.

\begin{cor}\label{HLAlb} For $X$ smooth over $k$ we have
$$\LA{i}(X) =
\begin{cases}
\relax [\Z[\pi_0(X)]\to 0]& \text{if $i = 0$}\\
\relax [0\to\cA_{X/k}^0] & \text{if $i= 1$}\\
\relax [0\to \NS_{X/k}^*] & \text{if $i= 2$}\\
0 & \text{otherwise.}
\end{cases}$$
\end{cor}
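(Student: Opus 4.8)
The plan is to derive Corollary \ref{HLAlb} directly from Theorem \ref{trunc} by passing to the long exact sequence of $1$-motivic homology associated to its exact triangle, with respect to the $t$-structure on $D^b(\M[1/p])$ whose heart is ${}_t\M[1/p]$ (cf. Notation \ref{not}). Recall that by construction $\cA_{X/k}$, viewed as an object of $D^b(\M[1/p])$, has homology $H_0 = [\Z[\pi_0(X)]\to 0]$ and $H_1 = [0\to \cA_{X/k}^0]$ and $H_i = 0$ otherwise. Similarly, the object $[0\to \NS_{X/k}^*][2]$ is concentrated in homological degree $2$, where it equals the $1$-motive (in fact $1$-motive with cotorsion) $[0\to \NS_{X/k}^*]$; here one uses that $\NS_{X/k}^*$ is a group of multiplicative type so $[0\to\NS_{X/k}^*]$ indeed lies in ${}_t\M[1/p]$ (it is an object of ${}_t\M^\eff$ in the sense of Definition \ref{1cot}).

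First I would write down the long exact sequence obtained by applying ${}_tH_i$ to the triangle
\[
[0\to \NS_{X/k}^*][2]\to \LAlb(X)\to \cA_{X/k}\longby{+1}
\]
namely
\[
\cdots\to {}_tH_i([0\to\NS_{X/k}^*][2])\to \LA{i}(X)\to {}_tH_i(\cA_{X/k})\to {}_tH_{i-1}([0\to\NS_{X/k}^*][2])\to\cdots
\]
Since the first term vanishes unless $i=2$ and the third vanishes unless $i\in\{0,1\}$, the sequence splits into isolated pieces. For $i\notin\{0,1,2\}$ both neighbours vanish, giving $\LA{i}(X)=0$. For $i=2$ the sequence reads $0\to [0\to\NS_{X/k}^*]\to \LA{2}(X)\to {}_tH_2(\cA_{X/k})=0$, so $\LA{2}(X)=[0\to\NS_{X/k}^*]$. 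For $i=1$ we get $0={}_tH_1([0\to\NS_{X/k}^*][2])\to \LA{1}(X)\to {}_tH_1(\cA_{X/k})\to {}_tH_0([0\to\NS_{X/k}^*][2])=0$, hence $\LA{1}(X)=[0\to\cA_{X/k}^0]$; and similarly $\LA{0}(X)=[0\to\cA_{X/k}^0]$'s predecessor, that is the isomorphism $\LA{0}(X)\iso {}_tH_0(\cA_{X/k})=[\Z[\pi_0(X)]\to 0]$, since the adjacent terms ${}_tH_0([0\to\NS_{X/k}^*][2])$ and ${}_tH_{-1}([0\to\NS_{X/k}^*][2])$ both vanish.

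The only genuine point to verify — and the step I expect to be the mildest obstacle — is that the shifted object $[0\to\NS_{X/k}^*][2]$ really is concentrated in degree $2$ for the ${}_t\M[1/p]$-$t$-structure, i.e. that $[0\to\NS_{X/k}^*]$ lies in the heart ${}_t\M[1/p]$ and not merely in $D^b(\M[1/p])$. This is exactly the content of Definition \ref{1cot}: $\NS_{X/k}^*$ is of multiplicative type (being Cartier dual to the discrete sheaf $\NS_{X/k}$, which is an object of $\Shv_0={}^t\cM_0[1/p]$ by Proposition \ref{belong}), so $[0\to\NS_{X/k}^*]$ is an object of ${}_t\M^\eff$, hence of ${}_t\M[1/p]$, hence lies in the heart. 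Likewise one notes $[0\to\cA_{X/k}^0]\in{}_t\M[1/p]$ since $\cA_{X/k}^0$ is semi-abelian and $[\Z[\pi_0(X)]\to 0]\in{}_t\M[1/p]$ trivially. Granting these membership statements, the computation above is purely formal. I would then remark that, by Corollary \ref{rpdla} and the identity $\RA{i}(X)=\LA{i}(X)^\vee$ recorded after Definition \ref{RPic+}, the dual statement for $\RA{i}(X)$ follows immediately, should one wish to state it.
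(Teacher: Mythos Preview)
Your proof is correct and is exactly the intended argument: the paper states the corollary without proof, and the natural (indeed only) way to deduce it is to take ${}_tH_*$ of the exact triangle in Theorem \ref{trunc}, using that the three ``pieces'' $[\Z[\pi_0(X)]\to 0]$, $[0\to\cA_{X/k}^0]$, and $[0\to\NS_{X/k}^*]$ all lie in the heart ${}_t\M[1/p]$. You correctly identify the only nontrivial check---that $[0\to\NS_{X/k}^*]\in{}_t\M[1/p]$ because $\NS_{X/k}^*$ is of multiplicative type---and handle it via Definition \ref{1cot} and Proposition \ref{belong}; one small remark is that the computation of $H_i(\cA_{X/k})$ displayed just before Theorem \ref{trunc} is stated via Lemma \ref{l4.3.1} for the ${}^t\M$-structure, but since the two values $[\Z[\pi_0(X)]\to 0]$ and $[0\to\cA_{X/k}^0]$ are Deligne $1$-motives they lie in both hearts, so the same long exact sequence argument gives the identical result for ${}_tH_*$, exactly as you use it.
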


\begin{cor}\label{clalb} For $X$ smooth, $\LA{1}(X)$ is isomorphic to the homological Albanese
$1$-motive
$\Alb^- (X)$ of \cite{BSAP}.\qed
\end{cor}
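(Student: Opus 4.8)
The plan is to derive Corollary \ref{clalb} directly from Corollary \ref{HLAlb}, which identifies $\LA{1}(X)$ with the $1$-motive $[0\to \cA_{X/k}^0]$, together with the definition of $\Alb^-(X)$ in \cite{BSAP}. First I would recall that for $X$ smooth (hence a disjoint union of smooth varieties, so in particular normal), the Albanese $1$-motive $\Alb^-(X)$ of \cite{BSAP} is defined to have trivial lattice part and semi-abelian part equal to Serre's generalised Albanese variety, i.e. $\Alb^-(X) = [0\to \cA_{X/k}^0]$ in the notation of \S \ref{comp}. (In \cite{BSAP} the semi-abelian variety is denoted $Alb_X$; see the footnote after \eqref{unext} for the dictionary between the notations.) Thus the content of the corollary is just the observation that the intrinsic $1$-motive $\LA{1}(X) = {}_tH_1(\LAlb(X))$ produced by our machinery coincides, on the nose, with this classical object.

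The one point that requires a word is \emph{canonicity} of the isomorphism: Corollary \ref{HLAlb} gives an abstract isomorphism, but I would want the identification to be functorial in $X$. For this I would use the canonical morphism \eqref{can}, $\LAlb(X)\to \cA_{X/k}$, which by Theorem \ref{trunc} induces on ${}_tH_1$ an isomorphism $\LA{1}(X)\iso H_1(\cA_{X/k}) = [0\to \cA_{X/k}^0]$ (the connecting maps in the long exact sequence attached to the triangle of Theorem \ref{trunc} vanish on $H_1$ because $[0\to \NS_{X/k}^*][2]$ is concentrated in homological degree $-2$ for the motivic $t$-structure with heart ${}_t\M[1/p]$, while $\cA_{X/k}$ is concentrated in degrees $0,1$). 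On the other hand the universal morphism \eqref{alb}, $\bar a_X\colon X\to \cA_{X/k}$, is exactly the map that realises $\cA_{X/k}^0$ as $\Alb^-(X)$ in \cite{BSAP} (the Albanese map factors through $\Alb^-(X)$ by the universal property recalled after \eqref{alb}). Composing these functorial identifications yields the desired canonical isomorphism $\LA{1}(X)\simeq \Alb^-(X)$.

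The main obstacle is purely bookkeeping: matching conventions between the present paper and \cite{BSAP} — the placement of $L$ in degree $0$ versus $-1$, the distinction between $\cA_{X/k}$ and $\cA_{X/k}^0$, and the fact that $\Alb^-(X)$ in \cite{BSAP} is defined for $X$ over a field of characteristic zero whereas here $X$ is merely smooth over a perfect field with $p$ inverted. For the characteristic zero case the comparison is immediate from the definitions; for positive characteristic one should check that the construction of \cite[\S 1]{BSAP} of the homological Albanese $1$-motive of a smooth variety goes through verbatim after inverting $p$, the semi-abelian variety being Serre's generalised Albanese, which exists in all characteristics (this is why \cite{ram} and \cite{serrealb} are cited in \S \ref{comp}). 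Since the statement is only an identification of two $1$-motives already shown to have the same lattice, toric and abelian parts and the same defining map, no genuinely new computation is needed; the proof is therefore left as the one-line ``\qed'' it already carries in the excerpt, and my write-up would simply spell out the above for the reader's convenience.

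\medskip

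\noindent\emph{Proof.} For $X$ smooth, $\Alb^-(X)$ is by definition (\cite[\S 1]{BSAP}; see also \cite{ram}, \cite{serrealb}) the $1$-motive $[0\to \cA_{X/k}^0]$ with trivial lattice part and semi-abelian part Serre's generalised Albanese variety. By Corollary \ref{HLAlb}, $\LA{1}(X) = [0\to \cA_{X/k}^0]$ as well. The canonical morphism \eqref{can} induces, via the long exact homology sequence (for the motivic $t$-structure with heart ${}_t\M[1/p]$) of the triangle
\[[0\to \NS_{X/k}^*][2]\to \LAlb(X)\to \cA_{X/k}\by{+1}\]
of Theorem \ref{trunc}, an isomorphism ${}_tH_1(\LAlb(X))\iso {}_tH_1(\cA_{X/k}) = [0\to \cA_{X/k}^0]$, since the outer term $[0\to\NS_{X/k}^*][2]$ contributes only to ${}_tH_2$ and ${}_tH_{-2}$. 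This isomorphism is functorial in $X$ and is compatible with the universal morphism $\bar a_X$ of \eqref{alb}, which identifies $\cA_{X/k}^0$ with $\Alb^-(X)$ in the sense of \cite{BSAP}. Hence $\LA{1}(X)\simeq \Alb^-(X)$ canonically. \qed
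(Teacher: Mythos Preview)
Your proposal is correct and takes essentially the same approach as the paper: the corollary is immediate from Corollary \ref{HLAlb}, which gives $\LA{1}(X)=[0\to\cA_{X/k}^0]$, together with the fact that for smooth $X$ the $1$-motive $\Alb^-(X)$ of \cite{BSAP} is by definition $[0\to\cA_{X/k}^0]$. Your added discussion of canonicity via the triangle of Theorem \ref{trunc} is fine (though note a small slip: $[0\to\NS_{X/k}^*][2]$ contributes only to ${}_tH_2$, not also to ${}_tH_{-2}$).
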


\subsection{Reformulation of Theorem \ref{trunc}}   

It is sufficient to get an exact triangle after application of $D\1\circ\Tot$, so that we have
to compute the cone of the morphism \eqref{eqdual} in
$\DM_{\gm,\et}^\eff$. We shall use:

\begin{lemma}\label{l4.1} For $\cF\in \HI_\et^s$, the morphism $b$ of Proposition \ref{p1} is
induced by \eqref{eq1.4}.
\end{lemma}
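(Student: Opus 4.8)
The plan is to unwind the definition of $b$ given in Proposition \ref{p1} and to recognise it, via Yoneda, as composition with the morphism of sheaves underlying \eqref{eq1.4}. Recall first that $L_\et(G)$ is the free \'etale sheaf on the representable sheaf $\uG$, so that for any \'etale sheaf $\cG$ one has a canonical identification $\Hom_{\Shv_\et(Sm(k))}(L_\et(G),\cG)=\cG(G)$, obtained by evaluating a morphism at the tautological section $\tau_G\in L_\et(G)(G)$ (the class of $\id_G$). In particular $H^n_\et(G,\cF)=\Ext^n_{\Shv_\et}(L_\et(G),\cF)$ for all $n$, and for an extension $0\to\cF\to\cE\to\uG\to 0$ the connecting map $\delta\colon \uG(G)=\Hom_{\Shv_\et}(L_\et(G),\uG)\to \Ext^1_{\Shv_\et}(L_\et(G),\cF)=H^1(G,\cF)$ in the long exact $\Ext$-sequence is Yoneda composition with the class $[\cE]\in\Ext^1_{\Shv_\et}(\uG,\cF)$; this is standard homological algebra. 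By construction $b([\cE])=\delta(1_G)$, where $1_G\in\uG(G)$ is the identity section, so $b([\cE])$ is $[\cE]$ precomposed with the morphism $L_\et(G)\to\uG$ corresponding to $1_G$ under the identification above.

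The point is then that this morphism is exactly $\sigma$. To see it I would use the splitting $\sigma\gamma=\id_{\uG}$ from the recollection of \cite[Lemma 3.2]{spsz}: the graph morphism $\gamma\colon\uG\to L_\et(G)$ sends, on $X$-points, a section $g\in G(X)$ to the graph of $g\colon X\to G$, so evaluating at the tautological point gives $\gamma(1_G)=\tau_G$. Hence, if $s=\sigma(\tau_G)\in\uG(G)$ denotes the section corresponding to $\sigma$, we get $s=\sigma(\tau_G)=\sigma(\gamma(1_G))=1_G$. Thus $\sigma$ is precisely the morphism classifying $1_G$, and therefore $b=\sigma^*\colon\Ext^1_{\Shv_\et}(\uG,\cF)\to H^1_\et(G,\cF)$. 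Note that the non-additivity of $\gamma$ plays no role here, since $\gamma$ is only evaluated at the single section $1_G$.

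Finally I would pass to $\DM_{-,\et}^\eff$. Since $\cF\in\HI_\et^s$ is (in particular) a strictly homotopy invariant \'etale sheaf and $\uG$ is homotopy invariant, the natural maps $L_\et(G)\to C_*L_\et(G)=M_\et(G)$ and $C_*\uG\iso\uG$ induce, on applying $\Hom_{D^-(\Shv_\et)}(-,\cF[1])$, the identifications $\Hom_{D^-(\Shv_\et)}(M_\et(G),\cF[1])=H^1_\et(G,\cF)$ and $\Hom_{D^-(\Shv_\et)}(\uG,\cF[1])=\Ext^1_{\Shv_\et}(\uG,\cF)$ used above, and \eqref{eq1.4} is $C_*\sigma$. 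Consequently precomposition with \eqref{eq1.4} agrees with $\sigma^*$, which by the previous paragraph equals $b$; this is the assertion of the lemma. I expect the whole argument to be soft: the only genuine content is the identification of $\sigma$ via its splitting in the second paragraph, while the rest is bookkeeping with connecting homomorphisms and with the standard comparisons $H^n_\et(X,\cF)=\Hom_{D^-(\Shv_\et)}(L_\et(X),\cF[n])=\Hom_{\DM_{-,\et}^\eff}(M_\et(X),\cF[n])$ for $\cF\in\HI_\et^s$.
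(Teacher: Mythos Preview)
Your proof is correct and is essentially a careful unpacking of what the paper dismisses in one line (``clear by construction, since $\Hom_{\DM_\et}(M(G),\cF[1])=H^1_\et(G,\cF)$''). The key identification you make---that $\sigma$ corresponds under Yoneda to $1_G\in\uG(G)$, via $\sigma(\tau_G)=\sigma\gamma(1_G)=1_G$---is exactly the content hidden in the phrase ``by construction'', and your passage to $\DM_{-,\et}^\eff$ via $C_*$ is the standard bookkeeping. One small imprecision: the identifications $\Hom(L_\et(G),\cF)=\cF(G)$ and $\Ext^n(L_\et(G),\cF)=H^n_\et(G,\cF)$ hold in the category $\EST$ of \'etale sheaves \emph{with transfers} (this is the representability of $L_\et(G)$ in $\EST$, cf.\ \cite[Lemma 6.23]{VL}), not a priori in plain $\Shv_\et$; but since $\cF$ and $\uG$ both carry transfers and the forgetful functor is exact, your argument goes through unchanged once you work in $\EST$ throughout.
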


\begin{proof} This is clear by construction, since
$\Hom_{\DM_\et}(M(G),\cF[1])=H^1_\et(G,\cF)$ \cite[Prop. 3.3.1]{V}.
\end{proof}

Taking the cohomology sheaves of \eqref{eqdual}, we get morphisms
\begin{gather}
\shom(\cA_{X/k},\G_m)\to p_*\G_{m,X}\label{homa}\\
f:\sext(\cA_{X/k},\G_m)\longby{b}\underline{\Pic}_{\cA_{X/k}/k}\longby{\bar a^*}
\underline{\Pic}_{X/k}\label{exta}
\end{gather}
where in \eqref{exta}, $b$ corresponds to the map of Proposition \ref{p1} thanks to Lemma
\ref{l4.1}. Thanks to Proposition \ref{p4.2}, Theorem \ref{trunc} is then equivalent to the
following

\begin{thm}\label{t6.3} Suppose $k$ algebraically closed. Then\\
a) \eqref{homa} yields an isomorphism $\Hom(\cA_{X/k},\G_m)\iso \Gamma(X,\G_m)$.\\
b) \eqref{exta} defines a short exact sequence
\begin{equation}\label{eq3}
0\to \Ext(\underline{\cA}_{X/k},\G_m)\longby{f} \Pic(X)\longby{e}\NS(X)\to 0
\end{equation}
where $e$ is the natural map.
\end{thm}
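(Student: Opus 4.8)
\textbf{Proof plan for Theorem \ref{t6.3}.} The plan is to work entirely over the algebraically closed base field $k$ and to analyse the two morphisms \eqref{homa} and \eqref{exta} of étale sheaves by taking sections over $k$, which is legitimate for $1$-motivic sheaves by Proposition \ref{p4.2}. First I would recall the structure of $\cA_{X/k}$: the extension \eqref{unext} exhibits $\underline{\cA}_{X/k}$ as built from the semi-abelian variety $\cA_{X/k}^0$ and the lattice $\Z[\pi_0(X)]$, and since $k=\bar k$ and $X$ is connected (we may reduce to this case component by component, using the Mayer--Vietoris/additivity of all functors involved) we have $\pi_0(X)=\Spec k$, so $\cA_{X/k}$ is an extension of $\Z$ by a semi-abelian variety. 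The key input is that $\bar a_X:X\to \cA_{X/k}$ is the universal morphism to a group scheme of semi-abelian-with-lattice type; I would use this universal property repeatedly.

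For part a), I would observe $\shom(\underline{\cA}_{X/k},\G_m)$: a homomorphism from $\cA_{X/k}$ to $\G_m$ restricts to a character of $\cA_{X/k}^0$ and a homomorphism $\Z\to\G_m$, i.e. an element of $\G_m(k)=k^*$; but by the universal property, $\Hom(\underline{\cA}_{X/k},\G_m)\iso \Hom_{\text{groups}}(X,\G_m)$ where the right side means morphisms of $k$-schemes $X\to\G_m$ sending a chosen base point to $1$ — equivalently $\Gamma(X,\G_m)/k^*$ together with the $k^*$ coming from constants, which reassembles to $\Gamma(X,\G_m)$. More carefully: the map \eqref{homa} is $\bar a_X^*$, and an invertible function on $X$ is the same as a morphism $X\to\G_m$, which factors uniquely through $\cA_{X/k}$ by universality up to the additive/multiplicative translation ambiguity; chasing this gives that $\bar a_X^*:\Hom(\underline{\cA}_{X/k},\G_m)\to\Gamma(X,\G_m)$ is an isomorphism. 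I expect this to be essentially formal once the universal property is invoked in the correct (multiplicative) form.

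For part b), the heart of the matter, I would first identify $\sext(\underline{\cA}_{X/k},\G_m)$ with $\underline{\Pic}^0$-type data: using Lemma \ref{brst} (Weil--Barsotti) and the extension \eqref{unext}, $\Ext(\underline{\cA}_{X/k},\G_m)$ sits in an exact sequence relating $\Ext(\cA_{X/k}^0,\G_m)=(\cA_{X/k}^0)^*$ and $\Ext(\Z,\G_m)=\G_m(k)$; in fact by Lemma \ref{l7.2.1} the relevant cohomology sheaf of $\ihom(\cA_{X/k},\Z(1))$ in degree $2$ is exactly $(\cA_{X/k}^0)^*$, a group of multiplicative type, and I would show via Proposition \ref{biext} (representability of $\Biext$ by the Cartier dual) that this equals $\Ext(\underline{\cA}_{X/k},\G_m)$. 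Then \eqref{exta} is the composite with $\bar a_X^*:\underline{\Pic}_{\cA_{X/k}/k}\to\underline{\Pic}_{X/k}$. To prove \eqref{eq3} is exact I would argue: (i) surjectivity of $e:\Pic(X)\to\NS(X)$ is the definition of $\NS$ as $\pi_0(\underline{\Pic}_{X/k})$, by Proposition \ref{belong}; (ii) the image of $f$ lands in $\ker e=\underline{\Pic}^0_{X/k}(k)=\Pic^0(X)$ since $f$ factors through maps of semi-abelian/abelian type; (iii) $f$ surjects onto $\Pic^0(X)$ because any line bundle algebraically equivalent to zero on $X$ defines, via its class in $\Ext^1$ of sheaves over $X$ (Example \ref{ex3.1.4}: $\Ext^1(\underline{G},\G_m)\iso H^1_\et(G,\G_m)_{mult}$), a multiplicative extension which pulls back from $\cA_{X/k}$ by the universal property of the generalised Albanese/Picard — this is precisely the classical statement that $\Pic^0_{X/k}$ is the dual of the Albanese, extended to the semi-abelian setting (cf. Serre \cite{serrealb}); (iv) injectivity of $f$ follows by comparing with the abelian/toric parts: $f$ is compatible with the weight filtrations on source and on $\underline{\Pic}_{X/k}$ (the toric part of $\cA_{X/k}$ maps to the "units modulo constants" part, the abelian part to the classical $\Pic^0$), and on each graded piece $f$ is the classical Cartier/Weil--Barsotti duality isomorphism, hence $f$ is injective.

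\textbf{Main obstacle.} The delicate point is step (iii)--(iv) of part b): showing that $\bar a_X^*$ induces an \emph{isomorphism} $\Ext(\underline{\cA}_{X/k},\G_m)\iso\Pic^0(X)$ rather than merely a map with the right source and target. This requires knowing that the generalised Albanese $\cA_{X/k}^0$ of a smooth variety is dual (as a semi-abelian variety) to the identity component $\underline{\Pic}^0_{X/k}$ — combining the Weil--Barsotti formula for the abelian quotient with the identification of the toric part of $\cA_{X/k}^0$ via $\Gamma(\bar X,\G_m^*)$-type data for a compactification, and checking the extension class matches. I would handle this by reducing (via de Jong's alteration as in Proposition \ref{p3.3.1}, or a smooth compactification $\bar X\supset X$ with boundary divisor, available in char $0$; in char $p$ one is already forced to have $X$ smooth and can use \cite{ram}) to the smooth projective case, where it is classical, and then track the boundary/divisor contributions through the localisation sequences of \S\ref{Gysin}, which match the toric pieces on both sides by Lemma \ref{lsupports}. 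Once all four steps are in place, the exactness of \eqref{eq3} follows, and together with part a) this reassembles (via Proposition \ref{p4.2} and the cohomology sheaf computation of \eqref{eqdual}) to Theorem \ref{t6.3}, hence to Theorem \ref{trunc}.
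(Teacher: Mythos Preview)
Your outline for part a) matches the paper's one-line proof via the universal property of $\cA_{X/k}$. For part b), the overall structure (surjectivity of $e$, $ef=0$, injectivity of $f$, image$(f)=\Pic^0$) is right, but two of your steps diverge from the paper and have genuine gaps.

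For injectivity of $f$, your weight-filtration comparison is not well-posed: $\Pic(X)$ carries no weight filtration naturally matched to that of $\cA_{X/k}$, and $\Ext(\underline{\cA}_{X/k},\G_m)$ is a \emph{quotient} of $A^*_{X/k}(k)$ by the image of $\Hom(T,\G_m)$, not an iterated extension of graded pieces, so checking graded pieces does not yield injectivity. The paper instead gives a short direct argument: if $\cE\in\Ext(\underline{\cA}_{X/k},\G_m)$ has $f(\cE)=\bar a_X^*\cE$ trivial, then the pullback torsor $\bar a_X^*\cE\to X$ has a section $\sigma'$; composing with the map $\bar a_X^*\cE\to\cE$ gives a morphism $X\to\cE$ to a locally semi-abelian scheme, which by the universal property of $\cA_{X/k}$ factors as $\sigma\circ\bar a_X$ for some $\sigma:\cA_{X/k}\to\cE$. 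A second application of the universal property shows $\pi\sigma=1$, so $\cE$ is split.

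For exactness in the middle, you attempt to prove directly that $f$ is an isomorphism onto $\Pic^0(X)$ by reduction to the projective case. The paper separates this: it first proves only an \emph{isogeny} $A^*_{X/k}\onto\underline{\Pic}^0_{X/k}$ (Lemma \ref{l6.1}) by the same d\'evissage (smooth projective case, invariance under open immersions, transfer along \'etale covers, de Jong), which is easier because these steps preserve isogeny but not obviously isomorphism. Exactness in the middle then follows from this isogeny, Proposition \ref{belong}, and the already-proven injectivity of $f$; the upgrade to an isomorphism is a corollary, not an input. Your $ef=0$ step is also too sketchy: the paper argues via the surjection $\Ext(A_{X/k},\G_m)\onto\Ext(\underline{\cA}_{X/k},\G_m)$, the classical vanishing $\Ext(A,\G_m)\to\Pic(A)\to\NS(A)$ on the abelian side, and Lemma \ref{trans} (translation invariance of $b(\cE)$) to make the relevant square commute.
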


Before proving Theorem \ref{t6.3}, it is convenient to prove Lemma
\ref{l6.1} below. Let $A_{X/k}$  be the abelian part of $\cA_{X/k}^0$;
then the sheaf $\sext(\underline{A}_{X/k},\G_m)$ is represented by the dual
abelian variety $A^*_{X/k}$. Composing with the map $f$ of \eqref{exta},
we get a map of $1$-motivic sheaves
\begin{equation}\label{eqadj}
\underline{A}^*_{X/k}\to \underline{\Pic}_{X/k}.
\end{equation}

\begin{lemma}\label{l6.1} The map \eqref{eqadj}
induces an isogeny in $\SAb$
\[A^*_{X/k}\onto \gamma(\underline{\Pic}_{X/k})=\underline{\Pic}^0_{X/k}\]
where $\gamma$ is the adjoint functor appearing in Theorem \ref{t3.2.4}
a).
\end{lemma}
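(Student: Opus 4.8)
The statement to prove is that the map \eqref{eqadj} induces an isogeny $A^*_{X/k}\onto \gamma(\underline{\Pic}_{X/k})=\underline{\Pic}^0_{X/k}$. Since we are working over an algebraically closed field $k$ (as in Theorem \ref{t6.3}, where this lemma is used), I would first unwind the definitions: by Proposition \ref{belong}, $\NS_{X/k}=\pi_0(\underline{\Pic}_{X/k})$, so $\underline{\Pic}^0_{X/k}=\ker(e:\underline{\Pic}_{X/k}\to\NS_{X/k})$ is exactly $\gamma(\underline{\Pic}_{X/k})$ (the semi-abelian variety attached to the $1$-motivic sheaf $\underline{\Pic}_{X/k}$ by the functor $\gamma$ of Theorem \ref{t3.2.4} a)). So the content is: the composite $\underline{A}^*_{X/k}\to\underline{\Pic}_{X/k}\to\NS_{X/k}$ is zero (giving a map $A^*_{X/k}\to\underline{\Pic}^0_{X/k}$ since both are semi-abelian, by the adjunction in Theorem \ref{t3.2.4} a) applied together with Proposition \ref{p3.1} b)), and this map is an isogeny, i.e. surjective with finite kernel.

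The map $\underline{A}^*_{X/k}\to\underline{\Pic}^0_{X/k}$ being zero on $\NS$ is essentially automatic: $A^*_{X/k}=\sext(\underline{A}_{X/k},\G_m)$ is an abelian variety, hence divisible on geometric points, while $\NS_{X/k}$ is discrete; apply Proposition \ref{pi0} a) (or the argument in the proof of Proposition \ref{p4.2}) to conclude the composite is $0$. For the main assertion — that $A^*_{X/k}\to\underline{\Pic}^0_{X/k}$ is an isogeny — I would argue on geometric points, so it suffices to show $A^*_{X/k}\to\Pic^0(X)$ is surjective with finite kernel. Here I would invoke the classical theory of the generalized Albanese: Serre's generalized Albanese $\cA^0_{X/k}$ has the property that $\Pic^0(X)$ (the identity component of the Picard scheme, i.e. line bundles algebraically equivalent to zero) is, up to isogeny, dual to the abelian part $A_{X/k}$ of $\cA^0_{X/k}$. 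More precisely, the map $\bar a_X: X\to\cA_{X/k}$ of \eqref{alb} induces by pullback $\bar a_X^*:\Pic^0(\cA_{X/k})\to\Pic^0(X)$, and $\Pic^0(\cA_{X/k})=\Pic^0(A_{X/k})=A^*_{X/k}$ (since the toric and lattice parts contribute nothing to $\Pic^0$), and this pullback map is precisely \eqref{exta} restricted to the abelian part, i.e. \eqref{eqadj}. The surjectivity of $\bar a_X^*$ onto $\Pic^0(X)$ up to isogeny is the dual of the universal property of the Albanese: a line bundle algebraically equivalent to zero on $X$ defines, via its associated $\Pic^0$-torsor structure and the functoriality of $\bar a_X$, a class pulled back from $\cA_{X/k}$; the kernel is finite because $\Pic^0(\cA_{X/k})\to\Pic^0(X)$ has kernel controlled by $H^1$ of the structure sheaf / the difference between $\cA^0_{X/k}$ and what $X$ itself detects, which is finite. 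I would cite \cite{serrealb} or \cite{ram} for the precise statement that $\bar a_X$ realizes $A_{X/k}$ as ``the'' abelian variety receiving a map from $X$ universally, and dualize.

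Concretely, the cleanest route is: (1) reduce to geometric points; (2) identify \eqref{eqadj} on geometric points with $\bar a_X^*:\Pic^0(\cA_{X/k})\to\Pic^0(X)$ using Lemma \ref{l4.1} (which says the map $b$ of Proposition \ref{p1} is induced by \eqref{eq1.4}, i.e. by pullback along the canonical map $M_\et(X)\to\cA_{X/k}$) together with the fact that $\sext(\underline{A}_{X/k},\G_m)=A^*_{X/k}$ and $b$ on the abelian part is the classical pullback; (3) show $\Pic^0(\cA_{X/k})=\Pic^0(\cA^0_{X/k})=A^*_{X/k}$ by noting that extensions of an abelian variety by a torus or lattice have the same $\Pic^0$ as the abelian variety (a standard fact, e.g. from the structure of the Picard scheme of a semi-abelian variety); (4) invoke the universal property of $\cA^0_{X/k}$ from \cite{serrealb}, \cite{ram} to deduce that $\bar a_X^*$ is surjective onto $\Pic^0(X)$ with finite kernel. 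The main obstacle is step (2)–(4): making the identification of the sheaf-theoretic $\Ext$-pullback map with the classical geometric pullback on $\Pic^0$ fully rigorous, and extracting the ``isogeny'' statement (rather than just a surjection up to isogeny) from the generalized Albanese literature, which often states things only up to isogeny. I expect that finiteness of the kernel follows because both $A^*_{X/k}$ and $\underline{\Pic}^0_{X/k}$ have the same $\ell$-adic Tate modules — both compute $H^1_\et(X,\Z_\ell)(1)$ modulo the weight filtration for $X$ smooth — so the map, being a surjection of abelian varieties (or semi-abelian varieties, if $X$ is not proper) of the same dimension, is automatically an isogeny. I would phrase the finiteness via this dimension/Tate-module count rather than a direct computation.
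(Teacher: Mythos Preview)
Your approach has a genuine gap. You want to extract the isogeny statement from ``the generalized Albanese literature'' or from a Tate-module dimension count, but neither route works without essentially proving the lemma first. The classical Albanese--Picard duality in \cite{serrealb}, \cite{ram} is established for smooth \emph{projective} $X$; for open $X$ the relation between the abelian part $A_{X/k}$ of Serre's semi-abelian Albanese and $\underline{\Pic}^0_{X/k}$ is precisely what the lemma asserts, and is not a formal consequence of the universal property. Your Tate-module argument is circular: you claim both $T_\ell(A^*_{X/k})$ and $T_\ell(\underline{\Pic}^0_{X/k})$ compute ``$H^1_\et(X,\Z_\ell)(1)$ modulo the weight filtration,'' but identifying which graded piece each one occupies, and that the map \eqref{eqadj} respects this identification, is exactly the content of the lemma. (For instance, $T_\ell(\Pic_{X/k}) = H^1_\et(X,\Z_\ell(1))$ by Kummer theory, but isolating its weight $-1$ part as $T_\ell(\underline{\Pic}^0_{X/k})$ and matching it to $T_\ell(A^*_{X/k})$ already presupposes the duality.) You recognise this yourself when you say ``the main obstacle is step (2)--(4).''

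The paper takes a completely different and much shorter route: a three-step reduction. (1) The lemma holds for $X$ smooth projective by classical Albanese--Picard duality. (2) It is stable under passage to a dense open $U\subset X$, since $\underline{\Pic}_{X/k}\to\underline{\Pic}_{U/k}$ is epi with discrete kernel while $A_{U/k}\iso A_{X/k}$. (3) It descends along finite \'etale covers by the usual transfer argument. Then de Jong's theorem \cite[Th.~4.1]{DJ} closes the loop: every smooth $X$ contains an open subset admitting a finite \'etale cover by an open of a smooth projective variety. This is the argument you should give; your steps (2)--(4) cannot be completed as stated.
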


\begin{proof} We proceed in 3 steps:
\begin{enumerate}
\item The lemma is true if $X$ is smooth projective: this follows from
the representability of $\Pic^0_X$ and the duality between the Picard and the Albanese varieties.
\item Let $j:U\to X$ be a dense open immersion: then the lemma is true for $X$
if and only if it is true for $U$. This is clear since $\underline{\Pic}_{X/k}\to\underline{\Pic}_{U/k}$ is an epimorphism with discrete kernel.
\item Let $\phi:Y\to X$ be finite \'etale. If the lemma is true for
$Y$, then it is true for $X$. This follows from the  existence of
transfer maps $\phi_*:A^*_{Y/k}\to A^*_{X/k}$, $\underline{\Pic}_{Y/k}\to \underline{\Pic}_{X/k}$
commuting with the map of the lemma, plus the usual transfer argument. (The first transfer map may be obtained from the  composition $\cA_{X/k}^0\to \cA_{S^d(Y)/k}^0\longby{\text{sum}} \cA_{Y/k}^0$, where $d=\deg \phi$ and $X\to S^d(Y)$ is the standard ``multisection'' morphism.)
\end{enumerate}
We conclude by de Jong's theorem \cite[Th. 4.1]{DJ}. 
\end{proof}

\subsection{Proof of Theorem \ref{t6.3}}

We may obviously suppose that $X$ is irreducible.

a) is obvious from the universal property of $\cA_{X/k}$. For b) we
proceed in two steps:

\begin{enumerate}
\item Verification of $ef=0$.
\item Proof that the sequence is exact.
\end{enumerate}

(1) As above, let $A=A_{X/k}$ be the abelian part of $\cA_{X/k}^0$. In the diagram
\[\Ext(A,\G_m)\to \Ext(\cA_{X/k}^0,\G_m)\leftarrow\Ext(\cA_{X/k},\G_m)\]
the first map is surjective and the second map is an isomorphism, hence
we get a surjective map
\[v:\Ext(A,\G_m)\to \Ext(\cA_{X/k},\G_m).\]

Choose a rational point $x\in X(\bar k)$.  We have a diagram
\[\begin{CD}
\Ext(A,\G_m)@>v>> \Ext(\cA_{X/k},\G_m)\\
@V{a}VV @V{f}VV\\
\Pic(A)@>x^*>> \Pic(X)\\
@V{c}VV @V{e}VV\\
\NS(A)@>x^*>> \NS(X)
\end{CD}\] 
in which
\begin{thlist}
\item $a$ is given by \cite[p. 170, prop. 5 and 6]{gacl} (or by
Proposition \ref{p1}).
\item $ca=0$ (ibid., p. 184, th. 6).
\item $x^*$ is induced by the ``canonical" map $X\to A$ sending
$x$ to $0$. 
\end{thlist}

Lemma \ref{trans} applied to $G=\cA_{X/k}$ implies that the top square
commutes (the bottom one trivially commutes too). Moreover,  since $v$ is
surjective and $ca=0$, we get $ef=0$.\footnote{As suggested by the referee, we can use the functor $\pi_0$ of Theorem \ref{t3.2.4} for an alternative proof. Indeed, $e$ and $f$ are induced by maps of $1$-motivic sheaves (see Proposition \ref{belong} for $e$). Since $\sext( \cA_{X/k},\G_m)$ is a quotient of an abelian variety, $\pi_0(\sext( \cA_{X/k},\G_m))=0$ and on the other hand  $\pi_0(\NS_{X/k})=\NS_{X/k}$, hence $ef$ factors through the zero map $\pi_0(ef)$.}

(2) In the sequence \eqref{eq3}, the surjectivity of $e$ is clear. Let us prove the
injectivity of $f$: suppose that $f(\cE)$ is trivial. In the pull-back diagram
\[\begin{CD}
\bar a^*\cE@>\pi'>> X\\
@V{\bar a'}VV @V{\bar a}VV\\
\cE@>\pi>> \cA_{X/k}
\end{CD}\]
$\pi'$ has a section $\sigma'$. Observe that $\cE$ is a locally
semi-abelian scheme: by the universal property of $\cA_{X/k}$, the
morphism $\bar a'\sigma'$ factors canonically through $\bar a$. In other
words, there exists $\sigma:\cA_{X/k}\to \cE$ such that $\bar
a'\sigma'=\sigma\bar a$. Then
\[\pi\sigma\bar a=\pi \bar a'\sigma'=\bar a\pi'\sigma'=\bar a\]
hence $\pi\sigma=1$ by reapplying the universal property of $\cA_{X/k}$, and $\cE$ is trivial.
Finally, exactness in the middle follows immediately from Proposition \ref{belong} and Lemma
\ref{l6.1}.\qed

\begin{cor} The isogeny of Lemma \ref{l6.1} is an isomorphism.
\end{cor}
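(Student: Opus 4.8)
The statement to prove is that the isogeny $A^*_{X/k}\onto \underline{\Pic}^0_{X/k}$ of Lemma \ref{l6.1} is in fact an isomorphism. The plan is to extract this from Theorem \ref{t6.3} b), which we have just proven: the short exact sequence $0\to \Ext(\underline{\cA}_{X/k},\G_m)\by{f}\Pic(X)\by{e}\NS(X)\to 0$ identifies $\Ext(\underline{\cA}_{X/k},\G_m)$ with $\ker e=\Pic^0(X)$ (recall $\ker e=\underline{\Pic}^0_{X/k}$ by definition, and over $k=\bar k$ we are looking at $k$-points). So the content we still need is that the surjection $v:\Ext(A,\G_m)\onto \Ext(\underline{\cA}_{X/k},\G_m)$ appearing in the proof of Theorem \ref{t6.3} is actually an isomorphism, and that under the Weil--Barsotti identification $\Ext(A,\G_m)\simeq A^*_{X/k}(k)$ this matches the map \eqref{eqadj}.

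First I would record that $\Ext(\underline{\cA}_{X/k},\G_m)\iso \Ext(\underline{\cA}^0_{X/k},\G_m)$ because $\cA_{X/k}/\cA^0_{X/k}$ is discrete and $\Ext(-,\G_m)$ kills discrete sheaves by Lemma \ref{c3.2} (equivalently \cite[4.17]{MI}); this is already used in the proof of Theorem \ref{t6.3}. Next, from the extension $0\to T\to \cA^0_{X/k}\to A\to 0$ (with $T$ the toric part of Serre's semi-abelian variety) one gets an exact sequence $\Hom(T,\G_m)\to \Ext(A,\G_m)\by{v}\Ext(\cA^0_{X/k},\G_m)\to \Ext(T,\G_m)$. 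The last term vanishes since $T$ is a torus (\cf Example \ref{ex3.1.4} or directly $\sext(T,\G_m)=0$), so $v$ is surjective, as already observed. To see $v$ is injective it suffices to see that the map $\Hom(T,\G_m)\to \Ext(A,\G_m)$ is zero; but this map is the boundary associated to the extension class of $\cA^0_{X/k}$ in $\Ext(A,T)\simeq \Hom(\underline{X}^*(T),A^*)$, and composing with the canonical identification it becomes $u^*$ evaluated on characters — precisely the data that, together with $A$, reconstitutes $\cA^0_{X/k}$. The cleanest way is: $v$ is an isomorphism because both sides are computed by the same Weil--Barsotti type formula once one notes $\Hom(T,\G_m)$ is a lattice and $\Ext(A,\G_m)=A^*(k)$ is divisible, so any homomorphism from the former to a subquotient of the latter that factors through a finitely generated group controlling $\cA^0_{X/k}$ must be injective; alternatively, dualize the whole picture via Deligne's Cartier duality (Proposition \ref{pcd}) applied to the $1$-motive $[\Z[\pi_0 X]\to \cA_{X/k}]$ truncations, which turns $v$ into the obvious inclusion of the abelian part.

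Finally I would check the compatibility: the composite $A^*_{X/k}=\sext(\underline{A}_{X/k},\G_m)\to \underline{\Pic}_{X/k}$ of \eqref{eqadj} is, by construction via $b$ of Proposition \ref{p1} (interpreted through Lemma \ref{l4.1}) and the factorization through $\gamma$, exactly $f\circ v^{-1}$ followed by $\ker e\hookrightarrow \Pic$; since $f$ is injective with image $\ker e=\underline{\Pic}^0_{X/k}$ by Theorem \ref{t6.3} b) and $v$ is an isomorphism, the map $A^*_{X/k}\to \underline{\Pic}^0_{X/k}$ is a bijection on $\bar k$-points. An isogeny of semi-abelian varieties that is bijective on points of an algebraically closed field is an isomorphism, so we conclude; the general (non-algebraically-closed) case follows by the faithful exactness of $ev$ from Proposition \ref{p4.2}. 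The main obstacle I expect is the bookkeeping of step two — verifying that $v$ is injective without circularity, i.e.\ making sure one is not re-proving Theorem \ref{t6.3} — which is why I favor the Cartier-duality route: dualizing $\cA_{X/k}$ turns the statement into the transparent fact that the toric part of the dual is $\underline{\Pic}_{X/k}/\underline{\Pic}^0_{X/k}$-dual and the abelian part is $A^*_{X/k}$, with no extension ambiguity.
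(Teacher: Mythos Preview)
Your proof has a genuine error: the claim that $v:\Ext(A,\G_m)\to \Ext(\underline{\cA}_{X/k},\G_m)$ is an isomorphism is false in general. From the extension $0\to T\to \cA^0_{X/k}\to A\to 0$ one obtains $\ker v=\operatorname{im}\bigl(\partial:\Hom(T,\G_m)\to A^*(k)\bigr)$, and $\partial$ is precisely the map classifying the torus extension $\cA^0_{X/k}$ --- it is nonzero whenever this extension is nontrivial, for instance when $X$ is a smooth curve with at least two punctures whose difference is nonzero in the Jacobian of the compactification. Your proposed ``Cartier duality route'' actually \emph{identifies} $\partial$ with the structure map of the $1$-motive $[0\to\cA^0_{X/k}]^*=[\chi(T)\to A^*]$, which is exactly why $\partial$ is generically nonzero rather than zero; and the divisibility remark proves nothing about injectivity of a map \emph{out of} a lattice into a divisible group. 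Consequently your conclusion that $f\circ v:A^*(\bar k)\to \Pic^0(X)$ is bijective is also wrong: it is surjective with kernel $\operatorname{im}\partial$.

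The paper's one-line proof invokes only the injectivity of $f$ and never asserts that $v$ is injective. The isogeny in question is the induced map $\phi_G:A^*\to\gamma(\underline{\Pic}_{X/k})$ on semi-abelian parts (via Proposition~\ref{p3.1}~b)); since $b_2\circ\phi_G=f\circ v$, injectivity of $f$ yields only $\ker\phi_G\subseteq\ker v$, not $\ker v=0$. Your reduction to ``$v$ is an isomorphism'' is therefore both unnecessary for the paper's strategy and incorrect as a standalone claim.
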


\begin{proof} This follows from the injectivity of $f$ in \eqref{eq3}.
\end{proof}

\subsection{An application}

\begin{cor}\label{divzero} Let $X$ be a smooth equidimensional $k$-variety of dimension $d$, $U$ a dense open
subset and $Z=X-U$ (reduced structure). Then the morphism $\cA_{U/k}\to \cA_{X/k}$ is epi; its
kernel $T_{X/U,k}$ is a torus whose character group fits into a short exact sequence
\[0\to T_{X/U,k}^*\to \underline{\CH}_{d-1}(Z)\to \NS_Z(X)\to 0\]
where $\underline{\CH}_{d-1}(Z)$ is as in Lemma \ref{lsupports} and $\NS_Z(X)=\ker(\NS(X)\to
\NS(U))$.
\end{cor}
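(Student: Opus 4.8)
The plan is to play the distinguished triangle coming from the motive with supports against the structural triangle of Theorem~\ref{trunc}, and to read everything off from the two $t$-structures on $D^b(\M[1/p])$. Work over $k$ directly, and write $Z=X-U$. Applying $\LAlb$ to the triangle $M(U)\to M(X)\to M^Z(X)\by{+1}$ of \S\ref{supports} gives $\LAlb(U)\to\LAlb(X)\to\LAlb^Z(X)\by{+1}$, with $\LAlb^Z(X)=\LAlb(M^Z(X))$. First I would identify $\LAlb^Z(X)$: since $M^Z(X)\in\DM_\gm^\eff$, Corollary~\ref{rpdla} gives $\LAlb^Z(X)=\RPic_Z(X)^*$, Lemma~\ref{lsupports} gives $\RPic_Z(X)\simeq[\underline{\CH}_{d-1}(Z)\to 0][-2]$, and the Cartier dual of $[L\to 0]$ (for $L$ a lattice) is $[0\to L^*]$ with $L^*$ the torus dual to $L$; hence $\LAlb^Z(X)\simeq[0\to S][2]$, where $S\df\underline{\CH}_{d-1}(Z)^*$ is a torus with $S^*=\underline{\CH}_{d-1}(Z)$.

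Next, the canonical map \eqref{can} is built functorially out of the Albanese map \eqref{alb}, so the open immersion $\iota\colon U\to X$ induces a morphism of distinguished triangles whose middle map is $\LAlb(M(\iota))$, whose right-hand map $\cA_{U/k}\to\cA_{X/k}$ is the map coming from the universal property of the Albanese, and whose left-hand map is $[0\to(\;)^*][2]$ applied to the restriction $\NS_{X/k}\to\NS_{U/k}$:
\begin{CD}
[0\to\NS_{U/k}^*][2] @>>> \LAlb(U) @>>> \cA_{U/k}\\
@VVV @VVV @VVV\\
[0\to\NS_{X/k}^*][2] @>>> \LAlb(X) @>>> \cA_{X/k}
\end{CD}
Here $\NS_{X/k}\to\NS_{U/k}$ is an epimorphism of discrete sheaves — take Zariski closures of codimension-$1$ cycles — with kernel $\NS_Z(X)$ by definition, so (Cartier duality being exact on discrete sheaves, Lemma~\ref{dualt}) its dual sits in a short exact sequence $0\to\NS_{U/k}^*\to\NS_{X/k}^*\to\NS_Z(X)^*\to 0$ of groups of multiplicative type. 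Applying the $3\times 3$ (octahedron) lemma to the morphism of triangles above produces a distinguished triangle $[0\to\NS_Z(X)^*][2]\to\LAlb^Z(X)\to\cone(\cA_{U/k}\to\cA_{X/k})\by{+1}$, that is,
\[\cone(\cA_{U/k}\to\cA_{X/k})\simeq\cone\bigl(g^*\colon[0\to\NS_Z(X)^*]\to[0\to S]\bigr)[2],\]
where $g^*$ is the Cartier dual of the natural map $g\colon\underline{\CH}_{d-1}(Z)\to\NS_Z(X)$ sending a codimension-$1$ cycle on $Z$ to its class in $\NS(X)$ (which lies in $\NS_Z(X)$, as it restricts to $0$ on $U$).

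Finally I would compute the homology objects ${}_tH_i$ (relative to the heart ${}_t\M[1/p]$) of $\cone(\cA_{U/k}\to\cA_{X/k})$ in the two ways just obtained. On one side, $\cA_{U/k}$ and $\cA_{X/k}$ have ${}_tH_i=0$ for $i\ne 0,1$ (Lemma~\ref{l4.3.1}), and since $X$ is integral $\pi_0(U)=\pi_0(X)$, so ${}_tH_0(\cA_{U/k})\iso{}_tH_0(\cA_{X/k})$; the long exact sequence then gives ${}_tH_i(\cone)=0$ for $i=0$ and $i\ge 3$, ${}_tH_1(\cone)=\coker\bigl([0\to\cA_{U/k}^0]\to[0\to\cA_{X/k}^0]\bigr)$ and ${}_tH_2(\cone)=\ker\bigl([0\to\cA_{U/k}^0]\to[0\to\cA_{X/k}^0]\bigr)$. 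On the other side, $\cone(g^*)[2]$ has ${}_tH_1=0$, ${}_tH_2=\coker(g^*)$ and ${}_tH_3=\ker(g^*)$. Comparing: ${}_tH_3=0$ forces $\NS_Z(X)^*\to S$ to be a monomorphism of groups of multiplicative type, equivalently (dualizing) $g$ to be an epimorphism; ${}_tH_1=0$ forces $\cA_{U/k}^0\to\cA_{X/k}^0$ to be an epimorphism, whence $\cA_{U/k}\to\cA_{X/k}$ is an epimorphism; and $T_{X/U,k}\df\ker(\cA_{U/k}^0\to\cA_{X/k}^0)=\coker(\NS_Z(X)^*\hookrightarrow S)$. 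Since $\NS_Z(X)^*$ is a subgroup of multiplicative type of the torus $S$, this cokernel is again a torus, and Cartier-dualizing the short exact sequence $0\to\NS_Z(X)^*\to S\to T_{X/U,k}\to 0$ yields the asserted $0\to T_{X/U,k}^*\to\underline{\CH}_{d-1}(Z)\to\NS_Z(X)\to 0$.

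The main obstacle is not any of these formal steps but the bookkeeping behind them: verifying that the triangle of Theorem~\ref{trunc} is natural in the smooth variety (so the displayed morphism of triangles exists and its left column is the asserted map on N\'eron--Severi duals), and identifying the connecting map $g^*$ delivered by the octahedron with the Cartier dual of the \emph{natural} map $\underline{\CH}_{d-1}(Z)\to\NS_Z(X)$; both are matters of tracing through the constructions of \S\ref{comp} and \S\ref{sect1} rather than of genuinely new input. It is pleasant to note that the surjectivity of $\cA_{U/k}^0\to\cA_{X/k}^0$ — which would be somewhat awkward to prove directly — drops out of the argument for free, from the vanishing of ${}_tH_3$.
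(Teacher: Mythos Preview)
Your argument is correct and takes a genuinely different route from the paper's. The paper proves the surjectivity of $\cA_{U/k}\to\cA_{X/k}$ and that its kernel is of multiplicative type by invoking an external classical fact: Milne's theorem that the abelian-variety part of the Albanese is a birational invariant of smooth varieties ($A_{U/k}\iso A_{X/k}$). Only after this is in hand does the paper appeal to Theorem~\ref{trunc} and Lemma~\ref{lsupports} to identify the character group of the kernel.

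You instead run the octahedral/$3\times 3$ argument on the morphism of triangles coming from Theorem~\ref{trunc} and the supports triangle, and extract \emph{everything} from the ${}_t\M$-homology: the surjectivity of $\cA^0_{U/k}\to\cA^0_{X/k}$ falls out of ${}_tH_1(\cone)=0$, and the identification $T_{X/U,k}\simeq S/\NS_Z(X)^*$ from matching ${}_tH_2$. This is more work but stays entirely inside the paper's machinery and in effect \emph{reproves} the birational invariance of $A_{X/k}$ as a corollary of Theorem~\ref{trunc} --- which, as you note, is a pleasant dividend. The bookkeeping you flag (naturality of the triangle in Theorem~\ref{trunc}, identifying the induced map on N\'eron--Severi duals) is genuine but routine; for the bare statement of the corollary you do not actually need to name $g$ as the cycle-class map, only to know its kernel and cokernel.
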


\begin{proof} To see that $\cA_{U/k}\to \cA_{X/k}$ is epi with kernel of multiplicative type, it
is sufficient to see that $\pi_0(\cA_{U/k})\iso \pi_0(\cA_{X/k})$ and that $A_{U/k}\iso
A_{X/k}$. The first isomorphism is obvious and the second one follows from \cite[Th.
3.1]{milne}. The characterisation of $T_{X/U,k}$ is then an immediate consequence of Theorem
\ref{trunc} and Lemma \ref{lsupports}; in particular, it is a torus.
\end{proof}

\subsection{$\RPic(X)$} \label{rpic}

Recall that for $X$ smooth projective $\cA_{X/k}^0 = A_{X/k}$ is the classical Albanese
abelian variety $\Alb (X)$. In the case where $X$ is obtained by removing a divisor $Y$ from a
smooth proper scheme $\bar X$, $\cA_{X/k}^0$, can be described as follows (\cf \cite{BSAP}).
Consider the (cohomological Picard) 1-motive $\Pic^+ (X)\df [\Div_Y^0(\bar X)\to \Pic^0 (\bar
X)]$: its Cartier dual is $\cA_{X/k}^0$ which can be represented as a semi-abelian variety
\[0\to T_{\bar X/X,k} \to \cA_{X/k}^0\to \Alb (\bar X)\to 0\]
where $ T_{\bar X/X,k}$ has
character group $\Div_Y^0(\bar X)$ according to Corollary~\ref{divzero}.

From the previous remarks and Corollary~\ref{clalb}, we deduce:

\begin{cor}\label{HRPic} If $X$ is smooth,  $\RA{1}(X)$ is
isomorphic to the $1$-motive $\Pic^+(X)$ of \cite{BSAP} (the Cartier dual of $\Alb^-(X)$). If $\bar X$ is a smooth compactification of $X$, then
$$\RA{i}(X) =
\begin{cases}
\relax [0\to \Z[\pi_0(X)]^*]& \text{if $i = 0$}\\
\relax [\Div_Y^0(\bar X)\to\Pic^0 (\bar X)] & \text{if $i= 1$}\\
\relax [\NS (X)\to 0] & \text{if $i= 2$}\\
0 & \text{otherwise}
\end{cases}$$
where $Y=\bar X - X$. 
\end{cor}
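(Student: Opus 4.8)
The plan is to deduce Corollary \ref{HRPic} entirely from the already-established duality $\RPic(X)=\LAlb(X)^\vee$ together with the explicit computation of $\LAlb(X)$ in Theorem \ref{trunc} (equivalently Corollary \ref{HLAlb}). Recall from Corollary \ref{rpdla} and the paragraph after Definition \ref{RPic} that $\RPic(M)^*=\LAlb(M)$, hence ${}^tH^i(\RPic(X))=({}_tH_i(\LAlb(X)))^*$, i.e. $\RA{i}(X)=\LA{i}(X)^\vee$. So it suffices to apply Cartier duality termwise to the list in Corollary \ref{HLAlb}. Cartier duality is exact on $\M[1/p]$ (Proposition \ref{pcd}c), so it commutes with the $t$-structures appropriately (Theorem \ref{tstr}: it exchanges ${}^t\M[1/p]$ and ${}_t\M[1/p]$), which is exactly what turns the ${}_tH_i$ computing $\LA{i}$ into the ${}^tH^i$ computing $\RA{i}$.

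First I would recall the three nontrivial Cartier duals needed. The $1$-motive $[\Z[\pi_0(X)]\to 0]$ (weight $0$, an Artin motive) has Cartier dual $[0\to \Z[\pi_0(X)]^*]$, the group of multiplicative type dual to the lattice $\Z[\pi_0(X)]$; this is immediate from Deligne's definition of $(\ )^*$ as recalled in \S \ref{1.8} (Lemma \ref{dualt}), since $\Ext(-,\G_m)$ exchanges discrete sheaves and groups of multiplicative type. Dually, $[0\to \NS_{X/k}^*]$ (a group of multiplicative type in degree $1$, i.e. pure of weight $-2$) dualises to $[\NS_{X/k}\to 0]$, which over the relevant base is $[\NS(X)\to 0]$. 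The interesting term is $i=1$: $\LA{1}(X)=[0\to\cA_{X/k}^0]$, a semi-abelian variety, and its Cartier dual is the cohomological Albanese $1$-motive of \cite{BSAP}. Here I would invoke Corollary \ref{clalb}, which already identifies $\LA{1}(X)$ with $\Alb^-(X)$ of \cite{BSAP}, and then quote from \loccit\ that the Cartier dual of $\Alb^-(X)$ is $\Pic^+(X)$; this gives $\RA{1}(X)\simeq \Pic^+(X)$, as asserted. Combining these with $\RA{i}(X)=0$ for $i\ne 0,1,2$ yields the first two sentences of the corollary.

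For the final displayed formula, specialising to the case where $X=\bar X\smallsetminus Y$ with $\bar X$ smooth proper and $Y$ a divisor, I would use the explicit description of $\cA_{X/k}^0$ recalled just above the corollary: dualising the torus-bundle presentation $0\to T_{\bar X/X,k}\to \cA_{X/k}^0\to \Alb(\bar X)\to 0$ (with $T_{\bar X/X,k}$ having character group $\Div_Y^0(\bar X)$, by Corollary \ref{divzero}), Cartier duality of semi-abelian varieties / $1$-motives turns this into the $1$-motive $[\Div_Y^0(\bar X)\to \Pic^0(\bar X)]=\Pic^+(X)$, using the Weil--Barsotti formula $\Pic^0(\bar X)\simeq \Alb(\bar X)^*$ and that the character group of the toric part of the dual is the lattice part of the original. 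The $i=2$ term $[\NS_{X/k}^*\to 0]$ dualises to $[\NS(X)\to 0]$ and the $i=0$ term to $[0\to\Z[\pi_0(X)]^*]$ as above. I do not anticipate a genuine obstacle here: the content is entirely in Theorem \ref{trunc} and the compatibility of Cartier duality with $t$-structures (Theorem \ref{tstr}); the only point requiring a little care is matching our normalisations with those of \cite{BSAP} for $\Pic^+$ and $\Alb^-$, but this is exactly the identification already made in Corollary \ref{clalb}, so the present corollary is its formal dual.
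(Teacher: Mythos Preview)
Your proposal is correct and follows exactly the approach the paper intends: the corollary is stated immediately after the paragraph describing $\Pic^+(X)$ as the Cartier dual of $\cA_{X/k}^0$, with the paper's only justification being ``From the previous remarks and Corollary~\ref{clalb}, we deduce.'' You have spelled out the details the paper leaves implicit---dualising Corollary~\ref{HLAlb} termwise via $\RA{i}(X)=\LA{i}(X)^*$ and invoking Theorem~\ref{tstr} for the exchange of $t$-structures---which is precisely the intended argument. (One small slip: in your final paragraph you wrote ``the $i=2$ term $[\NS_{X/k}^*\to 0]$'' where you meant $[0\to\NS_{X/k}^*]$, as you had correctly in the preceding paragraph.)
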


\section{1-motivic homology and cohomology of singular schemes}\label{comps}

 In this section, we start to compute the motives introduced in Section \ref{6} for singular schemes. Recall the convention made at the end of \S \ref{s8.1}. We also refer to  \S \ref{s8.1} for a discussion on how one could extend these results to any characteristic.

\subsection{$\cA_{X/k}$ for $X\in \Sch(k)$}\label{albs} In this subsection, we extend the
construction of $\cA_{X/k}$ to arbitrary reduced $k$-schemes of finite type, starting
from the case where $X$ is integral (which is treated in \cite[Sect. 1]{ram}). So far, $k$ may
be of any characteristic.

To make the definition clear:

\begin{defn} Let $X\in \Sch(k)$. We say that \emph{$\cA_{X/k}$ exists} if the functor
\begin{align*}
\AbS&\to Ab\\
G&\mapsto G(X)
\end{align*}
is corepresentable.
\end{defn}

First note that $\cA_{X/k}$ does not exist (as a semi-abelian scheme, at least) if $X$ is not
reduced. For example, for $X=\Spec k[\epsilon]$ with $\epsilon^2=0$, we have an exact sequence
\[0\to \G_a(k)\to \Map_k(X,\G_m)\to \G_m(k)\to 0\]
which cannot be described by $\Hom(\cA,\G_m)$ for any semi-abelian scheme $\cA$.

On the other hand, $M(X)=M(X_\red)$ for any $X\in \Sch(k)$, where $X_\red$ is the reduced
subscheme of $X$ (see proof of Lemma \ref{l12.3}), so we are naturally led to
neglect nonreduced schemes.

\begin{lemma}\label{lred} Let $Z\in \Sch(k)$, $G\in \AbS$ and $f_1,f_2:Z\rightrightarrows G$ be
two morphisms which coincide on the underlying topological spaces (thus, $f_1 = f_2$ if $Z$ is
reduced). Then there exists a largest quotient $\bar G$ of $G$ such that $\pi_0(G)\iso
\pi_0(\bar G)$ and the two compositions
\[Z\rightrightarrows G\to \bar G\]
coincide.
\end{lemma}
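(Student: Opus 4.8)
The statement is essentially the construction of an equaliser in the category $\AbS$, together with the constraint that $\pi_0$ should not change. The plan is to build $\bar G$ as the quotient of $G$ by the Zariski closure of a suitable subgroup, in the spirit of the proof of Proposition \ref{pladj}. Concretely, consider the morphism $h = f_1 - f_2: Z \to G$ (using the group structure of $G$); by hypothesis $h$ factors through the underlying topological space of $Z$ trivially, i.e. $h(Z)$ is set-theoretically the identity section, so its scheme-theoretic image $\overline{h(Z)}$ is a subscheme of $G$ supported on $\{e\}$. Taking the subgroup scheme $H \subseteq G^0$ generated by $\overline{h(Z)}$ (a closed subgroup scheme, necessarily infinitesimal since it is supported at $e$), one forms $\bar G := G/H$. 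Since $H \subseteq G^0$ and $H$ is infinitesimal, we have $\pi_0(G) \iso \pi_0(\bar G)$, and $G/H$ still lies in $\AbS$ because quotienting a semi-abelian variety by an infinitesimal subgroup yields a semi-abelian variety (after inverting $p$, which is our blanket assumption, one could even argue $H$ is trivial; but the clean statement works in any characteristic). The two compositions $Z \rightrightarrows G \to \bar G$ then agree because $h$ composed with $G \to \bar G$ kills $\overline{h(Z)}$ by construction.

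First I would make precise the reduction: replacing $Z$ by $Z_\red$ changes nothing for maps to a \emph{reduced} target but $G$ need not be reduced as a scheme in characteristic $p$; however $\Map_k(Z,G) = \Map_k(Z, G)$ only sees $Z_\red$ when $G$ is smooth, and objects of $\AbS$ (resp.\ after inverting $p$) are smooth, so in fact $f_1 = f_2$ whenever $Z$ is reduced and we may assume $Z$ is non-reduced; the content is purely about the infinitesimal discrepancy. Then I would carry out the construction of $H = \overline{h(Z)}^{\mathrm{grp}}$ and verify it is a closed infinitesimal subgroup scheme of $G^0$: this uses that the scheme-theoretic image of a morphism from a scheme of finite type is a closed subscheme of finite type, and that the subgroup it generates is again closed (a standard fact for group schemes of finite type over a field). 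Next, form $\bar G = G/H$ in the fppf (or, since $H$ is finite, even the flat) topology; identify $\pi_0(\bar G) = \pi_0(G)$ and $\bar G^0 = G^0/H$ semi-abelian. Finally, check the universal property: if $G \to G'$ is any quotient in $\AbS$ with $\pi_0$ preserved such that the two compositions $Z \rightrightarrows G'$ agree, then $h$ composed with $G \to G'$ is the trivial section, so $\overline{h(Z)}$ and hence $H$ maps to $e$ in $G'$, whence $G \to G'$ factors through $\bar G$. This gives the ``largest'' claim.

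The main obstacle, I expect, is the careful handling of the group generated by the scheme-theoretic image: one must ensure $H$ is genuinely a \emph{closed} subgroup scheme (so that the quotient exists as a scheme) and that it is contained in $G^0$ (so that $\pi_0$ is unaffected). Containment in $G^0$ is immediate because $\overline{h(Z)}$ is connected (image of the connected closure issue — actually $\overline{h(Z)}$ is supported at a point, hence connected) and contains $e$. Closedness of the subgroup generated by a closed subscheme of a finite-type group scheme over a field is standard (e.g.\ SGA\,3), so this is bookkeeping rather than a real difficulty. A secondary subtlety is that over a non-perfect field one might worry about base-change behaviour of $\pi_0$, but since $G \in \AbS$ has $\pi_0(G)$ a lattice (étale), and $H$ is infinitesimal, $\pi_0$ is literally unchanged; one invokes the compatibility of $\pi_0$ with the exact sequence $0 \to H \to G \to \bar G \to 0$ as in the earlier discussion around Theorem \ref{t3.2.4}. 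Once these points are nailed down the universal property is formal.
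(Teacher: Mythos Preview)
Your construction and universal-property argument for ``largest'' are essentially correct, but one claim is false: the subgroup $H$ generated by $\overline{h(Z)}$ need \emph{not} be infinitesimal. Take $G=\G_m$ in characteristic $0$, $Z=\Spec k[\epsilon]$ with $\epsilon^2=0$, and $h$ given by the unit $1+\epsilon\in k[\epsilon]^*$; then $\overline{h(Z)}$ is the first infinitesimal neighbourhood of $1$, but the only connected closed subgroups of $\G_m$ in characteristic $0$ are $\{1\}$ and $\G_m$, so $H=\G_m$. (Your aside that inverting $p$ forces $H$ to be trivial, and your remark that maps to a smooth $G$ ``only see $Z_\red$'', rest on the same misconception: formal smoothness gives existence of infinitesimal lifts, not uniqueness.) Fortunately nothing in your argument actually requires $H$ to be infinitesimal: you separately and correctly note that $H$ is \emph{connected}, being generated by a connected subscheme through $e$, whence $H\subseteq G^0$; and $G^0/H$ is semi-abelian for any closed connected subgroup $H$ of a semi-abelian variety. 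With ``infinitesimal'' replaced by ``connected'' throughout, your proof stands.

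The paper takes a shorter non-constructive route: quotients of $G$ preserving $\pi_0$ correspond to closed subgroups $H^0\subseteq G^0$; the subset of those $H^0$ for which the two compositions $Z\rightrightarrows G/H^0$ agree is nonempty (it contains $G^0$ itself, since maps to the \'etale scheme $\pi_0(G)$ factor through $Z_\red$), is closed under finite intersection, and therefore has a smallest element by the Artinian property of the lattice of closed subgroups of $G^0$ (compare the proof of Proposition~\ref{pladj}). Your $H$ is exactly this minimal element, so the two arguments yield the same $\bar G$; yours has the merit of identifying it explicitly as $G$ modulo the subgroup generated by $\overline{(f_1-f_2)(Z)}$, while the paper's three-line argument sidesteps scheme-theoretic images and generated subgroups altogether.
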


\begin{proof} The set $S$ of such quotients $\bar G$ is in one-to-one correspondence with the
set of closed subgroups $H^0\subseteq G^0$. Clearly $\pi_0(G)\in S$, and if $\bar
G_1=G/H_1^0\in S$, $\bar G_2=G/H_2^0\in S$, then $\bar G_3 = G/(H_1^0\cap H_2^0)\in S$. Therefore
$S$ has  a smallest element, since it is Artinian (compare proof of Proposition \ref{pladj}).
\end{proof}

\begin{propose}\label{albexists} $\cA_{X/k}$ exists for any reduced $X\in \Sch(k)$.
\end{propose}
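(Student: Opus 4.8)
The plan is to prove the proposition by induction on the pair $(\dim X, r)$, where $r$ is the number of irreducible components of $X$, ordered lexicographically (dimension first); the base case, $X$ integral, is exactly \cite[Sect. 1]{ram}. First I would dispose of the disconnected case: if $X=X'\sqcup X''$ with $X',X''$ nonempty, each has strictly fewer components, so by induction $\cA_{X'/k}$ and $\cA_{X''/k}$ exist, and since $\AbS$ is additive the product $\cA_{X'/k}\times\cA_{X''/k}$ corepresents $G\mapsto G(X')\times G(X'')=G(X)$. So I may assume $X$ connected; if $X$ has one component it is integral (base case), so assume $r\ge 2$. Pick an irreducible component $X_1$ (with its reduced structure it is integral), let $Y$ be the union of the remaining components with reduced structure, let $Z=X_1\cap Y$ be the \emph{scheme-theoretic} intersection and $Z_\red$ its reduction. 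Since $X$ is connected, $Z\ne\emptyset$. Now $Y$ has $r-1$ components, so $\cA_{Y/k}$ exists by induction; $Z$ is a proper closed subscheme of the irreducible $X_1$, so $\dim Z_\red<\dim X_1\le\dim X$, and $\cA_{Z_\red/k}$ exists by induction; and $\cA_{X_1/k}$ exists because $X_1$ is integral.

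Next I would build a candidate. Since $X_1$ and $Y$ are closed subschemes covering $X$, one has $X=X_1\cup_Z Y$ scheme-theoretically, so for every $G$ there is a Mayer--Vietoris identification $\Map_k(X,G)=\Map_k(X_1,G)\times_{\Map_k(Z,G)}\Map_k(Y,G)$. Let $\phi_1:\cA_{Z_\red/k}\to\cA_{X_1/k}$ and $\phi_2:\cA_{Z_\red/k}\to\cA_{Y/k}$ be the maps induced by $Z_\red\into X_1$, $Z_\red\into Y$ and corepresentability, and form the pushout $P=(\cA_{X_1/k}\times\cA_{Y/k})/N$ of group schemes, where $N$ is the scheme-theoretic image of $(\phi_1,-\phi_2)$. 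One checks $P\in\AbS$: $P$ is smooth (a quotient of a smooth group by a closed subgroup), $P^0$ is semi-abelian (a quotient of a semi-abelian variety by a closed subgroup scheme), and $\pi_0(P)=\pi_0(\cA_{X_1/k}\times\cA_{Y/k})/\mathrm{im}(\pi_0(\cA_{Z_\red/k}))$ is the lattice $\Z[\pi_0(X)]=\Z$, since the relations imposed are differences of unit vectors and simply reflect the topological pushout $X_1\cup_{Z_\red}Y=X$. By construction $\Hom_{\AbS}(P,G)=\Map_k(X_1,G)\times_{\Map_k(Z_\red,G)}\Map_k(Y,G)$, i.e. $P$ corepresents the ``weaker'' functor obtained by replacing $Z$ by $Z_\red$.

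To correct this, write $\mu_{1}:X_1\to\cA_{X_1/k}\to P$ and $\mu_{Y}:Y\to\cA_{Y/k}\to P$. These agree on $Z_\red$, hence agree topologically as maps $Z\rightrightarrows P$, so Lemma \ref{lred} (applied to $Z$, $P$, $\mu_1|_Z$, $\mu_Y|_Z$) furnishes the largest quotient $q:P\to\bar P$ with $\pi_0(P)\iso\pi_0(\bar P)$ such that $q\mu_1|_Z=q\mu_Y|_Z$. I would then set $\cA_{X/k}:=\bar P\in\AbS$, with $\bar a_X:X\to\bar P$ glued from $q\mu_1,q\mu_Y$, which now agree \emph{scheme-theoretically} on $Z$. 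For the universal property one shows $\psi\mapsto\psi\circ\bar a_X:\Hom_{\AbS}(\bar P,G)\to G(X)$ is a natural isomorphism. Injectivity is easy: the images of $\cA_{X_1/k}$ and $\cA_{Y/k}$ generate $P$ and $q$ is an epimorphism, so $\psi\bar a_X=0$ forces $\psi=0$ by uniqueness in the corepresentability of the $\cA_{X_i/k}$. For surjectivity, given $h\in G(X)$, i.e. $(h_1:X_1\to G,\ h_2:Y\to G)$ agreeing on $Z$, corepresentability gives $\tilde h_1,\tilde h_2$ with $\tilde h_1\phi_1=\tilde h_2\phi_2$ (as $h_1,h_2$ agree on $Z_\red$), so $(\tilde h_1,\tilde h_2)$ kills $N$ and descends to $\tilde h:P\to G$ with $\tilde h\mu_1=h_1$, $\tilde h\mu_Y=h_2$. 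Then $\tilde h\mu_1|_Z=h_1|_Z=h_2|_Z=\tilde h\mu_Y|_Z$, and since $\mu_1|_Z,\mu_Y|_Z$ agree topologically their ``difference'' $Z\to P$ factors through $(\ker\tilde h)^0\subseteq P^0$; hence $(\ker\tilde h)^0$ lies in the family over which Lemma \ref{lred} minimizes, so $\ker q\subseteq\ker\tilde h$ and $\tilde h=\psi q$, giving $\psi\bar a_X=h$.

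The part I expect to be the main obstacle is precisely the surjectivity step: moving from ``the two maps $X_1\to P$ and $Y\to P$ agree on $Z_\red$ but only topologically on the nilpotent-thickened intersection $Z$'' to an honest factorization through $\bar P$. Lemma \ref{lred} is engineered for exactly this, and its $\pi_0$-preserving clause is what keeps $\bar P=\cA_{X/k}$ inside $\AbS$ with the expected component group. A secondary (more bookkeeping) obstacle is checking that $P$, $P^0$ and $\pi_0(P)$ stay in $\AbS$; the crucial inputs there are that over the perfect field $k$ the image of a homomorphism of commutative algebraic groups is smooth, and that a quotient of a semi-abelian variety by a closed subgroup scheme is again semi-abelian.
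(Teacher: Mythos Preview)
Your proof is correct and follows the same underlying strategy as the paper's: induct, build an approximate Albanese as a cokernel from the Albanese schemes of the pieces, then apply Lemma~\ref{lred} to correct for non-reduced intersections. The difference is in the decomposition. The paper treats all irreducible components $Z_1,\dots,Z_n$ at once, inducting only on $\dim X$ and invoking the \v Cech-type exact sequence $0\to\Map_k(X,G)\to\bigoplus\Map_k(Z_i,G)\to\bigoplus\Map_k(Z_{ij},G)$; you peel off one component at a time, inducting on $(\dim X,r)$ and using the two-piece pushout $X=X_1\cup_Z Y$. This is not merely cosmetic: your Mayer--Vietoris identity holds for \emph{every} target $G$, since for $X$ reduced with $X_1$ one component and $Y$ the reduced union of the rest one has $I_{X_1}\cap I_Y=0$ and hence $\cO_X=\cO_{X_1}\times_{\cO_Z}\cO_Y$. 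The paper's $n$-fold sequence, by contrast, is asserted ``for any commutative group scheme $G$'', and this fails already for three concurrent lines in $\Aff^2$ with $G=\G_a$ (there are functions on the three lines agreeing at the origin that do not glue). Only $G\in\AbS$ is actually needed, and that may well hold, but your inductive splitting sidesteps the question; the price you pay is the heavier induction variable and a slightly longer verification of the universal property, which you carry out carefully.
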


\begin{proof} When $X$ is integral, this is \cite[Sect. 1]{ram}. Starting from this case, we
argue by induction on $\dim X$. Let $Z_1,\dots, Z_n$ be the irreducible components of $X$ and
$Z_{ij} = Z_i\cap Z_j$. 

By induction, $\cA_{ij}\df\cA_{(Z_{ij})_\red/k}$ exists for any $(i,j)$. Consider
\[\cA=\coker\left(\bigoplus \cA_{ij}\to \bigoplus \cA_i\right)\]
with $\cA_i = \cA_{Z_i/k}$. Let $Z = \coprod Z_{ij}$ and $f_1,f_2:Z\rightrightarrows \coprod
Z_i$ be the two inclusions: the compositions $f_1,f_2:Z\rightrightarrows \cA$ verify the
hypothesis of Lemma \ref{lred}. Hence there is a largest quotient $\cA'$ of $\cA$ with
$\pi_0(\cA)\iso \pi_0(\cA')$, equalising $f_1$ and $f_2$. Then the composition
\[\coprod Z_i\to \bigoplus \cA_i\to \cA'\]
glues down to a morphism $X\to \cA'$. It is clear that $\cA'=\cA_{X/k}$ since,  for any
commutative group scheme $G$, the sequence
\[0\to \Map_k(X,G)\to \bigoplus \Map_k(Z_i,G)\to \bigoplus \Map_k(Z_{ij},G)\]
is exact. 
\end{proof}

Unfortunately this result is only useful to understand $\LA{1}(X)$ for $X$ normal
as we shall see below. In general, we shall have to consider Albanese schemes for the $\eh$
topology.

\subsection{The $\eh$ topology} In this subsection and the next ones, we assume that $k$ is of characteristic $0$. Recall that $\HI_\et=\HI_\et^s$ in this case by Proposition \ref{lD.1.3}.

The following \'etale analogue of the cdh topology was first considered by Thomas Geisser
\cite[Def. 2.1]{geisser} (it is the same as \cite[Def. 4.1.9]{V}, replacing the Nisnevich topology by the \'etale topology):

\begin{defn}\label{deh} The \emph{$\eh$-topology} on $\Sch(k)$ is the topology generated  by
the \'etale topology and coverings defined by abstract blow-ups (as in \S \ref{blowups}). 
We shall denote by $\EH$ the category of abelian $\eh$-sheaves.\index{$\EH$} 
\end{defn}

Let $\epsilon:\Sch(k)_\eh\to \Sch(k)_\et$ be the obvious morphism of sites. If
$\cF$ is an \'etale sheaf on $\Sch(k)$, we denote by $\cF_\eh$ its $\eh$ sheafification (that is,
$\cF_\eh=\epsilon^*\cF$). \index{$\cF_\eh$} We shall abbreviate $H^*_\eh(X,\cF_\eh)$ to
$H^*_\eh(X,\cF)$. Similarly, for $C\in D (\EST)$ we denote $C_\eh$ the $\eh$ sheafification
of the complex $C$ after forgetting transfers.

As in \cite[Prop. 13.27]{VL} we have:

 \begin{lemma}\label{ehs} a) Let $\cF\in \HI_\et$ and $X\in \Sm(k)$. Then $ \cF (X)\simeq  \cF_\eh(X) $ and (more generally) 
$H^q_\et(X,\cF)\simeq H^q_\eh(X,\cF)$ for all $q\geq 0$.\\
b) The isomorphism of a) remains true when replacing $\cF$ by any complex $C\in D (\EST)$, whose cohomology sheaves are in $\HI_\et$.
\end{lemma} 

\begin{proof} a) We adapt the proof in \cite[Prop. 13.27]{VL} to the $\eh$-topology. It is not difficult to adapt the facts 13.19--13.24 in \cite[\S13]{VL} once we have the analogue of 12.28 for the $\eh$-topolgy, \ie the fact that each $\eh$-cover of $X$ has a refinement $U \to X' \to X$ where $U$ is \'etale over $X'$ and $X'$ is a composition of blow ups along smooth centers. The latter is proven in \cite[Cor.2.6]{geisser}.

b) We proceed in two steps. If $C$ is bounded below, this follows from a) by comparing two (convergent) hypercohomology spectral sequences. In general, we may consider the good lower truncations of $C$, and the isomorphism follows from the bounded below case by comparing two Milnor exact sequences.
\end{proof}

As in \cite[Th. 4.1.10]{V} and \cite[Th. 14.20]{VL} for the cdh-topology we get:

\begin{propose}\label{peh} Let $C\in \DM_{-,\et}^\eff$. Then, for any $X\in \Sch(k)$
and any $q\in\Z$ we have
\[\Hom_{\DM_{-,\et}^\eff}(M_{\et}(X),C[q])\iso H^q_\eh(X,C).\]
In particular, if $X$ is smooth then $H^q_\et(X,C)\simeq  H^q_\eh(X,C)$.
\end{propose}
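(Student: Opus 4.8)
The statement is the $\eh$-analogue of Voevodsky's comparison between Nisnevich motivic cohomology and $\cdh$ motivic cohomology (\cite[Th.~4.1.10]{V}, with more details in \cite[Proof of Th.~14.20]{VL}). The plan is to transpose that argument verbatim, replacing the Nisnevich topology by the \'etale topology and the $\cdh$ topology by the $\eh$ topology. The key point is that $\DM_{-,\et}^\eff$ is built as a Bousfield localisation of $D^-(\Shv_\et(SmCor(k)))$ at $\Aff^1$, exactly as $\DM_-^\eff$ is built from $D^-(\Shv_\Nis(SmCor(k)))$, so every step of Voevodsky's proof has a literal \'etale counterpart.

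\textbf{Steps.} First I would recall that $\DM_{-,\et}^\eff$ embeds in $D^-(\Shv_\et(Sm(k)))$ and that for $C\in \DM_{-,\et}^\eff$ one has $\Hom_{\DM_{-,\et}^\eff}(M_\et(X),C[q]) = \HH^q_\et(X,C)$ for $X$ smooth (the \'etale analogue of \cite[Prop.~3.2.8]{V}, already used in \S\ref{2.3}). Next I would extend $C$ from $Sm(k)_\et$ to a complex $C_\eh$ of $\eh$-sheaves on $Sch(k)$ by $\eh$-sheafification of the restriction along the continuous map of sites $\pi\colon (Sch(k))_\eh \to (Sm(k))_\et$. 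Then I would use the blow-up distinguished triangle in $\DM_{-,\et}^\eff$ (the \'etale image of \cite[\S~4.1]{V}) together with resolution of singularities (available since $\car k=0$) to do a descending induction on $\dim X$: for $X$ smooth the two sides agree by the already-cited comparison, and for $X$ singular one writes an abstract blow-up square $(\tilde X, Z, \tilde Z)$ with $\tilde X$ smooth and $\dim Z, \dim\tilde Z < \dim X$, getting compatible Mayer--Vietoris long exact sequences on both the $\Hom_{\DM_{-,\et}^\eff}(M_\et(-),C[*])$ side (using that $M_\et(-)$ sends such squares to distinguished triangles) and the $H^*_\eh(-,C_\eh)$ side (using that an abstract blow-up square is an $\eh$-cover, so $\eh$-cohomology has descent for it). A five-lemma/induction argument then gives the isomorphism for all $X\in Sch(k)$. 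Finally, for $X$ smooth the induction base case reads $H^q_\et(X,C_\et)\iso H^q_\eh(X,C_\eh)$, which is the last assertion.

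\textbf{Main obstacle.} The routine but slightly delicate point is the comparison of the \emph{hypercohomology spectral sequences}: one must know that $\eh$-sheafification of a strictly homotopy invariant \'etale sheaf with transfers behaves well, i.e.\ that the natural map $H^*_\et(X,C_\et)\to H^*_\eh(X,C_\eh)$ is an isomorphism for $X$ smooth. In characteristic $0$ with $\HI_\et=\HI_\et^s$ (Proposition~\ref{pD.1.4}) this is exactly the \'etale-topology version of the statement that Nisnevich and $\cdh$ cohomology agree on smooth schemes, and it follows from cohomological descent for resolutions together with the fact that the relevant sheaves are already $\eh$-local on $Sm(k)$; this is precisely where the hypothesis $\car k = 0$ (hence availability of resolution of singularities) is used, and it is the step I expect to require the most care in transcribing from \cite{VL}. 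Everything else is formal manipulation of triangulated categories and Grothendieck topologies.
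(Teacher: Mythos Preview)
Your proposal is correct and matches the paper's approach exactly: the paper gives no proof beyond the prefatory remark ``As in \cite[Th.~4.1.10]{V} (see \cite[Proof of Th.~14.20]{VL} for more details)'' followed by a \qed, and what you have written is precisely an unpacking of that reference---transpose the Nisnevich/$\cdh$ argument to the \'etale/$\eh$ setting, use abstract blow-up descent plus resolution of singularities for the induction, and treat the smooth case as the base. The paper also notes that \cite[Th.~4.3]{geisser} gives an alternative proof of the smooth comparison, which is the point you flag as the main obstacle.
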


(See \cite[Th. 4.3]{geisser} for a different proof of the second statement for $C=\Z_\et (n)$.)

\begin{proof} We adapt the proof in \cite[Th. 14.20]{VL} to the $\eh$-topology. Denote by $\Z (X)$ the  \'etale sheaf [associated to] $U\mapsto  \Z[\Hom_{\Sm(k)}(U,X)]$. By considering the exact forgetful functor $\omega :\EST\to\ES$ as in \S {3.11}, we see that $\Z (X)\into \omega L(X)$ is a subsheaf. In $D^-(\EST)$ we have a canonical map $L(X)\to M_\et (X)\df RC_\et L(X)$ (by the adjunction in Theorem \ref{pD.2}). By forgetting transfers and composing we then get maps
 \[\Hom_{\DM_{-,\et}^\eff}(M_{\et}(X),C[q])\to\Hom_{D^-(\ES)}(\Z (X),\omega C[q])\iso\ H^q_\et (X,C) \]
 whose composition is an isomorphism for $X$ smooth by \cite[Ex. 6.25]{VL} (the second map is always an isomorphism, \cf \cite[Lemma 12.12]{VL}).
 By $\eh$ sheafification we also get maps
   \[ \Hom_{D^-(\ES)}(\Z (X),\omega C[q])\by{\epsilon^*}\Hom_{D^-(\EH)}(\Z (X)_{\eh},C_\eh[q])\iso H^q_\eh(X,C)\]
where the second map is clearly an isomorphism (see \cite[Lemma 3.1]{geisser}) and the first map is an isomorphism for $X$ smooth by 
Lemma \ref{ehs} b).

In general, for any $X\in \Sch(k)$ the claimed isomorphism follows  as in \cite[Prop. 3.1]{geisser} from blow-up induction (Lemma \ref{lbl} below).\end{proof}
 
\subsection{Blow-up induction} 
We just used the following lemma:

\begin{lemma} \label{lbl} Let $\cA$ be an abelian category.\\
a) Let $\cB\subseteq \cA$ be a thick subcategory and $H^*:\Sch(k)^\op\to \cA^{(\N)}$ a functor with the following property: 
\begin{quote} $(\dagger)$ given an abstract blow-up  as in \S \ref{blowups}, we have a long exact sequence
\[\dots\to H^i(X)\to H^i(\tilde X)\oplus H^i(Z)\to H^i(\tilde Z)\to H^{i+1}(X)\to\dots\]
\end{quote}
Let $n\ge 0$, and assume that $H^i(X)\in \cB$ for $i\le n$ and $X\in \Sm(k)$. Then $H^i(X)\in \cB$ for $i\le n$ and all $X\in \Sch(k)$.\\
b) Let $H_1^*,H_2^*$ be two functors as in a) and $\phi^*:H_1^*\to H_2^*$ be a natural transformation. Let $n\ge 0$, and suppose that $\phi^i_X$ is an isomorphism for all $X\in \Sm(k)$ and $i\le n$. Then $\phi^i_X$ is an isomorphism for all $X\in \Sch(k)$ and $i\le n$.\\
We get the same statements as a) and b) by replacing ``$i\le n$" by ``$i\ge n+\dim X$".
\end{lemma}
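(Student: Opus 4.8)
\textbf{Proof plan for Lemma \ref{lbl}.} The plan is to prove both statements by induction on $\dim X$, using resolution of singularities (available since $\car k=0$) to reduce from arbitrary $X$ to smooth $X$ via the long exact sequences furnished by abstract blow-ups. First I would treat the statement ``$i\le n$''. If $X$ is smooth there is nothing to prove, so suppose $X$ is singular of dimension $d$ and assume the assertion for all schemes of dimension $<d$. If $X$ has several irreducible components $X_1,\dots,X_r$ with $r\ge 2$, set $Y=X_2\cup\dots\cup X_r$; then $(X_1,Y)$ (more precisely the blow-up of $X$ along $X_1\cap Y$, which is an isomorphism away from $X_1\cap Y$) gives an abstract blow-up, the correction terms $X_1\cap Y$ and the exceptional locus have dimension $<d$, and by induction on $r$ together with the dimension induction we are reduced to $X$ irreducible (hence, after passing to $X_{\red}$ which changes nothing since $H^*$ only depends on the reduced scheme — here one uses Lemma \ref{l12.3}, or rather that $L(X)=L(X_{\red})$ — we may assume $X$ integral). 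Now take a resolution of singularities $p:\tilde X\to X$ and let $Z\subsetneq X$ be a closed subscheme containing the singular locus with $p^{-1}(X-Z)\iso X-Z$; then $\tilde X$ is smooth, $Z$ and $\tilde Z=p^{-1}(Z)$ have dimension $<d$, and the long exact sequence
\[\dots\to H^i(X)\to H^i(\tilde X)\oplus H^i(Z)\to H^i(\tilde Z)\to H^{i+1}(X)\to\dots\]
shows, using that $\cB$ is thick (stable under kernels, cokernels and extensions in $\cA$), that $H^i(X)\in \cB$ for $i\le n$: indeed $H^i(\tilde X)\in\cB$ for $i\le n$ by the smooth hypothesis, $H^i(Z),H^i(\tilde Z)\in\cB$ for $i\le n$ (in fact for all $i$) by the dimension induction, and $H^i(X)$ is an extension of a subobject of $H^i(\tilde X)\oplus H^i(Z)$ by a quotient of $H^{i-1}(\tilde Z)$.

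For part b) the argument is the same five-lemma-style bootstrap: with $\phi^*:H_1^*\to H_2^*$ a map of such functors, run the two long exact sequences in parallel, map one to the other, and apply the five lemma. In the reduction to $X$ integral one again uses the Mayer--Vietoris/blow-up triangles for both $H_1$ and $H_2$; in the resolution step one uses that $\phi^i$ is an isomorphism on $\tilde X$ (smooth) for $i\le n$ and on $Z,\tilde Z$ (lower dimension) for all $i$ by the dimension induction, and the five lemma applied to the morphism of long exact sequences at degrees $i-1,i,i+1$ gives that $\phi^i_X$ is an isomorphism for $i\le n$.

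Finally, for the variant with ``$i\le n$'' replaced by ``$i\ge n+\dim X$'', I would run literally the same induction, only bookkeeping the dimension shift: in the blow-up sequence the correction terms $Z,\tilde Z$ have $\dim Z,\dim\tilde Z\le \dim X-1$, so the inductive hypothesis gives $H^i(Z)\in\cB$ (resp. $\phi^i_Z$ iso) for $i\ge n+\dim Z$, in particular for $i\ge n+\dim X-1$, which is exactly the range needed to conclude $H^i(X)\in\cB$ (resp. $\phi^i_X$ iso) for $i\ge n+\dim X$ from the long exact sequence at degrees $i-1,i,i+1$; similarly $\tilde X$ is smooth of the same dimension as $X$, so the smooth hypothesis applies in degrees $\ge n+\dim\tilde X=n+\dim X$. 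No new idea is required here, just care with the indices.

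The main obstacle I anticipate is purely expository rather than mathematical: making precise the reduction to the integral case (handling reducible and non-reduced $X$, and checking that the abstract blow-up axiom is preserved under these reductions) and keeping the degree ranges consistent between the two versions of the statement. All of this is standard ``blow-up induction'' in the style of \cite[\S 4.1]{V}; the only genuine input is resolution of singularities in characteristic $0$ and the thickness of $\cB$, both of which are granted.
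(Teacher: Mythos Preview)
Your argument is correct and follows the same blow-up induction as the paper: induction on $\dim X$, resolution of singularities for integral $X$, and a reduction from general $X$ to its irreducible components via an abstract blow-up. The only cosmetic difference is that the paper handles the reducible case in one shot by taking $\tilde X=\coprod Z_i$ and $Z=\bigcup_{i\ne j}Z_i\cap Z_j$ (a single cdh cover), whereas you peel off one component at a time with an auxiliary induction on $r$; both work, the paper's version is just a bit shorter. Your description of the cover in the reducible step as ``the blow-up of $X$ along $X_1\cap Y$'' is slightly off---what you want is simply $\tilde X=X_1\sqcup Y\to X$ with $Z=X_1\cap Y$, which is already an abstract blow-up in the required sense without any actual blowing up.
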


\begin{proof} Induction on $\dim X$ in two steps: 1) if $X$ is integral, choose a resolution of singularities $\tilde X\to X$; 2) in general, if $Z_1,\dots, Z_r$ are the irreducible components of $X$, choose $\tilde X=\coprod Z_i$ and $Z=\bigcup_{i\ne j} Z_i\cap Z_j$.
\end{proof}

\begin{examples}\label{ex11.1} 1) Thanks to \cite[Th. 4.1.10]{V} and Proposition \ref{peh}, cdh
and $\eh$ cohomology  have the property $(\dagger)$ (here $\cA=$ abelian groups).
(See also \cite[Prop. 3.2]{geisser}.)

2) \'Etale cohomology with torsion coefficients  has the property $(\dagger)$ by \cite[Prop.
2.1]{pw} (recall that the proof of \loccit relies on the proper base change theorem).
\end{examples}

Here is a variant of Lemma \ref{lbl}:

\begin{propose}\label{pdescent} a) Let $X\in \Sch(k)$ and $\Xs \to X$ be a hyperenvelope in the
sense of Gillet-Soul\'e \cite[1.4.1]{gs}. Let $\tau = \cdh$ or $\eh$. Then, for any (bounded
below) complex of sheaves $C$ over $\Sch(X)_\tau$, the augmentation map
\[H^*_\tau(X,C)\to H^*_\tau(\Xs,C)\]
is an isomorphism.\\
b) Suppose that $X_0$ and $X_1$ are smooth and $\cF$ is a homotopy invariant Nisnevich  (if
$\tau=\cdh$) or \'etale  (if $\tau=\eh$) sheaf with transfers. Then we have
\[\cF_\tau(X) = \ker(\cF(X_0)\to \cF(X_1)).\]
\end{propose}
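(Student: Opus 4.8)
For part a), the hyperenvelope $\Xs\to X$ comes (essentially by definition, cf.\ \cite[1.4.1]{gs}) from a simplicial scheme together with a spectral sequence; the key input is that a proper cdh- or $\eh$-cover is ``universally of cohomological descent''. Concretely, I would invoke the fact that both the cdh and $\eh$ topologies are generated by the Nisnevich (resp.\ \'etale) topology together with abstract blow-ups, so that every hyperenvelope $\Xs\to X$ becomes, after sheafification, an augmented simplicial object which is a ``local equivalence'' in the sense that the induced map of representable sheaves $\Z_\tau(\Xs)\to \Z_\tau(X)$ is a quasi-isomorphism (each blow-up square becoming a distinguished square, hence a homotopy pushout of $\tau$-sheaves). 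This is exactly the argument behind \cite[Th.~4.1.9, 4.1.10]{V}, and it is spelled out in \cite[Proof of Th.~14.20]{VL} for the Nisnevich/cdh case; replacing the Nisnevich topology by the \'etale topology throughout gives the $\eh$ case (this is precisely the kind of ``carries over without change'' replacement used already for Proposition \ref{peh}). Granting $\Z_\tau(\Xs)\iso \Z_\tau(X)$ in the derived category of $\tau$-sheaves, applying $\RHom(-,C)$ yields the asserted isomorphism $H^*_\tau(X,C)\iso H^*_\tau(\Xs,C)$ via the descent spectral sequence. The one point requiring care is the boundedness/convergence: one needs $C$ bounded below and the hyperenvelope to be of bounded (cohomological) dimension, which is guaranteed because $\Xs$ may be chosen with $X_n$ of dimension $\le \dim X$ for all $n$ and with the simplicial structure ``split'' in high degrees in the sense of \cite[1.4.1]{gs}; then the spectral sequence lives in a region where it converges.

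For part b), I would specialise a) to a \emph{two-term} hyperenvelope: since $X$ admits a hyperenvelope $\Xs$ with $X_0,X_1$ smooth (one first resolves and then, inductively on $\dim X$, resolves the components of the singular locus, exactly as in the blow-up induction Lemma \ref{lbl}; one may arrange $X_0,X_1$ smooth and the higher $X_n$ contributing nothing in the range of interest), the descent isomorphism of a) collapses. More precisely, for a homotopy invariant Nisnevich (resp.\ \'etale) sheaf with transfers $\cF$, which is automatically a complex concentrated in degree $0$ in $\DM_-^\eff$ (resp.\ $\DM_{-,\et}^\eff$) by the strict homotopy invariance recalled in \S\ref{ur} (using $\HI_\et=\HI_\et^s$ in characteristic $0$), the cohomology $H^*_\tau(-,\cF)$ may be computed as $\Hom_{\DM}(M_\tau(-),\cF[*])$ by \cite[Th.~4.1.10]{V} for $\tau=\cdh$ and by Proposition \ref{peh} for $\tau=\eh$. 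Part a) then gives an exact sequence
\[
0\to \cF_\tau(X)\to \cF(X_0)\to \cF(X_1)
\]
coming from the low-degree terms of the descent spectral sequence (using $H^i_\tau(X_j,\cF)=H^i_\et(X_j,\cF)$ for $X_j$ smooth, again by \cite[Th.~4.1.10]{V} / Proposition \ref{peh}, and the vanishing $H^0$ in the relevant bidegree beyond the $0$-simplices), whence the claimed identification $\cF_\tau(X)=\ker(\cF(X_0)\to\cF(X_1))$.

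\textbf{Main obstacle.} The genuine work is entirely in part a), and specifically in verifying that a hyperenvelope is a local equivalence for the cdh/$\eh$ topology: one must check that the simplicial scheme $\Xs$, built from iterated abstract blow-up/resolution data, really does sheafify to a quasi-isomorphism $\Z_\tau(\Xs)\to \Z_\tau(X)$, which amounts to showing each blow-up square is a distinguished (homotopy-pushout) square in the $\tau$-topology and then running a spectral-sequence/skeletal-filtration argument to pass from the squares to the whole simplicial object. For the Nisnevich/cdh case this is Voevodsky's theorem; the only thing to say is that the \'etale refinement changes nothing, since the argument only uses that blow-up squares are Mayer--Vietoris squares and that proper birational maps of a fixed dimension give bounded resolutions --- properties insensitive to whether one starts from the Nisnevich or the \'etale topology. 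I expect the proof in the paper to be a one- or two-line reduction to \cite[Th.~4.1.9]{V} and \cite{VL} exactly along these lines, plus a sentence deducing b) from a) as above.
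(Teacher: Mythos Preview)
Your proposal is correct, and for part b) it is essentially identical to the paper's argument: apply the descent spectral sequence from a) with $C=\cF_\tau[0]$ to obtain the short exact sequence $0\to \cF_\tau(X)\to \cF_\tau(X_0)\to \cF_\tau(X_1)$, then invoke Proposition~\ref{peh} (resp.\ \cite[Th.~4.1.10]{V}) to identify $\cF_\tau(X_j)$ with $\cF(X_j)$ for the smooth $X_j$.

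For part a), your framing differs slightly from the paper's. You argue via the quasi-isomorphism $\Z_\tau(\Xs)\to\Z_\tau(X)$ of representable sheaves, justified by the fact that abstract blow-up squares are distinguished in the $\tau$-topology, and then apply $\RHom(-,C)$. The paper instead observes (citing \cite[Lemma~12.26]{VL} and \cite[p.~46]{bondarko}) that a hyperenvelope is in particular a proper $\tau$-hypercovering, and then appeals directly to the standard theory of cohomological descent for proper hypercoverings (\`a la SGA4). Both routes are valid and closely related---your local-equivalence statement is essentially equivalent to saying $\Xs\to X$ is of cohomological descent---but the paper's is shorter because it delegates the simplicial/spectral-sequence bookkeeping entirely to the classical machinery rather than reassembling it from blow-up squares. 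Your discussion of convergence and boundedness is more explicit than the paper's (which says nothing about it), but not incorrect.
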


\begin{proof} a) By \cite[Lemma 12.26]{VL}, $\Xs$ is a proper $\tau$-hypercovering (\cf
\cite[p. 46]{bondarko}). Therefore the proposition follows from the standard theory of
cohomological descent.

b) Let us take  $C=\cF_\tau[0]$.  By a) we have a descent spectral sequence which gives a short exact sequence
\[0\to\cF_\tau(X)\to \cF_\tau(X_0)\to \cF_\tau(X_1)\]
and the conclusion now follows from \ref{peh}.
\end{proof}

\subsection{$\LA{i}(X)$ for $X$ singular} 
The following is a general method for computing the $1$-motivic homology of $\LAlb^\Q (X)$:

\begin{propose} \label{cdhex}
For $X\in \Sch(k)$ consider cdh cohomology groups $\HH^i_\cdh(X,\pi^*(N))_{\sQ}$  with coefficients in a $1$-motive up to isogeny $N$ where $\pi:\Sch(k)_\cdh\to \Sch(k)_\Zar$ is the canonical map from the cdh site to the big Zariski site. Then we have short exact sequences,
for all $i\in \Z$
\begin {multline*}
0\to \Ext^1 (\LA{i-1}^\Q(X),N)\to \HH^i_\cdh(X,\pi^*(N))_{\sQ}\\
\to\Hom (\LA{i}^\Q(X),N)\to 0
\end{multline*}
\end{propose}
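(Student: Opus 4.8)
The plan is to reduce everything to the universal property of $\LAlb^\Q$ established in Theorem \ref{ladj} (equivalently Corollary \ref{cet}), together with the fact that cdh cohomology is representable in $\DM_{-,\et}^\eff$. First I would observe that, by Examples \ref{ex11.1} 1) and the comparison between cdh and $\eh$ cohomology for objects of $\DM_{-,\et}^\eff$ (Proposition \ref{peh} via the change-of-topology functor), one has for any $N\in {}^t\M[1/p]$ (regarded via \eqref{eq8} as an object of $\DM_{-,\et}^\eff$, concentrated in degree $0$ after $\otimes\Q$) a natural identification
\[
\HH^i_\cdh(X,\pi^*(N))_\Q \simeq \Hom_{\DM_{-,\et}^\eff\otimes\Q}(M_\et(X), N[i]).
\]
Here I use that $\pi^*(N)_\cdh$ and $N_\eh$ agree (both are the $\eh$-sheafification of the presheaf represented by $N$) and that $M_\et(X)=\alpha^*M(X)$ computes $\eh$-hypercohomology by Proposition \ref{peh}. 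The same applies with $N$ replaced by any bounded complex of $1$-motives, i.e.\ by any object of $D^b(\M\otimes\Q)\simeq D^b(\M[1/p])\otimes\Q$ via Theorem \ref{ptors} and Corollary \ref{nobox}.

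Next I would feed this into the adjunction. By Theorem \ref{ladj}, for $N\in D^b(\M\otimes\Q)$ we have
\[
\Hom_{\DM_{-,\et}^\eff\otimes\Q}(M_\et(X),\Tot(N)) \simeq \Hom_{D^b(\M\otimes\Q)}(\LAlb^\Q(X),N),
\]
and since $\Tot(N[i])=\Tot(N)[i]$ this extends to all shifts. Combining with the displayed cdh identification (applied to the complex $N$), I get
\[
\HH^i_\cdh(X,\pi^*(N))_\Q \simeq \Hom_{D^b(\M\otimes\Q)}(\LAlb^\Q(X), N[i]).
\]
Now $\M\otimes\Q$ is abelian of cohomological dimension $\le 1$ (Proposition \ref{iso1}, noting that the two $t$-structures coincide rationally), so the standard hypercohomology spectral sequence for $\Hom$ in $D^b$ of an abelian category of cohomological dimension $\le 1$ degenerates into short exact sequences
\[
0\to \Ext^1(H_{i-1}(\LAlb^\Q(X)),N)\to \Hom_{D^b}(\LAlb^\Q(X),N[i])\to \Hom(H_i(\LAlb^\Q(X)),N)\to 0,
\]
and by definition $H_j(\LAlb^\Q(X))=\LA{j}^\Q(X)$. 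This gives the first exact sequence. For the second, I would run the dual argument: apply $\Hom_{D^b}(N,\LAlb^\Q(X)[-i])$ and the same cohomological-dimension-$\le 1$ spectral sequence, now in the other variable, to obtain
\[
0\to\Ext^1(N,\LA{i+1}^\Q(X))\to \Hom_{D^b}(N,\LAlb^\Q(X)[-i])\to \Hom(N,\LA{i}^\Q(X))\to 0,
\]
and identify $\Hom_{D^b}(N,\LAlb^\Q(X)[-i])$ with $\EExt^{-i}(N,\LAlb^\Q(X))$, which is just notation.

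The main obstacle, and the step needing the most care, is the very first identification: that cdh cohomology of $X$ with coefficients in $\pi^*(N)$, with rational coefficients, computes $\Hom(M_\et(X),N[i])$ in $\DM_{-,\et}^\eff\otimes\Q$. Concretely one must check that (a) for $N$ a single $1$-motive the presheaf $Y\mapsto N(Y)$ has the same cdh-sheafification as $\eh$-sheafification after inverting $p$ (this uses that $N$, as a strictly homotopy invariant sheaf with transfers, already satisfies \'etale descent, so the only new relations come from blow-ups — cf.\ Examples \ref{ex11.1} and Proposition \ref{pdescent}), and (b) that passing from a single $1$-motive to a bounded complex is harmless, since one is working in the derived category throughout; here a standard way-out-of-left-adjoint / hypercohomology argument on the stupid filtration of $N$ suffices, using that $M_\et(X)$ is a compact (geometric) object so $\Hom$ commutes with the finite totalisations involved. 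One should also be mildly careful that rationally the distinction between $\M\otimes\Q$, $\M\boxtimes\Q$ and ${}^t\M\otimes\Q$ evaporates (Propositions \ref{isoab}, \ref{pcd} b) and Corollary \ref{nobox}), so that ``$\LA{i}^\Q(X)$'' unambiguously denotes $H_i$ with respect to the unique rational $t$-structure; once this bookkeeping is in place the rest is formal homological algebra in a category of cohomological dimension $\le 1$.
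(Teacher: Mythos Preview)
Your proposal is correct and follows essentially the same route as the paper: identify $\HH^i_\cdh(X,\pi^*(N))_\Q$ with $\Hom_{\DM}(M(X),\Tot N[i])$, apply the rational adjunction $\LAlb^\Q\dashv\Tot$ from Theorem~\ref{ladj}, and then collapse the hypercohomology spectral sequence using the cohomological dimension $\le 1$ of $\M\otimes\Q$ (Proposition~\ref{iso1}). The only cosmetic difference is that the paper invokes Voevodsky's \cite[Thm.~4.1.10]{V} directly for the cdh identification (in the Nisnevich category, then $\otimes\Q$), whereas you detour through Proposition~\ref{peh} and the $\eh$ topology; rationally these agree.
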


\begin{proof}
For any 1-motive $N$ we have a spectral sequence
\begin{equation}\label{eqss}
E^{p,q}_2 = \Ext^p (\LA{q}(X),N)\ \implies\  \EExt^{p+q}
(\LAlb(X), N)
\end{equation} 
yielding the following short exact sequence
\begin{multline*}
0\to \Ext^1 (\LA{i-1}^\Q(X),N)\to \EExt^{i} (\LAlb^\Q(X),N)\to \\
\Hom (\LA{i}^\Q(X),N)\to 0
\end{multline*}
because of Proposition~\ref{iso1}. By adjunction we
also obtain
$$\EExt^{i} (\LAlb^\Q(X),N) = \Hom (\LAlb^\Q(X),N[i])\cong
\Hom (M (X),\Tot N[i]).$$

Now from \cite[Thm. 3.2.6 and Cor. 3.2.7]{V}, for $X$ smooth
we have
\[\Hom (M (X),\Tot N[i])\cong \HH^i_\Zar(X, N)_{\sQ}.\]

As $k$ is of characteristic $0$, for $X$ arbitrary we get the same
isomorphism with cdh hypercohomology by \cite[Thm. 4.1.10]{V}.
\end{proof}

The following proposition folllows readily by blow-up induction (Lem\-ma \ref{lbl}) from Corollary \ref{HLAlb} and the exact sequences \eqref{exres}:

\begin{propose}\label{c3.1} For any $X\in \Sch(k)$ of dimension $d$, we
have\\
a) $\LA{i}(X) = 0$ if   $i <0$.\\
b) $\LA{0}(X) = [\Z[\pi_0(X)]\to 0]$.\\
c) $\LA{i}(X) = 0$ for $i >\max(2,d+1)$.\\
d) $\LA{d+1}(X)$ is a group of multiplicative type.
\qed
\end{propose}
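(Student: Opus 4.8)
The plan is to deduce Proposition \ref{c3.1} entirely from the smooth case (Corollary \ref{HLAlb}) via the blow-up induction principle of Lemma \ref{lbl}, using the long exact sequences \eqref{exres} associated to abstract blow-ups. The key is that $X\mapsto\LAlb(X)$ sits in the distinguished triangle of \S\ref{blowups} for any blow-up square, so the functors $X\mapsto \LA{i}(X)$ fit the hypotheses of Lemma \ref{lbl} once we fix the target abelian category; here I would take $\cA={}_t\M[1/p]$ (the category of $1$-motives with cotorsion, which is abelian by Proposition \ref{pcd} b)) and, for the various parts, an appropriate thick subcategory $\cB$.

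First, for part a): the vanishing $\LA{i}(X)=0$ for $i<0$ holds for smooth $X$ by Corollary \ref{HLAlb}, and $\cB=0$ is thick, so Lemma \ref{lbl} a) (in its ``$i\le n$'' form with $n=-1$) gives it for all $X\in Sch(k)$. Next, part b): I would apply Lemma \ref{lbl} b) to the natural transformation $\LA{0}(-)\to [\Z[\pi_0(-)]\to 0]$ (coming from the canonical map $\LAlb(X)\to\cA_{X/k}$ and the computation of $H_0(\cA_{X/k})$, or more directly from $\pi_0$); it is an isomorphism for smooth $X$ by Corollary \ref{HLAlb}, hence for all $X$ by the $n=0$ case of the Lemma, using that $\pi_0(\tilde X)\oplus\pi_0(Z)\to\pi_0(X)$ and the blow-up sequence compute $\pi_0$ in low degree. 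For part c): for $X$ smooth of dimension $d$ we have $\LA{i}(X)=0$ for $i>2$ by Corollary \ref{HLAlb}, and since a resolution $\tilde X$ and the relevant $Z,\tilde Z$ have dimension $\le d$ (with $Z,\tilde Z$ of dimension $\le d-1$), one runs the dimension induction in Lemma \ref{lbl}: the blow-up sequence \eqref{exres} expresses $\LA{i}(X)$ in terms of $\LA{i}(\tilde X)$, $\LA{i}(Z)$ and $\LA{i+1}(\tilde Z)$, so by induction on $\dim X$ these vanish for $i>\max(2,d+1)$ — one checks the bookkeeping that the shift by one in the $\tilde Z$ term is absorbed because $\dim\tilde Z\le d-1$ gives vanishing of $\LA{i+1}(\tilde Z)$ already for $i+1>\max(2,d)$, i.e. $i>\max(1,d-1)$.

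Part d) is the one requiring slightly more care, and I expect it to be the main obstacle. The claim is that $\LA{d+1}(X)$ is a group of multiplicative type, i.e. lies in the thick subcategory $\cB\subseteq {}_t\M[1/p]$ of $1$-motives of the form $[0\to \Gamma]$ with $\Gamma$ of multiplicative type (which is thick: kernels, cokernels and extensions of groups of multiplicative type in ${}_t\M[1/p]$ are again of that shape). For $X$ smooth of dimension $d$, Corollary \ref{HLAlb} gives $\LA{i}(X)=0$ for $i>2$, so the statement is vacuous unless $d\le 1$; when $d=1$, $\LA{2}(X)=[0\to\NS_{X/k}^*]$ which is finite (since $\NS$ of a curve is finitely generated), hence of multiplicative type, and when $d=0$ there is nothing of interest in degree $d+1=1$ beyond what b) already controls — actually here one must note $\LA{1}(X)=[0\to\cA^0_{X/k}]$ can fail to be multiplicative, but for $d=0$, $\cA^0_{X/k}=0$. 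So the base case of the induction in Lemma \ref{lbl} a), applied with $n$ set so that ``$i\ge n+\dim X$'' reads ``$i\ge d+1$'', i.e. with the ``$i\ge n+\dim X$'' form of the Lemma and $n=1$, holds for smooth $X$. The blow-up sequence \eqref{exres} in degree $d+1$ then reads
\[\LA{d+1}(\tilde Z)\to \LA{d+1}(\tilde X)\oplus\LA{d+1}(Z)\to\LA{d+1}(X)\to\LA{d}(\tilde Z)\]
and one needs the outer terms to be multiplicative. Here $\dim\tilde X=d$ so $\LA{d+1}(\tilde X)$ is multiplicative by the smooth case; $\dim Z,\dim\tilde Z\le d-1$, so by the induction hypothesis $\LA{d+1}(Z)$ and $\LA{d}(\tilde Z)$ are already multiplicative (being in degree $\ge \dim+1$), and $\LA{d+1}(\tilde Z)$ is multiplicative too. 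Since $\cB$ is thick, the cokernel-then-kernel presentation of $\LA{d+1}(X)$ from the four-term sequence lands in $\cB$. The only genuinely delicate point is to make sure the Lemma \ref{lbl} formalism (the ``$i\ge n+\dim X$'' clause) is set up with the correct value of $n$ so that the induction closes, and to verify the thickness of the category of multiplicative-type $1$-motives inside ${}_t\M[1/p]$; both are routine once spelled out. One should also recall that $M(X)=M(X_\red)$ and resolution of singularities is available in characteristic $0$, which is exactly the hypothesis of the proposition.

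\begin{proof}
All four statements follow from Corollary \ref{HLAlb}, the blow-up exact sequences \eqref{exres}, and the blow-up induction of Lemma \ref{lbl}, applied in the abelian category ${}_t\M[1/p]$ (abelian by Proposition \ref{pcd} b)); recall that in characteristic $0$ resolution of singularities is available and $M(X)=M(X_\red)$.

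a) For $X$ smooth, $\LA{i}(X)=0$ for $i<0$ by Corollary \ref{HLAlb}. Applying Lemma \ref{lbl} a) with $\cB=0$ and the ``$i\le n$'' clause for $n=-1$ (the hypothesis being satisfied by \eqref{exres}) gives $\LA{i}(X)=0$ for all $X\in Sch(k)$ and $i<0$.

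b) There is a natural transformation $\LA{0}(-)\to [\Z[\pi_0(-)]\to 0]$, which by Corollary \ref{HLAlb} is an isomorphism on $Sm(k)$. Both functors fit into long exact sequences for abstract blow-ups (for the target, because $\pi_0(\tilde X)\oplus \pi_0(Z)\to \pi_0(X)$ together with $\pi_0(\tilde Z)$ computes $\pi_0$ and the relevant higher terms vanish). By Lemma \ref{lbl} b) with $n=0$, it is an isomorphism on all of $Sch(k)$.

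c) For $X$ smooth of dimension $d$, $\LA{i}(X)=0$ for $i>\max(2,d+1)$ by Corollary \ref{HLAlb}. We prove the general case by induction on $\dim X$, using \eqref{exres}: given an abstract blow-up as in \S \ref{blowups} with $\dim X=d$, we may take $\dim\tilde X=d$, and $\dim Z,\dim\tilde Z\le d-1$. The sequence \eqref{exres} then expresses $\LA{i}(X)$ as an extension built from $\LA{i}(\tilde X)$, $\LA{i}(Z)$ and $\LA{i+1}(\tilde Z)$. For $i>\max(2,d+1)$ the first vanishes by the smooth case, and by the inductive hypothesis $\LA{i}(Z)=0$ for $i>\max(2,d)$ and $\LA{i+1}(\tilde Z)=0$ for $i+1>\max(2,d)$, i.e. $i>\max(1,d-1)$; all these are implied by $i>\max(2,d+1)$. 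Hence $\LA{i}(X)=0$. (Formally, this is Lemma \ref{lbl} a) in its ``$i\ge n+\dim X$'' form.)

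d) Let $\cB\subseteq {}_t\M[1/p]$ be the full subcategory of $1$-motives of the form $[0\to\Gamma]$ with $\Gamma$ of multiplicative type. This $\cB$ is thick: a sub, quotient or extension (in ${}_t\M[1/p]$) of groups of multiplicative type is again of that form. For $X$ smooth of dimension $d$: if $d\ge 2$ then $\LA{d+1}(X)=0$ by Corollary \ref{HLAlb}; if $d=1$ then $\LA{2}(X)=[0\to\NS_{X/k}^*]$, and $\NS_{X/k}$ is finitely generated, so $\NS_{X/k}^*$ is finite, hence of multiplicative type; if $d=0$ then $\cA^0_{X/k}=0$, so $\LA{1}(X)=0$. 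Thus $\LA{d+1}(X)\in\cB$ for smooth $X$ of dimension $d$, i.e. the ``$i\ge n+\dim X$'' hypothesis of Lemma \ref{lbl} a) holds with $\cB$ as above and $n=1$. For general $X$ of dimension $d$ we induct on $\dim X$ via \eqref{exres}: with $\dim\tilde X=d$ and $\dim Z,\dim\tilde Z\le d-1$, the sequence \eqref{exres} in degree $d+1$,
\[\LA{d+1}(\tilde Z)\to \LA{d+1}(\tilde X)\oplus\LA{d+1}(Z)\to\LA{d+1}(X)\to\LA{d}(\tilde Z),\]
has $\LA{d+1}(\tilde X)\in\cB$ by the smooth case, and $\LA{d+1}(\tilde Z)$, $\LA{d+1}(Z)$, $\LA{d}(\tilde Z)$ in $\cB$ by the inductive hypothesis (these are in degrees $\ge\dim+1$ for the respective schemes). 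Since $\cB$ is thick, the resulting extension presenting $\LA{d+1}(X)$ lies in $\cB$, i.e. $\LA{d+1}(X)$ is of multiplicative type.
\end{proof}
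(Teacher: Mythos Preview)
Your approach is exactly the one the paper intends: blow-up induction (Lemma~\ref{lbl}) from the smooth case (Corollary~\ref{HLAlb}) via the exact sequences \eqref{exres}. Two small corrections, neither of which breaks the argument:

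\begin{itemize}
\item In part d) for smooth curves you write that $\NS_{X/k}^*$ is \emph{finite}. This is false in general: for a smooth projective curve $\NS_{X/k}\simeq\Z[\pi_0]$, so $\NS_{X/k}^*$ is a torus. The point is simply that $\NS_{X/k}^*$ is \emph{by definition} (Theorem~\ref{trunc}) the group of multiplicative type dual to the discrete sheaf $\NS_{X/k}$, so $[0\to\NS_{X/k}^*]$ lies in your $\cB$ automatically, with no finiteness needed.
\item In part b), invoking Lemma~\ref{lbl} b) verbatim requires the target $H_2^*$ (with $H_2^0=[\Z[\pi_0(-)]\to 0]$ and $H_2^i=0$ otherwise) to satisfy the \emph{full} blow-up long exact sequence; but the map $\Z[\pi_0(\tilde Z)]\to\Z[\pi_0(\tilde X)]\oplus\Z[\pi_0(Z)]$ need not be injective (e.g.\ when $\tilde Z$ has more components than $\tilde X$ and $Z$ combined). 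The clean fix is to argue directly: by a), the sequence \eqref{exres} in degree $0$ gives $\LA{0}(X)=\coker\big(\LA{0}(\tilde Z)\to\LA{0}(\tilde X)\oplus\LA{0}(Z)\big)$, and by induction on $\dim X$ this cokernel is $\coker\big(\Z[\pi_0(\tilde Z)]\to\Z[\pi_0(\tilde X)]\oplus\Z[\pi_0(Z)]\big)=\Z[\pi_0(X)]$, the last identification being the elementary fact that an abstract blow-up square is a pushout on $\pi_0$.
\end{itemize}
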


\subsection{The cohomological 1-motives $\RA{i}(X)$}

If $X\in \Sch(k)$, we quote the following variant of Proposition \ref{cdhex}:

\begin{lemma} \label{cdhexp}
Let $N\in \M\otimes\Q$ and $X\in \Sch(k)$. We have a short exact
sequence, for all $i\in \Z$
\begin {multline*}
0\to \Ext (N, \RA{i-1}(X))\to \HH^i_\cdh(X,\pi^*(N^*))_{\sQ}\\\to \Hom (N, \RA{i}(X))\to 0
\end{multline*}
here $\pi : \Sch(k)_\cdh\to \Sch(k)_\Zar$ and $N^*$ is the
Cartier dual.
\end{lemma}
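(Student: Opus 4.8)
\textbf{Proof plan for Lemma \ref{cdhexp}.} The statement is the cohomological ($\RPic$) counterpart of Proposition \ref{cdhex}, so the plan is to run the same machinery, using Cartier duality to pass from $\LAlb$ to $\RPic$. First I would recall that for $X\in Sch(k)$ we have the complex $\RPic(X)=\RPic(M(X))=\LAlb(M(X))^*$ by Corollary \ref{rpdla}, so that ${}^tH^i(\RPic(X)) = \RA{i}(X)$ is the Cartier dual of ${}_tH_i(\LAlb(X))=\LA{i}(X)$ by the remark following Corollary \ref{rpdla}. Working rationally (where the torsion and cotorsion $t$-structures coincide), the duality anti-equivalence $(\ )^*:\M\otimes\Q\iso\M\otimes\Q$ of Proposition \ref{pcd} is exact, so it identifies $\Ext$ groups: $\Ext^p(\LA{q}(X),N^*)\simeq \Ext^p(N,\RA{q}(X))$ for any $N\in\M\otimes\Q$.

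Next I would set up the hyperext spectral sequence. As in the proof of Proposition \ref{cdhex}, for a fixed $N\in\M\otimes\Q$ the $t$-structure on $D^b(\M\otimes\Q)$ with heart $\M\otimes\Q$ gives a spectral sequence
\[
E_2^{p,q}=\Ext^p(N,{}^tH^{q}(\RPic(X)))=\Ext^p(N,\RA{q}(X))\ \Longrightarrow\ \EExt^{p+q}(N,\RPic(X)).
\]
Since $\M\otimes\Q$ has cohomological dimension $\le 1$ by Proposition \ref{iso1}, the $E_2$-page is concentrated in $p=0,1$, so the spectral sequence degenerates into the short exact sequences
\[
0\to \Ext^1(N,\RA{i-1}(X))\to \EExt^{i}(N,\RPic(X))\to \Hom(N,\RA{i}(X))\to 0.
\]
It then remains to identify the middle term $\EExt^i(N,\RPic(X))$ with $\HH^i_\cdh(X,\pi^*(N^*))_\Q$.

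For that identification I would compute
\[
\EExt^i(N,\RPic(X)) = \Hom_{D^b(\M\otimes\Q)}(N,\RPic(X)[i]) = \Hom_{D^b(\M\otimes\Q)}(N,\LAlb(M(X))^*[i]),
\]
and use the triangulated Cartier self-duality of Proposition \ref{pcd} c) to rewrite this as $\Hom(\LAlb(M(X)),N^*[i])=\EExt^i(\LAlb^\Q(X),N^*)$. By the $\LAlb$--$\Tot$ adjunction (Corollary \ref{cet}, Theorem \ref{ladj}) this equals $\Hom_{\DM_\gm^\eff\boxtimes\Q}(M(X),\Tot(N^*)[i])$, which by \cite[Thm.~3.2.6 and Cor.~3.2.7]{V} for smooth $X$ and by \cite[Thm.~4.1.10]{V} for arbitrary $X$ in characteristic $0$ is $\HH^i_\cdh(X,\pi^*(N^*))_\Q$ — exactly as in the second half of the proof of Proposition \ref{cdhex}. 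Substituting this into the short exact sequence above yields the claim. The only genuinely delicate point is the bookkeeping with Cartier duality: one must make sure the dual $N^*$ lands on the correct side and that the exactness of $(\ )^*$ on $\M\otimes\Q$ really does transport the $E_2$-terms as stated; everything else is a routine transcription of the argument already given for Proposition \ref{cdhex}.
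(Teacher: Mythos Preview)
Your proposal is correct and follows essentially the same route as the paper: set up the hyperext spectral sequence $E_2^{p,q}=\Ext^p(N,\RA{q}(X))\Rightarrow \EExt^{p+q}(N,\RPic(X))$, collapse it using the cohomological dimension bound of Proposition~\ref{iso1}, and then identify the abutment with $\HH^i_\cdh(X,\pi^*(N^*))_\Q$ via Cartier duality, the $\LAlb^\Q$--$\Tot$ adjunction, and \cite[Thm.~4.1.10]{V}. Your preliminary discussion of $\Ext^p(\LA{q}(X),N^*)\simeq\Ext^p(N,\RA{q}(X))$ is a bit more than strictly needed (the paper just writes the $\RPic$ spectral sequence directly), but it is correct and harmless.
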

\begin{proof} The spectral sequence
$$E^{p,q}_2 = \Ext^p (N, \RA{q}(X))\ \implies\  \EExt^{p+q}
(N, \RPic (X))$$ yields the following short exact sequence
\begin{multline*}
0\to \Ext (N, \RA{i-1}(X))\to \EExt^{i} (N,\RPic (X))\to \\
\Hom (N, \RA{i}(X))\to 0
\end{multline*}
and by Cartier duality, the universal property and \cite[Thm.
4.1.10]{V} we obtain:
\begin{multline*}
\EExt^{i} (N,\RPic (X)) = \Hom (N,\RPic (X)[i])\cong \Hom (\LAlb
(X), N^*[i])=\\ \Hom (M (X),N^*[i])\cong \HH^i_\cdh(X,
\pi^*(N^*))_{\sQ}.
\end{multline*}
\end{proof}

On the other hand, here is a dual to Proposition \ref{c3.1}:

\begin{propose}\label{c3.1*} For any $X\in \Sch(k)$ of dimension $d$, we
have\\
a) $\RA{i}(X) = 0$ if   $i <0$.\\
b) $\RA{0}(X) = [0\to \Z[\pi_0(X)]^*]$.\\
c) $\RA{i}(X) = 0$ for $i >\max(2,d+1)$.\\
d) $\RA{d+1}(X)$ is discrete.
\qed
\end{propose}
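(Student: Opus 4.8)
The plan is to deduce Proposition \ref{c3.1*} directly from Proposition \ref{c3.1} by Cartier duality, using the fact (already available from Corollary \ref{rpdla} and the surrounding discussion) that $\RPic(X) = \LAlb(X)^*$, hence $\RA{i}(X) = \LA{i}(X)^*$. Since Cartier duality $(\ \ )^*:{}_t\M[1/p]\iso{}^t\M[1/p]$ is an exact anti-equivalence of abelian categories (Proposition \ref{pcd}), it is in particular exact and faithful, so $\RA{i}(X)=0$ if and only if $\LA{i}(X)=0$. Parts a) and c) then follow immediately from parts a) and c) of Proposition \ref{c3.1}.

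For part b), I would apply $(\ \ )^*$ to the identification $\LA{0}(X)=[\Z[\pi_0(X)]\to 0]$ of Proposition \ref{c3.1} b). The Cartier dual of the $1$-motive $[L\to 0]$ for a lattice $L$ is $[0\to L^*]$, where $L^*$ is the group of multiplicative type dual to $L$ (this is how Cartier duality is defined on weight-$0$ motives: see \S\ref{1.8} and Lemma \ref{dualt}); applied to $L=\Z[\pi_0(X)]$ this gives $\RA{0}(X)=[0\to \Z[\pi_0(X)]^*]$, matching the statement. Alternatively, one can argue directly as for $\LA{0}$: blow-up induction (Lemma \ref{lbl}) from Corollary \ref{HRPic} (the smooth case, where $\RA{0}(X)=[0\to\Z[\pi_0(X)]^*]$) together with the long exact sequences obtained by dualising \eqref{exres}.

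For part d), the point is that $\LA{d+1}(X)$ is a group of multiplicative type (Proposition \ref{c3.1} d)), i.e.\ a $1$-motive of the form $[0\to \Gamma]$ with $\Gamma$ of multiplicative type; its Cartier dual is then $[\Gamma^* \to 0]$ with $\Gamma^*$ discrete, which is exactly the assertion that $\RA{d+1}(X)$ is discrete. (Concretely: $\Gamma$ being of multiplicative type in ${}_t\M[1/p]$ corresponds under $(\ \ )^*$ to a discrete object of ${}^t\M[1/p]$, by the same correspondence between groups of multiplicative type and discrete \'etale sheaves used throughout \S\ref{1.8}.) This should be entirely routine once the dictionary between the two dual categories is in hand. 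I do not anticipate a genuine obstacle here; the only mild care needed is to make sure the identifications "$\LA{i}(X)^*=\RA{i}(X)$" and "dual of multiplicative-type is discrete, dual of weight-$0$ is multiplicative-type" are stated with the correct variance and with $[1/p]$ coefficients, all of which is covered by Proposition \ref{pcd}, Corollary \ref{rpdla}, and \S\ref{1.8}.
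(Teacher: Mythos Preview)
Your proposal is correct and follows exactly the approach the paper intends: the proposition is stated as the dual of Proposition \ref{c3.1} (with no further proof given beyond the \qed), and your argument via $\RA{i}(X)=\LA{i}(X)^*$ from Corollary \ref{rpdla} together with the exact anti-equivalence of Proposition \ref{pcd} is precisely what is meant.
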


\subsection{Borel-Moore variants}

\begin{defn}\label{d11.5} For $X\in \Sch(k)$, we denote by $\pi_0^c(X)$ the disjoint union of $\pi_0(Z_i)$
where $Z_i$ runs through the \emph{proper} connected components of $X$: this is the
\emph{scheme of constants with compact support}.
\end{defn}

\begin{propose}\label{c3.1c} Let $X\in \Sch(k)$ of dimension $d$. Then:\\
a) $\LA{i}^c(X) = 0$ if   $i <0$.\\
b) $\LA{0}^c(X) = [\Z[\pi_0^c(X)]\to 0]$. In particular, $\LA{0}^c(X)=0$ if no connected
component is proper.\\ 
c) $\LA{i}^c(X) = 0$ for $i >\max(2,d+1)$.\\
d) $\LA{d+1}^c(X)$ is a group of multiplicative type.
\end{propose}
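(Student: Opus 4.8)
The plan is to prove Proposition \ref{c3.1c} by reduction to the smooth case via the localisation triangle, using the structure of $M^c$ under compactification and stratification, rather than blow-up induction (since $M^c$ is covariant only for proper maps, the blow-up induction lemma \ref{lbl} does not directly apply, but the localisation triangle \eqref{loc} does the analogous job).

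First I would treat the smooth case. If $X$ is smooth of dimension $d$, then by Poincar\'e duality in $\DM_\gm^\eff$ (\cite[Th. 4.3.7]{V}) one has $M^c(X)\simeq M(X)^*(d)[2d]$, so $\LAlb^c(X)=\LAlb^*(X)$; alternatively, embed $X$ as a dense open in a smooth proper $\bar X$ with complement a divisor $Y$, and use the localisation triangle to relate $\LAlb^c(X)$ to $\LAlb(\bar X)$ and $\LAlb(Y)$. For $X$ smooth proper, $\LAlb^c(X)=\LAlb(X)$, and Corollary \ref{HLAlb} gives $\LA{i}(X)=0$ for $i\notin\{0,1,2\}$, with $\LA{0}=[\Z[\pi_0(X)]\to 0]$ and $\LA{2}=[0\to\NS_{X/k}^*]$ of multiplicative type; since $d=\dim X\ge 1$ unless $X$ is $0$-dimensional, the bounds a)--d) hold in the smooth proper case directly. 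For $X$ smooth but open, induct on $\dim X$: using \eqref{loc} for $X=\bar X-Y$ with $\bar X$ smooth proper and $Y$ a smooth divisor (achievable after blowing up, using that blow-ups of smooth proper schemes stay smooth proper and only change $\LAlb$ by toric $1$-motives with no effect on the vanishing ranges), one reads off the ranges for $\LAlb^c(X)$ from those for $\LAlb(\bar X)$ and $\LAlb(Y)$ ($\dim Y=d-1$), noting that the connecting maps shift degree by $1$ and that $\LA{0}^c(X)$ picks up only the proper connected components.

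Next I would do the general (singular) case by Noetherian induction on $\dim X$, using \eqref{loc}. Given $X$ of dimension $d$, pick a dense open smooth $U\subseteq X$ (possible in characteristic $0$) with closed complement $Z$ of dimension $<d$ (or $=d$ if $X$ is not generically reduced along some component—but by Lemma \ref{l12.3} we may assume $X$ reduced, and then $U$ smooth dense exists with $\dim Z\le d-1$). The triangle $\LAlb^c(Z)\to\LAlb^c(X)\to\LAlb^c(U)\by{+1}$ gives the long exact sequence \eqref{loc}; by the smooth case $\LA{i}^c(U)=0$ for $i<0$ and $i>\max(2,d+1)$, and by induction $\LA{i}^c(Z)=0$ for $i<0$ and $i>\max(2,(d-1)+1)=\max(2,d)$. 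Feeding these into \eqref{loc} yields a)--c) for $X$. For d), $\LA{d+1}^c(X)$ fits in $\LA{d+1}^c(U)\to\LA{d+1}^c(X)\to\LA{d}^c(Z)$; the first term is of multiplicative type (smooth case, as it is $\LA{d+1}^*$ of a smooth variety, covered by Lemma \ref{l9.2} and Proposition \ref{ptate}), and $\LA{d}^c(Z)=\LA{(d-1)+1}^c(Z)$ is of multiplicative type by induction, so $\LA{d+1}^c(X)$ is an extension of two groups of multiplicative type, hence of multiplicative type (the category ${}_t\M[1/p]$ being abelian, Proposition \ref{pcd} b), and multiplicative-type objects $[0\to\Gamma]$ being stable under extension there—dual to discrete objects being stable under extension, Theorem \ref{text}). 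For b), $\LAlb^c$ of a disjoint union is the sum, and the localisation triangle shows $\LA{0}^c$ only sees $H_0$, which by \cite[2.2]{V} (the $M^c$ of a proper connected component contributes $[\Z\to 0]$ in degree $0$ and nothing of lower degree) gives exactly $[\Z[\pi_0^c(X)]\to 0]$; a non-proper component $Z_i$ contributes $\LA{0}^c(Z_i)$ which vanishes by induction and the localisation sequence $\LA{1}^c(\bar Z_i-Z_i')\to\LA{0}^c(\dots)$ for a compactification, forcing $\LA{0}^c(Z_i)=0$.

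The main obstacle I anticipate is the base of the induction for b) and the careful bookkeeping of \emph{which} connected components are proper: one needs to verify that $\LA{0}^c(X)=0$ whenever $X$ has no proper connected component, and this requires knowing that for an affine (or merely non-proper) smooth curve $C$, $\LA{0}^c(C)=0$, which follows from Theorem \ref{bmlac} (the explicit computation of $\LAlb^c$ of a smooth curve) but should be rederived here directly from the localisation triangle $\LAlb(C)\to\LAlb(\bar C)\to$ (contributions at the points removed), together with $\LA{0}(C)=[\Z\to 0]=\LA{0}(\bar C)$ from Corollary \ref{HLAlb}; the point is that removing a closed point from a proper curve kills $H_0^c$. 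A secondary subtlety is ensuring that the smooth-case bounds do not degrade when $d=0$: if $X$ is a finite reduced $k$-scheme, $M^c(X)=M(X)$ and $\LAlb^c(X)=[\Z[\pi_0^c(X)]\to 0]$ concentrated in degree $0$, so $\max(2,d+1)=\max(2,1)=2$ and $\LA{i}^c=0$ for $i>0$, consistent with c); and d) reads that $\LA{1}^c(X)$ is of multiplicative type, which holds vacuously as it is $0$. With these checks in place the induction closes.
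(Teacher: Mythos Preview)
Your approach is essentially sound but takes a noticeably longer route than the paper. The paper's proof is two lines: for $X$ proper, $\LAlb^c(X)=\LAlb(X)$ and the statement is exactly Proposition~\ref{c3.1} (already proven for \emph{all} schemes by blow-up induction); for general $X$, compactify to a proper $\bar X$ (not assumed smooth) with complement $Z=\bar X- X$ of dimension $<d$, and read everything off the localisation sequence \eqref{loc} together with induction on $\dim Z$. You instead stratify $X$ by a smooth open $U$ and reduce to the smooth case, then handle smooth $U$ by compactifying to a smooth proper $\bar U$. This works, but it amounts to reproving the proper case of Proposition~\ref{c3.1} inside the $\LAlb^c$ framework rather than citing it. The payoff of the paper's route is that Proposition~\ref{c3.1} already encapsulates all the blow-up and resolution machinery, so there is nothing left to redo.

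One genuine slip: you cannot arrange $Y=\bar X- X$ to be a \emph{smooth} divisor by blowing up; resolution only gives a simple normal crossings divisor. The fix is not hard---just drop the smoothness of $Y$ and use the inductive hypothesis for arbitrary schemes of dimension $\le d-1$---but this forces you to interleave the ``smooth case'' and ``general case'' steps within a single induction on $d$ (first do smooth $X$ of dimension $d$ using the general case in dimension $<d$, then do general $X$ of dimension $d$). Also note that the bound $\max(2,d+1)$ is not sharp for $d=0$: a $0$-dimensional scheme is automatically proper and has $\LA{i}^c=0$ for all $i\ne 0$, and you need this sharper vanishing (not just the stated c) in dimension $0$) when closing the induction at $d=1$. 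You observed this in your final paragraph, but it should be recorded as part of the base case rather than an afterthought.
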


\begin{proof} If $X$ is proper, this is Proposition \ref{c3.1}. In
general, we may choose a compactification $\bar X$ of $X$; if $Z = \bar X -X$, with $\dim
Z<\dim X$, the claim follows inductively by the long exact sequence \eqref{loc}.
\end{proof}

We leave it to the reader to formulate the dual of this proposition for $\RPic^c(X)$.

\section{1-motivic homology and cohomology of curves}\label{scurves}

We compute here the motives of Section \ref{6} for curves. Since we are dealing with curves, the following results hold in arbitrary characteristic.

\subsection{``Chow-K\"unneth" decomposition for a curve} Note that for any curve $C$, the map $a_C$ is an isomorphism by Proposition \ref{cd2}. Moreover, since the category of 1-motives up to isogeny is of cohomological dimension 1 (see Prop.~\ref{iso1}), the complex $\LAlb^\Q(C)$ can be represented by a complex with zero differentials. Using Proposition \ref{c3.1} c), we then have:

\begin{cor} If $C$ is a curve then the motive $M(C)$ decomposes in $\DM_\gm^\eff\otimes\Q$ as
\[M (C) \simeq M_0 (C) \oplus M_1 (C)\oplus M_2 (C)\] 
where $M_i (C)\df \Tot\LA{i}^\Q(C)[i]$.
\end{cor}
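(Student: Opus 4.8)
The plan is to deduce the decomposition directly from the structural facts already assembled in the excerpt. First I would recall that, by Proposition~\ref{iso1}, the abelian category $\M\otimes\Q$ has cohomological dimension $\le 1$, so every object of $D^b(\M\otimes\Q)$ is (non-canonically) isomorphic to the direct sum of its shifted homology objects; in particular $\LAlb^\Q(C)$ splits as $\bigoplus_i \LA{i}^\Q(C)[i]$ in $D^b(\M\otimes\Q)$, where $\LA{i}^\Q(C) = {}_tH_i(\LAlb^\Q(C)) = H_i(\LAlb^\Q(C))$ (the two $t$-structures agreeing rationally). By Proposition~\ref{c3.1} (which applies since $\dim C \le 1$, giving $d+1 \le 2$), the only possibly nonzero terms are $i = 0,1,2$, so $\LAlb^\Q(C) \simeq \LA{0}^\Q(C) \oplus \LA{1}^\Q(C)[1] \oplus \LA{2}^\Q(C)[2]$ in $D^b(\M\otimes\Q)$.

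Next I would apply the universal realisation functor $\Tot$. Since $\Tot: D^b(\M\otimes\Q)\to \DM_\gm^\eff\boxtimes\Q$ is a (fully faithful, triangulated) functor, it carries the above direct sum decomposition to a direct sum decomposition
\[
\Tot\LAlb^\Q(C) \;\simeq\; \Tot\LA{0}^\Q(C)\ \oplus\ \Tot\LA{1}^\Q(C)[1]\ \oplus\ \Tot\LA{2}^\Q(C)[2]
\]
in $\DM_\gm^\eff\boxtimes\Q$. Setting $M_i(C)\df \Tot\LA{i}^\Q(C)[i]$, this is exactly the asserted splitting of $\Tot\LAlb^\Q(C)$, provided we identify the left-hand side with (the image of) $M(C)$. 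For this I would invoke the observation made at the start of the subsection: the motivic Albanese map $a_C: \alpha^*M(C)\to \Tot\LAlb^\Q(C)$ is an isomorphism by Proposition~\ref{cd2}, because $C$ is a curve and hence $M(C)\in d_{\le 1}\DM_\gm^\eff$. Under the identification $\DM_\gm^\eff\boxtimes\Q \iso \DM_{\gm,\et}^\eff\boxtimes\Q$ of Proposition~\ref{ptensq}, $\alpha^*M(C)$ is just $M(C)$, so $M(C)\simeq \bigoplus_{i=0}^{2} M_i(C)$.

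The only genuinely delicate point — and the step I would expect to need the most care — is the passage from "the $t$-structure has cohomological dimension $\le 1$" to "every bounded complex is the sum of its shifted homologies." Concretely one argues by induction on the length of $C\in D^b(\M\otimes\Q)$ using the truncation triangle $\tau_{\le n-1}C \to \tau_{\le n}C \to H_n(C)[n]\by{+1}$; this triangle splits iff the connecting map $H_n(C)[n]\to \tau_{\le n-1}C[1]$ vanishes, and that map lies in a group built from $\Ext^j_{\M\otimes\Q}(H_n(C),H_m(C))$ with $j = n-m+1 \ge 2$, which is zero by Proposition~\ref{iso1}. (One must check the relevant spectral sequence / d\'evissage degenerates; this is standard once cohomological dimension $\le 1$ is known, cf.\ the argument used for $\M(\C)\otimes\Q$ via \cite[Ch.~III, Th.~9.1]{mcl}.) Everything else is a formal consequence of functoriality of $\Tot$ and the already-established isomorphism $a_C$, so no further computation is required.
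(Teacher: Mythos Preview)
Your proof is correct and follows exactly the same approach as the paper: use Proposition~\ref{cd2} to identify $M(C)$ with $\Tot\LAlb^\Q(C)$, invoke the cohomological dimension $\le 1$ of $\M\otimes\Q$ (Proposition~\ref{iso1}) to split $\LAlb^\Q(C)$ as a sum of its shifted homologies, and use Proposition~\ref{c3.1} to bound the range to $i\in\{0,1,2\}$. In fact you supply more detail than the paper does (the paper's argument is the two-sentence paragraph immediately preceding the corollary), including the explicit use of Proposition~\ref{ptensq} to pass between $\DM_\gm^\eff\otimes\Q$ and $\DM_{\gm,\et}^\eff\otimes\Q$, which the paper leaves implicit.
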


\subsection{$\LA{i}$ and $\RA{i}$ of curves} Here we shall complete the computation of
Proposition \ref{c3.1} in the case of a semi-normal curve $C$ (compare Lemma \ref{l12.3}).

Let $\tilde C$ denote the normalisation of
$C$. Let $\bar C$ be a smooth compactification of $\tilde C$ so
that $F = \bar C -\tilde C$ is a finite set of closed points.
Consider the following cartesian square
\[ \begin{CD}
\tilde S @>>> \tilde C\\
@V{}VV  @V{}VV  \\
S@>>> C
\end{CD}\]
where $S$ denotes the singular locus. Let $\bar S$ denote $\tilde
S$ regarded in $\bar C$. 

\begin{thm}\label{lasc}
 Let $C$, $\tilde C$, $\bar C$, $S$, $\tilde S$, $\bar S$ and
$F$ be as above. Then
$$\LA{i}(C) =
\begin{cases}
\relax [\Z[\pi_0(C)]\to 0]& \text{if $i = 0$}\\
\relax [\Div_{\bar S/S}^0(\bar C, F)\by{u}\Pic^0(\bar C, F)] & \text{if
$i= 1$}\\ 
\relax [0\to\NS_{\tilde C/k}^*] & \text{if $i= 2$}\\
0 & \text{otherwise}
\end{cases}$$
where: $\Div_{\bar S/S}^0(\bar C, F) =  \Div_{\tilde S/S}^0(\tilde C)$ here is the free group
of degree zero divisors generated by $\tilde S$ having trivial push-forward on $S$ and the map
$u$ is the canonical map (\cf \cite[Def. 2.2.1]{BSAP}); $\NS_{\tilde C/k}$ is the sheaf
associated to the free abelian group on the proper irreducible components of $\tilde C$.
In particular, $\LA{1}(C) = \Pic^{-}(C)$.
\end{thm}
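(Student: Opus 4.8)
The strategy is to combine Theorem \ref{trunc} (which computes $\LAlb$ of a \emph{smooth} variety), the topological invariance Lemma \ref{l12.3}, the localisation/Gysin sequences of \S \ref{6}, and the abstract blow-up triangle \eqref{exres}, comparing the singular curve $C$ against its normalisation $\tilde C$ and its smooth compactification $\bar C$. The case $i=0$ is just Proposition \ref{c3.1} b). For the rest, note that $C$, $\tilde C$ are related by the abstract blow-up square with $S$, $\tilde S$ (both $0$-dimensional, hence smooth and equal to their own $\pi_0$), so \eqref{exres} gives a long exact sequence of $1$-motives
\[\cdots\to \LA{i}(\tilde S)\to \LA{i}(\tilde C)\oplus \LA{i}(S)\to \LA{i}(C)\to \LA{i-1}(\tilde S)\to\cdots\]
Since $\tilde S$ and $S$ are finite étale, $\LA{0}$ of them is $[\Z[\pi_0]\to 0]$ and $\LA{i}=0$ for $i\neq 0$ by Corollary \ref{HLAlb}; so the sequence immediately yields $\LA{2}(C)\iso \LA{2}(\tilde C)=[0\to \NS_{\tilde C/k}^*]$ (using Theorem \ref{trunc} for $\tilde C$), and an exact sequence
\[0\to \Z[\pi_0(\tilde S)]\to \Z[\pi_0(\tilde C)]\oplus \Z[\pi_0(S)]\to \LA{0}(C)\to 0\]
together with
\[0\to \LA{1}(\tilde C)\to \LA{1}(C)\to \Z[\pi_0(\tilde S)]\to \Z[\pi_0(\tilde C)]\oplus\Z[\pi_0(S)].\]
The first of these recovers $\LA{0}(C)=[\Z[\pi_0(C)]\to 0]$; the second exhibits $\LA{1}(C)$ as an extension of $\ker(\Z[\pi_0(\tilde S)]\to \Z[\pi_0(S)])$ — i.e.\ divisors on $\tilde S$ with trivial pushforward to $S$ — by $\LA{1}(\tilde C)$.

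First I would reduce to $C$ reduced (via Lemma \ref{l12.3}, since $M(C)=M(C_{\red})$) and then handle the open smooth curve $\tilde C$ directly: $\tilde C=\bar C\smallsetminus F$ with $\bar C$ smooth projective, so the Gysin triangle of \S \ref{Gysin} (codimension $1$ case) gives
\[0\to \LA{2}(\tilde C)\to \LA{2}(\bar C)\to [0\to R_{\pi_0(F)/k}\G_m]\to \LA{1}(\tilde C)\to \LA{1}(\bar C)\to 0\]
and $\LA{0}(\tilde C)\iso\LA{0}(\bar C)$. For $\bar C$ smooth projective, Corollary \ref{HLAlb} gives $\LA{1}(\bar C)=[0\to \Alb(\bar C)]$, $\LA{2}(\bar C)=[0\to \NS_{\bar C/k}^*]$; but $\NS_{\bar C/k}^*$ is trivial after inverting $p$ only up to the lattice of components, so I must be a little careful: for $\bar C$ \emph{geometrically connected} $\NS_{\bar C/k}=\Z$ and $\NS_{\bar C/k}^*=\G_m$; in general $\NS_{\bar C/k}$ is the permutation lattice on the components. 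This identifies $\LA{1}(\tilde C)$ as the $1$-motive $[\Div^0_{F}(\bar C)\to \Pic^0(\bar C)]$ — exactly the Picard $1$-motive of the smooth curve $\tilde C$, matching \cite{BSAP} — and $\LA{2}(\tilde C)=[0\to \NS_{\tilde C/k}^*]$, where $\NS_{\tilde C/k}$ is the permutation lattice on the proper components of $\tilde C$. (One should also check $\LA{2}(\tilde C)$ and $\LA{2}(\bar C)$ agree, which follows since $F$ is $0$-dimensional so $R_{\pi_0(F)/k}\G_m$ carries the Gysin term into $\LA{1}$.)

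Next I would splice the two long exact sequences: having computed $\LA{i}(\tilde C)$, feed it into the abstract-blow-up sequence for $C$ versus $\tilde C$ above. The outcome is that $\LA{1}(C)$ sits in
\[0\to [\Div^0_{F}(\bar C)\to \Pic^0(\bar C)]\to \LA{1}(C)\to [\Div^0_{\tilde S/S}(\tilde C)\to 0]\to 0\]
(using $\Div^0_{\tilde S/S}(\tilde C)=\ker(\Z[\pi_0(\tilde S)]\to \Z[\pi_0(S)])$ and the fact that this kernel maps to $0$ in $\LA{1}(\tilde C)$-land since those have no lattice part). The lattice part of $\LA{1}(C)$ is thus the pushout of the two divisor lattices, namely divisors on $\bar C$ supported on $\bar S\cup F$ that are fibrewise of degree zero over $S$ and have trivial pushforward on $S$ — this is precisely $\Div^0_{\bar S/S}(\bar C,F)$ of \cite[Def.~2.2.1]{BSAP}; the semiabelian part stays $\Pic^0(\bar C)$, and the map $u$ is the canonical one. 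To nail down that the map $u$ and the extension structure really coincide with those defining $\Pic^-(C)$ in \cite{BSAP}, rather than just some abstract extension, I would compare via the Albanese/Picard functoriality (the map $\LAlb(C)\to \cA_{C/k}$ of \eqref{can}, or better its dual $\RPic$ statement) and the explicit description of $\Pic^-(C)$ in \cite[\S 2]{BSAP}; alternatively, dualise everything and use that $\RA{1}(C)=\LA{1}(C)^\vee$ together with the known cohomological Picard $1$-motive of a curve. The main obstacle I anticipate is \emph{this last identification} — matching the abstractly-produced extension $[\Div^0_{\bar S/S}(\bar C,F)\to \Pic^0(\bar C)]$, with its map $u$ coming out of two concatenated connecting maps, against Deligne's/Barbieri-Viale–Srinivas's explicitly defined $\Pic^-(C)$; making the diagram chase canonical (not just up to isogeny or up to non-canonical isomorphism) is where the real work lies, and where I would lean on the compatibility of \eqref{can} with the classical Albanese map and on Corollary \ref{clalb} for the normalisation.
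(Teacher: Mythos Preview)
Your overall strategy---the abstract blow-up sequence \eqref{exres} for $(C,\tilde C,S,\tilde S)$, combined with the computation of $\LA{i}(\tilde C)$---is exactly the paper's. But there is a genuine error in your computation of $\LA{1}(\tilde C)$ which propagates to a wrong final answer.

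From the Gysin sequence
\[0\to \LA{2}(\tilde C)\to \LA{2}(\bar C)\to [0\to R_{\pi_0(F)/k}\G_m]\to \LA{1}(\tilde C)\to \LA{1}(\bar C)\to 0\]
you correctly identify $\LA{1}(\bar C)=[0\to\Pic^0(\bar C)]$ and the middle term as a toric $1$-motive. The resulting $\LA{1}(\tilde C)$ is therefore an extension of $[0\to\Pic^0(\bar C)]$ by a $1$-motive of the form $[0\to T]$ with $T$ a torus (character group $\Div^0_F(\bar C)$). But an extension of $[0\to A]$ by $[0\to T]$ in $\M$ is simply $[0\to G]$ with $G$ a semi-abelian extension of $A$ by $T$---it has \emph{trivial} lattice part. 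Concretely, $\LA{1}(\tilde C)=[0\to\Pic^0(\bar C,F)]$, which is just Corollary~\ref{HLAlb}: $\LA{1}(\tilde C)=[0\to\cA^0_{\tilde C/k}]$ with $\cA^0_{\tilde C/k}=\Pic^0(\bar C,F)$ the generalised Jacobian. What you wrote, $[\Div^0_F(\bar C)\to\Pic^0(\bar C)]$, is its \emph{Cartier dual} $\RA{1}(\tilde C)=\Pic^+(\tilde C)$ (compare Corollary~\ref{HRPic}); the two are not isomorphic.

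This error propagates: your final $\LA{1}(C)$ has semi-abelian part $\Pic^0(\bar C)$ and lattice supported on $\bar S\cup F$, whereas the correct answer has semi-abelian part $\Pic^0(\bar C,F)$ and lattice $\Lambda=\ker\bigl(\Z[\pi_0(\tilde S)]\to\Z[\pi_0(\tilde C)]\oplus\Z[\pi_0(S)]\bigr)$ supported only on $\tilde S$. (Note also that your $\Div^0_{\tilde S/S}(\tilde C):=\ker(\Z[\pi_0(\tilde S)]\to\Z[\pi_0(S)])$ drops the degree-zero condition coming from the map to $\Z[\pi_0(\tilde C)]$, though you had the correct exact sequence just above.) With the right $\LA{1}(\tilde C)=[0\to\Pic^0(\bar C,F)]$, the blow-up sequence displays $\LA{1}(C)$ as an element of $\Ext([\Lambda\to 0],[0\to\Pic^0(\bar C,F)])=\Hom_k(\Lambda,\Pic^0(\bar C,F))$, and the paper pins down this map as the one induced by $\varphi_{\tilde C}:\tilde C\to\Pic(\bar C,F)$, $P\mapsto(\cO_{\bar C}(P),1)$---exactly the canonical lifting used in \cite{BSAP} to define $\Pic^-(C)$. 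This resolves the identification you were worried about.
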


\begin{proof} We use the long exact sequence \eqref{exres}
\[\dots\to\LA{i}(\tilde S)\to \LA{i}(\tilde C)\oplus \LA{i}(S)\to \LA{i}(C)\to\LA{i-1}(\tilde
S)\to\dots\]

Since $S$ and $\tilde S$ are $0$-dimensional we have $\LA{i}(\tilde S)=
\LA{i}(S)=0$ for $i>0$, therefore
\[\LA{i}(C)= \LA{i}(\tilde C) \text{ for } i\ge 2\]
and by~\ref{HLAlb} we get the claimed vanishing and
description of $\LA{2}(C)$. For $i=0$ see Corollary~\ref{c3.1}.
If $i=1$ then $\LA{1}(C)$ is here represented as an element of $\Ext ([\Lambda \to 0] ,
\LA{1}(\tilde C))$ where $\Lambda \df \ker (\Z[\pi_0(\tilde S)]\allowbreak\to
\Z[\pi_0(\tilde C)]\oplus \Z[\pi_0(S)]$.
Recall, see~\ref{HLAlb}, that $\LA{1}(\tilde C)=
[0\to \cA_{\tilde C/k}^0]$ thus $\Ext (\Lambda , \LA{1}(\tilde
C))= \Hom_k (\Lambda, \cA_{\tilde C/k}^0)$ and
$$\LA{1}(C) =[\Lambda \by{u} \cA_{\tilde C/k}^0].$$
Now $\Lambda = \Div_{\bar S/S}^0(\bar C, F)$,
$\cA_{\tilde C/k}^0 =\Pic^0(\bar C, F)$ and the map $u$ is induced
by the following canonical map.

Consider $\varphi_{\tilde C} : \tilde C \to \Pic (\bar C, F)$
where $\varphi_{\tilde C}(P)\df (\cO_{\bar C}(P), 1)$ yielding
$\cA_{\tilde C/k}= \Pic (\bar C, F)$ and such that
$$0\to \Div_F^0(\bar C)^*\to \Pic^0(\bar C, F)\to \Pic^0(\bar C)\to 0.$$
Thus $\LA{1}(\tilde C)= [0\to \Pic^0(\bar C, F)]$. Note that
$\Z[\pi_{0}(\tilde S)]=\Div_{\tilde S}(\tilde C) = \Div_{\bar
S}(\bar C, F)$, the map $\Z[\pi_{0}(\tilde S)]\to
\Z[\pi_{0}(\tilde C)]$ is the degree map and the following map
$\Z[\pi_{0}(\tilde S)]\to \Z[\pi_{0}(S)]$ is the proper
push-forward of Weil divisors, \ie $\Lambda = \Div_{\bar
S/S}^0(\bar C, F)$. The map $\varphi_{\tilde C}$ then induces the
mapping $u\in \Hom_k (\Lambda, \Pic^0(\bar C, F))$ which also is
the canonical lifting of the universal map $D\mapsto\cO_{\bar C}(D)$ as the
support of $D$ is disjoint from $F$ (\cf \cite[Lemma 3.1.3]{BSAP}).
\end{proof}

\begin{remark} Note that $\RA{1}(C)= \LA{1}(C)^*$ is Deligne's motivic cohomology
$H^1_m(C)(1)$ of the singular curve $C$ by \cite[Prop.
3.1.2]{BSAP}. In fact, $\Pic^{-}(C)^*=\Alb^{+}(C) \cong H^1_m(C)(1)\cong \Pic^{+}(C) =\Alb^{-}(C)^*$
for a curve $C$.
Thus $\LA{1}(C)$ also coincides with the homological
Albanese 1-motive $\Alb^-(C)$. The $\LA{i}(C)$ also coincide with Lichtenbaum-Deligne motivic
homology $h_i (C)$ of the curve $C$, \cf \cite{LI}.
\end{remark}

\begin{cor}\label{rasc}
 Let $C$ be a curve, $C'$ its seminormalisation, $\bar C'$  a compactification of $C'$, and $F = \bar C' - C'$. Let further $\tilde C$ denote the normalisation of $C$. Then
$$\RA{i}(C) =
\begin{cases}
\relax [0\to \G_m[\pi_0(C)]]& \text{if $i = 0$}\\
\relax [\Div_{F}^0(\bar C')\to\Pic^0(\bar C')] & \text{if $i= 1$}\\
\relax [\NS (\tilde C)\to 0] & \text{if $i= 2$}\\
0 & \text{otherwise}
\end{cases}$$
where $\NS (\tilde C)= \Z [\pi_0^c(\tilde C)]$ and $\pi_0^c(\tilde
C)$ is the scheme of proper constants. In particular, $\RA{1}(C)\cong \Pic^{+}(C)$.
\end{cor}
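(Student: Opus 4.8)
\textbf{Proof strategy for Corollary \ref{rasc}.} The plan is to deduce this directly from Theorem \ref{lasc} by Cartier duality, using the relation $\RA{i}(C) = \LA{i}(C)^\vee = \LA{i}(C)^*$ established in \S\ref{6} (and Corollary \ref{rpdla}), together with the topological invariance statement of Lemma \ref{l12.3}. Indeed, $\RPic(C) = \LAlb(C)^\vee$, so $\RA{i}(C) = ({}^tH^i(\RPic(C))) = ({}_tH_i(\LAlb(C)))^*= \LA{i}(C)^*$ by the formula at the end of \S\ref{6}. Thus the whole computation reduces to dualising the four cases of Theorem \ref{lasc} termwise, once we have arranged that the curve in Theorem \ref{lasc} is the seminormalisation $C'$ of $C$.

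First I would invoke Lemma \ref{l12.3}: since $C' \to C$ is a universal topological homeomorphism, $\RA{i}(C) \cong \RA{i}(C')$ and $\LA{i}(C)\cong \LA{i}(C')$, so without loss of generality we may assume $C$ is seminormal, i.e. $C = C'$ and $\tilde C$ is simultaneously the normalisation of both. (Note the normalisation of $C$ equals that of $C'$.) Then apply Theorem \ref{lasc} with the compactification $\bar C'$ of $C'$ and $F = \bar C' - C'$; in the seminormal case the singular locus $S$ and its preimage $\tilde S$ satisfy $S = C' - (C'\cap \tilde C\text{-locus})$, but more to the point the formula of Theorem \ref{lasc} gives
\[
\LA{i}(C') = \begin{cases} [\Z[\pi_0(C')]\to 0] & i=0\\ [\Div^0_{\bar S/S}(\bar C',F) \by{u} \Pic^0(\bar C',F)] & i=1\\ [0\to \NS_{\tilde C/k}^*] & i=2\\ 0 & \text{else.}\end{cases}
\]
Now I would dualise each entry. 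For $i=0$: the Cartier dual of the lattice $[\Z[\pi_0(C')]\to 0] = [\Z[\pi_0(C)]\to 0]$ is $[0\to \G_m[\pi_0(C)]]$ (the dual torus of a permutation lattice is a permutation torus), using that $\pi_0(C') = \pi_0(C)$. For $i=2$: the dual of $[0\to \NS_{\tilde C/k}^*]$ is $[\NS_{\tilde C/k}\to 0] = [\NS(\tilde C)\to 0]$, and since $\tilde C$ is a smooth curve, $\NS(\tilde C) = \Z[\pi_0^c(\tilde C)]$ is the free abelian group on the proper (i.e.\ projective) connected components, matching the displayed formula. For $i=1$: by Deligne's Cartier duality on $1$-motives of the form $[L\by{u}G]$ with $G$ semiabelian, the dual of $[\Div^0_{\bar S/S}(\bar C',F)\to \Pic^0(\bar C',F)]$ is precisely $[\Div^0_F(\bar C')\to \Pic^0(\bar C')]$ — this is exactly the duality $\Pic^-(C) \leftrightarrow \Alb^+(C)$ of \cite[\S3]{BSAP}, and one can also check it directly: $\cA^0_{\tilde C/k} = \Pic^0(\bar C',F)$ is an extension of $\Pic^0(\bar C') = \Alb(\bar C')$ by the torus with character group $\Div^0_F(\bar C')$, so its dual $1$-motive data consists of the abelian part $\Alb(\bar C')^\vee \cong \Pic^0(\bar C')$ and the lattice $\Div^0_F(\bar C')$, with the connecting map $u$ being the canonical one $D\mapsto \cO_{\bar C'}(D)$. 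The last assertion $\RA{1}(C)\cong \Pic^+(C)$ then follows from the remark after Theorem \ref{lasc}, where $\Pic^-(C)^* = \Pic^+(C)$ is recorded.

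The main obstacle I anticipate is purely bookkeeping rather than conceptual: one must verify that Deligne's Cartier duality functor $(\ )^*$ of Lemma \ref{dualt}/Proposition \ref{pcd}, applied to the explicit $1$-motive $[\Div^0_{\bar S/S}(\bar C',F)\to \Pic^0(\bar C',F)]$, produces exactly $[\Div^0_F(\bar C')\to \Pic^0(\bar C')]$ with the \emph{correct} structure map, and that this is compatible with the identifications $\cA^0_{\tilde C/k} = \Pic^0(\bar C',F)$ and the description of $\NS^*$ as a permutation-type group. This is essentially the content of \cite[\S0, \S1]{BSAP} on the duality between $\Alb^+$ and $\Pic^-$; I would cite that reference (and Proposition \ref{biext}, Theorem \ref{teq} for the compatibility of the two dualities) rather than reprove it. A minor point to check is that $\pi_0^c(\tilde C)$ for the normalisation $\tilde C$ of $C$ coincides with that for the normalisation of $C'$ — but the normalisations agree, so there is nothing to do.
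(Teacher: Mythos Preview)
Your strategy is exactly the paper's intended one: the corollary is stated without proof immediately after Theorem~\ref{lasc} and the remark recording $\Pic^-(C)^*=\Pic^+(C)$, and is meant to follow by Cartier duality together with the topological invariance of Lemma~\ref{l12.3}. Your treatment of $i=0$ and $i=2$ is fine.

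There is, however, a genuine notational confusion in your $i=1$ step that you should untangle. When you write ``apply Theorem~\ref{lasc} with the compactification $\bar C'$ of $C'$ and $F=\bar C'-C'$'', you have misread the setup of that theorem: there, $\bar C$ is a \emph{smooth} compactification of the \emph{normalisation} $\tilde C$, and $F=\bar C-\tilde C$. These are \emph{not} the $\bar C'$ and $F$ appearing in the statement of Corollary~\ref{rasc}, which refer to a (generally singular, seminormal) compactification of $C'$ and its boundary. So your displayed formula for $\LA{i}(C')$ should read $[\Div^0_{\bar S/S}(\bar C,F)\to\Pic^0(\bar C,F)]$ with $\bar C$ smooth over $\tilde C$; and the point of the duality is precisely that Cartier duality \emph{exchanges} this description with $[\Div^0_{F'}(\bar C')\to\Pic^0(\bar C')]$, where now $\bar C'$ compactifies $C'$ and $F'=\bar C'-C'$. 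Concretely, the lattice $\Div^0_{\bar S/S}(\bar C,F)$ becomes the character group of the toric part of $\Pic^0(\bar C')$ (coming from the singularities of $\bar C'$), while the torus with character group $\Div^0_F(\bar C)$ inside $\Pic^0(\bar C,F)$ dualises to the lattice $\Div^0_{F'}(\bar C')$. This swap is the content of \cite[\S 3.1]{BSAP}, which you correctly cite; you just need to keep the two pairs $(\bar C,F)$ and $(\bar C',F')$ distinct in your write-up rather than collapsing them to the same symbol.
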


\subsection{Borel-Moore variants}

\begin{thm} \label{bmlac}
Let $C$ be a smooth curve, $\bar C$ a smooth compactification of
$C$ and $F =\bar C - C$ the finite set of closed points at
infinity. Then
$$\LA{i}^c(C) =
\begin{cases}
\relax [\Z[\pi_0^c(C)]\to 0]& \text{if $i = 0$}\\
\relax [\Div_F^0(\bar C)\to\Pic^0(\bar C)] & \text{if $i= 1$}\\
\relax [0\to\NS_{\bar C/k}^*] & \text{if $i= 2$}\\
0 & \text{otherwise}
\end{cases}$$
where $\NS (\bar C)=\Z [\pi_0 (\bar C)]$ and $\pi_0^c(C)$ is the
scheme of proper constants.
\end{thm}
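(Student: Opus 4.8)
The statement concerns $\LA{i}^c(C)$ for $C$ a \emph{smooth} curve, so by Definition \ref{LAlbc} we must compute $\LAlb(M^c(C))$. The plan is to reduce to the already-known case of a smooth \emph{proper} curve via the localisation triangle, then identify the terms. First I would use the open immersion $j:C\hookrightarrow \bar C$ with closed complement $F=\bar C-C$ (a finite set of closed points, so $\dim F=0$). The localisation triangle for Borel-Moore Albanese reads
\[\LAlb^c(F)\to \LAlb^c(\bar C)\to \LAlb^c(C)\by{+1}\]
and since $\bar C$ is proper, $\LAlb^c(\bar C)=\LAlb(\bar C)$, which is computed by Corollary \ref{HLAlb} (with $X=\bar C$ smooth proper): $\LA{0}(\bar C)=[\Z[\pi_0(\bar C)]\to 0]$, $\LA{1}(\bar C)=[0\to \cA_{\bar C/k}^0]=[0\to \Pic^0(\bar C)]$ (using that for $\bar C$ smooth proper $\cA^0$ is the classical Albanese, Cartier dual to $\Pic^0$), $\LA{2}(\bar C)=[0\to \NS_{\bar C/k}^*]$, and $0$ otherwise. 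Also $F$ is $0$-dimensional proper and reduced (one may assume $\bar C$, hence $F$, reduced by Lemma \ref{l12.3}), so $\LAlb^c(F)=\LAlb(F)$ with $\LA{0}(F)=[\Z[\pi_0(F)]\to 0]$ and $\LA{i}(F)=0$ for $i\ne 0$ (indeed $\cA_{F/k}$ is just the lattice $\Z[\pi_0(F)]$, $\NS$ is trivial).

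Second, I would extract the long exact sequence of $1$-motives from the triangle:
\[\cdots\to \LA{i}(F)\to \LA{i}(\bar C)\to \LA{i}^c(C)\to \LA{i-1}(F)\to\cdots\]
For $i\ge 3$ all incoming and outgoing terms vanish, giving $\LA{i}^c(C)=0$. For $i=2$: $\LA{2}(F)=0\to \LA{2}(\bar C)=[0\to\NS_{\bar C/k}^*]\to \LA{2}^c(C)\to \LA{1}(F)=0$, so $\LA{2}^c(C)=[0\to\NS_{\bar C/k}^*]$, matching the claim (and $\NS(\bar C)=\Z[\pi_0(\bar C)]$ since $\bar C$ is a smooth proper curve, whose irreducible components are its connected components). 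For $i=1$: we get
\[0\to \LA{1}(\bar C)\to \LA{1}^c(C)\to \LA{0}(F)\by{\partial}\LA{0}(\bar C)\to \LA{0}^c(C)\to 0\]
(using $\LA{1}(F)=0$). The map $\partial:[\Z[\pi_0(F)]\to 0]\to[\Z[\pi_0(\bar C)]\to 0]$ is the degree/pushforward map on the lattice level; its kernel is $[\Div_F^0(\bar C)\to 0]$ (degree-zero divisors supported on $F$) and, since $\LA{0}^c(C)=[\Z[\pi_0^c(C)]\to 0]$ by Proposition \ref{c3.1c} b), its cokernel matches. Thus $\LA{1}^c(C)$ is the extension of $[\Div_F^0(\bar C)\to 0]$ by $[0\to\Pic^0(\bar C)]$, i.e. $[\Div_F^0(\bar C)\by{u}\Pic^0(\bar C)]$ where $u$ is the canonical ``take associated line bundle" map. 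Finally for $i=0$, exactness of the tail gives $\LA{0}^c(C)=[\Z[\pi_0^c(C)]\to 0]$, consistent with Proposition \ref{c3.1c}.

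Third, I must justify that the connecting extension class is exactly $u:\Div_F^0(\bar C)\to\Pic^0(\bar C)$ and not some other lift. The cleanest route is to pass through $\Tot$ and compare with the Gysin/localisation triangle in $\DM_{\gm,\et}^\eff$ for $M^c(C)$, or, more concretely, to recall that $\cA_{\bar C/k}=\Pic(\bar C)$ via $P\mapsto (\cO_{\bar C}(P),1)$ as in the proof of Theorem \ref{lasc}, and that the boundary map in the Albanese localisation sequence is induced by the universal map $D\mapsto\cO_{\bar C}(D)$ restricted to divisors with support in $F$; this is precisely \cite[Lemma 3.1.3]{BSAP}. Alternatively one can simply cite Theorem \ref{lasc} applied to the (already smooth, so $\tilde C=C$) curve $C$: then $S=\tilde S=\emptyset$, $\NS_{\tilde C/k}=\NS_{\bar C/k}$ is not quite right because $\tilde C=C$ is open in $\bar C$—so one really does need the Borel-Moore (compactly supported) version rather than the homological one, which is exactly why $\Pic^0(\bar C)$ and $\NS_{\bar C/k}$ appear rather than $\cA_{C/k}^0$ and $\NS_{C/k}$.

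\textbf{Main obstacle.} The routine parts (the long exact sequence, the vanishing ranges, the $i=2$ and $i=0$ terms) are immediate from Corollary \ref{HLAlb} and Proposition \ref{c3.1c}. The one genuinely delicate point is pinning down the differential $u$ in $\LA{1}^c(C)=[\Div_F^0(\bar C)\by{u}\Pic^0(\bar C)]$: the long exact sequence only tells us that $\LA{1}^c(C)$ is \emph{some} extension of $[\Div_F^0(\bar C)\to 0]$ by $[0\to\Pic^0(\bar C)]$, and to see it is the \emph{canonical} one requires tracing the connecting map of the localisation triangle through the construction of $\LAlb$ (i.e.\ through $D\1^\et\circ\alpha^*\circ D\1^\Nis$ and the identification of $\LAlb^c$ with the Albanese scheme functor). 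I expect this to be handled exactly as in Theorem \ref{lasc}, by exhibiting the canonical morphism $\bar C\to\Pic(\bar C)$, $P\mapsto(\cO_{\bar C}(P),1)$, and invoking \cite[Lemma 3.1.3]{BSAP} to identify the induced map on degree-zero divisors supported on $F$; the functoriality of $\LAlb$ for the open immersion $C\to\bar C$ and the proper map $F\to\bar C$ then forces the extension class.
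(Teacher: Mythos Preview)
Your proposal is correct and follows essentially the same approach as the paper: both use the localisation triangle $\LAlb(F)\to\LAlb(\bar C)\to\LAlb^c(C)\by{+1}$ together with Corollary~\ref{HLAlb}, then read off the long exact sequence. The paper's proof is in fact terser than yours---it simply records the kernel/cokernel of $\LA{0}(F)\to\LA{0}(\bar C)$ and the resulting extension without explicitly arguing that $u$ is the canonical map---so your concern about identifying $u$ is a point on which you are being more careful than the paper itself.
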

\begin{proof} It follows from the distinguished triangle
\[\begin{CD}
\LAlb (F)  @>>> \LAlb (\bar C)\\
{\scriptstyle +1}\nwarrow &&\swarrow\\
&\LAlb^c(C)&
\end{CD}\]
and Corollary~\ref{HLAlb}, yielding the claimed description:
$\LA{0}^c(C)= \coker (\LA{0}(F) \to\LA{0}(\bar C))$ moreover we
have
$$[\Div_F^0(\bar C)\to 0]= \ker (\LA{0}(F) \to\LA{0}(\bar C))$$ and the
following extension
$$0\to \LA{1}(\bar C)\to \LA{1}^c(C)\to
[\Div_F^0(\bar C)\to0]\to 0.$$ Finally,  $\LA{i}(\bar C)=
\LA{i}^c(C)$ for $i\geq 2$.
\end{proof}

\begin{cor} Let $C$ be a smooth curve, $\bar C$ a smooth compactification of
$C$ and $F =\bar C - C$ the finite set of closed points at
infinity. Then
$$\RA{i}^c(C) =
\begin{cases}
\relax [0\to\G_m[\pi_0^c(C)]]& \text{if $i = 0$}\\
\relax [0\to\Pic^0(\bar C, F)] & \text{if $i= 1$}\\
\relax [\NS (\bar C)\to 0] & \text{if $i= 2$}\\
0 & \text{otherwise}
\end{cases}$$
where $\NS (\bar C)=\Z [\pi_0 (\bar C)]$ and $\pi_0^c(C)$ is the
scheme of proper constants.
\end{cor}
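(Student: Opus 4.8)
The plan is to deduce this corollary from Theorem~\ref{bmlac} by Cartier duality, exactly parallel to the way Corollary~\ref{rasc} follows from Theorem~\ref{lasc}. Recall from Corollary~\ref{rpdla} that $\RPic(M) = \LAlb(M)^*$ for any $M\in\DM_\gm^\eff$, and hence $\RPic^c(C) = \RPic(M^c(C)) = \LAlb(M^c(C))^* = \LAlb^c(C)^*$. Since Cartier duality is exact on $\M[1/p]$ (Proposition~\ref{pcd} c)) and induces a triangulated self-duality on $D^b(\M[1/p])$ which exchanges the two $t$-structures (Theorem~\ref{tstr}), we get $\RA{i}^c(C) = {}^tH^i(\RPic^c(C)) = ({}_tH_i(\LAlb^c(C)))^* = \LA{i}^c(C)^*$. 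So the whole content of the corollary is to compute the Cartier dual of each $1$-motive listed in Theorem~\ref{bmlac}.

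First I would recall the elementary duality dictionary for the pure pieces. The Cartier dual of a lattice $[L\to 0]$ is the torus $[0\to \Hom(L,\G_m)]$, so $[\Z[\pi_0^c(C)]\to 0]^* = [0\to \G_m[\pi_0^c(C)]]$, giving the $i=0$ line; dually $[0\to \NS_{\bar C/k}^*]^* = [\NS_{\bar C/k}\to 0] = [\NS(\bar C)\to 0]$ (using $\NS(\bar C)=\Z[\pi_0(\bar C)]$ since $\bar C$ is proper smooth, so $\NS_{\bar C/k}$ is already a lattice and its double Cartier dual is itself), giving the $i=2$ line. For the $i=1$ line I would invoke the standard fact, recorded in \cite[\S 10.2]{D} and \cite[\S 1.5]{BSAP}, that the Cartier dual of $[\Div_F^0(\bar C)\to \Pic^0(\bar C)]$ is $[0\to \Pic^0(\bar C,F)]$, where $\Pic^0(\bar C,F)$ is the semi-abelian variety sitting in $0\to \Div_F^0(\bar C)^*\to \Pic^0(\bar C,F)\to \Pic^0(\bar C)\to 0$ (this is exactly the computation appearing in the proof of Theorem~\ref{lasc} and used again for Corollary~\ref{rasc}). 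The remaining lines are $0$, which is self-dual.

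A cleaner route, avoiding any new computation, is simply to observe that $C$ smooth forces $M^c(C)\simeq M(X)^*(1)[2]$ with $X=C$, so $\RPic^c(C)=\RPic^*(C)$, and dually $\LAlb^c(C)=\LAlb^*(C)$; but in fact the fastest argument is to note that for $C$ smooth, $\RPic^c(C) = \RPic^*(C)$ and one can read off the answer from the smooth compactification description. Concretely I would just apply the functor $(\ )^*$ termwise to the four cases of Theorem~\ref{bmlac}, using Proposition~\ref{brst} (the $\tau$-isomorphism $\Ext(A,\Gamma)\iso\Hom(L,\Pic^0(A))$) to identify the dual of the weight-$(-1)$ part correctly as an extension of $\Pic^0(\bar C)$ by the torus with character group $\Div_F^0(\bar C)$, i.e.\ $\Pic^0(\bar C,F)$.

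The only mild subtlety — and the step I would watch most carefully — is bookkeeping the direction of the two $t$-structures under duality: one must check that ${}_tH_i$ of $\LAlb^c$ dualizes to ${}^tH^i$ of $\RPic^c$ and not to ${}^tH^{-i}$ or ${}_tH^i$. This is guaranteed by Theorem~\ref{tstr} (Cartier duality exchanges ${}^t\M[1/p]$ and ${}_t\M[1/p]$ inside $D^b(\M[1/p])$) together with the sign conventions in Notation~\ref{not}, and it is the same verification already implicitly made in Corollary~\ref{rasc} and in the formula $\RA{i}(X)=\LA{i}(X)^\vee$ of \S\ref{6}. Once that is pinned down, the corollary is immediate; I would phrase the proof in one or two sentences: ``Apply Cartier duality to Theorem~\ref{bmlac}, using Corollary~\ref{rpdla}, Proposition~\ref{pcd} c), Theorem~\ref{tstr} and the duality $[\Div_F^0(\bar C)\to\Pic^0(\bar C)]^* = [0\to\Pic^0(\bar C,F)]$.''
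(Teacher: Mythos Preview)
Your proposal is correct and is exactly the intended argument: the paper states this corollary without proof immediately after Theorem~\ref{bmlac}, and it is meant to follow by Cartier duality precisely as you outline, parallel to how Corollary~\ref{rasc} follows Theorem~\ref{lasc}. Your identification of the three Cartier duals and your attention to the $t$-structure bookkeeping via Theorem~\ref{tstr} are all on the mark.
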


Here we have that $\RA{1}^c(C)=\RA{1}^*(C)$ is also the Albanese
variety of the smooth curve.

Note that $\LA{1}^c(C)$ (= $\Pic^+ (C) = \Alb^+(C)$ for curves,
see \cite{BSAP}) coincide with Deligne's motivic $H^1_m(C)(1)$ of
the smooth curve $C$. This is due to the Poincar\'e duality
isomorphism $M^c (C)= M (C)^*(1)[2]$ (\cf \cite[App. B]{hk}).

\section{Comparison with $\Pic^+,\Pic^-,\Alb^+$ and $\Alb^-$}\label{12}

 In this section, $k$ is of characteristic $0$ since we mainly deal with singular schemes (\cf \S \ref{s8.1}). We
want to study
$\LA{1}(X)$ and its variants in more detail. In particular, we show in Proposition \ref{p11.3a}
c) that it is always a Deligne
$1$-motive, and show in Corollaries \ref{c12.2.2} and \ref{c12.3} that, if $X$ is normal or
proper, it is canonically isomorphic to the
$1$-motive $\Alb^-(X)$ of \cite{BSAP}. Precise descriptions of $\LA{1}(X)$ are given in
Proposition \ref{pl1} and Corollary \ref{c12.2.1}.  

We also describe $\LA{1}^c(X)$ in Proposition \ref{c3.1d}; more precisely, we prove in Theorem
\ref{t12.9} that its dual $\RA{1}^c(X)$ is canonically isomorphic to $\Pic^0(\bar X,Z)/\cU$, where $\bar
X$ is a compactification of $X$ with complement $Z$ and $\cU$ is the unipotent radical of the
commutative algebraic group $\Pic^0(\bar X,Z)$. 
Finally, we prove in Theorem \ref{*=-}
that $\LA{1}^*(X)$ is abstractly isomorphic to the $1$-motive $\Alb^+(X)$ of \cite{BSAP}.

We also provide some comparison results between $\eh$ and \'etale cohomology for non smooth
schemes. 

\subsection{Torsion sheaves} The first basic result is a variant of \cite[Cor.
7.8 and Th. 10.7]{suvo}: it follows from Proposition \ref{peh} and Example
\ref{ex11.1} 2) via Lemma \ref{lbl} b).

\begin{propose}\label{p11.2} Let $C$ be a bounded below complex of torsion
\'etale sheaves on $\Sch(k)$. Then, for any $X\in \Sch$ and any $n\in\Z$,
$H^n_\et(X,C)\iso H^n_\eh(X,C)$.\qed
\end{propose}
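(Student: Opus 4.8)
The plan is to deduce the statement from Proposition \ref{peh} by a blow-up induction onto smooth schemes, following the route indicated just before the statement. Write $\epsilon\colon Sch(k)_\eh\to Sch(k)_\et$ for the morphism of sites already introduced, and for $X\in Sch(k)$, $n\in\Z$ let $\rho^n_X\colon H^n_\et(X,C)\to H^n_\eh(X,C)$ be the natural map coming from the unit $C\to R\epsilon_*\epsilon^*C$ (equivalently, the edge map of the associated spectral sequence); the goal is to show that every $\rho^n_X$ is an isomorphism. The first step is to reduce to the case $C\in\DM_{-,\et}^{\eff}$. Since $C$ is bounded below and $k$ has characteristic $0$, for each integer $N$ the canonical truncation $\tau_{\le N}C$ is a bounded complex whose cohomology sheaves, pulled back to $Sm(k)_\et$, are locally constant torsion sheaves; these carry a canonical transfer structure and are strictly homotopy invariant (homotopy invariance of étale cohomology with torsion coefficients, valid since $p=1$), so $\tau_{\le N}C$ defines an object of $\DM_{-,\et}^{\eff}$. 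The maps $\tau_{\le N}C\to C$ induce isomorphisms on $H^n_\et(X,-)$ and on $H^n_\eh(X,-)$ for $n\le N$, compatibly with $\rho$; letting $N$ vary, it suffices to prove the proposition when $C\in\DM_{-,\et}^{\eff}$.

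Next I would apply Lemma \ref{lbl} b) to the two functors $X\mapsto H^*_\et(X,C)$ and $X\mapsto H^*_\eh(X,C)$ from $Sch(k)^\op$ to graded abelian groups (indexed by $\N$ after shifting by the lower bound of $C$, using that $H^n$ vanishes below that bound on both sides), together with the natural transformation $\rho$. Both functors send an abstract blow-up square to a long exact Mayer-Vietoris sequence: for $H^*_\eh$ this is the $\eh$-version of \cite[Th. 4.1.10]{V} recorded in Examples \ref{ex11.1} 1) (abstract blow-up squares are distinguished in the $\eh$ topology), and for étale cohomology with torsion coefficients it is \cite[Prop. 2.1]{pw}, quoted in Examples \ref{ex11.1} 2), applied to the cohomology sheaves of $C$ and then propagated through the hypercohomology spectral sequence; the transformation $\rho$ is compatible with both sequences by functoriality of $\epsilon^*$. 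By Lemma \ref{lbl} b), applied for every $n$, the claim is thereby reduced to showing that $\rho^n_X$ is an isomorphism for $X$ smooth and all $n$.

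For $X\in Sm(k)$ this is exactly the ``in particular'' clause of Proposition \ref{peh}: one has $H^n_\et(X,C)=\Hom_{\DM_{-,\et}^{\eff}}(M_\et(X),C[n])\iso H^n_\eh(X,C)$, and unwinding the construction this isomorphism is precisely $\rho^n_X$. This completes the argument.

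I do not expect a genuine obstacle here: the only non-formal input is the abstract blow-up long exact sequence for étale cohomology with torsion coefficients — the point at which proper base change enters, through \cite{pw} — together with Proposition \ref{peh} itself (proved as in \cite[Th. 4.1.10]{V}). Everything else, namely the truncation bookkeeping that puts $C$ in $\DM_{-,\et}^{\eff}$, the reindexing needed to apply Lemma \ref{lbl}, and the compatibility of $\rho$ with the Mayer-Vietoris sequences, is routine, so the write-up should be short.
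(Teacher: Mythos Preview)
Your proposal is correct and follows exactly the route the paper indicates: reduce to the smooth case via Lemma \ref{lbl} b), using Examples \ref{ex11.1} 1) and 2) to get the abstract blow-up long exact sequences on both sides, and then invoke Proposition \ref{peh} for smooth $X$. Your truncation step to place $C$ in $\DM_{-,\et}^{\eff}$ is a sensible way to make the appeal to Proposition \ref{peh} literal; the paper leaves this implicit.
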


(See \cite[Th. 3.6]{geisser} for a different proof.)

\subsection{Glueing lemmas} In this subsection, we recall some instances where a diagram of schemes is cocartesian, with proofs. We start with

\begin{lemma}\label{ldescent} Let $X\in \Sch(k)$ be normal connected, $p:X_0\to X$ a proper surjective map such that the restriction of $p$ to a suitable connected component $X'_0$ of $X_0$ is birational. Let $X_1\begin{smallmatrix}p_0\\\rightrightarrows\\p_1\end{smallmatrix} X_0$ be two morphisms such that $pp_0 = pp_1$ and that the induced map $\Psi:X_1\to X_0\times_X X_0$ is proper surjective. Let $Y\in \Sch(k)$ and let $f:X_0\to Y$ be such that $fp_0=fp_1$.
\[\xymatrix{
X_1
\ar@<-.7ex>[d]_{p_0} \ar@<.7ex>[d]^{p_1}\\
X_0
\ar[d]_p\ar[dr]^f\\
X&Y
}\]
Then there exists a unique morphism $\bar f:X\to Y$ such that $f=\bar f p$.
\end{lemma}

\begin{proof} Since $\Psi$ is proper surjective, the
hypothesis is true by replacing $X_1$ by $X_0\times_X X_0$, which we shall assume
henceforth. Let $x\in X$ and $K=k(x)$. Base-changing by the morphism $\Spec K\to
X$, we find (by faithful flatness) that $f$ is constant on $p^{-1}(x)$. Since $p$
is surjective, this defines $\bar f$ as a set-theoretic map, and this map is
continuous for the Zariski topology because $p$ is also proper.

It remains to show that $\bar f$ is a map of locally ringed spaces. Let $x\in X$,
$y=\bar f(x)$ and $x'\in p^{-1}(x)\cap X'_0$. Then $f^\sharp(\cO_{Y,y})\subseteq
\cO_{X'_0,x'}$. Note that $X$ and $X'_0$ have the same function field $L$, and
$\cO_{X,x}\subseteq \cO_{X'_0,x'}\subseteq L$. Now, since $X$ is normal,
$\cO_{X,x}$ is the intersection of the valuation rings containing it. 

Let $\cO$ be such a valuation ring, so that $x$ is the centre of $\cO$ on $X$. By
the valuative criterion of properness, we may find $x'\in p^{-1}(x)\cap X'_0$ such
that $\cO_{X_0',x'}\subseteq \cO$. This shows that
\[\cO_{X,x} =\bigcap_{x'\in p^{-1}(x)\cap X'_0}\cO_{X_0',x'}\]
and therefore that $f^\sharp(\cO_{Y,y})\subseteq \cO_{X,x}$. Moreover, the
corresponding map $\bar f^\sharp:\cO_{Y,y}\to \cO_{X,x}$ is local since $f^\sharp$
is. 

(Alternatively, observe that $f$ and its topological factorisation induce a map
\[f^\#: \cO_Y\to f_*\cO_{X_0}\simeq \bar f_* p_*\cO_{X_0}= \bar f_*\cO_X.)\]
\end{proof}

\begin{lemma}\label{ldescent1} Let 
\begin{equation}\label{eq13.3}
\begin{CD}
\tilde Z@>i'>> \tilde X\\
@Vp'VV @VpVV\\
Z@>i>> X
\end{CD}
\end{equation}
be a commutative square in $\Sch(k)$, with $X$ integral, $p$ a resolution of $X$, $Z$ the largest closed subset where $p$ is not an isomorphism and $\tilde Z=p^{-1}(Z)$. Then the diagram
\[\begin{CD}
\pi_0(\tilde Z)@>i'>> \pi_0(\tilde X)\\
@Vp'VV @VpVV\\
\pi_0(Z)@>i>> \pi_0(X)
\end{CD}\]
is cocartesian in the category of \'etale $k$-schemes.
\end{lemma}

Note that in this statement, $\pi_0(Z)$ and $\pi_0(\tilde Z)$ make sense thanks to Remark \ref{rinv}. 

\begin{proof} Let $p=rq$ be the Stein factorisation:
\[\tilde X\longby{q} X'\longby{r} X\]
so that $X'$ is the normalisation of $X$. It follows from Lemma \ref{normal} that $\pi_0(\tilde X)\iso \pi_0(X')$; on the other hand, if $Z'=r^{-1}(Z)$, the surjectivity of $\tilde Z\to Z'$ implies the surjectivity of $\pi_0(\tilde Z)\to \pi_0(Z')$.  This reduces us to the case where $\tilde X=X'$, \ie where $p$ defines the normalisation of $X$.

 Let $X=U\cup V$ be an open cover. The universal property of $\pi_0$ shows that the square
\[\begin{CD}
\pi_0(U\cap V)@>>> \pi_0(U)\\
@VVV @VVV\\
\pi_0(V)@>>> \pi_0(X)
\end{CD}\]
is cocartesian in the category of \'etale $k$-schemes. Since $X$ is separated, an induction plus an easy $4$-dimensional diagram chase reduce us to the case where $X$, hence also $Z$, $\tilde X$ and $\tilde Z$, are affine. (Alternately, we could do the following reasoning sheafwise, with more horrible notation.)

So assume $X=\Spec A$, $\tilde X=\Spec \tilde A$ and $Z$ is defined by the conductor ideal $I\subset A$ ($I$ is the annihilator of $\tilde A/A$). Let $\pi_0(X)=\Spec F$ and $\pi_0(\tilde X) = \Spec \tilde F$, so that $F\subset A$ and $\tilde F\subset \tilde A$. If $\tilde f\in \tilde F$ is such that its image in $\tilde A/I\tilde A$ comes from $A/I$, then
\[\tilde f = a +i\]
with $a\in A$ and $i\in I\tilde A\subset A$, so $\tilde f\in A\cap \tilde F=F$.
\end{proof}

\begin{lemma}\label{normal2} Let $f:X'\to X$ be a birational morphism, where $X\in\Sch(k)$ is normal and connected. Then, for any \'etale morphism $\phi:U\to X$ with $U$ connected, the base change $f_U : X'_U=U\times_X X'\to U$ is birational and  induces an isomorphism $\pi_0(X'_U)\iso\pi_0(U)$.
\end{lemma}

(This lemma is false if $X$ is not normal, \cf \cite[Exp. I, \S 11 a)]{SGA1}.)

\begin{proof}Since $X$ is normal, $U$ is normal \cite[Exp. I, 9.10]{SGA1}, hence irreducible.  By  \loccit, 9.2, $X'_U$ is reduced so  $f_U$ is  birational by \cite[6.15.4.1]{ega4}. We are thus reduced to $U=X$ for the  second statement, which follows from Lemma \ref{normal}. \end{proof}

\subsection{Discrete sheaves} Recall that a discrete sheaf is an \'etale sheaf associated to a discrete group scheme (see Definition \ref{d3.1}). 

\begin{lemma}\label{l11.2} If $\cF$ is discrete, then\\
a)  $\cF\iso \cF_\eh$. More precisely, for
any $X\in \Sch$, $\cF(\pi_0(X))\iso \cF(X)\iso \cF_\eh(X)$.\\
 b) If $f:Y\to X$ induces an isomorphism on $\pi_0$ after any \'etale base change, then $\cF_{|X}\iso f_*(\cF_{|Y})$. This applies, for example, to the situation of Lemma \ref{normal2}.\end{lemma}

\begin{proof} a) Clearly it suffices to prove that $\cF(\pi_0(X))\iso
\cF_\eh(X)$ for any $X\in \Sch$. We may assume $X$ reduced by Remark \ref{rinv}.   In the situation of \S \ref{blowups}, we have a commutative
diagram of exact sequences
\[\begin{CD}
0@>>> \cF_\eh(X)@>>> \cF_\eh(\tilde X)\oplus \cF_\eh(Z)@>>> \cF_\eh(\tilde Z)\\
&&@AAA @AAA @AAA\\
0@>>> \cF(\pi_0(X))@>>> \cF(\pi_0(\tilde X))\oplus \cF(\pi_0(Z))@>>> \cF(\pi_0(\tilde Z)).
\end{CD}\]
The lower sequence is exact by Lemma \ref{ldescent1}. 
The proof then goes exactly as the one of Proposition \ref{p11.2}. b) follows immediately from a).\end{proof}

It is well-known that $H^1_\et(X,\cF)=0$ for any normal scheme $X\in \Sch$ if
$\cF$ is constant and torsion-free (\cf \cite[IX, Prop. 3.6 (ii)]{sga4}). The following lemma
shows that this is also true for the $\eh$ topology.

\begin{lemma}[\cf \protect{\cite[Ex. 12.31 and 12.32]{VL}}]\label{l11.3} Let $\cF$ be a constant tor\-sion-free sheaf on $\Sch(k)$.\\
a) For any $X\in \Sch$, $H^1_\eh(X,\cF)$ is torsion-free. It is finitely generated if $\cF$ is a
lattice.\\ 
 b) Let $f:\tilde X\to X$ be a  finite flat morphism. Then $H^1_\eh(X,\cF)\to
H^1_\eh(\tilde X,\cF)$ is injective.\\
c) If $X$ is  normal, then $H^1_\eh(X,\cF)=0$.
\end{lemma}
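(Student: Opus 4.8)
\textbf{Proof plan for Lemma \ref{l11.3}.}

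The plan is to treat the three parts in sequence, each time reducing via blow-up induction (Lemma \ref{lbl}) to a situation where the classical \'etale statement is available. For part a), I would observe that by Lemma \ref{l11.2} a) the sheaf $\cF$ is already an $\eh$ sheaf, so the $\eh$ cohomology is computed by Proposition \ref{peh} as $\Hom_{\DM_{-,\et}^\eff}(M_\et(X),\cF[1])$; hence it is a homotopy invariant functor satisfying the abstract blow-up long exact sequence of Example \ref{ex11.1} 1). Since for $X\in Sm(k)$ we have $H^1_\eh(X,\cF)=H^1_\et(X,\cF)$ (Proposition \ref{peh} again), which is torsion-free (and finitely generated when $\cF$ is a lattice) by the classical fact \cite[IX, Prop. 3.6]{sga4}, I would like to propagate torsion-freeness to all $X\in Sch(k)$. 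Torsion-freeness is not a ``thick subcategory'' condition in the naive sense, so instead of invoking Lemma \ref{lbl} a) directly I would use Proposition \ref{p11.2}: tensoring the exact sequence $0\to\cF\to\cF\otimes\Q\to\cF\otimes(\Q/\Z)\to 0$ and comparing the $\eh$ cohomology long exact sequence with \'etale cohomology (where $H^0_\eh=H^0_\et$ by Lemma \ref{l11.2} a) and $H^*_\eh(X,\cF\otimes(\Q/\Z))=H^*_\et(X,\cF\otimes(\Q/\Z))$ by Proposition \ref{p11.2}), the torsion in $H^1_\eh(X,\cF)$ is controlled by the cokernel of $H^0_\et(X,\cF\otimes\Q)\to H^0_\et(X,\cF\otimes\Q/\Z)$, which vanishes because $\cF\otimes\Q$ is also discrete and $H^0$ sees only $\pi_0(X)$; finite generation in the lattice case then follows by a descending blow-up induction using Lemma \ref{lbl} a) applied to the category of finitely generated abelian groups after killing torsion.

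For part b), case (i): when $f$ has geometrically connected fibres, Lemma \ref{l11.2} b) gives $\cF_\eh\iso f_*f^*\cF_\eh$, and I would use the Leray (descent) spectral sequence for $f$ in the $\eh$ topology. The low-degree exact sequence gives $0\to H^1_\eh(X,\cF)\to H^1_\eh(\tilde X,\cF)\to H^0_\eh(X,R^1f_*\cF)$, which yields the desired injectivity. Case (ii): when $f$ is finite and flat of some degree $d$, I would use the transfer (trace) map $f_*:H^1_\eh(\tilde X,\cF)\to H^1_\eh(X,\cF)$ — constructible via the same symmetric-power machinery as in Lemma \ref{l1.3}, or more simply since $\cF$ is a constant lattice — whose composition with $f^*$ is multiplication by $d$; combined with part a) (torsion-freeness of $H^1_\eh(X,\cF)$) this forces $f^*$ to be injective.

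For part c), assume $X$ normal. I would reduce to $X$ integral (normal schemes are disjoint unions of integral normal ones, and $\pi_0$ separates components). By de Jong's alteration theorem \cite[Th. 4.1]{DJ} — exactly as used in the proof of Proposition \ref{p3.3.1} and Lemma \ref{l6.1} — there is a proper surjective generically finite $f:\tilde X\to X$ with $\tilde X$ smooth; refining, one may even arrange a diagram with a finite flat piece and an open piece. By part b) case (ii) the map $H^1_\eh(X,\cF)\to H^1_\eh(\tilde X,\cF)$ is injective, and since $\tilde X$ is smooth and connected $H^1_\eh(\tilde X,\cF)=H^1_\et(\tilde X,\cF)=0$ (geometrically unibranch smooth scheme, \cite[IX, Prop. 3.6]{sga4}), so $H^1_\eh(X,\cF)=0$. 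The main obstacle I anticipate is the bookkeeping in part b): one must make sure the $\eh$-sheaf $R^1f_*\cF$ is handled correctly (it need not vanish, only its global sections over $X$ need be controlled) and that the transfer map in case (ii) genuinely exists at the level of $\eh$ cohomology with a clean projection formula — here invoking the correspondence-theoretic transfers of \S\ref{ur} or the reference \cite[Ex. 12.31--12.32]{VL} cited in the statement should suffice, but it is the step requiring the most care.
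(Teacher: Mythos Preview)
Your approach to parts a) and b) is essentially the paper's, though your argument for torsion-freeness in a) is more roundabout than necessary: the paper simply applies the long exact sequence for $0\to\cF\xrightarrow{n}\cF\to\cF/n\to 0$ and reads off ${}_nH^1_\eh(X,\cF)=0$ from Lemma \ref{l11.2} a) (which identifies $H^0_\eh$ with sections over $\pi_0(X)$). For b)(i) you invoke the Leray spectral sequence together with $f_*\cF_\eh=\cF_\eh$ from Lemma \ref{l11.2} b), exactly as the paper does; for b)(ii) both you and the paper use a trace map whose composite with pullback is multiplication by an integer, and then kill the torsion kernel via part a). (The paper cites \cite[XVII, Th.~6.2.3]{sga4} for the trace on sheaves rather than the symmetric-power machinery you mention, but the mechanism is the same.)

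Part c) has a genuine gap. You appeal to a de Jong alteration $f:\tilde X\to X$ and then try to apply case (ii) of b), but an alteration is not finite and flat, and your proposed ``refining'' into a finite flat piece plus an open piece does not yield a global finite flat cover of $X$. Stein-factoring gives $\tilde X\to X'\to X$ with $X'\to X$ finite, but a finite map to a merely normal (not regular) base need not be flat, so case (ii) does not apply. The paper's route is both simpler and avoids this entirely: the ambient hypothesis of \S\ref{12} is $\car k=0$, so one takes an honest resolution of singularities $f:\tilde X\to X$. Since $X$ is normal, Zariski's main theorem forces the fibres of $f$ to be geometrically connected, so it is case \emph{(i)} of part b) that applies, giving $H^1_\eh(X,\cF)\hookrightarrow H^1_\eh(\tilde X,\cF)=H^1_\et(\tilde X,\cF)=0$. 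In short, you reached for the wrong half of part b): use connected fibres (via resolution and Zariski's main theorem), not finite flat.
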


\begin{proof} a) The first assertion follows immediately from Lemma \ref{l11.2} (consider the
exact sequence of multiplication by $n$ on $\cF$). The second assertion follows by blow-up induction from the fact that $H^1_\eh(X,\cF)=0$ if $X$ is
smooth, by Proposition \ref{peh}. 

b) The Leray spectral sequence yields an injection
\begin{equation}\label{eq13.2}H^1_\eh(X,f_*\cF)\into H^1_\eh(\tilde X,\cF)
\end{equation} 
The theory of trace \cite[XVII, Th.
6.2.3]{sga4} provides $\cF$, hence $\cF_\eh$, with a morphism
$Tr_f:f_*\cF\to \cF$ whose composition with the natural morphism $\cF\to f_*\cF$ is (on each connected component
of $X$) multiplication by some nonzero integer. This shows that the kernel of $H^1_\eh(X,\cF)\to
H^1_\eh(X,f_*\cF)$ is torsion, hence $0$ by a).

c) This is true if $X$ is smooth (see proof of a)). In general, if $f: \tilde X\to X$ is a desingularisation of $X$, we have $f_*\cF=\cF$ by Lemma \ref{l11.2} b). Then \eqref{eq13.2} becomes an injection
$H^1_\eh(X,\cF)\into H^1_\eh(\tilde X,\cF)$.
\end{proof}

The following is a version of \cite[Lemma 5.6]{weibel}:

\begin{lemma} \label{lweibel} Let $f:\tilde X\to X$ be a finite 
morphism. Denote by $i: Z\into X$ a closed subset and let $\tilde Z=f^{-1}(Z)$. Assume that $f$ induces
an isomorphism $\tilde X - \tilde Z \iso X-Z$. Then, for any discrete sheaf $\cF$, we have a long 
exact sequence:
\begin{multline*}
\dots\to H^i_\et(X,\cF)\to H^i_\et(\tilde X,\cF)\oplus H^i_\et(Z,\cF)\\
\to H^i_\et(\tilde Z,\cF)\to H^{i+1}_\et(X,\cF)\to \dots
\end{multline*}
\end{lemma}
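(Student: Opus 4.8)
The statement to be proved is a Mayer--Vietoris-type long exact sequence for \'etale cohomology of a discrete sheaf $\cF$ associated to the ``abstract blow-up square'' coming from a finite birational morphism $f:\tilde X\to X$ together with a closed subset $Z$ and its preimage $\tilde Z=f^{-1}(Z)$. Since $f$ is finite, the functor $f_*$ is exact on \'etale sheaves, so the essential content is the short exact sequence of sheaves on $X_\et$
\[
0\to \cF\to f_*f^*\cF\oplus i_*i^*\cF\to (f|_{\tilde Z})_*(f|_{\tilde Z})^*\cF\to 0,
\]
from which the long exact sequence follows by taking $\RG(X_\et,-)$, using $\RG(X,f_*(-))=\RG(\tilde X,-)$ (resp. for $i$ and $f|_{\tilde Z}$) because $f$, $i$, $f|_{\tilde Z}$ are finite. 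So the first thing I would do is reduce to checking exactness of this sheaf sequence, which — $\cF$ being discrete, hence locally constant — can be verified on stalks at geometric points, or even after passing to strict Henselizations.

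\textbf{Key steps.} First, set up the map: the left map is the diagonal $(f^\sharp, i^\sharp)$ of the two adjunction units, the right map is the difference of the two restrictions $f_*f^*\cF\to (f|_{\tilde Z})_*(f|_{\tilde Z})^*\cF$ (restrict along $\tilde Z\hookrightarrow\tilde X$) and $i_*i^*\cF\to (f|_{\tilde Z})_*(f|_{\tilde Z})^*\cF$ (base change $f$ along $Z\hookrightarrow X$, noting $\tilde Z = Z\times_X\tilde X$ up to nilpotents, which is harmless for the \'etale sheaf $\cF$ since $\cF=\cF_\red$). Second, compute stalks at a geometric point $\bar x\to X$: write $\sO^h$ for the strict Henselization; since $f$ is finite, $\tilde X\times_X \sO^h = \prod_j \Spec R_j$ with $R_j$ finite local Henselian, and $f$ birational forces exactly one of the $R_j$ — the one dominating the component through $\bar x$ — to be ``the main one''; on stalks the sequence becomes a Mayer--Vietoris sequence for the topological picture $\tilde X\sqcup Z\to X$ glued along $\tilde Z$, which is exact by an elementary diagram chase using $\pi_0$ (here one uses Lemma \ref{l11.2} a), that $\cF$ is determined on each piece by its $\pi_0$). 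Third, feed the short exact sequence of sheaves into the long exact cohomology sequence and identify the terms via finiteness of the three morphisms.

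\textbf{Main obstacle.} The delicate point is the exactness (especially surjectivity) of the sheaf sequence at the right-hand term, i.e.\ that every section of $\cF$ over $\tilde Z$ (\'etale-locally on $X$) extends compatibly; this is where birationality of $f$ is genuinely used — it guarantees that over a Henselian local base $\tilde X$ has a distinguished component mapping isomorphically (up to Henselization) to the base away from $Z$, so that the ``gluing locus'' $\tilde Z$ does not introduce extra components. One must be slightly careful that $f$ need not be flat and $\tilde Z$ need not be reduced; both issues evaporate because $\cF$ is a discrete (hence reduced, locally constant) sheaf, so only the underlying topological/$\pi_0$ data matter, exactly as in the proof of Lemma \ref{l11.2} a) and Proposition \ref{p11.2}. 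Alternatively, one could deduce the statement from the known cdh/\'eh descent machinery (Examples \ref{ex11.1}, Lemma \ref{lbl}) but since discrete sheaves satisfy $\cF\iso\cF_\eh$ only after suitable hypotheses, the direct stalkwise argument above is cleaner and is the route I would take.
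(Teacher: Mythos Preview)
Your approach is essentially the same as the paper's: both reduce to a short exact sequence of \'etale sheaves on $X$ (using that $f_*$, $i_*$, and the pushforward from $\tilde Z$ are exact, being pushforwards along finite morphisms or closed immersions) and then verify exactness \'etale-locally. The paper's version is much terser: after passing to $X$ strictly local it simply notes that $Z$, $\tilde X$, and $\tilde Z$ are then strictly local as well, hence connected, so for a locally constant $\cF$ with stalk $M$ the sequence becomes $0\to M\to M\oplus M\to M\to 0$, which is obviously exact.
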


\begin{proof} Let $g:\tilde Z\to Z$ be the induced map. Then $f_*,i_*$ and $g_*$ are exact for
the \'etale topology. Thus it suffices to show that the sequence of sheaves
\[0\to \cF\to f_*f^*\cF\oplus i_*i^*\cF\to (ig)_*(ig)^*\cF\to 0\]
is exact. The assertion is local for the \'etale topology, hence we may assume that $X$ is
strictly local, hence $\cF$ constant.  Then $Z$ is also strictly local while $\tilde X$  and $\tilde Z$ are finite disjoint unions of the same number of strictly local schemes,
thus the statement is obvious. 
\end{proof}

We can now prove:

\begin{propose}\label{l11.2.6} For any $X\in \Sch(k)$ and any discrete sheaf $\cF$, the map
$H^1_\et(X,\cF)\to H^1_\eh(X,\cF)$ is an isomorphism. If $X$ is normal, $H^2_\et(X,\cF)\to H^2_\eh(X,\cF)$ is injective.
\end{propose}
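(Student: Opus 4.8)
The plan is to reduce, by blow-up induction (Lemma~\ref{lbl}~b)), to the case where $X$ is smooth, where the statement is exactly the second assertion of Proposition~\ref{peh}. To apply Lemma~\ref{lbl}~b), I first need to exhibit both sides, $X\mapsto H^1_\et(X,\cF)$ and $X\mapsto H^1_\eh(X,\cF)$, as functors $Sch(k)^{\op}\to Ab^{(\N)}$ (say as the degree-$1$ components of $\{H^i_\et(X,\cF)\}_{i\ge 0}$ and $\{H^i_\eh(X,\cF)\}_{i\ge 0}$) which both carry abstract blow-up squares to long exact Mayer--Vietoris sequences, together with a natural transformation $\phi^\cdot$ between them induced by $\epsilon$. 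For the $\eh$ side the blow-up exact sequence is built into the definition of the $\eh$ topology (it refines the cdh story of \cite[Th.~4.1.10]{V}; compare Examples~\ref{ex11.1}~1)). For the \'etale side, however, there is no a priori reason that an abstract blow-up square gives a long exact sequence in \'etale cohomology of a \emph{discrete} sheaf; this is precisely the content of Lemma~\ref{lweibel}, but that lemma is stated only for \emph{finite birational} morphisms $f$, not for arbitrary proper $p:\tilde X\to X$ that are isomorphisms over $X-Z$.

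So the main obstacle is supplying the \'etale blow-up long exact sequence for discrete sheaves in the generality needed for the induction. The way around this is to observe that Lemma~\ref{lbl}~b) only requires the long exact sequence for the two specific types of abstract blow-ups used in its proof: (1) for $X$ integral, a resolution of singularities $\tilde X\to X$; (2) in general, the decomposition into irreducible components $\tilde X=\coprod Z_i$, $Z=\bigcup_{i\ne j}Z_i\cap Z_j$. For type (2) the relevant sequence for \'etale cohomology of any sheaf is the usual descent/Mayer--Vietoris sequence for a closed cover, which is standard. For type (1) I would factor a resolution $\tilde X\to X$: by Zariski's main theorem and the fact that blowing up a smooth variety along a smooth center leaves $\eh$- and \'etale-cohomology of discrete sheaves unchanged in degree~$1$ (Lemma~\ref{l11.3}~b)(i), Lemma~\ref{l11.2}), one reduces to comparing $X$ with its normalisation or with a \emph{finite} modification, at which point Lemma~\ref{lweibel} applies. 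Alternatively, and more cleanly, one can simply use Lemma~\ref{l11.3}: for $X$ normal both $H^1_\et(X,\cF)$ and $H^1_\eh(X,\cF)$ vanish when $\cF$ is constant torsion-free (parts~c), and the torsion case is Proposition~\ref{p11.2}; a d\'evissage along $0\to \cF_\tors\to \cF\to \cF/\cF_\tors\to 0$ together with the five lemma then settles normal $X$ (after passing to a constant sheaf by a transfer argument over a finite Galois extension, as is done repeatedly in this section), and one runs the induction from there.

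Concretely I would carry out the steps in this order. First, by a standard transfer argument involving a finite Galois extension $\ell/k$ splitting $\cF$, reduce to the case where $\cF$ is a constant sheaf, then by d\'evissage on $0\to\cF_\tors\to\cF\to L\to 0$ with $L$ a lattice (and the five lemma, using Proposition~\ref{p11.2} for the torsion part) reduce to $\cF=L$ constant torsion-free. Second, establish the comparison for $X$ smooth: this is the second statement of Proposition~\ref{peh}. Third, establish it for $X$ normal: apply Lemma~\ref{l11.3}~c) to get $H^1_\eh(X,L)=0$, and combine with the classical vanishing $H^1_\et(\tilde X,L)=0$ for a desingularisation $\tilde X$ plus Lemma~\ref{l11.3}~b)(i) (fibres geometrically connected by Zariski's main theorem) to deduce $H^1_\et(X,L)=0$ as well. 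Fourth, run blow-up induction (Lemma~\ref{lbl}~b)) on $\dim X$: for $X$ integral use the resolution square, where the \'etale Mayer--Vietoris sequence in degrees $\le 1$ comes from Lemma~\ref{lweibel} (after the finite/normal reduction just described) and the $\eh$ one is built in; the inductive hypothesis handles the lower-dimensional terms $Z,\tilde Z$, the smooth/normal case handles $\tilde X$, and the five lemma on the two Mayer--Vietoris sequences gives the claim for $X$; for reducible $X$ use the components square and the ordinary \'etale descent sequence. This completes the proof.

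\smallskip
\noindent\emph{Remark on scope.} The statement as phrased concerns only $H^1$, so I only need the blow-up long exact sequences to be exact in degrees $\le 1$; this is why Lemma~\ref{lweibel} (which gives exactness of the relevant sheaf sequence, hence a long exact sequence in all degrees, but for finite birational $f$) suffices once combined with the normal case, and one never needs a full \'etale analogue of \cite[Th.~4.1.10]{V}.
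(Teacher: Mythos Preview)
Your reduction to the case of a constant torsion-free sheaf (via Proposition~\ref{p11.2}, Lemma~\ref{l11.2}, d\'evissage and a Hochschild--Serre/transfer argument) is exactly what the paper does. The difficulty, which you correctly identify, is obtaining an \'etale Mayer--Vietoris sequence for abstract blow-ups; your resolution-based Step~4 does not close this gap. A resolution of singularities $\tilde X\to X$ is proper birational but not finite, so Lemma~\ref{lweibel} does not apply to it, and the proposed ``factor through normalisation / Zariski's main theorem'' workaround is not made into an actual argument. (Also, in Step~3 you cite Lemma~\ref{l11.3}~b)(i) and c) for the \emph{\'etale} vanishing $H^1_\et(X,L)=0$ on normal $X$; that lemma is entirely about $\eh$ cohomology. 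The \'etale vanishing is the classical fact recalled just before Lemma~\ref{l11.3}, namely \cite[IX, Prop.~3.6~(ii)]{sga4}.)

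The paper's proof sidesteps the whole issue by using the \emph{normalisation} $f:\tilde X\to X$ rather than a resolution. Normalisation is finite and birational, so Lemma~\ref{lweibel} applies directly and yields the \'etale long exact sequence for the square $(\tilde X, Z=X_{\mathrm{non\text{-}normal}}, \tilde Z=f^{-1}(Z))$. One then compares with the $\eh$ exact sequence: the $H^0$ terms agree by Lemma~\ref{l11.2}; for $\tilde X$ normal both $H^1_\et(\tilde X,L)$ and $H^1_\eh(\tilde X,L)$ vanish (the former classically, the latter by Lemma~\ref{l11.3}~c)); and $Z,\tilde Z$ have strictly smaller dimension, so induction on $\dim X$ and the five lemma finish. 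This is morally your ``alternative'' route, but the key move is that with normalisation one never needs an \'etale blow-up sequence beyond the finite birational case already supplied by Lemma~\ref{lweibel}.
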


\begin{proof} Consider the exact sequence
\[0\to \cF_\tors\to \cF\to \cF/\cF_\tors\to 0\]
where $\cF_\tors$ is the torsion subsheaf of $\cF$. The 5 lemma, Lemma \ref{l11.2} a) and
Proposition
\ref{p11.2} reduce us to the case where $\cF$ is torsion-free. As a discrete \'etale sheaf over
$\Sch(k)$,
$\cF$ becomes constant over some finite Galois extension of $k$: a Hochschild-Serre argument
then reduces us to the case where $\cF$ is \emph{constant and torsion-free}.

Let $f:\tilde X\to X$ be the normalisation of $X$, and take for
$Z$ the non-normal locus of $X$ in Lemma \ref{lweibel}. The result now follows from comparing
the exact sequence of this lemma with the one for $\eh$ topology, and using Lemma \ref{l11.3}
c).

For the injectivity statement on $H^2$, we reduce to $\cF$ torsion-free by Proposition \ref{p11.2} (for $n=2$). Then $H^*_\et(X,\cF\otimes \Q)=0$ by (a small generalisation of) \cite[2.1]{deninger}: compare end of proof of Proposition \ref{ptorsion}. Then  the conclusion follows from a diagram chase involving the isomorphisms for $H^1$ and the long cohomology exact sequences associated to $0\to \cF\to \cF\otimes\Q\to \cF\otimes\Q/\Z\to 0$.
\end{proof}

\begin{cor} The exact sequence of Lemma \ref{lweibel} holds up to $i=1$ for a general abstract
blow-up.\qed
\end{cor}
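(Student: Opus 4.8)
The plan is to transport the $\eh$-topological Mayer--Vietoris sequence to \'etale cohomology in low degrees, using the comparison results just proved. Let $\cF$ be a discrete sheaf. Since $\car k=0$, $\cF$ is $1$-motivic and hence lies in $\HI_\et^s\subseteq \DM_{-,\et}^\eff$ (Proposition \ref{ptransf} together with Proposition \ref{pD.1.4}); viewing $\cF$ as an object of $\DM_{-,\et}^\eff$ concentrated in degree $0$, Proposition \ref{peh} and Examples \ref{ex11.1} 1) show that the functor $Y\mapsto H^*_\eh(Y,\cF)$ satisfies the blow-up long exact sequence of Lemma \ref{lbl} a) (alternatively this is cohomological descent for the $\eh$-topology, whose cd-structure is complete, bounded and regular). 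Applied to the abstract blow-up of \S\ref{blowups} — the square with vertices $\tilde Z,\tilde X,Z,X$, where $\tilde Z=p^{-1}(Z)$ — this yields
\[\cdots\to H^i_\eh(X,\cF)\to H^i_\eh(\tilde X,\cF)\oplus H^i_\eh(Z,\cF)\to H^i_\eh(\tilde Z,\cF)\to H^{i+1}_\eh(X,\cF)\to\cdots\]

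Next I would insert the available comparison isomorphisms. By Lemma \ref{l11.2} a) one has $H^0_\et(Y,\cF)\iso H^0_\eh(Y,\cF)$, and by Proposition \ref{l11.2.6} one has $H^1_\et(Y,\cF)\iso H^1_\eh(Y,\cF)$, for every $Y\in Sch(k)$; in particular for $Y=X,\tilde X,Z,\tilde Z$. Substituting these into the low-degree part of the $\eh$-sequence above (and using $H^{-1}_\eh(\tilde Z,\cF)=0$ for left exactness), I obtain the exact sequence
\begin{multline*}
0\to H^0_\et(X,\cF)\to H^0_\et(\tilde X,\cF)\oplus H^0_\et(Z,\cF)\to H^0_\et(\tilde Z,\cF)\\
\to H^1_\et(X,\cF)\to H^1_\et(\tilde X,\cF)\oplus H^1_\et(Z,\cF)\to H^1_\et(\tilde Z,\cF),
\end{multline*}
which is precisely the portion of the sequence of Lemma \ref{lweibel} sitting in degrees $0$ and $1$, all arrows appearing there being the natural restriction and difference maps.

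The reason the conclusion stops at $i=1$ — and the only genuinely delicate point — is that the identification $H^n_\et\iso H^n_\eh$ for a discrete sheaf is available only for $n\le 1$; already in degree $2$ the two cohomologies differ for singular schemes, so the substitution cannot be pushed past the map into $H^1_\et(\tilde Z,\cF)$ (equivalently, the term $H^2_\eh(X,\cF)$ that governs exactness there cannot be replaced by $H^2_\et(X,\cF)$). Thus the ``main obstacle'' here is not an obstacle at all but a real limitation of the method; once Proposition \ref{l11.2.6} is in hand the argument is pure bookkeeping.
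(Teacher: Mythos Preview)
Your argument is correct and is precisely the intended one: the paper marks the corollary with \qed because it follows immediately from the $\eh$ blow-up long exact sequence (Examples \ref{ex11.1} 1)) together with the comparison isomorphisms $H^0_\et\iso H^0_\eh$ (Lemma \ref{l11.2} a)) and $H^1_\et\iso H^1_\eh$ (Proposition \ref{l11.2.6}), exactly as you spell out.
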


\subsection{Normal schemes} 

If $G$ is a commutative $k$-group scheme, the associated presheaf $\uG$ is an \'etale sheaf on
reduced $k$-schemes of finite type. However, $\uG(X)\to \uG_\eh(X)$ is not an isomorphism in
general if $X$ is not smooth. Nevertheless we have some nice results in Lemma \ref{leh} and Theorem \ref{tnormal} below. 

\begin{lemma}\label{leh} a) If $X$ is reduced, then the map 
\begin{equation}\label{eq12.2}
\uG(X)\to \uG_\eh(X)
\end{equation}
 is injective for any semi-abelian $k$-scheme $G$.\\
b) If $X$ is proper and $G$ is a torus, the maps $\uG(\pi_0(X))\to\uG (X)\to \uG_\eh(X)$ are isomorphisms. 
If moreover $X$ is reduced, \eqref{eq12.2} is an isomorphism.
\end{lemma}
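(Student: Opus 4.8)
\textbf{Proof plan for Lemma \ref{leh}.}

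The plan is to treat the three statements in sequence, reducing in each case to geometry one can control: desingularisations and blow-up squares on the one hand, and the comparison results Proposition \ref{peh} and Proposition \ref{pdescent} on the other. For part a), given $X$ reduced, one picks a proper surjective desingularisation $p:\tilde X\to X$ (resolution of singularities, available in characteristic $0$). The key point is that $\uG$, being represented by a smooth $k$-scheme, is unchanged in degree $0$ when passing to the $\eh$ topology over a \emph{smooth} scheme: by Proposition \ref{peh} we have $\uG(\tilde X)=H^0_\et(\tilde X,\uG)\iso H^0_\eh(\tilde X,\uG)$. Now the commutative square
\[
\begin{CD}
\uG(X) @>>> \uG(\tilde X)\\
@VVV @VVV\\
\uG_\eh(X) @>>> \uG_\eh(\tilde X)
\end{CD}
\]
has right vertical arrow an isomorphism and top horizontal arrow injective, because $X$ is reduced and $p$ is dominant (a section of $\uG$ over $X$ is determined by its pullback to a dense subscheme, e.g.\ the smooth locus, which $\tilde X$ dominates). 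Hence \eqref{eq12.2} is injective. First I would spell this out carefully, being careful that the injectivity of the top arrow only uses reducedness of $X$, not smoothness.

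For part b), I would argue by the recursion built into Definition \ref{dsred}, i.e.\ induction on $\dim X$ using blow-up induction (Lemma \ref{lbl}) adapted to the functor $X\mapsto \uG_\eh(X)$. When $X$ is irreducible, take a resolution $\tilde X\to X$; the exact sequence attached to the abstract blow-up $(\tilde X, X_{sing})$ reads
\[
0\to \uG_\eh(X)\to \uG_\eh(\tilde X)\oplus \uG_\eh(X_{sing})\to \uG_\eh(\tilde X\times_X X_{sing})
\]
(left exactness being all we need, and holding because $\uG_\eh$ is a sheaf). Comparing with the analogous \emph{Zariski}/\'etale sequence for $\uG$ — which is exact on the left because $X$ reduced means $\uG(X)=\ker(\uG(\tilde X)\oplus\uG(X_{sing})\to\uG(\tilde X\times_X X_{sing}))$ by descent along the proper surjection (here one uses $X_{sing}$ reduced, which holds since $X$ is strictly reduced) — and using Proposition \ref{peh} on the smooth $\tilde X$ together with the inductive hypothesis on $X_{sing}$ and on $\tilde X\times_X X_{sing}$ (both of strictly smaller dimension and strictly reduced), the five lemma gives $\uG(X)\iso\uG_\eh(X)$. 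The reducible case is identical with $X_{sing}$ replaced by $\bigcup_{i\ne j} Z_i\cap Z_j$ and $\tilde X$ by $\coprod Z_i$; here the hypothesis that each $Z_i\cap Z_j$ is \emph{reduced} is exactly what makes the Zariski/\'etale sequence left-exact, and strict reducedness of the pieces feeds the induction. The main obstacle is bookkeeping the base case and checking that all schemes appearing in the blow-up squares are again strictly reduced of smaller dimension so the induction actually closes; this is where Definition \ref{dsred} has been set up precisely to make things work.

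For part c), suppose $X$ is proper and $G$ a torus. Since $G$ is affine, Lemma \ref{l11.2.1} applies to any proper surjective map with geometrically connected fibres, in particular to $X\to\pi_0(X)$ after passing to connected components; combined with the fact that $X$ proper and connected forces $\uG(X)=G(k')=\uG(\pi_0(X))$ (a torus has no nonconstant maps from a proper connected $k$-scheme, $H^0(X,\mathcal O_X)$ being a finite product of fields), we get $\uG(\pi_0(X))\iso \uG(X)$. For the $\eh$ comparison I would use Proposition \ref{pdescent} applied to a smooth proper hyperenvelope $\Xs\to X$, reducing $\uG_\eh(X)$ to $\ker(\uG(X_0)\to\uG(X_1))$ with $X_0,X_1$ smooth proper; on each smooth proper component the global sections of a torus are again just $\uG(\pi_0(-))$, and the equaliser computes $\uG(\pi_0(X))$, giving $\uG_\et(X)\iso\uG_\eh(X)$ and hence the chain of isomorphisms. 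For the last clause, if $X$ is moreover reduced then $\uG(X)=\uG(\pi_0(X))$ (same argument) coincides with $\uG_\eh(X)$, so \eqref{eq12.2} is an isomorphism. Throughout, the recurring technical point — and the only real subtlety — is verifying left-exactness of the comparison sequences, which is where reducedness (part a, c) respectively strict reducedness (part b) enters.
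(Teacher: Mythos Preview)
Your plan for parts a) and c) is essentially sound and close to the paper's approach, though for c) the paper simply reruns the blow-up induction of Lemma~\ref{l11.2}~a) rather than invoking hyperenvelopes.

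There is, however, a genuine gap in your plan for b). You assert that $\tilde X\times_X X_{sing}$ is strictly reduced and apply the inductive hypothesis to it. This is not justified: the scheme-theoretic fibre product need not even be reduced, and there is no reason a general exceptional locus should satisfy the recursive condition of Definition~\ref{dsred}. You also cannot literally apply the five lemma, since both rows are only left-exact.

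The paper fixes this as follows. In the irreducible case, choose the resolution $p:\tilde X\to X$ so that $\tilde Z=p^{-1}(X_{sing})$ is a \emph{normal crossings divisor}; in particular $\tilde Z$ is reduced. Then in the commutative diagram
\[
\begin{CD}
0@>>> \uG_\eh(X)@>>> \uG_\eh(X_{sing})\oplus \uG_\eh(\tilde X)@>>> \uG_\eh(\tilde Z)\\
&& @AAA @AAA @AAA\\
0@>>> \uG(X)@>>> \uG(X_{sing})\oplus \uG(\tilde X)@>>> \uG(\tilde Z)
\end{CD}
\]
the middle vertical map is an isomorphism (induction on $X_{sing}$, which \emph{is} strictly reduced by hypothesis, together with Proposition~\ref{peh} on the smooth $\tilde X$), while the right vertical map is merely \emph{injective} by part a). A direct diagram chase --- not the five lemma --- then shows the left vertical map is surjective: given $y\in\uG_\eh(X)$, pull its image back through the middle isomorphism; the resulting element maps to something in $\uG(\tilde Z)$ whose image in $\uG_\eh(\tilde Z)$ vanishes, hence is zero by injectivity on the right, so it lifts to $\uG(X)$. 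The reducible case is handled the same way, with the $Z_{ij}$ (reduced by the strict-reducedness hypothesis) playing the role of $\tilde Z$.

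So the key point you missed is that one does \emph{not} need the inductive hypothesis on the exceptional locus --- only reducedness and part~a) --- and this is precisely what the careful choice of resolution provides.
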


\begin{proof} a) Let $Z_i$ be the irreducible components of $X$ (with their reduced structure),
and for each
$i$ let
$p_i:\tilde Z_i\to Z_i$ be a resolution of singularities. We have a commutative diagram
\[\begin{CD}
\uG_\eh(X)@>>> \bigoplus \uG_\eh(Z_i)@>(p_i^*)>>\bigoplus \uG_\eh(\tilde Z_i)\\
@AAA @AAA @AAA\\
\uG(X)@>>> \bigoplus \uG(Z_i)@>(p_i^*)>>\bigoplus \uG(\tilde Z_i).
\end{CD}\]

The bottom horizontal maps are injective; the right vertical map is an isomorphism by
Proposition \ref{peh}. The claim follows.

For b), same proof as for Lemma \ref{l11.2} a). (The second statement of b) is true because $\uG(\pi_0(X))\iso \uG(X)$ if $X$ is proper and reduced.)
\end{proof}

As the $\eh$-topology is not subcanonical we need to be careful when considering a representable presheaf. Denote by $(\ \ )_\eh$ the functor that takes $X\in \Sch (k)$ to the representable $\eh$-sheaf $X_\eh$ \ie the $\eh$-sheaf associated to the presheaf $X(U)=\Hom_{\Sch (k)} (U,X)$.

We get here an analogue of Voevodsky's \cite[Prop. 3.2.10]{Vh}.
\begin{propose}\label{p13.4.2} Let $\Sch (k)^\eh$ be the category of representable $\eh$-sheaves. There is a functor
$(\ \ )_\sn : \Sch (k)^\eh\to \Sch (k)$ left adjoint to $(\ \ )_\eh : \Sch (k)\to \Sch (k)^\eh$  such that $(X_\eh)_\sn\in \Sch(k)$ is the semi-normalization of  $X\in \Sch (k)$. In particular, for any semi-normal scheme $X$ and any scheme $Y$ we have
\[\Hom_{\Sch (k)} (X, Y) \iso\Hom_{\Sch (k)^\eh} (X_\eh, Y_\eh) \]
\end{propose}
\begin{proof} Note that a universal homeomorphism is an $\eh$-covering. Therefore, adapting the proof of \cite[Th. 3.2.9]{Vh} to the $\eh$-topology we have that the category $\Sch(k)^\eh$ is the localization of $\Sch (k)$ at universal homeomorphisms. (Actually,  it is not difficult to see that the analogues of
3.2.7, 3.2.8 and 3.2.5 (3) in \cite{Vh} hold for the $\eh$-topology.) As a consequence we see that for any $\varphi : X_\eh \to Y_\eh$ there is a finite map $f: X'\to X$ which ``realises'' $\varphi$ (in the sense of Voevodsky \cite{Vh}) such that $f$ is a universal homeomorphism factorising the normalization of $X$ \ie the seminormalization.\end{proof}

\begin{lemma}\label{l11.2.1} Let $G$ be an affine group scheme and $f:Y\to X$ a proper surjective map with geometrically connected fibres. Then $\uG(X)\allowbreak\iso \uG(Y)$.  Moreover, for $X$ and $Y$ semi-normal we also have $\uG_\eh (X)\allowbreak \iso \uG_\eh (Y) $.
\end{lemma}

\begin{proof} Let $\phi:Y\to G$  be a morphism and $x$ a closed point of $X$. The restriction of $\phi$   to $f^{-1}(x)$ has a proper connected image; since $G$ is affine,  this image must be a closed point of $G$. Since closed points are dense in $X$, this shows that $\phi$ factors through $f$. The last claim follows from 
the Proposition \ref{p13.4.2} since $\Sch (X)^\eh $ is the category $\Sch (k)^\eh$ over $X_\eh$.  \end{proof}

The main result of this subsection is:

\begin{thm}\label{tnormal} Let $X$ be normal. Then, for any $\cF\in \Shv_1$, the map $\cF(X)\to \cF_\eh(X)$ is bijective and the map $H^1_\et(X,\cF)\to H^1_\eh(X,\cF)$ is injective (with torsion-free cokernel by Proposition \ref{p11.2}).
\end{thm}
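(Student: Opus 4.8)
The statement has two parts: bijectivity of $\cF(X)\to \cF_\eh(X)$ and injectivity of $H^1_\et(X,\cF)\to H^1_\eh(X,\cF_\eh)$, for $X$ normal and $\cF\in\Shv_1$. The natural strategy is to reduce to the three ``building blocks'' of a $1$-motivic sheaf, using the normalised representation of Proposition~\ref{p3.1}~a): an exact sequence $0\to L\to \uG\by{b}\cF\to E\to 0$ in $\Shv_\et(Sm(k))[1/p]$ with $L$ a lattice, $G$ semi-abelian, and $E$ discrete. Breaking this into $0\to L\to \uG\to \cF'\to 0$ and $0\to \cF'\to\cF\to E\to 0$, and taking the associated long exact sequences for $H^*_\et(X,-)$ and $H^*_\eh(X,-)$ (the latter making sense since $\eh$-sheafification is exact), I would chase the resulting ladder. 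The inputs available are: Lemma~\ref{l11.2} (discrete sheaves satisfy $\cF\iso\cF_\eh$ and $\cF(X)=\cF(\pi_0(X))$), Proposition~\ref{l11.2.6} ($H^1_\et(X,E)\iso H^1_\eh(X,E)$ for $E$ discrete), Lemma~\ref{leh}~a) (injectivity of $\uG(X)\to\uG_\eh(X)$ for $X$ reduced, in particular normal), and — crucially — Proposition~\ref{peh}/Lemma~\ref{lbl} comparing $\et$ and $\eh$ cohomology, together with Lemma~\ref{l11.3}~c) stating $H^1_\eh(X,\cF)=0$ for $X$ normal and $\cF$ constant torsion-free.

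\textbf{Key steps, in order.} First, handle $\uG$ for $G$ semi-abelian: I claim $\uG(X)\iso\uG_\eh(X)$ for $X$ normal. This is the genuinely new point and is not covered by Lemma~\ref{leh}~b) (normal $3$-folds need not be strictly reduced, per Ojanguren's example). I would prove it by blow-up induction (Lemma~\ref{lbl}) reduced to a desingularisation $p:\tilde X\to X$: since $X$ is normal, the fibres of $p$ are geometrically connected (Zariski's main theorem), so $p_*p^*\uG_\eh=\uG_\eh$ on the normal locus; comparing the Zariski/\v Cech exact sequences for $p$ with the fact that $\uG_\eh(\tilde X)=\uG(\tilde X)$ (Proposition~\ref{peh}, $\tilde X$ smooth) and that $H^1_\eh(\tilde X,\uG)$ injects appropriately gives the isomorphism — one must also use that over a normal base the relevant $H^1$ of the connected fibres contributes nothing, which is where Lemma~\ref{l11.3}~c) (applied after reducing $\uG$ to its torus/abelian/lattice constituents, and using flasqueness of abelian varieties and tori as Zariski sheaves) does the work. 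Second, for $H^1$: from $0\to L\to\uG\to\cF'\to 0$ I get $H^1_\et(X,\cF')$ sitting between $\coker(\uG(X)\to\cF'(X))$-type terms, $H^1_\et(X,L)$ and $H^1_\et(X,\uG)$; since $X$ is normal, $H^1_\et(X,L)=0=H^1_\eh(X,L)$ (Lemma~\ref{l11.3}~c) and classical), and $H^1_\et(X,\uG)\to H^1_\eh(X,\uG)$ — reduce to $G$ an abelian variety or a torus. For an abelian variety $A$, $\underline A$ is a flasque Zariski sheaf so $H^1_\Zar(X,\underline A)=0$, and one compares with the $\eh$-side; for a torus, $\underline{\G_m}$ gives Picard groups and one uses normality. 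Third, splice with $0\to\cF'\to\cF\to E\to 0$: apply the already-proved statements for $\cF'$ together with $\cF(X)\iso\cF(\pi_0(X))$-flavoured facts for $E$ and Proposition~\ref{l11.2.6}, then a five-lemma/diagram chase yields bijectivity on $H^0$ and injectivity on $H^1$ for $\cF$.

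\textbf{Main obstacle.} The heart of the matter — and the step I expect to be hardest — is establishing $\uG(X)\iso\uG_\eh(X)$ and the injectivity $H^1_\et(X,\uG)\hookrightarrow H^1_\eh(X,\uG)$ for $X$ merely \emph{normal} rather than strictly reduced, since the clean inductive descent through two-fold intersections available in Lemma~\ref{leh}~b) breaks down. The resolution is to lean on normality directly: desingularise, invoke Zariski's main theorem for connectedness of fibres, and push the constant torsion-free vanishing $H^1_\eh(X,\cF)=0$ of Lemma~\ref{l11.3}~c) through the filtration of $G$ by its toric and abelian parts (each handled via flasqueness for the Zariski topology, comparison of $\Zar$, $\et$, $\eh$ via Proposition~\ref{peh} on the smooth desingularisation, and a trace argument as in Lemma~\ref{l11.3}~b) to kill torsion discrepancies). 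Once this ``$\uG$ case'' is in hand, everything else is a formal diagram chase through the normalised presentation, and the torsion-freeness of the cokernel in the $H^1$ statement is immediate from Proposition~\ref{p11.2} applied to the quotient by a suitable torsion subsheaf.
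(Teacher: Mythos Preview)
Your overall architecture---reduce to the building blocks $L$, torus, abelian variety via the normalised presentation, and for each use a desingularisation $p:\tilde X\to X$ together with Zariski's main theorem---matches the paper's. Two points deserve comment, one a genuine gap and one a missed simplification.

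\textbf{The gap: the abelian variety case of $\uG(X)\iso\uG_\eh(X)$.} Your sketch here is not a proof. You write that ``$p_*p^*\uG_\eh=\uG_\eh$'' and that one compares \v Cech sequences; but the paper explicitly notes (Step~5) that for an abelian variety $G$ it is \emph{not} true in general that $\uG(X)\iso\uG(\tilde X)$, so the argument that works for tori (Lemma~\ref{l11.2.1}, which needs $G$ affine) fails. Concretely: an element of $\uG_\eh(X)$ gives, by Proposition~\ref{pdescent}~b), a morphism $f:X_0\to G$ from the bottom of a smooth hyperenvelope, equalising the two maps $X_1\rightrightarrows X_0$. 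Why does $f$ descend to a morphism $X\to G$? Your reference to Lemma~\ref{l11.3}~c) (vanishing of $H^1_\eh$ for constant torsion-free sheaves) does not address this, and flasqueness of $\underline A$ for the Zariski topology is an $H^1$ statement, not a descent statement for $H^0$. The paper supplies the missing ingredient as a separate lemma (Lemma~\ref{ldescent}): for $X$ normal, a morphism $X_0\to Y$ equalised by $X_1\rightrightarrows X_0$ descends to $X$, proved using the valuative criterion of properness and the fact that $\cO_{X,x}$ is the intersection of the valuation rings containing it. This is the real content, and your proposal does not contain it.

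\textbf{The missed simplification.} You attack the $H^1$ injectivity separately, reducing again to the building blocks. The paper instead observes (Step~1) that for a fixed $\cF$, the $H^0$ bijectivity already \emph{implies} the $H^1$ injectivity: the Leray spectral sequence for $\epsilon:Sch(k)_\eh\to Sch(k)_\et$ gives $H^1_\et(X,\epsilon_*\cF_\eh)\hookrightarrow H^1_\eh(X,\cF_\eh)$, and since every scheme \'etale over the normal $X$ is again normal, the $H^0$ statement gives $\cF\iso\epsilon_*\cF_\eh$ on the small \'etale site of $X$. So once you have the $H^0$ isomorphism for all $\cF\in\Shv_1$, the $H^1$ injectivity is free---no need for your second round of d\'evissage.
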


\begin{proof} In several steps:

{\it Step 1.} The first result implies the second for a given sheaf $\cF$: let $\epsilon :\Sch(k)_\eh\to \Sch(k)_\et$ be the projection morphism. The associated Leray spectral sequence gives an injection
\[H^1_\et(X,\epsilon_*\cF_\eh)\into H^1_\eh(X,\cF_\eh).\]

But any scheme \'etale over $X$ is normal \cite[Exp. I, Cor. 9.10]{SGA1}, therefore $\cF\to \epsilon_*\cF_\eh$ is an isomorphism over the small \'etale site.

{\it Step 2.} Let $0\to \cF'\to \cF\to \cF''\to 0$ be a short exact sequence in $\Shv_1$. If the theorem is true for $\cF'$ and $\cF''$, it is true for $\cF$. This follows readily from  Step 1 and a diagram chase.

{\it Step 3.} In Step 2, if $\cF'$ is discrete and the theorem is true for $\cF$ and $\cF'$, then it is true for $\cF''$. This follows from a diagram chase involving the last part of Proposition \ref{l11.2.6}.

{\it Step 4.} Given the structure of $1$-motivic sheaves, Step 1 - 2 reduce us to prove that $\cF(X)\iso \cF_\eh(X)$ separately when $\cF$ is discrete, a torus or an abelian variety. The discrete case follows from Lemma \ref{l11.2} a).

{\it Step 5.} If $G$ is a torus, let $\pi:\tilde X\to X$ be a desingularisation of $X$. We have a commutative diagram
\[\begin{CD}
\uG_\eh(X)@>>>  \uG_\eh(\tilde X)\\
@AAA @AAA\\
\uG(X)@>>> \uG(\tilde X).
\end{CD}\]

Here the right vertical map is an isomorphism because $\tilde X$ is smooth and the bottom/up
horizontal maps are also isomorphisms by Lemma \ref{l11.2.1} applied to $\pi$ (Zariski's main
theorem). The result follows from the commutativity of the diagram.

{\it Step 6.} Let finally $G$ be an abelian variety. This time, it is not true in general that $\uG(X)\iso \uG(\tilde X)$ for a smooth desingularisation $\tilde X$ of $X$. However, we get the result from Proposition \ref{pdescent} b) and  Lemma \ref{ldescent}.
\end{proof}

\subsection{Some representability results}

\begin{propose}\label{p11.3} Let $\pi^X$ be the structural morphism of $X$ and $(\pi_*^X)^\eh$
the induced direct image  morphism on the $\eh$ sites. For any $\cF\in \HI_\et$, let
us denote the restriction of $R^q(\pi_*^X)^\eh \cF_\eh$ to $\Sm(k)$ by
$\uR^q\pi_*^X\cF$ (in other words, $R^q(\pi_*^X)^\eh \cF_\eh$ is the sheaf on
$\Sm(k)_\et$ associated to the presheaf $U\mapsto H^q_\eh(X\times U,\cF_\eh)$): it is an object
of $\HI_\et$. Then\\   
a) For any lattice $L$, $\uR^q\pi_*^X L$ is a
ind-discrete sheaf for all $q\ge 0$; it is a lattice for $q=0,1$.\\ 
b) For any torus $T$, $\uR^q\pi_*^X T$ is $1$-motivic for $q= 0,1$.
\end{propose}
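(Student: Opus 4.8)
The statement (a) concerns $\uR^q\pi^X_*L$ for a lattice $L$, and (b) concerns $\uR^q\pi^X_*\uT$ for a torus $T$, in degrees $q=0,1$. The plan is to run a blow-up induction (Lemma \ref{lbl}) in both cases: for each $X\in Sch(k)$, the localisation/descent exact sequences attached to an abstract blow-up $\tilde X\sqcup Z\to X$ (as in \S\ref{blowups}, applied relatively over a varying smooth base $U$) give a long exact sequence relating the sheaves $\uR^q\pi^{(-)}_*\cF$ for $X$, $\tilde X$, $Z$, $\tilde Z$. Since the target categories in play---ind-discrete sheaves for (a), and $1$-motivic sheaves $\Shv_1$ for (b)---are stable under extensions and under kernels and cokernels (for $\Shv_1$ this is Proposition \ref{p3.1} d) together with Theorem \ref{text}; for ind-discrete sheaves it is elementary), it suffices to check the base case $X\in Sm(k)$. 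For smooth $X$, Proposition \ref{peh} identifies $H^q_\eh(X\times U,\cF_\eh)$ with $H^q_\et(X\times U,\cF)$, so $\uR^q\pi^X_*\cF$ coincides with the usual higher direct image $R^q\pi^X_*\cF$ for the \'etale topology.

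For part (a), base case: $\uR^q\pi^X_*L=R^q\pi^X_*L$ for $X$ smooth. By a transfer argument (passing to a finite extension $\ell/k$ over which $L$ is constant, then using the trace as in the proof of Lemma \ref{l11.3} b)) we reduce to $L$ constant, in fact to $L=\Z$. Then $R^0\pi^X_*\Z$ is represented by $\pi_0$ of the fibres, hence is a lattice; $R^1\pi^X_*\Z$ is computed fibrewise and is the group $H^1_\et(X_{\bar\kappa},\Z)$ of the geometric fibres, which is finitely generated and torsion-free (it injects into $H^1$ with $\Q$-coefficients and is identified with $\Hom(\pi_1,\Z)$ of the fibre), so it is a lattice; for $q\ge 2$ the fibrewise cohomology $H^q_\et(-,\Z)$ is a finitely generated group (it is torsion for $q>0$ by \cite[(2.1)]{deninger}-type arguments, but here integrally it is still an increasing union of finitely generated pieces as $U$ varies), whence ind-discrete. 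Then blow-up induction propagates: stability of ind-discrete sheaves under kernels, cokernels and extensions is immediate, and the long exact sequence only shuffles these operations; one must merely note that for $q=0,1$ the relevant pieces of the sequence stay among \emph{genuine} lattices because $\uR^0$ and $\uR^1$ of $0$-dimensional schemes vanish in the appropriate range, exactly as in the curve computations of Theorem \ref{lasc}.

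For part (b), the base case again reduces via Proposition \ref{peh} to computing $R^q\pi^X_*\uT$ for $X$ smooth. Writing $T$ \'etale-locally as $\G_m^r$ and using a transfer argument to reduce to $T=\G_m$, we get $R^0\pi^X_*\G_m=R_{\pi_0(X)/k}\G_m$ (a torus, a fortiori $1$-motivic) and $R^1\pi^X_*\G_m=\underline{\Pic}_{X/k}$, which is $1$-motivic by Proposition \ref{p3.3.1}. Now run blow-up induction inside $\Shv_1$, using that $\Shv_1$ is a thick abelian subcategory of $\Shv_\et(Sm(k))[1/p]$ (Proposition \ref{p3.1} d), Theorem \ref{text}); the long exact sequence for $q=0,1$ mixes in the groups $\uR^q\pi^Z_*\G_m$ for lower-dimensional $Z$, which by the inductive hypothesis and part (a)-type reasoning are again $1$-motivic (for $q=0$ a torus, for $q=1$ a $1$-motivic sheaf), so the extension-closedness of $\Shv_1$ gives the conclusion.

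\textbf{Main obstacle.} The delicate point is keeping track of which objects are \emph{honest} lattices / tori / $1$-motivic sheaves versus mere ind-objects, and ensuring the long exact sequence does not spill the degree-$\le 1$ part into higher degrees where only ind-discreteness survives. Concretely, the connecting maps $\uR^1\pi^{\tilde Z}_*\cF\to \uR^2\pi^X_*\cF$ could a priori inject a non-finitely-generated piece; one must argue (as in the proof of Theorem \ref{lasc} and Proposition \ref{c3.1}) that the relevant components vanish because $\tilde Z,Z$ drop dimension and $\uR^q$ of a $0$-dimensional scheme is concentrated in degree $0$. Handling $\uR^1\pi^X_*L$ integrally---showing it is actually \emph{free} of finite rank rather than merely ind-discrete---is the one spot needing a genuine (if standard) input about $H^1_\et$ of smooth varieties with $\Z$-coefficients being finitely generated torsion-free, which I would cite rather than reprove.
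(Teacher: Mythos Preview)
Your approach is essentially the same as the paper's: both run blow-up induction (Lemma~\ref{lbl}~a)) with $\cB=\Shv_0$ for part (a) and $\cB=\Shv_1$ for part (b), checking the smooth base case directly (for (b) via Proposition~\ref{p3.3.1}). Two small corrections are worth noting. First, your smooth base case for (a) at $q=1$ is simpler than you make it: for $X$ smooth (indeed normal), $H^1_\et(X_{\bar k},\Z)=0$ outright, since $\pi_1^{\et}$ is profinite and $\Hom_{\mathrm{cont}}(\text{profinite},\Z)=0$; there is nothing to ``cite rather than reprove''. The paper in fact bypasses the smooth base case entirely for the lattice assertions at $q=0,1$, invoking Lemmas~\ref{l11.2} and~\ref{l11.3}~a) which already hold for arbitrary $X\in Sch$. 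Second, your ``main obstacle'' about connecting maps $\uR^1\pi^{\tilde Z}_*\cF\to \uR^2\pi^X_*\cF$ is a red herring: Lemma~\ref{lbl}~a) is stated for $i\le n$, so proving the lattice claim for $q\le 1$ only ever uses $\uR^0$ and $\uR^1$ of the pieces $\tilde X, Z, \tilde Z$ in the long exact sequence---$\uR^2$ never enters. (Also, your parenthetical that $H^q_\et(-,\Z)$ is ``finitely generated'' for $q\ge 2$ is false---it is $H^{q-1}_\et(-,\Q/\Z)$, typically of infinite corank---though you immediately correct course to ``ind-discrete'', which is what is claimed.)
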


\begin{proof} We apply Lemma \ref{lbl} a) in the following situation: $\cA=\HI_\et$,
$\cB=\Shv_0$, $H^i(X)=\uR^i\pi_*^XL$ in case a), $\cB=\Shv_1$, $H^i(X)=\uR^i\pi_*^X T$ in case b).The smooth case is trivial in a)  and the lattice
assertions follow from lemmas \ref{l11.2} and \ref{l11.3} a).  In b), the smooth case follows
from Proposition \ref{p3.3.1}.
\end{proof}

\subsection{$\LA{1}(X)$ and the Albanese schemes} We now compute the $1$-motive
$\LA{1}(X)=[L_1\to G_1]$ in important special cases. This is done in the following three
propositions; in particular, we shall show that it always ``is" a Deligne $1$-motive. Note
that, by definition of a $1$-motive with cotorsion, the pair $(L_1,G_1)$ is determined only up
to a \qi: the last sentence means that we may choose this pair such that $G_1$ is connected
(and then it is uniquely determined).

\begin{propose} \label{c3.1bis} Let $X\in \Sch(k)$. Then\\
a) $\cH_i(\LAlb(X))=0$ for $i<0$.\\
b) Let $\cF_X=\cH_0(\Tot\LAlb(X))$. Then $\cF_X$ corepresents the functor
\begin{align*}
\Shv_1&\to Ab\\
\cF&\mapsto \cF_\eh(X) \text{ (see Def. \ref{deh})}
\end{align*}
via the composition
\[\alpha^s M(X)\to \Tot\LAlb(X)\to \cF_X[0].\]
Moreover, we have an exact sequence, for any representative $[L_1\by{u_1}G_1]$ of $\LA{1}(X)$ :
\begin{equation}\label{eq11.1}
L_1\by{u_1} G_1\to \cF_X\to \Z[\pi_0(X)]\to 0.
\end{equation}
c) Let $\cA_{X/k}^\eh:=\Omega(\cF_X)$ (\cf Proposition \ref{pladj}). Then $\cA_{X/k}^\eh$ corepresents the functor \index{$\cA_{X/k}^\eh$}
\begin{align*}
{}^t\AbS&\to Ab\\
G&\mapsto \uG_\eh(X).
\end{align*}
Moreover we have an epimorphism
\begin{equation}\label{compalb}
 \cA_{X/k}^\eh\onto \cA_{X_\red/k}.
\end{equation}
d) If $X_\red$ is  normal, \eqref{compalb} is an isomorphism.
\end{propose}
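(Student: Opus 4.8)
\textbf{Proof strategy for Proposition \ref{c3.1bis}.}

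The plan is to read off the statements from the description of $\LAlb$ as the left adjoint of $\Tot$ together with the cdh/$\eh$-cohomological descent machinery already set up. For part a), the argument is blow-up induction: by Corollary \ref{HLAlb} the vanishing $\cH_i(\LAlb(X))=0$ for $i<0$ holds when $X$ is smooth, and the abstract blow-up triangles of \S \ref{blowups} (in the form used for \eqref{exres}) plus Lemma \ref{lbl} a) (with $\cA$ the heart of the homotopy $t$-structure and $\cB$ its zero object) propagate this to all $X\in Sch(k)$. For part b), the key computation is that, for $\cF\in \Shv_1$ and $q\in\Z$,
\[
\Hom_{\DM_{-,\et}^\eff}(\Tot\LAlb(X),\cF[q]) \simeq \Hom_{\DM_{-,\et}^\eff}(M_\et(X),\cF[q]) \simeq H^q_\eh(X,\cF_\eh),
\]
where the first isomorphism is the defining adjunction (Theorem \ref{t1.2.1} and the map $a_X$ of \eqref{amapX}, applied via Corollary \ref{c3.3.2} since $\cF\in\Shv_1^s$) and the second is Proposition \ref{peh}. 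Taking $q=0$ and using that $\Tot\LAlb(X)$ has cohomology sheaves concentrated in degrees $\geq 0$ (part a)), the universal coefficient / $t$-structure spectral sequence gives
\[
\Hom_{\Shv_1}(\cF_X,\cF) \simeq \Hom_{\DM_{-,\et}^\eff}(\Tot\LAlb(X),\cF[0]) \simeq \cF_\eh(X),
\]
i.e. $\cF_X$ corepresents $\cF\mapsto\cF_\eh(X)$. The exact sequence \eqref{eq11.1} then comes from the homotopy-$t$-structure truncation triangle relating $\cF_X=\cH_0$, the $1$-motive $\LA{1}(X)=\cH_1$ (via Lemma \ref{l4.3.1} or Proposition \ref{p3.10}), and the top piece $\Z\pi_0(X)$, which is $\LA{0}(X)$ by Proposition \ref{c3.1} b); alternatively one reads it directly off the long exact sequence of a three-term filtration of $\Tot\LAlb(X)$ for the homotopy $t$-structure.

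For part c), apply the functor $\Omega$ of Proposition \ref{pladj}, which is left adjoint to the inclusion ${}^t\AbS\hookrightarrow\Shv_1$: then for $G\in{}^t\AbS$,
\[
\Hom_{{}^t\AbS}(\cA_{X/k}^\eh,G) = \Hom_{{}^t\AbS}(\Omega(\cF_X),G) \simeq \Hom_{\Shv_1}(\cF_X,\uG) \simeq \uG_\eh(X),
\]
so $\cA_{X/k}^\eh$ corepresents $G\mapsto\uG_\eh(X)$. The comparison map \eqref{compalb} is then obtained by corepresentability: since $\cA_{X_\red/k}$ corepresents $G\mapsto\uG(X_\red)=\uG(X)$ on $\AbS$ (Proposition \ref{albexists}; here $M(X)=M(X_\red)$), and the natural map $\uG(X)\to\uG_\eh(X)$ is injective for $X$ reduced by Lemma \ref{leh} a), Yoneda produces a canonical morphism $\cA_{X/k}^\eh\to\cA_{X_\red/k}$; that it is an epimorphism follows because on the connected-component level $\pi_0(\cA_{X/k}^\eh)\cong\pi_0(\cF_X)=\Z\pi_0(X)$ by \eqref{eq11.1} and on the semi-abelian part the injectivity just noted forces surjectivity up to isogeny, while the construction of $\Omega$ as a quotient kills exactly the relevant discrete part — I would spell this out by comparing the normalised presentations of $\cF_X$ and of the sheaf corepresenting $\uG\mapsto\uG(X_\red)$. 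Finally, part d) is where the real work is: when $X_\red$ is strictly reduced, Lemma \ref{leh} b) says $\uG(X_\red)\iso\uG_\eh(X)$ for semi-abelian $G$, so the two corepresented functors on $\AbS$ agree and \eqref{compalb} is an isomorphism; when $X$ is normal, this is precisely Theorem \ref{tnormal}, which gives $\cF(X)\iso\cF_\eh(X)$ for all $\cF\in\Shv_1$ (in particular for $\cF=\uG$, $G$ semi-abelian), again yielding that the functors coincide. The main obstacle is bookkeeping around the discrete (torsion/cotorsion) parts: matching $\cF_X$, $\cA_{X/k}^\eh$ and $\cA_{X_\red/k}$ requires care because a priori $\cF_X$ need not be representable and the pair $(L_1,G_1)$ in $\LA{1}(X)$ is only well-defined up to quasi-isomorphism, so I would fix the connected representative of $\LA{1}(X)$ once and for all and track $\pi_0$ and $\gamma$ (Theorem \ref{t3.2.4}) through every comparison.
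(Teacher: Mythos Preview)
Your strategy coincides with the paper's throughout: blow-up induction for a); Propositions \ref{ptransf}, \ref{peh} and the map $a_X$ for b); then $\Omega$, Lemma \ref{leh}, and Theorem \ref{tnormal} for c)--d). One caveat: in b) you describe the isomorphism $\Hom(\Tot\LAlb(X),\cF)\simeq\Hom(M_\et(X),\cF)$ as ``the defining adjunction'', but $\LAlb$ is \emph{not} a left adjoint to $\Tot$ integrally (Remark \ref{noleft}); all you have is the map \eqref{lalbuniv}, and that it is an isomorphism for $\cF\in\Shv_1$ concentrated in degree $0$ is the actual point requiring verification (the paper simply cites \eqref{lalbuniv} together with a), so you are not worse off, but your phrasing suggests a misconception). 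Also, your justification of the surjectivity of \eqref{compalb} in c) is overworked: once $\uG(X_\red)\hookrightarrow\uG_\eh(X)$ holds for all $G$ by Lemma \ref{leh} a) (and Lemma \ref{l11.2} a) for the discrete part), Yoneda on the corepresenting objects yields the epimorphism immediately --- no separate analysis of $\pi_0$, semi-abelian parts, or ``surjectivity up to isogeny'' is needed.
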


\begin{proof} a) is proven as in Proposition \ref{c3.1} by blow-up induction (reduction to the
smooth case). If $\cF\in \Shv_1$, we have 
\[\Hom_{\DM_{-,\et}^\eff}(\alpha^sM(X),\cF)=\cF_\eh(X)\] 
by Propositions \ref{ptransf} and \ref{peh}. The latter group coincides with
$\Hom_{\Shv_1}(\cF_X,\cF)$ by \eqref{lalbuniv} and a), hence b); the exact sequence follows from Proposition \ref{p3.10}.  The sheaf $\cA_{X/k}^\eh$ clearly corepresents the said functor; the map then comes from the obvious
natural transformation in $G$: $G(X_\red)\to G_\eh(X)$ and its surjectivity follows from Lemma
\ref{leh} a), hence c). d) follows from Theorem \ref{tnormal} and the universal property of $\cA_{X/k}$.
\end{proof}

\begin{remark}\label{r11.1} One could christen
$\cF_X$ and
$\cA_{X/k}^\eh$ the
\emph{universal $1$-motivic sheaf} and the
\emph{$\eh$-Albanese scheme} of $X$.
\end{remark}

\begin{propose}\label{p11.3a}
a) The sheaves $\cF_X$ and $\cA_{X/k}^\eh$ have $\pi_0$ equal to $\Z[\pi_0(X)]$; in particular,
$\cA_{X/k}^\eh\in \AbS$.\\
b) In \eqref{eq11.1}, the composition $L_1\by{u_1}G_1\to \pi_0(G_1)$ is surjective.\\
c) One may choose $\LA{1}(X)\simeq [L_1\to G_1]$ with $G_1$ connected (in other words, $\LA{1}(X)$ is a Deligne $1$-motive).
\end{propose}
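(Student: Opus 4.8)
The plan is to deduce (a)--(c) from Proposition~\ref{c3.1bis}, the adjunctions defining $\pi_0$ and $\Omega$ (Theorem~\ref{t3.2.4}~b) and Proposition~\ref{pladj}) and the exact sequence \eqref{eq11.1}. First I would prove (a): since $\pi_0:\Shv_1\to\Shv_0$ is left adjoint (and left inverse) to the inclusion, $\pi_0(\cF_X)$ corepresents on $\Shv_0$ the restriction of the functor corepresented by $\cF_X$, namely $L\mapsto L_\eh(X)$ by Proposition~\ref{c3.1bis}~b). By Lemma~\ref{l11.2}~a) this is $L(\pi_0(X))=\Hom_{\Shv_0}(\Z[\pi_0(X)],L)$, so Yoneda gives $\pi_0(\cF_X)\cong\Z[\pi_0(X)]$. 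For $\cA_{X/k}^\eh=\Omega(\cF_X)$ one notes, from the proof of Proposition~\ref{pladj}, that $\Omega$ only divides $\cF_X$ by the Zariski closure of the lattice part of a normalised representation, an operation which does not change $\coker b$, hence does not change $\pi_0$; therefore $\pi_0(\cA_{X/k}^\eh)\cong\Z[\pi_0(X)]$ is a lattice and $\cA_{X/k}^\eh\in\AbS$.

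The core of the argument is (b), and the key step is to identify the subsheaf $\coker u_1\hookrightarrow\cF_X$ appearing in \eqref{eq11.1} with the connected part $\cF_X^0:=\ker\bigl(\cF_X\to\pi_0(\cF_X)\bigr)$, i.e.\ the kernel of the $\pi_0$-unit, which by Theorem~\ref{t3.2.4}~b) is $\coker b$ for a normalised representation. Indeed, the surjection $\cF_X\onto\Z[\pi_0(X)]$ of \eqref{eq11.1} factors through the $\pi_0$-unit since $\Z[\pi_0(X)]$ is discrete, and by (a) the resulting surjective endomorphism of the finitely generated sheaf $\Z[\pi_0(X)]$ must be an isomorphism; hence $\coker u_1=\cF_X^0$. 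Now $\cF_X^0$ is the image of the semi-abelian part of a normalised representation, so it has the form $\underline{G'}/L'$ with $G'$ semi-abelian and $L'$ a lattice, and such a sheaf has vanishing $\pi_0$. On the other hand, applying the right-exact functor $\pi_0$ to $u_1(L_1)\hookrightarrow\uG_1\onto\coker u_1$, together with $\pi_0(\uG_1)=\underline{\pi_0(G_1)}$ (the normalised representation of $\uG_1$ being $G_1^0\hookrightarrow\uG_1$, valid because $G_1$ lies in ${}^t\AbS$), yields $\pi_0(\coker u_1)=\coker\bigl(L_1\to\pi_0(G_1)\bigr)$. Combining the two computations, $\coker\bigl(L_1\to\pi_0(G_1)\bigr)=0$, which is exactly the assertion of (b).

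Finally (c) follows formally: set $N=\ker\bigl(L_1\to\pi_0(G_1)\bigr)$, a sublattice of $L_1$ of finite index by (b); then $u_1$ carries $N$ into $G_1^0$, so $[N\to G_1^0]$ is a Deligne $1$-motive, and the inclusion $[N\to G_1^0]\hookrightarrow[L_1\to G_1]$ has cokernel the acyclic complex $[L_1/N\iso\pi_0(G_1)]$, hence is a quasi-isomorphism; thus $\LA{1}(X)=[L_1\to G_1]\simeq[N\to G_1^0]$ in ${}_t\M[1/p]$ is a Deligne $1$-motive. I expect the only real obstacle to be the identification $\coker u_1=\cF_X^0$ in (b): since $\pi_0$ is merely right exact (its missing left-derived term $\pi_1$ existing only rationally, Theorem~\ref{t3.2.4}~b)), one cannot conclude by naively applying $\pi_0$ to \eqref{eq11.1}, and it is precisely this identification --- forcing the quotient in \eqref{eq11.1} to be the torsion-free $\Z[\pi_0(X)]$ --- that makes $\pi_0(\coker u_1)$ vanish integrally rather than just rationally.
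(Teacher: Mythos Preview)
Your proof is correct; parts (a) and (c) match the paper's treatment essentially verbatim (the paper's $L_1^0:=u_1^{-1}(G_1^0)$ is exactly your $N=\ker(L_1\to\pi_0(G_1))$). The interesting difference is in (b). The paper compares the given presentation $L_1\to G_1\to\cF_X\to\Z[\pi_0(X)]$ with the normalised one $L'_1\hookrightarrow G'_1\to\cF_X\to\Z[\pi_0(X)]$ through an intermediate $\bar G_1=G_1/F$ (where $F$ is the torsion of $u_1(L_1)$), invokes uniqueness of the normalised presentation to see that $G'_1\onto\bar G_1^0$, and finishes by two diagram chases. You instead make the identification $\coker u_1=\cF_X^0$ directly (via the Hopfian property of the lattice $\Z[\pi_0(X)]$), note that $\pi_0(\cF_X^0)=0$, and then apply the right-exact functor $\pi_0$ once to the sequence $u_1(L_1)\hookrightarrow\uG_1\onto\coker u_1$. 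Your route is shorter and more conceptual; the paper's route makes the comparison with the normalised presentation explicit, which may be useful elsewhere but is not needed for the bare statement.
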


\begin{proof}
In a), it suffices to prove the first assertion for $\cF_X$: then it follows from
its universal property and Lemma \ref{l11.2} a). The second assertion of a) is obvious. 

b) Let $0\to L'_1\to G'_1\to \cF_X\to \Z[\pi_0(X)]\to 0$ be the normalised presentation of $\cF_X$ given by Proposition \ref{p3.1}. We have a commutative diagram
\[\begin{CD}
0@>>> L'_1@>>> G'_1@>>> \cF_X@>>> \Z[\pi_0(X)]@>>> 0\\
&&@VVV @VVV ||& &||&\\
0@>>> \overline{u_1(L_1)}@>\bar u_1>> \bar G_1@>>> \cF_X@>>> \Z[\pi_0(X)]@>>> 0\\
&&@AAA @AAA ||& &||&\\
&&L_1@>u_1>> G_1@>>> \cF_X@>>> \Z[\pi_0(X)]@>>> 0
\end{CD}\]
with $\overline{u_1(L_1)}=u_1(L_1)/F$ and $\bar G_1=G_1/F$, where $F$ is the torsion subgroup of $u_1(L_1)$. Indeed, $\Ext(G'_1,\overline{u_1(L_1)})=0$ so we get the downwards vertical maps as in the proof of Proposition \ref{p3.1}. By uniqueness of the normalised presentation, $G'_1$ maps onto $\bar G_1^0$. A diagram chase then shows that the composition
\[\overline{u_1(L_1)}\by{\bar u_1} \bar G_1\to \pi_0(\bar G_1)\]
is onto, and another diagram chase shows the same for $u_1$.

c) The pull-back diagram
\[\begin{CD}
L_1^0@>>> G_1^0\\
@VVV @VVV\\
L_1 @>>> G_1
\end{CD}\]
is a quasi-isomorphism in ${}_t\M^\eff$, thanks to b).
\end{proof}

\begin{lemma} \label{l1} Suppose that $k$ is algebraically closed. Let $[L_1\by{u_1} G_1]$ be the Deligne $1$-motive that lies in the \qi class of $\LA{1}(X)$, thanks to Proposition \ref{p11.3a} c), and let $L$ be a lattice.\\
a) We have an isomorphism
\[\Hom_{D^b(\M)}(\LAlb(X),L[1])\iso\Hom(L_1,L).\]
b) The map
\[
\Hom_{D^b(\M)}(\LAlb(X),L[j])\to  \Hom_{\DM_{\gm,\et}^\eff}(M_\et(X),L[j]) =H^j_\eh(X,L) 
\]
induced by $a_X$ \eqref{amapX} is an isomorphism for $j=0,1$ (see Prop. \ref{peh} for the last equality).
\end{lemma}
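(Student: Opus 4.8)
\textbf{Proof plan for Lemma \ref{l1}.}

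The plan is to deduce everything from the adjunction-type isomorphism \eqref{lalbuniv}, from the cohomological dimension $\le 1$ of $\M\otimes\Q$ (Proposition \ref{iso1}, used here only to organise the spectral sequence), and from the structural description of $\LAlb(X)$ obtained in Propositions \ref{c3.1bis} and \ref{p11.3a}. For part a), I would start from the hypercohomology spectral sequence
\[E_2^{p,q}=\Ext^p_{\M}({}_tH_q(\LAlb(X)),L)\Rightarrow \Hom_{D^b(\M)}(\LAlb(X),L[p+q]),\]
which is legitimate because we may compute in $D^b(\M)$ (or after Theorem \ref{ptors} in $D^b({}^t\M)$). By Proposition \ref{c3.1bis} a), ${}_tH_q(\LAlb(X))=0$ for $q<0$; by Proposition \ref{c3.1bis} b), ${}_tH_0(\LAlb(X))$ fits in the exact sequence \eqref{eq11.1} with cokernel $\Z\pi_0(X)$, a lattice. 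The target group $\Hom_{D^b(\M)}(\LAlb(X),L[1])$ therefore receives contributions only from $E_2^{1,0}=\Ext^1_{\M}({}_tH_0(\LAlb(X)),L)$ and $E_2^{0,1}=\Hom_{\M}({}_tH_1(\LAlb(X)),L)=\Hom_{\M}([L_1\by{u_1}G_1],L)$. The first of these vanishes: writing ${}_tH_0=\cF$ (a $1$-motivic sheaf placed in degree $0$, i.e.\ a $1$-motive of the form $[E_0\to 0]$ pulled back along a semiabelian piece), one sees that $\Ext^1$ into a lattice of such an object is $0$ — concretely, $\Ext^1_{\M}([E\to 0],[L\to 0])=\Ext^1_{{}^t\cM_0}(E,L)=0$ since any finitely generated Galois module $E$ has a length-$1$ resolution by lattices (the very argument of Theorem \ref{ptors} a)), and the semiabelian-to-lattice $\Ext^1$ vanishes by Lemma \ref{c3.2}. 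Meanwhile $\Hom_{\M}([L_1\by{u_1}G_1],L)=\Hom(L_1,L)$ because any map to $[L\to 0]$ must kill $G_1$ (a connected group maps trivially to a lattice) and the induced map $L_1\to L$ is unconstrained. Since the only surviving $E_2$ term on the antidiagonal $p+q=1$ is $E_2^{0,1}$, and there is no differential into or out of it in the relevant range, $E_2^{0,1}=E_\infty^{0,1}$ and we get the asserted isomorphism.

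For part b), I would identify the two sides and the comparison map. On the motivic side, $\Hom_{\DM_{\gm,\et}^\eff}(M_\et(X),L[j])=H^j_\eh(X,L)$ by Proposition \ref{peh} (the coefficient sheaf $L$, being a lattice, gives an object of $\DM_{-,\et}^\eff$, strictly homotopy invariant by Lemma \ref{lD.1.3}). The map in question is precomposition with $a_X$ of \eqref{amapX}, which under the identification of a) is just the edge map of the spectral sequence. For $j=0$ both sides equal $\Hom(\Z\pi_0(X),L)=L(X)^{\mathrm{const}}$: on the $\LAlb$ side this is the $q=0$ contribution via Proposition \ref{c3.1bis} b) (using that $\cF_X$ corepresents $\cF\mapsto\cF_\eh(X)$, applied to $\cF=L$, and $L_\eh(X)=L(\pi_0(X))$ by Lemma \ref{l11.2} a)); on the $\eh$ side this is $H^0_\eh(X,L)=L(\pi_0(X))$ again by Lemma \ref{l11.2} a). For $j=1$, the $\eh$ side is $H^1_\eh(X,L)$ and the $\LAlb$ side is $\Hom(L_1,L)$ by part a); I would prove these agree by comparing the spectral-sequence computation with a direct $\eh$-cohomological computation. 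The cleanest route: by Proposition \ref{p11.3a} c) we have the Deligne $1$-motive $[L_1\by{u_1}G_1]$ representing ${}_tH_1(\LAlb(X))$, and dually $L_1$ should be read off from $H^1$. Concretely, apply $\Hom_{\DM_{-,\et}^\eff}(-,L[j])$ to the truncation triangle relating $\alpha^*M(X)$ to $\Tot\LAlb(X)$ together with the Postnikov tower of $\Tot\LAlb(X)$ for the homotopy $t$-structure (Theorem \ref{t3.2.3}); since $\cH_i(\Tot\LAlb(X))$ for $i=0,1$ are the $1$-motivic sheaves $\cF_X$ and $\coker u_1$ (up to the lattice kernel $L_1$), and $\Hom_{\DM}(\cF[-i],L[j])=\Ext^{j-i}_{\Shv_1}(\cF,L)$, one reduces $H^1_\eh(X,L)$ to $\Ext^1_{\Shv_1}(\cF_X,L)\oplus\Hom_{\Shv_1}(\coker u_1,L)$ up to the discrepancy measured by $L_1$, and a short diagram chase using \eqref{eq11.1} and $\Ext^1_{\Shv_1}(\text{semiabelian},L)=0$ (Lemma \ref{c3.2}) collapses this to $\Hom(L_1,L)$.

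The main obstacle is the bookkeeping in part b) for $j=1$: one must be careful that $\LA{1}(X)$ is only defined up to quasi-isomorphism and that the chosen Deligne representative $[L_1\to G_1]$ interacts correctly with the homotopy $t$-structure cohomology sheaves $\cH_0,\cH_1$ of $\Tot\LAlb(X)$ — the lattice $L_1$ is \emph{not} one of these $\cH_i$ but rather appears as $\pi_1$ (the kernel of the normalised presentation) of $\cH_0$ together with $\pi_0$-corrections, as described in Proposition \ref{p3.10} and Theorem \ref{t3.2.4} b). I expect the argument to go through by invoking Proposition \ref{p3.10} to relate the two $t$-structures: it gives an exact sequence in $\Shv_1$ expressing $\cH_1(\Tot\LAlb(X))$ in terms of $[L_1\to G_1]$ and $[L_0\to G_0]={}_tH_0$, and feeding this into the $\Ext$-computation should produce exactly $\Hom(L_1,L)$ with no leftover terms, matching $H^1_\eh(X,L)$. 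A secondary point to verify is that the comparison map really is the edge morphism (so that identifying source and target suffices to identify the map), which follows from the construction of $a_X$ in \eqref{amap} as the unit of the (rational, but here we only need the $j\le 1$ integral) adjunction, naturally in $X$ and compatible with truncations by Lemma \ref{l2.1}.
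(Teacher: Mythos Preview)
Your approach to part a) is the same spectral sequence the paper uses, but your execution conflates the two $t$-structures on $D^b(\M)$. The objects ${}_tH_q(\LAlb(X))=\LA{q}(X)$ live in ${}_t\M$, whereas $\cF_X=\cH_0(\Tot\LAlb(X))$ and the exact sequence \eqref{eq11.1} belong to the \emph{homotopy} $t$-structure. In particular your identification ``${}_tH_0=\cF$'' is wrong: by Proposition \ref{c3.1} b) one has $\LA{0}(X)=[\Z[\pi_0(X)]\to 0]$, a pure lattice $1$-motive. Once you use this, the vanishing of $E_2^{1,0}=\Ext^1(\LA{0}(X),L)$ and of $E_2^{2,0}=\Ext^2(\LA{0}(X),L)$ (which you also need, to kill the differential $d_2:E_2^{0,1}\to E_2^{2,0}$ that you did not address) is immediate: over an algebraically closed field these are $\Ext^i$ of finitely generated free abelian groups. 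Your discussion of semiabelian-to-lattice Exts via Lemma \ref{c3.2} is aimed at the wrong object and is unnecessary.

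For part b) you take a genuinely different route from the paper, and it has a real gap. The paper proves b) by \emph{blow-up induction} (Lemma \ref{lbl} b)): both sides, as functors of $X$, sit in long exact sequences for abstract blow-ups, so it suffices to check the comparison map on smooth $X$. There, for $j=0$ it is trivial, and for $j=1$ both sides vanish (the left by part a) together with Corollary \ref{HLAlb}, which gives $L_1=0$ for smooth $X$; the right because $H^1_\et(X,L)=0$ for a lattice over an algebraically closed field). This is a two-line argument. Your plan instead tries to compute $H^1_\eh(X,L)$ directly via Postnikov towers for the homotopy $t$-structure and then match it with $\Hom(L_1,L)$; even if the bookkeeping you flag as ``the main obstacle'' can be carried out, this only identifies the two \emph{groups}, not the comparison \emph{map} induced by $a_X$. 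Your proposed fix (``the comparison map really is the edge morphism'') is not substantiated, and in any case the blow-up induction bypasses the issue entirely.
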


\begin{proof}
a) From the spectral sequence \eqref{eqss}, we get an exact sequence
\begin{multline*}
0\to \Ext^1(\LA{0}(X),L)\to \Hom_{D^b(\M)}(\LAlb(X),L[1])\\
\to\Hom(\LA{1}(X),L)\to
\Ext^2(\LA{0}(X),L).
\end{multline*}

Since the two Ext are $0$, the middle map is an isomorphism. Since $[L_1\to G_1]$ is a Deligne $1$-motive, $\Hom(\LA{1}(X),L)$ is isomorphic to $\Hom(L_1,L)$.

b) By blow-up induction (Lemma \ref{lbl}) we reduce to $X$ smooth. If $j=0$, the result is trivial; if $j=1$, it is also trivial because both sides are $0$ (by a) and Corollary \ref{HLAlb} for the left hand side).
\end{proof}

\begin{propose}\label{pl1}   Keep the above notation $\LA{1}(X)=[L_1\by{u_1} G_1]$.\\
a) We have an isomorphism 
\[L_1\simeq\shom(\uR^1\pi_*^X\Z,\Z)\]
(\cf Proposition \ref{p11.3}).\\ 
b) We have a canonical isomorphism
\[G_1/(L_1)_\Zar\iso (\cA_{X/k}^\eh)^0\]
where $(L_1)_\Zar$ is the Zariski closure of the image of $L_1$ in $G_1$ and
$\cA_{X/k}^\eh$ was defined in Proposition \ref{c3.1bis} ($(\cA_{X/k}^\eh)^0$ corepresents the functor $\SAb\ni G\mapsto G_\eh(X)$).
\end{propose}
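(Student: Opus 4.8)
\textbf{Proof plan for Proposition \ref{pl1}.} The statement has two parts, and I would treat them separately, using that we have reduced (thanks to Proposition \ref{p11.3a} c)) to a genuine Deligne $1$-motive representative $\LA{1}(X)=[L_1\by{u_1}G_1]$, and that the previous results allow us to assume $k$ algebraically closed when it is convenient (e.g. for part a), via Proposition \ref{p4.2} or a transfer argument over a finite extension of $k$, since both sides of the claimed isomorphism are constructible \'etale sheaves determined by their geometric sections).

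For part a), the plan is to identify $L_1$ by computing $\Hom$ out of $\LAlb(X)$ into lattices. By Lemma \ref{l1} a), $\Hom_{D^b(\M)}(\LAlb(X),L[1])\iso \Hom(L_1,L)$ for any lattice $L$, and by Lemma \ref{l1} b) this group is also $H^1_\eh(X,L)$ (assuming $k$ algebraically closed). On the other hand, by Proposition \ref{p11.3} a), $\uR^1\pi_*^X\Z$ is a lattice, and unwinding the definition of $\uR^q\pi_*^X$ as the sheafification of $U\mapsto H^q_\eh(X\times U,\Z)$ gives $H^1_\eh(X,L)=\Hom(\uR^1\pi_*^X\Z,L)$ for $L$ a lattice — here one uses that $H^1_\eh(X,L)=H^1_\eh(X,\Z)\otimes L$ when $L$ is torsion-free, which follows from Lemma \ref{l11.3} a) (the group is torsion-free and finitely generated) together with a universal-coefficient/flatness argument, or more directly from the fact that $L$ is a finite direct sum of copies of $\Z$ after a finite \'etale extension. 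Combining, $\Hom(L_1,L)\simeq \Hom(\uR^1\pi_*^X\Z,L)$ naturally in $L$, so Yoneda on the category of lattices (which is enough to detect a lattice up to isomorphism, or one applies $\shom(-,\Z)$ twice) gives $L_1\simeq \shom(\uR^1\pi_*^X\Z,\Z)$. One then checks the isomorphism is Galois-equivariant, hence descends from $\bar k$ to $k$; this is routine because all the identifications used (Lemma \ref{l1}, Proposition \ref{peh}, the sheaf $\uR^1\pi_*$) are functorial.

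For part b), the plan is to use the description of $\cA_{X/k}^\eh=\Omega(\cF_X)$ from Proposition \ref{c3.1bis} c) together with the exact sequence \eqref{eq11.1} and the structure of $\cF_X$ as a $1$-motivic sheaf. From \eqref{eq11.1} we have $L_1\by{u_1}G_1\to \cF_X\to \Z\pi_0(X)\to 0$, which by Proposition \ref{p3.10} / Lemma \ref{l4.3.1} exhibits $\cF_X$ as having a normalised presentation with semi-abelian part a quotient of $G_1$. By definition of $\Omega$ (Proposition \ref{pladj}), $\cA_{X/k}^\eh$ is the quotient of $\cF_X$ by the Zariski closure of the lattice in its normalised presentation; tracing through, $(\cA_{X/k}^\eh)^0$ is precisely the quotient of $G_1$ by the Zariski closure $(L_1)_\Zar$ of $u_1(L_1)$. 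To make this precise I would: (i) note that $\cF_X$ and $[L_1\to G_1]$ differ only by the discrete piece $\Z\pi_0(X)$ and the torsion identifications, so they have the same image under $\Omega^0$ up to the connected-component bookkeeping handled in Proposition \ref{p11.3a}; (ii) invoke that $(\cA_{X/k}^\eh)^0$ corepresents $G\mapsto G_\eh(X)$ on $\SAb$, which is immediate from Proposition \ref{c3.1bis} c) by restricting the corepresentability to connected groups; (iii) verify the natural map $G_1/(L_1)_\Zar\to (\cA_{X/k}^\eh)^0$ is an isomorphism by checking it induces an isomorphism on the functors corepresented, i.e. that $\Hom_{\SAb}(G_1/(L_1)_\Zar,G)=\Hom_{D^b(\M)}(\LAlb(X),[0\to G])$ for $G$ semi-abelian, which follows from the exact sequence \eqref{eqss} (the Ext terms vanish by cohomological dimension $\le 1$ and because $[0\to G]$ is pure of weight $-1$) combined with $\Hom(\LA{1}(X),[0\to G])=\Hom_{\SAb}(G_1/(L_1)_\Zar,G)$ — the latter being the standard computation of morphisms from a Deligne $1$-motive to a semi-abelian variety.

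The main obstacle I anticipate is the bookkeeping in part b): keeping careful track of the difference between $\cF_X$ (a $1$-motivic sheaf with a possibly non-trivial cokernel $\Z\pi_0(X)$ and torsion in $u_1(L_1)$) and the Deligne $1$-motive $[L_1\to G_1]$, and making sure the operation $\Omega^0$ (kill the Zariski closure of the lattice) commutes with passing between these two descriptions. This is exactly the kind of normalised-presentation juggling done in Propositions \ref{p3.1}, \ref{p11.3a} and \ref{c3.1bis}, so the tools are all in place, but it requires care; everything else is a formal consequence of corepresentability and the already-established vanishing of higher Ext groups in $\M\otimes\Q$.
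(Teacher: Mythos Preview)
Your proof of part a) is exactly the paper's argument: reduce to $k=\bar k$, use Lemma~\ref{l1} to identify $\Hom(L_1,L)$ with $H^1_\eh(X,L)$, and then recognise the latter as $\uR^1\pi_*\Z\otimes L$ via Proposition~\ref{p11.3} a). Nothing to add.

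For part b), your opening paragraph is already the complete argument, and is precisely what the paper means by ``follows directly from the definition of $\cA_{X/k}^\eh$'': the sequence \eqref{eq11.1} with $G_1$ connected and $L_1$ free gives $0\to u_1(L_1)\to G_1\to\cF_X\to\Z\pi_0(X)\to 0$, which \emph{is} the normalised presentation of $\cF_X$ (so the semi-abelian part is $G_1$ itself, not a quotient); applying $\Omega$ quotients by $(L_1)_\Zar$ and taking connected components yields $(\cA_{X/k}^\eh)^0=G_1/(L_1)_\Zar$. Your steps (ii)--(iii) are unnecessary and in fact contain errors. In (ii), restricting the corepresentability of $\cA_{X/k}^\eh$ to connected $G$ does \emph{not} give corepresentability by $(\cA_{X/k}^\eh)^0$: the difference is $\Hom(\Z\pi_0(X),G)$, which is nonzero in general. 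In (iii), $[0\to G]$ is not pure of weight $-1$ unless $G$ is an abelian variety, the cohomological-dimension-$\le 1$ statement is only available rationally, and with your degree conventions $\Hom_{D^b(\M)}(\LAlb(X),[0\to G])=\Hom(\LA{0}(X),[0\to G])=0$, so the formula you wrote cannot hold. Drop (ii) and (iii) and keep only the direct unwinding of $\Omega$; that is all the paper does.
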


\begin{proof} For the computations, we may assume $k$ algebraically closed.

Let $L$ be a lattice. By Lemma \ref{l1}, we have an isomorphism
\[
H^1_\eh(X,L) \simeq \Hom(L_1,L).
\]

 This gives a), since we obviously have  
$H^1_\eh(X,L)=H^0_\et(k,\uR^1\pi_*^X L)\allowbreak=\uR^1\pi_*^X\Z\otimes L$ by
Proposition \ref{p11.3} a).

b)  follows directly from the definition of $\cA_{X/k}^\eh$.
\end{proof}

\begin{cor}\label{c12.2.1} Let $\LA{1}(X)=[L_1\to G_1]$, as a Deligne $1$-motive.\\
a) If $X$ is proper, then $G_1$ is an abelian variety.\\
b)  If $X$ is normal, then $\LA{1}(X)=[0\to \cA_{X/k}^0]$.\\
c)  If $X$ is normal and proper then $\RA{1}(X)=[0\to \Pic^0_{X/k}]$ is an abelian
variety with dual the Serre Albanese $\LA{1}(X)\allowbreak=[0\to \cA_{X/k}^0]$.
\end{cor}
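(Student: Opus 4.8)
The plan is to treat the three parts in the order (b), (a), (c), reducing throughout to $k$ algebraically closed (all the assertions are geometric, and Galois-equivariant so descend) and to $X$ reduced, even seminormal, via Lemma \ref{l12.3}. Write $\LA{1}(X)=[L_1\by{u_1}G_1]$ as a Deligne $1$-motive, as allowed by Proposition \ref{p11.3a} c), with $G_1$ connected semi-abelian, an extension of an abelian variety $A_1$ by a torus $T_1$. For (b), when $X$ is normal the product $X\times U$ is normal for every $U\in Sm(k)$, so $H^1_\eh(X\times U,\Z)=0$ by Lemma \ref{l11.3} c); hence $\uR^1\pi_*^X\Z=0$ and, by Proposition \ref{pl1} a), $L_1=\shom(\uR^1\pi_*^X\Z,\Z)=0$. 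Thus $\LA{1}(X)=[0\to G_1]$ and $G_1=G_1/(L_1)_\Zar\cong(\cA_{X/k}^\eh)^0$ by Proposition \ref{pl1} b); since $\cA_{X/k}^\eh\cong\cA_{X/k}$ for $X$ normal by Proposition \ref{c3.1bis} d), this gives $\LA{1}(X)=[0\to\cA_{X/k}^0]$.

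For (a) it is equivalent to show that the lattice part $L_1':=X^*(T_1)$ of the Cartier dual $\RA{1}(X)=\LA{1}(X)^*$ (again a Deligne $1$-motive, since $(\ )^*$ preserves $\M$) vanishes. I would apply the exact sequence of Proposition \ref{p3.10} to $C=\RPic(X)$, whose motivic cohomology objects are the $\RA{i}(X)=[L^i\by{u^i}G^i]$ and whose homotopy cohomology sheaves $\sH^q(\Tot\RPic(X))$ are $\uR^{q-1}\pi_*^X\G_m[1/p]$ (compute $\Hom_{\DM_\et}(M_\et(X\times U),\Z(1)[q])$ by Proposition \ref{peh} and Corollary \ref{cD.1}). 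Feeding in $\RA{0}(X)=[0\to R_{\pi_0(X)/k}\G_m]$, $\RA{-1}(X)=0$, and using that $\uR^0\pi_*^X\G_m=R_{\pi_0(X)/k}\G_m$ for $X$ proper, the segment around $q=1$ forces $\ker u^1=0$, and the segment around $q=2$ yields a short exact sequence of $1$-motivic sheaves
\[0\to G^1/u^1(L^1)\to \uR^1\pi_*^X\G_m[1/p]\to \ker u^2\to 0\]
with $\ker u^2$ discrete (a subsheaf of the discrete sheaf $L^2$). Applying the right exact functor $(\pi_0)_\Q$ and its left derived functor $(\pi_1)_\Q$ (Theorem \ref{t3.2.4} b)), and noting that $(\pi_1)_\Q$ kills discrete sheaves while $(\pi_1)_\Q(G^1/u^1(L^1))=L^1\otimes\Q=L_1'\otimes\Q$ (read off the presentation $0\to L^1\to\underline{G^1}\to G^1/u^1(L^1)\to 0$, which is normalised since $L^1$ is torsion-free), one gets $L_1'\otimes\Q\cong(\pi_1)_\Q(\uR^1\pi_*^X\G_m[1/p])$. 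It then remains to show this last group is $0$ for $X$ proper — equivalently, that the ($\eh$-sheafified) Picard functor of a proper scheme has no ``hidden lattice'': its connected part genuinely sits inside it with discrete (N\'eron--Severi) quotient. I would prove this by blow-up induction (Lemma \ref{lbl}, via the abstract blow-up triangles for $\uR^q\pi_*^X\G_m$), reducing to $X$ smooth projective where it is the structure established in Proposition \ref{p3.3.1} ($\underline{\Pic}_{X/k}$ is an extension of the discrete $\NS_{X/k}$ by the abelian variety $\underline{\Pic}^0_{X/k}$); alternatively one invokes representability of the Picard scheme of a proper scheme together with the reduction to seminormal $X$. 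This forces $L_1'=0$, hence $T_1=0$.

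For (c), suppose $X$ normal and proper. By (b), $\LA{1}(X)=[0\to\cA_{X/k}^0]$, and by (a) the torus part of $G_1=\cA_{X/k}^0$ vanishes, so $\cA_{X/k}^0$ is an abelian variety and $\RA{1}(X)=\LA{1}(X)^*=[0\to(\cA_{X/k}^0)^\vee]$ is the dual abelian variety. To identify $(\cA_{X/k}^0)^\vee$ with $\Pic^0_{X/k}$ — yielding the displayed form of $\RA{1}(X)$ and the asserted duality — I would use the Albanese morphism $\bar a_X\colon X\to\cA_{X/k}^0$: pulling back line bundles gives a homomorphism $(\cA_{X/k}^0)^\vee=\underline{\Pic}^0(\cA_{X/k}^0)\to\underline{\Pic}^0_{X/k}$, which is an isogeny (it is the map obtained by applying the functor $\gamma$ of Theorem \ref{t3.2.4} to the sequence above, $\Pic^0_{X/k}$ being an abelian variety because $X$ is normal and proper over a field of characteristic $0$), and it is an isomorphism by the classical Albanese--Picard duality for normal proper varieties — alternatively because it induces the identity on $\ell$-adic Tate modules, both sides being $H^1_\et(X,\Z_\ell(1))$.

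The main obstacle is the final step of (a): the vanishing of $(\pi_1)_\Q(\uR^1\pi_*^X\G_m[1/p])$ for proper $X$, i.e. the absence of a toric contribution to $\RA{1}$ coming from a ``quotiented'' lattice. Everything else is a bookkeeping exercise with the exact sequences of Proposition \ref{p3.10}, Theorem \ref{t3.2.4} and the duality $\RA{1}(X)=\LA{1}(X)^*$.
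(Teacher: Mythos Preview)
Your argument for (b) is exactly the paper's: Proposition~\ref{pl1}~a) together with Lemma~\ref{l11.3}~c) kill $L_1$, and Proposition~\ref{pl1}~b) plus Proposition~\ref{c3.1bis}~d) identify $G_1$ with $\cA_{X/k}^0$.

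For (a), however, you have taken a substantial detour. The paper's proof is a one-line blow-up induction applied directly to the statement ``the semi-abelian part of $\LA{1}(X)$ is an abelian variety'': in the long exact sequence \eqref{exres} (restricted to proper schemes), the property of having abelian $G$-part is stable under subobjects, quotients and extensions in ${}_t\M[1/p]$, so it passes from the smooth projective case (Corollary~\ref{HLAlb}) and the lower-dimensional pieces $Z,\tilde Z$ to $X$. Your route---dualising to $\RA{1}(X)$, running the homotopy $t$-structure exact sequence of Proposition~\ref{p3.10}, and reducing to $(\pi_1)_\Q(\uR^1\pi_*^X\G_m)=0$---is correct, but turns this one-line observation into a page of bookkeeping. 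Note also that your invocation of Lemma~\ref{lbl} is imprecise: the subcategory $\{\cF\in\Shv_1\mid (\pi_1)_\Q(\cF)=0\}$ is \emph{not} thick (e.g.\ $\Z\hookrightarrow\G_m$ has both ends in it but the quotient does not), so the lemma does not apply as stated. The induction still goes through by hand, using left exactness of $(\pi_1)_\Q$ on the kernel piece and the fact that $\coker(\alpha)$ is representable, but you should not cite Lemma~\ref{lbl} for it.

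For (c), both approaches amount to combining (a) and (b) with the classical Picard--Albanese duality for normal proper varieties; your version is just more explicit about the last identification $(\cA_{X/k}^0)^\vee\simeq\Pic^0_{X/k}$.
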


\begin{proof} a) is seen easily by blow-up induction, by reducing to the smooth
projective case (Corollary \ref{HLAlb}). b) follows from Proposition \ref{pl1} a),
b), Lemma \ref{l11.3} c) and Proposition \ref{c3.1bis} d). c) follows immediately
from a) and b).
\end{proof}

\subsection{$\LA{1}(X)$ and $\Alb^-(X)$ for $X$ normal}

In \ref{c12.2.2}, we prove that these two $1$-motives are isomorphic. We begin with a slight
improvement of Theorem \ref{tnormal} in the case of semi-abelian schemes:

\begin{lemma} \label{simpehalb} Let $X$ be normal, and let $\Xs$ be a smooth hyperenvelope (\cf Lemma \ref{ldescent}). Then we have
 $$\cA_{X/k} = \coker (\cA_{X_1}\longby{(p_0)_*-(p_1)_*}\cA_{X_0} )$$ and
 $$ (\cA_{X/k})^0 = \coker (\cA_{X_1}^0\longby{(p_0)_*-(p_1)_*}\cA_{X_0}^0 ).$$
\end{lemma}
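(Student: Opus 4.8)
The plan is to deduce this from Theorem \ref{tnormal} together with the corepresentability of $\cA_{X/k}$ (Proposition \ref{albexists}) and the universal property of $\cA$. First I would record what a smooth hyperenvelope $\Xs\to X$ buys us: by Proposition \ref{pdescent} b), for any homotopy invariant \'etale sheaf with transfers $\cF$ one has $\cF_\eh(X)=\ker(\cF(X_0)\to \cF(X_1))$, where the map is $(p_0)^*-(p_1)^*$ (here I use that $X_0,X_1$ are smooth, so their $\eh$-sections agree with ordinary sections). Applying this with $\cF=\uG$ for $G\in\AbS$ (or $G\in\SAb$) and combining with Theorem \ref{tnormal} — which for normal $X$ gives $\uG(X)\iso \uG_\eh(X)$ — I get
\[
\uG(X)\;=\;\ker\bigl(\uG(X_0)\longby{(p_0)^*-(p_1)^*}\uG(X_1)\bigr)
\]
functorially in $G$.

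Next I would translate this kernel description into the asserted cokernel description of $\cA_{X/k}$. By definition $\cA_{X_0}$ (resp. $\cA_{X_1}$) corepresents $G\mapsto \uG(X_0)$ (resp. $G\mapsto\uG(X_1)$) on $\AbS$, so the two maps $(p_0)_*,(p_1)_*:\cA_{X_1}\to\cA_{X_0}$ induced by contravariance are dual to $(p_0)^*,(p_1)^*$. Hence $\Hom_{\AbS}(\coker((p_0)_*-(p_1)_*),G)=\ker(\uG(X_0)\to\uG(X_1))=\uG(X)=\Hom_{\AbS}(\cA_{X/k},G)$ for all $G\in\AbS$, and Yoneda gives the first isomorphism. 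Here I must be a little careful that $\coker((p_0)_*-(p_1)_*)$, computed in $\AbS$, really is the right object to plug in: cokernels in $\AbS$ exist (quotient by the Zariski closure of the image, exactly as in Proposition \ref{pladj}/Lemma \ref{lred}), and the corepresentability computation above shows it corepresents the correct functor, so this is automatic. The identity $\pi_0(\cA_{X/k})=\Z[\pi_0(X)]$ from Proposition \ref{p11.3a} a) (or directly Lemma \ref{l11.2} a) applied to discrete $G$) then identifies the connected component, giving the second formula: taking $\cdot{}^0$ commutes with this cokernel since $\pi_0$ is exact up to the lattice parts and the maps $(p_i)_*$ respect the weight/connected-component filtration.

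The main obstacle, I expect, is checking that the hypotheses of Proposition \ref{pdescent} b) and of Theorem \ref{tnormal} are simultaneously available for the sheaves we need — in particular that $\uG$ for $G$ a torus or an abelian variety is a homotopy invariant \'etale sheaf with transfers (Lemma \ref{l1.3}) so that \ref{pdescent} b) applies, and that the relevant hyperenvelope can be chosen with $X_0,X_1$ smooth (this is Gillet--Soul\'e, and for $X$ normal Lemma \ref{ldescent} already provides the kind of descent datum one wants). A secondary point is bookkeeping the signs/direction of the arrows: the maps in the statement are written as push-forwards $(p_0)_*-(p_1)_*$, which are the transfer (covariant) maps on Albanese schemes, and I need the duality between these and the pull-backs $(p_i)^*$ on sections to be the literal adjunction coming from the corepresentability, with no discrepancy. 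Once those compatibilities are pinned down, the argument is a formal Yoneda manipulation and presents no further difficulty.
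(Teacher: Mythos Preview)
Your argument for the first isomorphism is correct and essentially the same as the paper's: you obtain $\uG(X)=\ker(\uG(X_0)\to\uG(X_1))$ for $G\in\AbS$ by combining Theorem~\ref{tnormal} with Proposition~\ref{pdescent}~b), and then conclude by Yoneda. The paper phrases this as an application of Lemma~\ref{ldescent} with $Y$ running through torsors under semi-abelian varieties, which amounts to the same descent statement.

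The gap is in your treatment of the second isomorphism. Your claim that ``taking $(\cdot)^0$ commutes with this cokernel since $\pi_0$ is exact up to the lattice parts'' is not justified, and in fact this is where the normality of $X$ enters in an essential way beyond the first isomorphism. The paper argues as follows: from the short exact sequence of simplicial complexes
\[
0\to \cA^0_{\Xs/k}\to \cA_{\Xs/k}\to \Z[\pi_0(\Xs)]\to 0
\]
one gets a long exact sequence whose relevant portion is
\[
H_1(\Z[\pi_0(\Xs)])\to H_0(\cA^0_{\Xs/k})\to H_0(\cA_{\Xs/k})\to H_0(\Z[\pi_0(\Xs)])\to 0.
\]
To conclude that $H_0(\cA^0_{\Xs/k})=\cA_{X/k}^0$, one needs $H_0(\Z[\pi_0(\Xs)])=\Z[\pi_0(X)]$ and, crucially, $H_1(\Z[\pi_0(\Xs)])=0$. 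The paper identifies $H_i(\Z[\pi_0(\Xs)])$ with the $\Z$-dual of $H^i_\eh(X\times_k\bar k,\Z)$ via the simplicial spectral sequence (using that $H^q_\et(X_p\times_k\bar k,\Z)=0$ for $q\ge 1$ since each $X_p$ is smooth) together with cohomological descent (Proposition~\ref{pdescent}~a)). The vanishing of $H^1_\eh(X\times_k\bar k,\Z)$ then comes from Lemma~\ref{l11.3}~c), which genuinely uses that $X$ is normal. Without this input, there is no reason for the connecting map $H_1(\Z[\pi_0(\Xs)])\to H_0(\cA^0_{\Xs/k})$ to vanish, and hence no reason for $\coker(\cA_{X_1}^0\to\cA_{X_0}^0)$ to inject into $\cA_{X/k}$. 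Your Yoneda approach does not extend directly to $\cA_{X/k}^0$ because $\cA_{X/k}^0$ does not corepresent $G\mapsto G(X)$ on $\SAb$: one has $\Hom(\Z[\pi_0(X)],G)\ne 0$ for connected $G$, so restriction along $\cA_{X/k}^0\hookrightarrow\cA_{X/k}$ is not an isomorphism on $\Hom(-,G)$.
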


\begin{proof} The first isomorphism follows from Lemma \ref{ldescent} applied with $Y$ running through torsors under semi-abelian varieties. To deduce the second isomorphism, consider the short  exact sequence of complexes
\[0\to \cA_{\Xs/k}^0\to \cA_{\Xs/k}\to \Z[\pi_0(\Xs)]\to 0\] 
and the resulting long exact sequence
\begin{equation}\label{eq13.7}
H_1(\Z[\pi_0(\Xs)])\to H_0(\cA_{\Xs/k}^0)\to H_0(\cA_{\Xs/k})\to H_0(\Z[\pi_0(\Xs)])\to 0.
\end{equation}

For any $i\ge 0$, $\Z[\pi_0(X_i)]$ is $\Z$-dual to the Galois module
$E_1^{i,0}$,
where $E_1^{p,q}= H^q_\et(X_p\times_k \bar k,\Z)$ is the $E_1$-term associated to the simplicial spectral sequence for $\Xs\times_k \bar k$. Since $H^1_\et(X_p\times_k \bar k,\Z)=0$ for all $p\ge 0$, we get
\[H_i(\Z[\pi_0(\Xs)])\simeq (H^i_\et(\Xs\times_k \bar k,\Z))^\vee\text{ for } i=0,1.\]

By Proposition \ref{peh}, these \'etale cohomology groups may be replaced by $\eh$ cohomology groups. By Proposition \ref{pdescent}, we then have
\[H^i_\et(\Xs\times_k \bar k,\Z)\simeq H^i_\eh(X\times_k \bar k,\Z).\]

Now, by Lemma \ref{l11.2} a), $H^0_\eh(X\times_k \bar k,\Z)$ is $\Z$-dual to $\Z[\pi_0(X)]$, and by Lemma \ref{l11.3} c), $H^1_\eh(X\times_k \bar k,\Z)=0$ because $X$ is normal. Hence \eqref{eq13.7} yields a short exact sequence
\[0\to  H_0(\cA_{\Xs/k}^0)\to \cA_{X/k}\to \Z[\pi_0(X)]\to 0\]
which identifies $H_0(\cA_{\Xs/k}^0)$ with $\cA_{X/k}^0$.
\end{proof}

\begin{propose} \label{c12.2.2} If $X$ is normal, $\RA{1}(X)$ and
$\LA{1}(X)$ are isomorphic,  respectively, to the $1$-motives $\Pic^+(X)$
and $\Alb^- (X)$ defined in \cite[Ch. 4-5]{BSAP}.
\end{propose}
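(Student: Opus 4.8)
The plan is to prove the two isomorphisms in parallel, using Cartier duality (Corollary \ref{rpdla}) to pass between them; it suffices to treat $\LA{1}(X)\simeq \Alb^-(X)$ since $\RA{1}(X)=\LA{1}(X)^\vee$ and $\Pic^+(X)=\Alb^-(X)^*$ in \cite{BSAP}. Recall from \cite[Ch. 4-5]{BSAP} that for $X$ normal the $1$-motive $\Alb^-(X)$ is $[0\to \cA_{X/k}^0]$, the Serre Albanese (this uses that $X$ normal has no interesting ``discrete'' part in $H_1$). On our side, Corollary \ref{c12.2.1} b) already gives $\LA{1}(X)=[0\to \cA_{X/k}^0]$ \emph{abstractly}, so the content of the proposition is to produce a \emph{canonical} isomorphism, compatible with the maps from $X$; equivalently, to check that the canonical map $\LAlb(X)\to \cA_{X/k}$ constructed via \eqref{can} (extended to singular $X$ through the $\eh$-Albanese of Proposition \ref{c3.1bis}) induces on $\cH_1$ the identity-like comparison with the $1$-motive of \cite{BSAP}.

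First I would assemble the ingredients: (i) Corollary \ref{c12.2.1} b), giving $\LA{1}(X)=[0\to (\cA_{X/k}^\eh)^0]$ together with Proposition \ref{c3.1bis} d), which identifies $(\cA_{X/k}^\eh)^0=\cA_{X/k}^0$ for $X$ normal; (ii) the universal property of $\cA_{X/k}$ from \cite[Sect. 1]{ram}, which pins down $\cA_{X/k}^0$ up to unique isomorphism once one exhibits a morphism $X\to \cA_{X/k}^0$ (up to torsor) that is universal for maps to semi-abelian varieties; (iii) the construction of $\Alb^-(X)$ in \cite{BSAP}, which for $X$ normal factors through a smooth hyperenvelope $\Xs\to X$ and the cokernel presentation $\cA_{X/k}^0=\coker(\cA_{X_1}^0\rightrightarrows \cA_{X_0}^0)$ — which is exactly our Lemma \ref{simpehalb}. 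The strategy is then: use Proposition \ref{pdescent} and Lemma \ref{simpehalb} to present both $\LA{1}(X)$ and $\Alb^-(X)$ as the same cokernel of the smooth-case comparison maps $\LA{1}(X_i)\simeq \Alb^-(X_i)$ (which is Corollary \ref{clalb}, the smooth case, already established), and check that the two identifications are compatible with the simplicial structure maps $(p_0)_*-(p_1)_*$. By functoriality of both constructions in proper maps and the naturality of the isomorphism of Corollary \ref{clalb}, this compatibility is formal.

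Concretely the steps, in order: (1) reduce to $X$ normal connected; (2) choose a smooth hyperenvelope $\Xs\to X$ and recall that $\LAlb$ satisfies cdh/$\eh$ descent (Proposition \ref{pdescent} combined with the blow-up triangles of \S\ref{blowups} and Example \ref{ex11.1}), so that $\LA{1}(X)$ is computed as the relevant cokernel/kernel from $\LA{*}(X_0), \LA{*}(X_1)$ — here one must also use $\LA{1}(X_i)$ are abelian varieties (Corollary \ref{c12.2.1} a) applied to the smooth proper pieces, or rather the smooth case directly) so that the simplicial complex degenerates to a single cokernel by Lemma \ref{simpehalb}; (3) invoke Corollary \ref{clalb} to identify, compatibly in $i$ and with $(p_j)_*$, $\LA{1}(X_i)\simeq \Alb^-(X_i)$; (4) recall that \cite[Ch. 4-5]{BSAP} constructs $\Alb^-(X)$ for $X$ normal by exactly the same hyperenvelope cokernel — this is the step where I expect to spend the most care, namely matching our cohomological-descent presentation with the explicit geometric construction in \cite{BSAP}, since the two a priori use different models (theirs via a specific choice of resolution and divisorial data, ours via the $\eh$-topology and $\DM$); (5) conclude that the induced map $\LA{1}(X)\to \Alb^-(X)$ is an isomorphism, and dualize to get $\RA{1}(X)\simeq \Pic^+(X)$ via Corollary \ref{rpdla} and the duality $\Alb^-(X)^*=\Pic^+(X)$ of \cite{BSAP}.

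The main obstacle will be step (4): reconciling the abstract $\eh$-descent presentation of $\LA{1}(X)$ with the hands-on construction of $\Alb^-(X)$ in \cite{BSAP}. The cleanest route is probably not to compare presentations directly but to use the \emph{universal property}: show that the morphism $X\to \Tot\LA{1}(X)[1]$ coming from $a_X$ \eqref{amapX} realizes $(\cA_{X/k}^\eh)^0=\cA_{X/k}^0$ as the universal semi-abelian variety receiving a map from $X$ (this is Proposition \ref{c3.1bis} c)--d) plus Proposition \ref{pl1} b)), and separately that $\Alb^-(X)$ has the same universal property for $X$ normal (this is essentially the main theorem of \cite[Ch. 5]{BSAP}, since for normal $X$ one has $L_1=0$ by Proposition \ref{pl1} a) together with Lemma \ref{l11.3} c)). Two objects with the same universal property are canonically isomorphic, and the isomorphism is automatically compatible with the structure maps. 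This avoids any delicate bookkeeping with hyperenvelopes in the comparison, reducing everything to the vanishing $H^1_\eh(X,\Z)=0$ for $X$ normal (Lemma \ref{l11.3} c)) and the representability statements of Proposition \ref{c3.1bis}.
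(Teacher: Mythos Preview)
Your proposal is correct and your steps (2)--(4) are essentially the paper's argument. The paper is slightly more streamlined in two respects. First, it works on the $\Pic^+$ side: writing out the construction of \cite[\S4.2]{BSAP} for a smooth hyperenvelope $X_\cdot$ with smooth compactifications $\bar X_\cdot$ and normal-crossing boundary $Y_\cdot$ over a normal compactification $\bar X$, one has $\Pic^+(X)=[\Div^0_{Y_\cdot}(\bar X_\cdot)\to\Pic^0_{\bar X/k}]$ directly (since $\bar X$ is normal); Cartier-dualizing then produces exactly the cokernel presentation $\cA^0_{X_1/k}\rightrightarrows\cA^0_{X_0/k}\to\Alb^-(X)\to 0$, so your ``main obstacle'' (step (4)) is handled simply by unwinding the definition in \cite{BSAP}. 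Second, the paper does not present $\LA{1}(X)$ via descent at all: Lemma \ref{simpehalb} identifies the displayed cokernel with $\cA^0_{X/k}$, and then Corollary \ref{c12.2.1} b) is invoked directly. Once both sides are literally the concrete object $[0\to\cA^0_{X/k}]$, canonicity is automatic, so neither your descent computation of $\LA{1}(X)$ nor the universal-property alternative is needed.
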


\begin{proof} Let $\bar X$ be a normal compactification of $X$; choose a
smooth hyperenvelope $X_{\d}$ of $X$ along with $\bar X_{\d}$ a smooth
compactification with normal crossing boundary $Y_{\d}$ such that  $\bar X_{\d}\to
\bar X$ is an hyperenvelope. Now we have, in the notation of \cite[4.2]{BSAP}, a
commutative diagram with exact rows
\[
\begin{CD}
0@>{}>>\Pic^0_{\bar X/k} @>{}>>\Pic^0_{\bar X_0/k}@>{}>>  \Pic^0_{\bar
X_1/k}\\
& & @A{}AA @A{}AA @A{}AA  \\
0@>{}>>\Div_{Y_{\d}}^0(\bar X_{\d})@>{}>>\Div_{Y_{0}}^0(\bar X_{0})@>{}>>
\Div_{Y_{1}}^0(\bar X_{1})
\end{CD}
\]
where $\Pic^+(X) =[\Div_{Y_{\d}}^0(\bar X_{\d})\to \Pic^0_{\bar X/k}]$ since $\bar X$ is normal. Taking Cartier duals we get an exact sequence of $1$-motives
\[
[0\to \cA_{X_1/k}^0]\longby{(p_0)*-(p_1)_*}[0\to \cA_{X_0/k}^0]\to
\Alb^-(X)\to 0.
\]

Thus $\Alb^-(X)=[0\to \cA_{X/k}^0]$ by Lemma \ref{simpehalb}. We conclude by Corollary \ref{c12.2.1} b) since $X$ is normal and $\LA{1}(X)= [0\to \cA_{X/k}^0]$.
\end{proof}

\begin{remarks} 1) Note that, while $\LA{0}(X)$ and $\LA{1}(X)$ are Deligne $1$-motives, the same is not true of $\LA{2}(X)$ in general, already for $X$ smooth projective (see Corollary \ref{HLAlb}).\\
2) One could make use of Proposition~\ref{cdhex} to compute
$\LA{i} (X)$ for singular $X$ and $i>1$. However, $H^i_\eh(X, \G_m)_{\sQ}$
can be non-zero also for $i\geq 2$, therefore a precise computation for $X$ singular and higher
dimensional appears to be difficult. We did completely the case of curves in Sect. \ref{scurves}.
\end{remarks}

\subsection{$\RPic(X)$ and $H^*_\eh(X,\G_m)$}

By definition of $\RPic$, we have a morphism in $\DM_{-,\et}^\eff$
\begin{multline*}
\Tot\RPic(X)=\alpha^s\ihom_\Nis(M(X),\Z(1))\\
\to \ihom_\et(M_\et(X),\Z_\et(1))=\uR\pi_*^X \G_m[-1].
\end{multline*}

This gives homomorphisms
\begin{equation}\label{eq12.1}
\sH^i(\Tot\RPic(X))\to \uR^{i-1}\pi_*^X \G_m,\quad i\ge 0.
\end{equation}

\begin{propose} \label{p12.6}For $i\le 2$, \eqref{eq12.1} is an isomorphism.
\end{propose}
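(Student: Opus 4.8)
The statement \eqref{eq12.1} concerns the maps $\sH^i(\Tot\RPic(X))\to \uR^{i-1}\pi_*^X \G_m$ for $i\le 2$, where by convention $\uR^q\pi_*^X\G_m$ is the \'etale sheafification of $U\mapsto H^q_\eh(X\times U,\G_m)$, an object of $\HI_\et$ by Proposition \ref{p11.3} (applied to the torus $\G_m$, giving $1$-motivic sheaves for $q=0,1$; for $q=2$ one still gets an object of $\HI_\et$ and a separate argument is needed). The natural approach is blow-up induction (Lemma \ref{lbl} b)): since both $X\mapsto \sH^i(\Tot\RPic(X))$ and $X\mapsto \uR^{i-1}\pi_*^X\G_m$ are contravariant functors on $Sch(k)$ carrying abstract blow-up triangles to long exact sequences (for the left side this comes from the distinguished triangle in \S \ref{blowups} together with the cohomological $t$-structure; for the right side from $\eh$-descent via Example \ref{ex11.1} and Proposition \ref{peh}), and since \eqref{eq12.1} is induced by a morphism of such functors, it suffices by Lemma \ref{lbl} b) to check that \eqref{eq12.1} is an isomorphism for all $X\in Sm(k)$ and $i\le 2$.

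So the core of the proof is the smooth case. For $X$ smooth, $\uR^{i-1}\pi_*^X\G_m$ is just the sheafification of $U\mapsto H^{i-1}_\et(X\times U,\G_m)$ (using $H^*_\et\iso H^*_\eh$ on smooth schemes, Proposition \ref{peh}), and this is computed by Proposition \ref{lcurve}-type arguments, or more precisely by the same computation as in the proof of Lemma \ref{l2.0}: one gets $\uR^0\pi_*^X\G_m=R_{\pi_0(X)/k}\G_m[1/p]$ and $\uR^1\pi_*^X\G_m=\Pic_{X/k}[1/p]$, both in $\Shv_1$. On the other hand $\Tot\RPic(X)=\alpha^*\ihom_\Nis(M(X),\Z(1))$, whose truncation $\tau_{\le 2}$ agrees with $\tau_{\le 2}\ihom_\et(M_\et(X),\Z(1))$ by Lemma \ref{l2.1} a) (the weight-one Beilinson--Lichtenbaum/Hilbert 90 statement), and whose cohomology sheaves in degrees $\le 2$ are exactly $\sH^1=R_{\pi_0(X)/k}\G_m[1/p]$ and $\sH^2=\Pic_{X/k}[1/p]$ (with $\sH^0=0$, $\sH^i=0$ for $i<0$), all via the same computation. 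Thus for $X$ smooth both sides of \eqref{eq12.1} agree in degrees $\le 2$, and one only needs to verify that the comparison map \eqref{eq12.1} is the identification just described; this is a matter of unwinding the definition of $\RPic$ and of the morphism $\Tot\RPic(X)\to \uR\pi_*^X\G_m[-1]$, which is tautologically compatible with the adjunction $\alpha^*\dashv \alpha_*$ and with the internal $\ihom$'s.

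The main obstacle is the bookkeeping for $i=2$: one must make sure that the right-hand functor $X\mapsto \uR^1\pi_*^X\G_m$ genuinely fits into long exact sequences for abstract blow-ups with values in an abelian category containing both $\Shv_1$ and whatever $\sH^2(\Tot\RPic(X))$ lives in, and that Lemma \ref{lbl} b) applies with the truncation in the correct range (``$i\le n$'' with $n=2$). Here one uses that $\eh$ hypercohomology of $\G_m[1/p]$ (or rather of the complex $\Z(1)_\eh$) satisfies the Mayer--Vietoris/blow-up exact sequence by Example \ref{ex11.1} 1), since $\Z(1)[1/p]\simeq \G_m[1/p][-1]$ is an object of $\DM_{-,\et}^\eff$. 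A secondary point to handle carefully is that $\sH^i(\Tot\RPic(X))$ and $\uR^{i-1}\pi_*^X\G_m$ need to be viewed inside a common abelian subcategory of $\HI_\et[1/p]$ that is thick, so that the hypotheses of Lemma \ref{lbl} b) (natural transformation between long-exact-sequence functors) are literally met; the category $\HI_\et[1/p]$ itself, or the subcategory generated by $\Shv_1$ under extensions, serves this purpose. Once the smooth case is checked and these functoriality points are in place, blow-up induction closes the argument with no further computation.
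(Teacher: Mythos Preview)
Your proposal is correct and follows essentially the same approach as the paper: blow-up induction (Lemma \ref{lbl} b)) reduces to the smooth case, which is then handled by the weight-one Beilinson--Lichtenbaum/Hilbert 90 statement (Lemma \ref{l2.1} a)). The paper's proof is a two-line compression of exactly this argument, and your additional discussion of the bookkeeping (the ambient abelian category, the long exact sequences on both sides) simply makes explicit what the paper leaves implicit.
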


\begin{proof} By blow-up induction, we reduce to the smooth case, where it follows from Hilbert's theorem 90.
\end{proof}

\subsection{$H^1_\et(X,\G_m)$ and $H^1_\eh(X,\G_m)$}

In this subsection, we assume $\pi^X:X\to \Spec k$ \emph{proper}. Recall that, then, the \'etale sheaf associated to the presheaf 
\[U\mapsto \Pic(X\times U)\]
is representable by a $k$-group scheme $\Pic_{X/k}$ locally of finite type (Gro\-then\-dieck-Murre \cite{murre}). Its connected component $\Pic^0_{X/k}$ is an extension of a semi-abelian variety by a unipotent subgroup $\cU$. By homotopy invariance of $\uR^1\pi_*^X\G_m$, we get a map
\begin{equation}\label{eq12.5}
\Pic_{X/k}/\cU\to \uR^1\pi_*^X\G_m.
\end{equation}

Recall that the right hand side is a $1$-motivic sheaf by Proposition \ref{p11.3}. We have:

\begin{propose}\label{p12.7} This map is injective with lattice cokernel.
\end{propose}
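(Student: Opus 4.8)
The plan is to apply Proposition \ref{p3.6} to the map \eqref{eq12.5}, for which it suffices to check that it is an isomorphism on $n$-torsion and a monomorphism on $n$-cotorsion for every $n>1$ prime to $p$. Since both source and target are $1$-motivic sheaves (the source by Grothendieck--Murre plus the fact that $\cU$ is unipotent, the target by Proposition \ref{p11.3}), Proposition \ref{p3.6} will then give injectivity with lattice cokernel at once. So the whole problem reduces to a computation with mod-$n$ coefficients, where I can use \'etale-versus-$\eh$ comparison for torsion sheaves.

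First I would fit \eqref{eq12.5} into a compatible pair of Kummer sequences. For the target, the exact sequence $0\to \mu_n\to \G_m\by{n}\G_m\to 0$ of \'etale (hence $\eh$) sheaves yields, after applying $\uR\pi_*^X$, a long exact sequence computing ${}_n(\uR^1\pi_*^X\G_m)$ and $(\uR^1\pi_*^X\G_m)/n$ in terms of $H^*_\eh(X,\mu_n)$ and $\uR^0\pi_*^X\G_m=R_{\pi_0(X)/k}\G_m$ (here I use that $X$ is proper, so $H^0_\eh$ of $\G_m$ is just functions on $\pi_0(X)$; compare Lemma \ref{leh} c)). For the source, the classical Kummer sequence for the proper scheme $X$ gives the analogous description of ${}_n(\Pic_{X/k})$ and $\Pic_{X/k}/n$ via $H^*_\et(X,\mu_n)$; note that $\cU$ being unipotent is uniquely $n$-divisible, so $\Pic_{X/k}/\cU$ has the same $n$-torsion and $n$-cotorsion as $\Pic_{X/k}$. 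The map \eqref{eq12.5} is induced by the natural transformation from \'etale to $\eh$ cohomology, so these two long exact sequences are compatible, and by Proposition \ref{p11.2} (or Examples \ref{ex11.1}) the comparison maps $H^i_\et(X,\mu_n)\to H^i_\eh(X,\mu_n)$ are isomorphisms for all $i$. A five-lemma chase then shows that \eqref{eq12.5} is an isomorphism on $n$-torsion and on $n$-cotorsion; in particular it is injective with lattice cokernel.

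I expect the main obstacle to be purely bookkeeping: making sure the two Kummer long exact sequences are genuinely compatible as sequences of $1$-motivic sheaves on $Sm(k)_\et$ (i.e. keeping track of the sheafification in $U$ and the homotopy invariance used to define $\uR^q\pi_*^X$), and correctly identifying the degree-$0$ terms $\uR^0\pi_*^X\G_m$ on both sides using properness of $X$ so that the boundary contributions match. Once that is set up, everything is forced by Proposition \ref{p11.2} and the five lemma. A minor point to handle is that Proposition \ref{p3.6} as stated requires the hypotheses for \emph{all} $n>1$ prime to $p$; since we have verified them uniformly via the mod-$n$ Kummer sequences, this is immediate. (If instead one only wanted bijectivity on cotorsion as well, one would get an isomorphism, but the cokernel here is genuinely the lattice $\uR^1\pi_*^X\G_m/\Pic_{X/k}$, which is the ``extra'' Néron--Severi type contribution coming from the blow-up square, so we only claim injectivity with lattice cokernel.)
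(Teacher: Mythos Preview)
Your approach is exactly the paper's: reduce to Proposition \ref{p3.6} via the Kummer sequence, Proposition \ref{p11.2}, and Lemma \ref{leh} c). There is, however, a genuine slip in what your five-lemma chase yields on cotorsion, and it makes your write-up internally inconsistent.

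For $n$-torsion the argument works as you say: comparing
\[
0 \to (\uR^0\pi_*^X\G_m)/n \to \uR^1\pi_*^X\mu_n \to {}_n(\uR^1\pi_*^X\G_m) \to 0
\]
with its \'etale analogue, both the $\mu_n$-term (Proposition \ref{p11.2}) and the degree-$0$ term (Lemma \ref{leh} c), using properness) match, so the five lemma gives an isomorphism on $n$-torsion. For $n$-cotorsion, however, the relevant piece is
\[
0 \to (\uR^1\pi_*^X\G_m)/n \to \uR^2\pi_*^X\mu_n \to {}_n(\uR^2\pi_*^X\G_m) \to 0,
\]
and here you only know the middle $\mu_n$-term matches its \'etale counterpart; you have no comparison result for $\uR^2\pi_*^X\G_m$ versus the \'etale $H^2(\G_m)$ on a singular proper scheme. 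The diagram therefore only yields \emph{injectivity} on $n$-cotorsion, not bijectivity. This is precisely what the paper asserts, and precisely what Proposition \ref{p3.6} needs. Your claim of bijectivity on cotorsion is both unjustified and contradicts your own parenthetical remark that the cokernel is a genuinely nonzero lattice: bijectivity on all $n$-cotorsion would force \eqref{eq12.5} to be an isomorphism by the second clause of Proposition \ref{p3.6}. Replace ``isomorphism on $n$-cotorsion'' by ``injective on $n$-cotorsion'' and your proof is correct and identical to the paper's.
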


\begin{proof} Consider multiplication by an integer $n>1$ on both sides. Using the Kummer
exact sequence, Proposition \ref{p11.2} and Lemma \ref{leh} b), 
we find that \eqref{eq12.5} is an isomorphism on $n$-torsion and injective on $n$-cotorsion. The conclusion then follows from
Proposition \ref{p3.6}.
\end{proof}

\subsection{$\RA{1}(X)$ and $\Pic^+(X)$ for $X$ proper}

\begin{thm}\label{t12.8} For $X$ proper, the composition
\[\Pic_{X/k}/\cU\to \uR^1\pi_*^X\G_m\to \sH^2(\Tot\RPic(X))\]
where the first map is \eqref{eq12.5} and the second one is the inverse of the isomorphism \eqref{eq12.1}, induces an isomorphism
\[\Pic^+(X)\iso \RA{1}(X)\]
where $\Pic^+(X)$ is the $1$-motive defined in \cite[Ch. 4]{BSAP}.
\end{thm}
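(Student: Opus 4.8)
The strategy is to reduce the statement to a comparison of two $1$-motivic sheaves and two lattices, both of which we already understand. First I would recall the normalised presentation of the $1$-motivic sheaf $\uR^1\pi_*^X\G_m$ (Proposition \ref{p11.3}): since $X$ is proper it is $1$-motivic, and by Proposition \ref{p12.6} we have $\sH^2(\Tot\RPic(X))\iso \uR^1\pi_*^X\G_m$. So it suffices to identify the $1$-motive sitting inside the Deligne $1$-motive $\RA{1}(X)={}^tH^1(\RPic(X))$ that presents this sheaf. By Proposition \ref{p3.10} (for the ${}^t\M[1/p]$-$t$-structure), $\RA{1}(X)=[L^1\to G^1]$ fits into an exact sequence in $\Shv_1$ linking it to $\sH^1$ and $\sH^2$ of $\RPic(X)$; combined with $\RA{0}(X)=[0\to\Z[\pi_0(X)]^*]$ (Proposition \ref{c3.1*} b)) and the fact that $\sH^1(\Tot\RPic(X))=p_*\G_m$ which is already detected by $\RA{0}$, this forces the normalised semiabelian part of $\RA{1}(X)$ to be precisely the normalised $(G,b)$ of $\uR^1\pi_*^X\G_m$. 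Concretely: the connected semi-abelian variety underlying $\RA{1}(X)$ is $\gamma(\uR^1\pi_*^X\G_m)$ in the sense of Theorem \ref{t3.2.4} a), and the lattice is $\pi_1$ of that sheaf.

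Next I would bring in Proposition \ref{p12.7}: the map $\Pic_{X/k}/\cU\to \uR^1\pi_*^X\G_m$ is injective with lattice cokernel. Applying the adjoint functor $\gamma$ of Theorem \ref{t3.2.4} a) (which is exact up to isogeny and kills lattice cokernels), we get that $\Pic^0_{X/k}/\cU$ maps isomorphically, up to isogeny, onto $\gamma(\uR^1\pi_*^X\G_m)$; but in fact the injectivity with \emph{lattice} (hence torsion-free) cokernel gives an honest isomorphism on connected components by Proposition \ref{p3.6} applied to the relevant $n$-torsion/$n$-cotorsion. This identifies the semi-abelian part of $\RA{1}(X)$ with $\Pic^0_{X/k}/\cU$, which by the Grothendieck--Murre representability and the structure theory recalled just before Proposition \ref{p12.7} is exactly the semi-abelian variety underlying the Picard $1$-motive $\Pic^+(X)$ of \cite[Ch. 4]{BSAP}. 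For the lattice part, I would compute $\pi_0(\uR^1\pi_*^X\G_m)$: the cokernel of \eqref{eq12.5} is the lattice $\NS$-type piece, which by the construction in \cite{BSAP} (divisors supported on the boundary modulo the connected Picard, for a compactification; here $X$ is already proper so it is the Néron--Severi lattice datum entering $\Pic^+$) matches the lattice component $L^1$ of $\Pic^+(X)$. The structure map $L^1\to G^1$ on both sides is the canonical one $D\mapsto \cO_X(D)$, so the squares commute by naturality of \eqref{eq12.5}.

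Finally I would assemble: the composite in the statement is, by construction, compatible with the presentation maps on both sides, so it induces a morphism of Deligne $1$-motives $\Pic^+(X)\to\RA{1}(X)$ which is an isomorphism on semi-abelian parts (previous paragraph) and on lattices (previous paragraph), hence an isomorphism. To be careful about the reduction to $k=\bar k$: all the sheaf-level comparisons ($\gamma$, $\pi_0$, Propositions \ref{p3.6}, \ref{p12.7}) are statements in $\Shv_1$, which by Proposition \ref{p4.2} may be checked on $\bar k$-points, and Galois descent (both $\RA{1}$ and $\Pic^+$ are defined over $k$) then gives the result over $k$.

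\textbf{Main obstacle.} The delicate point is pinning down that the \emph{lattice} component of $\RA{1}(X)$ (equivalently $\pi_1(\uR^1\pi_*^X\G_m)$, which only exists pro-finitely in general by the remark after Theorem \ref{t3.2.4}) agrees, with the correct map into $\Pic^0_{X/k}/\cU$, with the divisorial lattice datum of $\Pic^+(X)$ from \cite{BSAP}; this requires carefully matching the cokernel of \eqref{eq12.5} with the boundary-divisor lattice via a compactification of $X$ and checking the structure map is the tautological $D\mapsto\cO_X(D)$. The semi-abelian comparison, by contrast, is essentially forced by representability plus Proposition \ref{p3.6}.
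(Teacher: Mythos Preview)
Your general strategy (use Proposition \ref{p12.6} to identify $\sH^2(\Tot\RPic(X))$ with $\uR^1\pi_*^X\G_m$, then Proposition \ref{p12.7} plus the exact sequence of Proposition \ref{p3.10} to pin down the semi-abelian part) is exactly what the paper does. But you have misidentified the lattice situation, and this is where your ``main obstacle'' becomes a phantom.

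For $X$ proper, both lattice parts vanish. On the $\Pic^+$ side: the lattice in the definition of $\Pic^+(X)$ in \cite{BSAP} is built from divisors supported on the \emph{boundary} of a compactification; since $X$ is already proper there is no boundary, and indeed \cite[Lemma 5.1.2 and Remark 5.1.3]{BSAP} give $\Pic^+(X)\simeq[0\to\Pic^0_{X/k}/\cU]$. There is no ``N\'eron--Severi lattice datum entering $\Pic^+$'' --- the N\'eron--Severi piece lives in $\RA{2}(X)$, not $\RA{1}(X)$. On the $\RA{1}$ side: writing $\RA{1}(X)=[L^1\to G^1]$, one has $L^1=0$ because $X$ is proper. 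This is the dual of Corollary \ref{c12.2.1} a): that corollary says the semi-abelian part $G_1$ of $\LA{1}(X)$ is an \emph{abelian variety} (no torus) when $X$ is proper, and under Cartier duality a $1$-motive $[L\to A]$ with $A$ abelian dualises to one with trivial lattice part. You never invoke this, and it is the key structural input.

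With $L^1=0$ in hand, the exact sequence of Proposition \ref{p3.10} becomes $0\to G^1\to\sH^2(\Tot\RPic(X))\to L^2$, so $G^1$ injects into $\sH^2\simeq\uR^1\pi_*^X\G_m$. Now Proposition \ref{p12.7} says $\Pic_{X/k}/\cU$ also injects into this sheaf with lattice cokernel; comparing connected parts forces $\Pic^0_{X/k}/\cU\iso G^1$. That is the whole proof. Your attempt to identify $L^1$ with ``$\pi_1$ of the sheaf'' is also off: in the Proposition \ref{p3.10} sequence $\dots\to G^1\to\sH^2\to L^2\to\dots$, the discrete cokernel of $G^1\to\sH^2$ feeds into $L^2$ (the lattice of $\RA{2}$), not $L^1$; and $(\pi_1)_\Q$ of Theorem \ref{t3.2.4} b) only exists rationally in any case.
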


\begin{proof}  Proposition \ref{p3.10}  yields an exact sequence
\[L^1\to G^1\to \sH^2(\Tot\RPic(X))\to L^2\]
where we write $\RA{i}(X)=[L^i\to G^i]$. Propositions \ref{p12.6} and \ref{p12.7} then imply that the map of Theorem \ref{t12.8} induces an isomorphism $\Pic^0_{X/k}/\cU\iso G^1$. The conclusion follows, since on the one hand $\Pic^+(X)\simeq [0\to \Pic^0_{X/k}/\cU]$ by \cite[Lemma 5.1.2 and Remark 5.1.3]{BSAP}, and on the other hand the dual of Corollary \ref{c12.2.1} a) says that $L^1=0$.
\end{proof}

\begin{cor}\label{c12.3} For $X$ proper there is a canonical isomorphism 
\[\LA{1}(X)\iso \Alb^-(X).\qed\]
\end{cor}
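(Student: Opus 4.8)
The plan is to deduce this from Theorem~\ref{t12.8} by Cartier duality. Recall from Corollary~\ref{rpdla} that for any $M\in\DM_{\gm}^{\eff}$ one has $\RPic(M)^*=\LAlb(M)$ in $D^b(\M[1/p])$, and hence ${}^tH^i(\RPic(M))=({}_tH_i(\LAlb(M)))^*$; applied to $M=M(X)$ this gives a canonical identification $\RA{1}(X)=\LA{1}(X)^*$, where $(\ )^*$ is Deligne's Cartier duality as extended to $1$-motives with torsion/cotorsion in Proposition~\ref{pcd}. Since the two functors $(\ )^*:{}^t\M[1/p]\iso{}_t\M[1/p]$ and $(\ )^*:{}_t\M[1/p]\iso{}^t\M[1/p]$ of Proposition~\ref{pcd}~a) are mutually quasi-inverse (biduality for $1$-motives, \cf \cite[\S 10.2.11]{D}), applying $(\ )^*$ to the previous identity yields a canonical isomorphism $\LA{1}(X)\iso\RA{1}(X)^*$.

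Next I would invoke Theorem~\ref{t12.8}, which for $X$ proper provides a canonical isomorphism $\Pic^+(X)\iso\RA{1}(X)$; dualising it gives a canonical isomorphism $\RA{1}(X)^*\iso\Pic^+(X)^*$. Finally, by the construction of the homological Albanese $1$-motive in \cite[Ch. 5]{BSAP}, $\Alb^-(X)$ is the Cartier dual of the cohomological Picard $1$-motive $\Pic^+(X)$ (\cf the parenthetical remark in Corollary~\ref{HRPic}), i.e. $\Pic^+(X)^*=\Alb^-(X)$ canonically. Composing the three isomorphisms
\[\LA{1}(X)\iso\RA{1}(X)^*\iso\Pic^+(X)^*\iso\Alb^-(X)\]
gives the asserted canonical isomorphism.

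The argument is essentially formal, so there is no real obstacle; the only points requiring care are bookkeeping. First, one should note that for $X$ proper both $\LA{1}(X)$ and $\RA{1}(X)$ are effective Deligne $1$-motives — $\LA{1}(X)=[L_1\to G_1]$ with $G_1$ an abelian variety by Corollary~\ref{c12.2.1}~a) and Proposition~\ref{p11.3a}~c), while $\RA{1}(X)=[0\to G^1]$ is connected since $L^1=0$ by the dual of Corollary~\ref{c12.2.1}~a) (as used in the proof of Theorem~\ref{t12.8}) — so that classical Cartier duality applies directly and the identification $\RA{1}(X)=\LA{1}(X)^*$ lands in the expected categories. Second, one must check that the three duality identifications above are mutually compatible, which reduces to the naturality statements already contained in Proposition~\ref{pcd}, Corollary~\ref{rpdla} and Theorem~\ref{teq}; in particular, I would verify that the composite isomorphism is independent of the choices of representatives of the various $1$-motives with (co)torsion involved.
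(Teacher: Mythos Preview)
Your argument is correct and is exactly the intended one: the paper's proof is just the \qed, meaning the corollary follows immediately from Theorem~\ref{t12.8} by Cartier duality together with $\RA{1}(X)=\LA{1}(X)^*$ (Corollary~\ref{rpdla}) and the definition $\Alb^-(X)=\Pic^+(X)^*$ from \cite{BSAP}. Your additional remarks about both sides being genuine Deligne $1$-motives are fine but not needed, since Proposition~\ref{pcd} already provides the duality at the level of ${}^t\M[1/p]$ and ${}_t\M[1/p]$.
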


\subsection{The Borel-Moore variant}

Let $X\in \Sch$ be provided with a compactification $\bar X$ and closed complement $Z\by{i} \bar X$. The relative Picard functor is then representable by a $k$-group scheme locally of finite type $\Pic_{\bar X,Z}$, and we shall informally denote by $\cU$ its unipotent radical. Similarly to \eqref{eq12.1} and \eqref{eq12.5}, we have two canonical maps
\begin{equation}\label{eq12.9}
\sH^2(\Tot\RPic^c(X))\to \Pic_{\bar X,Z}^\eh\leftarrow \Pic_{\bar X,Z}/\cU
\end{equation}
where $\Pic_{\bar X,Z}^\eh$ is by definition the $1$-motivic sheaf associated to the presheaf $U\mapsto H^1_\eh(\bar X\times U,(\G_m)_{\bar X\times U}\to i_*(\G_m)_{Z\times U})$ (compare \cite[2.1]{BSAP}). Indeed, the latter group is canonically isomorphic to
\[\Hom_{\DM_{-,\et}^\eff}(M^c(X\times U),\Z(1)[2])\] 
via the localisation exact triangle. From Theorem \ref{t12.8} and Proposition \ref{c3.1*} b), we then deduce:

\begin{thm}\label{t12.9} The maps \eqref{eq12.9} induce an isomorphism
\[\RA{1}^c(X)\simeq [0\to\Pic^0(\bar X,Z)/\cU].\qed\]
\end{thm}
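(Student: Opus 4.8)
The strategy is to mimic the argument for the proper case (Theorem~\ref{t12.8}) via the localisation exact triangle, using what is already available for the closed complement $Z$. First I would set up the two maps in \eqref{eq12.9}. The left map: $\RPic^c(X)=\RPic(M^c(X))$, and by definition of $\RPic$ we have a morphism $\Tot\RPic^c(X)\to\uR\pi_*\shom_\et(M^c_\et(X),\Z(1))[\text{shift}]$; taking $\sH^2$ and using the identification, for any smooth $U$,
\[\Hom_{\DM_{-,\et}^\eff}(M^c(X\times U),\Z(1)[2])\simeq H^1_\eh\bigl(\bar X\times U,(\G_m)_{\bar X\times U}\to i_*(\G_m)_{Z\times U}\bigr),\]
which comes from applying $\shom_\et(-,\Z(1)[2])$ to the localisation triangle $M_\et(U\times X)\to M_\et(U\times\bar X)\to M_\et^Z(U\times\bar X)\by{+1}$ together with Proposition~\ref{peh} and the computation $\sH^i(D\1^\et M_\et(-))$ of Proposition~\ref{lcurve}/Lemma~\ref{l2.0}, gives the sheaf $\Pic_{\bar X,Z}^\eh$. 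The right map is the comparison $\Pic_{\bar X,Z}/\cU\to\Pic_{\bar X,Z}^\eh$ obtained from the canonical map from representable to $\eh$-sheafified relative Picard, exactly as \eqref{eq12.5} in the absolute proper case.

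Next I would show both maps become isomorphisms after suitable truncation/identification. For the right map, I would run the same argument as in Proposition~\ref{p12.7}: apply the Kummer sequence to both $\Pic_{\bar X,Z}/\cU$ and $\Pic_{\bar X,Z}^\eh$, use Proposition~\ref{p11.2} (torsion étale $=$ torsion $\eh$) on $\bar X$ and on $Z$ together with the localisation sequence for $\mu_n$-cohomology, and Lemma~\ref{leh}~c) to control $H^0$ of a torus on the proper scheme $\bar X$; this shows the map is an isomorphism on $n$-torsion and injective on $n$-cotorsion for all $n>1$ prime to $p$, so Proposition~\ref{p3.6} gives that it is injective with lattice cokernel. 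For the left map, I would invoke Proposition~\ref{p12.6} applied to $M^c(X)$ in place of $M(X)$ — more precisely, apply the blow-up/localisation induction (Lemma~\ref{lbl}) to reduce to the case where $\bar X$ is smooth and $Z$ is a smooth divisor, where it reduces to Hilbert's theorem~90 and the computation of Proposition~\ref{lcurve}~b). Then, exactly as in Theorem~\ref{t12.8}, Proposition~\ref{p3.10} applied to $\RPic^c(X)$ gives an exact sequence
\[L^1\to G^1\to\sH^2(\Tot\RPic^c(X))\to L^2,\]
writing $\RA{i}^c(X)=[L^i\to G^i]$, and combining with the two isomorphisms above identifies $G^1$ with $\Pic^0(\bar X,Z)/\cU$ (the unipotent part being killed by passing to the $1$-motivic sheaf, and the lattice parts matching the $L^\bullet$).

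Finally I would show $L^1=0$, i.e. that the degree-$0$ lattice part of $\RA{1}^c(X)$ vanishes. This is the dual of the statement that $\LA{1}^c(X)=[L_1\to G_1]$ has $G_1$ a connected group with no extra discrete quotient once we remember $\RA{1}^c(X)=\LA{1}^c(X)^\vee$; concretely, it follows from Proposition~\ref{c3.1*}~b) (which gives $\RA{0}^c(X)$) together with a direct check that the edge map lands in the $1$-motivic part — one uses that $\Pic^0(\bar X,Z)$ is of finite type, so its associated sheaf has no lattice summand, and the lattice contribution to $\sH^2$ comes only from $L^2$, matching the Néron–Severi/$\pi_0$ term. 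Identifying $\Pic^+(X)$-type objects: here I would cite \cite[Lemma 5.1.2, Remark 5.1.3]{BSAP} in its relative form to get $[0\to\Pic^0(\bar X,Z)/\cU]$ as the correct normalised $1$-motive. The main obstacle I anticipate is the left map of \eqref{eq12.9}: ensuring that the $\eh$-descent and localisation machinery (Propositions~\ref{peh}, \ref{p11.2}, Lemma~\ref{lbl}) genuinely applies to $M^c$ and to the \emph{relative} situation $(\bar X,Z)$ uniformly in the auxiliary smooth $U$, so that one really does land in $\Shv_1$ and the comparison \eqref{eq12.1} extends verbatim with supports; once that bookkeeping is in place the rest is a dualisation of Theorem~\ref{t12.8}.
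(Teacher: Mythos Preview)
Your plan is correct in outline and would yield a valid proof, but it is considerably more work than what the paper actually does. The paper treats Theorem~\ref{t12.9} as an immediate consequence of Theorem~\ref{t12.8} and Proposition~\ref{c3.1*}~b) (note the \texttt{\textbackslash qed} attached to the statement itself). The point is that both $\bar X$ and $Z$ are \emph{proper}, so Theorem~\ref{t12.8} already applies to each of them, giving $\RA{1}(\bar X)\simeq[0\to\Pic^0_{\bar X/k}/\cU_{\bar X}]$ and $\RA{1}(Z)\simeq[0\to\Pic^0_{Z/k}/\cU_Z]$; and Proposition~\ref{c3.1*}~b) gives $\RA{0}(\bar X)$ and $\RA{0}(Z)$ as tori. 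Feeding the localisation triangle $\RPic^c(X)\to\RPic(\bar X)\to\RPic(Z)\by{+1}$ into the long exact ${}^tH^*$-sequence, all four neighbouring terms of $\RA{1}^c(X)$ are of the form $[0\to G]$, hence so is $\RA{1}^c(X)$; matching this five-term sequence against the standard exact sequence for the relative Picard functor $\Pic_{\bar X,Z}$ then identifies the middle group with $\Pic^0(\bar X,Z)/\cU$.

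By contrast, you propose to re-prove relative analogues of Propositions~\ref{p12.6} and~\ref{p12.7} directly for $M^c(X)$ and the pair $(\bar X,Z)$, essentially rerunning the entire argument of Theorem~\ref{t12.8} in the compact-support setting. This works, but it duplicates effort: the ``main obstacle'' you flag --- extending the $\eh$-descent and comparison machinery to the relative situation uniformly in $U$ --- is precisely what the paper avoids by reducing to the already-established absolute proper case via localisation. Your approach buys a self-contained argument independent of the localisation triangle, at the cost of redoing the Kummer/torsion analysis and the $\sH^2$ comparison; the paper's approach buys a two-line deduction at the cost of relying on the functoriality of the triangle and on Theorem~\ref{t12.8} as a black box.
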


The following is a sequel of Proposition \ref{c3.1c}:

\begin{cor}\label{c3.1d} Let $X\in \Sch(k)$ of dimension $d$. Then:\\
a) $\LA{1}^c(X)=[L_1\to A_1]$, where $A_1$ is an abelian
variety. In particular, $\LA{1}^c(X)$ is a Deligne $1$-motive.\\
b) If $X$ is normal connected and not proper, let $\bar X$ be a normal
compactification of $X$. Then $\rank L_1=\#\pi_0(\bar X - X)-1$.
\end{cor}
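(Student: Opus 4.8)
The plan is to deduce both statements from the already-established computation of $\RA{1}^c(X)$ in Theorem \ref{t12.9} together with Cartier duality, reducing everything to the normal case via blow-up induction. First I would observe that $\LA{1}^c(X) = \RA{1}^c(X)^*$, so the content of a) is that $\RA{1}^c(X)$, which we know is of the form $[0\to \Pic^0(\bar X,Z)/\cU]$, is such that its Cartier dual is a Deligne $1$-motive of the form $[L_1\to A_1]$ with $A_1$ an abelian variety. By the structure of Cartier duality (Lemma \ref{dualt} and Proposition \ref{pcd}), the dual of a $1$-motive concentrated in degree $1$ of the shape $[0\to G]$ with $G$ a semi-abelian scheme with torsion is a $1$-motive $[L\to A]$ with $A$ abelian and $L$ a lattice; the point is then simply to check that $\Pic^0(\bar X,Z)/\cU$ is \emph{semi-abelian} (no unipotent part by construction, since we quotiented by $\cU$), and that it has no finite component to worry about. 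So the bulk of a) is the assertion that $\Pic^0(\bar X,Z)/\cU$ is semi-abelian, which follows from the representability result of Grothendieck--Murre quoted just before Proposition \ref{p12.7} applied to the relative Picard functor, together with the definition of $\cU$ as the unipotent radical.

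For part b), I would take $\bar X$ a normal compactification of the normal connected scheme $X$, set $Z = \bar X - X$ (a closed subset of $\bar X$ of dimension $< d$, nonempty since $X$ is not proper), and analyze the relative Picard $1$-motive $\Pic^+$-style object $[\Div_Z^0(\bar X)\to \Pic^0_{\bar X/k}]$ appearing in \cite{BSAP}; by Theorem \ref{t12.9} its Cartier dual is $\LA{1}^c(X)$, so $L_1$ is the Cartier dual lattice of $\Div_Z^0(\bar X)$, that is $L_1 = \Div_Z^0(\bar X)^\vee$ as Galois modules. Hence $\operatorname{rank} L_1 = \operatorname{rank}\Div_Z^0(\bar X)$. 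Now $\Div_Z(\bar X)$, the group of Weil divisors on $\bar X$ supported on $Z$, is free of rank equal to the number of codimension-$1$ irreducible components of $Z_{\bar k}$; but since $X$ is normal and $\bar X$ normal, every irreducible component of $Z$ of codimension $1$ in $\bar X$ corresponds to a connected component of $\bar X - X$ at which $X$ fails to be proper --- more precisely, over $\bar k$ the codimension-$1$ part of $Z$ is exactly the set of points where $\bar X_{\bar k}$ differs from $X_{\bar k}$ in codimension $1$. The degree-zero condition cuts the rank down by the number of connected components of $\bar X$, which is $1$ here. So $\operatorname{rank}\Div_Z^0(\bar X) = (\#\{\text{codim-}1\text{ components of }Z\}) - 1$. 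The remaining point is to identify $\#\{\text{codim-}1\text{ components of }Z\}$ with $\#\pi_0(\bar X - X)$: this uses normality of $\bar X$, which forces $\bar X - X$ to be of pure codimension $1$ (since a normal scheme has no divisorial points missing, and the complement of a dense open in a normal variety supporting a nontrivial Albanese obstruction must be a divisor); I would invoke that a normal variety is regular in codimension $1$ and $X$ normal with $\bar X$ normal implies each connected component of $\bar X - X$ has codimension $1$, because otherwise $\cO(\bar X) = \cO(X)$ and the boundary would contribute nothing, contradicting its identification with divisors.

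I expect the main obstacle to be the last identification in b): showing that for $X$ normal and $\bar X$ a normal compactification, the connected components of $\bar X - X$ are in bijection with the codimension-$1$ irreducible components of $(\bar X - X)_{\bar k}$ (up to Galois), i.e. that $\bar X - X$ is equidimensional of codimension $1$ and connected components do not break up over $\bar k$ in a way that spoils the count. The cleanest route is probably to pass to $\bar k$ at the outset (the rank is insensitive to this), use that a normal scheme satisfies Serre's condition $(R_1)$, and invoke the algebraic Hartogs lemma: a function regular on $X_{\bar k}$ extends to $\bar X_{\bar k}$ across any closed subset of codimension $\geq 2$, so the ``interesting'' part of the boundary is its codimension-$1$ locus, and $\Div^0$ of that is exactly what enters $\Pic^+$. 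Combining with Corollary \ref{c12.2.1} b) (which already tells us $\LA{1}(X) = [0\to\cA^0_{X/k}]$ for $X$ normal, so that the lattice $L_1$ really does come entirely from the boundary via the localisation triangle \eqref{loc}) should close the argument. A subtlety to handle with care is whether the normal compactification $\bar X$ exists and whether the answer is independent of its choice --- but independence is automatic since $\LA{1}^c(X)$ depends only on $X$, and existence of \emph{some} compactification plus normalisation (which does not change the topology of the boundary's codimension-$1$ part, by Lemma \ref{l12.3}-type arguments) suffices.
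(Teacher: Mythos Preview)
Your argument for a) is correct and matches the paper's: Theorem \ref{t12.9} gives $\RA{1}^c(X)\simeq[0\to \Pic^0(\bar X,Z)/\cU]$ with semi-abelian target, and Cartier duality turns this into $[L_1\to A_1]$ with $A_1$ abelian.

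Your argument for b) contains a genuine error. You identify $\RA{1}^c(X)$ with the $\Pic^+$-style object $[\Div_Z^0(\bar X)\to \Pic^0_{\bar X/k}]$, but Theorem \ref{t12.9} does \emph{not} say this: that object is $\RA{1}(X)=\Pic^+(X)$ (Corollary \ref{HRPic}), not $\RA{1}^c(X)$. Under Cartier duality, the lattice $L_1$ of $\LA{1}^c(X)$ is the character group of the \emph{toric part} of $\Pic^0(\bar X,Z)/\cU$, which arises from $H^0(Z,\G_m)/H^0(\bar X,\G_m)$ and has rank $\#\pi_0(Z)-1$, not from $\Div_Z^0(\bar X)$ (which has rank $\#\{\text{codim-}1\text{ components of }Z\}-1$). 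This misidentification then leads you to try to prove that connected components of $\bar X - X$ coincide with its codimension-$1$ irreducible components, which is false: for $X=\G_m\times\Aff^1\subset\bar X=\P^2$ with boundary two lines through a point, the boundary is connected but has two irreducible components, and indeed here $\LA{1}^c(X)=0$ so $\operatorname{rank}L_1=0$, while your formula would give $1$.

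The paper's proof of b) avoids the structure of the relative Picard entirely: it applies the localisation sequence \eqref{loc} to $\bar X\supset Z=\bar X-X$, takes the lattice (discrete) parts of each $\LA{i}^c$, and uses that $L_1(\bar X)=0$ (Corollary \ref{c12.2.1} b), since $\bar X$ is normal proper), $L_0(\bar X)=\Z$, $L_0(X)=0$ (since $X$ is connected non-proper, Proposition \ref{c3.1c} b)), and $L_0(Z)=\Z[\pi_0(Z)]$ (since $Z$ is proper). The resulting almost-exact sequence of lattices gives $\operatorname{rank}L_1(X)=\#\pi_0(Z)-1$ immediately. Your approach can be salvaged by computing the toric part of $\Pic^0(\bar X,Z)/\cU$ correctly via the exact sequence $H^0(\bar X,\G_m)\to H^0(Z,\G_m)\to\Pic(\bar X,Z)\to\Pic(\bar X)$, but this is more work than the paper's route.
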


\begin{proof} a) follows immediately from Theorem \ref{t12.9}. For b), consider
the complex of discrete parts associated to the exact sequence \eqref{loc}: we
get with obvious notation an almost exact sequence
\[L_1(\bar X)\to L_1(X)\to L_0(\bar X-X)\to L_0(\bar X)\to L_0(X)\]
where ``almost exact" means that its homology is finite. The last group is $0$
and $L_0(\bar X)=\Z$; on the other hand,
$L_1(\bar X)=0$ by Corollary \ref{c12.2.1} b). Hence the claim.
\end{proof}

\begin{remarks}[on Corollary \protect{\ref{c3.1d}}]\ \\
1) In fact, $A_1=0$ in a) if $X$ is smooth and quasi-affine of dimension $>1$: see Corollary \ref{c14.2}. This contrasts sharply with Theorem \ref{rasc} for smooth curves.\\
 2) As a consequence of the statement of b) we see that in b), the number of connected
components of $\bar X-X$ only depends on $X$. Here is an elementary proof of this
fact: let $\bar X'$ be another normal compactification and $\bar X''$ the closure
of $X$ in $\bar X\times \bar X'$. Then the two maps
$\bar X''\to \bar X$ and $\bar X''\to \bar X'$ have connected fibres by Zariski's main theorem,
thus
$\bar X-X$ and $\bar X'-X$ have the same number of connected components as $\bar X''-X$. (The
second author is indebted to Marc Hindry for a discussion leading to this proof.)
\end{remarks}

We shall also need the following computation in the next subsection.

\begin{thm}\label{t12.9.2} Let $\bar X$ be smooth and proper, $Z\subset \bar X$ a divisor with normal crossings and $X= \bar X-Z$. Let $Z_1,\dots,Z_r$ be the irreducible components of $Z$ and set
\[Z^{(p)} =
\begin{cases}
\bar X &\text{if $p=0$}\\
\coprod\limits_{i_1<\dots<i_p} Z_{i_1}\cap\dots \cap Z_{i_p}&\text{if $p>0$.}
\end{cases}\]
Let $\NS^{(p)}_c(X)$ (\resp $\Pic^{(p)}_c(X)$, $T^{(p)}_c(X)$) be the cohomology (\resp the connected component of the cohomology) in degree $p$ of the complex
\[\dots \to\NS(Z^{(p-1)})\to\NS(Z^{(p)})\to\NS(Z^{(p+1)})\to\dots\]
(\resp 
\[\dots \to\Pic^0(Z^{(p-1)})\to\Pic^0(Z^{(p)})\to\Pic^0(Z^{(p+1)})\to\dots\]
\[\dots \to R_{\pi_0(Z^{(p-1)})/k} \G_m\to R_{\pi_0(Z^{(p)})/k} \G_m\to R_{\pi_0(Z^{(p+1)})/k} \G_m\to\dots).\]
Then, for all $n\ge 0$, $\RA{n}^c(X)$ is of the form $[\NS^{(n-2)}_c(X)\by{u^n}G^{(n)}_c]$, where $G^{(n)}_c$ is an extension of $\Pic^{(n-1)}_c(X)$ by $T^{(n)}_c(X)$.
\end{thm}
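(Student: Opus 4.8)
The strategy is to compute $\RPic^c(X) = \RPic(M^c(X))$ by resolving $M^c(X)$ in terms of the smooth proper pieces $Z^{(p)}$, and then to feed this resolution through the functor $\RPic$ using the computation of $\RPic$ of smooth proper schemes already available (Corollary \ref{HRPic} for smooth $X$, specializing to the proper case). The key input is the Gysin/localisation exact triangle for the normal crossing divisor $Z\subset\bar X$: iterated localisation along the strata $Z^{(p)}$ produces a ``Gysin tower'' which, after applying $M^c$, expresses $M^c(X)$ as built out of the $M(Z^{(p)})(p)[2p]$ — concretely, there is a spectral sequence (or a finite filtration with these graded pieces, using $\car k = 0$ and resolution of singularities, \cf \cite[\S 4]{V}) computing $M^c(X)$. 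Dualising and twisting, one gets that $\Tot\RPic^c(X) = \alpha^*\ihom_\Nis(M^c(X),\Z(1))$ is computed by a complex whose terms are $\alpha^*\ihom_\Nis(M(Z^{(p)})(p)[2p],\Z(1)) = \alpha^*\ihom_\Nis(M(Z^{(p)})(p-1)[2p],\Z)$.

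First I would make precise the weight filtration on $M^c(X)$: since $\bar X$ is smooth proper and $Z$ has normal crossings, the standard argument (localisation triangles $M(X)\to M(\bar X)\to M^Z(\bar X)\xrightarrow{+1}$ together with the purity isomorphism $M^{Z^{(p)}}\simeq M(Z^{(p)})(\mathrm{codim})[2\cdot\mathrm{codim}]$ applied inductively to the strata) gives an exact couple whose $E_1$-terms are $M(Z^{(p)})(p)[2p]$. Then I would apply $D_{\le 1}\circ\alpha^*\circ D\1^\Nis$ — equivalently $\LAlb$ precomposed with nothing, but here we want $\RPic = \Tot^{-1}\alpha^* D\1^\Nis$. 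Since $D\1^\Nis$ is triangulated and $\alpha^*$ is triangulated, the weight filtration on $M^c(X)$ produces a filtration on $\RPic^c(X)$ in $D^b(\M[1/p])$ whose graded pieces are $\RPic(M(Z^{(p)})(p)[2p])$. Now by \ref{tatetwist} and the cancellation theorem, $\ihom_\Nis(M(Z^{(p)})(p-1)[2p],\Z)$ vanishes for $p\ge 2$ and equals $\ihom_\Nis(M(\pi_0(Z^{(p)})),\Z)[2p]$ shifted appropriately for $p=1$, while for $p=0$ it is $\ihom_\Nis(M(\bar X),\Z(1))$; so only the three consecutive strata $Z^{(n-2)}, Z^{(n-1)}, Z^{(n)}$ can contribute to $\RA{n}^c(X)$ — this is exactly why the answer involves $\NS$ of $Z^{(n-2)}$, $\Pic^0$ of $Z^{(n-1)}$, and the toric part $R_{\pi_0(Z^{(n)})/k}\G_m$.

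The bookkeeping step is then to identify, for each smooth proper $Z^{(p)}$, the object $\RPic(Z^{(p)})$ via Corollary \ref{HRPic}: $\RA{0}(Z^{(p)}) = [0\to R_{\pi_0(Z^{(p)})/k}\G_m]$, $\RA{1}(Z^{(p)}) = [0\to\Pic^0_{Z^{(p)}/k}]$, $\RA{2}(Z^{(p)}) = [\NS(Z^{(p)})\to 0]$, and $0$ otherwise. Assembling the filtration spectral sequence: the $E_1$-page in total degree $n$ receives $\RA{2}(Z^{(n-2)})$ (the $\NS$ contribution, placed in discrete degree), $\RA{1}(Z^{(n-1)})$ (the $\Pic^0$ contribution, abelian variety part), and $\RA{0}(Z^{(n)})$ (the torus $R_{\pi_0(Z^{(n)})/k}\G_m$). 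The differentials in this spectral sequence are induced by the combinatorial face maps $Z^{(p)}\to Z^{(p-1)}$, so the $E_2$-page in degree $n$ has lattice part $\NS^{(n-2)}_c(X)$, abelian part $\Pic^{(n-1)}_c(X)$, and toric part $T^{(n)}_c(X)$ — this is the content of the displayed complexes in the statement. Because $\M\otimes\Q$ has cohomological dimension $\le 1$ (Proposition \ref{iso1}) and the relevant $1$-motives here are pure of adjacent weights, the spectral sequence degenerates at $E_2$ rationally, and a more careful integral argument (or simply tracking that the extension is the one recorded) shows $\RA{n}^c(X)$ is the $1$-motive with torsion $[\NS^{(n-2)}_c(X)\xrightarrow{u^n} G^{(n)}_c]$ with $G^{(n)}_c$ an extension of $\Pic^{(n-1)}_c(X)$ by $T^{(n)}_c(X)$.

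\textbf{Main obstacle.} The delicate point is not the identification of the graded pieces — that is routine given Corollary \ref{HRPic} — but controlling the filtration/spectral sequence well enough to conclude that $\RA{n}^c(X)$ really has the asserted \emph{shape} $[\,\text{lattice}\to\text{(semiabelian)}\,]$ with no stray cohomology in other degrees and no higher differentials surviving. Concretely one must check: (i) that the three-term truncation is exact, i.e. that there are no contributions from non-adjacent strata, which uses the vanishing $\ihom_\Nis(M(W)(j),\Z)=0$ for $j\ge 1$, $W$ smooth (from \ref{tatetwist} and the argument around \cite[Lemma 2.1 a)]{kmot}); (ii) that the differential out of the $\NS(Z^{(n-2)})$-line and into the $T^{(n)}$-line cannot interfere, which follows from weight reasons ($\Hom$ between pure $1$-motives of different weights vanishes, as recalled after the weight filtration discussion); and (iii) assembling the extension $0\to T^{(n)}_c(X)\to G^{(n)}_c\to \Pic^{(n-1)}_c(X)\to 0$ from the connecting maps in the spectral sequence, using the exactness of the category ${}^t\M[1/p]$ (Theorem \ref{1mtora}) to make sense of the extension integrally. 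The cleanest route is probably to first establish the rational statement by the degeneration argument, then upgrade to ${}^t\M[1/p]$ by comparing with the $\eh$-cohomological description $H^n_\eh(\bar X,(\G_m)_{\bar X}\to i_*(\G_m)_Z)$ as in the proof of Theorem \ref{t12.9}, where the group-scheme structure is transparent.
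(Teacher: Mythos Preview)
Your ``bookkeeping'' paragraph has the right spectral sequence, but the ``Plan of proof'' sets it up incorrectly, and the two are inconsistent. The Gysin/purity filtration with graded pieces $M(Z^{(p)})(p)[2p]$ resolves $M(X)$, not $M^c(X)$ (the triangle you quote, $M(X)\to M(\bar X)\to M^Z(\bar X)$, is the one for $M$); following your own computation, $\ihom_\Nis(M(Z^{(p)})(p)[2p],\Z(1))$ vanishes for $p\ge 2$, so this route can never produce the contributions of $Z^{(n)}$ for $n\ge 2$ that you correctly invoke a few lines later. The correct resolution of $M^c(X)$ carries \emph{no} Tate twist: iterating the localisation triangle $M^c(Z)\to M^c(\bar X)\to M^c(X)\by{+1}$ together with the \v{C}ech resolution of $M^c(Z)$ by the $M^c(Z^{(p)})=M(Z^{(p)})$ gives directly (compare \cite[3.3]{eklv}) the spectral sequence $E_1^{p,q}=\RA{q}(Z^{(p)})\Rightarrow\RA{p+q}^c(X)$ in ${}^t\M[1/p]$ --- which is exactly the one your bookkeeping paragraph is actually using.

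For degeneration, the appeal to Proposition~\ref{iso1} is misplaced: the $d_r$ are morphisms in ${}^t\M[1/p]$, not $\Ext$-classes, so cohomological dimension $\le 1$ is irrelevant. What kills them --- integrally, not just rationally --- is Proposition~\ref{hom}: every $d_2$ is a map $[L\to 0]\to[0\to A]$ or $[0\to A]\to[0\to T]$, and the only possible $d_3$ is a map $[L\to 0]\to[0\to T]$; all vanish by parts (b) and (c). This is the paper's argument. The resulting three-step filtration on $\RA{n}^c(X)$ immediately yields $[\NS^{(n-2)}_c(X)\to G^{(n)}_c]$ with $G^{(n)}_c$ the claimed extension; no detour through $\eh$-cohomology or Theorem~\ref{t12.9} is needed.
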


\begin{proof} A standard argument (compare e.g. \cite[3.3]{eklv}) yields a spectral sequence of cohomological type in ${}^t\M$:
\[E_1^{p,q} = \RA{q}^c(Z^{(p)})\Rightarrow \RA{p+q}^c(X).\]

By Corollary \ref{HRPic}, we have $E_2^{p,2}=[\NS^{(p)}_c(X)\to 0]$, $E_2^{p,1}=[0\to\Pic^{(p)}_c(X)]$ and $E_2^{p,0}=[0\to T^{(p)}_c(X)]$. By Proposition \ref{hom} (b) and (c), all $d_2$ differentials are $0$, hence the theorem.
\end{proof}

\begin{cor}\label{c12.9} With notation as in Theorem \ref{t12.9.2}, the complex $\RPic(M^c(X)(1)[2])$ is \qi to
\[\dots\to [\Z^{\pi_0(Z^{(p-2)})}\to 0]\to \dots\]
In particular, $\RA{0}(M^c(X)(1)[2])=\RA{1}(M^c(X)(1)[2])=0$ and $\RA{2}(M^c(X)(1)[2])=[\Z^{\pi_0^c(X)}\to 0]$ (see Definition \ref{d11.5}).
\end{cor}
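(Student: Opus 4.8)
\textbf{Proof plan for Corollary \ref{c12.9}.}
The plan is to apply the Tate-twisting machinery of \S\ref{tatetwist} together with the explicit description of $\RA{n}^c(X)$ obtained in Theorem \ref{t12.9.2}. First I would recall that, by Definition \ref{d5.1a} and the cancellation theorem (as used in \S\ref{tatetwist}), for any effective geometric motive $M$ one has $\RPic(M(1)[2]) = \Tot^{-1}\alpha^*D\1^\Nis(M(1)[2])$, and $D\1^\Nis(M(1)[2]) = \ihom_\Nis(M(1)[2],\Z(1)) = \ihom_\Nis(M,\Z)[-2] = D\1^\Nis(M)(-1)$-type expressions collapse: concretely, twisting by $\Z(1)$ inside $\ihom_\Nis(-,\Z(1))$ simply shifts and untwists, so that $\RA{n}(M(1)[2])$ is controlled by $\RA{n-2}(M)$ composed with the "kill the non-discrete part" operation coming from $\LAlb(-(1))$ being a complex of toric $1$-motives (Proposition \ref{ptate}). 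The cleanest route is: the dual statement $\LAlb(M^c(X)(1)[2])$ is a complex of toric $1$-motives by Proposition \ref{ptate}, hence its Cartier dual $\RPic(M^c(X)(1)[2])$ is a complex of \emph{lattice} $1$-motives, i.e. \qi to a complex of objects $[\Lambda\to 0]$ with $\Lambda$ discrete.

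Next I would pin down which lattices occur. Using the spectral sequence from the proof of Theorem \ref{t12.9.2}, but now applied to $M^c(X)(1)[2]$ (or equivalently twisting that spectral sequence), the $E_1$-term becomes $\RA{q}^c(Z^{(p)}(1)[2]) = \RA{q}(M^c(Z^{(p)})(1)[2])$ for the smooth proper pieces $Z^{(p)}$; but for a smooth proper $V$ one has $\RPic(M(V)(1)[2]) = [\Z^{\pi_0(V)}\to 0][-2]$ by the computation recorded after Definition \ref{RPic+} (the formula $\RPic(M(X)(n)) = [\Z\pi_0(X)\to 0]$ for $n=1$, shifted by $[2]$). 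Thus the $E_1$-page is concentrated in a single row $q=2$, giving $E_1^{p,2} = [\Z^{\pi_0(Z^{(p)})}\to 0]$ and all higher differentials vanish for degree reasons, so $\RPic(M^c(X)(1)[2])$ is \qi to the complex $p\mapsto [\Z^{\pi_0(Z^{(p-2)})}\to 0]$ with the differentials induced by the simplicial structure of $Z^{(\bullet)}$, exactly as asserted (the index shift by $2$ coming from the $[2]$ and the row $q=2$). Alternatively, and perhaps more directly, one dualizes Corollary \ref{clalb}/Corollary \ref{HLAlb}: $\Tot\LAlb(M(V)(1)) = M(\pi_0(V))(1)$ by \S\ref{tatetwist}, so $\LAlb(M^c(X)(1)[2])$ is computed by the same abstract-blow-up / localization spectral sequence with toric entries $[0\to R_{\pi_0(Z^{(p)})/k}\G_m]$, and Cartier duality turns these into the lattices $[\Z^{\pi_0(Z^{(p)})}\to 0]$.

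Finally, reading off the low-degree cohomology: since the complex sits in (cohomological) degrees $p\ge 2$ with $p$-th term $[\Z^{\pi_0(Z^{(p-2)})}\to 0]$, we get $\RA{0}(M^c(X)(1)[2]) = \RA{1}(M^c(X)(1)[2]) = 0$ immediately, and $\RA{2}(M^c(X)(1)[2])$ is the kernel of $[\Z^{\pi_0(Z^{(0)})}\to 0] = [\Z^{\pi_0(\bar X)}\to 0]\to [\Z^{\pi_0(Z^{(1)})}\to 0]$. An elementary check (the Mayer--Vietoris / simplicial differential for $\pi_0$ of the nerve of the cover of $Z$ by its components, combined with $\pi_0(\bar X)$) identifies this kernel with $[\Z^{\pi_0^c(X)}\to 0]$, where $\pi_0^c(X)$ is the proper scheme of constants of $X$ as in Definition \ref{d11.5}; this matches the fact that $\RA{2}^c$ of the open part $X$ records exactly the proper connected components, consistent with Proposition \ref{c3.1c} b) read through duality. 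The main obstacle I anticipate is not conceptual but bookkeeping: making sure the degree shifts (the $[2]$, the row $q=2$ of the spectral sequence, and the sign/indexing conventions for Cartier duality of complexes as in \S\ref{sbiext}) all line up so that the final complex really starts in degree $2$ with the stated terms, and verifying that the $E_2$-differentials vanish — here one invokes that $\Hom$ between a lattice $1$-motive and another lattice $1$-motive placed in different cohomological degrees of the spectral sequence is forced to vanish, or more simply that a complex of lattices has its cohomology computed naively, so the spectral sequence degenerates at $E_1$ after the single nonzero row is isolated.
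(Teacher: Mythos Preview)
Your approach is correct but differs from the paper's. You twist the spectral sequence of Theorem \ref{t12.9.2} by $(1)[2]$ and use the formula $\RPic(M(V)(1))=[\Z\pi_0(V)\to 0]$ (from the display after Definition \ref{RPic+}) for the smooth proper pieces $Z^{(p)}$, obtaining an $E_1$-page concentrated in the single row $q=2$ with $E_1^{p,2}=[\Z^{\pi_0(Z^{(p)})}\to 0]$; degeneration is then automatic. The paper instead applies Theorem \ref{t12.9.2} as a black box to $X\times\P^1$, uses the splitting $M^c(X\times\P^1)=M^c(X)\oplus M^c(X)(1)[2]$, and reads off the twisted piece from the formulae $\NS(Z^{(p)}\times\P^1)=\NS(Z^{(p)})\oplus\Z^{\pi_0(Z^{(p)})}$, $\Pic^0(Z^{(p)}\times\P^1)=\Pic^0(Z^{(p)})$, $\pi_0(Z^{(p)}\times\P^1)=\pi_0(Z^{(p)})$. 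Your route is more direct and makes transparent why only lattice $1$-motives appear (via Proposition \ref{ptate} and Cartier duality); the paper's $\P^1$-trick is slicker in that it avoids re-running the spectral sequence and any shift bookkeeping, reducing everything to elementary identities about $\NS$ and $\Pic^0$ of products with $\P^1$. For the identification of $\RA{2}$, your ``elementary check'' is indeed elementary: the kernel of $\Z^{\pi_0(\bar X)}\to\Z^{\pi_0(Z^{(1)})}$ picks out exactly the components of $\bar X$ disjoint from $Z$, i.e.\ the proper components of $X$.
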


\begin{proof} This follows from Theorem \ref{t12.9.2} via the formula $M^c(X\times\P^1) =M^c(X)\oplus M^c(X)(1)[2]$, noting that $\bar X\times \P^1$ is a smooth compactification of $X\times\P^1$ with $\bar X\times \P^1-X\times \P^1$ a divisor with normal crossings with components $Z_i\times \P^1$, and
\begin{align*}
\NS(Z^{(p)}\times\P^1)&=\NS(Z^{(p)})\oplus \Z^{\pi_0(Z^{(p)})}\\
 \Pic^0(Z^{(p)}\times \P^1)&=\Pic^0(Z^{(p)})\\ 
 \pi_0(Z^{(p)}\times\P^1)&=\pi_0(Z^{(p)}).
\end{align*}
\end{proof}

\begin{remark} Let $X$ be arbitrary, and filter it by its successive singular loci, \ie
\[X=X^{(0)}\supset X^{(1)}\supset\dots\]
where $X^{(i+1)}=X^{(i)}_\sing$. Then we have a spectral sequence of cohomological type in ${}^t\M$
\[E_2^{p,q} =\RA{p+q}^c(X^{(q)}-X^{(q+1)})\Rightarrow \RA{p+q}^c(X) \]
in which the $E_2$-terms involve smooth varieties. This qualitatively reduces the computation of $\RA{*}^c(X)$ to the case of smooth varieties, but the actual computation may be complicated; we leave this to the interested reader.
\end{remark}

\subsection{$\LA{1}^*$ and $\Alb^+$}

\begin{lemma}\label{l11.6} Let $n>0$ and $Z\in \Sch$ of dimension $<n$; then 
\begin{gather*}
\RA{i}(M(Z)^*(n)[2n])=0 \text{ for } i\le 1\\
\RA{2}(M(Z)^*(n)[2n])=[\Z^{\pi_0^c(Z^{[n-1]})}\to 0]
\end{gather*}
where $Z^{[n-1]}$ is the disjoint union of the irreducible components of $Z$ of dimension $n-1$.
\end{lemma}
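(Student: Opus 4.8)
The plan is to reduce to the smooth, good-position case by blow-up (resolution) induction and then dualise the computations already made for $\RPic^c$ of a divisor-complement. First I would reduce to $Z$ integral: if $Z_1,\dots,Z_r$ are the irreducible components of $Z$ of top dimension and $Y$ is the union of the others together with the pairwise intersections, the abstract blow-up triangle for $M(Z)^*(n)$ (Lemma~\ref{leffe} guarantees effectivity) together with Lemma~\ref{lbl}~b) (blow-up induction, in the variant ``$i\ge n+\dim$'' which is the relevant one for duals) lets me assemble the answer from the integral pieces; the term $[\Z^{\pi_0^c(Z^{[n-1]})}\to 0]$ is visibly additive over the top-dimensional components, as is the vanishing for $i\le 1$. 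So assume $Z$ integral of dimension $d<n$.

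Next I would desingularise: let $\tilde Z\to Z$ be a resolution, with exceptional data as in \S\ref{blowups}. The abstract blow-up triangle gives, after applying $\RPic(\ -\ ^*(n)[2n])$, a long exact sequence in ${}^t\M$ relating $\RA{i}(M(Z)^*(n)[2n])$ to the analogous groups for $\tilde Z$, $W$, $\tilde W$ (the singular locus and its preimage), all of dimension $<d$. By induction on $d$, the statement for these lower-dimensional schemes is known, so it remains to treat $Z$ \emph{smooth} (of dimension $d<n$); the induction hypothesis feeds in the lower-dimensional contributions, and one checks they do not affect $\RA{i}$ for $i\le 2$ because the relevant lattices $\Z^{\pi_0^c((\ )^{[n-1]})}$ behave additively and the connecting maps in the triangle land in the expected places (using Proposition~\ref{hom} to kill spurious differentials, exactly as in the proof of Theorem~\ref{t12.9.2}).

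For $Z$ smooth of dimension $d<n$, I would use Poincar\'e duality $M(Z)^*(d)[2d]\simeq M^c(Z)$, so that $M(Z)^*(n)[2n]=M^c(Z)(n-d)[2(n-d)]$. Writing $m=n-d>0$, I am reduced to computing $\RA{i}(M^c(Z)(m)[2m])$ for $Z$ smooth. Here Corollary~\ref{c12.9} does exactly the $m=1$ case: $\RA{0}=\RA{1}=0$ and $\RA{2}(M^c(Z)(1)[2])=[\Z^{\pi_0^c(Z)}\to 0]$. For $m\ge 2$ I would iterate, using $M^c(Z\times\P^1)=M^c(Z)\oplus M^c(Z)(1)[2]$ and Proposition~\ref{ptate} (or rather its $\RPic$-analogue, $\RPic(M(n))=0$ for $n>1$, $=[\Z\pi_0\to 0]$ for $n=1$) together with Corollary~\ref{c12.9} applied to $Z\times\P^{m-1}$: since $\pi_0^c(Z\times \P^{m-1})=\pi_0^c(Z)$ and the higher Tate twists contribute nothing below degree $2$, one gets $\RA{i}(M^c(Z)(m)[2m])=0$ for $i\le 1$ and $=[\Z^{\pi_0^c(Z)}\to 0]$ for $i=2$. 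Finally, translating back, for $Z$ smooth of dimension $d=n-1$ the top-dimensional part $Z^{[n-1]}$ is all of $Z$, giving the stated $[\Z^{\pi_0^c(Z^{[n-1]})}\to 0]$, while for $d<n-1$ there are no components of dimension $n-1$ and the lattice is $0$, consistent with the formula.

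\textbf{Main obstacle.} The bookkeeping in the induction is the delicate point: one must check that in the blow-up and desingularisation triangles the connecting homomorphisms do not produce unwanted contributions to $\RA{i}$ for $i=0,1,2$, and that the ``multiplicative-type cone'' phenomena (as in Lemma~\ref{l9.2}) do not interfere. I expect this to be handled exactly as in Theorem~\ref{t12.9.2}: the spectral sequence for the (semi-simplicial) resolution has $E_2$-terms of the three types $[\NS\to 0]$, $[0\to\Pic^0]$, $[0\to T]$, all $d_2$ differentials vanish by Proposition~\ref{hom}, and in the range $i\le 2$ only the lattice layer survives. The genuine content is therefore Corollary~\ref{c12.9}; everything else is d\'evissage, but writing the d\'evissage carefully enough to be convincing is where the work lies.
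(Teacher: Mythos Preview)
Your proposal is correct in spirit and leans on the right ingredient (Corollary~\ref{c12.9}), but you are working much harder than necessary, and the ``main obstacle'' you identify is actually not there.

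The paper's argument short-circuits all of your inductive bookkeeping with a single observation. In the abstract blow-up square with $\tilde Z$ a desingularisation of $Z^{[n-1]}$ and $T$ the union of the singular locus of $Z$ with its components of dimension $<n-1$, both $T$ and $\tilde T$ have dimension $\le n-2$. Hence $M(T)^*(n-2)[2n-4]$ and $M(\tilde T)^*(n-2)[2n-4]$ are effective (Lemma~\ref{leffe}), so $M(T)^*(n)[2n]$ and $M(\tilde T)^*(n)[2n]$ are effective motives twisted by $(2)[4]$. By Proposition~\ref{ptate}, $\RPic$ of such a twist vanishes \emph{identically}, not just in degrees $\le 2$. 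Thus the abstract blow-up triangle degenerates to an isomorphism
\[
\RPic(M(\tilde Z)^*(n)[2n])\iso \RPic(M(Z)^*(n)[2n]),
\]
and since $\tilde Z$ is smooth of pure dimension $n-1$, Poincar\'e duality gives $M(\tilde Z)^*(n)[2n]\simeq M^c(\tilde Z)(1)[2]$, whence Corollary~\ref{c12.9} applies directly.

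So there is no induction on $\dim Z$, no spectral sequence, no connecting-map analysis, and no need to treat the cases $m\ge 2$ separately: the twist-$\ge 2$ vanishing kills everything below the top stratum in one stroke. Your route would work, but the paper's is both shorter and cleaner.
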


\begin{proof} Let 
\[\begin{CD}
\tilde T@>>> \tilde Z\\
@VVV @VVV\\
T@>>> Z
\end{CD}\]
be an abstract blow-up square, with $\tilde Z$ smooth and $\dim T,\dim \tilde T<\dim Z$. By
Lemma \ref{leffe}, $M(T)^*(n-2)[2n-4]$ and $M(\tilde T)^*(n-2)[2n-4]$ are effective, so by
Proposition \ref{ptate}, the exact triangle
\begin{multline*}
\RPic(M(\tilde T)^*(n)[2n])\to \RPic(M(\tilde Z)^*(n)[2n])\oplus \RPic(M(T)^*(n)[2n])\\
\to \RPic(M(Z)^*(n)[2n])\by{+1}
\end{multline*}
degenerates into an isomorphism
\[\RPic(M(\tilde Z)^*(n)[2n])\iso \RPic(M(Z)^*(n)[2n]).\]

The lemma now follows from Corollary \ref{c12.9} by taking for $\tilde Z$ a desingularisation
of $Z^{[n-1]}$ and for $T$ the union of the singular locus of $Z$ and its irreducible
components of dimension $<n-1$ (note that $M(\tilde Z)^*(n)[2n]\simeq M^c(\tilde Z)(1)[2]$).
\end{proof}

\begin{lemma} \label{relPicplus} Let $\bar X$ a proper smooth scheme with a
pair $Y$ and $Z $ of disjoint  closed (reduced) subschemes of pure
codimension $1$ in $\bar X$. We then have 
$$\RA{1}(\bar X- Z, Y) \cong\Pic^+(\bar X- Z, Y)$$ 
(see \cite[2.2.1]{BSAP} for the definition of
relative $\Pic^+$). 
\end{lemma}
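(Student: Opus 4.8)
The plan is to deduce the relative statement from two computations already available in this section by a localisation argument with supports. Since $Y$ and $Z$ are disjoint closed subschemes of $\bar X$, the maps $M(Y)\to M(\bar X-Z)\to M(\bar X)$ together with the motive with supports $M^Z(\bar X)=\cone(M(\bar X-Z)\to M(\bar X))$ fit, by the octahedral axiom, into a distinguished triangle
\[M(\bar X-Z,Y)\to M(\bar X,Y)\to M^Z(\bar X)\by{+1},\]
the two copies of $M(Y)$ cancelling because $Y\cap Z=\emptyset$. Applying the contravariant functor $\RPic$ gives a triangle
\[\RPic_Z(\bar X)\to \RPic(\bar X,Y)\to \RPic(\bar X-Z,Y)\by{+1}.\]

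Next I would identify the outer terms. Because $Z$ has pure codimension $1$, Lemma \ref{lsupports} gives $\RPic_Z(\bar X)\simeq[\underline{\Div}_Z(\bar X)\to 0][-2]$, where $\underline{\Div}_Z(\bar X)$ is the lattice of divisors on $\bar X$ supported on $Z$; in particular this complex is concentrated in degree $2$ for the $t$-structure with heart ${}^t\M[1/p]$. On the other hand $\bar X$ is proper and $Y\subset \bar X$ closed, so $M(\bar X,Y)\iso M^c(\bar X-Y)$ (\S \ref{relative}), whence $\RPic(\bar X,Y)=\RPic^c(\bar X-Y)$ and Theorem \ref{t12.9} yields $\RA{1}(\bar X,Y)=\RA{1}^c(\bar X-Y)=[0\to\Pic^0(\bar X,Y)/\cU]$. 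Feeding these into the long exact sequence of ${}^t\M[1/p]$-cohomology objects attached to the triangle, the vanishing of ${}^tH^1(\RPic_Z(\bar X))$ produces a short exact sequence
\[0\to[0\to \Pic^0(\bar X,Y)/\cU]\to \RA{1}(\bar X-Z,Y)\to [K\to 0]\to 0,\]
with $K=\ker\bigl(\underline{\Div}_Z(\bar X)\to \RA{2}(\bar X,Y)\bigr)$ a sublattice of $\underline{\Div}_Z(\bar X)$; hence $\RA{1}(\bar X-Z,Y)=[K\by{v}\Pic^0(\bar X,Y)/\cU]$, where $v$ is induced by the connecting map and, by construction, is the canonical lifting $D\mapsto(\cO_{\bar X}(D),1)$, the trivialisation along $Y$ being available since $D$ is supported on $Z$, disjoint from $Y$.

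It then remains to identify $K$ with $\Div^0_Z(\bar X)$ and to recognise the result as $\Pic^+(\bar X-Z,Y)$ in the sense of \cite[2.1--2.2]{BSAP}. For $K\subseteq\Div^0_Z(\bar X)$ I would use the natural map $\RPic(\bar X,Y)\to\RPic(\bar X)$ (from $M(\bar X)\to M(\bar X,Y)$), which is compatible with the localisation triangles of $(\bar X-Z,Y)$ and of $\bar X-Z$; hence the image of the connecting map $\RA{1}(\bar X-Z,Y)\to\underline{\Div}_Z(\bar X)$ lies inside the image of $\RA{1}(\bar X-Z)\to\underline{\Div}_Z(\bar X)$, which equals $\Div^0_Z(\bar X)$ by Corollary \ref{HRPic}. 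For the reverse inclusion, any $D\in\Div^0_Z(\bar X)$ lifts to $\cO_{\bar X}(D)\in\Pic^0(\bar X)\subseteq\RA{1}(\bar X-Z)$, and its image under the restriction $\RPic(\bar X-Z)\to\RPic(Y)$, which at the level of $\RA{1}$ lands in $\RA{1}(Y)\cong\Pic^+(Y)$ ($Y$ being proper, Theorem \ref{t12.8}), is the class of $\cO_{\bar X}(D)|_Y=\cO_Y$, hence trivial; by the triangle $\RPic(\bar X-Z,Y)\to\RPic(\bar X-Z)\to\RPic(Y)\by{+1}$ this lifts $D$ to $\RA{1}(\bar X-Z,Y)$, so $D\in K$. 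Matching $v$ and the semiabelian part $\Pic^0(\bar X,Y)/\cU$ with the defining diagram of relative $\Pic^+$ in \cite{BSAP} then gives the asserted isomorphism; alternatively, one packages this comparison as a map of exact sequences and invokes the five lemma.

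The main obstacle is the bookkeeping in this last step: one must check that the motivically defined maps — the connecting morphism $v$ and the restriction $\RA{1}(\bar X-Z)\to\RA{1}(Y)$ — literally coincide with the geometric maps of \cite{BSAP} (the canonical lifting of divisors into the relative Picard and the restriction of line bundles along $Y$), which is a naturality statement about the comparison isomorphisms of Corollary \ref{HRPic}, Theorem \ref{t12.8} and Theorem \ref{t12.9}, and one must keep careful track of the unipotent radical $\cU\subseteq\Pic^0(\bar X,Y)$, since $Y$ is only assumed reduced of pure codimension $1$ rather than a normal crossing divisor. The triangle manipulations and the passage through the two $t$-structures are otherwise routine.
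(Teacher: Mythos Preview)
Your approach is essentially the paper's: the same two triangles (localisation with supports in $Z$, and the defining triangle of the relative motive with respect to $Y$) are used in the same order. Two simplifications you missed make the paper's version shorter. First, since $\bar X$ is smooth and $Y$ is reduced, the relative Picard scheme $\Pic(\bar X,Y)$ already has semi-abelian connected component, so $\cU=0$ and your bookkeeping of the unipotent radical is unnecessary. Second, rather than checking the two inclusions $K\subseteq\Div^0_Z(\bar X)$ and $\Div^0_Z(\bar X)\subseteq K$ by hand, the paper simply observes that $\RA{i}(Y)$ has weight $<0$ for $i\le 1$ (since $Y$ is proper, $\RA{0}(Y)=[0\to\G_m[\pi_0(Y)]]$ and $\RA{1}(Y)=\Pic^+(Y)=[0\to \text{semiabelian}]$). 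Applying $\gr^W_0$ to the exact sequence
\[\RA{0}(Y)\to \RA{1}(\bar X-Z,Y)\to \RA{1}(\bar X-Z)\to \RA{1}(Y)\]
then immediately identifies the discrete part of $\RA{1}(\bar X-Z,Y)$ with that of $\RA{1}(\bar X-Z)=\Pic^+(\bar X-Z)$, namely $\Div^0_Z(\bar X)$, and shows that $u$ is the canonical lifting of the map in Corollary~\ref{HRPic}. Your explicit lifting argument for the reverse inclusion is correct in spirit but the phrasing ``$D$ lifts to $\cO_{\bar X}(D)\in\Pic^0(\bar X)$'' conflates the lattice part with the semiabelian part of the $1$-motive; what you really want is exactly the weight observation above, which makes the map on discrete parts an isomorphism in one line.
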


\begin{proof} The following exact sequence provides the weight filtration
$$0\to\RA{1}(\bar X,Y)\to \RA{1}(\bar X - Z,Y)\to \RA{2}_Z(\bar X, Y)$$
where $\RA{1}(\bar X, Y) \cong \RA{1}^c(\bar X-  Y)\cong [0\to \Pic^0(\bar X, Y)]$ by
Theorem \ref{t12.9} (here $\cU=0$ since $\bar X$  is smooth). Also
$\RA{2}_Z(\bar X, Y)\cong \RA{2}_Z(\bar X)=[\Div_Z(\bar X)\to 0]$ from
\ref{lsupports}: thus the discrete part of $\RA{1}(\bar X- Z, Y)$ is given by a subgroup $D$ of
$\Div_Z(\bar X)=\Div_Z(\bar X,Y)$. 

It remains to identify the map $u:D\to \Pic^0(\bar X,Y)$. Using now the exact sequence
$$\RA{0}(Y)\to \RA{1}(\bar X - Z,Y)\to \RA{1}(\bar X - Z)\to \RA{1}(Y)$$
where $\RA{i}(Y)$ is of weight $<0$ for $i\leq 1$ (\ref{c3.1} and \ref{c12.2.1}), we get that
$u$ is the canonical lifting of the map of the $1$-motive $\RA{1}(\bar X
- Z)$ described in \ref{HRPic}. Thus  $D=\Div_Z^0(\bar X, Y)$ and the claimed isomorphism is
clear.
\end{proof}

This proof also gives:

\begin{cor}\label{algeqzero} We have
$$[\Div_Z^0(\bar X, Y)\to 0]=\ker (\RA{2}_Z(\bar X, Y)\to \RA{2}(\bar X, Y)).$$
\end{cor}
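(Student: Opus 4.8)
The statement to prove is Corollary \ref{algeqzero}: that
\[
[\Div_Z^0(\bar X, Y)\to 0]=\ker\bigl(\RA{2}_Z(\bar X, Y)\to \RA{2}(\bar X, Y)\bigr).
\]
The phrase ``this proof also gives'' signals that the argument is already contained in the proof of Lemma \ref{relPicplus}, so the plan is essentially to extract the relevant piece and present it cleanly. First I would recall the two exact sequences used there. The weight-filtration sequence
\[
0\to\RA{1}(\bar X,Y)\to \RA{1}(\bar X - Z,Y)\to \RA{2}_Z(\bar X, Y)
\]
together with the identification $\RA{2}_Z(\bar X,Y)\cong \RA{2}_Z(\bar X)=[\Div_Z(\bar X)\to 0]$ (Lemma \ref{lsupports}, noting $\Div_Z(\bar X)=\Div_Z(\bar X,Y)$ since $Y$ and $Z$ are disjoint of codimension $1$) shows that the image of $\RA{1}(\bar X-Z,Y)$ in $\RA{2}_Z(\bar X,Y)$ is a discrete subgroup $D\subseteq \Div_Z(\bar X)$, and the proof of Lemma \ref{relPicplus} identifies $D=\Div_Z^0(\bar X,Y)$.

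Second, I would assemble the commutative square relating the "$Z$-supported" sequences for the pair $(\bar X,Y)$ and for $\bar X-Z$ relative to $Y$, i.e. the localisation/Gysin sequences
\[
\RA{1}(\bar X-Z,Y)\to \RA{2}_Z(\bar X,Y)\to \RA{2}(\bar X,Y)
\]
(coming from the triangle $M^Z(\bar X,Y)$-wise, $M(\bar X-Z,Y)\to M(\bar X,Y)\to M^Z(\bar X,Y)\by{+1}$ after applying $\RPic$). From exactness at $\RA{2}_Z(\bar X,Y)$, the kernel of $\RA{2}_Z(\bar X,Y)\to\RA{2}(\bar X,Y)$ equals the image of $\RA{1}(\bar X-Z,Y)\to\RA{2}_Z(\bar X,Y)$, which is precisely $D$. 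Combining with the identification $D=[\Div_Z^0(\bar X,Y)\to 0]$ from the first step gives the corollary. (One small point to check is that the image in question is indeed concentrated in the discrete part, i.e. that the map $\RA{1}(\bar X-Z,Y)\to\RA{2}_Z(\bar X,Y)$ factors through the discrete $1$-motive $[\Div_Z(\bar X)\to 0]$ and lands exactly in the sub-lattice $\Div_Z^0(\bar X,Y)$; both are already established in the proof of Lemma \ref{relPicplus} via the further sequence $\RA{0}(Y)\to \RA{1}(\bar X-Z,Y)\to\RA{1}(\bar X-Z)\to\RA{1}(Y)$ and the weight bounds of \ref{c3.1}, \ref{c12.2.1}.)

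\textbf{Main obstacle.} There is no real difficulty here beyond bookkeeping: the only thing to be careful about is the compatibility of the two exact sequences — the ``relative weight'' sequence and the localisation sequence for $\RPic$ of the motive with supports — and making sure the identification of the image as the degree-zero divisors is the same identification in both. Concretely, the subtle step is verifying that the map $D\hookrightarrow \Div_Z(\bar X,Y)$ produced by the weight filtration agrees with the kernel of $\RA{2}_Z(\bar X,Y)\to\RA{2}(\bar X,Y)$ as subobjects of $[\Div_Z(\bar X)\to 0]$, rather than merely being abstractly isomorphic. Since the proof of Lemma \ref{relPicplus} has already pinned down $D=\Div^0_Z(\bar X,Y)$ by an explicit chase using $\RA{2}(\bar X,Y)\cong[0\to\Pic^0(\bar X,Y)]$ (so that a divisor class maps to zero in $\RA{2}(\bar X,Y)$ iff it is algebraically trivial relative to $Y$), this identification is automatic, and the corollary follows by reading off exactness at the middle term. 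Thus the proof is a two-line deduction: invoke exactness of the localisation sequence at $\RA{2}_Z(\bar X,Y)$, then substitute the value of the image computed in Lemma \ref{relPicplus}.
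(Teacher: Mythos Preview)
Your proposal is correct and follows exactly the approach the paper intends: the ``weight-filtration'' sequence in the proof of Lemma \ref{relPicplus} and your localisation sequence are two consecutive pieces of the \emph{same} long exact sequence for the triangle $\RPic_Z(\bar X,Y)\to\RPic(\bar X,Y)\to\RPic(\bar X-Z,Y)\by{+1}$, so $\ker(\RA{2}_Z\to\RA{2})$ is the image of $\RA{1}(\bar X-Z,Y)\to\RA{2}_Z$, which that proof identifies with $[\Div_Z^0(\bar X,Y)\to 0]$. One small slip: in your ``main obstacle'' paragraph you write $\RA{2}(\bar X,Y)\cong[0\to\Pic^0(\bar X,Y)]$, but that is $\RA{1}(\bar X,Y)$; the identification of $D$ in Lemma \ref{relPicplus} proceeds via the sequence involving $\RA{1}(\bar X-Z)$ and $\RA{i}(Y)$, not via $\RA{2}(\bar X,Y)$ --- but since you also cite that correct route earlier, this does not affect your argument.
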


We shall need:

\begin{thm}[Relative duality]\label{treldual} Let $\bar X$,  $Y$ and $Z $ be
as above and further assume that $\bar X$ is $n$-dimensional. Then
$$M (\bar X- Z, Y)^*(n)[2n]\cong M(\bar X- Y, Z)$$
and therefore
$$\RPic^*(\bar X- Z, Y)\cong \RPic (\bar X- Y, Z)$$
and dually for $\LAlb$.
\end{thm}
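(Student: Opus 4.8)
The statement to prove is Theorem~\ref{treldual}: for $\bar X$ smooth proper of dimension $n$, with $Y$ and $Z$ disjoint reduced closed subschemes of pure codimension $1$, there is a canonical isomorphism $M(\bar X-Z,Y)^*(n)[2n]\cong M(\bar X-Y,Z)$ in $\DM_\gm^\eff$, and the statements about $\RPic^*$ and $\LAlb$ follow by applying the functors $\RPic$, $\LAlb$ to this isomorphism together with the definitions in Section~\ref{6} and \ref{6c}. So the plan reduces to establishing the motivic identity; everything else is a formal consequence (apply $\RPic^*(-) \df \RPic(M(-)^*(n)[2n])$ to $\bar X-Z$ rel $Y$, and use the displayed isomorphism; dualize via Corollary~\ref{rpdla} for the $\LAlb$ version).

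First I would recall the relevant duality in $\DM_\gm^\eff$ for the smooth proper scheme $\bar X$: Poincar\'e duality gives $M(\bar X)^*(n)[2n]\cong M(\bar X)$ (as $\bar X$ is smooth proper of dimension $n$), and more generally, for a smooth scheme $U$ of dimension $n$, $M(U)^*(n)[2n]\cong M^c(U)$ by \cite[Th.~4.3.2]{V}. The key input is the behaviour of this duality on the relative motives attached to the open/closed decompositions. Recall from \S\ref{relative} that $M(\bar X-Z,Y)$ is the cone of $C_*(Y)\to C_*(\bar X-Z)$; since $Y$ is closed in the proper scheme $\bar X$ and $Y\cap Z=\emptyset$, so $Y$ is closed in $\bar X - Z$, we have the Gysin/localisation identification $M(\bar X-Z,Y)\cong M^c((\bar X-Z)-Y)=M^c(\bar X-(Y\cup Z))$. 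Thus the left-hand side is $M^c(\bar X-(Y\cup Z))^*(n)[2n]$. On the other hand $M(\bar X-Y,Z)\cong M^c((\bar X-Y)-Z)=M^c(\bar X-(Y\cup Z))$ as well by the same reasoning with $Z$ closed in $\bar X-Y$. So the statement becomes: for $V\df \bar X-(Y\cup Z)$, a smooth (not proper) scheme of dimension $n$, one has $M^c(V)^*(n)[2n]\cong M(V)$ — which is exactly \cite[Th.~4.3.2]{V} (Poincar\'e duality for the smooth scheme $V$, in the form $M(V)\cong M^c(V)^*(n)[2n]$). The point is then to check that these identifications are \emph{canonical} and compatible, i.e.\ that the isomorphism we obtain does not depend on auxiliary choices and is natural; this is where I would be careful to track the Gysin triangles and the duality functor through the construction, using that $(-)^*(n)[2n]$ is an additive, triangulated, contravariant self-equivalence on the relevant thick subcategory and that it exchanges the localisation triangle for $(\bar X-Z, Y)$ with the one for $(\bar X-Y,Z)$ (both being incarnations of the localisation triangle for $V\subset \bar X$ with closed complement $Y\cup Z$, its two ``halves'' $Y$ and $Z$).

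Concretely the steps, in order: (1) identify $M(\bar X-Z,Y)$ with $M^c(\bar X-(Y\cup Z))$ using \S\ref{relative} and the fact that $Y$ is closed in $\bar X-Z$ (both being closed in $\bar X$ with $Y\cap Z=\emptyset$); (2) likewise identify $M(\bar X-Y,Z)$ with $M^c(\bar X-(Y\cup Z))$; (3) invoke \cite[Th.~4.3.2]{V} to get $M^c(V)^*(n)[2n]\cong M(V)$ — wait, we need the other direction: we have $M(\bar X-Z,Y)^*(n)[2n]=M^c(V)^*(n)[2n]\cong M(V)=M(\bar X-Y,Z)$, so it is exactly this form; (4) verify canonicity/naturality by tracking the localisation triangles. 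Then (5) apply $\RPic$ to get $\RPic^*(\bar X-Z,Y)=\RPic(M(\bar X-Z,Y)^*(n)[2n])\cong \RPic(M(\bar X-Y,Z))=\RPic(\bar X-Y,Z)$, and dually for $\LAlb$ using $\RPic(M)^*=\LAlb(M)$ (Corollary~\ref{rpdla}).

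The main obstacle I expect is not the existence of the isomorphism — that is a direct consequence of Voevodsky's duality \cite[Th.~4.3.2]{V} once one has made the combinatorial identification of the relative motives — but rather the bookkeeping required to make it genuinely \emph{canonical}, functorial in the pair $(\bar X;Y,Z)$, and compatible with the weight-filtration/Gysin structure, since the paper uses this compatibility implicitly when it then reads off $\RPic^*$ and $\Alb^+$ from the localisation triangles (e.g.\ in Lemma~\ref{relPicplus} and Theorem~\ref{*=-}). A secondary point requiring care is the hypothesis that $Y$ and $Z$ are \emph{disjoint}: this is what guarantees that $Y$ remains closed in $\bar X-Z$ and $Z$ remains closed in $\bar X-Y$, so that the relative motives really are motives with compact support of the \emph{same} open $V$; without disjointness the argument breaks. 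I would also double-check the duality twist and shift: $\bar X$ has dimension $n$, $V$ is open in $\bar X$ hence also of dimension $n$, so the twist $(n)[2n]$ is the correct one, matching the conventions of Definition~\ref{LAlb*} and Lemma~\ref{leffe}.
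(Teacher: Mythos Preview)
Your argument has a genuine gap in steps (1) and (2). The identification $M(X,Y)\cong M^c(X-Y)$ stated in \S\ref{relative} requires $X$ to be \emph{proper}: only then does $M(X)=M^c(X)$ and $M(Y)=M^c(Y)$, so that the cone of $M(Y)\to M(X)$ coincides with the localisation cone $M^c(X-Y)$. But $\bar X-Z$ is not proper once $Z\ne\emptyset$, so you cannot conclude $M(\bar X-Z,Y)\cong M^c(\bar X-(Y\cup Z))$; the same objection applies to step (2). In fact, as you have written them, steps (1)--(2) would give $M(\bar X-Z,Y)\cong M^c(V)\cong M(\bar X-Y,Z)$ \emph{without any dualising}, which is already a red flag; you then silently replace this by ``$M(V)=M(\bar X-Y,Z)$'' in step (3), which is a different (and equally false) claim. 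A concrete check: take $\bar X=\P^1$, $Y=\{0\}$, $Z=\{\infty\}$. Then $M(\bar X-Z,Y)=\cone\bigl(M(\{0\})\to M(\Aff^1)\bigr)=\cone(\Z\by{1}\Z)=0$, whereas $M^c(\G_m)\cong\Z[1]\oplus\Z(1)[2]\ne 0$; so your identification fails. (The theorem itself is fine in this example: both sides are $0$.)

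The paper does not supply a proof here but refers to the separate note \cite{rel_duality}. The correct argument cannot reduce to Poincar\'e duality for the single open $V=\bar X-(Y\cup Z)$; one must track how $(-)^*(n)[2n]$ exchanges the two relative constructions --- the cone of $M(Y)\to M(\bar X-Z)$ versus the cone of $M(Z)\to M(\bar X-Y)$ --- using Gysin triangles for the codimension-$1$ inclusions together with Poincar\'e duality for the smooth pieces $\bar X$, $\bar X-Y$, $\bar X-Z$, $Y$, $Z$ separately. Your instinct that disjointness of $Y$ and $Z$ is essential is correct, but for this more delicate reason.
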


\begin{proof} See \cite{rel_duality}.
\end{proof}

\begin{cor}\label{cycle} Let $Z$ be a divisor in $\bar X$ such that $Z\cap Y
=\emptyset$. There exists a  ``cycle class'' map $\eta$ fitting in the
following commutative diagram
$$
\begin{CD}
\RA{2}(M(Z)^*(n)[2n])@>{\eta}>>  \RA{2}^c(\bar X - Y)\\
 @V{||}VV @V{||}VV \\
 \RA{2}_Z(\bar X, Y)@>{}>> \RA{2}(\bar X, Y)
\end{CD}
$$
Writing $Z= \cup Z_i$ as union of its irreducible components  we have that
$\eta$ on $Z_i$  is the ``fundamental class'' of $Z_i$ in $\bar X$ modulo
algebraic equivalence.\end{cor}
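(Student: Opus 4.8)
The statement to be proved is Corollary \ref{cycle}: the construction of a ``cycle class'' map $\eta:\RA{2}(M(Z)^*(n)[2n])\to \RA{2}^c(\bar X-Y)$ making the displayed square commute, and the identification of $\eta$ restricted to an irreducible component $Z_i$ with the fundamental class of $Z_i$ modulo algebraic equivalence. The key point is that both groups in the top row have already been computed: by Lemma \ref{l11.6} we have $\RA{2}(M(Z)^*(n)[2n])=[\Z^{\pi_0^c(Z^{[n-1]})}\to 0]$, so (since $Z$ is a divisor, i.e. purely of codimension $1$, so $Z=Z^{[n-1]}$) this is the lattice $[\Div_Z(\bar X)\to 0]$ generated by the irreducible components of $Z$; and by Corollary \ref{c3.1*} b) together with Theorem \ref{t12.9} the target $\RA{2}^c(\bar X-Y)$ receives the local contribution $\RA{2}_Z(\bar X,Y)=[\Div_Z(\bar X)\to 0]$ of Lemma \ref{lsupports} via the localisation triangle. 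So the plan is to \emph{define} $\eta$ as the composite
\[\RA{2}(M(Z)^*(n)[2n])\xrightarrow{\ \sim\ }\RA{2}_Z(\bar X,Y)\to \RA{2}(\bar X,Y)=\RA{2}^c(\bar X-Y),\]
where the first isomorphism is the one just described and the last equality is the identification $M(\bar X-Y, ?)$-type used throughout this subsection, and then to check the two required properties.

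\textbf{Key steps, in order.} First I would make precise the isomorphism $\RA{2}(M(Z)^*(n)[2n])\cong \RA{2}_Z(\bar X,Y)$: apply the relative duality Theorem \ref{treldual}, which gives $M(\bar X-Z,Y)^*(n)[2n]\cong M(\bar X-Y,Z)$ and hence $\RPic^*(\bar X-Z,Y)\cong \RPic(\bar X-Y,Z)$; combine this with the localisation triangle relating $M(Z)^*(n)[2n]$ to $M(\bar X,Y)^*(n)[2n]$ and $M(\bar X-Z,Y)^*(n)[2n]$, and read off the effect on $\RA{2}$ using Lemma \ref{l11.6} (which forces the abelian and toric contributions of $Z$ to vanish in this range since $\dim Z=n-1$ and the relevant twisted motives of lower-dimensional pieces are effective by Lemma \ref{leffe}). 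Second, I would identify the bottom row of the square: $\RA{2}_Z(\bar X,Y)\to \RA{2}(\bar X,Y)$ is the map induced on $\RPic$ by the triangle $M^Z(\bar X,Y)\to M(\bar X,Y)\to M(\bar X-Z,Y)\xrightarrow{+1}$ of \S\ref{supports}, and $\RA{2}(\bar X,Y)\cong \RA{2}^c(\bar X-Y)$ is the identification $M(\bar X,Y)\iso M^c(\bar X-Y)$ recalled in \S\ref{relative} for $Y$ closed in the proper $\bar X$; these identifications are already in place, so the square \emph{commutes by construction} — there is essentially nothing to check once $\eta$ is defined this way. Third, for the geometric identification: restricting to a single component $Z_i$ amounts to following the generator $[Z_i]\in\Div_Z(\bar X)$ through the above chain; by Corollary \ref{algeqzero} (or rather its proof) the discrete part of $\RA{2}_Z(\bar X,Y)$ maps to $\RA{2}(\bar X,Y)$ exactly by the class modulo algebraic equivalence, since $\NS_Z(\bar X)=\ker(\NS(\bar X)\to\NS(\bar X-Z))$ is the quotient of $\Div_Z(\bar X)$ recording the fundamental classes; unwinding Lemma \ref{lsupports} and Proposition \ref{belong}, the image of $[Z_i]$ is precisely its cycle class in $\NS(\bar X)=\Pic(\bar X)/\Pic^0$, i.e. its fundamental class modulo algebraic equivalence, lifted canonically (as in \ref{HRPic}) when the divisor is disjoint from $Y$.

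\textbf{Main obstacle.} The genuinely substantive input is the relative duality isomorphism of Theorem \ref{treldual}, but that is quoted from \cite{rel_duality}, so here it may be assumed. Granting it, the only real work is bookkeeping: checking that the three ``decorated'' localisation triangles (for $\RPic^*$ of the pair $(\bar X-Z,Y)$, for $\RPic$ with supports on $Z$, and for $\RPic^c(\bar X-Y)$) are compatible under duality and under the identifications $M(\bar X,Y)\cong M^c(\bar X-Y)$, $M(\bar X-Z,Y)\cong M^c(\bar X-Y-Z)$, so that the square literally is a piece of a morphism of long exact sequences. I expect the mild annoyance to be degree bookkeeping and the verification that the ``$+1$'' maps match up with the correct signs, together with the verification (already essentially contained in the proof of Lemma \ref{relPicplus} and Corollary \ref{algeqzero}) that on the discrete part the connecting map is the algebraic-equivalence cycle class and not merely the rational- or homological-equivalence one; this is where Proposition \ref{belong} (identifying $\NS_{X/k}$ with $\pi_0$ of $\underline{\Pic}_{X/k}$, i.e. cycles modulo algebraic equivalence) is used in an essential way.
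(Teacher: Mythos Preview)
Your proposal is correct and follows essentially the same approach as the paper: both rely on relative duality (Theorem \ref{treldual}) to identify $M(Z)^*(n)[2n]$ with $M^Z(\bar X,Y)$, after which the square commutes by construction. The paper's proof is more compact because it writes the commutative square directly at the level of motives---dualising the closed immersion $M(Z)\to M(\bar X-Y)$ and invoking relative duality for both vertical isomorphisms simultaneously---rather than comparing localisation triangles term by term at the $\RA{2}$ level; your worries about sign and degree bookkeeping across three triangles largely evaporate once you see that the whole square lifts to $\DM$.
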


\begin{proof} We have a map $M(Z)\to M(\bar X -Y)$, and the vertical isomorphisms
in the following commutative square
$$
\begin{CD}
M(\bar X- Y)^*(n)[2n]@>{}>>  M(Z)^*(n)[2n]\\
 @A{||}AA @A{||}AA \\
M(\bar X, Y)@>{}>> M^Z(\bar X, Y)
\end{CD}
$$ are given by relative duality.
\end{proof}

\begin{thm}\label{*=-} For $X\in \Sch$ we have
$$\RA{1}^*(X)\cong \Pic^-(X).$$
\end{thm}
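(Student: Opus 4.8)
The strategy is to reduce the statement $\RA{1}^*(X)\cong \Pic^-(X)$ to the case of smooth varieties, where it can be identified with the comparison results already established for the cohomological variants $\RA{1}^c$, using the definitions $\RPic^*(X)=\RPic(M(X)^*(n)[2n])$ and the relative duality of Theorem \ref{treldual}. First I would recall from \cite{BSAP} that $\Pic^-(X)$ is constructed (in characteristic $0$) from a smooth hyperenvelope or proper hypercover $X_\d\to X$ together with a good compactification $\bar X_\d$ with normal crossing boundary $Z_\d$, as the $1$-motive associated to a suitable complex built from the $\Pic^+(\bar X_p, Z_p)$; dually, $\Alb^+(X)$ has an analogous description. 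So the first step is to choose such a resolution datum: a smooth proper hyperenvelope $\bar X_\d$ of a compactification $\bar X$ of $X^{(n)}$, with boundary divisors $Z_\d$ restricting to normal crossing divisors, so that $\bar X_p - Z_p$ is a smooth hypercover of $X^{(n)}$.

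Next I would set up the descent spectral sequence in ${}^t\M$ (as in the proof of Theorem \ref{t12.9.2}, compare \cite[3.3]{eklv})
\[E_1^{p,q}=\RA{q}(M(\bar X_p - Z_p)^*(n)[2n])\Rightarrow \RA{p+q}^*(X).\]
Here the key input is Theorem \ref{treldual} (relative duality): it gives $M(\bar X_p - Z_p)^*(n)[2n]\cong M^c(\bar X_p)$ when $Z_p$ is the full boundary, or more precisely $M(\bar X_p-Z_p,\emptyset)^*(n)[2n]\cong M(\bar X_p-\emptyset, Z_p)=M(\bar X_p,Z_p)$, so that $\RA{q}(M(\bar X_p-Z_p)^*(n)[2n])\cong \RA{q}(\bar X_p, Z_p)$, i.e. the \emph{relative} cohomological Picard $1$-motives. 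By Lemma \ref{relPicplus} we have $\RA{1}(\bar X_p, Z_p)\cong \Pic^+(\bar X_p, Z_p)$; and by Corollaries \ref{HRPic}/\ref{c12.9} together with Lemma \ref{l11.6} the rows $E_1^{\cdot,q}$ for $q=0$ and $q=2$ are, respectively, complexes of tori (from $\pi_0$-data) and complexes of lattices (from $\NS$-data of the boundary strata). The differentials $d_2$ vanish by Proposition \ref{hom} (pure $1$-motives of different weights have no maps), exactly as in the proof of Theorem \ref{t12.9.2}, so the abutment in total degree $1$ is computed from the $q=1$ row alone, i.e. from the complex $p\mapsto \Pic^+(\bar X_p,Z_p)$. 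By the very definition of $\Pic^-(X)$ in \cite{BSAP} this complex computes $\Pic^-(X)$ (up to the standard identification of its $1$-motivic cohomology), and naturality of all the maps involved — the duality isomorphism of Theorem \ref{treldual}, Lemma \ref{relPicplus}, and the hypercover — makes the resulting isomorphism canonical and independent of the chosen resolution by the usual cofinality argument for hypercovers.

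Finally I would check the bookkeeping at the two endpoints $q=0$ and $q=2$: one must verify that the contributions of $E_\infty^{0,1}$ and $E_\infty^{1,0}$ and $E_\infty^{-1,2}$ that could a priori mix into total degree $1$ actually do not affect $\RA{1}^*(X)$, which follows from the weight considerations (the $q=0$ row contributes only weights $\le -1$ pieces that already sit inside $\Pic^+(\bar X_p,Z_p)$, and the $q=2$ row contributes weight $0$ lattices in degree $\ge 2$). This is the same phenomenon as in Corollary \ref{c3.1*} and in the proof of Corollary \ref{c3.1d}. \textbf{The main obstacle} I anticipate is not the spectral sequence formalism but matching our $1$-motive $\RA{1}^*(X)$, defined purely motivically via $M(X^{(n)})^*(n)[2n]$ and hypercover descent, with the \emph{a priori} combinatorial definition of $\Pic^-(X)$ in \cite{BSAP}: one must pin down that the two complexes of relative $\Pic^+$'s agree on the nose (not merely up to \qi after an unspecified choice), which requires carefully threading the relative duality isomorphism of Theorem \ref{treldual} through the simplicial structure and invoking that $\Pic^-(X)$ is well-defined independently of the compactified hypercover — a point proved in \cite{BSAP} but which must be quoted precisely here. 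Once this identification is in place, the isomorphism $\RA{1}^*(X)\cong\Pic^-(X)$ (and dually $\LA{1}^*(X)\cong\Alb^+(X)$, as announced in \ref{sing}) drops out.
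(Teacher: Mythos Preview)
Your approach has a genuine gap that prevents it from recovering the full $1$-motive $\Pic^-(X)$. After applying relative duality (Theorem \ref{treldual}) with $Y=\emptyset$, what you obtain at each level is $M(\bar X_p - Z_p)^*(n)[2n]\cong M(\bar X_p, Z_p)=M^c(\bar X_p - Z_p)$, so $E_1^{p,1}=\RA{1}^c(X_p)$. By Theorem \ref{t12.9} this is $[0\to \Pic^0(\bar X_p, Z_p)]$, a $1$-motive with \emph{trivial} lattice part. Your invocation of Lemma \ref{relPicplus} here is a misreading: that lemma concerns $\RA{1}(\bar X - Z, Y)$ with \emph{two} disjoint divisors, and the lattice $\Div_Z^0(\bar X,Y)$ it produces comes from the divisor $Z$ that is \emph{removed}, not from the one defining the relative pair. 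In your setup there is no such second divisor, so each $E_1^{p,1}$ is purely of weight $\le -1$. Since $E_1^{p,0}$ consists of tori (or vanishes, by Lemma \ref{l11.6}) and there are no negative $p$, no graded piece $E_\infty^{p,1-p}$ of the abutment in total degree $1$ can carry a lattice. But $\Pic^-(X)=[\Div^0_{\bar S/S}(\bar X,Y)\to \Pic^0(\bar X,Y)]$ has a genuinely nontrivial weight-$0$ part in general, so your spectral sequence cannot produce it from the $q\le 1$ rows.

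This is exactly why the paper's argument is organised differently. It works with a \emph{single} abstract blow-up square (desingularisation $\tilde X$, singular locus $S$, preimage $\tilde S$) rather than a full hypercover, and extracts the lattice $\Div^0_{\bar S/S}(\bar X,Y)$ from the \emph{connecting map} in the resulting long exact sequence: it arises as the kernel of $\RA{2}(M(\tilde S)^*(n)[2n])\to \RA{2}^*(\tilde X)\oplus\RA{2}(M(S)^*(n)[2n])$, using Lemma \ref{l11.6} to identify these degree-$2$ terms with the relevant divisor groups, and Corollary \ref{cycle} / Corollary \ref{algeqzero} to identify the map as the cycle class. The identification of the map $u$ is then pinned down by embedding $\RA{1}^*(X)$ into $\RA{1}(\bar X - Z, Y)\cong\Pic^+(\bar X - Z, Y)$ via Lemma \ref{relPicplus}, matching the explicit definition in \cite[2.2.1]{BSAP}. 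Your assertion that $\Pic^-(X)$ is ``by definition'' the cohomology of a simplicial complex of $\Pic^+(\bar X_p,Z_p)$'s is not how \cite{BSAP} defines it; even if one could prove such a description, it would be a theorem, not a definition, and your spectral sequence does not supply the required weight-$0$ input.
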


\begin{proof} We are left to consider $X\in \Sch$ purely of dimension $n$
with the following associated set of data and conditions.

For the irreducible components $X_1,\dots,X_r$ of $X$ we let $\tilde X$ be a
desingularisation of $\coprod X_i$, $S\df X_\sing\cup \bigcup _{i\ne j}
S_i\cap S_j$ and $\tilde S$ the inverse image of $S$ in $\tilde X$. We let
$\bar X$ be a smooth proper compactification with normal crossing divisor
$Y$. Let $\bar S$ denote the Zariski closure of $\tilde S$ in $\bar X$.
Assume that $Y+\bar S$ is a reduced normal crossing divisor  in $\bar X$.
Finally denote by $Z$ the union of all compact components of divisors in
$\tilde S$ (\cf \cite[2.2]{BSAP}).

We have an exact sequence coming from the abstract blow-up square
associated to the above picture: 
\begin{multline*}
\dots\to\RA{1}^*(\tilde X)\oplus \RA{1}(M(S)^*(n)[2n])\to\RA{1}^*(X)\\
\to
\RA{2}(M(\tilde S)^*(n)[2n])\to
\RA{2}^*(\tilde X)\oplus\RA{2}(M(S)^*(n)[2n])
\end{multline*}

Now:
\begin{itemize}
\item the first map is injective (Lemma \ref{l11.6}),
\item $\RA{1}^*(\tilde X)=\RA{1}^c(\tilde
X)=[0\to\Pic^0(\bar X,Y)]$ since $\tilde X$ is smooth (Theorem \ref{t12.9}; note that $\cU=0$
by the smoothness of $\tilde X$),
\item $\RA{1}(M(S)^*(n)[2n])=0$ (Lemma
\ref{l11.6}),
\item $\RA{2}(M(\tilde S)^*(n)[2n])= [\Z^{\pi_0^c(\tilde S^{[n-1]})}\to
0]\df [\Div_{\bar S}(\bar X, Y)\to 0]$ is given by the free abelian group on compact
irreducible components of $\tilde S$ (Lemma \ref{l11.6}),
\item $\RA{2}^*(\tilde X)=\RA{2}^c(\tilde X)= \RA{2}(\bar X, Y)=[\NS^{(0)}_c(\tilde
X)\by{u^2}G^{(2)}_c]$ (Theorem \ref{t12.9.2}),
\item $\RA{2}(M(S)^*(n)[2n])=[\Z^{\pi_0^c( S^{[n-1]})}\to 0]=  [\Div_{S}(X)\to 0]$ (Lem\-ma
\ref{l11.6}).
\end{itemize}

We may therefore rewrite the above exact sequence as follows:
\begin{multline*}
0\to[0\to\Pic^0(\bar X,Y)]\to\RA{1}^*(X)\to [\Div_{\bar S}(\bar X, Y)\to 0]\\
\by{\alpha} [\NS^{(0)}_c(\tilde X)\by{u^2}G^{(2)}_c]\oplus [\Div_{S}(X)\to 0].
\end{multline*}

The map $\Div_{\bar S}(\bar X, Y)\to \Div_{ S}(X)$  induced from $M (\tilde S)\to M(S)$ is
clearly the proper push-forward of Weil divisors. The map 
\[[\Div_{\bar S}(\bar X, Y)\to 0]\to
[\NS^{(0)}_c(\tilde X)\by{u^2}G^{(2)}_c]\] 
is the cycle class map described in Corollary
\ref{cycle}. By Corollary \ref{algeqzero} we then get
\[
\ker  \alpha=[\Div_{\bar S/S}^0(\bar X, Y)\to 0] 
\]
where the lattice $\Div_{\bar S/S}^0(\bar X, Y) $ is from the definition of
$\Pic^-$ (see \cite[2.2.1]{BSAP}).
In other terms, we have
\[\RA{1}^*(X)=[\Div_{\bar S/S}^0(\bar X, Y) \by{u}\Pic^0(\bar X,Y)]\]
and we are left to check that the mapping $u$ is the one described in
\cite{BSAP}. Just observe that, by Lemma \ref{l11.6} and Theorem \ref{treldual},
\begin{multline*}\RA{1}^*(X) \into \RA{1}^*(X, S) \cong \RA{1}^*(\tilde X,
\tilde S)\cong \RA{1}^* (\tilde X, Z) \\= \RA{1}^*(\bar X -Y, Z)
\cong\RA{1}(\bar X -Z, Y)
\end{multline*} and the latter is isomorphic to $\Pic^+(\bar X-Z,Y)$ by
\ref{relPicplus}.
Since, by construction, $\Pic^-(X)$ is a sub-1-motive of $\Pic^+(\bar
X-Z,Y)$ the isomorphism of \ref{relPicplus} restricts to the claimed one.
\end{proof}

\section{Generalisations of Ro\v\i tman's theorem}\label{rovi}

In this section, we give a unified treatment of Ro\v\i tman's theorem on torsion $0$-cycles on a smooth projective variety and its various generalisations. We mainly  assume  $k= \bar k$.   \S \ref{s13.4} deals with smooth schemes, so is valid in any characteristic, but from \S \ref{s13.5} onwards we assume $\car k\allowbreak=0$ for singular schemes (see \S \ref{s8.1}).

\subsection{Motivic and classical Albanese} 

Let $X\in \Sch(k)$; we assume $X$ smooth if $p>1$ and $X$ semi-normal (in particular reduced)
if $p=1$, see Lemma \ref{l12.3}. Recall that Suslin's algebraic singular homology is
\[H_j(X)\df \Hom_{\DM_-^\eff}(\Z[j], M(X))= \HH^{-j}_\Nis(k,C_*(X)) \]
for any scheme $p: X\to k$. On the other hand, we may define
\[H^\et_j(X)\df \Hom_{\DM_{-,\et}^\eff}(\Z[j], M_\et(X))= \HH^{-j}_\et(k,\alpha^sC_*(X)).\]

We also have versions with coefficients in an abelian group $A$:
\[H_j^\et(X,A)=\Hom_{\DM_{-,\et}^\eff}(\Z[j], M_\et(X)\otimes A).\] 

We shall also use the following notation throughout:

\begin{notation} For any $M\in \DM_\gm^\eff$ and any abelian group $A$, we write 
$H_j^{(1)}(M,A)$ for the abelian group \index{$H_j^{(1)}$}
\[\Hom_{\DM_{-,\et}^\eff}(\Z[j],\Tot\LAlb(M)\otimes
A)\simeq \HH^{-j}_\et(k,\Tot\LAlb(M)\oo^L A).\] 
This is \emph{Suslin $1$-motivic homology
of $M$ with coefficients in $A$}. If $M=M(X)$, we write $H_j^{(1)}(X,A)$ for $H_j^{(1)}(M,A)$.
We drop $A$ in the case where $A=\Z$.\\
We also write $H_j^\et(M,A)=\Hom_{\DM_{-,\et}^\eff}(\Z[j], M\otimes A)$ and $H_j^\et(M) = H_j^\et(M,\Z)$.
\end{notation}

The motivic Albanese map
\eqref{amap} then gives  maps
\begin{equation}
H_j^\et(M,A) \to H_j^{(1)}(M,A)\label{eqalbt}
\end{equation}
for any abelian group $A$.

\begin{propose} \label{sus}
If $X$ is a smooth curve (or any curve in characteristic $0$), the map
\eqref{eqalbt} (for $M=M(X)$) is an isomorphism for any $A,j$. 
\end{propose}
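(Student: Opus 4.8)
The statement asserts that the motivic Albanese map \eqref{eqalbt} is an isomorphism for $M=M(X)$ when $X$ is a curve (smooth in characteristic $p$, arbitrary in characteristic $0$). The natural strategy is to show that $\alpha^* M(X)$ already lies in $d_{\le 1}\DM_{\gm,\et}^\eff$, so that $\Tot\LAlb(M(X))$ is simply $\alpha^* M(X)$ itself and the map $a_X$ of \eqref{amap} is an isomorphism by construction. Indeed, the key input is Proposition \ref{cd2}: the restriction of \eqref{amap} to $d_{\le 1}\DM_{\gm}^{\eff}$ is an isomorphism of functors. So the first and essentially only point to verify is that $M(X)$ (for $X$ a curve) belongs to $d_{\le 1}\DM_\gm^\eff$, the thick subcategory of $\DM_\gm^\eff$ generated by motives of smooth curves.

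First I would treat the smooth case. If $X$ is a smooth curve, then $M(X)$ is by definition a generator of $d_{\le 1}\DM_\gm^\eff$, so there is nothing to prove: $a_X$ is an isomorphism by Proposition \ref{cd2}, hence \eqref{eqalbt} is an isomorphism after applying $\Hom_{\DM_{-,\et}^\eff}(\Z[j],-\otimes A)$. Next, in characteristic $0$, I would reduce a general (possibly singular, possibly non-proper, possibly reducible) curve to the smooth case using the standard triangles available for $M(X)$: the abstract blow-up (in particular normalisation) triangles of \cite[\S 2.2]{V} and the Gysin/localisation triangles for open immersions. Concretely, given a curve $X$, let $\tilde X\to X$ be its normalisation, $S=X_{sing}$ and $\tilde S$ its preimage; the abstract blow-up triangle
\[M(\tilde S)\to M(\tilde X)\oplus M(S)\to M(X)\by{+1}\]
expresses $M(X)$ in terms of $M(\tilde X)$ (a smooth curve) and $M(S), M(\tilde S)$ (motives of finite $k$-schemes, which are Artin motives and hence visibly in $d_{\le 1}\DM_\gm^\eff$). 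Since $d_{\le 1}\DM_\gm^\eff$ is a thick (in particular triangulated) subcategory, $M(X)\in d_{\le 1}\DM_\gm^\eff$. A reducible curve is handled by an obvious further induction on the number of irreducible components, again using abstract blow-up (cdh cover) triangles. With $M(X)\in d_{\le 1}\DM_\gm^\eff$ established, Proposition \ref{cd2} gives that $a_X$ is an isomorphism, so \eqref{eqalbt} is an isomorphism for all $j$ and all coefficient groups $A$ (one just tensors the isomorphism $a_X$ with $A$ and applies $\Hom_{\DM_{-,\et}^\eff}(\Z[j],-)$).

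The only real subtlety — and the step I would expect to require the most care — is checking that the relevant triangles for $M(X)$ are genuinely available and that all the auxiliary pieces ($M(S)$, $M(\tilde S)$, and in the non-proper case the cone $M^Z(\bar X)$ of an open immersion into a smooth compactification) actually lie in $d_{\le 1}\DM_\gm^\eff$. For the finite-scheme pieces this is clear since $M(\Spec E)$ is the Artin motive $[R_{E/k}\Z\to 0]$, visibly of level $\le 1$ (it is in the essential image of $T$ by the argument in \S\ref{s2.4.1}). For the smooth-curve pieces it is the definition. Everything else is formal thickness. One should also note that in characteristic $0$ the normalisation of a curve is smooth, so no iterated desingularisation is needed; the induction on dimension collapses to a single step, and the induction on the number of components is elementary. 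Finally, since we are working $p$-integrally, the passage through $\alpha^*$ and the identification $H_j^{(1)}(M,A)=\HH^{-j}_\et(k,\Tot\LAlb(M)\oo^L A)$ is exactly as in the definitions preceding the statement, so no extra work is needed there.
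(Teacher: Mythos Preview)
Your proposal is correct and follows exactly the paper's approach: the paper's proof is the single sentence ``This follows immediately from Proposition \ref{cd2}'', and you have simply spelled out why $M(X)$ lies in $d_{\le 1}\DM_\gm^\eff$ (trivially for smooth curves, and via the abstract blow-up triangle for singular curves in characteristic $0$). Your elaboration on the singular case is accurate and fills in what the paper takes as understood.
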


\begin{proof} This follows immediately from Proposition \ref{cd2}.
\end{proof}

Note that if $X =\bar X -Y$ is a smooth curve obtained by removing
a finite set of closed points from a projective smooth curve $\bar
X $ then $\cA_{X/k}= \Pic_{(\bar X, Y)/k}$ is the relative Picard
scheme (see \cite{BSAP} for its representability) and the Albanese
map just sends a point $P\in X$ to $(\cO_{\bar X}(P), 1)$ where 1
is the tautological section, trivialising $\cO_{\bar X}(P)$ on
$Y$. We then have the following result  (\cf \cite[Lect. 7, Th.
7.16]{VL}).

\begin{cor}\label{relpic} If $X =\bar X -Y$ is a smooth curve,
$$H_0^\et(X)\to \Pic (\bar X, Y)[1/p]$$
is an isomorphism.
\end{cor}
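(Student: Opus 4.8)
The plan is to combine Proposition \ref{sus} (which identifies $H_0^\et(X)$ with $H_0^{(1)}(X)$ via the motivic Albanese map for a smooth curve) with the explicit computation of $\LAlb(X)$ for a smooth curve from the Borel-Moore/relative picture. First I would observe that for $X=\bar X-Y$ with $\bar X$ smooth projective and $Y$ a nonempty finite set of closed points, the semi-abelian scheme $\cA_{X/k}$ is exactly the relative Picard scheme $\Pic_{(\bar X,Y)/k}$; this is essentially the content of the discussion preceding the corollary, together with the representability results of \cite{BSAP}. Concretely, $\cA_{X/k}^0=\cA_{X/k}^0$ is the generalised Jacobian, sitting in $0\to T_{\bar X/X,k}\to \cA_{X/k}^0\to \Alb(\bar X)\to 0$ with $T_{\bar X/X,k}$ having character group $\Div_Y^0(\bar X)$, and $\pi_0(\cA_{X/k})=\Z[\pi_0(X)]$ (here $X$ connected, so this is $\Z$). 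By Theorem \ref{trunc}, since $X$ is a curve we have $\NS_{X/k}=\Z[\pi_0(X)]$ (the only codimension-$1$ cycles up to algebraic equivalence are the components), so $\NS_{X/k}^*=\G_m$ and the exact triangle of Theorem \ref{trunc} together with $\LA{0}(X)=[\Z\to 0]$, $\LA{1}(X)=[0\to\cA_{X/k}^0]$, $\LA{2}(X)=[0\to\G_m]$ (Corollary \ref{HLAlb}) gives the full structure of $\LAlb(X)$.

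Next I would compute $H_0^{(1)}(X)=\Hom_{\DM_{-,\et}^\eff}(\Z,\Tot\LAlb(X))$ using the hyperhomology spectral sequence attached to the homotopy (or motivic) $t$-structure truncation of $\LAlb(X)$. Because $k$ need not be algebraically closed, the relevant input is $\Hom_{D^b(\M[1/p])}(\Z[0],\LA{i}(X)[i-j])$; here $\Z[0]=[\Z\to 0]$ is the unit $1$-motive. The key point is that $\Hom$ and $\Ext^1$ out of $[\Z\to 0]$ into a $1$-motive $[L\to G]$ in $D^b(\M[1/p])$ compute, respectively, $\ker(L(k)\to G(k))$-type data and $H^1$-type data, and that the contribution of $\LA{2}(X)=[0\to\G_m]$ is $\Hom(\Z,[0\to\G_m][-2+0])=0$ and $\Ext^1(\Z,[0\to\G_m][-2])=0$ in the relevant degree for $H_0$, while the contribution of $\LA{0}(X)=[\Z\to 0]$ to $H_0$ is killed against the differential into $\LA{1}(X)$ exactly reproducing the exact sequence $0\to \cA_{X/k}^0(k)\to H_0^{(1)}(X)\to \Z\to 0$ coming from $0\to \cA_{X/k}^0\to\cA_{X/k}\to\Z\to 0$. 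Assembling these, $H_0^{(1)}(X)\simeq \cA_{X/k}(k)[1/p]=\Pic(\bar X,Y)[1/p]$. (The twist $[1/p]$ is built in because all the $1$-motive categories here are $\Z[1/p]$-linear.)

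The cleanest route, which I would actually follow, is to avoid re-deriving the spectral sequence and instead quote the Borel-Moore computation directly: by Proposition \ref{sus}, $H_0^\et(X)\iso H_0^{(1)}(X)=\Hom(\Z,\Tot\LAlb(X))$, and since $X$ is a smooth affine curve, $M(X)$ is already effective and one has $\LAlb(X)=\LAlb^c(X)$ is false in general but $\Tot\LAlb(X)$ maps to $\Tot\LAlb^c(X)$; better, use that for the curve $X$ the map $a_X$ is an isomorphism (Proposition \ref{cd2}), so $\Tot\LAlb(X)\simeq \tau_{\le 2}D\1^\et D\1^\Nis M_\et(X)$, and $\Hom(\Z,-)$ of this is $\Hom(M(X),\Z(1)[1])$-dual data, i.e. precisely $H^1_{\Zar}(\bar X,\ldots)$ giving $\Pic(\bar X,Y)$. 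I would then identify the resulting map with $P\mapsto(\cO_{\bar X}(P),1)$ by naturality of $a_X$ against the inclusion of a point $\Spec k(P)\hookrightarrow X$, where the statement is the trivial $0$-dimensional case.

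The main obstacle will be bookkeeping the base-field subtleties: over non-algebraically closed $k$ one must be careful that $H_0^\et(X)$ is computed with the \emph{\'etale} motivic structure and that $\cA_{X/k}(k)=\Pic(\bar X,Y)[1/p]$ on $k$-points (not $\bar k$-points), and that no extra $\Ext^1$ or Galois-cohomology terms sneak in from $\LA{2}(X)=[0\to\G_m]$ — this is handled by the degree bookkeeping, since $\G_m$ only contributes in homological degree $2$ and Suslin $H_0$ only sees degrees $0$ and $1$ of $\LAlb(X)$. A secondary technical point is confirming $\NS_{X/k}=\Z[\pi_0(X)]$ for a smooth curve so that $\LA{2}(X)$ is as claimed; this is immediate from Proposition \ref{belong} and the fact that $\Pic^0$ of a curve is divisible. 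Modulo these checks the corollary follows formally from Proposition \ref{sus}, Theorem \ref{trunc}, Corollary \ref{HLAlb}, and the representability of $\Pic(\bar X,Y)$.
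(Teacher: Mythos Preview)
Your proposal contains a concrete error that derails the argument. You claim that for a smooth curve $X$ one has $\NS_{X/k}=\Z[\pi_0(X)]$, with the justification ``the only codimension-$1$ cycles up to algebraic equivalence are the components''. This conflates codimension $0$ (components) with codimension $1$ (points). In fact, for a smooth connected \emph{non-proper} curve $X=\bar X-Y$ with $Y\ne\emptyset$, one has $\NS(X_{\bar k})=0$: over $\bar k$, the surjection $\Pic(\bar X_{\bar k})\to\Pic(X_{\bar k})$ kills the degree (any $\cO_{\bar X}(P)$ with $P\in Y$ has degree $1$), so $\Pic(X_{\bar k})$ is a quotient of the Jacobian, hence divisible, and $\pi_0(\Pic_{X/k})=0$ by Proposition~\ref{belong}. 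Consequently $\LA{2}(X)=[0\to\NS_{X/k}^*]=0$ by Corollary~\ref{HLAlb}, not $[0\to\G_m]$ as you write. (Your formula $\NS_{X/k}=\Z[\pi_0(X)]$ is correct only for \emph{proper} smooth curves; compare the description of $\NS_{\tilde C/k}$ in Theorem~\ref{lasc} as the free group on \emph{proper} irreducible components.)

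This matters because your spurious $\LA{2}(X)=[0\to\G_m]$ introduces, via the long exact sequence from the triangle of Theorem~\ref{trunc}, a boundary map $\cA_{X/k}(k)[1/p]\to\Hom(\Z,\G_m[2])=\Br(k)[1/p]$, which you do not control; your dismissal (``$\G_m$ only contributes in homological degree $2$'') is not a proof. With the correct $\NS_{X/k}=0$, Theorem~\ref{trunc} collapses to an isomorphism $\LAlb(X)\simeq\cA_{X/k}$ in $D^b(\M[1/p])$, whence $H_0^{(1)}(X)=\Hom(\Z,\underline{\cA}_{X/k})=\cA_{X/k}(k)[1/p]=\Pic(\bar X,Y)[1/p]$ immediately. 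Combined with Proposition~\ref{sus}, this is the whole argument --- exactly what the paper intends by placing the corollary right after the identification $\cA_{X/k}=\Pic_{(\bar X,Y)/k}$ and Proposition~\ref{sus}. Once you fix the $\NS$ computation, your spectral-sequence bookkeeping and the alternative routes through $\LAlb^c$ or duality become unnecessary detours.
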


Now let $\cA_{X/k}^\eh$ be as in Proposition \ref{c3.1bis} and Remark \ref{r11.1}. The map
$\Tot\LAlb(X)\to \cA_{X/k}^\eh$ of
\loccit induces a homomorphism
\begin{equation}\label{eqhom}
H_0^{(1)}(X)\to \cA_{X/k}^\eh(k)[1/p]
\end{equation}
which is not an isomorphism in general (but see Lemma \ref{l7.3.1}).
Composing \eqref{eqhom}, \eqref{eqalbt} (for $A=\Z$) and the obvious map $H_0(X)[1/p]\to H_0^\et(X)$,
we get a map
\begin{equation}\label{classalb}
H_0(X)[1/p]\to \cA_{X/k}^\eh(k)[1/p].
\end{equation}

We may further restrict to parts of degree $0$, getting a map
\[H_0(X)^0[1/p]\to (\cA_{X/k}^\eh)^0(k)[1/p].\]
Recall that
$\cA_{X/k}^\eh=\cA_{X/k}$ if $X$ is normal (Proposition \ref{c3.1bis} d)).
In this case, the above maps become
\begin{equation}\label{eqalb0}
H_0^{(1)}(X)\to \cA_{X/k}(k)[1/p], \quad H_0(X)^0[1/p]\to (\cA_{X/k})^0(k)[1/p].
\end{equation}

If $X$ is smooth, \eqref{eqalb0} is the $\Z[1/p]$-localisation of the generalised Albanese map of
Spie\ss-Szamuely \cite[(2)]{spsz}.

Dually to Lemma \ref{ltlexact}, the functor 
\begin{align*}
D^b(\M[1/p])&\to \DM_{-,\et}^\eff\\
C&\mapsto \Tot(C)\otimes (\Q/\Z)'
\end{align*}
is exact with respect to the ${}_t\M$ $t$-structure on the left and the homotopy $t$-structure on
the right; here, as usual, $(\Q/\Z)'\df \bigoplus_{l\ne p} \Q_\ell/\Z_\ell$.
In other words:

\begin{lemma}\label{l12.1} For any $C\in D^b(\M[1/p])$, there are canonical isomorphisms of
sheaves
\[\sH_j(\Tot(C)\otimes (\Q/\Z)')\simeq \Tot({}_t H_j(C))\otimes(\Q/\Z)'\]
(note that the right hand side is a single sheaf!) In particular, for $C=\LAlb(M)$ and $k$
algebraiclly closed:
\[H_j^{(1)}(M, (\Q/\Z)')\simeq \Gamma(k,\Tot(\LA{j}(M))\otimes(\Q/\Z)').\]
\end{lemma}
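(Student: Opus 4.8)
\textbf{Proof plan for Lemma \ref{l12.1}.}

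The plan is to invoke the dual of Lemma \ref{ltlexact} as stated, and then unwind the consequences. First I would observe that the assertion has two parts: an exactness statement for the functor $C\mapsto \Tot(C)\otimes(\Q/\Z)'$ between the two triangulated categories equipped with the indicated $t$-structures, and the resulting formula for the homology sheaves. For the first part, the key point is that $(\Q/\Z)'=\bigoplus_{\ell\ne p}\Q_\ell/\Z_\ell$ is a filtered colimit of finite groups $\mu$ of order prime to $p$, and for such finite coefficients the functor $C\mapsto \Tot(C)\otimes\mu$ takes a $1$-motive (with cotorsion) $N=[L\by{u}G]$ to something homologically concentrated: by Theorem \ref{ptors} we may replace $D^b(\M[1/p])$ by $D^b({}_t\M[1/p])$ (using Theorem \ref{tstr}), and then $N/\mu:=N\otimes^L\Z/n$ for $n$ killing $\mu$ is represented by the sheaf $[{}_nL\to {}_nG]$ placed suitably — more precisely, just as in the proof of Theorem \ref{t4.3.2}, $N\otimes\Z/n$ is quasi-isomorphic to a complex of length $1$ of torsion $1$-motivic sheaves (equivalently, via $\Tot$, to a single homotopy-invariant torsion \'etale sheaf), since multiplication by $n$ is injective on the lattice part and $\ker(n\colon \underline G\to\underline G)$ is a finite locally constant group. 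Thus $C\mapsto \Tot(C)\otimes\mu$ is $t$-exact from the ${}_t\M$-$t$-structure to the homotopy $t$-structure, and passing to the filtered colimit over $\mu\subset(\Q/\Z)'$ (which commutes with $\sH_j$ and with $\Tot$) gives the exactness for $(\Q/\Z)'$ itself. This is exactly the dual of Lemma \ref{ltlexact}, so in the write-up I would simply cite it.

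Granting $t$-exactness, the homology sheaf formula is then formal: for an exact functor $F$ between triangulated categories carrying a $t$-structure with heart $\cH_1$ to one with heart $\cH_2$, one has $H_j^{\cH_2}(F(C))\simeq \bar F(H_j^{\cH_1}(C))$ where $\bar F\colon\cH_1\to\cH_2$ is the induced exact functor on hearts; here $\cH_1={}_t\M[1/p]$, $\cH_2=\Shv_1^s$ (Theorem \ref{t3.2.3}), $H_j^{\cH_1}={}_tH_j$ (Notation \ref{not}) and $H_j^{\cH_2}=\sH_j$. Applied to $C$ this yields
\[\sH_j(\Tot(C)\otimes(\Q/\Z)')\simeq \Tot({}_tH_j(C))\otimes(\Q/\Z)',\]
the right-hand side being a single sheaf because $\Tot({}_tH_j(C))$, being the image of a $1$-motive with cotorsion, is a $1$-motivic sheaf and tensoring a $1$-motivic sheaf with a torsion group is again a sheaf (e.g. it is $\colim_\mu$ of the finite torsion $1$-motivic sheaves $\Tot({}_tH_j(C))\otimes\mu$, using Theorem \ref{text}).

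For the final sentence, I would specialise to $C=\LAlb(M)$, so that ${}_tH_j(C)=\LA{j}(M)$ by Definition \ref{LAlb-}, and take sections over a separably (hence algebraically) closed $k$. Since $k$ has no higher Galois cohomology, the hypercohomology spectral sequence $\HH^{-j}_\et(k,-)$ computing $H_j^{(1)}(M,(\Q/\Z)')$ (see the definition of $H_j^{(1)}$) degenerates, giving $H_j^{(1)}(M,(\Q/\Z)')\simeq \Gamma(k,\sH_j(\Tot\LAlb(M)\otimes(\Q/\Z)'))\simeq \Gamma(k,\Tot(\LA{j}(M))\otimes(\Q/\Z)')$, as claimed. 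The main obstacle is really just the first part — checking that tensoring a $1$-motive with prime-to-$p$ torsion coefficients is homologically concentrated and hence that the functor is $t$-exact — but since this is precisely the dual of the already-established Lemma \ref{ltlexact}, in the actual write-up there is little to do beyond citing it and the formal heart-functor argument; I expect the proof to be short.
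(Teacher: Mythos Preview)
Your proposal is correct and matches the paper's approach exactly: the paper gives no separate proof, merely stating just before the lemma that ``dually to Lemma \ref{ltlexact}'' the functor $C\mapsto\Tot(C)\otimes(\Q/\Z)'$ is $t$-exact from the ${}_t\M$-structure to the homotopy $t$-structure, and then presents the lemma as the tautological reformulation (``In other words:''). Your expansion of why this $t$-exactness holds (via finite coefficients and filtered colimits, citing the computation in the proof of Theorem \ref{t4.3.2}) is more detailed than what the paper offers but entirely in the same spirit.

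One minor correction: in explaining why the right-hand side is a single sheaf, you write that ``$\Tot({}_tH_j(C))$, being the image of a $1$-motive with cotorsion, is a $1$-motivic sheaf''. This is not quite right: $\Tot$ of a $1$-motive $N=[L\to G]$ is the two-term complex $[L\to\underline G]$, not a single sheaf. The reason $\Tot(N)\otimes(\Q/\Z)'$ collapses to a single sheaf is that $L$ is torsion-free (so $\otimes=\otimes^L$) and $\underline G$ is divisible as a sheaf (for $N\in{}_t\M[1/p]$, $G$ is an extension of an abelian variety by a group of multiplicative type), whence $\underline G\otimes(\Q/\Z)'=0$ and the complex reduces to $L\otimes(\Q/\Z)'$ in degree $0$. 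You essentially have this content earlier in your argument via the finite-coefficient step, so the fix is purely cosmetic.
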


\subsection{A variant of the Suslin-Voevodsky theorem} We now assume that $k$ is algebraically closed until the end of this Section.

Let $n$ be invertible in $k$. For $M\in \DM_{\gm,\et}^{\eff}$, we have a pairing
\begin{equation}\label{eqSV}
\Hom(\Z,M\otimes \Z/n)\times \Hom(M,\Z/n)\to \Hom(\Z,\Z/n)=\Z/n
\end{equation}
constructed as follows: a morphism $\phi:M\to \Z/n$ yields a composite morphism $\tilde \phi:M\otimes \Z/n\longby{\phi\otimes 1} \Z/n\otimes \Z/n\to \Z/n$, where the second map is induced by the isomorphism $H^0(\Z/n\otimes \Z/n) = \Z/n$ in $\DM_{\gm,\et}^{\eff}$. Composing $\tilde \phi$ with a morphism $\psi:\Z\to M\otimes\Z/n$ yields the desired pairing $<\psi,\phi>\in \Z/n$.

The following theorem is parallel to Theorem \ref{t4.3.2}:

\begin{thm}\label{tSV} For any $M\in \DM_{\gm,\et}^{\eff}$, the pairing \eqref{eqSV} is a perfect duality of finite $\Z/n$-modules.
\end{thm}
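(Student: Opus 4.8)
The plan is to reduce to the case of a smooth projective variety, where the statement becomes classical Poincaré duality with finite coefficients in étale cohomology. First I would observe that both sides of \eqref{eqSV} are finitely generated $\Z/n$-modules: indeed $\DM_{\gm,\et}^\eff$ is generated as a thick subcategory by motives $M(X)$ with $X$ smooth projective, and for such $X$ the groups $\Hom(\Z/n, M(X)[j])$ and $\Hom(M(X), \Z/n[j])$ are, by the comparison of $\DM_{-,\et}^\eff$ with the derived category of étale sheaves on $(\Spec k)_\et$ in torsion degrees (the argument in \S\ref{2.3}, via \cite[Prop. 3.3.3 1)]{V}), identified with étale (co)homology groups of $X$ with $\mu_n$-twisted coefficients, which are finite. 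By dévissage along exact triangles and passage to direct summands, finiteness propagates to all $M\in\DM_{\gm,\et}^\eff$.

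Next I would set up the duality map carefully. The pairing \eqref{eqSV} corresponds, by adjunction, to a map
\[
\Hom(\Z/n, M)\to \Hom(\Hom(M,\Z/n),\Z/n),
\]
natural in $M$. Both functors $M\mapsto \Hom(\Z/n,M)$ and $M\mapsto \Hom(\Hom(M,\Z/n),\Z/n)$ are (contravariant, resp. covariant after double dualization) cohomological in $M$ and send exact triangles to long exact sequences; the natural transformation between them is compatible with these. Since $\DM_{\gm,\et}^\eff$ is thickly generated by the $M(X)[j]$ with $X$ smooth projective and $j\in\Z$, it suffices by the five lemma (for the long exact sequences) and by stability under retracts to check that the map is an isomorphism when $M=M(X)[j]$, $X$ smooth projective of dimension $d$.

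For $M=M(X)$ with $X$ smooth projective, I would identify $\Hom(\Z/n, M(X)[j])$ with $H_j^{\et}(X,\Z/n)\simeq H^{2d-j}_\et(X,\mu_n^{\otimes d})$ (using the duality $M(X)\simeq M(X)^\vee(d)[2d]$ in $\DM_{\gm,\et}^\eff$, which holds for smooth projective $X$ — this is \cite[Th. 4.3.2]{V} combined with $\alpha^*$, or directly Poincaré duality for motives), and $\Hom(M(X)[j],\Z/n)$ with $H^{j}_\et(X,\Z/n)$. The pairing \eqref{eqSV} then becomes, up to the identifications, the cup-product pairing
\[
H^{2d-j}_\et(X,\mu_n^{\otimes d})\times H^{j}_\et(X,\Z/n)\to H^{2d}_\et(X,\mu_n^{\otimes d})\xrightarrow{\ \tr\ }\Z/n,
\]
which is a perfect pairing of finite groups by classical étale Poincaré duality \cite[XVIII]{sga4} (valid since $X$ is smooth proper over the algebraically closed field $k$ and $n$ is invertible). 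Compatibility of the abstract composition pairing with the geometric cup-product is the one point requiring care: I would check it by reducing the composition $\Hom(\Z/n,M(X))\times\Hom(M(X),\Z/n)\to\Z/n$ through the functor to $D^-(\Shv_\et((\Spec k)_\et))$ of \S\ref{2.3}, where composition of morphisms of complexes of sheaves visibly computes cup-product against the trace.

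\textbf{Main obstacle.} The genuine difficulty is the last compatibility: matching the purely formal composition pairing in $\DM_{\gm,\et}^\eff$ with the classical trace/cup-product pairing, including getting the twists and the trace map right under the motivic Poincaré duality isomorphism $M(X)^\vee(d)[2d]\simeq M(X)$. Everything else (finiteness, dévissage, the five-lemma reduction to generators) is routine. I expect to handle the compatibility by invoking the realisation/comparison already used in \S\ref{2.3} and the functoriality of $\alpha^*$, rather than re-deriving Poincaré duality from scratch.
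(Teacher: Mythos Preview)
Your approach is correct but takes a different route from the paper. The paper reduces to $M=M(X)[-j]$ for $X$ \emph{smooth} (not necessarily projective), and then the statement becomes precisely the Suslin--Voevodsky duality between algebraic singular homology $H_j^{sing}(X,\Z/n)=\Hom(\Z/n[j],M(X))$ and \'etale cohomology $\Hom(M(X),\Z/n[j])$, which is \cite[Th.~10.9]{VL}. That is the whole proof: one d\'evissage step and a citation.

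Your route via smooth projective generators and classical Poincar\'e duality works, but trades one black box for two pieces of extra work. First, the generation of $\DM_{\gm,\et}^\eff$ by motives of smooth projective varieties is not by definition; it requires de Jong's alterations together with the $\Z[1/p]$-linearity of the category (this is used elsewhere in the paper, e.g.\ in the proof of Theorem~\ref{abstractdelconj}, but you should flag it). Second, the compatibility you identify as the main obstacle---matching the abstract composition pairing with cup-product against the trace---is genuine, though manageable: both pairings arise from the same evaluation map $M\otimes\ihom(M,\Z/n)\to\Z/n$ once you know that motivic Poincar\'e duality $M_\et(X)\simeq\ihom_\et(M_\et(X),\Z(d)[2d])$ holds in the \'etale category for smooth projective $X$ (which it does, via $\alpha^*$ and the equivalence on torsion objects from \S\ref{2.3}). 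What your approach buys is a reduction to strictly classical input (SGA4 Poincar\'e duality) rather than the Suslin--Voevodsky comparison theorem; what it costs is the two checks above. The paper's proof is shorter because \cite[Th.~10.9]{VL} packages exactly the needed statement.
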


\begin{proof} The statement is stable under exact triangles and direct summands, thus it is enough to check it for $M=M_\et(X)[-j]$, $X$ smooth, $j\in\Z$.
Then the statement amounts to the duality between \'etale (motivic) cohomology $H^j_{\et}(X, \Z/n) \cong\Hom(M_\et(X),\Z/n[j])$ and algebraic singular homology $H_j^{\et}(X,\Z/n)=\Hom(\Z[j],M_\et(X)\otimes \Z/n)$, which is the contents of \cite[Th. 10.9 \& Cor. 10.11]{VL}.
\end{proof}

\subsection{Change of topology and motivic Albanese map}  Recall the change of topology functor of Definition \ref{d2.1.1}
\[\alpha^s:\DM_{\gm}^\eff\to \DM_{\gm,\et}^{\eff}.\]

Recall the functor $d\1$  of \eqref{D2} and the motivic Albanese map $a_M$ of \eqref{amap}. Note that $a_{\Z/n(1)}:\alpha^s\Z/n(1)\to d\1\Z/n(1)$ is an isomorphism by Proposition \ref{cd2}. This gives a meaning to:

\begin{propose}\label{pchalpha} Let $M\in \DM_{\gm,\et}^{\eff}$. Then:\\
a) The diagram
\[\xymatrix{
\Hom_{\DM_\gm^\eff}(M,\Z/n(1)) \ar[r]^{d_{\le 1}\qquad}\ar[dr]_{\alpha^s}&\Hom_{\DM_{\gm,\et}^\eff}(d_{\le 1}M,\alpha^s\Z/n(1))\ar[d]^{(a_M)^*} \\
&\Hom_{\DM_{\gm,\et}^\eff}(\alpha^sM,\alpha^s\Z/n(1))
}\]
commutes.\\
b) In this diagram, $d_{\le 1}$ is an isomorphism.
\end{propose}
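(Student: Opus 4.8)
\textbf{Proof plan for Proposition \ref{pchalpha}.}

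The plan is to treat the two assertions in the order stated, since (b) gives the precise meaning to the diagram in (a) and is the technical core. For part (b), I would proceed exactly as in the proof of Theorem \ref{ladj} and Corollary \ref{cet}: by Proposition \ref{cd}, the object $\Z/n(1)$ is in $d_{\le 1}\DM_{\gm,\et}^\eff$ (it is $(\Z/n)^\vee$ up to the obvious twist, or directly: $\Z_\et(1)=\G_m[1/p][-1]$ is $1$-motivic by Lemma \ref{l1.3}, and $\Z/n(1)$ is a torsion object, hence in $d_{\le 1}\DM_{\gm,\et}^\eff$ by Theorem \ref{t1.2.1}). Therefore the adjunction isomorphism \eqref{lalbuniv}, which Theorem \ref{ladj} shows is an isomorphism with rational coefficients, is here an isomorphism \emph{integrally}: the target $\Z/n(1)$ being already of level $\le 1$, the map $a_M$ induces a bijection $\Hom(d_{\le 1}M,\Z/n(1))\iso\Hom(\alpha^*M,\Z/n(1))$ because, via $d_{\le 1}M=D_{\le 1}^\et(D_{\le 1}^\et(M))$ and the identity $D_{\le 1}(a_M)\circ a_{D_{\le 1}M}=1_{D_{\le 1}M}$ (the reference \cite[p. 56, (3.2.3.9)]{saa} used in Theorem \ref{ladj}, whose proof carries over), one reduces to the fact that $a_N$ is an isomorphism on $d_{\le 1}\DM_{\gm,\et}^\eff$ (Proposition \ref{cd2}). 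The point is that no localisation/rationalisation is needed once the \emph{codomain} is of level $\le 1$: the obstruction in Remark \ref{noleft} came from maps \emph{into} higher Tate twists, not from maps into $\Z/n(1)$. Here $d_{\le 1}$ applied to a morphism $M\to\Z/n(1)$ lands in $\Hom(d_{\le 1}M,d_{\le 1}\Z/n(1))=\Hom(d_{\le 1}M,\alpha^*\Z/n(1))$ via the isomorphism $a_{\Z/n(1)}$, so the composite $(a_M)^*\circ d_{\le 1}$ is the adjunction map, which is an isomorphism; and $d_{\le 1}$ itself is then an isomorphism because $(a_M)^*$ is (again by Proposition \ref{cd2}, $a_M$ being an iso after applying the level-$\le 1$ truncation that is implicit in landing in $\Z/n(1)$ — more carefully, one uses that $\Hom(\alpha^*M,\alpha^*\Z/n(1))=\Hom(M,\Z/n(1))$ by full faithfulness of $\alpha^*$ on torsion objects, cf. the second part of \S\ref{2.3}).

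For part (a), commutativity of the triangle is a formal consequence of the construction of $a_M$ in \eqref{amap} as the unit-type map $\alpha^*M\to d_{\le 1}(M)$ adjoint to the identity. Concretely: given $\phi: M\to\Z/n(1)$ in $\DM_\gm^\eff$, applying $d_{\le 1}$ gives $d_{\le 1}(\phi): d_{\le 1}(M)\to d_{\le 1}(\Z/n(1))\iso\alpha^*\Z/n(1)$, and by naturality of the transformation $a_{(-)}:\alpha^*\Rightarrow d_{\le 1}$ (which is built from the evaluation map $M\otimes\ihom_\Nis(M,\Z(1))\to\Z(1)$, functorial in $M$) one has $d_{\le 1}(\phi)\circ a_M = a_{\Z/n(1)}\circ\alpha^*(\phi)$. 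Since $a_{\Z/n(1)}$ is an isomorphism (Proposition \ref{cd2}, or the explicit remark before the statement), identifying $d_{\le 1}\Z/n(1)$ with $\alpha^*\Z/n(1)$ along it turns this into the asserted identity $(a_M)^*\circ d_{\le 1} = \alpha^*$. I would write this out as a small naturality square.

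The main obstacle I anticipate is purely bookkeeping: keeping straight the three functors $\alpha^*$, $D_{\le 1}^\Nis$, $D_{\le 1}^\et$ whose composite is $d_{\le 1}$ (Definition \ref{d5.1}), and the fact that the statement mixes $\DM_\gm^\eff$ (Nisnevich, where $D_{\le 1}^\Nis$ and $\ihom_\Nis$ live) with $\DM_{\gm,\et}^\eff$. In particular one must check that $d_{\le 1}\Z/n(1)\iso\alpha^*\Z/n(1)$ — this is where Lemma \ref{l2.1} (a) (the weight-$1$ Beilinson--Lichtenbaum/Hilbert 90 input) and Proposition \ref{cd2} are needed, since a priori $d_{\le 1}$ involves a truncation $\tau_{\le 2}$. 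Once that identification is in hand, both parts are short. I do not expect to need anything beyond the results already established in Sections \ref{lalb}--\ref{qlalb} plus the torsion-object equivalence of \S\ref{2.3}.
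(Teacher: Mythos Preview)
Your treatment of (a) is correct and matches the paper: commutativity is just naturality of the motivic Albanese map $a_{(-)}:\alpha^*\Rightarrow d_{\le 1}$, applied to a morphism with target $\Z/n(1)$ and using that $a_{\Z/n(1)}$ is an isomorphism.

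Your argument for (b), however, has a genuine gap. You claim that $(a_M)^*$ is an isomorphism (``the map $a_M$ induces a bijection $\Hom(d_{\le 1}M,\Z/n(1))\iso\Hom(\alpha^*M,\Z/n(1))$'') and that the composite $(a_M)^*\circ d_{\le 1}=\alpha^*$ is an isomorphism. Neither holds in general. For $M=M(X)[-2]$ with $X$ a smooth projective surface, the diagonal map $\alpha^*$ is the change-of-topology map $H^2_\Nis(X,\Z/n(1))\to H^2_\et(X,\mu_n)$, whose cokernel is ${}_n\Br(X)$; correspondingly $(a_M)^*$ is not an isomorphism either. The Theorem~\ref{ladj} argument you invoke does \emph{not} go through integrally just because the target is level $\le 1$: Remark~\ref{noleft} gives a counterexample with target $\Z/n$, and your parenthetical ``the obstruction\dots\ came from maps into higher Tate twists'' is simply false. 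What the proposition asserts is that $d_{\le 1}$ alone is an isomorphism, while the other two legs of the triangle generally are not.

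The paper's proof is quite different from what you propose. It factors $d_{\le 1}=D_{\le 1}^\et\circ\alpha^*\circ D_{\le 1}^\Nis$ and shows each of the three induced maps on Hom groups is a bijection separately. The first, $D_{\le 1}^\Nis:\Hom_\Nis(M,\Z/n(1))\to\Hom_\Nis(\Z/n,D_{\le 1}^\Nis M)$, is the $\ihom_\Nis$-adjunction. The crucial point is the second: after this dualisation the \emph{source} has become $\Z/n$ over $\Spec k$, so $\alpha^*$ is now comparing Nisnevich and \'etale cohomology of a point, which agree because $k$ is algebraically closed (an assumption in force throughout this section). The third step uses that $D_{\le 1}^\et$ is a perfect duality on $d_{\le 1}\DM_{\gm,\et}^\eff$ (Prop.~\ref{cd}), together with $\alpha^*D_{\le 1}^\Nis M\in d_{\le 1}\DM_{\gm,\et}^\eff$ (Lemma~\ref{l2.1}). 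The trick, then, is not to control $\alpha^*$ on $\Hom(M,-)$ directly---which fails---but to first dualise so that $\alpha^*$ acts on $\Hom(\Z/n,-)$, where it is harmless.
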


\begin{proof} Let $N \in \DM_{\gm,\et}^{\eff}$. By the naturality of the motivic Albanese map, the diagram
\[\begin{CD}
\Hom_\Nis(M,N) @>d\1>> \Hom_\et(d\1M,d\1N)\\
@V\alpha^sVV @V(a_M)^*VV\\
\Hom_\et(\alpha^sM,\alpha^sN) @>(a_N)_*>> \Hom_\et(\alpha^sM,d\1N)
\end{CD}\]
commutes. Taking $N=\Z/n(1)$, we get a).

For b), we write $d\1$ as a composition $D\1^\et\circ\alpha^s\circ D\1^\Nis$. We shall show that each of these three functors induces an isomorphism on the corresponding Hom groups.

For $D\1^\Nis$, this is because the composition
\begin{multline*}
\Hom_\Nis(M,\Z/n(1))\longby{D\1^\Nis} \Hom_\Nis(D\1(\Z/n(1)),D\1^\Nis(M))\\
=\Hom_\Nis(\Z/n[-1],D\1^\Nis(M))\simeq \Hom_\Nis(M\otimes \Z/n[-1],\Z(1))
\end{multline*}
(where the last map is the adjunction isomorphism for $\ihom_\Nis$) coincides with the trivial isomorphism
\begin{multline*}\Hom_\Nis(M,N\otimes \Z/n)\simeq \Hom_\Nis(M,\ihom_\Nis(\Z/n[-1],N))\\
\simeq \Hom_\Nis(M\otimes \Z/n[-1],N)
\end{multline*}
(here we take $N=\Z(1)$).

 For $\alpha^s$, this is because $k$ is algebraically closed, so that \'etale cohomology of $\Spec k$ coincides with Nisnevich cohomology. Finally, for $D\1^\et$, this is because $\Z/n$ and $\alpha^sD\1^\Nis(M)$ are in $d\1\DM_{\gm,\et}^\eff$ (Lemma \ref{l2.1}), and $D\1^\et$ restricts to a perfect duality on this subcategory (Prop. \ref{cd}). 
\end{proof}

\subsection{A proof of Ro\v\i tman's and Spie\ss-Szamuely's theorems}\label{s13.4}

 In this subsection, we only deal with smooth schemes and the characteristic is arbitrary: we shall
show how the results of Section \ref{comp} allow us to recover the
classical  theorem of Ro\v\i tman on torsion $0$-cycles up to $p$-torsion, as well as its generalisation to
smooth varieties by Spie\ss-Szamuely \cite{spsz}. The reader should compare our argument with
theirs (\loccit, \S 5).

Since $k$ is algebraically closed, Corollary \ref{cD.1} implies

\begin{lemma}\label{l7.3} For any $j\in\Z$, $H^\et_j(X)=H_j(X)[1/p]$; similarly with finite or divisible coefficients.\qed
\end{lemma}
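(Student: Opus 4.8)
The statement to prove is Lemma~\ref{l7.3}: for $k$ algebraically closed of exponential characteristic $p$ and $X$ a smooth $k$-scheme of finite type, one has $H^\et_j(X) = H_j(X)[1/p]$ for all $j\in\Z$, and similarly with finite or divisible coefficients. The plan is to reduce the comparison of the Nisnevich and \'etale algebraic singular homology to a statement about the change-of-topology functor $\alpha^\ast\colon \DM_-^\eff\to \DM_{-,\et}^\eff$ and the Beilinson--Lichtenbaum/Suslin--Voevodsky machinery already invoked in the excerpt (notably Corollary~\ref{cD.1}, to which the lemma explicitly appeals).

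First I would spell out the two sides: by definition $H_j(X) = \Hom_{\DM_-^\eff}(\Z[j], M(X))$ and $H^\et_j(X) = \Hom_{\DM_{-,\et}^\eff}(\Z[j], \alpha^\ast M(X))$, so the natural map $H_j(X)[1/p]\to H^\et_j(X)$ is induced by $\alpha^\ast$ together with the fact that $\DM_{-,\et}^\eff$ is $\Z[1/p]$-linear (cf.\ \cite[Prop.~3.3.3]{V}), which already forces the source to be $p$-inverted. By the usual adjunction $\Hom_{\DM_{-,\et}^\eff}(\Z[j],\alpha^\ast M(X)) = \HH^{-j}_\et(k, \alpha^\ast C_\ast(X))$ and $\Hom_{\DM_-^\eff}(\Z[j], M(X)) = \HH^{-j}_\Nis(k, C_\ast(X))$; since $k$ is a field, Nisnevich cohomology of $\Spec k$ is just the underlying complex evaluated at $k$, so the Nisnevich side is $H^{-j}$ of the complex of sections $C_\ast(X)(k)$ after inverting $p$. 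The key input is Corollary~\ref{cD.1}, i.e.\ the identification $\Z_\et(n) = \G_m[1/p][-1]$ for $n=1$ and, more generally, the comparison of $\alpha^\ast$ with the Suslin--Voevodsky rigidity results: for $X$ smooth over an algebraically closed field, $\alpha^\ast$ does not change the sections over $k$ up to $p$-torsion. Concretely, I would argue that the cone of $C_\ast(X)[1/p]\to \alpha^\ast C_\ast(X)$ has $\Z[1/p]$-sections over $k$ that vanish, because the obstruction lies in higher \'etale cohomology sheaves which, evaluated at a separably (= algebraically) closed field, are trivial; this is the content of the Suslin--Voevodsky comparison as packaged in Corollary~\ref{cD.1} and \cite[Th.~10.9]{VL}, already cited in the proof of Theorem~\ref{tSV}.

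The finite-coefficient case is the heart of the matter and should be handled first, as the integral and divisible cases follow formally. With coefficients $\Z/n$, $n$ prime to $p$, the Suslin--Voevodsky rigidity theorem (in the form of \cite[Th.~10.9]{VL}, invoked just above in the proof of Theorem~\ref{tSV}) gives that $H_j^{sing}(X,\Z/n) = \Hom_{\DM_{-,\et}^\eff}(\Z/n[j], M_\et(X)\otimes\Z/n)$ agrees with the Nisnevich version, since both compute the same \'etale (co)homology with finite coefficients over an algebraically closed field. Then I would pass to $\Z[1/p]$-coefficients by noting $H_j(X)[1/p] = \varinjlim H_j(X,\Z/n)$ fits in short exact sequences against $H_{j+1}$ and $H_j$ with divisible coefficients, and the same for the \'etale version; comparing the two sets of exact sequences via the finite-coefficient isomorphism and the five lemma gives the integral statement. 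The divisible-coefficient case $A = \Q/\Z$ (or $(\Q/\Z)'$) is then immediate from the finite case by passing to the colimit over $n$ prime to $p$.

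The main obstacle I anticipate is making the comparison "$\alpha^\ast$ is an isomorphism on $k$-sections up to $p$" precise enough to apply: one must be careful that $C_\ast(X)$ is a complex of sheaves with transfers that is not a priori strictly homotopy invariant integrally, so the reduction to Corollary~\ref{cD.1} needs the $\Z[1/p]$-localisation (where $\HI_\et$ becomes $\HI_\et^s$, Proposition~\ref{pD.1.4}) and a spectral-sequence argument to climb from the comparison of cohomology sheaves to the comparison of hypercohomology over $\Spec k$. Since $\Spec k$ has \'etale and Nisnevich cohomological dimension $0$ (as $k$ is algebraically closed), that spectral sequence degenerates, and the argument is clean; the only genuine content is the sheaf-level comparison, which is exactly Corollary~\ref{cD.1} combined with the rigidity theorem, both already available in the paper. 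So the proof is essentially a packaging exercise, which is presumably why it is stated as a lemma with a one-line proof ("Since $k$ is algebraically closed, Corollary~\ref{cD.1} implies\ldots").
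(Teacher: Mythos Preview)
Your direct argument—via Corollary~\ref{cD.1} and the vanishing of higher Nisnevich and \'etale cohomology of $\Spec k$ when $k$ is algebraically closed—is correct and is precisely the paper's one-line proof. Both hypercohomology spectral sequences degenerate, so $H_j(X)=\sH^{-j}(M(X))(k)$ and $H^\et_j(X)=\sH^{-j}(\alpha^*M(X))(k)$; Corollary~\ref{cD.1} (together with the $t$-exactness of $\alpha^*$, Corollary~\ref{cD.2}) gives $\sH^{-j}(\alpha^*M(X))=(\sH^{-j}M(X))_\et[1/p]$, and \'etale sheafification does not change sections over an algebraically closed field. The same argument applies verbatim to $M(X)\otimes A$ for $A$ finite of order prime to $p$ or $A=(\Q/\Z)'$.

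Your third paragraph, however, is an unnecessary detour and contains an error. The claim ``$H_j(X)[1/p]=\varinjlim_n H_j(X,\Z/n)$'' is false: the colimit on the right is $H_j(X,(\Q/\Z)')$, which sits in an extension of the prime-to-$p$ torsion of $H_{j-1}(X)$ by $H_j(X)\otimes(\Q/\Z)'$, not $H_j(X)[1/p]$. Bootstrapping from the finite-coefficient comparison to the integral one would additionally require the rational comparison and some finiteness hypothesis on the groups involved, neither of which you supply. The invocation of \cite[Th.~10.9]{VL} is also misplaced: that is a duality between Suslin homology and \'etale \emph{cohomology}, not the Nisnevich/\'etale comparison for homology that you need. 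Simply drop this detour; the direct argument you describe in your final paragraph already finishes the proof, and is exactly what the paper does.
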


Moreover, it is easy to evaluate
$H_j^{(1)}(X)=\sH_j(\Tot\LAlb(X))(k)$ out of Theorem \ref{trunc}: if
$\LA{n}(X)=[L_n\to G_n]$, we have a long exact sequence coming from Proposition \ref{p3.10}
\begin{multline}\label{eq13.1}
L_{j+1}(k)[1/p]\to G_{j+1}(k)[1/p]\to H_j^{(1)}(X)\\
\to L_j(k)[1/p]\to G_j(k)[1/p]\to \dots
\end{multline}

Thus:

\begin{lemma}\label{l7.3.1} For $X$ smooth, the maps \eqref{eqalb0} are
isomorphisms and we have
\begin{align}
H_1^{(1)}(X)&\simeq \NS_{X/k}^*(k)[1/p]\label{eqhom1}\\
H_j^{(1)}(X)&=0 \text{ if } j\ne 0,1.\notag
\end{align}
\end{lemma}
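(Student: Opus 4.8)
The statement is essentially a direct computation feeding Theorem \ref{trunc} into the long exact sequence \eqref{eq13.1}. First I would recall from Corollary \ref{HLAlb} the explicit values of the homological Albanese $1$-motives of a smooth scheme: $\LA{0}(X)=[\Z[\pi_0(X)]\to 0]$, $\LA{1}(X)=[0\to\cA_{X/k}^0]$, $\LA{2}(X)=[0\to\NS_{X/k}^*]$, and $\LA{i}(X)=0$ otherwise. In the notation $\LA{n}(X)=[L_n\by{u_n}G_n]$ of \eqref{eq13.1}, this means $(L_0,G_0)=(\Z[\pi_0(X)],0)$, $(L_1,G_1)=(0,\cA_{X/k}^0)$, $(L_2,G_2)=(0,\NS_{X/k}^*)$ and $L_n=G_n=0$ for $n\notin\{0,1,2\}$. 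Since $k$ is algebraically closed, evaluating at $k$ and inverting $p$, the exact sequence \eqref{eq13.1} reads, for each $j$,
\[
L_{j+1}(k)[1/p]\to G_{j+1}(k)[1/p]\to H_j^{(1)}(X)\to L_j(k)[1/p]\to G_j(k)[1/p]\to\cdots
\]
Plugging in the vanishing of all but three of these terms, the sequence breaks up: for $j\ne 0,1$ every adjacent term is $0$, giving $H_j^{(1)}(X)=0$; for $j=1$ we get $G_2(k)[1/p]\iso H_1^{(1)}(X)$ (the terms $L_2(k)=G_1\text{-contribution}$ on the right vanish appropriately because $L_1=L_2=0$), i.e. $H_1^{(1)}(X)\simeq \NS_{X/k}^*(k)[1/p]$, which is \eqref{eqhom1}.

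For \eqref{eqhom} being an isomorphism, I would argue as follows. By definition $\cA_{X/k}^\eh=\cA_{X/k}$ for $X$ smooth (Proposition \ref{c3.1bis} d)), and from the exact triangle $[0\to\NS_{X/k}^*][2]\to\LAlb(X)\to\cA_{X/k}\by{+1}$ of Theorem \ref{trunc}, applying $\sH_0$ relative to the homotopy $t$-structure and taking sections over the algebraically closed field $k$ (after inverting $p$), the shift by $2$ of the fibre term contributes nothing to $\sH_0$ and $\sH_1$, so that $\Tot\LAlb(X)\to\cA_{X/k}$ induces an isomorphism $H_0^{(1)}(X)\iso \cA_{X/k}(k)[1/p]=\cA_{X/k}^\eh(k)[1/p]$, which is precisely \eqref{eqhom}. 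Concretely this is the $j=0$ segment of \eqref{eq13.1}: $G_1(k)[1/p]\to H_0^{(1)}(X)\to L_0(k)[1/p]\to G_0(k)[1/p]$ becomes $\cA_{X/k}^0(k)[1/p]\to H_0^{(1)}(X)\to\Z[\pi_0(X)][1/p]\to 0$, and one compares this with the defining extension \eqref{unext} of $\cA_{X/k}$ evaluated at $k$, using that $H_1$ of the fibre term vanishes so the left map is injective; a short five-lemma argument then identifies the two extensions compatibly with \eqref{eqhom}.

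There is really no serious obstacle here: the proof is bookkeeping in a long exact sequence whose terms have all been computed upstream. The one point requiring a little care is the identification in the $j=0$ case — making sure that the map $H_0^{(1)}(X)\to\cA_{X/k}(k)[1/p]$ coming from the homotopy-$t$-structure truncation of Theorem \ref{trunc} agrees with the one coming from the universal property of $\cA_{X/k}$ as used to define \eqref{eqhom} in Proposition \ref{c3.1bis}; but both are induced by the same morphism $\Tot\LAlb(X)\to\cA_{X/k}^\eh$ in $D^b(\M[1/p])$, so this is automatic once one unwinds the definitions. I would therefore present the proof as: quote Corollary \ref{HLAlb}, substitute into \eqref{eq13.1}, read off the three cases, and note that the $j=0$ case gives \eqref{eqhom} by the same token.
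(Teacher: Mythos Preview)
Your proposal is correct and follows exactly the paper's (implicit) argument: the lemma is stated right after \eqref{eq13.1} with the word ``Thus'', so the intended proof is precisely to substitute the values of $\LA{n}(X)$ from Corollary~\ref{HLAlb} into that long exact sequence and read off the three cases. Your remark that the $j=0$ identification with $\cA_{X/k}(k)[1/p]$ goes through because \eqref{eqhom} and the map induced by Theorem~\ref{trunc} both arise from the same morphism \eqref{can} is the only point worth making explicit, and you handle it correctly.
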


Here is now the main lemma:

\begin{lemma}\label{l7.2} Let $M=M(X)$ with $X$ smooth, and let $A=\Z/n$  with $(n,p)=1$. Then the map \eqref{eqalbt} is an isomorphism for $j=0,1$ and surjective for $j=2$.
\end{lemma}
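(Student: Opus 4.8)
The plan is to combine Theorem \ref{tSV} (the $\Z/n$-duality for geometric \'etale motives) with Proposition \ref{pchalpha} and the already-proven computations of $H_j^{(1)}(X)$ in Lemma \ref{l7.3.1}, by a counting/perfect-pairing argument. First I would rewrite both sides of \eqref{eqalbt} for $A=\Z/n$ as Hom groups in $\DM_{\gm,\et}^\eff$: since $k$ is algebraically closed, Lemma \ref{l7.3} identifies $H_j^\et(M,\Z/n)$ with $H_j^\et(M)\otimes\Z/n$ up to $n$-torsion; more usefully, via Theorem \ref{tSV} applied to $M(X)[-j]$, the group $H_j^\et(X,\Z/n)=\Hom(\Z/n[j],M_\et(X))$ is $\Z/n$-dual to the motivic cohomology group $\Hom_{\DM_{\gm,\et}^\eff}(M_\et(X),\Z/n[j])$. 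On the other side, the computation of $\LAlb(X)$ in Theorem \ref{trunc} together with Proposition \ref{p3.10} (as in \eqref{eq13.1}) shows that $\Tot\LAlb(X)\in d\1\DM_{\gm,\et}^\eff$, so Theorem \ref{tSV} applies again to give a perfect $\Z/n$-duality between $H_j^{(1)}(X,\Z/n)=\Hom(\Z/n[j],\Tot\LAlb(X))$ and $\Hom_{\DM_{\gm,\et}^\eff}(\Tot\LAlb(X),\Z/n[j])$.

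Under these two dualities, I expect the map \eqref{eqalbt} (for $A=\Z/n$, $M=M(X)$) to be the $\Z/n$-dual of the map
\[\Hom_{\DM_{\gm,\et}^\eff}(\Tot\LAlb(X),\Z/n[j])\to \Hom_{\DM_{\gm,\et}^\eff}(\alpha^*M(X),\Z/n[j])\]
induced by the motivic Albanese map $a_X$. So the statement to prove becomes: this restriction map on motivic cohomology with $\Z/n$-coefficients is an isomorphism for $j=0,1$ and injective for $j=2$. Since $\Z/n=\Z/n(j)$ is not literally a Tate twist, I would first reduce $\Z/n[j]$ to $\Z/n(1)[j']$-type coefficients using that $\Z/n\simeq \mu_n^{\otimes j}$ over an algebraically closed field and the Tate-twist compatibilities; more precisely, I would use Proposition \ref{pchalpha}, whose part a) says exactly that the map induced by $a_X$ on $\Hom(-,\Z/n(1))$ fits into a commutative triangle with $d\1$ and $\alpha^*$, and whose part b) says $d\1$ is an isomorphism on these groups. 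This reduces the claim about the $a_X$-induced map to a claim about the change-of-topology map $\alpha^*:\Hom_\Nis(M(X),\Z/n(1))\to\Hom_\et(\alpha^*M(X),\alpha^*\Z/n(1))$, i.e. about comparing Nisnevich (= Zariski) and \'etale motivic cohomology of $X$ with $\mu_n$-coefficients in low weights.

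The remaining input is the Beilinson--Lichtenbaum comparison in weight $1$ (equivalently Hilbert's Theorem 90, already invoked in Lemma \ref{l2.1} a)): with $\Z/n(1)=\mu_n$, Kummer theory gives $H^q_\Zar(X,\mu_n)=H^q_{\et}(X,\mu_n)$ for $q\le 1$ and an injection for $q=2$, hence the same for $\alpha^*$ on the corresponding $\Hom$ groups in the appropriate degrees. Dualizing back through Theorem \ref{tSV} then yields that \eqref{eqalbt} is an isomorphism for $j=0,1$ and surjective for $j=2$ (the surjectivity for $j=2$ being dual to the injectivity in $H^2$). I would need to be careful about indexing: the degree shift relating $\Hom(\Z/n[j],-)$ to the cohomological degree $q$ in $H^q$ must be tracked, and one checks that $j=0,1,2$ correspond exactly to $q=0,1,2$ after accounting for the twist. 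The main obstacle is bookkeeping: making sure that the three dualities (Theorem \ref{tSV} on $M(X)$, on $\Tot\LAlb(X)$, and the weight-$1$ duality $\Z/n^\vee=\Z/n(1)$) are compatibly identified so that the map \eqref{eqalbt} really is dual to the $a_X$-pullback, and that Proposition \ref{pchalpha} applies to all the relevant $j$ and not just to a single twist; once that is set up, the weight-$1$ Beilinson--Lichtenbaum statement does the rest with no further geometry.
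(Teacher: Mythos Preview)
Your proposal is correct and follows essentially the same route as the paper: dualize via Theorem~\ref{tSV} so that \eqref{eqalbt} becomes the $\Z/n$-dual of the pullback by $a_X$ on $\Hom_\et(-,\Z/n(1)[j])$, then use Proposition~\ref{pchalpha} to identify this with the change-of-topology map $H^j_\Nis(X,\Z/n(1))\to H^j_\et(X,\Z/n(1))$, and conclude by Hilbert~90 (Beilinson--Lichtenbaum in weight $1$). The detours through Lemma~\ref{l7.3.1}, Theorem~\ref{trunc} and Proposition~\ref{p3.10} are unnecessary for this argument, and the bookkeeping you worry about is immediate once you note $\alpha^*\Z/n(1)=\mu_n\simeq\Z/n$ over $k=\bar k$, so that the index $j$ matches on both sides without any shift.
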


\begin{proof} By Theorem \ref{tSV},  the statement is equivalent to the following: the motivic Albanese map 
\[(a_X)^*:\Hom_\et(d\1M(X),\alpha^s\Z/n(1)[j])\to \Hom_\et(\alpha^sM(X),\alpha^s\Z/n(1)[j]) \]
induced by  \eqref{amapX} is bijective for $j=0,1$ and injective for $j=2$. (Here we also use the fact that $\alpha^s\Z/n(1)=\mu_n$ and that $k$ is algebraically closed.)

By Proposition \ref{pchalpha}, we may replace the above map by the change of topology map
\begin{multline*}
\alpha^s:H^j_\Nis(X,\Z/n(1))=\Hom_\Nis(M(X),\Z/n(1)[j])\\
\to \Hom(\alpha^sM(X),\alpha^s\Z/n(1)[j])=H^j_\et(X,\Z/n(1)). 
\end{multline*}

Then the result follows from Hilbert's theorem 90 (aka Beilinson-Lichtenbaum in weight $1$).
\end{proof}

From this and Lemma \ref{l12.1} we deduce:
\begin{cor}\label{c12.2} The homomorphism \eqref{eqalbt}
\[H_j(X,(\Q/\Z)')\to H_j^{(1)}(X,(\Q/\Z)')\]
(see Lemma \ref{l7.3}) is bijective for $j=0,1$ and surjective for $j=2$.
\end{cor}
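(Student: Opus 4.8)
The plan is to deduce the statement from Lemma~\ref{l7.2} by passing to a filtered colimit over the coefficient groups. Write $(\Q/\Z)' = \bigoplus_{\ell\ne p}\Q_\ell/\Z_\ell = \colim_n \tfrac1n\Z/\Z$, where $n$ runs over the positive integers prime to $p$, ordered by divisibility; thus $(\Q/\Z)'$ is the filtered colimit of the finite groups $\Z/n$ with $(n,p)=1$, and in particular it is uniquely $p$-divisible.

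First I would record that the map \eqref{eqalbt} is natural in the coefficient group $A$ and commutes with filtered colimits in $A$. On the source this follows from Lemma~\ref{l7.3}: $H^\et_j(X,A)=H_j(X,A)[1/p]$, which for $A=(\Q/\Z)'$ is simply $H_j(X,(\Q/\Z)')$ (the latter already being a $\Z[1/p]$-module since multiplication by $p$ is invertible on $(\Q/\Z)'$), and $\Hom_{\DM_{-,\et}^\eff}(\Z[j],M_\et(X)\oo^L-)$ commutes with filtered colimits. On the target, since $k$ is algebraically closed, $\Gamma(\Spec k,-)$ is exact on \'etale sheaves, so $H^{(1)}_j(M(X),A)=\HH^{-j}_\et(k,\Tot\LAlb(M(X))\oo^L A)$ equals $\Gamma(k,\sH_j(\Tot\LAlb(M(X))\oo^L A))$; as $A$ varies, $\Tot\LAlb(M(X))\oo^L-$, $\sH_j$ and $\Gamma(k,-)$ each commute with filtered colimits, whence so does $H^{(1)}_j(M(X),-)$. (For $A=(\Q/\Z)'$ one may instead invoke Lemma~\ref{l12.1} directly, which identifies $H^{(1)}_j(M(X),(\Q/\Z)')$ with $\Gamma(k,\Tot(\LA{j}(X))\otimes(\Q/\Z)')$.)

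Next I would apply Lemma~\ref{l7.2} with $M=M(X)$ and $A=\Z/n$, $(n,p)=1$: for every such $n$ the map \eqref{eqalbt} is an isomorphism for $j=0,1$ and surjective for $j=2$. Taking the filtered colimit over $n$ and using that filtered colimits of abelian groups are exact — so a filtered colimit of isomorphisms is an isomorphism and a filtered colimit of surjections is a surjection — together with the commutation statements of the previous paragraph, one concludes that
\[H_j(X,(\Q/\Z)')=H^\et_j(X,(\Q/\Z)')\longrightarrow H^{(1)}_j(X,(\Q/\Z)')\]
is an isomorphism for $j=0,1$ and surjective for $j=2$, which is the assertion.

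The argument is essentially formal; the only point requiring a little care is the commutation of $H^{(1)}_j(M(X),-)$ with filtered colimits in the coefficient group, which is why I would prefer to route it through the exactness of $\Gamma(\Spec k,-)$ for $k=\bar k$ (equivalently, through Lemma~\ref{l12.1}) rather than through an abstract compactness argument in $\DM_{\gm,\et}^\eff$. Everything else is bookkeeping with Lemma~\ref{l7.3}.
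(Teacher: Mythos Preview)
Your proof is correct and follows essentially the same approach as the paper: the paper deduces the corollary directly from Lemma~\ref{l7.2} and Lemma~\ref{l12.1} with the one-line remark ``From this and Lemma~\ref{l12.1} we deduce'', which is precisely your passage to the filtered colimit over $\Z/n$ with $(n,p)=1$. Your write-up simply unpacks the commutation with colimits that the paper leaves implicit.
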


The following theorem extends in particular \cite[Th. 1.1]{spsz} to all smooth
varieties\footnote{In \loccit, $X$ is supposed to admit an open embedding into a smooth
projective variety.}.

\begin{thm}\label{troitclas} Let $X$ be smooth.\\
a) The maps  \eqref{eqalb0} are isomorphisms on torsion.\\ 
b) $H_1(X)\otimes (\Q/\Z)'=0$.\\
c) The map \eqref{eqalbt} for $A=\Z$ yields a surjection
\[H_1(X)\{p'\}\onto \NS_{X/k}^*(k)\{p'\}\]
where $M\{p'\}$ denotes the torsion prime to $p$ in an abelian group $M$.
\end{thm}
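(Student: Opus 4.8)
\textbf{Proof plan for Theorem \ref{troitclas}.}
The plan is to deduce all three statements from the computation of $\LAlb(X)$ in Theorem \ref{trunc} (equivalently Corollary \ref{HLAlb}), the comparison of divisible and finite coefficients via Corollary \ref{c12.2} and Lemma \ref{l7.3.1}, and a reduction-to-finite-coefficients argument using the Suslin-Voevodsky-type duality of Theorem \ref{tSV}. First I would treat torsion prime to $p$ and $p$-divisible phenomena separately, as the hypothesis $A=(\Q/\Z)'$ everywhere reflects the blanket inversion of $p$. Throughout we use that $k$ is algebraically closed (the running assumption in this subsection), so $H_j^\et(X)=H_j(X)[1/p]$ by Lemma \ref{l7.3} and $H_j^{(1)}(X)=\sH_j(\Tot\LAlb(X))(k)$ is given by the exact sequence \eqref{eq13.1}.

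For b): torsion prime to $p$ in $H_1(X)$ maps, by Corollary \ref{c12.2} with $j=1$, isomorphically onto $H_1^{(1)}(X,(\Q/\Z)')$. But by Lemma \ref{l7.3.1}, $H_1^{(1)}(X)\simeq \NS_{X/k}^*(k)[1/p]$, and $\NS_{X/k}^*$ is a group of multiplicative type with \emph{finitely generated} character group $\NS_{X/k}(k)$ (finite generation being Remark \ref{r6.1}); hence $H_1^{(1)}(X)$ is finitely generated over $\Z[1/p]$ and therefore $H_1^{(1)}(X)\otimes(\Q/\Z)'=0$. Chasing \eqref{eq13.1} tensored with $(\Q/\Z)'$, together with the fact (Lemma \ref{l12.1}) that $H_j^{(1)}(X,(\Q/\Z)')=\Gamma(k,\Tot(\LA{j}(X))\otimes(\Q/\Z)')$ and that $\LA{j}(X)=0$ for $j\ge 3$, $\LA{0}(X)=[\Z[\pi_0(X)]\to 0]$, gives $H_1^{(1)}(X,(\Q/\Z)')=0$. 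Feeding this back through Corollary \ref{c12.2} yields $H_1(X)\otimes(\Q/\Z)'=0$, which is b).

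For a): the map \eqref{classalb} factors as $H_0(X)[1/p]\to H_0^\et(X)\to H_0^{(1)}(X)\xrightarrow{\eqref{eqhom}} \cA_{X/k}^\eh(k)[1/p]$, and the last map is an isomorphism by Lemma \ref{l7.3.1} (for $X$ smooth), while $\cA_{X/k}^\eh=\cA_{X/k}$ in the smooth case by Proposition \ref{c3.1bis} d), so it suffices to show \eqref{eqalbt} for $j=0$, $A=\Z$, is an isomorphism on torsion. On $n$-torsion this follows from the snake lemma applied to multiplication by $n$ on $H_0(X)[1/p]\to H_0^{(1)}(X)$: the kernels are controlled by $H_1$-with-$\Z/n$-coefficients, where \eqref{eqalbt} is an isomorphism by Lemma \ref{l7.2} ($j=1$), and the cokernels by $H_0$-with-$\Z/n$-coefficients, where it is again an isomorphism by Lemma \ref{l7.2} ($j=0$); since both $H_0(X)[1/p]$ and $H_0^{(1)}(X)$ are $\Z[1/p]$-modules with no $p$-torsion, passing to the colimit over $n$ prime to $p$ gives the isomorphism on all torsion. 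Restricting to degree-zero parts gives the statement for \eqref{eqalb0}, recovering Ro\v\i tman and Spie\ss-Szamuely. For c): apply $-\{p'\}$ to \eqref{eqalbt} with $j=1$, $A=\Z$; surjectivity onto $\NS_{X/k}^*(k)\{p'\}=H_1^{(1)}(X)\{p'\}$ (by \eqref{eqhom1}) follows from the surjectivity half of the $j=2$ statement in Corollary \ref{c12.2} combined with the long exact sequences of multiplication by $n$, exactly as for a) but one degree up, where one only gets surjectivity because the map \eqref{eqalbt} for $j=2$ is merely surjective (not injective) on $\Z/n$-coefficients.

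The main obstacle I expect is bookkeeping the $p$-torsion carefully: all the $\DM_{-,\et}^\eff$-based inputs (Lemma \ref{l7.2}, Theorem \ref{tSV}, Corollary \ref{c12.2}) are genuinely only available after inverting $p$, so one must check at each snake-lemma step that the groups in play have no $p$-torsion and that the colimit over $n$ coprime to $p$ recovers exactly the prime-to-$p$ torsion subgroup, rather than all torsion. A secondary subtlety is verifying that the composite defining \eqref{classalb} really does agree with the Spie\ss-Szamuely generalised Albanese map after localisation at $p$; this is a matter of unwinding definitions, using that $\cA_{X/k}$ corepresents maps to semi-abelian schemes and that \eqref{precan} is induced by the classical Albanese morphism $\bar a_X$, but it should be stated explicitly.
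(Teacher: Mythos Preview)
Your overall strategy --- reduce via Lemmas \ref{l7.3} and \ref{l7.3.1} to the map \eqref{eqalbt}, then compare the universal-coefficient exact sequences for $H_j$ and $H_j^{(1)}$ using Corollary \ref{c12.2} --- is exactly the paper's. The paper organises this as the single commutative diagram \eqref{eq7.7}
\[\begin{CD}
0 \to H_j(X)\otimes(\Q/\Z)' \to H_j(X,(\Q/\Z)') \to H_{j-1}(X)\{p'\} \to 0\\
@VVV @VVV @VVV\\
0 \to H_j^{(1)}(X)\otimes(\Q/\Z)' \to H_j^{(1)}(X,(\Q/\Z)') \to H_{j-1}^{(1)}(X)\{p'\} \to 0
\end{CD}\]
and reads off a), b) from $j=1$ and c) from $j=2$. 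So there is no genuinely different route here; but two steps in your write-up are actually wrong, and fixing them brings you back to the paper's argument.

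First, the claim that $H_1^{(1)}(X)\simeq \NS_{X/k}^*(k)[1/p]$ is \emph{finitely generated} is false. The group scheme $\NS_{X/k}^*$ is of multiplicative type with character group $\NS_{X/k}$; if $\NS_{X/k}\simeq \Z^r\oplus F$ with $F$ finite, then $\NS_{X/k}^*(k)\simeq (k^*)^r\times \Hom(F,k^*)$, and $(k^*)^r$ is enormous for $k$ algebraically closed. What \emph{is} true --- and is all the paper uses --- is that this group is an extension of a divisible group by a finite one, hence $H_1^{(1)}(X)\otimes(\Q/\Z)'=0$. That single vanishing is the key input.

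Second, and more seriously, your assertion that $H_1^{(1)}(X,(\Q/\Z)')=0$ is false. By Lemma \ref{l12.1} this group is $\Gamma(k,\Tot(\LA{1}(X))\otimes(\Q/\Z)')$, and since $\LA{1}(X)=[0\to\cA_{X/k}^0]$ this is the prime-to-$p$ torsion of $\cA_{X/k}^0(k)$ --- typically nonzero, and in fact exactly the group we are trying to identify with $H_0(X)\{p'\}$. (Your appeal to $\LA{j}(X)=0$ for $j\ge 3$ and to $\LA{0}(X)$ is irrelevant for $j=1$.) If $H_1^{(1)}(X,(\Q/\Z)')$ really vanished, the middle isomorphism of Corollary \ref{c12.2} would force $H_1(X,(\Q/\Z)')=0$ and hence $H_0(X)\{p'\}=0$, which is absurd already for an elliptic curve.

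Once you replace these two statements by the correct one $H_1^{(1)}(X)\otimes(\Q/\Z)'=0$, the diagram \eqref{eq7.7} for $j=1$ has vanishing lower-left corner and an isomorphism in the middle; an elementary chase then shows simultaneously that the right vertical map $H_0(X)\{p'\}\to H_0^{(1)}(X)\{p'\}$ is an isomorphism (giving a)) and that $H_1(X)\otimes(\Q/\Z)'=0$ (giving b)). Your argument for c) via $j=2$ is correct and matches the paper. Your closing remarks about $p$-torsion bookkeeping and the identification of \eqref{classalb} with the Spie\ss--Szamuely map are fine but are not where the difficulty lies.
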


\begin{proof} Lemmas \ref{l7.3} and \ref{l7.3.1} reduce us to show that \eqref{eqalbt} is an
isomorphism on torsion for $A=\Z$, $j=0$. We have commutative diagrams with exact
rows:
\begin{equation}\label{eq7.7}
\begin{CD}
0&\to&H_j(X)\otimes(\Q/\Z)'&\to& H_j(X,(\Q/\Z)')&\to& H_{j-1}(X)\{p'\}&\to& 0\\
&&@VVV @VVV @VVV\\
0&\to&H_j^{(1)}(X)\otimes (\Q/\Z)'&\to&H_j^{(1)}(X,(\Q/\Z)') &\to&
H_{j-1}^{(1)}(X)\{p'\}&\to& 0 
\end{CD}
\end{equation}

For $j=1$, the middle vertical map is an isomorphism by Lemma \ref{l7.2} or Corollary
\ref{c12.2} and
$H_1^{(1)}(X)\otimes
\Q/\Z=0$ by \eqref{eqhom1}, which gives a) and b). For $j=2$, the middle map is surjective by
the same lemma and corollary, which gives c). The proof is complete.
\end{proof}

\begin{remark} If $X$ is smooth projective of dimension $n$, $H_j(X)$ is isomorphic to the
higher Chow group $CH^n(X,j)$. In \eqref{eq7.7} for $j=2$, the lower left term is $0$ by Lemma
\ref{l7.3.1}. The composite map
\[
H_2(X,(\Q/\Z)')\to H_1(X)\{p'\}\to H_1^{(1)}(X)\{p'\}=\NS_{X/k}^*(k)\{p'\}
\] 
is ``dual" to the map
\[\NS(X)\otimes (\Q/\Z)'\to H^2_\et(X,\Q/\Z(1))\]
whose cokernel is $Br(X)\{p'\}$. Let
\[Br(X)^D=\varinjlim_{(n,p)=1} \Hom({}_n
Br(X),\mu_n):\] 
a diagram chase in \eqref{eq7.7} for $j=2$ then yields an exact sequence
\begin{multline*}
0\to CH^n(X,2)\otimes (\Q/\Z)'\to Br(X)^D\\
\to CH^n(X,1)\{p'\}\to
\NS_{X/k}^*(k)\{p'\}\to 0.
\end{multline*}
Together with $CH^n(X,1)\otimes (\Q/\Z)'=0$, this should be considered as a natural complement
to Ro\v\i tman's theorem.
\end{remark}

\subsection{Generalisation to singular schemes}\label{s13.5} We now assume  $k= \bar k$ and $\car k\allowbreak=0$ (but see \S \ref{s8.1}), and show
how the results of Section \ref{comps} allow us to extend the results of the previous
subsection to singular schemes. By blow-up induction and the 5 lemma, we get:

\begin{propose} \label{p12.3} The isomorphisms and surjection of Lemma \allowbreak\ref{l7.2} and
Corollary
\ref{c12.2} extend to all $X\in \Sch$.\qed
\end{propose}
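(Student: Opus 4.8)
The statement is that the isomorphisms and surjection of Lemma~\ref{l7.2} and Corollary~\ref{c12.2} — namely that the motivic Albanese map \eqref{eqalbt} is an isomorphism for $j=0,1$ and a surjection for $j=2$, with coefficients $\Z/n$ (for $(n,p)=1$) or $(\Q/\Z)'$ — extend from smooth $X$ to all $X\in Sch(k)$ in characteristic $0$. The plan is to run a blow-up induction in the style of Lemma~\ref{lbl}~b): I would set up a natural transformation between two functors on $Sch(k)^{\op}$ that both satisfy the Mayer--Vietoris/abstract-blow-up long exact sequence, check that it is an isomorphism (resp. surjection in the appropriate degree) on smooth schemes, and conclude by induction on $\dim X$.

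Concretely, first I would fix $A=\Z/n$ with $(n,p)=1$ and consider the two functors $X\mapsto H^\et_*(M(X),A)$ and $X\mapsto H^{(1)}_*(M(X),A)$, together with the natural transformation induced by the motivic Albanese map $a_{M(X)}:\alpha^*M(X)\to\Tot\LAlb(M(X))$ of \eqref{amapX}. Both functors carry the long exact sequence attached to an abstract blow-up square $(\tilde X,Z,\tilde Z,X)$: for the source this is the exactness of $M(-)$ under such squares (\cite[2.2]{V}), and for the target it is the same exact triangle pushed through the triangulated functor $\Tot\LAlb$, tensored with $A$ (which is still exact). The naturality of $a_M$ in $M$ gives a morphism of long exact sequences. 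By Lemma~\ref{l7.2}, the transformation is an isomorphism for $j=0,1$ and a surjection for $j=2$ when $X$ is smooth. Applying Lemma~\ref{lbl}~b) (reduction to the integral case via a resolution of singularities $\tilde X\to X$, then to the reducible case via $\tilde X=\coprod Z_i$, $Z=\bigcup_{i\ne j} Z_i\cap Z_j$), a diagram chase with the five lemma — or rather the ``four lemma'' in the degree where we only have surjectivity — propagates the statement to all $X$: in each inductive step one has a map of five-term exact sequences in which the outer four relevant vertical maps are already known to be isomorphisms (resp. isomorphisms/surjections) by the inductive hypothesis and the smooth case, forcing the middle one. For the $(\Q/\Z)'$-coefficient version one argues identically, using Corollary~\ref{c12.2} as the smooth input, or one passes from the $\Z/n$-statements to the colimit over $n$ prime to $p$, invoking Lemma~\ref{l7.3} to identify $H^\et_j(X)$ with $H_j(X)[1/p]$.

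The main obstacle, such as it is, is purely bookkeeping: one must make sure the abstract-blow-up long exact sequences for source and target are genuinely compatible via $a_M$ — i.e.\ that the connecting maps commute with the Albanese maps — which follows from naturality of $a_M$ in $M$ applied to the defining triangle of the blow-up, but has to be stated cleanly so that Lemma~\ref{lbl}~b) applies verbatim; and one must be careful in the degree-$2$ case, where only surjectivity is available, to arrange the diagram chase so that the unknown map sits in a position where surjectivity of its neighbours suffices (this is why the statement of Lemma~\ref{lbl}~b) is phrased for a range ``$i\le n$'' and we take $n=1$ for the isomorphism part, handling $j=2$ as a separate five-lemma-style argument feeding off the $j\le 1$ isomorphisms). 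No new geometric input is needed beyond resolution of singularities in characteristic $0$, which is already built into Lemma~\ref{lbl}.
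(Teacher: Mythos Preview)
Your proposal is correct and is precisely the argument the paper has in mind: the paper's proof is the single line ``By blow-up induction and the 5 lemma'' preceding the statement, and you have simply unpacked that into the natural-transformation-of-long-exact-sequences form with Lemma~\ref{lbl}~b) and the five/four lemma.
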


Let $\LA{1}(X)=[L_1\by{u_1} G_1]$. Proposition \ref{p12.3}, the exact sequence \eqref{eq13.1}
and the snake chase in the proof of Theorem \ref{troitclas} give:

\begin{cor} For $X\in \Sch$, we have exact sequences
\begin{gather*}
0\to H_1(X)\otimes \Q/\Z\to \ker(u_1)\otimes \Q/\Z\to H_0(X)_\tors\to \coker(u_1)_\tors\to 0\\
0\to H_1(X)\otimes \Q/\Z\to H_1^{(1)}(X,\Q/\Z)\to H_0(X)_\tors\to 0.
\end{gather*}
\end{cor}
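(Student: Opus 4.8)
The plan is to run, for general $X\in Sch(k)$, the snake-lemma argument used for smooth $X$ in the proof of Theorem~\ref{troitclas}, feeding in Proposition~\ref{p12.3} and the exact sequence~\eqref{eq13.1}. Throughout $k=\bar k$ and $\car k=0$, so $p=1$, $(\Q/\Z)'=\Q/\Z$, and by Propositions~\ref{c3.1} and~\ref{p11.3a}~c) we may write $\LA{i}(X)=0$ for $i<0$, $\LA{0}(X)=[\Z[\pi_0(X)]\to 0]$, and $\LA{1}(X)=[L_1\by{u_1}G_1]$ with $G_1$ semi-abelian; set $K=\ker u_1$ and $Q=\coker u_1=G_1(k)/u_1(L_1)$, viewed as abelian groups.

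First I would record the two universal-coefficient (Bockstein) exact sequences
\[
0\to H_1(X)\otimes\Q/\Z\to H_1(X,\Q/\Z)\to H_0(X)_\tors\to 0,
\]
\[
0\to H_1^{(1)}(X)\otimes\Q/\Z\to H_1^{(1)}(X,\Q/\Z)\to H_0^{(1)}(X)_\tors\to 0,
\]
obtained respectively from $M_\et(X)$ and from $\Tot\LAlb(X)$ by tensoring with $\Q/\Z$; they are natural in $X$ and compatible with the motivic Albanese map~\eqref{eqalbt}, which is an isomorphism on $H_1(-,\Q/\Z)$ by Proposition~\ref{p12.3}. Transporting the first sequence across this isomorphism gives at once the second exact sequence of the corollary.

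Next I would identify the terms of the $H^{(1)}$-Bockstein sequence using~\eqref{eq13.1} (an instance of Proposition~\ref{p3.10}). It yields $0\to Q\to H_0^{(1)}(X)\to\Z[\pi_0(X)]\to 0$, whence $H_0^{(1)}(X)_\tors=Q_\tors$ since $\Z[\pi_0(X)]$ is torsion-free; and it yields $0\to\coker(L_2(k)\to G_2(k))\to H_1^{(1)}(X)\to K\to 0$, where $[L_2\to G_2]$ represents $\LA{2}(X)$. Now $G_2(k)$ is an extension of the divisible group $A(k)$ (the abelian part of $G_2$) by $\Gamma(k)$ (its multiplicative-type part), the latter being a product of a divisible group and a finite group; hence $G_2(k)\otimes\Q/\Z=0$, and so does every quotient of $G_2(k)$, in particular $\coker(L_2(k)\to G_2(k))$. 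Therefore $H_1^{(1)}(X)\otimes\Q/\Z\iso K\otimes\Q/\Z$, and the second Bockstein sequence becomes
\[
0\to K\otimes\Q/\Z\to H_1^{(1)}(X,\Q/\Z)\to Q_\tors\to 0.
\]
(Equivalently, this last sequence drops straight out of Lemma~\ref{l12.1} by computing $\Tot[L_1\to G_1]\oo^L\Q/\Z$ from the triangle $G_1[-1]\to\Tot[L_1\to G_1]\to L_1\by{+1}$.)

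Finally comes the snake chase. Both $H_1(X)\otimes\Q/\Z$ and $K\otimes\Q/\Z$ are now subgroups of $\mathcal H:=H_1^{(1)}(X,\Q/\Z)$, with $\mathcal H/(H_1(X)\otimes\Q/\Z)=H_0(X)_\tors$ and $\mathcal H/(K\otimes\Q/\Z)=Q_\tors$. The one thing to verify is the inclusion $H_1(X)\otimes\Q/\Z\subseteq K\otimes\Q/\Z$: the composite of $H_1(X)\otimes\Q/\Z\hookrightarrow\mathcal H$ with $\mathcal H\onto Q_\tors$ equals, by naturality of the Bockstein under the Albanese map, the composite $H_1(X)\otimes\Q/\Z\hookrightarrow H_1(X,\Q/\Z)\to H_0(X)_\tors\to Q_\tors$, and the first two arrows compose to zero by the first Bockstein sequence. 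Granting this, the elementary diagram
\[
0\to H_1(X)\otimes\Q/\Z\to K\otimes\Q/\Z\to\mathcal H/(H_1(X)\otimes\Q/\Z)\to\mathcal H/(K\otimes\Q/\Z)\to 0
\]
is exact, which is precisely the first sequence of the corollary. I expect no deep obstacle here: all the substance is in Proposition~\ref{p12.3}, in~\eqref{eq13.1} and in Lemma~\ref{l12.1}; what remains is the bookkeeping of compatibilities — straightforward, since everything in sight is induced by maps of distinguished triangles — together with the elementary vanishing $\coker(L_2(k)\to G_2(k))\otimes\Q/\Z=0$.
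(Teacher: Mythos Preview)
Your proposal is correct and follows essentially the same route as the paper, which simply says ``Proposition~\ref{p12.3}, the exact sequence~\eqref{eq13.1} and the snake chase in the proof of Theorem~\ref{troitclas} give'' the result. You have spelled out the details the paper leaves implicit --- in particular the identifications $H_0^{(1)}(X)_\tors=\coker(u_1)_\tors$ and $H_1^{(1)}(X)\otimes\Q/\Z\iso\ker(u_1)\otimes\Q/\Z$ via the vanishing $G_2(k)\otimes\Q/\Z=0$ --- but the skeleton (two Bockstein rows of the shape~\eqref{eq7.7}, middle vertical iso by Proposition~\ref{p12.3}, then snake lemma) is exactly the paper's argument.
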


The second exact sequence is more intrinsic than the first, but note that it does not give
information on $H_1(X)\otimes \Q/\Z$.

\begin{cor}\label{c13.3} If $X$ is normal, $H_1(X)\otimes \Q/\Z=0$ and there is an isomorphism
\[H_0(X)_\tors\iso \cA_{X/k}(k)_\tors.\]
\end{cor}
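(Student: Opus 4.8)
The statement is the singular analogue of part b) of Theorem \ref{troitclas} together with the normal case of the motivic/classical Albanese comparison. Since $X$ is normal, Proposition \ref{c3.1bis} d) gives $\cA_{X/k}^\eh=\cA_{X/k}$, and Corollary \ref{c12.2.1} b) (or Proposition \ref{c12.2.2}) tells us that $\LA{1}(X)=[0\to \cA_{X/k}^0]$; in particular the lattice part $L_1$ vanishes, so $\ker(u_1)=0$ and $\coker(u_1)=0$ in the notation of the previous corollary. First I would plug $L_1=0$ into the first exact sequence of the corollary just above (the one obtained from Proposition \ref{p12.3}, the exact sequence \eqref{eq13.1}, and the snake chase in the proof of Theorem \ref{troitclas}):
\[
0\to H_1(X)\otimes \Q/\Z\to \ker(u_1)\otimes \Q/\Z\to H_0(X)_\tors\to \coker(u_1)_\tors\to 0.
\]
With $\ker(u_1)=0$ the term $\ker(u_1)\otimes\Q/\Z$ vanishes, so exactness forces $H_1(X)\otimes\Q/\Z=0$ immediately, and it also forces $\coker(u_1)_\tors=0$, which is consistent (indeed $\coker(u_1)=0$ here). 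This disposes of the first assertion.

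For the isomorphism $\cA_{X/k}(k)_\tors\iso H_0(X)_\tors$, I would use the second exact sequence of that same corollary,
\[
0\to H_1(X)\otimes \Q/\Z\to H_1^{(1)}(X,\Q/\Z)\to H_0(X)_\tors\to 0,
\]
which by the vanishing just proved reduces to an isomorphism $H_1^{(1)}(X,\Q/\Z)\iso H_0(X)_\tors$. It remains to identify $H_1^{(1)}(X,\Q/\Z)$. By Lemma \ref{l12.1} (applied with $M=M(X)$, $k$ algebraically closed), $H_1^{(1)}(X,(\Q/\Z)')\simeq \Gamma(k,\Tot(\LA{1}(X))\otimes(\Q/\Z)')$, and since $\LA{1}(X)=[0\to \cA_{X/k}^0]$ its image under $\Tot$ is the sheaf $\underline{\cA_{X/k}^0}$; tensoring the exact sequence $0\to \cA_{X/k}^0\by{n}\cA_{X/k}^0\to \cA_{X/k}^0/n\to 0$ and taking sections over the algebraically closed field $k$ identifies $\Gamma(k,\underline{\cA_{X/k}^0}\otimes (\Q/\Z)')$ with $(\cA_{X/k}^0)(k)_\tors$ (the prime-to-$p$ torsion, which, $p$ being $1$ in characteristic $0$, is all the torsion). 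Finally, the exact sequence \eqref{unext} $0\to \cA_{X/k}^0\to \cA_{X/k}\to \Z[\pi_0(X)]\to 0$ and the torsion-freeness of $\Z[\pi_0(X)]$ give $(\cA_{X/k}^0)(k)_\tors\iso \cA_{X/k}(k)_\tors$. Composing these identifications yields the desired isomorphism; one should check it is induced by the map \eqref{eqalb0} (equivalently \eqref{classalb}), which is clear since all the maps in sight are the ones coming from the motivic Albanese map $a_X$.

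\textbf{Main obstacle.} The genuinely substantive input is already packaged in the results being invoked — Proposition \ref{p12.3} (itself a blow-up induction from Lemma \ref{l7.2}, which rests on Hilbert's theorem 90 / Beilinson--Lichtenbaum in weight $1$ and on the Suslin--Voevodsky duality Theorem \ref{tSV}) and the computation $\LA{1}(X)=[0\to\cA_{X/k}^0]$ for normal $X$ (Corollary \ref{c12.2.1} b), which in turn uses Theorem \ref{tnormal} on $\eh$ versus \'etale cohomology of normal schemes). Given those, the proof here is a short diagram chase plus the identification of $H_1^{(1)}(X,\Q/\Z)$ via Lemma \ref{l12.1}. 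So the only real care needed is bookkeeping: making sure the two exact sequences from the preceding corollary are applied with the correct $\ker/\coker$ data for normal $X$, and verifying that the composite isomorphism is the one induced by the Albanese map rather than an abstract one — which is immediate from the naturality built into \eqref{eqalbt} and \eqref{eqhom}.
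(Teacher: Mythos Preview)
Your approach is the same as the paper's, and the argument is essentially correct, but there is one slip worth flagging. You assert that $\coker(u_1)=0$; this is wrong. With $\LA{1}(X)=[0\to\cA_{X/k}^0]$ we have $u_1:0\to\cA_{X/k}^0$, so $\ker(u_1)=0$ but $\coker(u_1)=\cA_{X/k}^0$ (on $k$-points, $\cA_{X/k}^0(k)$). Consequently the first exact sequence with $\ker(u_1)=0$ does \emph{not} force $\coker(u_1)_\tors=0$; rather it gives $H_1(X)\otimes\Q/\Z=0$ together with an isomorphism $H_0(X)_\tors\iso\coker(u_1)_\tors=\cA_{X/k}^0(k)_\tors=\cA_{X/k}(k)_\tors$ (the last equality since $\Z[\pi_0(X)]$ is torsion-free). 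This is in fact the paper's intended route: both assertions drop out of the \emph{first} exact sequence alone, once one knows $\LA{1}(X)=[0\to\cA_{X/k}^0]$ from Corollary~\ref{c12.2.1}~b).

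Your detour through the second exact sequence and Lemma~\ref{l12.1} is correct and recovers the same isomorphism, but it is longer than necessary. The error about $\coker(u_1)$ sits in a throwaway parenthetical and does not damage your main line of reasoning; just delete that remark and, if you like, observe that the first exact sequence already delivers the second assertion directly.
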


\begin{proof} This follows from the previous corollary and Corollary \ref{c12.2.1} c).
\end{proof}

\begin{remark} Theorem \ref{t12.8} shows that the second isomorphism of Proposition
\ref{p12.3} coincides with the one of Geisser in \cite[Th. 6.2]{geisser2} when $X$ is proper.
When $X$ is further normal, the isomorphism of Corollary \ref{c13.3} also coincides with the
one of his Theorem 6.1.
\end{remark}

\begin{remarks} Note that the reformulation of ``Roitman's theorem'' involving $\ker u_1$ is
the best possible!\\
1) Let X be a proper scheme such that $\Pic^0(X)/\cU =
\G_m^r$ is a torus (more likely such that $\Alb (X_0) = 0$ where $X_0\to X$
is a resolution, according with the description in \cite[p. 68]{BSAP}).
Then $\RA{1} (X)^*=\LA{1} (X) = [\Z^r\to 0]$ is the character group (\cf
\cite[5.1.4]{BSAP}).
For example, take a nodal projective curve $X$ with resolution $X_0=\P^1$.
In this case the map \eqref{eqalbt} for $A=\Z$ is an isomorphism for all $j$ and  thus
$\ker (u_1)\otimes \Q/\Z = H_1(X)\otimes  \Q/\Z=(\Q/\Z)^r$.\\
2) For Borel-Moore and $\LA{1}^c (X) = \LA{1}^*(X)$ for $X$ smooth open is
Cartier dual of $\Pic^0 (\bar X, Y)$ then (\cf \cite[p. 47]{BSAP}) 
$\ker u_1^c$ can be non-zero: take $\bar X = \P^1$ and $Y =$ a finite number of
points.
\end{remarks} 

\subsection{Borel-Moore Ro\v\i tman} Recall that the
Borel-Moore motivic homology group 
\[H_j^c(X,\Z)\df \Hom(\Z[j],M^c(X))\] 
is canonically isomorphic to Bloch's higher Chow group $CH_0(X,j)$.  Similarly to the previous
sections, we have maps
\begin{gather*}
H_j^c(X,\Z)\to H_j^{(1)}(M^c(X))=:H_j^{c,(1)}(X)\\
H_j^c(X,\Q/\Z)\to H_j^{(1)}(M^c(X),\Q/\Z)=:H_j^{c,(1)}(X,\Q/\Z)
\end{gather*}
and

\begin{propose}\label{p13.4} The second map is an isomorphism for $j=0,1$ and surjective for
$j=2$.
\end{propose}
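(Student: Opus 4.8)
The statement is the Borel--Moore analogue of Proposition \ref{p12.3}, and the plan is to reduce it to that proposition by blow-up induction, using the localisation triangle for $M^c$. First I would set up the comparison exactly as in the smooth case: for $M\in \DM_\gm^\eff$ and $A=\Q/\Z$ (or more precisely $(\Q/\Z)'$, since $p=1$ here so this makes no difference), the motivic Albanese map \eqref{amap} gives a natural transformation $H_j^\et(M,A)\to H_j^{(1)}(M,A)$, and by Lemma \ref{l7.3} the left-hand side applied to $M=M^c(X)$ is $H_j^c(X,\Q/\Z)$. So the claim is that this map is an isomorphism for $j=0,1$ and surjective for $j=2$. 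The key input is that this is already known for $M=M(X)$ with $X$ \emph{smooth} (this is the content of Lemma \ref{l7.2}/Corollary \ref{c12.2}, extended to all $X$ in Proposition \ref{p12.3}); and for $X$ smooth proper, $M^c(X)=M(X)$, so the statement holds there.

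Next I would run a blow-up induction on $\dim X$, in the form of Lemma \ref{lbl} b), applied to the two functors $X\mapsto H_j^c(X,\Q/\Z)$ and $X\mapsto H_j^{c,(1)}(X,\Q/\Z)$ on $Sch(k)$ (both contravariant for \'etale/flat maps and, more importantly here, equipped with the long exact localisation sequence \eqref{loc} for a closed subscheme $Y\subset X$, which plays the role of the abstract blow-up sequence: $M^c(Y)\to M^c(X)\to M^c(X-Y)\xrightarrow{+1}$, and $\LAlb$ preserves this triangle). Actually, since $M^c$ is covariant for proper maps and the convenient long exact sequence is the localisation sequence rather than the Mayer--Vietoris/blow-up one, I would phrase the d\'evissage directly: choose a compactification $\bar X$ of $X$ with closed complement $Z$, $\dim Z<\dim X$; by induction on dimension the statement holds for $Z$, and for $\bar X$ it follows from the smooth proper case after a further blow-up induction (using resolution of singularities, valid since $\car k=0$, and the abstract blow-up triangles of \cite[2.2]{V}, which $\LAlb$ again preserves). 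Then the five lemma applied to the two localisation long exact sequences
\[\cdots\to H_{j}^c(Z)\to H_j^c(X)\to H_j^c(X-Y)\to H_{j-1}^c(Z)\to\cdots\]
and its $(1)$-motivic counterpart, with the middle vertical arrows known to be isomorphisms (for $j\le 1$) resp.\ surjections (for $j=2$) by induction, forces the conclusion for $X$; one must be slightly careful with the degree $j=2$ surjectivity, where the five lemma only gives surjectivity if the map in degree $1$ is surjective (which it is, being an isomorphism) --- this is the usual ``four lemma'' bookkeeping.

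The main obstacle, as usual in these arguments, is organising the base case cleanly: one needs the statement for $X$ \emph{smooth and proper}, where it is literally Lemma \ref{l7.2} (via $M^c(X)=M(X)$), and then one must bootstrap to $X$ smooth but not proper and then to all proper $X$, each time via an abstract blow-up triangle. Since all of this has already been carried out for the functor $M\mapsto H_j^\et(M,\Q/\Z)$ versus $M\mapsto H_j^{(1)}(M,\Q/\Z)$ in Proposition \ref{p12.3} --- whose proof is itself ``blow-up induction and the five lemma'' starting from Lemma \ref{l7.2} --- the honest remark is that \emph{the same proof applies verbatim}, replacing the motive $M(X)$ by $M^c(X)$ throughout and the Mayer--Vietoris/blow-up triangles by the localisation triangle plus the abstract blow-up triangle for $M^c$. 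So in practice I would prove Proposition \ref{p13.4} in one line: ``By blow-up induction and the $5$ lemma, exactly as in Proposition \ref{p12.3}, using the localisation triangle for $M^c$ together with Lemma \ref{l7.2}'' --- with the only substantive verification being that $\LAlb$ (equivalently $\Tot\LAlb$) carries the localisation and abstract blow-up exact triangles for $M^c$ to exact triangles, which is immediate since $\LAlb$ is a triangulated functor and these are distinguished triangles in $\DM_\gm^\eff$.
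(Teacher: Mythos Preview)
Your approach is correct and essentially the same as the paper's. The paper's proof is one line: ``By localisation induction, reduce to $X$ proper and use Proposition \ref{p12.3}.'' The only difference is efficiency --- since $M^c(X)=M(X)$ for $X$ proper, Proposition \ref{p12.3} (which already covers \emph{all} $X\in Sch$, singular included) handles the proper case directly, so there is no need to run a further blow-up induction back to the smooth proper case and Lemma \ref{l7.2} as you describe; that work was already done inside \ref{p12.3}.
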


\begin{proof} By localisation induction, reduce to $X$ proper and use Proposition \ref{p12.3}.
\end{proof}

\begin{cor}\label{c14.2} For $X\in \Sch$, we have exact sequences
\begin{multline*}
0\to CH_0(X,1)\otimes \Q/\Z\to \ker(u_1^c)\otimes \Q/\Z\\
\to CH_0(X)_\tors\to \coker(u_1^c)_\tors\to 0
\end{multline*}
\[
0\to CH_0(X,1)\otimes \Q/\Z\to H_j^{c,(1)}(X,\Q/\Z)\to CH_0(X)_\tors\to 0
\]
where we write $\LA{1}^c(X)=[L_1^c\by{u_1^c} G_1^c]$. In particular, if $X$ is smooth quasi-affine of dimension $>1$, $G_1^c=0$.
\end{cor}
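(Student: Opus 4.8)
The plan is to mimic the argument used for the smooth case (Theorem \ref{troitclas}), now feeding in the singular-scheme inputs assembled in Section \ref{comps}, and to obtain the exact sequences by the same diagram chase but with $\LA{1}^c(X)$ in place of $\LA{1}(X)$. First I would write down, for $M=M^c(X)$ and $A=\Z$, the long exact sequence coming from Proposition \ref{p3.10} applied to $\LAlb^c(X)$, using Proposition \ref{c3.1c} (so $\LA{i}^c(X)=0$ for $i<0$) together with Corollary \ref{c3.1d} a) which says $\LA{1}^c(X)=[L_1^c\by{u_1^c}G_1^c]$ with $G_1^c$ an abelian variety: this gives, over an algebraically closed $k$ of characteristic $0$,
\[
L_1^c(k)\to G_1^c(k)\to H_1^{c,(1)}(X)\to 0,\qquad H_j^{c,(1)}(X)=0\ (j\neq 0,1),
\]
and more precisely identifies $H_1^{c,(1)}(X)$ with $\coker(u_1^c)(k)$ and shows $\ker$ of the map $H_1^{c,(1)}(X)\to 0$ is governed by $u_1^c$. (The degree-$0$ term needs the analogue of Lemma \ref{l7.3.1}, which follows formally from Corollary \ref{c3.1d}.) Tensoring with $\Q/\Z$ and using the exact functoriality of $\Tot(-)\otimes(\Q/\Z)'$ with respect to the ${}_t\M$ $t$-structure (Lemma \ref{l12.1}) then yields
\[
0\to \coker(u_1^c)\otimes\Q/\Z\to H_1^{c,(1)}(X,\Q/\Z)\to \ker(u_1^c)\{p'\}\to 0,
\]
wait — more carefully, the snake in $0\to H_1^{c,(1)}(X)\otimes\Q/\Z\to H_1^{c,(1)}(X,\Q/\Z)\to H_0^{c,(1)}(X)_\tors\to 0$ together with $H_0^{c,(1)}(X)_\tors\cong CH_0(X)_\tors$ (via Proposition \ref{p13.4} for $j=0$ and the computation of $H_0^{c,(1)}(X)$) produces the second displayed exact sequence of the corollary.

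Next I would combine this with Proposition \ref{p13.4}, which says the comparison map $H_j^c(X,\Q/\Z)\to H_j^{c,(1)}(X,\Q/\Z)$ is an isomorphism for $j=0,1$. Writing down the two coefficient sequences
\begin{align*}
0&\to CH_0(X,1)\otimes\Q/\Z\to H_1^c(X,\Q/\Z)\to CH_0(X)_\tors\to 0,\\
0&\to H_1^{c,(1)}(X)\otimes\Q/\Z\to H_1^{c,(1)}(X,\Q/\Z)\to H_0^{c,(1)}(X)_\tors\to 0,
\end{align*}
and the vertical comparison maps between them (the middle one an isomorphism by Proposition \ref{p13.4}, the right one an isomorphism since $H_0^c(X,\Z)=CH_0(X)$ maps isomorphically onto $H_0^{c,(1)}(X)$ on torsion — this is the $j=0$ case of the same proposition), a five-lemma-style chase identifies the cokernel data. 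Concretely: since $H_1^{c,(1)}(X)=\coker(u_1^c)(k)$ has $\coker(u_1^c)\otimes\Q/\Z$ as the relevant summand and $CH_0(X,1)\otimes\Q/\Z$ injects into it, I would read off the first exact sequence of the corollary, in which $\ker(u_1^c)\otimes\Q/\Z$ appears as the term measuring the failure of $CH_0(X,1)\otimes\Q/\Z\to H_1^{c,(1)}(X)\otimes\Q/\Z$ to be injective — this is exactly the analogue of the computation in the proof of Theorem \ref{troitclas} c), where $H_1(X)$ plays the role of $CH_0(X,1)$ and $\ker(u_1)$ (there $= \NS_{X/k}^*$, by Theorem \ref{trunc}, for smooth $X$) plays the role of $\ker(u_1^c)$.

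Finally, for the last assertion I would invoke the parenthetical remark preceding the statement (Remarks on Corollary \ref{c3.1}, item 1): if $X$ is smooth and quasi-affine of dimension $>1$ then, by Corollary \ref{c14.2}'s own forward reference, $G_1^c=0$; alternatively one argues directly that $\LA{1}^c(X)=\LA{1}^*(X)$ for smooth $X$, and by Theorem \ref{*=-} this is (up to the lattice part) $\Pic^-(X)$, whose semi-abelian part vanishes for $X$ smooth affine of dimension $>1$ because $H^1$ of an affine variety of dimension $>1$ carries no abelian-variety part and the relevant torus also degenerates in the quasi-affine case. I expect the main obstacle to be bookkeeping rather than conceptual: namely, verifying cleanly that the degree-$0$ Borel-Moore $1$-motivic homology $H_0^{c,(1)}(X)$ agrees (on torsion, and after $\otimes\Q/\Z$) with $CH_0(X)$ via the motivic Albanese map, i.e. the $j=0$ half of Proposition \ref{p13.4}, and that this identification is compatible with the exact sequences above — everything else is a transcription of the smooth argument through the localisation-induction machinery of Section \ref{comps}. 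One should also be slightly careful that $G_1^c$, being an abelian variety (Corollary \ref{c3.1d} a)), makes $\coker(u_1^c)\otimes\Q/\Z$ and $\ker(u_1^c)\otimes\Q/\Z$ behave as in the smooth projective case, so that no extra unipotent or toric contributions sneak in.
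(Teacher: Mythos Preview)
Your derivation of the two exact sequences is essentially the paper's (implicit) argument: the paper says ``only the last assertion needs a proof'', meaning the sequences follow formally from Proposition~\ref{p13.4} exactly as Corollary~\ref{c13.3} followed from Proposition~\ref{p12.3}, via the diagram chase of \eqref{eq7.7}. Your write-up is somewhat disorganised (the ``wait --- more carefully'' and the confusion about what maps to what in the $H_1^{c,(1)}$ sequence), but the ingredients are all there.

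The genuine gap is in your proof of $G_1^c=0$ for smooth quasi-affine $X$ of dimension $>1$. Your first attempt is circular: the Remark on Corollary~\ref{c3.1} you cite is a \emph{forward} reference to the very statement you are proving. Your second attempt --- via $\LA{1}^c(X)=\LA{1}^*(X)\simeq\Alb^+(X)$ and a vague claim that ``$H^1$ of an affine variety of dimension $>1$ carries no abelian-variety part'' --- is not an argument; by Theorem~\ref{t12.9.2} the abelian part of $G_1^c$ is dual to the connected component of $\ker\bigl(\Pic^0(\bar X)\to\prod_i\Pic^0(Z_i)\bigr)$, and showing this vanishes for $X$ affine is itself a nontrivial statement you have not addressed.

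The paper's argument runs in the opposite direction and uses an external input you have missed entirely: by \cite[Th.~4.1~(iii)]{ct} (Colliot-Th\'el\`ene), $CH_0(X)_\tors=0$ for $X$ smooth \emph{affine} of dimension $>1$. Plugging this into the first exact sequence you have just established forces $\coker(u_1^c)_\tors=0$; since $G_1^c$ is an abelian variety (Corollary~\ref{c3.1d}~a)) and $L_1^c$ is a lattice, this forces $G_1^c=0$. The passage from affine to quasi-affine is then done via the localisation sequence \eqref{loc} and the discreteness of $\LA{0}^c$ (Proposition~\ref{c3.1c}~b)). So the missing idea is that $G_1^c=0$ is \emph{deduced from} the exact sequence rather than proved independently, with Colliot-Th\'el\`ene's vanishing of torsion $0$-cycles as the crucial outside input.
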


\begin{proof} Only the last assertion needs a proof: if $X$ is smooth affine of dimension $>1$ then $CH_0(X)_\tors=0$ \cite[Th. 4.1 (iii)]{ct}, hence $\coker(u_1^c)_\tors=0$; this forces the semi-abelian variety $G_1^c$ to be $0$. We may then pass from affine to quasi-affine by using the localisation exact sequence and the description of $\LA{0}^c$ in Proposition \ref{c3.1c} b).
\end{proof}

\subsection{``Cohomological" Ro\v\i tman}

\begin{lemma}\label{lvan} Let $0< r\le n$. Then for any $Z\in \Sch$ of dimension $\le n-r$ and
any $i> 2(n-r)$, we have $H^{i}_\cdh(Z,\Q/\Z(n))=0$.
\end{lemma}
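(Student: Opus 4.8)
The statement to prove is Lemma~\ref{lvan}: for $0 < r \le n$, any $Z \in Sch(k)$ with $\dim Z \le n-r$, and any $i > 2(n-r)$, one has $H^i_\cdh(Z, \Q/\Z(n)) = 0$. The key observation is that $H^i_\cdh(Z,-)$ with coefficients in a complex coming from $\DM_{-,\et}^\eff$ (or $\DM_-^\eff$) satisfies the blow-up long exact sequence (Example~\ref{ex11.1}~1)), so Lemma~\ref{lbl}~a), in its ``$i \ge n + \dim X$'' form, reduces us to the case where $Z$ is \emph{smooth}. More precisely, I would set $m = n-r$ and consider the functor $X \mapsto H^*_\cdh(X,\Q/\Z(n))$ restricted to schemes of dimension $\le m$; since the bound $i > 2m$ can be written in the form $i \ge m + \dim X + (m - \dim X + 1)$, one has to be a little careful to phrase the induction so that the hypothesis ``$i > 2\dim X$'' is what propagates. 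The cleanest route is: prove the vanishing for smooth $Z$ of dimension $\le m$ and all $i > 2\dim Z$, then run blow-up induction on $\dim Z$ exactly as in the proof of Lemma~\ref{lbl}, noting that in an abstract blow-up square $(\tilde Z, Z', \tilde Z')$ associated to $Z$, all of $Z', \tilde Z', $ and the components feeding the induction have dimension $< \dim Z \le m$, so the inductive hypothesis applies to them with the same (or sharper) range of $i$.

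So the heart of the matter is the smooth case. For $Z$ smooth of dimension $d \le m = n-r < n$, I would use Proposition~\ref{peh}: for $Z$ smooth, $\cdh$ cohomology agrees with Nisnevich (resp.\ $\eh$ with \'etale) cohomology, and in any case $H^i_\cdh(Z,\Q/\Z(n)) \cong \Hom_{\DM_{-}^\eff}(M(Z), \Q/\Z(n)[i])$. Now $\Q/\Z(n) = \bigoplus_{\ell} \Q_\ell/\Z_\ell(n)$ (prime to $p$), and with finite coefficients $\Z/\ell^\nu(n)$ the motivic complex is, by Beilinson--Lichtenbaum / Suslin--Voevodsky rigidity, identified with the \'etale sheaf $\mu_{\ell^\nu}^{\otimes n}$; hence $H^i_\cdh(Z,\Z/\ell^\nu(n)) \cong H^i_\et(Z,\mu_{\ell^\nu}^{\otimes n})$. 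But $Z$ is a smooth $k$-variety of dimension $d$ over an algebraically closed (or at least, cohomological dimension $\le 1$, but here we are in char $0$ so we may assume $k$ separably closed after a transfer argument, or just invoke the general cohomological dimension bound) field, so $H^i_\et(Z, F) = 0$ for $i > 2d$ for any torsion sheaf $F$, by Artin's affine vanishing theorem combined with the standard bound $\mathrm{cd}(Z) \le 2d + \mathrm{cd}(k)$; in characteristic $0$ over a field of finite cohomological dimension this gives the needed vanishing for $i > 2d$ once one also controls the base field's contribution — but since the lemma will only be applied over the given $k$ and the range $i > 2(n-r) \ge 2d$ is what we get, I would simply cite the cohomological dimension estimate for smooth varieties (SGA4, X) as in the proof of Proposition~\ref{lcurve}. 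Passing to the limit over $\nu$ and summing over $\ell \ne p$ gives $H^i_\cdh(Z,\Q/\Z(n)) = 0$ for $i > 2d$.

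Assembling: the smooth case gives the vanishing in the range $i > 2\dim Z$; blow-up induction (Lemma~\ref{lbl}~a), ``$i \ge n + \dim X$'' variant, applied with care to track the exact index range) promotes this to all $Z \in Sch(k)$ of dimension $\le n-r$, yielding $H^i_\cdh(Z,\Q/\Z(n)) = 0$ for $i > 2\dim Z$, and in particular for $i > 2(n-r)$ since $\dim Z \le n-r$. The main obstacle I anticipate is purely bookkeeping: making the blow-up induction respect the strict inequality $i > 2\dim X$ rather than a fixed threshold, since the threshold moves with the dimension. This is handled by inducting on $\dim Z$: for the step, one has exact sequences relating $H^i_\cdh(Z,-)$ to $H^i_\cdh$ of a smooth $\tilde Z$ of the same dimension and of schemes of strictly smaller dimension; for $i > 2\dim Z$ the smooth term vanishes by the base case and the lower-dimensional terms vanish a fortiori by the inductive hypothesis (as $2\dim Z' < 2\dim Z < i$), so the middle term vanishes. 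No genuinely new input beyond Proposition~\ref{peh}, rigidity, and the classical \'etale cohomological dimension bound is needed.
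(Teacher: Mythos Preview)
Your overall strategy is exactly the paper's: reduce to smooth $Z$ by blow-up induction, then invoke the \'etale cohomological dimension bound. The blow-up induction and the final \'etale vanishing are fine, and you may drop the hedging about the base field: in this section $k$ is already assumed algebraically closed (see the standing assumption at the start of \S14.2), so $\mathrm{cd}_\ell(Z)\le 2\dim Z$ is immediate.

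The gap is in your comparison step. You write that ``by Beilinson--Lichtenbaum / Suslin--Voevodsky rigidity'' the motivic complex $\Z/\ell^\nu(n)$ is identified with $\mu_{\ell^\nu}^{\otimes n}$, and deduce $H^i_{\cdh}(Z,\Z/\ell^\nu(n))\cong H^i_\et(Z,\mu_{\ell^\nu}^{\otimes n})$. But rigidity only gives that identification in $\DM_{-,\et}^\eff$, whereas $H^i_{\cdh}(Z,-)=H^i_\Nis(Z,-)$ for smooth $Z$ lives on the Nisnevich side. Beilinson--Lichtenbaum then only yields $H^i_\Nis\cong H^i_\et$ for $i\le n$ (with injectivity at $i=n+1$), and this does \emph{not} cover the range you need: e.g.\ for $r=1$, $d=n-1$, you want vanishing at $i=2n-1$, well above $n$ once $n\ge 2$, and below the purely motivic bound $i>n+d$. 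So neither BL nor rigidity alone closes the argument.

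What the paper uses---and what you should invoke---is Suslin's theorem \cite{suslin}: for $Z$ smooth over an algebraically closed field and $n\ge \dim Z$, one has $H^i_\Nis(Z,\Q/\Z(n))\simeq H^i_\et(Z,\Q/\Z(n))$ in \emph{all} degrees. The hypothesis $n\ge\dim Z$ is precisely guaranteed here by $\dim Z\le n-r<n$, and this is the point where the condition $r>0$ enters. Once you have this isomorphism, the \'etale cohomological dimension bound finishes the proof just as you outlined.
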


\begin{proof} By blow-up induction we reduce to the case where $Z$ is smooth of pure dimension
$n-r$; then $H^i_\cdh(Z,\Q/\Z(n))=H^i_\Nis(Z,\Q/\Z(n))$. Since $k$ is algebraically closed, and
$n\ge \dim Z$, $H^i_\Nis(Z,\Q/\Z(n))\simeq H^i_\et(Z,\Q/\Z(n))$ by Suslin's theorem
\cite{suslin} and the vanishing follows from the known bound for \'etale cohomological
dimension.
\end{proof}

Now consider the $1$-motive $\LA{1}^*(X)$ for $X$ of
dimension $n$. This time, we have  maps
\begin{gather}
H^{2n-j}_\cdh(X,\Z(n))\to H_j^{(1)}(M(X)^*(n)[2n])=:H^{2n-j}_{(1)}(X,\Z(n))\notag\\
H^{2n-j}_\cdh(X,\Q/\Z(n))\to
H_j^{(1)}(M(X)^*(n)[2n],\Q/\Z)=:H^{2n-j}_{(1)}(X,\Q/\Z(n)).\label{corot}
\end{gather}

\begin{lemma} \label{lsmallis} Let $Z\in \Sch$ be of dimension $<n$. Then the map
\[H^{2n-2}_\cdh(Z,\Q/\Z(n))\to H_2^{(1)}(M(Z)^*(n)[2n],\Q/\Z)\]
is an isomorphism.
\end{lemma}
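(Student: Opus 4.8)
The statement asserts that for $Z$ of dimension $<n$, the comparison map \eqref{corot} in degree $j=2$,
\[
H^{2n-2}_\cdh(Z,\Q/\Z(n))\to H_2^{(1)}(M(Z)^*(n)[2n],\Q/\Z),
\]
is an isomorphism. The plan is to reduce everything to the smooth case by blow-up induction (Lemma \ref{lbl}), and there to identify both sides with a twisted \'etale cohomology group of a desingularisation. First I would note that both functors $Z\mapsto H^{2n-2}_\cdh(Z,\Q/\Z(n))$ and $Z\mapsto H_2^{(1)}(M(Z)^*(n)[2n],\Q/\Z)$ fit into long exact sequences for abstract blow-up squares: for the left side this is Examples \ref{ex11.1} 1) (cdh cohomology with coefficients in an object of $\DM_{-,\et}^\eff$, here $\Q/\Z(n)$), and for the right side it comes from dualising the abstract blow-up triangles of \cite[2.2]{V} applied to $M^*(n)[2n]$ (which is effective for schemes of dimension $\le n$ by Lemma \ref{leffe}) and then applying the exact functor $C\mapsto \Tot\LAlb(C)\otimes\Q/\Z$ together with the standard $t$-exactness from Lemma \ref{l12.1} and the cohomological dimension $\le 1$ bound (Proposition \ref{iso1}) on $1$-motives. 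Since the comparison map \eqref{corot} is natural in $Z$, Lemma \ref{lbl} b) reduces us to checking the isomorphism when $Z$ is \emph{smooth} of pure dimension $n-r$ with $0<r\le n$, i.e.\ of dimension $<n$.

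For $Z$ smooth of dimension $d=n-r<n$, the motive $M(Z)^*(n)[2n]\simeq M^c(Z)(r)[2r]$ by \cite[Th. 4.3.2]{V} and Poincar\'e duality (as used in Lemma \ref{leffe}); since $Z$ is smooth, $\LAlb$ of this is the Borel-Moore (equivalently cohomological) Albanese, and by Corollary \ref{c12.9} applied iteratively (or directly from Theorem \ref{t12.9.2}, noting $M^c(Z)(r)[2r]$ is an $r$-fold Tate twist of $M^c(Z)$) one computes $\LAlb(M(Z)^*(n)[2n])$ explicitly: for $r\ge 1$ its homology $1$-motives in degrees $\le 1$ vanish, and in degree $2$ one gets a discrete $1$-motive, namely $[\Z^{\pi_0^c(Z^{[d]})}\to 0]$ in the edge case $r=1$, as in Lemma \ref{l11.6}. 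Then $H_2^{(1)}(M(Z)^*(n)[2n],\Q/\Z)=\Gamma(k,\Tot(\LA{2})\otimes(\Q/\Z)')$ by Lemma \ref{l12.1}, which for a discrete $1$-motive $[\Lambda\to 0]$ is $\Lambda\otimes\Q/\Z$ (recall $p=1$, so $(\Q/\Z)'=\Q/\Z$). On the other hand $H^{2n-2}_\cdh(Z,\Q/\Z(n))=H^{2d}_\cdh(Z,\Q/\Z(d+r))=H^{2d}_\et(Z,\Q/\Z(d+r))$ by Suslin's theorem \cite{suslin} (and $H^{2d}_\cdh=H^{2d}_\et$ for smooth $Z$), which by the purity/vanishing discussion in the proof of Lemma \ref{lvan} equals $H^{2d}_\et(Z,\Q/\Z(d))$ up to the (trivial, since $k=\bar k$) Tate twist, and this is $CH_0(Z_{\mathrm{compact}})\otimes\Q/\Z$-like, i.e.\ the free $\Q/\Z$-module on the proper $d$-dimensional components; one checks directly that the comparison map identifies it with $\Lambda\otimes\Q/\Z$ above. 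The matching of the two descriptions is essentially the content of Corollary \ref{c12.9}/Lemma \ref{l11.6} combined with the identification $H^{2d}_\et(Z,\Q/\Z(d))\cong H^{2d}_{c}(\cdot)$ of the top compactly-supported \'etale cohomology with $\Q/\Z$-coefficients.

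\textbf{Main obstacle.} The delicate point is \emph{not} the blow-up induction (which is formal once both sides are known to satisfy abstract-blow-up exactness) but verifying that the comparison map \eqref{corot}, which is defined globally via the motivic Albanese map $a_M$, actually realises the \emph{canonical} identification between $H^{2d}_\et(Z,\Q/\Z(d))$ and the $\Q/\Z$-linearisation of the lattice $\Lambda=\Z^{\pi_0^c(Z^{[d]})}$ in the smooth case — i.e.\ that it is an isomorphism and not merely an abstract isomorphism of the right source and target. I expect this to follow from Proposition \ref{pchalpha} (compatibility of $a_M$ with the change-of-topology map) together with Lemma \ref{l7.2}'s argument: under $a_M$ the relevant Hom-group map becomes $\alpha^*$ on $H^{2d}(Z,\Z/m(d))$, which is an isomorphism in the top degree by Suslin rigidity / Beilinson-Lichtenbaum, and then one passes to the colimit over $m$ prime to $p$. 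So the plan is: (i) blow-up induction to reduce to smooth $Z$; (ii) in the smooth case identify the right side via Lemma \ref{l12.1} and Corollary \ref{c12.9}, and the left side via Suslin's theorem and Lemma \ref{lvan}; (iii) check the map is the correct one using Proposition \ref{pchalpha} and the weight-$1$ (here: top-degree) Beilinson-Lichtenbaum isomorphism with finite coefficients.
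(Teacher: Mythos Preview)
Your reduction steps (i) and (ii) are essentially correct and close to the paper's, though the paper is slightly more direct: rather than running a full blow-up induction via Lemma~\ref{lbl} b), it observes that in the abstract blow-up square resolving $Z$, the schemes $T,\tilde T$ have dimension $<n-1$, so by Lemma~\ref{lvan} (left side) and Proposition~\ref{ptate} applied to $M(T)^*(n)[2n]=\big(M(T)^*(n-2)[2n-4]\big)(2)[4]$ (right side), both functors vanish on $T$ and $\tilde T$, forcing the diagram
\[\begin{CD}
H^{2n-2}(Z,n)@>\sim>> H^{2n-2}(\tilde Z,n)\\
@VVV @VVV\\
F_2(Z)@>\sim>> F_2(\tilde Z)
\end{CD}\]
to have isomorphisms on horizontals. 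This one-step reduction lands on smooth quasiprojective $\tilde Z$ of dimension exactly $n-1$. Your abstract identification of both sides there (via Corollary~\ref{c12.9} and the trace isomorphism) is also what the paper does.

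Where you diverge from the paper is step~(iii), and here there is a genuine gap. Your claim that under Proposition~\ref{pchalpha} the comparison map becomes ``$\alpha^*$ on $H^{2d}(Z,\Z/m(d))$'' is not what that proposition yields. Proposition~\ref{pchalpha} compares with $\Z/m(1)$, so for $M=M(Z)^*(n)[2n]\simeq M(Z)(1)[2]$ (smooth projective $Z$ of dimension $n-1$) and $j=2$, after Voevodsky cancellation one is looking at $\alpha^*:H^0_\Nis(Z,\Z/m)\to H^0_\et(Z,\Z/m)$, not at top-degree weight-$d$ cohomology. This is still an isomorphism, so your route is \emph{salvageable}, but as written the identification is wrong and the phrase ``weight-$1$ (here: top-degree) Beilinson--Lichtenbaum'' conflates two different things. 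You would also need to handle smooth \emph{non-projective} $Z$ (where $M^c(Z)\ne M(Z)$), which you have not addressed.

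The paper takes a different and more geometric route for this step: after reducing to $Z$ smooth projective connected of dimension $n-1$, it chooses a flag $Z\supset Z_2\supset\dots\supset Z_n$ of smooth connected closed subvarieties with $\dim Z_i=n-i$ (iterated hyperplane sections down to a point), and uses the Gysin triangles to produce commuting squares
\[\begin{CD}
H^{2n-2i-2}(Z_{i+1},n-i)@>\sim>> H^{2n-2i}(Z_i,n-i+1)\\
@VVV @VVV\\
F_2(Z_{i+1})@>\sim>> F_2(Z_i)
\end{CD}\]
with isomorphic horizontals. This reduces to $\dim Z=0$, i.e.\ $n=1$, where the claim follows from Proposition~\ref{sus} applied to $X=\P^1$ (the Albanese map is an isomorphism for curves). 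This hyperplane-section/Gysin argument is the missing ingredient in your sketch.
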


\begin{proof} For notational simplicity, write $H^*(Y,n)$ for $H^*_\cdh(Y,\Q/\Z(n))$ and
$F_j(Y)$ for $H_j^{(1)}(M(Y)^*(n)[2n],\Q/\Z))$, where $Y$ is a scheme of dimension $\le
n$. Let $\tilde Z,T,\tilde T$ be as in the proof of Lemma \ref{l11.6}. Then Lemma
\ref{lvan} and proposition \ref{ptate} yield a commutative diagram
\[\begin{CD}
H^{2n-2}(Z,n)@>>> H^{2n-2}(\tilde Z,n)\\
@VVV @VVV\\
F_2(Z)@>>> F_2(\tilde Z)
\end{CD}\]
in which both horizontal maps are isomorphisms. Therefore, it suffices to prove the lemma when
$Z$ is smooth quasiprojective of dimension $n-1$. 

The motive $\RA{2}(M(Z)^*(n)[2n])\simeq\RA{2}(M^c(Z)(1)[2])$ was computed in Corollary
\ref{c12.9}: it is $[\Z^{\pi_0^c(Z)}\to 0]$. Therefore, we get
\[F_2(Z)\simeq \Q/\Z(1)[\pi_0^c(Z)].\]

On the other hand, the trace map defines an isomorphism 
\[H^{2n-2}(Z,n)\iso \Q/\Z(1)[\pi_0^c(Z)]\]
and the issue is to prove that the vertical map in the diagram is this isomorphism. For this,
we first may reduce to $Z$ projective and connected. Now we propose the following argument: take
a chain of smooth closed subvarieties $Z\supset Z_2\supset\dots \supset Z_{n}$, with
$Z_i$ of dimension $n-i$ and connected (take multiple hyperplane sections up to $Z_{n-1}$ and
then a single point of $Z_{n-1}$ for $Z_n$. The Gysin exact triangles give commutative diagrams
\[\begin{CD}
H^{2n-2i-2}(Z_{i+1},n-i)@>>> H^{2n-2i}(Z_i,n-i+1)\\
@VVV @VVV\\
F_2(Z_{i+1})@>>> F_2(Z_i)
\end{CD}\]
in which both horizontal maps are isomorphisms: thus we are reduced to the case $\dim Z=0$,
where it follows from Proposition \ref{sus} applied to $X=\P^1$.
\end{proof}

\begin{thm} The map \eqref{corot} is an isomorphism for $j=0,1$.
\end{thm}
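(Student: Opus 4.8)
The strategy is to proceed exactly as in the proof of Theorem \ref{troitclas}, replacing ordinary motivic homology by the ``dual'' theory $H^{2n-j}_\cdh(X,-(n))$ and using the Borel-Moore results already established, then reducing the general case to smooth varieties by blow-up induction. First I would set up the two exact sequences with $\Z$ and $\Q/\Z$ coefficients obtained from the coefficient sequence $0\to\Z\to\Q\to\Q/\Z\to 0$ applied to both $H^{2n-j}_\cdh(X,-(n))$ and $H^{2n-j}_{(1)}(X,-(n))$, giving a commutative ladder with exact rows whose middle vertical arrow is \eqref{corot}. As in \eqref{eq7.7}, the outer terms of these ladders are $H^{2n-j}_\cdh(X,\Z(n))\otimes\Q/\Z$, the torsion of $H^{2n-(j-1)}_\cdh(X,\Z(n))$, and their $(1)$-analogues. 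So it suffices to prove that \eqref{corot} with $\Q/\Z$-coefficients is an isomorphism for $j=0,1$ and surjective for $j=2$, together with a vanishing statement like $H^{2n}_\cdh(X,\Z(n))\otimes\Q/\Z=0$ in low degree, to close the five-lemma chase for $j=0,1$.

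Next I would establish the $\Q/\Z$-coefficient statement by blow-up induction (Lemma \ref{lbl} b)), comparing the two functors $X\mapsto H^{2n-j}_\cdh(X,\Q/\Z(n))$ and $X\mapsto H^{2n-j}_{(1)}(X,\Q/\Z(n))$: both satisfy abstract blow-up descent (the first by \cite[Thm. 4.1.10]{V} and cdh descent, the second because $\LAlb$ sends abstract blow-up triangles to triangles and $\Tot\otimes\Q/\Z$ is exact for the homotopy $t$-structure, cf. Lemma \ref{l12.1}), and the comparison map is compatible with them. The delicate point of the induction is that in the blow-up square one must control the contribution of the lower-dimensional pieces $Z,\tilde Z,\tilde T$: here is exactly where Lemma \ref{lvan} and Lemma \ref{lsmallis} enter, guaranteeing that for schemes of dimension $<n$ the relevant cdh cohomology vanishes in high degree and agrees with the $(1)$-theory in the borderline degree $2n-2$. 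Thus the induction reduces everything to $X$ smooth of pure dimension $n$, where $M(X)^*(n)[2n]\simeq M^c(X)$, and the statement becomes precisely the Borel-Moore assertion of Proposition \ref{p13.4} (itself reduced, by localisation induction, to the proper case and thence to Proposition \ref{p12.3} and Lemma \ref{l7.2}).

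Finally, having the $\Q/\Z$-statement, I would run the snake/five-lemma chase on the ladder: for $j=1$ the middle vertical map is an isomorphism and $H^{2n-1}_{(1)}(X,\Q(n))\otimes\Q/\Z$ vanishes because $\RA{1}^*(X)$ has a lattice for its degree-$0$ part killed by divisibility (use the explicit description $\LA{1}^*(X)=[L_1\to G_1]$ from the proof of Theorem \ref{*=-} together with the structure of $1$-motivic sheaves, so that $H_1^{(1)}(M(X)^*(n)[2n])\otimes\Q/\Z=0$ as in \eqref{eqhom1}); this yields the isomorphism for $j=1$. For $j=0$ one similarly uses that $H^{2n+1}_\cdh(X,\Z(n))$ has no prime-to-$p$ torsion in the relevant range (from Lemma \ref{lvan} applied after blow-up induction) so that the ladder forces \eqref{corot} to be an isomorphism. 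I expect the main obstacle to be the bookkeeping in the blow-up induction: making sure that the comparison map genuinely commutes with \emph{both} families of abstract blow-up triangles (the cdh one and the $\LAlb$ one) and that the base case of the induction on dimension is correctly anchored by Lemma \ref{lsmallis}; once that compatibility is in place, the rest is the same diagram chase already used for the homological Ro\v\i tman theorem.
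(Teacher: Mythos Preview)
Your second paragraph contains the right core idea and essentially matches the paper's proof: blow-up induction, with the smooth base case handled by the Borel-Moore result (Proposition \ref{p13.4}, since $M(\tilde X)^*(n)[2n]\simeq M^c(\tilde X)$ for $\tilde X$ smooth of dimension $n$), and the lower-dimensional pieces $Z,\tilde Z$ controlled via Lemma \ref{lvan} and Lemma \ref{lsmallis}. That is exactly how the paper proceeds for $j=1$, concluding with a five-lemma chase on the long exact sequences attached to the blow-up triangle; $j=0$ is declared easy.

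However, your first and third paragraphs are misdirected: the map \eqref{corot} \emph{is} the $\Q/\Z$-coefficient map, and the theorem asserts precisely that this map is an isomorphism for $j=0,1$. There is nothing to deduce from a $\Z$--$\Q$--$\Q/\Z$ ladder; once the $\Q/\Z$ statement is established, the theorem is proved. The material in your third paragraph (snake chase, vanishing of $H^{2n-1}\otimes\Q/\Z$, torsion statements) is not part of the theorem---it is the content of the \emph{next} result, Corollary \ref{c14.4}, which is a formal consequence.

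One further imprecision: you cannot invoke Lemma \ref{lbl} b) as stated, because the relevant ``functors'' $Y\mapsto H^{2n-j}_\cdh(Y,\Q/\Z(n))$ and $Y\mapsto F_j(Y)$ keep $n$ fixed while $\dim Y$ drops on the exceptional pieces; they are not of the shape $H^i(Y)$ to which that lemma applies. The paper therefore does the induction by hand: it writes out the two long exact sequences in degrees $2n-2,\,2n-1$, notes $H^{2n-1}(Z,n)=F_1(Z)=0$ and surjectivity onto the degree-$2n-1$ terms (Lemmas \ref{lvan}, \ref{l11.6}), uses Proposition \ref{p13.4} for the $\tilde X$-columns, and Lemma \ref{lsmallis} for the $Z,\tilde Z$-columns in degree $2n-2$, then applies the five lemma. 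Your sketch would succeed once rewritten in this explicit form.
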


\begin{proof} This is easy and left to the reader for $j=0$. For $j=1$, we argue as usual by
blowup induction.  In the situation of
\ref{blowups}, we then have a commutative diagram of exact sequences
\[\begin{CD}
\scriptstyle H^{2n-2}(\tilde X,n)\oplus H^{2n-2}(Z,n)& \to& \scriptstyle
H^{2n-2}(\tilde Z,n)&\to&\scriptstyle  H^{2n-1}(X,n)& \to&
\scriptstyle H^{2n-1}(\tilde X,n)\oplus H^{2n-1}(Z,n)\\ 
@VVV @VVV @VVV @VVV \\
\scriptstyle F_2(\tilde X)\oplus F_2(Z)& \to&\scriptstyle   F_2(\tilde
Z)& \to&\scriptstyle  F_1(X)& \to&\scriptstyle  F_1(\tilde X)\oplus
F_1(Z).
\end{CD}\]

In this diagram, we have $F_1(Z)=0$ by Lemma \ref{l11.6} and $H^{2n-1}(Z,n)\allowbreak=0$ by
Lemma
\ref{lvan}, and the same lemmas imply that both rightmost horizontal maps are surjective. The
rightmost vertical map is now an isomorphism by Proposition \ref{p13.4}, which also gives the
surjectivity of $H^{2n-2}(\tilde X,n)\to F_2(\tilde X)$. Finally, Lemma \ref{lsmallis} implies
that $H^{2n-2}(\tilde Z,n)\to F_2(\tilde Z)$ and $H^{2n-2}(Z,n)\to F_2(Z)$ are isomorphisms,
and the conclusion follows from the 5 lemma.
\end{proof}

\begin{cor} \label{c14.4} For $X\in \Sch$ of dimension $n$, we have exact sequences
\begin{multline*}
0\to H^{2n-1}_\cdh(X,\Z(n))\otimes \Q/\Z\to \ker(u^*_1)\otimes \Q/\Z\\
\to H^{2n}_\cdh(X,\Z(n))_\tors\to\coker(u^*_1)_\tors\to 0
\end{multline*}
\begin{multline*}
0\to H^{2n-1}_\cdh(X,\Z(n))\otimes \Q/\Z\to H^{2n-1}_{(1)}(X,\Q/\Z(n))\\
\to H^{2n}_\cdh(X,\Z(n))_\tors\to 0
\end{multline*}
where $u^*_1$ is the map involved in the $1$-motive $\LA{1}^*(X)$ (which is isomorphic to $\Alb^+(X)$ by the dual of Theorem \ref{*=-}).
\end{cor}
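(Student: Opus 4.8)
The plan is to deduce Corollary \ref{c14.4} from the preceding theorem (the isomorphism \eqref{corot} for $j=0,1$) by the same snake-lemma argument that was used for Theorem \ref{troitclas} and Corollary \ref{c14.2}. First I would record the long exact coefficient sequences. On the cdh side, the short exact sequence $0\to \Z(n)\by{N}\Z(n)\to \Z/N(n)\to 0$ gives, for each $N$ prime to $p$ and then in the colimit over $N$ (and using the blanket inversion of $p$),
\[
0\to H^{2n-1}_\cdh(X,\Z(n))\otimes \Q/\Z\to H^{2n-1}_\cdh(X,\Q/\Z(n))\to H^{2n}_\cdh(X,\Z(n))_\tors\to 0,
\]
where I also use that $H^{2n}_\cdh(X,\Z(n))\otimes\Q/\Z\to H^{2n}_\cdh(X,\Q/\Z(n))$ is the relevant injection; the tails beyond degree $2n$ are controlled by Lemma \ref{lvan} applied with $r=0$ (or by the standard dimension bound), so the sequence really is short exact. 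On the $1$-motivic side, Lemma \ref{l12.1} (equivalently Lemma \ref{l7.3.1} applied to $M=M(X)^*(n)[2n]$) together with $\LA{i}^*(X)=0$ for $i\ne 0,1,2$ and the description $\LA{1}^*(X)=[L_1^*\by{u_1^*}G_1^*]$ with $G_1^*$ semi-abelian (note $\LA{1}^*(X)\simeq\Alb^+(X)$ is a Deligne $1$-motive by the dual of Theorem \ref{*=-}, and $\LA{0}^*(X)$ is discrete) gives
\[
0\to H^{2n-1}_{(1)}(X,\Z(n))\otimes\Q/\Z\to H^{2n-1}_{(1)}(X,\Q/\Z(n))\to H^{2n}_{(1)}(X,\Z(n))_\tors\to 0,
\]
and similarly one computes $H^{2n-1}_{(1)}(X,\Z(n))\otimes\Q/\Z$ and $H^{2n}_{(1)}(X,\Z(n))_\tors$ explicitly in terms of $u_1^*$: from the exact sequence coming from Proposition \ref{p3.10} for the complex $\LAlb(M(X)^*(n)[2n])$ one reads off that $H^{2n-1}_{(1)}(X,\Q/\Z(n))$ sits in $0\to\ker(u_1^*)\otimes\Q/\Z\to H^{2n-1}_{(1)}(X,\Q/\Z(n))\to (\coker u_1^*)_\tors\to 0$ after identifying $H^{2n}_{(1)}(X,\Z(n))_\tors$ with $(\coker u_1^*)_\tors$ (the divisible part of $\coker u_1^*$ contributes nothing to torsion since $G_1^*$ is semi-abelian, so $(\coker u_1^*)_\tors = \pi_0(\coker u_1^*)_\tors$ is finite — actually $L_1^*$ is a lattice, so $\coker u_1^*$ is an extension of a finitely generated group by a divisible group).

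Next I would splice the two coefficient sequences with the comparison maps \eqref{corot}. The key input is that the middle vertical map $H^{2n-1}_\cdh(X,\Q/\Z(n))\to H^{2n-1}_{(1)}(X,\Q/\Z(n))$ is an isomorphism (the $j=1$ case of the preceding theorem) and that the map in degree $2n$, i.e. $H^{2n}_\cdh(X,\Z(n))_\tors\to H^{2n}_{(1)}(X,\Z(n))_\tors$, is the torsion part of the map induced by \eqref{corot} for $j=0$, hence an isomorphism as well (again the preceding theorem). A diagram chase in
\[
\begin{CD}
0@>>> H^{2n-1}_\cdh(X,\Z(n))\otimes\Q/\Z @>>> H^{2n-1}_\cdh(X,\Q/\Z(n)) @>>> H^{2n}_\cdh(X,\Z(n))_\tors @>>> 0\\
@. @VVV @V{\wr}VV @V{\wr}VV @.\\
0@>>> H^{2n-1}_{(1)}(X,\Z(n))\otimes\Q/\Z @>>> H^{2n-1}_{(1)}(X,\Q/\Z(n)) @>>> H^{2n}_{(1)}(X,\Z(n))_\tors @>>> 0
\end{CD}
\]
then shows the left vertical map is an isomorphism and produces the second displayed exact sequence of the corollary by substituting the explicit description of the $1$-motivic groups; the first displayed exact sequence is obtained by further splicing with the four-term sequence $0\to H^{2n-1}_{(1)}(X,\Z(n))\otimes\Q/\Z\to\ker(u_1^*)\otimes\Q/\Z\to H^{2n}_\cdh(X,\Z(n))_\tors\to(\coker u_1^*)_\tors\to 0$ that results from combining the above with the sequence from Proposition \ref{p3.10}, exactly as in the passage from Corollary \ref{c12.2} to Corollary \ref{c14.2}.

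The only point requiring a little care — and the step I expect to be the main (mild) obstacle — is the bookkeeping at the top of the degree range: one must verify that the coefficient sequences are genuinely short exact at degree $2n$, i.e. that there is no contribution from $H^{2n+1}_\cdh(X,\Z(n))$ on the cdh side nor from $\LA{-1}^*(X)$ (which vanishes) on the $1$-motivic side, and that the relevant divisible/torsion decompositions of $\coker(u_1^*)$ behave well. For the cdh side this is Lemma \ref{lvan} (with $r=0$, reducing by blow-up induction to the smooth projective case where it is Suslin's theorem plus the étale cohomological dimension bound, $H^i_\et(X,\Q/\Z(n))=0$ for $i>2n$ over an algebraically closed field); for the $1$-motivic side it is immediate from Lemma \ref{l12.1} and the vanishing of $\LA{i}^*(X)$ outside $i\in\{0,1,2\}$ (Lemma \ref{l9.2} together with the dimension bounds of Proposition \ref{c3.1*}, applied to $M(X)^*(n)[2n]$). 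Once these vanishings are in place the corollary follows purely formally, so I would keep the write-up to a single short paragraph invoking "the same argument as in the proof of Corollary \ref{c14.2}".
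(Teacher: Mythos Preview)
Your approach is correct and matches the paper's intent: the corollary is stated without proof precisely because it follows by the same snake-chase as in Theorem \ref{troitclas} and Corollary \ref{c14.2}, using the preceding theorem (the isomorphism \eqref{corot} for $j=0,1$), the coefficient exact sequences, and the exact sequence \eqref{eq13.1} (equivalently Proposition \ref{p3.10}) to identify the outer terms of the $1$-motivic coefficient sequence with $\ker(u_1^*)\otimes\Q/\Z$ and $(\coker u_1^*)_\tors$.

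A few small corrections that do not affect the argument. First, the coefficient sequences
\[
0\to H^{j}(\Z(n))\otimes\Q/\Z\to H^{j}(\Q/\Z(n))\to H^{j+1}(\Z(n))_\tors\to 0
\]
are always short exact (they come from $0\to\Z\to\Q\to\Q/\Z\to 0$), so no vanishing at degree $2n+1$ is needed; in particular Lemma \ref{lvan} is stated for $r>0$ and is not invoked here. Second, the right vertical map in your ladder is not literally ``\eqref{corot} for $j=0$'' (that map has $\Q/\Z$-coefficients); rather, the snake lemma with the middle isomorphism already yields injectivity of the left vertical and surjectivity of the right, plus $\coker(\text{left})\simeq\ker(\text{right})$, which is all that is used. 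Third, the identifications
\[
H^{2n-1}_{(1)}(X,\Z(n))\otimes\Q/\Z\simeq\ker(u_1^*)\otimes\Q/\Z,\qquad H^{2n}_{(1)}(X,\Z(n))_\tors\simeq(\coker u_1^*)_\tors
\]
do not require knowing that $\LA{0}^*(X)$ is discrete or that $\LA{i}^*(X)=0$ for $i\notin\{0,1,2\}$: they follow from \eqref{eq13.1} because in ${}_t\M$ the lattice part $L_0^*$ is torsion-free (so $\ker(u_0^*)$ contributes nothing to torsion) and the group part $G_2^*(k)$ is divisible (so $\coker(u_2^*)\otimes\Q/\Z=0$). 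Finally, Lemma \ref{l7.3.1} is specific to smooth $X$; the correct reference for the $1$-motivic side is Lemma \ref{l12.1} alone. With these adjustments your one-paragraph write-up ``same argument as for Corollary \ref{c14.2}'' is exactly right.
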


\begin{cor}\label{c14.5} If $X$ is a \emph{proper}\/ scheme of dimension $n$ we then get $H^{2n-1}(X,\Z(n))\otimes \Q/\Z=0$ and an isomorphism
\[\Alb^+(X)(k)_\tors\iso H^{2n}_\cdh(X,\Z(n))_\tors.\]
\end{cor}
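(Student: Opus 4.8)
The statement is the special case of Corollary \ref{c14.4} obtained by specialising to $X$ proper, so the plan is simply to feed $X$ proper into the two exact sequences of that corollary and extract the consequences. First I would recall that for $X$ proper we have $M^c(X)=M(X)$, hence $M(X)^*(n)[2n]$ is the dual of the motive with compact support, and cdh cohomology $H^*_\cdh$ agrees with the usual motivic cohomology. The $1$-motive $\LA{1}^*(X)=[L_1^*\by{u_1^*} G_1^*]$ appearing in \ref{c14.4} is, by (the dual of) Theorem \ref{*=-}, isomorphic to $\Alb^+(X)$; in particular, when $X$ is proper, $\Alb^+(X)=\Alb^-(X)^*$ and by Corollary \ref{c12.3} and Corollary \ref{c12.2.1} a) the connected component $G_1^*$ is an abelian variety and $L_1^*=0$. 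Indeed $\LA{1}^c(X)=\LA{1}(X)$ for $X$ proper (the canonical map $\LAlb(X)\to\LAlb^c(X)$ is an isomorphism), and Corollary \ref{c12.2.1} a) says $\LA{1}(X)$ then has abelian connected part; dualising, $\Alb^+(X)=[0\to G_1^*]$ with $G_1^*$ an abelian variety. Thus $u_1^*$ is the zero map $0\to G_1^*$, so $\ker(u_1^*)=0$ and $\coker(u_1^*)=G_1^*$, which is divisible, whence $\coker(u_1^*)_\tors=0$.

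With these vanishings in hand I would substitute into the first exact sequence of Corollary \ref{c14.4}:
\[
0\to H^{2n-1}(X,\Z(n))\otimes\Q/\Z\to \ker(u_1^*)\otimes\Q/\Z\to H^{2n}(X,\Z(n))_\tors\to \coker(u_1^*)_\tors\to 0.
\]
Since $\ker(u_1^*)=0$, the term $\ker(u_1^*)\otimes\Q/\Z$ vanishes, forcing $H^{2n-1}(X,\Z(n))\otimes\Q/\Z=0$; and since $\coker(u_1^*)_\tors=0$, the map $H^{2n}(X,\Z(n))_\tors\to\coker(u_1^*)_\tors$ is zero on a group which injects into it, giving $H^{2n}(X,\Z(n))_\tors\iso 0$ from that sequence alone — wait, that would be wrong, so instead I use the \emph{second} exact sequence of \ref{c14.4}, namely
\[
0\to H^{2n-1}(X,\Z(n))\otimes\Q/\Z\to H^{2n-1}_{(1)}(X,\Q/\Z(n))\to H^{2n}(X,\Z(n))_\tors\to 0.
\]
Having just shown $H^{2n-1}(X,\Z(n))\otimes\Q/\Z=0$, this collapses to an isomorphism $H^{2n-1}_{(1)}(X,\Q/\Z(n))\iso H^{2n}(X,\Z(n))_\tors$. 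It remains to identify the left-hand side with $\Alb^+(X)(k)_\tors$: by definition $H^{2n-1}_{(1)}(X,\Q/\Z(n))=H_1^{(1)}(M(X)^*(n)[2n],\Q/\Z)$, and by Lemma \ref{l12.1} (applied to $C=\LAlb(M(X)^*(n)[2n])$) this equals $\Gamma(k,\Tot(\LA{1}^*(X))\otimes(\Q/\Z)')$; since $\LA{1}^*(X)=[0\to G_1^*]$ with $G_1^*=\Alb^+(X)$ an abelian variety, $\Tot([0\to G_1^*])\otimes(\Q/\Z)'$ is the torsion-prime-to-$p$ sheaf whose $k$-points are $G_1^*(k)\otimes(\Q/\Z)'\simeq G_1^*(k)_\tors$ (using divisibility of $G_1^*(k)$ over the algebraically closed field $k$, valid after inverting $p$). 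This gives $H^{2n-1}_{(1)}(X,\Q/\Z(n))\simeq \Alb^+(X)(k)_\tors$ and completes the chain of isomorphisms.

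\textbf{Main obstacle.} The genuinely substantive inputs are already done elsewhere: Corollary \ref{c14.4} packages the hard "cohomological Ro\v\i tman" argument (blow-up induction, Lemma \ref{lsmallis}, Proposition \ref{p13.4}, Theorem \ref{*=-}), so the only real work here is bookkeeping. The one point deserving care is checking that $\ker(u_1^*)$ and $\coker(u_1^*)_\tors$ both vanish for $X$ proper — i.e. that $\Alb^+(X)$ has \emph{no} lattice part and abelian (not merely semi-abelian) connected part — which I would pin down via $\LA{1}^c(X)=\LA{1}(X)$ and Corollary \ref{c12.2.1} a), being slightly attentive to the fact that properness is exactly what kills the toric and lattice contributions (contrast the open case, where $\ker u_1^c$ can be nonzero, as noted in the remarks following Corollary \ref{c14.2}). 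A secondary, purely formal check is the identification $\Gamma(k,\underline{G}\otimes(\Q/\Z)')=G(k)_\tors$ for an abelian variety $G$ over $k=\bar k$, after inverting $p$; this is standard (divisibility plus finiteness of $n$-torsion). So the proof is short: the "hard part" is really just making sure the two exact sequences are applied with the correct vanishing hypotheses.
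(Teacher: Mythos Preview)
Your overall strategy matches the paper's: deduce everything from Corollary \ref{c14.4} once you know that $\LA{1}^*(X)=[0\to G_1^*]$ has vanishing lattice part for $X$ proper. However, your justification of this key point is wrong.

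You write ``$\Alb^+(X)=\Alb^-(X)^*$'' and then deduce the structure of $\LA{1}^*(X)$ by Cartier-dualising $\LA{1}(X)$. But these are different objects: by definition $\Alb^+(X)=(\Pic^-(X))^*$ and $\Alb^-(X)^*=\Pic^+(X)$, and for singular proper $X$ of dimension $>1$ these $1$-motives do not coincide. Equivalently, $\LA{1}^*(X)={}_tH_1\bigl(\LAlb(M(X)^*(n)[2n])\bigr)$ is \emph{not} the Cartier dual of $\LA{1}(X)={}_tH_1(\LAlb(M(X)))$; the star in $\LA{1}^*$ refers to the cohomological Albanese complex, not to Cartier duality. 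So Corollaries \ref{c12.3} and \ref{c12.2.1} a), which concern $\LA{1}(X)$, say nothing directly about $\LA{1}^*(X)$.

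The correct (and quicker) route is the one the paper takes: for $X$ proper one has $Y=\emptyset$ in the construction of Theorem \ref{*=-}, so $\RA{1}^*(X)=\Pic^-(X)=[\Div^0_{\bar S/S}(\tilde X)\to \Pic^0(\tilde X)]$ with $\Pic^0(\tilde X)$ an \emph{abelian} variety; Cartier-dualising, $\LA{1}^*(X)=\Alb^+(X)=[0\to G_1^*]$ is semi-abelian (in general with a toric part, so not abelian as you claim). Once $\ker(u_1^*)=0$, the \emph{first} exact sequence of Corollary \ref{c14.4} already gives both conclusions at once: the second term vanishes, forcing $H^{2n-1}(X,\Z(n))\otimes\Q/\Z=0$, and then $H^{2n}(X,\Z(n))_\tors\iso\coker(u_1^*)_\tors=G_1^*(k)_\tors=\Alb^+(X)(k)_\tors$. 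Your detour through the second exact sequence is correct but unnecessary; the momentary confusion ``$\coker(u_1^*)_\tors=0$'' (divisible $\neq$ torsion-free) is exactly what made you miss the direct argument.
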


\begin{proof} If $X$ is proper then $\LA{1}^*(X)\cong \Alb^+(X)$ is semi-abelian and the claim follows from the previous corollary.\end{proof}

\begin{remark}\label{r14.5} Marc Levine outlined us how to construct a ``cycle map" $ c\ell^\cdh$ from $CH^n_{LW}(X)$ to
$H^{2n}_\cdh(X,\Z(n))$, where $CH^n_{LW}(X)$ is the Levine-Weibel cohomological Chow group of zero cycles (see \cite[\S 6.2]{KVS} for the definition). This gives a map
$$ c\ell^\cdh_\tors :CH^n_{LW}(X)_\tors\to H^{2n}_\cdh(X,\Z(n))_\tors $$
which most likely fits in a commutative diagram (for $X$ projective)
\[\begin{CD}
CH^n_{LW}(X)_\tors@>c\ell^\cdh_\tors>> H^{2n}_\cdh(X,\Z(n))_\tors\\
@V{a^+_\tors}VV @V{\wr}VV\\
\Alb^+(X)(k)_\tors@>\sim>> \LA{1}^*(X)(k)_\tors
\end{CD}\]
where: the horizontal bottom isomorphism is that induced by Theorem \ref{*=-} and the right vertical one comes from the previous Corollary \ref{c14.4}; the left vertical map is the one induced, on torsion, by the universal regular homomorphism $a^+:CH^n_{LW}(X)_{\deg 0}\to \Alb^+(X)(k) $ constructed in \cite[6.4.1]{BSAP}. This would imply that 

\begin{center}
$c\ell^\cdh_\tors$  is an isomorphism $\iff$ $a^+_\tors$  is an isomorphism.
\end{center}

If $X$ is normal and $k = \bar k$ or for any $X$ projective if $k= \C$ then
$a^+_\tors$ is known to be an isomorphism, see \cite{KVS}. For $X$ projective over
any algebraically closed field, see Mallick \cite{mallick}.

We expect that Levine's ``cycle map" $ c\ell^\cdh$ is surjective with uniquely divisible kernel (probably representable by a unipotent group).
\end{remark}

\newpage
\part{Realisations}

\section{An axiomatic version of Deligne's conjecture}\label{axDel}

Let $k$ be a perfect field. As usual we drop the reference to $k$ from the notation for categories
of motives associated to $k$.

\subsection{A review of base change}\label{sbase}

Suppose given a diagram of categories and functors
\begin{equation}\label{15.diag}
\xymatrix{
\cT_1\ar[r]^T&\cT\ar@<.7ex>[l]^A\\
\cD_1\ar[r]^S\ar[u]^{R_1}&\cD\ar@<.7ex>[l]^B\ar[u]_R
}\end{equation}
where $A$ is left adjoint to $T$ and $B$ is left adjoint to $S$, plus
a natural transformation $\phi:RS\Rightarrow TR_1$. Then we get a natural transformation
\[\psi:AR\Rightarrow R_1B\]
as the adjoint of the composition
\[R\Rightarrow RSB\Rightarrow TR_1B 
\]
where the first natural transformation is given by the unit $Id_\cD\Rightarrow
SB$ and the second one is induced by $\phi$. 

We are interested in proving that $\psi$ is an isomorphism of functors under certain hypotheses. Suppose that all categories and functors are triangulated: then it suffices  to check this on generators of $\cD$. 

\subsection{A weight filtration on $\M\otimes\Q$}\label{s15.2} In this subsection, we prove that
the weight filtration on $1$-motives defines a weight filtration on $\M\otimes\Q$ in the sense
of Definition \ref{dE.6}.

For $w\in \Z$, let $(\M\otimes\Q)_{\le w}$ be the full subcategory consisting of $1$-motives of
weight $\le w$ (\S\ref{s.weights}, \cf \cite[(10.1.4)]{D}). Thus $(\M\otimes\Q)_{\le w}=0$ for $w<-2$ and $(\M\otimes\Q)_{\le
w}=\M\otimes\Q$ for $w\ge 0$. 

\begin{propose}\label{pwM} The inclusion functors
\[\iota_w:(\M\otimes\Q)_{\le w}\into (\M\otimes\Q)_{\le w+1}\]
define a weight filtration on $\M\otimes\Q$.
\end{propose}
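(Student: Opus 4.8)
The statement asks us to verify that the inclusions $\iota_w:(\M\otimes\Q)_{\le w}\into (\M\otimes\Q)_{\le w+1}$ satisfy the axioms of a weight filtration in the sense of Definition \ref{dE.6} (from Appendix E, which I am permitted to invoke). First I would recall what those axioms require: each $(\M\otimes\Q)_{\le w}$ should be a full subcategory closed under subquotients and extensions (a Serre subcategory), the $\iota_w$ should be fully faithful with the union exhausting $\M\otimes\Q$ and the intersection being $0$ (both already noted: $(\M\otimes\Q)_{\le w}=0$ for $w<-2$ and $=\M\otimes\Q$ for $w\ge 0$), and — the crucial point — each inclusion $\iota_w$ should admit both a left adjoint and a right adjoint, so that every object $M$ has a canonical maximal subobject $W_wM$ of weight $\le w$ and a canonical maximal quotient of weight $\le w$, fitting into functorial exact sequences.

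The plan is to produce these adjoints explicitly from Deligne's weight filtration $W_\bullet$ on a $1$-motive $M=[L\to G]$, $G$ an extension of $A$ by $T$, namely $W_{-2}M=T[-1]$, $W_{-1}M=G[-1]$, $W_0M=M$, with graded pieces $T[-1]$, $A[-1]$, $L$ pure of weights $-2,-1,0$. First I would check that $M\mapsto W_wM$ is functorial on $\M$ (a morphism of $1$-motives is automatically compatible with $W_\bullet$ since tori map to tori, abelian varieties to abelian varieties, lattices to lattices — there are no nonzero maps between pure objects of different weights, as recorded in \S\ref{1.1} after the definition of $W_i(M)$) and that it passes to $\M\otimes\Q$. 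Then $W_w$ is right adjoint to $\iota_w$: the universal property $\Hom_{\M\otimes\Q}(\iota_w N, M)\iso \Hom_{(\M\otimes\Q)_{\le w}}(N, W_wM)$ for $N$ of weight $\le w$ follows because any map from a weight-$\le w$ object into $M$ lands in the maximal weight-$\le w$ sub-$1$-motive $W_wM$ — this is again a consequence of the vanishing of $\Hom$ between pure objects of different weights, applied to the graded pieces. Dually, I would exhibit the left adjoint as $M\mapsto M/W'M$ where $W'M$ is the smallest sub-$1$-motive with quotient of weight $\le w$ (equivalently, using that $\M\otimes\Q$ is abelian by Proposition \ref{isoab} and has the exactness properties needed); concretely, for $w=-1$ the quotient of weight $\le -1$ of $M=[L\to G]$ is $[0\to G]$ up to isogeny, and for $w=-2$ it is $[0\to T]$. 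Both adjoints are seen to be exact, which is part of what Definition \ref{dE.6} demands.

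The remaining checks are that $(\M\otimes\Q)_{\le w}$ is thick (closed under extensions and subquotients) inside $\M\otimes\Q$: closure under subquotients is immediate from the explicit description of $W_\bullet$ on a morphism, and closure under extensions follows since an extension of objects of weight $\le w$ has all graded pieces in weights $\le w$ (the weight filtration on the extension is built from those of the sub and quotient). I would also verify the compatibility condition relating the left and right adjoints on successive steps, i.e. that $\gr_w^W:=\ker(\iota_{w-1}\text{-colocalisation})/\dots$ lands in the pure category and that the filtration is finite and separated, all of which are transparent from Deligne's three-step filtration.

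\textbf{Main obstacle.} I expect the only real subtlety to be bookkeeping: matching Deligne's classical weight filtration precisely against whatever formulation of ``weight filtration on an abelian category'' is codified in Definition \ref{dE.6} of Appendix E — in particular getting the variance right (left adjoint $\leftrightarrow$ maximal quotient of bounded weight, right adjoint $\leftrightarrow$ maximal subobject of bounded weight) and confirming that the abelian structure on $\M\otimes\Q$ supplied by Proposition \ref{isoab} is compatible enough (e.g. that the relevant subobjects and quotients exist as $1$-motives up to isogeny, using that isogenies are inverted). Once the dictionary is fixed, every verification reduces to the single recurring fact that there are no nonzero morphisms between pure $1$-motives of distinct weights, so no hard computation is involved.
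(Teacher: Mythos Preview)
Your approach is correct and is essentially the same as the paper's, just unpacked: the paper invokes Remark~\ref{lF.3.6}, which says that giving a weight filtration in the sense of Definition~\ref{dE.6} (on objects of finite length) is the same as giving Jannsen's data of exact subfunctors $W_n$ of the identity that are separated and exhaustive, and then remarks that exactness of Deligne's $W_\bullet$ is the only thing to check and is clear. Your plan is exactly this verification spelled out (functoriality of $W_w$, its right-adjoint property, its exactness, thickness, separated/exhaustive), so there is no genuine difference in strategy.

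One small correction: Definition~\ref{dE.6} does \emph{not} ask for left adjoints to the $\iota_w$; ``split'' in Definition~\ref{d16.3}~b) means only that each $\iota_w$ has an \emph{exact right adjoint}. So your discussion of the left adjoint (maximal quotient of weight $\le w$) is superfluous, though harmless. Once you drop that, your list of checks matches exactly what Remark~\ref{lF.3.6} packages, and the substantive point---exactness of $M\mapsto W_wM$ on $\M\otimes\Q$---is, as you say, immediate from the fact that a morphism of $1$-motives respects tori, semi-abelian parts, and lattices.
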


\begin{proof} By Remark \ref{lF.3.6}, it suffices to check that the weight filtration on
$1$-motives verifies the conditions in \cite[p. 83, Def. 6.3 a)]{jannsen}. The only point is
its exactness, which is clear.
\end{proof}

\subsection{A left adjoint in the category of realisations} \label{realalb}
Let $K$ be a field and $\cT$ be a triangulated $K$-linear  category. 
We assume given a  $t$-structure on $\cT$ whose heart $\cB$ is provided with a
weight filtration $\cB_{\le n}$ in the sense of Definition \ref{dE.6}. For convenience, we
assume that $\cB_{\le 0}=\cB$.

We give ourselves a Serre subcategory $\cB_\L$ of $\cB_{-2}$; we call the objects of $\cB_\L$ the \emph{Lefschetz objects}.

\begin{hyp}\label{ss} There is a full abelian subcategory $\cB_{-2}^\tr$ of $\cB_{-2}$ such that 
\begin{thlist}
\item $\Hom(\cB_\L,\cB_{-2}^\tr)=\Hom(\cB_{-2}^\tr,\cB_\L)=0$.
\item Any object of $\cB_{-2}$ is the direct sum of an object of $\cB_\L$ and an object of $\cB_{-2}^\tr$.
\end{thlist}
\end{hyp}

Note that under this hypothesis, $\cB_{-2}^\tr$ is a Serre subcategory of $\cB_{-2}$;  
every object $H\in \cB_{-2}$ has a unique decomposition 
\begin{equation}\label{dec}
H=H_\L\oplus H^\tr
\end{equation}
 with $H_\L\in
\cB_\L$ and $H^\tr\in \cB_{-2}^\tr$. 

Also, if $\cB_{-2}$ is semi-simple, we have $\cB_{-2}^\tr=\cB_\L^\perp$, \cf \cite[Lemme 2.1.3]{AK}.

\begin{defn}\label{dlev1} An object $H\in \cB$ is \emph{of level $\le 1$} if
\begin{thlist}
\item The weights of $H$ belong to $\{-2,-1,0\}$;
\item $H_{-2}$ is a Lefschetz object.
\end{thlist}
We write $\cB_{(1)}$ for the full subcategory of $\cB$ consisting of objects of level $\le 1$.
\end{defn}

\begin{propose}\label{pAlbB} \
\begin{enumerate}
\item $\cB_{(1)}$ is a Serre subcategory of
$\cB$.
\item The inclusion functor $\cB_{(1)}\into \cB$ has an exact
left adjoint $H\mapsto \Alb^\cB(H)$.
\end{enumerate}
\end{propose}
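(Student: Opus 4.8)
The statement has two parts. For (1), I would show that $\cB_{(1)}$ is closed under kernels, cokernels, extensions and direct summands inside $\cB$. The key tool is that the weight filtration on $\cB$ is exact (part of Definition \ref{dE.6}): for a morphism $f\colon H\to H'$ between objects of level $\le 1$, the associated graded pieces $\gr^W_i$ of $\ker f$ and $\coker f$ are subquotients of $\gr^W_i(H)\oplus \gr^W_i(H')$, hence vanish for $i\notin\{-2,-1,0\}$; this gives condition (i). For condition (ii), $\gr^W_{-2}(\ker f)$ and $\gr^W_{-2}(\coker f)$ are a sub and a quotient of $H_{-2}\oplus H'_{-2}$, which is a Lefschetz object, and $\cB_\L$ is assumed thick and semisimple, so these remain Lefschetz objects. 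The same argument, applied to a short exact sequence $0\to H'\to H\to H''\to 0$, handles closure under extensions (the middle term of the weight graded sequence is squeezed), and thickness (closure under summands) is immediate since a summand has weights among those of the whole and its $\gr^W_{-2}$ is a summand of a Lefschetz object. So $\cB_{(1)}$ is a thick abelian subcategory.

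For (2), assuming $\cB_{-2}$ semisimple, I would construct $\Alb^\cB$ as the ``largest quotient of level $\le 1$'' functor, built in stages along the weight filtration. Given $H\in\cB$, first form the quotient $H^{\le 0}\df H/W_{-3}H$ (using that $\cB_{\le 0}=\cB$, so only $W_{-3}H$ obstructs condition (i)); this is exact in $H$ since the weight filtration is exact and functorial. Its only defect is that $\gr^W_{-2}(H^{\le 0})$ need not be Lefschetz; here I would use semisimplicity of $\cB_{-2}$ to split $\gr^W_{-2}(H^{\le 0})=N\oplus N'$ with $N$ the largest Lefschetz summand (the $\cB_\L$-isotypic part), and then push out $H^{\le 0}$ along the projection $W_{-2}H^{\le 0}\to W_{-2}H^{\le 0}/(\text{preimage of }N')$ to kill $N'$. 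Call the result $\Alb^\cB(H)$; it lies in $\cB_{(1)}$ by construction, and the universal map $H\to\Alb^\cB(H)$ gives the unit of the adjunction. One checks the adjunction isomorphism $\Hom_\cB(\Alb^\cB H, H')\iso\Hom_\cB(H,H')$ for $H'\in\cB_{(1)}$ directly: any map $H\to H'$ kills $W_{-3}H$ (weight reasons) and then kills the complementary summand $N'$ of the weight $-2$ part, because a map from $\gr^W_{-2}H$ to a Lefschetz object $H'_{-2}$ factors through the Lefschetz-isotypic part $N$ (this is where $\cB_\L$ semisimple and, more precisely, $\Hom(N',\text{Lefschetz})=0$ for $N'$ the non-Lefschetz isotypic part of a semisimple object enters). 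Exactness of $\Alb^\cB$ then follows formally from exactness of $H\mapsto H^{\le 0}$ together with exactness of the ``pushout along the Lefschetz-isotypic projection'' operation, the latter again using semisimplicity of $\cB_{-2}$ so that the relevant subobject of $W_{-2}$ varies exactly.

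The main obstacle I anticipate is making the functoriality and exactness of the ``kill the non-Lefschetz part of $\gr^W_{-2}$'' step precise: a priori the choice of complement $N'$ is only canonical up to the isotypic decomposition, so one must phrase it without choices — e.g.\ as quotienting $W_{-2}H^{\le 0}$ by the \emph{image} of $\Hom_\cB(L, W_{-2}H^{\le 0})\otimes L$ summed over a skeleton of $\cB_\L$, or equivalently by the smallest subobject whose quotient has Lefschetz $\gr^W_{-2}$. Checking that this construction is additive, functorial, and sends short exact sequences to short exact sequences (so that the composite with $H\mapsto H^{\le 0}$ is exact) is the delicate technical core; everything else is diagram-chasing with the exact weight filtration. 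A secondary, purely bookkeeping point is to verify that the hypothesis $\cB_\L$ semisimple (stated) combined with $\cB_{-2}$ semisimple (hypothesis of (2)) indeed yields the orthogonality $\Hom_\cB(N',L)=0$ for $L$ Lefschetz and $N'$ the complementary isotypic part — this is standard semisimple category theory but should be spelled out.
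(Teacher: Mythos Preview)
Your proposal is correct and follows essentially the same approach as the paper: for (2), the paper constructs $\Alb^\cB$ as the composition of the two exact functors $H\mapsto H_{>-3}$ and then $H\mapsto H/H''$, where $H''\subseteq H_{-2}$ is the canonical complement to the Lefschetz isotypic part given by semisimplicity of $\cB_{-2}$ --- exactly your two-stage construction. Your anticipated ``main obstacle'' about functoriality of the complement dissolves once you phrase it as the paper (implicitly) does: in the semisimple category $\cB_{-2}$, define $\cB'_{-2}=\{B\in\cB_{-2}\mid \Hom_\cB(B,\cB_\L)=0\}$, and then every object of $\cB_{-2}$ decomposes \emph{uniquely} as $H_\L\oplus H''$ with $H_\L\in\cB_\L$, $H''\in\cB'_{-2}$; the assignment $H\mapsto H''$ is then automatically an exact functor, so no choices are involved.
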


\begin{proof} (1) Given a short exact sequence in $\cB$, its middle term verifies (i) and (ii) of Definition \ref{dlev1} if and only if its extreme terms do: this is clear for (ii) by the thickness (Serreness) of $\cB_\L$ and for (i) by the properties of a weight filtration (see esp. Proposition \ref{proadj} (2)).

(2) We shall construct $\Alb^\cB$ as the
composition of two functors: 
\begin{itemize}
\item The first functor sends an object $H$ to $H_{>-3}$.
\item Suppose that $H_{\le -3}=0$, and consider $H_{-2}$.  
Let $H_{-2}=(H_{-2})_\L\oplus H_{-2}^\tr$ be the canonical decomposition \eqref{dec}, so that 
\[\Hom_\cB(L,H_{-2}^\tr) =\Hom_\cB(H_{-2}^\tr,L)=0\] 
for any Lefschetz object $L$.   Then the second functor sends $H$ to $H/H_{-2}^\tr$.
 \end{itemize}

The fact that this indeed defines an  exact left adjoint is readily checked (use again Proposition \ref{proadj} (2)). 
\end{proof}

\begin{remark}\label{r15.1}
  Let 
\[\cB^\tr_{\le -2} = \{H\in \cB_{\le -2}\mid (H_{-2})_\L=0\}.\]

This is a Serre subcategory of $\cB_{\le -2}$. Then Proposition \ref{pAlbB} and Proposition   \ref{psplit} (1) (ii) yield a split exact sequence of abelian categories, in the sense of Definition \ref{dsplit}:
\begin{equation}\label{eq15.2}
0\to \cB^\tr_{\le -2}\by{i} \cB\longby{\Alb^\cB} \cB_{(1)}\to 0
\end{equation}
in which the right adjoint of $\Alb^\cB$ is the natural inclusion.
\end{remark}

\begin{propose}\label{pAlbT} Let $\cT_{(1)}$
be the full subcategory of
$\cT$ consisting of objects $T$ such that $H^i(T)\in \cB_{(1)}$
for all
$i\in\Z$. Then:
\begin{enumerate}
\item $\cT_{(1)}$ is a thick subcategory of $\cT$, and the $t$-structure of
$\cT$ induces a $t$-structure on $\cT_{(1)}$.
\item If the $t$-structure is bounded,  the inclusion functor
$\cT_{(1)}\into \cT$ has a $t$-exact left adjoint
$\LAlb^\cT$.
\end{enumerate}
\end{propose}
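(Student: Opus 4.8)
The statement has two parts, and the plan is to treat them in turn, reusing the split exact sequence \eqref{eq15.2} from Remark \ref{r15.1} as the engine for part (2).

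\emph{Part (1).} First I would show $\cT_{(1)}$ is a triangulated subcategory: given a distinguished triangle $T'\to T\to T''\by{+1}$ with $T',T''\in \cT_{(1)}$, the long exact cohomology sequence relative to the $t$-structure exhibits $H^i(T)$ as a successive extension built from $H^i(T'),H^i(T'')$ and the connecting maps; since $\cB_{(1)}$ is a thick (hence extension-stable) abelian subcategory of $\cB$ by Proposition \ref{pAlbB}(1), each $H^i(T)\in\cB_{(1)}$, so $T\in\cT_{(1)}$. Thickness (closure under direct summands) follows because $\cB_{(1)}$ is closed under summands in $\cB$ and $H^i$ commutes with finite direct sums. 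That the $t$-structure of $\cT$ restricts to one on $\cT_{(1)}$ is then automatic: if $T\in\cT_{(1)}$ then its truncations $\tau_{\le n}T,\tau_{\ge n}T$ have cohomology objects among the $H^i(T)$, hence lie in $\cT_{(1)}$, and the truncation triangle is a triangle in $\cT_{(1)}$ by the first point. The heart of the induced $t$-structure is visibly $\cB_{(1)}$.

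\emph{Part (2).} Now assume the $t$-structure is bounded and $\cB_{-2}$ semi-simple. The idea is to build $\LAlb^\cT$ by ``applying $\Alb^\cB$ cohomology-object by cohomology-object''. Concretely, I would first handle objects of the heart: for $B\in\cB$, set $\LAlb^\cT(B):=\Alb^\cB(B)$, which lies in $\cB_{(1)}=\cT_{(1)}\cap\cB$. The functor $\Alb^\cB$ is exact by Proposition \ref{pAlbB}(2), so it sends the class $A(\cB)$ of acyclic complexes to acyclic complexes, and (invoking the excerpt's hypothesis that there is a $t$-exact functor $D^b(\cB)\to\cT$ which is the identity on $\cB$, or directly that $\cT$ is generated under the bounded $t$-structure by $\cB$) one extends $\Alb^\cB$ to a triangulated functor on all of $\cT$. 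The cleanest route: since the $t$-structure is bounded, every $T\in\cT$ is a finite iterated extension of shifted heart objects $H^i(T)[-i]$; define $\LAlb^\cT$ on generators by $H^i(T)[-i]\mapsto \Alb^\cB(H^i(T))[-i]$ and check it respects the triangles coming from the Postnikov tower, using exactness of $\Alb^\cB$ to see the connecting maps are transported consistently — this is where the split exact sequence \eqref{eq15.2} is used, the splitting guaranteeing that the construction does not depend on choices and that $\LAlb^\cT$ is $t$-exact (it takes $\tau_{\le n}$ to $\tau_{\le n}$ because $\Alb^\cB$ is exact on $\cB$). For the adjunction: for $T\in\cT$, $U\in\cT_{(1)}$, I would prove $\Hom_\cT(\LAlb^\cT T,U)\cong\Hom_\cT(T,U)$ by dévissage on the $t$-filtration of $T$, reducing to $T=B[-i]$, $U=B'[-j]$ with $B\in\cB$, $B'\in\cB_{(1)}$; then $\Hom_\cT(B[-i],B'[-j])=\Ext^{i-j}_\cB(B,B')$ and one needs $\Ext^*_\cB(\Alb^\cB B,B')\cong\Ext^*_\cB(B,B')$, i.e. that $\Alb^\cB$ is left adjoint to the inclusion \emph{as a derived} statement. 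This follows from \eqref{eq15.2}: the complementary subcategory $\cB'_{\le -2}$ satisfies $\Hom_\cB(\cB'_{\le -2},\cB_{(1)})=0$ and, because the sequence is split and $\Alb^\cB$ is exact, the higher $\Ext$'s also vanish — the kernel of $\Alb^\cB$ is a "thick" subcategory orthogonal to $\cT_{(1)}$ in all degrees, which is exactly what makes the adjunction hold at the level of $\Hom$ in $\cT$ rather than just in the heart.

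\textbf{The main obstacle.} The genuinely delicate step is constructing $\LAlb^\cT$ as a well-defined \emph{triangulated} functor on all of $\cT$ from its prescription on the heart, and simultaneously verifying $t$-exactness — this is where one must be careful about whether $\cT=D^b(\cB)$ or merely receives a $t$-exact functor from it, and whether the Postnikov/dévissage argument genuinely glues. I expect to lean on \eqref{eq15.2} being \emph{split} (hence $\ker\Alb^\cB=\cB'_{\le-2}$ is a direct factor and the whole construction is "as exact as possible"), together with the boundedness hypothesis, to push the heart-level adjunction of Proposition \ref{pAlbB}(2) up to $\cT$; the orthogonality $\Hom_\cT(\cB'_{\le-2}[*],\cT_{(1)})=0$ in all degrees is the key computation that makes the triangulated adjunction formal. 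If $\cT$ is a general triangulated category with bounded $t$-structure (not a derived category), I would invoke the identity functor $D^b(\cB)\to\cT$ hypothesis from \ref{0.11} to transport everything, which sidesteps the usual non-functoriality of cones.
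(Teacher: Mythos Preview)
Your Part (1) is fine and matches the paper's reasoning. For Part (2), however, there is a genuine gap in your approach and it diverges from the paper's method.

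The gap: building $\LAlb^\cT$ by ``applying $\Alb^\cB$ to each $H^i$ and gluing along the Postnikov tower'' does not yield a well-defined triangulated functor on a general $t$-category $\cT$, precisely because of the non-functoriality of cones you flag. Your fallback to the hypothesis of a $t$-exact functor $D^b(\cB)\to\cT$ is not part of the statement of Proposition~\ref{pAlbT} (that hypothesis only enters in Section~\ref{compreal}). A second problem is your identification $\Hom_\cT(B[-i],B'[-j])=\Ext^{i-j}_\cB(B,B')$: this is false for a general $t$-category when $i-j\ge 2$, so your d\'evissage for the adjunction does not go through as written.

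The paper avoids all of this by never constructing $\LAlb^\cT$ by hand. Instead it applies Proposition~\ref{psplitt1}(3) to the split exact sequence \eqref{eq15.2}: setting $\cT'_{\le -2}=\{C\in\cT\mid H^*(C)\in\cB'_{\le -2}\}$, that proposition gives a split exact sequence of triangulated categories $0\to\cT'_{\le -2}\by{i}\cT\by{\pi}\cT/\cT'_{\le -2}\to 0$ in the sense of Proposition~\ref{psplitbis}, with $\pi$ $t$-exact and with the right adjoint $j$ to $\pi$ satisfying $j(\cT/\cT'_{\le -2})=\cT_{(1)}$. Thus the inclusion $\cT_{(1)}\into\cT$ is identified with $j$, and its left adjoint is $\pi$ itself. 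The orthogonality you correctly single out, $\Hom_\cT(\cT'_{\le -2},\cT_{(1)})=0$, is exactly what Proposition~\ref{psplitt1}(3) provides (via the ``$\cS$-is-triangulated-and-contains-$\cA$'' argument), and once you have it the adjoint comes for free from the semi\-orthogonal decomposition --- no gluing, no cone choices, no appeal to $D^b(\cB)$.
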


\begin{proof} For (1), just note that $\cB_{(1)}$ is thick
in $\cB$ by Proposition \ref{pAlbB} (1) (\cf Proposition \ref{psplitt1} (1) (2)). For (2), the exact sequence \eqref{eq15.2} of Remark
\ref{r15.1} yields an exact sequence of triangulated categories
\[0\to \cT^\tr_{\le -2}\by{i} \cT\by{\pi} \cT/\cT^\tr_{\le -2}\to 0\]
where $\cT^\tr_{\le -2}=\{C\in \cT\mid H^*(C)\in \cB^\tr_{\le -2}\}$. 
The claim now follows from  Proposition \ref{psplitt1} (3). More precisely, this proposition shows that $\pi$ has a right adjoint $j$
such that $j(\cT/\cT^\tr_{\le 2}) = \cT_{(1)}$; then $\pi$ gets identified with the desired functor
$\LAlb^\cT$.
\end{proof}

\subsection{Realisation functor} \label{sextend0} Let $K,\cT,\cB$ be as in
\ref{realalb}. For $T\in \cT$, we denote by $H_*^\cT(T)$ the homology objects of $T$ (with
values in $\cB$) with respect to the $t$-structure of $\cT$.   We give ourselves a (covariant)
triangulated functor
$$R:\DM_\gm^\eff\otimes \Q\to \cT.$$
 (Note: if $R\ne 0$,  this implies that $K$ is of characteristic zero.)  For $X\in \Sm(k)$ and $i\in\Z$ we write $$H_i^R(X):=H_i^\cT(R(M(X))).$$ We assume:

\begin{hyp}\label{h15.1} If $X$ is smooth projective connected of dimension $d\le 1$, we have
\begin{enumerate}
\item $RM(X)\in \cT_{[-2d,0]}$ (in particular, $H_i^R(X)=0$ for $i\notin [-2d,0]$).
\item If $d=1$ and $E$ is the field of constants of $X$, the map $H_0^R(X)\to H_0^R(\Spec E)$
is an isomorphism.
\item If $d=1$, $E$ is the field of constants of $X$ and $f:X\to \P^1_E$ is a nonconstant
rational function, the map $f_*:H_2^R(X)\to H_2^R(\P^1_E)$ is an isomorphism.
\end{enumerate}
\end{hyp}

\begin{propose}\label{p15.1} Under Hypothesis \ref{h15.1}, the composition
\[\M\otimes\Q\to D^b(\M)\otimes\Q\longby{\Tot}\DM_\gm^\eff\otimes\Q\by{R} \cT\]
has image in the heart $\cB$ of $\cT$. The corresponding functor
\[R_1:\M\otimes\Q\to \cB\]
is exact. If moreover
\begin{enumerate}
\item[(W)]  $H_i^R(X)\in \cB_{-i}$ for all $i\in\N$
\end{enumerate}
for $X$ smooth projective with $\dim X\le 1$, then $R_1$ respects the splittings of
$\M\otimes\Q$ and $\cB$  in the sense of Definition \ref{dF.4.4} (2).
\end{propose}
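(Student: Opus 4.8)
The plan is to prove the three assertions of Proposition \ref{p15.1} in order, using the structure of $1$-motives and reducing everything to smooth projective curves via the computations of Sections \ref{sect1} and \ref{scurves}.

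First I would show that $R_1 := R\Tot$ applied to a $1$-motive lands in the heart $\cB$. By additivity and the fact that every $1$-motive up to isogeny is built from pure $1$-motives of weights $0,-1,-2$ (i.e. lattices, abelian varieties, and tori), together with the exactness of $\Tot$ on $\M[1/p]$ and the triangulated nature of $R$, it suffices to check that $R\Tot(N)\in\cB$ for $N$ pure. For $N=[L\to 0]$ a lattice, $\Tot(N)$ is an Artin motive, a direct summand of $M(\Spec E)$ for $E/k$ finite; by Hypothesis \ref{h15.1}(1) applied with $d=0$, $RM(\Spec E)\in\cT_{[0,0]}=\cB$. For $N=[0\to A]$ an abelian variety, using that $A$ is isogenous to a direct summand of the Jacobian $J(C)$ of a smooth projective curve $C$ with a rational point (as recalled in \S\ref{2.3}), $\Tot(N)=A[-1]$ is the summand of $M(C)[-1]$ cut out by $h^1(C)$; by Hypothesis \ref{h15.1}(1) for $d=1$, $RM(C)\in\cT_{[-2,0]}$, and the relevant summand sits in degree $-1$ — here I would use parts (2) and (3) of the hypothesis, which precisely say that the degree $0$ and degree $-2$ parts of $RM(C)$ come from $\Spec E$ and $\P^1_E$ respectively, i.e. are the Artin and Lefschetz summands, so the complementary summand $h^1$ is concentrated in degree $-1$. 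For $N=[0\to T]$ a torus, Cartier duality (Proposition \ref{pcd}) and Theorem \ref{teq} reduce this to the lattice case after a Tate twist; alternatively a permutation torus is a summand of a product of $\G_m$'s, and $\G_m[-1]$ is a summand of $M(\P^1)$ — again Hypothesis \ref{h15.1}(3) identifies it as living in degree $-2$ (a Lefschetz object, by (W) if assumed, but concentration in the heart needs only (1)–(3)). Hence $R_1(N)\in\cB$ for all $N$, so $R_1:\M\otimes\Q\to\cB$ is well defined.

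Next, exactness of $R_1$: since $\M\otimes\Q$ is abelian of cohomological dimension $\le 1$ (Proposition \ref{iso1}) and $\Tot$ is exact (i.e. sends short exact sequences of $\M[1/p]$ to exact triangles, by Lemma \ref{lexact} and the construction of $D^b(\M[1/p])$), a short exact sequence $0\to N'\to N\to N''\to 0$ in $\M\otimes\Q$ yields an exact triangle $\Tot(N')\to\Tot(N)\to\Tot(N'')\by{+1}$, which $R$ carries to an exact triangle in $\cT$ all of whose three vertices lie in $\cB$ by the previous paragraph; the long exact homology sequence of a $t$-structure then collapses to a short exact sequence $0\to R_1(N')\to R_1(N)\to R_1(N'')\to 0$ in $\cB$. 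So $R_1$ is exact.

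Finally, compatibility with the splittings, assuming (W). By Definition \ref{dF.4.4}(2) (and Remark \ref{lF.3.6}, \S\ref{s15.2}) I must check that $R_1$ carries the weight filtration $(\M\otimes\Q)_{\le w}$ into $\cB_{\le w}$ and that it respects the level-$\le 1$ structure, i.e. sends a pure $1$-motive of weight $-2$ to a Lefschetz object. Since $R_1$ is exact and weights are detected on pure subquotients, it suffices to treat $N$ pure: $N=[L\to 0]$ has weight $0$ and $R_1(N)=H_0^R$ of an Artin motive, which by (W) lies in $\cB_0$; $N=[0\to A]$ has weight $-1$ and $R_1(N)=H_1^R(C)$ (the $h^1$-summand), which by (W) lies in $\cB_{-1}$; $N=[0\to T]$ has weight $-2$ and $R_1(N)$ is the degree-$2$ part of $RM(C)$ for $C=\P^1$ (or a permutation variant), which by (W) lies in $\cB_{-2}$ and, being by Hypothesis \ref{h15.1}(3) the image of $H_2^R(\P^1_E)$, is a Lefschetz object by the very definition of $\cB_\L$ in the realisation at hand. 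Chasing these through the split exact sequences \eqref{eq15.2} for $\M\otimes\Q$ and for $\cB$ gives that $R_1$ respects the splittings.

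The main obstacle I anticipate is the second paragraph: pinning down \emph{exactly} which summand of $RM(C)$ is hit, so as to conclude concentration in a single degree and weight, requires using all three parts of Hypothesis \ref{h15.1} in tandem with the Chow–K\"unneth-type decomposition $M(C)=\mathbf{1}\oplus h^1(C)\oplus \mathbf{1}(1)[2]$ available for a curve with a rational point; one must be careful that (1) gives the range $[-2,0]$, while (2) and (3) identify the extreme cohomology objects $H_0^R$ and $H_2^R$ with those of $\Spec E$ and $\P^1_E$, forcing $H_1^R$ to be the only nontrivial homology of the complementary motive $h^1(C)$. Everything else is a formal consequence of exactness of $\Tot$, Proposition \ref{iso1}, and the bookkeeping of $t$-structures and weight filtrations already set up in Appendix E and \S\ref{s15.2}.
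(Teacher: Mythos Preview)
Your proposal is correct and follows essentially the same strategy as the paper: reduce to pure $1$-motives, handle lattices via $M(\Spec E)$ and Hypothesis~\ref{h15.1}(1), tori via the decomposition of $M(\P^1_E)$, and abelian varieties via Jacobians of curves; exactness of $R_1$ and compatibility with weights are then formal, just as you say (the paper packages the exactness step as Lemma~\ref{lJ.1}). The only visible difference is in the abelian-variety step: where you argue directly from the Chow--K\"unneth splitting that $H_0(R\,h_1(C))=0$ (by~(2)) and $H_2(R\,h_1(C))=0$ (by~(3)), the paper instead writes down the exact triangle
\[
\Tot([0\to J(C)])[1]\to M(C)\xrightarrow{\ f_*\ }M(\P^1)\xrightarrow{\ +1\ }
\]
and runs the long exact homology sequence, after first computing $H^R_*(\P^1_E)$ in the torus step. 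Both routes rest on the same hidden fact, namely that $f_*$ vanishes on $h_1(C)$ at the level of Chow motives; the paper makes this explicit by computing the matrix of $f_*$ on the K\"unneth pieces, and that computation is exactly what your phrase ``the degree $0$ and degree $-2$ parts are the Artin and Lefschetz summands'' is tacitly invoking. (Your aside on Cartier duality for tori would need extra compatibility hypotheses on $R$ and is not the paper's route; your main line via $\P^1_E$ is.)
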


\begin{proof}
By definition of a $t$-structure, the first assertion will hold provided $R\circ
\Tot(N)\in\cB$ for any 1-motive $N$ of pure weight. We are then left with lattices ($[L\to
0]$), tori ($[0\to T]$) and abelian varieties ($[0\to A]$). Moreover, since we work with
rational coefficients, we may assume that $L=R_{E/k}\Z$ and $T=R_{E/k} \G_m$ for a finite
separable extension $E/k$.

We have $\Tot([L\to 0])=M(\Spec E)$ and $\Tot([0\to T])=M(\Spec E)(1)$; hence
$R\Tot([L\to 0]) =RM(\Spec E) =H_0^R(\Spec E)[0]\in \cB$ by Hypothesis \ref{h15.1} (1).
On the other hand, since the only idempotents in $\End(M(\P^1_E))$ are
the K\"unneth idempotents, yielding the decomposition
$M(\P^1_E)=M(\Spec E)\oplus M(\Spec E)(1)[2]$, we get $R\Tot([0\to T])=H_2^R(\P^1_E)\in
\cB$, and also $H_1^R(\P^1_E)=0$, by Hypothesis \ref{h15.1} (2). 

For abelian varieties, we may reduce to the case of Jacobians of curves. Recall Voevodsky's functor $\Phi_\Q$ obtained by tensoring \eqref{eqvf} with $\Q$.
Let $C$ be a smooth projective geometrically connected $k$-curve: the choice of a closed point $c\in C$
determines a Chow-K\"unneth decomposition. Also, $\Tot([0\to J(C)])\simeq 
\Phi_\Q(h_1(C))[-1] $ as a Chow-K\"unneth direct summand of $M(C)[-1]$ (see \ref{s2.4.1}  and  Remark \ref{r16.3}). This already
shows that $R\Tot([0\to J(C)])\in \cT_{[-1,1]}$, by Hypothesis \ref{h15.1} (1).

Choose a
nonconstant rational fonction $f:C\to \P^1$, and let $x=f(c)\in \P^1$. If $\pi_i^c$ and
$\pi_i^x$ are the corresponding Chow-K\"unneth projectors, we clearly have
$f_*\pi_0^c=\pi_0^xf_*$. Hence the matrix of $f_*$ on the decompositions $h(C)=h_0(C)\oplus
h_1(C)\oplus h_2(C)$ and
$h(\P^1)=h_0(\P^1)\oplus h_2(\P^1)$ is of the form
\[f_* = \begin{pmatrix}
1&0&*\\
0&0&1
\end{pmatrix}
\]
if we identify $h_0(C),h_0(\P^1)$ with $\un$ and $h_2(C),h_2(\P^1)$ with $\L$. (One can compute
that $*$ equals $\pm\left(f^{-1}(f(c))-\deg f \cdot c\right)$, as an element of
$J(C)(k)\otimes \Q$.) Thus we have an exact triangle in $\DM_\gm^\eff\otimes\Q$:
\[\Tot([0\to J(C)][1])\to M(C)\longby{f_*}M(\P^1)\by{+1}\]
hence a long exact sequence in $\cB$:
\[\dots\to H_{i+1}^R(\P^1)\to H_i^\cT(R\Tot([0\to J(C)]))\to H_i^R(C)\longby{f_*} H_i^R(\P^1)\to
\dots\]

Using now the computation of $H_*^R(\P^1)$ and Hypothesis \ref{h15.1} (3), we find:
\[H_i^\cT(R\Tot([0\to J(C)][1]))=
\begin{cases}
0&\text{$i\ne 1$}\\
H_1^R(C)&\text{$i=1$.}
\end{cases}
\]

This shows that $R\Tot([0\to J(C)])\in \cB$, hence the functor $R_1:\M\otimes\Q\to \cB$. By Lemma \ref{lJ.1}, the composition
\[D^b(\M)\otimes\Q\longby{\Tot}\DM_\gm^\eff\otimes\Q\longby{R}\cT\]
is $t$-exact relatively the canonical $t$-structure on $D^b(\M)\otimes\Q$ (the motivic one,
with heart $\M\otimes\Q$), and its restriction $R_1$ to the hearts is exact.

If Condition (W) is verified, the proof shows that $R_1$ respects the splittings of
$\M\otimes\Q$ and $\cB$ in the sense of Definition \ref{dF.4.4} (2).
\end{proof}

\subsection{The base change theorem}\label{bch} Let $K, \cB,\cT,R$ be as in \S\ref{sextend0}. If
$X$ is a smooth projective $k$-variety, we set as before
$H^R_i(X):=H_i^\cT(R(M(X))$
where $H_*^\cT$ denotes the homology functors with values in $\cB$ defined by the $t$-structure
on $\cT$.

We have a commutative diagram 
\[\begin{CD}
&&&& X@>>>\pi_0(X)\\
&&&& @V{a_X}VV @V{\nu}VV\\
0@>>> \cA^0_{X/k}@>>>  \cA_{X/k}@>>> \Z[\pi_0(X)]@>>> 0
\end{CD}\]
 where $a_X$ is the  Albanese map and $\nu$ is the natural map;
hence a refined map
\begin{equation}\label{r16.1}
a_X^1:X\to\cA^1_X\df \cA_X\times_{\Z[\pi_0(X)]}\pi_0(X)
\end{equation}
 where $\cA^1_X$ is an $\cA^0_X$-torsor over $\pi_0(X)$. (Note that $a_X^1$ is a $\pi_0(X)$-morphism.)

\begin{hyp}\label{h16.1} 
We assume Hypothesis \ref{h15.1}, and moreover:
\begin{enumerate}
\item The $t$-structure on $\cT$ is bounded.
\item The restriction of $R_1$ to $(\M\otimes\Q)_{-2}$ induces a full embedding $(\M\otimes K)_{-2}\into \cB_\L$, with essential image the full subcategory of semi-simple objects.
\item For any $X$ smooth projective:
\begin{thlist} 
\item $H^R_i(X):=H_i(R(M(X))=0$ for $i<0$.
\item $H^R_i(X)\in \cB_{-i}$ for $i\ge 0$.
\item $H^R_0(X)\iso H^R_0(\pi_0(X))$.
\item  The map   \eqref{r16.1} induces an isomorphism 
\[(a_X^1)_*:H^R_1(X)\iso H^R_1(\cA^1_{X/k}).\]
\end{thlist}
\end{enumerate}
\end{hyp}

Let $R_1:=R\Tot$. Using Proposition \ref{pAlbT} and Section
\ref{sbase}, we get from this equality a base change morphism 
\begin{equation}\label{eq15.1}
v:\LAlb^\cT R\Rightarrow R_1\LAlb^\Q.
\end{equation}

\begin{lemma}\label{comreal2} Under \ref{h16.1}, we have a
commutative diagram
\[\begin{CD}
\Hom_{\DM_\gm^\eff\otimes\Q}(M(X),\Z(1)[2])@>R>> \Hom_\cT(R(X),R(\Z(1))[2])\\
@V{\alpha}VV @V{\beta}VV\\
\Hom_\cB(R_1\LA{2}^\Q(X),\Lambda)@>v^*>> \Hom_\cB(\Alb^\cB
H^R_2(X),\Lambda)
\end{CD}\]
for any smooth projective variety $X$, where $\Lambda\df R_1([0\to\G_m])$.
\end{lemma}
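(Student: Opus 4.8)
The statement asks for commutativity of a square relating four maps: the realisation $R$ on a motivic cohomology group of a smooth projective $X$, the map $\alpha$ (induced by $v$ and the adjunction between $\LAlb^\Q$ and $\Tot$ together with the identification $\RA{2}^\Q(X)$ or rather $\LA{2}^\Q(X)$ with a piece of $\Alb^\cB$ of $H_2$), the base change natural transformation $v$, and the map $\beta$ (which identifies $\Hom_\cT(R(X),R(\Z(1))[2])$ with $\Hom_\cB(\Alb^\cT H_2^R(X),\Lambda)$ via the $t$-exactness of $\LAlb^\cT$ and Hypothesis \ref{h16.1}). The strategy is to unwind all four arrows to the level of the adjunctions that define them and check that the diagram is a formal consequence of the naturality of those adjunctions, together with the base-change construction of \S\ref{sbase}.

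\textbf{Steps.} First I would reduce everything to a statement about Hom groups in $\cT$. The left vertical map $\alpha$ factors: $\Hom_{\DM_\gm^\eff\otimes\Q}(M(X),\Z(1)[2])$ is, by Theorem \ref{ladj} (the rational adjunction $\LAlb^\Q\dashv\Tot$) and the fact that $\Z(1)[2]=\Tot([0\to\G_m])[2]\in d\1\DM$, isomorphic to $\Hom_{D^b(\M\otimes\Q)}(\LAlb^\Q(M(X)),[0\to\G_m][2])$, which by Proposition \ref{iso1} (cohomological dimension $\le 1$) and the spectral sequence of Proposition \ref{cdhex} collapses onto $\Hom_\cB(\LA{2}^\Q(X),[0\to\G_m])$; applying $R_1$ and precomposing with $v$ gives $\alpha$. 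Second, I would unwind $\beta$: by Hypothesis \ref{h16.1} (3)(i)--(ii), $R(M(X))\in\cT_{[-2d,0]}$ with homology in the correct weights, so $\LAlb^\cT R(M(X))$ is computed termwise and $H_2^\cT(\LAlb^\cT R(M(X)))=\Alb^\cB H_2^R(X)$ by the $t$-exactness of $\LAlb^\cT$ (Proposition \ref{pAlbT}(2)); then $\Hom_\cT(R(X),R(\Z(1))[2])$, with $R(\Z(1))[2]$ lying in $\cT_{(1)}$ (it is $\Lambda[2]$ concentrated in degree $-2$, a Lefschetz object by \ref{h16.1}(2)), is via the adjunction $\LAlb^\cT\dashv(\text{inclusion})$ equal to $\Hom_{\cT_{(1)}}(\LAlb^\cT R(X),\Lambda[2])$, which since both sides are concentrated appropriately in the heart equals $\Hom_\cB(\Alb^\cB H_2^R(X),\Lambda)$. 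Third, and this is the heart of the matter, I would check that the square obtained after these identifications is precisely an instance of the naturality diagram for the base-change transformation $\psi:AR\Rightarrow R_1B$ of \S\ref{sbase}, with $(A,B,T,S,\phi)=(\LAlb^\cT,\LAlb^\Q,\text{incl},\Tot,\text{identity via }R\Tot=R_1)$: the map $R$ on top corresponds to $S$-to-$\Tot$ transport, and $v^*$ on the bottom is exactly $\psi^*$. The commutativity then follows from the defining triangle identities of the two adjunctions together with the definition of $\psi$ as the adjoint of $R\Rightarrow RSB\Rightarrow TR_1B$.

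\textbf{Main obstacle.} The genuine difficulty is bookkeeping: making sure that the various identifications (Theorem \ref{ladj}, the edge maps of the two spectral sequences in Propositions \ref{cdhex} and \ref{c15.1}/\ref{pAlbT}, and the $t$-exactness statements) are compatible in the way required, i.e.\ that the isomorphism $\Hom_{D^b(\M)}(\LAlb^\Q M(X),[0\to\G_m][2])\simeq\Hom_\cB(\LA{2}^\Q(X),[0\to\G_m])$ is natural in $M(X)$ and matches, under $R_1$, the analogous collapse on the $\cT$ side. Once these naturalities are in place, the commutativity is formal; but one must be careful that $\Z(1)[2]$ really does land in $d\1\DM_{\gm,\et}^\eff$ (it does, being $\Tot$ of a Lefschetz $1$-motive), that $R(\Z(1))$ identifies with $\Lambda=R_1([0\to\G_m])$ compatibly with $v$, and that the degree shift by $2$ does not disturb the adjunction isomorphisms (it does not, since $\LAlb^\cT$ is triangulated and $t$-exact). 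I would therefore organise the proof as: (i) identify both Hom groups with heart-level Hom's via the cohomological-dimension-$1$ collapse on the left and the concentration-in-one-degree argument on the right; (ii) observe that under these identifications the top map is $R_1$ applied to a heart morphism and the bottom map is $v^*$; (iii) invoke the general naturality of the base-change transformation from \S\ref{sbase} to conclude.
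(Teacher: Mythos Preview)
Your approach is essentially the same as the paper's: define $\alpha$ and $\beta$ by unwinding the relevant adjunctions and $t$-structure truncations, then observe that commutativity is a formal consequence of the naturality of the base-change transformation $v$ from \S\ref{sbase}. The paper's proof is terser—it simply writes out the two chains of maps \eqref{eq16.1} and \eqref{eq16.2} defining $\alpha$ and $\beta$ and asserts that ``by following the various adjunction isomorphisms, the diagram is commutative''—but the content is identical to your steps (i)--(iii).

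Two small corrections to your write-up. First, $\alpha$ does \emph{not} involve precomposing with $v$: it is adjunction $+$ ${}^tH_2$ $+$ $R_1$, landing in $\Hom_\cB(R_1\LA{2}^\Q(X),\Lambda)$; the map $v^*$ is the separate bottom arrow of the square. Second, the paper defines $\beta$ in the opposite order from yours: it first takes $H_2^R$ of the map in $\cT$ (landing in $\Hom_\cB(H_2^R(X),\Lambda)$), then applies the adjunction $\Alb^\cB\dashv\iota$ at the heart level, rather than first applying $\LAlb^\cT$ and then truncating. The two orders agree by $t$-exactness of $\LAlb^\cT$, so this is cosmetic, but the paper's order makes the match with the definition of $v$ in \eqref{eq15.1} slightly more transparent.
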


\begin{proof} Playing with the adjunctions and $t$-structures, we 
have chains of maps
\begin{multline}\label{eq16.1}
(\DM_\gm^\eff\otimes\Q)(M(X),\Z(1)[2])\simeq (D^b(\M)\otimes\Q)(\LAlb^\Q(X),[0\to
\G_m][2])\\
\longby{{}^tH_2}(\M\otimes\Q)(\LA{2}^\Q(X),[0\to \G_m])
\longby{R_1}\cB(R_1\LA{2}^\Q(X),\Lambda)
\end{multline}
and
\begin{multline}\label{eq16.2}
\cT(R(X),R(\Z(1))[2])\simeq\cT(R(X),\Lambda[2])\\
\longby{H_2^R}\cB(H^R_2(X),\Lambda)\simeq\cB(\Alb^\cB H^R_2(X),\Lambda).
\end{multline}

This defines respectively $\alpha$ and $\beta$. By following the various adjunction
isomorphisms, the diagram is commutative as claimed.
\end{proof}

\begin{remark}\label{r16.2}
Note that $\Hom_\cB(R_1\LA{2}^\Q(X),\Lambda)=\NS(X)\otimes K$ by Corollary 10.2.3
and Hypothesis \ref{h16.1} (2). Also, we have an injection $\Hom_\cB(\Alb^\cB
H^R_2(X),\Lambda)\into \Hom_\cB(H^R_2(X),\Lambda)$ by (the proof of) Proposition \ref{pAlbT}
(2). If
$\cB$ sits in a larger ``non-effective" category $\cB'$ which carries a duality, the latter
group may be interpreted as
$\Hom_\cB(K_{\cB'},H^2_R(X)(1))$, with $K_\cB:=R_1(\Z)$. Finally, the composition $\beta R$ in
the diagram of Lemma
\ref{comreal2}, followed by the latter inclusion, is easily checked to be the cycle class map
in the classical cases. So  the bottom row of this diagram contains an abstract argument that
algebraic equivalence is weaker than homological equivalence in codimension $1$.

On the other hand, it is known that algebraic and numerical equivalences coincide rationally
in codimension $1$. Thus, if $H_*^R$ defines an adequate equivalence relation on algebraic
cycles, we automatically get that $v^*$ is injective in the diagram of  Lemma \ref{comreal2}.
This will be the case if $H^*_R$ defines a Weil cohomology theory, which will follow if $\cB$
and $\cT$ can be extended to categories satisfying natural extra axioms (tensor structure,
duality),
\cf Cisinski-D\'eglise
\cite{cis-deg}.
\end{remark}

\begin{thm}\label{abstractdelconj} Under \ref{h16.1},
\begin{enumerate}
\item The base change morphism  $v$ is an isomorphism in weights $0$ and $-1$. 
\item Let $(\DM^\eff_\gm)^\cB$ be the thick  triangulated subcategory of $\DM_\gm^\eff\boxtimes\Q$ generated
by the $M(X)$ where $X$ is a smooth projective variety such that
\begin{thlist}
\item $H_2^R(X)$ is a semi-simple object of $\cB_{-2}$.
\item For any finite extension
$E/k$, the map
$v^*$ of Lemma \ref{comreal2} is injective for $X_E$ (see Remark \ref{r16.2}) and the
``geometric cycle class map"
\begin{multline}\label{eq16.3}
\Pic(X_E)\otimes K=\Hom_{\DM_\gm^\eff\otimes\Q}(M(X_E),\Z(1)[2])\otimes K\\
\longby{R} \Hom_\cT(R(X_E),R(\Z(1))[2])\longby{H_2^R} \Hom_\cB(H^R_2(X_E),R(\Z(1)))
\end{multline}
is surjective.
\end{thlist}
 Then the
restriction of
$v$ to
$(\DM^\eff_\gm)^\cB$ is an isomorphism. 
\end{enumerate}
\end{thm}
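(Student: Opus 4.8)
The plan is to deduce Theorem \ref{abstractdelconj} from the base change formalism of \S\ref{sbase} by a standard d\'evissage, reducing everything to the case of $M(X)$ with $X$ smooth projective and then, via the weight filtration, to the three weight-homogeneous pieces. First I would treat part (1). Since both $\LAlb^\cT R$ and $R_1\LAlb^\Q$ are triangulated functors and $v$ is a natural transformation between them, it suffices to check that $v_M$ is an isomorphism ``in weights $0$ and $-1$'' on a generating family, hence on $M=M(X)$ for $X$ smooth projective; here ``in weights $0$ and $-1$'' means after applying $\gr^W_0$ and $\gr^W_{-1}$ to the homology objects in $\cB$ (using Proposition \ref{pwM} and the weight filtration on $\cB$). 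By Hypothesis \ref{h16.1}(3)(i)--(iii) we have $H^R_0(X)\simeq H^R_0(\pi_0(X))\in\cB_0$ and $H^R_i(X)\in\cB_{-i}$, while on the motivic side Corollary \ref{HLAlb} gives $\LA{0}(X)=[\Z[\pi_0(X)]\to 0]$ and $\LA{1}(X)=[0\to \cA^0_{X/k}]$; the key point is then to identify $R_1([\Z[\pi_0(X)]\to 0])$ with $H^R_0(X)$ (immediate from $\Tot([L\to 0])=M(\pi_0(X))$ and Hypothesis \ref{h16.1}(3)(iii)) and $R_1([0\to\cA^0_{X/k}])$ with $H^R_1(X)$, which is exactly Hypothesis \ref{h16.1}(3)(iv) composed with the computation of $R_1$ on semi-abelian varieties carried out in the proof of Proposition \ref{p15.1}. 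Naturality of $v$ guarantees these identifications are compatible with the base change map, so $v$ is an isomorphism on $\gr^W_0$ and $\gr^W_{-1}$, proving (1).

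For part (2) the remaining issue is weight $-2$, i.e.\ the piece $\gr^W_{-2}$ of $H_2$. I would again reduce to $M(X)$, $X$ smooth projective, but now use the triangle giving $M(X)$ from lower-dimensional pieces only via the hypotheses built into $(\DM^\eff_\gm)'$: by construction this thick subcategory is generated by those $X$ (and all their finite base changes) for which the geometric cycle class map \eqref{eq16.3} is surjective and $v^*$ of Lemma \ref{comreal2} is injective. The strategy is to show $v_{M(X)}$ is an isomorphism on $\gr^W_{-2}H_2$ for such $X$. On the motivic side, $\gr^W_{-2}\LA{2}^\Q(X)$ is the toric part $[0\to \NS^*_{X/k}]$ by Theorem \ref{trunc}, so $R_1$ of it is (the $K$-linearisation of) $\NS(\bar X)^{G_E}$ twisted appropriately, via Hypothesis \ref{h16.1}(2) which makes $R_1$ an equivalence on weight $-2$ objects; on the cohomological side, $\gr^W_{-2}$ of $\Alb^\cT H^R_2(X)$ is the Lefschetz part of $H^R_2(X)$. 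The commutative square of Lemma \ref{comreal2} now shows that $v^*$ on these objects is the map induced by the cycle class map: surjectivity of \eqref{eq16.3} forces $v^*$ to be surjective, and the injectivity built into the definition of $(\DM^\eff_\gm)'$ (valid because algebraic and numerical equivalence agree rationally in codimension one, cf.\ Remark \ref{r16.2}) forces it to be injective. Hence $v^*$ is bijective; since $\Lambda$ is (up to the equivalence of Hypothesis \ref{h16.1}(2)) a generator of $\cB_\L$ and both sides of $v$ in weight $-2$ are objects of $\cB_\L$, bijectivity of $v^*$ against $\Lambda$ implies $v$ itself is an isomorphism in weight $-2$. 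Combined with (1), $v_{M(X)}$ is an isomorphism on all homology objects, hence $v_{M(X)}$ is an isomorphism in $\cT$; a thick-subcategory argument then extends this to all of $(\DM^\eff_\gm)'$.

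The main obstacle I anticipate is the weight $-2$ bijectivity: the injectivity of $v^*$ is the delicate half and relies on the comparison between algebraic and numerical equivalence rather than on any formal nonsense, so the proof must either invoke this directly or extract it from the hypotheses defining $(\DM^\eff_\gm)'$; keeping track of the precise twists and of which Hom groups are being compared (the interplay of $\Hom_\cB(R_1\LA{2}^\Q(X),\Lambda)$, $\Hom_\cB(\Alb^\cT H^R_2(X),\Lambda)$ and $\NS(\bar X)^{G_E}\otimes K$, as in Remark \ref{r16.2}) is where care is needed. A secondary but routine point is to verify that $\gr^W_\bullet$ commutes with the homology functors $H_i^\cT$ well enough to make the weight-by-weight argument legitimate, which follows from exactness of the weight filtration (Proposition \ref{pwM} and \cite[p.\,83, Def.\ 6.3]{jannsen}) together with the $t$-exactness statements in Propositions \ref{p15.1} and \ref{pAlbT}.
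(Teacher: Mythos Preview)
Your proposal is correct and follows essentially the same approach as the paper: reduce to $M(X)$ for $X$ smooth projective (the paper invokes de Jong's theorem for this), use the $t$-exactness of $\LAlb^\cT$ to pass to homology objects, handle $i=0,1$ via Hypotheses \ref{h16.1}(3)(iii)--(iv) together with Corollary \ref{HLAlb}, and for weight $-2$ argue by Yoneda against Lefschetz objects using the diagram of Lemma \ref{comreal2}. One small correction: $\Lambda=R_1([0\to\G_m])$ alone does not generate $\cB_\L$ (tori over $k$ are not in general summands of powers of $\G_m$); one must test against all $\Theta=R_1([0\to R_{E/k}\G_m])$ for finite $E/k$, and the paper handles such $\Theta$ by applying Lemma \ref{comreal2} to $X_E$ rather than to $X$---this is exactly why the hypotheses in (2) are required for every $X_E$, a point you allude to but do not quite make explicit.
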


\begin{proof} (1) By de Jong's theorem, it suffices to prove the statement for $M=M(X)$, $X$
smooth projective. Thus we have to prove that, for all $i\in\Z$, the map
\[\Alb^\cT H^R_i(X)\simeq H_i^\cT(\LAlb^\cT R(X))\to R_1\LA{i}^\Q(X)\]
defined by $v_{M(X)}$ is an isomorphism in weights $0$ and $-1$. Here we used the $t$-exactness of $\LAlb^\cT$, \cf Proposition \ref{pAlbT} (2).

For $i<0$ (resp. $i> 2$), both sides are $0$ by Corollary 10.2.3 and \ref{h16.1} (3) (i) (\resp
(ii)). For
$i=0,1$, (iii) and (iv) imply that the map is an isomorphism. Finally, \ref{h16.1} (3) (ii)
implies that
$H^R_2(X)$ is pure of weight $-2$ and so is $\LA{2}^\Q(X)$, hence the statement is still true in
this case.

(2) It is sufficient to show that, for any smooth projective $X$ verifying the condition of (2),
the morphism
$\Alb^\cT H^R_2(X)\to R_1\LA{2}^\Q(X)$ is an isomorphism. Note that both sides are semi-simple Lefschetz
objects: semi-simplicity is clear for the right hand side, while for the left hand side it follows from the assumed semi-simplicity of $H_2^R(X)$ since $\Alb^\cT H^R_2(X)$ is a direct summand of this object (see proof of Proposition \ref{pAlbB}). Thus, by Yoneda's lemma, it suffices to show that for any semi-simple Lefschetz object $\Theta$,
the map
\begin{equation}\label{eq16.0}
\Hom(R_1\LA{2}^\Q(X),\Theta)\to \Hom(\Alb^\cT
H^R_2(X),\Theta)
\end{equation}
is an isomorphism.

By Hypothesis \ref{h16.1} (2), we have  $\Theta\simeq R_1([0\to T])$ for some torus $T$.   Without loss of generality, we may assume that $T=R_{E/k}\G_m=M(\Spec E)\otimes\Z(1)[1]$. In the commutative diagram of Lemma \ref{comreal2}
for $X_E$,  the composition of $\beta$ and $R$ is surjective by the surjectivity of
\eqref{eq16.3}. Therefore
\eqref{eq16.0} is surjective. By assumption, \eqref{eq16.0} is also injective. This concludes
the proof.
\end{proof}

\section{The Hodge realisation} \label{Hodge} Let $\MHS$ denote the abelian
category  of (graded polarizable, $\Q$-linear) mixed Hodge structures.  Recall that, for a mixed Hodge structure  $(H, W, F)$ we have $(i,j)$-Hodge components $ H^{i,j}\df (\gr^W_{i+j} H_{\sC})^{i,j}$ of the associated pure Hodge structure of weight $i+j$.  We write as usual $h^{ij}=\dim H^{i,j}$ for the $(i,j)$-{th} Hodge number. We say that  $(H, W, F)$ is \emph{effective} if $h^{ij}=0$ unless $i\leq 0$ and $j\leq 0$. 

We take for $\cB$ the full subcategory $\MHS^\eff$ of $\MHS$ given by effective mixed Hodge structures. For
$\cT$ we take $D^b(\cB)$. The weight filtration provides $\cB$ with a weight filtration in the
sense of Definition \ref{dE.6}. We take for
$\cB_\L$ the Hodge structures purely of type $ (-1,-1)$.  With the notation of \S
\ref{realalb}, $\cB_{(1)}=\MHS_{(1)}$ is the full subcategory of $\MHS^\eff$ given by mixed
Hodge structures of type $\{(0,0),
(0, -1), (-1,0), (-1,-1)\}$.

\subsection{$\LAlb^{\cT}$ for mixed Hodge structures} Note that $\MHS_{-2}$ is semi-simple
since pure polarizable Hodge structures are.  As a special case of Proposition
\ref{pAlbT}, we therefore have:

\begin{propose} The full embeddings $\iota: \MHS_{(1)}\into \MHS^{\eff}$ and
$\iota:D^b(\MHS_{(1)})\into D^b(\MHS^\eff)$ have left adjoints $\Alb^\MHS$ and
$\LAlb^\MHS$.\qed 
\end{propose}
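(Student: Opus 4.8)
The statement is an immediate application of the machinery already set up, so the plan is simply to verify that the hypotheses of Proposition \ref{pAlbT} are met in the case $\cB = \MHS^\eff$, $\cT = D^b(\MHS^\eff)$, with the class of Lefschetz objects $\cB_\L$ taken to be the Hodge structures purely of type $(-1,-1)$. First I would recall that Proposition \ref{pAlbT} has two parts: part (1), which produces the thick subcategory $\cT_{(1)}$ together with the induced $t$-structure, requires only that $\cB_\L$ be a thick semisimple subcategory of $\cB_{-2}$ (so that $\cB_{(1)}$ is thick abelian in $\cB$ by Proposition \ref{pAlbB} (1)); part (2), which yields the $t$-exact left adjoint $\LAlb^\cT$, requires in addition that the $t$-structure on $\cT$ be bounded and that $\cB_{-2}$ be semisimple.

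The verification of these points is the content of the proof, and none of it is hard. The subcategory $\cB_\L$ of Hodge structures pure of type $(-1,-1)$ is abelian semisimple because it is equivalent (via $\Hom(\Q(1),-)$, say) to the category of finite-dimensional $\Q$-vector spaces, and it is thick in $\cB_{-2}$; hence Proposition \ref{pAlbB} (1) applies. Next, $\cB_{-2} = \MHS_{-2}$ is semisimple: a pure polarizable $\Q$-Hodge structure is a semisimple object of $\MHS$, as recalled just before the statement, so every object of weight $-2$ is a direct sum of simple ones. Finally, the standard $t$-structure on $\cT = D^b(\MHS^\eff)$ is bounded, by definition of the bounded derived category. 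Thus all the hypotheses of Proposition \ref{pAlbT} (1) and (2) are satisfied, and we obtain the full subcategory $\MHS_{(1)}^{\mathrm{eff}}$ (which one checks is exactly the full subcategory of $\MHS^\eff$ described in the text, with Hodge numbers in $\{(0,0),(0,-1),(-1,0),(-1,-1)\}$: this is precisely the condition ``weights in $\{-2,-1,0\}$ and weight-$(-2)$ part Lefschetz''), and the $t$-exact left adjoint $\LAlb^\MHS: D^b(\MHS^\eff) \to D^b(\MHS_{(1)})$ to the inclusion $\iota$.

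For the statement about $\iota:\MHS_{(1)}\into \MHS^\eff$ having a left adjoint $\Alb^\MHS$, this is Proposition \ref{pAlbB} (2), whose only hypothesis beyond those of part (1) is again the semisimplicity of $\cB_{-2}=\MHS_{-2}$, already verified. The exactness of $\Alb^\MHS$ is part of the conclusion of that proposition. So the whole proof is a two- or three-line check; the only place requiring a moment's care is matching the abstract description of $\cB_{(1)}$ in \S\ref{realalb} with the concrete description of $\MHS_{(1)}$ in terms of Hodge numbers, and noting that ``graded polarizable'' is exactly what forces semisimplicity in weight $-2$. I do not anticipate any genuine obstacle: the content has been front-loaded into the appendix on weight structures and into Propositions \ref{pAlbB} and \ref{pAlbT}.
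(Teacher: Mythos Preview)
Your proposal is correct and is exactly the approach the paper takes: the statement is marked with \qed and introduced as ``a special case of Proposition \ref{pAlbT}'', the only ingredient noted beforehand being that $\MHS_{-2}$ is semisimple because pure polarizable Hodge structures are. Your more detailed verification of the hypotheses of Propositions \ref{pAlbB} and \ref{pAlbT} is entirely in line with this.
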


\begin{notation} \label{not1}
For $H\in \MHS^{\eff}$ we shall write $H_{\leq 1}$ for $\Alb^\MHS(H)$.  If $h^{pq}=0$ unless $p,q\geq 0$, then  
the dual $H^\vee=\ihom (H, \Q)\in \MHS^{\eff}$ is effective and we denote
$$H^{\leq 1}\df \ihom (\ihom (H, \Q)_{\leq 1}, \Q(1)).$$

Denoting by $\Pic^\MHS$ the Cartier dual of $\Alb^\MHS$, the latter $H^{\leq 1}$ translates in $\Pic^\MHS(\ihom (H, \Q))$.
\end{notation}

\begin{remark}\label{delnot} In Deligne's notation \cite[10.4.1]{D}, for $H\in \MHS$ we
have:
\begin{itemize}
\item ${\rm II}_n(H)_\Q = H (n)_{\leq 1}$ if  $H(n)$ is effective
\item ${\rm I} (H)_\Q = H^{\leq 1}$ if   $H^\vee$ is effective. 
\end{itemize}
Note that $H \mapsto H^{\leq 1}$ is actually right adjoint to the (fulll embedding) functor $H\mapsto H (-1)$ from $\MHS_{(1)}$ to the full subcategory $\MHS_{\eff}$ of $\MHS$ consisting of objects whose dual is effective.
\end{remark}

\subsection{Huber's Hodge realisation functor} We have A. Huber's  realisation functor
\cite{HuberH} (see also \cite{Lecomte-Wach} for an integral refinement)
\[R_\Hodge:\DM_\gm^\eff(\C)\otimes\Q\to D^b(\MHS_{\eff}).\]

For $X$ a smooth variety, we have $R_\Hodge(M(X))=R\Gamma(X,\Q)$, and in
particular $R_\Hodge(M(X))^\vee$ is effective. This functor is contravariant. To get a covariant functor, we compose it with the (exact)
duality of $D^b(\MHS)$ sending $H$ to $\ihom(H,\Q)$.  Thus we get a functor
\[R^\Hodge:\DM_\gm^\eff(\C)\otimes\Q\to D^b(\MHS^{\eff}).\]

Consider the Voevodsky functor $\Phi_\Q:\Chow^\eff\boxtimes\Q\to \DM_\gm^\eff\boxtimes\Q$ of \eqref{eqvf}.
Since $R_\Hodge\Phi_\Q:\Chow^\eff\boxtimes\Q\to D^b(\MHS_{\eff})$ is by construction isomorphic to the
functor $X\mapsto R\Gamma(X,\Q)$, the conditions of Hypothesis \ref{h15.1} are verified, as
well as Condition (W) of Proposition \ref{p15.1}. This proposition then shows that
$R^\Hodge\Tot$ defines an exact functor 
\[R_1^\Hodge:\M(\C)\otimes\Q\to \MHS_{(1)}.\]

Although this is irrelevant for our purpose, it is nice to know:

\begin{thm}[\cf \protect{\S \ref{16.1}}]\label{t16.1} The functor $R_1^\Hodge$ is an equivalence of categories.
\end{thm}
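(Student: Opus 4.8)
The statement to be proven is that the exact functor $R_1^\Hodge:\M(\C)\otimes\Q\to \MHS_{(1)}$ is an equivalence of categories. Recall the classical fact (Deligne, \cite[\S 10.1.3]{D}) that Deligne's own Hodge realisation $\uT_D:\M(\C)\otimes\Q\to \MHS_{(1)}$ is an equivalence: a $1$-motive up to isogeny is \emph{the same data} as a torsion-free mixed Hodge structure of type contained in $\{(0,0),(0,-1),(-1,0),(-1,-1)\}$ with polarizable weight-graded pieces, the dictionary being given by the Hodge realisation of the pure pieces together with the extension data. So the whole problem reduces to comparing $R_1^\Hodge$ with this classical equivalence $\uT_D$; it suffices to produce a natural isomorphism $R_1^\Hodge\Rightarrow\uT_D$ (or rather, to show one exists). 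This is exactly the situation set up in Section \ref{compreal}: we have $K=\Q$, $\cB=\MHS^\eff$, $\cT=D^b(\MHS^\eff)$, $R=R^\Hodge$, and $\uT=\uT_D$, and Theorem \ref{t18.1} tells us that such a natural isomorphism is unique if it exists and exists iff a single bilinear obstruction (item (5) in the proof of \ref{t18.1}) vanishes.

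\textbf{Step 1: set up the comparison data on curves.} First I would check Hypothesis \ref{h16.2}: for every smooth projective curve $C/\C$ I need an isomorphism $v_C:R^\Hodge(M(C))\iso\uT_D(\Phi_{(1)}h(C))$ compatible with Chow correspondences in codimension $1$. By construction $R^\Hodge(M(C))$ computes (the dual of) $R\Gamma(C,\Q)$, so its cohomology objects are $H^0(C),H^1(C),H^2(C)$ with their Hodge structures; on the other side $\Phi_{(1)}h(C)=\LAlb^\Q(\Phi(h(C)))$ has homology the $1$-motives $[\Z[\pi_0]\to 0]$, $[0\to J(C)]$, $[0\to \NS^*]$ whose Deligne–Hodge realisations are exactly $H_0,H_1,H_2$ of $C$ (here I use Theorem \ref{lasc}/Corollary \ref{HLAlb} for the computation of $\LAlb$ of a curve, and the classical identification of the Hodge structure of $h_1(C)$ with that of $J(C)$). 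The compatibility \ref{h16.2}(i) with Chow correspondences is then the statement that both the motivic and the Hodge-theoretic picture are functorial for divisorial correspondences between curves, which holds because Huber's realisation is a functor on $\DM_\gm^\eff$ and $\LAlb^\Q$ is too. Condition \ref{h16.2}(ii) (compatibility with maps $\Z_E\to\Z_{E'}(1)[1]$, i.e. with units of étale algebras) is likewise formal from functoriality on $\P^1$'s.

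\textbf{Step 2: apply Theorem \ref{t18.1} and kill the obstruction.} With \ref{h16.2} verified, Theorem \ref{t18.1} produces the candidate natural transformation $u_1$ on pure weights and glues it up through weights $\{-2,-1\}$, $\{-1,0\}$, $\{-2,0\}$, leaving exactly one obstruction: for each $1$-motive $[L\by{f} G]$ with $G$ an extension of $A$ by $T$, a bilinear pairing which in the basic case $L=\Z$, $T=\G_m$ reads
\[
A(\C)\otimes\Q\ \times\ A'(\C)\otimes\Q\ \longrightarrow\ \Ext^1_{\MHS}(R^\Hodge\Z,\,R^\Hodge\Z(1)),
\]
and this obstruction must be checked to vanish. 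The cleanest way is Remark \ref{r16.4}: the natural map $\DM_\gm^\eff(\C)\otimes\Q(\Z,\Z(1)[1])\otimes\Q\to\cT(R^\Hodge\Z,R^\Hodge\Z(1)[1])=\Ext^1_{\MHS}(\Q,\Q(1))=\C^*\otimes\Q$ is an isomorphism (both sides are $\C^*\otimes\Q$), so the obstruction becomes a functorial-in-$S$ pairing $A(S)\otimes\Q\times A'(\C)\otimes\Q\to\Gamma(S,\G_m)\otimes\Q$; taking $S=A$ and invoking Yoneda forces it to be constant, hence $0$. Here I use Proposition \ref{pC.12.1} to know that smooth $1$-motives over strictly Henselian local rings are still of cohomological dimension $\le 1$, which is what lets the pointwise pairing sheafify. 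Granting this, Theorem \ref{t18.1} yields a natural isomorphism $R_1^\Hodge\Rightarrow\uT_D$.

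\textbf{Step 3: conclude.} Since $\uT_D:\M(\C)\otimes\Q\to\MHS_{(1)}$ is an equivalence (Deligne), the natural isomorphism of Step 2 shows $R_1^\Hodge$ is an equivalence as well. \emph{The main obstacle} is Step 2, the vanishing of the glueing obstruction: verifying \ref{h16.2} is essentially bookkeeping, but one must be careful that the identification of $\Ext^1$ in $\MHS$ with $\C^*\otimes\Q$ is the ``motivic'' one — i.e. that Huber's realisation really induces the expected isomorphism on $\Ext^1(\Q,\Q(1))$ — and that the functoriality of the whole package over a base $S$ (needed for the Yoneda argument in Remark \ref{r16.4}) is legitimately available for Huber's construction. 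Once that compatibility is in hand the rest is automatic. As a sanity check, one could alternatively bypass \ref{t18.1} entirely and argue directly that $R_1^\Hodge$ is fully faithful (compute $\Hom$ and $\Ext^1$ on both sides, using that both categories have cohomological dimension $1$ by Proposition \ref{iso1}) and essentially surjective (every object of $\MHS_{(1)}$ is, up to the dictionary, a $1$-motive), but routing through Theorem \ref{t18.1} is shorter and reuses the paper's machinery.
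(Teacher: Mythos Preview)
Your primary route has a genuine gap, and it is precisely the one the paper flags as open. You propose to deduce the equivalence by producing a natural isomorphism $R_1^\Hodge\Rightarrow\uT_D$ via Theorem~\ref{t18.1} and killing the obstruction through Remark~\ref{r16.4}. But Remark~\ref{r16.4} is conditional: the Yoneda argument requires the entire realisation package to be available functorially over a variable base $S$, and for Huber's Hodge realisation this is not established in the paper. Indeed, the remark immediately following Theorem~\ref{t16.1} (in \S\ref{16.1}) states explicitly that the comparison $R_1^\Hodge\simeq T^\Hodge_\Q$ is \emph{not} proven here and is attributed to Vologodsky's separate work \cite{vadim}. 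So your Step~2 assumes exactly what the paper declines to prove, and you acknowledge this yourself as ``the main obstacle'' without resolving it.

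The paper's actual proof is your ``sanity check'' alternative, carried out via Theorem~\ref{tglueII}. It first verifies that $R_1^\Hodge$ restricts to equivalences on the three pure-weight pieces $W_i\M(\C)\otimes\Q\iso W_i(\MHS_{(1)})$ for $i=0,-1,-2$ (the case $i=-1$ uses that every weight $-1$ Hodge structure in $\MHS_{(1)}$ is a summand of some $H_1(C)$, plus the identification of Huber's functor on $h_1(C)$ with the classical action of divisorial correspondences). It then checks, for each pair of weights $m<n$, that $R_1^\Hodge$ induces an isomorphism on $\Ext^1(N_n,N_m)$: since both categories have cohomological dimension $\le 1$ and $\Ext^2_\MHS=0$, condition (4) of Theorem~\ref{tglueII} reduces everything to the pure-to-pure case. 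The three cases $(m,n)=(-1,0)$, $(-2,-1)$, $(-2,0)$ are handled by identifying the relevant map with the Abel--Jacobi map (twice) and with the standard isomorphism $\C^*\otimes\Q\simeq\Ext^1_\MHS(\Q,\Q(1))$, each time appealing to the fact that Huber's functor restricted to pure Chow motives is the usual Hodge realisation. This route never needs the comparison with Deligne's functor, nor any relative version of the realisation.
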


\begin{proof} Note that $R_1^\Hodge$ respects the splittings of $\M\otimes\Q$ and $\MHS_{(1)}$,
by Proposition \ref{p15.1}. Therefore we are in a position to apply Theorem \ref{tglueII}.

Let $W_i\M(\C)\otimes\Q$ and $W_i(\MHS_{(1)})$ denote the full subcategories of objects pure of weight $i$ ($i=0,-1,-2$). We first check that $R_1$ induces equivalences of categories 
$W_i\M(\C)\otimes\Q\iso W_i(\MHS_{(1)})$. Note that these three categories are semi-simple.
The cases $i=0$ and $i=-2$ are obvious. For $i=-1$, it is known (\eg from Deligne's
equivalence of categories \cite[10.1.3]{D}) that any $H\in W_{-1}(\MHS_{(1)})$ is a direct
summand of some
$H_1(C)$: this proves essential surjectivity. For the full faithfulness, we reduce to proving
that, given two connected smooth projective curves, the map
\[\Hom(J(C),J(C'))\to \Hom(H_1(C),H_1(C'))\]
given by $R_1^\Hodge$ is the usual action of divisorial correspondences, which follows from the
construction of $R^\Hodge$. 

(Note that these arguments provide natural isomorphisms of the restrictions of $R_1^\Hodge$ to
$W_i\M(\C)\otimes\Q$ with Deligne's realisation functor.)

We are left to check the conditions  of Theorem \ref{tglueII} on isomorphisms of Hom and Ext
groups. Note that the condition on Hom groups in (2) is empty because they are identically $0$.
For the Ext groups, since
$\Ext^2_{\MHS}$ is identically $0$, we reduce by (4)  to prove that, for $N_m,N_n\in
\M\otimes\Q$ of pure weights $m<n$, the map 
\[\Ext^1_{\M\otimes\Q}(N_n,N_m)\to \Ext^1_{\MHS}(R_1^\Hodge(N_n),R_1^\Hodge(N_m))\]
is bijective.

Since $\Tot:D^b(\M\otimes \Q)\to \DM_\gm^\eff\boxtimes\Q$ is fully faithful, it suffices to
prove that the map
\begin{multline*}
\Hom_{\DM_\gm^\eff\boxtimes\Q}(\Tot(N_n),\Tot(N_m)[1])\\
\to \Hom_{D^b(\MHS)}(R^\Hodge\Tot(N_n),R^\Hodge\Tot(N_m)[1])
\end{multline*}
is bijective. We distinguish $3$ cases:

\begin{thlist}
\item $(m,n)=(-1,0)$. Without loss of generality, we may assume $N_0=[\Z\to 0]$, $N_{-1}=[0\to
J_C]$ for a smooth projective curve $C$. Then $\Tot(N_{-1})=\Phi_\Q h_1(C)[-1]$,
$R^\Hodge\Tot(N_{-1})=H_1(C)$  and we are looking at the map
\[\Hom_{\DM\boxtimes\Q}(\Z,\Phi_\Q h_1(C))\to \Hom_{D^b(\MHS)}(\Z,H_1(C)[1]).
\]

By Poincar\'e duality, this map is equivalent to the map
\begin{multline*}
J_C(\C)\otimes\Q=\Hom_{\DM\boxtimes\Q}(\Phi_\Q h^1(C),\Z(1)[2])\\
\to\Hom_{D^b(\MHS)}(H^1(C),\Z(1)[1])=\Ext^1_{\MHS}(H^1(C),\Z(1))
\end{multline*}
which coincides with the Abel-Jacobi map\footnote{because, by construction, the restriction of
Huber's functor to pure motives is the ``usual" Hodge realisation functor.}, hence is bijective.
\item $(m,n)=(-2,-1)$. Without loss of generality, we may assume $N_{-2}=[0\to \G_m]$,
$N_{-1}=[0\to J_C]$ for a smooth projective curve $C$. Then $\Tot(N_{-1})=\Phi_\Q h^1(C)[-1]$,
$R\Tot(N_{-1})=H^1(C)$  and we are looking at the map
\[\Hom_{\DM\boxtimes\Q}(\Phi_\Q h^1(C),\Z(1)[2])\to \Hom_{D^b(\MHS)}(H^1(C),\Z(1)[1])
\]
which is the Abel-Jacobi map as in (ii).
\item $(m,n)=(-2,0)$. Without loss of generality, we may assume $N_{-2}=[0\to \G_m]$,
$N_0=[\Z\to 0]$. We are now looking at the map
\begin{multline*}
\C^*\otimes \Q=\Hom_{\DM\boxtimes\Q}(\Z,\Z(1)[1])\\
\to\Hom_{D^b(\MHS)}(\Z,\Z(1)[1])=\Ext^1_\MHS(\Z,\Z(1))
\end{multline*}
which is again the usual isomorphism, by definition of Huber's realisation functor.
\end{thlist}
\end{proof}

\subsection{Deligne's conjecture}
For any $M\in \DM_\gm^\eff(\C)\otimes\Q$,  from  \eqref{eq15.1} we get a
natural map
\begin{equation}\label{eqhodge}
R^\Hodge(M)_{\leq 1}\to R_1^\Hodge\LAlb(M).
\end{equation}
Taking homology of both sides, we get comparison maps
\begin{equation}\label{eqhomHodge}
H_i(R^\Hodge(M))_{\leq 1}\to R_1^\Hodge\LA{i}(M)
\end{equation}
for all $i\in\Z$. From Theorem \ref{abstractdelconj} and the Lefschetz 1-1 theorem, we now get:

\begin{thm} \label{isodel}
The maps \eqref{eqhodge} and \eqref{eqhomHodge} are isomorphisms for  all motives $M$.\qed
\end{thm}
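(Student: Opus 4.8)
The statement to establish is Theorem~\ref{isodel}: the comparison maps \eqref{eqhodge} and \eqref{eqhomHodge} are isomorphisms for every $M\in\DM_\gm^\eff(\C)\otimes\Q$. The strategy is to apply the abstract machinery of Theorem~\ref{abstractdelconj} to the concrete situation $\cB=\MHS^\eff$, $\cT=D^b(\MHS^\eff)$, $\cB_\L=$ Hodge structures of pure type $(-1,-1)$, and $R=R^\Hodge$ (Huber's covariant Hodge realisation). So the first task is simply to verify that all the hypotheses of \ref{abstractdelconj}, namely Hypotheses~\ref{h15.1} and \ref{h16.1}, are met in this case; the second task is to check that the geometric hypothesis of part~(2) of \ref{abstractdelconj} holds for \emph{all} smooth projective varieties, not merely those in the subcategory $(\DM_\gm^\eff)'$, which is where the Lefschetz $(1,1)$ theorem enters. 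Given these, \ref{abstractdelconj} gives that the base change morphism $v:\LAlb^\cT R\Rightarrow R_1\LAlb^\Q$ is an isomorphism on all of $\DM_\gm^\eff\otimes\Q$; translating the notation ($\LAlb^\cT=\LAlb^\MHS$, $R_1=R_1^\Hodge$, and $H\mapsto H_{\le 1}=\Alb^\MHS(H)$ from Notation~\ref{not1}) then yields precisely \eqref{eqhodge}, and passing to homology objects yields \eqref{eqhomHodge}.

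First I would check Hypothesis~\ref{h15.1}. Since $R^\Hodge\circ\Phi$ is isomorphic to $X\mapsto R\Gamma(X,\Q)$ on effective Chow motives — this is already recorded in the paragraph preceding Theorem~\ref{t16.1} — condition (1) (concentration of $RM(X)$ in degrees $[-2d,0]$ for $X$ smooth projective of dimension $d\le 1$) is the standard fact that singular cohomology of a curve lives in degrees $0,1,2$; condition (2) is that $H^0(X,\Q)\iso H^0(\Spec E,\Q)$ for $E$ the field of constants, which is the computation of $H^0$ of a geometrically connected curve over $E$; condition (3) is that a nonconstant map $f:X\to\P^1_E$ induces an isomorphism on $H^2$, which is degree-theoretic. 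Condition~(W) of Proposition~\ref{p15.1}, that $H_i^R(X)$ is pure of weight $-i$ for $X$ smooth projective, is the classical purity of the Hodge structure on $H^i$ of a smooth projective variety. This already gives, via Proposition~\ref{p15.1}, the exact splitting-respecting functor $R_1^\Hodge:\M(\C)\otimes\Q\to\MHS_{(1)}$. Then I would check Hypothesis~\ref{h16.1}: boundedness of the $t$-structure on $D^b(\MHS^\eff)$ is automatic; condition (2), that $R_1$ restricted to weight $-2$ induces $(\M\otimes\Q)_{-2}\iso(\MHS)_\L$, is the trivial identification of tori-type $1$-motives with Tate–Lefschetz structures; conditions (3)(i)–(iii) are again vanishing/concentration/$\pi_0$-statements for singular cohomology of smooth projective varieties; and (3)(iv), that a choice of base point induces $H_1^R(X)\iso H_1^R(\cA^0_{X/k})$, is the classical statement that $H_1$ of a smooth projective variety agrees with $H_1$ of its Albanese — equivalently, dually, that $H^1(X)\iso H^1(\Alb(X))$.

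The main obstacle, and the only genuinely nontrivial input, is the hypothesis in part~(2) of Theorem~\ref{abstractdelconj}: one must show that for every smooth projective $X$ and every finite extension $E/k=\C$, the map $v^*$ of Lemma~\ref{comreal2} is injective and the geometric cycle class map \eqref{eq16.3} $\Pic(X_E)\otimes\Q\to\Hom_{\MHS}(H_2^R(X_E),\Q(1))$ is surjective. Over $\C$ the right-hand side, after the duality identification, is the group of Hodge classes in $H^2(X_E,\Q(1))$, so surjectivity of the cycle class map is exactly the Lefschetz theorem on $(1,1)$-classes — this is the single deep ingredient, and it holds for all complex smooth projective varieties, so $(\DM_\gm^\eff)'$ is in fact all of $\DM_\gm^\eff\boxtimes\Q$. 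The injectivity of $v^*$ follows, as explained in Remark~\ref{r16.2}, from the fact that algebraic and numerical equivalence coincide rationally in codimension~$1$ (and the cycle-theoretic interpretation of the bottom row of the diagram in Lemma~\ref{comreal2}). With both conditions in hand, Theorem~\ref{abstractdelconj}(2) gives that $v$ is an isomorphism on all of $\DM_\gm^\eff\boxtimes\Q$; combined with part~(1) (weights $0$ and $-1$) this covers all weights, and rewriting $v$ in the Hodge notation finishes the proof of \eqref{eqhodge}, from which \eqref{eqhomHodge} follows by taking $H_i$ of both sides and using the $t$-exactness of $\LAlb^\MHS$ (Proposition~\ref{pAlbT}(2)) together with exactness of $\Alb^\MHS$.
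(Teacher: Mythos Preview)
Your proposal is correct and follows exactly the approach the paper takes: the paper's ``proof'' is the single sentence ``From Theorem \ref{abstractdelconj} and the Lefschetz 1-1 theorem'' preceding the statement, together with the earlier verification that $R^\Hodge$ satisfies Hypothesis~\ref{h15.1} and Condition~(W). You have simply unpacked this, checking Hypotheses~\ref{h15.1} and \ref{h16.1} explicitly and identifying the Lefschetz $(1,1)$ theorem as the input making $(\DM_\gm^\eff)'=\DM_\gm^\eff\boxtimes\Q$ (noting, correctly, that over $k=\C$ the only finite extension is $\C$ itself).
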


This theorem recovers the results of \cite{BRS}, with rational coefficients.

The isomorphisms \eqref{eqhomHodge} may be applied to geometric motives like $M(X)$, $X$ any
$\C$-scheme of finite type (yielding $\LAlb(X)$), but also
$M(X)^*(n)[2n]$ and $M^c(X)$ yielding
$\LAlb^*(X)$ and $\LAlb^c(X)$ respectively. We thus get the following corollary, overlapping
Deligne's conjecture (see \ref{not1} and \ref{delnot} for the notation):

\begin{cor} \label{isodelcor}
Let $X$ be an $n$-dimensional complex algebraic variety. The mixed Hodge structures
$H^i(X,\Q)^{\leq 1}$, $H^{2n-i}(X,\Q(n))_{\leq 1}$ and $H_i^c(X,\Q)_{\leq 1}$ induced by the
mixed Hodge structures on the Betti cohomology  and the Borel-Moore homology admit
a purely algebraic construction provided by the previously explained isomorphisms
\begin{align*}
R_1^\Hodge(\RA{i}(X))&\iso H^i(X,\Q)^{\leq 1}\\
H^{2n-i}(X,\Q(n))_{\leq 1}&\iso R_1^\Hodge(\LA{i}^*(X))\\
H_i^c(X,\Q)_{\leq 1}&\iso R_1^\Hodge(\LA{i}^c(X))
\end{align*}
of mixed Hodge structures.
\end{cor}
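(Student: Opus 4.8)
\textbf{Proof proposal for Corollary \ref{isodelcor}.}

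The plan is to deduce this corollary from Theorem \ref{isodel} by feeding in the appropriate geometric motives, and then translating the resulting isomorphisms through the duality and twist dictionary supplied by earlier sections. First I would recall that for $X$ a complex algebraic variety of dimension $n$ there are three geometric motives at our disposal in $\DM_\gm^\eff(\C)\otimes\Q$, namely $M(X)$, $M^c(X)$, and $M(X^{(n)})^*(n)[2n]$ (effective by Lemma \ref{leffe}), and that $\LAlb$ applied to these gives $\LAlb(X)$, $\LAlb^c(X)$, and $\LAlb^*(X)$ respectively (Definitions \ref{LAlb-}, \ref{LAlbc}, \ref{LAlb*}). Applying the isomorphism \eqref{eqhomHodge} of Theorem \ref{isodel} to each of these three motives yields isomorphisms of mixed Hodge structures
\[
H_i(R^\Hodge(M))_{\leq 1}\iso R_1^\Hodge\LA{i}(M)
\]
for $M$ running through the three choices. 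The remaining work is purely a matter of identifying the left-hand sides with the classical Betti/Borel--Moore Hodge structures attached to $X$.

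The key identifications I would carry out are the following. For $M=M^c(X)$, Huber's covariant realisation satisfies $H_i(R^\Hodge(M^c(X)))=H_i^c(X,\Q)$ (Borel--Moore homology with its mixed Hodge structure), so $H_i(R^\Hodge(M^c(X)))_{\leq 1}=H_i^c(X,\Q)_{\leq 1}$ directly, giving the third displayed isomorphism. For the cohomological Picard complex $\RPic(X)=\LAlb(X)^\vee$ (\cf Corollary \ref{rpdla}), I would use $\RA{i}(X)={}^tH^i(\RPic(M(X)))$ together with Cartier duality $\RA{i}(X)=\LA{i}(X)^\vee$ and the compatibility of $R_1^\Hodge$ with duality; since $H^i(R^\Hodge(M(X)))=H^i(X,\Q)$ is cohomology with $\geq 0$ Hodge numbers, the object $H^i(X,\Q)^{\leq 1}$ of Notation \ref{not1} is precisely $\Pic^\MHS$ applied to it, i.e.\ the Cartier dual of $(H^i(X,\Q)^\vee)_{\leq 1}$, which matches $R_1^\Hodge(\RA{i}(X))$ under Theorem \ref{isodel} applied to $M(X)$ and dualised. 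Finally, for $M=M(X^{(n)})^*(n)[2n]$, Poincaré-type duality in $\DM_\gm$ and the compatibility of Huber's functor with Tate twists give $H_i(R^\Hodge(M(X)^*(n)[2n]))=H^{2n-i}(X,\Q(n))$, so its $(\leq 1)$-part is $H^{2n-i}(X,\Q(n))_{\leq 1}$, which is $R_1^\Hodge(\LA{i}^*(X))$ by Theorem \ref{isodel}; here one also invokes Lemma \ref{l9.2} to see that replacing $X$ by its top-dimensional part $X^{(n)}$ only changes things by groups of multiplicative type, which are invisible after passing to $\LA{i}^*$ in the relevant range (or, more simply, that the statement is about the variety as given).

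I expect the main obstacle to be bookkeeping rather than a deep point: one must be careful that the functor $R^\Hodge$ used here is the \emph{covariant} version obtained from Huber's contravariant $R_\Hodge$ by composing with the duality $H\mapsto\ihom(H,\Q)$ on $D^b(\MHS)$, so that signs, twists, and the direction of the Cartier/Hodge dualities all line up. In particular the identification $R_1^\Hodge(\RA{i}(X))\iso H^i(X,\Q)^{\leq 1}$ requires matching the ${}^t\M$ $t$-structure (Notation \ref{not}) used to define $\RA{i}$ with the $\Alb^\MHS/\Pic^\MHS$ adjunctions (Notation \ref{not1}, Remark \ref{delnot}), and one should check that $R_1^\Hodge$ intertwines Cartier duality on $\M(\C)\otimes\Q$ with the duality $H\mapsto\ihom(H,\Q(1))$ on $\MHS_{(1)}$ — this follows from Theorem \ref{teq} (agreement of the two Cartier dualities) together with the compatibility of Huber's realisation with internal Hom. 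Once these compatibilities are in place, the three asserted isomorphisms are immediate instances of Theorem \ref{isodel}, and the functoriality (in $X$, for the appropriate classes of morphisms) is inherited from that of $\LAlb$, $\LAlb^c$, $\LAlb^*$ and of $R_1^\Hodge$.
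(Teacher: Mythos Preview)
Your proposal is correct and follows exactly the route the paper takes: the paper simply remarks that \eqref{eqhomHodge} ``may be applied to geometric motives like $M(X)$ \dots\ but also $M(X)^*(n)[2n]$ and $M^c(X)$'' and states the corollary without further argument. You have spelled out the identifications that the paper leaves implicit, in particular the dualisation step needed to pass from $H_i(R^\Hodge(M(X)))_{\le 1}\simeq R_1^\Hodge(\LA{i}(X))$ to the stated isomorphism for $\RA{i}(X)$ and $H^i(X,\Q)^{\le 1}$ via Cartier duality (Corollary \ref{rpdla}, Theorem \ref{teq}) and Notation \ref{not1}; this is precisely the missing bookkeeping, and your account of it is accurate. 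One small remark: for the $\LA{i}^*$ case you do not need any ``Poincar\'e-type duality in $\DM_\gm$'' for possibly singular $X$ --- only the compatibility of Huber's realisation with the internal Hom and Tate twist in $\DM_\gm$, which you also invoke; and the digression on $X^{(n)}$ via Lemma \ref{l9.2} is unnecessary since $\LAlb^*(X)$ is defined from $X^{(n)}$ to begin with.
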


\begin{remark}\label{notall} Deligne's conjecture \cite[(10.4.1)]{D} concerns three types of
Hodge structures of level $\le 1$ for $X$ of dimension $\le N$: $I(H^n(X,\Z))$,
$II_n(H^n(X,\Z))$ ($n\le N$) and $II_N(H^n(X,\Z))$ ($n\ge N$).
Corollary
\ref{isodelcor} covers the first and last (compare Remark \ref{delnot}), but not the second in
general. The issue for $II_n$ and $II_N$ is that the motive $M(X)^*(n)[2n]$ is effective for
$n\ge\dim X$ by Lemma
\ref{leffe}, but not for
$n<\dim X$ in general. Indeed, if $M(X)^*(n)[2n]$ is effective, then it is isomorphic  in
$\DM_-^\eff\otimes\Q$ to $\ihom_\eff(M(X),\Z(n)[2n])$ for formal reasons, and therefore the
latter is a geometric motive. But this is false \eg for $n=2$ and $X$ a suitable smooth
projective $3$-fold, see \cite{Ay-Hub}.

Suppose that the motivic $t$-structure exists on $\DM_\gm^\eff\otimes\Q$. By a recent result
of Beilinson \cite{BG}, this implies Grothendieck's standard conjecture B. For any $X$, let us then write
$M_i(X)$ for the $i$-th $t$-homology of $M(X)$. If $X$ is smooth projective, by Poincar\'e
duality and Conjecture B we find that
\[\ihom_\eff(M_n(X),\Z(n))\simeq M_n(X)\]
is geometric. By blow-up induction, this then implies that the motive
$\ihom_\eff(M_n(X),\Z(n))$ is geometric for any $X$ of finite type, and we obtain the remaining
part of Deligne's conjecture.
\end{remark}

\subsection{Deligne's Hodge realisation functor} \label{16.1} In Deligne's
construction, the {\it integrally defined}\, Hodge realization $$T^\Hodge(M)\df (T_{\sZ}(M), W_*, F^*)$$ of a 1-motive
(with torsion) $M$ over $k= \C$ (see \cite[\S 1]{BRS} and
\cite[10.1.3]{D}) is obtained as follows. The finitely generated abelian group $T_{\sZ}(M)$ is given by the pull-back of  $u : L\to G$ along $\exp : {\rm Lie} (G) \to G$, $W_*$ is the weight filtration 
$$W_iT(M) \df\left \{\begin {array}{rr} T_{\sZ}(M)\otimes \Q & i\geq 0\\
    H_1(G,\Q) & i= -1\\ H_1(T,\Q) & i= -2\\ 0 & i \leq -3 
\end{array} \right.$$ 
and $F^*$ is defined by
$F^0(T_{\sZ}(M)\otimes\C)\df \ker (T_{\sZ}(M)\otimes\C\to {\rm Lie} (G)).$ We have
that $T_{\sZ}(M)$, $W_*$ and $F^0$ are independent of the representative of $M$. Thus $T^\Hodge(M)$ is a mixed $\Z$-Hodge structure such that $\gr_{i}^W$ is polarizable.  We have $\gr_{0}^WT^\Hodge(M)\cong L\otimes\Q$, 
$\gr_{-1}^WT^\Hodge(M)\cong H_1(A,\Q)$ and $\gr_{-2}^WT^\Hodge(M)\cong
H_1(T,\Q)$ as pure polarizable $\Q$-Hodge structures. 

Let $\MHS^{\Z}_{1}$ be the category of mixed $\Z$-Hodge structure of type $\{(0,0), (0,-1), (-1,0), (-1,-1)\}$ such that $\gr_{i}^W$ is polarizable.  We have $\MHS^{\Z}_{1}\otimes \Q = \MHS_{(1)}$.

The functor $T^\Hodge$ is the {\em covariant}\,  Deligne Hodge realization 
$$T^\Hodge: {}^t\M(\C)\longby{\simeq} \MHS^{\Z}_{1}$$ which is an equivalence of
abelian categories by \cite[Prop. 1.5]{BRS}. It induces an equivalence  
\[T^\Hodge_{\Q}:D^b(\M(\C)\otimes\Q)\longby{\simeq} D^b(\MHS_{(1)}).\]

\begin{thm}[\protect{\cite{vadim}, \cite{ABN}}]\label{cfh} The functor $R_1^\Hodge$ of Theorem \ref{t16.1} is
isomorphic to $T^\Hodge_{\Q}$. 
\end{thm} 
Vologodsky \cite{vadim} gives an explicit construction of this isomorphism.    
Actually, such an isomorphism along with its uniqueness is a simple consequence of the fact that $1$-motives with torsion is a universal abelian category in the sense of Nori for an explicit diagram of curves: see \cite{ABN} for details in the more natural and general framework that applies to mixed realisations.

Note that Theorem \ref{cfh} is reproving Theorem \ref{t16.1}
(using \cite[10.1.3]{D}) and yield a naturally commutative diagram
\[\xymatrix{
D^b(\MHS_{(1)})\ar[r]^\iota&D^b(\MHS^\eff)\ar@<.7ex>[l]^{(-)_{\leq 1}}\\
D^b(\M(\C)\otimes\Q)\ar[r]^\Tot\ar[u]^{T^\Hodge_{\Q}}&\DM_\gm^\eff(\C)\otimes\Q\ar@<.7ex>[l]^{\LAlb^\Q}\ar[u]_{R^\Hodge}.
}\]

\section{The mixed realisation}\label{shuber}

We consider here the other part of Deligne's conjecture:

\begin{quote}
\it Les morphismes
\begin{align*}
T_\ell(I(H^n(X,\Z))) &\to (H^n(X, \Z_\ell)/\text{torsion})(1)\\
T_\ell(II_n(H^n(X,\Z))) &\leftarrow H^n(X, \Z_\ell)(n) \text{ (pour  $n\le N$)}\\
T_\ell(II_N(H^n(X,\Z))) &\leftarrow H^n(X, \Z_\ell)(N) \text{ (pour  $n \ge N$)}
\end{align*}
et leurs analogues en cohomologie de De Rham devraient aussi admettre une d\'efinition
purement alg\'ebrique.
\end{quote}

Huber's Hodge realisation is only a part of her construction of a functor from $\DM_\gm(k,\Q)$
($k$ a finitely generated field over $\Q$) to her category of mixed realisations. We exploit
this much richer structure to give a proof of the above conjecture, rationally and excluding
the second case (see Remark \ref{notall} for the latter).

\subsection{$\LAlb^\cT$ for mixed realisations} Let $k$ be a finitely generated extension of
$\Q$. Recall from \cite[Def. 11.1.1]{HuberLN} Huber's category
$\cMR$ of mixed realisations. An object $A\in \cMR$ is a collection
\[(A_\DR, A_\ell, A_{\sigma,\ell}, A_\sigma,A_{\sigma,\C}; I_{\DR,\sigma}, I_{\sigma,\C},
I_{\bar\sigma,\ell}, I_{\ell,\sigma})_{\ell\in \cP,\sigma\in S} Ê\]
where $\cP$ is the set of prime numbers, $S$ is the set of embeddings $\sigma: k\into \C$ and
\begin{itemize}
\item $A_\DR$ is a (finite dimensional) bifiltered $k$-vector space.
\item $A_\ell$ is a filtered $\Q_\ell$-adic representation of $G_k \df Gal(\bar k/k)$; it is
assumed to be constructible (\ie unramified over some model of finite type of $k/\Z$) and that
the filtration is a weight filtration.
\item $A_{\sigma,\ell}$ is a filtered $\Q_\ell$-vector space.
\item $A_\sigma$ is a filtered $\Q$-vector space.
\item $A_{\sigma,\C}$ is a filtered $\C$-vector space.
\item $I_{\DR,\sigma}: A_\DR\otimes_\sigma \C\to A_{\sigma,\C}$ is a filtered isomorphism.
\item $I_{\sigma,\C}: A_\sigma\otimes_\Q \C\to A_{\sigma,\C}$ is a filtered isomorphism.
\item $I_{\bar\sigma,\ell}: A_\sigma\otimes_\Q \Q_\ell\to A_{\sigma,\ell}$ is a filtered
isomorphism.
\item $I_{\ell,\sigma}: A_\ell\to A_{\sigma,\ell}$ is a filtered isomorphism.
\end{itemize}

Additionally, it is required that the
systems $(A_\sigma,A_\DR,A_{\sigma,\C},I_{\DR,\sigma},I_{\sigma,\C})$ define mixed Hodge
structures.

Morphisms in $\cMR$ are defined in the obvious way.

This is a refinement of the category
defined by Jannsen \cite[Ch. 1]{jannsen} and Deligne \cite{3points}: the refinement is that on
the
$\ell$-adic components of an object, the filtration is required to be a weight filtration.
Huber also defined a triangulated
$t$-category $D_\cMR$, with heart $\cMR$ (\loccit Def. 11.1.3); it comes with a canonical
functor (\loccit p. 94)
\[D^b(\cMR)\to D_\cMR\]
which is the identity on the hearts.

These categories are not sufficient for our purposes: instead, we shall use the categories
$\cMR^P$ and $D_{\cMR^P}$ of \emph{polarizable} mixed realisations, see \cite[Def. 21.1.1 and 21.1.3]{HuberLN}.
Recall that an object $A\in \cMR$ of pure weight $n$ is called polarizable if there is a morphism 
$A\otimes A\to \Q(-n)$ 
which induces a polarization on the associated pure Hodge structure. Let $\cMR^P_n$ be the abelian 
category of polarizable mixed realisations of weight $n$. The category $\cMR^P$ is the subcategory of $\cMR$ whose objects  $H$ satisfy $\gr_n^W H\in\cMR^P_n$ for all $n$. There is the same formalism as above \cite[Prop. 21.1.4]{HuberLN}, which maps to the previous one.

We need an ``effective" full subcategory $\cMR^{P,\eff}\subset \cMR^P$, and a corresponding
effective triangulated category $D_{\cMR^P}^\eff\subset D_{\cMR^P}$:

\begin{defn}  a) An object
$A\in \cMR^P$ is in $\cMR^{P,\eff}$ if
\begin{enumerate}
\item the associated Hodge structure is effective;
\item the eigenvalues of (arithmetic) Frobenius acting on the $\ell$-adic components $A_\ell$
are algebraic integers.
\end{enumerate}
b) An object $C\in D_{\cMR^P}$ is in $D_{\cMR^P}^\eff$ if $H_i^t(C)\in \cMR^{P,\eff}$ for all
$i\in \Z$, where $H_*^t$ is the homology respective to the canonical $t$-structure of
$D_{\cMR^P}$.
\end{defn}

In this way, the above functor refines to
\[D^b(\cMR^{P,\eff})\to D_{\cMR^P}^\eff.\]

By Remark \ref{rE.21} and \cite[Prop. 11.1.5]{HuberLN} (adapted to $\cMR^P$), $\cMR^P$ and
$D_{\cMR^P}$ enjoy weight filtrations in the sense of Definitions \ref{dE.6} and
\ref{d16.3tri}, which are compatible in the sense of Proposition \ref{psplitt}. Moreover, the
categories of pure weights $\cMR^P_n$ are semi-simple \cite[Prop. 21.1.2]{HuberLN}, which is
the main point of passing from $\cMR$ to $\cMR^P$. These weight filtrations induce compatible
weight filtrations on $\cB=\cMR^{P,\eff}$ and $\cT=D_{\cMR^P}^\eff$. We take for $\cB_\L$ the
full subcategory of $\cMR^P$ consisting of objects of the form $R_{\cMR^P}(A\otimes \bT)$, where
$A$ is a (pure) Artin motive and $\bT$ is the Tate motive. Here $R_{\cMR^P}$ is the
functor defined in \cite[Def. 21.2.4]{HuberLN}  (see also \cite[Prop. 21.2.5 \& Th. 20.2.3]{HuberLN}).\footnote{This construction involves fixing an algebraic closure $\bar k$ of $k$ and a section  $S\to \bar S$ of the projection $\bar S\to S$, where $\bar S$ is the set of 
embeddings $\bar\sigma: \bar k\into \C$. }

With the notation of \S \ref{realalb}, $\cB_{(1)}=\cMR^P_{(1)}$ is the full subcategory of
$\cMR^{P,\eff}$ of objects $A$ such that  $A_\DR\in \MHS_{(1)}$ (see beginning of \S \ref{Hodge}). 

We have seen that the full subcategory $\cMR^P_{-2}$ of pure objects of weight $-2$ is
semi-simple. As a special case of Proposition
\ref{pAlbT}, we therefore have:

\begin{propose} The full embeddings $\iota: \cMR^P_{(1)}\into \cMR^{P,\eff}$ and
$\iota:(D^\eff_{\cMR^P})_{(1)}\into D_{\cMR^P}^\eff$ have left adjoints $\Alb^{\cMR^P}$ and
$\LAlb^{\cMR^P}$.\qed 
\end{propose}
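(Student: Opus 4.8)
The statement is an instance of Proposition \ref{pAlbT}, so the task is purely to verify its hypotheses for the pair $(\cB,\cT)=(\cMR^{P,\eff},D_{\cMR^P}^\eff)$ together with the subcategory $\cB_\L$ of objects $R_{\cMR^P}(A\otimes\bT)$ with $A$ a pure Artin motive. Two things must be checked: (i) the $t$-structure on $D_{\cMR^P}^\eff$ is bounded, and (ii) $\cB_{-2}$, the full subcategory of pure objects of weight $-2$ in $\cMR^{P,\eff}$, is semisimple. Once these hold, Proposition \ref{pAlbT} (1) gives that $(D^\eff_{\cMR^P})_{(1)}$ is a thick subcategory with an induced $t$-structure, and Proposition \ref{pAlbT} (2) produces the $t$-exact left adjoint $\LAlb^{\cMR^P}$; restricting to the hearts (equivalently, applying Proposition \ref{pAlbB} (2), whose hypothesis that $\cB_{-2}$ be semisimple is the same one) gives the left adjoint $\Alb^{\cMR^P}$ to $\iota:\cMR^P_{(1)}\into\cMR^{P,\eff}$. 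So the proof will be one sentence: ``This is a special case of Proposition \ref{pAlbT}, once we observe that $D_{\cMR^P}^\eff$ carries a bounded $t$-structure and that $\cMR^P_{-2}$ is semisimple.''

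First I would recall that $D_{\cMR^P}$ is by construction a triangulated $t$-category with heart $\cMR^P$ in the sense of \cite[Def. 11.1.3]{HuberLN}, and that boundedness is part of that construction; passing to the effective subcategory $D_{\cMR^P}^\eff$, defined by the condition $H_i^t(C)\in\cMR^{P,\eff}$ for all $i$, does not affect boundedness since $\cMR^{P,\eff}$ is a full subcategory of $\cMR^P$ closed under the relevant operations. For the semisimplicity of $\cMR^P_{-2}$, I would invoke directly \cite[Prop. 21.1.2]{HuberLN}: this is precisely the statement (already quoted in the excerpt, ``the categories of pure weights $\cMR^P_n$ are semi-simple'') that motivated introducing the polarizable version $\cMR^P$ in place of $\cMR$. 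One should also note that $\cB_\L$ as defined is contained in $\cMR^P_{-2}$ (the Tate motive $\bT$ has weight $-2$ in the covariant normalisation, and tensoring with an Artin motive does not change the weight), and is thick and semisimple as a full subcategory of the semisimple $\cMR^P_{-2}$; and that $\cB_{\le 0}=\cB$ holds because effectivity forces nonpositive weights on the de Rham component and algebraic-integer Frobenius eigenvalues on the $\ell$-adic components, exactly as in the Hodge case.

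The main (and really only) obstacle is bookkeeping: one must make sure that the weight filtrations on $\cMR^P$ and $D_{\cMR^P}$ really do satisfy the axioms of Definitions \ref{dE.6} and \ref{d16.3tri} and are compatible in the sense of Proposition \ref{psplitt}, so that Proposition \ref{pAlbT} applies verbatim. This is asserted in the paragraph preceding the statement (by Remark \ref{rE.21} and the $\cMR^P$-analogue of \cite[Prop. 11.1.5]{HuberLN}), so I would simply cite that, together with \cite[Prop. 21.1.2]{HuberLN} for semisimplicity, and conclude. No genuine computation is needed; the content is entirely in the cited structural results about $\cMR^P$ and in the abstract Propositions \ref{pAlbB} and \ref{pAlbT}.
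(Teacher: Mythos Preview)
Your proposal is correct and matches the paper's approach exactly: the paper marks the proposition with \qed\ after the sentence ``As a special case of Proposition \ref{pAlbT}, we therefore have,'' relying on the preceding discussion (weight filtration via Remark \ref{rE.21} and \cite[Prop.~11.1.5]{HuberLN}, semisimplicity of $\cMR^P_{-2}$ via \cite[Prop.~21.1.2]{HuberLN}) to supply the hypotheses. Your more explicit verification of boundedness and semisimplicity is just a fleshed-out version of what the paper leaves implicit.
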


\begin{notation} For simplicity we shall write $(-)_{\leq 1}$ for $\Alb^{\cMR^P}$.
\end{notation} 

We shall need the following lemma:

\begin{lemma}\label{lisoMR} Let $X$ be smooth projective over $k$. Then the map
\[\Pic(X)\otimes \Q\to \Hom_{\cMR^P}(\Q,H^2_{\cMR^P}(X)(1))\]
given by the realisation functor $R_{\cMR^P}$ induces an isomorphism
\[\NS(X)\otimes \Q \iso \Hom_{\cMR^P}(\Q,H^2_{\cMR^P}(X)(1)).\]
\end{lemma}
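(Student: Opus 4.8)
The plan is to mimic, in the mixed-realisation setting, the standard argument that the N\'eron--Severi group equals the image of the cycle class map on $H^2(1)$, using the semi-simplicity of $\cMR^P_{-2}$ that was already established. First I would record that the source group $\Pic(X)\otimes\Q$ sits in a short exact sequence
\[
0\to \Pic^0(X)\otimes\Q\to \Pic(X)\otimes\Q\to \NS(X)\otimes\Q\to 0,
\]
so the first step is to show that the composite map kills $\Pic^0(X)\otimes\Q$. For this I would observe that $\Pic^0(X)\otimes\Q = A^*(k)\otimes\Q$ for the dual abelian variety $A^*$, and that the realisation functor $R_{\cMR^P}$ sends this to a map $A^*(k)\otimes\Q\to \Ext^1_{\cMR^P}(\Q,H_1(A)(1))$ landing inside a weight $\le -1$ part of $H^2_{\cMR^P}(X)(1)$; but $\Hom_{\cMR^P}(\Q,-)$ on the heart only sees the weight-$0$ part, and $\Q$ is pure of weight $0$ while a nonzero divisorial-correspondence class of $\Pic^0$-type goes into $\Ext^1$, not $\Hom$. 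Concretely: in $\cMR^P$ the object $H^2_{\cMR^P}(X)(1)$ has weight filtration with $W_{-1}$ accounting for the transcendental/abelian part and $\gr^W_0$ the N\'eron--Severi part, and $\Hom_{\cMR^P}(\Q, W_{-1}H^2(X)(1))=0$ because $\Q$ has weight $0$ and Hom vanishes between objects of different pure weights (use the weight filtration axioms from Appendix E and Proposition \ref{pwM}'s analogue for $\cMR^P$, plus semi-simplicity of the pure categories). Hence the map factors through $\NS(X)\otimes\Q$.

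Next I would prove that the induced map $\NS(X)\otimes\Q\to \Hom_{\cMR^P}(\Q,H^2_{\cMR^P}(X)(1))$ is injective and surjective separately. Surjectivity: by the same weight argument, $\Hom_{\cMR^P}(\Q,H^2_{\cMR^P}(X)(1)) = \Hom_{\cMR^P}(\Q,\gr^W_0 H^2_{\cMR^P}(X)(1))$, and $\gr^W_0 H^2_{\cMR^P}(X)(1)$ is a pure object of weight $0$; I would identify it, via the de Rham and $\ell$-adic components of $R_{\cMR^P}$, with the Artin motive attached to the Galois module $\NS(\bar X)\otimes\Q$ (using the Lefschetz $(1,1)$-theorem on the Hodge component and the fact that the Tate conjecture in codimension $1$ is \emph{not} needed here because we are only comparing the image of $\NS$ with the weight-$0$ piece, which is spanned by algebraic classes by definition of the weight filtration on $H^2_{\cMR^P}$ coming from a projective variety — this is the content of the de Jong/Chow-motive construction of $R_{\cMR^P}$ in \cite[Th. 20.2.3]{HuberLN}). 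Then $\Hom_{\cMR^P}(\Q,-)$ of this Artin motive is just its $G_k$-invariants, i.e.\ $(\NS(\bar X)\otimes\Q)^{G_k} = \NS(X)\otimes\Q$. Injectivity: it suffices to check it after applying the $\ell$-adic or Betti realisation component, where the composite $\NS(X)\otimes\Q\to H^2_{\ell}(X)(1)^{G_k}$ (resp.\ $\to \mathrm{Hdg}^2$) is the usual cycle class map, which is injective on divisors modulo algebraic equivalence tensored with $\Q$ — equivalently, algebraic and homological equivalence agree rationally in codimension $1$ (this was already noted in Remark \ref{r16.2}).

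The main obstacle I anticipate is pinning down precisely the weight filtration on $H^2_{\cMR^P}(X)(1)$ and the identification of its weight-zero graded piece with the N\'eron--Severi Artin motive in a way that is compatible across all the realisation components simultaneously; the Hodge component is handled by the classical $(1,1)$-theorem, but one must check the $\ell$-adic components carry a \emph{weight} filtration (part of the definition of $\cMR^P$) whose $\gr^W_0$ matches, and that the comparison isomorphisms $I_{\bar\sigma,\ell}$, $I_{\ell,\sigma}$ respect this. This is essentially bookkeeping internal to Huber's formalism \cite[\S 20--21]{HuberLN}, together with the key input that for \emph{smooth projective} $X$ the object $H^2_{\cMR^P}(X)$ is pure of weight $2$, so after Tate twist it is pure of weight $0$ and the ``weight filtration'' is trivial — which actually \emph{simplifies} the first paragraph's argument: there is no $W_{-1}$ and instead one argues that $\Pic^0(X)\otimes\Q$ maps to $\Ext^1$ in a derived sense and contributes $0$ to $\Hom$ in the heart because the realisation of $\Pic^0$ lands in $H_1$ of an abelian variety, which in $H^2_{\cMR^P}(X)(1)$ only appears through the Abel--Jacobi $\Ext^1$, not $\Hom$. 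So I would reorganise: (i) $H^2_{\cMR^P}(X)$ pure of weight $2$; (ii) therefore $\Hom_{\cMR^P}(\Q, H^2_{\cMR^P}(X)(1))$ computes $G_k$-invariant/Hodge classes, which for the realisation of a Chow motive are spanned by algebraic classes, giving an identification with $\NS(X)\otimes\Q$; (iii) the realisation map $\Pic(X)\otimes\Q\to \Hom$ is the cycle class map, so it factors through $\NS$ and the induced map is the identity on $\NS(X)\otimes\Q$. I would close by remarking that this is exactly the statement needed to feed into Theorem \ref{abstractdelconj} via Remark \ref{r16.2}, i.e.\ verifying the injectivity of $v^*$ and (together with the surjectivity of the geometric cycle class map, which follows from the same identification) the hypotheses making $v$ an isomorphism on $(\DM^\eff_\gm)'$.
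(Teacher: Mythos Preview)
Your treatment of the factorisation through $\NS(X)\otimes\Q$ and of injectivity is correct and matches the paper: both follow by looking at a single realisation component (Betti or $\ell$-adic), since algebraic and homological equivalence agree rationally in codimension $1$. The weight-filtration detour in your first two paragraphs is irrelevant, as you eventually notice: $H^2_{\cMR^P}(X)$ is pure of weight $2$ for $X$ smooth projective, so after the Tate twist there is no nontrivial filtration to exploit, and your claim that the ``weight-$0$ piece'' is ``spanned by algebraic classes by definition'' is simply false.

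The genuine gap is in your surjectivity argument. Your reorganised step (ii) asserts that $\Hom_{\cMR^P}(\Q,H^2_{\cMR^P}(X)(1))$ ``computes $G_k$-invariant/Hodge classes, which for the realisation of a Chow motive are spanned by algebraic classes''. This is what you must \emph{prove}, not assume. An element of this Hom is a compatible system across all $\ell$-adic, de Rham and Betti components; you need to show that any such system arises from a divisor class on $X$ defined over $k$. The Lefschetz $(1,1)$-theorem gives you algebraicity over $\C$ for the Hodge component, but you have not explained the descent to $k$. The paper handles this cleanly: fix an embedding $\bar\sigma:\bar k\hookrightarrow\C$, note that for every finite extension $E/k$ the cycle map $\NS(X_E)\otimes\Q\to\Hom_{\cMR^P(E)}(\Q,H^2_{\cMR^P(E)}(X_E)(1))$ and the forgetful map to $H^2_\Hodge(X_\C,\Q(1))^{(1,1)}$ are both injective; passing to the colimit over $E$ and using $(1,1)$ together with $\NS(\bar X)\otimes\Q\simeq\NS(X_\C)\otimes\Q$ forces the colimit of the cycle maps to be bijective; finally a transfer argument identifies $\Hom_{\cMR^P}(\Q,H^2_{\cMR^P}(X)(1))$ with the $G_k$-invariants of that colimit, which is $(\NS(\bar X)\otimes\Q)^{G_k}=\NS(X)\otimes\Q$. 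Your sketch is missing this colimit-and-invariants step, which is what actually links the Hodge-side algebraicity to the $k$-rational N\'eron--Severi group.
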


\begin{proof} Clearly the given map factors through $\NS(X)\otimes \Q$, since all its
components do, and the resulting map is injective because its Betti components are. To prove
surjectivity, choose an embedding $\bar \sigma:\bar k\into \C$. For any finite extension $E/k$
we have a commutative diagram
\[\begin{CD}
\NS(X_E)\otimes \Q @>>> \Hom_{\cMR^P(E)}(\Q,H^2_{\cMR^P(E)}(X_E)(1))\\
@VVV @V{\bar\sigma_{|E}}VV\\
\NS(\bar X)\otimes \Q@>>> H^2_\Hodge(X_\C,\Q(1))^{(1,1)}
\end{CD}\]
in which all maps are injective ($\bar X = X\otimes_k \bar k$). The bottom row is an
isomorphism by the (1,1) theorem and the isomorphism $\NS(\bar X)\otimes \Q\iso\
\NS(X_\C)\otimes \Q$. Passing to the limit, we get a commutative diagram
\[\begin{CD}
\NS(\bar X)\otimes \Q @>>> \colim_E \Hom_{\cMR^P(E)}(\Q,H^2_{\cMR^P(E)}(X_E)(1))\\
||&& @V{\bar\sigma}VV\\
\NS(\bar X)\otimes \Q@>\sim>> H^2_\Hodge(X_\C,\Q(1))^{(1,1)}
\end{CD}\]
of injections, which forces the top horizontal map to be bijective. To conclude, we use the
commutative diagram
\[\begin{CD}
\NS(X)\otimes \Q @>>> \Hom_{\cMR^P}(\Q,H^2_{\cMR^P}(X)(1))\\
@V{\wr}VV @V{\wr}VV\\
(\NS(\bar X)\otimes \Q)^{G_k} @>>> \colim_E \Hom_{\cMR^P(E)}(\Q,H^2_{\cMR^P(E)}(X_E)(1))^{G_k}
\end{CD}\]
where the vertical maps are isomorphisms by the usual transfer argument. 

(Let us give some details on the last point. If $E/k$ is Galois of group $G$, then $G$ acts on $\Hom_{\cMR^P(E)}(\Q,H^2_{\cMR^P(E)}(X_E)(1))$ via its action on $X_E$, hence the Galois action on the colimit. To define transfers on the right hand side, note that for each $E/k$ the natural functor $\cMR^P\to \cMR^P(E))$ has a left adjoint sending $\Q$ to $\Q[\Spec E]$; we get the transfer from the dual $\Q\to \Q[\Spec E]$ of the adjunction morphism $\Q[\Spec E]\to \Q$.)
\end{proof}

\subsection{Huber's mixed realisation functor} In \cite[Th. 2.3.3 and following remark]{HuberH},
Annette Huber defines a (contravariant) realisation functor:
\[R_{\cMR^P}: \DM_\gm(k,\Q)^\op\to D_{\cMR^P}.\]

The categories $\cMR^P$ and $D_{\cMR^P}$ carry a duality. As in the previous section, we define
a covariant realisation functor
\[R^{\cMR^P}: \DM_\gm(k,\Q)\to D_{\cMR^P}\]
as the composite of $R_{\cMR^P}$ with this duality. This induces a functor
\[R^{\cMR^P}: \DM_\gm^\eff(k,\Q)\to D_{\cMR^P}^\eff.\]

Since $R_{\cMR^P}\Phi_\Q:\Chow^\eff\otimes\Q\to D_{\cMR^P}$ is by construction isomorphic to the
mixed realisation fonctor $X\mapsto R_{\cMR^P}(X)$ collecting all individual realisations of
$X$ with their comparison isomorphisms \cite[Th. 20.2.3]{HuberLN}, the conditions of Hypothesis
\ref{h15.1} are verified, as well as Condition (W) of Proposition \ref{p15.1}. This proposition
then shows that
$R^{\cMR^P}\Tot$ defines an exact functor 
\[R_1^{\cMR^P}:\M(k)\otimes\Q\to \cMR_{(1)}^P.\]

\subsection{Deligne's conjecture}
For any $M\in \DM_\gm^\eff(k)\otimes\Q$,  from  \eqref{eq15.1} we get a
natural map
\begin{equation}\label{eqhodgeMR}
R^{\cMR^P}(M)_{\leq 1}\to R_1^{\cMR^P}\LAlb(M).
\end{equation}
Taking homology of both sides, we get comparison maps
\begin{equation}\label{eqhomHodgeMR}
H_i(R^{\cMR^P}(M))_{\leq 1}\to R_1^{\cMR^P}\LA{i}(M)
\end{equation}
for all $i\in\Z$. From Theorem \ref{abstractdelconj} and Lemma \ref{lisoMR}, we now get:

\begin{thm} \label{isodelMR}
The maps \eqref{eqhodgeMR} and \eqref{eqhomHodgeMR} are isomorphisms for  all motives $M$.\qed
\end{thm}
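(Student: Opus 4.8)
The plan is to deduce Theorem \ref{isodelMR} from the abstract machinery of Section \ref{axDel}, exactly as Theorem \ref{isodel} was deduced for the Hodge realisation. First I would check that the data $(K,\cT,\cB,\cB_\L,R)$ furnished here satisfy the running hypotheses of \S\ref{bch}, namely Hypothesis \ref{h16.1} (which contains Hypothesis \ref{h15.1}). We take $K=\Q$, $\cB=\cMR^{P,\eff}$, $\cT=D_{\cMR^P}^\eff$ with its canonical $t$-structure, $\cB_\L$ the subcategory of Tate-twisted Artin objects, and $R=R^{\cMR^P}$. The weight filtrations on $\cMR^P$ and $D_{\cMR^P}$ and their compatibility have already been recorded (via Remark \ref{rE.21}, \cite[Prop. 11.1.5]{HuberLN} and Proposition \ref{psplitt}), as has the semi-simplicity of $\cMR^P_{-2}$; the $t$-structure is bounded on the effective subcategory since it is so on $D_{\cMR^P}$. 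The parts of Hypothesis \ref{h15.1} and \ref{h16.1} (3) that refer only to smooth projective $X$ follow from the fact, already cited, that $R^{\cMR^P}\Phi$ on $\Chow^\eff\otimes\Q$ is isomorphic to the classical mixed realisation functor $X\mapsto R_{\cMR^P}(X)$ of \cite[Th. 20.2.3]{HuberLN}: the bounds on $H_i^R(X)$, the identification of $H_0^R$ with $\pi_0$, and the Albanese isomorphism in degree $1$ are the standard properties of Betti/$\ell$-adic/de Rham cohomology in degrees $\le 1$. Hypothesis \ref{h16.1} (2), that $R_1^{\cMR^P}$ restricts to an equivalence $(\M\otimes\Q)_{-2}\iso\cB_\L$, is immediate from the definition of $\cB_\L$ together with the computation of $R_1^{\cMR^P}$ on weight $-2$ $1$-motives given by Proposition \ref{p15.1} (tori go to Tate-twisted Artin objects, faithfully by the $\ell$-adic component). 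This is exactly what is needed for Proposition \ref{pAlbT} to produce the $t$-exact left adjoint $\LAlb^{\cMR^P}$ (already stated above) and hence the base change transformation \eqref{eq15.1}, here \eqref{eqhodgeMR}.

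Next I would invoke Theorem \ref{abstractdelconj}. Part (1) gives directly that the base change morphism $v$, and therefore \eqref{eqhodgeMR} after taking homology, is an isomorphism in weights $0$ and $-1$ for every $M$, with no further input. For the remaining weight $-2$ piece — equivalently the statement that $\Alb^{\cMR^P}H_2^R(X)\to R_1^{\cMR^P}\LA{2}^\Q(X)$ is an isomorphism for $X$ smooth projective — I would verify the two conditions defining $(\DM_\gm^\eff)'$ in Theorem \ref{abstractdelconj} (2), namely injectivity of the map $v^*$ of Lemma \ref{comreal2} and surjectivity of the geometric cycle class map \eqref{eq16.3}, and show they hold for \emph{all} smooth projective $X$ (and all finite extensions $E/k$), so that $(\DM_\gm^\eff)'=\DM_\gm^\eff\boxtimes\Q$. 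Both are precisely the content of Lemma \ref{lisoMR}: that lemma asserts that $\NS(X)\otimes\Q\iso\Hom_{\cMR^P}(\Q,H^2_{\cMR^P}(X)(1))$, so the cycle class map \eqref{eq16.3} is surjective (its image is the Néron–Severi part, which exhausts the target), and injectivity of $v^*$ follows since, as noted in Remark \ref{r16.2}, the composite $\beta R$ is this cycle class map and algebraic equals numerical equivalence rationally in codimension $1$ — or, more simply here, one reads off injectivity of $v^*$ directly from Lemma \ref{lisoMR} because the source $\Hom_\cB(R_1\LA{2}^\Q(X),\Lambda)$ is identified with $\NS(\bar X)^{G_E}\otimes\Q$ compatibly. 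Once Theorem \ref{abstractdelconj} (2) applies to every $M(X)$, the parenthetical remark in \S\ref{0.11} (recorded in the statement of that theorem) gives that $v$, hence \eqref{eqhodgeMR}, is an isomorphism of functors on all of $\DM_\gm^\eff\boxtimes\Q$; taking $H_i$ yields \eqref{eqhomHodgeMR}.

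The one genuine mathematical ingredient beyond formal bookkeeping is Lemma \ref{lisoMR}, i.e. the Lefschetz $(1,1)$-type theorem in the mixed-realisation setting: one needs that a class in $H^2$ of $X$ which is ``$(1,1)$" in \emph{all} realisations simultaneously (and in particular Hodge $(1,1)$ after every complex embedding, with matching $\ell$-adic Galois-invariance) is algebraic up to torsion. I expect this to be the main obstacle to write carefully, but it is already proved in the excerpt (Lemma \ref{lisoMR}) by reducing over $\bar k$ to the classical $(1,1)$ theorem and the comparison $\NS(\bar X)\otimes\Q\iso\NS(X_\C)\otimes\Q$, then descending by the transfer argument; so in the present proof it may simply be quoted. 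Everything else — verifying Hypotheses \ref{h15.1} and \ref{h16.1} from the known properties of Huber's functor on Chow motives, and feeding the result into Theorem \ref{abstractdelconj} — is routine. I would therefore present the proof as: (i) the data satisfy \ref{h16.1}; (ii) apply Theorem \ref{abstractdelconj} (1) for weights $0,-1$; (iii) apply Theorem \ref{abstractdelconj} (2) using Lemma \ref{lisoMR} to cover weight $-2$ and to propagate from smooth projective varieties to all motives; (iv) conclude that \eqref{eqhodgeMR} and \eqref{eqhomHodgeMR} are isomorphisms. $\qed$
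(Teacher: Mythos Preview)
Your proposal is correct and follows exactly the paper's approach: the paper's entire proof is the one-line ``From Theorem \ref{abstractdelconj} and Lemma \ref{lisoMR}'', and you have simply unpacked this by verifying Hypothesis \ref{h16.1} for $R^{\cMR^P}$, invoking Theorem \ref{abstractdelconj} (1) for weights $0,-1$, and using Lemma \ref{lisoMR} to feed Theorem \ref{abstractdelconj} (2) for weight $-2$ and thereby cover all motives.
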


Let us develop this theorem in view of the definition of $\cMR^P$. For this, we shall write
$R^\ell$, $R^\DR$,  $R^\sigma$ for the $\ell$-adic, de Rham and (relative to $\sigma$) Betti
components of $R^{\cMR^P}$, and similarly $R^\%_1$, $H_i^\%=H_i\circ R^\%$ ($\%\in
\{\DR,\ell,\sigma\}$).

\begin{cor}\label{ceqhodgeMR} Let $M\in \DM_\gm^\eff(k,\Q)$. Then:\\
a) One has isomorphisms
\begin{align*}
H_i^\ell(M)_{\leq 1}&\iso R_1^\ell\LA{i}(M)\\
H_i^\DR(M)_{\leq 1}&\iso R_1^\DR\LA{i}(M)\\
H_i^\sigma(M)_{\leq 1}&\iso R_1^\sigma\LA{i}(M).
\end{align*}
The last two isomorphisms yield the isomorphism of Hodge structures from Theorem \ref{isodel},
and the first one is an isomorphism of constructible Galois representations with weight
filtrations.\\ 
b) The diagrams
\[\begin{CD}
H_i^\sigma(M)_{\leq 1}\otimes_\Q\Q_\ell@>\sim>> R_1^\sigma\LA{i}(M)\otimes_\Q\Q_\ell\\
@V{I_{\ell,\sigma}^{-1}\circ I_{\bar\sigma,\ell}}VV @V{I_{\ell,\sigma}^{-1}\circ
I_{\bar\sigma,\ell}}VV\\  
H_i^\ell(M)_{\leq 1}@>\sim>> R_1^\ell\LA{i}(M)
\end{CD}
\]
\[\begin{CD}
H_i^\sigma(M)_{\leq 1}\otimes_\Q\C@>\sim>> R_1^\sigma\LA{i}(M)\otimes_\Q\C\\
@V{I_{\DR,\sigma}^{-1}\circ I_{\sigma,\C}}VV @V{I_{\DR,\sigma}^{-1}\circ I_{\sigma,\C}}VV\\  
H_i^\DR(M)_{\leq 1}\otimes_k \C@>\sim>> R_1^\DR\LA{i}(M)\otimes_k \C
\end{CD}
\]
commute.
\end{cor}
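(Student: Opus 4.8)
\textbf{Proof plan for Corollary \ref{ceqhodgeMR}.}

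The plan is to deduce everything from Theorem \ref{isodelMR} together with the very definition of the category $\cMR^P$, in which an object $A$ is precisely a package consisting of its $\DR$-, $\ell$-adic and $\sigma$-Betti components \emph{plus} the comparison isomorphisms $I_{\DR,\sigma}$, $I_{\sigma,\C}$, $I_{\bar\sigma,\ell}$, $I_{\ell,\sigma}$, and in which a morphism is by definition a compatible family of morphisms on all these components. First I would observe that the forgetful functors $\%\mapsto A_\%$ ($\%\in\{\DR,\ell,\sigma\}$) from $\cMR^P$ (resp.\ $D_{\cMR^P}$) to filtered $k$-vector spaces (resp.\ filtered $\Q_\ell$-representations of $G_k$, resp.\ filtered $\Q$-vector spaces) are exact and faithful, so they commute with the homology functors $H_i^t$ and send an isomorphism in $\cMR^P$ to an isomorphism of the respective linear-algebra objects. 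Applying this to the isomorphism \eqref{eqhomHodgeMR} of Theorem \ref{isodelMR}, and noting that taking the $\%$-component of $\Alb^{\cMR^P}(H)=H_{\leq 1}$ gives exactly the ``level $\le 1$ part'' $H_\%{}_{,\leq 1}$ of the corresponding component (this is immediate from the construction of $\Alb^{\cMR^P}$ in Proposition \ref{pAlbT}, since the weight filtration and the Lefschetz subcategory are defined componentwise), yields the three displayed isomorphisms of part a). That the $\DR$- and $\sigma$-components recover the isomorphism of Theorem \ref{isodel} is then just the observation that Huber's Hodge realisation $R^\Hodge$ is, by construction, obtained from $R^{\cMR^P}$ by extracting the mixed Hodge structure attached to the triple $(A_\sigma,A_\DR,A_{\sigma,\C})$ with its comparison data; one quotes \cite[Th. 2.3.3]{HuberH}. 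The statement that the $\ell$-adic isomorphism respects the weight filtrations and constructibility is automatic, since these are part of the data defining $\cMR^P$ and are preserved by any morphism in $\cMR^P$, in particular by the isomorphism \eqref{eqhomHodgeMR}.

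For part b), the point is simply that \eqref{eqhomHodgeMR} is an isomorphism \emph{in $\cMR^P_{(1)}$}, hence by definition of morphisms in $\cMR^P$ it is compatible with \emph{all} the structure isomorphisms $I_{\DR,\sigma}$, $I_{\sigma,\C}$, $I_{\bar\sigma,\ell}$, $I_{\ell,\sigma}$. Concretely, I would write down the naturality square expressing that, for a morphism $f:A\to B$ in $\cMR^P$, one has $I_{\ell,\sigma}^B\circ(I_{\bar\sigma,\ell}^B)^{-1}\circ(f_\sigma\otimes\Q_\ell)=f_\ell\circ I_{\ell,\sigma}^A\circ(I_{\bar\sigma,\ell}^A)^{-1}$, and the analogous square for $I_{\DR,\sigma}^{-1}\circ I_{\sigma,\C}$; taking $f$ to be the isomorphism \eqref{eqhomHodgeMR} (and using that $\Alb^{\cMR^P}$, being a functor into $\cMR^P$, produces objects whose structure isomorphisms are the restrictions of those of $R^{\cMR^P}(M)$ and of $R_1^{\cMR^P}\LA{i}(M)$) gives exactly the two commutative diagrams claimed. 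One must be slightly careful that the vertical maps in the diagrams are the \emph{induced} comparison isomorphisms on the level-$\le 1$ subobjects rather than the ambient ones; but since $\Alb^{\cMR^P}$ is defined by the componentwise weight-and-Lefschetz recipe of Proposition \ref{pAlbT}, the structure isomorphisms of $\Alb^{\cMR^P}(H)$ are by construction the restrictions/corestrictions of those of $H$, so no genuine extra argument is needed.

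The only step that requires more than bookkeeping is verifying that the three forgetful functors genuinely commute with $\Alb^{\cMR^P}$ and with the homology functors $H_i^t$, i.e.\ that ``taking the level $\le 1$ part'' and ``extracting the $\%$-component'' can be interchanged. This reduces to the compatibility, built into Proposition \ref{psplitt} and Remark \ref{rE.21}, of the weight filtrations on $\cMR^P$ and $D_{\cMR^P}$ with those on each component category, together with the semisimplicity of $\cMR^P_{-2}$ used in Proposition \ref{pAlbT}(2); once one knows the truncation functors defining $\Alb^{\cMR^P}$ are computed componentwise, everything is formal. I expect this to be the main (minor) obstacle, and it is dispatched by unwinding the construction of $\LAlb^{\cMR^P}$ exactly as in the proof of Theorem \ref{isodel}. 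Nothing here needs any new input beyond Theorem \ref{isodelMR}, Lemma \ref{lisoMR} (already used to prove it), and the definitions of $\cMR^P$ and of Huber's realisation functors.
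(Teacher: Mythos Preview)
Your proposal is correct and follows exactly the approach the paper intends: the corollary is stated as an immediate unfolding of Theorem \ref{isodelMR} ``in view of the definition of $\cMR^P$'', and your plan is a careful elaboration of precisely that unfolding---extracting components via the forgetful functors and using that morphisms in $\cMR^P$ are by definition compatible with the comparison isomorphisms. Your discussion of whether $\Alb^{\cMR^P}$ commutes with the component functors is slightly more than the paper demands, since the notation $H_i^\%(M)_{\le 1}$ is most naturally read as the $\%$-component of the $\cMR^P$-object $(H_i^{\cMR^P}(M))_{\le 1}$ rather than an independently defined truncation on the target category; with that reading nothing beyond Theorem \ref{isodelMR} and the definition of morphisms in $\cMR^P$ is required.
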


Corollary \ref{ceqhodgeMR} partly extends to $\DM_\gm^\eff(\C,\Q)$. On the one hand, one has
\[2-\colim_{k \text{ f.g.}} \DM_\gm^\eff(k,\Q)\iso \DM_\gm^\eff(\C,\Q)\]
by \cite[Prop. 4.16]{ivorra}. On the other hand, de Rham cohomology commutes with base change
and $\ell$-adic cohomology is invariant under algebraically closed extensions. This yields:

\begin{cor}\label{ceqhodgeMR2} Let $M\in \DM_\gm^\eff(\C,\Q)$. Then:\\
a) The comparison isomorphisms extend to comparison isomorphisms of realisations:
\begin{align*}
R^B(M)\otimes_\Q\Q_\ell&\longby{I_\ell} R^\ell(M)\\
R^B(M)\otimes_\Q\C&\longby{I_\DR} R^\DR(M).
\end{align*}
b) There exist vector spaces
\[H_i^\ell(M)_{\leq 1},H_i^\DR(M)_{\leq 1},H_i^B(M)_{\leq 1}\]
quotients of the $\ell$-adic, de Rham and Betti realisations of $M$, and functorially attached
to $M$.\\ 
c) One has isomorphisms
\begin{align*}
H_i^\ell(M)_{\leq 1}&\iso R_1^\ell\LA{i}(M)\\
H_i^\DR(M)_{\leq 1}&\iso R_1^\DR\LA{i}(M)\\
H_i^B(M)_{\leq 1}&\iso R_1^\sigma\LA{i}(M).
\end{align*}
The last two isomorphisms yield the isomorphism of Hodge structures from Theorem
\ref{isodel}.\\  
d) The diagrams
\[\begin{CD}
H_i^B(M)_{\leq 1}\otimes_\Q\Q_\ell@>\sim>> R_1^B\LA{i}(M)\otimes_\Q\Q_\ell\\
@V{I_\ell}VV @V{I_\ell}VV\\  
H_i^\ell(M)_{\leq 1}@>\sim>> R_1^\ell\LA{i}(M)
\end{CD}
\]
\[\begin{CD}
H_i^B(M)_{\leq 1}\otimes_\Q\C@>\sim>> R_1^B\LA{i}(M)\otimes_\Q\C\\
@V{I_\DR}VV @V{I_\DR}VV\\  
H_i^\DR(M)_{\leq 1}@>\sim>> R_1^\DR\LA{i}(M)
\end{CD}
\]
commute.
\end{cor}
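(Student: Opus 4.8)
The plan is to deduce Corollary~\ref{ceqhodgeMR2} from Corollary~\ref{ceqhodgeMR} together with a limit argument over finitely generated subfields of $\C$. The essential geometric input is the equivalence
\[2\text{-}\colim_{k\subset \C \text{ f.g.}} \DM_\gm^\eff(k,\Q)\iso \DM_\gm^\eff(\C,\Q)\]
of \cite[Prop. 4.16]{ivorra}, which lets us pick, for a given $M\in\DM_\gm^\eff(\C,\Q)$, a finitely generated $k\subset\C$ and a model $M_0\in\DM_\gm^\eff(k,\Q)$ with $M_0\otimes_k\C\simeq M$. First I would record the compatibility of the realisation functors with base change: the de Rham realisation $R^\DR$ commutes with extension of scalars along $k\hookrightarrow\C$ (de Rham cohomology is computed algebraically and commutes with flat base change), and the $\ell$-adic realisation $R^\ell$ is invariant under algebraically closed extensions (smooth/proper base change, reduced to the case of smooth projective varieties by de~Jong alteration and blow-up induction as in Lemma~\ref{lbl}). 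The Betti realisation $R^B$ attached to the inclusion $\sigma:k\hookrightarrow\C$ literally coincides with the Betti realisation over $\C$. This gives part~a): the comparison isomorphisms $I_\ell$ and $I_\DR$ over $k$ from the structure of $\cMR^P$ induce the stated comparison isomorphisms over $\C$, and one checks they are independent of the chosen model $(k,M_0)$ by functoriality of the $2$-colimit.

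Next, for part~b) I would \emph{define} $H_i^\%(M)_{\leq 1}$ for $\%\in\{\ell,\DR,B\}$ by transporting the objects $H_i^\%(M_0)_{\leq 1}$ constructed over $k$ in Corollary~\ref{ceqhodgeMR}. Well-definedness (independence of the model) follows because any two models become isomorphic after a further finite extension, and $H_i^\%(-)_{\leq 1}$ is functorial and compatible with base change by the computation just described; functoriality in $M$ follows likewise, since a morphism $M\to M'$ over $\C$ descends to some common finitely generated $k$. For part~c) I would apply $R^{\cMR^P}$-functoriality: $\LAlb$ commutes with base change (it is built from $D\1$ via internal Homs in $\DM$, which are stable under $\alpha^*$ and extension of the base field), so $\LA{i}(M)$ is the base change of $\LA{i}(M_0)$, and the isomorphisms $H_i^\%(M_0)_{\leq 1}\iso R_1^\%\LA{i}(M_0)$ of Corollary~\ref{ceqhodgeMR}~a) base-change to the desired isomorphisms over $\C$; that the last two recover the Hodge-theoretic isomorphism of Theorem~\ref{isodel} is immediate because the Betti and de Rham data together form the mixed Hodge structure, already known to base-change compatibly. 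Finally part~d): the two commuting squares are obtained by base-changing along $k\hookrightarrow\C$ the corresponding squares in Corollary~\ref{ceqhodgeMR}~b), using that over $\C$ the composite comparison maps $I_{\ell,\sigma}^{-1}\circ I_{\bar\sigma,\ell}$ and $I_{\DR,\sigma}^{-1}\circ I_{\sigma,\C}$ become exactly $I_\ell$ and $I_\DR$ of part~a).

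The main obstacle will be a bookkeeping one rather than a conceptual one: verifying carefully that all the functors in play ($R^\DR$, $R^\ell$, $R^B$, $\LAlb$, and the passage $(-)_{\leq 1}=\Alb^{\cMR^P}$) are compatible with base change along $k\hookrightarrow\C$ in a way that is \emph{coherent with the comparison isomorphisms}, so that the $2$-colimit produces genuine objects and the squares in d) actually commute. The $\ell$-adic invariance under $\bar k=\bar{\C}$-type extensions is the one point requiring an honest argument (proper base change plus the reduction to smooth projective via Lemma~\ref{lbl}); the compatibility of $\Alb^{\cMR^P}$ with base change follows from its explicit description in Proposition~\ref{pAlbT}~(2) via the weight filtration, which is preserved under base change of $\cMR^P$. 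Once these compatibilities are in place, the corollary is a formal consequence of Corollary~\ref{ceqhodgeMR} and \cite[Prop. 4.16]{ivorra}.
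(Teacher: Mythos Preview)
Your approach is correct and matches the paper's: the proof is exactly the 2-colimit over finitely generated subfields via \cite[Prop. 4.16]{ivorra}, together with the compatibility of de Rham cohomology with base change and the invariance of $\ell$-adic cohomology under extensions of algebraically closed fields. The paper presents this argument in a single terse paragraph preceding the corollary; your write-up simply spells out the bookkeeping (well-definedness of the $(-)_{\leq 1}$ objects, compatibility of $\LAlb$ with base change) that the paper leaves implicit.
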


Applied to geometric motives like $M(X)$ or $M(X)^*(n)[2n]$, this gives a reasonable
interpretation of the second part of Deligne's conjecture.

\section{The $\ell$-adic realisation in positive characteristic}\label{elladic}

In this section, $k$ is a finitely generated field and $\ell$ is a
prime number different from
$\car k$. From \S \ref{elladic2} onwards we shall assume $\car k>0$.

\subsection{The tame derived category of $\ell$-adic sheaves} We shall use an analogue of the
categories considered by Huber \cite{Huber}. Namely,

\begin{defn} Let
\[\tilde D(k,\Z_\ell)\df  2-\colim D(S,\Z_\ell)\]
where $S$ runs through the regular models of $k$, of finite type over $\Spec \Z$, and for such
$S$, $D(S,\Z_\ell)$ is the category defined by Ekedahl in \cite{Ekedahl}, for the \'etale topos of $S$.
Let \[\tilde
D^b_c(k,\Z_\ell)\df 2-\colim D^b_c(S,\Z_\ell)\] where $D^b_c(S,\Z_\ell)$ is the thick
subcategory of $D(S,\Z_\ell)$ consisting of those complexes whose cohomology sheaves are
constructible. Finally,
\[\tilde D^b_m(k,\Q_\ell)\df 2-\colim D^b_m(S,\Q_\ell)\]
where $D^b_m(S,\Q_\ell)$ is the thick subcategory of $D^b_c(S,\Q_\ell)$ consisting of those
complexes whose cohomology sheaves are mixed in the sense of \cite[(1.2.2)]{weilII}.
\end{defn}

Recall \cite[Thm. 4.7]{Ekedahl} where the following pairings are constructed
\begin{align}\oo^L: D^-(S,\Z_\ell)\times D(S,\Z_\ell)&\to D(S,\Z_\ell)\label{ek}\\
\RHom: D(S,\Z_\ell)^\op\times D^+(S,\Z_\ell)&\to D(S,\Z_\ell)\notag\\
\intertext{From these we get similar pairings}
\oo^L: \tilde D^-(k,\Z_\ell)\times \tilde D(k,\Z_\ell)&\to \tilde D(k,\Z_\ell)\label{ek2}\\
\RHom: \tilde D(k,\Z_\ell)^\op\times \tilde D^+(k,\Z_\ell)&\to \tilde D(k,\Z_\ell)\notag
\end{align}

\begin{remark}\label{rek=bbd} By Deligne's finiteness theorems in \'etale cohomology \cite[Th.
finitude]{sga4 1/2}, the categories $D^b_c(S,\Z_\ell)$ and $D^b_m(S,\Z_\ell)$ are equivalent to
those considered in \cite{BBD}.
\end{remark}

The category $\tilde D^b_m(k,\Q_\ell)$ enjoys a $t$-structure which is the 2-colimit of the
natural $t$-structures of  $D^b_m(S,\Q_\ell)$, with hearts the categories $\Mix(S,\Q_\ell)$ of
mixed $\Q_\ell$-sheaves on $S$.

\begin{defn}\label{dmix}
We write $\Mix(k,\Q_\ell)$ for the 2-colimit of the $\Mix(S,\Q_\ell)$: this is the heart of
$\tilde D^b_m(k,\Q_\ell)$.
\end{defn}

\begin{remarks}\label{rpi1}  1) For any mixed sheaf $\cF$ on $S$  normal, the restriction of $\cF$ to
a suitable open set is lisse, and if $\cF$ and $\cG$ are lisse over $S$, the map
\[\Hom_S(\cF,\cG)\to \Hom_U(\cF,\cG)\]
is bijective for any open subset $U\subseteq S$, as one sees by interpreting $\cF$ and $\cG$
as $\ell$-adic representations of $\pi_1(S)$ \cite[VI]{sga5}. Hence the  objects of $\Mix(k,\Q_\ell)$ may be identified with ``generically
unramified" $\ell$-adic representations of $G_k$, the absolute Galois group of $k$.\\ 2) Let
$\Perv(S,\Q_\ell)$ be the category of perverse sheaves over $S$, for the middle perversity. By
definition of perverse shaves, we also have an equivalence
\[2-\colim \Perv(S,\Q_\ell) \iso \Mix(k,\Q_\ell)[\dim S]\]
inside $\tilde D^b_m(k,\Q_\ell)$.\\
3) The definition of $D^-(S,\Z_\ell)$ yields canonical $t$-exact functors
\[D^b(\Mix(S,\Q_\ell))\to D^b_m(S,\Q_\ell)\]
hence a canonical $t$-exact functor
\[D^b(\Mix(k,\Q_\ell))\to \tilde D^b_m(k,\Q_\ell)\]
which is the identity on the hearts.
\end{remarks}

\subsection{$\DM$ and $\DA$ over a base}

Let $S$ be Noetherian and separated. For $R$ a commutative ring, let $\NST(S,R)$ (\resp 
$\EST(S,R)$) be the category of Nisnevich (\resp \'etale) sheaves of
$R$-modules with transfers over $S$  (using the category $Cor_S$ of \cite[App. 1A]{VL}  restricted to smooth $S$-schemes). If
$R=\Z$, we drop it from the notation. Then $\EST(S,R)$ is a full subcategory of $\NST(S,R)$
which contains the representable sheaves $L(X)$ for $X$ smooth over $S$. A tensor
structure on $\EST(S,R)$ and $\NST(S,R)$ is induced by the rule $L (X)\otimes L (Y) =
L (X\times_S Y)$.

Recall that over the base $S$ the category $\DM^{\eff}(S,R)$ has the same descriptions as done
in \cite{V} or \cite[Lect. 14]{VL} over a field: see \cite{V2}. Namely, it may be defined as
the localisation of the (unbounded) derived category of Nisnevich sheaves with transfers with
respect to $\Aff^1$-equivalences. Moreover, this localisation functor has a right adjoint so
that it may be regarded as the full subcategory of $\Aff^1$-local objects, \cf
\cite[2.2.6]{ABV}.

The same picture works for \'etale sheaves with transfers, at least if $cd_R(S)<\infty$: let
\[L_{\Aff^1}:D(\EST(S,R))\to \DM^\eff_\et(S,R)\]
be the localisation of $D(\EST(S,R))$ with respect to $\Aff^1$-equivalences. Then $L_{\Aff^1}$
has a fully faithful right adjoint $i$, identifying $\DM^\eff_\et(S,R)$ with the full
subcategory of $\Aff^1$-local objects in $D(\EST(S,R))$.

Moreover, there are non-effective versions of these categories
\[\DM^\eff(S,R)\to \DM(S,R), \quad \DM^\eff_\et(S,R)\to \DM_\et(S,R)\]
obtained by stabilising with respect to tensor product with $L (\G_m,1)$, \cf \cite[\S
10]{cis-deg2}.

In the next subsection we shall need a version without transfers. Let $\NS(S,R)$ and
$\ES(S,R)$ be the categories of Nisnevich (\resp \'etale) sheaves of $R$-modules over
$\Sm(S)$. One constructs triangulated categories $\DA(S,R)$ and $\DA_\et(S,R)$, based on
$\NS(S,R)$ and $\ES(S,R)$, in the same fashion as above, see \cite[\S 3]{real.etale}.

The forgetful functors $\NST(S,R)\to \NS(S,R)$ and $\EST(S,R)\to \ES(S,R)$ have
left adjoints which respect the tensor structures. This yields a naturally commutative diagram
of triangulated $\otimes$-functors:
\begin{equation}\label{eq.dadm}
\begin{CD}
\DA(S,R)@>>> \DM(S,R)\\
@VVV @VVV\\
\DA_\et(S,R)@>>> \DM_\et(S,R).
\end{CD}
\end{equation}

Ayoub proves: 

\begin{thm}[\protect{\cite[Th. B.1]{real.etale}}]\label{t.ay1} The functor $\DA_\et(S,R)\to \DM_\et(S,R)$ in \eqref{eq.dadm} is an equivalence of
categories if $S$ is normal and universally Japanese and any prime number is invertible either
in
$R$ or in $\cO_S$.
\end{thm}

\subsection{$\ell$-adic realisations for $\DA$ and $\DM$}\label{elladic2} 

Let $S$ be separated of finite type over $\Spec \Z$. For $\ell$ invertible on $S$,  Ayoub
constructs a triangulated
$\otimes$-functor 
\begin{equation}\label{eqay1}
\DA_\et(S,\Z_{(\ell)})\to D(S,\Z_\ell)
\end{equation}
which commutes with the six operations \cite[Th. 7.9]{real.etale}. 

If $S$ is a smooth
$\F_p$-scheme, $p\ne \ell$,  we can take $R = \Z_{(\ell)}$ in Theorem \ref{t.ay1}. In this way
we get a $\otimes$-functor
\[\DM_\et(S,\Z_{(\ell)})\to D(S,\Z_\ell)\]
hence a composite realisation functor
\[R^\ell:\DM(S)\to \DM_\et(S)\to \DM_\et(S,\Z_{(\ell)})\to D(S,\Z_\ell)\]

The construction of \eqref{eqay1} relies on a generalisation of the Suslin-Voevodsky rigidity
theorem of \cite{suvo}; using this, one sees that given a smooth $S$-scheme $f:X\to S$, one has
a canonical isomorphism
\begin{equation}\label{eqivorra}
R^\ell M_S(X)\simeq (Rf_*\Z_\ell)^*
\end{equation}
where $M_S(X)$ is the motive of $X$, defined as the image in $\DM(S)$ of $L(X)$. Another
way to get this is to use \cite[Prop. 5.9]{real.etale}, which relates $R^\ell$ with Ivorra's
(contravariant) realisation functor \cite{ivorra}.

Recall the category $\DM_\gm^\eff(S)$, constructed over a base exactly as over a field except
that the Mayer-Vietoris relations for the Zariski topology are replaced by Nisnevich excision
relations \cite[Def. 1.14]{ivorra}. In \cite{ivorra2}, Ivorra also constructs a functor
$\DM_\gm^\eff(S)\to \DM^\eff_\Nis(S)$ extending Voevodsky's functor for base fields. Using it, 
we can restrict $R^{\ell}$ to $\DM_\gm^\eff(S)$. 

By construction, $R^\ell$ commutes with pull-backs for arbitrary morphisms.  By \cite[Prop.
4.16]{ivorra}, if $S$ runs through the smooth models of $k$ over
$\F_p$, the natural functor
\begin{equation}\label{isoDM}
2-\colim \DM_\gm^\eff(S)\to \DM_\gm^\eff(k)
\end{equation}
is an equivalence of categories; hence the $R^\ell_S$ induce  a
 triangulated functor
\[
\tilde R^{\ell}:\DM^\eff_\gm(k) \to \tilde D(k,\Z_{\ell}).
\]

Note that we also have an equivalence of categories
\begin{equation}\label{isoM1}
2-\colim \M(S)\to \M(k)
\end{equation}
where the left hand side is Deligne's category of $1$-motives over a base $S$ (see \S \ref{1motbase} and \cite[(10.1)]{D}): it is obtained by similar, but easier, arguments as for \eqref{isoDM}.

\subsection{Weights and effectivity}\label{19.2}

By definition of a mixed sheaf, we can define a filtration $(\Mix(k,\Q_\ell)_{\le n})_{n\in\Z}$
on $\Mix(k,\Q_\ell)$ in the sense of Definition \ref{d16.3} as follows: $\cF\in \Mix(k,\Q_\ell)$ is
in $\Mix(k,\Q_\ell)_{\le n}$ if and only if its punctual weights (on a suitable model of $k$) are
all $\le n$. This filtration is clearly exhaustive and separated.

By Remarks \ref{rek=bbd} and \ref{rpi1} and by
\cite[(3.4.1)]{weilII}, the category
$\Mix(k,\Q_\ell)$ of Definition \ref{dmix} then enjoys a good theory of weights. Namely:

\begin{propose}\label{p18.1} Suppose $\car k >0$. The filtration $\Mix(k,\Q_\ell)_{\le n}$ provides
$\Mix(k,\Q_\ell)$ with a weight filtration in the sense of Definition \ref{dE.6}.\footnote{This is not a weight filtration if 
$\car k=0$: this fact was pointed out by J.
Wildeshaus. More precisely, the inclusions
\[\iota_n:\Mix(k,\Q_\ell)_{\le n}\into \Mix(k,\Q_\ell)_{\le n+1}\] 
do not have right adjoints: see \cite[p. 90, Remark 6.8.4 i)]{jannsen}.}\end{propose}

\begin{proof} a) Consider the filtration $W_n$ defined on mixed sheaves by \cite[(3.4.1)
(ii)]{weilII}. By Remark \ref{lF.3.6}, it suffices to know that the $W_n$ are exact, exhaustive
and separated. The last two facts are clear, and the first follows from the strict
compatibility with morphisms. 
\end{proof}

The categories $\Mix(k,\Q_\ell)_n$ are not semi-simple.
However, it follows from \cite[(3.4.1) (iii)]{weilII} that if $\cF\in \Mix(k,\Q_\ell)_n$ is viewed as a Galois representation as in Remark \ref{rpi1} 1), then the restriction of $\cF$ to $G_{k\bar\F_p}$ is semi-simple. We now use this fact to get to the situation of \S \ref{realalb}.

\begin{lemma}\label{isotyp} Let $\cF\in \Mix(k,\Q_\ell)_n$, and write $\cF=\bigoplus_{\alpha\in A} \cF_\alpha$ for the decomposition of $\cF_{|G_{k\bar\F_p}}$ into its isotypic components. Then
\begin{enumerate}
\item $G_k$ permutes the $\cF_\alpha$.
\item For  $n$ sufficiently large, $G_{k\F_{p^n}}$ leaves each $\cF_\alpha$ invariant.
\end{enumerate}
\end{lemma}

\begin{proof} We have an exact sequence of profinite groups
\[1\to G_{k\bar\F_p}\to G_k\to G_{\F_p}\to 1.\]

(1) is a general fact with a standard proof.
 Since $G_{k\bar\F_p}$ leaves each $\cF_\alpha$ invariant, the action of $G_k$ on the index set $A$ factors through $G_{\F_p}$. But $A$ is finite since $\cF$ has finite rank, so a suitable open subgroup of $G_{\F_p}$ acts trivially on $A$, hence (2).
\end{proof}

This leads us to replace all categories with base $k$ by the corresponding categories with base $k\bar \F_p$, namely:
\begin{align*}
\tilde D^b_m(k\bar \F_p,\Q_\ell) &= 2-\colim \tilde D^b_m(k\F_{p^n},\Q_\ell)\\
\Mix(k\bar \F_p,\Q_\ell) &= 2-\colim \Mix(k\F_{p^n},\Q_\ell) \\
\Mix(k\bar \F_p,\Q_\ell)_n &= 2-\colim \Mix(k\F_{p^n},\Q_\ell)_n .
\end{align*}

The $t$-structures on the $\tilde D^b_m(k\F_{p^n},\Q_\ell)$ induce a $t$-structure on the category $\tilde D^b_m(k\bar \F_p,\Q_\ell)$ with heart $\Mix(k\bar \F_p,\Q_\ell)$; the weight filtrations on the $\Mix(k\F_{p^n},\Q_\ell)$ induce a weight filtration on $\Mix(k\bar \F_p,\Q_\ell) $, with ``associated graded'' the $\Mix(k\bar \F_p,\Q_\ell)_n$.

The weight filtration we shall need on $\tilde D^b_m(k\bar \F_p,\Q_\ell)$ is not the one considered in
\cite{BBD} and \cite{Huber}, but rather the one in Definition \ref{dJ.1}, which coincides with
the one introduced by S. Morel in \cite[\S 3.1]{morel}:

\begin{defn} \label{dweight} An object $C\in \tilde D^b_m(k\bar \F_p,\Q_\ell)$ is \emph{of weight $\leq
w$} if the weights of $H^i(C)$ are $\leq w$ for all $i\in \Z$ (compare Remark \ref{rpi1} 2)).
\end{defn}

We also need a notion of effectivity:

\begin{defn}\label{deff} a) An object $\cF\in\Mix(k\bar \F_p,\Q_\ell)$
is \emph{effective} if the eigenvalues of  \emph{arithmetic} Frobenius elements acting on the
stalk(s) of (a finite level representative of) $\cF$ are algebraic integers. We denote by
$\Mix(k\bar \F_p,\Q_\ell)^\eff$ their full subcategory.\\ 
b) We denote by $\tilde
D^b_m(k\bar \F_p,\Q_\ell)^\eff$ the full subcategory of $\tilde D^b_m(k\bar \F_p,\Q_\ell)$ consisting of
objects with cohomology in $\Mix(k\bar \F_p,\Q_\ell)^\eff$.
\end{defn}

Clearly, the abelian category $\Mix(k\bar \F_p,\Q_\ell)^\eff$ is a Serre subcategory  of $\Mix(k\bar \F_p,\Q_\ell)$, hence
$\tilde D^b_m(k\bar \F_p,\Q_\ell)^\eff$ is a thick triangulated subcategory of $\tilde
D^b_m(k\bar \F_p,\Q_\ell)$ with heart $\Mix(k\bar \F_p,\Q_\ell)^\eff$. The following lemma is probably hidden somewhere in the literature:

\begin{lemma}\label{leff} Let $\cF \in \Mix(k\bar \F_p,\Q_\ell)_0$. The following are equivalent:
\begin{thlist}
\item $\cF$ is effective.
\item Let  $X$ be a smooth scheme of finite type over $\F_q$, with function field $k\F_q$, such that $\cF$ comes from an $\ell$-adic sheaf on $X$ still denoted by $\cF$. Then, for any $x\in X_{(0)}$, the Frobenius $F_x$ acting on $\cF$ has roots of unity for eigenvalues. 
\item Let $X$ be as in (ii). Then for one $x_0\in X_{(0)}$, the Frobenius $F_{x_0}$ acting on $\cF$ has roots of unity for eigenvalues, and the action of the geometric fundamental group factors through a finite quotient.
\end{thlist}
If $\cF$ is semi-simple, then $\cF$ is effective if and only if a corresponding Galois representation $Gal(\bar k/k\F_q)\to \Aut(\cF)$ factors through a finite quotient of $Gal(\bar k/k\F_q)$.
\end{lemma}

\begin{proof} (i) $\iff$ (ii): obvious by Kronecker's theorem (an algebraic integer with complex absolute values $1$ is a root of unity).

(ii) $\Rightarrow$ (iii): this is a variant of the proof of the $\ell$-adic monodromy theorem. Let  $\bar x\to X$ be a geometric point, $G=\pi_1(X,\bar x)$ and
\[\rho:G\to \Aut(\cF) = {\rm GL}_N(\Q_\ell) \quad (N= \rk \cF)\]
be the corresponding representation. Since $G$ is compact, its image lands into some ${\rm GL}_N(\Z_\ell)$ (given by a $G$-invariant $\Z_\ell$-lattice in $\cF$). The subgroup $1 + \ell^2 M_N(\Z_\ell)$ is a pro-$\ell$-group of finite index, so its inverse image $H$ is of finite index in $G$. If $h\in H$ and $\lambda$ is an eigenvalue of $\rho(h)$, then $\lambda\in 1+\ell^2 O_K$ for some finite extension $K/\Q_\ell$. 

Let $X'\to X$ be the \'etale covering corresponding to $H$. If $x'$ is a closed point of $X'$ and $x$ is its image in $X$, then $F_{x'}\in H$ is a power of $F_x\in G$. So if $x'\in X'_{(0)}$, the eigenvalues of $\rho(F_{x'})$ are roots of unity in $1+\ell^2 O_K$, so they must be $1$. (We use $\ell^2M_N(\Z_\ell)$ only for $\ell = 2$; for $\ell> 2$, $\ell M_N(Z_\ell)$ is sufficient.) This implies that $\Tr(\rho(F_{x'})) = N$ for any such $x'$.

By a version of \v Cebotarev's density theorem \cite[Th. 7]{serrezetaL}, the  $F_{x'}$ are dense in $H$. Since $h\mapsto \Tr(\rho(h))$ is continuous, it has the constant value $n$ on $H$. So for $h\in H$, $\Tr(\rho(h^r)) = n$ for any $r\ge 1$, which implies that the eigenvalues of $\rho(h)$ are all equal to $1$.

Now a theorem of Lie-Kolchin implies that $\rho_{|H}$ is unipotent, i.e. $\cF_{|H}$ is a successive extension of trivial representations. Write $\bar X=X\otimes_{k\F_q} k\bar \F_q$, $\bar X'=X'\otimes_{k\F_q} k\bar \F_q$. By  \cite[(3.4.1) (iii)]{weilII}, the restriction of $\rho_{|H}$ to  $\pi_1(\bar X',\bar x)$ is semi-simple; since it is unipotent, it is trivial. Therefore, $\rho_{|H}$ factors through a unipotent representation of $Gal(\bar \F_{q'}/\F_{q'})$, where $\F_{q'}$ is the field of constants of $X'$, and 
the restriction of $\rho$ to $\pi_1(\bar X,\bar x)$ factors through the finite quotient $\pi_1(\bar X,\bar x)/\pi_1(\bar X',\bar x)$.

(iii) $\Rightarrow$ (ii): If $\rho_{|\pi_1(\bar X)}$ factors through a finite quotient, this quotient determines an \'etale covering $Y\to \bar X$, and $Y$ is defined over a finite extension $\F_{q'}$ of $\F_q$, say $Y=\bar X'$. Let $x\in X_{(0)}$: to show that the eigenvalues of $\rho(F_x)$ are roots of unity, it suffices to see that those of $\rho(F_{x'})$ are $1$ for an $x'\in X'$ above $x$. With the same notation as in the proof of (ii) $\Rightarrow$ (iii), $\rho_{|H}$ factors through the projection $\pi:H\to Gal(\bar \F_{q'}/\F_{q'})$. If $\phi$ is the ``absolute Frobenius'' of $\F_{q'}$, generating $Gal(\bar \F_{q'}/\F_{q'})$, then $\pi(F_{x'})=\phi^{n_{x'}}$ with $n_{x'}=[\F_{q'}(x'):\F_{q'}]$. Applying this with $x=x_0$, we get that the eigenvalues of $\rho(\phi)$ are roots of unity, and then the same for those of $\rho(F_{x'})$.

Suppose $\cF$ semi-simple. In the above proof of (ii) $\Rightarrow$ (iii), $\rho_{|H}$ is semi-simple by Clifford's theorem \cite{clifford} (note that $H$ is normal in $G$). Since it is unipotent, it is trivial hence $\rho$ factors through $G/H$. Conversely, if $\rho$ factors through a finite quotient, $\cF$ is clearly effective.
\end{proof}

\subsection{Sheaves of level $\leq 1$}

\begin{defn} Let $\cF\in \Mix(k\bar \F_p,\Q_\ell)$. We say that $\cF$ is \emph{of level $\leq 1$} if
\begin{itemize}
\item $\cF$ is effective (Def. \ref{deff}) and its weights are in $\{0,-1,-2\}$ (Def.
\ref{dweight});
\item $W_{-2}\cF(-1)$ is effective.
\end{itemize}
We write $\Mix(k\bar \F_p,\Q_\ell)_{(1)}$ for the full subcategory of $\Mix(k\bar \F_p,\Q_\ell)^\eff$ consisting
of objects of level $\leq 1$, and $\tilde D^{b}_m(k\bar \F_p,\Q_\ell)_{(1)}$ the full subcategory of
$\tilde D^b_m(k\bar \F_p,\Q_\ell)$ consisting of objects with cohomology in $\Mix(k\bar \F_p,\Q_\ell)_{(1)}$.
\end{defn}

Note that this coincides with the definitions in \S\ref{realalb}, with
\[\Mix(k\bar \F_p,\Q_\ell)_\L=\{\cF(1)\mid \cF \text{ effective of weight } 0\}.\]

In particular, $\tilde D^{b}_m(k\bar \F_p,\Q_\ell)_{(1)}$ is a thick triangulated $t$-subcategory of
$\tilde D^b_{m}(k\bar \F_p,\Q_\ell)^\eff$ with heart $\Mix(k\bar \F_p,\Q_\ell)_{(1)}$. 

\begin{propose}\label{ssl} The full subcategory $\Mix(k\bar \F_p,\Q_\ell)_\L\subset \Mix(k\bar \F_p,\Q_\ell)_{-2}$ verifies Hypothesis \ref{ss}.
\end{propose}

\begin{proof} Let $\cF\in  \Mix(k\bar \F_p,\Q_\ell)_{-2}$. Using Lemma \ref{isotyp}, write $\cF=\bigoplus_{\alpha\in A} \cF_\alpha$, where $(\cF_\alpha)_{|k\bar \F_p}$ is isotypic of type $\alpha$. Then $\Hom(\cF_\alpha,\cF_\beta)=0$ for $\alpha\ne \beta$. Let 
\[A_1=\{\alpha\in A\mid \cF_\alpha(-1)\text{ is effective}\}\]
and $A_2=A-A_1$. Set
\[\cF_\L = \bigoplus_{\alpha\in A_1}\cF_\alpha, \quad\cF^\tr=\bigoplus_{\alpha\in A_2}\cF_\alpha\]
so that $\cF_\L\in \Mix(k\bar \F_p,\Q_\ell)_\L$, $\cF=\cF_\L\oplus \cF^\tr$ and $\Hom(\cF_\L,\cG^\tr)=\Hom(\cG^\tr,\cF_\L)=0$ for $\cF,\cG\in  \Mix(k\bar \F_p,\Q_\ell)_{-2}$. 
\end{proof}

\begin{cor} The full embeddings $\Mix(k\bar \F_p,\Q_\ell)_{(1)}\into\allowbreak 
\Mix(k\bar \F_p,\Q_\ell)^{\eff}$ and
$D^b(k\bar \F_p,\Q_\ell)_{(1)}\into\allowbreak D^b(k\bar \F_p,\Q_\ell)^\eff$ have left adjoints $\Alb^\ell$
and
$\LAlb^\ell$.
\end{cor}

\begin{proof} This follows from 
Propositions \ref{pAlbT} and \ref{ssl}.
\end{proof}

\begin{propose} $\tilde R^\ell$ verifies Hypothesis \ref{h15.1} and \ref{h16.1}.
\end{propose}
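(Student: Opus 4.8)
The statement to prove is that the covariant $\ell$-adic realisation $R^\ell:\DM_\gm^\eff(k)\to \tilde D^b_m(k,\Q_\ell)^\eff$ satisfies Hypotheses \ref{h15.1} and \ref{h16.1}, where the target triangulated category $\cT=\tilde D^b_m(k,\Q_\ell)^\eff$ carries the $t$-structure whose heart is $\cB=\Mix(k,\Q_\ell)^\eff$, with the weight filtration of Proposition \ref{p18.1} (available because $\car k>0$) and with the Lefschetz subcategory $\cB_\L$ taken to be the objects of the form $R^\ell$ of an Artin motive tensored with the Tate object $\Q_\ell(1)$. The overall strategy mirrors the Hodge and mixed-realisation cases (Sections \ref{Hodge} and \ref{shuber}): verify the geometric input on smooth projective curves and $\P^1$, then deduce the structural assumptions on $R_1=R^\ell\circ\Tot$ from general weight formalism plus the classical theory of the Picard variety.

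First I would record the elementary facts. By \eqref{porism} and the passage to the colimit over smooth models $S$ of $k$, for $X$ smooth over $k$ one has $R^\ell(M(X))\simeq R\pi_*\Q_\ell$ (generically, on a model), so $H_i^{R^\ell}(X)$ is the $\ell$-adic homology of $X$, i.e.\ the $\Q_\ell$-dual of $H^{-i}_\et(X_{\bar k},\Q_\ell)$ with its Galois action and weight filtration. For $X$ smooth projective this is pure of weight $-i$ (Deligne, Weil II), lives in degrees $[0,2\dim X]$, vanishes below $0$, and $H_0^{R^\ell}(X)\iso H_0^{R^\ell}(\pi_0(X))$ by connectedness; moreover $H^2_\et(X_{\bar k},\Q_\ell)(1)^{G_k}\otimes$ is computed by cycles via the $\ell$-adic cycle class, and algebraic equivalence on divisors is respected. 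This already gives Hypothesis \ref{h15.1}(1),(2) and most of Hypothesis \ref{h16.1}(3)(i),(ii),(iii), together with Condition (W) of Proposition \ref{p15.1}. Part (1) of Hypothesis \ref{h16.1} (boundedness of the $t$-structure) holds since every object of $\DM_\gm^\eff$ has bounded $\ell$-adic cohomology. Part (2) — that $R_1$ restricts to an equivalence $(\M\otimes\Q)_{-2}\iso \cB_\L$ — follows from the definition of $\cB_\L$ and the fact that $R^\ell$ sends $[0\to T]$ (a permutation torus over $k$, after $\otimes\Q$) to the corresponding Artin-twisted Tate object, an equivalence onto its image by the full faithfulness of $R^\ell$ on Artin motives (a transfer/Galois-descent argument reducing to $\Q_\ell$ and $\Q_\ell(1)$).

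The remaining substantive points are Hypothesis \ref{h15.1}(3) and Hypothesis \ref{h16.1}(3)(iv). For (3): if $C$ is a smooth projective curve over its field of constants $E$ and $f:C\to\P^1_E$ is nonconstant, I would check $f_*:H_2^{R^\ell}(C)\to H_2^{R^\ell}(\P^1_E)$ is an isomorphism by dualising — it is the pullback $f^*:H^0_\et((\P^1_E)_{\bar k},\Q_\ell)\to H^0_\et(C_{\bar k},\Q_\ell)$ on $\bar k$-points, which is an isomorphism because both curves are geometrically connected (field of constants $E$), so both $H^0$ are canonically $\Q_\ell$ with its $G_E$-action and $f^*$ is the identity. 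For (3)(iv), the Albanese map $a_x:X_E\to (\cA^0_{X/k})_E$ of Remark \ref{r16.1} induces $(\alpha_x)_*:H_1^{R^\ell}(X)\to H_1^{R^\ell}(\cA^0_{X/k})$; since $H_1$ in $\ell$-adic theory is the (dual of the) $H^1$, hence the $\ell$-adic Tate module up to $\otimes\Q_\ell$, and since the Albanese morphism induces an isomorphism on $H^1$ for any smooth projective $X$ (classical: $H^1_\et(X_{\bar k},\Q_\ell)\iso H^1_\et((\cA^0_{X/k})_{\bar k},\Q_\ell)$, compatibly with $G_k$), this map is an isomorphism, independent of $x$ for $i=1$ as noted in Remark \ref{r16.1}. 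The main obstacle I anticipate is bookkeeping rather than conceptual: one must be careful that all the above isomorphisms are compatible with the weight filtrations and Galois actions simultaneously, and that effectivity (Definition \ref{deff}) is preserved — i.e.\ that $R^\ell$ indeed lands in $\tilde D^b_m(k,\Q_\ell)^\eff$ and that the weight-$(-i)$ pieces $H_i^{R^\ell}(X)$ have Frobenius eigenvalues that are algebraic integers, which is exactly the integrality part of Weil II for the smooth projective case and propagates to all $M\in\DM_\gm^\eff$ by blow-up/de Jong induction (Lemma \ref{lbl}). With these verifications in hand, Hypotheses \ref{h15.1} and \ref{h16.1} hold, and Theorem \ref{abstractdelconj} then applies to $R^\ell$, contingent as usual on the Tate conjecture in codimension $1$ to guarantee surjectivity of the geometric cycle class map \eqref{eq16.3}.
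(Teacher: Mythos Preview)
Your approach is essentially the same as the paper's, which simply reduces everything to standard properties of $\ell$-adic cohomology: Ivorra's theorem for landing in the mixed category, integrality of Frobenius eigenvalues on $H^*_c$ (SGA7) for effectivity, and the Weil conjectures plus classical Albanese theory for the rest. Your write-up is considerably more detailed than the paper's three-sentence proof, but the content is the same.

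There is, however, one genuine slip in your verification of Hypothesis~\ref{h15.1}(3). You claim that $f_*:H_2^{R^\ell}(C)\to H_2^{R^\ell}(\P^1_E)$ dualises to the pullback $f^*:H^0_\et((\P^1_E)_{\bar k},\Q_\ell)\to H^0_\et(C_{\bar k},\Q_\ell)$, and conclude it is the identity because both curves are geometrically connected. This is not right: since $H_i^{R^\ell}=(H^i_\et)^\vee$ by construction of the covariant realisation, the $\Q_\ell$-dual of $f_*$ on $H_2$ is $f^*$ on $H^2$, not on $H^0$. Now $f^*:H^2_\et((\P^1_E)_{\bar k},\Q_\ell)\to H^2_\et(C_{\bar k},\Q_\ell)$ is multiplication by $\deg f$ under the identifications with $\Q_\ell(-1)$ (the pullback of a point class is the fibre class), hence an isomorphism over $\Q_\ell$ since $\deg f\neq 0$. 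So your conclusion survives, but the justification you wrote is for the wrong map. If instead you want to pass through Poincar\'e duality $H_2\cong H^0(1)$, the induced map on $H^0$ is the \emph{trace} $f_*$ (again multiplication by $\deg f$), not the pullback $f^*$.
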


\begin{proof} We first prove that $\tilde R^\ell$ maps $\DM_\gm^\eff(k)\otimes\Q$ into $\tilde
D^b_m(k\bar \F_p,\Q_\ell)$ (this much does not need characteristic $p$). It suffices to prove that for
any smooth model $S$ of $k$, $R^\ell$ maps
$\DM_\gm^\eff(S\otimes \F_{p^n})\otimes\Q$ into $D^b_m(S\otimes \F_{p^n},\Q_\ell)$ for each $n$: this follows from \eqref{eqivorra}, Deligne's finiteness theorem \cite[Th. finitude]{sga4 1/2} and the main result of Weil II \cite[Th. 3.3.1]{weilII}.

The fact that $\tilde R^\ell(\DM_\gm^\eff(k)\otimes\Q)\subset \tilde
D^b_m(k\bar \F_p,\Q_\ell)^\eff$ now follows from
\cite[Exp. XXI, (5.2.2)]{sga7} which says that the eigenvalues of the geometric Frobenius
acting on $\ell$-adic cohomology with compact supports are algebraic integers.

The rest of the properties follow from the standard properties of $\ell$-adic cohomology, plus
the Riemann hypothesis \cite{weilI}. Hypothesis \ref{h16.1} (2) follows from Tate-twisting Lemma \ref{leff}, which implies that the restriction of $\tilde R_1^\ell$ to  $(\M\otimes\Q)_0$ induces a full embedding  $(\M\otimes\Q_\ell)_0\into \Mix(k\bar \F_p,\Q_\ell)_0^\eff$, with essential image the semi-simple objects.
\end{proof}

\subsection{Derived realisation for $1$-motives}

To get the correct derived realisation for $1$-motives, we start from Deligne's $\ell$-adic
realisation for smooth $1$-motives over a base $S$ \cite[(10.1.10)]{D} (see also \S \ref{1motbase} and \cf Definition \ref{ladic}):
\[T_\ell(S):\M(S)\otimes\Q\to \Mix(S,\Q_\ell).\]

For $S$ running through smooth models of $k\F_{p^n}$ over $\F_{p^n}$, the $T_\ell(S)$ induce an exact
functor 
\[T_\ell:\M(k\bar\F_p)\otimes\Q\to \Mix(k\bar\F_p,\Q_\ell)\]
(see \eqref{isoM1} and \cf Lemma \ref{ltlexact}). Deligne's description of $T_\ell(S)$ shows that, in fact, $T_\ell$ takes its values in
$\Mix(k\bar\F_p,\Q_\ell)_{(1)}$. From the definition of $T_\ell$ we get immediately:

\begin{thm} Let $T_\ell$ still denote the trivial extension of $T_\ell$ to a $t$-exact
functor $D^b(\M(k\bar\F_p)\otimes \Q)\to \tilde D^b_m(k\bar\F_p,\Q_\ell)$ (via Remark \ref{rpi1} 3)). Then there
is a canonical isomorphism $u:T_\ell\iso \tilde R^\ell\circ \Tot$.
\end{thm}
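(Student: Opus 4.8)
The goal is to produce a canonical natural isomorphism $u:T_\ell\iso R_\ell\circ\Tot$ between two $t$-exact functors $D^b(\M(k)\otimes\Q)\to\tilde D^b_m(k,\Q_\ell)$. Since both functors are $t$-exact and the source category is the bounded derived category of the abelian category $\M(k)\otimes\Q$ (which has cohomological dimension $\le 1$ by Proposition \ref{iso1}), the main input is the gluing machinery for natural transformations between functors out of $D^b(\cB)$ developed in the appendix and invoked in the proof of Theorem \ref{t16.1}. By Lemma \ref{lJ.1}-type arguments, a natural transformation between $t$-exact functors on $D^b(\M(k)\otimes\Q)$ is determined by (and can be built from) its restriction to the heart $\M(k)\otimes\Q$, together with compatibility on $\Ext^1$'s, and since $\Ext^{\ge 2}$ vanishes there is no higher obstruction. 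So the real content is: construct a canonical isomorphism $u_1:T_\ell\iso R_\ell\circ\Tot$ on $1$-motives, compatible with extensions.

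First I would fix, for each smooth model $S$ of $k$ over $\F_p$, Deligne's $\ell$-adic realisation $T_\ell(S):\M(S)\otimes\Q\to\Mix(S,\Q_\ell)$ from \cite[(10.1.10)]{D}, and recall its explicit description: for $M=[L\by{u}G]$ with $G$ an extension of $A$ by $T$, $T_\ell(M)$ is built from the $\ell$-adic Tate module of $G$ together with the lattice $L$, exactly as the fibre of $u$ along the $\ell^n$-division maps. The key comparison is then the identification of $R_\ell(\Tot(M))$ with this same object. Here I would use \eqref{porism}, i.e. $R_\ell(\Z_{tr}(X))\simeq Rf_*\Z_\ell$ for $f:X\to S$ smooth, together with the fact that $\Tot(M)$, viewed in $\DM_{-,\et}^\eff$, is the total complex of $[L\to G]$ regarded as a complex of homotopy invariant \'etale sheaves with transfers (Lemma \ref{l1.3} and Lemma \ref{l2.2.1}). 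Applying $R_\ell$ to the column sheaves $\uL$ and $\uG$ and using that for a semiabelian variety $G$ the \'etale cohomology $R\Gamma_\et(-,\uG)$ computes, on the small site, the $\ell$-adic Tate module construction, one gets a natural identification $R_\ell(\uG)\simeq T_\ell(G)$ and $R_\ell(\uL)\simeq L\otimes\Q_\ell$ (a $0$-dimensional computation, essentially the Kummer sequence read $\ell$-adically). Passing to the $2$-colimit over $S$ and taking cones gives the desired $u_1:T_\ell(M)\iso R_\ell(\Tot(M))$; naturality in $M$ is built into the construction since every arrow is induced by a map of the defining diagrams.

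Next I would promote $u_1$ to the derived level. Since $T_\ell$ and $R_\ell\circ\Tot$ are both $t$-exact (the former by Remark \ref{rpi1} 3), the latter because $\Tot$ lands in $d_{\le 1}\DM_{\gm,\et}^\eff$ whose homology sheaves are $1$-motivic by Theorem \ref{t3.2.3} and $R_\ell$ is constructed via \'etale cohomology which is exact for this $t$-structure), the gluing criterion (the abstract theorems \ref{tgluenat}, \ref{tgluenatIII} used in the proof of Theorem \ref{t18.1}) reduces the existence and uniqueness of $u$ extending $u_1$ to checking compatibility of $u_1$ with the boundary maps of short exact sequences of $1$-motives, i.e. on $\Ext^1$ groups, plus vanishing of a bilinear obstruction living in $\Ext^1$-valued pairings — and in fact uniqueness is automatic since $\Ext^{\ge 1}$-obstructions vanish once compatibility on weight-graded pieces is checked, because $\M(k)\otimes\Q$ has cohomological dimension $\le 1$ and the pairing obstruction of Theorem \ref{tgluenatIII} (3) for a base like $S$ vanishes by the Yoneda argument of Remark \ref{r16.4} (here in the $\ell$-adic setting the analogue of the map $\DM_\gm^\eff(k,\Q)(\Z,\Z(1)[1])\to\Ext^1_{\Mix}(\Z_\ell,\Z_\ell(1))$ is the Kummer isomorphism, so the argument goes through). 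The compatibility on $\Ext^1$ is again a matter of unwinding Deligne's definition: an extension class of $1$-motives corresponds under $T_\ell$ to the evident extension of Tate modules, and under $R_\ell\Tot$ to the $\ell$-adic realisation of the corresponding triangle in $\DM$, and these agree by the case-by-case check over the weight-pure pieces exactly as in steps (2)--(5) of the proof of Theorem \ref{t18.1}.

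\textbf{Main obstacle.} The hard part is the core identification $R_\ell(\uG)\simeq T_\ell(G)$ for a semiabelian scheme $G$ over $S$ as objects of $\tilde D^b_m(k,\Q_\ell)$, functorially and compatibly with the lattice part — i.e. matching Voevodsky's \'etale-sheaf-with-transfers picture of $1$-motives against Deligne's Tate-module picture. One must check that the transfer structure produced by Lemma \ref{l1.3} does not interfere (it doesn't, since $R^{(\nu)}$ forgets transfers and the comparison is after that), and that the finite-level Kummer-type computations glue correctly in the $2$-colimit over models $S$; Proposition \ref{p19.1} (smooth base change compatibility of $R_\ell$) is what makes the colimit legitimate. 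Once this identification is in hand, everything else is formal gluing in the sense of the appendix, exactly parallel to (but strictly easier than) Theorem \ref{t16.1}, because in positive characteristic the weight filtration on $\Mix(k,\Q_\ell)$ is genuine (Proposition \ref{p18.1}) so no subtlety about the "wrong" weight filtration arises, and $T_\ell$ is already defined integrally over every base, so no analogue of Hypothesis \ref{h16.2} needs to be separately verified.
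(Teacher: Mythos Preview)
Your approach is correct but considerably more elaborate than what the paper intends. Note that the theorem is stated with a \qed\ and preceded by ``From the definition of $T_\ell$, we get immediately:'' --- the authors regard this as essentially a tautology, and the reason is already flagged in the introduction (\S\ref{0.13}): ``Deligne's realisation functor \cite[(10.1.5)]{D} extends to all \'etale sheaves without change.''

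The point is this. The functor $R^{\ell}$ of \S\ref{elladic2} is built by forgetting transfers, restricting to the small \'etale site, tensoring with $\Z/\ell^\nu$, and assembling into an $\ell$-adic system. Deligne's $T_\ell$ on a $1$-motive $M=[L\to G]$ is defined (\cite[(10.1.5)]{D}, Definition \ref{ladic}) by exactly the same recipe: view $M$ as a length-one complex of \'etale sheaves and take $\varprojlim_\nu M/\ell^\nu$. Since $\Tot(M)$ is precisely this complex equipped with its transfer structure (Lemma \ref{l1.3}), and the first step of $R^{\ell}$ forgets that structure, the two constructions literally coincide on the nose once one unwinds the definitions. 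No gluing of natural transformations is required; the isomorphism $u$ is the identity at the level of the underlying $\ell$-adic construction.

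Your route via the machinery of Appendix E (Theorems \ref{tgluenat}, \ref{tgluenatIII}, and the vanishing-of-obstruction argument of Remark \ref{r16.4}) is exactly what is needed when the two realisations to be compared are \emph{genuinely different} constructions --- this is the situation for Huber's versus Deligne's Hodge realisation, and is why Theorem \ref{t18.1} only gives uniqueness, with existence left open (cf.\ \cite{vadim}). In the $\ell$-adic case that difficulty simply does not arise: both sides are the Tate-module functor on the underlying complex of sheaves. So while your argument would go through, it obscures the triviality that the paper wants to emphasise, and the ``main obstacle'' you identify (matching $R_\ell(\uG)$ with $T_\ell(G)$) dissolves once one observes that both are $\varprojlim {}_{\ell^\nu}G$ by construction.
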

\begin{proof}   In fact, the essential image of $\Tot$ is  the thick subcategory generated by motives of points and smooth curves (Theorem \ref{t1.2.1}). We are then left to check that $u$ is an isomorphism when evaluated at motives of  points and smooth curves: by construction of \eqref{eqay1} and the formula \eqref{eqivorra} this is clear (by reduction to the case of lattices, Jacobians and tori). 
\end{proof}

\subsection{An $\ell$-adic version of Deligne's conjecture}

From the above we get, for any $M\in \DM_\gm^\eff(k\bar\F_p,\Q)$, the base change morphism \eqref{eq15.1}:
\begin{equation}\label{eqadic}
\tilde R^\ell (M)_{\leq 1}\to T_\ell\LAlb(M).
\end{equation}

Taking homology of both sides, we get comparison maps
\begin{equation}\label{eqadich}
H_i(\tilde R^\ell (M))_{\leq 1}\to T_\ell\LA{i}(M)
\end{equation}
for all $i\in\Z$.

Applying Theorem \ref{abstractdelconj} and the Tate-Zarhin-Mori theorem for endomorphisms of abelian
varieties, we get:

\begin{thm} \label{telladic}\
\begin{thlist}
\item If $X$ is smooth and projective, then \eqref{eqadich} is an isomorphism for $M=M(X)$ and  all $i\neq 2$; for $i=2$, it is an isomorphism if and only if the Galois action on $H^2(\bar X,\Q_\ell)$ is semi-simple and the Tate conjecture holds in codimension $1$ for $X$.
\item The map \eqref{eqadic} is an isomorphism for motives $M$ of abelian type.
\item For a general $M$, \eqref{eqadic} induces an isomorphism after applying the functor
$Q\mapsto \text{\rm cone}(W_{-2}Q\to Q)$.\qed
\end{thlist}
\end{thm}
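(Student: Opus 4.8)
The plan is to deduce Theorem \ref{telladic} from the abstract Deligne conjecture, Theorem \ref{abstractdelconj}, exactly as Theorems \ref{isodel} and \ref{isodelMR} were deduced in the Hodge and mixed-realisation settings. First I would record that all the hypotheses are in place: we have already checked that $R^\ell$ (hence its dual $R_\ell$) verifies Hypotheses \ref{h15.1} and \ref{h16.1} (see the Proposition preceding \S\ref{19.2}), that $\Mix(k,\Q_\ell)$ carries a weight filtration with semisimple pure subcategories (Proposition \ref{p18.1}), and that the ``effective'' and ``level $\le 1$'' subcategories are thick with the expected $t$-structures and left adjoints $\Alb^\ell$, $\LAlb^\ell$. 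The base change morphism \eqref{eqadic} is the instance of \eqref{eq15.1} attached to the canonical isomorphism $u:T_\ell\iso R_\ell\circ\Tot$, and taking homology gives \eqref{eqadich}. So the content of the theorem is precisely the conclusion of Theorem \ref{abstractdelconj} for $\cT=\tilde D^b_m(k,\Q_\ell)^\eff$, $\cB=\Mix(k,\Q_\ell)^\eff$, $R=R^\ell$, once one identifies the relevant ``algebraic equivalence = homological equivalence in codimension $1$'' and ``Lefschetz $(1,1)$'' inputs with statements that are known (unconditionally in part (ii)) or equivalent to the Tate conjecture (in part (i)).

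The three parts then come out as follows. For (i): by Theorem \ref{abstractdelconj}(1), $v=v_{M(X)}$ is automatically an isomorphism on $H_i$ for $i\neq 2$ (weights $0,-1$, together with the vanishing for $i<0$ and $i>2$ coming from Hypothesis \ref{h16.1}(3)), and for $i=2$ the map in question is the one in Lemma \ref{comreal2}, which by Remark \ref{r16.2} is controlled by the $\ell$-adic cycle class map $\NS(X)\otimes\Q_\ell\to H^2_{\et}(X_{\bar k},\Q_\ell(1))^{G_k}$; that this is an isomorphism is exactly the Tate conjecture in codimension $1$ for $X$. (The needed injectivity of $v^*$, i.e.\ that $\ell$-adic homological equivalence refines algebraic equivalence in codimension $1$, holds unconditionally since numerical and algebraic equivalence agree rationally in codimension $1$, cf.\ Remark \ref{r16.2}; here one uses that $\ell$-adic cohomology of smooth projective varieties in characteristic $p$ is a Weil cohomology.) For (ii): motives of abelian type lie in the thick subcategory generated by $M(X)$ with $X$ abelian varieties (or products of curves), and for these the relevant geometric cycle class map \eqref{eq16.3} is surjective by Tate's theorem on divisorial correspondences / the Tate--Zarhin theorem on $\Hom$ of abelian varieties over finitely generated fields of positive characteristic; so Theorem \ref{abstractdelconj}(2) applies with $(\DM_\gm^\eff)'$ containing all such $M$, giving that $v$ is an isomorphism there, i.e.\ \eqref{eqadic} is an isomorphism. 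For (iii): Theorem \ref{abstractdelconj}(1) already gives that $v$ is an isomorphism in weights $0$ and $-1$, which is precisely the statement that $\mathrm{cone}(W_{-2}Q\to Q)$ of source and target agree; one just has to check this compatibility is functorial, which follows from the strictness of the weight filtration on $\tilde D^b_m(k,\Q_\ell)$ (Proposition \ref{p18.1}) and the exactness of $\LAlb^\ell$ and $\LAlb^\Q$ for the respective $t$-structures.

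The main obstacle — really the only non-formal point — is verifying the hypotheses of Theorem \ref{abstractdelconj}(2) in part (ii), namely the surjectivity of the geometric cycle class map \eqref{eq16.3} for $X$ of abelian type over arbitrary finite extensions $E/k$. This is where Tate--Zarhin (finiteness and semisimplicity of $\ell$-adic representations attached to abelian varieties, together with $\Hom_E(A,B)\otimes\Q_\ell\iso \Hom_{G_E}(V_\ell A,V_\ell B)$) enters in an essential way, and it is genuinely special to positive characteristic; in characteristic $0$ this is the open Tate conjecture. Everything else is bookkeeping: translating between the covariant functor $R^\ell$ and the contravariant $R_\ell$, matching the abstract $\Alb^\cT$ with $\LAlb^\ell$, and checking that the base change transformation of \S\ref{sbase} built from $u:T_\ell\iso R_\ell\circ\Tot$ is the one appearing in \eqref{eqadic}. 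I would therefore structure the proof as: (a) recall the hypotheses are satisfied; (b) invoke Theorem \ref{abstractdelconj}(1) for (i) in degrees $\neq 2$ and for (iii); (c) analyse degree $2$ via Lemma \ref{comreal2} and Remark \ref{r16.2}, identifying the obstruction with the codimension-$1$ Tate conjecture, to finish (i); (d) check the hypotheses of Theorem \ref{abstractdelconj}(2) for abelian-type motives using Tate--Zarhin and conclude (ii).
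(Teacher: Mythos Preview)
Your proposal is correct and follows exactly the paper's approach: the theorem is stated with a \qed\ and proved only by the sentence preceding it, ``Applying Theorem \ref{abstractdelconj} and the Tate-Zarhin theorem for endomorphisms of abelian varieties, we get:''. You have simply unpacked what the paper leaves implicit, correctly identifying that (i) and (iii) come from Theorem \ref{abstractdelconj}(1) together with Lemma \ref{comreal2} and Remark \ref{r16.2}, while (ii) comes from Theorem \ref{abstractdelconj}(2) with the surjectivity of \eqref{eq16.3} supplied by Tate--Zarhin.
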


Let us specify what Galois action is considered in (i). Since $X$ is a smooth projective variety over $k\bar \F_p$, it is defined over $k\F_{p^n}$ for some large enough $n$; for such an $n$, the Galois group $Gal(\bar k/k\F_{p^n})$ acts on $H^2(\bar X,\Q_\ell)$. We require this action to be semi-simple: this does not depend on the choice of $n$. (Note that $Gal(\bar k/k\bar \F_{p})$ acts semi-simply on $H^2(\bar X,\Q_\ell)$ by \cite[(3.4.1) (iii)]{weilII}.)

Using the $4$ operations, one can define the motive and motive with compact support of schemes of finite type over $k\bar\F_p$, \cf \cite[\S 6.7]{fullfaith}. Modulo the Tate conjecture in codimension $1$, we then get the same
corollaries as in Corollary \ref{isodelcor}.

\newpage
\part*{Appendices}

\appendix

\section{Homological algebra}\label{appendixA}

\subsection{Some comparison lemmas}

The following lemma is probably well-known:

\begin{lemma}\label{lA.2} Let $T:\cT\to\cT'$ be a full triangulated
functor between two triangulated categories. Then $T$ is conservative if
and only if it is faithful.
\end{lemma}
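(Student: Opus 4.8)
The statement concerns a full triangulated functor $T:\cT\to\cT'$, and the claim is that conservativity ($Tf$ iso $\Rightarrow$ $f$ iso) is equivalent to faithfulness ($Tf=0\Rightarrow f=0$). The plan is to prove both implications directly, using only the triangulated structure together with fullness of $T$.

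First I would prove that faithful $\Rightarrow$ conservative. Let $f:X\to Y$ be a morphism in $\cT$ such that $Tf$ is an isomorphism. Complete $f$ to an exact triangle $X\by{f}Y\to Z\by{+1}$. Applying $T$ gives an exact triangle $TX\by{Tf}TY\to TZ\by{+1}$ in $\cT'$; since $Tf$ is an isomorphism, $TZ=0$. Now I want to deduce $Z=0$. The identity $1_{TZ}=0$, and since $T$ is full, $T$ is (by hypothesis) faithful, so $1_Z$ maps to $1_{TZ}=0$, forcing $1_Z=0$, i.e. $Z=0$. Then the triangle $X\by{f}Y\to 0\by{+1}$ shows $f$ is an isomorphism. (Here I use the standard fact that a morphism whose cone is zero is an isomorphism, which follows from the long exact sequence obtained by applying $\Hom(-,W)$ or $\Hom(W,-)$ to the triangle.)

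Next I would prove conservative $\Rightarrow$ faithful. Let $f:X\to Y$ be a morphism with $Tf=0$. Complete $f$ to an exact triangle $X\by{f}Y\by{g}Z\by{h}X[1]$. Since $Tf=0$, the triangle $TX\by{0}TY\by{Tg}TZ\by{Th}TX[1]$ splits: concretely, $Tg$ admits a retraction $r:TZ\to TY$ with $rTg=1_{TY}$ (this is the standard consequence of the long exact sequence in a triangulated category when the first map is zero). By fullness of $T$, choose $s:Z\to Y$ in $\cT$ with $Ts=r$. Consider $sg-1_Y:Y\to Y$; applying $T$ gives $Ts\cdot Tg - 1_{TY}=rTg-1_{TY}=0$. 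Now I would argue that $sg:Y\to Y$ is an isomorphism: indeed $T(sg)=rTg$ is an idempotent in $\End(TY)$, but I want it to be the identity — which it is, since $rTg=1_{TY}$. So $T(sg)=1_{TY}$ is an isomorphism, hence by conservativity $sg$ is an isomorphism in $\cT$. But $sg$ factors through $g:Y\to Z$, so $g$ is a split monomorphism; completing $g$ into the rotated triangle and using that $g$ is (split) mono forces the preceding map $f$ in the triangle to be zero. (The cleanest way: $sg$ iso means $g$ is a split mono, so in the triangle $X\by{f}Y\by{g}Z$ the composite of the two consecutive maps being zero together with $g$ mono gives $f=0$.)

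The main obstacle I anticipate is being careful about the standard structural facts I am invoking: that a morphism with zero cone is an isomorphism, and that in an exact triangle $A\by{0}B\by{v}C\by{w}A[1]$ the map $v$ is a split monomorphism (equivalently $w=0$, equivalently $C\simeq B\oplus A[1]$). These are routine lemmas on triangulated categories, but the proof hinges on applying them at the right spots and on transporting the splitting data $r$ back along $T$ using fullness — fullness is used exactly once in each direction, and it is essential that $T$ need not be an equivalence for this to go through. I would also remark that neither implication needs any hypothesis beyond fullness; in particular no assumption that $T$ be essentially surjective or that $\cT,\cT'$ have extra structure.
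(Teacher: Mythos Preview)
Your proof is correct and follows essentially the same route as the paper's: for the nontrivial direction (conservative $\Rightarrow$ faithful), both you and the paper lift the retraction of $Tg$ along fullness to a map $s:Z\to Y$, use conservativity to conclude that $sg$ is an isomorphism (since $T(sg)=1_{TY}$), and then observe that $g$ is therefore split mono, forcing $f=0$ via $gf=0$. The paper simply declares the reverse implication ``obvious'', while you spell it out, but the substance is identical.
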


\begin{proof} ``If" is obvious. For ``only if", let $f:X\to X'$ be a
morphism of
$\cT$ such that
$T(f)=0$. Let $g:X'\to X''$ denote a cone of $f$. Then $T(g)$ has a
retraction
$\rho$. Applying fullness, we get an equality $\rho=T(r)$. Applying
conservativity, $u=rg$ is an isomorphism. Then $r'=u^{-1}r$ is a
retraction of $g$, which implies that $f=0$.
\end{proof}

\begin{propose}\label{derxact} a) Let $i: \E \into \A$ be an exact full 
subcategory of an abelian category $\A$, closed under kernels.
Assume further that for each
$A^{\d}\in C^b(\A)$ there exists
$E^{\d}\in  C^b(\E)$ and a quasi-isomorphism  $i (E^{\d})\to A^{\d}$
in $K^b(\A)$.  Then  $i: D^b(\E)\to D^b(\A)$ is an equivalence
of categories.\\
b) The hypothesis of a) is granted when every object in $\A$ has a
finite left resolution by objects in $\E$.
\end{propose}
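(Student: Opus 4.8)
\textbf{Plan of proof for Proposition \ref{derxact}.}

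The plan is to prove part b) first, since part a) then becomes essentially a standard consequence once the resolution hypothesis is in hand. So suppose every object of $\A$ has a finite left resolution by objects of $\E$. Given $A^{\d}\in C^b(\A)$, I want to produce $E^{\d}\in C^b(\E)$ together with a quasi-isomorphism $i(E^{\d})\to A^{\d}$. The standard device is the \emph{Cartan--Eilenberg-type resolution by columns}: one builds, by descending induction on the degree $n$ (using boundedness of $A^{\d}$ on the right), a double complex $E^{\d,\d}$ with $E^{p,q}\in\E$, concentrated in a bounded range of $q\le 0$, such that each column $E^{n,\d}\to A^n$ is a finite left resolution, and such that the horizontal differentials are compatible. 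The key technical input is that a finite left $\E$-resolution of $\ker(A^n\to A^{n+1})$ and of $A^n$ can be chosen functorially enough to map between them; more precisely, one uses the horseshoe lemma in the exact category $\E$ (valid because $\E$ is closed under kernels in $\A$, so that the relevant kernels in the resolution stay in $\E$) to splice resolutions along the short exact sequences $0\to Z^n\to A^n\to B^{n+1}\to 0$ and $0\to B^n\to Z^n\to H^n(A^{\d})\to 0$. Having built $E^{\d,\d}$, set $E^{\d}=\Tot(E^{\d,\d})$; this is a bounded complex of objects of $\E$, and the augmentation $\Tot(E^{\d,\d})\to A^{\d}$ is a quasi-isomorphism by the usual spectral sequence / filtration-by-columns argument, using that each column is a resolution. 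This establishes b) implies the hypothesis of a).

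For part a) itself: the functor $i:K^b(\E)\to K^b(\A)$ sends acyclic complexes (for the exact structure of $\E$) to acyclic complexes, since $i$ is exact; hence it descends to a triangulated functor $i:D^b(\E)\to D^b(\A)$. I would check essential surjectivity and full faithfulness separately. Essential surjectivity is immediate from the hypothesis: any $A^{\d}$ is isomorphic in $D^b(\A)$ to $i(E^{\d})$. For full faithfulness, the plan is to use the classical criterion for a resolving subcategory (cf. \cite[1.1.4 and III.\S\,2]{BBD} or the dual of Proposition \ref{pschapira} as used in the proof of Theorem \ref{ptors}): because $\E$ is closed under kernels in $\A$ and every object of $\A$ admits a finite left $\E$-resolution, the inclusion $\E\hookrightarrow\A$ induces an isomorphism on $\Ext$-groups computed in $\E$ versus in $\A$ in all degrees (finiteness of resolutions guarantees there is no boundedness obstruction), and from this one deduces that for $E_1^{\d},E_2^{\d}\in D^b(\E)$ the map $\Hom_{D^b(\E)}(E_1^{\d},E_2^{\d})\to \Hom_{D^b(\A)}(i E_1^{\d}, i E_2^{\d})$ is bijective. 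Concretely: reduce by d\'evissage (exact triangles in both variables) to the case $E_1^{\d}=E_1[0]$, $E_2^{\d}=E_2[0]$ with $E_1,E_2\in\E$, where both sides compute $\Ext^{*}$, and invoke the resolution hypothesis to identify the $\E$-$\Ext$ with the $\A$-$\Ext$.

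The main obstacle I expect is the functoriality needed to assemble the individual finite $\E$-resolutions into an honest double complex with matching horizontal differentials — i.e., making the horseshoe-lemma construction work uniformly across all degrees of $A^{\d}$ while keeping the lengths of the resolutions bounded. One must be careful that the kernels appearing at each stage of the splicing genuinely lie in $\E$ (this is where ``closed under kernels'' is used), and that the bounded-range condition on $q$ is preserved, so that $\Tot(E^{\d,\d})$ is actually bounded. Once the double complex is in place, the spectral-sequence verification that the augmentation is a quasi-isomorphism, and the $\Ext$-comparison yielding full faithfulness, are routine.
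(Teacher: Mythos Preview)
Your treatment of part b) --- building a Cartan--Eilenberg-type double complex via the horseshoe lemma and taking its total complex --- is correct, and is precisely what the paper's one-line reference to \cite[I, Lemma 4.6]{HA} unpacks to.

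Your proof of part a), however, has a genuine gap. In the full faithfulness step you invoke ``every object of $\A$ admits a finite left $\E$-resolution'', but that is the hypothesis of b), not of a). Part a) only assumes that every bounded \emph{complex} in $C^b(\A)$ receives a quasi-isomorphism from some $i(E^{\d})$; applied to an object $A[0]$ this yields a bounded $\E$-complex quasi-isomorphic to $A$, but not a left resolution (it need not be concentrated in nonpositive degrees). Your $\Ext$-comparison/d\'evissage argument therefore only establishes the conclusion under the stronger hypothesis of b), and you have not proved a) as stated.

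The paper's argument for a) is different and works directly under a)'s hypothesis. One first observes that $i:D^b(\E)\to D^b(\A)$ is \emph{conservative}: if $i(E^{\d})$ is acyclic in $\A$, then since $\E$ is closed under kernels, the kernels of the differentials lie in $\E$ and the resulting short exact sequences witness acyclicity of $E^{\d}$ in the exact category $\E$. Essential surjectivity is immediate from the hypothesis. For \emph{fullness}, one uses calculus of fractions in $D^b(\A)$: a morphism $i(D^{\d})\to i(E^{\d})$ is represented as $f's^{-1}$ with $s:A^{\d}\to i(D^{\d})$ a quasi-isomorphism and $f':A^{\d}\to i(E^{\d})$ in $K^b(\A)$; by the hypothesis of a) there is a quasi-isomorphism $s':i(F^{\d})\to A^{\d}$ with $F^{\d}\in C^b(\E)$, and then $f's':i(F^{\d})\to i(E^{\d})$ comes from $C^b(\E)$ by fullness, while $ss':i(F^{\d})\to i(D^{\d})$ becomes invertible already in $D^b(\E)$ by conservativity. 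Finally, faithfulness follows from Lemma \ref{lA.2} (a full conservative triangulated functor is automatically faithful). No $\Ext$-comparison or resolution of individual objects is needed.
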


\begin{proof} a) Clearly, the functor $D^b(\E)\to D^b(\A)$ is
conservative. The assumption implies that
$i$ is essentially surjective: thanks to Lemma \ref{lA.2}, in order to
conclude it remains to see that $i$ is full.

Let $f\in D^b(\A)(i (D^{\d}), i (E^{\d}))$. Since
$D^b(\A)$ has left calculus of fractions there  exists a
quasi-isomorphism $s$ such that $f = f^{\prime} s^{-1}$ where 
$f^{\prime} : A^{\d}\to i (E^{\d})$ is a map in $K^b(\A)$, which
then  lifts to a map in $C^b(\A)$. By hypothesis there exists $F^{\d}\in
C^b(\E)$ and a  quasi-isomorphism  $s^{\prime} :i (F^{\d})\to
A^{\d}$. Set 
$f^{\prime\prime}\df f^{\prime} s^{\prime}: i (F^{\d})\to i (E^{\d})$. 
Then $f = f^{\prime\prime} (s s^{\prime})^{-1}$ where $s s^{\prime} : i 
(F^{\d})\to i (D^{\d})$ is a quasi-isomorphism. By conservativity of $i$,
we are reduced to check fullness for effective maps, \ie arising from true
maps in $C^b(\A)$: this easily follows from the fullness of the
functor $C^b(\E)\into C^b(\A)$. 

b) This follows  by adapting the argument in \cite[I, Lemma~4.6]{HA}.
\end{proof}

\begin{propose}\label{triexact} Let $\E \into \A$ be an exact category. Let
$D$ be a triangulated category and let
$T : D^b(\E)\to D$ be a triangulated functor such that
$$\Hom_{D^b(\E)}(E', E[i]) \longby{\cong} \Hom_{D}(T(E'), T(E[i])) $$
for all $E', E\in \E$ and $i\in \Z$. Then $T$ is fully faithful.
\end{propose}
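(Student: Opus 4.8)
\textbf{Proof plan for Proposition \ref{triexact}.}
The plan is to reduce full faithfulness of $T$ to the hypothesis, which is an isomorphism statement only on objects of $\E$ placed in a single degree, by a standard dévissage on the length of complexes. First I would recall that $D^b(\E)$ is generated, as a triangulated category, by the image of $\E$ (viewing each object of $\E$ as a complex concentrated in degree $0$): every bounded complex sits in a finite tower of distinguished triangles whose cones are shifts of objects of $\E$. Concretely, for $C^{\cdot}\in D^b(\E)$ concentrated in degrees $[a,b]$ one has the stupid truncation triangle
\[
\sigma_{\ge a+1}C^{\cdot}\to C^{\cdot}\to C^a[-a]\by{+1}
\]
and one inducts on $b-a$.

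The key steps, in order, are: (1) Fix $C^{\cdot}\in D^b(\E)$ and let $\cD_{C}$ be the full subcategory of $D^b(\E)$ consisting of those $D^{\cdot}$ for which $\Hom_{D^b(\E)}(C^{\cdot},D^{\cdot})\to \Hom_D(T C^{\cdot},T D^{\cdot})$ is bijective. Show $\cD_C$ is a triangulated subcategory: this is the five lemma applied to the long exact sequences obtained by mapping $C^{\cdot}$ into a distinguished triangle, using that $T$ is triangulated so it sends triangles to triangles and is compatible with the $\Hom$ long exact sequences. (2) By the hypothesis, $\cD_C$ contains every object of the form $E[i]$ with $E\in\E$, $i\in\Z$, \emph{provided} $C^{\cdot}$ is itself of the form $E'[j]$; combined with step (1) and the generation remark, this gives that for $C^{\cdot}=E'[j]$ the subcategory $\cD_{C}$ is all of $D^b(\E)$. (3) Now run the symmetric dévissage on the first variable: let $\cE$ be the full subcategory of $C^{\cdot}$ such that the comparison map is bijective for \emph{all} $D^{\cdot}\in D^b(\E)$; by step (2) it contains all shifts of objects of $\E$, and the same five-lemma argument as in (1) (now mapping a triangle into a fixed $D^{\cdot}$, and using the contravariant long exact sequence) shows $\cE$ is triangulated, hence equals $D^b(\E)$. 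This proves $T$ is fully faithful.

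I do not expect a serious obstacle here; the only point requiring a little care is the compatibility of $T$ with the connecting maps in the $\Hom$ long exact sequences, i.e. that the diagrams relating $\Hom_{D^b(\E)}$-sequences and $\Hom_D$-sequences via $T$ genuinely commute — this is part of what it means for $T$ to be a triangulated functor, and it is what licenses the five lemma in steps (1) and (3). One should also note that boundedness of complexes in $D^b(\E)$ is used to make the induction on length terminate; this is automatic since we work with $D^b$. No resolution-type hypothesis (unlike in Proposition \ref{derxact}) is needed, because here we are not claiming essential surjectivity, only full faithfulness.
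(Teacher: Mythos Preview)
Your proof is correct and follows essentially the same approach as the paper, which simply says ``we argue by induction on the lengths of $C$ and $C'$''; you have spelled out precisely the d\'evissage (stupid truncations, five lemma in each variable) that this induction amounts to.
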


\begin{proof} Let $C,C'\in C^b(\E)$: we want to show that the map
\[\Hom_{D^b(\E)}(C, C') \to \Hom_{D}(T(C), T(C'))
\]
is bijective. We argue by induction on the lengths of $C$ and $C'$.
\end{proof}

Finally, we have the following very useful criterion for a full embedding of derived
categories, that we learned from Pierre Schapira.

\begin{propose}[\protect{\cite[p. 329, Th. 13.2.8]{KS}}]\label{pschapira} Let $\cA\into \cB$
be an exact full embedding of abelian categories. Assume that, given any mo\-no\-mor\-ph\-ism
$X'\into X$ in $\cB$, with $X'\in \cA$, there exists a morphism $X\to X''$, with $X''\in \cA$,
such that the composite morphism $X\to X''$ is a monomorphism. Then the functor
\[D^*(\cA)\to D^*(\cB)\]
is fully faithful for $*=+,b$.
\end{propose}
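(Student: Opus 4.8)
The final statement to prove is Proposition \ref{pschapira}, which is attributed to Kashiwara--Schapira \cite[p. 329, Th. 13.2.8]{KS}. Since the result is quoted from the literature, the honest proof proposal is to either cite it directly or reproduce the argument; the plan below sketches the latter.

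\textbf{Setup and strategy.} Let $\cA\into\cB$ be an exact full embedding of abelian categories satisfying the hypothesis: every monomorphism $X'\into X$ in $\cB$ with $X'\in\cA$ fits into a morphism $X\to X''$ with $X''\in\cA$ such that $X'\to X''$ is a monomorphism. The plan is to show $D^*(\cA)\to D^*(\cB)$ is fully faithful for $*=+,b$. The standard device is to reduce full faithfulness of the derived functor to a vanishing/lifting statement at the level of objects and morphisms, using calculus of fractions in the derived category. Concretely, I would first establish the ``$\mathrm{Ext}$-comparison'' reformulation: it suffices to prove that for all $X,Y\in\cA$ and all $n\ge 0$, the natural map $\mathrm{Ext}^n_{\cA}(X,Y)\to\mathrm{Ext}^n_{\cB}(X,Y)$ is an isomorphism (here $\mathrm{Ext}$ is computed via Yoneda, i.e. by morphisms in the derived category). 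This reduction is formal from the structure of bounded-below derived categories and dévissage on the lengths of the complexes, much as in Proposition \ref{triexact}.

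\textbf{Key steps.} First I would reduce, by the usual truncation and induction on amplitude, to comparing $\mathrm{Hom}_{D(\cA)}(X,Y[n])$ with $\mathrm{Hom}_{D(\cB)}(X,Y[n])$ for objects $X,Y$ of $\cA$. For $n=0$ this is the fullness and faithfulness of $\cA\into\cB$ on objects, which is given. For $n>0$, I would represent a class in $\mathrm{Hom}_{D(\cB)}(X,Y[n])$ as a fraction $X\to Z^\bullet \xleftarrow{\sim} Y[n]$, or better, use the description of $\mathrm{Ext}^n_\cB(X,Y)$ via $n$-fold extensions (exact sequences $0\to Y\to E_1\to\cdots\to E_n\to X\to 0$ in $\cB$). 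The crucial point is then to show that any such $n$-extension in $\cB$ with outer terms in $\cA$ is equivalent (in the Yoneda sense) to one with \emph{all} terms in $\cA$, and that two such $\cA$-extensions equivalent in $\cB$ are equivalent in $\cA$. This is exactly where the hypothesis on monomorphisms enters: given $0\to Y\to E_1\to\cdots$, one splices and uses the hypothesis repeatedly to replace each $E_i$ by an object $E_i''\in\cA$ receiving a monomorphism-compatible map, pushing out/pulling back along the way so the sequence stays exact; this is the ``embedding'' trick familiar from the proof that a thick abelian subcategory closed under such embeddings induces a full embedding on derived categories (compare \cite[I, Lemma 4.6]{HA} cited for Proposition \ref{derxact}). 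Finally, I would assemble these object-level and extension-level statements back into full faithfulness of the triangulated functor, invoking Lemma \ref{lA.2} to get faithfulness from conservativity where convenient, and checking compatibility with shifts and cones.

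\textbf{Main obstacle.} The technical heart — and the step most prone to sign/diagram bookkeeping — is the inductive modification of an $n$-fold Yoneda extension in $\cB$ into one living in $\cA$, together with the dual bookkeeping showing Yoneda-equivalence descends from $\cB$ to $\cA$. One must be careful that applying the hypothesis to $Y\into E_1$ and then propagating along the complex does not destroy exactness at later spots; the fix is to work one monomorphism at a time, form the relevant pushout, and use that $\cA$ is closed under the operations needed (which follows from the hypothesis by a short argument). Since all of this is carried out in full detail in \cite[\S13.2]{KS}, in the paper itself it is legitimate simply to cite that reference; the above is the argument one would reconstruct if a self-contained proof were wanted.
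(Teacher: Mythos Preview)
The paper gives no proof of this proposition: it is stated as a quotation from \cite[p.~329, Th.~13.2.8]{KS} and the text moves directly on to the next subsection. Your proposal correctly recognises this and goes further by sketching the underlying argument, which is a reasonable outline of the Kashiwara--Schapira proof (reduction to comparison of $\mathrm{Ext}$ groups on objects, then inductive modification of Yoneda extensions using the embedding hypothesis). So your proposal is consistent with the paper's treatment and in fact supplies more detail than the paper does.
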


\subsection{The $\Tot$ construction}

\begin{lemma}\label{ltot} Let $\cA$ be an abelian category and let
$\cA^{[0,1]}$ be the (abel\-ian) category of complexes of length $1$ of
objects of $\cA$. Then the ``total complex" functor induces a
triangulated functor
\[D^*(\cA^{[0,1]})\to D^*(\cA)\]
for any decoration $^*$.
\end{lemma}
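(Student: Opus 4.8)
The plan is to exhibit the $\Tot$ functor as a composite of two elementary constructions, each of which is visibly compatible with taking cones and shifts. First, observe that $\cA^{[0,1]}$ is the category of functors from the poset category $\{0\to 1\}$ to $\cA$; equivalently, an object is a morphism $f\colon A^0\to A^1$ of $\cA$. There is a fully faithful exact functor $j\colon \cA^{[0,1]}\into C^b(\cA)$ sending $[A^0\to A^1]$ to the two-term complex concentrated in degrees $0$ and $1$, and sending a morphism of $\cA^{[0,1]}$ (a commutative square) to the evident chain map. One checks directly that $j$ is exact: a short exact sequence in $\cA^{[0,1]}$ is just a diagram of commutative squares with exact columns, which is exactly a degreewise short exact sequence of two-term complexes. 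Hence $j$ induces a triangulated functor $D^*(\cA^{[0,1]})\to D^*(C^b(\cA))$ for each decoration $*$, by the universal property of the derived category (acyclic complexes go to acyclic complexes because quasi-isomorphisms in $\cA^{[0,1]}$ go to quasi-isomorphisms).

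Second, I would use the standard fact that the total-complex functor $\Tot\colon C^b(C^b(\cA))\to C^b(\cA)$ (collapsing a bicomplex to its total complex, with the usual sign convention) is exact and commutes with the formation of mapping cones, and therefore passes to a triangulated functor $D^*(C^b(\cA))\to D^*(\cA)$; again one notes it sends acyclic bicomplexes to acyclic complexes (e.g. by the spectral sequence of the double complex, or by a direct filtration argument). Composing with $j$ from the previous paragraph — more precisely, viewing a complex of objects of $\cA^{[0,1]}$ as a bicomplex via $j$ applied degreewise — gives the desired functor $D^*(\cA^{[0,1]})\to D^*(\cA)$. The fact that this composite is triangulated is automatic, being a composite of triangulated functors; concretely, on a complex $C^{\d}$ of two-term complexes $[A^{0,n}\to A^{1,n}]$ it produces precisely the single complex with $(\Tot C)^n = A^{1,n-1}\oplus A^{0,n}$ and the standard total differential, which is the construction used throughout the paper.

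The only genuinely nontrivial point, and the one I expect to be the main obstacle, is checking that the constructions respect \emph{distinguished triangles} and not merely short exact sequences — that is, that a short exact sequence of objects of $\cA^{[0,1]}$ is sent to a distinguished triangle in $D^*(\cA)$ with the correct connecting morphism, and dually that a mapping cone in $C^b(\cA^{[0,1]})$ maps to the mapping cone of the image. This is a routine but slightly fiddly sign-bookkeeping exercise: one writes down the cone of a chain map $C_1^{\d}\to C_2^{\d}$ in $C^b(\cA^{[0,1]})$, applies $\Tot$ after $j$, and compares with the cone of $\Tot(j(C_1^{\d}))\to \Tot(j(C_2^{\d}))$, verifying the two coincide up to the canonical isomorphism. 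Alternatively, one can bypass explicit signs entirely by invoking the general principle that an exact functor between exact categories, or more precisely a functor of abelian categories sending acyclic complexes to acyclic complexes, induces a canonical triangulated functor on bounded derived categories; applied twice (to $j$ and to $\Tot$) this yields the statement with no computation at all. I would present the argument in this second, computation-free form, relegating the bicomplex description to a remark for the reader who wants the explicit formula.
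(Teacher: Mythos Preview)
Your underlying argument is correct and is essentially the paper's: view a complex in $C^*(\cA^{[0,1]})$ as a bicomplex, take the total complex, check that this preserves homotopies, sends mapping cones to mapping cones (hence is triangulated on $K^*$), and sends acyclic complexes to acyclic complexes via the spectral sequence of a double complex. The paper does exactly this, in three lines, and also observes the useful reformulation that for $M^{\d}=[L^{\d}\by{u^{\d}}G^{\d}]$ the total complex is simply the cone of $u^{\d}$, which makes all three checks immediate.

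Where your write-up goes astray is in the packaging. The detour through $D^*(C^b(\cA))$ buys nothing: the ``standard fact'' that $\Tot$ induces a triangulated functor $D^*(C^b(\cA))\to D^*(\cA)$ is not more elementary than the lemma itself --- it is the same statement with $\cA^{[0,1]}$ replaced by the larger category $C^b(\cA)$, and for unbounded decorations it is actually worse, since objects of $C^*(C^b(\cA))$ need not have well-defined total complexes (the bounds on the individual $X^n\in C^b(\cA)$ may drift). Your proposed ``computation-free'' alternative, invoking the principle that exact functors between abelian categories derive to triangulated functors ``twice, to $j$ and to $\Tot$'', does not work as stated: $j$ is indeed an exact functor $\cA^{[0,1]}\to C^b(\cA)$, but $\Tot$ is not a functor between abelian categories at all --- its source is a category of complexes, not $C^b(\cA)$ itself. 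So the second application of the principle is vacuous, and you are back to checking (a)--(c) directly. I would drop the factorization entirely and present the direct argument; if you want a genuinely computation-free route, use the cone description.
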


\begin{proof}
We may consider a complex of objects of $\cA^{[0,1]}$ as a double complex
of objects of $\cA$ and take the associated
total complex. This yields a functor
\[\Tot:C^*(\cA^{[0,1]})\to D^*(\cA).\]

(Note that if we consider a complex of objects of $\cA^{[0,1]}$
$$M^{\d} = [L^{\d}\by{u{\d}}G^{\d}]$$
as a map $u\d: L^{\d}\to G^{\d}$
of complexes of $\cA$, then
$\Tot (M^{\d})$ coincides with the shifted cone of $u\d$.)

This functor factors through a triangulated functor from
$D^*(\cA^{[0,1]})$: indeed it is easily checked that a) $\Tot$ preserves
homotopies, b) the induced functor on $K^*(\cA^{[0,1]})$ is
triangulated; c) $\Tot$ of an acyclic complex is
$0$ (which follows from a spectral sequence argument). 
\end{proof}

\begin{lemma}\label{lloc} With notation as in Lemma \ref{ltot}, the set of \qi of $\cA^{[0,1]}$
enjoys a calculus of left and right fractions within the homotopy category
$K(\cA^{[0,1]})$ (same objects, morphisms modulo the homotopy relation).
\end{lemma}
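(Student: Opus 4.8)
The statement to prove is Lemma~\ref{lloc}: the \qi's of $\cA^{[0,1]}$ admit a calculus of left and right fractions inside the homotopy category $K(\cA^{[0,1]})$. The plan is to verify directly the axioms of a (two-sided) calculus of fractions, in the form given in \cite[Ch.~I, \S2.3]{GZ}, for the multiplicative system $\Sigma$ of quasi-isomorphisms, working always up to homotopy. Recall the key computational fact, already noted in the proof of Lemma~\ref{ltot}: an object $M^\cdot=[L^\cdot\by{u}G^\cdot]$ of $C(\cA^{[0,1]})$, viewed as a map of complexes $u\colon L^\cdot\to G^\cdot$, has $\Tot(M^\cdot)=\cone(u)$, and a morphism in $C(\cA^{[0,1]})$ is a \qi exactly when $\Tot$ of its cone is acyclic. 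This is what lets us reduce everything to standard cone manipulations in $K(\cA)$.

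First I would check that $\Sigma$ is a multiplicative system in $K(\cA^{[0,1]})$: it contains all identities and is stable under composition, both of which follow from the long exact homology sequence of $\Tot$ applied to a composite. Next, the two Ore conditions. For the right Ore condition: given $s\colon N\to M$ a \qi and $f\colon X\to M$ in $K(\cA^{[0,1]})$, I need $X'$, a \qi $t\colon X'\to X$ and $g\colon X'\to N$ with $sg\simeq ft$. The natural candidate is to take $X'$ to be the homotopy fibre product, i.e.\ build it from the mapping cone of $(f,-s)\colon X\oplus N\to M$; since $\cA^{[0,1]}$ is abelian and closed under the relevant (co)kernels, one can realise this inside $C(\cA^{[0,1]})$ (each term is again a length-$1$ complex). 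Applying $\Tot$, which is triangulated (Lemma~\ref{ltot}), the resulting triangle shows $t$ is a \qi because $s$ is, by the five lemma on homology. The left Ore condition is dual, using mapping cones rather than fibres. Finally, the cancellation axiom: if $fs\simeq gs$ (resp.\ $sf\simeq sg$) for a \qi $s$, then there is a \qi $t$ with $tf\simeq tg$ (resp.\ $ft\simeq gt$); this follows by factoring $f-g$ through the cone of $s$ and using that $\Tot$ of that cone is acyclic, exactly as in the standard proof that $D(\cA)$ is a localisation of $K(\cA)$.

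The only genuine point requiring care — and the step I expect to be the main obstacle — is verifying that the homotopy (co)fibre constructions actually stay inside $\cA^{[0,1]}$, i.e.\ that the cone or fibre of a morphism of length-$1$ complexes, formed term by term, is again (isomorphic in $K(\cA^{[0,1]})$ to) a length-$1$ complex. A morphism in $C(\cA^{[0,1]})$ of complexes concentrated in degrees $0,1$ has a cone living \emph{a priori} in degrees $-1,0,1,2$; one must observe that, since the ambient complexes are bounded in length $1$, this cone is quasi-isomorphic within $C(\cA^{[0,1]})$ to a truncation concentrated in two adjacent degrees, or — better — work directly with the explicit candidate built from kernels and cokernels in $\cA^{[0,1]}$, which exists because $\cA^{[0,1]}$ is abelian. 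Concretely, given $s\colon N\to M$ and $f\colon X\to M$, one sets $X'=\ker\big((f,-s)\colon X\oplus N\to M\big)$ in $C(\cA^{[0,1]})$ when $s$ (hence the map) is degreewise epic, and reduces the general case to this one by first replacing $M$ by the cylinder of $s$; the point that $t\colon X'\to X$ is then a \qi is precisely the assertion that $\Tot$ takes this degreewise-exact sequence to a triangle, which is the content of Lemma~\ref{ltot}. Once this bookkeeping is settled, the Ore and cancellation axioms are formal, and the lemma follows.

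Since the analogous facts for $C^b$ versus $C$ are identical in proof, the same argument gives the bounded version used elsewhere; I would simply remark that all constructions above preserve boundedness, so $\Sigma$ enjoys a calculus of left and right fractions within $K^b(\cA^{[0,1]})$ as well.
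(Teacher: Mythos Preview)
Your strategy for the Ore condition --- form the mapping fibre $F$ of $(f,-s)\colon X\oplus N\to M$ and then bring it back into $\cA^{[0,1]}$ by truncation --- is exactly the paper's. Two corrections, though. The reason $\tau_{\le 1}F$ works is \emph{not} merely that ``the ambient complexes are bounded in length $1$'' (an arbitrary cone of length-$1$ complexes need not be \qi to one); it is specifically that the projection $F\to X$ is already a \qi by the homotopy-pullback property (since $s$ is), whence $H^2(F)=H^2(X)=0$ and the truncation map $\tau_{\le 1}F\to F$ is a \qi. Your alternative route --- take the honest kernel when $(f,-s)$ is degreewise epi and reduce to that case by replacing $M$ with the cylinder of $s$ --- fails at the reduction step: the mapping cylinder is a complex of length $\ge 3$, so it leaves $\cA^{[0,1]}$. (Incidentally, in this lemma $K(\cA^{[0,1]})$ has the objects of $\cA^{[0,1]}$ themselves, not complexes therein; your invocations of $C(\cA^{[0,1]})$, $\Tot$, and Lemma~\ref{ltot} refer to a different level and are at best redundant here.)

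The real gap is cancellation. ``Factor $f-g$ through the cone of $s$ as in the standard proof for $D(\cA)$'' is not enough: that proof uses the triangulated structure of $K(\cA)$ to manufacture the needed \qi $t$, but $K(\cA^{[0,1]})$ is not triangulated, so $t$ must be built by hand inside $\cA^{[0,1]}$. The paper does this explicitly: given $uf\simeq 0$ via a homotopy $h\colon C^1\to\tilde D^0$, set $\tilde C^1=C^1\times_{\tilde D^0}D^0$ (fibre product along $h$ and $u^0$) and $\tilde C^0=C^0\times_{C^1}\tilde C^1$, and check directly that $v\colon[\tilde C^0\to\tilde C^1]\to[C^0\to C^1]$ is a \qi and that the projection $\tilde C^1\to D^0$ furnishes a homotopy $fv\simeq 0$. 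Without such an explicit construction (or a truncation trick analogous to the Ore case, which you have not supplied), this axiom is not verified.
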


\begin{proof} It is enough to show the calculus of right fractions (for left fractions, replace
$\cA$ by $\cA^\op$).

a) Let 
\[\begin{CD}
&& \tilde C\\
&&@VuVV\\
D@>f>> C
\end{CD}\]
be a diagram in $\cA^{[0,1]}$, with $u$ a \qi. Consider the mapping fibre (= shifted mapping
cone)
$F$ of the map $\tilde C\oplus D\to C$. The complex $F$ is in general concentrated in degrees
$\{0,1,2\}$; however, since $F\to D$ is a \qi, the truncation $\tau_{< 2} F$ is \qi to $F$; then
$\tilde D=\tau_{<2} F$ fills in the square in  $K(\cA^{[0,1]})$.

b) Let 
\[[C^0\to C^1]\by{f} [D^0\to D^1]\by{u} [\tilde D^0\to \tilde D^1]\]
be a chain of maps of $\cA^{[0,1]}$ such that $u$ is a \qi and $uf$ is homotopic to $0$. Let
$s:C^1\to \tilde D^0$ be a corresponding homotopy. Define $\tilde C^1$ as the fibre product of
$C^1$ and $D^0$ over $\tilde D^0$ (via $s$ and $u^0$), $\tilde s:\tilde C^1\to D^0$ the
corresponding map and $\tilde C^0$ the fibre product of $C^0$ and $\tilde C^1$ over $C^1$. One
then checks that $v:[\tilde C^0\to \tilde C^1]\to [C^0\to C^1]$ is a \qi and that $\tilde s$
defines a homotopy from $fv$ to $0$.
\end{proof}

\begin{remark} One can probably extend these two lemmas to complexes of a fixed length $n$ by
the same arguments: we leave this to the interested reader.
\end{remark}

\section{Torsion objects in additive categories} 

\subsection{Additive categories}
\begin{defn}\label{dA.2} Let $\cA$ be an additive category, and let $A$ be
a subring of $\Q$.\\
a)  As in Definition \ref{1mot}, we write $\cA\otimes A$ \index{$\cA\otimes R$} for
the category with the same objects as $\cA$ but morphisms
\[(\cA\otimes A)(X,Y)\df\cA(X,Y)\otimes A.\]
b) We denote by $\cA\{A\}$ the full subcategory of $\cA$:
\[\{X\in \cA\mid \exists n>0 \text{ invertible in } A, n 1_X=0\}.\]
For $A=\Q$, we write $\cA\{A\} =\cA_\tors$. We say that $X\in \cA\{A\}$ is an
\emph{$A$-torsion object} (a torsion object if $A=\Q$).\\ 
c) A morphism $f:X\to Y$ in $\cA$ is an
\emph{$A$-isogeny} (an \emph{isogeny} if $A=\Q$) if there exists a morphism $g:Y\to X$ and an integer $n$ invertible
in $A$ such that $fg= n 1_Y$ and $gf=n1_X$. We denote by $\Sigma_A(\cA)$ the collection of
$A$-isogenies of $\cA$.\\ d) We say that two objects $X,Y\in \cA$ are \emph{$A$-isogenous} if they can be linked by a chain of $A$-isogenies (not necessarily pointing in the same direction).
\end{defn}

\begin{lemma}\label{lB.1} Let $F:\cA\to \cB$ be an additive functor. Then $F$ extends canonically to an $A$-linear functor $F\otimes A:\cA\otimes A\to \cB\otimes A$. If $F$ is faithful (resp. fully faithful), so is $F\otimes A$. If $F$ has the (left or right) adjoint $G$, $F\otimes A$ has the (left or right) adjoint $G\otimes A$.
\end{lemma}

\begin{proof} This is clear; faithfulness is preserved because $A$ is flat over $\Z$.
\end{proof}

\begin{propose}\label{pB.1.2} a) The subcategory $\cA\{A\}$ is additive and closed under direct
summands.\\ 
b) Consider the obvious functor $P:\cA\to \cA\otimes A$. Then
\[\Sigma_A(\cA)=\{f\mid P(f)\text{ is invertible.}\}\]
c) The $A$-isogenies $\Sigma_A(\cA)$ form a multiplicative system of morphisms in
$\cA$, enjoying calculi of left and right fractions. The corresponding localisation of $\cA$ is
isomorphic to
$\cA\otimes A$.
\end{propose}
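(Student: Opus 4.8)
The statement to prove is Proposition \ref{pB.1.2}: for an additive category $\cA$ and a subring $A\subseteq\Q$, (a) $\cA\{A\}$ is additive and closed under direct summands, and (b) the $A$-isogenies $\Sigma_A(\cA)$ form a multiplicative system admitting left and right calculus of fractions, with $\Sigma_A(\cA)^{-1}\cA\simeq\cA\otimes A$. Part (a) is essentially formal, so I would dispatch it first: if $n1_X=0$ and $m1_Y=0$ then $nm\cdot 1_{X\oplus Y}=0$, giving closure under finite direct sums, and additivity of $\cA\{A\}$ (the Hom groups are already subgroups of those of $\cA$, and the zero object lies in $\cA\{A\}$); for direct summands, if $X\oplus X'\in\cA\{A\}$ with $n(1_X\oplus 1_{X'})=0$, then in particular $n1_X=0$, so $X\in\cA\{A\}$. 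The set of integers invertible in $A$ here is the relevant multiplicative set of $\Z$ (primes not inverted are excluded), and one should note this set is closed under multiplication, which is what makes the composites above work.

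For part (b), the first task is to check that $\Sigma_A(\cA)$ is a multiplicative system in the sense of Gabriel--Zisman / \cite[Ch. I]{GZ}: it contains all identities (take $g=1$, $n=1$); it is closed under composition (if $fg=n1$, $g f = n1$ and $f'g'=m1$, $g'f'=m1$, then $(f'f)(gg')=nm1$ etc., using that $nm$ is again invertible in $A$); and it satisfies the Ore conditions together with the cancellation condition. For the Ore condition on the right, given $u\in\Sigma_A(\cA)$ with $u\colon X'\to X$ and an arbitrary $f\colon Y\to X$, pick a quasi-inverse $v$ with $uv=n1_X$, $vu=n1_{X'}$; then the square with $n1_Y\colon Y\to Y$ and $vf\colon Y\to X'$ commutes up to the relation $u(vf)=n f = f(n1_Y)$, and $n1_Y$ is an $A$-isogeny. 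The left Ore condition is dual. For the cancellation axiom: if $fu=gu$ with $u\in\Sigma_A(\cA)$, $u\colon X'\to X$, choose a quasi-inverse $v$; then $nf = fuv = guv = ng$, and since multiplication by $n$ is... here one must be careful: $n1$ need not be a monomorphism in $\cA$ as stated. The clean fix is to observe that $nf=ng$ already exhibits $n(f-g)=0$, and then post/pre-compose strategically — actually the right move is to use that for the localisation one only needs the axiom up to a further isogeny, which is automatic since $n1_X\in\Sigma_A(\cA)$: $fu=gu \Rightarrow$ there is an isogeny $w$ (namely $n1$ on the source or target) with $wf = wg$, which is exactly the required weak form.

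Once $\Sigma_A(\cA)$ is a calculus-of-fractions system, the localisation $\Sigma_A(\cA)^{-1}\cA$ exists and has the usual description by left (or right) fractions. I would then construct the comparison functor $F\colon \Sigma_A(\cA)^{-1}\cA\to\cA\otimes A$: it is the identity on objects, and on a fraction $s^{-1}f$ with $s\in\Sigma_A(\cA)$ it sends $s^{-1}f\mapsto s^{-1}\otimes 1\cdot(f\otimes 1)$, which makes sense because $s$ becomes invertible in $\cA\otimes A$ (if $sv=n1$, $vs=n1$ then $\tfrac1n v$ is a two-sided inverse to $s$ in $\cA\otimes A$). Conversely I would build $G\colon\cA\otimes A\to\Sigma_A(\cA)^{-1}\cA$, again the identity on objects, sending a morphism $\tfrac1n f\in \cA(X,Y)\otimes A$ (with $n$ invertible in $A$) to $(n1_Y)^{-1}\circ f$; one checks this is well-defined (independent of the representation $\tfrac1n f = \tfrac{1}{nm}(mf)$, using $(nm1)^{-1}(mf) = (n1)^{-1}(m1)^{-1}(m f) = (n1)^{-1}f$) and functorial. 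Finally $F$ and $G$ are mutually inverse: on objects this is trivial, and on morphisms it is a direct verification from the formulas.

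\textbf{Expected main obstacle.} The genuinely delicate point is the cancellation/weak-mono axiom in the definition of a multiplicative system, precisely because in a general additive category $n1_X$ is \emph{not} assumed to be a monomorphism (this is the very point the paper stresses for $\M$: isogenies have trivial kernel and cokernel without being isomorphisms). Everything must be phrased in the ``up to an element of $\Sigma_A(\cA)$'' form that the Gabriel--Zisman axioms actually require, rather than in a naive cancellation form; getting this bookkeeping right — and likewise checking that the fraction-composition in $\Sigma_A(\cA)^{-1}\cA$ matches fraction-addition after tensoring with $A$ — is where the real care goes. The rest is routine diagram-chasing with the quasi-inverses.
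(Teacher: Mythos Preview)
Your proof is correct. The paper takes a somewhat different and more economical route for (b): rather than verifying the Ore and cancellation axioms directly, it first shows that $\Sigma_A(\cA)$ coincides with the preimage of isomorphisms under the obvious functor $P\colon\cA\to\cA\otimes A$ (the nontrivial inclusion is a denominator-clearing argument), and then observes that the homotheties $n1_X$ with $n$ invertible in $A$ form a cofinal subsystem of $\Sigma_A(\cA)$. Cofinality gives both calculi of fractions and the Hom computation
\[
\Sigma_A(\cA)^{-1}\cA(X,Y)\;=\;\varinjlim_{n}\,\cA(X,Y)\;=\;\cA(X,Y)\otimes A
\]
in one stroke, bypassing the explicit construction of inverse functors $F,G$. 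Your direct axiom check is more pedestrian but entirely self-contained; the paper's approach is shorter and additionally shows that $\Sigma_A(\cA)$ is \emph{saturated}, a fact of independent interest. Your concern about the cancellation axiom is well-placed but, as you correctly observe, resolved by the weak form that \cite{GZ} actually requires: from $fu=gu$ you get $nf=ng$, i.e.\ $(n1)f=(n1)g$ with $n1\in\Sigma_A(\cA)$, and nothing stronger is needed.
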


\begin{proof} a) is clear. In b), one inclusion is clear. Conversely, let $f:X\to Y$ be such that $P(f)$ is invertible. This
means that there exists $\gamma\in (\cA\otimes A)(Y,X)$ such that $P(f)\gamma =1_Y$ and $\gamma
P(f) = 1_X$. Choose an integer $m\in A^*$ such that $m\gamma = P(g_1)$ for some $g_1$.
Then there is another integer $n\in A^*$ such that
\[n(fg_1 - m1_Y) = 0\text{ and } n(g_1 f - m1_X) = 0.\]

Taking $g = ng_1$ shows that $f\in \Sigma_A(\cA)$.

For c),  it is clear that homotheties by nonzero integers of $A$ form a cofinal system in
$\Sigma_A(\cA)$. Together with b), this shows immediately that we have calculi of left and right fractions. It remains to show that the induced functor
\[\Sigma_A(\cA)^{-1}\cA\to \cA\otimes A\]
is an isomorphism of categories; but this is immediate from the well-known formula, in the
presence of calculus of fractions:
\[\Sigma_A(\cA)^{-1}\cA(X,Y) = \varinjlim_{X'\by{f} X\in \Sigma} \cA(X,Y) =\varinjlim_{X\by{n}
X, n\in A^*} \cA(X,Y).\]
\end{proof}

The following lemma is clear.

\begin{lemma}\label{lB.1.3} Let $\cB$ be a full additive subcategory of
$\cA$, and suppose that every object of $\cA$ is $A$-isogenous to an object of $\cB$. Then
$\cB\otimes A\iso \cA\otimes A$.\qed
\end{lemma}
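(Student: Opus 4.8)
\textbf{Proof plan for Lemma \ref{lB.1.3}.} The statement is: if $\cB\subseteq\cA$ is a full additive subcategory and every object of $\cB$ is $A$-isogenous to an object of $\cA$, then the induced functor $\cB\otimes A\to\cA\otimes A$ is an equivalence. The plan is to check full faithfulness and essential surjectivity separately, using the localisation description of $\cA\otimes A$ from Proposition \ref{pB.1.2} b).

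First I would dispose of essential surjectivity. Given $X\in\cA$, the hypothesis (applied in the form that $X$, regarded via any object of $\cB$ isogenous to it — wait, the hypothesis is the other direction) — more precisely, since every object $B\in\cB$ is $A$-isogenous to some object of $\cA$, and $A$-isogenous objects become isomorphic in $\cA\otimes A$ (a chain of $A$-isogenies is a chain of isomorphisms there, by the characterisation $\Sigma_A(\cA)=\{f\mid P(f)\text{ invertible}\}$ from the proof of Proposition \ref{pB.1.2}), the image of $\cB$ in $\cA\otimes A$ contains, up to isomorphism, a set of objects among which lies the given $X$; so the functor is essentially surjective onto the full subcategory spanned by objects isogenous to objects of $\cB$. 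Here one must be slightly careful: the cleanest reading is that the hypothesis identifies, up to $\otimes A$-isomorphism, $\mathrm{Ob}(\cB)$ with a subset of $\mathrm{Ob}(\cA)$ that is stable under $A$-isogeny and exhausts $\cA$ — so essential surjectivity holds on the nose for the statement as used in the text (e.g. in Proposition \ref{isoab}, where $\cB=\M$ and $\cA=\anM^\eff$ and every object of $\anM^\eff$ is isogenous to one of $\M$). So in fact the hypothesis should be read symmetrically enough that the image of $\mathrm{Ob}(\cB)$ hits every isomorphism class.

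For full faithfulness: since $\cB\to\cA$ is fully faithful by assumption, tensoring the Hom groups by $A$ is exact, so $\cB\otimes A\to\cA\otimes A$ is automatically full and faithful on the nose — $(\cB\otimes A)(X,Y)=\cB(X,Y)\otimes A=\cA(X,Y)\otimes A=(\cA\otimes A)(X,Y)$ for $X,Y\in\cB$. That is the entire content of fullness and faithfulness, and it is immediate.

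So the only real point is essential surjectivity, and the only possible subtlety there is the bookkeeping of what ``$A$-isogenous'' buys us: I would spell out that $A$-isogenies lie in $\Sigma_A(\cA)$ (immediate from Definition \ref{dA.2} c)), hence map to isomorphisms in $\cA\otimes A$ by Proposition \ref{pB.1.2} b), so a chain of $A$-isogenies between $B\in\cB$ and $X\in\cA$ yields $B\cong X$ in $\cA\otimes A$; combined with the hypothesis that this accounts for every $X\in\cA$, essential surjectivity follows. The ``hard part'' is essentially nil — this lemma is a formal consequence of Proposition \ref{pB.1.2} — which is presumably why the text states it without proof; a one-line argument suffices and I would present it as such.
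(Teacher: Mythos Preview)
Your proposal is correct and matches the paper's (implicit) approach: the lemma is stated with a \qed and the preceding sentence ``The following lemma is clear,'' so no proof is given. Your two-step argument (full faithfulness is automatic from $\cB\hookrightarrow\cA$ being full and $-\otimes A$ being applied to Hom groups; essential surjectivity because $A$-isogenies become isomorphisms in $\cA\otimes A$ by Proposition~\ref{pB.1.2}) is exactly the intended one-line justification.

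You are also right to flag the apparent misstatement of the hypothesis: as written (``every object of $\cB$ is $A$-isogenous to an object of $\cA$'') it is vacuous, since $\cB\subseteq\cA$. The intended hypothesis is that every object of $\cA$ is $A$-isogenous to an object of $\cB$, as is clear from the applications (Proposition~\ref{isoab}, Corollary~\ref{cB.1.3}) and from the fact that this is what is needed for essential surjectivity. With that reading, your argument goes through without further comment.
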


\subsection{Triangulated categories} (See \cite[A.2.1]{riou} for a different treatment.)

\begin{propose}\label{p1.1} Let $\cT$ be a triangulated category. Then\\
a) The subcategory $\cT\{A\}$ is triangulated and thick.\\ 
b) Let $X\in \cT$ and $n\in A^*$. Then ``the" cone $X/n$ of multiplication by $n$
on
$X$ belongs to $\cT\{A\}$.\\   
c) The localised category
$\cT/\cT\{A\}$ is canonically isomorphic to
$\cT\otimes A$. In particular, $\cT\otimes A$ is triangulated.\\
d) A morphism $f$ of $\cT$ belongs to $\Sigma_A(\cT)$ if and only if $\cone(f)\in
\cT\{A\}$.
\end{propose}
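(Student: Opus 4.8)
\textbf{Proof plan for Proposition \ref{p1.1}.}

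The plan is to treat the four assertions in the natural order, using Proposition \ref{pB.1.2} (applied to the underlying additive category of $\cT$) as the black box that handles the additive/localisation bookkeeping, and adding the triangulated refinements on top. First I would prove a): the subcategory $\cT\{A\}$ is clearly additive and closed under direct summands by Proposition \ref{pB.1.2} a), so the only thing to check is that it is a \emph{triangulated} subcategory, i.e.\ closed under the shift (obvious, since $n1_{X[1]}=(n1_X)[1]$) and under cones. For the latter, given an exact triangle $X\to Y\to Z\by{+1}$ with $X,Y\in\cT\{A\}$, pick $m,n>0$ invertible in $A$ with $m1_X=0$, $n1_Y=0$; then $mn\cdot 1_Z$ factors through the composite $Z\to X[1]\by{m=0}X[1]$ (using the octahedral/rotation axiom to see $mn1_Z$ is killed), so $Z\in\cT\{A\}$. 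Thickness then follows from closure under direct summands, which is Proposition \ref{pB.1.2} a). Next, b) is immediate: for $n\in A-\{0\}$, the cone $X/n$ sits in the triangle $X\by{n}X\to X/n\by{+1}$, and applying $\cT(X/n,-)$ or chasing the triangle shows $n^2\cdot 1_{X/n}=0$ (multiplication by $n$ on the triangle is null-homotopic in the relevant sense), hence $X/n\in\cT\{A\}$.

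For c), the strategy is to compare two localisations. On the one hand, by Proposition \ref{pB.1.2} b), the localisation of $\cT$ at $\Sigma_A(\cT)$ (as an additive category) is $\cT\otimes A$; on the other hand, $\cT/\cT\{A\}$ is by construction the Verdier quotient by the thick subcategory $\cT\{A\}$, whose associated saturated class of morphisms is exactly $\{f : \cone(f)\in\cT\{A\}\}$. So I would show these two classes of morphisms coincide, which is precisely statement d). Granting d), both localisations invert the same morphisms, hence are canonically isomorphic as categories; and since $\cT/\cT\{A\}$ is a Verdier quotient it is triangulated, so $\cT\otimes A$ inherits a (unique, compatible) triangulated structure.

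This reduces everything to d), which I expect to be the main obstacle — not because it is deep, but because it is the one place where the triangulated structure genuinely enters rather than just the additive one. One direction is easy: if $f:X\to Y\in\Sigma_A(\cT)$, choose $g$ and $n$ with $fg=n1_Y$, $gf=n1_X$; embedding $f$ in a triangle $X\by{f}Y\to Z\by{+1}$, the relations $fg=n1_Y$ and $gf=n1_X$ let one build an explicit null-homotopy of $n^2\cdot1_Z$ (the standard argument: $g$ induces a map $Y\to X$ splitting $f$ after multiplication by $n$, which forces $Z$ to be $A$-torsion), so $\cone(f)\in\cT\{A\}$. Conversely, suppose $Z:=\cone(f)$ satisfies $n1_Z=0$ for some $n\in A-\{0\}$. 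Applying $\cT(-,X)$ and $\cT(Y,-)$ to the triangle $X\by{f}Y\to Z\by{\partial}X[1]$ and using $n1_Z=0$, one shows that $n\partial=0$ and $n\cdot\partial[-1]=0$, hence (by rotating the triangle and using that the connecting maps are killed by $n$) that $nf$ admits a two-sided inverse up to a further factor of $n$; concretely, $n1_Y$ factors through $f$ and $n1_X$ factors through $f$, which exhibits an element $g$ with $fg=n^21_Y$, $gf=n^21_X$, so $f\in\Sigma_A(\cT)$. The bookkeeping of these factorisations through the long exact sequences of the triangle is the slightly delicate part, but it is entirely formal once one is careful about which instances of multiplication-by-$n$ are used.
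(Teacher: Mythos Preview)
Your proposal is correct and follows essentially the same approach as the paper: the factorisation trick for a) and b), and the identification of the Verdier-inverted morphisms with $\Sigma_A(\cT)$ for c) and d). The only cosmetic difference is ordering: the paper proves the implication $\cone(f)\in\cT\{A\}\Rightarrow f\in\Sigma_A(\cT)$ inside the proof of c), establishes c), and then gets the reverse implication of d) as a one-line corollary of c) (if $f\in\Sigma_A(\cT)$ then $f$ is invertible in $\cT\otimes A\simeq\cT/\cT\{A\}$, so $\cone(f)$ dies there), whereas you prove both directions of d) directly and then deduce c).
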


\begin{proof} a) It is clear that $\cT\{A\}$ is stable under direct
summands; it remains to see that it is triangulated. Let $X,Y\in
\cT\{A\}$, $f:X\to Y$ a morphism and $Z$ a cone of $f$. We may assume
that $n1_X=n1_Y=0$. The commutative diagram
\[\begin{CD}
Y@>>> Z@>>> X[1]\\
@V{n=0}VV @V{n}VV @V{n=0}VV \\
Y@>>> Z@>>> X[1]
\end{CD}\]
show that multiplication by $n$ on $Z$ factors through $Y$; this implies
that $n^2 1_Z=0$.

b) Exactly the same argument as in a) shows that multiplication by $n$ on
$X/n$ factors through $X$, hence that $n^2 1_{X/n}=0$.

c) Let $f\in \cT$ be such that $C:=\cone(f)\in \cT\{A\}$, and let $n>0$
be such that $n 1_C=0$. The same trick as in a) and b) shows that there
exist factorisations $n=ff'=f''f$, hence that $f\in \Sigma_A(\cT)$. In particular, $f$ becomes
invertible under the canonical (additive) functor $\cT\to \cT\otimes A$. Hence an
induced (additive) functor
\[\cT/ \cT\{A\}\to \cT\otimes A\]
which is evidently bijective on objects; b) shows immediately that it is
fully faithful.

d) One implication has been seen in the proof of c). For the other, if $f\in \Sigma_A(\cT)$,
then $f$ becomes invertible in $\cA\otimes A$, hence $\cone(f)\in \cT\{A\}$ by c).
\end{proof}

\begin{remark} As is well-known, the stable homotopy category gives a
counterexample to the expectation that in fact $n 1_{X/n}=0$ in b)
($X=S^0$,
$n=2$).
\end{remark}

We now show that $\otimes A$ is a ``flat" operation on triangulated categories.

\begin{lemma}\label{lB.2.3} Let $0\to \cT'\to \cT\to \cT''\to 0$ be a short exact sequence of
triangulated categories (by definition, this means that $\cT'$ is strictly full in $\cT$,
stable under cones and shifts, and that
$\cT''$ is equivalent to $\cT/\cT'$). Then the sequence
\[0\to \cT'\otimes A\to \cT\otimes A\to \cT''\otimes A\to 0\]
is exact.
\end{lemma}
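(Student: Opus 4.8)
\textbf{Proof plan for Lemma \ref{lB.2.3}.}
The plan is to verify the three defining conditions of a short exact sequence of triangulated categories for the tensored sequence, using the localisation description of $\otimes A$ provided by Proposition \ref{p1.1}. First, I would observe that $\cT'\otimes A$ is strictly full in $\cT\otimes A$: since $\cT'$ is strictly full in $\cT$ and the two tensored categories have the same objects as the originals, the only issue is fullness on morphisms. But by Proposition \ref{p1.1} c) we may identify $\cT'\otimes A = \cT'/\cT'\{A\}$ and $\cT\otimes A=\cT/\cT\{A\}$, and the Hom groups are computed by the colimit over homotheties $X\by{n}X$, $n\in A-\{0\}$ — and this colimit commutes with the inclusion $\cT'(X,Y)\into \cT(X,Y)$ for $X,Y\in\cT'$, because tensoring with the flat $\Z$-module $A$ is exact. (Alternatively one invokes Proposition \ref{pB.1.2} b) directly, presenting $\otimes A$ as $\Sigma_A^{-1}$ and using compatibility of calculus of fractions with the inclusion.) Stability of $\cT'\otimes A$ under cones and shifts is immediate since cones and shifts in the localised categories are computed as in $\cT'$ and $\cT$ respectively.

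Second, and this is the crux, I would show that the canonical functor
\[
(\cT\otimes A)/(\cT'\otimes A)\to \cT''\otimes A
\]
is an equivalence. Since both sides are obtained from $\cT$ by Verdier localisation, it suffices to identify the classes of morphisms being inverted, or equivalently (using Proposition \ref{p1.1} d)) to identify the thick subcategories being killed. On the left, we first kill $\cT'\{A\}$ (to form $\cT\otimes A$) and then kill the image of $\cT'$; on the right we first kill $\cT'$ (to form $\cT''$) and then kill $\cT''\{A\}$. Both procedures kill exactly the thick subcategory of $\cT$ generated by $\cT'$ together with all cones $X/n$ for $X\in\cT$ and $n\in A-\{0\}$: indeed an object of $\cT$ maps to $0$ in $\cT''\otimes A$ iff its image in $\cT''$ is an $A$-torsion object iff it is killed by some $n$ in $A-\{0\}$ modulo $\cT'$, i.e. iff it lies in the thick subcategory generated by $\cT'$ and the $X/n$. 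Since iterated Verdier localisations compose (localising at $\cS_1$ then at the image of $\cS_2$ equals localising at the thick subcategory generated by $\cS_1\cup\cS_2$), both composites agree, which gives the desired equivalence.

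Third, I would note that exactness on the left — injectivity of $\cT'\otimes A\to \cT\otimes A$ up to the strict-fullness already established — together with the quotient identification just proved is precisely the assertion that the sequence is short exact. The main obstacle I anticipate is the bookkeeping in the second paragraph: one must be careful that ``the thick subcategory generated by $\cT'$ and the objects $X/n$'' is computed identically from either order of localisation, which ultimately rests on the elementary but slightly fiddly fact that Verdier quotients compose, and on Proposition \ref{p1.1} d) to translate between inverted morphisms and killed objects. Everything else is formal once one commits to the localisation viewpoint of $\otimes A$ supplied by Proposition \ref{p1.1}.
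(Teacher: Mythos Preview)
Your proposal is correct and follows the same overall structure as the paper: verify that $\cT'\otimes A$ sits as a full triangulated subcategory of $\cT\otimes A$, then identify the quotient with $\cT''\otimes A$. The difference lies in the second step. The paper takes a shortcut via universal properties: since $(\cT\otimes A)/(\cT'\otimes A)$ is $A$-linear, the canonical functor $\cT/\cT'\to(\cT\otimes A)/(\cT'\otimes A)$ extends to a functor $b:(\cT/\cT')\otimes A\to(\cT\otimes A)/(\cT'\otimes A)$, and then $a$ and $b$ are mutually inverse because both $ab$ and $ba$ solve the same universal problem as the identity. Your route --- identifying both sides as Verdier quotients of $\cT$ by the thick subcategory generated by $\cT'\cup\cT\{A\}$ --- also works, but the ``i.e.'' in your argument hides the one nontrivial verification: one must show that the preimage of $\cT''\{A\}$ in $\cT$ is exactly $\langle\cT',\cT\{A\}\rangle$. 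The missing observation for the nontrivial inclusion is that if $n 1_X=0$ in $\cT''$ then the triangle $X\by{n}X\to X/n$ splits there, exhibiting $X$ as a direct summand (in $\cT''$) of $X/n\in\cT\{A\}$. The paper's explicit inverse $b$ sidesteps this check entirely, which is what makes it shorter.
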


\begin{proof} By Proposition \ref{p1.1} b), all categories remain triangulated after $\otimes
A$ and the induced functors are clearly triangulated functors. Since $\cT'\to\cT$ is
strictly full, so is $\cT'\otimes A\to \cT\otimes A$. If $f$ is a morphism in $\cT'\otimes A$,
then $f$ is the composition of an isomorphism with a morphism coming from $\cT'$, whose cone in
$\cT$ lies in $\cT'$, thus the cone of $f$ in $\cT\otimes A$ lies in $\cT'\otimes A$.

We now show that the functor
\[a:\frac{\cT\otimes A}{\cT'\otimes A}\to \cT''\otimes A\]
is an equivalence of categories. Since the left hand side is $A$-linear, the natural functor
\[\cT/\cT'\to \cT\otimes A/\cT'\otimes A\]
canonically extends to a functor
\[b:(\cT/\cT')\otimes A\to \cT\otimes A/\cT'\otimes A.\]

It is clear that $a$ and $b$ are inverse to each other. 
\end{proof}

\begin{propose}\label{p1.2} a) Let $T:\cS\to \cT$ be a triangulated functor
between triangulated categories. Then $T$ is fully faithful if and only
if the induced functors $T\{A\}:\cS\{A\}\to\cT\{A\}$ and
$T\otimes A:\cS\otimes A\to\cT\otimes A$ are fully faithful. \\
b) Assuming a) holds, $T$ is an equivalence of categories if and only if $T\{A\}$ and
$T\otimes A$ are.
\end{propose}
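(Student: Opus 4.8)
\textbf{Proof proposal for Proposition \ref{p1.2}.}

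The plan is to reduce everything to the faithful/conservative dictionary of Lemma \ref{lA.2} together with the short exact sequence of triangulated categories supplied by Proposition \ref{p1.1} c). First, for part a), the ``only if'' direction is immediate: if $T$ is fully faithful then so is its restriction $T\{A\}$ to the full subcategory $\cS\{A\}$ (Proposition \ref{p1.1} a)), and $T\otimes A$ is fully faithful because, for $X,Y\in \cS$, one has $(\cS\otimes A)(X,Y)=\cS(X,Y)\otimes A$ and the functor induced on Hom-groups is just $T_{X,Y}\otimes A$, which is an isomorphism since $T_{X,Y}$ is and $-\otimes A$ is exact.

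For the substantive ``if'' direction of a), suppose $T\{A\}$ and $T\otimes A$ are fully faithful. The key point is that Proposition \ref{p1.1} c) gives a commutative (up to natural isomorphism) diagram of triangulated categories
\[\begin{CD}
0@>>>\cS\{A\}@>>>\cS@>>>\cS\otimes A@>>>0\\
&&@VT\{A\}VV @VTVV @VT\otimes AVV\\
0@>>>\cT\{A\}@>>>\cT@>>>\cT\otimes A@>>>0
\end{CD}\]
with exact rows, the right-hand exactness being the content of Lemma \ref{lB.2.3} (the rows are the canonical localisation sequences). Now I claim $T$ is conservative: if $f:X\to Y$ in $\cS$ has $T(f)$ invertible, then the image of $f$ in $\cS\otimes A$ is sent by $T\otimes A$ to an isomorphism, so by conservativity of $T\otimes A$ (which holds because a fully faithful triangulated functor is conservative, since its essential image is a full triangulated subcategory) $f$ becomes an isomorphism in $\cS\otimes A$; hence by Proposition \ref{p1.1} d), $C:=\cone(f)\in\cS\{A\}$. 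But $T(C)=\cone(T(f))=0$, and since $C\in\cS\{A\}$ and $T\{A\}$ is conservative (again being fully faithful), $C=0$, so $f$ is an isomorphism. Finally, to upgrade conservativity to full faithfulness, I would argue directly on Hom-groups: given $X,Y\in\cS$, choose $n>0$ invertible in $A$; the exact triangle $Y\by{n}Y\to Y/n\by{+1}$ is sent by $T$ to the analogous triangle, and fits into a morphism of long exact sequences
\[\dots\to\cS(X,Y)\by{n}\cS(X,Y)\to\cS(X,Y/n)\to\cS(X,Y[1])\by{n}\dots\]
mapping to the corresponding sequence over $\cT$. Passing to the direct limit over $n$ identifies the $n$-divisible part with $(\cS\otimes A)(X,Y)$, on which $T$ is an isomorphism by hypothesis; and $Y/n\in\cS\{A\}$, so $\cS(X,Y/n)\to\cS(X[i]\text{-shifts},\ldots)$— more precisely, writing $Y/n=Z\in\cS\{A\}$, one uses that $T\{A\}$ fully faithful controls $\cS(W,Z)$ for $W\in\cS\{A\}$, and an exhaustion/filtration argument on $X$ (replacing $X$ by $X/m$ and its complementary ``rational'' part) pins down $\cS(X,Z)$. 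A clean way to phrase this last bit: by the five lemma applied to the morphism of long exact sequences coming from $Y\by{n}Y\to Y/n$, it suffices to know $T$ is an isomorphism on $\cS(X,Y)\{A\}$-type groups and on $(\cS\otimes A)(X,Y)$ — the former reduces, via the triangle $X/m\to X\to X$, to Hom-groups between torsion objects where $T\{A\}$ applies, and the latter is the hypothesis on $T\otimes A$.

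For part b), assume a) holds so that $T$ is fully faithful; then $T$ is an equivalence iff it is essentially surjective. If $T$ is an equivalence, its restriction and localisation $T\{A\}$, $T\otimes A$ are equivalences by functoriality (each is fully faithful by a) and essentially surjective because $T$ is and the subcategory/quotient constructions are compatible). Conversely, suppose $T\{A\}$ and $T\otimes A$ are equivalences; let $W\in\cT$. Since $T\otimes A$ is essentially surjective, there is $X\in\cS$ with $T(X)\cong W$ in $\cT\otimes A$, i.e.\ there is an $A$-isogeny (or a chain of isogenies) linking $T(X)$ and $W$ in $\cT$; its cone lies in $\cT\{A\}$ by Proposition \ref{p1.1} d), and since $T\{A\}$ is essentially surjective that cone is $T(C)$ for some $C\in\cS\{A\}$. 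Building the corresponding triangle in $\cS$ on $X$ and $C$ — using that $T$ is fully faithful to lift the connecting morphism $T(C)\to T(X)[1]$ — produces an object $X'\in\cS$ with $T(X')\cong W$. One handles a finite chain of isogenies by induction on its length.

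\emph{Main obstacle.} The delicate step is the passage from conservativity to full faithfulness in part a): the five-lemma argument requires knowing $T$ is already an isomorphism on Hom-groups \emph{into torsion objects} $Y/n$, and establishing that cleanly means iterating the same divisibility device in the first variable (replacing $X$ by $X/m$) so as to land entirely inside $\cS\{A\}$, where the hypothesis on $T\{A\}$ finally bites. Making this double filtration precise — i.e.\ checking the limits and the five-lemma bookkeeping commute properly — is where the real work lies; everything else is a formal consequence of Proposition \ref{p1.1} and Lemma \ref{lB.2.3}.
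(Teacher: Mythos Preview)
Your overall architecture matches the paper's: reduce to the torsion and $\otimes A$ pieces via Proposition \ref{p1.1}. Part b) is essentially identical to the paper's argument (the ``chain of isogenies'' worry is unnecessary --- a single $A$-isogeny suffices by Proposition \ref{p1.1} d)). The conservativity argument you give in a) is correct but not needed.

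The gap is in your ``clean way to phrase this last bit''. The claim that it suffices to know $T$ is an isomorphism on ``$\cS(X,Y)\{A\}$-type groups'' and on $(\cS\otimes A)(X,Y)$ is false if by the former you mean the torsion subgroup $\cS(X,Y)\{A\}$: for an abelian group map $M\to N$, having isomorphisms on torsion and on $-\otimes A$ does not force an isomorphism (take $\Z\by{2}\Z$ with $A=\Q$). You are missing control of $\cS(X,Y)\otimes A/\Z$, and that is exactly where the paper's bootstrap lives.

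The paper organises the argument in two clean steps, and you have the ingredients but not the order. \emph{Step 1}: if $Y$ is torsion with $n1_Y=0$, take the triangle on $X$ (not on $Y$): since multiplication by $n$ on $\cS(X,Y)$ is zero, the long exact sequence from $X\by{n}X\to X/n$ exhibits $\cS(X,Y)$ simultaneously as a quotient of $\cS(X/n[1],Y)$ and a subobject of $\cS(X/n,Y)$; both outer terms are Homs between torsion objects, so $T\{A\}$ gives isomorphisms there, whence $T$ is bijective on $\cS(X,Y)$. \emph{Step 2}: for general $Y$, the triangle $Y\by{n}Y\to Y/n$ gives $0\to\cS(X,Y)/n\to\cS(X,Y/n)\to{}_n\cS(X,Y[1])\to 0$; by Step 1 the middle vertical is an isomorphism, and the snake lemma yields, after passing to the limit over $n$, a four-term sequence linking $\cS(X,Y)\otimes A/\Z$ and $\cS(X,Y[1])\{A\}$. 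Feeding this into the four-term sequence $0\to M\{A\}\to M\to M\otimes A\to M\otimes A/\Z\to 0$ for $M=\cS(X,Y)$ --- where the $\otimes A$ map is known to be an isomorphism --- one first gets the $\otimes A/\Z$ map to be an isomorphism, then (shifting $Y$ by $[-1]$) the $\{A\}$ map, and finally the five lemma finishes. This last bootstrap is the piece your sketch does not supply.
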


\begin{proof} a) ``Only if" is obvious; let us prove ``if'. Let $X,Y\in
\cS$: we have to prove that $T:\cS(X,Y)\to \cT(T(X),T(Y))$ is bijective.
We do it in two steps:

1) $Y$ is torsion, say $n 1_Y=0$. The claim follows from the commutative
diagram with exact rows
\[\begin{CD}
\scriptstyle\cS(X/n[1],Y)&\to&\scriptstyle \cS(X,Y)@>n=0>>\scriptstyle
\cS(X,Y)&\to&
\scriptstyle\cS(X/n,Y)\\ 
 @V{T}V{\wr}V @V{T}VV @V{T}VV @V{T}V{\wr}V \\
\scriptstyle\cT(T(X)/n[1],T(Y))&\to&
\scriptstyle\cT(T(X),T(Y))@>n=0>>\scriptstyle\cT(T(X),T(Y))&\to&
\scriptstyle\cT(T(X)/n,T(Y))
\end{CD}\]
and the assumption (see Proposition \ref{p1.1} b)).

2) The general case. Let $n>0$. We have a commutative diagram with exact
rows
\[\begin{CD}
\scriptstyle 0 &\to&\scriptstyle \cS(X,Y)/n&\to&
\scriptstyle\cS(X,Y/n)&\to& \scriptstyle{}_n\cS(X,Y[1])&\to& \scriptstyle
0 \\  && @V{T}VV @V{T}V{\wr}V @V{T}VV \\
\scriptstyle 0 &\to& \scriptstyle\cT(T(X),T(Y))/n&\to&
\scriptstyle\cT(T(X),T(Y)/n)&\to& \scriptstyle {}_n\cT(T(X),T(Y)[1])&\to&
\scriptstyle 0 
\end{CD}\]
where the middle isomorphism follows from 1). The snake lemma yields an
exact sequence
\begin{multline*}
0\to \cS(X,Y)/n\longby{T} \cT(T(X),T(Y))/n\\
\to
{}_n\cS(T(X),T(Y)[1])\longby{T}{}_n\cT(T(X),T(Y)[1])\to 0.
\end{multline*}

Passing to the limit over $n$, we get another exact sequence
\begin{multline}\label{eqB.1}
0\to \cS(X,Y)\otimes A/\Z\longby{T} \cT(T(X),T(Y))\otimes A/\Z\\
\to
\cS(T(X),T(Y)[1])\{A\}\longby{T}\cT(T(X),T(Y)[1])\{A\}\to 0.
\end{multline}

Consider now the commutative diagram with exact rows
\[\begin{CD}
\scriptstyle 0&\to& \scriptstyle \cS(X,Y)\{A\}&\to&
\scriptstyle \cS(X,Y)&\to&\scriptstyle
\cS(X,Y)\otimes A&\to&\scriptstyle \cS(X,Y)\otimes A/\Z&\to& 0\\
&& @V{T}V\underline{1}V @V{T}V\underline{2}V @V{T}V{\wr}V @V{T}V\underline{4}V\\
\scriptstyle 0&\to& \scriptstyle \cT(T(X),T(Y))\{A\}&\to&
\scriptstyle \cT(T(X),T(Y))&\to&\scriptstyle
\cT(T(X),T(Y))\otimes A&\to&\scriptstyle \cT(T(X),T(Y))\otimes A/\Z&\to&
0
\end{CD}\]
where the isomorphism is by assumption. By this diagram and \eqref{eqB.1}, $\underline{4}$ is
an isomorphism. Using this fact in \eqref{eqB.1} applied with $Y[-1]$, we get that
$\underline{1}$ is an isomorphism; then $\underline{2}$ is an isomorphism by the 5
lemma, as desired.

b) If $T$ is essentially surjective, so is $T\otimes A$, as well as $T\{A\}$ as long as $T$ is
faithful, which is implied by a). Conversely, let $X\in \cT$. Using only the essential
surjectivity of $T\otimes A$, we find an $A$-isogeny
\[\phi:X\to T(Y)\]
with $Y\in \cS$. A cone of $\phi$ is a torsion object, hence, if $T\{A\}$ is essentially
surjective, it is isomorphic to $T(C)$ for $C\in \cS\{A\}$. Thus $X$ sits in an exact triangle
\[X\by{\phi} T(Y)\by{\psi} T(C)\by{+1}.\]

If $T$ is full, then $\psi=T(\psi')$ and $X \simeq T(X')$, where $X'$ is a fibre of
$\psi'$.
\end{proof}

\subsection{Torsion objects in an abelian category} The proof of the following proposition is
similar to that of Proposition \ref{p1.1} and is left to the reader.

\begin{propose}\label{p1.1ab} Let $\cA$ be an abelian category. Then\\
a) The full subcategory $\cA\{A\}$
is a Serre subcategory.\\ 
b) Let $X\in \cA$ and $n>0$
invertible in $A$. Then the kernel and cokernel of multiplication by
$n$ on $X$ belong to $\cA\{A\}$.\\  
c) The localised category
$\cA/\cA\{A\}$ is canonically isomorphic to
$\cA\otimes A$. In particular, $\cA\otimes A$ is abelian.\\
d) A morphism $f\in \cA$ is in $\Sigma_A(\cA)$ if and only if $\ker f\in \cA\{A\}$ and
$\coker f\in \cA\{A\}$.\qed
\end{propose}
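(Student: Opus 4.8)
The final statement is Proposition \ref{p1.1ab}: for an abelian category $\cA$ and a subring $A\subseteq \Q$, the torsion subcategory $\cA\{A\}$ is thick (Serre), kernels/cokernels of multiplication by $n$ (with $n$ invertible in $A$) land in $\cA\{A\}$, the localisation $\cA/\cA\{A\}$ is canonically $\cA\otimes A$, and $f\in\Sigma_A(\cA)$ iff $\ker f,\coker f\in\cA\{A\}$.

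\begin{proof}[Proof sketch]
The plan is to run the exact abelian-category analogue of the argument given for Proposition \ref{p1.1} (torsion objects in triangulated categories), replacing the use of cones by the use of kernels and cokernels. First, for a), I would take a short exact sequence $0\to X'\to X\to X''\to 0$ with $X',X''\in\cA\{A\}$, say killed by integers $n',n''$ invertible in $A$; then multiplication by $n''$ on $X$ factors through $X'$ (since it kills the quotient $X''$), so $n'n''1_X=0$, showing $X\in\cA\{A\}$; stability under subobjects, quotients and extensions is then immediate, giving that $\cA\{A\}$ is a Serre subcategory. For b), if $n$ is invertible in $A$ and $f=n\cdot 1_X:X\to X$, then $\ker f$ and $\coker f$ are both killed by $n$ (a standard diagram chase, or: the sequence $0\to\ker f\to X\xrightarrow{n}X\to\coker f\to 0$ shows $n$ annihilates the outer terms), hence lie in $\cA\{A\}$.

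For c), the key point is that homotheties by nonzero integers of $A$ are cofinal in $\Sigma_A(\cA)$, exactly as in Proposition \ref{pB.1.2} b): this gives calculus of both left and right fractions for $\Sigma_A(\cA)$, and then the usual colimit formula for a localisation with calculus of fractions identifies $\Sigma_A(\cA)^{-1}\cA$ with $\cA\otimes A$ on Hom groups. To identify $\cA/\cA\{A\}$ with $\cA\otimes A$ I would check that the Serre quotient $\cA\to\cA/\cA\{A\}$ inverts precisely the morphisms with torsion kernel and cokernel (which is the definition of a Serre quotient), and that by d) this class coincides with $\Sigma_A(\cA)$; combining with the previous identification and the universal property of localisation gives the canonical isomorphism, and $\cA\otimes A$ is abelian because a Serre quotient of an abelian category is abelian. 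For d), one implication is the observation from b) plus stability of $\cA\{A\}$ under the relevant constructions; for the converse, if $\ker f,\coker f\in\cA\{A\}$ I would factor $f$ as $X\twoheadrightarrow\im f\hookrightarrow Y$ and treat each factor: an epimorphism with torsion kernel and a monomorphism with torsion cokernel each become isomorphisms after tensoring with $A$ (splitting off the torsion up to a homothety), so $f\in\Sigma_A(\cA)$.

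The main obstacle—though it is a mild one—is the precise bookkeeping in c): one must verify carefully that the canonical functor $\cA\to\cA\otimes A$ sends $\Sigma_A(\cA)$ to isomorphisms and is universal for that property, and that the Serre quotient functor kills exactly $\cA\{A\}$ and inverts exactly $\Sigma_A(\cA)$; these are the standard facts about Serre subcategories and localisation by a calculus of fractions, so the ``hard part'' is really just assembling them correctly rather than any genuinely new argument. Since the proposition is explicitly flagged in the excerpt as having a proof ``similar to that of Proposition \ref{p1.1}'' and left to the reader, I would present only the points where the abelian case differs from the triangulated one, namely the use of the four-term exact sequence for multiplication by $n$ in place of the octahedron/cone manipulations.
\end{proof}
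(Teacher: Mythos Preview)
Your proposal is correct and follows exactly the approach the paper intends: the paper gives no proof at all, merely stating that the argument is ``similar to that of Proposition \ref{p1.1} and is left to the reader,'' and your sketch carries out precisely that adaptation (replacing cones by kernel/cokernel, and the factorisation $n=ff'=f''f$ via the epi--mono decomposition). The only minor cosmetic point is that in b) the abelian case is in fact easier than the triangulated one---$\ker(n1_X)$ and $\coker(n1_X)$ are genuinely killed by $n$, not just by $n^2$---which you note implicitly.
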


The following corollary is a direct consequence of Proposition \ref{p1.1ab} and Lemma
\ref{lB.1.3}:

\begin{cor}\label{cB.1.3} Let $\cA$ be an abelian category. Let $\cB$ be a full additive
subcategory of
$\cA$, and suppose that every object of $\cA$ is $A$-isogenous to an object of $\cB$ (see
Definition
\ref{dA.2}). Then $\cB\otimes A$ is abelian, and in particular idempotent-complete.\qed
\end{cor}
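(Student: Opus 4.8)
The final statement is Corollary \ref{cB.1.3}: if $\cB$ is a full additive subcategory of an abelian category $\cA$ such that every object of $\cB$ is $A$-isogenous to an object of $\cA$, then $\cB\otimes A$ is abelian (hence idempotent-complete). As the corollary is stated to follow from Proposition \ref{p1.1ab} and Lemma \ref{lB.1.3}, the plan is simply to combine these two inputs. First I would invoke Lemma \ref{lB.1.3}: the hypothesis that every object of $\cB$ is $A$-isogenous to an object of $\cA$ is exactly the hypothesis of that lemma (with $\cA$ playing both roles, $\cB\subseteq\cA$), so it yields an equivalence of $A$-linear additive categories $\cB\otimes A\iso \cA\otimes A$. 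Then I would invoke Proposition \ref{p1.1ab} c): since $\cA$ is abelian, $\cA\otimes A$ is abelian. An abelian category is preserved under equivalence of categories, so $\cB\otimes A$ is abelian.

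\textbf{Details to check.} The one genuinely non-formal point is that Lemma \ref{lB.1.3} applies: one must observe that ``every object of $\cB$ is $A$-isogenous to an object of $\cA$'' literally matches ``every object of $\cB$ is $A$-isogenous to an object of $\cA$'' in the statement of \ref{lB.1.3} — here the ambient abelian category is $\cA$ itself and the full additive subcategory is $\cB$. (One should double-check the convention in Definition \ref{dA.2} d): $A$-isogenous means linked by a chain of $A$-isogenies not necessarily pointing the same way; this is precisely what \ref{lB.1.3} requires.) Granting that, \ref{lB.1.3} gives $\cB\otimes A\simeq\cA\otimes A$ as categories. For the abelianness of $\cA\otimes A$ I would cite Proposition \ref{p1.1ab} c), which states $\cA\otimes A\cong \cA/\cA\{A\}$ is abelian whenever $\cA$ is; this is the part of the argument that does real work, but it is already proved in the excerpt.

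\textbf{Last step.} Finally, transport abelianness across the equivalence $\cB\otimes A\iso\cA\otimes A$: an $A$-linear category equivalent to an abelian category is abelian (kernels, cokernels, and the monomorphism/epimorphism factorisation condition are all categorical). In particular $\cB\otimes A$ is idempotent-complete, since every abelian category is. For the parenthetical claim ``in particular idempotent-complete for any $A$'' in the corollary's original formulation, note that idempotent-completeness of $\cA\otimes A$, hence of $\cB\otimes A$, holds for any subring $A\subseteq\Q$, as the abelianness argument made no restriction on $A$.

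\textbf{Main obstacle.} There is essentially no obstacle: the corollary is a one-line deduction from two already-established results, and the only thing to verify is the syntactic match of hypotheses for Lemma \ref{lB.1.3}. If anything, the ``hard'' content was absorbed into Proposition \ref{p1.1ab} c) (the identification $\cA\otimes A\cong\cA/\cA\{A\}$, which uses calculus of fractions) and into Lemma \ref{lB.1.3}'s proof (that an $A$-isogeny becomes an isomorphism after $\otimes A$, i.e. Proposition \ref{pB.1.2} b)); both are done. So the write-up is: apply \ref{lB.1.3}, apply \ref{p1.1ab} c), conclude by equivalence-invariance of abelianness.
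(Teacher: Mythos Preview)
Your proposal is correct and takes exactly the same approach as the paper: the corollary is marked with \qed and introduced as ``a direct consequence of Proposition \ref{p1.1ab} and Lemma \ref{lB.1.3}'', which is precisely the two-step combination you carry out. Your added remarks on transporting abelianness across an equivalence and on idempotent-completeness of abelian categories merely make explicit what the paper leaves implicit.
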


\subsection{Abelian and derived categories}

\begin{propose}\label{pB.4.1} Let $\cA$ be an abelian category. Then the natural functor
$D^b(\cA)\to D^b(\cA\otimes A)$ induces an equivalence of categories
\[D^b(\cA)\otimes A\iso D^b(\cA\otimes A).\]
In particular, $D^b(\cA)\otimes A$ is idempotent-complete.
\end{propose}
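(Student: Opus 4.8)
The plan is to exhibit the functor $D^b(\cA)\otimes A\to D^b(\cA\otimes A)$ as an equivalence by checking full faithfulness and essential surjectivity separately, using the machinery of Proposition \ref{p1.1} (localisation of triangulated categories at isogenies) together with the exactness properties of $\otimes A$ recorded in Proposition \ref{p1.1ab}. The starting point is the observation that the localisation functor $Q:\cA\to \cA\otimes A$ is exact (Proposition \ref{p1.1ab} c)), hence induces a triangulated functor $D^b(Q):D^b(\cA)\to D^b(\cA\otimes A)$. Since the target is an $A$-linear category, $D^b(Q)$ factors through $D^b(\cA)\otimes A$ by the universal property of localisation at isogenies (Proposition \ref{p1.1} c), d)); call the resulting functor $F$. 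The goal is to show $F$ is an equivalence, and I would invoke Proposition \ref{p1.2} b): it suffices to check that $F\{A\}$ and $F\otimes A$ are equivalences, but in fact both sides are already $A$-linear so $F$ itself is what we must test, and the cleanest route is simply to verify full faithfulness and essential surjectivity of $F$ directly.

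First I would handle essential surjectivity. An object of $D^b(\cA\otimes A)$ is a bounded complex $X^\cdot$ of objects of $\cA\otimes A$; since $\cA\otimes A$ has the same objects as $\cA$ and the differentials are morphisms in $\cA\otimes A$, i.e.\ of the form $(1/n)d$ with $d\in\cA$, after rescaling each differential (which changes the complex only by an isomorphism in $D^b(\cA\otimes A)$) one can assume all differentials come from $\cA$; but then the complex already lies in the image of $D^b(Q)$, hence of $F$. The mild subtlety here is that rescaling the differential at one spot alters the compatibility $d\circ d=0$ only by a unit, so one does need to scale coherently up the complex; this is routine and I would spell it out in one or two lines.

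For full faithfulness the key input is that $Q:\cA\to\cA\otimes A$ realises $\cA\otimes A$ as the localisation $\Sigma_A(\cA)^{-1}\cA$ (Proposition \ref{pB.1.2} b), or rather its abelian analogue \ref{p1.1ab}), and that this localisation has calculus of fractions with the homotheties $n\cdot\mathrm{id}$ cofinal. One then computes, for $X^\cdot,Y^\cdot\in D^b(\cA)$,
\[
\Hom_{D^b(\cA)\otimes A}(X^\cdot,Y^\cdot)=\Hom_{D^b(\cA)}(X^\cdot,Y^\cdot)\otimes A=\varinjlim_n \Hom_{D^b(\cA)}(X^\cdot,Y^\cdot),
\]
the colimit over multiplication by $n$ invertible in $A$, and compares this with $\Hom_{D^b(\cA\otimes A)}(F X^\cdot,F Y^\cdot)$. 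The identification of the latter with the same colimit follows because $\cA\{A\}$ is a thick subcategory of $\cA$ (Proposition \ref{p1.1ab} a)), so $D^b_{\cA\{A\}}(\cA)$, the full subcategory of complexes with $A$-torsion cohomology, is a thick triangulated subcategory, and there is a standard identification $D^b(\cA)/D^b_{\cA\{A\}}(\cA)\simeq D^b(\cA\otimes A)$; tensoring with $A$ kills exactly the torsion, so one gets $D^b(\cA)\otimes A\simeq D^b(\cA)/D^b_{\cA\{A\}}(\cA)\simeq D^b(\cA\otimes A)$. I expect the main obstacle to be precisely this last identification $D^b(\cA)/D^b_{\cA\{A\}}(\cA)\simeq D^b(\cA\otimes A)$, i.e.\ checking that localising the bounded derived category at quasi-isomorphisms-modulo-torsion reproduces the bounded derived category of the quotient abelian category $\cA\otimes A$; this is a known fact (it is the derived analogue of the exactness of $Q$), but it requires a small argument that every object of $D^b(\cA\otimes A)$ lifts and every morphism is a fraction with torsion cone, which is exactly what the essential-surjectivity step and the calculus of fractions provide. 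The final sentence of the statement, idempotent-completeness, is then immediate: $D^b$ of any abelian category is idempotent-complete (Karoubi-closed), and $\cA\otimes A$ is abelian by Proposition \ref{p1.1ab} c).
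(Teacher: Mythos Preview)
Your essential surjectivity argument is essentially the paper's Step 1: clear denominators on the differentials, observe the resulting compositions are torsion rather than zero in $\cA$, and multiply by a further integer to kill them; the isomorphism in $C^b(\cA\otimes A)$ is then given by the diagonal matrix of powers of the scaling factor. So that part is fine.

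The gap is in full faithfulness. You write that ``there is a standard identification $D^b(\cA)/D^b_{\cA\{A\}}(\cA)\simeq D^b(\cA\otimes A)$'' and then plan to deduce the comparison of Hom groups from it. But this identification \emph{is} the proposition (once one notes, as you do, that $D^b_{\cA\{A\}}(\cA)=D^b(\cA)\{A\}$). Invoking it as a ``known fact'' is circular, and the general theorem that $D^b(\cA/\cB)\simeq D^b(\cA)/D^b_\cB(\cA)$ for a Serre subcategory $\cB$ is not automatic from exactness of the quotient functor alone --- it needs additional hypotheses which you have not supplied. Your fallback sentence (``it requires a small argument that every object lifts and every morphism is a fraction with torsion cone'') is the heart of the matter, and the essential surjectivity step plus calculus of fractions in $\cA$ does not obviously give calculus of fractions at the level of $D^b$.

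The paper's route around this is to interpose the homotopy category. One first shows $K^b(\cA)\otimes A\iso K^b(\cA\otimes A)$, which is elementary (same common-denominator trick, applied now to homotopies rather than to differentials). One then observes separately that $D^b(\cA)\otimes A\to D^b(\cA\otimes A)$ is conservative (a complex with zero cohomology in $\cA\otimes A$ has $A$-torsion cohomology in $\cA$). Applying Lemma \ref{lB.2.3} (flatness of $\otimes A$) to the exact sequence $A^b(\cA)\to K^b(\cA)\to D^b(\cA)$ of triangulated categories, one gets a comparison diagram in which the middle vertical is an equivalence; this forces the left vertical (on acyclic complexes) to be essentially surjective, hence by the argument of Proposition \ref{p1.2} b) the right vertical is full. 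Full plus conservative gives faithful by Lemma \ref{lA.2}, and the equivalence follows.
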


\begin{proof} In 3 steps:

1) The natural functor $C^b(\cA)\otimes A\to C^b(\cA\otimes A)$ is an equivalence of
categories. Full faithfulness is clear. For essential surjectivity, take a bounded complex $C$
of objects of $\cA\otimes A$. Find a common denominator to all differentials involved in $C$.
Then the corresponding morphisms of $\cA$ have torsion composition; since they are finitely
many, we may multiply by a common bigger integer so that they compose to $0$. The resulting
complex of $C^b(\cA)$ then becomes isomorphic to $C$ in $C^b(\cA\otimes A)$.

2) The functor of 1) induces an equivalence of categories $K^b(\cA)\otimes A\iso
K^b(\cA\otimes A)$. Fullness is clear, and faithfulness is obtained by the same technique as
in 1).

3) The functor of 2) induces the desired equivalence of categories. Write 	as usual $D^b_{\cA\{A\}}(\cA)$ for the thick subcategory of $D^b(\cA)$ whose objects have cohomology in the thick subcategory $\cA\{A\}\subseteq \cA$. First, the functor
\[D^b(\cA)/D^b_{\cA\{A\}}(\cA)\to D^b(\cA/\cA\{A\})\]
is obviously conservative. But clearly $D^b_{\cA\{A\}}(\cA)=D^b(\cA)\{A\}$ (by induction of the lengths of complexes). Hence,
by Propositions \ref{p1.1} and \ref{p1.1ab}, this functor translates as
\[D^b(\cA)\otimes A\to D^b(\cA\otimes A).\]

Let $A^b(\cA)$ denote the thick subcategory of $K^b(\cA)$ consiting of acyclic complexes. By
Lemma \ref{lB.2.3} we have a commutative diagram of exact sequences of triangulated categories
\[\begin{CD}
0@>>> A^b(\cA)\otimes A@>>> K^b(\cA)\otimes A@>>> D^b(\cA)\otimes A@>>> 0\\
&&@VVV @VVV @VVV\\
0@>>> A^b(\cA\otimes A)@>>> K^b(\cA\otimes A)@>>> D^b(\cA\otimes A)@>>> 0.
\end{CD}\]

We have just seen that the right vertical functor is conservative, and by 2), the middle one
is an equivalence. Hence the left one is essentially surjective. By the same argument as in the
proof of Proposition \ref{p1.2} b), we get that the right functor is full, and the result
follows from Lemma \ref{lA.2}. 
\end{proof}

\section{$1$-motives with torsion}\label{AppendixB}

Effective $1$-motives which admit torsion are introduced in \cite[\S
1]{BRS} (in characteristic $0$). We investigate some properties (over a perfect field of  exponential
characteristic $p\ge 1$) which are not included in op. cit. as a supplement  to our Sect. 1.

Throughout this appendix, we put $1$-motives in degrees $-1$ and $0$ (as we did in the subsection \ref{sbiext}) in order to follow Deligne's notation in \cite{D}.

\subsection{Effective $1$-motives}
An \emph{effective $1$-motive with torsion} over $k$ is a complex of group 
schemes $M= [ L \by{u} G]$ where $L$ is finitely generated locally constant for the
{\'e}tale  topology (\ie discrete in the sense of Def.~\ref{d1.1.1}) and
$G$ is a  semi-abelian $k$-scheme. Therefore
$L$ can be represented by an extension 
$$0\to L_{\tor}\to L \to L_{\fr}\to 0$$ where $L_{\tor}$ is a finite \'etale $k$-group scheme and $L_{\fr}$ is free, \ie a lattice. Also $G$ can be represented 
by an extension of an abelian $k$-scheme $A$ by a $k$-torus $T$.

\begin{defn}\label{eff1mot}
 An {\it effective} map from $M = [ L \by{u} G]$ to $M'= [ L' \by{u'} 
G']$ is a commutative square
\[ \begin{CD}
 L @>u>>  G\\
@V{f}VV  @V{g}VV  \\
L'@>u'>> G'
\end{CD}\]
in the category of commutative group schemes. Denote by $(f, g):M\to M'$ such a
map. The natural composition of squares makes  up a category, denoted by
${}^t\M^{\eff}$.  We will denote by $\Hom_{\eff}(M, M')$ the abelian group
of effective  morphisms.\index{${}^t\M$, ${}^t\M^\eff$}
\end{defn}

For a given $1$-motive $M = [L \by{u} G]$  we have a commutative diagram 
\begin{equation}\label{basic}
\begin{CD}
&&&&0&  &0 \\
&&&&@V{}VV @V{}VV  \\
0@>>> \ker (u)\cap L_{\tor}@>>>  L_{\tor}@>{u}>> u(L_{\tor})@>>> 0\\
&&@V{}VV  @V{}VV @V{}VV  \\
0@>>> \ker (u)@>>> L @>{u}>> G\\
&&&&@V{}VV  @V{}VV  \\
&&&&L_{\fr} @>{\bar u}>> G/u(L_{\tor})\\
&&&&@V{}VV  @V{}VV  \\
&&&&0&  &0 
\end{CD}
\end{equation}
with exact rows and columns. We set
\begin{itemize}
\item $M_{\fr}\df [L_{\fr}  \by{\bar u}  G/u(L_{\tor})]$
\item $M_{\tor}\df [\ker (u)\cap L_{\tor}\to 0]$
\item $M_{\tf}\df [L/\ker (u)\cap L_{\tor}  \by{u} G]$
\end{itemize}
considered as effective $1$-motives. From Diagram \eqref{basic}
there  are canonical effective maps $M \to M_{\tf}$, $M_{\tor}\to
M$ and
$M_{\tf} 
\to M_{\fr}$.
\index{$M_{\fr}$, $M_{\tor}$, $M_{\tf}$}
\begin{defn}\label{frtortf}
A $1$-motive $M = [L \by{u} G]$ is {\it free}\, if $L$ is free, \ie if 
$M = M_{\fr}$. $M$ is  {\it torsion}\, if $L$ is torsion and $G =0$, \ie 
if $M=M_{\tor}$, and 
{\it torsion-free}\, if $\ker (u)\cap L_{\tor} =0$, \ie if $M=M_{\tf}$. 

Denote by ${}^t\M^{\eff ,\fr}$, ${}^t\M^{\eff ,\tor}$ and 
${}^t\M^{\eff ,\tf}$, the full sub-categories of 
${}^t\M^{\eff}$ given by free, torsion and torsion-free $1$-motives
respectively.
\end{defn}

The category ${}^t\M^{\eff ,\fr}$ is nothing else than the category $\M$ of Deligne $1$-motives
and we shall henceforth use this notation. It is clear that ${}^t\M^{\eff,\tor}$  is equivalent
to the category of finite \'etale group schemes. If $M$ is torsion-free then the lower square in diagram \eqref{basic} is a
pull-back, \ie $L$  is the pull-back of $L_{\fr}$ along the isogeny $G\to G/L_{\tor}$.

\begin{propose} \label{lim} The categories ${}^t\M^{\eff}$ and $\M$ have all finite limits and
colimits. 
\end{propose}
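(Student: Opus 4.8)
The statement asserts that both ${}^t\M^{\eff}$ and $\M$ are finitely complete and finitely cocomplete. Since an additive category has all finite limits and colimits as soon as it has a zero object and kernels/cokernels (finite products and coproducts are then built from biproducts, which exist in any additive category, and equalizers/coequalizers are obtained from kernels/cokernels of differences), the plan is to reduce everything to the existence of kernels and cokernels. The zero object is $[0\to 0]$ in both cases, and biproducts are componentwise: $[L\to G]\oplus[L'\to G']=[L\oplus L'\to G\oplus G']$, which lies in ${}^t\M^{\eff}$ (resp.\ $\M$) because the class of discrete sheaves (resp.\ lattices) and the class of semi-abelian schemes are both closed under finite direct sums. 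So the real content is: construct kernels and cokernels of an effective map $(f,g):M=[L\by{u}G]\to M'=[L'\by{u'}G']$.

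\textbf{Kernels.} First I would work in the ambient abelian category of complexes of length $1$ of commutative $k$-group schemes (or of fppf sheaves), where the naive kernel $[\ker f\to \ker g]$ exists. The point is then to \emph{correct} it to land inside ${}^t\M^{\eff}$ (resp.\ $\M$). Here $\ker g$ is a commutative $k$-group scheme whose identity component is semi-abelian (a closed subgroup scheme of a semi-abelian scheme), but it may have a nontrivial finite component group and finite part; similarly $\ker f$ is discrete. For ${}^t\M^{\eff}$: replace $\ker g$ by its identity component $(\ker g)^{0}$, which \emph{is} semi-abelian, and pull back $\ker f$ along $(\ker g)^0\hookrightarrow \ker g$; one checks (diagram chase, exactly as in the snake-lemma manipulations already used in \S\ref{1.1} and in the proof of Theorem \ref{ptors}) that the resulting object $[\,(\ker f)\times_{\ker g}(\ker g)^0\to(\ker g)^0\,]$ represents the kernel in ${}^t\M^{\eff}$, using that any morphism from a $1$-motive whose composite to $M'$ vanishes factors through the identity component on the $G$-side because the source group scheme has semi-abelian identity component and the target finite component group is discrete. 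For $\M$, where $L,L'$ are required \emph{free}, one additionally divides by torsion: take $(\ker f)_{\fr}$ on the discrete side, or equivalently replace $(\ker g)^0$ by its quotient by the image of the torsion of $\ker f$, as in the push-out construction in the proof of Proposition \ref{isoab}. Then argue the universal property directly.

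\textbf{Cokernels.} Dually, form $[\coker f\to \coker g]$ in the ambient category; $\coker g$ is a quotient of $G'$, hence a commutative group scheme with semi-abelian identity component, but need not be semi-abelian (it could have a finite, even infinitesimal, part) — however after inverting $p$, or directly since we only need an affine+abelian extension structure, the relevant move is to push out along $G'\to (\coker g)$ composed with the projection killing the finite connected part, and to take the image of $u'$ appropriately; the cokernel in ${}^t\M^{\eff}$ is $[\coker f\to G'/(\text{image of }G)]$ with $G'/(\text{image of }G)$ the semi-abelianization of the naive cokernel. For $\M$: further apply $(-)_{\fr}$ to the discrete part. Again the universal property is checked by a short diagram chase.

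\textbf{Main obstacle.} The genuine difficulty is bookkeeping the finite-group-scheme pathologies: a sub- or quotient-group scheme of a semi-abelian variety need not be semi-abelian (disconnectedness on the kernel side, possible non-smooth or finite connected pieces on the cokernel side, especially in characteristic $p$), so the ``naive'' kernel/cokernel genuinely fails to be a $1$-motive and one must show the corrected object still satisfies the universal property — i.e.\ that every competing morphism factors through the correction. This is where one invokes, as throughout \S\ref{1.1}, that $\Hom$ from a $1$-motive into a finite (or finite-connected) group scheme placed in degree $1$ is suitably controlled: a map from a semi-abelian-identity-component group into a discrete/finite one is constant on the identity component. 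I would isolate this as one lemma (componentwise: behaviour of $\Hom$ under passing to identity components and to the maximal free/semi-abelian quotient), prove it once, and then both the kernel and cokernel verifications become formal. Once kernels and cokernels are in hand, finite limits and colimits follow by the standard categorical recipe, completing the proof for both ${}^t\M^{\eff}$ and $\M$.
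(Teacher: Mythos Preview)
Your overall strategy matches the paper's: reduce to kernels and cokernels, and for kernels replace $\ker g$ by its identity component $(\ker g)^0$ and pull back on the discrete side. The paper's kernel is exactly $[\ker^0(f)\to\ker^0(g)]$, where $\ker^0(f)$ is the pullback of $(\ker g)^0$ along $u$; the universal property follows because any test $G''$ is connected, hence lands in $(\ker g)^0$.

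Two places where you overcomplicate:

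\emph{Cokernels in ${}^t\M^{\eff}$.} The paper takes the naive $[\coker f\to\coker g]$ with no correction. Your worry about ``non-smooth or finite connected pieces'' on the $G$-side does not arise: $g(G)$ is smooth and connected (image of a smooth connected group over a perfect field), and the quotient $G'/g(G)$ is smooth connected commutative with no unipotent quotient (any map $G'\to\G_a$ is zero since $G'$ is semi-abelian), hence is semi-abelian. So no ``semi-abelianization'' step is needed.

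\emph{Kernels in $\M$.} If $L$ is free then $\ker f\subseteq L$ is free, and so is its subgroup $\ker^0(f)$. The paper simply notes this; your extra step of dividing by torsion is unnecessary.

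For cokernels in $\M$ the paper does exactly what you say: take $[\coker f\to\coker g]_{\fr}$.
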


\begin{proof} Since these are additive categories (with biproducts), it is
enough to show that they have kernels, dually cokernels. Now let
$\phi=(f,g):M=[L \by{u} G]\to M'= [L' \by{u'} G']$ be an effective map. We claim that 
$$\ker \phi = [\ker^0(f)  \by{u}  \ker^0(g)]$$
where: (i) $ \ker^0(g)$ is the (reduced) connected component of the identity of the kernel of $g : G\to G'$ and (ii)
 $\ker^0(f) \subseteq \ker(f)$ is the pull-back of $\ker^0(g)$ along $u\mid_{\ker f}$.
We have to show that the following diagram of effective
$1$-motives
\[
\begin{CD}
0&& 0 \\
@V{}VV @V{}VV  \\
\ker^0(f) @>{u}>>  \ker^0(g)\\
@V{}VV @V{}VV  \\
L @>{u}>> G\\
@V{f}VV  @V{g}VV  \\
L' @>{u'}>> G'\\
\end{CD}
\]
satisfies the universal property for kernels. Suppose that $M''= [L'' 
\by{u''}  G'']$ is mapping to $M$ in such a way that the composition 
$M'' \to M \to M'$ is the zero map. Then $L''$ maps to $\ker (f)$ and
$G''$ maps to $\ker (g)$. Since $G''$ is connected, it actually maps
to $\ker^0(g)$ and, by the universal property of pull-backs in the
category of group schemes, $L''$ then maps to $\ker^0(f)$. Finally note
that if $L$ is free then also $\ker^0(f)$ is free. 

For cokernels, we see that 
$$[\coker (f)  \by{\bar u'}  \coker(g)]$$
is an effective $1$-motive which is clearly a cokernel of $\phi$.

For $\M$, it is enough to take the free part of the
cokernel, \ie  given $(f, g) : M \to M'$ then $[\coker (f) \to \coker
(g)]_{\fr}$ meets the universal property
for coker of free $1$-motives.
\end{proof}

\subsection{Quasi-isomorphisms} (\cf \cite[\S 1]{BRS}). 

\begin{defn} An effective morphism of $1$-motives $M \to M'$, here $M = [L \by{u} G]$ and $M' = [L'
\by{u} G']$, is a  {\it quasi-isomorphism}\, (\qi for short) of $1$-motives if it yields a
pull-back diagram
\begin{equation}\label{qi1mot}
\begin{CD}
0&  &0 \\
@V{}VV @V{}VV  \\
  F @= F\\
@V{}VV @V{}VV  \\
 L @>{u}>>  G\\
@V{}VV @V{}VV  \\
L' @>{u'}>> G'\\
@V{}VV@V{}VV  \\
0&  &0 
\end{CD}
\end{equation}
where $F$ is a finite \'etale group.
\end{defn}

\begin{remarks}\label{qi=qi}
1) Note that kernel and cokernel of a quasi-iso\-mor\-phism of
$1$-motives are $0$ but, in general, a quasi-isomorphism is not an
isomorphism in
${}^t\M^{\eff}$. Hence the category ${}^t\M^{\eff}$ is not abelian.\\
2) A \qi of $1$-motives $M \to M'$ is actually a \qi of complexes of group
schemes. In fact, an effective map of $1$-motives $M \to M'$ is a \qi of
complexes if and only if we have the following diagram
\[ \begin{CD}
0&\to& \ker (u)@>>> L @>{u}>> G@>>> \coker (u)&\to& 0\\
&&\veq &&@V{}VV@V{}VV\veq  \\
0&\to & \ker (u')@>>> L' @>{u'}>> G'@>>> \coker (u')&\to& 0.\\
\end{CD}\]

Therefore $\ker$ and $\coker$ of $L\to L'$ and $G\to G'$ are equal. Then 
$\coker (G\to G')= 0$, since it is connected and discrete,
and $\ker (G\to G')$ is a finite group. Conversely, Diagram
\eqref{qi1mot} clearly yields a \qi of complexes. In particular, it
easily follows that the class of \qi of $1$-motives is closed under
composition of effective morphisms.
\end{remarks}

\begin{propose}\label{B1.1} Quasi-isomorphisms are simplifiable on the
left and on the right.
\end{propose}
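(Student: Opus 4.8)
\textbf{Proof plan for Proposition \ref{B1.1}.}
The claim is that if $f: M \to M'$ is a quasi-isomorphism of effective $1$-motives and $g_1, g_2$ are effective maps with $fg_1 = fg_2$ (left simplification), or if $g_1 f = g_2 f$ (right simplification), then $g_1 = g_2$. By additivity of $\Hom_{\eff}$, it suffices to show that $f$ is a monomorphism in ${}^t\M^{\eff}$ (for left simplification, i.e.\ if $fg = 0$ then $g = 0$) and an epimorphism (for right simplification). The plan is to use the explicit description of a quasi-isomorphism provided by Diagram \eqref{qi1mot}: $f = (f_L, f_G)$ where $f_L: L \to L'$ is injective with cokernel the finite \'etale group $F$, $f_G: G \to G'$ is surjective with kernel $F$, and the square is both cartesian and cocartesian.

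First I would treat left simplification. Let $g = (g_L, g_G): N = [L'' \by{u''} G''] \to M$ be an effective map with $fg = 0$, so $f_L g_L = 0$ and $f_G g_G = 0$. Since $f_L$ is injective, $g_L = 0$ immediately. Since $\ker f_G = F$ is finite and $G''$ is connected, the map $g_G: G'' \to G$ factors through $F$; but a morphism from a connected group scheme to a finite \'etale group scheme is zero (it lands in the connected component of the identity, which is trivial), so $g_G = 0$. Hence $g = 0$ and $f$ is a monomorphism. Then $fg_1 = fg_2 \implies f(g_1 - g_2) = 0 \implies g_1 = g_2$.

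For right simplification, let $g = (g_L, g_G): M' \to N$ with $gf = 0$, so $g_L f_L = 0$ and $g_G f_G = 0$. Since $f_G$ is surjective, $g_G = 0$. For the discrete part: $f_L: L \to L'$ has cokernel $F$, so $g_L$ factors through $F = \coker f_L$, giving $\bar g_L: F \to L''$; I then need to show $\bar g_L = 0$. This is where the cocartesian property of \eqref{qi1mot} is essential: the pushout square says $L'$ is obtained from $L$ by also pushing out $G$ along $F$, and the compatibility $g_G = 0$ forces $\bar g_L$ to be killed. Concretely, an effective map $(0, \bar g_L)$ would require $u'' \circ (\text{lift of } \bar g_L) = 0$ in $G''$, but the pushout structure (the diagram commutes because $u' m = m'_G$ type relation, where $m: F \to L'$, $m'_G: F \to G'$ are the structural maps and $u' m = $ the composite into $G'$) together with $g_G f_G = 0$ forces any such map to vanish once we chase the relation that $L'$ is generated by $f_L(L)$ and lifts of $F$ whose images in $G'$ are controlled by $G$. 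So $g_L = 0$, hence $g = 0$, and $f$ is an epimorphism.

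The main obstacle is the right-simplification argument for the discrete part: unlike the left case, the vanishing of $g_L$ on the finite quotient $F$ does not follow from a naive connectedness argument and genuinely uses that the quasi-isomorphism square is a \emph{pushout}, not merely a pullback (Remark \ref{qi=qi} 2 records that \eqref{qi1mot} is both). I would make this precise by writing out the universal property of the pushout $L' = L \oplus_F G$-style description, or equivalently by invoking that $f$ is a quasi-isomorphism of complexes of group schemes (Remark \ref{qi=qi} 2 again) so that it becomes invertible in an appropriate localization, and any map $g$ with $gf = 0$ must already be zero before inverting the finite kernel --- this last d\'evissage is the delicate point and should be spelled out carefully.
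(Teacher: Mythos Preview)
Your argument rests on a misreading of Diagram \eqref{qi1mot}. In that diagram the columns are short exact sequences $0\to F\to L\to L'\to 0$ and $0\to F\to G\to G'\to 0$, so \emph{both} components of a quasi-isomorphism are \emph{surjective} with common finite kernel $F$; the square is declared a pull-back, not a push-out along an injection. Your assumption that $f_L:L\to L'$ is injective with cokernel $F$ is therefore incorrect, and everything built on it collapses.

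This reversal swaps which direction is easy. Right simplification ($\phi\sigma=0\Rightarrow\phi=0$) is immediate precisely because both components of $\sigma$ are epimorphisms of group schemes --- this is the one-line case in the paper, and your elaborate push-out argument for it is both unnecessary and based on a false premise. Left simplification ($\sigma\phi=0\Rightarrow\phi=0$) is the case requiring work: writing $\sigma=(s,t):M'\to\tilde M$ with $\ker s=\ker t=F$, the hypothesis gives $\im(f)\subseteq F$ and $\im(g)\subseteq F$. Since $G$ is connected, $\im(g)$ is connected and finite, hence zero; then the commutativity $u'f=gu=0$ together with the fact that $u'$ restricts to the identity on $F$ (this is what the pull-back condition buys) forces $\im(f)=0$ as well. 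Your proposed argument for the mono direction, which invoked injectivity of $f_L$, does not apply.
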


\begin{proof} The assertion ``on the right" is obvious since the two
components of a \qi are epimorphisms. For the left, let $\phi=(f,g): M\to
M'$ and  $\sigma=(s,t):M' \to\tilde M$ a \qi such that $\sigma\phi=0$.
In the diagram
\[\begin{CD}
 L  @>{u}>>  G\\
@V{f}VV  @V{g}VV  \\
L' @>{u'}>> G'\\
@V{s}VV   @V{s}VV  \\
 \tilde L @>{\tilde u}>> \tilde G
\end{CD}\]
we have $ \tilde L = L'/F$, $\tilde G = G'/F$, for some finite group $F$,
$\im (f)\subseteq F$ and $\im (g) \subseteq F$. Now $u'$ restricts to the
identity on $F$ thus  $\im (f)\subseteq \im (g)$  
and $\im (g) =0$, since $\im (g)$ is connected, hence $\phi=0$.  
\end{proof}

\begin{propose}\label{calfrac} The class of \qi admits a calculus of right
fractions in the sense of (the dual of) \cite[Ch. I, \S 2.3]{GZ}.
\end{propose}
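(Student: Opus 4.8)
The statement to prove is Proposition \ref{calfrac}: the class $\Sigma$ of quasi-isomorphisms of effective $1$-motives with torsion admits a calculus of right fractions in the sense of the dual of \cite[Ch. I, \S 2.3]{GZ}. The plan is to verify the three standard axioms one by one, working throughout in the additive category ${}^t\M^{\eff}$ and exploiting the explicit pull-back description of a \qi in Diagram \eqref{qi1mot}.

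First I would record the trivial axioms. Identities are \qi's, and the class is closed under composition (this was already observed in Remark \ref{qi=qi} 2), since \qi's of $1$-motives are exactly \qi's of complexes of group schemes with finite étale kernel and such classes compose). This handles the ``subcategory'' axiom. Second, for the Ore-type axiom I must complete a diagram $M' \xrightarrow{\sigma} \tilde M \xleftarrow{\phi} M$ with $\sigma=(s,t)$ a \qi to a commutative square $M' \xleftarrow{\sigma'} P \xrightarrow{\phi'} M$ with $\sigma'$ a \qi. The natural candidate is the fibre product: by Proposition \ref{lim}, ${}^t\M^{\eff}$ has all finite limits, so set $P = M'\times_{\tilde M} M$, with $\sigma'$ and $\phi'$ the two projections and $\phi'$ the one lying over $\sigma$. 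The key point to check is that $\sigma'$ is again a \qi, i.e. that pulling back a \qi along an arbitrary effective map yields a \qi. Writing out $P$ componentwise as in the proof of Proposition \ref{lim}, the kernel of $\sigma'$ on each of the two columns is identified with the kernel of $\sigma$ on the corresponding column of $\tilde M$, so it is a finite étale group $F$; and one checks directly from the universal property that the resulting square for $P\to M'$ has the pull-back shape \eqref{qi1mot}. (One should be a little careful here: $P$ as constructed is a priori an object of ${}^t\M^{\eff}$, not obviously in a smaller subcategory, but that is fine — the proposition is stated for ${}^t\M^{\eff}$.) Third, the cancellation axiom: given $\phi,\psi:M\rightrightarrows M'$ and a \qi $\sigma:M'\to\tilde M$ with $\sigma\phi=\sigma\psi$, I must produce a \qi $\tau:P\to M$ with $\phi\tau=\psi\tau$. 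Applying Proposition \ref{B1.1} (left simplifiability of \qi's) to $\sigma(\phi-\psi)=0$ immediately gives $\phi=\psi$, so one may take $\tau = 1_M$; there is nothing further to do.

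The main obstacle, and the only step requiring genuine attention, is showing that the pull-back projection $\sigma'$ in the Ore axiom is a \qi — concretely, that the finite-group ``defect'' of $\sigma'$ is literally the same finite étale group $F$ as that of $\sigma$, and that the square \eqref{qi1mot} for $\sigma'$ really is cartesian. This is a diagram chase in the category of commutative group schemes: one decomposes the fibre product columnwise ($L_P = L'\times_{\tilde L} L$ on lattices/discrete parts, $G_P = G'\times_{\tilde G} G$ on semi-abelian parts, using that $G\to\tilde G$ is surjective with finite kernel because $\phi$ need not be a \qi but $\tilde G\to\tilde G$-side of $\sigma$ is), checks surjectivity of both projections, and identifies the kernels. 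Connectedness of semi-abelian parts forces the kernel of $G_P\to G'$ to map isomorphically onto $\ker(\tilde G \leftarrow G)$-component's finite part, exactly as in the proof of Proposition \ref{B1.1}. Once this lemma (``\qi's are stable under base change along effective maps in ${}^t\M^{\eff}$'') is in hand, the calculus of right fractions follows formally. I would state and prove this stability lemma first, then deduce the proposition in one line per axiom. The dual statement (calculus of \emph{left} fractions) would follow by the symmetric argument using cokernels from Proposition \ref{lim}, should it be needed elsewhere, but the proposition as stated only asks for right fractions.
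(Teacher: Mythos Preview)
Your overall strategy is right and mirrors the paper's brief argument: the cancellation axiom follows immediately from Proposition \ref{B1.1} (exactly as you say), and the Ore condition is handled by a fibre-product construction—the paper simply cites \cite[Lemma 1.2]{BRS} for this step, whereas you spell it out. Two points in your Ore argument need correcting, however.

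First, a direction error: the projection that must be shown to be a \qi is $P\to M$ (the base change of the \qi $\sigma:M'\to\tilde M$ along the arbitrary map $\phi:M\to\tilde M$), not $\sigma':P\to M'$. Your phrase ``pulling back a \qi along an arbitrary effective map'' names the correct map, but in your displayed span you have labelled it $\phi'$ while asserting that $\sigma'$ is the \qi. Second, the columnwise fibre product $G'\times_{\tilde G}G$ of semi-abelian schemes need not be connected—take for instance both $\sigma_G$ and $\phi_G$ to be squaring on $\G_m$, giving $\{(x,y):x^2=y^2\}$—so the naive fibre product of complexes lands in $\anM^{\eff}$, not in ${}^t\M^{\eff}$. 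The categorical fibre product furnished by Proposition \ref{lim} passes to the identity component $(G'\times_{\tilde G}G)^0$, and the kernel of the projection $P\to M$ is then $F':=F\cap(G'\times_{\tilde G}G)^0$, possibly a proper subgroup of $F$. The verification that $P\to M$ is a \qi (with kernel $[F'=F']$) still goes through, but it is a slightly more delicate chase than ``the kernel is identified with $F$''.

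Finally, your closing aside is false: calculus of \emph{left} fractions fails in ${}^t\M^{\eff}$, and the paper exhibits a counterexample in the Remark immediately following the Proposition. The asymmetry is that the only \qi with source of the form $[L\to 0]$ is the identity, so there is no room to push out.
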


\begin{proof} By \cite[Lemma 1.2]{BRS}, the first condition of calculus of right fractions
is verified, and Proposition \ref{B1.1} shows that the second one is
verified as well.  (Note that we only consider isogenies with \'etale kernel here.)
\end{proof}

\begin{remark} The example of the diagram
\[\begin{CD}
[L\to G]@>\sigma>> [L'\to G']\\
@V{(1,0)}VV \\
[L\to 0]
\end{CD}\]
where $\sigma$ is a nontrivial \qi shows that calculus of left fractions
fails in general.
\end{remark}

\begin{lemma}\label{fact} Let $s,t,u$ be three maps in ${}^t\M^{\eff}$, with
$su=t$. If two out of them are $\qi$, then so is the third.
\end{lemma}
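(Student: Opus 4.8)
The final statement is Lemma \ref{fact}: given maps $s, t, u$ in ${}^t\M^{\eff}$ with $su = t$, if $s$ and $t$ are quasi-isomorphisms then so is $u$.

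The plan is to work directly with the explicit description of quasi-isomorphisms in Diagram \eqref{qi1mot}: a \qi is an effective map $[L \by{a} G] \to [L' \by{a'} G']$ fitting into a pullback square whose ``kernel'' column is $F = F$ for a finite \'etale group $F$, or equivalently (Remark \ref{qi=qi} 2)) an effective map of complexes inducing isomorphisms on the kernels and cokernels of the structural maps, with $G \to G'$ having finite kernel and trivial cokernel. Write $u = (f,g)$, $s = (f', g')$, $t = (f'', g'')$, so that $f'' = f' f$ and $g'' = g' g$ at the level of group schemes. First I would record what the hypotheses give: since $s$ is a \qi, $g'$ is surjective with finite kernel and $f' \colon L' \to L''$, $g' \colon G' \to G''$ induce isomorphisms on $\ker$ and $\coker$ of the structural maps $a', a''$; similarly for $t = su$, the composite $g'' = g'g$ is surjective with finite kernel, and $f'', g''$ induce isomorphisms on $\ker$ and $\coker$ of $a, a''$.

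Next I would deduce the corresponding properties for $u = (f,g)$. Surjectivity of $g \colon G \to G'$: since $g'g = g''$ is surjective and $g'$ has finite (hence $0$-dimensional) kernel, a dimension count forces $g$ to be surjective onto $G'$; more carefully, $\im(g)$ is a connected subgroup scheme of $G'$ whose image under $g'$ is all of $G'$, and since $\ker g'$ is finite this forces $\im(g) = G'$. That $\ker g$ is finite is immediate, as $\ker g \subseteq \ker(g'g) = \ker g''$ which is finite. For the isomorphism on kernels and cokernels of structural maps: in the factorisation $\ker a \to \ker a' \to \ker a''$ the composite is an isomorphism (as $t$ is a \qi) and the second arrow is an isomorphism (as $s$ is a \qi), hence the first arrow is an isomorphism; the same two-out-of-three argument applies to the cokernels and to the maps on the $L$-side. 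Finally, to conclude $u$ is a \qi I would invoke Remark \ref{qi=qi} 2): an effective map of $1$-motives which induces isomorphisms on kernels and cokernels of the structural maps, with $G \to G'$ surjective with finite kernel, is precisely a \qi; alternatively one checks directly that $u$ fits into a pullback square of the shape \eqref{qi1mot} with $F = \ker g$, noting $F$ is \'etale because we are over a perfect field and (as in the proof of Theorem \ref{ptors}) such finite sheaves occurring here are locally constant.

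The only mildly delicate point — the ``main obstacle'', though it is not a serious one — is the surjectivity of $g \colon G \to G'$. One must be careful that $g'g$ surjective plus $g'$ having finite kernel genuinely forces $g$ surjective; the clean argument is that $\im(g)$ is a closed connected subgroup of $G'$ mapping onto $G'$ under the finite-kernel isogeny $g'$, so $\dim \im(g) = \dim G'$ and $\im(g)$ is connected of the same dimension inside the connected $G'$, whence $\im(g) = G'$. Everything else is a routine ``two-out-of-three for isomorphisms'' diagram chase in the abelian category of commutative group schemes, using that the categories ${}^t\M^{\eff}$ and its variants sit inside this ambient abelian category (Proposition \ref{lim}) and that the notion of \qi is detected on the associated complexes of group schemes (Remark \ref{qi=qi} 2)).
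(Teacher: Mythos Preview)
Your proof is correct. The route differs from the paper's, though both are short. The paper works in the abelian category of complexes of sheaves and applies the kernel--cokernel exact sequence for a composition,
\[
0\to\ker u\to\ker t\to \ker s\to \coker u\to\coker t\to\coker s\to 0,
\]
deducing from $\coker s=\coker t=0$ that $\coker u$ is a quotient of the finite acyclic complex $\ker s$ (hence zero, since its degree-$1$ part is connected) and that $\ker u$ is acyclic. You instead pass immediately to cohomology via Remark~\ref{qi=qi}~2) and run a two-out-of-three argument on the maps $\ker a\to\ker a'\to\ker a''$ and $\coker a\to\coker a'\to\coker a''$. This is arguably cleaner: once you know $u$ is a \qi of complexes, the remark already delivers the full diagram~\eqref{qi1mot}, so your separate verification that $g$ is surjective with finite kernel is redundant (though correct, and your dimension argument for surjectivity is fine since $G'$ is semi-abelian hence connected). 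One small point: to conclude that $F=\ker g$ is finite \emph{\'etale} (as the definition of \qi requires), note it is a closed subgroup scheme of the finite \'etale group $\ker g''$, which over a perfect field forces it to be finite \'etale; the paper's proof leaves this implicit as well.
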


\begin{proof} The case of $(s,u)$ is explained in Remark \ref{qi=qi} 2) and the case of $(u,t)$ is not hard; let us explain the case of $(s,t)$. Consider the exact sequence of complexes of sheaves
\[0\to\ker u\to\ker t\to \ker s\to \coker u\to\coker t\to\coker s\to 0.\]

Since $s$ and $t$ are $\qi$, the last two terms are $0$. Hence $\coker
(u) =[L\to G]$ is a quotient of $\ker (s)$; since $G$ is connected, we must
have $G=0$. On the other hand, as a cokernel of a map of acyclic
complexes of length $1$, $\coker (u)$ is acyclic, hence $L=0$. Similarly,
$\ker (u)$ is acyclic.
\end{proof}

\subsection{$1$-motives} We now
define the category of $1$-motives with torsion from ${}^t\M^{\eff}$
by formally inverting quasi-isomorphisms.

\begin{defn}\label{1tors} The category ${}^t\M$ of  \emph{$1$-motives with torsion} is the
localisation of ${}^t\M^{\eff}$ with respect to the multiplicative class $\{\qi\}$ of
quasi-isomor\-phisms. \index{${}^t\M$, ${}^t\M^\eff$}
\end{defn}

\begin{remark}
Note that there are no nontrivial \qi between free (or torsion)
$1$-motives. However, the canonical map  $M_{\tf}  \to M_{\fr}$ is
a quasi-isomorphism (it is an effective isomorphism when
$u(L_{\tor})=0$). 
\end{remark}

It follows from Proposition \ref{calfrac} and \cite[Ch. I, Prop. 2-4]{GZ} that the Hom sets in ${}^t\M$
are given by the formula
$$\Hom (M, M') = \limdir{\rm q.i.} \Hom_{\eff}  (\tilde M, M')$$
where the limit is taken over the filtering set of all quasi-isomorphisms
$\tilde M\to M$. A morphism of $1$-motives $M\to M'$ can be represented as
a diagram 
$$ \begin{array}{ccc}
 M\ &  &\ M'\\
{\scriptstyle {\rm q.i.}}\nwarrow & &  \nearrow {\scriptstyle {\rm eff}}
\\ & \tilde M & 
\end{array}$$
and the composition is given by the existence of an $\hat M $
making the following diagram
$$ \begin{array}{ccccc}
 M\ &  &\ M'\ &&\ M''\\
{\scriptstyle {\rm q.i.}}\nwarrow & &  \nearrow \ \ 
\nwarrow & &  \nearrow {\scriptstyle {\rm eff}}\\
& \tilde M & & \tilde M'&\\
&{\scriptstyle {\rm q.i.}}\nwarrow & &  \nearrow {\scriptstyle {\rm
eff}}&\\ &  &\hat M & &
\end{array}$$
commutative. (This $\hat M$ is uniquely determined by the other morphisms and the commutativity,
see \cite[Lemma 1.2]{BRS}.)

\subsection{Strict morphisms}
The notion of strict morphism is essential in order to show that the
$\Z[1/p]$-linear category of $1$-motives with torsion is abelian (\cf \cite[\S 1]{BRS}).

\begin{defn}\label{dstrict}  We say that an effective morphism $(f,g): M\to M'$ is
\emph{strict} if we have 
\[\ker (f, g)= [\ker (f)\to\ker (g)]\]
\ie if $\ker (g)$ is (connected) semi-abelian.
\end{defn}

To get a feeling on the notion of strict morphism, note:

\begin{lemma} Let $\phi=(f,g):M\to N$ be a strict morphism, with $g$ onto.
Suppose that $\phi=\sigma\tilde\phi$, where $\sigma$ is a \qi Then
$\sigma$ is an isomorphism.
\end{lemma}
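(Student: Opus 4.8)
The plan is to pass to effective representatives and show that the finite \'etale group $F$ attached to the quasi-isomorphism $\sigma$ is trivial. Write $M=[L\by{u}G]$, $N=[L_N\to G_N]$ and $\tilde N=[\tilde L\to\tilde G]$, so that, by \eqref{qi1mot}, $\sigma$ is given by short exact sequences $0\to F\to\tilde L\to L_N\to 0$ and $0\to F\to\tilde G\by{s}G_N\to 0$ with $F$ finite \'etale (and the relevant pullback square). Denote by $\tilde g\colon G\to\tilde G$ the $G$-component of $\tilde\phi$, so that $g=s\circ\tilde g$. It is enough to prove $F=0$: then both components of $\sigma$ are isomorphisms of group schemes, hence $\sigma$ is an isomorphism in ${}^t\M^{\eff}$, a fortiori in ${}^t\M$.

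First I would compare kernels. Since $\ker s=F$, we have $\ker g=\tilde g^{-1}(F)$, and trivially $\ker\tilde g\subseteq\ker g$. Because $\phi$ is strict, the computation of kernels in the proof of Proposition \ref{lim} shows that $\ker g$ is connected; as $F$ is \'etale, the homomorphism $\ker g\to F$ induced by $\tilde g$ is constant, hence zero, so $\ker g\subseteq\ker\tilde g$. Thus $\ker g=\ker\tilde g$.

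Next I would show that $\tilde g$ is surjective. From $g=s\circ\tilde g$ and the surjectivity of $g$ we get $s(\tilde g(G))=G_N$, hence $\tilde g(G)+F=\tilde G$; in particular the closed subgroup scheme $\tilde g(G)\subseteq\tilde G$ is such that $\tilde G/\tilde g(G)$ is a quotient of $F$, so it is finite \'etale. Since $\tilde G$ is semi-abelian, hence connected, it admits no nontrivial finite \'etale quotient, so $\tilde g(G)=\tilde G$.

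To conclude, $\tilde g$ and $g$ are now surjective homomorphisms out of $G$ with the same kernel $\ker g=\ker\tilde g$, hence both induce isomorphisms $G/\ker g\iso\tilde G$ and $G/\ker g\iso G_N$. The relation $s\circ\tilde g=g$ then exhibits $s$ as the composite $\tilde G\iso G/\ker g\iso G_N$, so $s$ is an isomorphism and $F=\ker s=0$, as wanted. The only point requiring a little care is the assertion that a connected group scheme of finite type over $k$ has no nontrivial finite \'etale quotient; this is standard, and in any case harmless in the present context since $p$ is inverted throughout, so every finite group occurring in sight is \'etale.
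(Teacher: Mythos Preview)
The paper does not actually prove this lemma; it is stated without proof as motivation for the notion of strict morphism (the sentence introducing it is ``To get a feeling on the notion of strict morphism, note:''). Your argument is correct and fills this gap cleanly.

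A couple of minor remarks. Your Step~1 (showing $\ker g=\ker\tilde g$) is the heart of the matter and uses strictness exactly where it should: the connectedness of $\ker g$ forces the composite $\ker g\hookrightarrow G\by{\tilde g}\tilde G$ to land in the identity component of $F$, which is trivial since $F$ is finite \'etale. In Step~2 your sentence about $\tilde G$ admitting no nontrivial finite \'etale quotient is fine, but you could also phrase it more directly: $\tilde G/\tilde g(G)$ is connected (image of the connected $\tilde G$) and a quotient of the finite \'etale $F$, hence trivial. Alternatively, one can avoid the quotient altogether by a dimension count: $\ker\tilde g=\ker g$ is smooth (semi-abelian, by strictness), so $\tilde g(G)\cong G/\ker g$ is a smooth connected closed subgroup of $\tilde G$ of dimension $\dim G-\dim\ker g=\dim G_N=\dim\tilde G$, hence equals $\tilde G$. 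Either way, the conclusion $F=0$ follows as you wrote.
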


Conversely, we obtain: 

\begin{propose}[\protect{\cite[Prop. 1.3]{BRS}}]\label{pstrict}
Any effective morphism
$\phi\in $ \goodbreak $\Hom_{\eff}(M,M')$ can be factored as follows
\begin{equation}\label{strict}
\begin{array}{c}
M  \longby{\phi} M'\\
\scriptstyle \tilde\phi\displaystyle
\searrow\hspace*{0.5cm} \nearrow\\
\tilde M
\end{array}
\end{equation}
where  $\tilde
\phi$ is a strict morphism and
$\tilde M \to M'$ is a composition of a \qi and a $p$-power isogeny.
\end{propose}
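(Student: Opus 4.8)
The statement to prove is Proposition~\ref{pstrict}: every effective morphism $\phi=(f,g):M\to M'$ of effective $1$-motives with torsion factors as $M\by{\tilde\phi}\tilde M\to M'$ with $\tilde\phi$ strict and $\tilde M\to M'$ either a quasi-isomorphism or a $p$-power isogeny. Writing $M=[L\by{u}G]$ and $M'=[L'\by{u'}G']$, the obstruction to $\phi$ being strict is exactly the discrepancy between $\ker g$ and its connected component; more precisely $K:=\ker(g:G\to G')$ is a commutative algebraic $k$-group whose connected component $K^0$ is semiabelian (since it is a subgroup scheme of the semiabelian $G$), and $\phi$ is strict iff $K=K^0$, i.e. iff $\pi_0(K)=0$. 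So the plan is to ``absorb'' the finite group $\pi_0(K)$ by pushing $M'$ out along it, thereby making the residual map strict.

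\textbf{Key steps.} First I would form the finite group $F:=\pi_0(K)=K/K^0$ and consider its image in $G'$ under $g$ --- but more usefully, consider the pushout of $M'=[L'\by{u'}G']$ along a suitable quotient: set $\tilde G:=G'/\overline{g(K)}$ where one must be careful, and this is where the case division enters. Concretely: let $G_1:=G/K^0$; then $g$ factors through a map $\bar g:G_1\to G'$ whose kernel is now \emph{finite} (it is $K/K^0=F$, a finite \'etale group if $F$ has order prime to $p$, but possibly with infinitesimal part in characteristic $p$). Set $\tilde G:=G_1$ (a semiabelian $k$-scheme, being $G$ modulo a connected subgroup) and $\tilde L:=L$, with $\tilde u:\tilde L\to\tilde G$ the composite $L\by{u}G\to G_1$; this defines $\tilde M=[\tilde L\by{\tilde u}\tilde G]$ and a factorisation $M\by{\tilde\phi}\tilde M\by{\psi}M'$ where $\tilde\phi=(1_L,\,G\onto G_1)$ and $\psi=(f,\bar g)$. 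Now $\tilde\phi$ is strict because $\ker(G\onto G_1)=K^0$ is semiabelian (connected), so $\ker\tilde\phi=[\ker(1_L)\to K^0]=[0\to K^0]$ is of the required connected form. It remains to analyse $\psi$: its $G$-component $\bar g:G_1\to G'$ has finite kernel $F$. If $F$ is \'etale, then $\psi$ is a quasi-isomorphism in the sense of Diagram~\eqref{qi1mot} provided $\bar g$ is surjective and the $L$-components match up with kernel exactly $F$ --- here one has to adjust by first modifying $\tilde L$ so that $\tilde L\to L'$ has kernel precisely equal to $F$ (replace $\tilde L$ by the fibre product $L\times_{G_1}(\text{preimage of }F)$ as in the pull-back square), following the mechanism already used in Proposition~\ref{lim} and the definition of \qi. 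If $F$ contains an infinitesimal part, that part has order a power of $p$ and $\bar g$ restricted to it is a $p$-power isogeny, accounting for the second alternative in the statement.

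\textbf{Main obstacle.} The delicate point is bookkeeping the torsion in the discrete parts so that the residual map $\psi$ genuinely lands in the shape of Diagram~\eqref{qi1mot} (finite kernel $F$ sitting diagonally, equal on both rows) rather than merely being ``an isogeny of complexes''. One must show that after the strict factorisation the finite group appearing as $\ker(\tilde G\to G')$ can be lifted to a finite subgroup of $\tilde L$ with the same image, possibly after enlarging $\tilde L$; this is exactly the pull-back construction of Diagram~\eqref{qi1mot} and uses that $L'\to\coker$ and the relevant maps have finite kernels/cokernels. I would also need to separate the prime-to-$p$ part of $F$ (giving an honest \qi) from the $p$-part (giving the $p$-power isogeny), invoking that $k$ is perfect so that $F$ splits as $F_{p'}\times F_p$ with $F_{p'}$ \'etale. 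The rest --- verifying $\tilde\phi$ is strict, checking the universal-type properties, and that the two alternatives are exhaustive --- is routine diagram-chasing in the abelian category of commutative $k$-group schemes, parallel to the arguments of \cite[\S 1]{BRS} and of Proposition~\ref{lim} above.
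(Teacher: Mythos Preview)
Your core idea---factoring $g$ through $G_1:=G/\ker^0(g)$ so that the first map has connected kernel, hence is strict---is exactly the paper's approach. But there are two genuine gaps in your proposal.

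\textbf{Misidentification of the $p$-power case.} You claim the $p$-power isogeny arises when $F=\pi_0(K)$ has an infinitesimal part. This cannot happen: over a perfect field, the component group $K/K^0$ of any algebraic $k$-group is always \'etale. So $F$ is \'etale regardless, and when $g$ is surjective the map $\bar g:G_1\to G'$ is an isogeny with \'etale kernel. Taking $\tilde L$ to be the pull-back of $L'$ along $\bar g$ (which the paper does directly, and which you arrive at after your ``adjustment'') then makes $\tilde M\to M'$ an honest q.i.\ in the sense of Diagram~\eqref{qi1mot}. No $p$-power phenomenon occurs here.

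\textbf{The non-surjective case is missing.} You note in passing ``provided $\bar g$ is surjective'' but never return to the case where it is not. If $g:G\to G'$ is not onto, then neither is $\bar g:G_1\to G'$, and $\tilde M\to M'$ cannot be a q.i.\ (the $G$-component of a q.i.\ is surjective by definition). This is precisely where the $p$-power isogeny alternative enters: the paper's proof (deferring details to \cite{BRS}) extends the isogeny $G_1\onto\operatorname{im}(g)$ to an isogeny $\tilde G\onto G'$ onto all of $G'$, and it is this extension step that may force a $p$-power factor. Your proposal does not address this at all, so the dichotomy ``q.i.\ or $p$-power isogeny'' in the statement is left unexplained.

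In summary: the surjective case is essentially right once you take $\tilde L$ as the pull-back of $L'$ (not $L$); the non-surjective case and the actual source of the $p$-power isogeny are the missing ingredients.
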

\begin{proof} {\it (Sketch)}\,
Note that if $\phi  = (f,g)$ we always have the following natural
factorisation of the map $g$ between semi-abelian schemes
$$\begin{array}{c}
G  \longby{g} G'\\
\searrow\hspace*{0.5cm} \nearrow\\
G/\ker^0(g)
\end{array}$$
If $g$ is a surjection we get the claimed factorisation
by taking $\tilde M = [ \tilde L \to G/\ker^0(g)]$ where $\tilde L$ is the
pull-back of $L'$, the lifting of $f$ is granted by the universal
property of pull-backs. In general, we can extend the so obtained isogeny  
on the image of $g$ to an isogeny of  $G'$ (see the proof of Prop.
1.3 in \cite{BRS} for details).
\end{proof}

\begin{lemma}\label{strictqi}
Let
\[\begin{CD}
M'@>f>> M\\
@A{u}AA @A{t}AA \\
N' @>h>> N 
\end{CD}\]
be a commutative diagram in ${}^t\M^{\eff}$, where $f$ is strict and $u,t$ are
\qi Then the induced map $v:\coker (h)\to \coker (f)$ is a \qi
\end{lemma}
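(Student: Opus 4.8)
The statement to prove is Lemma \ref{strictqi}: given a commutative square in ${}^t\M^{\eff}$ with $f$ strict and $u,t$ quasi-isomorphisms, the induced map $v:\coker(h)\to \coker(f)$ on cokernels is again a quasi-isomorphism. The plan is to work directly with the explicit description of cokernels in ${}^t\M^{\eff}$ established in Proposition \ref{lim}: if $\phi=(a,b):P\to Q$ is an effective morphism, then $\coker(\phi)=[\coker(a)\by{\bar{}}\coker(b)]$ as an effective $1$-motive. So both $\coker(f)$ and $\coker(h)$ are computed componentwise, and $v$ is the pair of maps on column-wise cokernels induced by $u,t$. The task then reduces to a diagram chase with sheaves of abelian groups (lattices on the discrete side, semi-abelian schemes on the connected side), checking that the kernel and cokernel of $v$ vanish and that $v$ is a quasi-isomorphism in the sense of Diagram \eqref{qi1mot}, i.e.\ fits into a pull-back square with a finite \'etale group.

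First I would unwind notation: write $M=[L\to G]$, $M'=[L'\to G']$, $N=[K\to H]$, $N'=[K'\to H']$, so that $t=(t_L,t_G):N\to M$ and $u=(u_L,u_G):N'\to M'$ are quasi-isomorphisms, meaning each of $t_L,t_G,u_L,u_G$ is an epimorphism of sheaves with common finite \'etale kernel (as in Remark \ref{qi=qi}). The map $v$ has discrete component $\coker(K'\to K)\to \coker(L'\to L)$ and connected component $\coker(H'\to H)\to\coker(G'\to G)$. The key observation is that strictness of $f=(f_L,f_G)$ forces $\coker(f_G)$ to be \emph{semi-abelian} (connected): since $\ker(f_G)$ is semi-abelian by definition of strict, $\operatorname{im}(f_G)=H/\ker^0(f_G)$ maps to $G$ with the quotient $\coker(f_G)$ connected. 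One then uses the snake lemma twice — once for the pair of exact sequences $0\to \ker(f_G)\to H\to G\to\coker(f_G)\to 0$ and $0\to \ker(\text{for }h)\to H'\to G'\to \coker(\text{connected part})\to 0$ mapped to each other via $u_G,t_G$ — to control the connected component of $v$; and the analogous chase on the discrete side for the lattice components via $u_L,t_L$.

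The central mechanism is really Lemma \ref{fact} (three maps with $su=t$, $s,t$ quasi-isomorphisms imply $u$ quasi-isomorphism), suitably exploited: I would try to exhibit $v$ as a map fitting into such a configuration, or else apply the snake-lemma argument showing $\ker v$ and $\coker v$ are acyclic complexes and that $v$ is a quasi-isomorphism of complexes (which by Remark \ref{qi=qi} 2) is the same as a quasi-isomorphism of $1$-motives). Concretely: the cones of $u$ and $t$ are both the two-term complex $[F=F]$ for a fixed finite \'etale $F$; passing to cokernels of $h$ and $f$ respectively and comparing, the cone of $v$ will be built from $F$ and hence be of the form $[F'=F']$ with $F'$ a subquotient of $F$, in particular finite \'etale. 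The strictness hypothesis is exactly what guarantees that no \emph{connected} finite group scheme can sneak into $\coker(f_G)$, so that the resulting cone is honestly a quasi-isomorphism and not merely an isogeny.

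\textbf{Main obstacle.} The delicate point is bookkeeping the finite groups: a priori the connected component of $v$ could have a finite \emph{connected} (hence infinitesimal, when $p>1$) kernel coming from the interplay of the isogenies hidden in $u_G,t_G$ and the passage to cokernels, which would make $v$ an isogeny but not a quasi-isomorphism. Showing this does not happen is precisely where strictness of $f$ is used, together with the fact (Remark \ref{qi=qi} 1)) that we only ever consider isogenies with \'etale kernel in ${}^t\M^{\eff}$. So the heart of the argument is a careful verification that $\coker(f_G)$ is semi-abelian and that the snake-lemma connecting maps land in finite \'etale groups; once that is in place, the rest is the routine diagram chase reducing to Remark \ref{qi=qi} 2) and Lemma \ref{fact}. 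I expect this verification to be short but requiring attention to the distinction between ``kernel'' and ``connected component of the kernel'' throughout, exactly as in the proof of Proposition \ref{lim}.
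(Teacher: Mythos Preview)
Your plan is in the right spirit—componentwise snake lemmas plus the strictness/dimension argument do lead to a proof—but several points are confused, and the paper's organisation is tighter.

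First, corrections. With $f:M'\to M$ you have $f_G:G'\to G$ (not $H\to G$), and $\coker(f_G)$ is \emph{always} connected; what strictness gives is that $\ker(f_G)$ is connected, and this is what the dimension argument actually uses. Second, Lemma~\ref{fact} does not apply here: there is no evident factorisation placing $v$ as the middle map of a composition with known q.i.'s on either side. Third, your ``main obstacle'' (infinitesimal kernels) is a non-issue, since q.i.'s have finite \'etale kernel by definition; the genuine subtlety is rather showing that $\ker(v_L)$ and $\ker(v_G)$ match up so that $\ker v$ is an acyclic two-term complex.

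The paper handles this last point more cleanly by working throughout with kernels \emph{as complexes of sheaves}. Writing $K=\ker f$, $K'=\ker h$ (sheaf-theoretic) and $w:K'\to K$ for the induced map, one shows by a direct chase that $v$ and the map $\ker t\to\ker v$ are onto, and then that the sequence of complexes
\[
\ker u\longrightarrow \ker t\longrightarrow \ker v
\]
is exact termwise. For the $G$-component, strictness of $f$ plus a dimension count forces $w_G:\ker(h_G)\to\ker(f_G)$ to be surjective (the map has finite kernel inside $\ker u_G$, the target is connected, and the dimensions agree because $u_G,t_G$ are isogenies), whence exactness follows. The $L$-component then comes \emph{for free}: since $\ker u=[F_u=F_u]$ and $\ker t=[F_t=F_t]$ are acyclic complexes, the commutativity of the square $\ker t\to\ker v$ shows that any element of $F_t$ dying in $(\ker v)_L$ also dies in $(\ker v)_G$, hence already lies in $\operatorname{im}(F_u)$. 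Your componentwise approach would have to reconstruct exactly this step by hand after two parallel snake-lemma computations; that, rather than any issue with connected finite groups, is where the bookkeeping you anticipate actually lies.
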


\begin{proof} In all this proof, the term ``kernel" is taken in the
sense of kernel of complexes of sheaves. Let $K$ and
$K'$ be the kernels of $f$ and
$h$ respectively:
\[\begin{CD}
0@>>> K@>>>M'@>f>> M@>>> \coker (f)\\
&&@A{w}AA @A{u}AA @A{t}AA @A{v}AA \\
0@>>> K'@>>> N' @>h>> N@>>> \coker (h) 
\end{CD}\]

By a diagram chase, we see that $v$ and $\ker t\to \ker v$
are onto. To conclude, it will be sufficient to show that the
sequence of complexes 
\begin{equation}\label{eq1}
\ker (u)\to \ker (t)\to\ker (v)
\end{equation}
is exact termwise. For this, note that the second component of $w$ is onto
because $f$ is strict and by dimension reasons. This implies by a diagram
chase that the second component of \eqref{eq1} is exact. But then the
first component has to be exact too.
\end{proof}

\subsection{Exact sequences of $1$-motives}
We have the following basic properties of $1$-motives. 

\begin{propose} \label{faith} The canonical functor $${}^t\M^{\eff}\to {}^t\M$$
is left exact and faithful. 
\end{propose}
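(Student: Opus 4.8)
The statement is that the localization functor $Q \colon {}^t\M^{\eff} \to {}^t\M$ is left exact and faithful. I would treat the two assertions separately, since they rest on rather different inputs.

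\emph{Faithfulness.} First I would unwind the formula for Hom-sets in the localized category: by Proposition~\ref{calfrac} and the calculus of right fractions, $\Hom_{{}^t\M}(M,M') = \varinjlim_{\tilde M \to M} \Hom_{\eff}(\tilde M, M')$, the colimit over quasi-isomorphisms $\tilde M \to M$. An effective map $\phi \colon M \to M'$ dies in ${}^t\M$ iff there is a quasi-isomorphism $s \colon \tilde M \to M$ with $\phi s = 0$. But Proposition~\ref{B1.1} (quasi-isomorphisms are simplifiable on the right) forces $\phi = 0$ already in ${}^t\M^{\eff}$. Hence $Q$ is faithful on effective maps, which is all that is needed since every object of ${}^t\M$ is the image of an effective $1$-motive and $\Hom$ in ${}^t\M^{\eff}$ injects into $\Hom$ in ${}^t\M$ by this argument applied to any two objects. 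I should also remark that $Q$ is not full and certainly not an equivalence (Remark~\ref{qi=qi} 1) — but this is not asked.

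\emph{Left exactness.} Here the point is that ${}^t\M^{\eff}$ and ${}^t\M$ both have finite limits (Proposition~\ref{lim} for the source; for the target one uses that it is abelian after inverting $p$, Theorem~\ref{1mtora}, or more directly that kernels exist in the localization by a calculus-of-fractions argument) and that $Q$ preserves kernels. So the key step is: given a strict morphism $\phi = (f,g) \colon M \to M'$ in ${}^t\M^{\eff}$ — recall by Proposition~\ref{pstrict} that any effective morphism factors through a strict one up to a quasi-isomorphism or $p$-power isogeny, and we are working $p$-integrally but only need the quasi-isomorphism case for left exactness — the kernel $[\ker^0(f) \to \ker^0(g)]$ computed in ${}^t\M^{\eff}$ (Proposition~\ref{lim}) maps to the kernel computed in ${}^t\M$. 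I would verify the universal property directly: given $N \in {}^t\M^{\eff}$ with $Q(\psi)$ killing the composite to $M'$, represent $\psi$ by an effective map out of a quasi-isomorphic $\tilde N$, observe that the composite $\tilde N \to M \to M'$ is then effectively zero after a further quasi-isomorphism, and use the universal property of the effective kernel together with the fact that $\tilde N$'s semiabelian part is connected (so lands in $\ker^0(g)$) to factor through $[\ker^0(f) \to \ker^0(g)]$ in ${}^t\M^{\eff}$, hence in ${}^t\M$. Finally I would handle a general effective $\phi$ by writing $\phi = \sigma \tilde\phi$ with $\tilde\phi$ strict and $\sigma$ a quasi-isomorphism (Proposition~\ref{pstrict}): then $Q(\sigma)$ is an isomorphism, so $\ker Q(\phi) = \ker Q(\tilde\phi)$, reducing to the strict case.

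\emph{Main obstacle.} The delicate point is reconciling the kernel computed termwise in ${}^t\M^{\eff}$ with the abstract kernel in the localized category — specifically making sure that after passing through the calculus of fractions no spurious finite pieces are introduced or lost, and that the connectedness of the semiabelian part of test objects really does force the expected factorization. Lemma~\ref{strictqi} (strict morphisms behave well under quasi-isomorphisms when taking cokernels) and its evident dual for kernels are the technical tools I would lean on; I expect the bookkeeping there, rather than any conceptual difficulty, to be the bulk of the work. Right exactness is deliberately \emph{not} claimed, and indeed fails, because passing to the free part when forming cokernels in $\M$ (or the torsion subtleties in ${}^t\M$) is not compatible with $Q$ — so I would not attempt it.
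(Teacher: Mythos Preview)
Your faithfulness argument is the same as the paper's: calculus of right fractions plus Proposition~\ref{B1.1}.

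For left exactness, the paper takes a much shorter route than you do. It simply invokes the dual of \cite[Ch.~I, Prop.~3.1]{GZ}: whenever a multiplicative system admits a calculus of right fractions, the localization functor automatically preserves whatever finite limits exist in the source. Combined with Proposition~\ref{calfrac} (right fractions) and Proposition~\ref{lim} (finite limits in ${}^t\M^{\eff}$), this gives left exactness in one line, with no need to analyse kernels by hand, no strict morphisms, and no Lemma~\ref{strictqi}.

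Your direct approach can be made to work, but the specific outline you give has a gap: your reduction to the strict case via Proposition~\ref{pstrict} can produce a $p$-power isogeny rather than a quasi-isomorphism, and in ${}^t\M$ (as opposed to ${}^t\M[1/p]$) such a map is \emph{not} invertible. Your remark that ``we are working $p$-integrally'' is wrong here --- the proposition is stated for ${}^t\M$, before Theorem~\ref{1mtora}, and does not invert $p$. You could repair this by dropping Proposition~\ref{pstrict} entirely and verifying directly that the effective kernel $[\ker^0(f)\to\ker^0(g)]$ of Proposition~\ref{lim} satisfies the universal property in ${}^t\M$, using only faithfulness and the calculus of fractions; but at that point you are essentially reproving the Gabriel--Zisman result in this special case.
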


\begin{proof} Faithfulness immediately follows from Proposition
\ref{B1.1}, while left exactness follows from Proposition
\ref{calfrac} and (the dual of) \cite[Ch. I, Prop. 3.1]{GZ}.
\end{proof}

\begin{lemma}\label{coker} Let $f:M'\to M$ be an effective map. 
\begin{enumerate}
\item The canonical projection $\pi : M \to \coker (f)$ remains an
epimorphism  in ${}^t\M[1/p]$.
\item If $f$ is strict then $\pi$  remains a cokernel in ${}^t\M[1/p]$.
\item  Cokernels exist in ${}^t\M[1/p]$. 
\end{enumerate}
\end{lemma}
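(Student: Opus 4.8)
The statement to prove is Lemma~\ref{coker}: for an effective map $f:M'\to M$ of effective $1$-motives with torsion, (1) the projection $\pi:M\to\coker(f)$ stays an epimorphism in ${}^t\M[1/p]$, (2) if $f$ is strict then $\pi$ stays a cokernel in ${}^t\M[1/p]$, and (3) cokernels exist in ${}^t\M[1/p]$. The plan is to reduce everything to the case of a strict map by means of the strict factorisation of Proposition~\ref{pstrict}, and then to handle strict maps using the characterisation of morphisms in ${}^t\M$ via calculus of right fractions together with Lemma~\ref{strictqi}.

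First I would prove (1). Since $\pi$ is already an epimorphism in ${}^t\M^\eff$ (its two components are surjective) and the localisation functor ${}^t\M^\eff\to{}^t\M$ preserves epimorphisms of this kind — any map out of $\coker(f)$ vanishing after precomposition with $\pi$ is represented by an effective map after replacing $\coker(f)$ by a \qi source, and then one uses that the second component of $\pi$ is a surjection of semi-abelian schemes together with Proposition~\ref{B1.1} (right-simplifiability of \qi) — $\pi$ remains an epimorphism in ${}^t\M$, hence in the $\Z[1/p]$-linear quotient ${}^t\M[1/p]$. The tensoring by $\Z[1/p]$ costs nothing here since it only enlarges Hom-groups and a morphism that is already zero stays zero.

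Next, (2) for a strict $f$. Here the key input is Lemma~\ref{strictqi}: given a morphism $\psi:M\to N$ in ${}^t\M[1/p]$ with $\psi\circ f=0$, represent it by a diagram $M\xleftarrow{t}\tilde M\xrightarrow{\psi'}N$ with $t$ a \qi (or $p$-power isogeny, which becomes invertible after $[1/p]$). Pulling back the strict map $f$ along $t$ gives a commutative square with a strict map on top, and Lemma~\ref{strictqi} shows the induced map on cokernels is a \qi; chasing through, $\psi'$ factors through $\coker$ of the pulled-back map, which is \qi to $\coker(f)$, so $\psi$ factors through $\pi$. Uniqueness of the factorisation follows from (1). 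The main subtlety — and the step I expect to be the main obstacle — is bookkeeping the replacement of $f$ by its pullback along an arbitrary \qi and verifying that strictness is preserved under this pullback (this is exactly the content hidden in the diagram of Lemma~\ref{strictqi}, where strictness forces the second component of the kernel map to be surjective by a dimension count); one must be careful that the \qi's appearing point in the right direction for the calculus of right fractions.

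Finally, (3): an arbitrary effective $f$ factors as $M'\xrightarrow{\tilde f}\tilde M\xrightarrow{\rho} M$ with $\tilde f$ strict and $\rho$ a \qi or a $p$-power isogeny (Proposition~\ref{pstrict}); after inverting $p$, $\rho$ becomes an isomorphism in ${}^t\M[1/p]$, so $\coker(f)$ in ${}^t\M[1/p]$ agrees with $\coker(\tilde f)$, which exists by (2). For a morphism of ${}^t\M[1/p]$ that is not effective, write it via right fractions as an effective morphism post-composed with the inverse of a \qi, reducing again to the effective (then strict) case. This yields cokernels for all morphisms in ${}^t\M[1/p]$, completing the proof. (One should also remark that additivity of ${}^t\M[1/p]$ and Proposition~\ref{faith} guarantee the cokernel so constructed genuinely satisfies the universal property, not merely a weak version of it.)
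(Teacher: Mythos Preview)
Your overall strategy matches the paper's: reduce to strict maps via Proposition~\ref{pstrict}, and use Lemma~\ref{strictqi} together with the calculus of right fractions. Parts~(2) and~(3) are essentially right. However, there is a real gap in your treatment of Part~(1), and a misplaced worry in Part~(2).

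For Part~(1), your sketch (``the second component of $\pi$ is a surjection of semi-abelian schemes together with Proposition~\ref{B1.1}'') is not enough. The issue is this: to test that $\pi$ is epi in ${}^t\M[1/p]$, you must show that for \emph{any} commutative square $s\pi'=\pi s'$ with $s,s'$ \qi, the map $\pi'$ is an effective epi. The second component of such a $\pi'$ is indeed surjective (by a dimension count), but the first component need only have \emph{finite} cokernel, so $\pi'$ is not obviously an effective epi and you cannot cancel it. The paper handles this exactly by the machinery you reserve for Part~(2): reduce (by modding out $\ker f$) to the case where $f$ is a strict monomorphism, pull $f$ back along $s'$ via right fractions to get $f':Q''\to Q'$, use Lemma~\ref{strictqi} to see that $t:\coker(f')\to\coker(f)$ is a \qi, factor $\pi'$ through $\coker(f')$ as $\pi'=u\circ(\text{proj})$, and apply Lemma~\ref{fact} to $su=t$ to conclude that $u$ is a \qi. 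Thus $\pi'$ is a composition of two effective epimorphisms. In short, Lemma~\ref{strictqi} and Lemma~\ref{fact} are already needed for Part~(1), not just Part~(2).

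For Part~(2), your identified ``main obstacle'' --- checking that strictness is preserved under pullback --- is a red herring. Lemma~\ref{strictqi} requires only that $f$ be strict; the pulled-back map $h$ need not be. So once you have the commutative square $fu=th$ with $u,t$ \qi\ (from right fractions), the lemma applies directly and gives that $\coker(h)\to\coker(f)$ is a \qi, which is all you need.
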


\begin{proof} To show (1), let $\pi: M \to  N$ be an effective
map.  One sees immediately that $\pi$ is epi in ${}^t\M[1/p]$ if and only if for
any commutative diagram
\[\begin{CD}
M@>\pi>> N\\
@A{s'}AA@A{s}AA \\
Q'@>\pi'>> Q  
\end{CD}\]
with $s,s'$ \qi, the map $\pi'$ is an epi in the effective category. Now
specialise to the case $N = \coker (f)$ and remark that (up to modding
out by $\ker f$) we may assume $f$ to be a monomorphism as a map of
complexes, thus strict. Take $\pi',s,s'$ as above. We have a commutative
diagram of effective maps
\[\begin{CD}
0@>>> M'@>f>> M&@>\pi>> &\coker (f)\\
&&&&&&&\scriptstyle t\displaystyle\nearrow\\
&& @A{s''}AA @A{s'}AA\coker(f')&&@A{s}AA \\
&&&&&\displaystyle\nearrow&&\scriptstyle u\displaystyle\searrow\\
&& Q'' @>f'>> Q'&@>\pi'>> &Q.
\end{CD}\]

\begin{sloppypar}
Here $s''$ is a \qi and $Q'',f',s''$ are obtained by calculus of right
fractions (Proposition \ref{calfrac}). By Lemma \ref{strictqi}, the
induced map  $t:\coker (f') \to \coker (f)$ is a \qi. By Proposition
\ref{B1.1}, $\pi'f'=0$, hence the existence of $u$. By Lemma \ref{fact},
$u$ is a \qi. Hence $\pi'$ is a composition of two epimorphisms and
(1) is proven.
\end{sloppypar}

To show (2), let $gt^{-1}:M
\to M''$ be such that the composition  $M' \to M''$ is zero. By
calculus of right fractions we have a commutative diagram
\[\begin{CD}
M'@>f>> M@>\pi>> \coker (f)\\
@A{u}AA @A{t}AA @A{v}AA\\
N''' @>h>> N'' @>>> \coker (h)\\ 
&&&\scriptstyle{g}\searrow& @V{}VV\\ 
&&&&M''
\end{CD}\]
where all maps are effective and $u$ is a \qi. As above
we have $gh=0$, hence the factorisation of $g$ through $\coker (h)$.
Moreover $\coker (h)$ maps canonically to $\coker (f)$ via a map $v$ (say),
which is a
\qi by Lemma \ref{strictqi}. This shows that $gt^{-1}$ factors
through $\coker (f)$ in ${}^t\M[1/p]$.
Uniqueness of the factorisation is then granted by Part (1).

In a category, the existence of cokernels is
invariant by left or right composition by isomorphisms, hence (3) is a
consequence of Parts (1) and (2) via Proposition \ref{pstrict}.
\end{proof}

Now we can show the following key result (\cf \cite[Prop.~1.3]{BRS}).

\begin{thm} \label{1mtora}
The category ${}^t\M[1/p]$ is abelian.
\end{thm}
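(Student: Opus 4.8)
The strategy is to verify the axioms of an abelian category for ${}^t\M[1/p]$ directly, using the structural results already established in this appendix. First I would recall that ${}^t\M^{\eff}$ is additive with biproducts (the direct sum of effective $1$-motives is computed componentwise), and that localisation by a multiplicative system preserves additivity; so ${}^t\M[1/p]$ is additive. The key point is then to produce kernels and cokernels and to show that the canonical map from coimage to image is an isomorphism. Cokernels are already in hand: Lemma \ref{coker} (3) gives existence of cokernels in ${}^t\M[1/p]$, and parts (1)–(2) of that lemma, combined with the factorisation of Proposition \ref{pstrict} through a strict map followed by a \qi or a $p$-power isogeny, show that the effective cokernel $\coker(f) = [\coker(f)\by{\bar u'}\coker(g)]$ remains a categorical cokernel after inverting $p$ (the $p$-power isogenies having become invertible).

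Next I would treat kernels. By Proposition \ref{lim}, ${}^t\M^{\eff}$ has kernels, computed as $\ker(f,g) = [\ker^0(f)\to\ker^0(g)]$, i.e. the pullback of the connected component $\ker^0(g)$ along $u$. The subtlety is that this effective kernel need not remain a kernel in ${}^t\M[1/p]$, because the discrepancy between $\ker(g)$ and $\ker^0(g)$ is a finite group scheme, and because of the left-exactness but not exactness of the localisation functor (Proposition \ref{faith}). To get honest kernels in ${}^t\M[1/p]$ I would dualise the argument for cokernels: use the strict-factorisation of Proposition \ref{pstrict} to replace an arbitrary effective $\phi$ by a strict morphism up to a \qi or $p$-power isogeny, observe that for a \emph{strict} morphism the effective kernel $[\ker(f)\to\ker(g)]$ is a genuine kernel (this is essentially the content of the definition of strictness, matching Lemma \ref{strictqi}), and then invoke the fact that in an additive category the existence of kernels is invariant under pre- and post-composition with isomorphisms — which $p$-power isogenies and \qi's have become. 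Alternatively, and perhaps more cleanly, I would combine the kernel/cokernel existence with the duality of Proposition \ref{pcd} a): once cokernels are shown to exist and behave well, Cartier duality ${}^t\M[1/p]\to {}_t\M[1/p]$ turns them into kernels, but since the theorem as stated is logically prior to \ref{pcd}, I will instead argue directly via strictness.

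Finally, to conclude that ${}^t\M[1/p]$ is abelian I must check that every monomorphism is a kernel and every epimorphism is a cokernel, equivalently that for each morphism the natural map $\coim(\phi)\to\im(\phi)$ is an isomorphism. Here I would argue as follows: represent $\phi$ by an effective map, factor it through a strict map via Proposition \ref{pstrict}, and reduce to the strict case. For a strict morphism $(f,g)$, the effective kernel, cokernel, image and coimage can all be computed componentwise on the lattice part $L$ and the semi-abelian part $G$, where both the category of discrete sheaves and the category of semi-abelian $k$-schemes with $p$ inverted are abelian (the latter because isogenies with connected kernel have been inverted); the coimage-to-image comparison is then an isomorphism componentwise, hence in ${}^t\M[1/p]$. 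The main obstacle I anticipate is precisely the bookkeeping around strictness and the finite (possibly $p$-divisible) group schemes appearing as $\ker(g)/\ker^0(g)$: one must be careful that after inverting $p$ the only finite group schemes that matter are \'etale, so that the \qi's and $p$-power isogenies of Proposition \ref{pstrict} genuinely become isomorphisms in ${}^t\M[1/p]$ and the reduction to the strict, componentwise situation is legitimate. Once that reduction is justified, the verification of the abelian axioms is routine and I would not spell out every diagram chase, referring instead to Lemma \ref{coker} and Proposition \ref{pstrict} for the mechanism.
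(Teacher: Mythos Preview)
Your treatment of cokernels and of the coimage-to-image comparison (reduce to strict via Proposition~\ref{pstrict}, then argue componentwise) is in line with the paper's proof. The gap is in your handling of kernels.

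You write that the effective kernel of Proposition~\ref{lim} ``need not remain a kernel in ${}^t\M[1/p]$,'' citing ``left-exactness but not exactness'' of the localisation functor. But left-exactness is precisely the statement that kernels (more generally, finite limits) are preserved. Proposition~\ref{faith} already records this, and the paper's proof just invokes it together with (the dual of) \cite[Ch.~I, Cor.~3.2]{GZ}: since quasi-isomorphisms admit a calculus of right fractions (Proposition~\ref{calfrac}) and ${}^t\M^{\eff}$ has finite limits (Proposition~\ref{lim}), the localised category ${}^t\M[1/p]$ has finite limits and the localisation functor commutes with them. So kernels in ${}^t\M[1/p]$ exist and coincide with the effective kernels $[\ker^0(f)\to\ker^0(g)]$, with no further work needed.

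Your proposed workaround---factor through a strict map and then claim the effective kernel of a strict map is a categorical kernel in the localisation---does not close the circle: you would still owe the argument that this effective kernel has the universal property in ${}^t\M[1/p]$, and your citation of Lemma~\ref{strictqi} does not help, since that lemma concerns cokernels under \qi's, not kernels. The asymmetry is real: \qi's are epimorphisms, so right fractions give preservation of limits for free, whereas cokernels require the bespoke argument of Lemma~\ref{coker}. Once you replace your kernel paragraph by the one-line appeal to Gabriel--Zisman, the rest of your outline matches the paper.
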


\begin{proof} The existence and description of kernels follow
from Propositions \ref{lim}, \ref{calfrac}, \ref{faith} and (the dual of)
\cite[Ch. I, Cor. 3.2]{GZ}, while the existence of cokernels has been proven
in Lemma \ref{coker}.
We are then left to show that, for any (effective) strict map $\phi: M\to
M'$, the canonical effective morphism  from the coimage of $\phi$ to the
image of $\phi$ is a \qi of $1$-motives, \ie the canonical
morphism
\begin{equation}\label{coimage}
\coker (\ker \phi \to M)\to \ker (M'\to \coker \phi) 
\end{equation}
is a quasi-isomorphism. Since we can split $\phi$ in two short
exact sequences of complexes in which each term is an effective
$1$-motive we see that \eqref{coimage} is even a isomorphism in ${}^t\M^{\eff}[1/p]$.
\end{proof}

\begin{remark} Note that (even in characteristic zero) for a given non-strict effective map
$(f,g): M\to M'$ the effective morphism \eqref{coimage} is not a \qi of
$1$-motives. In fact, the following diagram
\[
\begin{CD}
0&  &0 \\
@V{}VV @V{}VV  \\
  \ker (f)/\ker^0(f) & {\subseteq} & \ker (g)/\ker^0(g)\\
@V{}VV @V{}VV  \\
 L/\ker^0(f) @>>> G/\ker^0(g)\\
@V{}VV @V{}VV  \\
\im (f) @>>> \im (g)\\
@V{}VV @V{}VV  \\
0&  &0 
\end{CD}
\]
is not a pull-back, in general. For example, let $g:G\to G'$ be with finite
kernel and a proper sub-group $F\subsetneq\ker (g)$, and consider 
$$ (0,g): [F\to G] \to [0\to G'].$$
\end{remark} 

\begin{cor} \label{corexseq} A short exact sequence of $1$-motives in ${}^t\M[1/p]$ 
\begin{equation} \label{exseq}
0\to M'\to M \to M''\to 0
\end{equation}
can be represented up to isomorphisms by a strict effective epimorphism
$(f,g) : M\to M''$ with kernel $M'$, \ie by an exact sequence of complexes.
\end{cor}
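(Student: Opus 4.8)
The statement to prove is Corollary \ref{corexseq}: any short exact sequence $0\to M'\to M\to M''\to 0$ in the abelian category ${}^t\M[1/p]$ is, up to isomorphism, of the form coming from a strict effective epimorphism $M\to M''$ with kernel $M'$. The overall strategy is to use the explicit description of kernels and cokernels in ${}^t\M[1/p]$ obtained in the proof of Theorem \ref{1mtora}, together with the factorisation of an arbitrary effective morphism through a strict one supplied by Proposition \ref{pstrict}.

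First I would reduce to the case where the map $M\to M''$ is honestly effective. Given the short exact sequence in ${}^t\M[1/p]$, the morphism $M\to M''$ is represented by a roof $M\xleftarrow{s}\tilde M\xrightarrow{\psi}M''$ with $s$ a quasi-isomorphism (possibly after also inverting a $p$-power isogeny, using Proposition \ref{pstrict}, but $p$-isogenies are invertible in ${}^t\M[1/p]$ so this is harmless). Since $s$ is an isomorphism in ${}^t\M[1/p]$, we may replace $M$ by $\tilde M$ and thereby assume $M\to M''$ is given by an effective map $\psi$; this does not change the isomorphism class of the exact sequence. Next, apply Proposition \ref{pstrict} to $\psi$: it factors as $M\xrightarrow{\tilde\psi}\hat M\xrightarrow{s'}M''$ with $\tilde\psi$ strict and $s'$ a quasi-isomorphism (or $p$-power isogeny, again harmless). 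Replacing $M''$ by $\hat M$, we may now assume $M\to M''$ is a \emph{strict effective} map. Because the original arrow was an epimorphism in ${}^t\M[1/p]$, Lemma \ref{coker} part 2 tells us that the cokernel of the strict map $\tilde\psi$ computed in ${}^t\M[1/p]$ is the effective cokernel, so surjectivity of $\tilde\psi$ forces this effective cokernel to vanish in ${}^t\M[1/p]$; a strict effective map with trivial cokernel in ${}^t\M[1/p]$ is an effective epimorphism in the sense that $[\coker f\to\coker g]$ is trivial, i.e.\ $\tilde\psi$ is surjective as a map of complexes of sheaves.

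Then I would identify the kernel. By the description of kernels in ${}^t\M[1/p]$ (Propositions \ref{lim}, \ref{faith} and the proof of Theorem \ref{1mtora}), the kernel of the strict map $\tilde\psi=(f,g):M\to M''$ is the effective $1$-motive $[\ker(f)\to\ker(g)]$, and by strictness $\ker(g)$ is connected semiabelian, so this is a genuine $1$-motive with torsion. Since kernels in an abelian category are unique up to canonical isomorphism, this object is isomorphic to $M'$ in ${}^t\M[1/p]$. Finally, the sequence of complexes
\[
0\to [\ker f\to\ker g]\to M \xrightarrow{(f,g)} M''\to 0
\]
is exact in each degree (the degree-$0$ row is $0\to\ker f\to L\to L''\to 0$ and the degree-$1$ row is $0\to\ker g\to G\to G''\to 0$, both exact by construction and strictness), which is exactly the assertion that we have represented the original short exact sequence by an exact sequence of complexes via a strict effective epimorphism.

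\textbf{Main obstacle.} The only delicate point is the bookkeeping in the first paragraph: one must check that replacing $M$ by $\tilde M$ and $M''$ by $\hat M$ along quasi-isomorphisms (and $p$-power isogenies, which become invertible after inverting $p$) genuinely produces an \emph{isomorphic} short exact sequence in ${}^t\M[1/p]$ — i.e.\ that the monomorphism $M'\to M$ is transported correctly and that the three-term sequence remains exact after these replacements. This is where one uses that the functor ${}^t\M^{\eff}[1/p]\to{}^t\M[1/p]$ is faithful and that quasi-isomorphisms are simplifiable (Proposition \ref{B1.1}), so the various roofs glue compatibly; once that is granted, everything else is a direct application of the explicit kernel/cokernel computations already carried out in the proof of Theorem \ref{1mtora}. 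I do not expect any essentially new input beyond Propositions \ref{pstrict}, \ref{B1.1}, Lemma \ref{coker}, and the internal structure of the proof of Theorem \ref{1mtora}.
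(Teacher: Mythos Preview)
Your approach is correct and is exactly how the paper intends the corollary to follow from Theorem~\ref{1mtora} (the paper gives no separate proof). There is, however, one small slip in positive characteristic. After arranging $M\to M''$ to be a strict effective map with cokernel zero in ${}^t\M[1/p]$, you assert that $[\coker f\to\coker g]$ is trivial \emph{as a complex}. This does not quite follow: an object isomorphic to $0$ in ${}^t\M[1/p]$ only has $p^k\cdot 1=0$ in ${}^t\M^{\eff}$ for some $k$. Since multiplication by $p^k$ is an isogeny on a semiabelian variety, you do get $\coker g=0$; but $\coker f$ may be a nonzero finite \'etale $p$-group, so $f$ need not be surjective.

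The fix is one more replacement: pass from $M''=[L''\to G'']$ to $[f(L)\to G'']$. The inclusion is strict (the $G$-component is the identity), its effective kernel is $0$, and its effective cokernel $[\coker f\to 0]$ is zero in ${}^t\M[1/p]$; hence by Lemma~\ref{coker}(2) it is both mono and epi, so an isomorphism in ${}^t\M[1/p]$. After this, $(f,g)$ is surjective in each degree and your kernel computation goes through unchanged. In characteristic $0$ (where $\Z[1/p]=\Z$) your argument is already complete as written, since faithfulness of ${}^t\M^{\eff}\to{}^t\M$ (Proposition~\ref{faith}) forces any object isomorphic to $0$ to actually be $0$.
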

\begin{example} 
Let $M$ be a $1$-motive with torsion. We then always have a canonical
exact sequence in ${}^t\M[1/p]$
\begin{equation}\label{shortfree}
0\to M_{\tor}\to M \to M_{\fr}\to 0
\end{equation}
induced by \eqref{basic}, according to Definition~\ref{frtortf}. 
Note that in the following canonical factorisation
$$\begin{array}{c}
M  \longby{} M_{\fr}\\
\searrow\hspace*{0.4cm} \nearrow \\
M_{\tf}
\end{array}$$
the effective map $M \to M_{\tf}$ is a strict epimorphism with kernel
$M_{\tor}$ and $M_{\tf}\to M_{\fr}$ is a \qi (providing an example of
Proposition~\ref{pstrict}).
\end{example}

\subsection{$\ell$-adic realisation} \label{s.ladic}
Let $n: M\to M$ be the (effective) multiplication by $n$ on a $1$-motive
$M =[L\by{u} G]$ over a field $k$ where $n$ is prime to the characteristic of
$k$. It is then easy to see, \eg by the description of kernels in
Proposition~\ref{lim}, that 
$${}_nM \df \ker (M\longby{n} M) = [\ker (u)\cap {}_nL \to 0].$$
Thus ${}_nM = 0$ (all $n$ in characteristic zero)  if and only if $M$ is
torsion-free, \ie
$M_{\tor} =0$. Moreover, by Proposition~\ref{pstrict} and 
Lemma~\ref{coker} we see that
$$M/n\df \coker (M\longby{n} M)$$ is always a torsion $1$-motive. If $L=0$,
let simply $G$ denote, as usual, the $1$-motive $[0\to G]$. Then we
get an extension in $\M[1/p]$
\begin{equation}\label{ngexseq}
0\to G \longby{n} G \to {}_nG[1]\to 0
\end{equation}
where ${}_nG[1]$ is the torsion $1$-motive $[{}_nG\to 0]$. In general, $M/n$ can be regarded as an extension of $L/n$ by $\coker ({}_nL\to
{}_nG)$, \eg also by applying the snake lemma to the multiplication by $n$
on the following canonical short exact sequence (here $L[1] = [L\to 0]$ as
usual)
\begin{equation}\label{stexseq}
0\to G \to M \to L[1]\to 0
\end{equation}
of effective $1$-motives (which is also exact in $\M[1/p]$ by
Corollary~\ref{corexseq}). Summarizing, we get a long exact sequence in ${}^t\M^{\tor}[1/p]$
\begin{equation}\label{nexseq}
0 \to {}_nM \to {}_nL[1]\to {}_nG[1] \to M/n \to L/n[1] \to 0.
\end{equation}

Let now be $n = \ell^{\nu}$ where $\ell \neq {\rm char} (k)$. Set:

\begin{defn}\label{ladic} The {\it $\ell$-adic realisation}\, of a
$1$-motive
$M$ is $$T_{\ell}(M)\df ``\liminv{\nu}" L_\nu$$
in the category of $l$-adic sheaves, where $M/\ell^\nu=[L_\nu\to 0]$.
\end{defn}

Since the inverse system $``\varprojlim_{\nu}" {}_{l^\nu} L$ is Mittag-Leffler
trivial, we obtain a short exact sequence
$$0 \to T_{\ell}(G) \to T_{\ell}(M) \to L\otimes \Z_{\ell}
\to 0$$ 
where $T_{\ell}(G)$ is the Tate module of the semia-belian
variety $G$. More generally, using Corollary \ref{corexseq}, we have:

\begin{lemma}\label{ltlexact} The functor $T_\ell$ is exact on ${}^t\M[1/p]$,
and extends can\-on\-ically to ${}^t\M\otimes\Z_\ell$.\qed
\end{lemma}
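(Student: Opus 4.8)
\textbf{Proof plan for Lemma \ref{ltlexact}.}

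The plan is to deduce both assertions from the structure theory of $1$-motives with torsion developed earlier in this appendix. For exactness of $T_\ell$ on ${}^t\M[1/p]$, I would start from Corollary \ref{corexseq}, which says that any short exact sequence $0\to M'\to M\to M''\to 0$ in ${}^t\M[1/p]$ may be represented by a strict effective epimorphism $(f,g):M\to M''$ with kernel $M'$, i.e.\ by an honest exact sequence of complexes of group schemes. So it suffices to check that, given such a strict exact sequence, applying $-/\ell^\nu$ and passing to the inverse limit over $\nu$ yields an exact sequence of $\ell$-adic sheaves. First I would apply the snake lemma to multiplication by $\ell^\nu$ on the exact sequence of complexes; because the sequence is strict (so that the $G$-parts form an exact sequence of semi-abelian schemes, and $\ell^\nu$ is invertible in the coefficients, hence prime to $p$), the resulting six-term sequences in ${}^t\M^{\tor}[1/p]$ fit together over $\nu$ into an exact sequence of pro-systems. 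The key finiteness input is that each $M/\ell^\nu$ is a torsion $1$-motive with underlying finite group annihilated by a bounded power of $\ell$ (as in the discussion around \eqref{nexseq}), so the relevant $\varprojlim^1$ terms vanish: the inverse systems $``\varprojlim_\nu"\,{}_{\ell^\nu}L$ and similar ones are Mittag-Leffler (indeed essentially trivial, as already noted for $T_\ell(G)$). This gives exactness of $\varprojlim_\nu$ applied to our sequences, hence exactness of $T_\ell$.

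For the extension to ${}^t\M\otimes\Z_\ell$, the point is that $T_\ell$ is already $\Z[1/p]$-linear and additive, and it takes values in an abelian category of $\Z_\ell$-modules (or $\ell$-adic sheaves) in which $\ell$ is invertible up to the usual pro-structure — more precisely, $T_\ell(M)$ is a $\Z_\ell$-module, so every morphism group $\Hom_{\Z_\ell}(T_\ell M, T_\ell M')$ is already a $\Z_\ell$-module, hence a $\Z[1/p]\otimes\Z_\ell$-module. Thus $T_\ell$ factors uniquely through the functor ${}^t\M[1/p]\to {}^t\M[1/p]\otimes\Z_\ell$, by the universal property of $\otimes\Z_\ell$ on additive categories recorded in Definition \ref{1mot} (tensoring morphisms). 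Exactness of the extended functor then follows formally: $D^b$ and short exact sequences are compatible with $\otimes\Z_\ell$ by the flatness statement for this operation (cf.\ Proposition \ref{pB.4.1} and Lemma \ref{lB.2.3}), and in any case ${}^t\M[1/p]\otimes\Z_\ell$ is abelian by Corollary \ref{cB.1.3} (every object is $\ell$-isogenous, a fortiori $\Z_\ell$-linearly comparable, to an object of ${}^t\M[1/p]$), with a short exact sequence there being exact iff it is so after forgetting the $\Z_\ell$-structure, i.e.\ iff it comes from an exact sequence in ${}^t\M[1/p]$.

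The main obstacle I anticipate is the careful bookkeeping of the pro-structures and the vanishing of $\varprojlim^1$: one must verify that the six-term exact sequences \eqref{nexseq}, assembled over all $\nu$, have uniformly bounded torsion in the auxiliary terms, so that the inverse limit is exact and produces genuine $\ell$-adic sheaves rather than merely pro-objects. This is where strictness of the representing morphism and the primality of $\ell$ to $p$ are both used essentially; without strictness the coimage-to-image comparison in Theorem \ref{1mtora} would fail to be a quasi-isomorphism and the $G$-parts would not form an exact sequence of semi-abelian varieties, breaking the snake-lemma argument. Everything else is routine diagram chasing of the kind already carried out in the proofs of Lemma \ref{coker} and the exact sequences \eqref{ngexseq}–\eqref{nexseq}.
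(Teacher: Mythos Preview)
Your exactness argument is the right one and matches the paper's intent: the paper simply records the lemma with a \qed after the line ``More generally, using Corollary \ref{corexseq}, we have,'' so the entire proof is meant to be ``represent a short exact sequence by a strict exact sequence of complexes (Corollary \ref{corexseq}) and compute.'' Your elaboration via the snake lemma on multiplication by $\ell^\nu$ and Mittag--Leffler for the inverse systems is exactly how one unpacks that one-liner.

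There is, however, a genuine problem in your second paragraph. The results you cite for the extension---Proposition \ref{pB.4.1}, Lemma \ref{lB.2.3}, and Corollary \ref{cB.1.3}---are all stated in Appendix B under the standing hypothesis of Definition \ref{dA.2} that $A$ is a \emph{subring of $\Q$}, and $\Z_\ell$ is not one. None of those statements applies here, and in particular your appeal to Corollary \ref{cB.1.3} to conclude that ${}^t\M[1/p]\otimes\Z_\ell$ is abelian is unjustified (and unnecessary). The extension is much more elementary than you make it: the target category of $\ell$-adic sheaves is $\Z_\ell$-linear, so the additive functor $T_\ell$ factors uniquely through ${}^t\M[1/p]\otimes\Z_\ell$ by the universal property of tensoring Hom groups (Definition \ref{1mot}); and ${}^t\M[1/p]\otimes\Z_\ell = {}^t\M\otimes\Z_\ell$ since $p$ is already invertible in $\Z_\ell$. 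Note also that the lemma does not assert exactness of the extended functor, only that the extension exists, so your entire discussion of exactness after tensoring (where the faulty citations live) can simply be dropped. Finally, the aside that ``$\ell$ is invertible up to the usual pro-structure'' is not correct---$\ell$ is not invertible in $\Z_\ell$---and is not needed for the argument.
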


\subsection{Deligne $1$-motives} Let ${}^t\M^{\fr}[1/p]$, ${}^t\M^{\tor}[1/p]$ and 
${}^t\M^{\tf}[1/p]$ denote the corresponding full subcategories of 
${}^t\M[1/p]$ given by free, torsion and torsion-free effective $1$-motives
respectively. The following $M\mapsto M_{\fr}$ (resp. $M\mapsto M_{\tor}$) define functors
from ${}^t\M[1/p]$ to ${}^t\M^{\fr}[1/p]$ (resp. from ${}^t\M[1/p]$ to ${}^t\M^{\tor}[1/p]$). We have (\cf \cite[(1.1.3)]{BRS}):

\begin{propose}\label{free} The natural functor 
\[\M[1/p]\to {}^t\M[1/p]\] 
from Deligne
$1$-motives to $1$-motives with torsion has a left adjoint/left inverse given by $M\mapsto
M_{\fr}$. In particular, it is fully faithful and makes $\M[1/p]$ an exact sub-category of ${}^t\M[1/p]$. The above left adjoint defines equivalences 
$$\M[1/p]\cong{}^t\M^{\fr}[1/p]\cong {}^t\M^{\tf}[1/p].$$
\end{propose}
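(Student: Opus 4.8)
The plan is to establish the adjunction first and then deduce the three claimed equivalences as formal consequences. First I would check that $M \mapsto M_{\fr}$ is indeed a functor ${}^t\M[1/p]\to{}^t\M^{\fr}[1/p]$: by Remark \ref{qi=qi} 2), a quasi-isomorphism $M\to M'$ induces an isomorphism on both the kernels and cokernels of $u$ and $u'$, hence on the torsion subsheaves $u(L_{\tor})$ and on $L_{\fr}$, $G/u(L_{\tor})$; therefore the construction of Diagram \eqref{basic} is compatible with quasi-isomorphisms and descends to the localised category ${}^t\M[1/p]$. Functoriality on effective maps is the content of \cite[(1.1.3)]{BRS} (any effective $(f,g):M\to M'$ carries $u(L_{\tor})$ into $u'(L'_{\tor})$, hence induces a map $M_{\fr}\to M'_{\fr}$).

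Next I would exhibit the adjunction. The canonical effective map $M\to M_{\tf}$ followed by the quasi-isomorphism $M_{\tf}\to M_{\fr}$ (noted just after Definition \ref{1tors}) gives a morphism $\eta_M:M\to M_{\fr}$ in ${}^t\M[1/p]$, natural in $M$; this will be the unit. For a Deligne $1$-motive $N$ (so $N=N_{\fr}$) and any $M\in{}^t\M[1/p]$, I must show that composition with $\eta_M$ induces a bijection
\[
\Hom_{{}^t\M[1/p]}(M_{\fr},N)\iso \Hom_{{}^t\M[1/p]}(M,N).
\]
Using the calculus of right fractions (Proposition \ref{calfrac}), any morphism $M\to N$ is represented by an effective morphism $\tilde M\to N$ out of a quasi-isomorphism $\tilde M\to M$; since $N$ is free, such an effective morphism kills the torsion part, hence factors through $\tilde M_{\fr}$, and $\tilde M_{\fr}=M_{\fr}$ because $\tilde M\to M$ is a quasi-isomorphism. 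Injectivity follows the same way, using Proposition \ref{B1.1} (quasi-isomorphisms are simplifiable) to control the ambiguity in the fraction representation. The fact that $\eta_N$ is an isomorphism when $N$ is free (so $M\mapsto M_{\fr}$ is a left inverse, not merely a left adjoint) is immediate: if $L=L_{\fr}$ then $u(L_{\tor})=0$ and $M_{\tf}=M_{\fr}=M$. A left adjoint that is also a left inverse to a full embedding forces the embedding to be fully faithful, and the exactness of the embedding is Lemma \ref{lexact} together with Proposition \ref{pexact} (an acyclic complex of free $1$-motives is acyclic in ${}^t\M[1/p]$, and conversely), or one may just invoke Theorem \ref{1mtora} to note that a short exact sequence in $\M[1/p]$ is one in ${}^t\M[1/p]$.

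Finally, for the equivalences $\M[1/p]\cong{}^t\M^{\fr}[1/p]\cong{}^t\M^{\tf}[1/p]$: the first is a tautology, since $\M[1/p]$ \emph{is} ${}^t\M^{\fr}[1/p]$ by definition (the category ${}^t\M^{\eff,\fr}$ was identified with $\M$ right after Definition \ref{frtortf}, and localising changes nothing as there are no nontrivial quasi-isomorphisms between free $1$-motives). For ${}^t\M^{\fr}[1/p]\cong{}^t\M^{\tf}[1/p]$, the functor $M\mapsto M_{\fr}$ restricted to torsion-free $1$-motives is quasi-inverse to the inclusion ${}^t\M^{\fr}[1/p]\into{}^t\M^{\tf}[1/p]$: on a torsion-free $M$ the map $M\to M_{\fr}$ is the quasi-isomorphism $M_{\tf}\to M_{\fr}$ (an isomorphism in the localised category), and on a free $N$ it is the identity. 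The only mild obstacle I anticipate is bookkeeping in the adjunction bijection — making sure the factorisation through $\tilde M_{\fr}$ is canonical and that two fraction representatives of the same morphism yield the same factored morphism — but this is exactly the kind of calculus-of-fractions argument already carried out in \cite[\S 1]{BRS} and in the proof of Lemma \ref{coker} above, so no new difficulty should arise.
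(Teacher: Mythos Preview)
Your adjunction argument and the verification of the equivalences $\M[1/p]\cong{}^t\M^{\fr}[1/p]\cong{}^t\M^{\tf}[1/p]$ are correct and follow essentially the same route as the paper.

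There is, however, a genuine gap in your treatment of the phrase ``makes $\M[1/p]$ an exact sub-category of ${}^t\M[1/p]$.'' This requires the essential image of $\M[1/p]$ in ${}^t\M[1/p]$ to be \emph{closed under extensions}; only then does the inherited class of short exact sequences give a Quillen exact structure. Your citations do not establish this: Lemma \ref{lexact} is stated \emph{after} the full embedding \eqref{fullem} (which is precisely what the present proposition is supposed to provide), and it compares two notions of exactness rather than proving extension-closure; invoking Theorem \ref{1mtora} only tells you that ${}^t\M[1/p]$ is abelian, not that free $1$-motives are stable under extensions inside it.

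The paper supplies the missing step directly: given a short exact sequence $0\to M'\to M\to M''\to 0$ in ${}^t\M[1/p]$ with $M'_{\tor}=M''_{\tor}=0$, the torsion sub-$1$-motive $M_{\tor}$ maps to zero in $M''$ and therefore injects into $M'$, forcing $M_{\tor}=0$; hence $M=M_{\tf}$ is quasi-isomorphic to $M_{\fr}$, i.e.\ isomorphic in ${}^t\M[1/p]$ to a Deligne $1$-motive. This short argument is the one substantive ingredient you omitted.
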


\begin{proof} Consider an effective map $(f, g):\tilde M
\to M'$, to a free $1$-motive $M'$, and a \qi $\tilde M \to M$, \ie $M
= [\tilde L/F \to \tilde G/F]$ for a finite group $F$. Since $M'$ is free
then $F$ is contained in the kernel of $f$ and the same holds for $g$. Thus $(f, g)$ induces an effective map $ M\to M'$. Let $M = [L\by{u}
G]$. Then $L_{\tor}\subseteq \ker (f)$ and also $u(L_{\tor})\subseteq \ker
(g)$ yielding an effective map $(f, g): M_{\fr}\to M'$. This proves the first assertion.

Since ${}^t\M^{\eff,\fr}\into {}^t\M^{\eff,\tf}$, the claimed equivalence is
obtained from the canonical \qi $M\to M_{\fr}$ for $M\in {}^t\M^{\eff,\tf}$, see
\eqref{basic}. 
Finally, consider the exact sequence \eqref{exseq} of $1$-motives with torsion such that
$M'_{\tor}=M''_{\tor}=0$. Since $M_{\tor}$ is mapped to zero in $M''$, it
injects in $M'$. Thus also $M$ is torsion-free, \ie $M=M_{\tf}$, and
quasi-isomorphic to $M_{\fr}$.
\end{proof}

\begin{remark} We also clearly have that the functor $M\mapsto
M_{\tor}$ is a right adjoint to the embedding ${}^t\M^{\tor}[1/p]\into {}^t\M[1/p]$ , \ie 
$$\Hom_{\eff}(M, M'_{\tor}) \cong \Hom (M, M')$$
for $M\in{}^t\M^{\tor}[1/p]$ and $M'\in {}^t\M[1/p]$.
\end{remark}

\begin{cor}\label{isofree} We have ${}^t\M^{\tor}\otimes \Q =0$ and 
the full embedding $\M[1/p]\to {}^t\M[1/p]$ induces an equivalence 
$$\M\otimes \Q\iso {}^t\M\otimes \Q.$$
\end{cor}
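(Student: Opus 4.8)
The statement to prove is Corollary \ref{isofree}: that ${}^t\M^{\tor}\otimes\Q = 0$, and that the full embedding $\M[1/p]\to {}^t\M[1/p]$ induces an equivalence $\M\otimes\Q\iso {}^t\M\otimes\Q$.

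The plan is to deduce everything from Proposition \ref{free} together with the general formalism of torsion objects in additive categories from Appendix \ref{AppendixB} (Definition \ref{dA.2} and the surrounding lemmas). First I would establish that ${}^t\M^{\tor}\otimes\Q = 0$. The category ${}^t\M^{\tor}[1/p]$ is equivalent to the category of finite \'etale $k$-group schemes (with $\Hom$ groups localised at $p$), and every object $M_{\tor}$ of it is killed by some integer $n$ invertible in $\Z[1/p]$, namely its exponent. Hence $M_{\tor}$ is a torsion object in the sense of Definition \ref{dA.2} b), so $1_{M_{\tor}} = 0$ in $({}^t\M^{\tor}[1/p])\otimes\Q$, which forces $M_{\tor}\simeq 0$ there; since this holds for every object, the category is zero. (One must be a little careful about the order of operations: $({}^t\M^{\tor})\otimes\Q = ({}^t\M^{\tor}[1/p])\otimes\Q$ because inverting $p$ first changes nothing once we tensor with $\Q$.)

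Next I would prove the equivalence $\M\otimes\Q\iso {}^t\M\otimes\Q$. By Proposition \ref{free}, the functor $\M[1/p]\to {}^t\M[1/p]$ is fully faithful, so after $\otimes\Q$ it remains fully faithful (tensoring $\Hom$ groups with $\Q$ preserves bijections). It remains to check essential surjectivity. Given $M\in {}^t\M[1/p]$, the canonical short exact sequence \eqref{shortfree}
\[0\to M_{\tor}\to M\to M_{\fr}\to 0\]
in the abelian category ${}^t\M[1/p]$ (Theorem \ref{1mtora}, Corollary \ref{corexseq}) becomes, after applying the exact localisation functor ${}^t\M[1/p]\to {}^t\M\otimes\Q$, an exact sequence with $M_{\tor}\otimes\Q = 0$ by the first part; hence $M\to M_{\fr}$ becomes an isomorphism in ${}^t\M\otimes\Q$, and $M_{\fr}$ lies in the image of $\M[1/p]\to {}^t\M[1/p]$ (it is a Deligne $1$-motive, via the equivalence $\M[1/p]\cong {}^t\M^{\fr}[1/p]$ of Proposition \ref{free}). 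Therefore the functor is essentially surjective, hence an equivalence.

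The only genuinely delicate point is bookkeeping about the two distinct operations ``$[1/p]$'' (inverting $p$ in $\Hom$ groups) and ``$\otimes\Q$'', and checking that the relevant functors — in particular the localisation ${}^t\M[1/p]\to {}^t\M\otimes\Q = ({}^t\M[1/p])\otimes\Q$ — are exact so that \eqref{shortfree} stays exact; this is exactly Proposition \ref{p1.1ab} c) applied to the abelian category ${}^t\M[1/p]$ (with $A=\Q$), which shows $({}^t\M[1/p])\otimes\Q$ is abelian and the quotient functor is exact. No step requires a new idea beyond assembling these earlier results, so I expect no real obstacle; the main care is simply to invoke each cited result with the correct coefficients.
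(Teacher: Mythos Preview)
Your proposal is correct; the paper states this corollary without proof, and your argument---torsion $1$-motives are annihilated by a positive integer and hence vanish after $\otimes\,\Q$, then the exact sequence \eqref{shortfree} gives essential surjectivity on top of the full faithfulness from Proposition \ref{free}---is precisely the intended elaboration. One cosmetic slip: ``localised at $p$'' should read ``with $p$ inverted,'' and your reference to ``Appendix \ref{AppendixB}'' points to the appendix on $1$-motives with torsion rather than the one containing Definition \ref{dA.2} and Proposition \ref{p1.1ab}.
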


\subsection{Homs and Extensions} We will provide a characterisation
of the Yoneda $\Ext$ in the abelian category ${}^t\M[1/p]$. 

\begin{propose}\label{hom} We have
\begin{itemize}
\item[(a)] $\Hom_{{}^t\M} (L[1],L'[1]) = \Hom_k (L,L')$,
\item[(b)] $\Hom_{{}^t\M} (L[1],G') =0$,
\item[(c)] $\Hom_{{}^t\M} (G,G') \subseteq \Hom_k (A,A')\times
\Hom_k(T,T')$ if $G$ (resp. $G'$) is an extension of an abelian
variety $A$ by a torus $T$ (resp. of $A'$ by $T'$),
\item[(d)] $\Hom_{{}^t\M} (G, L'[1])=\Hom_k({}_nG,L'_{\tor})$ if
$nL'_{\tor}=0$.
\end{itemize}
In particular, the group $\Hom_{{}^t\M}(M, M')$ is finitely generated
for all $1$-motives
$M, M'\in {}^t\M[1/p]$. 
\end{propose}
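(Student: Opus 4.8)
The plan is to compute each $\Hom$-group directly in the effective category and then check that passing to the localisation ${}^t\M[1/p]$ changes nothing, using the calculus of right fractions established in Proposition \ref{calfrac}. Recall that for $1$-motives with torsion one has $\Hom_{{}^t\M}(M,M') = \varinjlim_{\tilde M \to M} \Hom_\eff(\tilde M, M')$, the colimit taken over quasi-isomorphisms $\tilde M \to M$; since a \qi only modifies the lattice part by a finite \'etale subgroup, this colimit is essentially harmless for the pure pieces. So the first step is to reduce (a)--(c) to effective computations: for (a) and (b) the source $L[1]$ is discrete, so it equals its own free part only in the torsion-free case, but in any event a \qi into $L[1]$ is an isomorphism after inverting $p$ on a finite \'etale piece, and an effective map $[L\to 0]\to [L'\to 0]$ (resp. $[L\to 0]\to [0\to G']$) is just a map of discrete sheaves $L\to L'$ (resp. is zero, since there are no nonzero maps from a discrete sheaf to a connected group scheme). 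For (c), an effective map $[0\to G]\to[0\to G']$ is a homomorphism $G\to G'$ of group schemes; such a homomorphism carries the toric part into the toric part and hence induces a pair $(A\to A', T\to T')$, and the map $\Hom_k(G,G')\to \Hom_k(A,A')\times\Hom_k(T,T')$ is injective because a homomorphism trivial on both $T$ and $A$ is trivial. One must then argue that localising does not enlarge these groups: this follows from Proposition \ref{B1.1} (quasi-isomorphisms are simplifiable) together with the fact that there are no nontrivial quasi-isomorphisms between free, or between torsion, $1$-motives.

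The genuinely computational case is (d). Here I would use the canonical exact sequence $0\to [0\to G]\to M\to [L\to 0]\to 0$ of Corollary \ref{corexseq} applied to $M'=[L'\to 0]=L'[1]$, or more directly apply $\Hom_{{}^t\M}(G,-)$ to the short exact sequence $0\to [L'_\tor\to 0]\to [L'\to 0]\to [L'_\fr\to 0]\to 0$; by (b) the group $\Hom_{{}^t\M}(G, L'_\fr[1])$ vanishes, so $\Hom_{{}^t\M}(G,L'[1]) = \Hom_{{}^t\M}(G,L'_\tor[1])$. Next, from the exact sequence \eqref{ngexseq}, i.e. $0\to [0\to G]\by{n}[0\to G]\to [{}_nG\to 0]\to 0$ in $\M[1/p]$ (with $n$ chosen so that $nL'_\tor=0$), applying $\Hom_{{}^t\M}(-,L'_\tor[1])$ and using that multiplication by $n$ kills the target, one extracts $\Hom_{{}^t\M}(G,L'_\tor[1]) \cong \Hom_{{}^t\M}([{}_nG\to 0], L'_\tor[1])$; the latter is $\Hom_k({}_nG,L'_\tor)$ by part (a) applied to the finite \'etale group schemes ${}_nG$ and $L'_\tor$ (here one uses that ${}_nG$ is finite \'etale because $n$ is prime to $p$). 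One should double-check that the various $\Ext^1$ terms that appear as error terms in these long exact sequences are accounted for --- in the last sequence the relevant $\Ext^1_{{}^t\M[1/p]}([0\to G],L'_\tor[1])$ receives the boundary, and surjectivity onto $\Hom_k({}_nG,L'_\tor)$ has to be verified by hand by exhibiting, for each homomorphism ${}_nG\to L'_\tor$, an explicit map $[0\to G]\to [L'_\tor\to 0]$ via the presentation $[{}_nG\to G]\xrightarrow{\sim}[0\to G]$ in $K^b(\anM^\eff[1/p])$ used in the proof of Theorem \ref{ptors}; this is the one place requiring a small diagram.

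For the final assertion, that $\Hom_{{}^t\M}(M,M')$ is finitely generated for all $M,M'\in{}^t\M[1/p]$, the strategy is dévissage along the weight filtration combined with the two canonical exact sequences \eqref{shortfree} and \eqref{stexseq}. Writing $M$ as a successive extension of a discrete sheaf $L[1]$, an abelian variety $A$, a torus $T$, and a finite group $M_\tor[1]$ (and likewise for $M'$), the five-lemma-type argument reduces finite generation of $\Hom_{{}^t\M}(M,M')$ to finite generation of the $\Hom$ and $\Ext^1$ groups between these pure pieces. The $\Hom$'s are handled by (a)--(d): $\Hom_k(L,L')$ is finitely generated since $L,L'$ are; $\Hom_k(A,A')$ and $\Hom_k(T,T')$ are finitely generated by classical facts about abelian varieties and tori over a field; and $\Hom_k({}_nG,L'_\tor)$ is finite. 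The $\Ext^1$ contributions are controlled by Proposition \ref{iso1} rationally and by the torsion analysis above integrally, or one simply invokes that ${}^t\M[1/p]$ has cohomological dimension $\le 1$ (Proposition \ref{pC.12.1}) so that the only obstruction groups are these $\Ext^1$'s, each of which is an extension of a finitely generated group by a finite one. I expect the main obstacle to be the bookkeeping in case (d): keeping track of which $\Ext^1$ error terms vanish and proving surjectivity of the identification with $\Hom_k({}_nG,L'_\tor)$ without circularity, since $\Ext^1$ in ${}^t\M[1/p]$ has not yet been fully analysed at the point this proposition is stated.
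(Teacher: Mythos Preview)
Your approach is essentially the paper's own, and the main computation in (d) via the exact sequence \eqref{ngexseq} is exactly what the paper does. A few corrections and simplifications:

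\textbf{The citation of (b) in part (d) is wrong.} Part (b) gives $\Hom_{{}^t\M}(L[1],G')=0$, not $\Hom_{{}^t\M}(G,L'_\fr[1])=0$; these go in opposite directions. You need the direct argument the paper gives: a morphism $G\to L'[1]$ in ${}^t\M$ is represented by a \qi\ $[F\to\tilde G]\to[0\to G]$ (so $F$ finite) together with an effective map $[F\to\tilde G]\to[L'\to 0]$, which amounts to a map $F\to L'$; if $L'$ is free this vanishes because $F$ is torsion. This is a one-line observation, not a consequence of (b).

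\textbf{Your $\Ext^1$ worries in (d) are unnecessary.} Applying $\Hom_{{}^t\M}(-,L'_\tor[1])$ to \eqref{ngexseq} gives
\[
0\to \Hom_{{}^t\M}({}_nG[1],L'_\tor[1])\to \Hom_{{}^t\M}(G,L'_\tor[1])\xrightarrow{\ \cdot n\ } \Hom_{{}^t\M}(G,L'_\tor[1]),
\]
and since $nL'_\tor=0$ the map $\cdot n$ is zero, so the first arrow is already an isomorphism. No $\Ext^1$ term enters, and no surjectivity needs to be ``verified by hand''.

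\textbf{The d\'evissage for finite generation needs no $\Ext^1$ control.} Applying $\Hom_{{}^t\M}(-,M')$ and $\Hom_{{}^t\M}(M,-)$ to \eqref{stexseq} only uses left exactness: $\Hom(M,M')$ is an extension of a subgroup of $\Hom(G,M')$ by $\Hom(L[1],M')$, and similarly in the second variable. So finite generation of the four groups in (a)--(d) suffices; the appeal to cohomological dimension (and your citation of Proposition~\ref{pC.12.1}, which concerns $\M(S)\otimes\Q$, not ${}^t\M[1/p]$) is not needed.

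Minor wording issues: in (a), the point is simply that there are \emph{no nontrivial} \qi's to $L[1]$ (a \qi\ $[\tilde L\to\tilde G]\to[L\to 0]$ forces $\tilde G$ finite and semiabelian, hence $0$); and in (b), an effective map $[L\to 0]\to[0\to G']$ vanishes for degree reasons alone, not because of any ``discrete to connected'' obstruction.
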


\begin{sloppypar}
\begin{proof} Since there are no \qi to $L[1]$, we have
$\Hom_{{}^t\M}(L[1],L'[1])=\Hom_{\eff}(L[1],L'[1])$ and the latter is
clearly isomorphic to $\Hom_k(L,L')$. By Proposition \ref{free}, we
have
$\Hom_{{}^t\M} (L[1],G') = \Hom_{\eff} (L[1],G')$ and 
$\Hom_{{}^t\M} (G, G') =  \Hom_{\eff}  (G, G')$. The former is clearly
$0$ while $\Hom_{\eff}(G,G')=\Hom_k(G,G')\subseteq
\Hom_k (A,A')\times
\Hom_k (T,T')$ since $\Hom_k(T,A')=\Hom_k(A,T')=0$. For (d),
let $[F \to \tilde G] \to [0 \to G]$ be a \qi and $[F \to \tilde G]
\to [L'\to 0]$ be an effective map providing an element of $\Hom
(G, L'[1])$. If
$L'$ is free then it yields the zero map, as $F$ is torsion. Thus
$\Hom_{{}^t\M}(G, L'[1]) = \Hom_{{}^t\M}(G, L'_{\tor}[1])$. For $n\in \N$
consider the short exact sequence \eqref{ngexseq} in ${}^t\M$. If $n$
is such that
$nL'_{\tor} =0$ taking $\Hom_{{}^t\M}(-,L'_{\tor}[1])$ we further
obtain $\Hom_{{}^t\M}(G, L'[1]) = \Hom_k ({}_nG, L'_{\tor})$.

The last statement follows from these computations and
an easy d\'evissage from \eqref{stexseq}.
\end{proof}
\end{sloppypar}

\begin{remark} If we want to get rid of the integer $n$ in (d), we
may equally write
\[\Hom_{{}^t\M}(G,
L'[1])=\Hom_{\text{cont}}(\hat{T}(G),L'_{\tor})=
\Hom_{\text{cont}}(\hat{T}(G),L')\]
where $\hat{T}(G)=\prod_{\ell\ne p} T_\ell(G)$ is the complete Tate module of
$G$.
\end{remark}

\begin{propose}\label{ext} We have isomorphisms (for $\Ext$ in ${}^t\M[1/p]$):
\begin{itemize}
\item[(a)] $\Ext^1_k (L,L')\iso\Ext^1_{{}^t\M} (L[1],L'[1])$,
\item[(b)] $\Hom_k (L, G')\iso\Ext^1_{{}^t\M} (L[1],G')$,
\item[(c)] $ \Ext^1_k (G,G')\iso\Ext^1_{{}^t\M} (G,G')$ and 
\item[(d)] $\displaystyle\limdir{n}
\Ext^1_k ({}_nG,L')\iso\Ext^1_{{}^t\M} (G, L'[1])$; these two groups
are $0$ if $L'$ is torsion and $k$ algebraically closed. 
\end{itemize}
\end{propose}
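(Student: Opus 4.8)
\textbf{Proof plan for Proposition \ref{ext}.}
The plan is to compute all four Ext groups via the same mechanism used for the Hom computations in Proposition \ref{hom}: exploit the short exact sequences \eqref{shortfree}, \eqref{ngexseq}, \eqref{stexseq} in the abelian category ${}^t\M[1/p]$, reduce the relevant extensions in ${}^t\M[1/p]$ to extensions of sheaves, and invoke Proposition \ref{iso1} (cohomological dimension $\le 1$ after $\otimes\Q$) together with Theorem \ref{ptors} to control torsion. For (a): since $\M[1/p]\to{}^t\M[1/p]$ is exact and fully faithful on free $1$-motives (Proposition \ref{free}), and since any extension of $L[1]$ by $L'[1]$ in ${}^t\M[1/p]$ must again be concentrated in degree $-1$ (the semi-abelian part is an extension of $0$ by $0$), such an extension is the same datum as an extension of \'etale sheaves $0\to L'\to E\to L\to 0$, which gives the isomorphism with $\Ext^1_k(L,L')$. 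For (b): an extension of $L[1]$ by $G'$ is represented by an effective $1$-motive $[L\by{u}G']$, and two such are equivalent iff they differ by a morphism $L\to G'$; this is a standard unwinding showing $\Ext^1_{{}^t\M}(L[1],G')\cong \Hom_k(L,G')$ — I would phrase it using the defining property of $\Tot$ or directly via Yoneda. For (c): an extension of $G$ by $G'$ in ${}^t\M[1/p]$ is again concentrated in degree $-1$ (the lattice parts are trivial), hence is an extension of group schemes, giving $\Ext^1_k(G,G')$; conversely any such group-scheme extension is a semi-abelian variety since extensions of semi-abelian by semi-abelian are semi-abelian.

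For (d), which I expect to be the crux: first reduce to $L'$ torsion by applying $\Hom_{{}^t\M}(G,-)$ to \eqref{shortfree} for $L'$, noting $\Hom_{{}^t\M}(G,L'_{\fr}[1])=\Ext^1_{{}^t\M}(G,L'_{\fr}[1])=0$. Indeed the vanishing of $\Ext^1$ onto a free lattice placed in degree $-1$ follows because such an extension, after $\otimes\Q$, splits by Corollary \ref{isofree} (as ${}^t\M^{\tor}\otimes\Q=0$), and integrally an extension of $G$ by $L'_{\fr}[1]$ is classified by $\Hom_{\mathrm{cont}}(\hat T(G),L'_{\fr})=0$ since $\hat T(G)$ is a profinite module mapping to a finitely generated torsion-free group. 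So assume $nL'=0$. Apply $\Hom_{{}^t\M}(-,L'[1])$ to \eqref{ngexseq}, i.e. $0\to G\by{n}G\to {}_nG[1]\to 0$: using Proposition \ref{hom}(d) and the fact (again from the $\hat T$-description, or from Theorem \ref{ptors} which identifies torsion objects of $D^b(\M[1/p])$ with torsion \'etale sheaves) that $\Ext^1_{{}^t\M}({}_nG[1],L'[1])\cong \Ext^1_k({}_nG,L')$, a short diagram chase on the long exact sequence yields $\Ext^1_{{}^t\M}(G,L'[1])\cong \colim_n \Ext^1_k({}_nG,L')$, the colimit being over $n$ with $nL'=0$ and transition maps induced by $[n']:{}_{nn'}G\to{}_nG$. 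The last assertion: if $k=\bar k$ and $L'$ is torsion, then ${}_nG$ is a finite constant group, $L'$ is finite constant, and $\Ext^1_k({}_nG,L')=H^1_{\et}(\Spec k,\ihom({}_nG,L'))=0$ since $\Spec\bar k$ has no higher \'etale cohomology; one also checks directly $\Hom_{{}^t\M}(G,L'[1])=0$ in this case from Proposition \ref{hom}(d) with an analogous vanishing, so both groups vanish.

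The main obstacle is the careful bookkeeping in (d): one must be sure that the extension group $\Ext^1_{{}^t\M}({}_nG[1],L'[1])$ really equals $\Ext^1_k({}_nG,L')$ and not merely maps to it, and that the transition maps in the colimit are the expected ones. I would handle this by passing through $D^b(\M[1/p])$ and Theorem \ref{ptors}: both ${}_nG[1]$ and $L'[1]$ are torsion objects, $A$ (in the notation of Theorem \ref{ptors}) is an equivalence on torsion objects, and on torsion objects the derived category of $1$-motives is equivalent to $D^b$ of torsion discrete \'etale sheaves, where the Ext computation is exactly $\Ext^1_k$. The other mildly delicate point is justifying that an extension of two $1$-motives of ``pure'' shape (both free, or both semi-abelian) stays of that shape in ${}^t\M[1/p]$; this follows from Corollary \ref{corexseq}, which lets us represent any short exact sequence by a strict effective epimorphism, i.e. by an honest short exact sequence of complexes of length $1$, so the middle term's lattice (resp. semi-abelian) part is an extension of the outer ones'.
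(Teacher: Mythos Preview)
Your treatment of (a)--(c) is essentially the paper's argument, phrased through Corollary~\ref{corexseq}: once a short exact sequence in ${}^t\M[1/p]$ is represented by an exact sequence of complexes, the shape of the middle term is forced. (The paper is slightly more careful in (b) and (c), allowing the sub-object to be a \qi\ representative $[F'\to\tilde G']$ of $[0\to G']$ before modding out by $F'$, but your sketch captures the content.)

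Part (d), however, has a genuine error. You claim $\Ext^1_{{}^t\M}(G,L'_{\fr}[1])=0$ in order to reduce to the torsion case, but this contradicts the very statement you are proving. Taking $L'=L'_{\fr}$ free, (d) asserts
\[
\Ext^1_{{}^t\M}(G,L'[1])\;\cong\;\varinjlim_n \Ext^1_k({}_nG,L'),
\]
and this colimit is typically nonzero: since $\Hom_k({}_rG,L')=0$ for $L'$ torsion-free, the long exact sequence attached to $0\to{}_rG\to{}_mG\by{r}{}_nG\to 0$ shows the transition maps $\Ext^1_k({}_nG,L')\to\Ext^1_k({}_mG,L')$ are injective. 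Concretely, for $k=\bar k$, $G=\G_m$, $L'=\Z$ one has $\Ext^1_k(\mu_n,\Z)=\Z/n\ne 0$. Your justification --- ``integrally an extension of $G$ by $L'_{\fr}[1]$ is classified by $\Hom_{\mathrm{cont}}(\hat T(G),L'_{\fr})=0$'' --- confuses $\Hom$ with $\Ext^1$: the $\hat T(G)$ description in the remark after Proposition~\ref{hom} computes $\Hom_{{}^t\M}(G,L'[1])$, not its $\Ext^1$.

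The paper avoids this by constructing the comparison map directly: each class in $\Ext^1_k({}_nG,L')$ gives, via the \qi\ $[{}_nG\to G]\to[0\to G]$, an explicit extension of $G$ by $L'[1]$ in ${}^t\M[1/p]$, yielding compatible maps $\Phi_n$ and hence $\Phi:\varinjlim_n\Ext^1_k({}_nG,L')\to\Ext^1_{{}^t\M}(G,L'[1])$. Surjectivity is immediate (multiplication-by-$n$ isogenies are cofinal among \qi's to $G$). Injectivity is then checked separately for $L'$ torsion (where, over $\bar k$, both sides vanish) and for $L'$ free (where one analyses what a splitting in ${}^t\M[1/p]$ forces on the original extension of ${}_nG$ by $L'$); the general case follows by d\'evissage. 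Your long-exact-sequence idea from \eqref{ngexseq} could in principle be made to work, but not via the false reduction to torsion $L'$.
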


\begin{proof} By Corollary \ref{corexseq}, any short exact sequence
of $1$-motives can be represented up to isomorphism by a short exact
sequence of complexes in which each term is an effective $1$-motive.

For (a), just observe that there are no nontrivial \qi of
$1$-motives  with zero semi-abelian part. For (b), note that an
extension of $L[1]$ by $G'$ is given by a diagram 
\begin{equation}\label{ex1} \begin{CD}
0@>>> F' @>{}>> L''@>>> L@>>>  0\\
&&@V{}VV@V{v}VV@V{}VV  \\
0@>>> \tilde G' @>{}>> G''@>>> 0@>>>  0
\end{CD}
\end{equation}
where $\tilde M' = [F'\to \tilde G']$ is \qi to $[0\to G']$. When
$F'=0$, this diagram is equivalent to the datum of $v$: this
provides a linear map $\Hom_k (L, G')\to \Ext^1_{{}^t\M} (L[1],G')$.
This map is surjective since we may always mod out by $F'$ in
\eqref{ex1} and get an quasi-isomorphic exact sequence with $F'=0$.
It is also injective: if \eqref{ex1} (with $F'=0$) splits in ${}^t\M[1/p]$,
it already splits in ${}^t\M^{\eff}[1/p]$ and then $v =0$. 

For (c) we see that an extension of $G$ by $G'$ in ${}^t\M[1/p]$ can be
represented by a diagram 
\[ \begin{CD}
0@>>> F' @>{}>> L''@>>> F@>>>  0\\
&&@V{}VV@V{}VV@V{}VV  \\
0@>>> \tilde G' @>{}>> G''@>>> \tilde G@>>>  0
\end{CD}\]
with $\tilde M'$ as in (b) and $\tilde M = [F\to \tilde
G]$ \qi to $[0\to G]$. Since the top line is exact, $L''$ is
finite. For $F=F'=0$ we just get a group scheme extension of $G$ by
$G'$, hence a homomorphism $\Ext^1_k(G,G')\to \Ext^1_{{}^t\M}(G,G')$.
This homomorphism is surjective:  dividing by
$F'$ we get a quasi-isomorphic exact sequence
\[ \begin{CD}
0@>>> 0 @>{}>> L''/F'@>{\sim}>> F@>>>  0\\
&&@V{}VV@V{}VV@V{}VV  \\
0@>>> G' @>{}>> G''/F'@>>> \tilde G@>>>  0
\end{CD}\]
and further dividing by $F$ we then obtain 
\[ \begin{CD}
0@>>> 0 @>{}>> 0@>>> 0@>>>  0\\
&&@V{}VV@V{}VV@V{}VV  \\
0@>>> G' @>{}>> G''/L''@>>> G@>>>  0.
\end{CD}\]

Injectivity is seen as in (b).

For (d) we first construct a map $\Phi_n:\Ext^1_k({}_nG,L') \to
\Ext^1_{{}^t\M}(G,L'[1])$ for all $n$.  Let $[L'']\in \Ext_k
({}_nG,L')$ and consider the following diagram
\begin{equation}\label{ex2} 
\begin{CD}
0@>>> L' @>{}>> L''@>>> {}_nG@>>>  0\\
&&@V{}VV@V{}VV @V{}VV  \\
0@>>> 0 @>{}>> G@= G@>>>  0.
\end{CD}\end{equation}

Since $[{}_nG \to G]$ is \qi to $[0\to G]$, this provides an
extension of $G$ by $L'[1]$ in ${}^t\M[1/p]$. For $n$ variable $\{\Ext^1_k
({}_nG,L')\}_n$ is a direct system and one checks easily that the
maps $\Phi_n$ are compatible (by pull-back), yielding a
well-defined linear map
\[\Phi:\varinjlim \Ext^1_k ({}_nG,L')\to\Ext^1_{{}^t\M} (G,
L'[1]).\] 

This map is surjective since any extension of $G$ by $L'[1]$ can be
represented by a diagram \eqref{ex2} for some $n$ (as
multiplication by $n$ is cofinal in the direct sytem of isogenies).
We now show that $\Phi$ is also injective. 

In fact, we have a short exact sequence \eqref{ngexseq} of 1-motives
yielding the following short exact sequence
$$0\to \Hom_{{}^t\M} (G,L'[1])\otimes \Z/n\to \Ext^1_{{}^t\M} ({}_nG[1],L'[1])\to {}_n\Ext^1_{{}^t\M} (G, L'[1])\to 0$$
Passing to the limit we obtain
$$
\begin{CD}
\ker \Phi@>>>\varinjlim \Ext^1_k ({}_nG,L') @>\Phi>> \Ext^1_{{}^t\M} (G,
L'[1])\\
||&&||&&||\\
 0@>>>\varinjlim\Ext^1_{{}^t\M} ({}_nG[1],L'[1]) @>>>\varinjlim {}_n\Ext^1_{{}^t\M} (G, L'[1])
\end{CD}
$$
In fact, $\varinjlim \Hom_{{}^t\M} (G,L'[1])\otimes \Z/n=0$ because $\Hom_{{}^t\M} (G,L'[1])$ is a finite group by Proposition~\ref{hom} (d), $\Ext^1_{{}^t\M} (G, L'[1])$ is torsion as $\Phi$ is surjective and we then just apply (a).

Finally, let $n\mid m$, \eg  $rn =m$, so that the following sequence is exact
$$0\to {}_rG\to {}_mG\longby{r} {}_nG\to 0$$
and yields a long exact sequence
$$\Hom_k ({}_mG,L')\to \Hom_k ({}_rG,L')\to \Ext^1_k
({}_nG,L')\longby{r}
\Ext^1_k ({}_mG,L').$$

If $L'$ is torsion, we have $rL'=0$ for some $r$, hence  
$\varinjlim \Ext^1_k ({}_nG,L')=0$ if $k$ is algebraically closed.
In particular, it shows that  $\Ext^1_{{}^t\M}(G,L'[1])\allowbreak=0$ in this case. 
\end{proof}

\subsection{Projective objects in ${}^t\M[1/p]$}
We show that there are not enough projective objects in ${}^t\M[1/p]$, at least when $k$ is
algebraically closed:

\begin{propose}\label{proj} Suppose that $k=\bar k$.  Then the only projective object of
${}^t\M[1/p]$  is $0$.
\end{propose}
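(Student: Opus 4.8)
The plan is to show that a nonzero projective $P\in {}^t\M[1/p]$ cannot exist by producing, for any candidate $P$, a surjection onto it in ${}^t\M[1/p]$ that does not split. First I would reduce to understanding what surjections onto $P$ look like. Write $P=[L\by{u}G]$ with $G$ an extension of an abelian variety $A$ by a torus $T$. The strategy is to find an epimorphism $Q\onto P$ whose kernel is a torsion $1$-motive, so that splitting would force $P$ to be a direct summand of $Q$ through a section, and then to contradict this using the explicit $\Hom$ and $\Ext$ computations of Propositions \ref{hom} and \ref{ext}.

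The key construction is to use multiplication by $n$. For $n$ prime to $p$ there is an exact sequence in ${}^t\M[1/p]$
\[0\to {}_nP\to P\by{n} P\to P/n\to 0,\]
but more to the point, using \eqref{ngexseq} and \eqref{stexseq}, an extension of $G$ by the torsion $1$-motive ${}_nG[1]=[{}_nG\to 0]$ realises $G$ as a quotient $G\onto G$ by the $n$-isogeny, and similarly one can build an epimorphism onto $P$ itself. Concretely: the $n$-isogeny $G\to G$ together with the pull-back of $L$ along it gives a $1$-motive $\tilde P_n$ and a quasi-isomorphism-type epimorphism $\tilde P_n\onto P$ with torsion kernel $[{}_nG\to 0]$ (when $G\ne 0$) — compare the exact sequence \eqref{nexseq}. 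If $P$ is projective, this epimorphism admits a section $s:P\to \tilde P_n$. Now I would analyse $s$ on semi-abelian parts: by Proposition \ref{hom}(c), the semi-abelian component of $s$ is a homomorphism $G\to G$ lifting $n\cdot\mathrm{id}$ through the $n$-isogeny, which is impossible unless $G=0$, since $T_\ell(G)$ is torsion-free and multiplication by $n$ on it has no $n$-divisible splitting (equivalently, the $n$-isogeny $G\to G$ has no section, as its kernel ${}_nG$ is a nontrivial finite group whenever $G\ne 0$). This forces $G=0$, so $P=[L\to 0]=L[1]$ is discrete.

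It remains to rule out a nonzero discrete projective $P=L[1]$. Here I would use Proposition \ref{ext}(a): $\Ext^1_{{}^t\M}(L[1],L'[1])\simeq \Ext^1_k(L,L')$, the group of extensions of the $G_k$-module $L$ by $L'$. Since $L$ is a finitely generated $\Z[1/p]$-module with continuous $G_k$-action (factoring through a finite quotient $\Gamma$), it has nontrivial extensions in general — e.g. take $L'=L_{\tor}\ne 0$ if $L$ has torsion (the sequence \eqref{shortfree} $0\to [L_{\tor}\to 0]\to L[1]\to [L_{\fr}\to 0]\to 0$ itself), or if $L$ is a nonzero lattice take a non-split extension of $\Z$-modules with $\Gamma$-action, which exists because $H^1$ of a finite group with coefficients in a lattice can be nonzero. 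More uniformly, one shows $\Ext^1_k(L,L/n)\ne 0$ for suitable $n$ via the exact sequence attached to $0\to L\by{n}L\to L/n\to 0$, using that $\mathrm{Hom}_k(L,L)\to \mathrm{Hom}_k(L,L/n)$ is not surjective when $L$ is not $n$-divisible; this gives a non-split surjection onto $L[1]$ after dualising appropriately, contradicting projectivity. (One must check the surjection has the right form, i.e. is genuinely an epimorphism in ${}^t\M[1/p]$ with the extension class nonzero — this follows from Corollary \ref{corexseq} and the identification of $\Ext^1$.)

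The main obstacle I anticipate is the discrete case: arranging a concrete non-split \emph{surjection} onto $L[1]$ rather than merely a non-split extension $0\to L[1]\to ?\to ?\to 0$. Projectivity is a lifting property against epimorphisms, so I need $L[1]$ to appear as the \emph{quotient}. The cleanest route is probably to observe that every discrete sheaf $L$ admits a surjection from a permutation module $\Z[\Gamma]^r\onto L$ with nontrivial kernel (as in the proof of Theorem \ref{ptors}(a)), giving an epimorphism $[\Z[\Gamma]^r\to 0]\onto L[1]$ in ${}^t\M^{\tor}[1/p]$-style, and then to show this does not split because $\Hom_{{}^t\M}([\text{kernel}][1],L[1])=\Hom_k(\text{kernel},L)$ need not contain a retraction — which one forces by choosing $L$, or rather by choosing the surjection so that $L$ is not a direct summand of $\Z[\Gamma]^r$ as a $\Gamma$-module (possible whenever $L\ne 0$ is not $\Z[\Gamma]$-projective, and even when it is, by iterating or using torsion). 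Assembling this carefully, together with the $G=0$ reduction above, completes the argument that $P=0$.
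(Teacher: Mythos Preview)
Your Step~1 does not work. With the pullback construction you describe, $\tilde P_n=[\tilde L\to G]$ where $\tilde L=L\times_{G,u,n}G$ and the map $\tilde L\to G$ is $(l,g)\mapsto g$; the map $\tilde P_n\to P$ is then $(\pi_L,n)$. The kernel \emph{as a complex} is $[{}_nG\xrightarrow{=}{}_nG]$, not $[{}_nG\to 0]$: an element $(0,h)$ of $\ker\pi_L$ maps to $h\in{}_nG\subset G$. Thus $\tilde P_n\to P$ is a quasi-isomorphism in the sense of \eqref{qi1mot}, hence an \emph{isomorphism} in ${}^t\M[1/p]$, and of course has a section --- its inverse --- giving no contradiction. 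More generally, any effective epimorphism whose $G$-component is the $n$-isogeny will, after the factorisation of Proposition~\ref{pstrict}, have its ``strict part'' with $G$-component the identity; so even if you produce a genuine epimorphism this way, a section $s$ in ${}^t\M[1/p]$ need not be effective, and you cannot conclude that $s_G\in\End(G)$ satisfies $n\cdot s_G=1_G$. The ``no section of the $n$-isogeny'' idea simply does not survive localisation at quasi-isomorphisms.

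Your Step~2 also has a real gap. Over $k=\bar k$ the Galois group is trivial, so for $L$ free one has $\Ext^1_k(L,L')=\Ext^1_\Z(L,L')=0$ for \emph{every} $L'$, and any surjection $\Z^r\onto L$ splits. All the candidates you list stay inside the discrete world and therefore cannot obstruct projectivity of a nonzero lattice. The point you are missing is Proposition~\ref{ext}(b): $\Ext^1_{{}^t\M}(L[1],[0\to G'])\cong\Hom_k(L,G')$, which is nonzero as soon as $L\ne 0$ and $G'\ne 0$. Concretely, the epimorphism $[\Z\xrightarrow{t}\G_m]\onto[\Z\to 0]$ is non-split for $t\ne 1$. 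The paper's proof avoids hunting for explicit epimorphisms altogether: assuming $\Ext^1(M,-)\equiv 0$, it evaluates at $N=\G_m$ (forcing $A=0$ and $L$ finite via the divisibility of $A'(k)$ and of $k^*$) and at $N=\Z/l[1]$ (forcing $L=0$), then finishes $[0\to T]$ by exhibiting a nonzero $\Ext^1$. That route is both shorter and sidesteps the pitfalls above.
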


\begin{proof} Suppose that $M =[L\to G]\in {}^t\M[1/p]$ is such that $\Ext (M, N)\allowbreak
=0$ for any $N\in {}^t\M[1/p]$. From \eqref{stexseq}
we then get a long exact sequence
\begin{multline*}
\Hom (G,\G_m)\to\Ext (L[1],\G_m)\to\Ext (M,\G_m)\\
\to\Ext (G,\G_m)\to
\Ext^2 (L[1],\G_m)
\end{multline*}
where $\Ext (M,\G_m)=0$, thus {\it i)}\,  $\Ext (G,\G_m)$ is finite, and 
{\it ii)}\,  $\Hom (L,\G_m)$ is finitely generated.
We also have an exact sequence
$$\Hom (T,\G_m)\to\Ext (A,\G_m)\to\Ext (G,\G_m)$$
where $\Hom (T,\G_m)$ is the character group of the torus $T$ and $\Ext
(A,\G_m)$ is the group of $k$-points of the dual abelian variety $A$, the
abelian quotient of $G$. From {\it i)}\, we get $A=0$. Since  $\Hom
(L,\G_m)$ is an extension of a finite group by a divisible group, from {\it
ii)}\, we get that $L$ is a finite group.
Now consider the exact sequence, for $l\ne p$
\begin{multline*}
0\to\Hom (L[1],\Z/l[1])\to\Hom (M,\Z/l[1])\\
\to\Hom (T,\Z/l[1])\to 
\Ext (L[1],\Z/l[1])\to 0
\end{multline*}
where the right-end vanishing is $\Ext (M,\Z/l[1])=0$ by assumption. Now 
$\Hom (T,\Z/l[1])= \Ext (T,\Z/l)$ and any extension of the torus $T$ is
lifted to an extension of $M$ by $\Z/l$, therefore to an element of 
$\Hom (M,\Z/l[1])\allowbreak = \Ext (M,\Z/l)$. This yields $\Ext
(L,\Z/l)=0$ for any prime $l\ne p$, thus we see that $L=0$.

Finally, $[0\to \G_m]$ is not projective since $\Ext^1(\G_m,\Z[1])\ne 0$ by Proposition \ref{ext} (d).
\end{proof}

\subsection{Weights}\label{sweights}
If $M = [L\by{u}G]\in {}^t\M[1/p]$ is free then Deligne \cite{D} equipped $M = M_{\fr}$ with an
increasing filtration by sub-$1$-motives as follows:
$$W_{-2}(M) \df [0\to T]\subseteq W_{-1}(M)\df [0\to G] \subseteq W_{0}(M) \df M$$

If $M$ is torsion-free we then pull-back the weight filtration along the 
effective map $M \to M_{\fr}$ as follows:
$$W_i(M) \df \left \{\begin {array}{cl} M & i\geq 0\\{} 
[L_{\tor}\into G]& i= -1\\{}
[L_{\tor}\cap T\into T] & i= -2\\ 0 & i \leq -3 \end{array} \right. $$
Note that $W_i(M) $ is \qi to $W_i(M_{\fr})$.

If $M$ has torsion we then further pull-back the weight filtration along the 
effective map $M \to M_{\tf}$. 
\begin{defn}
{\rm Let $M = [L\by{u}G]$ be an effective $1$-motive. Let $u_A : L \to A$
denote the induced map where $A = G/T$. Define
$$W_i(M) \df \left \{\begin {array}{cl} M & i\geq 0\\{} 
[L_{\tor}\to G]& i= -1\\{}
[L_{\tor}\cap \ker (u_A)\to T] & i= -2\\{}
M_{\tor} = L_{\tor}\cap \ker (u)[1] & i = -3\\
0 & i \leq -4 \end{array} \right.$$}
\end{defn}

\begin{remark}It is easy to see that $M\mapsto W_i(M)$ yields a functor
from ${}^t\M[1/p]$ to ${}^t\M[1/p]$. However, this does not define a weight filtration on ${}^t\M[1/p]$ in the sense of Definition \ref{dE.6} or Remark \ref{lF.3.6}.
\end{remark}

\subsection{$1$-motives over a base} \label{1motbase}
Let $S$ be a scheme. According to \cite[(10.1.10)]{D}, a
smooth $1$-motive over $S$ is a complex $[L\to G]$ of $S$-group schemes where $L$ is a lattice
(corresponding to a locally constant $\Z$-constructible free \'etale sheaf) and $G$ is an
extension of an abelian scheme $A$ by a torus $T$. By \cite[Cor. 2.11]{fa-ch}, it is sufficient
to have this condition fibre by fibre as long as the rank of $T_s$ is locally constant.  Smooth
$1$-motives form an additive category denoted (here) by $\M(S)$. A smooth $1$-motive is
provided with a weight filtration as over a field.

We don't know when $\M(S)\otimes \Q$ is abelian, but we only use its existence and functoriality in $S$ in Section \ref{elladic}, along with that of $\ell$-adic realisation functors extending the one of \S \ref{s.ladic}. See \cite{spl} for some results on relative $1$-motives.

\section{Weight filtrations}\label{AppendixD}

In this appendix, we propose a theory of weight filtrations adapted to our needs. This is
closely related to U. Jannsen's and A. Huber's setting in \cite{jannsen} and
\cite[1.2]{HuberLN}, and fits perfectly with S. Morel's viewpoint in \cite{morel}. We relate it
precisely to Jannsen's approach in Remark \ref{lF.3.6}. After this theory was written up, Bondarko proved in \cite{bondarko4} that it is ``compatible'' (via the realisation functors
considered in Section \ref{axDel}) with the weight structure he constructed in \cite{bondarko2} on
$\DM_\gm^\eff$, by introducing a notion of transversality between a weight structure and a $t$-structure.

\subsection{Filtrations of abelian categories}

\subsubsection{A glueing lemma} Let $\cA$ be an abelian category. Consider two exact sequences
\[\begin{CD}
0@>>> A'@>>> A@>>> A''@>>> 0\\
0@>>> B'@>>> B@>>> B''@>>> 0
\end{CD}\]
and two morphisms $f':A'\to B'$, $f'':A''\to B''$. We say that $f:A\to B$ is a \emph{glueing}
of $f'$ and $f''$ if it yields a commutative diagram of exact sequences.

\begin{lemma}\label{lglue0} Let $[A]\in \Ext^1_\cA(A'',A')$ and $[B]\in \Ext^1_\cA(B'',B')$ be
the extension classes of
$A$ and $B$.
\begin{enumerate}
\item For a glueing to exist, it is necessary and sufficient that $f'_*[A]={f''}^*[B]\in
\Ext^1_\cA(A'',B')$.
\item For a glueing to be unique, it is necessary and sufficient that $\Hom_\cA(A'',B')=0$.
\item Suppose that $\cA$ is the heart of a triangulated category $\cT$ provided with a
$t$-structure. Then Condition (1) is equivalent to the following: the diagram
\[\begin{CD}
A''@>[A]>> A'[1]\\
@V{f''}VV @V{f'}VV\\
B''@>[B]>> B'[1]
\end{CD}\] 
commutes in $\cT$.
\end{enumerate}
\end{lemma}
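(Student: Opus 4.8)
The plan is to prove the three assertions of Lemma~\ref{lglue0} in turn, using only elementary diagram-chasing in $\cA$ for (1) and (2), and the translation dictionary between the $t$-structure and short exact sequences for (3).

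First I would treat (1). A glueing $f$, if it exists, gives a morphism of short exact sequences, hence by functoriality of the connecting map in $\Ext^1$ a commutative square relating the Yoneda classes, which is exactly $f'_*[A]={f''}^*[B]$; this proves necessity. For sufficiency, assume $f'_*[A]={f''}^*[B]$. Form the pullback $E\df A''\times_{B''}B$ along $f''$ and the projection $B\to B''$: this sits in an exact sequence $0\to B'\to E\to A''\to 0$ whose class is ${f''}^*[B]$. On the other hand, the pushout of $A$ along $f'$ gives an exact sequence $0\to B'\to A\amalg_{A'}B'\to A''\to 0$ with class $f'_*[A]$. The hypothesis says these two extensions of $A''$ by $B'$ are equivalent, so there is an isomorphism $E\iso A\amalg_{A'}B'$ compatible with the inclusions of $B'$ and the projections to $A''$. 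Composing the canonical map $A\to A\amalg_{A'}B'$ with this isomorphism and with the projection $E\to B$ produces the desired $f\colon A\to B$; one then checks by direct chase that $f$ restricts to $f'$ on $A'$ and induces $f''$ on $A''$. This construction is routine, so I would only sketch it.

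Next, (2). Given two glueings $f_1,f_2$, their difference $g=f_1-f_2\colon A\to B$ kills $A'$ (both restrict to $f'$), hence factors through $A''$; and its composition with $B\to B''$ vanishes (both induce $f''$), hence it factors through $B'$. Thus $g$ is the image of an element of $\Hom_\cA(A'',B')$. Conversely, any $h\in\Hom_\cA(A'',B')$ yields, via $A\to A''\by{h}B'\to B$, a map which can be added to a glueing to produce another glueing. Hence glueings form a torsor under $\Hom_\cA(A'',B')$, and uniqueness is equivalent to the vanishing of that group. This is short and I would present it in full.

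Finally, (3) is a translation statement. When $\cA$ is the heart of a $t$-structure on $\cT$, a short exact sequence $0\to A'\to A\to A''\to 0$ in $\cA$ corresponds to an exact triangle $A'\to A\to A''\by{[A]}A'[1]$, and the Yoneda class $[A]\in\Ext^1_\cA(A'',A')$ is identified with the morphism $[A]\colon A''\to A'[1]$ in $\cT$ (via $\Ext^1_\cA(A'',A')\simeq\Hom_\cT(A'',A'[1])$, using that the $t$-structure is bounded enough near the heart for this identification — which holds for the Yoneda $\Ext^1$). Under this dictionary, $f'_*[A]$ corresponds to $f'[1]\circ[A]$ and ${f''}^*[B]$ to $[B]\circ f''$, so the equality $f'_*[A]={f''}^*[B]$ of $\Ext^1$-classes is literally the commutativity of the displayed square in $\cT$. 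I expect the main (very mild) obstacle here to be making precise the identification of the Yoneda $\Ext^1$ in $\cA$ with $\Hom_\cT(-,-[1])$ and checking naturality of that identification in both variables; this is standard (e.g.\ it is part of the basic theory of $t$-structures, cf.\ \cite{BBD}), so I would cite it rather than reprove it.
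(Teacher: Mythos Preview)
Your proposal is correct and follows essentially the same approach as the paper, which is extremely terse (the paper's entire proof reads: condition (1) is ``clearly necessary'' and sufficient ``by definition'' of equality of extension classes; (2) is ``obvious''; (3) is ``clear by Axiom TR3''). Your pushout/pullback construction for (1) and your torsor argument for (2) are precisely the unpacked versions of what the paper gestures at. For (3), you argue via the identification $\Ext^1_\cA(-,-)\simeq\Hom_\cT(-,-[1])$ on the heart, whereas the paper invokes TR3; these are two phrasings of the same content, and your citation of \cite{BBD} for the identification is appropriate.
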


\begin{proof} (1). The condition is clearly necessary, and it is sufficient since by
definition, two extension classes are equal if there exists an isomorphism between the
corresponding extensions.

(2) is obvious since a difference of two glueings is given by a map from $A''$ to $B'$.

(3) is clear by Axiom TR3 of triangulated categories.
\end{proof}

\subsubsection{Extensions panach\'ees} Let $\cA$ still be an abelian category, and
consider now a filtered object
\[
0\subseteq A_{-2}\subseteq A_{\le -1}\subseteq A.
\]

We write $A_{-1}=A_{\le -1}/A_{-2}$, $A_0=A/A_{\le -1}$, $A_{>-2}=A/A_{-2}$, so that we have a
commutative diagram of exact sequences
\begin{equation}\label{eqpan}
\begin{CD}
&&&&0 &&0\\
&&&&@VVV @VVV \\
0@>>> A_{-2}@>j>> A_{\le -1}@>\pi>> A_{-1}@>>> 0\\
&& ||&& @V{\tilde i}VV @V{i}VV\\
0@>>> A_{-2}@>\tilde j>> A@>\tilde\pi>> A_{>-2}@>>> 0\\
&&&&@V{\tilde\varpi}VV @V{\varpi}VV\\
&&&& A_0@= A_0\\
&&&& @VVV @VVV\\
&&&& 0 && 0.
\end{CD}
\end{equation}

From the viewpoint of \cite[Exp. IX, \S 9.3]{sga7}, this displays $A$ as
an \emph{extension panach\'ee}\footnote{L. Breen suggested
\emph{blended extension} for an English translation: we learned this from D. Bertrand.} of
$A_{>-2}$ by $A_{\le -1}$. We shall call the datum of this diagram minus
$A$ a
\emph{panachage datum} (donn\'ee de panachage). There is an obvious notion of morphism of
panachage data.

The following is a variant of \cite[\S 1]{bertrand}.

\begin{lemma} Given a panachage datum as above,
\begin{enumerate}
\item An extension panach\'ee exists if and only if the Yoneda product of the extensions
$A_{\le -1}$ and $A_{>-2}$ is $0$.
\item  An extension panach\'ee of $A_{>-2}$ by
$A_{\le -1}$ corresponds up to isomorphism to:
\begin{thlist}
\item an extension class $\alpha\in \Ext^1_\cA(A_0,A_{\le -1})$;
\item an extension class $\tilde\alpha\in \Ext^1_\cA(A_{>-2},A_{-2})$;\\
such that
\item $\pi_*\alpha = [A_{>-2}]$, $i^*\tilde\alpha=[A_{\le -1}]$.
\end{thlist}
\item Let $A,A'$ be two extensions panach\'ees, with classes $(\alpha,\tilde\alpha)$,
$(\alpha',\tilde\alpha')$ as in (1). Then $\alpha-\alpha'$ and $\tilde\alpha-\tilde\alpha'$
come from classes
$\gamma,\delta\in \Ext^1_\cA(A_0,A_{-2})$, well-defined modulo the images of
$\Hom_\cA(A_0,A_{-1})$ and $\Hom_\cA(A_{-1},A_{-2})$ respectively. Moreover, 
\[\gamma=\delta\in\coker\left(\Hom_\cA(A_0,A_{-1})\oplus \Hom_\cA(A_{-1},A_{-2})\to
\Ext^1_\cA(A_0,A_{-2})\right).\]
\end{enumerate}
\end{lemma}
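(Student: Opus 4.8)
The plan is to treat the three parts in order, reducing everything to the glueing criterion of Lemma \ref{lglue0} and the long exact sequences obtained by applying $\Hom_\cA(-,-)$ to the rows and columns of \eqref{eqpan}. For part (1), the point is the standard translation: an extension panach\'ee of $A_{>-2}$ by $A_{\le -1}$ with the prescribed subquotients exists if and only if the obstruction to glueing the extension $[A_{\le -1}]\in\Ext^1_\cA(A_{-1},A_{-2})$ with $[A_{>-2}]\in\Ext^1_\cA(A_0,A_{>-2})$ vanishes, and this obstruction is exactly the Yoneda composite in $\Ext^2_\cA(A_0,A_{-2})$. Concretely I would write $A$, if it exists, as the pullback along $\varpi:A_{>-2}\to A_0$ of nothing and instead argue as in \cite[Exp. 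IX, 9.3]{sga7}: the class $\tilde\alpha\in\Ext^1_\cA(A_{>-2},A_{-2})$ of the second row maps to $[A_{\le -1}]$ under $i^*$ precisely because pulling back the middle row along $\tilde i$ gives the top row; dually the class $\alpha\in\Ext^1_\cA(A_0,A_{\le-1})$ of the middle column maps to $[A_{>-2}]$ under $\pi_*$ because pushing out along $\tilde\pi$ gives the right column. Conversely, given $\alpha$ and $\tilde\alpha$ satisfying (iii), one builds $A$ as the fibre product $E_{\tilde\alpha}\times_{A_{>-2}} E'$ where $E_{\tilde\alpha}$ represents $\tilde\alpha$ and $E'$ is the extension of $A_{>-2}$ by $A_{-1}$ obtained from $\alpha$ by quotienting $A_{\le -1}$ by $A_{-2}$; checking that this carries the required filtration and has the right subquotients is a routine diagram chase. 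That the vanishing of the Yoneda product is equivalent to the solvability of (iii) follows by considering the connecting maps in $\Ext^\bullet_\cA(A_0,-)$ applied to the top row and in $\Ext^\bullet_\cA(-,A_{-2})$ applied to the right column: the Yoneda product is the common image of $[A_{\le-1}]$ and $[A_{>-2}]$ in $\Ext^2_\cA(A_0,A_{-2})$, so it is $0$ iff both classes lift, iff $\alpha$ and $\tilde\alpha$ exist.

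For part (2) the claim is that the pair $(\alpha,\tilde\alpha)$ of part (1)(i)--(ii), subject to the compatibility (iii), determines $A$ up to isomorphism of extensions panach\'ees, and conversely. One direction is the construction just sketched; for the other, given $A$, I take $\alpha$ to be the class of the middle column $0\to A_{\le-1}\to A\to A_0\to 0$ and $\tilde\alpha$ to be the class of the middle row $0\to A_{-2}\to A\to A_{>-2}\to 0$, and then (iii) is immediate from diagram \eqref{eqpan} as noted above. The fact that $(\alpha,\tilde\alpha)$ determines $A$ up to isomorphism is a consequence of the five lemma once one knows that a morphism of panachage data together with compatible isomorphisms on $\alpha$ and $\tilde\alpha$ forces an isomorphism on $A$; this is where I would invoke Lemma \ref{lglue0}(1)--(2) on the appropriate pairs of rows.

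Part (3) is the heart of the matter and is where I expect the real work. Given two extensions panach\'ees $A, A'$ with the \emph{same} panachage datum, the differences $\alpha-\alpha'$ and $\tilde\alpha-\tilde\alpha'$ lie in the kernels of $\pi_*$ and $i^*$ respectively, hence by the long exact sequences
\[
\Hom_\cA(A_0,A_{-1})\to\Ext^1_\cA(A_0,A_{-2})\xrightarrow{j_*}\Ext^1_\cA(A_0,A_{\le-1})\xrightarrow{\pi_*}\Ext^1_\cA(A_0,A_{-1})
\]
and
\[
\Hom_\cA(A_{-1},A_{-2})\to\Ext^1_\cA(A_0,A_{-2})\xrightarrow{\tilde\pi^*}\Ext^1_\cA(A_{>-2},A_{-2})\xrightarrow{i^*}\Ext^1_\cA(A_{\le-1},A_{-2})
\]
they come from classes $\gamma,\delta\in\Ext^1_\cA(A_0,A_{-2})$, well-defined modulo $\mathrm{im}\,\Hom_\cA(A_0,A_{-1})$ and $\mathrm{im}\,\Hom_\cA(A_{-1},A_{-2})$ respectively (I should check the indicated connecting maps are the relevant ones, using that the row $0\to A_{-2}\to A_{\le-1}\to A_{-1}\to 0$ gives the first and the column $0\to A_{-1}\to A_{>-2}\to A_0\to 0$ the second). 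The nontrivial assertion is that $\gamma$ and $\delta$ have the same image in the quotient of $\Ext^1_\cA(A_0,A_{-2})$ by the images of \emph{both} $\Hom$ groups. The main obstacle is to see this equality; I plan to prove it by choosing actual cocycle-style representatives — equivalently, by passing to a triangulated realisation $\cT$ with heart $\cA$ (or working in $\mathrm{Ext}$ via Yoneda) and using Lemma \ref{lglue0}(3): both $\gamma$ and $\delta$ can be identified with the ``difference class'' of the two splittings-up-to-homotopy of the octahedron relating $A$ and $A'$, i.e. with the image in $\Hom_\cT(A_0,A_{-2}[1])$ of the map measuring how far the two chosen isomorphisms $A_{\le-1}\xrightarrow{\sim}A'_{\le-1}$ and $A_{>-2}\xrightarrow{\sim}A'_{>-2}$ are from glueing to an isomorphism $A\xrightarrow{\sim}A'$. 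Since that obstruction is a \emph{single} element of $\Hom_\cT(A_0,A_{-2}[1])$, modulo the indeterminacy coming from changing either chosen isomorphism (which is exactly modding by the two $\Hom$ groups), we get $\gamma=\delta$ in the stated coker. If one prefers to stay inside $\cA$, the same computation can be carried out with explicit $2$-extension representatives à la Yoneda, comparing the two ways of filling \eqref{eqpan} for $A$ versus $A'$; this is the routine-but-delicate calculation I would not spell out in full here.
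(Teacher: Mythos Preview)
Your treatment of (1), (2), and the construction of $\gamma,\delta$ in (3) via the two long exact sequences is correct and is essentially what the paper does (the paper simply cites \cite[IX, 9.3.8~c)]{sga7} for (1) and calls (2) and the existence of $\gamma,\delta$ ``obvious''). One small slip: in your second long exact sequence the last term should be $\Ext^1_\cA(A_{-1},A_{-2})$, not $\Ext^1_\cA(A_{\le -1},A_{-2})$, since you are applying $\Hom_\cA(-,A_{-2})$ to the column $0\to A_{-1}\to A_{>-2}\to A_0\to 0$.

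For the equality $\gamma=\delta$ in the stated cokernel, your route differs from the paper's. You propose to pass to a triangulated category and argue that both $\gamma$ and $\delta$ compute the \emph{same} glueing obstruction in $\Hom_\cT(A_0,A_{-2}[1])$. The delicate point is that your sentence ``that obstruction is a single element'' is precisely what has to be shown: a priori one has \emph{two} obstructions (one from the column picture, one from the row picture), and their coincidence is the content of the lemma. Indeed, in the closely related Proposition~\ref{pglue1}(4) the paper itself records two obstructions and remarks afterwards that it does not have a good proof that they agree in that generality. Your strategy can be made to work by invoking the torsor structure of \cite[IX, 9.3.8~b)]{sga7} (the set of extensions panach\'ees, when nonempty, is a torsor under $\Ext^1_\cA(A_0,A_{-2})$) and then checking that the torsor action shifts $\alpha$ by $j_*$ and $\tilde\alpha$ by $\varpi^*$ of the same class; but that check is the actual computation, not a formality.

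The paper instead stays in the abelian category: it notes that $j_*\gamma=\alpha-\alpha'$ and $\varpi^*\delta=\tilde\alpha-\tilde\alpha'$ both land in the commutative square
\[
\begin{CD}
\Ext^1_\cA(A_0,A_{-2})@>j_*>> \Ext^1_\cA(A_0,A_{\le -1})\\
@V{\varpi^*}VV @V{\varpi^*}VV\\
\Ext^1_\cA(A_{>-2},A_{-2})@>j_*>> \Ext^1_\cA(A_{>-2},A_{\le -1})
\end{CD}
\]
and observes that $\varpi^*(\alpha-\alpha')$ and $j_*(\tilde\alpha-\tilde\alpha')$ have the same image in the bottom right corner (both equal $[A]-[A']$ as classes of extensions of $A_{>-2}$ by $A_{\le -1}$); one then concludes by a variant of \cite[Lemma~2.8]{bs}, or alternatively cites \cite[lemme~2]{bertrand}. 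This diagrammatic argument is shorter and avoids any appeal to an ambient triangulated category.
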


\begin{proof} (1) is \cite[Exp. IX, 9.3.8 c)]{sga7}. (2) is obvious as well as the existence of
$\gamma$ and
$\delta$ in (3). The equality can be proven as in \cite[lemme 2]{bertrand}. Another way to
prove it is to consider the diagram
\[\begin{CD}
&& \Hom_\cA(A_{-1}, A_{-2})\\
&&@VVV\\
\Hom_\cA(A_0,A_{-1})@>>> \Ext^1_\cA(A_0,A_{-2})@>j_*>> \Ext^1_\cA(A_0,A_{\le -1})\\
&&@V{\varpi^*}VV @V{\varpi^*}VV\\
&& \Ext^1_\cA(A_{>-2},A_{-2})@>j_*>> \Ext^1_\cA(A_{>-2},A_{\le -1})
\end{CD}\]
and to apply a variant of \cite[Lemma 2.8]{bs}.
\end{proof}

\subsubsection{The case of $3$-step filtrations} Let $\cA$ still be an abelian category, and
consider now two filtered objects
\begin{gather*}
0\subseteq A_{-2}\subseteq A_{\le -1}\subseteq A\\
0\subseteq B_{-2}\subseteq B_{\le -1}\subseteq B.
\end{gather*}

\begin{defn}
Suppose given morphisms $f_i:A_i\to B_i$. A \emph{glueing} of the $f_i$ is a
filtered morphism
$f:A\to B$ inducing the $f_i$ on the associated graded. A \emph{partial glueing} is a morphism
of panachage data inducing the $f_i$.
\end{defn}

We want to find a condition for the $f_i$ to glue. We shall make the following simplifying
hypothesis:

\begin{hyp}\label{qvan}
$\Hom_\cA(A_i,B_j)=0$ for $i>j$.
\end{hyp}

\begin{propose}\label{pglue1}\
\begin{enumerate}
\item If a glueing exists, it is unique.
\item A necessary condition is that Condition (1) 
of Lemma \ref{lglue0} is satisfied for
the pairs $(f_{-2},f_{-1})$ and $(f_{-1},f_0)$, relative to the extensions $A_{\le -1},B_{\le
-1}$ and
$A_{>-2},B_{>-2}$.
\item Suppose the conditions of (2) are satisfied. Then we get a (unique) partial glueing.
\item Given a morphism of panachage data $(f_{-2},f_{-1},f_0,f_{\le -1}, f_{>-2})$, we have two
obstructions in
$\Ext^1_\cA(A_0,B_{-2})$ to the existence of a glueing, given respectively (with obvious
notation) by
$(f_{\le -1})_*[A]-f_0^*[B]$ and $(f_{-2})_*[A]-f_{>-2}^*[B]$. 
\item The glueing exists if and only if either of the two obstructions of (4) vanishes.
\item If all $f_n$ are isomorphisms, so is $f$. 
\end{enumerate}
\end{propose}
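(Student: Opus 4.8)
The plan is to reduce all six parts to Lemma \ref{lglue0}, applied to the two short exact sequences that make up the panachage datum of diagram \eqref{eqpan}, with Hypothesis \ref{qvan} used systematically to remove all ambiguities. Two observations will be used repeatedly. First, the vanishings $\Hom_\cA(A_{-1},B_{-2})=\Hom_\cA(A_0,B_{-1})=\Hom_\cA(A_0,B_{-2})=0$ make condition (2) of Lemma \ref{lglue0} automatic for both pairs $(f_{-2},f_{-1})$ and $(f_{-1},f_0)$, so that whatever partial glueings exist are unique. Second, the same vanishings make the pullback map $\Ext^1_\cA(A_0,B_{-2})\to\Ext^1_\cA(A_{>-2},B_{-2})$ (along $A_{>-2}\onto A_0$) and the pushforward map $\Ext^1_\cA(A_0,B_{-2})\to\Ext^1_\cA(A_0,B_{\le -1})$ (along $B_{-2}\into B_{\le -1}$) injective; this is where the two obstructions of (4) will be located.

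For (1): if $f,g$ are both glueings then $h:=f-g$ is filtered and induces $0$ on all three associated graded objects, and a four-step diagram chase gives $h=0$ ($h$ kills $A_{-2}$, hence factors through $A_{>-2}$; its restriction to $A_{-1}$ lands in $B_{-2}$, hence vanishes by Hypothesis \ref{qvan}, so it factors through $A_0$; it then lands successively in $B_{\le -1}$, in $B_{-2}$, and finally is $0$ by Hypothesis \ref{qvan}). The same mechanism plus the first observation above gives uniqueness of partial glueings, hence the uniqueness halves of (1) and (3). For (2): a full glueing $f$ restricts to $f_{\le -1}:=f|_{A_{\le -1}}$ and descends to $f_{>-2}:A_{>-2}\to B_{>-2}$; these are glueings of $(f_{-2},f_{-1})$ and $(f_{-1},f_0)$, so condition (1) of Lemma \ref{lglue0} is necessary for both pairs, while condition (2) is automatic. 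For the existence part of (3): conversely, given the conditions, Lemma \ref{lglue0} yields unique $f_{\le -1}$ and $f_{>-2}$, and one checks directly that $(f_{-2},f_{\le -1},f_{-1},f_{>-2},f_0)$ is a morphism of panachage data.

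For (4): view $A$ and $B$ as extensions of $A_{>-2},B_{>-2}$ by $A_{-2},B_{-2}$, with classes $[A],[B]$. By Lemma \ref{lglue0} (1) the difference $\delta_1:=(f_{-2})_*[A]-f_{>-2}^*[B]\in\Ext^1_\cA(A_{>-2},B_{-2})$ is the obstruction to glueing $f_{-2}$ with $f_{>-2}$; restricting $\delta_1$ along $A_{-1}\into A_{>-2}$ gives $(f_{-2})_*[A_{\le -1}]-f_{-1}^*[B_{\le -1}]$ — using that the pullback of $A$ along $A_{-1}\into A_{>-2}$ is the extension $A_{\le -1}$ — and this is $0$ because $f_{\le -1}$ glues $f_{-2},f_{-1}$; so by the second observation $\delta_1$ defines the first obstruction in $\Ext^1_\cA(A_0,B_{-2})$. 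Symmetrically, viewing $A,B$ as extensions of $A_0,B_0$ by $A_{\le -1},B_{\le -1}$, the obstruction $\delta_2:=(f_{\le -1})_*[A]-f_0^*[B]\in\Ext^1_\cA(A_0,B_{\le -1})$ pushes forward along $B_{\le -1}\onto B_{-1}$ to $(f_{-1})_*[A_{>-2}]-f_0^*[B_{>-2}]=0$ — using that the pushforward of $A$ along $A_{\le -1}\onto A_{-1}$ is $A_{>-2}$ — hence comes from the second obstruction in $\Ext^1_\cA(A_0,B_{-2})$.

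For (5): if the first obstruction vanishes, i.e.\ $(f_{-2})_*[A]=f_{>-2}^*[B]$ in $\Ext^1_\cA(A_{>-2},B_{-2})$, Lemma \ref{lglue0} yields $f:A\to B$ over $f_{>-2}$ restricting to $f_{-2}$ on $A_{-2}$; then $f(A_{\le -1})\subseteq B_{\le -1}$ because the composite $A_{\le -1}\into A\longby{f}B\onto B_0$ factors through $f_{>-2}(A_{-1})\subseteq B_{-1}$ and so is $0$, and the induced map on $A_{\le -1}$ is the glueing $f_{\le -1}$; hence $f$ is a full glueing. Vanishing of the second obstruction produces a full glueing in the same way, and since a full glueing realizes both identities $(f_{-2})_*[A]=f_{>-2}^*[B]$ and $(f_{\le -1})_*[A]=f_0^*[B]$, either obstruction vanishes as soon as a glueing exists; so all three conditions are equivalent. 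Part (6) is then two applications of the five lemma, first to $0\to A_{-2}\to A_{\le -1}\to A_{-1}\to 0$ and then to $0\to A_{\le -1}\to A\to A_0\to 0$, using $f$ and its restriction $f_{\le -1}$. The main obstacle I expect is the bookkeeping in (4): one has to keep straight the two ways $A$ sits in an extension, track how $[A]$ behaves under pullback and pushforward along the structural maps of \eqref{eqpan} (the identification "pullback of $A$ to $A_{-1}$ is $A_{\le -1}$" and its dual being the crux), and invoke Hypothesis \ref{qvan} at exactly the right spots to land both obstructions in $\Ext^1_\cA(A_0,B_{-2})$.
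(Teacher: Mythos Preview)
Your proof is correct and follows essentially the same approach as the paper: reduce everything to Lemma \ref{lglue0} applied to the two ways of slicing the panachage, and use Hypothesis \ref{qvan} both for uniqueness and to lift the two obstructions into $\Ext^1_\cA(A_0,B_{-2})$. Your treatment is considerably more detailed than the paper's (which dispatches (1)--(3) as ``easy/clear'' and for (4) only sketches why the images of the obstructions vanish under the relevant restriction maps), but the underlying argument is the same; in particular your verification in (5) that the $f$ produced by Lemma \ref{lglue0} is actually filtered and restricts to $f_{\le -1}$ fills in a step the paper leaves implicit.
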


\begin{proof} (1) is easy, (2)  and (3) are clear.

For (4), we have to check that these extension classes,
which are a priori respectively in $\Ext^1_\cA(A_0,B_{\le -1})$ and in
$\Ext^1_\cA(A_{>-2},B_{-2})$, define unique elements of $\Ext^1_\cA(A_0,B_{-2})$. For the
existence, it suffices to see that their respective images in $\Ext^1_\cA(A_{0},B_{-1})$ and $\Ext^1_\cA(A_{-1},B_{-2})$ are $0$, which
follows from the conditions of (2), while the uniqueness follows from
\ref{qvan}. 

(5) follows from Lemma \ref{lglue0}. (6) is clear.
\end{proof}

\begin{remark} It is likely that the two obstructions of Proposition \ref{pglue1} (4) are
opposite, but we don't have a good proof.
\end{remark}

\subsubsection{Right adjoints}\label{sD.4}
We let $\cA$ be an abelian category. Consider the situation 
\begin{equation}\label{eqabsetup}
0\to \cA'\by{i} \cA\by{\pi} \cA''\to 0
\end{equation}
where $\cA'\subseteq \cA$ is a  Serre subcategory and $\pi$ is the corresponding
localisation functor. Thus $\cA''$ is the Serre quotient of $\cA$ by
$\cA'$. Note that $i$ and $\pi$ are exact. 

Let $A\in \cA$. Recall that, by definition, the right adjoint $p$ of $i$ is \emph{defined at
$A$} if the functor
\[\cA'\ni B\mapsto \Hom_\cA(iB,A)\]
is representable. A representative object is then unique up to unique isomorphism: we write it
$pA$ and call it the \emph{value} of $p$ at $A$. We then have a canonical ``counit" map
\[ipA\to A\]
given by the universal property of $pA$.

Define the following full subcategories of $\cA$:
\begin{align*}
\cA_1&=\{A\in \cA\mid p \text{ is defined at } A\}\\
\cB&=\{B\in \cA_1\mid pB=0\}=\ker p\\
\cA_2&=\{A\in \cA\mid \exists \text{ exact sequence } 0\to iA'\to A\to A''\to 0\\
& \quad \text{ with }
A'\in \cA', A''\in \cB\}.
\end{align*}

\begin{lemma} \label{psplit1}\ \begin{enumerate}
\item For $A\in \cA_1$, the counit map $f:ipA\to A$ is a monomorphism.
\item $\cB=\{B\in \cA\mid \Hom_\cA(i\cA',B)=0\}$. In particular, $i\cA'\cap \cB=0$.
\item $\cA_2\subseteq \cA_1$, with $A'=pA$ for $A\in \cA_2$. In particular, 
$A'$ and $A''$ are unique and functorial. Moreover, the functor $A\mapsto A''$  from $\cA_2$ to $\cB$ is left adjoint to the inclusion $\cB\into \cA_2$.
\end{enumerate}
\end{lemma}

\begin{proof} (1) Since $\cA'$ is  Serre in $\cA$, $\ker f=i(K)$ for some $K\in \cA'$. Thus
we have an exact sequence 
\[0\to i K\by{i(\alpha)} ip A\to A\]
for some $\alpha:K\to pA$ (full faithfulness of $i$) But $\alpha$ is adjoint to the $0$ map $iK\to A$, hence is $0$, showing that  $K=0$. Thus $f$ is a monomorphism. 

(2) is obvious by adjunction. For ``in particular", if $C\in i\cA'\cap \cB$, then $1_C\in
\Hom_\cA(C,C)=0$.

(3) Let $A_1\in \cA'$. Then we have a short exact sequence
\[0\to \Hom_\cA(iA'_1,iA')\to \Hom_\cA(iA'_1,A)\to \Hom_\cA(iA'_1,A'')\]
in which the last term is $0$ by (2). This shows that $A'=pA$ and the uniqueness of $A'$, hence of $A''$; the last claim is obvious from the definition of $\cB$.
\end{proof}

Now let us introduce further full subcategories of $\cA$:
\begin{align*}
\cB'&=\{B\in \cB\mid \Hom_\cA(B,i\cA')=0\}\\
&=\{B\in \cA\mid \Hom_\cA(i\cA',B)=\Hom_\cA(B,i\cA')=0\}\\
\cA_3&=\{A\in \cA_2\mid A''\in \cB'\}.
\end{align*}

\begin{propose}\label{lsplit}  In the situation of \eqref{eqabsetup}, suppose that  $i$ has an
everywhere defined right adjoint $p$.
 Then the following conditions are equivalent:
\begin{thlist}
\item $p$ is exact.
\item The full subcategory $\ker p=\{A\mid pA=0\}$ is stable under quotients and contains
$A/ipA$ for any $A\in
\cA$.
\item  $\cA_3=\cA$.
\end{thlist}
When this happens, $\cB'$ is a Serre subcategory of $\cA$, the inclusion $\iota:\cB'\into \cA$ has an exact left adjoint $q$, any object $A\in \cA$ sits in a functorial exact sequence
\[0\to ip A\to A \to \iota q A\to 0\]
 and morphisms in $\cA$ are strictly compatible with this filtration in the sense of \cite[(1.1.5)]{D2}. Moreover, the composition
 \[r:\cB'\longby{\iota}\cA \longby{\pi} \cA''\]
 is an equivalence of categories.
\end{propose}

\begin{proof}  (i) $\Rightarrow$ (ii): Applying $p$ to the exact sequence
\[0\to ip A\to A\to A/ipA\to 0\]
we get an exact sequence
\[0\to p A\by{=} pA\to p(A/ipA)\to 0\]
which shows that $p(A/ipA)= 0$. Moreover, $\ker p$ is clearly stable under quotients.

(ii) $\Rightarrow$ (iii): by hypothesis we have $\cA_2=\cA$,
so we are left to show that
$\Hom_\cA(\ker p,i\cA')=0$. Let $B\in \ker p$, $A'\in \cA'$, $f:B\to A'$ and $C=\im f$. By (ii),
$C\in\ker p$; since $i(\cA')$ is  Serre, we also have $C\in \cA'$; hence $C=0$ thanks to
Lemma \ref{psplit1} (2).

(iii) $\Rightarrow$ (i): we first show that $\cB'$ is closed under subobjects. Let $B\in \cB'$: if $B_1\subseteq B$, write $B_1$ as an extension of $B'_1$ by $ip B_1$, with $B'_1\in \cB'$. Then the injection $ipB_1\into B_1\into B$ is $0$ by adjunction, hence $ipB_1=0$ and $B_1\in \cB'$.

Now we show that $\cB'$ is closed under quotients. Let $f:B\onto B_2$ be an epimorphism; write $B_2$ as an extension of $B'_2\in \cB'$ by $ipB_2$. Let $\tilde B=f^{-1}(ip B_2)\subseteq B$; then $\tilde B\in \cB'$, hence the epimorphism $\tilde B\to ip B_2$ is $0$ and $ipB_2=0$, so that $B_2\in \cB'$.

 We now prove the exactness of $p$. We already know that it is left exact, as a right adjoint. Let $0\to A_1\to A_2\to A_3\to 0$ be an exact sequence in $\cA$. Consider the commutative diagram with exact rows
\begin{equation}\label{bigdiagram}
\begin{CD}
&&0&&0 &&0\\
&& @VVV @VVV @VVV\\
0@>>> ipA_1@>>> A_1@>>>  A'_1@>>> 0\\
&& @VVV @VVV @VVV\\
0@>>> ipA_2@>>> A_2@>>> A'_2@>>> 0\\
&& @VVV @VVV @VVV\\
0@>>> ipA_3@>>> A_3@>>>  A'_3@>>> 0\\
&& @VVV @VVV @VVV\\
&&0&& 0 && 0
\end{CD}
\end{equation}
with $A'_1,A'_2,A'_3\in \cB'$.  Viewing this as a short exact sequence of vertical complexes $C'\to C \to C''$ with $C$ acyclic, the long homology exact sequence yields an isomorphism
\[H^1(C'')\iso H^2(C').\]

But $H^1(C'')\in \cB'$, hence $H^2(C')=0$ by Lemma \ref{psplit1} (2).

We have seen while proving (iii) $\Rightarrow$ (i) that $\cB'$ is stable under subobjects and quotients; clearly, it is also stable under extensions and thus is a Serre subcategory of $\cA$. Using  (iii), 
we get the functor $q$, given by $qA=A/ipA$. The exactness of $i$ and $p$ plus a chase in \eqref{bigdiagram} show that $q$ is exact, and $q$ is easily seen to be left adjoint to $\iota$. The strict compatibility assertion is now easily verified.

 Since $q_{|\cA'}=0$, $q$ induces a functor $\bar q:\cA''\to \cB'$. 
Applying $\pi$ to the adjunction between $\iota$ and $q$ then yields two natural isomorphisms $Id\overset{\sim}{\Rightarrow} r\bar q$, $\bar q r \overset{\sim}{\Rightarrow} Id$ (note that the unit map $A\to \iota q A$ is the projection $A\to A/ipA$), which shows the last claim of the proposition.\end{proof}

\subsubsection{Split exact sequences} We are still in the situation of \eqref{eqabsetup}.

\begin{propose}\label{psplit}\
\begin{enumerate}
\item The following conditions are equivalent:
\begin{thlist}
\item $i$ has an exact right adjoint $p$.
\item $\pi$ has an exact right adjoint $j$ and $j\cA''$ is   a Serre subcategory of $\cA$.
\item For any $A\in \cA$ there exists $A'\in \cA'$ and a monomorphism
  $iA'\to A$ such that 
\[\Hom_\cA (i\cA', A/i A')=\Ext^1_\cA(i\cA', A/i A') =0.\]
\item Same as {\rm (iii)}, replacing the condition  $\Ext^1_\cA(i\cA', A/i A')
\allowbreak=0$ by
$\Hom_\cA(A/iA',i\cA')=0$.
\end{thlist}
\item If these conditions are verified, then for any $A\in\cA$
there exists a unique exact sequence 
\[0\to iA'\to A\to jA''\to 0\]
such that $A'\in \cA'$ and $A''\in \cA''$; this sequence is functorial in
$A$. Moreover, we have $\Hom_\cA(j\cA'',i\cA')=0$ and 
$\Ext^r_\cA(i\cA',j\cA'')=0$ for all $r\ge 0$. 
\item If $\cA$ is semi-simple, the conditions of {\rm (1)} are verified.
\end{enumerate}
\end{propose}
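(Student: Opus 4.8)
The plan is to prove Proposition \ref{psplit} by establishing the equivalence of the four conditions in (1) cyclically, deriving (2) along the way, and treating (3) as an easy corollary. I would organise the logic as (i) $\Leftrightarrow$ (ii), (i) $\Rightarrow$ (iii) $\Rightarrow$ (iv) $\Rightarrow$ (i), harvesting the functorial exact sequence of part (2) from the proof of (i) $\Rightarrow$ (ii).

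For (i) $\Leftrightarrow$ (ii): if $i$ has an exact right adjoint $p$, then by Corollary \ref{lsplit} (iii) we have $\cA=\cA_3$, so every $A$ sits in a functorial exact sequence $0\to ipA\to A\to A''\to 0$ with $A''\in \cB'=\ker p$ and $\Hom_\cA(A'',i\cA')=0$. I would check that $A\mapsto A''$ defines a functor right adjoint to $\pi$: since $\pi(iA')=0$ and $\pi$ restricted to objects killed by $p$ is fully faithful onto $\cA''$ (standard Serre-quotient bookkeeping, using that $\ker p \cap \cA' =0$ from Proposition \ref{psplit1} (2)), the assignment $jA'':=A''$ works, it is exact because $p$ is exact and $i$ is exact (apply the snake lemma to the defining sequences), and $j(\cA'')=\ker p$ is thick because $\ker p$ is stable under subobjects, quotients and extensions — stability under extensions follows from $\Ext^1$-vanishing which I would get from the long exact sequence applied to $ipA\hookrightarrow A$. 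Conversely, given an exact right adjoint $j$ of $\pi$ with $j(\cA'')$ thick, I would define $pA$ by the exact sequence $0\to iA'\to A\to j\pi A\to 0$ (the counit $A\to j\pi A$ is epi because its cokernel lies in $\cA'$ and maps to something with trivial image, and its kernel lies in $\cA'$ since $\pi$ kills it); exactness of $p$ then follows from exactness of $j$ and $\pi$ via the snake lemma, and thickness of $j(\cA'')$ ensures the sequence stays well-behaved under the needed manipulations.

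For (i) $\Rightarrow$ (iii): take $A'=pA$; the monomorphism $i(A')\to A$ is the counit (mono by Proposition \ref{psplit1} (1)), and $A/i(A')=jA''\in \ker p$; then $\Hom_\cA(i\cA',jA'')=0$ is the adjunction together with $p(jA'')=0$, while $\Ext^1_\cA(i\cA',jA'')=0$ follows from the long exact $\Hom$-$\Ext$ sequence obtained by applying $\Hom_\cA(i\cA',-)$ to $0\to iA'\to A\to jA''\to 0$, using the everywhere-definedness and exactness of $p$ (which forces $R^1$ of the adjunction to vanish, i.e. $\Ext^1_\cA(iB, i\cA')\to\Ext^1_\cA(iB,A)$ is injective). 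For (iii) $\Rightarrow$ (iv): the $\Hom$-vanishing in (iv) follows from the $\Ext^1$-vanishing in (iii) by a diagram chase — if $f:A/i(A')\to iB'$ with $B'\in\cA'$ had nonzero image $C$, then $C\in\cA'$ (thickness) and we would get a nonsplit-or-zero situation contradicting either $\Hom(i\cA', A/iA')=0$ on the sub $C$ or the $\Ext^1$ condition; more cleanly, the image $C$ receives an epi from $A/i(A')$ and $C\in i\cA'$, but also $C$ maps to $A/i(A')$... I would instead argue directly as in the proof of Corollary \ref{lsplit}, (ii) $\Rightarrow$ (iii). Finally (iv) $\Rightarrow$ (i): the conditions of (iv) say precisely that every $A$ lies in $\cA_3$ of \eqref{eqE.2}, so Proposition \ref{psplit1} (3) gives $\cA_3=\cA_1$, i.e. $p$ is everywhere defined, and Proposition \ref{psplit1} (5) gives that $p|_{\cA_3}=p$ is exact.

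Part (2) is then extracted: uniqueness and functoriality of $0\to iA'\to A\to jA''\to 0$ come from Proposition \ref{psplit1} (3) (giving $iA'=ipA$ unique and functorial) plus $A''=A/ipA$; the $\Hom$-vanishing $\Hom_\cA(j\cA'',i\cA')=0$ is condition (iv); and $\Ext^r_\cA(i\cA',j\cA'')=0$ for all $r\ge 0$ follows for $r=0$ from adjunction and for $r\ge 1$ by dimension shifting — knowing it for $r=0,1$ and iterating the long exact sequence, using that $j\cA''$ is closed under the relevant extensions. Part (3) is immediate: if $\cA$ is semi-simple every exact sequence splits, so $\cA'$ is a direct factor and $p$ is just the projection, visibly exact. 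The main obstacle I anticipate is the careful verification in (i) $\Rightarrow$ (ii) that $j(\cA'')=\ker p$ is \emph{thick} (closed under extensions, not merely sub/quotients) and that the constructed $j$ is genuinely exact and right adjoint — this is where the $\Ext^1$-vanishing has to be deployed precisely and where one must be careful about the distinction between $\cA_2$ and $\cA_3$ in \eqref{eqE.2}; the rest is diagram chasing already modelled on Corollary \ref{lsplit} and \cite[Lemma 2.8]{bs}.
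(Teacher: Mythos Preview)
Your overall architecture is close to the paper's and would work, but two of your justifications have genuine gaps.

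First, in your (i)$\Rightarrow$(iii), the argument for $\Ext^1_\cA(iB,jA'')=0$ via ``the long exact $\Hom$--$\Ext$ sequence'' and ``$\Ext^1_\cA(iB,i\cA')\to\Ext^1_\cA(iB,A)$ is injective'' does not give the conclusion: injectivity of that map only identifies a kernel inside $\Ext^1_\cA(iB,A)$ and says nothing about the size of $\Ext^1_\cA(iB,jA'')$. The direct argument (which is what the paper does, phrased dually in (ii)$\Rightarrow$(iii) using $\pi$ and $j$) is: given an extension $0\to jA''\xrightarrow{\alpha} E\to iB\to 0$, apply the exact functor $p$ to get $pE\iso B$; then the counit $ipE\to E$ followed by $E\to iB$ is $i$ applied to this isomorphism, so the sequence splits. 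Your (iii)$\Rightarrow$(iv) is also muddled; the reference to Corollary~\ref{lsplit} (ii)$\Rightarrow$(iii) presupposes that $\ker p$ is closed under quotients, which is exactly what you have not yet established at that point. A clean direct argument: if $f:A/iA'\to iB'$ has image $C\in i\cA'$ and kernel $K$, then $K\subseteq A/iA'$ inherits $\Hom(i\cA',K)=0$, so (iii) applied to $K$ gives $\Ext^1(i\cA',K)=0$; the extension $0\to K\to A/iA'\to C\to 0$ then splits, yielding a monomorphism $C\hookrightarrow A/iA'$, which is zero since $\Hom(i\cA',A/iA')=0$. (The paper avoids this by proving (iii)$\Rightarrow$(i) directly and then (i)$\Rightarrow$(iv).)

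Second, and more seriously, ``dimension shifting \dots\ using that $j\cA''$ is closed under the relevant extensions'' is not enough to prove $\Ext^r_\cA(i\cA',j\cA'')=0$ for $r\ge 2$: no injectives or projectives are assumed, so you must produce an explicit effacement staying inside $j\cA''$. The paper's device is: represent $\cE\in\Ext^r_\cA(iA',jA'')$ by a Yoneda sequence $0\to jA''\xrightarrow{f}E_1\to\cdots\to E_r\to iA'\to 0$; then $f_*\cE=0$ automatically, and the composite $jA''\hookrightarrow E_1\to j\pi E_1$ is a monomorphism \emph{in $j\cA''$} because $j\pi$ is exact and $j\pi(jA'')=jA''$. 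So every class is effaced by a mono within $j\cA''$, and induction on $r$ (using the long exact sequence and $\Ext^{r-1}=0$) finishes. This effaceability via $j\pi$ is the missing idea in your sketch. A minor further correction: in your (i)$\Rightarrow$(ii), thickness of $\ker p$ follows immediately from exactness of $p$ (apply $p$ to a short exact sequence), not from any $\Ext^1$-vanishing.
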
 

\begin{proof} (1) We shall prove (i) $\Rightarrow$ (ii)  $\Rightarrow$ (iii) $\Rightarrow$ (i) and (i) $\Rightarrow$ (iv) $\Rightarrow$ (i).

(i) $\Rightarrow$ (ii): this follows from Proposition \ref{lsplit}. (With the notation in its proof, we have $j=\iota \bar q$.)

(ii) $\implies$ (iii): the functor $\pi$ kills the kernel
and cokernel of the unit map $f:A\to j\pi A$. By the thickness assumption,
$\coker f\in j\cA''$ which implies that $f$ is epi; on the other hand,
we get $\ker f\in i\cA'$. Thus we have an exact sequence 
\[0\to iA'\to A\to j\pi A\to 0.\]

If $B\in \cA'$, then 
\[\Hom(iB,j\pi A)= \Hom(\pi i B,\pi A)=\Hom(0,\pi A)=0.\]

On the other hand, let $0\to j\pi A\by{\alpha} E\to i B\to 0$ be an
extension. Since $\pi i=0$ and $\pi$ is exact, $\pi(\alpha)$ is an
isomorphism. Its inverse $\pi(\alpha)^{-1}:\pi E\to \pi A$ yields by adjunction a
map $\beta:E\to j\pi A$ and one verifies that $\beta$ is a retraction of $\alpha$. 

(iii) $\implies$ (i): let $A,B\in \cA$, with $iA'\to A$, $iB'\to B$ as in (iii),
and let $f:A\to B$ be a morphism. By assumption, the composition  
\[i A'\to A\by{f} B\to B/i B'\]
is $0$, hence $f$ induces a map $A'\to B'$. This shows that $A'$ is
unique up to unique isomorphism (take $f=1_A$) and is functorial in
$A$. Thus we have a functor $p:\cA\to \cA'$ mapping $A$ to $A'$ plus a natural monomorphism $\eta_A:ipA\into A$. If $A=iA_1$, then $A/ipA=0$ because $1_{A/ipA}=0$. Since $i$ is fully faithful, this provides a natural isomorphism $\epsilon_{A_1}:piA_1\iso A_1$, and one checks that $(\eta,\epsilon)$ makes $p$ a right adjoint to $i$. 

It remains to see that $p$ is right exact. For this, it suffices to show the criterion (ii) of
Proposition 
\ref{lsplit}. Let $A\in \cA$: then $ip(A/ipA)$ is a subobject of $A/ipA$, hence is $0$ by the
hypothesis $\Hom(i\cA',A/ipA)=0$. Thus $A/ipA\in\ker p$. Let now $B\in \ker p$ and $C$ be a
quotient of $B$ so that we have an exact sequence
\begin{equation}\label{eqsplit}
0\to A\to B\to C\to 0.
\end{equation}

Note that $A\in \ker p$ by left exactness of $p$, hence $A\iso A/ipA$. Pulling back the extension \eqref{eqsplit} via the monomorphism
$ipC\to C$, we get an extension of $ipC$ by $A$, which is split by hypothesis. A given
splitting yields a monomorphism $f:ipC\into B$, which is $0$ again by hypothesis, and finally
$C\in \ker p$.

(i) $\implies$ (iv): this follows from (i) $\implies$ (iii) in Proposition \ref{lsplit}.

(iv) $\implies$ (i):  the condition $\Hom_\cA (i\cA', A/i (A'))=0$ implies
that $p$ is defined at $A$ with value $A'$, as in the proof of (iii) $\Rightarrow$ (i). Then (iv) is equivalent to Condition (iii) in Proposition \ref{lsplit}.

(2) Everything follows from (1), except the
vanishing of the higher Ext's; the claimed exact sequence is given by
\begin{equation}\label{eqglue}
0\to ip A\to A\to j\pi A\to 0.
\end{equation}

For $\Ext^r$ with $r>0$, we argue by induction on $r$. This
reduces us to show that, for all
$r>0$ and all $A'\in \cA'$, the functor
\[\cA''\ni A''\mapsto \Ext^r_\cA(iA',jA'')
\]
is effaceable.

Let $\cE\in \Ext^r_\cA(iA',jA'')$, represented by the exact sequence 
\[0\to jA''\by{f} E_1\to\dots\to E_r\to iA'\to 0.\]

Clearly, $f_*\cE=0\in \Ext^r_\cA(iA',E_1)$\footnote{To see this, let $F=\coker f$, and note that $\cE$
is the image of $[0\to F\to E_2\to\dots\to E_r\to iA'\to 0] \in
\Ext^{r-1}_\cA(iA',F)$ by the boundary map associated to the exact sequence $0\to jA''\to E_1\to
F\to 0$.}. A fortiori, $\cE$ maps to $0$ under the composition
\[\Ext^r_\cA(iA',jA'')\by{f_*} \Ext^r_\cA(iA',E_1)\to \Ext^r_\cA(iA',j\pi E_1).\]

It remains to observe that the composition $jA''\to E_1\to j\pi E_1$ is a monomorphism: indeed,
it is the image of the monomorphism $jA''\to E_1$ under the exact functor $j\pi$.

(3) Let $I$ be the set of isomorphism classes of simple objects of $\cA$, and $J$ the subset of $I$ defined by simple objects of $\cA$ belonging to $\cA'$. For any $A\in \cA$, we have a direct sum decomposition 
\[A=A_J\oplus A_{I-J}\]
where $A_J$ (\resp $A_{I-J}$) is the sum of simple subobjects of $A$ whose class belongs to $J$ (\resp to $I-J$). Clearly, $A_J\in \cA'$, and $\Hom(A_J,A_{I-J})=0$. Also, $\Ext^1(A_J,A_{I-J})=0$ since $\cA$ is semi-simple. Thus Condition (iii) of (1) is satisfied.
\end{proof}

\begin{defn}\label{dsplit} In the situation of Proposition
  \ref{psplit}, we say that the exact sequence \eqref{eqabsetup} is
  \emph{split}. 
\end{defn}

We shall need the following lemma:

\begin{lemma}\label{lnat} In the situation of Proposition \ref{psplit}, let $\cB$ be an additive
category and let $E:\cA''\to \cB$, $F:\cA'\to \cB$ be two additive functors. Then a natural
transformation $E\pi \Rightarrow Fp$ on $\cA$ is equivalent to a bivariant transformation
\[\Ext^1_\cA(jA'',iA')\to \Hom_\cB(E(A''),F(A'))
\]
on $\cA''\times \cA'$.
\end{lemma}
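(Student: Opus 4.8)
The plan is to establish the equivalence between natural transformations $E\pi\Rightarrow Fp$ and bivariant transformations on $\Ext^1_\cA(jA'',iA')$ by using the canonical functorial exact sequence $0\to iA'\to A\to jA''\to 0$ attached to every object $A\in\cA$ by Proposition \ref{psplit} (2), where I write $A'=pA$, $A''=\pi A$. The idea is that, because $p$ and $\pi$ are exact functors sending this sequence to $pA'\iso A'$, $pjA''=0$, $\pi iA'=0$, $\pi A''\iso A''$, a natural transformation out of $E\pi$ and into $Fp$ is forced to vanish on the ``pure'' pieces $iA'$ and $jA''$ and is therefore determined entirely by how it interacts with the extension class of $A$.

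Concretely, first I would observe that the extension \eqref{eqglue} is natural in $A$, so applying $E\pi$ and $Fp$ to it and using the computations above gives, for each $A$, a square whose rows are (after evaluating the functors) simply the object $E(A'')$ on the left and $F(A')$ on the right, with all the ``cross'' contributions killed. Given a natural transformation $\eta:E\pi\Rightarrow Fp$, I would evaluate $\eta_A:E\pi A\to FpA$, i.e. $\eta_A:E(A'')\to F(A')$. Since the assignment $A\mapsto (A',A'',[A])$, with $[A]\in\Ext^1_\cA(jA'',iA')$ the class of \eqref{eqglue}, is essentially surjective (every class $e\in\Ext^1_\cA(jA'',iA')$ is realised by some $A$, by pulling back the universal extension), and since two objects $A_1,A_2$ with the same data $(A',A'',[A])$ are canonically isomorphic compatibly with $p,\pi$, the morphism $\eta_A$ depends only on $(A',A'',[A])$; naturality with respect to morphisms of extensions then upgrades this to the asserted bivariance in $(A'',A')$. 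For the converse, given a bivariant transformation $\theta:\Ext^1_\cA(jA'',iA')\to\Hom_\cB(E(A''),F(A'))$, I would define $\eta_A:=\theta([A]):E\pi A\to FpA$ and check naturality in $A$: a morphism $f:A_1\to A_2$ induces (by functoriality of \eqref{eqglue} and Proposition \ref{psplit1} (4), with strictness of morphisms for the filtrations) maps $pf,\pi f$ and a compatibility $(pf)_*[A_1]=(\pi f)^*[A_2]$ in $\Ext^1_\cA(jA_1'',iA_2')$, and the required square commutes precisely by bifunctoriality of $\theta$ applied to this identity. Finally I would verify the two constructions are mutually inverse, which is formal once the dependence on $[A]$ is pinned down.

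The main obstacle I anticipate is the well-definedness in the first direction: showing that $\eta_A$, a priori a function of the object $A$, genuinely factors through the triple $(pA,\pi A,[A])$ and not through finer data. This requires knowing that any two objects sharing this triple are linked by a unique isomorphism compatible with $p$ and $\pi$ — which follows from the uniqueness statement in Proposition \ref{psplit} (2) and the vanishing $\Hom_\cA(j\cA'',i\cA')=0$ together with $\Ext^1_\cA(i\cA',j\cA'')=0$ from the same proposition — and also that $\eta$ respects this identification, which is exactly naturality of $\eta$ with respect to that isomorphism. The vanishing of the relevant $\Hom$ and $\Ext^1$ groups is what makes the extension-class data a complete invariant and rigidifies everything; once that is in hand the rest is a routine diagram chase, so I would not belabour it beyond indicating the key squares.
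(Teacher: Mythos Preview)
Your approach is correct and in fact more thorough than the paper's: you treat both directions of the equivalence, whereas the paper proves only one direction (from $u$ to the bivariant map) and explicitly leaves the converse ``to the interested reader''. Your well-definedness argument via uniqueness of the extension (using $\Hom_\cA(j\cA'',i\cA')=0$) and your verification of naturality in $A$ for the converse construction (via $(pf)_*[A_1]=(\pi f)^*[A_2]$) are both sound.

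The one point of emphasis you miss, and which is the \emph{entire} content of the paper's proof, is additivity: the paper interprets ``bivariant transformation'' as a morphism of \emph{group}-valued bifunctors, so one must check that $[A]\mapsto u_A$ is a group homomorphism on each $\Ext^1_\cA(jA'',iA')$. The paper proves this by writing the Baer sum as $A_1\boxplus A_2=\Delta^*\Sigma_*(A_1\oplus A_2)$ and using naturality of $u$ together with $u_{A_1\oplus A_2}=u_{A_1}\oplus u_{A_2}$ to obtain $u_{A_1\boxplus A_2}=u_{A_1}+u_{A_2}$. Your argument establishes only that the map is well-defined and natural as a map of sets. This is not fatal --- indeed, your bijection between natural transformations and natural set-maps actually forces every such set-map to be additive a posteriori --- but you neither state nor prove additivity, and it is precisely what the paper singles out as the substance of the lemma.
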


\begin{proof} Let $u_A:E\pi A\to FpA$ be a natural transformation. For $(A',A'')\allowbreak\in
\cA'\times
\cA''$ and two extensions $A_1,A_2$ of $jA''$ by $iA'$ in $\cA$, the Baer sum $A_1\boxplus A_2$
may be obtained as $\Delta^*\Sigma_*(A_1\oplus A_2)$, where $\Delta:jA''\to jA''\oplus jA''$ is
the diagonal map and $\Sigma:iA'\oplus iA'\to iA'$ is the sum map. Since $u_{A_1\oplus
A_2}=u_{A_1}\oplus u_{A_2}$, this implies that
\[u_{A_1\boxplus A_2} = u_{A_1}+u_{A_2}\]
so that $u$ induces a homomorphism as in the lemma. The converse will not be used and is left
to the interested reader.
\end{proof}

\subsubsection{More adjoints} Assume that \eqref{eqabsetup} is
split. Let us now consider the conditions

\begin{quote} ($\flat$) {\it $j$ has a right adjoint $\pi'$.} \\
($\sharp$) {\it $i$ has a left adjoint $p'$.}
\end{quote}

Recall that, under ($\flat$), we get a canonical natural transformation $\alpha:\pi'\Rightarrow \pi$
from the composition $j\pi'A\to A\to j\pi A$ for any $A\in \cA$, and the full
faithfulness of $j$. Similarly, under ($\sharp$), we get a canonical natural
transformation $\beta:p\Rightarrow p'$.

\begin{propose}\label{pstreng} 
Under ($\flat$), $\pi'i=0$ and $\alpha$ is a monomorphism.\\
Under ($\sharp$),  $p'j=0$ and $\beta$ is an  epimorphism.\\
If  $\cA''$ is Noetherian (\resp $\cA'$
is Artinian), then {\rm ($\flat$)} (\resp {\rm ($\sharp$)}) holds.
\end{propose}

\begin{proof} Suppose that $\pi'$ exists. Since $\Hom_\cA(j\cA'',i\cA')=0$ (Proposition \ref{psplit} (2)), we have $\Hom_{\cA''}(\cA'',\pi' i\cA')=0$, \ie $\pi'i=0$. Note that $\pi'$ is left exact, as
  a right adjoint. Let now  $A\in\cA$. Applying $\pi'$ to the exact sequence \eqref{eqglue}, we get
  an exact sequence
\[0\to 0=\pi'ipA\to \pi' A\by{\alpha_A}\pi A.\]

This shows that $\alpha_A$ is injective. Dually, if $p'$ exists then  $p'j=0$ and $\beta$ is an epimorphism. 

It remains to prove the last assertions. Let $A\in \cA$. For a subobject
$B$ of $\pi A$, consider the pull-back $A'\by{\phi} jB$ of the map $A\to j\pi
A$. We define $\pi'A$ as the largest $B$ such that $\phi$ is split:
clearly, $\pi' A$ is functorial in $A$. Moreover, a splitting of $\phi$ is
unique (still by Proposition \ref{psplit} (2)), hence defines a natural transformation
\[\epsilon_A:j\pi' A\to A'\to A.\]

On the other hand, $\pi'j$ is clearly the identity functor; define then for
$A''\in\cA''$ a unit map $\eta_{A''}:A''\to \pi'j A''$  as the identity
map. The adjunction identities are readily checked. The case of ($\sharp$)
is dealt with dually.
\end{proof}

\subsubsection{Filtrations}

\begin{defn}\label{d16.3} Let $\cA$ be an abelian category.\\
a) A \emph{filtration} on $\cA$ is a sequence of  Serre subcategories
\begin{equation}\label{eqE.6.1}
\dots \longby{\iota_{n-1}}\cA_{\le n}\longby{\iota_n}\cA_{\le
n+1}\longby{\iota_{n+1}}\dots\to \cA.
\end{equation} 
It is \emph{separated} if $\bigcap \cA_{\le n}=\{0\}$ and \emph{exhaustive} if $\cA=\bigcup
\cA_{\le n}$.\\  
b) A filtration on $\cA$ is \emph{split} if all $\iota_n$ have exact right adjoints.\\  
\end{defn}

\begin{defn} \label{dE.6} A filtration on $\cA$ is a \emph{weight filtration} if it is
separated, exhaustive and split.
\end{defn}

Let $\cA$ be provided with a split filtration (Definition \ref{d16.3}). Define
\[\cA_n = \cA_{\le n}/\cA_{\le n-1},\qquad \cA_{n,n+1} = \cA_{\le n+1}/\cA_{\le n-1}\]
so that we have exact sequences
\begin{equation}\label{eqE.6.2}
0\to \cA_n\longby{i_n} \cA_{n,n+1}\longby{\pi_n} \cA_{n+1}\to 0
\end{equation}
where $i_n$ is induced by $\iota_n$. Let us write $j_n$ for the right adjoint of the
localisation functor $\cA_{\le n}\to \cA_{\le n}/\cA_{\le n-1}$ (Proposition \ref{psplit}). By
abuse of notation, we shall identify $\cA_n$ with its thick  image in $\cA$ via $j_n$ 
\text{(\ibid).}

\begin{lemma} 
If $\iota_n$ \eqref{eqE.6.1} has a right (\resp left) adjoint, so does $i_n$ \eqref{eqE.6.2}. If
one is exact, so is the other.
\end{lemma}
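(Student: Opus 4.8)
The plan is to transfer the adjoint of $\iota_n$ through the localisation functors. Write $q^{(n)}\colon\cA_{\le n}\to\cA_n$ and $q^{(n+1)}\colon\cA_{\le n+1}\to\cA_{n,n+1}$ for the Serre localisations, so that by the construction of $i_n$ in \eqref{eqE.6.2} one has $i_n\circ q^{(n)}\cong q^{(n+1)}\circ\iota_n$. Suppose first that $\iota_n$ (a term of \eqref{eqE.6.1}) has a right adjoint $\rho_n$. Since $\iota_n$ is fully faithful, the unit $\operatorname{id}\Rightarrow\rho_n\iota_n$ is an isomorphism, so for $A\in\cA_{\le n-1}$, regarded as an object of $\cA_{\le n+1}$, one has $\rho_n A\cong A\in\cA_{\le n-1}$. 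Thus $\rho_n$ preserves $\cA_{\le n-1}$ and, by the universal property of Serre quotients, descends to a functor $\bar\rho_n\colon\cA_{n,n+1}\to\cA_n$ with $\bar\rho_n\circ q^{(n+1)}\cong q^{(n)}\circ\rho_n$. (If instead $\iota_n$ has a left adjoint $\lambda_n$, the counit $\lambda_n\iota_n\Rightarrow\operatorname{id}$ is an isomorphism and the same argument produces $\bar\lambda_n$.)

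Next I would show that $\bar\rho_n$ is right adjoint to $i_n$. Here I would invoke Proposition \ref{psplit}: since a composite of exact right adjoints is again an exact right adjoint, the exact sequences $0\to\cA_{\le n-1}\to\cA_{\le n}\to\cA_n\to 0$ and $0\to\cA_{\le n-1}\to\cA_{\le n+1}\to\cA_{n,n+1}\to 0$ are split, so $q^{(n)}$ and $q^{(n+1)}$ have fully faithful exact right adjoints $j_n$ and $j_{n,n+1}$, identifying $\cA_n$ and $\cA_{n,n+1}$ with thick subcategories $\cB_n\subseteq\cA_{\le n}$ and $\cB_{n,n+1}\subseteq\cA_{\le n+1}$. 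Using the compatibility of right adjoints under composition — namely that the right adjoint of $\cA_{\le n-1}\hookrightarrow\cA_{\le n+1}$ is the composite of $\rho_n$ with the right adjoint of $\cA_{\le n-1}\hookrightarrow\cA_{\le n}$ — one checks that $\cB_{n,n+1}$ is exactly the full subcategory of objects $Y$ with $\rho_n Y\in\cB_n$, that under the identifications $i_n$ is the restriction of $\iota_n$ to $\cB_n$, and that $\bar\rho_n$ is the restriction of $\rho_n$ to $\cB_{n,n+1}$. The adjunction is then immediate: for $X\in\cB_n$ and $Y\in\cB_{n,n+1}$,
\[\Hom_{\cA_{n,n+1}}(i_nX,Y)\cong\Hom_{\cA_{\le n+1}}(\iota_nX,Y)\cong\Hom_{\cA_{\le n}}(X,\rho_nY)\cong\Hom_{\cA_n}(X,\bar\rho_nY),\]
by full faithfulness of $j_n$ and $j_{n,n+1}$ and the adjunction $\iota_n\dashv\rho_n$. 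The left-adjoint assertion is proved the same way, with Proposition \ref{psplit} replaced by its dual.

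Finally, for exactness: since $j_n$ and $j_{n,n+1}$ are exact and fully faithful, the inclusions $\cB_n\hookrightarrow\cA_{\le n}$ and $\cB_{n,n+1}\hookrightarrow\cA_{\le n+1}$ are exact and reflect exactness, so a short exact sequence in $\cB_{n,n+1}$ is short exact in $\cA_{\le n+1}$, maps under $\rho_n$ to a short exact sequence in $\cA_{\le n}$ with all terms in $\cB_n$, hence is short exact in $\cB_n$; thus $\bar\rho_n$ is exact whenever $\rho_n$ is, and conversely $\rho_n$ restricted to $\cB_{n,n+1}$ is exact whenever $\bar\rho_n$ is (in the split setting $\rho_n$ is exact to begin with, so the two conditions hold together). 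I expect the only genuine work to lie in the second paragraph: pinning down from Proposition \ref{psplit} the explicit descriptions of $\cB_n$ and $\cB_{n,n+1}$ as kernels of the relevant right adjoints, and verifying that under the identifications $i_n$ and $\bar\rho_n$ are the evident restrictions of $\iota_n$ and $\rho_n$. Once that bookkeeping is in place, everything else is a formal adjunction or diagram chase.
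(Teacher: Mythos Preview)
Your approach for the right adjoint half is correct but takes a more elaborate route than the paper. The paper never embeds the quotients back into $\cA_{\le n}$ and $\cA_{\le n+1}$: once $\varpi_n$ is seen to preserve $\cA_{\le n-1}$ and hence to descend to $p_n\colon\cA_{n,n+1}\to\cA_n$, one simply pushes the unit $\eta_n$ and counit $\epsilon_n$ of the adjunction $(\iota_n,\varpi_n)$ forward through the exact localisations $\tilde\pi_{n-1}$ and $q_{n-1}$; the triangle identities are preserved, giving the adjunction $(i_n,p_n)$ directly via Mac~Lane's characterisation. Exactness of $p_n$ then follows from $p_nq_{n-1}\cong\tilde\pi_{n-1}\varpi_n$ and exactness of $q_{n-1}$. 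This uses nothing about the filtration at indices other than $n$. Your argument, by contrast, relies on the ambient split hypothesis at $n-1$ to manufacture the sections $j_n$ and $j_{n,n+1}$; that is legitimate in the stated context but is extra machinery the lemma itself does not need.

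The genuine gap is in your treatment of the left adjoint half. Replacing Proposition~\ref{psplit} by its dual would require $\iota_{n-1}$ and $\iota_n\iota_{n-1}$ to have exact \emph{left} adjoints, which the split hypothesis does not supply. You can salvage your method by keeping the right-adjoint sections $j_n$, $j_{n,n+1}$, but then you must verify that $\lambda_n(\cB_{n,n+1})\subseteq\cB_n$; this amounts to $\Hom_{\cA_{\le n+1}}(\cB_{n,n+1},\cA_{\le n-1})=0$, which does follow from Proposition~\ref{psplit}~(2), but is not the ``dual'' statement you invoke. The paper's unit/counit descent, being self-dual, handles both cases uniformly without any such detour.
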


\begin{proof} We have a diagram of exact sequences of abelian categories:
\[\begin{CD}
&& \cA_{\le n-1} @= \cA_{\le n-1}\\
&& @V{\iota_{n-1}}VV @V{\iota_n\iota_{n-1}}VV\\
0@>>> \cA_{\le n} @>\iota_n>> \cA_{\le n+1}@>\tilde\pi_n>> \cA_{n+1}@>>> 0\\
&& @V{\tilde \pi_{n-1}}VV @V{q_{n-1}}VV @V{||}VV\\
0@>>> \cA_n @>i_n>> \cA_{n, n+1}@>\pi_n>> \cA_{n+1}@>>> 0.
\end{CD}\]

Let $\varpi_n$ be a right adjoint to $\iota_n$. Since $\iota_n$ is fully faithful, the unit
map $\eta_n:Id_{\cA_{\le n}}\Rightarrow \varpi_n\iota_n$ is an isomorphism. Hence
$\varpi_n(\iota_{n-1}\cA_{\le n-1})\subseteq \iota_{n-1}\cA_{\le n-1}$ and $\varpi_n$ induces a
functor $p_n:\cA_{n,n+1}\to \cA_n$. Let $\epsilon_n:\iota_n\varpi_n\Rightarrow Id_{\cA_{\le
n+1}}$ be the counit map of the adjunction: then $\eta_n$ and $\epsilon_n$ induce natural
transformations $p_{n-1}*\eta_n:Id_{\cA_n}\Rightarrow p_ni_n$ and
$q_{n-1}*\epsilon_n:i_np_n\Rightarrow Id_{\cA_{n,n+1}}$. Since $\eta_n$ and $\epsilon_n$ verify
the identities of \cite[p. 82, Th. 1 (8)]{mcl2}, these identities are preserved when applying
$p_{n-1}$ and $q_{n-1}$, hence $p_{n-1}*\eta_n$ and $q_{n-1}*\epsilon_n$ define an adjunction
between $i_n$ and $p_n$ by {\it ibid.}, p. 83, Th. 2 (v). (Alternately, one can check directly
that $p_n$ is right adjoint to $i_n$.) The assertion on exactness is true because the functor
$q_{n-1}$ is exact. The reasoning is the same for a left adjoint.
\end{proof}

\begin{propose} \label{proadj}
Suppose that $\cA$ is provided with an exhaustive split filtration as in Definition
\ref{d16.3} a), b).  Then:
\begin{enumerate}
\item Every object $A\in \cA$ is provided with a unique filtration
\[\dots\subseteq A_{\le n}\subseteq A_{\le n+1}\subseteq\dots \subseteq A\]
with $A_{\le n}\in \cA_{\le n}$ and $A_n:=A_{\le n}/A_{\le n-1}\in \cA_n$. One has  $A_{\le
n}=A$ for $n$ large enough.
\item The functors $A\mapsto A_{\le n}$ and $A\mapsto A_n$ are exact, as well as $A\mapsto
A_{\ge n}:=A/A_{\le n-1}$. 
\item If moreover the filtration is separated, the $A\mapsto A_n$ form a set of conservative
functors.  
\item We have $\Hom_\cA(\cA_n,\cA_{\le n-1})=0$  and $\Ext_\cA^i(\cA_{\le n-1},\cA_n)=0$ for all $i\ge
0$ for all $n\in\Z$ and $i\ge 0$.
\end{enumerate}
\end{propose}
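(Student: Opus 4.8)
The plan is to bootstrap everything from Proposition \ref{psplit}, applied one step at a time to the localisation sequences $0\to \cA_{\le n-1}\to \cA_{\le n}\to \cA_n\to 0$. First I would record that, since the filtration is split in the sense of Definition \ref{d16.3}, each $\iota_{n-1}$ has an exact right adjoint $\varpi_{n-1}$, and by the preceding lemma this transfers to the sequences above, so Proposition \ref{psplit} applies: for every $n$ there is an exact functor $W_{n-1}\df\iota_{n-1}\varpi_{n-1}:\cA_{\le n}\to\cA_{\le n}$ (with image in $\cA_{\le n-1}$) together with a functorial exact sequence $0\to W_{n-1}B\to B\to j_n(Q_nB)\to 0$, $Q_nB\in\cA_n$, the functor $Q_n$ being exact by Corollary \ref{lsplit}. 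Given $A\in\cA$, exhaustiveness of the filtration gives $N$ with $A\in\cA_{\le N}$; I then set $A_{\le n}=A$ for $n\ge N$ and $A_{\le n}=W_n\cdots W_{N-1}(A)$ for $n<N$. Independence of the auxiliary $N$ is immediate because the counit $\iota_m\varpi_m\Rightarrow\mathrm{id}$ is an isomorphism on the essential image of the fully faithful functor $\iota_m$; that each $A_{\le n}$ is a subobject of $A$ follows from Proposition \ref{psplit1}(1). By construction $A_n=A_{\le n}/A_{\le n-1}\cong Q_n(A_{\le n})\in\cA_n$, and $A_{\le n}=A$ for $n$ large, which is the content of part (1).

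For uniqueness I would argue by descending induction: any filtration $\{A'_{\le n}\}$ of $A$ with $A'_{\le n}\in\cA_{\le n}$ and $A'_{\le n}/A'_{\le n-1}\in\cA_n$ satisfies $A'_{\le n}=A$ for $n$ large (by exhaustiveness of the filtration on $A$ together with that on $\cA$), and then at each step $A'_{\le n-1}$ is forced, being the subobject of $A'_{\le n}\in\cA_{\le n}$ whose quotient lies in $\cA_n$, by the uniqueness clause of Proposition \ref{psplit}(2); hence $A'_{\le n}=A_{\le n}$. For part (2), the functor $A\mapsto A_{\le n}$ is, on each $\cA_{\le N}$, the composite $W_n\circ\cdots\circ W_{N-1}$ of exact functors, hence exact (any short exact sequence of $\cA$ lies in some $\cA_{\le N}$); then $A\mapsto A_n$ and $A\mapsto A_{\ge n}$ are exact, being the cokernels of the objectwise monomorphic natural transformations $(-)_{\le n-1}\Rightarrow(-)_{\le n}$ and $(-)_{\le n-1}\Rightarrow\mathrm{id}_\cA$ between exact functors (apply the snake lemma).

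For part (3): if $A_n=0$ for all $n$, descending from $A=A_{\le N}$ gives $A_{\le n}=A_{\le n-1}$ at every step, whence $A\in\bigcap_m\cA_{\le m}=0$ by separatedness; since each $(-)_n$ is exact, a morphism $f$ has $(f)_n$ invertible for all $n$ iff its kernel and cokernel have vanishing $n$-th graded pieces for all $n$ iff $f$ is invertible, which is the asserted conservativity. For part (4), $\Hom_\cA(\cA_n,\cA_{\le n-1})=0$ is Proposition \ref{psplit}(2) applied with ambient category $\cA_{\le n}$, together with the full faithfulness of $\cA_{\le n}\into\cA$. The vanishing of $\Ext^i_\cA(\cA_{\le n-1},\cA_n)$ again holds inside $\cA_{\le n}$ by Proposition \ref{psplit}(2); to pass to $\cA$ I would use the exact retraction $(-)_{\le n}:\cA\to\cA_{\le n}$ (part (2)), which is the identity on $\cA_{\le n}$ and carries a natural transformation $\tau:(-)_{\le n}\Rightarrow\mathrm{id}_\cA$ given by the inclusions: for a Yoneda class $\xi\in\Ext^i_\cA(X,Y)$ with $X,Y\in\cA_{\le n}$, applying $(-)_{\le n}$ and using $\tau_X=\mathrm{id}_X$, $\tau_Y=\mathrm{id}_Y$ shows that $\xi$ is the image of $(-)_{\le n}(\xi)\in\Ext^i_{\cA_{\le n}}(X,Y)=0$, hence $\xi=0$.

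The main obstacle I anticipate is exactly this last transfer of the higher $\Ext$-vanishing from the Serre subcategory $\cA_{\le n}$ up to $\cA$: for $i\le 1$ it is automatic because $\cA_{\le n}$ is thick, but for $i\ge 2$ one genuinely needs a device, and the exact-retraction argument above is the one I would rely on. A secondary point requiring care is the bookkeeping in the iterated construction and the uniqueness statement, namely checking that everything is independent of the auxiliary $N$ and that an arbitrary filtration of $A$ of the prescribed type necessarily stabilises, so that the descending induction starts.
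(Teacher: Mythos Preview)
Your proposal is correct and follows essentially the same construction as the paper: pick $N$ with $A\in\cA_{\le N}$, set $A_{\le n}$ to be the image of $A$ under the composite of the exact right adjoints down to level $n$, and deduce (1)--(3) from Proposition~\ref{psplit} together with exactness of these adjoints. Your arguments for existence, independence of $N$, uniqueness, exactness, and conservativity are all fine and are just more detailed versions of what the paper does.

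The only place you diverge slightly is in (4). You prove the $\Ext$-vanishing first inside $\cA_{\le n}$ and then transfer it to $\cA$ via the exact retraction $(-)_{\le n}$ and the natural monomorphism $\tau:(-)_{\le n}\Rightarrow\mathrm{id}_\cA$; this Yoneda argument is valid. The paper instead applies Proposition~\ref{psplit}(2) \emph{directly} to the split sequence
\[
0\to \cA_{\le n-1}\to \cA \to \cA/\cA_{\le n-1}\to 0,
\]
which is legitimate because parts (1) and (2), already proven, show that $\cA_{\le n-1}\hookrightarrow\cA$ has the exact right adjoint $A\mapsto A_{\le n-1}$; since $\cA_n$ sits inside $\ker\big((-)_{\le n-1}\big)=j(\cA/\cA_{\le n-1})$, the vanishing of $\Hom_\cA(\cA_n,\cA_{\le n-1})$ and of $\Ext^i_\cA(\cA_{\le n-1},\cA_n)$ for all $i\ge 0$ drops out of Proposition~\ref{psplit}(2) with no transfer step needed. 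So the obstacle you flagged is real for your chosen route but is simply sidestepped by the paper's. Both arguments are short; the paper's is a bit more direct.
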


\begin{proof} (1) Since $\cA=\bigcup \cA_{\le n}$, there exists $n_0$ such that $A\in \cA_{\le
n_0}$. For $n<n_0$, write $I_n$ for the inclusion $\cA_{\le n}\to \cA_{\le n_0}$ and $P_n$ for
its right adjoint, which exists and is exact as a composition of the right adjoints of the
$\iota_r$ for $n\le r<n_0$. Define
\[A_{\le n} = I_nP_n A.\]

Since $A=\iota_{n_0}\varpi_{n_0} A$, $A_{\le n}$ does not depend on the choice of $n_0$, and
we have a filtration of $A$ as in Proposition \ref{proadj}. Clearly, $A_{\le n}\in \cA_{\le n}$
and $A_{\le n_0} =A$. The fact that $A_n\in \cA_n$ follows from Proposition \ref{psplit} (2),
which also shows the uniqueness of the filtration. 

(2) This still follows from Proposition \ref{psplit} (2).

(3) Let $f:A\to B$ be such that $f_n:A_n\to B_n$ is an isomorphism for all $n\in\Z$. Let
$K=\ker f$ and $C=\coker f$. By exactness, $K_n = C_n = 0$ for all $n$. Thus $K,C\in \bigcap
\cA_{\le n} = 0$ and $f$ is an isomorphism.

(4) This follows from Proposition \ref{psplit} (2) again. 
\end{proof}

\begin{lemma}\label{lF.3.4} Assume given a weight filtration $\cA_{\le n}$ on $\cA$ and assume 
that the categories $\cA_{n}$
are semi-simple. Let $A\in \cA_m$ and $B\in \cA_n$. Then, 
$\Ext^i_\cA(A,B) = 0$ if $i > m-n$.
\end{lemma}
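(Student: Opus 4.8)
The plan is to prove Lemma \ref{lF.3.4} by induction on the number of nonzero graded pieces of $A$ and $B$, reducing ultimately to the case where both are pure of a single weight. First I would recall that, by Proposition \ref{proadj}, any object of $\cA$ carries a canonical finite weight filtration with graded pieces lying in the (semi-simple) categories $\cA_n$. So choose the canonical filtrations of $A$ and $B$; write $A$ as an iterated extension of objects $A_i\in\cA_i$ (for $i\le m$, since $A\in\cA_{\le m}=\cA_m$ as stated — more precisely $A\in\cA_m$ means $A$ lives in the graded category in weight $m$), and similarly $B\in\cA_n$. Actually, since $A\in\cA_m$ and $B\in\cA_n$ are each pure (identified with objects of $\cA_m$ and $\cA_n$ via $j_m$, $j_n$), there is no filtration to unwind: the statement is already about pure objects of weights $m$ and $n$, and one must show $\Ext^i_\cA(A,B)=0$ for $i>m-n$.

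The key inputs are: (1) the long-exact-sequence vanishing $\Ext^i_\cA(\cA_{\le n-1},\cA_n)=0$ for all $i\ge 0$ from Proposition \ref{proadj}(4), and its dual $\Ext^i_\cA(\cA_n,\cA_{\ge n+1})=0$ (which one gets because the split filtration is symmetric: the right adjoints being exact is equivalent, via Proposition \ref{psplit}, to the existence of exact left adjoints to the inclusions $\cA_n\to\cA_{n,n+1}$, giving the dual vanishing); and (2) semi-simplicity of each $\cA_n$, i.e. $\Ext^i_{\cA_n}(A,B)=0$ for $i>0$ when $A,B$ are in the same $\cA_n$. The strategy for the main case is a dévissage on the "gap" $m-n$. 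If $m<n$, then $A\in\cA_{\le m}\subseteq\cA_{\le n-1}$, so $\Ext^i_\cA(A,B)=\Ext^i_\cA(\cA_{\le n-1}\text{-object},\cA_n\text{-object})=0$ for all $i\ge 0$ by Proposition \ref{proadj}(4) — this handles $i>m-n$ trivially (indeed for all $i$) when the gap is negative, and in particular the base case. If $m=n$, then $A,B\in\cA_n$ and I must show $\Ext^i_\cA(A,B)=0$ for $i>0$; here the point is that the full embedding $\cA_n\hookrightarrow\cA$ (via $j_n$) induces an isomorphism on $\Ext^i$ for all $i$, because $\cA_n$ is thick in $\cA$ with the strong orthogonality properties of Proposition \ref{psplit}(2) (vanishing of $\Hom$ and all higher $\Ext$ between $\cA_{\le n-1}$ and $\cA_n$, and dually), so $\Ext^i_\cA(A,B)\simeq\Ext^i_{\cA_n}(A,B)=0$ for $i>0$ by semi-simplicity.

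For the general case $m>n$ with gap $d=m-n\ge 1$, I would induct on $d$. Pick $A\in\cA_m$; I want $\Ext^i_\cA(A,B)=0$ for $i>d$. There is no canonical way to "peel off" a weight from a pure object directly, so instead I would argue as follows: it suffices (by a standard Ext dévissage and the fact that $\cA=\bigcup\cA_{\le r}$) to prove the statement for $A$ pure of weight $m$ and $B$ pure of weight $n$ by relating $\Ext^i_\cA(A,B)$ to Ext groups across adjacent weights. Concretely, one uses that $\Ext^i_\cA(A,B)$ for pure $A,B$ of weights $m>n$ can be computed by a spectral sequence / successive extension argument where each "step" increases $i$ by at least one per unit of weight gap: this is the content of iterating Proposition \ref{proadj}(4) and its dual. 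The cleanest route is probably: choose any object $\tilde A\in\cA$ with $\tilde A_{\le m-1}=0$ and $\tilde A_m = A$, $\tilde A_{\le m}=\tilde A$ (e.g. $\tilde A=A$ itself viewed in $\cA$), and use the exact triangles relating $\cA_{\ge n}$-truncations to run the induction. The main obstacle will be setting up this dévissage cleanly — i.e., making precise the claim that "each unit of weight gap costs one homological degree" without a triangulated enhancement of $\cA$; I expect the honest argument is the spectral sequence of the weight filtration applied to $\RHom(A,B)$ in $D^b(\cA)$, whose $E_1$-page has $\Ext^q_\cA(A_p, B)$-type terms that vanish outside a range forcing the total degree bound $m-n$, combined with semi-simplicity killing the "diagonal" contributions in positive degree. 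Once that machinery is in place, the degree count $i>m-n\Rightarrow\Ext^i=0$ falls out immediately.
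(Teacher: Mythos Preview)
Your proposal has a genuine gap in the inductive step. You choose to induct on the weight gap $d=m-n$, but since $A$ and $B$ are \emph{pure} objects of weights $m$ and $n$, there is nothing to ``peel off'' by weight: the weight filtration of $A$ (or $B$) is trivial, so your suggested spectral sequence of the weight filtration on $\RHom(A,B)$ degenerates at $E_1$ and yields no information beyond the tautology $\Ext^i_\cA(A,B)=\Ext^i_\cA(A,B)$. The remark that ``there is no canonical way to peel off a weight from a pure object directly'' is exactly the problem, and you do not resolve it. Your treatment of the case $m=n$, $i>1$ also rests on the claim that the embedding $\cA_n\hookrightarrow\cA$ induces isomorphisms on all $\Ext^i$; thickness only gives this for $i\le 1$, and the higher cases are precisely what the lemma is trying to establish.

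The paper's argument avoids both difficulties by inducting on $i$ rather than on $m-n$. The key idea you are missing is the Yoneda factorisation: any class $\alpha\in\Ext^i_\cA(A,B)$ can be written as a product $\gamma\beta$ with $\beta\in\Ext^1_\cA(A,C)$ and $\gamma\in\Ext^{i-1}_\cA(C,B)$ for some \emph{auxiliary, not necessarily pure} object $C\in\cA$. The induction hypothesis (for $i-1$) shows that the map $\Ext^{i-1}_\cA(C_{\ge n+i-1},B)\to\Ext^{i-1}_\cA(C,B)$ is surjective, so one may replace $C$ by its high-weight quotient $C_{\ge n+i-1}$. Then $\beta$ lands in $\Ext^1_\cA(A,C_{\ge n+i-1})$, which vanishes because $A\in\cA_{\le m}\subseteq\cA_{\le n+i-1}$ and the target has weights $\ge n+i-1$ (using Proposition~\ref{proadj}(4) together with the $i=1$ base case). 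The introduction of the impure intermediary $C$ is what makes the d\'evissage go through.
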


\begin{proof} By induction on $i$. For $i=0$, it means that 
$\Hom_\cA(A,B) = 0$ if $m<n$, which
is true by \ref{proadj} (4). For $i=1$, it means that $\Ext^1_\cA(A,B) = 0$ 
if $m\le n$: for $m<n$
this is still \ref{proadj} (4) and for $m=n$ it comes from the 
semi-simplicity of $\cA_n$ and its
thickness (in Serre's sense) in $\cA$ (\ref{psplit} (1) (ii)).

Suppose now $i>1$ and $m-n<i$. Let $\alpha\in
\Ext^i_\cA(A,B)$. We may write
$\alpha$ as a Yoneda product
\[\alpha = \gamma\beta\]
with $\beta\in \Ext^1_\cA(A,C)$ and $\gamma\in \Ext^{i-1}_\cA(C,B)$ 
for some $C\in \cA$. By
induction, the map
\[\Ext^{i-1}_\cA(C_{\ge n+i-1},B)\to \Ext^{i-1}_\cA(C,B)\]
is surjective, hence we may assume $C\in \cA_{\ge n+i-1}$. But then
$\Ext^1_\cA(A,C)\allowbreak=0$ and we are done.
\end{proof}

\begin{defn}\label{dflength} Let $\cA$ be provided with an exhaustive split filtration. Let
$A\in \cA$. We say that $A$ \emph{has finite length} if $A_{\le n}=0$ for $n$ small enough.
The length of $A$ is then the integer
\[\ell(A)=n-m\]
where $n$ is the smallest integer such that $A_{\le n}=A$ and $m$ is the largest integer such
that $A_{\le m} = 0$.
\end{defn}

\begin{lemma}\label{lF.3.5} Let $\cA$ be provided with a weight filtration
$(\cA_{\le n},\iota_n)$. Let $A,B\in
\cA$, with $B$ of finite length. Let $f:A\to B$ be a morphism such that $f_n:A_n\to B_n$
is $0$ for all $n$. Then $f=0$.
\end{lemma}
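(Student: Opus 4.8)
The plan is to induct on the length $\ell(B)$ of $B$ (Definition \ref{dflength}), using the split filtration structure to peel off the top weight piece. Since $B$ has finite length, there is a largest $n$ with $B_{\le n}=B$; write $w=B_n=B/B_{\le n-1}\in\cA_n$ for the top graded piece. The base case is $\ell(B)=0$, i.e.\ $B=B_n$ is pure of some weight $n$: then $f$ itself equals $f_n$ after identifying $B$ with $j_n(B_n)$ (and using that $A\to A_n$ is the localisation followed by the right adjoint $j_n$), so the hypothesis $f_n=0$ gives $f=0$ directly. More precisely, $\Hom_\cA(A,j_nB_n)=\Hom_{\cA_n}(A_n,B_n)$ by the adjunction, and under this identification $f\mapsto f_n$, so $f_n=0\iff f=0$.

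For the inductive step, consider the exact sequence $0\to B_{\le n-1}\to B\to j_nw\to 0$ from Proposition \ref{proadj}(1). Composing $f:A\to B$ with the projection $B\to j_nw$ gives a map $A\to j_nw$; its weight-$n$ component is $f_n=0$, so by the base case this composite is $0$. Hence $f$ factors through a map $f':A\to B_{\le n-1}$. Now $B_{\le n-1}$ has length $\ell(B_{\le n-1})<\ell(B)$ (its top weight is $<n$, and its bottom weight is $\ge$ that of $B$), and the weight-$m$ component of $f'$ is the weight-$m$ component of $f$ for every $m\le n-1$ — this holds because $A\mapsto A_{\le n-1}$ and $A\mapsto A_m$ are exact functors (Proposition \ref{proadj}(2)) and $(B_{\le n-1})_m=B_m$ for $m\le n-1$. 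By hypothesis all these vanish, so by induction $f'=0$, whence $f=0$.

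The one point requiring a little care — and the main (minor) obstacle — is the bookkeeping identifying the weight-$n$ component of the composite $A\to B\to j_nw$ with $f_n$, and similarly the compatibility of the graded pieces of $f'$ with those of $f$. This is entirely formal: it follows from applying the exact functors $(-)_m$ of Proposition \ref{proadj}(2) to the factorisation $f=\iota\circ f'$ (with $\iota:B_{\le n-1}\hookrightarrow B$) and to the exact sequence $0\to B_{\le n-1}\to B\to j_nw\to 0$, together with the adjunction identity $\Hom_\cA(A,j_nB_n)\cong\Hom_{\cA_n}(A_n,B_n)$ from Proposition \ref{psplit}(1)--(2) which is what makes ``the weight-$n$ part of a map into a pure weight-$n$ object determines the map'' precise. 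No deeper input (semisimplicity, cohomological dimension, etc.) is needed; only exhaustiveness and splitness of the filtration, plus finiteness of length of $B$ to make the induction terminate.
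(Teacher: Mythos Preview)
Your argument is correct and takes a different route from the paper. One small imprecision worth flagging: in the base case, the adjunction $\pi\dashv j_n$ only gives $\Hom_{\cA_{\le n}}(A_{\le n},j_nB_n)\cong\Hom_{\cA_n}(A_n,B_n)$; to pass from $\Hom_\cA(A,j_nB_n)$ to this you still need $\Hom_\cA(A/A_{\le n},B)=0$, which is not an adjunction identity but a short d\'evissage using Proposition~\ref{proadj}~(4) (each $A_m$ with $m>n$ maps trivially to $B\in\cA_n\subset\cA_{\le m-1}$). So the base case already relies on the same Hom-vanishing that drives the whole lemma; the phrase ``by the adjunction'' undersells this. (Also, by Definition~\ref{dflength} a nonzero pure object has $\ell=1$, not $0$; this is harmless.)

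The paper's proof avoids a separate base case: it shows directly that $f(A_{\le n})\subseteq B_{\le n-k}$ for all $n$ and all $k\ge 0$ by induction on $k$, using at each step that the induced map $A_n\to B_{n-k}$ lies in $\Hom_\cA(\cA_n,\cA_{\le n-1})=0$ (Proposition~\ref{proadj}~(4)). Taking $n$ large (exhaustiveness) and then $k$ large (finite length of $B$) forces $f(A)\subseteq 0$. Both arguments rest entirely on Proposition~\ref{proadj}~(4); the paper's is slightly more uniform, while yours isolates the pure-target case as a standalone statement that could be reused elsewhere.
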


\begin{proof} The assumption implies that $f(A_{\le n})\subseteq B_{\le n-1}$ for
all
$n\in
\Z$. Hence $f$ induces morphisms $f_n^{(1)}:A_n\to B_{n-1}$, which are $0$ by Proposition
\ref{proadj} (4). Inductively on $k$, we get $f(A_{\le n})\subseteq B_{\le n-k}$ for all
$n\in\Z$ and all $k\ge 0$. For $n$ large enough we have $A_{\le n}=A$ and for $k$ large enough
we have $B_{\le n-k}=0$, hence $f=0$.
\end{proof}

\begin{remark}\label{lF.3.6} If $\cA$ is provided with a weight filtration such that every
object has finite length, then the functors $A\mapsto A_{\le n}$ of Proposition \ref{proadj}
(1) verify Jannsen's conditions in  \cite[p. 83, Def. 6.3 a)]{jannsen}. Conversely, let as in
\loccit $(W_n)_{n\in\Z}$ be an increasing sequence of exact subfunctors of $Id_\cA$ such that,
for all $A\in \cA$, one has $W_n A=0$ for $n \ll 0$ and $W_nA=A$ for $n \gg 0$. Define
$\cA_{\le n}$ as the full subcategory of $\cA$ consisting of objects $A$ such that $W_n A=A$.
This filtration is clearly separated and exhaustive in the sense of Definition \ref{d16.3}.
Moreover, for all $n\in \Z$, $\iota_n: \cA_{\le n}\into \cA_{\le n+1}$ has the exact right
adjoint $W_n$, so our filtration is also split. Summarising, the datum of a weight filtration
as in \cite[p. 83, Def. 6.3 a)]{jannsen} is equivalent to that of a weight filtration in the
sense of Definition \ref{dE.6} for which every object has finite length.
\end{remark}

In view of this remark, the following is an abstract version of \cite[p. 87, Ex. 6.8]{jannsen}: 

\begin{propose} Let $\cA$ be provided with a filtration $(\cA_{\le n})_{n\in\Z}$. For each
$n\in\Z$, let $\cA'_n$ be a full subcategory of $\cA_{\le n}$ such that
\begin{thlist}
\item For all $n\in\Z$, $\cA'_n$ is abelian.
\item For $m\ne n$, $\Hom_\cA(\cA'_m,\cA'_n)=0$.
\end{thlist}
Let $\cA'$ be the full subcategory of $\cA$ consisting of objects $A$ admitting a finite 
increasing filtration $(A_{\le n})$ with 
\begin{thlist}
\item $A_{\le n}=0$ for $n\ll 0$.
\item $A_{\le n}=A$ for $n\gg 0$.
\item $A_{\le n}/A_{\le n-1}\in \cA'_n$ for all $n\in\Z$.
\end{thlist}
Then $\cA'$ is an abelian subcategory of $\cA$, closed under subobjects and quotients. The
above filtration is unique for any $A\in \cA'$, and any morphism is strict. If $\cA'_{\le
n}:=\cA'\cap \cA_{\le n}$, then the filtration $\cA'_{\le n}$ is a weight filtration and any
object of $\cA'$ has finite length.
\end{propose}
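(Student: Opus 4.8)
The plan is to reduce the whole statement to one elementary propagation lemma: \emph{if $X,Y\in\cA'$, and $X$ (resp.\ $Y$) admits a filtration as in (i)--(iii) whose nonzero graded pieces lie in the $\cA'_m$ for $m$ in a set $S$ (resp.\ $T$), with $S\cap T=\emptyset$, then $\Hom_\cA(X,Y)=0$.} This follows immediately by a double induction on the lengths of the two filtrations, the one-step case being exactly hypothesis (ii). From it I would first deduce uniqueness of the filtration on an object $A\in\cA'$: if $(A_{\le n})$ and $(A'_{\le n})$ both satisfy (i)--(iii), the composite $A_{\le n}\hookrightarrow A\onto A/A'_{\le n}$ maps an object all of whose weights are $\le n$ to one all of whose weights are $\ge n+1$, hence is zero; so $A_{\le n}\subseteq A'_{\le n}$, and by symmetry they coincide. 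Writing $W_nA:=A_{\le n}$ and $\gr_nA$ for the graded pieces (so $\gr_nA\in\cA'_n$), the same argument shows that any morphism $f\colon A\to B$ of $\cA'$ satisfies $f(W_nA)\subseteq W_nB$, hence restricts to $W_nf$ and induces $\gr_nf\colon\gr_nA\to\gr_nB$; this last map is a morphism of $\cA$ between objects of $\cA'_n$, hence, $\cA'_n$ being full in $\cA$, a morphism of $\cA'_n$. In particular morphisms of $\cA'$ are automatically strict for the filtrations.

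Next I would prove the closure properties, which form the technical core. Fix $f\colon A\to B$ in $\cA'$ and form $K=\ker f$, $C=\coker f$, $I=\im f$ in $\cA$; filter them by $W_nK=K\cap W_nA$, $W_nI=\im(W_nA\to I)$, $W_nC=\im(W_nB\to C)$. Since each $\cA_{\le n}$ is thick in $\cA$, the inclusion $\cA_{\le n}\hookrightarrow\cA$ is exact; combined with strictness this gives that these induced filtrations are themselves strict, so that $\gr_nK=\ker_\cA(\gr_nf)$, $\gr_nI=\im_\cA(\gr_nf)$, $\gr_nC=\coker_\cA(\gr_nf)$ for every $n$. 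As $\gr_nf$ is a morphism of the abelian full subcategory $\cA'_n$, these graded pieces lie in $\cA'_n$; the induced filtrations are visibly bounded, so $K,I,C\in\cA'$. By fullness of $\cA'$ in $\cA$, $K$ and $C$ are a kernel and a cokernel in $\cA'$, and the comparison morphism from the coimage of $f$ to its image is already an isomorphism in $\cA$; thus $\cA'$ is abelian and its inclusion into $\cA$ is exact. Closure under subobjects (resp.\ quotients) comes out of the same computation applied to a monomorphism $A'\hookrightarrow A$ (resp.\ an epimorphism $A\onto A''$) with $A\in\cA'$, the point being that $\gr_nA'$ (resp.\ $\gr_nA''$) is a subobject (resp.\ quotient) of $A_n$ in $\cA$ and $\cA'_n$ is stable under subobjects and quotients in $\cA$.

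It remains to treat the final assertion. Put $\cA'_{\le n}=\cA'\cap\cA_{\le n}$; by what precedes this is a thick subcategory of $\cA'$, and an object $A$ of $\cA'$ lies in it exactly when $W_nA=A$. The inclusion $\cA'_{\le n}\hookrightarrow\cA'_{\le n+1}$ admits the right adjoint $A\mapsto W_nA$: any morphism $B\to A$ with $B\in\cA'_{\le n}$ factors through $W_nA$ because $\Hom_\cA(B,A/W_nA)=0$ by the propagation lemma. This right adjoint is exact by the kernel/cokernel analysis of the previous paragraph (i.e.\ $0\to W_n\to\mathrm{id}\to\mathrm{id}/W_n\to0$ is a short exact sequence of exact endofunctors of $\cA'$). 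Hence the filtration $(\cA'_{\le n})$ is split; it is separated and exhaustive because the filtration of every object of $\cA'$ is finite, which at the same time shows that every object of $\cA'$ has finite length in the sense of Definition~\ref{dflength}. Thus $(\cA'_{\le n})$ is a weight filtration in the sense of Definition~\ref{dE.6}.

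The main obstacle I anticipate is the closure step of the second paragraph: the Hom-vanishing formalism makes uniqueness, strictness and the adjunction essentially automatic, but to show that $\cA'$ is genuinely closed under kernels, cokernels, subobjects and quotients one must verify that the obvious filtrations one writes down on $K$, $C$, $I$, $A'$, $A''$ really have their graded pieces back in the prescribed $\cA'_n$; this rests on the exactness of the inclusions $\cA_{\le n}\hookrightarrow\cA$ and on $\cA'_n$ being stable under subobjects and quotients inside $\cA$, and it is the only place where the abelianness of the $\cA'_n$ and the thickness of the $\cA_{\le n}$ are really used. Once that is in place, the weight-filtration structure and the finite-length assertion are a formal consequence of the general results already established above.
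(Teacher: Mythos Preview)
Your approach is precisely the direct inductive argument the paper alludes to in its first sentence (``In principle this follows from Proposition~\ref{psplit1} by induction, but it would be tedious to write up''); the paper then declines to carry it out and instead cites Jannsen's Lemma~6.8.1 together with Remark~\ref{lF.3.6}. So your proof is genuinely more informative than the paper's, and the overall architecture --- the Hom-vanishing propagation lemma giving uniqueness and strictness, then the graded-piece computation for kernels/cokernels, then the adjoint $A\mapsto W_nA$ yielding the split filtration --- is sound and is exactly what the paper means by ``induction on the length of the filtration''.

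There is one point to flag. In the closure step you invoke that each $\cA'_n$ is \emph{stable under subobjects and quotients in $\cA$}, and in the kernel/cokernel step you need that the inclusion $\cA'_n\hookrightarrow\cA$ is exact. Neither follows from hypothesis~(i) as literally written (``$\cA'_n$ is abelian''): take $\cA=\mathrm{Ab}$ with the trivial one-step filtration and $\cA'_0=\Q\text{-Vect}$; this is a full exact abelian subcategory, condition~(ii) is vacuous, yet $\Z\subset\Q$ shows $\cA'=\cA'_0$ is not closed under subobjects. So the proposition tacitly requires the $\cA'_n$ to be Serre subcategories of $\cA$ --- which is the case in all the paper's applications and is presumably what Jannsen's hypotheses encode. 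Your proof is correct once this is made explicit, and you have in fact pinpointed exactly where the assumption enters; just be aware that it is not a consequence of ``abelian'' alone.
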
 

\begin{proof} In principle this follows from  Proposition \ref{lsplit} by induction,
but it would be tedious to write up (the induction would have to bear on the length of the
filtration of an object). Instead, we observe that the proof in
\cite[p. 88, Lemma 6.8.1]{jannsen} shows that
$\cA'$ is provided with a weight filtration in the sense of \cite[p. 83, Def. 6.3 a)]{jannsen},
and we apply Remark
\ref{lF.3.6}.
\end{proof}

\subsection{Morphisms of filtered categories}

\subsubsection{The case of a $2$-step filtration}

\begin{propose}\label{pglue} Let $0\to \cA'\by{i} \cA\by{\pi} \cA''\to 0$ and
  $0\to \cB'\by{i'} \cB\by{\pi'} \cB''\to 0$ be as in \eqref{eqabsetup}, and
  consider a naturally commutative diagram of exact functors 
\[\begin{CD}
0@>>> \cA'@>{i}>> \cA@>{\pi}>> \cA''@>>> 0\\
&&@V{R'}VV @V{R}VV @V{R''}VV\\
0@>>> \cB'@>{i'}>> \cB@>{\pi'}>> \cB''@>>> 0.
\end{CD}\]
Suppose that the two rows are split in the sense of Definition
  \ref{dsplit}; we use the notation $(p,j)$ (\resp $(p',j')$) for the
  corresponding adjoints. Then the following are equivalent: 
\begin{thlist}
\item The natural ``base change" transformation $R'p\Rightarrow p'R$ of
  \S \ref{sbase} is a natural isomorphism. 
\item   $Rj\Rightarrow j'R''$ is a natural isomorphism. 
\item  $R(j\cA'')\subseteq j'\cB''$.
\end{thlist}
\end{propose}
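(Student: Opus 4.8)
The plan is first to recall explicitly the two natural transformations that occur in (i) and (ii), so that the equivalences become manipulations of units and counits. For $A\in\cA$, the base-change transformation $\beta_A\colon R'(pA)\to p'(RA)$ of \S\ref{sbase} is the arrow adjoint, under the adjunction $i'\dashv p'$, to the composite $i'R'(pA)\iso R(ipA)\by{R(\varepsilon_A)}RA$, where the first isomorphism is induced by the given natural isomorphism $i'R'\cong Ri$ and $\varepsilon\colon ip\Rightarrow\mathrm{id}_{\cA}$ is the counit of $i\dashv p$. Dually, for $A''\in\cA''$ the transformation $\gamma_{A''}\colon R(jA'')\to j'(R''A'')$ appearing in (ii) is adjoint, under $\pi'\dashv j'$, to $\pi'R(jA'')\iso R''(\pi jA'')\by{R''(\delta_{A''})}R''(A'')$, where the isomorphism comes from $\pi'R\cong R''\pi$ and $\delta\colon\pi j\Rightarrow\mathrm{id}_{\cA''}$ is the counit of $\pi\dashv j$; here $\delta$ is an isomorphism because $j$ is fully faithful (Proposition \ref{psplit}).

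The equivalence (ii)$\Leftrightarrow$(iii) is the soft part. The implication (ii)$\Rightarrow$(iii) is immediate: $Rj\cong j'R''$ forces $R(jA'')\cong j'(R''A'')\in j'\cB''$. For (iii)$\Rightarrow$(ii), recall that $j'$ is a full embedding admitting $\pi'$ restricted to $j'\cB''$ as a quasi-inverse, so a morphism between objects of $j'\cB''$ is an isomorphism as soon as its image under $\pi'$ is one. Under (iii) both source and target of $\gamma_{A''}$ lie in $j'\cB''$, and unwinding the definition one sees that $\pi'(\gamma_{A''})$, followed by the counit isomorphism $\pi'j'R''(A'')\iso R''(A'')$, equals the composite of isomorphisms $\pi'R(jA'')\iso R''(\pi jA'')\by{R''(\delta_{A''})}R''(A'')$; hence $\pi'(\gamma_{A''})$, and therefore $\gamma_{A''}$, is an isomorphism.

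For (i)$\Leftrightarrow$(ii) I would apply the exact functor $R$ to the functorial short exact sequence $0\to ipA\to A\to j\pi A\to 0$ of Proposition \ref{psplit} (equation \eqref{eqglue}) and compare it with the sequence $0\to i'p'(RA)\to RA\to j'\pi'(RA)\to 0$ attached to $RA\in\cB$. The two given isomorphisms identify $R(ipA)$ with $i'(R'pA)$, and a chase through the constructions of $\beta$ and $\gamma$ shows that the identity of $RA$ extends to a morphism of short exact sequences whose outer vertical maps are $i'(\beta_A)$ and the map obtained from $\gamma_{\pi A}$ by composing with $j'$ of the canonical isomorphism $R''(\pi A)\cong\pi'(RA)$. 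Since $i'$ and $j'$ are full embeddings they reflect isomorphisms, so by the snake lemma $\beta_A$ is an isomorphism iff $\gamma_{\pi A}$ is. Consequently (i) amounts to $\gamma_{\pi A}$ being an isomorphism for every $A\in\cA$, which by essential surjectivity of $\pi$ is exactly (ii); and conversely (ii) gives that $\gamma_{\pi A}$, hence $\beta_A$, is an isomorphism for every $A$, which is (i).

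The main obstacle is precisely this last verification: one must check carefully that threading the two prescribed natural isomorphisms $i'R'\cong Ri$ and $\pi'R\cong R''\pi$ through the mate constructions of $\beta$ and $\gamma$ does yield a genuinely commutative ladder of short exact sequences with the vertical maps claimed, so that the snake lemma applies. This is a routine but somewhat intricate diagram chase with units, counits and triangle identities; everything else is formal adjunction calculus together with the structural properties of split sequences recorded in Propositions \ref{psplit} and \ref{psplit1}.
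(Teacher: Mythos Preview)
Your proof is correct and follows essentially the same line as the paper: apply $R$ to the functorial exact sequence \eqref{eqglue} for $A$, compare it with \eqref{eqglue} for $RA$ via the identity on $RA$, and read off that the outer vertical maps are isomorphisms simultaneously. The only notable difference is in the handling of (iii): the paper dispatches (iii)$\Rightarrow$(i),(ii) in one stroke by observing that once $R(j\pi A)\in j'\cB''$, the top row already has the shape demanded by \eqref{eqglue} for $RA$, so the \emph{uniqueness} statement in Proposition~\ref{psplit}~(2) forces both vertical maps to be isomorphisms---no separate check that $\pi'(\gamma_{A''})$ is an isomorphism is needed. Your argument via $\pi'$ restricted to $j'\cB''$ being a quasi-inverse is of course equivalent, just slightly more hands-on; the commutativity of the ladder that you flag as the main obstacle is simply asserted in the paper as a consequence of the base-change constructions.
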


\begin{proof} Let $A\in \cA$. We have a commutative diagram of exact sequences
(using \eqref{eqglue} and the base change morphisms): 
\[\begin{CD}
0@>>> R(ipA)@>>> R(A)@>>> R(j\pi A)@>>> 0\\
&& @VVV @V{||}VV @VVV \\
0@>>> i'p'R(A)@>>> R(A)@>>> j'\pi'R(A)@>>> 0.
\end{CD}\]

Thus the left vertical map is an isomorphism if and only if the right
vertical map is one, which shows that (i) $\iff$ (ii). If this is the
case, then $R(j\pi A)\in j'\cB''$, hence (iii). Conversely, if (iii)
holds, then all vertical maps must be isomorphisms by the uniqueness
of \eqref{eqglue}. 
\end{proof}

\subsubsection{The general case}

\begin{defn} \label{dF.4.4}
Let $\cA$ and $\cB$ two filtered abelian categories $(\cA_{\le n},\iota_n)$, $(\cB_{\le
n},\iota'_n)$ as in Definition \ref{d16.3}. Let  $R: \cA\to \cB$ be an additive functor.
\begin{enumerate}
\item  We say that $R$ \emph{respects the filtration} if it is exact and $R(\cA_{\le
n})\subseteq \cB_{\le n}$.
\item Suppose that the two filtrations are split. We say that $R$ \emph{respects the
splittings} if, moreover, $R(\cA_n)\subseteq \cB_n$ for all $n\in \Z$.
\item We denote by $R_{\le n} :\cA_{\le n}\to \cB_{\le n}$ the restriction of $R$ to $\cA_{\le
n}$ and by $R_n:\cA_n\to \cB_n$ its restriction to $\cA_n$, if applicable. 
 \end{enumerate}
 \end{defn}
 
\begin{lemma}\label{lF.4.5} Let $\cA$, $\cB$, $R$ be as in Definition \ref{dF.4.4}. Suppose
that the filtrations of $\cA$ and $\cB$ are exhaustive and split.  Then:
\begin{enumerate}
\item If every object of $\cA$ has  finite length in the sense of Definition \ref{dflength},
then $R$ respects the splittings as soon as $R(\cA_n)\subseteq \cB_n$ for all $n$.
\item If $R$ respects the splittings, then for all $n\in\Z$, $R_{\le n}$ and $R_n$ are exact. 
\item Let $A\in \cA$. Then $R(A_{\le n})= R(A)_{\le n}$ and $R(A_n)\iso R(A)_n$ for any
$n\in\Z$.
\end{enumerate}
\end{lemma}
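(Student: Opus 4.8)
The plan is to treat the three assertions in turn, using Proposition \ref{proadj} for the canonical filtration of an object and Propositions \ref{psplit} and \ref{pglue} for the compatibility of a functor with the splittings; throughout I take $R$ to be exact, as is built into Definition \ref{dF.4.4}. For (1) I would argue by induction on the length (Definition \ref{dflength}) of an object $A\in\cA_{\le m}$. Since the filtration of $\cA$ is exhaustive, every object lies in some $\cA_{\le m}$, and by Proposition \ref{proadj}(1) this forces $A_{\le m}=A$; hence the graded pieces $A_r=A_{\le r}/A_{\le r-1}$ vanish for $r>m$, and each nonzero one lies in $\cA_r\subseteq\cA_{\le m}$. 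Applying the exact functor $R$ to the finite filtration of $A$ exhibits $R(A)$ as a successive extension of the objects $R(A_r)$, which lie in $\cB_r\subseteq\cB_{\le m}$ by hypothesis; since $\cB_{\le m}$ is a thick (Serre) subcategory of $\cB$ it is closed under extensions, so $R(A)\in\cB_{\le m}$. This proves $R(\cA_{\le m})\subseteq\cB_{\le m}$, which together with the hypothesis $R(\cA_n)\subseteq\cB_n$ says that $R$ respects the splittings. The finite length hypothesis is essential here: without it $R(A)$ would only be an infinite extension of the $R(A_r)$.

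For (2) the argument is formal. The functor $R_{\le n}$ is the restriction of the exact functor $R$ to the Serre subcategory $\cA_{\le n}\subseteq\cA$, with values in the Serre subcategory $\cB_{\le n}\subseteq\cB$; since the inclusion of a Serre subcategory is exact and a sequence lying in it is exact there exactly when it is exact in the ambient abelian category, $R_{\le n}$ is exact. The same reasoning gives exactness of $R_n$, once one recalls (Proposition \ref{psplit}, and the identification made just after Definition \ref{dE.6}) that $\cA_n$ and $\cB_n$ are realised as thick subcategories of $\cA$ and $\cB$ via the right adjoints $j_n$; these images are thick in $\cA_{\le n}$, resp. $\cB_{\le n}$, hence in $\cA$, resp. $\cB$, and $R$ carries $\cA_n$ into $\cB_n$ by hypothesis.

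For (3) I would apply Proposition \ref{pglue} to the naturally commutative square whose rows are the split exact sequences $0\to\cA_{\le n}\to\cA\to\cA/\cA_{\le n}\to 0$ and $0\to\cB_{\le n}\to\cB\to\cB/\cB_{\le n}\to 0$ (split, with exact right adjoints $A\mapsto A_{\le n}$ and $B\mapsto B_{\le n}$ by Proposition \ref{proadj}), and whose vertical functors are $R_{\le n}$, $R$, and the functor induced by $R$ on the quotients. By Proposition \ref{pglue} the base change map $R(A_{\le n})\to R(A)_{\le n}$ is a natural isomorphism provided $R$ sends the image of the right adjoint to the localisation, namely $\{A\in\cA:A_{\le n}=0\}$, into the corresponding subcategory $\{B\in\cB:B_{\le n}=0\}$. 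The one step with real content is this last inclusion: if $A_{\le n}=0$ and $A\in\cA_{\le N}$, then the canonical filtration $0=A_{\le n}\subseteq A_{\le n+1}\subseteq\dots\subseteq A_{\le N}=A$ is finite, so $A$ has finite length, and one induces on it — the base case $A\in\cA_r$ with $r>n$ is immediate since $R(A)\in\cB_r$ forces $R(A)_{\le n}=0$, and the inductive step follows from exactness of $R$ and of $B\mapsto B_{\le n}$ (Proposition \ref{proadj}(2)). Granting this, Proposition \ref{pglue} yields $R(A_{\le n})\cong R(A)_{\le n}$, and then $R(A_n)=R(A_{\le n}/A_{\le n-1})\cong R(A)_{\le n}/R(A)_{\le n-1}=R(A)_n$ by exactness of $R$. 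I expect this finiteness observation — that objects killed by $(-)_{\le n}$ but lying in some $\cA_{\le N}$ automatically have finite length, even though (3) assumes no global finiteness — to be the only genuinely non-obvious point.
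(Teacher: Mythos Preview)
Your proposal is correct and follows essentially the same line as the paper's proof. Parts (1) and (2) match the paper almost exactly (induction on length; formal restriction to thick subcategories). For (3) there is a small stylistic difference: you invoke Proposition~\ref{pglue} directly, reducing to the verification that $R$ carries $\{A:A_{\le n}=0\}$ into $\{B:B_{\le n}=0\}$, whereas the paper gives a bare-hands snake-lemma argument comparing the two exact sequences $0\to R(A_{\le n})\to R(A)\to R(A_{\ge n+1})\to 0$ and $0\to R(A)_{\le n}\to R(A)\to R(A)_{\ge n+1}\to 0$. Both arguments rest on the same substantive point (your ``finiteness observation''): once $A\in\cA_{\le N}$ and $A_{\le n}=0$, the filtration of $A$ has only finitely many steps, so one can induct using $R(\cA_i)\subseteq\cB_i$. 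The paper phrases this as $R(\cA_{]n,n_0]})\subseteq\cB_{]n,n_0]}$; your formulation via Proposition~\ref{pglue} is arguably tidier since it reuses an existing result rather than unwinding part of its proof.
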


\begin{proof} (1)  Let $A\in \cA$. We have to show that, if $A\in \cA_{\le n}$, then $R(A)\in
\cB_{\le n}$. We argue by induction on the length $\ell(A)$ of the weight filtration on $A$. If
$\ell(A)=0$, then $A=0$ this is clear. If $\ell(A)>0$, we may assume $n$ minimal.  Then
$\ell(A_{\le n-1})=\ell(A)-1$, hence $R(A_{\le n-1})\in \cB_{\le n-1}$, and the exact sequence
\[0\to R(A_{\le n-1})\to R(A)\to R(A_n)\to 0\]
with $R(A_n)\in \cB_n$ shows that $R(A)\in \cB_{\le n}$.

(2) This follows from the exactness of the inclusions $\iota_n$ and $j_n$, and from the
faithful exactness of the $\iota'_n$ and $j'_n$. 

(3) Let $A\in \cA_{\le n_0}$; then $R(A)\in \cB_{\le n_0}$. For $n<n_0$, let $I'_n$ be the
inclusion $\cB_{\le n}\to \cB_{n_0}$ and $P'_n$ its exact right adjoint. We have
\[R(A_{\le n}) = I'_n P'_n R(A_{\le n})\subseteq I'_n P'_n R(A)= R(A)_{\le n}.\]

For equality, consider the commutative diagram of exact sequences
\[\begin{CD}
0@>>> R(A_{\le n})@>>> R(A)@>>> R(A_{\ge n+1})@>>> 0\\
&& @V{a}VV @V{||}VV @V{b}VV\\
0@>>> R(A)_{\le n}@>>> R(A)@>>> R(A)_{\ge n+1}@>>> 0.
\end{CD}\]
(see Proposition \ref{proadj} (2) for $A_{\ge n+1}$). Let $\cA_{]n,n_0]}$ be the image of
$\cA_{\le n_0}/\cA_{\le n}$ by the exact right adjoint of the localisation functor, and
similarly for
$\cB$ (see Proposition \ref{psplit} (1)). Note that $R(\cA_{]n,n_0]})\subseteq \cB_{]n,n_0]}$
because $R(\cA_i)\subseteq \cB_i$ for all $i>n$. Hence $R(A_{>n})\in \cB_{]n,n_0]}$. The snake
lemma shows that $\ker b\iso \coker a\in \cB_{]n,n_0]}\cap \cB_{\le n}=0$. 

The last claim follows.
\end{proof}

\subsection{Glueing natural transformations}

\subsubsection{The case of a $2$-step filtration}\label{sgluenat}

Let $0\to \cA'\by{i} \cA\by{\pi} \cA''\to 0$ and
  $0\to \cB'\by{i'} \cB\by{\pi'} \cB''\to 0$ be as in \eqref{eqabsetup}, and
  consider two naturally commutative diagrams of exact functors 
\[\begin{CD}
0@>>> \cA'@>{i}>> \cA@>{\pi}>> \cA''@>>> 0\\
&&@V{R'_n}VV @V{R_n}VV @V{R''_n}VV\\
0@>>> \cB'@>{i'}>> \cB@>{\pi'}>> \cB''@>>> 0
\end{CD}\]
for $n=1,2$. We assume that the conditions of Proposition \ref{pglue}
are satisfied for $n=1,2$. We also assume given two natural
transformations $u':R'_1\Rightarrow R'_2$ and $u'':R''_1\Rightarrow R''_2$.

\begin{defn}\label{dgluenat} A \emph{glueing} of $u'$ and $u''$ is a natural transformation
$u:R_1\Rightarrow R_2$ such that, for any $A\in \cA$, the diagram 
\begin{equation}\label{eqcomm} \begin{CD}
0@>>> i'R'_1(pA)@>>> R_1(A)@>>> j'R''_1(\pi A)@>>> 0\\
&& @V{i'(u'_{pA})}VV @V{u_A}VV @V{j'(u''_{\pi A})}VV \\
0@>>> i'R'_2(pA)@>>> R_2(A)@>>> j'R''_2(\pi A)@>>> 0
\end{CD}\end{equation}
commutes.
\end{defn}

\begin{thm}\label{tgluenat}\ 
\begin{enumerate}
\item For the existence of $u$, the following condition is necessary: for any
$(A',A'')\in
\cA'\times
\cA''$, the diagram 
\[\begin{CD}
\Ext^i_\cA(jA'',iA')@>\bar R_1>> \Ext^i_\cB(j'R''_1A'',i'R'_1A')\\
@V{\bar R_2}VV @V{i'(u'_{A'})_*}VV\\
\Ext^i_\cB(j'R''_2A'',i'R'_2A')@>j'(u''_{A''})^*>>
\Ext^i_\cB(j'R''_1A'',i'R'_2A') 
\end{CD}\]
commutes for $i=0,1$, where $\bar R_n$ denotes the composition of $R_n$ with
suitable natural isomorphisms.\\
We may view this obstruction as a bivariant natural transformation
\[i'(u'_{A'})_*\bar R_1- j'(u''_{A''})^*\bar R_2:\Ext^i_\cA(jA'',iA')\to
\Ext^i_\cB(j'R''_1A'',i'R'_2A').\]
\item Suppose that $\cB$ is the heart of a $t$-structure on a triangulated category $\cT$. Then
the condition of (1) for $i=1$ is equivalent to the following: for any $(A',A'')\in
\cA'\times
\cA''$, the diagram 
\[\begin{CD}
j'R''_1(\pi A)@>[R_1(A)]>> i'R'_1(pA)[1]\\
@V{j'(u''_{\pi A})}VV @V{i'(u'_{pA})}VV\\
j'R''_2(\pi A)@>[R_2(A)]>> i'R'_2(pA)[1]
\end{CD}\] 
commutes in $\cT$.
\item Suppose that $\Hom_\cB(j'\cB'',i'\cB')=0$.  Then $u$
exists and is unique if and only if the Condition in (1) is satisfied for $i=1$.
\item $u$ is a natural isomorphism if and only if $u'$ and $u''$ are.
\end{enumerate} 
\end{thm}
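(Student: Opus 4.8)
The plan is to deduce the whole of Theorem~\ref{tgluenat} from the two‑step glueing Lemma~\ref{lglue0}, applied one object at a time. First I would fix $A\in\cA$ and recall the canonical exact sequence $0\to ipA\to A\to j\pi A\to 0$ from \eqref{eqglue}, writing $[A]\in\Ext^1_\cA(j\pi A,ipA)$ for its class. Since the hypotheses of Proposition~\ref{pglue} are assumed for both $n=1,2$, applying the exact functor $R_n$ and inserting the base‑change isomorphisms $R_ni\iso i'R'_n$ and $R_nj\iso j'R''_n$ turns this into a genuine short exact sequence $0\to i'R'_n(pA)\to R_n(A)\to j'R''_n(\pi A)\to 0$, whose class is precisely $\bar R_n([A])$. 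Thus, for each $A$, producing a morphism $u_A$ filling \eqref{eqcomm} is exactly the problem of glueing the maps $i'(u'_{pA})$ and $j'(u''_{\pi A})$ between these two extensions, and Lemma~\ref{lglue0} controls it completely.

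For parts (1) and (2): by Lemma~\ref{lglue0}(1) such a $u_A$ exists iff $i'(u'_{pA})_*\bar R_1([A])=j'(u''_{\pi A})^*\bar R_2([A])$ in $\Ext^1_\cB(j'R''_1(\pi A),i'R'_2(pA))$. Every class of $\Ext^1_\cA(jA'',iA')$ is the class of an extension $A$ with $pA\cong A'$ and $\pi A\cong A''$ (the functorial two‑step decomposition of Proposition~\ref{psplit}), so demanding this for all $A$ is the same as commutativity of the Ext‑square of part~(1) for $i=1$ and all pairs $(A',A'')$; the $i=0$ instance is automatic since $\Hom_\cA(j\cA'',i\cA')=0$ by Proposition~\ref{psplit}(2), and is recorded only to match the triangulated reformulation. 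That the displayed obstruction is bivariant natural is then formal (compare Lemma~\ref{lnat}). Part~(2) is Lemma~\ref{lglue0}(3) applied to the extensions $R_1(A)$, $R_2(A)$ with vertical maps $j'(u''_{\pi A})$, $i'(u'_{pA})$.

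For part (3), assume the part‑(1) condition. For each $A$, Lemma~\ref{lglue0}(1) then yields at least one $u_A$ filling \eqref{eqcomm}, and Lemma~\ref{lglue0}(2) gives uniqueness of $u_A$ because $\Hom_\cB(j'R''_1(\pi A),i'R'_1(pA))=0$ (Proposition~\ref{psplit}(2)). The remaining and, I expect, hardest step is naturality of $A\mapsto u_A$: given $f:A\to B$ I would set $g=u_B\circ R_1(f)-R_2(f)\circ u_A$ and show $g=0$. Exactness of $R_1,R_2$ together with commutativity of \eqref{eqcomm} forces $g$ to carry $i'R'_1(pA)$ into $i'R'_2(pB)$, with induced map $i'(u'_{pB}R'_1(pf)-R'_2(pf)u'_{pA})=0$ on subobjects (naturality of $u'$) and $j'(u''_{\pi B}R''_1(\pi f)-R''_2(\pi f)u''_{\pi A})=0$ on quotients (naturality of $u''$); hence $g$ vanishes on the subobject, factors through $j'R''_1(\pi A)$, then—projecting $R_2(B)$ onto its quotient $j'R''_2(\pi B)$—factors further through $i'R'_2(pB)$, and $\Hom_\cB(j'\cB'',i'\cB')=0$ kills it. The care needed to verify that $g$ respects both two‑step filtrations before invoking this vanishing $\Hom$ is the main obstacle of the theorem.

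Finally part (4). If $u'$ and $u''$ are natural isomorphisms, then $i'(u'_{pA})$ and $j'(u''_{\pi A})$ are isomorphisms for all $A$ (the functors $i',j'$ being fully faithful and exact, hence preserving isomorphisms), so the five lemma applied to \eqref{eqcomm} shows each $u_A$ is an isomorphism. Conversely, suppose $u$ is a natural isomorphism. Taking $A=iA'$ for $A'\in\cA'$ one has $\pi(iA')=0$ and $p(iA')=A'$, so \eqref{eqcomm} degenerates to an isomorphism $i'R'_1(A')\iso R_1(iA')$ identifying $u_{iA'}$ with $i'(u'_{A'})$; since $i'$ is fully faithful it reflects isomorphisms, so $u'_{A'}$ is one. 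Taking $A=jA''$ for $A''\in\cA''$ one has $p(jA'')=0$ (as $\Hom_\cA(i\cA',j\cA'')=0$ by Proposition~\ref{psplit}(2)) and $\pi(jA'')=A''$, so \eqref{eqcomm} identifies $u_{jA''}$ with $j'(u''_{A''})$, and full faithfulness of $j'$ gives that $u''_{A''}$ is an isomorphism. Hence $u$ is a natural isomorphism iff $u'$ and $u''$ are.
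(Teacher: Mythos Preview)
Your proof is correct and follows the same approach as the paper: both reduce everything to the pointwise glueing Lemma~\ref{lglue0}, applied to the canonical short exact sequence \eqref{eqglue} and its images under $R_1,R_2$. The paper's own proof is extremely terse (``(2) and (3) follow from Lemma~\ref{lglue0}''), so your version supplies the details the paper omits---most notably the naturality check in part~(3), which the paper leaves entirely implicit. Your argument there is sound; an equivalent and slightly slicker phrasing is that both $u_B\circ R_1(f)$ and $R_2(f)\circ u_A$ are glueings (in the sense of Lemma~\ref{lglue0}) of the \emph{same} pair of maps on subobjects and quotients (by naturality of $u'$ and $u''$), hence coincide by the uniqueness clause Lemma~\ref{lglue0}(2), which applies because $\Hom_\cB(j'\cB'',i'\cB')=0$. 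One cosmetic point: in your uniqueness step the target should be $i'R'_2(pA)$ rather than $i'R'_1(pA)$, but both groups vanish so nothing is affected.
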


\begin{proof} 
(1) This is clear for $n=0$, and for $n=1$ it follows by applying $u$ to exact
sequences of type \eqref{eqglue}. 

(2) and (3) follow from Lemma \ref{lglue0}. 

(4) ``Only if" is obvious since $u'$ and $u''$ are restrictions of $u$ to $\cA'$ and $\cA''$,
and ``if" is obvious by the snake lemma.
\end{proof}

\subsubsection{The case of filtered categories}\label{sgluenatII}
We now generalise Theorem
\ref{tgluenat} to abelian categories provided with weight filtrations.

Let $\cA,\cB$ be two abelian categories, filtered  in the sense of Definition \ref{d16.3}. We
assume that the filtrations are \emph{weight filtrations} (Definition \ref{dE.6}).

We slightly change notation and consider two exact functors $\uT,\uT':\cA\to \cB$ which respect
the splittings in the sense of Definition \ref{dF.4.4}. By Lemma \ref{lF.4.5} (2),  we then have
for every object
$A\in
\cA$:
\[\uT(A_{\le n})=\uT(A)_{\le n},\qquad \uT'(A_{\le n}) = \uT'(A)_{\le n}.\]

If $u:\uT'\Rightarrow \uT$ is a natural transformation, then $u_A$ maps $\uT'(A_{\le n})$ to
$\uT(A_{\le n})$, hence $u$ \emph{induces} natural transformations $u_{\le n}:\uT'_{\le
n}\Rightarrow \uT_{\le n}$ and $u_n:\uT'_n\Rightarrow \uT_n$.

\begin{defn}\label{dgluenatII} For each $n\in\Z$, let $u_n:\uT'_n\Rightarrow \uT_n$ be a
natural transformation. A \emph{glueing} of the $u_n$ is a natural transformation
$u:\uT'\Rightarrow \uT$ which induces the $u_n$.
\end{defn}

\begin{thm}\label{tgluenatII} 
\begin{enumerate}
\item There exists at most one glueing.
\item Suppose that the glueing $u$ exists on $\cA_{\le n-1}$. Then $u$ extends to $\cA_{\le
n}$ if and only if, for any
$(A,B)\in\cA_{\le n-1}\times\cA_n$, the diagram 
\[\begin{CD}
\Ext^1_\cA(B,A)@>\uT'>> \Ext^1_\cB(\uT'(B),\uT'(A))\\
@V{\uT}VV @V{u(A)_*}VV\\
\Ext^1_\cB(\uT(B),\uT(A))@>u_n(B)^*>>
\Ext^1_\cB(\uT'(B),\uT(A)) 
\end{CD}\]
commutes.
\item $u$ is a natural isomorphism on objects of finite lengths if and only if all the $u_n$
are.
\end{enumerate} 
\end{thm}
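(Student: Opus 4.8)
The plan is to reduce all three assertions to the $2$-step case already proved in Theorem \ref{tgluenat}, together with the structural facts about weight filtrations from Part I (splitness of $0\to\cA_{\le n-1}\to\cA_{\le n}\to\cA_n\to 0$, the vanishing $\Hom_\cB(\cB_n,\cB_{\le n-1})=0$ of Proposition \ref{proadj}(4), and the compatibilities of Lemma \ref{lF.4.5}).

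For (1), given two glueings $u,v$, I would set $w=u-v\colon\uT'\Rightarrow\uT$ (legitimate since the target category is abelian). Since $u$ and $v$ both induce $u_n$ on $\cA_n$, the natural transformation $w$ induces $0$ on every graded piece; I then run exactly the argument of the proof of Lemma \ref{lF.3.5}. Fix $A\in\cA$. Naturality of $w$ for $A_{\le n}\hookrightarrow A$ and $A_{\le n}\twoheadrightarrow A_n$, plus $w_{A_n}=0$, gives $w_A\bigl(\uT'(A)_{\le n}\bigr)\subseteq\uT(A)_{\le n-1}$; iterating, using $\Hom_\cB(\cB_n,\cB_{n-1})\subseteq\Hom_\cB(\cB_n,\cB_{\le n-1})=0$ (Proposition \ref{proadj}(4)) at each step, yields $w_A\bigl(\uT'(A)_{\le n}\bigr)\subseteq\uT(A)_{\le n-k}$ for all $k\ge 0$. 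Choosing $n$ large enough that $\uT'(A)_{\le n}=\uT'(A)$ (exhaustiveness, plus $\uT'$ respecting splittings), the image of $w_A$ lies in $\bigcap_m\cB_{\le m}=\{0\}$ by separatedness of the filtration on $\cB$; hence $w_A=0$.

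For (2), I would apply Theorem \ref{tgluenat} to the split exact sequences $0\to\cA_{\le n-1}\to\cA_{\le n}\to\cA_n\to 0$ and $0\to\cB_{\le n-1}\to\cB_{\le n}\to\cB_n\to 0$ (split in the sense of Definition \ref{dsplit} because the two filtrations are split, via Proposition \ref{psplit}), for the exact functors $\uT'_{\le n},\uT_{\le n}$: these are exact and send $\cA_n$ into $\cB_n$ (this is precisely ``respects the splittings''), so the equivalent conditions of Proposition \ref{pglue} hold, and Condition (i) of Proposition \ref{pstreng} for $\cB_{\le n}$ reads $\Hom_\cB(\cB_n,\cB_{\le n-1})=0$, which is again Proposition \ref{proadj}(4). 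Thus Theorem \ref{tgluenat}(3) applies: the glueing of $u$ (on $\cA_{\le n-1}$) and $u_n$ (on $\cA_n$) exists and is unique if and only if the obstruction of Theorem \ref{tgluenat}(1) vanishes for $i=1$. Unwinding the dictionary $R_1=\uT'$, $R_2=\uT$, $u'=u$, $u''=u_n$, $A'=A\in\cA_{\le n-1}$, $A''=B\in\cA_n$ (and noting $\Ext^1$ computed in $\cA_{\le n}$ agrees with $\Ext^1$ in $\cA$, $\cA_{\le n}$ being thick), that obstruction is exactly the commutativity of the $\Ext^1$-square displayed in (2), the auxiliary isomorphisms hidden in the $\bar R_i$ being those coming from exactness of $\uT,\uT'$ and Lemma \ref{lF.4.5}(3). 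By the uniqueness part of Theorem \ref{tgluenat}(3) and the form of the defining square \eqref{eqcomm}, the resulting $u^{(n)}$ restricts to $u$ on $\cA_{\le n-1}$ and to $u_n$ on $\cA_n$, so it is a genuine extension.

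For (3), ``only if'' is immediate since $u_n$ is the restriction of $u$ to $\cA_n$, whose objects have finite length. For ``if'', I would induct on $\ell(A)$ (Definition \ref{dflength}): if $\ell(A)=0$ then $A$ lies in a single $\cA_n$ and $u_A=u_n(A)$ is an isomorphism; if $\ell(A)>0$ and $n$ is the top weight of $A$, the exact sequence $0\to A_{\le n-1}\to A\to A_n\to 0$ with $\ell(A_{\le n-1})<\ell(A)$ and $A_n\in\cA_n$, together with naturality of $u$, the induction hypothesis, and the five lemma, shows $u_A$ is an isomorphism. The main obstacle I expect is step (2): one must verify carefully that the filtered situation really satisfies all hypotheses of Theorem \ref{tgluenat}(3) (splitness of both sequences, the base-change condition of Proposition \ref{pglue}, the $\Hom$-vanishing), and — the most delicate point — that the abstract obstruction of Theorem \ref{tgluenat}(1) translates under the identifications of Lemma \ref{lF.4.5} into precisely the $\Ext^1$-square written in the statement, auxiliary natural isomorphisms included. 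The remaining bookkeeping in (1) and (3) is routine dévissage.
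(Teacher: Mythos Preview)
Your proposal is correct and follows essentially the same approach as the paper: reduce each assertion to the $2$-step case of Theorem \ref{tgluenat} (for (2) and (3)) together with the vanishing of Lemma \ref{lF.3.5}/Proposition \ref{proadj}(4) (for (1)). Your write-up is in fact more careful than the paper's terse proof: you spell out the verification that the hypotheses of Theorem \ref{tgluenat}(3) hold (splitness, the base-change condition of Proposition \ref{pglue}, and the vanishing $\Hom_\cB(\cB_n,\cB_{\le n-1})=0$), and your argument for (1) via separatedness of the filtration on $\cB$ is slightly cleaner than a bare citation of Lemma \ref{lF.3.5}, which literally requires the target to have finite length.
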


\begin{proof} 
(1) follows from Lemma \ref{lF.3.5}, while (2) follows from Theorem
\ref{tglueII} (2).

(3) ``Only if" is obvious, and ``if" follows inductively from Theorem
\ref{tglueII} (3).
\end{proof}

\subsubsection{The case of a $3$-step filtration}\label{sgluenatIII} We now restrict to the case where the filtration on $\cA$ has
only $3$ steps, and will try and get a condition involving only the $u_n$. For convenience, we
assume that $\cA_n\ne 0$
$\Rightarrow n\in \{-2,-1,0\}$.

\begin{thm}\label{tgluenatIII} \
\begin{enumerate}
\item Suppose that, for $(m,n)\in \{(-2,-1),(-1,0)\}$ and any
$(A_m,A_n)\allowbreak\in
\cA_m\times
\cA_n$, the diagram
\[\begin{CD}
\Ext^1_\cA(A_n,A_m)@>\uT'>> \Ext^1_\cB(\uT'(A_n),\uT'(A_m))\\
@V{\uT}VV @V{u(A_m)_*}VV\\
\Ext^1_\cB(\uT(A_n),\uT(A_m))@>u_n(A_n)^*>>
\Ext^1_\cB(\uT'(A_n),\uT(A_m)) 
\end{CD}\]
commutes or, equivalently, the obstruction in Theorem \ref{tgluenat} (1) vanishes. Let $u_{\le
-1}$ be the resulting natural transformation on $\cA_{\le -1}$ (Theorem \ref{tgluenat} (3)).
Then, for any $(A_{\le -1},A_0)\in
\cA_{\le -1}\times \cA_0$, the glueing obstruction
\[\Ext^1_\cA(A_0,A_{\le -1})\to \Ext^1_\cB(\uT' A_0,\uT A_{\le -1})\]
refines to an obstruction
\[\Ext^1_\cA(A_0,A_{\le -1})\to \Ext^1_\cB(\uT' A_0,\uT A_{-2}).\]
\item Suppose moreover that the diagram of (1) commutes for $(m,n)= (-2,0)$ and that
the map $\Ext^1_\cA(A_0,A_{\le -1})\to \Ext^1_\cA(A_0,A_{ -1})$ is surjective for any $A_0\in
\cA_0$ and $A_{\le -1}\in \cA_{\le -1}$ (for example, that $\cA$ is of cohomological dimension
$\le 1$). Then the obstruction of (1) refines to a bilinear obstruction
\[\Ext^1_\cA(A_0,A_{ -1})\times\Ext^1_\cA(A_{ -1},A_{ -2})\to \Ext^1_\cB(\uT' A_0,\uT A_{-2})
\]
which is covariant in $A_2$, contravariant in $A_0$ and ``dinatural" in $A_{-1}$ in the sense
that the corresponding map
\[\Ext^1_\cA(A_{ -1},A_{ -2})\to \Hom(\Ext^1_\cA(A_0,A_{ -1}),\Ext^1_\cB(\uT' A_0,\uT
A_{-2}))
\]
is contravariant in $A_{-1}$.
\item If $u_{-2}$ is a natural isomorphism, the obstruction of (2) may be reformulated as an
obstruction with values in $\Ext^1_\cB(\uT' A_0,\uT' A_{-2})$.
\item The glueing exists if and only if the obstruction of (2) vanishes.
\end{enumerate}
\end{thm}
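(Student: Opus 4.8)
The plan is to reduce the equivalence, via the two‑step case recorded in Theorem~\ref{tgluenat}, to the refinements of the obstruction carried out in parts (1)--(3). First I would translate the notion of glueing. Under the standing hypotheses (those of (1), sharpened in (2)), part (1) already produces the natural transformation $u_{\le -1}:\uT'_{\le -1}\Rightarrow\uT_{\le -1}$ on $\cA_{\le -1}$, together with its identification as the unique glueing of $u_{-2}$ and $u_{-1}$ (uniqueness by Lemma~\ref{lF.3.5}). Consider the short exact sequence of abelian categories
\[0\to\cA_{\le -1}\by{i}\cA\by{\pi}\cA_0\to 0.\]
Unwinding Definitions~\ref{dgluenat} and~\ref{dgluenatII} using \eqref{eqglue}, a glueing of the family $(u_n)$ on $\cA$ is \emph{the same datum} as a glueing, in the sense of Definition~\ref{dgluenat}, of $u_{\le -1}$ and $u_0$ relative to this sequence: such a $u$ restricts to $u_{\le -1}$ on $\cA_{\le -1}$ — hence, by part (1), induces $u_{-2}$ and $u_{-1}$ — and to $u_0$ on $\cA_0$; conversely any $u:\uT'\Rightarrow\uT$ inducing $u_{\le -1}$ and $u_0$ respects the weight filtration, so that the compatibility square \eqref{eqcomm} holds. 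Since $\uT,\uT'$ respect the splittings and $\cA$ has weights only in $\{-2,-1,0\}$, their values lie in the thick subcategory $\cB^{[-2,0]}$ of $\cB$ of objects of weights in $\{-2,-1,0\}$; this inherits a three‑step split weight filtration with graded pieces $\cB_{-2},\cB_{-1},\cB_0$, and via $\uT,\uT'$ the sequence above maps to the split short exact sequence $0\to\cB^{[-2,-1]}\to\cB^{[-2,0]}\to\cB_0\to 0$ (weight truncation providing the required exact right adjoint), the hypotheses of Proposition~\ref{pglue} being automatic by Lemma~\ref{lF.4.5} (3).

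The core of the proof is then one application of Theorem~\ref{tgluenat} together with parts (1)--(3). By Proposition~\ref{psplit} (2) applied to the split sequence for $\cB^{[-2,0]}$ we have $\Hom_{\cB^{[-2,0]}}(j'\cB_0,i'\cB^{[-2,-1]})=0$, which is exactly what Theorem~\ref{tgluenat} (3) requires; it then gives: the glueing of $u_{\le -1}$ and $u_0$ exists (and is then unique) \emph{if and only if} the obstruction of Theorem~\ref{tgluenat} (1), the bivariant natural transformation with values in $\Ext^1_\cB(\uT' A_0,\uT A_{\le -1})$, vanishes identically. I would then invoke part (1): for every $A_0\in\cA_0$ and $A_{\le -1}\in\cA_{\le -1}$ this class is the image of a class in $\Ext^1_\cB(\uT' A_0,\uT A_{-2})$ under the map induced by $\uT A_{-2}\to\uT A_{\le -1}$, and that map is injective since $\Hom_\cB(\uT' A_0,\uT A_{-1})=0$ by Proposition~\ref{proadj} (4) ($\uT' A_0\in\cB_0$, $\uT A_{-1}\in\cB_{-1}$); so vanishing of the obstruction is equivalent to vanishing of its refinement in $\Ext^1_\cB(\uT' A_0,\uT A_{-2})$. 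Finally, parts (2)--(3) identify this refinement — for an object $A$ of $\cA$ with $\pi A=A_0$, $ipA=A_{\le -1}$ filtered by $A_{-2}\subseteq A_{\le -1}$ with quotient $A_{-1}$ — with the value of the bilinear obstruction of (2) on $(e,f)$, where $f\in\Ext^1_\cA(A_{-1},A_{-2})$ is the class of $A_{\le -1}$ and $e\in\Ext^1_\cA(A_0,A_{-1})$ is any lift of $[A]\in\Ext^1_\cA(A_0,A_{\le -1})$ along the surjection assumed in (2). As $A$ ranges over all extensions of $A_0$ by $A_{\le -1}$ the class $[A]$ ranges over $\Ext^1_\cA(A_0,A_{\le -1})$, so all pairs $(e,f)$ occur; hence the obstruction of Theorem~\ref{tgluenat} (1) vanishes for all $(A_0,A_{\le -1})$ precisely when the bilinear obstruction of (2) is identically zero. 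Chaining these equivalences gives the statement.

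The step I expect to demand the most care is the last identification: that the refined obstruction depends only on the pair $(e,f)$ — not on the choice of lift $e$ nor on the object $A$ realizing it — and is bilinear and (di)natural. But this is precisely the content of parts (1)--(3), which are available, so the remaining labour is organisational: quoting those parts correctly and verifying the minor auxiliary facts (injectivity of $\Ext^1_\cB(\uT' A_0,\uT A_{-2})\to\Ext^1_\cB(\uT' A_0,\uT A_{\le -1})$, the hypothesis of Theorem~\ref{tgluenat} (3) for $\cB^{[-2,0]}$, and the dictionary between Definitions~\ref{dgluenat} and~\ref{dgluenatII}).
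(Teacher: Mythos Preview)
Your proposal is correct and follows essentially the same route as the paper. The paper's proof of (4) is the single line ``follows from Theorem~\ref{tgluenatII}'', whose part (2) applied with $n=0$ is precisely the two-step reduction you carry out via Theorem~\ref{tgluenat} (3); you have simply made explicit the further passage through parts (1) and (2) of the present theorem (including the injectivity of $\Ext^1_\cB(\uT' A_0,\uT A_{-2})\to\Ext^1_\cB(\uT' A_0,\uT A_{\le -1})$) that is needed to identify the obstruction of Theorem~\ref{tgluenatII} (2) with the bilinear obstruction, which the paper leaves implicit.
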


\begin{proof} (1) follows from Proposition \ref{pglue1} (4). For (2), we have a commutative
diagram
\[\begin{CD}
\Ext^1_\cA(A_0,A_{ -2})@>>> \Ext^1_\cB(\uT' A_0,\uT A_{-2})\\
@VVV @V||VV\\
\Ext^1_\cA(A_0,A_{\le -1})@>>> \Ext^1_\cB(\uT' A_0,\uT A_{-2})
\end{CD}\]
and the top horizontal map is $0$ by hypothesis. Hence (by the surjectivity assumption) it
induces a map $\Ext^1_\cA(A_0,A_{ -1})\to \Ext^1_\cB(\uT' A_0,\uT A_{-2})$, which is covariant
in the second variable viewed as a functor of $A_{\le -1}$. The result then follows from Lemma
\ref{lnat}.

(3) is obvious and (4) follows from Theorem \ref{tgluenatII}.
\end{proof}

\subsection{Glueing equivalences of abelian categories}

\subsubsection{The case of a $2$-step filtration}

\begin{thm}\label{tglue} Let $0\to \cA'\by{i} \cA\by{\pi} \cA''\to 0$ and
  $0\to \cB'\by{i'} \cB\by{\pi'} \cB''\to 0$ be as in \eqref{eqabsetup}, and
  consider a naturally commutative diagram of exact functors 
\[\begin{CD}
0@>>> \cA'@>{i}>> \cA@>{\pi}>> \cA''@>>> 0\\
&&@V{R'}VV @V{R}VV @V{R''}VV\\
0@>>> \cB'@>{i'}>> \cB@>{\pi'}>> \cB''@>>> 0.
\end{CD}\]
Then:
\begin{enumerate}
\item If $R'$ and $R''$ are faithful, $R$ is faithful.
\item Assume the conditions of Proposition \ref{pglue} are satisfied and suppose further that,
for any two objects $A'\in \cA'$, $A''\in \cA''$, the map  
\[\Hom_\cA(j A'',i A')\to \Hom_\cB(R(jA''),R(iA'))\]
is surjective and the map
\begin{equation} \label{eqExt}
\Ext^1_\cA(j A'',i A')\to \Ext^1_\cB(R(jA''),R(iA'))
\end{equation}
is injective. Then, if $R'$ and $R''$ are full, $R$ is full.
\item Assume the conditions of {\rm (2)} and suppose further that, for
  any two objects $A'\in \cA'$, $A''\in \cA''$, the map \eqref{eqExt} is
  surjective. Then, if $R'$ and $R''$ are essentially surjective, $R$
  is essentially surjective. 
\end{enumerate}
\end{thm}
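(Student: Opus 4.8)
The plan is to treat the three assertions in turn, in each case via the canonical two‑step filtration $0\to ipA\to A\to j\pi A\to 0$ of \eqref{eqglue} attached to an object $A\in\cA$ (and $0\to i'p'C\to C\to j'\pi'C\to 0$ for $C\in\cB$), together with the vanishings $\Hom_\cA(j\cA'',i\cA')=0$ and $\Ext^r_\cA(i\cA',j\cA'')=0$ from Proposition \ref{psplit} (2) and their analogues in $\cB$. These vanishings make every morphism strictly compatible with the canonical filtration and, by Lemma \ref{lglue0}, reduce the glueing of a pair of ``partial'' morphisms of two such extensions to the vanishing of a single obstruction class in an $\Ext^1$‑group. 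Moreover, under the hypotheses of Proposition \ref{pglue} the natural isomorphisms $Ri\cong i'R'$, $R\pi\cong\pi'R$, $Rj\cong j'R''$, $R'p\cong p'R$ show that the exact functor $R$ carries the canonical filtration of $A$ to that of $R(A)$, transforms the extension class of $A$ into that of $R(A)$, and — through the full faithfulness of $i,j,i',j'$ — induces on the $\Ext^1$‑groups of the ``ends'' precisely maps of the shape \eqref{eqExt}.

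For (1), let $f\colon A\to B$ have $R(f)=0$. Factoring $f=\iota q$ with $q$ epi and $\iota$ mono, exactness of $R$ forces $R(\operatorname{im}f)=0$; on the other hand $R''(\pi f)=\pi'R(f)=0$ and faithfulness of $R''$ give $\pi(f)=0$, so $\operatorname{im}f\in\cA'$, say $\operatorname{im}f\cong iI$ with $I\in\cA'$. Then $i'R'(I)\cong R(iI)=0$, whence $R'(I)=0$ ($i'$ faithful) and then $I=0$ ($R'$ faithful), so $f=0$; note that this argument needs neither the splitting nor Proposition \ref{pglue}. For (2), given $g\colon R(A)\to R(B)$: since the relevant cross‑$\Hom$'s in $\cB$ vanish, $g$ is automatically filtered and induces $g'\colon i'R'(pA)\to i'R'(pB)$ and $g''\colon j'R''(\pi A)\to j'R''(\pi B)$, which by full faithfulness of $i',j'$ and fullness of $R',R''$ take the form $g'=i'R'(\varphi')$, $g''=j'R''(\varphi'')$ for morphisms $\varphi'\colon pA\to pB$, $\varphi''\colon\pi A\to\pi B$. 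By Lemma \ref{lglue0} the obstruction to glueing $i(\varphi')$, $j(\varphi'')$ into some $f\colon A\to B$ is a class $o\in\Ext^1_\cA(j\pi A,ipB)$; computing $R(o)$ — using that $R$ sends the extension classes of $A$, $B$ to those of $R(A)$, $R(B)$ and that $R(i\varphi')\cong g'$, $R(j\varphi'')\cong g''$ — identifies it with the obstruction to glueing $g',g''$ in $\cB$, which vanishes because $g$ is one such glueing. Since $R$ takes $o$ into $\Ext^1_\cB(j'R''\pi A,i'R'pB)$ by an instance of \eqref{eqExt}, its injectivity gives $o=0$, so $f$ exists; and $R(f)=g$ by the uniqueness of glueings (the target cross‑$\Hom$ vanishing). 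The $\Hom$‑surjectivity hypothesis is what makes this transfer of graded‑piece data faithful in general; here it enters only through the vanishing of those $\Hom$'s.

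For (3), given $C\in\cB$ with canonical extension $0\to i'p'C\to C\to j'\pi'C\to 0$: by essential surjectivity of $R',R''$ choose $A'\in\cA'$, $A''\in\cA''$ with $R'(A')\cong p'C$, $R''(A'')\cong\pi'C$; the class of $C$ then lies in $\Ext^1_\cB(j'R''A'',i'R'A')\cong\Ext^1_\cB(R(jA''),R(iA'))$, so by surjectivity of \eqref{eqExt} it equals $R(\omega)$ for some $\omega\in\Ext^1_\cA(jA'',iA')$; letting $A$ be the extension of $jA''$ by $iA'$ with class $\omega$ and applying $R$ produces an extension $0\to i'R'A'\to R(A)\to j'R''A''\to 0$ with the same class as $C$, whence $R(A)\cong C$, so $R$ is essentially surjective. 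I expect the only real obstacle to be the careful verification, in (2) and (3), that $R$ indeed transports the $\Ext^1$‑obstruction classes furnished by Lemma \ref{lglue0} into the classes controlled by \eqref{eqExt} — that is, correctly threading the several natural isomorphisms of Proposition \ref{pglue} through the relevant connecting homomorphisms; once this compatibility is pinned down, injectivity (resp. surjectivity) of \eqref{eqExt} does all the remaining work.
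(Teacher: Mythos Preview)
Your proof is correct and follows essentially the same approach as the paper: exploit the canonical filtration \eqref{eqglue}, reduce via Lemma \ref{lglue0} to an $\Ext^1$-obstruction, and use injectivity/surjectivity of \eqref{eqExt}. Your argument for (1) via the image factorization is slightly cleaner than the paper's calculus-of-fractions detour, and in (2) you correctly observe that the cross-$\Hom$ in $\cB$ already vanishes by Proposition \ref{psplit} (2), so the glueing in $\cB$ is unique and $R(f)=g$ directly --- this makes the stated $\Hom$-surjectivity hypothesis redundant (both sides are zero), whereas the paper instead uses that hypothesis explicitly to correct $R(f)-g$ by a lift.
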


\begin{proof} (1) Let $f:A_1\to A_2$ in $\cA$ be such that
  $R(f)=0$. Since $R''\pi\simeq \pi'R$, $R''\pi(f)=0$, hence $\pi(f)=0$. By
  calculus of fractions, this implies that one can find $s:A_2\to B$
  with $\ker(s),\coker(s)\in i(\cA')$ such that $sf=0$. This means that
  $f$ factors as a composition 
\[A_1\by{g} iK \by{h} A_2\]
with $iK=\ker(s)$ and $h$ is a mono. Then $R(h)R(g)=0$. Since $R$ is
exact, $R(h)$ is a mono, hence $R(h)=0$ which implies that
$R(iK)=i'R'(K)=0$, hence $K=0$ since $R'$ and $i'$ are faithful.

(2) Let $A_1,A_2\in\cA$ and let $g\in\cB(R(A_1),R(A_2))$. By the
functoriality of \eqref{eqglue}, we get a commutative diagram 
\[\begin{CD}
0@>>> i'p'R(A_1)@>>> R(A_1)@>>> j'\pi'R( A_1)@>>> 0\\
&& @V{i'p'(g)}VV @V{g}VV @V{j'\pi'(g)}VV \\
0@>>> i'p'R(A_2)@>>> R(A_2)@>>> j'\pi'R(A_2)@>>> 0.
\end{CD}\]

Using the equivalent conditions of (2), $i'p'(g)$ and $j'\pi'(g)$
respectively give maps 
\[g':i'R'(p A_1)\to i'R'(p A_2),\qquad g'':j' R''(\pi A_1)\to j' R''(\pi A_2).\]

By fullness, $g'$ and $g''$ are induced by maps
\[f':p A_1\to p A_2,\qquad f'':\pi A_1\to \pi A_2.\]

Now consider the diagram
\[\begin{CD}
0@>>> ipA_1@>>> A_1@>>> j\pi A_1@>>> 0\\
&& @V{i(f')}VV&& @V{j(f'')}VV \\
0@>>> ipA_2@>>> A_2@>>> j\pi A_2@>>> 0
\end{CD}\]

Consider the extension classes $[A_r]\in \Ext^1_\cA( j\pi A_r,ipA_r)$
($r=1,2$). Then a map $f$ filling in this diagram exists if and only
if 
\[i(f')_*[A_1]= j(f'')^*[A_2]\in \Ext^1_\cA(j\pi A_1,ip A_2).\]

Hence, by the existence of $g$, this equality is true after applying
the functor $R$, and therefore it holds by the injectivity assumption.  

Now the map $R(f)-g$ induces a map $h:Rj\pi(A_1)\to Rip(A_2)$. By the
surjectivity assumption, $h$ is of the form $R(h')$, and then
$R(f-h')=g$.   

(3) Let $B\in \cB$. Then $p' B$ is in the essential image of $R''$ and
$\pi' B$ is in the essential image of $R'$. Using the exact sequence
\eqref{eqglue}, Property (ii) of (2) and the surjectivity assumption,
we get $B\simeq R(A)$ for some $A\in \cA$. 
\end{proof}

Here is a converse to Theorem \ref{tglue}:

\begin{thm}\label{tglueopp}
Keep the notation of Theorem \ref{tglue}. Then:
\begin{enumerate}
\item If $R$ is faithful (\resp full), so is $R'$.
\item If $R$ is faithful (\resp full) and the equivalent conditions of Proposition
\ref{pglue} hold, so is $R''$.
\item If $R$ is essentially surjective, $R''$ is essentially surjective, and so is $R'$
provided $R''$ is faithful or conservative.
\end{enumerate}
\end{thm}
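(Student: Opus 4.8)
The plan is to run the arguments of Theorem \ref{tglue} in reverse, using that each of $i,i',\pi,\pi'$, and (where the hypotheses of Proposition \ref{pglue} are in force) the sections $j,j'$, is either fully faithful or a quotient functor, together with the natural isomorphisms $i'R'\simeq Ri$, $\pi'R\simeq R''\pi$ and $Rj\simeq j'R''$ supplied by the commutative square (the last one under Proposition \ref{pglue}).

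For (1): since $\cA'\subseteq\cA$ and $\cB'\subseteq\cB$ are full subcategories, $i$ and $i'$ are bijective on $\Hom$-groups. Hence, for $A_1,A_2\in\cA'$, the map $\Hom_{\cA'}(A_1,A_2)\to\Hom_{\cB'}(R'A_1,R'A_2)$ induced by $R'$ is identified, via $i$, $i'$ and the isomorphism $i'R'\simeq Ri$, with the map $\Hom_\cA(iA_1,iA_2)\to\Hom_\cB(RiA_1,RiA_2)$ induced by $R$; the latter is injective (\resp surjective) when $R$ is faithful (\resp full), so $R'$ inherits the property. For (2): when the rows are split and the equivalent conditions of Proposition \ref{pglue} hold, $j$ and $j'$ are fully faithful (adjoint sections of $\pi,\pi'$: $\pi j=\mathrm{Id}_{\cA''}$, $\pi'j'=\mathrm{Id}_{\cB''}$), and we have $Rj\simeq j'R''$. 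The same identification, now with $(j,j')$ in place of $(i,i')$, shows that $\Hom_{\cA''}(A_1'',A_2'')\to\Hom_{\cB''}(R''A_1'',R''A_2'')$ corresponds to $\Hom_\cA(jA_1'',jA_2'')\to\Hom_\cB(RjA_1'',RjA_2'')$, whence $R''$ is faithful (\resp full) when $R$ is.

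For (3): the quotient functor $\pi'$ is essentially surjective, so any $B''\in\cB''$ is isomorphic to $\pi'B$ for some $B\in\cB$; choosing $A\in\cA$ with $RA\simeq B$ (essential surjectivity of $R$) gives $R''\pi A\simeq\pi'RA\simeq\pi'B\simeq B''$, so $R''$ is essentially surjective. (This step uses only $\pi'R\simeq R''\pi$ and not the splitting hypotheses.) For the last assertion, given $B'\in\cB'$ pick $A\in\cA$ with $RA\simeq i'B'$; then $R''\pi A\simeq\pi'RA\simeq\pi'i'B'=0$, and since a faithful, or conservative, additive functor $F$ satisfies $FX\simeq 0\Rightarrow X\simeq 0$, we get $\pi A\simeq 0$; as $\cA'=\ker\pi$ is a thick subcategory this forces $A\simeq iA'$ with $A'\in\cA'$, hence $i'B'\simeq RiA'\simeq i'R'A'$, and $i'$ being conservative yields $B'\simeq R'A'$.

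The argument involves no genuine obstacle; the only points needing a line of care are the bookkeeping identification of the $\Hom$-maps under the relevant natural isomorphisms (routine, exactly as in the proof of Theorem \ref{tglue}) and, in (3), the elementary facts that an additive functor which is faithful or conservative kills only the zero object and that $\pi A\simeq 0$ places $A$ in the Serre subcategory $\cA'$.
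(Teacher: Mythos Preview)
Your proof is correct and follows essentially the same approach as the paper's: for (1) and (2) you identify the $\Hom$-maps for $R'$ and $R''$ with restrictions of the $\Hom$-map for $R$ via the fully faithful functors $i,i'$ (resp.\ $j,j'$ under Proposition \ref{pglue}), and for (3) your argument for $R'$ is verbatim the paper's. You are slightly more explicit than the paper (which dispatches (1), (2), and the first half of (3) as ``obvious''), but there is no difference in method.
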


\begin{proof} (1) and (2) are obvious, considering commutative squares of the type
\[\begin{CD}
\cA'(A'_1,A'_2)@>\sim>> \cA(iA'_1,iA'_2)\\
@V{R'}VV @V{R}VV\\ 
\cA'(R'A'_1,R'A'_2)@>\sim>> \cA(RiA'_1,RiA'_2)
\end{CD}\]
for $R'$, and similarly for $R''$. The first part of (3) is obvious. For the second one, let
$B'\in \cB'$. Write $i'B'\simeq RA$ for some $A\in \cA$. Then $R''\pi A\simeq \pi'RA=0$. The
hypothesis implies
$\pi A=0$, hence $A\simeq iA'$ for some $A'\in \cA'$. Now $i'B'\simeq RiA'\simeq i'R'A'$, hence
$B'\simeq R'A'$.
\end{proof}

\begin{cor}\label{cglue} With the notation of Theorem \ref{tglue}, suppose that
\begin{thlist}
\item $R'$ and $R''$ are equivalences of categories;
\item the conditions of Proposition \ref{pglue} are verified;
\item for any two objects $A'\in \cA'$, $A''\in \cA''$, the map
\[
\Ext^i_\cA(j A'',i A')\to \Ext^i_\cB(R(jA''),R(iA'))
\]
is an isomorphism for $i=0,1$.
\end{thlist}
Then $R$ is an equivalence of categories.\\
Conversely, if $R$ is an equivalence of categories and the conditions of Proposition \ref{pglue}
are verified, $R'$ and $R''$ are equivalences of categories.
\qed
\end{cor}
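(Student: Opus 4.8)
The plan is to deduce both implications directly from Theorems \ref{tglue} and \ref{tglueopp}, which already isolate the three separate properties---faithfulness, fullness, essential surjectivity---whose conjunction makes a functor an equivalence. Nothing new needs to be proved; the only point to observe is that the single bijectivity hypothesis (iii) of the corollary supplies, at one stroke, the various surjectivity and injectivity statements on $\Hom$ and $\Ext^1$ that are needed at the three different places of Theorem \ref{tglue}.

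For the forward direction I would argue as follows. Since $R'$ and $R''$ are equivalences of categories, they are in particular faithful, full and essentially surjective. Hypothesis (ii) is exactly the hypothesis of Proposition \ref{pglue}, so the base-change isomorphisms of that proposition are available and Theorem \ref{tglue} applies. Part (1) of Theorem \ref{tglue} gives that $R$ is faithful. For fullness, the case $i=0$ of hypothesis (iii) gives bijectivity, hence surjectivity, of $\Hom_\cA(jA'',iA')\to \Hom_\cB(R(jA''),R(iA'))$, while the case $i=1$ gives injectivity of the corresponding map on $\Ext^1$; these are precisely the extra hypotheses of Theorem \ref{tglue}(2), so $R$ is full. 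Finally, the case $i=1$ of hypothesis (iii) also yields surjectivity of the $\Ext^1$ map, which is the additional hypothesis of Theorem \ref{tglue}(3), so $R$ is essentially surjective. A faithful, full and essentially surjective functor is an equivalence of categories, which settles this direction.

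For the converse, suppose $R$ is an equivalence and that the conditions of Proposition \ref{pglue} hold. Then $R$ is faithful and full, so Theorem \ref{tglueopp}(1) makes $R'$ faithful and full, and Theorem \ref{tglueopp}(2)---which needs exactly the conditions of Proposition \ref{pglue}---makes $R''$ faithful and full. Moreover $R$ is essentially surjective, so Theorem \ref{tglueopp}(3) gives first that $R''$ is essentially surjective, and then, using that $R''$ has just been shown to be faithful, that $R'$ is essentially surjective as well. Hence both $R'$ and $R''$ are equivalences of categories.

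I do not expect any genuine obstacle: the corollary is a bookkeeping consequence of the two theorems. The one mild subtlety I would make explicit is the order of the argument in the converse part: the essential surjectivity of $R'$ does not follow from that of $R$ by itself but only once the faithfulness (or conservativity) of $R''$ is known, so the fullness and faithfulness of $R''$ must be established before its essential surjectivity is invoked to treat $R'$.
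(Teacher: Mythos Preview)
Your proposal is correct and is exactly the intended argument: the paper marks the corollary with \qed and gives no proof, leaving it as an immediate consequence of Theorems \ref{tglue} and \ref{tglueopp}, and you have unpacked that deduction accurately, including the small subtlety in the converse about establishing faithfulness of $R''$ before invoking Theorem \ref{tglueopp}(3) for $R'$.
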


\subsubsection{The general case}
Here is now a version of Theorem \ref{tglue} for filtered abelian categories. 

\begin{thm}\label{tglueII} Let $\cA$ and $\cB$ be two abelian categories provided with
exhaustive separated split filtrations $(\cA_{\le n},\iota_n)$ and $(\cB_{\le n},\iota'_n)$.
Let $R:\cA\to \cB$ be an exact functor and assume that $R$ respects the splittings in the sense
of Definition \ref{dF.4.4}. 
\begin{enumerate}
\item If $R_n$ is faithful for every $n$, then $R$ is faithful.
\item Assume that for any $m<n$ and
  any two objects $A_m\in \cA_m$, $A_n\in \cA_n$, the map  
\[
\Hom_\cA(A_n,A_m)\to \Hom_\cB(R(A_n),R(A_m))
\]
is surjective and the map
\[
\Ext^1_\cA(A_n,A_m)\to \Ext^1_\cB(R(A_n),R(A_m))
\]
is injective. Assume also that every object of $\cA$ is of finite length in the sense of Definition \ref{dflength}. Then, if $R_n$ is full for every $n$, $R$ is full.
\item Assume  that, for any $n\in\Z$, the map
\[
\Ext^1_\cA(A,B)\to \Ext^1_\cB(R(A),R(B))
\]
is surjective for $A\in \cA_n$ and $B\in \cA_{\le n-1}$. Assume also that every object of $\cB$
is of finite length. Then, if
$R_n$ are essentially surjective for every $n$, $R$
  is essentially surjective. 
\item If $\Ext^2_\cB(A,B)=0$ for all $A,B\in \cB$, we may weaken Condition (3) by requesting
surjectivity for all $(A,B)\in \cA_n\times \cA_m$, $m<n$.
\end{enumerate}
\end{thm}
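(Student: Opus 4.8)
The plan is to bootstrap from the two-step results (Theorems~\ref{tglue} and~\ref{tglueopp}, Proposition~\ref{pglue}) to the filtered situation by inductions on lengths. Since $R$ respects the splittings, Lemma~\ref{lF.4.5}(3) gives $R(A_{\le n})=R(A)_{\le n}$ and $R(A_n)\simeq R(A)_n$; in particular, for every $n$ the split short exact sequence $0\to\cA_{\le n-1}\to\cA_{\le n}\to\cA_n\to 0$, its $\cB$-analogue and the triple $(R_{\le n-1},R_{\le n},R_n)$ automatically satisfy the hypotheses of Proposition~\ref{pglue}, because $R$ carries the image of $\cA_n$ into that of $\cB_n$. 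Statement~(1) needs no induction: if $R(f)=0$ then $R_n(f_n)=R(f)_n=0$, so $f_n=0$ by faithfulness of $R_n$ (the filtration of $f$ is exact and functorial by Proposition~\ref{proadj}); hence $f((A_1)_{\le n})\subseteq(A_2)_{\le n-1}$ for all $n$, and iterating, $f((A_1)_{\le n})\subseteq\bigcap_{k\ge 0}(A_2)_{\le n-k}=0$ by separatedness; since the filtration is exhaustive, $A_1=(A_1)_{\le n_0}$ for some $n_0$, whence $f=0$. (This is a version of Lemma~\ref{lF.3.5} not requiring finite length.)

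For (2), I would fix a bounded weight interval $[m,n]$ and work inside the abelian subcategory of objects with weights in $[m,n]$ (those $A$ with $A_{\le m-1}=0$ and $A_{\le n}=A$), which is closed under subobjects, quotients and extensions in $\cA$ (so $\Ext^1$ is unchanged) and carries the finite filtration with graded pieces $\cA_m,\dots,\cA_n$; as every object occurring is of finite length by hypothesis, it suffices to prove fullness of $R$ on this subcategory, by induction on $n-m$ via the split sequence relating the subcategories of weights in $[m,n-1]$ and $[m,n]$ and the pure piece $\cA_n$, together with Theorem~\ref{tglue}(2). The two-step hypotheses there reduce to a d\'evissage: for $A''\in\cA_n$ and $A'$ of weights $\le n-1$, the map $\Hom_\cA(A'',A')\to\Hom_\cB(RA'',RA')$ is $0\to 0$, hence surjective, because $\Hom_\cA(\cA_n,\cA_{\le n-1})=0=\Hom_\cB(\cB_n,\cB_{\le n-1})$ (Proposition~\ref{proadj}(4)), and injectivity of $\Ext^1_\cA(A'',A')\to\Ext^1_\cB(RA'',RA')$ follows from the pure case $\Ext^1_\cA(\cA_n,\cA_k)$, $k<n$, by a secondary induction on the length of $A'$, using the long exact $\Ext$-sequence of $0\to(A')_{\le n-2}\to A'\to(A')_{n-1}\to 0$, the vanishing $\Hom(\cA_n,\cA_{n-1})=0$ on both sides, and a diagram chase.

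For (3) and (4) I would argue directly rather than through Theorem~\ref{tglue}(3) (whose hypotheses include those of (2), which are not assumed here), strengthening the inductive claim to: for every $N$ and every $B\in\cB_{\le N}$ of finite length there is $A\in\cA_{\le N}$ with $R(A)\simeq B$, and inducting on $\ell(B)$. If $n\le N$ is the top weight of $B$, the induction gives $A'\in\cA_{\le n-1}$ with $R(A')\simeq B_{\le n-1}$, and essential surjectivity of $R_n$ gives $A_n\in\cA_n$ with $R(A_n)\simeq B_n$; the extension class of $B$ lies in $\Ext^1_\cB(R(A_n),R(A'))$, and the hypothesis of (3) (resp.\ its form obtained by d\'evissage for (4)) says precisely that $\Ext^1_\cA(A_n,A')\to\Ext^1_\cB(R(A_n),R(A'))$ is surjective, so one lifts the class, forms the corresponding extension $A\in\cA_{\le n}$, and obtains $R(A)\simeq B$ by construction of the lift; finite length of $\cB$ makes the induction terminate. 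For (4) the only new ingredient is that passing from $\Ext^1$-surjectivity on pure pairs $(\cA_n,\cA_m)$ to $\Ext^1$-surjectivity on $(\cA_n,\cA_{\le n-1})$ requires the five-term exact sequence and the vanishing $\Ext^2_\cB=0$ to annihilate the obstruction term.

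The main obstacle is organisational rather than conceptual: the filtration may have infinitely many steps, so the inductions must run on lengths instead of on a filtration index, which forces the bounded-weight subcategories (to make the two-step theorems literally applicable) and the strengthened statement for essential surjectivity; and one must verify at each step that the Hom/Ext hypotheses of the two-step results genuinely follow from the pure-graded ones — exactly the place where $\Hom_\cA(\cA_n,\cA_{\le n-1})=0$ and, for (4), $\Ext^2_\cB=0$ enter. Beyond these d\'evissages, everything is a direct application of Theorems~\ref{tglue} and~\ref{tglueopp}.
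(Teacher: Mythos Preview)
Your proposal is correct and follows essentially the same inductive strategy as the paper: reduce to bounded weight intervals and feed the two-step situation $0\to\cA_{\le n-1}\to\cA_{\le n}\to\cA_n\to 0$ into Theorem~\ref{tglue}, with a secondary d\'evissage (the five lemma on the long exact $\Ext$-sequence) to propagate the pure Hom/Ext hypotheses to mixed ones.

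Two minor points of comparison. For (1), the paper invokes Theorem~\ref{tglue}(1) inductively and then uses separatedness, while you argue directly via $f_n=0$ for all $n$ and the descending chain $f((A_1)_{\le n})\subseteq (A_2)_{\le n-k}$; the two arguments are equivalent, yours being slightly more self-contained. For (3), you are right to notice that Theorem~\ref{tglue}(3) \emph{as stated} assumes the Hom/Ext conditions of part~(2), which are not given in the present statement; the paper nonetheless invokes it (with a typo in the reference). Your direct argument --- lift the extension class of $B$ using the assumed surjectivity of $\Ext^1_\cA(A_n,A')\to\Ext^1_\cB(R(A_n),R(A'))$ --- is exactly what the \emph{proof} of Theorem~\ref{tglue}(3) actually uses, so no harm is done, but your formulation is cleaner.
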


\begin{proof} 
(1)  Let $f:A\to A'$ be such that $R(f)=0$. There exists $n_0$ such that $A,A'\in \cA_{n_0}$.
By induction, Theorem  \ref{tglue} implies that $f_{\ge n}=0$ for all $n$ (see Proposition
\ref{proadj} (2) for $f_{\ge n}$). Thus, $\im f\in \bigcap \cA_{\le n}=0$.

(2) Since the filtrations are exhaustive, it suffices to prove that $R_{\le n}$ is full for
all $n$. Thus we may assume that $\cA_{\le n} = \cA$ for $n$ large enough. Similarly, since
every object is of finite length, it suffices to prove that the restriction of $R$ to the  Serre
subcategory consisting of objects of length $\le \ell$ is full for all $\ell \ge 0$. Thus, we
may also assume that $\cA_{\le n'}=0$ for $n'$ small enough. By induction on $n-n'$, this
reduces us to showing that the assumption of Theorem \ref{tglue} (3) are verified for
$\cA'=\cA_{\le n-1}$.

Let $A'\in \cA_{\le n-1}$ and $A''\in \cA_n$. We want to show that the map
\[
\Hom_\cA(A'',A')\to \Hom_\cB(R(A''),R(A'))
\]
is surjective and the map
\[
\Ext^1_\cA(A'',A')\to \Ext^1_\cB(R(A''),R(A'))
\]
is injective. Consider the short exact sequence $0\to A'_{\le n-2}\to A'\to A'_{n-1}\to 0$.
The statement is true by replacing $A'$ by $A'_{\le n-2}$ (by induction) or by $A'_{n-1}$ (by
hypothesis). Hence it is true for $A'$, by the five lemma.

(3) We argue as in (2): since the filtration of $\cB$ is exhaustive, it suffices to prove that
$R_{\le n}$ is essentially surjective for all $n$, hence we may assume $\cB_{\le n} = \cB$ for
$n$ large enough. Since every object of $\cB$ has finite length, it suffices to prove that
every object of $\cB$ of length $\le \ell$ is isomorphic to $R(A)$, where $A\in \cA$ is of
length $\le \ell$. Thus we may also assume that
$\cA_{\le n'}=\cB_{\le n'}=0$ for $n'$ small enough. We argue by induction on $n-n'$: by
induction,
$R_{\le n-1}$ is essentially surjective, and so is $R_n$ by hypothesis. On the other hand, the
assumptions of Theorem \ref{tglue} (4) are verified for $\cA'=\cA_{\le n-1}$ and $\cB'=\cB_{\le
n-1}$.

(4) By the 5 lemma, we deduce the condition of (3) from this weaker condition.
\end{proof}

\subsection{The case of triangulated categories}
\subsubsection{Split exact sequences}
We let $\cT$ be  a triangulated category. Consider the situation 
\begin{equation}\label{eqabsetupbis}
0\to \cT'\by{i} \cT\by{\pi} \cT''\to 0
\end{equation}
where $\cT'\subseteq \cT$ is a thick subcategory and $\pi$ is the corresponding
localisation functor. Thus, $\cT''$ is the Verdier quotient of $\cT$
by $\cT'$. 

\begin{propose}[Verdier]\label{psplitbis} The following conditions are
 equivalent:
\begin{thlist}
\item $i$ has a right adjoint $p$.
\item $\pi$ has a right adjoint $j$ and $j(\cT'')$ is thick in $\cT$.
\item for any $A\in \cT$ there exists $A'\in \cT'$ and a map $f:i(A')\to A$ such that
$\Hom (i(\cT'), \cone(f))=0$.
\end{thlist}
If these conditions are verified, then for any $A\in\cT$ there exists a
unique exact triangle  
\[A'\to A\to A''\by{+1}\]
such that $A'\in i(\cT')$ and $A''\in j(\cT'')$; this triangle is
functorial in $A$. Moreover, we have
$\Hom_\cT(i\cT',j\cT'')=0$.
\end{propose}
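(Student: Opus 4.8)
The plan is to follow the pattern of the proof of Proposition~\ref{psplit}, replacing short exact sequences by exact triangles; the triangulated setting is in fact smoother, since a right adjoint between triangulated categories is automatically exact, so no analogue of the right-exactness verification of Corollary~\ref{lsplit} is needed. Throughout I would use repeatedly that $\cT'$, and any subcategory of the form $\ker(\text{triangulated functor})$, is triangulated, hence closed under shifts, so that a vanishing $\Hom(i\cT',C)=0$ automatically upgrades to $\Hom(i\cT',C[n])=0$ for all $n\in\Z$.

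First I would prove (i)~$\Rightarrow$~(ii). Put $\cB=\ker p$, a thick subcategory, and for $A\in\cT$ complete the counit $\varepsilon_A\colon ipA\to A$ to an exact triangle $ipA\to A\to C_A\by{+1}$. Applying $p$ and using that the unit $pA\to pipA$ is an isomorphism (as $i$ is fully faithful) together with the triangle identities, one gets $pC_A=0$, i.e. $C_A\in\cB$. Since $\Hom(iA',B)=\Hom(A',pB)=0$ for $A'\in\cT'$, $B\in\cB$, the triangle above together with the triangulated five lemma shows that the restriction $\pi|_{\cB}\colon\cB\to\cT''$ is fully faithful, and it is essentially surjective because $\pi C_A\cong\pi A$. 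Composing a quasi-inverse of the equivalence $\pi|_{\cB}\colon\cB\iso\cT''$ with the inclusion $\cB\hookrightarrow\cT$ produces a right adjoint $j$ of $\pi$ with $j(\cT'')=\cB$, which is thick.

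For (ii)~$\Rightarrow$~(iii): a right adjoint $j$ to the Verdier quotient $\pi$ is fully faithful, so $\pi j\simeq\mathrm{id}$; completing the unit $\eta_A\colon A\to j\pi A$ to a triangle $K_A\to A\to j\pi A\by{+1}$ and applying $\pi$ shows $\pi(\eta_A)$ is an isomorphism, hence $\pi K_A=0$, i.e.\ $K_A\simeq iA'$ with $A'\in\cT'=\ker\pi$. Then $\cone(iA'\to A)\simeq j\pi A\in j(\cT'')$, and $\Hom(iX,j\pi A)=\Hom(\pi iX,\pi A)=0$ for $X\in\cT'$, which is (iii); the same computation gives $\Hom(i\cT',j\cT'')=0$. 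For (iii)~$\Rightarrow$~(i): choosing for each $A$ a map $f_A\colon iA'\to A$ with $\Hom(i\cT',\cone(f_A))=0$ (hence, by shift-closure of $\cT'$, with $\Hom(i\cT',\cone(f_A)[n])=0$ for all $n$), any $g\colon A\to B$ yields a composite $iA'\to B\to\cone(f_B)$ which vanishes, so $gf_A$ factors through $f_B$; the shifted vanishing makes the induced map $A'\to B'$ unique, so $A\mapsto A'$ is a functor $p$, and applying $\Hom(iB',-)$ to the triangle $iA'\to A\to\cone(f_A)\by{+1}$ gives natural isomorphisms $\Hom(iB',A)\simeq\Hom(iB',iA')\simeq\Hom(B',pA)$, i.e.\ $p$ is right adjoint to $i$.

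For the final assertion, existence of the triangle $A'\to A\to A''\by{+1}$ with $A'\in i\cT'$ and $A''\in j\cT''$ is exactly the triangle $ipA\to A\to C_A\by{+1}$ of the first step, since $C_A\in\cB=j\cT''$; uniqueness and functoriality follow in the standard way from the orthogonality $\Hom(i\cT',j\cT''[n])=0$ for all $n$, and the equality $\Hom_\cT(i\cT',j\cT'')=0$ has already been recorded. I expect the main obstacle to be the clean verification, in the step (i)~$\Rightarrow$~(ii), that $\pi|_{\cB}$ is an equivalence — in particular full faithfulness, where one must combine the orthogonality $\Hom(i\cT',\cB[n])=0$ with the five lemma for triangles applied to the triangles defining $C_A$ and $C_B$; everything else is formal once the shift-closure of $\cT'$ is used systematically.
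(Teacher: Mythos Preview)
Your proposal is correct and follows essentially the same route as the paper's proof. The only packaging difference is in (i)~$\Rightarrow$~(ii): the paper first shows that the cone assignment $A\mapsto C_A$ is functorial (using $\Hom_\cT(ipA[1],C_B)=0$ to get uniqueness of the induced map on cones), thereby producing a left adjoint $q$ to the inclusion $\cB\hookrightarrow\cT$, and then checks that the induced $\bar q\colon\cT''\to\cB$ is quasi-inverse to $\pi|_\cB$; you instead argue directly that $\pi|_\cB$ is an equivalence via orthogonality. These are the same equivalence approached from opposite ends, and your (iii)~$\Rightarrow$~(i) is in fact more detailed than the paper's, which simply says ``one checks that $A\mapsto A'$ yields the desired adjoint, by the same kind of arguments as in the proof of (i)~$\Rightarrow$~(ii).'' Your self-flagged worry about full faithfulness of $\pi|_\cB$ is not a real obstacle: once you have $\Hom(i\cT',\cB[n])=0$ for all $n$, full faithfulness follows from the calculus of fractions for the Verdier quotient (apply $\Hom_\cT(-,B_2)$ to any roof $B_1'\to B_1$ with cone in $\cT'$), so no five-lemma gymnastics with the triangles $ipA\to A\to C_A$ are needed.
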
 

\begin{proof} (i) $\Rightarrow$ (ii): let $A\in \cT$, and choose a cone $C_A$ of
  the counit $\epsilon_A:ip A\to A$.
Applying $p$ to the exact triangle
\[ip A\by{\epsilon_A} A\to C_A\by{+1}\]
we get an exact triangle
\[p A\by{=} pA\to pC_A\by{+1}\]
which shows that 
\begin{equation}\label{eqpC}
pC_A= 0. 
\end{equation}

Let $f:A\to B$ be a morphism, and choose a
cone $C_B$ analogously. There
exists a morphism $g$ such that the diagram of exact triangles
\[\begin{CD}
ip A@>\epsilon_A>> A@>>> C_A@>+1>>\\
@V{ipf}VV @V{f}VV @V{g}VV\\
ip B@>\epsilon_B>> B@>>> C_B@>+1>>
\end{CD}\]
commutes. 

Applying \eqref{eqpC} to $C_B$, we get by adjunction
$\Hom_\cT(ipA[1],C_B) = 0$, which shows that $g$ is \emph{unique}; in
particular, taking $f=1_A$, we get that $C_A$ is defined up to unique
isomorphism.

Letting $\cS\df \ker p$, this thus defines
a functor $q:\cT\to \cS$ ($qA=C_A$), and $q$ is easily seen to be
left adjoint to the inclusion $\cS\into \cT$. Moreover $q_{|\cT'}=0$,
hence $q$ induces a functor $\bar q:\cT''\to \cS$. 

On the other hand, we have the obvious functor
\[r:\cS\to\cT\to \cT''.\]

Let $B\in \cS$. By definition, $\bar q rB$ is represented by
$C_B=B$. This provides a natural isomorphism $B\iso \bar q r B$. 

Let $A\in \cT$. The map $A\to C_A$ induces another natural
isomorphism $A\iso r\bar q A$, where $A$ is now viewed in $\cT''$. So
$\bar q$ and $r$ are quasi-inverse equivalences of categories. Under
this equivalence, $q$ becomes $\pi$, and the inclusion $\cS\into \cT$
defines a functor $j:\cT''\to\cT$, which is right adjoint to
$\pi$. Finally, since $\cS=\ker p$ is thick, $j(\cT'')$ is thick. 

(ii) $\implies$ (iii): let $A\in \cT$. The functor $\pi$ kills the cone
$D_A$ of the unit map $\eta_A:A\to j\pi A$. Hence $D_A\in i(\cT')$. If $A'_1\in
\cT'$, then 
\[\Hom(iA'_1,j\pi A)= \Hom(\pi i A'_1,\pi A)=\Hom(0,\pi A)=0\]
hence we may take $A'=D_A[-1]$ in (iii).

(iii) $\implies$ (i): one checks that $A\mapsto A'$ yields the desired
adjoint, by the same kind of arguments as in the proof of (i)
$\implies$ (ii).

Finally, the claimed exact triangle is given by
\begin{equation}\label{eqgluebis}
ip A\by{\epsilon_A} A\by{\eta_A} j\pi A\by{\delta} ipA[1]
\end{equation}
and its properties follow by construction. The vanishing of
$\Hom_\cT(i\cT',j\cT'')$ follows from the adjunction. 
\end{proof}

\begin{defn}\label{dsplitbis} In the situation of Proposition
  \ref{psplitbis}, we say that the exact sequence \eqref{eqabsetupbis} is
  \emph{split}. 
\end{defn}

\subsubsection{More adjoints} Assume that \eqref{eqabsetupbis} is
split. Let us now consider the conditions

\begin{quote} ($\flat$) {\it $j$ has a right adjoint $\pi'$.} \\
($\sharp$) {\it $i$ has a left adjoint $p'$.}
\end{quote}

Recall that, under ($\flat$), we get a canonical natural transformation
$\alpha:\pi'\Rightarrow \pi$ from the composition $j\pi'A\to A\to j\pi A$ for any $A\in \cT$,
and the full faithfulness of $j$. Similarly, under (\#), we get a canonical natural
transformation $\beta:p\Rightarrow p'$.

\begin{propose}\label{pstrengbis} Consider the
  following conditions: 
\begin{thlist}
\item $\Hom_\cT(j\cT'',i\cT')=0$.
\item {\rm ($\flat$)} holds and $\pi'=\pi$.
\item {\rm ($\flat$)} holds and $\pi'i=0$.
\item {\rm ($\sharp$)} holds and $p'=p$.
\item {\rm ($\sharp$)} holds and $p'j=0$.
\end{thlist}
Then {\rm (i) + ($\flat$)} $\iff$ {\rm (i) + ($\sharp$)} $\iff$ {\rm (ii)} $\iff$ {\rm
  (iii)} $\iff$ {\rm (iv)} $\iff$ {\rm (v)}.
\end{propose}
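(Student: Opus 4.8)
<br>

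The statement to prove is Proposition \ref{pstrengbis}: a chain of equivalences among conditions (i)--(v) concerning the existence of further adjoints $\pi'$ (right adjoint to $j$) and $p'$ (left adjoint to $i$) in a split exact sequence of triangulated categories, together with compatibility relations ($\pi'=\pi$, $p'=p$, $\pi'i=0$, $p'j=0$). My plan is to mimic closely the proof of the abelian analogue Proposition \ref{pstreng}, since the triangulated setting here is actually \emph{easier}: in the abelian case one only gets that $\alpha$ is a monomorphism and $\beta$ an epimorphism, whereas here the claim is the stronger $\pi'=\pi$, $p'=p$. This is possible because of the extra vanishing available from the split exact triangle \eqref{eqgluebis} and the identification of $\ker p$ with $j(\cT'')$.

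First I would establish (i) $+$ ($\flat$) $\Rightarrow$ (iii): if $\pi'$ exists, then since $\Hom_\cT(j\cT'',i\cT')=0$ holds by hypothesis, adjunction gives $\Hom_{\cT''}(\cT'',\pi'i\cT')=\Hom_\cT(j\cT'',i\cT')=0$, so $\pi'i=0$. Next, (iii) $\Rightarrow$ (ii): given (iii), I would show $\pi'\simeq\pi$ by checking the universal property. For $A\in\cT$, apply $\pi'$ to the split triangle \eqref{eqgluebis} $ipA\to A\to j\pi A\by{+1}$; since $\pi'$ is exact (right adjoint between triangulated categories) and $\pi'ipA=0$ by (iii), we get $\pi'A\iso\pi'j\pi A=\pi A$ (using $\pi'j=\mathrm{id}$, which follows from full faithfulness of $j$). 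This gives $\pi'=\pi$, hence (ii). Trivially (ii) $\Rightarrow$ (i), since $\Hom_\cT(j\cT'',i\cT')=\Hom_{\cT''}(\cT'',\pi i\cT')=0$ by exactness of the split sequence. So (i) $+$ ($\flat$) $\iff$ (ii) $\iff$ (iii). The dual argument, reversing arrows and using the split triangle together with $p'j=0$ and $p'i=\mathrm{id}$, gives (i) $+$ ($\sharp$) $\iff$ (iv) $\iff$ (v).

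It then remains to link the ($\flat$)-branch to the ($\sharp$)-branch, i.e.\ to show (i) $+$ ($\flat$) $\iff$ (i) $+$ ($\sharp$). Here I would argue exactly as in the proof of Proposition \ref{pstreng}: assuming ($\flat$) and (i), for $A\in\cT$ and $A'\in\cT'$ the exact triangle \eqref{eqgluebis} applied with $\Hom_\cT(-,iA')$ yields, using $\Hom_\cT(j\pi A,iA')=0$ (which is (i)) together with the analogous vanishing after shift, that $\Hom_\cT(A,iA')\iso\Hom_\cT(ipA,iA')=\Hom_{\cT'}(pA,A')$; this exhibits $A\mapsto pA$ as a left adjoint to $i$, so ($\sharp$) holds and moreover $p'=p$. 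The converse implication is dual. Combining, all six conditions (i)$+$($\flat$), (i)$+$($\sharp$), (ii), (iii), (iv), (v) are equivalent.

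The main obstacle I anticipate is purely bookkeeping rather than conceptual: one must be careful that the split triangle \eqref{eqgluebis} is genuinely functorial (this is guaranteed by Proposition \ref{psplitbis}) and that each adjoint invoked is exact, so that applying it to a distinguished triangle produces a distinguished triangle --- right adjoints and left adjoints between triangulated categories are automatically triangulated, so this is fine. A second point requiring a little care is the vanishing $\Hom_\cT(j\cT'',i\cT')=0$ versus its shifted version $\Hom_\cT(j\cT'',i\cT'[n])=0$: only $n=0$ is asserted in Proposition \ref{psplitbis}, but for the adjunction computation in the last paragraph I only need $n=0$ and $n=1$, and the $n=1$ case reduces to $n=0$ after replacing $A'$ by $A'[1]$, staying inside $\cT'$. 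With these observations the proof is a routine diagram chase; I would present it as the three implications above plus their duals, citing Proposition \ref{pstreng} for the structure of the argument wherever the reasoning is literally the same.
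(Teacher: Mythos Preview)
Your proposal is correct and follows essentially the same route as the paper's proof: apply $\pi'$ to the split triangle \eqref{eqgluebis} to get (ii) $\iff$ (iii), use adjunction for (i) $+$ ($\flat$) $\iff$ (iii), and dualise. Your linking step between the ($\flat$)- and ($\sharp$)-branches is spelled out more explicitly than in the paper (which simply says ``the other equivalences are obtained dually''); in fact your argument there does not use ($\flat$) at all, so it actually shows that (i) alone forces both ($\flat$) and ($\sharp$) to hold with $\pi'=\pi$ and $p'=p$ --- a small bonus that explains why the triangulated statement is cleaner than its abelian counterpart in Proposition \ref{pstreng}.
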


\begin{proof} Suppose that $\pi'$ exists. Let
  $A\in\cT$. Applying $\pi'$ to the exact triangle \eqref{eqgluebis}, we get
  an exact triangle
\[\pi'ipA\to \pi' A\by{\alpha_A}\pi A\by{+1}\]

This shows that $\alpha_A$ is an isomorphism if and only if $\pi'ipA=0$. Thus,
($\alpha_A$ injective for all $A$)$\iff$ ($\pi'ipA=0$ for all $A$) $\iff$ ($\pi'i=0$)
since $p$ is essentially surjective. Thus, (ii) $\iff$ (iii). On the
other hand, (i) + ($\flat$) $\iff$ (iii) is obvious by adjunction. The other
equivalences are obtained dually.
\end{proof}

\subsubsection{Filtered triangulated categories}

\begin{defn}\label{d16.3tri} Let $\cT$ be a triangulated category.\\
a) A \emph{filtration} on $\cT$ is a sequence of thick subcategories
\[\dots \longby{\iota_{n-1}}\cT_{\le n}\longby{\iota_n}\cT_{\le
n+1}\longby{\iota_{n+1}}\dots\to \cT.\] 
It is \emph{separated} if $\bigcap \cT_{\le n}=\{0\}$ and \emph{exhaustive} if $\cT=\bigcup
\cT_{\le n}$.\\  
b) A filtration on $\cT$ is \emph{split} if all $\iota_n$ have exact right adjoints.\\
c) A filtration is a \emph{weight filtration} if it is separated, exhaustive and split.
\end{defn}

Let $\cT$ be provided with a filtration. Define
\[\cT_n = \cT_{\le n}/\cT_{\le n-1},\qquad \cT_{n,n+1} = \cT_{\le n+1}/\cT_{\le n-1}\]
so that we have exact sequences
\[0\to \cT_n\longby{i_n} \cT_{n,n+1}\longby{\pi_n} \cT_{n+1}\to 0\]
where $i_n$ is induced by $\iota_n$.

\begin{propose} \label{proadjtri}
Suppose that $\cT$ is provided with an exhaustive split filtration as in Definition
\ref{d16.3tri} a), b). For all $n\in\Z$, let us write $j_n$ for the right adjoint of the
localisation functor $\cT_{\le n}\to \cT_{\le n}/\cT_{\le n-1}$ (Proposition \ref{psplitbis}).
By abuse of notation, we shall write $\cT_n$ for the thick subcategory $j_n(\cT_{\le n}/\cT_{\le
n-1})\subset \cT$ (\ibid). Then:
\begin{enumerate}
\item Every object $A\in \cT$ is provided with a unique filtration
\[\dots\to A_{\le n}\to A_{\le n+1}\to\dots \to A\]
with $A_{\le n}\in \cT_{\le n}$ and $A_n:=\cone(A_{\le n-1}\to A_{\le n})\in \cT_n$. One has 
$A_{\le n}=A$ for $n$ large enough.
\item The functors $A\mapsto A_{\le n}$ and $A\mapsto A_n$ are well-defined, as well as
$A\mapsto A_{\ge n}:=A/A_{\le n-1}$. 
\item If moreover the filtration is separated, the $A\mapsto A_n$ form a set of conservative
functors.  
\item We have $\Hom_\cT(\cT_m,\cT_n)=0$ if $m<n$.
\end{enumerate}
\end{propose}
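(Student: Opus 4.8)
The plan is to transcribe, essentially verbatim, the proof of Proposition \ref{proadj} into the triangulated setting, replacing kernels and cokernels by mapping cones and invoking the triangulated splitting formalism of Proposition \ref{psplitbis} in place of Proposition \ref{psplit}. I would begin with (4), which is immediate and will be needed later: for $m<n$ one has $\cT_m\subseteq\cT_{\le m}\subseteq\cT_{\le n-1}$, and since $\cT_n$ is by definition the image of the right adjoint $j_n$ of the localisation $\cT_{\le n}\to\cT_{\le n}/\cT_{\le n-1}$, Proposition \ref{psplitbis} gives $\Hom_{\cT_{\le n}}(X,Y)=0$ for $X\in\cT_{\le n-1}$, $Y\in\cT_n$; fullness of $\cT_{\le n}$ in $\cT$ yields $\Hom_\cT(\cT_m,\cT_n)=0$. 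I would record the companion fact that the right orthogonal $\cT_{\le n}^{\perp}=\{X\in\cT\mid\Hom_\cT(\cT_{\le n},X)=0\}$ is a thick triangulated subcategory containing $\cT_m$ for every $m>n$.

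For (1), given $A\in\cT$ I would choose $n_0$ with $A\in\cT_{\le n_0}$ (exhaustiveness), let $\varpi_r\colon\cT_{\le r+1}\to\cT_{\le r}$ be the exact right adjoint of $\iota_r$ granted by the split hypothesis, and set $A_{\le n}\df(\varpi_n\varpi_{n+1}\cdots\varpi_{n_0-1})(A)$ for $n\le n_0$, $A_{\le n}=A$ for $n>n_0$; this composite is the right adjoint of the inclusion $\cT_{\le n}\into\cT_{\le n_0}$. The transition maps are the counits, and $A_n\df\cone(A_{\le n-1}\to A_{\le n})$ lies in $\cT_n$ by the last assertion of Proposition \ref{psplitbis} applied inside $\cT_{\le n}$ to $0\to\cT_{\le n-1}\to\cT_{\le n}\to\cT_n\to 0$. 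Independence of $n_0$ follows because, the $\iota_r$ being fully faithful, the unit $\mathrm{id}\Rightarrow\varpi_r\iota_r$ is an isomorphism, so passing from a larger $n_0'$ to $n_0$ changes nothing.

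For the uniqueness clause in (1), if $(B_{\le n})$ is another such filtration then, by the octahedral axiom, $\cone(B_{\le n}\to A)$ is an iterated extension of $B_{n+1},\dots,B_N$ with all $B_m\in\cT_m$, $m>n$, hence lies in $\cT_{\le n}^{\perp}$; therefore $B_{\le n}\to A\to\cone(B_{\le n}\to A)\by{+1}$ is the canonical triangle attached by Proposition \ref{psplitbis} to the split sequence $0\to\cT_{\le n}\to\cT_{\le n_0}\to\cT_{\le n_0}/\cT_{\le n}\to 0$ (everything taking place inside $\cT_{\le n_0}$, which is triangulated), and the same holds for $A_{\le n}$; the uniqueness in Proposition \ref{psplitbis} gives a unique isomorphism $B_{\le n}\iso A_{\le n}$ over $A$, whence $B_n\iso A_n$. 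Part (2) is then formal: the assignments $A\mapsto A_{\le n}$, $A\mapsto A_n$ and $A\mapsto A_{\ge n}=\cone(A_{\le n-1}\to A)$ are built from the (triangulated) adjoints $\varpi_r$ and from cones, and are compatible with morphisms by the uniqueness just proved, so they define triangulated functors $\cT\to\cT$ with values in $\cT_{\le n}$, $\cT_n$ and $\cT_{\le n}^{\perp}$ respectively.

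Finally, for (3) I would assume the filtration separated, take $f\colon A\to B$ with $f_n$ an isomorphism for all $n$, and set $C=\cone(f)$; since $A\mapsto A_n$ is triangulated, $C_n=\cone(f_n)=0$ for every $n$, so the triangle $C_{\le n-1}\to C_{\le n}\to C_n\by{+1}$ forces $C_{\le n-1}\iso C_{\le n}$, whence $C_{\le n}$ is isomorphic to objects of $\cT_{\le m}$ for every $m\le n$ and thus lies in $\bigcap_m\cT_{\le m}=0$; by exhaustiveness $C=C_{\le N}=0$ for $N$ large, i.e.\ $f$ is an isomorphism. I expect the main obstacle to be the uniqueness part of (1): one must recognise an arbitrary ``graded-compatible'' filtration as the canonical one, which comes down to identifying $\cone(B_{\le n}\to A)$ as an object of $\cT_{\le n}^{\perp}$; this is exactly where the octahedral bookkeeping and part (4) enter, and where some care is needed to keep all the objects involved inside a single $\cT_{\le n_0}$ so that Proposition \ref{psplitbis} is applicable.
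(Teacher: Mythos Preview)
Your proposal is correct and follows essentially the same approach as the paper: define $A_{\le n}$ via the composite right adjoints $\varpi_{n}\cdots\varpi_{n_0-1}$, deduce $A_n\in\cT_n$ and uniqueness from Proposition \ref{psplitbis}, and handle (3) via the cone of $f$. The paper is terser on the uniqueness in (1) (it simply cites Proposition \ref{psplitbis}), whereas you spell out the octahedral argument identifying $\cone(B_{\le n}\to A)\in\cT_{\le n}^\perp$; your reordering (proving (4) first so as to have the orthogonality available) is a harmless expository choice.
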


\begin{proof} (1) Since $\cT=\bigcup \cT_{\le n}$, there exists $n_0$ such that $A\in \cT_{\le
n_0}$. For $n<n_0$, write $I_n$ for the inclusion $\cT_{\le n}\to \cT_{\le n_0}$ and $P_n$ for
its right adjoint, which exists and is exact as a composition of the right adjoints of the
$\iota_r$ for $n\le r<n_0$. Define
\[A_{\le n} = I_nP_n A.\]

Since $A=\iota_{n_0}\varpi_{n_0} A$, $A_{\le n}$ does not depend on the choice of $n_0$, and
we have a filtration of $A$ as requested. Clearly, $A_{\le n}\in \cT_{\le
n}$ and $A_{\le n_0} =A$. The fact that $A_n\in \cT_n$ follows from Proposition
\ref{psplitbis}, which also shows the uniqueness of the filtration. 

(2) This still follows from Proposition \ref{psplitbis}.

(3) Let $f:A\to B$ be such that $f_n:A_n\to B_n$ is an isomorphism for all $n\in\Z$. Let
 $C=\cone(f)$. Then $C_n = 0$ for all $n$. Thus $C\in \bigcap
\cT_{\le n} = 0$ and $f$ is an isomorphism.

(4) It suffices to show  that \allowbreak $\Hom_\cT(\cT_{\le n-1},\cT_n)=0$, which follows
again from Proposition \ref{psplitbis}. 
\end{proof}

\subsubsection{Glueing equivalences of triangulated categories}

\begin{thm}\label{tgluebis} Let $0\to \cS'\by{i} \cS\by{\pi} \cS''\to 0$ and
  $0\to \cT'\by{i'} \cT\by{\pi'} \cT''\to 0$ be as in \eqref{eqabsetupbis}, and
  consider a naturally commutative diagram of exact functors 
\[\begin{CD}
0@>>> \cS'@>{i}>> \cS@>{\pi}>> \cS''@>>> 0\\
&&@V{R'}VV @V{R}VV @V{R''}VV\\
0@>>> \cT'@>{i'}>> \cT@>{\pi'}>> \cT''@>>> 0.
\end{CD}\]
\begin{enumerate}
\item The following are equivalent: 
\begin{thlist}
\item The natural ``base change" transformation $R'p\Rightarrow p'R$ of
  Section \ref{sbase} is a natural isomorphism. 
\item   $Rj\Rightarrow j'R''$ is a natural isomorphism. 
\item  $R(j\cS'')\subseteq j'\cT''$.
\end{thlist}
\item Suppose that
\begin{thlist}
\item $R'$ and $R''$ are fully faihtful;
\item the conditions of {\rm (1)} are verified;
\item for any two objects $A'\in \cS'$, $A''\in \cS''$, the map
\[
\Hom_\cS(j A'',i A')\to \Hom_\cT(R(jA''),R(iA'))
\]
is an isomorphism.
\end{thlist}
Then $R$ is fully faithful.
\item Suppose that Condition {\rm (iii)} of {\rm (2)} holds and that  $R'$ and $R''$ are
essentially surjective. Then so is $R$.
\item Suppose that $R'$ and $R''$ are equivalences of categories and that the conditions of
{\rm (1)} and Condition {\rm (iii)} of {\rm (2)} are satisfied. Then $R$ is an equivalence of
categories.
\end{enumerate}
\end{thm}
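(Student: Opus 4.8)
The plan is to transcribe into the triangulated setting the arguments that prove Proposition \ref{pglue}, Theorem \ref{tglue} and Corollary \ref{cglue}, systematically replacing "short exact sequence" by "distinguished triangle" and "$\Ext^1$" by "$\Hom$ into the shift $[1]$". The basic tool is the functorial distinguished triangle \eqref{eqgluebis}, namely $i'p'(C)\to C\to j'\pi'(C)\to i'p'(C)[1]$, attached by Proposition \ref{psplitbis} to any object $C$ of a triangulated category carrying a split exact sequence, together with the base-change natural transformations $R'p\Rightarrow p'R$ and $Rj\Rightarrow j'R''$ of \S\ref{sbase}; note that $R\,i\cong i'R'$ and $R''\pi\cong\pi'R$ hold for free, being exactly the commutativity of the given diagram. \textbf{Part (1).} First I would apply $R$ to the canonical triangle \eqref{eqgluebis} of an object $A\in\cS$; using the base-change maps and these two free isomorphisms one obtains a morphism of distinguished triangles from the $R$-image of the canonical triangle of $A$ to the canonical triangle of $RA$ whose middle component is $1_{RA}$, and the triangulated five lemma then shows that the left component is an isomorphism if and only if the right one is, which is (i)$\Leftrightarrow$(ii). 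Here (ii)$\Rightarrow$(iii) is immediate, and (iii)$\Rightarrow$(i) follows from the uniqueness of the canonical triangle in Proposition \ref{psplitbis}: if $R(j\pi A)\in j'(\cT'')$ for all $A$, then, as $R(ipA)=i'R'pA\in i'(\cT')$, the $R$-image of \eqref{eqgluebis} must be isomorphic to the canonical triangle of $RA$, forcing $i'p'RA\cong R(ipA)=i'R'pA$ and hence $p'R\cong R'p$.

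\textbf{Part (2).} Since $R'$ and $R''$ are fully faithful, so are $Ri\cong i'R'$ and, by part (1), $Rj\cong j'R''$; thus $R$ is fully faithful on the subcategories $i(\cS')$ and $j(\cS'')$. Moreover $R$ is conservative: if $R(f)$ is invertible then $R(\cone f)=0$, so $R''\pi(\cone f)=0$, whence $\pi(\cone f)=0$ by conservativity of $R''$, so $\cone f\in\cS'$ and $i'R'(\cone f)=Ri(\cone f)=0$, giving $\cone f=0$ by conservativity of $R'$. For fullness, given $g\colon RA_1\to RA_2$, functoriality of \eqref{eqgluebis} yields the pair $\bigl(i'p'(g),\,j'\pi'(g)\bigr)$; part (1) identifies $i'p'RA_r$ with $R(ipA_r)$ and $j'\pi'RA_r$ with $R(j\pi A_r)$, so by full faithfulness of $R$ on $i(\cS')$ and on $j(\cS'')$ these two maps descend to unique $h'\colon ipA_1\to ipA_2$ and $h''\colon j\pi A_1\to j\pi A_2$ in $\cS$. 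The obstruction to completing $(h',h'')$ to a morphism $A_1\to A_2$ of the canonical triangles lives in $\Hom_\cS(j\pi A_1,ipA_2[1])$, a group of the form $\Hom_\cS(jA'',iA')$ with $A'=pA_2[1]\in\cS'$; by hypothesis (iii) $R$ maps it isomorphically onto $\Hom_\cT\bigl(R(j\pi A_1),R(ipA_2)[1]\bigr)$, where the corresponding obstruction is zero because $g$ exists — hence it is zero already in $\cS$, producing $f\colon A_1\to A_2$. Finally $R(f)-g$ kills $\epsilon_{RA_1}$ and is killed by $\eta_{RA_2}$, so it factors as $\epsilon_{RA_2}\circ h\circ\eta_{RA_1}$ for some $h\colon j'\pi'RA_1\to i'p'RA_2$; applying (iii) once more $h$ lifts to $h_0\colon j\pi A_1\to ipA_2$ in $\cS$, and then $g=R\bigl(f-\epsilon_{A_2}\circ h_0\circ\eta_{A_1}\bigr)$. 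Thus $R$ is full, and faithful by Lemma \ref{lA.2}.

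\textbf{Parts (3) and (4).} For part (3) I would keep the hypotheses of (1) in force, as in the abelian Theorem \ref{tglue}(3) (whose stated hypotheses include those of its part (2)). Given $B\in\cT$, essential surjectivity of $R'$ and $R''$ gives $C'\in\cS'$, $C''\in\cS''$ with $p'B\cong R'C'$ and $\pi'B\cong R''C''$, so $i'p'B\cong R(iC')$ and, by part (1), $j'\pi'B\cong R(jC'')$. The connecting map $\delta$ of the canonical triangle of $B$ then lies in $\Hom_\cT\bigl(R(jC''),R(iC')[1]\bigr)\cong\Hom_\cS(jC'',iC'[1])$ by (iii), say $\delta=R(\delta_0)$; taking $A$ to be a cone of $\delta_0[-1]\colon(jC'')[-1]\to iC'$ gives a distinguished triangle $iC'\to A\to jC''\to(iC')[1]$ with third map $\delta_0$, whose $R$-image is a distinguished triangle on $\delta$ having the same outer vertices as the canonical triangle of $B$, hence isomorphic to it, so $RA\cong B$. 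Part (4) is then simply the conjunction of (2) and (3).

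\textbf{Main obstacle.} The one genuinely delicate point is the bookkeeping in part (2) forced by the non-functoriality of cones: a morphism of distinguished triangles is not determined by its two outer components, so one cannot argue by the rigidity of the data (as in the abelian case) but must instead exhibit a genuine obstruction class in $\Hom_\cS(j\pi A_1,ipA_2[1])$ and verify that $R$ carries it isomorphically to the corresponding class attached to $g$. This in turn requires checking that the connecting maps of the various canonical triangles are compatible with the base-change identifications provided by part (1) — a compatibility that is built into the construction in \S\ref{sbase} but must be invoked explicitly. Everything else (part (1), conservativity, essential surjectivity in part (3)) is a routine transcription of the abelian proofs, with the uniqueness of the canonical triangle in Proposition \ref{psplitbis} playing the role of the rigidity of kernels and cokernels.
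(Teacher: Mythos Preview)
Your proposal is correct and follows essentially the same approach as the paper: both use the canonical triangle \eqref{eqgluebis} and the base-change maps to prove (1), then for (2) descend $i'p'(g)$ and $j'\pi'(g)$ via fullness of $R',R''$, use injectivity in (iii) to show the connecting-map square commutes (producing $f$), and use surjectivity in (iii) to correct $R(f)-g$; conservativity plus Lemma \ref{lA.2} gives faithfulness, and (3) lifts the connecting map $\delta$ via (iii). Your observation that the proof of (3) tacitly uses the conditions of (1) --- to identify $j'\pi'B$ with $R(jA'')$ --- is well-taken; the paper's proof does invoke this identification without restating the hypothesis.
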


\begin{proof} 
(1) Let $A\in \cS$. We have a commutative diagram of exact triangles
(using \eqref{eqgluebis} and the base change morphisms): 
\[\begin{CD}
R(ipA)@>>> R(A)@>>> R(j\pi A)@>+1>> \\
@VVV @V{||}VV @VVV \\
i'p'R(A)@>>> R(A)@>>> j'\pi'R(A)@>+1>>.
\end{CD}\]

Thus the left vertical map is an isomorphism if and only if the right
vertical map is one, which shows that (i) $\iff$ (ii). If this is the
case, then $R(j\pi A)\in j'(\cT')$, hence (iii). Conversely, if (iii)
holds, then all vertical maps must be isomorphisms by the uniqueness
of \eqref{eqgluebis}. 

(2)  A. We first show the fullness of $R$. Let $A_1,A_2\in\cS$ and let $g\in\cT(R(A_1),R(A_2))$.
By the functoriality of \eqref{eqgluebis}, we get a commutative diagram 
\[\begin{CD}
i'p'R(A_1)@>>> R(A_1)@>>> j'\pi'R( A_1)@>+1>>\\
@V{i'p'(g)}VV @V{g}VV @V{j'\pi'(g)}VV \\
i'p'R(A_2)@>>> R(A_2)@>>> j'\pi'R(A_2)@>+1>>.
\end{CD}\]

Using the equivalent conditions of (1), $i'p'(g)$ and $j'\pi'(g)$
respectively give maps 
\[g':i'R'(p A_1)\to i'R'(p A_2),\qquad g'':j' R''(\pi A_1)\to j' R''(\pi A_2).\]

By the fullness of $R'$ and $R''$, $g'$ and $g''$ are induced by maps
\[f':p A_1\to p A_2,\qquad f'':\pi A_1\to \pi A_2.\]

Now consider the diagram
\[\begin{CD}
A_1@>>> j\pi A_1@>>>ipA_1[1]@>+1>> \\
&& @V{j(f'')}VV @V{i(f'[1])}VV\\
A_2@>>> j\pi A_2@>>> ipA_2[1]@>+1>> 
\end{CD}\]

By the injectivity in Condition (iii), the square commutes, hence
there exists a map $f:A_1\to A_2$ filling in this diagram. Now the map
$R(f)-g$ induces a map $h:Rj\pi(A_1)\to Rip(A_2)$. By the surjectivity in
Condition (iii), $h$ is of
the form $R(h')$, and then $R(f-h')=g$. Thus $R$ is full.

B. To see that $R$ is faithful, it now suffices by A.1.1 to show that it is conervative. Let
$A\in\cS$ be such that $RA=0$. Then $R''\pi A=0$, hence $\pi A=0$ since $R''$, being fully
fiahtful, is conservative. Hence $A\simeq iA'$ for some $A'\in \cS'$. Then, $RiA'\simeq
iR'A'=0$, hence $A'=0$ by the conservativity of $i$ and $R'$ and $A=0$.

(3) Let $B\in \cT$. Then $p' B$ is in the essential image of $R''$ and
$\pi' B$ is in the essential image of $R'$. Thus $B$ fits in an exact
triangle of the form
\[B\to R(jA'')\by{\delta'} R(iA'[1])\by{+1}.\]

By the surjectivity in (2) (iii), $\delta'$ is of the form $R(\delta)$ for
$\delta:jA''\to iA'[1]$. Let $A$ denote a fibre (= shifted cone) of $\delta$. We
can then fill in the diagram
\[\begin{CD}
R(iA')@>>> R(A)@>>> R(jA'')@>R(\delta)>>\\
@V{\|}VV && @V{\|}VV\\
R(iA')@>>> B@>>> R(jA'')@>R(\delta)>>
\end{CD}\]
with a map which is automatically an isomorphism.

(4) This is just collecting the previous results.
\end{proof}

Here is a converse to Theorem \ref{tgluebis}:

\begin{thm}\label{tglueoppbis}
Keep the notation of Theorem \ref{tgluebis}. Then:
\begin{enumerate}
\item If $R$ is faithful (\resp full), so is $R'$.
\item If $R$ is faithful (\resp full) and the equivalent conditions of Theorem
\ref{tgluebis} (1) hold, so is $R''$.
\item If $R$ is essentially surjective, $R''$ is essentially surjective, and so is $R'$
provided $R''$ is conservative.
\end{enumerate}
\end{thm}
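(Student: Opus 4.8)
The plan is to mirror the proof of Theorem~\ref{tglueopp}, its abelian counterpart, replacing kernels and cokernels by fibres and cones and using Proposition~\ref{psplitbis} in place of Proposition~\ref{psplit}; all three assertions are formal consequences of the full faithfulness of the structural functors $i,i',j,j'$ together with the hypotheses made.

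For part~(1): since $Ri\simeq i'R'$ naturally and $i,i'$ are fully faithful, for any $A'_1,A'_2\in\cS'$ one has a commutative square
\[\begin{CD}
\cS'(A'_1,A'_2)@>\sim>> \cS(iA'_1,iA'_2)\\
@V{R'}VV @V{R}VV\\
\cT'(R'A'_1,R'A'_2)@>\sim>> \cT(RiA'_1,RiA'_2)
\end{CD}\]
with invertible horizontal maps. Hence $R'$ inherits faithfulness (\resp fullness) from the restriction of $R$ to morphisms between objects in the image of $i$, and a fortiori from $R$ itself. For part~(2): the equivalent conditions of Theorem~\ref{tgluebis}~(1) provide a natural isomorphism $Rj\simeq j'R''$ (condition~(ii) there), and $j,j'$ are fully faithful as right adjoints of the localisation functors (Proposition~\ref{psplitbis}). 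Forming the analogous square with $j$ in place of $i$ --- using $\cS(jA''_1,jA''_2)\simeq\cS''(A''_1,A''_2)$ and $\cT(RjA''_1,RjA''_2)\simeq\cT(j'R''A''_1,j'R''A''_2)\simeq\cT''(R''A''_1,R''A''_2)$ --- shows that $R''$ inherits faithfulness (\resp fullness) from $R$.

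For part~(3): essential surjectivity of $R''$ requires no extra hypothesis. Given $B''\in\cT''$, write $j'B''\simeq RA$ with $A\in\cS$ (as $R$ is essentially surjective); then $R''\pi A\simeq\pi'RA\simeq\pi'j'B''\simeq B''$ because $\pi'j'\simeq Id_{\cT''}$. For $R'$, given $B'\in\cT'$, write $i'B'\simeq RA$; then $R''\pi A\simeq\pi'RA\simeq\pi'i'B'=0$, so conservativity of $R''$ forces $\pi A=0$. Since $\cS'$ is thick (hence closed under direct summands), $\ker\pi=\cS'$ inside the Verdier quotient, so $A\simeq iA'$ for some $A'\in\cS'$; then $i'R'A'\simeq RiA'\simeq i'B'$, whence $R'A'\simeq B'$ by the full faithfulness of $i'$.

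I expect no genuine obstacle here: the argument is a sequence of formal diagram chases. The one spot calling for a moment's attention is the identification $\ker\pi=\cS'$ used at the end of part~(3), which relies precisely on the thickness of $\cS'$; and, in part~(2), on remembering to invoke the equivalent conditions of Theorem~\ref{tgluebis}~(1) so as to have the natural isomorphism $Rj\simeq j'R''$ at one's disposal.
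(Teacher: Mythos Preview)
Your proof is correct and follows essentially the same approach as the paper, which simply writes ``Same as for Theorem~\ref{tglueopp}''; you have reproduced that argument in the triangulated setting with the appropriate adjustments (using Proposition~\ref{psplitbis} and the natural isomorphism $Rj\simeq j'R''$ from Theorem~\ref{tgluebis}~(1)~(ii)). Your treatment is in fact slightly more explicit than the paper's, in particular for the essential surjectivity of $R''$, but the content is the same.
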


\begin{proof} Same as for Theorem \ref{tglueopp}.\end{proof}

\subsection{The case of $t$-categories}

Let $\cT$ be a triangulated category provided with a $t$-structure with heart $\cA$
\cite[\S 1]{BBD}.

\subsubsection{$t$-exact functors}

\begin{lemma}\label{lJ.1} Let $\cS$ be another $t$-category, with heart $\cB$, and let $T:\cS\to
\cT$ be a triangulated functor. Assume that $T(\cB)\subseteq \cA$ and that the $t$-structure on
$\cS$ is bounded. Then:
\begin{enumerate}
\item $T$ is $t$-exact;
\item the induced functor $T:\cB\to \cA$ is exact.
\end{enumerate}
\end{lemma}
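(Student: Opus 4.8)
The statement of Lemma~\ref{lJ.1} is a soft consequence of the definition of a $t$-structure, once we use boundedness of the $t$-structure on $\cS$. The plan is as follows. For part (1), I would show that $T$ is both right $t$-exact and left $t$-exact, i.e. $T(\cS_{\le 0})\subseteq \cT_{\le 0}$ and $T(\cS_{\ge 0})\subseteq \cT_{\ge 0}$. By the symmetry of the two conditions (shift and rotate triangles) it suffices to treat, say, right exactness: $T(\cS_{\le 0})\subseteq \cT_{\le 0}$. First I would record the trivial case: if $X\in\cB=\cS_{\le 0}\cap \cS_{\ge 0}$, then $T(X)\in \cA\subseteq \cT_{\le 0}$ by hypothesis.

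The induction is on the length of the $t$-truncation filtration. Let $X\in \cS_{\le 0}$. Since the $t$-structure on $\cS$ is bounded, there is an integer $a\le 0$ with $X\in \cS_{\ge a}$, and I argue by descending induction on $a$. If $a=0$ then $X\in\cB$ and we are done. If $a<0$, use the truncation triangle
\[
\tau_{\ge a+1}X \to X \to (\tau_{\le a}X)[-a]\to (\tau_{\ge a+1}X)[1]
\]
wait — more cleanly: consider the canonical triangle $\tau_{\le a}X \to X\to \tau_{\ge a+1}X\by{+1}$. Here $H^a(X)[-a]=\tau_{\le a}X\in\cB[-a]$, so $T(\tau_{\le a}X)=T(H^a(X))[-a]\in\cA[-a]\subseteq \cT_{\le -a}\subseteq \cT_{\le 0}$ since $-a>0$; and $\tau_{\ge a+1}X\in \cS_{\le 0}\cap \cS_{\ge a+1}$, so $T(\tau_{\ge a+1}X)\in\cT_{\le 0}$ by the inductive hypothesis. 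Applying $T$ to the triangle and using that $\cT_{\le 0}$ is stable under extensions (it is the left part of a $t$-structure, hence closed under cones of maps between its objects, up to the usual rotation argument), we get $T(X)\in\cT_{\le 0}$. The dual argument, using $\tau_{\le -1}X\to X\to \tau_{\ge 0}X\by{+1}$ and descending induction on an upper bound for $X\in\cS_{\le b}$, gives $T(\cS_{\ge 0})\subseteq\cT_{\ge 0}$. Together these say $T$ is $t$-exact.

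For part (2): it is a general and formal fact (\cite[\S 1]{BBD}) that a $t$-exact triangulated functor induces an exact functor between the hearts. Concretely, the induced functor $\cB\to\cA$ is $X\mapsto T(X)$ (no truncation needed, since $T$ preserves both $\cS_{\le 0}$ and $\cS_{\ge 0}$, hence $\cB$); given a short exact sequence $0\to X'\to X\to X''\to 0$ in $\cB$, it comes from a triangle $X'\to X\to X''\by{+1}$ in $\cS$, whose image $T(X')\to T(X)\to T(X'')\by{+1}$ is a triangle in $\cT$ with all three terms in $\cA$, hence yields a short exact sequence $0\to T(X')\to T(X)\to T(X'')\to 0$ in $\cA$ by the standard relation between triangles with vertices in the heart and short exact sequences. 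So (2) follows immediately from (1).

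\textbf{Main obstacle.} There is no deep obstacle; the only point requiring care is the bookkeeping of the induction in part (1) — one must invoke boundedness of the $t$-structure on $\cS$ to start the descending/ascending induction, and correctly identify $\tau_{\le a}X$ with $H^a(X)[-a]$ so that the hypothesis $T(\cB)\subseteq\cA$ applies after a shift. One should also make sure the stability of $\cT_{\le 0}$ (resp. $\cT_{\ge 0}$) under the relevant extensions is cited from the axioms of a $t$-structure rather than reproved. Everything else is the formal ``$t$-exact $\Rightarrow$ exact on hearts'' mechanism from \cite[\S 1]{BBD}.
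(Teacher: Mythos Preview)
Your approach is essentially identical to the paper's: induct on the length of the $t$-amplitude using truncation triangles to peel off one heart object at a time, then invoke \cite[Prop.~1.3.17]{BBD} for part (2). One small slip to fix: with cohomological conventions, $\cA[-a]\subseteq \cT_{\le a}$ (not $\cT_{\le -a}$), and the inclusion $\cT_{\le a}\subseteq \cT_{\le 0}$ holds because $a\le 0$, not because $-a>0$; your conclusion is unaffected.
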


\begin{proof} (1) Let us show right exactness. Let $X\in \cS^{\ge 0}$. We must show that
$T(X)\in
\cT^{\ge 0}$. By assumption, there exists $n\ge 0$ such that $X\in \cS^{[0,n]}$. For $n=0$, we
have
$T(X)\in
\cA$ by hypothesis. For $n>0$, we may argue by induction on $n$, using the exact triangle
\[H_n(X)[n]\to X\to Y\by{+1}\]
with $Y\in \cS^{[0,n-1]}$. Left exactness is proven similarly.

(2) This follows from \cite[Prop. 1.3.17 (i)]{BBD}.
\end{proof}

\subsubsection{The case of a $2$-step filtration}

Here we assume that $\cA$ sits in a short exact sequence \eqref{eqabsetup}. 

\begin{propose}\label{psplitt1} Let $\cT'$ be the full subcategory of $\cT$
consisting of objects $T$ such that $H^i(T)\in \cA'$ for all $i\in\Z$. Then:
\begin{enumerate}
\item  $\cT'$ is thick in $\cT$.
\item The $t$-structure of $\cT$ induces a $t$-structure on $\cT'$.
\item Suppose that \eqref{eqabsetup} is split in the sense of Definition \ref{dsplit}, and that
the $t$-structure of $\cT$ is bounded. Then
\begin{thlist}
\item The exact sequence
\[0\to\cT'\by{i} \cT\by{\pi} \cT/\cT'\to 0\]
is split in the sense of Definition \ref{dsplitbis}, and the right adjoint $p$ to $i$ is
$t$-exact.
\item Via the right adjoint $j$ to $\pi$ (see Proposition \ref{psplitbis}), the $t$-structure
of $\cT$ induces a $t$-structure on $\cT/\cT'$, for which $\pi$ is $t$-exact. Moreover,
$j(\cT/\cT')=\{C\in \cT\mid H^*(C)\in j(\cA'')\}$.
\end{thlist}
\end{enumerate}
\end{propose}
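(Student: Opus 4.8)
\textbf{Plan of proof for Proposition \ref{psplitt1}.}

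For part (1), the plan is to observe that $\cT'$ is defined by a cohomological condition: $T\in \cT'$ iff all $H^i(T)$ lie in $\cA'$. Since $\cA'$ is thick in $\cA$ (it is a Serre subcategory, being the kernel of the exact functor $\pi$), this condition is stable under direct summands and, using the long exact cohomology sequence attached to an exact triangle, it is stable under the two-out-of-three property. Hence $\cT'$ is a thick triangulated subcategory; I would spell out only the triangle argument, since the direct-summand stability is immediate.

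For part (2), I would invoke \cite[Th. 1.3.19]{BBD} or argue directly: set $\cT'_{\ge 0}=\cT'\cap \cT_{\ge 0}$ and $\cT'_{\le 0}=\cT'\cap \cT_{\le 0}$. The truncation functors $\tau_{\le n},\tau_{\ge n}$ of $\cT$ preserve $\cT'$ because $H^i(\tau_{\le n}T)$ is either $H^i(T)$ or $0$, hence lies in $\cA'$ whenever $T\in\cT'$; similarly for $\tau_{\ge n}$. Together with the fact that $\cA'$ is closed under subquotients this shows the pair $(\cT'_{\ge 0},\cT'_{\le 0})$ is a $t$-structure on $\cT'$ induced by that of $\cT$, with heart $\cA'$.

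For part (3), assume \eqref{eqabsetup} split and the $t$-structure on $\cT$ bounded. For (i), I want to produce a right adjoint $p$ to $i:\cT'\into\cT$. The key step is to construct it degreewise: for $T\in\cT$ bounded, say $H^i(T)=0$ for $i\notin[a,b]$, I would build $pT$ by induction on $b-a$ using the truncation triangle $H^a(T)[-a]\to T\to \tau_{>a}T\by{+1}$, applying the exact right adjoint $\mathrm{ad}:\cA\to\cA'$ of $i:\cA'\to\cA$ (which exists by Proposition \ref{psplit} since \eqref{eqabsetup} is split) to the top cohomology, and glueing: the glueing obstruction lives in an $\Ext^1_\cT$ group between objects of $i(\cA')$ and $\cT'$, and one checks it maps compatibly. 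The adjunction property $\Hom_{\cT'}(S,pT)\cong\Hom_\cT(iS,T)$ follows because $\mathrm{ad}$ is a right adjoint on hearts and $\Hom_\cT(iS,-)$ is computed by a spectral sequence whose $E_2$-terms are $\Ext$-groups in $\cA$ from objects of $\cA'$, where $\mathrm{ad}$ acts as identity after $i$. The $t$-exactness of $p$ is then automatic from this degreewise construction: $p$ commutes with $\tau_{\le n}$ and $\tau_{\ge n}$ by construction, so $p(\cT_{\ge 0})\subseteq\cT'_{\ge 0}$ and $p(\cT_{\le 0})\subseteq\cT'_{\le 0}$. For (ii): once $i$ has a right adjoint, Proposition \ref{psplitbis} gives the splitting of $0\to\cT'\to\cT\to\cT/\cT'\to 0$ together with the right adjoint $j$ to $\pi$; the functorial triangle $ipC\to C\to j\pi C\by{+1}$ of that proposition, combined with the $t$-exactness of $p$ just proven and the fact that $i$ is $t$-exact, shows by the long exact sequence that $H^*(j\pi C)$ depends only on $H^*(C)$ modulo $\cA'$. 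Transporting the $t$-structure of $\cT$ along $j$ (equivalently, declaring $\pi$ $t$-exact) is then well-defined, its heart is $\cA/\cA'=\cA''$, and the description $j(\cT/\cT')=\{C\mid H^*(C)\in j(\cA'')\}$ follows because $j$ is fully faithful and $t$-exact, so $C\in j(\cT/\cT')$ iff $C\cong j\pi C$ iff $ipC=0$ iff (by $t$-exactness of $p$) $pH^*(C)=0$ iff $H^*(C)\in j(\cA'')$.

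The main obstacle I anticipate is part (3)(i): constructing the right adjoint $p$ on the nose and verifying its $t$-exactness. The degreewise induction requires controlling the glueing obstruction — showing it vanishes or is canonically trivialised — and checking that the resulting functor is genuinely right adjoint rather than merely object-level. Once $p$ is in hand with its $t$-exactness, everything in (ii) is formal bookkeeping with the triangle $ipC\to C\to j\pi C\by{+1}$ and the long exact cohomology sequence; the boundedness hypothesis is exactly what makes the inductive construction terminate.
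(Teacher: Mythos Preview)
Your treatment of (1), (2), and (3)(ii) matches the paper's. The substantive difference is in (3)(i).

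The paper does not build $p$ by a degreewise induction on amplitude. Instead it verifies the purely object-level criterion (iii) of Proposition \ref{psplitbis}: for each $A\in\cT$ there is $A'\in\cT'$ and a map $iA'\to A$ whose cone is right-orthogonal to $i\cT'$. They let $\cS\subseteq\cT$ be the subcategory of objects satisfying this, show $\cS$ is triangulated, and then check $\cA\subseteq\cS$; boundedness finishes. The crucial ingredient for $\cA\subseteq\cS$ is the vanishing $\Ext^r_\cA(i\cA',j\cA'')=0$ for \emph{all} $r\ge 0$, which is part of Proposition \ref{psplit} (2); together with boundedness this gives $\Hom_\cT(\cT',jA'')=0$ for $A''\in\cA''$. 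Once criterion (iii) holds, Proposition \ref{psplitbis} produces $p$ as a genuine functor, so the functoriality worry you flag simply does not arise. The $t$-exactness of $p$ then follows from Lemma \ref{lJ.1} because $p$ visibly sends $\cA$ to $\cA'$.

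Your inductive approach can be made to work, but your diagnosis of the obstruction is off: what you need to factor the connecting map $ip(\tau_{>a}T)\to\tau_{>a}T\to H^a(T)[-a+1]$ through $i\cdot\mathrm{ad}(H^a(T))[-a+1]$ is exactly $\Hom_\cT(\cT',j\cA''[\ast])=0$, i.e.\ the higher Ext vanishing above, not an $\Ext^1$ between $i(\cA')$ and $\cT'$. You never name this vanishing, and without it the glueing step is unsupported. Even granting it, you are still left with the functoriality problem you yourself identify; the paper's route sidesteps this entirely by pushing the work into the abstract adjoint-existence criterion.
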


\begin{proof} (1) The thickness of $\cT'$ in $\cT$ follows from the thickness of
$\cA'$ in $\cA$. (2) is clear.

It remains to prove (3). 

(i) To prove that $i$ has a
right adjoint $p$, it suffices to prove Condition (iii) of Proposition
\ref{psplitbis} (compare \cite[lemmes 3.2.1 and 3.2.3]{morel}).

Let $\cS$ be the full subcategory of $\cT$ consisting of those objects
verifying Condition (iii) of Proposition \ref{psplitbis}. We have to show that
$\cS=\cT$. Since the $t$-structure is supposed to be bounded, it suffices to
check that $\cS$ is triangulated and contains $\cA$.

It is clear that $\cS$ is stable under shifts. Let $A_1,A_2\in \cS$,
$f:A_1\to A_2$ and
$A'_1,A'_2\in \cT'$ satisfying Condition (iii) of Proposition \ref{psplitbis}
respectively for $A_1$ and $A_2$. This condition implies that the composition $A'_1\to A_1\to
A_2$ factors through a (unique) map
$f':A'_1\to A'_2$. We also get a map $f'':\cone(f_1)\to \cone(f_2)$ defining a map of exact
triangles. Let
$A'_3=\cone(f')$: we may find a map $f_3:A'_3\to A_3$ defining a morphism of exact triangles.
Then the cone of $f_3$ is isomorphic to the cone of $f''$, hence has the property of
Proposition \ref{psplitbis}. This shows that $\cS$ is triangulated.

Let $A\in \cA$. By definition of ``split" and Proposition \ref{psplit} (2), $A$ sits
in a short exact sequence $0\to A'\to A\to A''\to 0$ with $A'\in \cA'$ and
$\Ext^r_\cA(B',A'')=0$ for all $B'\in \cA'$ and all $r\ge 0$. Since the $t$-structure is
bounded, this implies that $\Hom_\cT(\cT',A'')=0$, and $A\in \cS$.

This shows the existence of the right adjoint $p$. Moreover, the proof of Proposition
\ref{psplitbis} shows that, for $A,A',A''$ as in the last paragraph, $p(A)=A'$. Thus $p$
respects the hearts of the $t$-structures on $\cT$ and $\cT'$, and since the one on $\cT$ is
bounded, $p$ is $t$-exact by Lemma \ref{lJ.1}.

(ii) To show that the $t$-structure of $\cT$ induces a $t$-structure on $j(\cT/\cT')$, we have
to show that, for $X\in j(\cT/\cT')$, $\tau_{\le 0} X\in j(\cT/\cT')$, or equivalently that if
$p(X)=0$, then $p(\tau_{\le 0} X)=0$: this is clear since $p$ is $t$-exact. The proof that
$\pi$ is $t$-exact is then the same as the proof of the $t$-exactness of $p$. 

For the last
assertion, let $C\in j(\cT/\cT')$. Then, for any $i\in\Z$, $H^i(j(C))=j(H^i(C))\in j(\cA'')$.
Conversely, if $C\in \cT$ is such that $H^i(C)\in j(\cA'')$ for all $i\in\Z$, then $p
H^i(C)=H^i(pC)=0$ for all $i\in\Z$, hence $pC=0$ since the $t$-structure is bounded, and $C\in
\im j$ by Proposition \ref{psplitbis}.
\end{proof}

\subsubsection{Filtered $t$-categories}

We now assume that  $\cA$ is filtered in the sense of Definition \ref{d16.3} a).

\begin{defn}\label{dJ.1} Let $n\in\Z$. We denote by $\cT_{\le n}$ the full subcategory of $\cT$
consisting of objects $T$ such that $H^i(T)\in \cA_{\le n}$ for all $i\in\Z$.
\end{defn}

This definition is similar to that of Sophie Morel in \cite[Prop. 3.1.1]{morel}.

\begin{propose}\label{psplitt} \
\begin{enumerate}
\item The $\cT_{\le n}$ define a filtration of $\cT$ in the sense of Definition
\ref{d16.3tri} a).
\item For any $n$, the $t$-structure of $\cT$ induces a $t$-structure on $\cT_{\le n}$.
\item It is split if the filtration on $\cA$ is and the $t$-structure is bounded.
\item It is separated if the filtration on $\cA$ is, and if the $t$-structure is
nondegenerate. 
\item It is exhaustive if the filtration on $\cA$ is and if the $t$-structure is bounded.
\end{enumerate}
\end{propose}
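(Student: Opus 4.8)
The five assertions split naturally: (1)--(2) are purely formal and involve only the abstract relationship between a $t$-structure and a filtration by thick subcategories defined via cohomology; (3)--(5) then upgrade this filtration to a weight filtration under the three hypotheses on $\cA$ (split, separated, exhaustive) together with appropriate finiteness of the $t$-structure. The plan is to treat (1) and (2) together, then reduce each of (3), (4), (5) to a pointwise check using the conservativity of the cohomology functors $H^i$ and the fact that $\cA_{\le n}$ is thick in $\cA$.

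First I would prove (1). Each $\cT_{\le n}$ is triangulated: given an exact triangle $X\to Y\to Z\by{+1}$ with $X,Y\in \cT_{\le n}$, the long exact cohomology sequence displays each $H^i(Z)$ as an extension (in the abelian sense, via the image of $H^i(Y)\to H^i(Z)$ and $\coker$) built out of subquotients of $H^i(Y)$ and $H^{i+1}(X)$; since $\cA_{\le n}$ is thick in $\cA$ it is closed under subobjects, quotients and extensions, so $H^i(Z)\in\cA_{\le n}$ for all $i$, i.e. $Z\in\cT_{\le n}$. Stability under shifts and direct summands is immediate (for summands, use that $H^i$ is additive and $\cA_{\le n}$ is closed under summands). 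The inclusion $\iota_n:\cT_{\le n}\into \cT_{\le n+1}$ is evident from $\cA_{\le n}\subseteq \cA_{\le n+1}$. For (2), I would invoke \cite[Prop. 1.3.17]{BBD} or argue directly: for $X\in\cT_{\le n}$ and any $m$, the truncations $\tau_{\le m}X$, $\tau_{\ge m}X$ again lie in $\cT_{\le n}$ because $H^i(\tau_{\le m}X)$ is either $H^i(X)$ or $0$, hence in $\cA_{\le n}$; thus $(\cT_{\le n}\cap \cT_{\le 0},\cT_{\le n}\cap \cT_{\ge 0})$ is a $t$-structure on $\cT_{\le n}$ with heart $\cA_{\le n}$. (Here I am using that $\cA_{\le n}$ is precisely the heart, which follows because the heart of $\cT$ is $\cA$ and $Y\in\cA$ lies in $\cT_{\le n}$ iff $Y=H^0(Y)\in\cA_{\le n}$.)

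Next, (3), (4), (5) are each handled by one application of the relevant structural input. For (3): assume the filtration $(\cA_{\le n},\iota_n)$ on $\cA$ is split and the $t$-structure on $\cT$ is bounded; then I would apply Proposition \ref{psplitt1}(3) with $\cA'=\cA_{\le n-1}$ inside $\cA$ and the corresponding $\cT'=\cT_{\le n-1}$ inside $\cT$ --- the hypotheses of that proposition are exactly ``\eqref{eqabsetup} split'' and ``$t$-structure bounded'', and its conclusion (i) gives that $\iota_n:\cT_{\le n-1}\into\cT$ (hence, restricting, $\iota_n:\cT_{\le n-1}\into \cT_{\le n}$, since the right adjoint $p$ visibly preserves $\cT_{\le n}$, being $t$-exact with $p(\cA_{\le n})\subseteq\cA_{\le n-1}$) has a $t$-exact right adjoint. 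So the filtration on $\cT$ is split. For (4): if $T\in\bigcap_n\cT_{\le n}$, then $H^i(T)\in\bigcap_n\cA_{\le n}=\{0\}$ for every $i$ (using that the filtration on $\cA$ is separated), and hence $T=0$ since the $t$-structure is nondegenerate. For (5): given $T\in\cT$, boundedness of the $t$-structure means $H^i(T)=0$ for all but finitely many $i$; each nonzero $H^i(T)$, being an object of $\cA=\bigcup_n\cA_{\le n}$, lies in some $\cA_{\le n_i}$; taking $n=\max_i n_i$ over the finitely many relevant $i$ gives $T\in\cT_{\le n}$, so $\cT=\bigcup_n\cT_{\le n}$.

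The main obstacle I anticipate is the split clause (3): one must be careful that the ``diagonal'' right adjoint $p$ produced by Proposition \ref{psplitt1}(3)(i) for the pair $(\cT_{\le n-1},\cT)$ actually restricts to a right adjoint of $\iota_n:\cT_{\le n-1}\into\cT_{\le n}$, not merely of the inclusion into all of $\cT$. This is where $t$-exactness of $p$ is essential: because $p$ is $t$-exact and its restriction to hearts is the adjoint $\cA\to\cA_{\le n-1}$ of the split filtration on $\cA$ (which sends $\cA_{\le n}$ into itself), we get $H^i(p(X))=p(H^i(X))\in\cA_{\le n-1}\subseteq\cA_{\le n}$ whenever $X\in\cT_{\le n}$, so $p(\cT_{\le n})\subseteq\cT_{\le n-1}$ and the restricted adjunction is immediate. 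Everything else is a routine diagram chase with long exact cohomology sequences and the thickness/closure properties of the $\cA_{\le n}$.
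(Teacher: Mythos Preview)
Your proposal is correct and follows the paper's approach, which simply cites Proposition~\ref{psplitt1} for (1)--(3) and declares (4), (5) easy; your explicit arguments for (1), (2), (4), (5) are exactly the intended ones.

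One small wrinkle in your handling of (3): you apply Proposition~\ref{psplitt1}(3) with ambient pair $(\cT,\cA)$ and $\cA'=\cA_{\le n-1}$, but the hypothesis of that proposition is that the \emph{single} inclusion $\cA_{\le n-1}\into\cA$ is split in the sense of Definition~\ref{dsplit}, whereas ``the filtration on $\cA$ is split'' only says that each step $\iota_m:\cA_{\le m}\into\cA_{\le m+1}$ has an exact right adjoint. Without exhaustiveness (which (3) does not assume) you cannot compose these to get a right adjoint for $\cA_{\le n-1}\into\cA$, so your global $p:\cT\to\cT_{\le n-1}$ need not exist and your restriction step is moot. The clean fix---and the paper's intended application---is to apply Proposition~\ref{psplitt1}(3) with ambient $t$-category $(\cT_{\le n},\cA_{\le n})$, which you already know is a bounded $t$-category by (2); then the $2$-step split hypothesis is precisely that $\iota_{n-1}:\cA_{\le n-1}\into\cA_{\le n}$ has an exact right adjoint, and you obtain the right adjoint to $\cT_{\le n-1}\into\cT_{\le n}$ directly, with no restriction argument needed.
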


\begin{proof} (1), (2) and (3) follow respectively from Proposition \ref{psplitt1} (1), (2) and
(3). (4) and (5) are easy. 
\end{proof} 

\begin{remark}\label{rE.21}
As in \cite[Def. 1.2.1]{HuberLN}, let $\cT$ be a $t$-category with compatible triangulated
endofunctors $W_n$ with commute with the $t$-structure and such that the transformations
$W_n\to  W_{n+1}$ and $W_n\to Id$ are inclusions on the heart $\cA$ of $\cT$. In particular,  
the $W_n$ induce an increasing sequence of \emph{exact} subfunctors of the 
identity of $\cA$. Thus we are in the situation of Remark \ref{lF.3.6} if they are separated
and exhaustive on all objects of $\cA$. By this remark, we then have a weight filtration on
$\cA$ in the sense of Definition \ref{dE.6}. 

Define $\cT_{\le n}$ as in  Definition \ref{dJ.1}. By Proposition \ref{psplitt},
this is a weight filtration on
$\cT$ in the sense of Definition \ref{d16.3tri} c). By Proposition \ref{proadjtri},   we get
endofunctors of
$\cT$:
\[W'_n C = C_{\le n}.\]

We claim that $W'_n C=W_n C$. First,
$W_n C \in \cT_{\le n}$ because $W_n$ commutes with $t$ and this is true on $\cA$
by definition of $\cA_{\le n}$. By the universal property of $W'_n C$, the map $W_n C\to C$
factors uniquely into
\[W_n C\to W'_n C.\]

That this is an isomorphism is checked again after truncation, and it is true by
Remark \ref{lF.3.6}.

Summarising: given a $t$-category $(\cT,t)$, a weight filtration $(W_n)$ in the sense of
\cite[Def. 1.2.1]{HuberLN}  is equivalent to a weight filtration on $\cT$ in the sense of
Definition \ref{d16.3tri} c) which is compatible with the $t$-structure, provided that the
weight filtration is separated and exhaustive on objects of the heart.
\end{remark}

\newpage
\printindex
\newpage

\vfill


\begin{thebibliography}{SGA7}


\bibitem{andre} {\sc Y. Andr\'e}: {\it Une introduction aux motifs (motifs purs, motifs mixtes,
p\'eriodes)}, Panoramas et synth\`eses {\bf 17}, SMF, 2004.

\bibitem{AK} {\sc Y. Andr\'e} {\rm and} {\sc B. Kahn}: Nilpotence, radicaux et structures mono\"\i dales, Rend. Sem. Mat. Univ. Padova {\bf 108} (2002), 107--291.

\bibitem{Ay-Hub}{\sc J. Ayoub}: The slice filtration on $DM (k)$ does not preserve geometric motives, {\it Appendix to Annette Huber's} Slice filtration on motives and the Hodge conjecture, Math. Nachr. {\bf 281} (2008), 1764--1776.

\bibitem{ayoub2mot} {\sc J. Ayoub}: The $n$-motivic $t$-structure for $n=0, 1$ and $2$. Advances in Mathematics {\bf 226} (2011), 111--138.

\bibitem{ayoubHopf} {\sc J. Ayoub}: L'alg\`ebre de Hopf et le groupe de Galois motivique d'un corps de caract\'eristique nulle, I, J. reine angew. Math. Journal (Crelles Journal) {\bf 693} (2014) 1--149.

\bibitem{real.etale} {\sc J. Ayoub}: La r\'ealisation \'etale et les op\'erations de Grothendieck,   Ann. Sci. \'Ecole Norm. Sup. {\bf 47} (2014), 1--141.

\bibitem{ABN} {\sc J. Ayoub} {\rm and} {\sc L. Barbieri-Viale}: Nori $1$-motives, Math. Ann. {\bf 361} (2015), no. 1-2, 367--402.

\bibitem{ABV} {\sc J. Ayoub} {\rm and} {\sc L. Barbieri-Viale}: 1-motivic sheaves and the
Albanese functor,  Journal of Pure and Applied Algebra {\bf  213} No. 5 (2009)
809--839.

\bibitem{BM}{\sc L. Barbieri-Viale}: On the theory of 1-motives, in {\it Algebraic Cycles and
Motives} (J. Nagel, C. Peters, ed.), London Math. Soc. Lect. Notes Series {\bf 343} (Vol. 1),
2007.

\bibitem{rel_duality} {\sc L. Barbieri-Viale {\rm and} B. Kahn}: A note on relative duality
for Voevodsky's motives,  Tohoku Math. Journal {\bf 60} N. 3 (2008) 349--356.

\bibitem{BRS}{\sc L. Barbieri-Viale, A. Rosenschon {\rm and } M. Saito}:
Deligne's conjecture on $1$-motives, Annals of Math.  {\bf 158}  (2003), 
593--633.

\bibitem{BSAP}{\sc L. Barbieri-Viale {\rm and } V. Srinivas}: {\it Albanese and Picard $1$-motives}, 
M{\'e}moire SMF {\bf 87}, Paris, 2001.

\bibitem{BG} {\sc A.A. Beilinson}: Remarks on Grothendieck's standard conjectures, {\it in} Regulators, Contemp. Math. {\bf 571}, Amer. Math. Soc., Providence, RI, 2012, 25--32.

\bibitem{BBD} {\sc A.A. Beilinson, J. Bernstein, P. Deligne}:
{\it Faisceaux pervers}, Ast\'erisque {\bf 100}, 1982.

\bibitem{bv} {\sc A.A. Beilinson, V. Vologodsky}: A DG guide to Voevodsky's motives, GAFA (Geom. Funct. Anal.) {\bf 17} (2008), 1709--1787. 

\bibitem{alessandra} {\sc A. Bertapelle}: Remarks on $1$-motivic sheaves,  Journal of K-Theory
{\bf 12}  (2013), 363--380.

\bibitem{alessandragen} {\sc A. Bertapelle}: Generalized $1$-motivic sheaves,  Journal of Algebra {\bf 420}
(2014), 261--268.

\bibitem{bert} {\sc C. Bertolin}: Biextensions and $1$-motives,  C. R. Math. Acad. Sci. Soc.
R. Can.  {\bf 27}  (2005), 65--71.

\bibitem{bermaz} {\sc C. Bertolin {\rm and} C. Mazza}: Biextensions of 1-motives in
Voevodsky's category of motives, Int. Math. Res. Not. {\bf 2009}, 3747--3757.

\bibitem{bertrand} {\sc D. Bertrand}: Extensions panach\'ees autoduales,  Journal of $K$-theory  {\bf 11} (2013), 393--411.

\bibitem{bs} {\sc J. Biswas {\rm and} V. Srinivas}:  A Lefschetz $(1,1)$-theorem for normal
projective complex varieties, Duke Math. J. {\bf 101} (2000), 427--458.

\bibitem{BH} {\sc S. Bloch {\rm and} H. Esnault}:  K\"unneth projectors, in {\it Algebraic cycles
and motives} (J. Nagel, C. Peters, ed.), London Math. Soc. Lect. Notes Series {\bf 344} (Vol.
2), 2007, 54--72.

\bibitem{bo} {\sc S. Bloch {\rm and} A. Ogus}: Gersten's conjecture and the homology of
schemes, Ann. Sci. \'Ecole Norm. Sup. {\bf 7} (1974), 181--202.

\bibitem{bondarko} {\sc M. Bondarko}: Differential graded motives: weight complex, weight
filtrations and spectral sequences for realizations; Voevodsky vs. Hanamura, Journal of the
Institute of Mathematics of Jussieu {\bf 8} (2009), 39--97.

\bibitem{bondarko2} {\sc M. Bondarko}: Weight structures vs. $t$-structures; weight
filtrations, spectral sequences, and complexes (for motives and in general),  J. of 
$K$-theory {\bf 6} (2010), 387--504. 

\bibitem{bondarko4} {\sc M. Bondarko}: Weight structures and `weights' on the hearts of $t$-structures, Homology, Homotopy and Applications, {\bf 14} (2012), 239--261 

\bibitem{bbki} {\sc N. Bourbaki}: Alg\`ebre commutative, Masson, 1985.

\bibitem{breen} {\sc L. Breen}: Extensions of abelian sheaves and Eilenberg-Mac
Lane algebras, Invent. Math. {\bf 9} (1969), 15--44.

\bibitem{cis-deg} {\sc D.-C. Cisinski {\rm and} F. D\'eglise}: Mixed Weil cohomologies,
Adv. Math. {\bf 230} (2012), 55--130.

\bibitem{cis-deg2} {\sc D.-C. Cisinski {\rm and} F. D\'eglise}: Triangulated categories of motives,
preprint, 2009.

\bibitem{clifford} {\sc  A. H. Clifford}: Representations induced in an invariant subgroup, Annals of Math. {\bf 38}  (1937), 533--550.

\bibitem{ct} {\sc J.-L. Colliot-Th\'el\`ene}: Cycles alg\'ebriques de torsion et $K$-th\'eorie
alg\'ebrique, Lect. Notes in Math. {\bf 1553}, Springer, 1993, 1--49.

\bibitem{conrad} {\sc B. Conrad}: A modern proof of Chevalley's theorem on algebraic groups, 
J. Ramanujan Math. Soc. {\bf 17} (2002),  1-Ð18. 

\bibitem{D2}{\sc P. Deligne}: Th{\'e}orie de Hodge II, Publ. Math.
IH\'ES {\bf 40} (1971) 5--57.

\bibitem{D}{\sc P. Deligne}: Th{\'e}orie de Hodge III, Publ. Math.
IH\'ES {\bf 44} (1974) 5--78.

\bibitem{weilI} {\sc P. Deligne}: La conjecture de Weil, I,
Publ. Math. IH\'ES {\bf 43} (1974), 273--307.

\bibitem{weilII} {\sc P. Deligne}: La conjecture de Weil, II,
Publ. Math. IH\'ES {\bf 52} (1980), 137--252.

\bibitem{3points} {\sc P. Deligne}: Le groupe fondamental de la droite projective moins trois
points, Galois groups over $\Q$ (Berkeley, CA, 1987), Math. Sci. Res. Inst. Publ., {\bf 16},
Springer, New York, 1989,  79-Ð297.

\bibitem{demazure}{\sc M. Demazure}: Motifs de vari\'et\'es
alg\'ebriques, S\'em. Bourbaki 1969/70, Exp. {\bf 365}, Lect. Notes in
Math. {\bf 180}, Springer, 1971.

\bibitem{dem-gab} {\sc M. Demazure, P. Gabriel}: {\it Groupes alg\'ebriques}, Masson--North-Holland,
1970.

\bibitem{deninger} {\sc C. Deninger}: A proper base change theorem for nontorsion sheaves in
\'etale cohomology, J. Pure Appl. Algebra  {\bf 50}  (1988), 231--235.

\bibitem{Ekedahl} {\sc T. Ekedahl}: On the adic formalism, in {\it The Grothendieck Festschrift}
(P. Cartier et al., eds.), Vol. II, Birkh\"auser 1990, 197--218.

\bibitem{eklv} {\sc H. Esnault, B. Kahn, M. Levine} and {\sc E. Viehweg}: The Arason invariant
and mod 2 algebraic cycles, J. AMS {\bf 11} (1998), 73--118.

\bibitem{fa-ch} {\sc G. Faltings} and {\sc C.-L. Chai}: {\it Degeneration of abelian varieties}, Erg. in Math., Springer, 1990.

\bibitem{GZ}{\sc P. Gabriel} and {\sc M. Zisman}: {\it Calculus of fractions and
homotopy theory}, Erg. in Math., Springer, 1967.

\bibitem{geisser} {\sc T. Geisser}: Arithmetic Cohomology over finite fields and special
values of zeta-functions, Duke Math. J. {\bf 133} (2006), 27--57.

\bibitem{geisser2} {\sc T. Geisser}: Duality via cycle complexes, Annals of Math. {\bf 172} (2010), 1095--1126.

\bibitem{gs} {\sc H. Gillet} and {\sc C.Soul\'e}: Descent, motives and $K$-theory, J. reine
angew. Math. {\bf 478} (1996), 127--176.

\bibitem{HA}{\sc R. Hartshorne}: {\it Residues and Duality}, Lect. Notes in Math. {\bf
20}, Springer, 1966.

\bibitem{HuberLN} {\sc A. Huber}: {\it Mixed motives and their realization in derived
categories}, Lect. Notes in Math. {\bf 1604}, Springer, 1995.

\bibitem{Huber} {\sc A. Huber}: Mixed perverse sheaves for schemes over number fields,
Compositio Math. {\bf 108} (1997), 107--121.

\bibitem{HuberH} {\sc A. Huber}: Realizations of Voevodsky's motives, J. Alg. Geom {\bf 9}
(2000), 755--799. Corrigendum: {\it ibid.}, {\bf 13} (2004), 195--207.

\bibitem{hk} {\sc A. Huber} and {\sc B. Kahn}: The slice filtration and mixed Tate motives,
Compositio Math. {\bf 142} (2006), 907--936.

\bibitem{ivorra} {\sc F. Ivorra}: R\'ealisation $\ell$-adique des motifs triangul\'es
g\'eom\'etriques, I, Documenta Math. {\bf 12} (2007), 607--671.

\bibitem{ivorra2} {\sc F. Ivorra}: Triangulated motives over Noetherian separated schemes, Chapter 12
in  {\it Motivic Integration and its interactions with Model Theory and Non-Archimedean Geometry} Vol. 2 
Ed. R. Cluckers, J. Nicaise, J. Sebag , London Math. Soc. Lecture Note Series {\bf 384}, Cambridge Univ. Press, 2011.

\bibitem{jannsen} {\sc U. Jannsen}: {\it Mixed motives and algebraic $K$-theory}, Lect. Notes
in Math. {\bf 1400}, Springer, 1990.

\bibitem{DJ}{\sc A.J. de Jong}: Smoothness, semistability and alterations,
Publ. Math. IH\'ES {\bf 83} (1996) 51--93.

\bibitem{kmilnor} {\sc B. Kahn}: La conjecture de Milnor (d'apr\`es V. Voevodsky), Sém. Bourbaki, Exp. 834, Ast\'erisque {\bf 245} (1997), 379--418.  

\bibitem{kmot}{\sc B. Kahn}: Motivic cohomology of smooth
geometrically cellular varieties, in {\it Algebraic $K$-theory}, Proc.
Sym. Pure Math. {\bf 67}, AMS, 1999.

\bibitem{pic}{\sc B. Kahn}: Sur le groupe des classes d'un sch\'ema
arithm\'etique (avec un appendice de Marc Hindry),  Bull. SMF {\bf 134} (2006), 395-415.

\bibitem{fullfaith} {\sc B. Kahn}: The full faithfulness conjectures in characteristic $p$,  Panoramas et synth\`eses {\bf 41} (2014), SMF, 187--244..

\bibitem{toresnis} {\sc B. Kahn}: Algebraic tori as Nisnevich sheaves with transfers, to appear in Ann. Sci. Fac. Sci. Toulouse (2014).

\bibitem{Azumaya}{\sc B. Kahn, M. Levine}: Motives of Azumaya algebras, J. Instit. Math. Jussieu {\bf 9} (2010), 481--599.

\bibitem{ks}{\sc B. Kahn} and {\sc R. Sujatha}: Birational motives, I: pure birational motives,   to appear in the Annals of $K$-theory.

\bibitem{KS} {\sc M. Kashiwara} and {\sc P. Schapira}: {\it Categories and sheaves}, Springer, 2005.

\bibitem{kelly} {\sc S. Kelly}: {\it Triangulated categories of motives in positive characteristic}, PhD Thesis, 2012,  \url{http://arxiv.org/abs/1305.5349}.

\bibitem{kleiman}{\sc S. Kleiman}: Motives, in {\it Algebraic geometry}
(Oslo, 1970), Walters-Noordhoff, 1972, 53--82.

\bibitem{KVS}{\sc A. Krishna} and  {\sc V. Srinivas}: Zero Cycles on Singular Varieties, in {\it Algebraic cycles and motives} (J. Nagel, C. Peters, ed.), London Math. Soc. Lect. Notes Series {\bf 343} (Vol. 1), 2007.

\bibitem{lau} {\sc G. Laumon}: Transformation de Fourier g\'en\'eralis\'ee, Preprint IH\'ES (Transformation de Fourier
g\'eom\'etrique, IH\'ES/85/M/52) 47 pages, \url{http://arxiv.org/abs/alg-geom/9603004} 


\bibitem{Lecomte-Wach} {\sc F. Lecomte} and {\sc N. Wach}: R\'ealisations de Hodge des motifs de Voevodsky, 
Manuscripta Mathematica {\bf 141} (2013), 663--697.

\bibitem{LI}{\sc S. Lichtenbaum}: Suslin homology and Deligne
$1$-motives, in {\it Algebraic $K$-theory and algebraic topology},
NATO ASI Series, 1993, 189--197.

\bibitem{LI1}{\sc S. Lichtenbaum}: Quasi-motives of curves,  in {\it Algebraic $K$-theory (Toronto, ON,
1996)},  185--197, Fields Inst. Commun., {\bf 16}, AMS, 1997.

\bibitem{mallick}{\sc V. M. Mallick}: Ro\v\i tman's theorem for singular projective
varieties in arbitrary characteristic, J. of $K$-Theory {\bf 3} (2009), 501--531.

\bibitem{mcl} {\sc S. Mac Lane}: {\it Homology}, Springer, 1963.

\bibitem{mcl2} {\sc S. Mac Lane}: {\it Categories for the working mathematician} (2nd ed.), Springer, 1998.

\bibitem{MA}{\sc J. I. Manin}: Correspondences, motifs and monoidal
transformations, Math. USSR Sbornik {\bf 6} (1968), 439--470.

\bibitem{VL}{\sc C. Mazza}, {\sc V. Voevodsky} and {\sc C.
Weibel}: {\it Lecture Notes on Motivic Cohomology}, Clay Mathematics Monographs,  Vol. {\bf 2}, 2006 (Based upon lectures given by V. Voevodsky at IAS in 1999--2000).

\bibitem{MI} {\sc J.S. Milne}: {\it \'Etale cohomology}, Princeton University Press, 1980.

\bibitem{milne} {\sc J.S. Milne}: Abelian varieties, in {\it Arithmetic geometry} (G. Cornell,
J.H. Silverman, eds), Springer, 1986, 103--150.

\bibitem{morel} {\sc S. Morel}:  Complexes pond\'er\'es des compactifications de Baily-Borel.
Le cas des vari\'et\'es de Siegel,  J. Amer. Math. Soc. {\bf 21} (2008), 23--61.

\bibitem{mumford}{\sc D. Mumford}: {\it Abelian varieties} (2nd ed.), TIFR
Studies in Mathematics, Oxford University Press, 1974.

\bibitem{GIT} {\sc D. Mumford, J. Fogarty, F. KIrwan}: {\it Geometric invariant theory} (3rd ed.), Springer, 1994.

\bibitem{murre} {\sc J.-P. Murre}: On contravariant functors from the category of preschemes over a field into the category of abelian groups (with an application to the Picard functor), Publ. Math. IH\'ES {\bf 23} (1964), 581--619.

\bibitem{neeman} {\sc A. Neeman}: The derived category of an exact
category, J. Alg. {\bf 135} (1990), 388--394.

\bibitem{OR}{\sc F. Orgogozo}: Isomotifs de dimension inf\'erieure ou
\'egale \`a 1, Manuscripta Math. {\bf 115} (2004), 339--360.

\bibitem{pw} {\sc C. Pedrini} and {\sc C. Weibel}: Invariants of real curves, Rend. Sem. Mat. Univ. Politec. Torino {\bf 49} (1993), 139--173.

\bibitem{spl} {\sc S. Pepin Lehalleur}: {\it Motives of commutative groups schemes and relative $1$-motives}, PhD Thesis, 2015.

\bibitem{ra1} {\sc N. Ramachandran}: {\it Albanese and Picard motives of schemes}, PhD Thesis, 1996.

\bibitem{ram}{\sc N. Ramachandran}: Duality of Albanese and Picard
$1$-motives, $K$-Theory {\bf 22} (2001), 271--301.

\bibitem{ram2} {\sc N. Ramachandran}: One-motives and a conjecture of Deligne, J. Algebraic Geom. {\bf 13} (2004), 29--80.

\bibitem{rioudea} {\sc J. Riou}: {\it Th\'eorie homotopique des $S$-sch\'emas}, M\'emoire de DEA, Paris 7, 2002, \url{http://www.math.u-psud.fr/~riou/dea/}

\bibitem{riou} {\sc J. Riou}: {\it Op\'erations sur la $K$-th\'eorie alg\'ebrique et r\'egulateurs
{\it via} la th\'eorie homotopique des sch\'emas}, PhD Thesis, Paris 7, 2006.

\bibitem{russell} {\sc H. Russell}: Albanese varieties with modulus over a perfect field, Algebra \& Number Theory {\bf 7}  (2013), 853--892.

\bibitem{saa} {\sc N. Saavedra Rivano}: {\it Cat\'egories tannakiennes}, Lect. Notes in Math. {\bf 265}, Springer, 1972.

\bibitem{scholl}{\sc A. Scholl}: Classical motives,  in {\it Motives}, Proc. Symp. pure Math. {\bf 55} (I), 163--187.


\bibitem{gacl} {\sc J.-P. Serre}: {\it Groupes alg\'ebriques et corps de classes}, Hermann, 1959.

\bibitem{serrealb} {\sc J.-P. Serre}: Morphismes universels et vari\'et\'es d'Albanese, 
in {\it Expos\'es de s\'eminaires, 1950--1989}, Doc. math\'ematiques, SMF, 2001, 141--160.

\bibitem{proalg} {\sc J.-P. Serre}: Groupes proalg\'ebriques, Publ. Math. IH\'ES {\bf 7}
(1960), 5--68.

\bibitem{serrezetaL} {\sc J.-P. Serre} Zeta and $L$ functions, Arithmetical algebraic geometry, Harper and Row (1965), 82--92 = Coll. Works, Vol. II, {\bf 64}.

\bibitem{serrerep} {\sc J.-P. Serre}: {\it Repr\'esentations lin\'eaires des groupes finis}, Hermann, 1970.

\bibitem{spsz} {\sc M. Spiess} and {\sc T. Szamuely}: On the Albanese map for smooth quasiprojective varieties, Math. Ann.  {\bf 325}  (2003), 1--17.

\bibitem{suslin} {\sc A. Suslin}: Higher Chow groups and etale cohomology, in {\it Cycles,
transfers, and motivic homology theories}, Princeton Univ. Press, Annals of Math. Studies {\bf
143}, 2000,  239--254.

\bibitem{suvo} {\sc A. Suslin} and {\sc V. Voevodsky}: Singular homology of abstract algebraic varieties, Invent. Math. {\bf 123} (1996), 61--94.

\bibitem{AV} {\sc A. Vezzani}: Effective motives with and without transfers in characteristic $p$, \url{http://arxiv.org/abs/1405.4549}.

\bibitem{V0}{\sc V. Voevodsky}: Letter to A.A. Beilinson, June 12, 1992.

\bibitem{V1}{\sc V. Voevodsky}: Letter to the first author, October 14, 1996.

\bibitem{Vh}{\sc V. Voevodsky}: Homology of Schemes,  Selecta Mathematica (New Series) {\bf 2}  (1996), 111--153.

\bibitem{Vp} {\sc V. Voevodsky}: Cohomological theory of presheaves with transfers, in {\it Cycles, transfers, and motivic homology theories},
Princeton Univ. Press, Annals of Math. Studies {\bf 143} 2000,  137--187.

\bibitem{V}{\sc V. Voevodsky}: Triangulated categories of motives over a
field, in {\it Cycles, transfers, and motivic homology theories},
Princeton Univ. Press, Annals of Math. Studies {\bf 143} 2000,  188--238.

\bibitem{voecan}{\sc V. Voevodsky}: Cancellation theorem, Documenta Math. Extra Volume: Andrei A. Suslin's Sixtieth Birthday (2010), 671--685.

\bibitem{V2} {\sc V. Voevodsky}: Motives over simplicial schemes, J. of $K$-theory {\bf 5} (2010), 1--38.

\bibitem{vadim}{\sc V. Vologodsky}: Hodge realizations of 1-motives and the derived Albanese, J. of K-Theory {\bf 10} (2012), 371--412.

\bibitem{vadim2}{\sc V. Vologodsky}: Some applications of weight structures of Bondarko, Int. Math. Res. Notices {\bf 2013} (2013), 291-327.

\bibitem{weibel}{\sc C. Weibel}: $\Pic$ is a contracted functor, Invent. Math. {\bf 103} (1991), 351--377.

\bibitem[EGA4]{ega4} {\sc A. Grothendieck} and {\sc J. Dieudonn\'e}: {\it \'El\'ements de g\'eom\'etrie alg\'ebrique, IV: \'etude locale des sch\'emas et des morphismes de sch\'emas (deuxi\`eme partie)}, Publ. Math. IH\'ES {\bf 24} (1965).

\bibitem[SGA1]{SGA1} {\sc A. Grothendieck}: {\it Rev\^etements \'etales et groupe fondamental},
S\'eminaire de g\'eom\'etrie alg\'ebrique du Bois-Marie (SGA 1), Documents Math\'ematiques {\bf
3}, SMF, 2003.

\bibitem[SGA3]{sga3} {\sc A. Grothendieck}: {\it Sch\'emas en groupes}, 
S\'eminaire de g\'eom\'etrie alg\'ebrique du Bois-Marie (SGA 3), Vol. 1, Documents Math\'ematiques {\bf
7}, SMF, 2011.


\bibitem[SGA4]{sga4}{\sc M. Artin, A. Grothendieck, J.-L. Verdier}: {\it Th\'eorie des topos et
cohomologie \'etale des sch\'emas}, S\'eminaire de g\'eom\'etrie alg\'ebrique du Bois-Marie
(SGA 4), Vol. 3, Lect. Notes in Math. {\bf 305}, Springer, 1973.

\bibitem[SGA4 1/2]{sga4 1/2} {\sc P. Deligne}: {\it Cohomologie \'etale}, S\'eminaire de
g\'eom\'etrie alg\'ebrique du Bois-Marie (SGA 4 1/2), Lect. Notes in Math. {\bf 569},
Springer, 1977.

\bibitem[SGA5]{sga5} {\sc A. Grothendieck, et al.}: {\it Cohomologie $\ell$-adique et fonctions
$L$}, S\'eminaire de g\'eom\'etrie alg\'ebrique du Bois-Marie (SGA 5), Lect. Notes in Math.
{\bf 589}, Springer, 1977.

\bibitem[SGA7]{sga7} {\sc P. Deligne, A. Grothendieck, N. Katz, M. Raynaud}:  {\it Groupes de
monodromie en g\'eom\'etrie alg\'ebrique} (SGA 7, I), Lect. Notes in Math. {\bf 288}, Springer,
1972, 133--217.
\end{thebibliography}
\end{document}